\tikzset{commutative diagrams/arrow style=math font}
\title[Formes et courants sur les espaces de Berkovich]
      {Formes différentielles réelles et courants
	sur les espaces de Berkovich}
\author{Antoine Chambert-Loir}
\address{%
Université Paris Cité, Institut de Mathématiques de Jussieu-Paris Rive Gauche, F-75013, Paris, France}
\email{antoine.chambert-loir@math.univ-paris-diderot.fr}
\author{Antoine Ducros}
\address{%
Sorbonne Université,  Université de Paris, CNRS, Institut de Mathématiques de Jussieu-Paris Rive Gauche, IMJ-PRG, F-75005, Paris, France}
\email{antoine.ducros@imj-prg.fr}
\date{\today, \clocktime}
\begin{abstract}
Nous définissons une théorie des formes de type~$(p,q)$
et de courants sur les espaces de Berkovich qui est parallèle
à la théorie des formes différentielles sur les espaces complexes:
intégration des formes de bidegré maximal,
formule de Poincaré--Lelong, 
notion de fonction plurisousharmonique,
fibrés en droites métrisés…

Cette théorie combine des considérations de géométrie
analytique non archimédienne et de géométrie tropicale locale
où elle repose sur la théorie des superformes inventée par Lagerberg.
Cela conduit aussi à une généralisation de la condition
d'équilibre en géométrie tropicale.

Les fibrés en droites métrisés dont la métrique est lisse
disposent d'une $(1,1)$-forme de Chern.
Dans le cas d'espaces analytiques propres,
l'intégration des produits de formes de Chern est
compatible avec la théorie de l'intersection.
Les fibrés en droites métrisés dont la métrique
est seulement continue et plurisousharmonique disposent
d'un $(1,1)$-courant de Chern positif. 
Par une adaptation de la théorie de Bedford--Taylor,
nous donnons sens aux produits de tels courants,
en particulier à sa puissance maximale. 
L'un des résultats principaux de ce travail
est que l'on retrouve ainsi les mesures qui avaient
été définies par le premier auteur.

Nous détaillons enfin quelques exemples, dont le cas
des variétés abéliennes, et proposons des rudiments
de géométrie d'Arakelov non archimédienne.
\end{abstract}
\def\degtr{\operatorname{deg.tr}\nolimits}
\def\restr{\mathbin|_}
\def\pl{\ifmmode \mathrm{PL}\else\textsc{pl}\xspace\fi}
\def\plstr{\ifmmode \mathrm{PL}^{\text{\upshape str}}\else\textsc{pl}\textsuperscript{\upshape str}\xspace\fi}
\def\Gstr{G\textsuperscript{str}}
\def\rang{\operatorname{rg}}
\def\Aut{\operatorname{Aut}}
\def\div{\operatorname{div}}
\def\ddiv#1{\delta_{\div (#1)}}
\def\supp{\operatorname{supp}}
\def\vol{\operatorname{vol}}
\def\Jac{\operatorname{Jac}}
\def\MA{\operatorname{MA}}\def\CPSH{\operatorname{CPsh}}
\def\Or{\operatorname{Or}}
\def\hPic{\mathop{\widehat{\mathrm{Pic}}}}
\def\Pic{\operatorname{Pic}}
\def\id{\operatorname{id}}
\def\Hom{\operatorname{Hom}}
\def\Spec{\operatorname{Spec}}
\def\Proj{\operatorname{Proj}}
\def\Spf{\operatorname{Spf}}
\def\Card{\operatorname{Card}}
\def\Arg{\operatorname{Arg}}
\def\GL{\operatorname{GL}}
\def\Long{\operatorname{long}}
\def\Frac{\operatorname{Frac}}
\def\Gal{\operatorname{Gal}}
\def\lpar{\mathopen{(\!(}} \def\rpar{\mathclose{)\!)}}
\def\lbra{\mathopen{[\![}} \def\rbra{\mathclose{]\!]}}
\def\groth{{\mathrm G}}
\newcommand\gm[1][]{\mathbf G_{\mathrm m\csname @ifnotempty\endcsname{#1}{,#1}}}
\def\gmtrop{\gm[\trop]}
\def\R{\mathbf R}
\def\C{\mathbf C}
\def\P{\mathbf P}
\def\hr#1{{\mathscr H}(#1)}
\def\hrt#1{\widetilde{{\mathscr H}(#1)}}
\def\Q{\mathbf Q}
\def\A{\mathbf A}
\def\Z{\mathbf Z}
\def\Tau{{\mathrm T}}
\def\N{\mathbf N}
\def\zr#1#2{\P_{#1/#2}}
\def\norm#1{\mathopen\|{#1}\mathclose\|}
\def\abs#1{\mathopen|{#1}\mathclose|}
\def\Abs#1{\left|{#1}\right|}
\def\da {\mathrm d}
\newcommand{\di}[1][]{\mathop{\mathrm d'_{#1}}\nolimits}             
\newcommand{\dc}[1][]{\mathop{\mathrm d''_{#1}}\nolimits}
\def\bi{\mathop{\mathrm b'}\nolimits}             
\def\bc{\mathop{\mathrm b''}\nolimits}
\def\ddc{\mathop{\mathrm d'\mathrm d''}\nolimits}
\def\cpct{{\text{\upshape c}}}
\def\trop{{\text{\upshape trop}}}
\def\harm{{\text{\upshape harm}}}
\let\ra\rightarrow
\let\hra\hookrightarrow
\let\phi\varphi\let\eps\varepsilon
\def\resp{\emph{resp.}\xspace}
\def\cf{\emph{cf.}\nobreak\xspace}
\def\loccit{\emph{loc.\,cit.}\nobreak\xspace}
\def\df{\mathrm{def}}
\renewcommand{\vec}{\overrightarrow}
\def\setminus{\mathchoice
    {\mathbin{\vrule height .72ex width 1.61ex depth -.38ex}}% 12
    {\mathbin{\vrule height .72ex width 1.61ex depth -.38ex}}% 12
    {\mathbin{\vrule height .50ex width 0.85ex depth -.28ex}}%  9
    {\mathbin{\vrule height .20ex width 0.570ex depth -.24ex}}% 7
}
\let\bar\overline
\def\sq{{\mathrm{sq}}}
\def\an{{\mathrm{an}}}
\def\gen{\mathrm{gen}}
\def\Int{\operatorname{Int}}
\begin{document}
\selectlanguage{french}

\maketitle

\setcounter{tocdepth}1
\tableofcontents

\chapter*{Introduction}\phantomsection
\label{introduction}

Cet article propose une théorie des formes différentielles
et des courants à valeurs réelles sur un espace analytique 
au sens de Berkovich. 
Avant d'en décrire la teneur, expliquons un peu la philosophie
qui nous a guidés.

\section{Contexte et philosophie}

\subsection{}
Pour des problèmes de  géométrie arithmétique sur un corps de fonctions~$k(C)$,
où $C$  est une variété algébrique projective sur un corps~$k$,
il est courant et fructueux 
de considérer un objet  défini sur~$k(C)$ (variété, faisceau,
morphisme...) comme un germe d'objet sur~$C$
puis de le prolonger sur~$C$ tout entier.
La conjecture de Mordell sur les corps de fonctions
devient alors un énoncé sur les sections d'une surface projective
fibrée en courbes de genre au moins~$2$ au-dessus d'une courbe projective.
On peut alors utiliser tout l'attirail de la géométrie algébrique
sur cette surface, notamment la théorie de l'intersection.

Sur les corps de nombres, la théorie des schémas de Grothendieck
donne un corps à ce principe. Cependant, le spectre~$\Spec(\Z)$
de~$\Z$ est un schéma affine de dimension~$1$, 
donc l'analogue d'une courbe algébrique affine, 
et n'a pas de compactification naturelle.
Une telle compactification ne serait vraisemblablement pas
définissable uniquement par l'algèbre. 
La nullité du degré du diviseur d'une fonction
sur une courbe projective a pour analogue 
la formule du produit. Dans cette analogie,
c'est ainsi aux points fermés d'une courbe affine
que correspondent les idéaux maximaux de~$\Z$,
et le rôle des points  à l'infini sera joué par la valeur absolue usuelle,
archimédienne, de~$\Z$, un objet de nature plus analytique, ou en tout cas 
topologique.

La théorie d'Arakelov (\cite{arakelov74,faltings84,gillet-s90})
vise à pallier cette asymétrie en proposant une théorie de l'intersection
arithmétique sur les schémas réguliers, de type fini sur~$\Z$.
Dans cette théorie, les objets algébriques (cycles, fibrés vectoriels,...)
sont munis de structures supplémentaires définies par la géométrie
analytique complexe (courants de Green, métriques hermitiennes,...).
Cette théorie a donné lieu à de très beaux développements
(par exemple, un théorème de Riemann-Roch arithmétique)
et à des applications importante en géométrie diophantienne 
classique (hauteur de cycles~\cite{faltings91,bost-g-s94},
théorèmes d'équidistribution de points de petite hauteur~\cite{szpiro-u-z97},
preuve de la conjecture de Bogomolov~\cite{ullmo98,zhang98}).

\subsection{}
Dans ce travail, nous avons souhaité remettre toutes les places
sur le même plan, non pas en inventant une structure algébrique
aux places archimédiennes, mais en tâchant de définir une structure
analytique aux places ultramétriques.
Ce n'est certes pas une idée originale.

La théorie d'Arakelov non archimédienne~\cite{bloch-g-s95b}
est une première élaboration. Étant donnée une variété
algébrique~$X$ sur le corps des fractions~$K$ d'un anneau
de valuation discrète~$R$, ces auteurs travaillent non
pas avec un modèle fixé~$\mathscr X/R$, mais avec
le système projectif de tous les modèles. Pour des raisons
techniques, ils doivent se restreindre aux modèles
réguliers dont la fibre spéciale  est un diviseur à croisements
normaux stricts. Lorsque la caractéristique résiduelle de~$R$
est strictement positive, cela les oblige à admettre
l'existence d'une résolution des singularités plongée.
Indépendamment de cette restriction (le cas où $R=\C\lbra t\rbra$
est déjà très intéressant),  leur théorie reste de nature
très algébrique: formes, courants sont définis comme
des systèmes compatibles de cycles sur les différents modèles
(pour des applications convenables fournies par la théorie
de l'intersection).

Dans le cas des courbes, la thèse~\cite{thuillier2005}
d'A.~Thuillier
fournit une théorie d'Arakelov analytique à toutes les places
fondée sur les espaces de Berkovich aux places
ultramétriques et sur l'analyse complexe aux places
archimédiennes.
Dans le cas
d'une courbe sur un corps ultramétrique, il y développe aussi
une théorie du potentiel formellement analogue à celle
qui vaut sur les surfaces de Riemann.
Le cas de la droite projective a été développé indépendamment
par~\cite{baker-rumely2006,baker-rumely2010}.
Ces considérations recoupent aussi les travaux
de~\cite{rivera-letelier2003,favre-rivera-letelier2006,favre-rl2007}
en dynamique non archimédienne.

En dimension arbitraire,
la théorie des mesures sur les espaces de Berkovich 
(due au premier auteur~\cite{chambert-loir2006} et raffinée
par~\cite{gubler2007a}) fournit une troisième incarnation de
cette idée. Si $X$ est propre, de dimension~$n$, et si
$\bar L$ est un fibré en droites sur~$X$ muni d'une métrique admissible
au sens de~\cite{zhang95b}, on construit une mesure
sur l'espace analytique $X^\an$ associé à~$X/K$ par Berkovich
que nous notons $c_1(\bar L)^n$.
En effet, cette mesure est un analogue de la mesure
de même nom lorsque $\bar L$ est un fibré en droites
hermitien sur une variété analytique compacte de dimension~$n$.
Sa masse totale est égale au degré $\deg(c_1(L)^n|X)$ de~$X$ pour~$L$.
Elle donne aussi lieu à des théorèmes d'équidistribution
des points de petite hauteur tout à fait analogues
à ceux évoqués plus haut dans le cas archimédien
(\cite{chambert-loir2006,baker-rumely2006,favre-rivera-letelier2006,gubler2007b,yuan2008}).
Voir aussi~\cite{chambert-loir2011} pour un survol
de la théorie qui met en parallèle les cas archimédien et ultramétrique.
Dans~\cite{gubler2007b}, Gubler montre comment
les idées de~\cite{ullmo98,zhang98} se transposent
et permettent de prouver la conjecture de Bogomolov
sur les corps de fonctions lorsque la variété abélienne
sous-jacente est {\og totalement dégénérée\fg}.

Divers mathématiciens se sont aussi intéressé à l'analogue
du théorème de Yau (conjecture de Calabi) dans ce contexte.
Citons un travail non publié de Kontsevich et Tschinkel (2000).
Plus récemment,
l'unicité d'une métrique conduisant à une mesure
prescrite (théorème de Calabi)  est prouvée 
dans~\cite{yuan-zhang2009}
en adaptant la preuve de~\cite{blocki2003}.
L'article~\cite{liu2011} construit quelques solutions
dans le cas de variétés abéliennes totalement dégénérées.
Les articles~\cite{boucksom-favre-jonsson2012a,boucksom-favre-jonsson2012b}
prouvent l'existence de solutions dans de nombreux cas algébriques.

\section{Cet article}

\subsection{}
Dans le cas d'un fibré en droites hermitien~$\bar L$
sur une variété complexe~$X$  $c_1(\bar L)$ est une forme 
différentielle de type~$(1,1)$ et sa puissance
$c_1(\bar L)^n$ est une forme de  type~$(n,n)$,
laquelle donne lieu à une mesure si $X$ est de dimension~$n$.
La théorie des mesures de~\cite{chambert-loir2006}
avait ceci d'insatisfaisant qu'elle définissait
une mesure $c_1(\bar L)^n$ tout en s'avérant incapable
de donner un sens à l'objet $c_1(\bar L)$,
lorsque $\bar L$ est un fibré métrisé 
sur une variété algébrique propre de dimension~$n$
sur un corps valué complet.

Combler cette frustration  est la raison d'être du présent travail.

Au passage, nous étendons la théorie en traitant
des variétés analytiques ni nécessairement algébriques,
ni nécessairement compactes. Le caractère local
des mesures $c_1(\bar L)^n$ (établi dans~\cite{chambert-loir2011})
apparaîtra ainsi bien plus naturellement.

Soit donc $k$ un corps valué complet, non archimédien.
Soit $X$ un espace $k$-analytique au sens de Berkovich~\cite{berkovich1990,berkovich1993}.

\subsection{Fonctions et formes lisses}
Nous appelons \emph{fonction lisse} sur~$X$ une fonction 
sur~$X$ à valeurs réelles qui s'écrit
localement $\phi(\log\abs{f_1},\dots,\log\abs{f_r})$,
où $f_1,\dots,f_r\in\mathscr O_X^\times$ 
sont des fonctions holomorphes inversibles sur~$X$
et $\phi$ est une fonction~$\mathscr C^\infty$ sur~$\R^r$ à valeurs
réelles. Les fonctions lisses forment un sous-faisceau~$\mathscr A_X$
du faisceau des fonctions continues sur~$X$ à valeurs réelles.
Si $X$ est bon et paracompact,
par exemple si $X$ est l'analytifié d'une variété algébrique, 
ce faisceau est fin.

Plus généralement, si $p$ et~$q$ sont deux entiers naturels,
nous définissons  une notion de forme différentielle lisse de type~$(p,q)$
sur~$X$, qui redonne la notion de fonction lisse pour  $(p,q)=(0,0)$.
Ce sont naturellement les sections
de faisceaux~$\mathscr A_X^{p,q}$ qui sont nuls 
si $p>\dim(X)$ ou $q>\dim(X)$. 
La somme directe de ces faisceaux est une algèbre bigraduée commutative; 
elle est munie d'opérateurs différentiels~$\di$ et~$\dc$ 
de degrés~$(1,0)$ et~$(0,1)$  et de carrés nuls.

Si $\dim(X)=n$, on peut intégrer les formes de type~$(n,n)$,
pourvu bien entendu qu'elles soient intégrables, ce qui est
par exemple le cas si leur support  est compact.
Si $X$ est l'analytifié d'une variété propre, 
l'intégrale d'une forme sur~$X$
de la forme~$\di\omega$ est nulle. 

Plus généralement, Berkovich a introduit la notion de \emph{bord}~$\partial(X)$
de l'espace analytique~$X$ et nous
définissons aussi l'intégrale d'une forme de type~$(n-1,n)$
sur ce bord. Ces définitions donnent lieu à l'analogue de
la \emph{formule de Stokes}:
\[ \int_X\di\omega = \int_{\partial(X)}\omega, \]
si $\omega$ est une forme à support compact de type~$(n-1,n)$ sur~$X$.

\subsection{Courants}
Nous introduisons alors la notion de courant. Comme
chez Schwartz et De Rham, ceux-ci sont définis comme
les formes linéaires sur l'espace des formes différentielles lisses
à support compact (et, techniquement, qui évitent le bord)
qui vérifient une condition de continuité convenable.
Par l'intégration des formes différentielles de type~$(n,n)$
et le produit extérieur, les formes s'identifient
à des courants.
L'intégration sur~$X$, sur un sous-espace de~$X$,
une mesure sur~$\Int(X)$,
donnent aussi lieu à des courants.  

Si $f$ est une fonction méromorphe régulière sur~$X$
(c'est-à-dire définie sur un ouvert de Zariski dense),
la fonction $\log\abs f$ définit un courant.
Notons $\div(f)$ le diviseur de~$f$ (un cycle de codimension~$1$
sur~$X$) et $\delta_{\div(f)}$ le courant sur~$X$
qui se déduit par linéarité des divers courants d'intégration
sur les composantes de~$\div(f)$.
Nous prouvons alors l'analogue de la \emph{formule de Poincaré-Lelong}:
\[ \ddc \log\abs f = \delta_{\div(f)}. \]

\subsection{}
Étant donné un  fibré en droites~$L$ sur~$X$,
on définit alors naturellement la notion de métrique lisse 
sur~$L$.
Grâce à la formule de Poincaré-Lelong,
un tel fibré métrisé~$\bar L$ possède une forme de courbure $c_1(\bar L)$,
définie de façon analogue à la forme de courbure
d'un fibré en droites hermitien sur une variété complexe.

Si $X$ est l'analytifié d'une variété algébrique propre de dimension~$n$,
on démontre alors à l'aide de la formule de Stokes
que l'intégrale sur~$X$ de la $(n,n)$-forme lisse $c_1(\bar L)^n$
est égal au degré d'intersection $\deg(c_1(L)^n\cap[X])$.

D'autre part, un modèle formel~$(\mathfrak X,\mathfrak L)$ de~$(X,L)$ 
sur~$k^\circ$ fournit naturellement une métrique continue 
(qualifiée de \emph{formelle}) sur~$L$.
Pour une telle métrique, la norme d'une section locale de~$L$
est égale au maximum d'une famille finie de valeurs absolues de 
certaines fonctions
holomorphes inversibles. Ainsi,  et contrairement à l'intuition véhiculée
par les articles précédents, une métrique formelle  n'est \emph{pas}  lisse
en général.  

Cela fournit un point de contact entre la théorie
du présent article et le mémoire~\cite{maillot2000}
sur la théorie d'Arakelov des variétés toriques.
Ce dernier travail étudie en détail  
les métriques hermitiennes naturelles sur les fibrés
en droites hermitiens sur une variété torique complexe --- définies
précisément par des maximum de valeurs absolues de fonctions holomorphes.
Un tel  fibré en droite hermitien possède une courbure, laquelle
n'est plus une forme différentielle mais un courant.
Un outil crucial de~\cite{maillot2000} est fourni
par la théorie de~\cite{bedford-t82,demailly1985} qui permet
de les multiplier. (Rappelons que les courants
ne sont en général pas plus multipliables que ne le sont les distributions.)
Le fait essentiel
est que la fonction $\max(x_1,\dots,x_r)$ sur~$\R^r$
est limite uniforme de fonctions convexes lisses.

\subsection{}
Nous définissons ainsi des notions de formes et courants positifs.
Cela nous amène à développer les rudiments de la théorie
des fonctions plurisousharmoniques ---
une fonction lisse~$u$ est dite plurisousharmonique si  
$\ddc u$ est un courant positif.
En fait, les seules fonctions plurisousharmoniques
que nous considérerons dans cet article
sont des limites uniformes de fonctions lisses plurisousharmoniques.
En copiant les arguments de~\cite{bedford-t82,demailly1985},
nous pouvons définir des produits de courants
$T\wedge \ddc u_1\wedge\dots\wedge\ddc u_r$, où $T$
est un courant positif et $u_1,\dots,u_r$ des fonctions
plurisousharmoniques comme ci-dessus.  

Grâce à cette théorie, on peut multiplier les courants de courbure
des fibrés en droites munis de métriques formelles.
Nous prouvons alors que la théorie développée ici redonne
les mesures définies dans~\cite{chambert-loir2006}
qui correspondent au cas où le nombre de fibrés en droites
est égal à la dimension.

La théorie construite dans ce dernier article partait
du cas des métriques formelles, alors considérées comme métriques
{\og lisses\fg}. La théorie de l'intersection sur la fibre spéciale
du modèle formel fournissait des mesures atomiques sur~$X$ ;
un argument d'approximation
permettait de construire des mesures  pour des fibrés en droites
munis de métriques plus générales.
Dans le présent article, le point de vue est renversé : les
métriques formelles ne sont pas lisses, mais limites
uniformes de métriques lisses convenables, et les mesures
associées ne sont définies que via un passage à la limite.

\subsection{}
Notre théorie de l'intégration des formes offre d'autres
analogies avec la théorie complexe.
Par exemple, nous démontrons un théorème de continuité (et de lissité)
des intégrales fibre, analogue au théorème de Stoll~\cite{Stoll-1966a}.

Nous démontrons aussi des analogues des formules 
de Levine et de Bochner--Martinelli en théorie des courants.

Mentionnons également l'article~\cite{DucrosHrushovskiLoeser-2023}
où des limites de familles à un paramètre d'intégrales complexes
sont interprétées comme des intégrales au sens de travail
sur des espaces analytiques sur le corps $\C\lpar T\rpar$.

\subsection{}
Dans un dernier chapitre, nous appliquons notre théorie au cas des
variétés abéliennes au moyen de la théorie de l'uniformisation de 
Tate--Raynaud. Nous décrivons les métriques canoniques sur
les fibrés en droite et les mesures de Monge--Ampère associées.

% \section{Géométrie tropicale}

\subsection{}
La construction des faisceaux de formes différentielles
repose sur un travail
récent d'A.~Lagerberg~\cite{lagerberg2012}
dont le but est d'adapter 
les outils classiques de la théorie du pluripotentiel sur
les variétés complexes
à l'analyse convexe sur des espaces vectoriels réels.

Pour tout ouvert~$U$ de l'espace vectoriel~$\R^n$, Lagerberg construit
une algèbre bigraduée commutative~$\mathscr A^{*,*}(U)$
en dédoublant le complexe de De Rham de~$\R^n$: pour tous~$(p,q)\in\N^2$,
\[ \mathscr A^{p,q}(U) = \mathscr C ^\infty(U) \otimes \bigwedge^p (\R^n)^* \otimes \bigwedge^q (\R^n)^*. \]
En particulier, $\mathscr A^{0,0}(U)$ est l'algèbre
des fonctions~$\mathscr C^\infty$ sur~$U$.
Pour $i\in\{1,\dots,n\}$, on note $\di x_i$ 
et $\dc x_i$  la $i$-ieme forme coordonnée vue respectivement
dans~$\mathscr A^{1,0}$ et $\mathscr A^{0,1}$.
Cette algèbre possède deux dérivations, notées~$\di$ et~$\dc$,
analogues des dérivations holomorphes et antiholomorphes.
Elles sont caractérisées par la formule
\[  \di f = \sum_{i=1}^n \frac{\partial f}{\partial x_i} \di x_i
\quad\text{et}\quad
  \dc f = \sum_{i=1}^n \frac{\partial f}{\partial x_i} \dc x_i, \]
si $f$ est une fonction~$\mathscr C^\infty$ sur~$U$.
Cette algèbre possède aussi une involution qui échange~$\di$ et~$\dc$.

L'opérateur $(\ddc f)^n$  est l'analogue de l'opérateur
de Monge-Ampère complexe; il s'exprime d'ailleurs en
termes de l'opérateur de Monge-Ampère réel: on a en effet 
\[ (\ddc f)^n  = n! \det\left( \big( \frac{\partial^2 f}{\partial x_i\partial x_j}\big) \right)  \di x_1\wedge\dc x_1 \wedge\dots\wedge\di x_n
\wedge\dc x_n. \]

Les éléments de $\mathscr A^{p,q}(U)$
sont appelées \emph{superformes} de type~$(p,q)$ sur~$U$;
le préfixe super est ajouté par référence à la superanalyse de Berezin;
comme nous n'utiliserons pas cette dernière, nous nous permettrons
souvent d'omettre ce préfixe.

Une $(n,n)$-forme~$\omega$ sur un ouvert~$U$ de~$\R^n$
s'écrit
\[ \omega=\omega^\sharp \di x_1\wedge\dc x_1 \wedge\dots\wedge\di x_n
\wedge\dc x_n, \]
où $\omega^\sharp$ est une fonction~$\mathscr C^\infty$.
Son intégrale est alors définie par la formule
\[ \int_U \omega = \int_U \omega^\sharp(x_1,\dots,x_n)\,\mathrm dx_1\dots\mathrm dx_n. \]
Cela donne automatiquement naissance à une théorie des courants.
Il y a aussi une notion de forme positive, de courant positif;
les fonctions convexes sont caractérisées par la positivité
de leur image par l'opérateur~$\ddc$, etc.

\subsection{}
Le lien entre espaces de Berkovich et espaces réels
est fourni par la {\og géométrie tropicale\fg}.

Le point de vue tropical en géométrie analytique sur un corps valué~$k$
(ultramétrique ou non)
consiste à approcher un espace analytique~$X$
par ses \emph{tropicalisations}, ou \emph{amibes},
images de~$X$ dans~$\R^n$ par des applications de la forme
$x\mapsto (\log\abs{f_1(x)},\dots,\log\abs{f_n(x)})$,
où $f_1,\dots,f_n$ sont des fonctions analytiques
inversibles sur~$X$. Introduit initialement dans le cas où $k$
est le corps des nombres complexes et $X$ est une variété algébrique
(voir \cite{bergman1971,viro2001,mikhalkin2006}),
ce point de vue a ensuite été développé dans le cas
ultramétrique 
(citons notamment~\cite{bieri-groves1984,einsiedler-kapranov-lind2006,gubler2007a}).

Il est fondamental que les amibes 
sont alors des sous-espaces linéaires par morceaux de~$\R^n$,
connexes si $X$ est connexe.
Ce fait,
initialement démontré dans~\cite{bieri-groves1984},
a ensuite reprouvé par~\cite{gubler2007a} à l'aide
d'un argument de Berkovich issu de son analyse
locale~\cite{berkovich2004} des espaces analytiques
(qui repose sur l'existence d'altérations~\cite{dejong96}),
puis dans~\cite{ducros2012b} comme conséquence de l'élimination
des quantificateurs pour les corps valués algébriquement clos.
Voir aussi l'article~\cite{payne2009} pour
une  autre incarnation du lien 
intime entre géométrie analytique et géométrie tropicale.

\subsection{}
Plaçons nous pour commencer en géométrie analytique complexe.
Soit $X$ une variété analytique complexe, $f_1,\dots,f_n$
des fonctions holomorphes inversibles sur~$X$
et $f=(f_1,\dots,f_n)\colon X\ra (\C^*)^n$ l'application qui s'en déduit.
Notons $f_\trop\colon X\ra\R^n$ la composition de~$f$
et de l'application $(\C^*)^n\ra \R^n$ donnée par $(z_1,\dots,z_n)\mapsto
(\log\abs{u_1},\dots,\log\abs{u_n})$.

Par les formules
\[ f^*u=u\circ f_\trop, \quad f^*\di x_i = \mathrm d \log\abs{f_i} 
\quad \text{et}\quad 
f^*\dc x_i = \mathrm d \Arg(f_i) , \]
une superforme~$\omega$  de type~$(p,q)$ sur~$\R^n$ donne lieu
à une forme différentielle réelle~$f^*\omega$ de degré~$p+q$ sur~$X$.
On prendra garde que $f^*\omega$
n'est pas forcément de type~$(p,q)$ : cela se voit déjà
sur $f^*\di x_i$ qui contient une composante holomorphe
et une composante antiholomorphe.
En revanche, si $\omega$ est de type~$(p,p)$ et symétrique,
$f^*\omega$ est de type~$(p,p)$.

\subsection{}
Revenons maintenant au cas des espaces analytiques sur un corps
ultramétrique~$k$.
Soit $X$ un tel espace. La définition d'une forme lisse
de type~$(p,q)$ est définie en considérant des tropicalisations
locales $f\colon X\ra\gm^r$ et des superformes sur l'image~$f_\trop(X)$
dans~$\R^r$,
une fois adaptée la définition de Lagerberg au cas des
espaces linéaires par morceaux.

En revanche, si $X$ est de dimension~$n$,
on voudrait définir l'intégrale d'une forme de type~$(n,n)$
de la forme $f^*\omega$ comme la somme des intégrales
à la Lagerberg sur les sous-espaces de dimension~$n$
de $f_\trop(X)$.
Cela pose deux problèmes supplémentaires: 
\begin{itemize}
\item L'intégrale définie par Lagerberg
dépend du  système de coordonnées canonique sur~$\R^n$
et ses sous-espaces linéaires n'en disposent pas a priori;
\item Si l'on pense à la formule de projection
en théorie de l'intersection, ou à la formule du changement
de variables dans les intégrales multiples,
l'intégrale de~$f^*\omega$ ne peut
être égale à $\int\omega$ qu'à un facteur correctif près,
espèce de degré de~$f$.
\end{itemize}
Nous les résolvons simultanément en introduisant
la notion de \emph{calibrage}~$\mu$ d'un sous-espace linéaire
par morceaux de dimension~$n$ de~$\R^r$. À des questions d'orientation près,
il s'agit essentiellement d'une famille de $n$-vecteurs définis face par face.
En contractant la superforme~$\omega$ de type~$(n,n)$ par ces $n$-vecteurs,
on la transforme en une $n$-forme différentielle $\langle\omega,\mu\rangle$
sur l'espace linéaire par morceaux, forme qu'on intègre
de façon usuelle. La théorie de Lagerberg
correspond au choix du $n$-vecteur $\frac\partial{\partial x_1}\wedge
\dots\frac\partial{\partial x_n}$ sur~$\R^n$.

Si $X$ est compact, nous définissons alors un calibrage canonique~$\mu_f$
de $f_\trop(X)$. Négligeant ici des problèmes de bord,
le fait que, lorsque $r=n$, $f$ soit \emph{fini et plat}
au-dessus du squelette de~$\gm^n$, et le degré de~$f$ en ces points,
intervient crucialement dans cette construction.

Signalons qu'il aurait peut-être été possible d'éviter d'introduire
cette notion de calibrage en utilisant des structures $\Z$-linéaires
sur les espaces linéaires par morceaux considérés
mais nous n'avons pas développé cette approche.

\subsection{}
Une variante de ces constructions conduit à l'intégrale sur le bord de~$X$
d'une forme de type~$(n-1,n)$
et la formule de Stokes se déduit de celle  en géométrie différentielle.
Si la définition de l'intégrale de bord
considère les bords des tropicalisations,
un avatar de la condition d'équilibre en géométrie tropicale
permet de prouver que la terminologie est justifiée,
au sens où l'intégrale sur le bord de~$X$ d'une forme
nulle au voisinage du bord est nulle.

\subsection{}
En fait, la description des paragraphes précédents correspond
à la construction mise au point dans une première version de ce texte
et il s'est avéré commode de calculer les intégrales sur l'espace~$X$
lui-même plutôt que sur ses tropicalisations.

Plus précisément, un espace analytique est muni d'une classe
de parties linéaires par morceaux naturelles, que nous appelons
parties squelettiques. Ce sont ces dernières que nous calibrons
et sur lesquelles les intégrales se calculent, au moyen de
cartes affines.

Un intérêt de cette approche est qu'elle permet de faire
des raisonnement non seulement locaux, mais également G-locaux,
sur l'espace analytique~$X$. Au passage, cela nous a conduit
à développer une théorie des fonctions G-lisses et des G-formes;
les fonctions G-lisses sont les analogues des fonctions lisses par morceaux
sur une variété réelle ou complexe.

\subsection{}
Sur la base de  la première version de ce texte,
W.~Gubler et K.~Künnemann ont développé une théorie  voisine
des $\delta$-formes qui, dans notre langage,
donnent lieu à des courants d'un certain type,
qui combinent nos formes et des cycles tropicaux sur l'espace
affine d'une tropicalisation.
La construction du produit des $\delta$-lisses repose
alors sur le produit des formes lisses et sur la
théorie de l'intersection tropicale.

Nous avons ainsi été conduits à réinterpréter la théorie de l'intersection
tropicale dans notre cadre, par des méthodes à la Bedford--Taylor.
(A.~Mihatsch~\cite{Mihatsch-2023} a également proposé une telle interprétation.)

Cela nous a amené à considérer le produit
d'un espace analytique au sens de Berkovich et d'un espace affine réel,
et les formes et courants sur celui-ci.
Il a donc fallu munir ce produit d'une structure convenable,
que nous avons appelée \emph{espace tropical}: ce sont
des espaces munis de classes convenables de  parties squelettiques
et de  tropicalisations, les deux exemples extrêmes étant les espaces
analytiques et les espaces linéaires par morceaux.

De fait, toute la théorie précédente est menée 
d'abord dans ce cadre abstrait qui occupe la première partie du livre.
A posteriori, il nous semble que ce partage permet de distinguer clairement
la part de la géométrie linéaire par morceaux et celle 
de la géométrie algébrique/analytique  dans notre théorie
des formes et courants.

\subsection{}
Nous nous sommes efforcés de rédiger le texte
dans la généralité maximale compatible avec nos compétences
techniques. Ainsi, on ne se limite pas aux espaces analytiques algébriques,
on ne les suppose pas lisses, ni même réduits. En fait, notre théorie est très {\og générique\fg}; par exemple, les supports
des formes lisses de degré maximal ne rencontrent aucun
fermé de Zariski d'intérieur vide, en particulier aucun point rigide
(en dimension~$>0$).  Ainsi,
des hypothèses de lissité, etc. n'interviendraient jamais
de façon cruciale au cours des preuves.

La théorie est valable en toute caractéristique.
Elle n'utilise à aucun moment la résolution des singularités
(Hironaka, Temkin) ni même l'existence d'altérations (De Jong), 
notamment grâce à l'article~\cite{ducros2012b} du second auteur 
qui permet de dégager, via la théorie des modèles,
les propriétés des  tropicalisations et des squelettes.
Nous n'utilisons pas de modèles formels pour développer
la théorie (mais ils fournissent une source importante d'exemples).
Toutefois, nous faisons appel à des arguments de réduction, proches de ceux
permis par la géométrie formelle, mais de nature plus locale
et techniquement plus aisés, 
fournis par la théorie de Temkin~\cite{temkin2000,temkin2004}.

La théorie est aussi valable en valuation triviale.
C'est d'ailleurs un point crucial pour le calcul des
opérateurs de Monge--Ampère associés aux métriques formelles
qui repose sur la théorie appliquée au corps résiduel  du corps de base,
corps résiduel que l'on munit de la valeur absolue triviale.

\section{Prolongements}

Depuis la parution de la première version de ce texte,
de nombreux travaux en ont utilisé les définitions et les méthodes,
en ont proposé des variantes et des approfondissements.

Mentionnons ici quelques uns, qui nous paraissent les plus significatifs,
en présentant par avance nos excuses pour les éventuelles omissions.

\subsection{}
Pour l'opérateur différentiel~$\dc$,
les formes lisses sur un espace analytique~$X$
donnent lieu à un analogue du complexe de Dolbeault.
Dans le cas algébrique,
\cite{Jell-2016}, Jell a démontré que ce complexe fournit
une résolution du faisceau constant~$\R$,
et plus généralement du faisceau des~$(p,0)$-formes.
Sa preuve s'étend en général.
Notons que ce complexe est de longueur~$\dim(X)$,
de façon compatible au fait que la dimension topologique de~$X$
est au plus égale à sa dimension.

Jell~\cite{Jell-2019} a démontré que, même pour une courbe propre, 
la cohomologie de ces complexes de Dolbeault peut être de dimension infinie.
Jell et Wanner~\cite{JellWanner-2018} ont observé 
que ce n'est néanmoins pas le cas pour les courbes de Mumford,
c'est-à-dire celles dont la réduction est maximalement dégénérée;
dans ce cas, il y a même un analogue de la dualité de Poincaré.

Pour remédier à ce défaut, Gubler, Jell et 
Rabinoff~\cite{GublerJellRabinoff-2021} ont construit une variante 
de notre théorie des formes lisses dans laquelles les tropicalisations
sont, non pas seulement de la forme $\log \abs f$ pour une fonction
analytique inversible~$f$, mais des « fonctions harmoniques »
dans un sens permis par la théorie d'intersection
sur le corps résiduel. Nous avons réinterprété une partie de ce travail
sous le vocable de « G-formes harmoniquement tropicalisées »;
comme déjà dans remarqué dans~\cite{GublerJellRabinoff-2021},
de telles formes apparaissent naturellement dans la théorie
des variétés abéliennes lorsque leur extension de Raynaud  n'est
pas isotriviale.

Dans~\cite{Liu-2020}, Liu construit la classe de Chern
d'un cycle de codimension~$p$ sur une variété algébrique propre~$X$
à valeurs dans le groupe $H^{p,p}(X)$ de cohomologie de Dolbeault.
Pour une variété définie sur~$\Q$, il démontre
en particulier qu'un cycle (défini sur~$\Q$) dont la classe de cohomologie 
de de Rham 
complexe usuelle  est nulle 
a également une classe nulle pour toutes les cohomologie de Dolbeault
non archimédiennes associées aux différentes places finies de~$\Q$.

La cohomologie de ce complexe de Dolbeault est très certainement liée aux
groupes d'homologie tropicaux de~\cite{ItenbergKatzarkovMikhalkinEtAl-2019},
mais nous n'avons pas d'énoncé formel précis à proposer.

\subsection{}
Nous avons déjà mentionné la théorie des $\delta$-formes
de Gubler et Künnemann~\cite{GublerKunnemann-2017}.
Signalons l'article de Mihatsch~\cite{Mihatsch-2024}
dans lequel il raffine leur construction et, surtout, 
en donne une application à la théorie de l'intersection sur
les tours de Lubin--Tate.

\subsection{}
La résolution par Yau du problème
de Monge--Ampère fut un progrès majeur
de la géométrie complexe. L'analogue non archimédien
a été proposé par Kontsevich et Tschinkel
et résolu par Boucksom, Favre et Jonsson
dans \cite{BoucksomFavreJonsson-2015, BoucksomFavreJonsson-2016}.
Dans l'esprit,
leur théorie est liée à celle de ce texte mais les définitions
et les méthodes sont différentes, car elles reposent d'emblée sur
la considération de modèles et de la théorie de l'intersection sur
leur fibre spéciale.

\subsection{}
Dans~\cite{BoucksomEriksson-2021},
Boucksom et Eriksson construisent une variante
de l'accouplement de Deligne dans notre théorie.
Cet accouplement est également considéré dans l'article~\cite{YuanZhang-2025}
de Yuan et Zhang.

\subsection{}
La notion de forme et de courant positifs est fondamentale 
dans le développement de notre théorie, où une notion convenable
de fonction psh joue un rôle crucial. 
La définition que nous proposons impose la possibilité
d'approximation d'une fonction psh par des fonctions lisses psh.
Gubler et Rabinoff ont considéré une définition différente,
fondée sur la restriction aux courbes et la théorie de Thuillier
des fonctions psh sur les courbes. 
Ils établissent notamment un principe du maximum.
Nous ne savons pas si un tel principe vaut pour les fonctions psh
de ce texte.

Notre notion de fonction psh a un lien étroit avec celle
de métrique semipositive au sens de Zhang~\cite{zhang95b}.
Signalons l'article~\cite{GublerMartin-2019} qui étudie 
plus en détail cette dernière.

Signalons enfin l'article~\cite{BoteroBurgosGilSombra-2022} dans lequel
Botero, Burgos Gil et Sombra étudient la notion de fonction
convexe sur un espace linéaire par morceaux.

\section{Perspectives}

\subsection{}
Qu'est-ce qu'une forme de Kähler dans notre contexte et y
a-t-il un analogue du théorème de plongement de Kodaira ?
La difficulté vient du fait que le support d'une $(n,n)$-forme
(qui serait la forme associée à la forme de Kähler) est
un squelette et donc très petit par rapport à l'espace.
L'analogue non archimédien de la métrique de Fubini est certes
singulière, mais son courant de courbure est concentré en un point
de la droite projective. Dans le cas d'un espace projectif
de dimension plus grande, la « forme volume » associée à cette
métrique est également concentrée en un point.

\subsection{}
Il serait intéressant de développer une théorie sur~$\Z$,
dans le cadre des espaces considérés par Poineau dans~\cite{poineau2012}.
Mentionnons son étude~\cite{Poineau-2024}
de la variation des mesures d'équilibre
en dynamique algébrique.
Comme notre théorie est d'essence ultramétrique, il faudra sans doute
des arguments nouveaux pour traiter les places archimédiennes.

Il serait également intéressant d'aborder la dynamique non archimédienne
au moyen des concepts fournis par le présent travail, 
de même que l'analyse complexe est un outil essentiel en dynamique 
complexe.

\subsection{}
Le développement d'une présentation \emph{analytique} de la théorie
d'Arakelov non archimédienne était l'une des motivations de ce texte.
Même si des premiers résultats  ont été donnés Gubler et Künnemann~\cite{GublerKunnemann-2017}, puis \cite{Mihatsch-2024} qui prouve qu'un cycle intersection
complète possède un courant de Green,
ce développement est encore inabouti.
Un analogue de la {\og formule de Mahler\fg}, 
dans l'esprit de~\cite{maillot2000,chambert-loir-thuillier2009}
a été démontré dans~\cite{BoucksomEriksson-2021},
par une variante de la méthode de~\cite{chambert-loir-thuillier2009}.
Comme le suggère le travail de Mihatsch~\cite{Mihatsch-2024},
il est vraisemblable qu'un tel développement aura des conséquences
sur la géométrie arithmétiques des variétés de Shimura.

\section{Remerciements}

C'est en janvier~2010 que nous avons débuté ce travail,
au cours d'une visite d'A.~D. à A.~C.-L.,
alors que ce dernier passait l'année à l'Institute for Advanced Study
de Princeton. Nous voulons remercier l'Institute pour son hospitalité
et l'atmosphère qui nous a permis  d'entamer un projet
qui semblait alors bien hasardeux.
A.~C.-L. a bénéficié du soutien de la National Science
Foundation, agrément No.~DMS-0635607, des largesses de l'Institut
universitaire de France. Il a  été membre
des projets~\emph{Positive}, \emph{Defigeo}
de l'Agence nationale de la recherche.
A.~D. a bénéficié du soutien de l'Institut universitaire de France,
de l'hospitalité du MSRI et de l'institut Weizmann;
il a été membre des projets~\emph{BERKO},
\emph{Valcomo}, \emph{Defigeo} de l'Agence nationale de la recherche.
Les deux auteurs sont membres du projet~\emph{AdAnAr} de cette agence.

Le contenu de cet article a été exposé dans diverses conférences et séminaires,
\emph{Algebraic cycles and L-functions,} Regensburg (Allemagne);
\emph{Number Theory Days,} Zürich (Suisse);
\emph{Séminaire de géométrie algébrique},
\emph{Séminaire de géométrie tropicale}, 
\emph{Séminaire de géométrie et théorie des modèles}, Paris,
plusieurs sessions des \emph{Nonarchimedean Days} à Regensburg (Allemagne),
ainsi que dans une série de trois Simons symposiums;
il a fait l'objet d'un cours à l'université de Westlake, Hangzhou (Chine).

Nous remercions
les organisateurs de ces séminaires ainsi que leur public 
et d'autres collègues pour l'intérêt qu'ils nous ont manifesté,
en particulier M.~Baker, V.~Berkovich, J.-B.~Bost, S.~Boucksom,  
J.-P.~Demailly, P.~Dolbeault, 
C.~Favre, W.~Gubler, E.~Hrushovski, N.~Hultberg, K.~Künnemann, I.~Itenberg,
M.~Jonsson, F.~Loeser, M.~Maculan, A.~Mihatsch, C.~Peskine, J.~Poineau, 
M.~Temkin, A.~Thuillier, C.~Voisin, A.~Werner, X.~Yuan et S.~Zhang.

Pour terminer, nous présentons nos excuses pour le délai
entre la première version (avril 2012) et celle-ci (juillet 2025)
et pour la multiplication par cinq du nombre de pages.
Nous tâcherons qu'il n'en soit pas de même pour la version définitive.

\part{Géométrie tropicale}

\def\pf{\text{\upshape pf}}
\def\Rad{\text{\upshape Rad}}
\def\Bor{\text{\upshape Bor}}
\def\Lunloc{{\mathrm L^1_{\mathrm{loc}}}}
\def\pr{\operatorname{pr}}
\def\MAT{\mathrm M}
\def\ShHom{\mathop{\mathscr H\!\mathit{om}}\nolimits}

\chapter{Superformes sur un espace vectoriel réel (d'après Lagerberg)}
\label{sec.lagerberg}

\section{Superformes sur un espace vectoriel réel}

La construction algébrique suivante est à la base de cet article
elle a été introduite par Lagerberg dans~\cite{lagerberg2012}.
Ce sont les versions ponctuelles des espaces de formes
qui nous intéresseront plus loin.

\begin{defi}
Soit $V$ un espace vectoriel réel de dimension finie.
On note~$A^{*,*}(V)$ l'algèbre bigraduée commutative
somme directe, pour tous $p,q\geq 0$, des espaces vectoriels
$A^{p,q}(V)=\Lambda^pV^*\otimes\Lambda^qV^*$.
\end{defi}
On dit qu'un élément de $A^{p,q}(V)$ est
une \emph{superforme}
de type~$(p,q)$ sur~$V$. 
Ces espaces $A^{p,q}(V)$ sont bien sûr nuls 
si $p>\dim(V)$ ou $q>\dim(V)$.

D'après la formule usuelle calculant l'algèbre extérieure
d'une somme directe, 
$A^{*,*}(V)$ est l'algèbre extérieure de l'espace vectoriel~$V^*\oplus V^*$.
La condition de commutativité sur cette algèbre est ainsi la suivante: 
si $\omega$ est de type~$(p,q)$ et $\omega'$ de 
type~$(p',q')$,
alors
\[ \omega' \wedge \omega = (-1)^{(p+q)(p'+q')} \omega\wedge\omega'. \]

Pour distinguer les formes linéaires apparaissant
dans le premier facteur~$\Lambda^p \vec V^*$ de celles apparaissant
dans le second~$\Lambda^q \vec V^*$, on convient de noter
systématiquement les premières avec le préfixe~$\di$
et les secondes avec le préfixe~$\dc$. 
Ainsi, si $x\in V^*$, on note $\di x$ l'élément~$x\otimes 1$ dans~$A^{1,0}(V)$
et $\dc x$ l'élément~$1\otimes x$ de $A^{0,1}(V)$.

Soit $(x_1,\dots,x_n)$ une base de~$V^*$.
Pour tout couple $(I,J)$ de suites strictement croissantes de~$\{1,\dots,n\}$,
où $I=(i_1,\dots,i_p)$ et $J=(j_1,\dots,j_q)$,
notons $\di x_I\wedge \dc x_J$ la superforme de type~$(p,q)$ définie par
\[  \di x_I\wedge \dc x_J = \di x_{i_1}\wedge\dots \wedge \di x_{i_p}
 \wedge \dc x_{j_1}\wedge\dots \wedge \dc x_{j_q}. \]
Ces superformes $\di x_I\wedge\dc x_J$ fournissent une base
de~$A^{p,q}(V)$.

Toute application linéaire~$f$ donne lieu à 
un morphisme d'algèbres évident $f^*\colon A^{*,*}(V)\to A^{*,*}(W)$.

\subsection{}
Il existe un unique automorphisme d'algèbres,
$\mathrm J\colon A^{*,*}(V)\to A ^{*,*}(V)$,
tel que $\mathrm J(\di x_i)=\dc x$ et $\mathrm J(\dc x)=-\di x$
pour tout $x\in V$. 

On a donc $\mathrm J^2 \di x = -\di x$
et $\mathrm J^2\dc x=-\dc x$ pour tout $x\in V$.
Par suite, $\mathrm J^2$ agit comme $(-1)^{p+q}$ sur $\mathrm A^{p,q}(V)$;
en particulier $\mathrm J^2$ est l'identité sur $\mathrm A^{p,p}(V)$.
On prendra garde que notre définition diffère de celle de Lagerberg
qui posait $\mathrm J\dc x=\di x$; elle fait disparaître des termes de signe;

On dira qu'une forme~$\omega$ de type~$(p,q)$
est \emph{symétrique} si l'on a $p=q$ et si $\mathrm J\omega=\omega$.
Les formes symétriques  définissent
une sous-algèbre commutative de~$A^{*,*}(V)$.

\section{Superformes positives}
\label{subsec.formes-positives}
\subsection{}
Comme l'a montré Lagerberg, les superformes fournissent
un analogue réel de l'algèbre des formes différentielles
réelles sur un espace complexe. Cette analogie
se prolonge jusqu'à des notions de positivité
qui jouent un rôle crucial dans la théorie.
Nous commençons ainsi par décrire un certain nombre de cônes
des espaces $A^{p,p}(V)$.
Les résultats qui suivent sont pour l'essentiel des adaptations
d'énoncés classiques dans le cas complexe; certains sont
énoncés (parfois sans preuve) dans~\cite{lagerberg2012}.

\begin{defi}\label{defi.positive-gt-n}
Soit $V$ un espace vectoriel réel de dimension~$n$,
Soit $\omega\in A^{n,n}(V)$.
On dit que $\omega$ est positive
s'il existe une base $(x_1,\dots,x_n)$ de~$V^*$
et un nombre réel $a\geq 0$ tel que
$\omega = a \di x_1\wedge \dc x_1\wedge \dots \wedge \di x_n\wedge\dc{x_n}$.
\end{defi}

C'est alors possible pour toute base de~$V^*$; en effet,
si $f$ est un isomorphisme linéaire de~$V$,
$f^*(\omega)=\det(f)^2 \omega$.

\begin{defi}\label{defi.positive-gt}
Soit $V$ un espace vectoriel réel de dimension~$n$,
soit $p$ un entier naturel et soit $\omega\in A^{p,p}(V)$
une forme \emph{symétrique} de type~$(p,p)$ sur~$V$.
\begin{enumerate}\def\theenumi{\alph{enumi}}\def\labelenumi{\theenumi)}
\item 
On dit que $\omega$ est \emph{faiblement positive}
si pour toute famille $(\alpha_1,\dots,\alpha_{n-p})$ de formes de 
type~$(1,0)$,
la $(n,n)$-forme
\[ \omega \wedge \alpha_1 \wedge \mathrm J(\alpha_1) \wedge\dots
 \wedge \alpha_{n-p} \wedge \mathrm J(\alpha_{n-p})  \]
est positive.
\item 
On dit que $\omega$ est \emph{positive} si pour toute forme~$\alpha$
de type~$(n-p,0)$,
la $(n,n)$-forme
\[ (-1)^{(n-p)(n-p-1)/2} \omega \wedge \alpha \wedge \mathrm J(\alpha) \]
est positive.

\item On dit que $\omega$ est \emph{fortement positive} s'il existe
un entier~$m$, 
des nombres réels positifs~$a_1,\dots,a_m$ 
et des formes~$\alpha_{j,s}$ de type~$(1,0)$ (pour $1\leq j\leq p$
et $1\leq s\leq m$)
tels que
\[ \omega = \sum_{s=1}^m  a_s  \alpha_{1,s}\wedge \mathrm J(\alpha_{1,s})\wedge\dots
\wedge \alpha_{p,s}\wedge \mathrm J(\alpha_{p,s}). \]
\end{enumerate}
\end{defi}

Pour l'explication du signe dans la condition \emph b),
noter que si $\alpha=\alpha_1\wedge\dots\wedge \alpha_m$ est le produit
de $m$ formes de type~$(1,0)$,
alors
\[ 
\alpha_1\wedge\mathrm J(\alpha_1) \wedge\dots \wedge \alpha_m \wedge \mathrm J(\alpha_m)
= (-1)^{m(m-1)/2} \alpha\wedge\mathrm J(\alpha) . \]

Ces cônes, dont la définition est due à Lagerberg~\cite{lagerberg2012},
sont des analogues de cônes classiquement
introduits en géométrie complexe 
(chapitre~III de~\cite{demailly2009}, \S3.2 de~\cite{klimek91}).
Bien qu'inspirée par celle utilisée en géométrie
analytique complexe,
la terminologie adoptée en diffère légèrement: 
les formes faiblement positives sont analogues à
celles qui sont simplement appelées  positives 
en géométrie analytique complexe.

\subsection{}
Une $(0,0)$-forme $a\in A^{0,0}(V)=\R$  fortement positive 
est un nombre réel positif ou nul.

Si $(x_1,\dots,x_n)$ est une base de~$V$,
une $(n,n)$-forme~$\omega$ s'écrit de manière unique
$a \di (x_1)\wedge \dc(x_1)\wedge\dots$;
les trois conditions de positivité sont équivalentes à l'inégalité $a\geq0$.

Nous avons restreint les définitions de positivité aux formes symétriques.
Rappelons que si $\dim(V)=n$, toute $(n,n)$-forme antisymétrique est nulle,
de sorte qu'une $(p,p)$-forme antisymétrique est orthogonale à toute
$(n-p,n-p)$-forme symétrique.

% Notons par ailleurs qu'une forme $\omega$ de type~$(p,p)$
% fortement positive est nécessairement symétrique, 
% car il en est ainsi des formes  de type~$(1,1)$
% de la forme $\alpha\wedge\mathrm J(\alpha)$ et de leurs produits. 
% En revanche, comme toute $(n,n)$-forme antisymétrique est nulle,
% une forme $\omega$ est positive 
% (\resp faiblement positive)
% si et seulement si la forme  symétrique $\omega+\mathrm J\omega$ l'est. 

Ces trois conditions définissent
des cônes convexes fermés de l'espace $A^{p,p}_{\mathrm{sym}}(V)$
des formes symétriques de type~$(p,p)$.
Par définition, une forme symétrique
$\alpha$ de type~$(p,p)$ est faiblement positive
si et seulement si $\alpha\wedge\beta$ est positive
pour toute forme~$\beta$ fortement positive de type~$(n-p,n-p)$.
La bidualité des cônes convexes fermés en dimension
finie (theorem~14.1 de~\cite{rockafellar1970})
entraîne que $\alpha$ est fortement positive si et seulement
si $\alpha\wedge\beta$ est  positive pour toute forme faiblement
positive~$\beta$.

Nous verrons plus bas 
(corollaire~\ref{coro.positive-autodual})
que la condition de positivité est auto-duale :
% (\cite{lagerberg2012}, proposition~2.1 et commentaires avant le lemme~2.2;
% lemme~\ref{lemm.positive-matrice} ci-dessous):
une forme $\alpha$ de type~$(p,p)$
est positive si et seulement si $\alpha\wedge\beta$
est positive pour toute forme positive~$\beta$ de type~$(n-p,n-p)$.

Une forme fortement positive est positive, 
une forme positive est faiblement positive.

\begin{prop}[\cite{lagerberg2012}, proposition 2.2]
\label{prop.fpositive-f*}
Une forme symétrique $\omega$ de type~$(p,p)$ sur~$V$
est faiblement positive si et seulement si 
pour tout espace vectoriel~$W$ de dimension~$p$ et toute application 
linéaire~$f\colon W\ra V$, la forme~$f^*\omega$ est positive.
\end{prop}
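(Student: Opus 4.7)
\medskip
\noindent\textbf{Plan de la démonstration.}

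Les deux propriétés en jeu étant ponctuelles, je me place en un point~$x_0\in U$ fixé et traite le problème comme de l'algèbre linéaire pure sur~$\vec V$. La positivité faible de~$\omega$ en~$x_0$ est l'énoncé selon lequel, pour toute famille $\alpha_1,\dots,\alpha_{n-p}\in\vec V^*$, la $(n,n)$-forme $\omega\wedge\alpha_1\wedge\mathrm J\alpha_1\wedge\dots\wedge\alpha_{n-p}\wedge\mathrm J\alpha_{n-p}$ a un coefficient positif ou nul (par rapport à la $(n,n)$-forme standard) en~$x_0$. La positivité de~$f^*\omega$ en~$f^{-1}(x_0)$ — qui est une $(p,p)$-forme sur un espace de dimension~$p$, donc de bidegré maximal — revient pareillement à la positivité de l'unique coefficient de~$f^*\omega$ dans une base quelconque de~$\vec W^*$.

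\smallskip
\noindent\emph{Sens direct.} Supposons $\omega$ faiblement positive et fixons $f\colon W\to V$ affine, avec $\dim W=p$. Si l'application linéaire associée~$\vec f$ n'est pas injective, alors $f^*\omega=0$ est trivialement positive. Sinon, je choisis une base $(e_1,\dots,e_p)$ de~$\vec W$ et je complète $(\vec f(e_1),\dots,\vec f(e_p))$ en une base de~$\vec V$. La base duale s'écrit $(\alpha_1,\dots,\alpha_p,\beta_1,\dots,\beta_{n-p})$ avec $f^*\alpha_i=e_i^*$ et $f^*\beta_j=0$. Je développe
\[\omega=\sum_{|I|=|J|=p}\omega_{IJ}\,\delta_I\wedge\mathrm J\delta_J\]
dans cette base $\delta$. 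L'hypothèse de positivité faible appliquée aux $\beta_j$ produit la $(n,n)$-forme $\omega\wedge\beta_1\wedge\mathrm J\beta_1\wedge\dots\wedge\beta_{n-p}\wedge\mathrm J\beta_{n-p}$; le wedge avec tous les $\beta_j$ et tous les $\mathrm J\beta_j$ annule tous les termes sauf celui où $I=J=(1,\dots,p)$. Il reste donc
\[\omega_{(1..p),(1..p)}(x_0)\,\alpha_1\wedge\mathrm J\alpha_1\wedge\dots\wedge\alpha_p\wedge\mathrm J\alpha_p\wedge\beta_1\wedge\mathrm J\beta_1\wedge\dots\wedge\beta_{n-p}\wedge\mathrm J\beta_{n-p},\]
qui est un multiple \emph{positif} (par $\det(M)^2>0$, où $M$ est le changement de base correspondant) de la $(n,n)$-forme standard. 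La positivité faible impose donc $\omega_{(1..p),(1..p)}(x_0)\ge 0$. Or le même calcul montre que $f^*\omega$ est égal à $\omega_{(1..p),(1..p)}(f(y))$ fois $e_1^*\wedge\mathrm Je_1^*\wedge\dots\wedge e_p^*\wedge\mathrm Je_p^*$, d'où la positivité recherchée.

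\smallskip
\noindent\emph{Réciproque.} Supposons que $f^*\omega$ soit positive pour toute $f$ affine à source de dimension~$p$, et fixons $x_0\in U$ et $\alpha_1,\dots,\alpha_{n-p}\in\vec V^*$. Si cette famille est liée, la $(n,n)$-forme à considérer est nulle, donc positive. Sinon, je pose $W=x_0+\bigcap_{i=1}^{n-p}\ker\alpha_i$, qui est un sous-espace affine de dimension~$p$ passant par~$x_0$; soit $f\colon W\hookrightarrow V$ l'inclusion. Je complète $(\alpha_1,\dots,\alpha_{n-p})$ en une base $(\alpha_1,\dots,\alpha_{n-p},\gamma_1,\dots,\gamma_p)$ de~$\vec V^*$; les restrictions des~$\gamma_k$ forment alors une base de~$\vec W^*$. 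Le même calcul formel qu'au sens direct (avec les rôles échangés) montre d'une part que $\omega\wedge\alpha_1\wedge\mathrm J\alpha_1\wedge\dots$ est égal à $\omega_{\gamma\gamma}(x_0)$ fois un multiple positif de la $(n,n)$-forme standard, et d'autre part que $f^*\omega=\omega_{\gamma\gamma}(f(y))\,f^*\gamma_1\wedge\mathrm Jf^*\gamma_1\wedge\dots\wedge f^*\gamma_p\wedge\mathrm Jf^*\gamma_p$. L'hypothèse appliquée à~$x_0\in W$ donne $\omega_{\gamma\gamma}(x_0)\ge 0$, d'où la positivité faible de~$\omega$ en~$x_0$.

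\smallskip
\noindent\emph{Point délicat.} La principale chose à vérifier est le bookkeeping des signes lors du passage entre base $(\alpha,\beta)$ et base canonique, d'une part, et entre ordonnancement « $\alpha,\mathrm J\alpha,\beta,\mathrm J\beta$ » et « $\alpha,\beta,\mathrm J\alpha,\mathrm J\beta$ » d'autre part. La règle de commutation de Lagerberg, $\omega\wedge\omega'=(-1)^{pp'+qq'}\omega'\wedge\omega$, fait que les variables en~$\di$ et les variables en~$\dc$ commutent entre elles sans produire de signe, si bien que $\alpha_1\wedge\mathrm J\alpha_1\wedge\dots\wedge\beta_{n-p}\wedge\mathrm J\beta_{n-p}=\alpha_1\wedge\dots\wedge\beta_{n-p}\wedge\mathrm J\alpha_1\wedge\dots\wedge\mathrm J\beta_{n-p}$; le changement de base se traduit alors par le facteur $\det(M)^2>0$ mentionné plus haut, ce qui garantit que « positivité comme coefficient dans une base » a le sens voulu indépendamment de la base choisie.
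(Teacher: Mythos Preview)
The paper does not actually give a proof of this proposition: it is stated as a citation from \cite{lagerberg2011}, Proposition~2.2, and the text moves directly to the next result without any argument. There is therefore no ``paper's proof'' to compare against.

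That said, your argument is correct and is essentially the standard one. Both conditions are pointwise, so reducing to linear algebra at a fixed point is the right move; the key observation---that wedging $\omega$ with $\beta_1\wedge\mathrm J\beta_1\wedge\dots\wedge\beta_{n-p}\wedge\mathrm J\beta_{n-p}$ isolates exactly the coefficient $\omega_{(1..p),(1..p)}$, which is also the sole coefficient of $f^*\omega$---is the heart of the matter, and you identify it cleanly in both directions. Your treatment of the degenerate cases ($\vec f$ not injective; the $\alpha_i$ linearly dependent) is correct. The ``point délicat'' is handled properly: the commutation rule $\omega'\wedge\omega=(-1)^{pp'+qq'}\omega\wedge\omega'$ indeed makes $(1,0)$- and $(0,1)$-forms commute, so rearranging $\alpha_1\wedge\mathrm J\alpha_1\wedge\dots$ into $(\alpha_1\wedge\dots)\wedge(\mathrm J\alpha_1\wedge\dots)$ introduces no sign, and the change-of-basis factor is $\det(M)^2>0$ as you say. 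This is exactly why the notion of positive $(n,n)$-form is basis-independent, as the paper remarks parenthetically in the definition.
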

\begin{proof}
Soit $\omega$ une forme de type~$(p,p)$ telle que
la forme $f^*\omega$ soit positive
pour tout espace vectoriel~$W$ de dimension~$p$
et toute application linéaire $f\colon W\to V$;
démontrons que $\omega$ est faiblement positive.
Soit $\alpha_1,\dots,\alpha_{n-p}$ des formes de type~$(1,0)$ sur~$V$;
posons 
\[ \mu = \omega \wedge \alpha_1 \wedge \mathrm J(\alpha_1) \wedge
\dots \wedge \alpha_{n-p} \wedge \mathrm J(\alpha_{n-p}) . \]
On doit prouver que $\mu$ est positive.
Si les formes $\alpha_j$  sont linéairement dépendantes, 
\[ \alpha_1 \wedge \mathrm J(\alpha_1) \wedge
\dots \wedge \alpha_{n-p} \wedge \mathrm J(\alpha_{n-p}) = 0, \]
donc $\mu=0$.
On peut donc supposer qu'elles sont linéairement indépendantes.
Soit $\alpha_{n-p+1},\dots,\alpha_n$ des formes de type~$(1,0)$
de sorte que la famille $(\alpha_1,\dots,\alpha_n)$ soit une base de~$V^*$;
soit $(v_1,\dots,v_n)$ la base duale.
Soit $f\colon \R^p\to V$ l'application linéaire 
donnée par $f(x_1,\dots,x_p) = \sum_{i=1}^p x_i v_{n-p+i}$
et soit $g\colon \R^{n-p}\to V$ l'application linéaire
donnée par $g(y_1,\dots,y_{n-p})=\sum_{j=1}^{n-p} y_j v_j$.
Pour tout~$j\in\{1,\dots,n-p\}$,
on a $f^*(\alpha_j) =  0$  et $g^*(\alpha_j)=\di y_j$.
Soit $(f,g)\colon\R^n\to V$ l'application linéaire 
définie par
$(x_1,\dots,x_p,y_1,\dots,y_{n-p})\mapsto f(x_1,\dots,x_p)
+ g(y_1,\dots,y_{n-p})$; c'est un isomorphisme.
De plus,
\begin{align*}
 (f,g)^*\mu & = (f,g)^* (\omega\wedge \alpha_1\wedge \mathrm J(\alpha_1)
  \wedge\dots\wedge \alpha_{n-p}\wedge \mathrm J(\alpha_{n-p}) \\
& = (f,g)^*(\omega) \wedge \di y_1\wedge \dc y_1 \wedge \dots
  \wedge \di y_{n-p}\wedge \dc y_{n-p}. \end{align*}
En outre, $(f,g)^*(\omega)=f^*(\omega) + g^*(\omega)$
et 
\[ g^*(\omega) \wedge \di y_1\wedge \dc y_1 \wedge \dots
  \wedge \di y_{n-p}\wedge \dc y_{n-p}= 0. \]
Ainsi, 
\[  (f,g)^*\mu = f^*(\omega) \wedge \di y_1\wedge \dc y_1 \wedge \dots
  \wedge \di y_{n-p}\wedge \dc y_{n-p}. \]
La positivité de la $(p,p)$-forme~$f^*(\omega)$ entraîne
donc celle de $(f,g)^*(\mu)$. Comme $(f,g)$ est un isomorphisme
linéaire, $\mu$ est positive.

Inversement, soit $W$ un espace vectoriel de dimension~$p$
et soit $f\colon W\to V$ une application linéaire. 
Si $f$ est n'est pas injective, la $(p,p)$-forme $f^*\omega$
est nulle. Supposons donc que $f$ est injective.
Soit $(v_1,\dots,v_n)$ une base de~$V$ tel
que $(v_1,\dots,v_p)$ soit une base de~$f(W)$;
soit $(\alpha_1,\dots,\alpha_n)$ la base duale.
Définissons $\mu$ et $g$ comme précédemment.
Par hypothèse, $(f,g)^*\mu $ est positive; le calcul ci-dessus
entraîne alors que $f^*(\omega)$ est positive.
\end{proof}

\subsection{}
\label{subsec.forme-positive-bil}
Expliquons 
comment la notion de positivité d'une $(p,p)$-forme symétrique~$\omega$ 
peut s'interpréter en termes de positivité 
d'une forme bilinéaire~$q_\omega$ sur~$\Lambda^pV$ canoniquement associée
à~$\omega$.
% \footnote{ACL. Je suggère de rectifier les signes pour que les
% positivités de $\omega$, $b_\omega$, $q_\omega$ soient équivalentes.}

Pour tout $k\in\N$, posons $\sigma_k=(-1)^{k(k-1)/2}$.
Fixons un élément non nul~$\lambda$ de~$\Lambda^nV^*$.
Par définition les $(n,n)$-formes positives sont les multiples
positifs de $\sigma_n \lambda \wedge \mathrm J(\lambda)$.

On associe à~$\omega$ une application bilinéaire sur~$\Lambda^{n-p}(V^*)$
à valeurs dans $A^{n,n}(V)$ par la formule
$(\alpha,\beta)\mapsto \omega\wedge \alpha\wedge \mathrm J\beta$.

Rappelons aussi que le choix de $\lambda$ fournit un isomorphisme
$v\mapsto \phi_v$ entre~$\Lambda^pV$ et $\Lambda^{n-p}V^*$. 
Pour $v\in\Lambda^pV$, $\phi_v$ est la $(n-p)$-forme 
sur~$V$ donnée par $\phi_v(w)=\langle \lambda ,v\wedge w\rangle$
pour tout $w\in\Lambda^{n-p}V$.
Il en résulte une application bilinéaire $b_\omega$ sur $\Lambda^p(V)$
à valeurs dans~$A^{n,n}(V)$:
pour $v,w\in\Lambda^pV$, on a 
\[ b_\omega(v,w) = \omega\wedge \phi_v \wedge \mathrm J\phi_w. \]

Par ailleurs, puisqu'elle appartient à~$\Lambda^p V^*\otimes\Lambda^pV^*$,
$\omega$ {\og est\fg} une forme bilinéaire~$q_\omega$ sur~$\Lambda^p V$,
symétrique puisque l'on a supposé que $\omega$ est symétrique.
Explicitement, si $\omega =\alpha\otimes \beta$, pour $\alpha,\beta\in\Lambda^pV^*$, on a 
\[ q_\omega (v,w) = \langle \alpha,v\rangle \langle \beta,w\rangle. \]

Pour tous $v,w\in\Lambda^p V$, on a
\begin{equation}
 b_\omega (v,w) = (-1)^{p(n-p)} q_\omega(v,w)  \lambda \wedge\mathrm J\lambda.\end{equation}
Pour démontrer cette formule, on peut se donner
une base $(e_1,\dots,e_n)$ de~$V$ et la base duale $(x_1,\dots,x_n)$
de~$V^*$.  On note $e_I = e_{i_1}\wedge\dots \wedge e_{i_p}$
si $I=\{i_1,\dots,i_p\}$ et $i_1<\dots<i_p$; on définit
$\di x_I$ de manière analogue.
On peut aussi supposer que $\lambda = \di x_1\wedge\dots\wedge \di x_n$
et $\omega=\di x_I \otimes \dc x_J$, où $I,J$ sont des parties à $p$~éléments
de~$\{1,\dots,n\}$.
Si $I$ est une partie à~$p$ éléments de~$\{1,\dots,n\}$,
notons $\eps_I$ la signature de la permutation de~$\{1,\dots,n\}$
qui est croissante et d'image~$I$ sur~$\{1,\dots,p\}$
et est croissante sur~$\{p+1,\dots,n\}$.
Si $K$ et $L$ sont des parties à~$n-p$ éléments de $\{1,\dots,n\}$,
on a 
\[ \omega\wedge \di x_K \wedge \mathrm J\di x_L =
\begin{cases} 
 (-1)^{p(n-p)} \eps_I\eps_J \lambda\wedge\mathrm J\lambda
 & \text{si $K= \complement I$ et $L=\complement J$,} \\
0 &  \text{sinon.}
\end{cases} \]
En outre, $\phi_{e_K}=\eps_K \di x_{\complement K}$,
si bien que 
\[ b_\omega (e_K,e_L) = \begin{cases}
(-1)^{p(n-p)} \lambda \wedge \mathrm J\lambda & \text{si 
$I=K$ et $J=L$, } \\
0 & \text{sinon.} \end{cases} \]
Enfin, 
\[ q_\omega(e_K,e_L) = \langle \di x_I,e_K\rangle \langle \di x_J,e_L\rangle
= \begin{cases} 1 & \text{si $I=K$ et $J=L$, } \\ 0 & \text{sinon,}
\end{cases}
\]
d'où la formule annoncée.

En outre, les formules précédentes
entraînent que les signes de ces diverses formes  sont reliées
par les équivalences suivantes: \\
La forme $\omega$ est positive au sens de la définition~\ref{defi.positive-gt};
\\
$\Leftrightarrow$ L'application bilinéaire $(\alpha,\beta)\mapsto \omega\wedge\alpha\wedge\mathrm J\beta$ est de signe $\sigma_{n-p}$; \\
$\Leftrightarrow$ L'application bilinéaire $b_\omega$ est de signe~$\sigma_{n-p}$; \\
$\Leftrightarrow$ La forme bilinéaire~$q_\omega$ est de signe
$\sigma_{n-p}(-1)^{p(n-p)}\sigma_n = \sigma_p$.

Enfin, si $f\colon W\to V$ est une application linéaire,
on a $q_{f^*\omega}=q_\omega \circ (f,f)$.

Nous dirons qu'une $(p,p)$-forme symétrique est \emph{définie positive}
si la forme bilinéaire symétrique $\sigma_p q_\omega$ est définie positive.

\begin{lemm}\label{lemm.positive-matrice}
Soit $\omega\in A^{p,p}(V)$ une forme \emph{symétrique} de type~$(p,p)$,
soit $(x_1,\dots,x_n)$ une base de~$V$ et soit
\[ \omega=(-1)^{p(p-1)/2} \sum_{I,J} \omega_{I,J} \di x_I \wedge \dc x_J \]
la décomposition de~$\omega$ dans la base correspondante
de~$A^{p,p}(V)$.
Alors, les quatre propriétés suivantes sont équivalentes:
\begin{enumerate}
\item La forme $\omega$ est positive (\resp définie positive) ;
\item La matrice symétrique $(\omega_{I,J})$ (indexée par les parties
à $p$ éléments de~$\{1,\dots,n\}$) est positive (\resp définie positive) ;
\item Il existe une base $(\alpha_I)$ de $\Lambda^pV^*$ 
et une famille $(c_I)$ de nombres réels positifs ou nuls
(\resp strictement positifs), 
indexées par les parties
à $p$ éléments de~$\{1,\dots,n\}$,
telles que
\[ \omega = (-1)^{p(p-1)/2} \sum_{I}  c_I   \alpha_I \wedge \mathrm J(\alpha_I). \]
\end{enumerate}
\end{lemm}
\begin{proof}
L'équivalence des assertions~(1) et~(2) résulte
de la discussion qui précède le lemme et
de ce qu'avec les notations introduites, la matrice~$(\omega_{IJ})$ est celle
de la forme bilinéaire $(-1)^{p(p-1)/2} q_\omega$
dans la base duale de la base $(\di x_I)_I$ de~$\Lambda^pV^*$.
Celle des propriétés~(2) et propriétés~(3) découle de la diagonalisation
des formes quadratiques. 
\end{proof}

\begin{coro}\label{coro.positive-generateurs}
L'ensemble des formes positives  dans $A^{p,p}(V)$ est le cône convexe
fermé engendré par les formes
du type $(-1)^{p(p-1)/2}\alpha\wedge\mathrm J\alpha$,
où $\alpha$ parcourt l'ensemble des formes de type~$(p,0)$.
\end{coro}
\begin{proof}
Le lemme précédent montre que les formes
positives sont contenues dans ce cône. Inversement,
si $\alpha\in A^{p,0}(V)$ est une forme de type~$(p,0)$
et $\beta$ est une $(n-p,0)$-forme, 
la formule
\[ (-1)^{p(p-1)/2} \alpha\wedge\mathrm J\alpha \wedge 
   (-1)^{(n-p)(n-p-1)/2} \beta\wedge\mathrm J\beta
 =  (-1)^{n(n-1)/2}   (\alpha\wedge\beta) \wedge \mathrm J(\alpha\wedge\beta)\]
montre que $(-1)^{p(p-1)/2}\alpha\wedge\mathrm J\alpha$
est une forme positive, d'où l'autre inclusion.
\end{proof}

\begin{coro}\label{coro.positive-autodual}
La condition de positivité est auto-duale:
une forme $\alpha$, symétrique de type~$(p,p)$,
est positive si et seulement si $\alpha\wedge\beta$
est positive pour toute forme positive $\beta$ de type~$(n-p,n-p)$.
\end{coro}
\begin{proof}
Cela résulte immédiatement du corollaire précédent.
\end{proof}

\begin{coro}
Les trois notions de positivité coïncident 
pour les formes de bidegrés $(0,0)$, $(1,1)$, $(n-1,n-1)$ et $(n,n)$.
\end{coro}
Elles ne coïncident pas en général, pas plus que leurs analogues
en géométrie complexe.
\begin{proof}
Le cas des formes de type~$(0,0)$ et $(n,n)$ a déjà été expliqué.
Par définition même, les formes faiblement positives de type~$(n-1,n-1)$
sont positives. Le lemme précédent, ou un argument de dualité,
prouve que les formes positives de type~$(1,1)$ sont fortement positives.

Soit $\omega$ une forme de type~$(1,1)$
qui est faiblement positive.
Dans une base $(x_1,\dots,x_n)$ de~$V$,
écrivons $\omega=\sum_{i,j=1}^n \omega_{i,j} \di x_i \wedge \dc x_j$.
D'après la proposition~\ref{prop.fpositive-f*} appliquée
à l'application linéaire $f\colon \R\to V$ donnée par $t\mapsto tv$,
on  a \[ \sum_{i,j=1}^n \omega_{i,j} x_i(v) x_j(v) \geq 0 \]
pour tout vecteur~$v$ de~$V$, de composantes $v_i=x_i(v)$.
La matrice $(\omega_{i,j})$ est donc positive;
d'après le lemme~\ref{lemm.positive-matrice}, la forme~$\omega$
est positive, et aussi fortement positive.

Par dualité, les formes de type~$(n-1,n-1)$
faiblement positives sont aussi fortement positives.
\end{proof}

\begin{lemm}\label{lemme.image.inverse}
Soit $\omega$ une forme symétrique de type~$(p,p)$ sur~$V$.
Soit $W$ un espace vectoriel réel
et $f\colon W\ra V$ une application linéaire.

\begin{enumerate}
\item
Si $\omega$ est  positive (\resp faiblement positive, \resp fortement positive),
il en est de même de la forme $f^*\omega$ sur~$W$.

\item
Si~$f$ est surjective et si~$f^*\omega$ est  positive
(\resp faiblement positive, \resp fortement positive),
il en est de même de~$\omega$.
\end{enumerate}
\end{lemm}

% Si $u$ n'est pas injective, la forme $u^*\omega$ sur~$u^{-1}(U)$
% est nulle, donc positive. On peut donc supposer que $u$
% est l'injection d'un sous-espace affine et $u^{-1}(U)=U\cap V'$.
 
\begin{proof} Commençons par l'assertion~1). 
Le cas des formes fortement positives est évident,
celui des formes faiblement positives résulte 
de la proposition~\ref{prop.fpositive-f*},
tandis que  celui des formes positives découle de l'interprétation
de la positivité en termes de formes bilinéaires
(\S\ref{subsec.forme-positive-bil}).

Venons-en à l'assertion~2) ; on suppose
donc~$f$ surjective. 
Le cas des formes faiblement positives
résulte de la proposition~\ref{prop.fpositive-f*}
et du fait que toute application affine~$g\colon U\ra V$ 
se relève à~$W$. 

En ce qui concerne les formes positives
et fortement positives,
on choisit un isomorphisme~$W\simeq V\oplus V'$
modulo lequel~$f$ est la première projection. Si~$f^*\omega$
est positive, la forme bilinéaire~$\eta$ 
qu'elle induit sur~$\Lambda^p W$ est positive 
(\S\ref{subsec.forme-positive-bil}) ; comme~$\eta$
est le prolongement
par~$0$ de la forme bilinéaire~$\eta'$
induite par~$\omega$
sur le sommande $\Lambda^p V$ de~$\Lambda^pW$, 
la forme~$\eta$' est positive, et~$\omega$ est positive en vertu 
là encore de~\S\ref{subsec.forme-positive-bil}.

Si~$f^*\omega$ est fortement positive, elle s'écrit
sous la forme
\[\sum a_i \alpha_{i,1}\wedge \mathrm J(\alpha_{i,1})\wedge \alpha_{i,2}\wedge \mathrm J(\alpha_{j,2})\wedge\ldots
\wedge \alpha_{i,n_i}\wedge \mathrm J(\alpha_{i,r_i}),\]
où
les~$\alpha_{i,j}$
sont de type~$(1,0)$ et les~$a_i$ strictement positifs. 
Comme~$f^*\omega$ est 
égale au prolongement par~$0$ de la forme~$\omega$
vue comme vivant dans le sommande~$A^{p,p}(V)$ de~$A^{p,p}(W)$, 
il vient
\[\omega=\sum a_i \alpha_{i,1,V}\wedge \mathrm J(\alpha_{i,1,V})\wedge \alpha_{i,2,V}\wedge \mathrm J(\alpha_{j,2,V})\wedge\ldots
\wedge \alpha_{i,n_i,V}\wedge \mathrm J(\alpha_{i,r_i,V}),\]
où le~$V$ en indice indique la projection sur le sommande~$A^{1,0}(V)$. Ainsi,
$\omega$ est fortement positive. 
\end{proof}

\begin{prop}[cf. \cite{lagerberg2012}, Lemma 2.1]
\label{prop.produit-positives}
Soit $\omega_1,\dots,\omega_s$ des formes sur~$V$
et posons $\omega=\omega_1\wedge\dots\wedge\omega_s$.

\begin{enumerate}\def\theenumi{\alph{enumi}}\def\labelenumi{\theenumi)}
\item
Si les $\omega_j$ sont toutes fortement positives,
alors $\omega$ est fortement positive.

\item
Si les $\omega_j$ sont toutes positives, alors $\omega$ est positive.

\item 
Si les $\omega_j$ sont toutes faiblement positives 
et si au plus une d'entre elle n'est pas fortement positive,
alors $\omega$ est faiblement positive.
\end{enumerate}
\end{prop}
\begin{proof}
Par récurrence, l'assertion~\emph b) découle de la caractérisation 
du cône des formes positives et de la formule
\[ (-1)^{p(p-1)/2} \alpha\wedge\mathrm J\alpha \wedge 
   (-1)^{q(q-1)/2} \beta\wedge\mathrm J\beta
 =  (-1)^{r(r-1)/2}   (\alpha\wedge\beta) \wedge \mathrm J(\alpha\wedge\beta),\]
où $\alpha$, $\beta$ sont des formes de type~$(p,0)$ et~$(q,0)$
respectivement, et $r=p+q$.

Pour démontrer les assertions~\emph a) et~\emph c),
il suffit par récurrence
de prouver que si une forme $\beta$ est fortement positive,
alors $\alpha\wedge\beta$ est de même nature que $\alpha$.
On peut supposer que $\beta$ est de la forme 
$\beta_1\wedge \mathrm J\beta_1\wedge\dots\wedge\beta_m\wedge\mathrm J\beta_m$,
où les $\beta_j$ sont des formes de type~$(1,0)$.
La preuve est alors directe.
\end{proof}

\begin{prop}[\cite{harvey1977}, corollaire~1.17]
\label{prop.harvey}
Soit  $\omega$ une $(1,1)$-forme symétrique définie positive sur~$V$.
Soit $p$ un entier~$\geq 1$.
Dans l'espace des $(p,p)$-formes symétriques, 
la forme~$\omega^p$ appartient à l'intérieur relatif
du cône des $(p,p)$-formes   fortement positives,
à l'intérieur du cône des $(p,p)$-formes symétriques positives,
et à l'intérieur du cône des~$(p,p)$-formes faiblement positives. 
\end{prop}

En particulier, les formes
positives, et a fortiori les formes faiblement positives,
engendrent l'espace des formes symétriques.
En revanche, Lagerberg donne dans \cite{lagerberg2012} (remark~4.1)
un exemple de $(2,2)$-forme 
faiblement positive sur~$\R^4$, non nulle, qui est orthogonale 
à toute forme fortement positive; 
ainsi les formes fortement positives engendrent
un sous-espace strict des $(2,2)$-formes  symétriques sur~$\R^4$.

% \textcolor{red}{%
% Puisque le cône des formes positives est auto-dual, il ne
% contient pas de droite et engendre l'espace des formes symétriques.
% [Supprimer si la suggestion en cours fonctionne.]}
  
\begin{proof}
D'après la proposition~\ref{prop.produit-positives}, 
$\omega^p$ est fortement positif.

Nous allons démontrer que 
si $\alpha\in A^{n-p,n-p}(V)$ est une  forme faiblement positive et symétrique
telle que $\omega^p\wedge\alpha= 0$,
alors $\beta\wedge \alpha=0$
pour toute forme~$\beta$ fortement positive.
Cela entraîne la première partie de la proposition, par dualité.
Cela entraîne aussi la troisième partie. Soit en effet
une forme~$\alpha$, fortement positive, orthogonale à~$\omega^p$;
démontrons qu'elle est nulle.
Comme elle est également faiblement positive, elle est alors
orthogonale à toute forme fortement positive.
Puisque $\alpha$ est fortement positive, elle est 
somme de formes du type
$ c \alpha_1\wedge\mathrm J\alpha_1 \wedge \dots \wedge
\alpha_{n-p}\wedge\mathrm J\alpha_{n-p}$, où $(\alpha_1,\dots,\alpha_{n-p})$
est une famille d'éléments de~$V^*$, qu'on peut supposer libre,
et $c\in\R_+$.
Par positivité, chacun de ces
termes est également orthogonal à toute fortement positive.
Mais cela entraîne qu'ils sont nuls: complétons en effet la famille $(\alpha_1,\dots,\alpha_{n-p})$ en une base~$(\alpha_1,\dots,\alpha_n)$;
en prenant $\beta=
 \alpha_{n-p+1}\wedge\mathrm J\alpha_{n-p+1} \wedge \dots $,
on trouve 
$ c \alpha_1\wedge\mathrm J\alpha_1 \wedge \dots \wedge
\alpha_{n}\wedge\mathrm J\alpha_{n}\geq 0$, d'où $c=0$.

% Par dualité, cela revient à prouver que $\beta\wedge\alpha=0$
% pour toute $(p,p)$-forme symétrique~$\beta$.
% 
% Remarquons alors  
% que l'espace $A^{1,1}_{\mathrm s}(V)$ des $(1,1)$-formes 
% symétriques est engendré par les  $(1,1)$-formes symétriques
% fortement positives, car les matrices symétriques définies
% positives engendrent l'espace des matrices symétriques.
% Par suite, l'espace $A^{p,p}_{\mathrm s}(V)$ 
% des $(p,p)$-formes symétriques est aussi engendré par
% les formes (symétriques et fortement positives)
Démontrons maintenant l'énoncé admis. Soit $\alpha$ une forme symétrique
faiblement positive de type~$(n-p,n-p)$ telle que $\omega^p\wedge\alpha=0$,
soit $\beta_1,\dots,\beta_p$ dans~$V^*$ et soit
$\beta=\beta_1\wedge\mathrm J\beta_1 \wedge \dots \wedge
\beta_p\wedge\mathrm J\beta_p$;  prouvons que $\beta\wedge\alpha=0$.

C'est évident si $\beta_1,\dots,\beta_p$ ne sont pas linéairement
indépendantes, puisqu'alors $\beta=0$. Supposons donc
que $\beta_1,\dots,\beta_p$ soient linéairement indépendantes
et prolongeons cette famille en une base $(\beta_1,\dots,\beta_n)$
de~$V^*$; posons $\eta=\sum_{j=1}^n \beta_j\wedge\mathrm J\beta_j$.

Comme $\omega$ est symétrique définie positive, 
il existe un nombre réel strictement positif~$c$
tel que $\omega \geq c\eta$ 
au sens où la différence $\omega-c\eta$ 
est une $(1,1)$-forme symétrique fortement positive.
D'après la proposition~\ref{prop.produit-positives},
appliquée à la forme faiblement positive~$\alpha$
et aux formes fortement positives $\omega$, $\omega-c\eta$ et $\eta$,
la différence $\omega^p \wedge\alpha-  c^p \eta^p\wedge\alpha$
est une $(n,n)$-forme faiblement positive,
de même que la forme $\eta^p\wedge\alpha$.
Comme $\omega^p\wedge\alpha=0$ par hypothèse, on en déduit
que $\eta^p\wedge\alpha=0$.

Pour toute partie~$I=\{i_1,\dots,i_p\}$ de~$\{1,\dots,n\}$, où
$i_1<\dots<i_p$, notons $\beta_I=\beta_{i_1}\wedge\dots\wedge\beta_{i_p}$;
notons aussi $\lambda=\beta_1\wedge\dots\wedge\beta_n$.
Si $p\in\N$, posons enfin $\sigma_p=(-1)^{p(p-1)/2}$.
Alors, $\eta^p= p!\sigma_p \sum_{\abs I=p} \beta_I \wedge \mathrm J\beta_I$.
La famille $(\beta_J\wedge\mathrm J\beta_K)$ indexée par les couples
$(J,K)$ de parties à $n-p$ éléments de~$\{1,\dots,n\}$
est une base de $A^{n-p,n-p}(V)$.
Écrivons donc~$\alpha$ dans cette base: 
\[ \alpha = \sigma_{n-p} \sum_{\abs J=\abs K=n-p} \alpha_{J,K} \beta_J \wedge \mathrm J\beta_K.   \]
Pour toute partie $I$ à $p$ éléments de~$\{1,\dots,n\}$, on a donc
\begin{align*}
\sigma_{n-p} \beta_I\wedge\mathrm J\beta_I \wedge\alpha & 
= \sigma_p \sigma_{n-p}
\sum_{J,K}  \alpha_{J,K} \beta_I \wedge \mathrm J\beta_I \wedge\beta_J \wedge\mathrm J\beta_ K  
= \sigma_n  \alpha_{\complement I,\complement I} \lambda\wedge\mathrm J\lambda, \\
\noalign{\noindent{d'où}}
\eta^p \wedge \alpha 
& = p!\sigma_n  \sum_{\abs J=n-p}  \alpha_{J,J}  \lambda\wedge\mathrm J\lambda .
\end{align*}

Comme $\alpha$ est faiblement positif et $\sigma_{n-p}\beta_I\wedge\mathrm J\beta_I$ est fortement positive pour toute partie~$I$,
la proposition~\ref{prop.produit-positives}
entraîne aussi que $\alpha_{J,J}\geq 0$ pour toute partie~$J$. 
Puisque $\eta^p\wedge\alpha=0$, il s'ensuit que $\alpha_{J,J}=0$ pour tout~$J$.
C'est en particulier le cas pour $J=\{p+1,\dots,n\}$;
comme $\beta = \pm \beta_I\wedge \mathrm J\beta_I$ pour $I=\{1,\dots,p\}$, 
on a donc 
$\beta\wedge \alpha=0$, ce qu'il fallait démontrer.

Pour établir la seconde partie de la proposition, on démontre
que si une forme positive $\alpha$ vérifie $\omega^p\wedge\alpha=0$,
alors $\beta\wedge\alpha=0$ pour toute forme positive~$\beta$
de type~$(p,p)$.

Considérée comme forme bilinéaire sur $A^{p,0}$, la forme
$\omega^p$ est symétrique définie positive,
% \footnote{ACL. Faut-il l'expliquer...  Ajuster quand même le signe...}, 
il existe un nombre réel~$\eta>0$ tel que $\omega^p-\eta\beta$
est positive.
Alors, $0=\omega^p\wedge\alpha $
est la somme des formes 
$(\omega^p-\eta\beta)\wedge\alpha$ et $\eta \beta\wedge\alpha$,
et ces formes sont positives d'après la proposition~\ref{prop.produit-positives}.
Par suite, elles sont toutes deux nulles. En particulier, $\beta\wedge\alpha=0$.
\end{proof}

On dira qu'une forme symétrique~$\omega$ de type~$(p,p)$
est strictement  positive (\resp strictement faiblement positive, 
\resp strictement fortement positive)
s'il existe une $(1,1)$-forme définie  positive~$\lambda$
telle que $\omega-\lambda^p$ soit une forme positive
(\resp faiblement, \resp fortement positive).

\section{Superformes différentielles, d'après A.~Lagerberg}

\subsection{}
Soit $V$ un espace affine de dimension finie sur~$\R$;
on note $\vec V$ son espace directeur et $\vec V^*$ le dual de~$\vec V$.

Dans~\cite{lagerberg2012}, Lagerberg
définit comme suit
la notion de superforme différentielle
de type~$(p,q)$\index{superforme} sur un 
ouvert~$U$
de~$V$, que nous appellerons simplement dans ce qui suit 
\emph{forme de type $(p,q)$}\index{forme de type $(p,q)$! sur un espace affine} : 
c'est un élément de 
\[ \mathscr A_V^{p,q}(U) 
= \mathscr C^\infty(U) \otimes_\R   A^{p,q}(\vec V)
=  \mathscr C^\infty(U) 
       \otimes_\R \Lambda^p \vec V^*\otimes_\R \Lambda^q\vec  V^*. \]
Elles sont bien sûr nulles si $p>\dim(V)$ ou $q>\dim(V)$.
En particulier, $\mathscr A_V^{0,0}(U)$ est l'anneau
des fonctions $\mathscr C^\infty$ sur $U$;
nous le noterons plus simplement $\mathscr A_V(U)$. 

On considérera parfois une superforme de type~$(p,q)$ sur un
ouvert~$U$ de~$V$
comme une application~$\mathscr C^\infty$ de~$U$ 
vers l'espace vectoriel $A^{p,q}(\vec V)$.

\subsection{}
Une fois choisie une base de~$\vec V$ et une origine de~$V$,
on dispose de coordonnées
$(x_1,\dots,x_n)$ qui permettent d'identifier~$V$ à~$\R^n$
(où $n=\dim(V)$).
Pour distinguer les formes linéaires apparaissant
dans le premier facteur~$\Lambda^p \vec V^*$ de celles apparaissant
dans le second~$\Lambda^q \vec V^*$, on convient de noter
systématiquement les premières avec le préfixe~$\di$
et les secondes avec le préfixe~$\dc$.
Une forme de type~$(p,q)$ sur~$U$ 
s'écrit ainsi
\begin{align*}
 \omega  & = \sum_{\abs I=p,\, \abs J=q} \omega_{I,J} \di 
x_I\otimes \dc x_J \\
& = \sum_{\abs I=p, \,\abs J=q} \omega_{I,J} \, \di 
x_{i_1}\wedge\dots \wedge \di x_{i_p} \otimes \dc x_{j_1}\wedge\dots
\wedge \dc x_{j_q}, \end{align*}
où les $\omega_{I,J}$ sont des fonctions~$\mathscr C^\infty$ sur~$U$,
et où~$I=(i_1,\dots,i_p)$ et $J=(j_1,\dots,j_q)$ 
parcourent respectivement
les suites strictement croissantes d'entiers compris entre~$1$ et~$n$.

\subsection{}
La somme directe $\mathscr A_V^{*,*}(U)=\bigoplus \mathscr A_V^{p,q}(U)$ 
est naturellement munie d'une structure d'algèbre 
graduée 
sur l'anneau $\mathscr A_V(U)$
obéissant à la règle de commutativité
suivante: si $\omega$ est de type~$(p,q)$ et $\omega'$ de 
type~$(p',q')$,
alors
\begin{equation}
 \omega' \wedge \omega = (-1)^{(p+q)(p'+q')} \omega\wedge\omega'. \end{equation}
Ainsi, la forme $\di x_I \otimes \dc x_J$
sera plutôt notée $\di x_I \wedge \dc x_J$.

Cette algèbre est munie d'une involution~$\mathrm J$,
définie, en coordonnées, par la formule
\begin{equation}
\label{eq.J}
 \mathrm J\omega = (-1)^q \sum_{I,J} \omega_{I,J}\,\dc x_I\wedge \di x_J
= (-1)^{(p+1)q} \sum_{I,J} \omega_{I,J}\,\di x_J \wedge 
\dc x_I.
\end{equation}
% (p+1)q + p(q+1) = p+q + 2pq
Par suite, $\mathrm J^2$ agit comme $(-1)^{p+q}$ sur $\mathscr A^{p,q}_V$.

Lorsque $p=q$, on dit qu'une forme~$\omega$ de type~$(p,p)$ 
est \emph{symétrique} si elle vérifie $\mathrm J\omega=\omega$;
cela revient donc à exiger que $\omega_{I,J}=\omega_{J,I}$ 
pour tout couple~$(I,J)$.

%\subsection{} Supposons~$\mathscr F$ stable par multiplication,
%et soit~$x\in V$. Le 
%sous-ensemble~${\mathfrak m}_x$ de~$\mathscr F_x$ formé des fonctions
%s'annulant en~$x$ est alors un idéal maximal de~$\mathscr F_x$,
%de corps résiduel~$\R$. (Notons que~$\mathscr F_x$ n'est pas
%forcément un anneau local : une fonction de la classe~$\mathscr F$ 
%peut être ponctuellement non nulle en~$x$ sans être inversible
%au voisinage, lorsque~$\mathscr F$ n'est pas contenue dans~$\mathscr C^0$).
%
%Si~$\mathscr E$ est un~$\mathscr F$-module, la 
%\emph{tige}
%de~$\mathscr E$ en un point~$x$ est le~$\R$-espace
%vectoriel~$\mathscr E_x/{\mathfrak m}_x\mathscr E_x$, que l'on
%notera~$\mathscr E(x)$. Si~$s\in \mathscr E_x$,
%on notera~$s(x)$ son image dans $\mathscr E\otimes_x\R$. 
%
%Par exemple, la tige
%~$\mathscr F^{p,q}(x)$
%s'identifie à~$\Lambda^pV^*\otimes_{\R} \Lambda^qV^*$. 

\subsection{}
Soit $f\colon V'\ra V$ une application affine entre 
deux espaces
affines de dimension finie.
On définit de façon évidente l'image réciproque
$f^*\omega$ d'une forme~$\omega$ de type $(p,q)$ 
sur un ouvert~$U$ 
de~$V$;
c'est une forme de type $(p,q)$ 
sur $f^{-1}(U)$.

Ces applications~$f^*$ sont compatibles aux structures
d'algèbres, ainsi qu'aux involutions~$\mathrm J$.

\subsection{Calcul différentiel}
Les faisceaux $\mathscr A_V^{p,q}$ sont munis 
de deux opérateurs différentiels, $\di$ et $\dc$,
définis ainsi en coordonnées. Pour $\omega$ comme ci-dessus,
\begin{align}
\label{eq.di}
\di\omega = \sum_{\abs I=p, \,\abs J=q} \sum_{i=1}^n 
 \frac{\partial\omega_{I,J}(x)}{\partial x_i} \di x_i \wedge \di x_I 
\wedge \dc x _J  \\
\label{eq.dc}
\dc  \omega =(-1)^p \sum_{\abs I=p, \,\abs J=q} \sum_{j=1}^n 
 \frac{\partial\omega_{I,J}(x)}{\partial x_j} \di x_I \wedge \dc x_j 
\wedge  \dc x_J  .
\end{align}

Ainsi, $\di\omega$ est de type $(p+1,q)$, tandis que $\dc\omega$
est de type $(p,q+1)$. Notons que pour $i\in\{1,\dots,n\}$,
les symboles $\di x_i$ et $\dc x_i$ sont bien les images
par~$\di$ et~$\dc$ de la fonction affine~$x_i$ sur~$V$.
Remarquons aussi que la différentielle~$\di$ 
n'est autre que l'opérateur induit par la connexion  naturelle dans le
fibré vectoriel trivial de fibre~$\Lambda^q\vec V^*$.

Le lemme de Schwarz entraîne comme de coutume
les relations $\di\di\omega=\dc\dc\omega=0$
et $\di\dc\omega=-\dc\di\omega$.

% d' J = -J d''
\begin{lemm}\label{lemm.d'-d''-J}
On a les relations 
\begin{equation}
\mathrm J\di=\dc \mathrm J \quad\text{et}\quad \mathrm J\dc=-\di \mathrm J . \end{equation}
Par suite, 
sur $\mathscr A^{p,q}_V$, on a les relations
\begin{equation}
\dc = \mathrm J\di \mathrm J^{-1} 
  = - \mathrm J^{-1} \di \mathrm J
  = (-1)^{p+q}\mathrm J \di \mathrm J
\end{equation}
et
\begin{equation}
\di=\mathrm J^{-1}\dc \mathrm J
 =  - \mathrm J \dc \mathrm J^{-1}
=(-1)^{p+q}\mathrm J\dc\mathrm J.\end{equation}
On a aussi 
\begin{equation} \ddc\mathrm J = \mathrm J\ddc. \end{equation}
\end{lemm}
\begin{proof}
Par linéarité, il suffit de traiter
le cas d'une forme du type $\omega=\alpha\di x_I\wedge \dc x_J$,
où $I$ et $J$ sont des multi-indices de longueur~$p$ et~$q$.
D'après la formule~\eqref{eq.J}, on a 
\begin{align*}
\mathrm J\di\mathrm J\omega & 
= \mathrm J \di \mathrm J \left( \alpha \di x_I\wedge\dc x_J\right)\\
& = (-1)^{(p+1)q} \mathrm J\di \left( \alpha \di x_J\wedge\dc x_I\right) \\
& = (-1)^{(p+1)q} \mathrm J \sum_{j=1}^n \frac{\partial \alpha}{\partial x_j}
\di x_j \wedge \di x_J\wedge \dc x_I \\
&= (-1)^{(p+1)q} (-1)^{(q+2)p}
  \sum_{j=1}^n  \frac{\partial \alpha}{\partial x_j}
      \di x_I\wedge \dc x_j \wedge\dc x_J \\
& = (-1)^{q} \sum_{j=1}^n  \frac{\partial \alpha}{\partial x_j}
\di x_I \wedge \dc x_i\wedge \dc x_J  \\
& = (-1)^{p+q} \dc\omega,
\end{align*}
d'après la formule~\eqref{eq.dc}.

Puisque $\dc\omega$ est de type~$(p,q+1)$,
cela démontre l'égalité $\mathrm J\di\mathrm J=-\mathrm J^2\dc$,
autrement dit $\di\mathrm J=-\mathrm J \dc$.
% 
% En appliquant cette égalité à $\mathrm J\omega$,
% on en déduit également que $\mathrm J\di \mathrm J^2 \omega
% = (-1)^{p+q} \mathrm J\di \mathrm J\omega=\mathrm J^2\dc \mathrm J\omega$,
% d'où l'égalité $\di \mathrm J = \mathrm J\dc $.
% 
% Comme $\mathrm J$ agit comme~$(-1)^{p+q}$ sur $\mathscr A^{p,q}_V$,
% on a $\mathrm J^{-1}=(-1)^{p+q}\mathrm J$ sur cet espace.
% Il vient alors
% \[ \di \omega = (-1)^{p+q+1}\mathrm J^2\di\omega
%  = (-1)^{p+q+1}\mathrm J \dc \mathrm J\omega = 
%  - \mathrm J\dc \mathrm J^{-1}\omega.\]
% 
Les autres relations s'en déduisent, compte tenu du fait que $\mathrm J^2$
vaut $(-1)^{p+q}$ sur~$\mathscr A^{p,q}_V$.
\end{proof}

\subsection{}
Les opérateurs $\di$ et $\dc$ sont des dérivations de l'algèbre
graduée $\mathscr A^{*,*}(U)$.
Si $\omega$ et~$\omega'$ sont respectivement de type~$(p,q)$
et~$(p',q')$, on a
\begin{gather*}
 \di (\omega\wedge \omega')
=\di\omega\wedge\omega'+(-1)^{p+q}\omega\wedge\di\omega', \\
 \dc (\omega\wedge \omega')
=\dc\omega\wedge\omega'+(-1)^{p+q}\omega\wedge\dc\omega'.
\end{gather*}

\begin{lemm}\label{lemm.didc=-dcdi}
Soit $p,q,r$ des entiers naturels tels que $p+q+r=\dim(V)-1$.
Soit $\alpha, \beta, \gamma$ des superformes
\emph{symétriques} de types $(p,p)$, $(q,q)$ et $(r,r)$, respectivement.
On suppose que $\alpha$ et $\beta$ sont de classe~$\mathscr C^1$.
On a l'égalité
$\di\alpha\wedge\dc\beta \wedge\gamma=-\dc\alpha\wedge\di\beta \wedge\gamma$.
\end{lemm}
\begin{proof}
Puisque $\alpha$ est de type $(p,p)$ et symétrique, on a 
\[ \mathrm J \di\alpha = \dc \mathrm J \alpha=\dc\alpha.\]
De même, 
$ \mathrm J\dc\beta = - \di \mathrm J \beta$.
Alors,
\[ \mathrm J (\di\alpha\wedge \dc\beta) 
    = \mathrm J \di \alpha \wedge \mathrm J\dc\beta  
    =  - \dc  \alpha \wedge \di \beta. \]
Puisque $\gamma$ est symétrique, on a donc
\[ \mathrm J\omega = \mathrm J (\di\alpha\wedge \dc\beta\wedge\gamma) 
    =  - \dc  \alpha \wedge \di \beta\wedge\gamma = -\eta. \]
La forme 
$ \di\alpha\wedge \dc\beta\wedge\gamma $
est de type $(p+1,p)+ (q,q+1)+(r,r)=(p+q+r+1,p+q+r+1)$ ;
comme $p+q+r+1=\dim(V)$, elle est symétrique. 
On a donc la relation annoncée.
\end{proof}

\begin{rema}
Comme on le voit, le dédoublement des variables différentielles
introduit par Lagerberg dans~\cite{lagerberg2012}
permet un calcul différentiel avec des formes de type~$(p,q)$
(pour $0\leq p,q\leq n$)
sur un espace réel de dimension~$n$
qui est formellement analogue au calcul différentiel holomorphe.
Expliquons comment on peut l'interpréter en termes de géométrie
tropicale complexe. Soit $T$ le tore complexe $(\C^*)^n$,
soit $V=\R^n$ et soit $\pi\colon T\ra V$ l'application
de tropicalisation donnée par $(z_1,\dots,z_n)
\mapsto (\log\abs{z_1},\dots,\log\abs{z_n})$.
Elle identifie~$V$ au quotient de~$T$ par son sous-groupe
compact maximal~$\mathbf U$. 
Interprétant $\di x_j$ comme $\mathrm d\log\abs{z_j}$
et $\dc x_j$ comme $\mathrm d\Arg(z_j)$, toute
superforme de type~$(p,q)$ sur un ouvert~$U$ de~$V$
définit une forme différentielle de degré~$p+q$ sur~$\pi^{-1}(U)$
qui est invariante par l'action de~$\mathbf U$.
Inversement, toute forme~$\omega$ de degré~$p+q$ sur~$\pi^{-1}(U)$
peut s'écrire sous la forme
\[ \sum_{I,J} f_{I,J}(r,\theta) \mathrm dr_I \wedge\mathrm d\theta_J,\]
où $(r_1,\theta_1),\dots,(r_n,\theta_n)$ sont respectivement module
et argument de $z_1,\dots,z_n$
(de sorte que $(\log r_1,\dots,\log r_n)\in U$
et $(e^{i\theta_1},\dots,e^{i\theta_n})\in\mathbf U$).
Si, de plus, $\omega$ est invariante sous l'action de~$\mathbf U$,
on obtient que $f_{I,J}$ ne dépend pas de~$\theta$,
si bien que $\omega$ provient d'une superforme de type~$(p,q)$
sur~$\R^n$. Autrement dit, on a un isomorphisme
$\mathscr A^{p,q}_{\text{Lag}}
\simeq (\pi_*) \big((\mathscr A^{p+q}_{T})^{\mathbf U}\big)$
de faisceaux sur~$V$.

Par contre, cette identification ne préserve pas le type
des formes en général ; ce n'est déjà pas
le cas pour $\mathrm d\log\abs{z_j}$ qui n'est
pas de type~$(1,0)$. Toutefois, les
formes ainsi obtenues sur~$T$ sont réelles,
et 
les superformes  \emph{symétriques} de type~$(p,p)$
donnent lieu à  des formes de type~$(p,p)$.
\end{rema}

\section{Superformes à coefficients}

\subsection{}
Soit $V$ un espace affine 
et soit~$\mathscr F$ un sous-faisceau en $\R$-espaces  vectoriels 
du faisceau des fonctions numériques sur~$V$.

Pour tout ouvert~$U$ de~$V$,
une \emph{forme 
de type $(p,q)$  à coefficients dans~$\mathscr F$ sur l'ouvert~$U$} est un 
élément de
 \[ \mathscr F_V^{p,q}(U)
% :=\mathscr F_V(U)\otimes_{\mathscr A_V(U)}\mathscr A_V^{p,q}(U) 
:=\mathscr F(U)\otimes_\R A^{p,q}(\vec V). \]
Cela définit un faisceau en $\R$-espaces vectoriels.

La structure de $\R$-espace vectoriel gradué sur~$A^{*,*}(\vec V)$
et son opérateur~$\mathrm J$
induisent sur la somme directe~$\bigoplus_{p,q}\mathscr F_V^{p,q}$
une structure de faisceau en $\R$-espaces vectoriels gradués
et le munissent d'un opérateur~$\mathrm J$.

Si $\mathscr F$ est un $\mathscr A_V$-module,
alors on peut écrire
\[ \mathscr F_V^{p,q} (U)
 =\mathscr F_V(U)\otimes_{\mathscr A_V(U)}\mathscr A_V^{p,q}(U) , \]
de sorte que $\mathscr F_V^{*,*}$ est un module gradué
sur l'algèbre graduée $\mathscr A_V^{*,*}$;
l'opérateur~$\mathrm J$ est un morphisme d'algèbres.

Si $\mathscr F$ est une $\R$-algèbre,
la structure d'algèbre graduée sur~$\mathscr A^{*,*}(\vec V)$
induit sur~$\mathscr F_V^{*,*}$ une structure de $\R$-algèbre graduée.

\subsection{} \label{valeur-ponctuelle}
Soit~$V$ un espace affine de dimension finie  et soit~$\omega$
une forme de type~$(p,q)$ 
définie sur un ouvert~$U$ de~$V$. Pour tout~$x\in U$, 
on peut évaluer~$\omega$ en~$x$, et obtenir
ainsi un élément~$\omega(x)$ de~$A^{p,q}(\vec V)$. 
En coordonnées, si~$\omega =\sum \omega_{I,J} \di x_I\wedge \dc x_J$
alors~$\omega(x) =\sum \omega_{I,J} (x)\di x_I\wedge \dc x_J$. 
On dira que $\omega$ est ponctuellement nulle en~$x$ si~$\omega(x)=0$.

Observons que l'on a $\omega=0$ si et seulement si~$\omega(x)=0$ 
pour tout~$x\in U$. 

L'évaluation est compatible à la structure de~$\mathscr A_V$-algèbre.
 
% et à celle de~$\mathscr A_V$-algèbre si~$\mathscr F$ est stable par
% multiplication. 

\subsection{Calcul différentiel}
Les opérateurs différentiels~$\di$ et~$\dc$
s'étendent de façon évidente aux formes
à coefficients dans~$\mathscr F$
pourvu que les éléments de~$\mathscr F$ aient une régularité suffisante.
Par exemple, si $\omega$ est une forme de type~$(p,q)$
à coefficients de classe~$\mathscr C^r$
pour~$r\geq 1$, alors
$\di\omega$ et $\dc\omega$ sont des formes
de types~$(p+1,q)$ et $(p,q+1)$ à coefficients de classe~$\mathscr C^{r-1}$.
De même, si $\omega$ est à coefficients de classe~$\mathscr C^2$,
le lemme de Schwarz entraîne
encore $\di\di\omega=\dc\dc\omega=0$.

\subsection{}
Soit $f\colon V'\ra V$ une application affine entre 
deux espaces affines de dimension finie.
Soit $\mathscr F'$ un faisceau en $\R$-espaces vectoriels 
de fonctions numériques sur~$V'$; supposons que $u\circ f$
appartienne à~$\mathscr F'$ pour toute fonction~$u$ de~$\mathscr F$.
On définit alors de façon évidente l'image réciproque
$f^*\omega$ d'une forme~$\omega$ de type $(p,q)$ 
 à coefficients dans $\mathscr F$
sur un ouvert~$U$ 
de~$V$;
c'est une forme de type $(p,q)$ 
 à coefficients dans~$\mathscr F'$ 
sur $f^{-1}(U)$.

Ces applications~$f^*$ sont compatibles aux opérateurs~$\mathrm J$,
et aux structures de modules ou d'algèbres quand elles sont définies.

\subsection{}
Si $\mathscr G$ est un sous-faisceau en $\R$-espaces vectoriels de~$\mathscr F$,
alors 
$\mathscr G_V^{*,*}$ est un sous-faisceau
en $\R$-espaces vectoriels gradués
de~$ \mathscr G_V^{*,*}$.
% il est stable par~$\mathrm J$ et est compatible aux opérateurs différentiels
% lorsque ceux-ci sont définis.

Cela permet par exemple
de voir les formes de classe~$\mathscr C^k$ comme 
des formes à coefficients boréliens.

\section{Intégrale d'une $(p,n)$-forme 
sur un sous-espace de dimension~$p$}

\subsection{}
Soit $\omega$ une $(n,n)$-forme à coefficients boréliens
sur un ouvert~$U$ de~$\R^n$, 
donnée en coordonnées par la formule
\begin{equation}
\label{eq.omega.n-n-d'd''}
  \omega = \omega^\sharp(x_1,\dots,x_n) \di x_1\wedge\dc x_1
\wedge\dots\di x_n\wedge \dc x_n, \end{equation}
où $\omega^\sharp$ est donc une fonction borélienne sur~$U$.
Si $\omega^\sharp$ est intégrable sur~$U$, 
Lagerberg a défini l'intégrale  de~$\omega$  
par la formule
\begin{equation}\label{eq.int.lagerberg}
 \int_U \omega = \int_{U}  \omega^\sharp(x_1,\dots,x_n) 
\,
\mathrm d x_1\dots\mathrm d x_n, \end{equation}
où  $\mathrm dx_1\dots\mathrm d x_n$ désigne la mesure de Lebesgue.\footnote
{En fait, la définition~2.2 de~\cite{lagerberg2012} propose
une autre convention, erronée, mais qu'il n'utilise apparemment pas dans 
son article. La convention que nous adoptons a pour vertu
que l'intégrale d'une $(n,n)$-forme positive 
(définition~\ref{defi.positive-gt})
est positive.}

\subsection{Non-canonicité des intégrales de Lagerberg}
Si $f\colon \R^n\ra\R^n$ est une application affine bijective, 
d'application linéaire associée~$\vec f$,
on a 
\begin{equation}
 \int_{f^{-1}(U)} f^*\omega = \abs{\det(\vec f)} \int_U \omega . \end{equation}

Soit $n$ un entier naturel 
et soit $V$ un espace affine de dimension~$n$ sur~$\R$.
Pour définir l'intégrale d'une $(n,n)$ forme 
intégrable~$\omega$ sur un ouvert~$U$ de~$V$,
il suffit de fixer un isomorphisme affine de~$V$ sur~$\R^n$
et d'appliquer la définition ci-dessus. Le problème est que
\emph{le résultat obtenu dépend de l'isomorphisme choisi} : 
en effet, par ce qui précède, deux isomorphismes~$u$
et~$u'$ fournissent la même notion d'intégrale si et seulement 
le jacobien de l'automorphisme affine~$u'\circ u^{-1}$ de~$\R^n$
est égal à~$\pm 1$.

Pour définir l'intégrale d'une $(n,n)$-forme
sur un ouvert d'un espace affine de dimension~$n$, des données 
supplémentaires sont donc nécessaires.
Comme on vient de le voir, 
ce peut être
un système de coordonnées affines ;
mais comme la dépendance en un tel système
ne fait intervenir que la valeur absolue du jacobien
du changement de variables, 
il est suffisant de se donner un $n$-vecteur
et une orientation. 

Le but des paragraphes
qui suivent est d'expliciter cette
assertion un peu vague et de définir
du même coup l'intégrale d'une $(n-1,n)$-forme
sur le bord d'un demi-espace de~$V$. 

\subsection{}\label{sss.prelim-integration}
Soit $V$ un $\R$-espace affine de dimension finie,
soit $\mathscr B$ le faisceau des fonctions boréliennes sur~$V$.
Soit $W$ un sous-espace affine de~$V$, soit $p$ sa dimension;
soit~$o$ une orientation de~$W$.

Soit $n$ un entier naturel et soit~$\lambda\in\Lambda^n V$.
Soit $\omega$ une $(p,n)$-forme à coefficients boréliens
sur~$V$. Par définition, $\omega$ est un
élément de $\mathscr B(U)\otimes\Lambda^p \vec V^*\otimes \Lambda^n\vec V^*$.
On peut donc contracter la partie~$\Lambda^n\vec V^*$
de la forme~$\omega$ par~$\lambda$ de façon
à obtenir
une $(p,0)$-forme sur~$V$, c'est-à-dire
une $p$-forme sur~$V$, au sens usuel de la géométrie différentielle.

Lorsque $p=n$, $V=\R^n$, que $\omega$ est donnée
par la formule~\eqref{eq.omega.n-n-d'd''} et que $\lambda$
est le $n$-vecteur $e_1\wedge\dots\wedge e_n$, 
on obtient  $(-1)^{n(n-1)/2}$ fois la forme
$\omega^\sharp \mathrm dx_1\dots\mathrm dx_n$ 
qui est intégrée dans la relation~\eqref{eq.int.lagerberg}
(attention à l'ordre des termes~$\di x_i$ et~$\dc x_i$). 

On définit ainsi une $(p,0)$-forme
$ \langle\omega,\lambda\rangle$ sur~$V$ comme $(-1)^{n(n-1)/2}$
fois la contraction décrite ci-dessus.
Notons alors
\[ \int_W \langle\abs\omega,\abs\lambda\rangle \]
la masse totale de la mesure 
sur~$W$ définie
par la restriction à~$W$
de la $p$-forme~$\langle\omega,\lambda\rangle$,
si ladite restriction
est localement intégrable ; dans le cas contraire,
on pose
\[ \int_W \langle\abs\omega,\abs\lambda\rangle=+\infty.\]
On dit que $\omega$ est intégrable sur~$W$ si
\[ \int_W\langle\abs\omega,\abs\lambda\rangle<+\infty.\]
Dans ce cas, l'orientation~$o$ fixée sur~$W$ permet de définir
l'intégrale de~$\langle\omega,\lambda\rangle$ sur~$W$;
sous-entendant le choix de l'orientation~$o$, on note naturellement 
\[ \int_W \langle\omega,\lambda\rangle \]
cette intégrale. 

Les formes~$\omega$ de type~$(p,n)$ à coefficients boréliens 
sur~$V$
qui sont intégrables
sur~$W$ forment un sous-espace vectoriel de~$\mathscr B^{p,n}_V(V)$
sur lequel
l'application $\omega\mapsto  \int_W \langle\omega,\lambda\rangle $
est linéaire.

\subsection{}\label{sss.integrale.affine}
Lorsqu'on change l'orientation~$o$ en l'orientation opposée,
l'intégrale d'une forme (par rapport à~$(o,\lambda)$)
est transformée en son opposée.
De même, lorsqu'on change le $n$-vecteur~$\lambda$ en le multiple~$a\lambda$,
l'intégrale d'une forme est multipliée par~$a$.

En particulier, la notion d'intégrabilité et l'intégrale ne sont
pas modifiées si l'on change
simultanément l'orientation~$o$ en l'orientation opposée
et le $n$-vecteur~$\lambda$ en son opposé~$-\lambda$.
Ces notions ne dépendent donc que de la classe~$\mu$
du couple~$(o,\lambda)$ dans le produit contracté
$
\Lambda^n \vec V
\times ^ {\{-1,1\}}
\Or(\vec W)
$
de l'espace des $n$-vecteurs sur~$\vec V$
et 
de l'ensemble des orientations sur~$\vec W$, munis des actions naturelles
de~$\{-1,1\}$.

On notera ainsi $\int_W \langle\omega,\mu\rangle$
et $\int_W\langle\abs\omega,\abs\mu\rangle$
ce qui était noté
$\int_W \langle\omega,\lambda\rangle$
et $\int_W\langle\abs\omega,\abs\lambda\rangle$.
 
On peut penser aux symboles $\abs\omega$
et $\abs \mu$ comme représentant 
\og $\omega$ et $\mu$ au signe près\fg.

\subsection{Vecteurs-volume} 
Supposons que $n$ soit égal à la dimension de~$V$. 
Nous appellerons \emph{vecteur-volume}\index{vecteur-volume} sur~$V$
un élément du produit contracté $\abs{\Lambda^n\vec V}= 
\Lambda^n\vec V\times ^ {\{-1,1 \}} \Or(\vec V)$.
Concrètement, on obtient bien sûr un tel 
vecteur-volume $\abs{e_1\wedge\dots \wedge e_n}  $
à partir d'une base $(e_1,\ldots, e_n)$ 
de~$\vec V$, décrétée directe,
et du $n$-vecteur $e_1\wedge\dots \wedge e_n$ de~$\vec V$.

L'espace des vecteurs-volume sur~$V$ est naturellement
un~$\R$-espace vectoriel de dimension~$1$.

On dira qu'un vecteur-volume~$\mu$ est (strictement) positif s'il existe
une base $(e_1,\dots,e_n)$ de~$\vec V$ et un nombre réel (strictement)
positif~$a$ tel que $\mu=a\abs{e_1\wedge\dots \wedge e_n}$.
C'est alors le cas pour toute base de~$V$. 

\subsection{Intégrale des formes de type~$(n,n)$}
Supposons toujours que $n$ soit égal à la dimension de~$V$
et soit $\mu$ un vecteur-volume sur~$V$.
 
D'après les paragraphes précédents,
on peut  donc
définir : 
\begin{itemize}
\item L'intégrale 
$\int_V \langle\abs \omega,\abs \mu\rangle$
pour toute forme borélienne~$\omega$ de type $(n,n)$ sur $V$ ; 
\item La notion de $\mu$-intégrabilité d'une telle forme ; 
\item Pour une $(n,n)$-forme $\omega$ sur $V$ qui est 
borélienne et $\mu$-intégrable,
l'intégrale  $\int_V \langle \omega,\mu\rangle$.
\end{itemize}
Explicitons ces définitions en coordonnées.

Soit $(e_1,\dots,e_n)$ une base de~$\vec V$;
écrivons $\mu=a\abs{e_1\wedge\dots\wedge e_n}$.
Si $\omega=\omega^\sharp(x_1,\ldots, x_n) \di x_1\wedge\dc x_1\wedge \ldots \wedge \di x_n \wedge \dc x_n$,
alors
$\omega$ est $\mu$-intégrable si et seulement si 
la fonction~$a\omega^\sharp$ sur~$V$
est intégrable.
Cela  ne dépend d'ailleurs pas de~$a$, sauf si $a=0$ auquel cas
toute forme borélienne est $\mu$-intégrable.

On a 
\[ \int_V \langle \abs \omega ,\abs \mu\rangle = \abs a \int_V 
\abs{\omega^\sharp}(x_1,\dots,x_n)\, \mathrm dx_1\dots\mathrm dx_n, \]
et \[ \int_V \langle \omega ,\mu\rangle = a \int_V
\omega^\sharp(x_1,\dots,x_n)\, \mathrm dx_1\dots\mathrm dx_n\]
lorsque $\omega$ est $\mu$-intégrable.

Lorsque $a=1$, on retrouve la formule de Lagerberg.
% Le lecteur prendra garde que la définition~2.2 de~\cite{lagerberg2012}
% semble comporter une erreur de signe qu'il corrige implicitement
% dans le reste de son article.

Supposons que le vecteur-volume~$\mu$ soit positif. Avec les notations
précédentes, on a donc $a\geq 0$.
Ainsi, l'intégrale $\int_V \langle\omega,\mu\rangle$
de  toute $(n,n)$-forme positive~$\omega$
est positive
et  coïncide avec l'intégrale $\int_V \langle\abs\omega,\abs\mu\rangle$
de sa «valeur absolue».

\subsection{}
Soit $f\colon V\ra V'$ un isomorphisme d'espaces affines de dimension finie
et soit $\mu$ un vecteur-volume sur~$V$. 
Soit  $f_*(\mu)$ le vecteur 
volume sur~$V'$ 
déduit de~$\mu$ par transport de structure.
Si $\mu$ est positif, $f_*(\mu)$ l'est aussi.

Soit $\omega'$ une $(n,n)$-forme à coefficients boréliens
sur~$V'$ et soit $\omega=f^*\omega'$.
Pour que $\omega$ soit $\mu$-intégrable, il faut et il suffit que
$\omega'$ soit $f_*(\mu)$-intégrable; alors,
 \[ 
\int_{V}\langle f^*\omega ,\mu\rangle =  \int_{V'} \langle 
\omega,{f_*(\mu)} \rangle.\]

\subsection{Le cas des $(n-1,n)$-formes : les intégrales de bord}
\label{sss.integrale.bord}
On suppose toujours que $n$ est la dimension de $V$. 
Soit $V_+$ un demi-espace fermé de~$V$; 
son bord~$\partial(V_+)$ est un hyperplan affine de~$V$
et $V_+$ est l'une des deux composantes connexes
de~$V\setminus\partial(V_+)$.
Si $\ell$ est une forme affine telle que $V_+=\{\ell\geq 0\}$,
on dit qu'un vecteur~$v\in\vec V$ est  sortant
de~$V_+$ si $\ell(v) \leq  0$, 
l'adverbe strictement signifiant que l'inégalité précédente
est stricte.

Tout vecteur-volume~$\mu$ sur~$V$ induit canoniquement un 
élément $\partial \mu_+$ 
de $\Or(\vec {\partial (V_+)})\times^ {\{-1,1\}}\Lambda^n \vec V$
défini comme suit. On choisit une orientation $o$ de $\vec{\partial (V_+)}$, 
une base directe $(e_2,\ldots,e_n)$ de $\vec{\partial (V_+)}$, 
un vecteur~$e_1$ strictement sortant de $V_+$, 
et l'on note~$o'$ l'orientation de~$V$ 
pour laquelle la base~$(e_1,\ldots, e_n)$ est directe.
(On dira que $V$ est orienté suivant \emph{la règle de la normale sortante}
à partir de l'orientation de~$\vec{\partial (V_+)}$.)
Il existe alors un unique $\lambda\in \Lambda^n\vec V$ tel que $(\lambda,o')$ 
soit égal à $\mu$ dans $\abs{\Lambda^n\vec V}$. Le couple $(\lambda,o)$ 
définit un élément $\partial \mu_+$ 
de $\Lambda^n \vec V\times^ {\{-1,1\}}\Or(\vec {\partial (V)})$
qui est indépendant des choix faits. 

D'après les paragraphes~\ref{sss.prelim-integration} 
et~\ref{sss.integrale.affine},
on peut alors définir : 
\begin{itemize}
\item L'intégrale 
$\int_{\partial (V_+)} \langle\abs \omega,\abs{ \partial \mu_+}\rangle$
pour toute forme borélienne
$\omega$ de type $(n-1,n)$ sur~$V$ ; 
\item
La notion de $\partial \mu_+$-intégrabilité d'une telle forme
sur~$\partial (V_+)$ ; 
\item
Pour une forme $\omega$ sur~$V$ qui est mesurable et
$\partial\mu_+$-intégrable
sur~$\partial (V_+)$, l'intégrale 
$\int_{\partial (V_+)} \langle \omega,\partial \mu_+\rangle$. 
\end{itemize}
Concrètement, fixons une origine de~$V$ appartenant à~$\partial (V_+)$
et choisissons $(e_1,\dots,e_n)$ comme ci-dessus:
c'est une base directe de~$V$; 
si $(x_1,\dots,x_n)$ est la base duale de~$\vec V$, 
le demi-espace $V_+$ est défini par l'inégalité $x_1\leq 0$;
enfin, $(e_2,\dots,e_n)$ est une base directe de~$\partial (V_+)$.
Compte tenu des orientations choisies, 
le vecteur-volume~$\mu$ est donné par un $n$-vecteur 
$a e_1\wedge\dots\wedge e_n$ de~$\Lambda^n \vec V$. 
En coordonnées, on peut donc écrire
\[ \omega = \sum_{i=1}^n
  \omega_{i} \di x_{1}\wedge\dc x_1\wedge \dots \wedge \widehat{\di x_i} 
\wedge\dc x_i
\wedge\dots \di x_{n}\wedge \dc x_n. \]
On a alors
\[ \int_{ \partial (V_+)} \langle \abs \omega ,\abs {\partial \mu_+}\rangle = \abs a
\int_{\partial (V_+)}
 \abs{\omega_{1}}(0,x_2,\dots,x_n)\mathrm dx_2\dots\mathrm 
dx_n.\] La forme $\omega$
est $\partial \mu_+$-intégrable
sur $W$ si et seulement si 
$a\omega_1(0,*,\ldots, *)$ est $L^1$ (ce qui ne dépend pas de $a$, sauf si celui-ci est nul, auquel cas
toute forme borélienne est $\partial \mu_+$-intégrable). 
Si c'est le cas,
\[ \int_{\partial (V_+)} \langle \omega ,\partial \mu_+\rangle = a \int_{\partial (V_+)} 
\omega_{1}(0,x_2,\dots,x_n)
\mathrm dx_2\dots\mathrm dx_n. \]

% \subsection{}\footnote{Probablement à supprimer}
% À partir d'un vecteur-volume sur~$V$,
% on définirait de façon analogue,
% l'intégrale sur une partie d'un sous-espace orienté de 
% codimension~$p$ 
% d'une forme mesurable de type~$(n-p,n)$ sur~$V$.

% Si $\omega$ est une $(n,n)$-forme (resp. une $(n-1,n)-$-forme) 
% à support compact et si $A$ est une partie de $V$ (resp. $V_+$)
% contenant le support de $\omega$, le nombre réel $\int_A\omega$ 
% ne dépend pas de $A$ 
% et sera noté $\int_V\omega$ (resp. $\int_{\partial (V^+)}\omega$). 

\begin{lemm}[Formule de Stokes]
\label{lemm.stokes-affine}
Soit $\mu\in\abs{\Lambda^n \vec V}$.
Soit $V_+$ un demi-espace fermé dans~$V$.
Soit $\omega$ une forme de type~$(n-1,n)$ sur~$V$, à coefficients 
de classe~$\mathscr C^1$,
telle que $\supp(\omega)\cap V_+$  est compact.
On a
\[ \int_{V_+} \langle  \di\omega,\mu\rangle
 = \int_{\partial (V_+)} \langle \omega, \partial \mu \rangle.\]
\end{lemm}
\begin{proof}
Cette formule n'est qu'une récriture de la formule de Stokes usuelle.
Vérifions-le en coordonnées. Supposons donc que 
$V$ est l'espace~$\R^n$ muni de son orientation canonique
et que $V_+=\R_-\times\R^{n-1}$. Notons $(e_1,\dots,e_n)$
la base canonique de~$\R^n$. Le vecteur-volume~$\mu$
correspond à un $n$-vecteur $ae_1\wedge\dots\wedge e_n$.
D'autre part, on peut écrire
\[ \omega = \sum_{i=1}^n
  \omega_{i} \di x_{1}\wedge\dots\wedge\widehat{\di x_i}\wedge
\dots\wedge \di x_n \wedge \dc x_1\wedge \dots \wedge \dc x_n. \]
Par définition, on a 
\begin{align*}
 \di\omega & = \sum_{i=1}^n \sum_{j=1}^n\frac{\partial\omega_i}{\partial x_j} 
\di x_j\wedge\di x_{1}\wedge\dots\wedge\widehat{\di x_i}\wedge
\dots\wedge \di x_n \wedge \dc x_1\wedge \dots \wedge \dc x_n \\
& = \sum_{i=1}^n (-1)^{i-1} \frac{\partial\omega_i}{\partial x_i} 
\di x_1 \wedge\dots\wedge\di x_n \wedge \dc x_1\wedge \dots \wedge \dc x_n.
\end{align*}
Soit $\alpha$ la forme différentielle de degré~$n-1$ sur~$V$
donnée par
\[ \alpha = (-1)^{n(n-1)/2} \sum_{i=1}^n \omega_i \mathrm dx_1\wedge\dots\wedge
\widehat{\mathrm dx_i}\wedge\dots\wedge\mathrm dx_n. \]
Soit $\beta$ la $n$-forme sur~$V$ donnée par
\[ \beta = (-1)^{n(n-1)/2} \sum_{i=1}^n(-1)^{i-1} \frac{\partial\omega_i}{\partial x_i}
 \mathrm dx_1\wedge\dots\wedge\mathrm dx_n. \]
On a donc
\[ \int_{\partial (V_+)} \langle \omega ,\partial \mu_+\rangle = 
a \int_{\partial (V_+)} \alpha 
\quad\text{et}\quad 
\int_{V_+} \langle \di\omega,\mu_+\rangle
= a \int_{V_+} \beta. \]
D'après la formule de Stokes usuelle, 
$\int_{\partial (V_+)}\alpha=\int_{V_+}\mathrm d\alpha$.
Or, 
\[ \mathrm d\alpha = (-1)^{n(n-1)/2} \sum_{i=1}^n (-1)^{i-1}
\frac{\partial\omega_i}{\partial x_i}
\mathrm dx_1\wedge\dots\wedge \mathrm dx_n = \beta, \]
d'où le lemme.
\end{proof}
\begin{lemm}[Formule de Green]
\label{lemm.green-affine}
Soit $\mu\in\abs{\Lambda^n \vec V}$.
Soit $V_+$ un demi-espace fermé dans~$V$.
Soit $\alpha$ et $\beta$ des formes \emph{symétriques} de type $(p,p)$
et $(q,q)$ sur~$V$, avec $p+q=n-1$,
à coefficients de classe~$\mathscr C^2$; si l'intersection
de leurs supports est compacte,  on a
\[ \int_{V_+} \langle  \alpha\wedge \ddc\beta - \ddc\alpha\wedge\beta 
,\mu\rangle
 = \int_{\partial (V_+)} \langle \alpha\wedge \dc\beta - 
\dc\alpha\wedge\beta,\partial \mu^+ \rangle.\]
\end{lemm}
\begin{proof}
Posons
\[ \omega=\alpha\wedge \dc\beta-\dc\alpha\wedge\beta. \]
D'après la formule de Stokes (lemme~\ref{lemm.stokes-affine}),
on a 
\[ \int_{V_+} \langle\di\omega ,\mu\rangle= \int_{\partial (V_+)}\langle\omega,
\partial\mu_+\rangle. \]
Par ailleurs, comme $\alpha$ est de bidegré~$(p,p)$,
\[ \mathrm d\omega=(\alpha\wedge\ddc\beta-\ddc\alpha\wedge\beta)
+ (\di\alpha\wedge\dc\beta+\dc\alpha\wedge\di\beta). \]
D'après le lemme~\ref{lemm.didc=-dcdi}, on a 
$\di\alpha\wedge\dc\beta=-\dc\alpha\wedge\di\beta$,
d'où le lemme.
\end{proof}

\section{Supercourants}

Soit $V$ un espace affine réel de dimension finie, $n$,
muni d'un vecteur-volume strictement positif~$\mu$.

\subsection{}
Dans~\cite{lagerberg2012}, Lagerberg munit l'espace des superformes
de type~$(p,q)$ sur~$V$,
lisses à support contact,
d'une topologie localement convexe, 
similaire à celle des fonctions lisses à support compact.
Les éléments de son dual topologique sont appelés supercourants,
voire courants, de bidimension~$(p,q)$ ou de bidegré~$(n-p,n-q)$.

\subsection{}
L'intégrale des $(n,n)$-formes
sur~$V$ qui lui est associée fournit une application
$\alpha\mapsto [\alpha]$
de l'espace des~$(p,q)$-formes dans celui des  courants
de bidegré~$(p,q)$
par la formule 
\[ \langle [\alpha],\beta\rangle= \int_V \langle \alpha\wedge\beta,\mu\rangle.
\]
Cette application est injective.

\subsection{}
Grâce à la formule de Stokes,
le calcul différentiel des superformes se prolonge par dualité
en un calcul différentiel des supercourants.

\subsection{}
Comme dans le cas des variétés, on peut écrire un courant sur~$\R^n$
comme une « forme à coefficients distributions »:
si $T$ est de bidegré~$(p,q)$, on a une décomposition 
\[ T = \sum_{IJ} T_{IJ} \di x_I\wedge \dc x_J \]
indexée par les couples $(I,J)$ de parties de~$\{1,\dots,n\}$
tels que $\Card(I)=p$ et $\Card(J)=q$,
où les $T_{IJ}$ sont des courants de bidegré~$(0,0)$.
À un signe près qui ne dépend que du couple~$(I,J)$, 
on  a $\langle T_{IJ},\phi\eta\rangle = \langle T, \phi \di x^I\wedge \dc x^J\rangle$.
Remarquons que $T_{IJ}\wedge\eta$ est une distribution sur~$\R^n$.

\subsection{}
On dit qu'un courant de bidimension $(p,p)$ est positif
(\resp faiblement positif, \resp fortement positif) s'il est
symétrique et si l'image de toute forme positive
(\resp fortement positive, \resp faiblement positive) est positive.

Si $f$ est une fonction (continue) sur~$V$, 
le courant $\ddc [f]$ est positif si et seulement si
la fonction~$f$ est convexe;
inversement, tout courant positif fermé de bidegré~$(1,1)$
est de cette forme
(propositions~2.5 et~2.6 de~\cite{lagerberg2012}).

\subsection{}\label{ss.lagerberg-convolution}
Lageberg~\cite{lagerberg2012} explique comment,
par convolution, les supercourants sur~$\R^n$ peuvent
être approchés par des formes lisses. 
Rappelons la construction.

Soit $\rho$ une fonction lisse à support compact sur~$\R^n$;
pour tout $\eps>0$, posons $\rho_\eps(x)=\eps^{-n}\rho(x/\eps)$.

Soit $f$ une fonction lisse sur un ouvert~$\Omega$ de~$\R^n$.
La fonction $\rho*f$ sur~$\R^n$ est définie par
\[ \rho*f(x) = \int_\Omega f(y) \rho(x-y)\,dy. \]
Son support est contenu dans $\supp(f)+\supp(\rho)$.
Si $\rho$ et $f$ sont positives, alors $\rho*f$ est positive.

Si $\Omega=\R^n$, on écrit
\[ \rho*f(x) = \int_{\R^n} \rho(y) f(x-y)\, dy, \]
On en déduit que si $f$ est lisse, alors $\rho*f$ est lisse,
et ses dérivées sont données par dérivation sous le signe somme.
De plus, si $\rho$ est positive d'intégrale~$1$,
alors $\rho_\eps*\omega$ converge uniformément vers~$\rho$
sur tout compact de~$\R^n$ lorsque $\eps\to0$, 
et de même pour chacune de ses dérivées.
De même, si $f$ est croissante par rapport à~$x_i$,
il en est de même de~$\rho*f$.

Dans le cas général,
on peut écrire
\[ \rho*f(x) = \int_{\R^n} \rho(y) \tilde f(x-y)\, dy, \]
où $\tilde f$ est la fonction (mesurable) sur~$\R^n$ égale à~$f$ sur~$\Omega$
et à~$0$ ailleurs. De cette formule, on en déduit que des variantes
des propriétés précédentes.
Si $\rho$ est positive, de support contenu dans~$B$
et si $f$ est croissante par rapport à~$x_i$ (resp. convexe, resp. lisse)
sur un ouvert~$\Omega'$ de~$\Omega$ tel que $\Omega'+\supp(\rho)\subset\Omega$, 
alors $\rho*f$ est croissante par rapport à~$x_i$ (resp. convexe, resp. lisse)
sur cet ouvert~$\Omega'$.
De même, si $\rho$ est positive d'intégrale~$1$,
alors $\rho_\eps*\omega$ converge vers~$\rho$ lorsque $\eps\to 0$,
uniformément sur tout compact de~$\Omega$, avec ses dérivées.

Soit maintenant $\omega$ une forme lisse sur~$\R^n$.
Soit $\rho$ une fonction lisse à support compact sur~$\R^n$;
on définit la forme $\rho*\omega$ en se ramenant au cas des fonctions
sur~$\R^n$ en décomposant 
$\omega = \sum_{I,J} \omega_{IJ} \di x_I \wedge \dc x_J$
et en posant
\[ \rho* \omega = \sum_{IJ} (\rho * \omega_{IJ}) \di x_I \wedge \dc x_J.\] 

Lorsque $\omega$ est à support compact, la forme $\rho*\omega$
est à support compact.

Par dérivation sous le signe somme, on a 
\[ \di (\rho*\omega) = \rho*\di\omega, \qquad \dc (\rho*\omega)=\rho*\dc \omega. \]

Supposons que  $\rho$ soit positive.
Alors si $\omega$ est positive (resp. faiblement positive, resp. fortement positive), il en est de même de $\rho*\omega$.

Supposons que $\rho$ soit positive, d'intégrale~$1$;
pour tout $\eps>0$, posons $\rho_\eps(x)=\eps^{-n}\rho(x/\eps)$.
Alors $\rho_\eps*\omega$ converge uniformément vers~$\rho$
sur tout compact de~$\R^n$, avec ses dérivées.

Soit maintenant $T$ un courant de bidimension~$(p,q)$ sur~$\R^n$.
La convolution $\rho * T$  est un courant de bidimension~$(p,q)$
défini par
\[ \langle \rho* T, \omega\rangle = \langle T, \rho* \omega \rangle \]
pour toute forme~$\omega$, lisse de type~$(p,q)$  et à support compact.
En fait, c'est un courant associé à la \emph{forme} lisse
\[ \sum _{I,J} (\rho*(T_{IJ}\wedge\eta)) \di x_I \wedge \dc x_J,\]
puisque la convolée de~$\rho$ et de la distribution $T_{IJ}\wedge\eta$
est une fonction lisse.

On a $\di (\rho*T)=\rho*\di T$ et $\dc(\rho*T)=\rho*\dc T$.

Supposons $\rho$ positive.
Si $T$ est positif (resp. faiblement positif, resp. fortement positif),
il en est de même de $\rho* T$.

Supposons que $\rho$ est positive d'intégrale~$1$.
Alors $\rho_\eps*T$ converge faiblement vers~$T$ quand $\eps$ tend vers~$0$.

En particulier, tout courant positif fermé est limite de formes lisses positives fermées.

\subsection{}
Inspiré par les théories de Demailly~\cite{demailly1985}
et Bedford-Taylor~\cite{bedford-t82}, Lagerberg définit,
par approximation, une notion
de produit de courants positifs de la forme $\ddc f$,
où $f$ est une fonction convexe sur~$\R^n$.

Le courant $\MA(f)=(\ddc f)^n $ est alors une mesure positive
sur~$V$, qu'on appelle la mesure de Monge-Ampère de~$f$.

\section{Un calcul d'opérateur de Monge-Ampère}

\begin{lemm}\label{lemm.MA-H}
Soit $U$ un ouvert de~$\R^n$ et 
soit $h\colon U\ra \R$ une fonction de classe~$\mathscr C^2$.
Alors,
\[ (\ddc h)^n = n! \det\big( \frac{\partial^2 h}{\partial x_i\partial x_j}\big)
   \di x_1\wedge\dc x_1\wedge\dots\wedge\di x_n\wedge\dc x_n. \]
\end{lemm}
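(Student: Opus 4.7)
The plan is to expand $\ddc h$ directly in coordinates, take the $n$-fold wedge, and recognize a determinant. First I would use the explicit formulas for $\di$ and $\dc$ recalled earlier in the paper: since $h$ is of type $(0,0)$ with constant-coefficient differential $\dc h=\sum_j \partial_j h\,\dc x_j$, differentiating again gives
\[ \ddc h = \di\Bigl(\sum_j \partial_j h\,\dc x_j\Bigr) = \sum_{i,j} h_{ij}\,\di x_i\wedge \dc x_j,\qquad h_{ij}:=\frac{\partial^2 h}{\partial x_i\partial x_j},\]
since the $\dc x_j$ have trivial $\di$-derivative.

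Next I would expand the $n$-fold wedge as
\[ (\ddc h)^n = \sum_{i_1,j_1,\dots,i_n,j_n} h_{i_1j_1}\cdots h_{i_nj_n}\,\di x_{i_1}\wedge \dc x_{j_1}\wedge\dots\wedge \di x_{i_n}\wedge\dc x_{j_n}.\]
The key observation is that in the bigraded algebra $\mathscr A^{*,*}$, the commutation rule $\omega'\wedge\omega=(-1)^{pp'+qq'}\omega\wedge\omega'$ applied to $\di x_i$ (type $(1,0)$) and $\dc x_j$ (type $(0,1)$) yields $(-1)^{0+0}=1$: the $\di$-factors and $\dc$-factors commute with each other. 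Hence the interlaced wedge can be reorganized without any sign into
\[ \di x_{i_1}\wedge\dots\wedge\di x_{i_n}\wedge\dc x_{j_1}\wedge\dots\wedge\dc x_{j_n}.\]

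Nonvanishing contributions require $(i_1,\dots,i_n)$ and $(j_1,\dots,j_n)$ to be permutations $\sigma,\tau\in S_n$ of $(1,\dots,n)$, each producing their respective signs. Thus
\[ (\ddc h)^n = \Bigl(\sum_{\sigma,\tau\in S_n}\operatorname{sgn}(\sigma)\operatorname{sgn}(\tau)\prod_{k=1}^n h_{\sigma(k)\tau(k)}\Bigr)\,\di x_1\wedge\dots\wedge\di x_n\wedge\dc x_1\wedge\dots\wedge\dc x_n.\]
The double sum is a classical identity: setting $\pi=\sigma\tau^{-1}$ and reindexing decouples the two permutations to give $n!\det(h_{ij})$. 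Finally, I would rearrange the monomial $\di x_1\wedge\dots\wedge\di x_n\wedge\dc x_1\wedge\dots\wedge\dc x_n$ into the interlaced form $\di x_1\wedge\dc x_1\wedge\dots\wedge\di x_n\wedge\dc x_n$; since each transposition needed only swaps a $\dc x$ past a $\di x$, the bigraded commutation again produces no sign, and the stated formula follows.

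There is no real obstacle here — the whole computation is bookkeeping — but the one subtle point worth emphasizing is that the sign $(-1)^{n(n-1)/2}$ one might naively expect from the complex analog does \emph{not} appear, precisely because Lagerberg's $\di x$ and $\dc x$ commute (not anticommute) in this bigraded convention, as dictated by the commutation rule stated in the paper.
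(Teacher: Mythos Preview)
Your proof is correct and follows essentially the same approach as the paper: expand $\ddc h$ in coordinates, take the $n$-fold wedge, reduce to a double sum over permutations, and use the substitution $\psi=\tau\sigma^{-1}$ (the paper writes $\psi=\tau\circ\sigma^{-1}$) to collapse the double sum to $n!\det(h_{ij})$. The only cosmetic difference is that the paper computes the sign of the interlaced monomial $\di x_{\sigma(1)}\dc x_{\tau(1)}\cdots\di x_{\sigma(n)}\dc x_{\tau(n)}$ directly as $\eps(\sigma)\eps(\tau)$, whereas you first separate the $\di$- and $\dc$-blocks and then re-interlace at the end; both routes rely on the same commutation $\di x_i\wedge\dc x_j=\dc x_j\wedge\di x_i$.
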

\begin{proof}
Partons de la formule
\[ \ddc h = \sum_{i,j=1}^n \frac{\partial^2 h}{\partial x_i\partial x_j}
\di x_i \wedge \dc x_j. \]
Alors, dans l'expression 
\[  (\ddc h)^n   = \sum_{\substack{i_1,\dots,i_n \\ j_1,\dots,j_n}}
 \prod_{k=1}^n\frac{\partial^2 h}{\partial x_{i_k}\partial x_{j_k}}
\di x_{i_k} \wedge \dc x_{j_k} \]
(rappelons que les formes de
bidegré~$(1,1)$ commutent deux à deux),
tous les termes pour lesquels $\{i_1,\dots,i_n\}\neq \{1,\dots,n\}$
ou $\{j_1,\dots,j_n\}\neq\{1,\dots,n\}$ sont nuls.
Ainsi,
\[  (\ddc h)^n   = \sum_{\sigma,\tau\in\mathfrak S_n}
 \prod_{k=1}^n\frac{\partial^2 h}{\partial x_{\sigma(k)}\partial x_{\tau(k)}}
\,
\di x_{\sigma(1)} \wedge \dc x_{\tau(1)}\wedge 
\dots \wedge \di x_{\sigma(n)} \wedge \dc x_{\tau(n)}.
\]
Remarquons aussi que
\[ \di x_{\sigma(1)} \wedge \dc x_{\tau(1)}\wedge 
\dots \wedge \di x_{\sigma(n)} \wedge \dc x_{\tau(n)}
=  \eps(\sigma)\eps(\tau)
\di x_1\wedge \dc x_1\wedge \dots \wedge \di x_n\wedge \dc x_n, \] 
où $\eps(\cdot)$ désigne la signature
d'une permutation.
En outre, si $\psi=\tau\circ\sigma^{-1}$,  on a
 \[
\prod_{k=1}^n\frac{\partial^2 h}{\partial x_{\sigma(k)}\partial x_{\tau(k)}}
= \prod_{k=1}^n 
\frac{\partial^2 h}{\partial x_{k}\partial x_{\psi(k)}}
\]
et 
\[  \eps(\sigma)\eps(\tau)=\eps(\psi). \]
Par suite,
\[ (\ddc h)^n = n! \sum_{\psi\in\mathfrak S_n}
 \prod_{k=1}^n \frac{\partial^2 h}{\partial x_{k}\partial x_{\psi(k)}}
 \eps(\psi)\,
\di x_1\wedge \dc x_1\wedge \dots \wedge \di x_n\wedge \dc x_n, \] 
ainsi qu'il fallait démontrer.
\end{proof}

\begin{lemm}
Munissons~$\R^n$ de la mesure de Lebesgue et de son vecteur-volume
canonique~$\mu$.
Soit $U$ un ouvert convexe de~$\R^n$ et 
soit $h\colon U\ra \R$ une fonction lisse, strictement convexe.
L'application gradient $\nabla h\colon U\ra \R^n$ est injective
et $\int_U \langle (\ddc h)^n ,\mu\rangle = n! \vol\nabla h(U)$.
% \footnote{Si $h$
% est convexe, mais pas strictement convexe, $\nabla h$
% n'est plus forcément injective, mais la seconde formule
% semble toujours vraie. La raison est que pour $\eps>0$,
% $h_\eps\colon x\mapsto h(x)+\eps\norm x^2$
% est strictement convexe et que $\vol\nabla h_\eps(U)\ra \vol\nabla h(U)$
% quand $\eps\ra0$. Faut-il plutôt mettre cet énoncé?}
\end{lemm}
\begin{proof}
Soit $x$ et~$y$ des points distincts de~$ U$. 
Comme la fonction $f\colon t\mapsto h(x+t(y-x))$ 
sur $[0,1]$ est strictement convexe, on a  $f'(1)>f'(0)$ puis
$\langle\nabla h(y)-\nabla h(x),y-x\rangle>0$.
En particulier, $\nabla h(x)\neq\nabla h(y)$.

La matrice jacobienne $\Jac(\nabla h)$ 
de l'application $\nabla h\colon U\ra\R^n$
n'est autre que la matrice hessienne~$\mathscr H(h)$ de l'application~$h$.
Puisque~$h$ est convexe, on a $\det(\mathscr H(h))\geq 0$.
D'après le lemme précédent, le théorème de Sard et
la formule de changement de variables dans les intégrales
multiples, on a donc
\[ \int_U \langle (\ddc h)^n , \mu \rangle = n! \int_U \det\mathscr H(h)
= n! \int_U \det\Jac(\nabla h) =  n! \vol(\nabla h(U)).
\qedhere\]
\end{proof}

\begin{lemm}
Soit $V$ un espace affine réel de dimension finie
et soit $(f_i)_{i\in I}$ une famille non vide de formes affines
sur~$V$. Soit $j\in I$. 
Les conditions suivantes sont équivalentes:
\begin{enumerate}
\def\theenumi{\roman{enumi}}\def\labelenumi{(\theenumi)}

\item Pour toute forme affine $g$ sur $V$, 
l'application affine $(f_i-g)_i$ de $V$ dans $\R^I$ est injective. 

\item L'application affine $(f_i-f_j)_i$ de $V$ dans $\R^I$ est injective. 

\item L'application affine $(f_i)_i$ de $V$ dans $\R^I$ est injective, 
et son image ne contient aucune droite parallèle à la droite diagonale
engendrée par le vecteur constant $(1)_i$. 

\item La famille $(\vec{f_i}-\vec{f_j})_i$
contient une base de l'espace dual $\vec V^*$. 

\item Le sous-espace affine de l'espace dual $\vec V^*$ engendré par les
$\vec{f_i}$ est égal à $\vec V^*$.

\end{enumerate}

\end{lemm}

\begin{proof}
Il est clair que (i)$\Rightarrow$(ii). 
Supposons que (ii) est vraie. Posons $f=(f_i)_i$. L'application affine 
$(f_i-f_j)_i$ étant injective, la famille
$(\vec{f_i}-\vec{f_j})_i$ engendre $\vec V^*$ ; il s'ensuit que
$(\vec {f_i})_i$ engendre $\vec V^*$ et donc
que $f$ est injective. 
Supposons qu'il existe une famille finie
$(a_i)_i$ de nombres réels, deux points $v$ et $w$ de $V$, et deux nombres
réels distincts $s$ et $t$ tels que $f(v)=(a_i+s)_i$ et $f(w)=(a_i+t)_i$ 
(comme $s\neq t$ on a nécessairement $v\neq w$). 
L'application affine
$(f_i-f_j)_i$ prend alors la valeur $(a_i-a_j)_i$ en $v$ et en $w$, ce qui 
contredit (i). Par conséquent, $f(V)$ ne contient aucune droite parallèle
à la diagonale, d'où (iii). 

Supposons que (iii) est vraie, et montrons (iv).  Si la famille
$(\vec{f_i}-\vec {f_j})$ n'était pas de rang maximal, il existerait un vecteur
$\vec v$ non nul dans l'intersection des noyaux des formes linéaires
$\vec{f_i}-\vec {f_j}$. L'image $\vec f(\vec v)$ serait
alors un vecteur nul dont toutes les composantes
sont égales, c'est-à-dire un vecteur directeur
de la droite diagonale, si bien que $f(V)$ contiendrait une droite affine
parallèle à la diagonale, contredisant (iii). Par conséquent (iv) est vraie.

L'implication (iv)$\Rightarrow$(v) provient du fait que
le sous-espace affine de $\vec V^*$ engendré par les $\vec {f_i}$ est le 
sous-espace affine de $\vec V^*$ passant par $\vec{f_j}$ et dirigé par 
$\langle \vec{f_i}-\vec {f_i}\rangle_i$. 

Supposons enfin que (v) est vraie. Soit $g$ une forme affine sur $V$. 
La famille $(\vec{f_i}-\vec g)_i$ contient alors une base de $\vec V^*$ : sinon, 
les $\vec {f_i}$ appartiendraient toutes à un même sous-espace affine strict de 
$\vec V^*$ passant par $\vec g$. L'application linéaire $(\vec{f_i}-\vec g)_i$ 
est donc injective, et l'application affine $(f_i-g)_i$
l'est alors aussi, ce qui montre (i) et termine la preuve du lemme. 
\end{proof}

\begin{defi}\label{defi.tres-separante}
Soit $V$ un espace affine réel de dimension finie
et soit $(f_i)_{i\in I}$ une famille non vide de formes affines
sur~$V$. 
Nous dirons que la famille~$(f_i)$ est \emph{très séparante}
si elle satisfait les conditions équivalentes du lemme ci-dessus.
\end{defi}

\subsection{}
\label{sss-tres-separante}
Soit $V$ un espace affine réel de dimension finie
et soit $(f_i)_{i\in I}$ une famille non vide de formes affines
sur~$V$. 

\begin{enumerate}\def\theenumi{\alph{enumi}}\def\labelenumi{\theenumi)}
\item Pour toute forme affine~$g$ sur~$V$,
la famille $(f_i-g)_i$ est très séparante si et seulement
s'il en est de même de la famille~$(f_i)_i$ : cela résulte immédiatement
du critère (i).

\item Si la famille $(f_i)$ est très séparante, 
l'ensemble des points~$x$ de~$V$ tels
que $f_i(x)=f_j(x)$  pour tous $i,j$ est de cardinal au plus~$1$.
(En effet, son image par l'application affine injective $f$ 
est l'intersection de~$f(V)$ et de la diagonale de~$\R^I$.)

\item Si la famille $(f_i)_i$ contient une forme constante,
elle est très séparante si et seulement si l'application affine~$f$
est injective. (Prendre~$j$ tel que
$f_j$ soit constante et appliquer le critère (ii)). 
\end{enumerate}

% (Preuves: (iii)$\Leftrightarrow$(i). 
% Si $\sum \lambda_i f_i\equiv c$, avec $\sum\lambda_i=1$ et $c\in\R$,
% on a $\sum_{i\neq j}\lambda_i (f_i-f_j)=c$, puis 
% $\sum_{i\neq j}\lambda_i (\vec{f_i}-\vec{f_j})=0$. Et inversement.
% Donc les $f_i$ sont affinement liées modulo constantes
% ssi les $\vec{f_i}-\vec{f_j}$ sont linéairement  dépendants.
% 
% (ii)$\Leftrightarrow$(iii). Car $\sum \lambda_i \vec{f_i}=0$
% ssi $\sum\lambda_i f_i$ est constante.
% En effet, si $x,y\in V$ tel que $f_i(x)=f_j(x)$ 
% et $f_i(y)=f_j(y)$ pour tout~$i,j$. 
% Alors, $\vec{f_i}-\vec{f_j}$ s'annule en~$x-y$, donc $x=y$
% si la famille $(\vec{f_i}-\vec {f_j})_i$ engendre~$\vec V^*$.

\begin{lemm}\label{lemm.approx-max}
Soit $V$ un espace vectoriel réel de dimension finie,
soit $(f_i)_{i\in I}$ une famille finie non vide de formes affines sur~$V$;
soit $f=(f_i)\colon V\to\R^I$.
Pour tout $\eps>0$, on pose $g_\eps(x)=\eps \log\big(\sum_{i\in I} \exp(f_i(x)/\eps)\big)$ ; on pose aussi $g(x)=\sup_{i\in I} f_i(x)$.

% Soit $(a_i)_{i\in I}$ une famille de nombres réels strictement positifs.
\begin{enumerate}
\item
La fonction $g\colon V\to\R$ est convexe.

\item
Pour tout $\eps>0$, la fonction $g_\eps\colon V\to\R$ 
est convexe ;
elle est strictement convexe si et seulement si la famille~$(f_i)$
est très séparante.
\item
Lorsque $\eps\to 0^+$, les fonctions $g_\eps$ tendent vers~$g$, 
uniformément et en décroissant.
\end{enumerate}
\end{lemm}
\begin{proof}
\begin{enumerate}
\item
La fonction~$g$ est convexe puisque c'est un maximum
de fonctions convexes.

\item
Il suffit de traiter le cas où $\eps=1$.
Commençons par traiter le cas où $V=\R^n$ et les~$f_i$
sont les fonctions coordonnées.
La fonction~$g_1$ est lisse sur~$\R^n$; pour démontrer
qu'elle est convexe, il suffit de démontrer
que sa hessienne~$D^2_x g_1$ est positive en tout point~$x$. Or,
\[ \frac{\partial^2}{\partial x_i\partial x_j} g_1
= \frac\partial{\partial x_i} \frac{ \exp(x_j)}{\sum  \exp(x_k)}
= \frac{ \exp(x_i)}{\sum  \exp(x_k)}\delta_{ij}
- \frac{ \exp(x_i)\exp(x_j)}{(\sum  \exp(x_k))^2}. \]
Par suite, pour tout $x\in\R^n$ et tout $y\in\R^n$,  on a
\[
D^2_xg_1(y,y)  = \sum_{i=1}^n 
 \frac{ y_i^2 \exp(x_i)}{\sum  \exp(x_k)}
- \sum_{i,j=1}^n
\frac{y_i  y_j\exp(x_i)\exp(x_j)}{(\sum  \exp(x_k))^2}  .\]
Posons $u_i =  \exp(x_i)$. Comme la fonction $y\mapsto y^2$
est strictement convexe, on a 
\[ (\sum u_i y_i^2)(\sum u_i) \geq (\sum u_i y_i)^2,\]
avec égalité si et seulement si les~$y_i$ sont tous égaux.
On a donc $D^2_xg_1(y,y)\geq 0$ et $D^2_x g_1(y,y)=0$ si et seulement
si les~$y_i$ sont tous égaux.

Dans le cas général, la fonction~$g_1$ est la composée de l'application
affine~$f$ 
et de la fonction sur~$\R^I$ analogue à celle que nous venons d'étudier.
Elle est donc convexe. De plus, sa hessienne est positive, et son noyau
est l'image inverse de la diagonale par~$\vec f$. Si la famille~$(f_i)$
est très séparante, la hessienne de~$g_1$ est donc définie positive
en tout point, donc~$g_1$ est strictement convexe.
Inversement, on laisse au lecteur le soin de vérifier que 
si $g_1$ est strictement convexe, alors la famille~$(f_i)$ est très séparante.

\item
Il suffit de traiter le cas où $V=\R^n$ et les $f_i$ sont les fonctions
coordonnées.
Soit $x\in V$. Pour tout $\eps>0$, on a l'inégalité
\[    g(x) \leq \eps g_\eps(x) \leq \eps \log(n)+g(x), \]
ce qui entraîne que
$g_\eps(x)$ converge uniformément vers~$g(x)$
lorsque $\eps$ tend vers~$0$ par valeurs supérieures.

Par ailleurs, si $a$ et $b$ sont des nombres réels tels que $a<b$,
vérifions l'inégalité $g_a(x)\leq g_b(x)$.
Fixons $x\in V$ et, pour $t>0$, 
posons $h(t)= g_t(x)=\frac1t \log(\sum \exp(tx_i))$;
il suffit de démontrer qu'elle décroît.
La fonction~$h$ est dérivable et l'on a
\[ h'(t) = -\frac1{t^2} \log(\sum e^{t x_i})
 + \frac 1 t\, \frac{ \sum x_i e^{t x_i}}{\sum e^{t x_i}}. \]
Puisque $tx_j \leq \log(\sum_i e^{tx_i})$ pour tout~$j$,  on a
\[ \sum_i t x_i e^{t x_i} 
 \leq \log (\sum_i e^{t x_i}) \sum_i e^{tx_i},\]
d'où l'inégalité $h'(t)\leq0$. Cela prouve que $h$ est décroissante. 
\qedhere
\end{enumerate}
\end{proof}

\begin{prop}\label{MA-delta_0/1}
Soit $V$ un espace vectoriel réel de dimension finie~$n$,
soit $(f_1,\dots,f_m)$ une famille non vide de formes linéaires sur~$V$
et soit $\mu$ un vecteur-volume strictement positif sur~$V$.
Soit $D$ une partie convexe de~$V$ contenant~$0$ et soit $W(D)$
le cône tangent de~$D$ en~$0$.
Posons $g=\max(f_1,\dots,f_m)$; pour tout $\eps>0$,
posons $g_\eps=\eps\log(\sum \exp(f_i(x)/\eps))$.
Alors, $W(D)$ est de masse finie~$\lambda_D$ pour la mesure positive 
(absolument continue par rapport à la mesure de Lebesgue)
$\langle(\ddc g_1)^n,\mu\rangle$ sur~$V$ et l'on a
\[ \int_D \psi \langle (\ddc g_{\eps})^n,\mu\rangle 
\to \lambda_D \psi(0)  \]
pour toute fonction continue bornée~$\psi$ sur~$D$.

De plus, si $\dim D=$ alors
$\lambda_D>0$ si et seulement si la famille $(f_1,\dots,f_m)$
est très séparante.
\end{prop}
\begin{proof}
Supposons d'abord que $D=V$; dans ce cas, $W(D)=V$.
Rappelons que $g_\eps$ 
est une fonction convexe, lisse, qui converge uniformément
vers $g$ quand $\eps\ra 0^+$. 
On a donc $\langle(\ddc g)^n,\mu\rangle 
= \lim_{\eps\ra 0^+} \langle (\ddc g_\eps)^n,\rangle$ au sens des mesures.
Soit $T$ l'homothétie de rapport~$1/\eps$; 
son jacobien  vaut~$1/\eps^n$
et l'on a $g_\eps=\eps T^* g_1$,
donc
\begin{align*}
\int_V \psi \langle (\ddc g_\eps)^n,\mu\rangle
& = \eps^n \int_V \psi \langle  T^*(\ddc g_1)^n,\mu \rangle \\
& = \int_V (T^{-1})^* \psi \langle (\ddc g_1)^n,\mu\rangle  \\
& = \int_V \psi (\eps x) \langle (\ddc g_1)^n,\mu\rangle 
\end{align*}
et 
\[ 
\int_V \psi \langle (\ddc g)^n,\mu\rangle
= \lim_{\eps\to 0^+} \int_V \psi(\eps x) \langle (\ddc g_1)^n,\mu\rangle.
\]
Lorsque $\eps$ tend vers~$0$, la fonction continue~$\psi(\eps  x)$ converge 
vers la fonction constante de valeur~$\psi(0)$
et nous allons déduire du théorème de convergence dominée que
\[ \int_V \psi \langle (\ddc g)^n,\mu\rangle = \psi(0)
\int_V \langle (\ddc g_1)^n,\mu\rangle. \]
% 
% Remarquons que
% \begin{align*}
%  \ddc g_\eps
% & =  \eps^{-1}\sum_{i=1}^n \frac{a_i\exp(f_i(x/\eps))}{\sum a_i \exp(f_i(x/\eps))} 
% \di f_i \dc f_i \\
% &\qquad
% - \eps^{-1} \sum_{i,j=1}^n \frac{a_i\exp(f_i(x/\eps))}{\sum a_i \exp(f_i(x/\eps))}
% \frac{a_j\exp(f_j(x/\eps))}{\sum a_j \exp(f_j(x/\eps))}
% \di f_i \dc f_j
% \end{align*}
% Par suite, pour toute fonction~$\phi$ sur~$V$ qui est
% lisse à support compact,  on a
% \begin{align*}
%  \int_V \phi \langle (\ddc g)^n,\mu\rangle
% & = \lim_{\eps\ra 0^+} \int_V \phi(x) \langle (\ddc g_\eps(x))^n,\mu\rangle
% = \lim_{\eps\ra 0^+} \int_V \phi(\eps x) \langle (\ddc g_1)^n ,\mu\rangle \\
% &= \phi(0) \int_V \langle (\ddc g_1)^n,\mu\rangle. \end{align*}

La fonction~$g_1$ est convexe,
donc la mesure $\langle (\ddc g_1)^n,\mu\rangle $ sur~$V$ est positive.
Pour appliquer le théorème de convergence dominée, il suffit donc
d'établir la finitude de cette mesure.
Si elle pourrait être établie directement
(soit par un calcul, soit par des propriétés
générales, \cf \cite{lagerberg2012}, proposition~3.9),
cette finitude  résulte aussi du calcul précédent.
En effet, prenons pour~$\psi$ une fonction lisse à support compact,
positive ou nulle, et égale à~$1$
dans un voisinage~$B$ de~$0$. 
Alors,
\begin{align*}
\int_V  \psi \langle (\ddc g)^n,\mu\rangle 
&= \lim_{\eps\ra0^+}\int_{V}\psi(\eps x)\langle (\ddc g_1)^n ,\mu\rangle\\
& \geq \limsup_{\eps\ra 0^+}\int_{\eps^{-1}B}\langle (\ddc g_1)^n ,\mu\rangle \\
& = \int_V \langle (\ddc g_1)^n,\mu\rangle,
\end{align*}
d'où la finitude requise.

Le cas d'une partie convexe~$D$ quelconque s'en déduit. 
En effet, pour toute fonction continue
bornée~$\psi$ sur~$D$, on peut écrire
\[ \int_D \psi \langle (\ddc g_\eps)^n,\mu\rangle
 = \int_{V} \chi_{D}(\eps x)\psi(\eps x) \langle (\ddc g_1)^n,\mu\rangle. \]
Soit $x\in V$.
Si $x\in W(D)$, alors $\eps x\in D$ pour $\eps>0$ assez petit, 
et $\psi(\eps x)\to \psi(0)$; sinon, on a $\chi_D(\eps x)=0$ pour tout~$\eps$.
Ainsi, $\chi_D (\eps x)\psi(\eps x)\to \psi(0) \chi_{W(D)}(x)$.
Par convergence dominée, on a donc
\[ \int_D \psi \langle (\ddc g_\eps)^n,\mu\rangle
\to \psi(0)\int_{W(D)} \langle (\ddc g_1)^n,\mu\rangle
=\lambda_D \psi(0). \]

Supposons que la famille $(f_1,\dots,f_m)$ n'est pas très séparante.
Comme $\ddc g$ n'est pas modifié lorsqu'on soustrait aux~$f_i$
une même forme linéaire, on peut supposer que l'une d'entre elle est nulle.
Si l'application linéaire résultante $f=(f_1,\dots,f_m)$
n'est pas injective, $g_1$ provient d'une fonction sur un espace
de dimension~$<n$, de sorte que $(\ddc g_1)^n=0$.

Inversement, si la famille $(f_1,\dots,f_m)$ est très séparante,
la fonction $g_1$ est strictement convexe, et sa hessienne est
définie positive en tout point, donc la mesure
$\langle(\ddc g_1)^n,\mu\rangle$ sur~$V$ est le 
produit de la mesure de Lebesgue par une fonction strictement
positive. Cela entraîne que $\int_{W(D)} \langle (\ddc g_1)^n,\mu\rangle>0$
dès que $\dim (W(D))=n$, c'est-à-dire que $\dim(D)=n$.
\end{proof}

\begin{prop}\label{MA-delta_0/2}
Soit $V$ un espace affine réel de dimension~$n\geq1$
muni d'un vecteur-volume~$\mu$.
Soit $(f_1,\dots,f_m)$ une suite non vide de formes affines sur~$V$
et soit $g=\max(f_1,\dots,f_m)$.
Pour $x\in V$, soit $I_x$ l'ensemble des~$i\in\{1,\dots,m\}$
tels que $g(x)=f_i(x)$. Soit $S$ l'ensemble des points~$x$ de~$V$
tels que la famille~$(f_i)_{i\in I_x}$ soit très séparante.

L'ensemble~$S$ est fini
et il existe une famille $(\lambda_x)_{x\in S}$
de nombres réels telle que
\[ (\ddc g)^n =  \sum_{x\in S}\lambda_x\delta_x . \]

En outre, $\lambda_x$ ne dépend que de la famille~$(f_i)_{i\in I_x}$
et du vecteur-volume~$\mu$. De plus, si $\mu$ est strictement positif,
alors $\lambda_x>0$ pour tout~$x\in S$.
\end{prop}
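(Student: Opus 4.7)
My approach is to reduce to $V=\R^n$ with its standard vector-volume, approximate~$g$ uniformly by smooth convex functions, and pass to the limit using Corollary~\ref{coro.produit-ddc}. The finiteness of~$S$ is immediate: whenever $(f_i)_{i\in I}$ is très séparante, the characterization discussed just before the statement shows that the locus $\{x\in V:f_i(x)=f_j(x)\;\forall i,j\in I\}$ contains at most one point, and there are only finitely many possibilities for~$I_x$.

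By Lemma~\ref{lemm.approx-max} with $a_i=1$, $g$ is the uniform limit on~$V$ of the smooth convex functions $g_\eps(x)=\eps\log\bigl(\sum_i\exp(f_i(x)/\eps)\bigr)$, so $g$ is locally psh-approchable and $\langle(\ddc g)^n,\mu\rangle$ is defined as the weak limit of $\langle(\ddc g_\eps)^n,\mu\rangle$. To identify this limit I localize around a point $x_0\in V$: shrinking to a neighborhood~$U$ on which $g=\max_{i\in I_{x_0}}f_i$ and on which $U\cap S\subset\{x_0\}$, I distinguish two cases. If $(f_i)_{i\in I_{x_0}}$ is not très séparante, pick $i_0\in I_{x_0}$ and let $W^\perp\subset\vec V$ be the common kernel of the $\vec f_i-\vec f_{i_0}$ for $i\in I_{x_0}$; then $g-f_{i_0}$ factors on~$U$ through the quotient $\pi\colon V\to V/W^\perp$, of dimension $<n$, so $(\ddc g)^n|_U=0$. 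If $(f_i)_{i\in I_{x_0}}$ is très séparante, then $x_0\in S$, while for every $y\in U\setminus\{x_0\}$ the family $(f_i)_{i\in I_y}$ is not très séparante (else $y$ would lie in $U\cap S\setminus\{x_0\}=\emptyset$); the first case then gives $(\ddc g)^n|_{U\setminus\{x_0\}}=0$ and hence $(\ddc g)^n|_U=\lambda_{x_0}\delta_{x_0}$ with $\lambda_{x_0}\geq 0$ by positivity.

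The coefficient $\lambda_{x_0}$ is computed by rescaling. Substituting $y=(y'-x_0)/\eps$ in $\int_U\langle(\ddc g_\eps)^n,\mu\rangle$ turns it into the integral of $\langle(\ddc\tilde g_\eps)^n,\mu\rangle$ for $\tilde g_\eps(y)=g_\eps(x_0+\eps y)-g(x_0)$; as $\eps\to 0^+$, $\tilde g_\eps$ converges locally uniformly to $\tilde g(y)=\log\sum_{i\in I_{x_0}}\exp(\vec f_i(y))$, the indices outside $I_{x_0}$ being exponentially suppressed on any compact. Passing to the limit yields $\lambda_{x_0}=\int_{\R^n}\langle(\ddc\tilde g)^n,\mu\rangle$, a canonical invariant of $((\vec f_i)_{i\in I_{x_0}},\mu)$. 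The main technical obstacle is the quantitative justification of this rescaling, in particular the uniform vanishing of $\det\operatorname{Hess}(g_\eps)$ away from neighborhoods of~$S$; it rests on the identity $\operatorname{Hess}(g_\eps)(x)=\eps^{-1}C_\eps(x)$, where $C_\eps$ is the weighted covariance of the gradients $\vec f_i$ under the weights $w_i\propto\exp(f_i/\eps)$, whose determinant is $O(e^{-c/\eps})$ uniformly on compact subsets of $V\setminus S$ since the weights concentrate there on non-très-séparante subfamilies.
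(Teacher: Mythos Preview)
Your proof is correct and follows essentially the same route as the paper: you show $S$ is finite by the same cardinality argument, then localize and observe that near a point where the active family is not très séparante, $g$ factors (after subtracting an active $f_{i_0}$) through a quotient of dimension $<n$, so the approximations $g_\eps$ from Lemma~\ref{lemm.approx-max} satisfy $(\ddc g_\eps)^n=0$ there and hence $(\ddc g)^n=0$. This is exactly the paper's argument.

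The one difference is that your final paragraph goes beyond what the proposition asks: the paper disposes of the last assertion in one line (``évidente'', since the germ of $g$ at $x$ is $\max_{i\in I_x}f_i$, which determines the local Monge--Ampère measure), whereas you sketch a rescaling computation yielding an explicit integral formula for $\lambda_{x_0}$. That computation is really the content of the \emph{next} lemma (Lemma~\ref{MA-delta_0/1}), and the covariance/Hessian estimate you flag as a technical obstacle is not needed here; for the present statement the locality observation already suffices.
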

\begin{proof}
Soit $I$ une partie non vide de~$\{1,\dots,m\}$
telle que la famille $(f_i)_{i\in I}$ soit très séparante.
Fixons $i\in I$.
Chaque équation $f_j(x)=f_i(x)$, pour $j\in I$ ($j\neq i$), définit
un sous-espace affine de~$V$; d'après la propriété~\emph b)
du \S\ref{sss-tres-separante}, l'intersection
de ces sous-espaces est de cardinal au plus~$1$.
Par suite, l'ensemble~$S$ est fini. 

On peut supposer que le vecteur-volume~$\mu$ est strictement positif.
Puisque $g$ est convexe,
le supercourant $(\ddc g)^n$ est une mesure positive sur~$V$.
Nous allons prouver que son support est égal à l'ensemble~$S$.

Soit $x$ un point de~$V$. Soit $U$ un voisinage de~$x$
dans~$V$ sur lequel $f_i<g(x)$ si $i\not\in I_x$,
de sorte que $g=\max_{i\in I_x} f_i$ sur~$U$. 
Choisissons d'autre part $j\in I_x$; posons $f'_i=f_i-f_j$
et $g'=\max_{i\in I_x} f'_i$.
Pour tout $y\in U$, on a
\[ g(y)= \max_{i\in I_x} f_i(y) = f_j(y)+ \max_{i\in I_x} (f_i-f_j)(y)
=f_j(y)+ g'(y). \]
Pour tout~$i\in I_x$, posons $f'_i=f_i-f_j$. La famille $(f_i)_{i\in I_x}$
est très séparante si et seulement si la famille $(f'_i)$ l'est,
c'est-à-dire, d'après la propriété \ref{sss-tres-separante}, \emph c),
si et seulement
si l'application affine~$f'\colon V\to \R^{I_x}$ définie par les~$f'_i$
est injective, car $f'_j=0$.

D'autre part, $\ddc f_j=0$, donc $\ddc g=\ddc g'$.
D'après le lemme~\ref{lemm.approx-max},
$g'$ est limite uniforme sur~$U$ d'une famille~$(g'_\eps)$
de fonctions convexes lisses 
provenant de l'espace affine~$V'_x$ image de~$V$ par l'application affine~$f'$.

Si cet espace est de dimension~$<n$, c'est-à-dire si $x\not\in S$,
on a donc $(\ddc g'_\eps)^n=0$ sur~$U$, d'où $(\ddc g')^n=0$ sur~$U$,
puis $(\ddc g)^n=0$ sur~$U$.

Nous avons ainsi démontré que
le support de la mesure  positive $(\ddc g)^n$ est contenu dans
l'ensemble fini~$S$; elle est donc de la forme $\sum_{x\in S}\lambda_x \delta_x$,
où $(\lambda_x)_{x\in S}$ est une famille de nombres réels positifs.

D'autre part, on a vu qu'au voisinage d'un point~$x\in S$,
la fonction~$g$ ne dépend que des formes~$f_i$,
pour $i\in I_x$; par suite, la mesure $(\ddc g)^n$
ne dépend que de ces formes et du vecteur-volume~$\mu$.

La proposition~\ref{MA-delta_0/1} fournit la formule explicite:
\[ \lambda_x = \int_V \langle (\ddc g'_1)^n,\mu\rangle, \]
où $g'_1 = \log \left( \sum_{i\in I_x}  \exp(f'_i)\right)$.
Comme $f'$ est injective, $g'_1$ est strictement convexe
(lemme~\ref{lemm.approx-max}), donc $\langle(\ddc g'_1)^n,\mu\rangle$
est une mesure strictement positive sur~$V$
(lemme~\ref{lemm.MA-H}).
\end{proof}

\iffalse
La preuve de la proposition~\ref{MA-delta_0/1} contient une formule explicite
pour ces coefficients~$\lambda_x$:
si l'on pose $ g_1 = \log \left( \sum_{i\in I_x}  \exp(f_i)\right)$, on a
\[ \lambda_x = \int_V \langle (\ddc g_1)^n,\mu\rangle \]
pour tout $x\in V$.
\fi

\begin{rema}
Soit $(f_1,\dots,f_m)$ une famille de formes linéaires
sur~$\R^n$ qui contient un repère affine de~$\R^n$. 
Posons $f=\max(f_1,\dots,f_m)$; c'est une fonction 
linéaire par morceaux (dans la terminologie introduite plus bas,
\emph{paralinéaire}) sur~$\R^n$.
Elle est convexe (comme toute borne supérieure d'une
famille de fonctions convexes);
elle est même strictement convexe au sens des fonctions linéaires par morceaux,
c'est-à-dire que sa restriction à aucune droite vectorielle n'est linéaire.
Si les $f_j$ sont définies sur~$\Q$, la fonction~$f$
est donc la fonction d'appui d'un fibré en droites ample~$L$
sur une variété torique~$X$ de tore~$\gm^n$ (\cite{fulton1993}, p.~70, proposition).
Dans ce cas, l'application~$\nabla(h)$ est appelée {\og application
moment\fg} et l'image de~$V$ est un polytope convexe de~$\R^n$
correspondant à la variété torique polarisée~$(X,L)$.
On sait alors que $\vol(\nabla(h)(V))=\deg_L(X)/n!$.
Le courant $(\ddc f)^n$ est donc égal à $\deg_L(X)\delta_0$.
Cela redonne une formule de Lagerberg (\cite{lagerberg2012}, exemple~3.2
et proposition~5.12).
Le lien entre supercourants et théorie de l'intersection apparaîtra clairement 
dans la suite de ce texte.
\end{rema}

\section{Cellules et polyèdres}

\subsection{}\label{ss.KG}
Soit $K$ un sous-corps de~$\R$ et soit $G$ un sous-$K$-espace vectoriel
non nul de~$\R$.
Dans ce travail, nous appelons espace affine un espace affine
réel~$V$ de dimension finie muni d'un couple $(\vec V_K, V_G)$,
où $\vec V_K$ est une $K$-structure sur l'espace
vectoriel~$\vec V$ sous-jacent à l'espace affine~$V$ 
et $V_G$ est une orbite du sous-groupe $\vec V_K\otimes G$ de~$\vec V$
agissant sur~$V$.

Si $V$ et~$W$ sont de tels espaces affines,
les seules applications applications affines $f\colon V\to W$
que nous considérerons auront une partie linéaire $K$-rationnelle
et enverront~$V_G$ dans~$W_G$.
Concrètement, une application de~$\R^n$ dans~$\R^m$ est affine si elle
est de la forme $x\mapsto A\cdot x+b$, où $A\in\MAT_{m,n}(K)$ et $b\in G^m$.

\subsection{}
Dans la suite de ce travail, qui est motivé par la géométrie analytique,
nous ne considérerons que le cas où $K=\Q$
et $G=\R$ même si la plupart des résultats liés à la géométrie paralinéaire
s'étendraient au cas général. 
En géométrie analytique sur un corps valué~$k$,
on pourrait vouloir garder une trace
des paramètre réels qui interviennent dans les définitions 
des espaces analytiques
(«espaces $\Gamma$-stricts», où $\Gamma$ est
un sous-groupe de~$\R^*_+$ contenant  les rayons 
des briques affinoïdes utilisées, non trivial si $\abs{ k^*}=1$),
ce qui justifierait l'étude du cas $K=\Q$
et $G=\Q\log(\abs{k^\times}\cdot\Gamma)$.
Nous au laissons le lecteur intéressé le soin de modifier les énoncés
et leurs preuves.

Pour éviter toute ambiguïté,
nous qualifierons parfois de fonctions $\R$-paralinéaires
celles qui le sont lorsque $K=\R$.

\subsection{}
Soit $V$ un espace affine et soit $f$ une forme affine sur~$V$, non constante.
Les demi-espaces fermés de~$V$ définis par~$V$ sont les parties~$V_+$
et~$V_-$ définies par les inégalités $f(x)\geq0$ et $f(x)\leq 0$ respectivement.
Les demi-espaces ouverts sont définis de même avec les inégalités strictes.

On appelle \emph{cellule}\index{cellule} de~$V$
toute partie non vide de~$V$ qui est intersection d'une famille finie
de demi-espaces fermés de~$V$. 
En particulier, $V$ est une cellule.

Soit $C$ une cellule de~$V$. On appelle \emph{cellule ouverte}\index{cellule ouverte} associée à~$C$,
et on note~$\mathring C$, l'\emph{intérieur}\index{intérieur d'une cellule}
de la cellule~$C$ dans le sous-espace
affine qu'elle engendre. C'est aussi l'ensemble des points de~$C$
en lesquels $C$ est une variété topologique,
et $C$ est l'adhérence de~$\mathring C$.
La \emph{dimension} de~$C$\index{dimension d'une cellule}
est celle de~$\mathring C$. On note aussi $\partial (C)=C\setminus\mathring C$
le \emph{bord} de la cellule~$C$.\index{bord d'une cellule}

On appelle \emph{polyèdre}  d'un espace affine~$V$ toute partie de~$V$
qui est réunion finie de cellules de~$V$.
En particulier, un polyèdre est fermé.

Il résulte immédiatement de la définition d'une cellule
que l'image réciproque d'une cellule par une application affine est une cellule.
Par suite, l'image réciproque d'un polyèdre est un polyèdre.

Une union finie, une intersection finie de polyèdres sont des polyèdres.

Soit $P$ un polyèdre de~$V$.
On dit qu'une application~$h$ de~$P$ à valeurs dans un espace affine~$W$
est \emph{polyédrale} s'il existe une famille finie $(C_i)$ de cellules de~$V$
recouvrant~$P$ telle que, pour tout~$i$, $h|_{C_i}$ soit une application affine.

\begin{prop}\label{prop.cellule-application-ouverte}
Soit $f\colon V\to W$ une application affine entre espaces affines et
soit $C$ une  cellule de~$V$.

\begin{enumerate}
\item L'image $f(C)$ est une cellule de~$W$.

\item Pour tout point
$x$ de $C$ il existe une section continue $g
\colon f(C)\to C$ de $f$ telle que $g(f(x))=x$ 
et une famille finie $(D_i)$ de cellules de~$W$
telles que $f(C)=\bigcup D_i$ et telles que
$g|_{D_i}$ soit affine pour tout $i$.

\item L'application continue $f|_C\colon C\to f(C)$ est ouverte. 
\end{enumerate}
\end{prop}

Ce découpage de $f(C)$ en cellules est en général, inévitable, même 
si l'on n'impose pas le passage de la section
par un point donné.
Par exemple, lorsque $C$ est un tétraèdre de~$\R^3$, projeté sur un quadrilatère
convexe du plan, de sorte que chaque sommet de~$C$ ait pour image
un sommet de~$f(C)$, il n'existe pas de section affine de~$f$
sur $f(C)$. 

\begin{proof}
Commençons par expliquer comment (c) se déduit de (b). 
Pour démontrer que $f|_C\colon C\to f(C)$ est ouverte, il suffit de
démontrer que pour tout $x\in C$ et tout voisinage $U$ de $x$ dans $C$, l'image
$f(U)$ est un voisinage de $f(x)$ dans $f(C)$. 
Or par (b) il existe une section continue $g$ de $f|_C\colon C\to f(C)$
telle que $g(f(x))=x$. L'image réciproque $g^{-1}(U)$ est alors un voisinage
de $x$ dans $f(C)$ contenu dans $f(U)$, ce qui démontre (c). 

Il reste donc à prouver (a) et (b).
Remarquons d'abord que si $f_1\colon V\to V_1$ et $f_2\colon V_1\to V_2$ vérifient
ces assertions, il en est de même de~$f=f_2\circ f_1$.
Soit en effet $C$ une cellule de~$V$ ; posons $D=f_1(C)$
et $E=f_2(D)=f(C)$. 
Alors $D$ est une cellule de~$V_1$, 
donc $E$ est une cellule de~$V_2$.
D'autre part, soit 
$x$ un point de $C$ ; posons $y=f_1(x))$
et $z=f_2(y)=f(x)$. 
Soit $g_2$ une section continue de $f_2$
sur $E$ à valeurs dans $D$ telle que $g_2(z)=y$ et soit $(E_j)$ une famille finie de cellules de~$V_2$
de réunion~$E$ telle que $g_2|_{E_j}$ soit 
affine pour tout $j$. 
Soit $g_1$ une section continue de $f_1$ sur $D$
à valeurs dans $C$ telle que
$g_1(y)=x$ et soit $(D_i)$
une famille finie de cellules de~$V_2$
de réunion~$D$ telle que $g_1|_{D_i}$ soit 
affine pour tout $i$. 
Alors $g=g_1\circ g_2$ est une section continue de $f$
à valeurs dans $C$ 
telle que $g(z)=x$. 
Pour tout $(i,j)$, posons $F_{ij}=E_j\cap g_2^{-1}(D_i)$ ; 
comme $g_2|_{E_j}$ est affine, c'est une cellule de $W$.
Par construction $E$ est la réunion des $F_{ij}$ et $g|_{F_{ij}}$ est affine
pour tout $(i,j)$.

L'application~$f$ se factorise en l'injection $x\mapsto (x,f(x))$
de~$V$ dans~$V\times W$ et en la projection $(x,y)\mapsto y$ de~$V\times W$
vers~$W$. Compte-tenu de ce qui précède, 
cela permet de supposer que $f$ est soit injective, soit une projection.

Si $f$ est injective, l'énoncé est évident, car $f$ induit
un isomorphisme de~$V$ sur~$f(V)$ donc $f(C)$ est une cellule, 
et $f|_C\colon C\to f(C)$ est un homéomorphisme et en particulier
une application ouverte, ce qui démontre~(a) et~(c). 
En ce qui concerne~(b) la section induite par la réciproque de $f\colon V\to f(V)$ convient
(et est la seule !).

Lorsque $f$ est une projection, il suffit, par récurrence
sur la dimension des fibres, de traiter
le cas de la projection $(x,t)\mapsto x$ de~$V\times\R$ dans~$V$.
Par définition d'une cellule, il existe une famille finie  $(\phi_i)$
de formes affines sur~$V$ et une familles $(a_i)$ 
de nombres rationnels
telles que $C$ soit l'ensemble des~$(x,t)\in V\times\R$ tels que 
$\phi_i(x)-a_it\geq 0$ pour tout~$i$.
Notons $I$, $J$, $K$ les ensembles 
d'indices~$i$ tels que $a_i>0$, $a_i<0$, et $a_i=0$
respectivement.
On a 
\[ f(C) = \bigcap_{(i,j)\in I\times J} \{ \phi_j/a_j \leq \phi_i/a_i \}
\cap \bigcap_{k\in K} \{\phi_k\geq 0\}. \]
C'est donc une cellule de~$V$.

Pour $x$ dans $V$ posons 
\[u(x)=\inf_{i\in I}\phi_i(x)/a_i
\quad\text{et}\quad
v(x)=\sup_{j\in J}\phi_j(x)/a_j\]
(si $I$ est vide $u(x)=+\infty$ et si $J$ est vide $v(x)=-\infty$). 
Les fonctions $u$ et $v$ sont des applications continues de~$V$ 
vers $[-\infty; +\infty]$. Soit $(x,t)$ un point 
de $C$. Pour tout $\xi$ dans $V$, posons
$g(\xi)=\sup(v(\xi), \inf(t,u(\xi))$ ; la fonction $g$ est continue et à valeurs
réelles (même si $u$ ou $v$ prennent des valeurs infinies). 
Si $\xi$ appartient à $f(C)$ alors $v(\xi)\leq u(\xi)$ et donc
$v(\xi)\leq g(\xi)\leq u(\xi)$ ; de plus $\phi_k(\xi)\geq 0$ pour tout
$k\in K$, si bien que $(\xi,g(\xi))$ appartient à~$C$. 
Et l'on par ailleurs $g(x)=t$ car $u(x)\leq t\leq v(x)$ puisque $(x,t)\in C$. 
L'application $h=\xi \mapsto (\xi, g(\xi))$ est une section continue de $f$, 
dont la restriction à $f(C)$ prend ses valeurs dans $C$. 
Pour $i\in I$, notons $D_i$ l'ensemble des points~$\xi$ de~$V$ 
tels que $g(\xi)=\phi_i(\xi)/a_i$ ; pour tout $j\in J$, notons~$E_j$  
l'ensemble des points~$\xi$ de~$V$ 
tels que $g(\xi)=\phi_j(\xi)/a_j$ ; et soit $F$ l'ensemble
des points $\xi$ de $V$ tels que $g(\xi)=t$. 
Les $D_j$, les $E_j$ et $F$ sont définis par des conjonctions finies
d'inégalités larges entre fonctions affines, donc sont des cellules
de $V$, et leur réunion est $V$ tout entier. Par construction, la restriction
de $h$ à chacune d'entre elles est affine. Les traces sur $C$ de ces cellules
répondent aux conditions de~(b). 
\end{proof}

\begin{coro}\label{coro.image-polyedre-sections}
Soit $f\colon V\to W$ une application affine entre espaces affines.
Soit $P$ un polyèdre de~$V$. Alors $f(P)$ est un polyèdre de~$W$.
De plus, il existe une famille finie $(D_i)$ de cellules de~$W$
et une famille finie $(g_i)$ d'applications affines de~$W$ dans~$V$
telles que $f(C)=\bigcup D_i$,
$g_i(D_i)\subset C$ et $f\circ g_i$ soit l'identité
sur~$D_i$.
\end{coro}

\begin{coro}\label{coro.projection-voisinage}
Soit $D$ une cellule de~$\R^m\times \R^d$, 
soit $p\colon\R^n\to\R^m$ la projection sur le premier
facteur et soit $C=p(D)$; posons $d=n-m$.
Soit $(x,t)\in D$ un point qui est dans l'intérieur topologique 
de~$p^{-1}(p(y))\cap D$ dans~$p^{-1}(p(y))$. 

\begin{enumerate}
\item
Pour tout point~$\xi\in C$ assez proche de~$x$,
le point $(\xi,t)$ appartient à~$D$.
\item
La cellule $D$ est un voisinage de~$y$ dans~$C\times\R^d$.
\end{enumerate}
\end{coro}
Si l'assertion~\emph b) est plus générale,
sa démonstration commence par établir l'assertion~\emph a).
\begin{proof}
Pour tout $\xi\in C$, on pose $D_\xi=p^{-1}(\xi)\cap D$;
considérée comme une cellule de~$\R^d$. 

\begin{enumerate}
\item
Par hypothèse, il existe $d+1$ points $t_0,\dots,t_d$ de~$D_x$
affinement indépendants tels que le simplexe~$S$ de sommets $t_0,\dots,t_d$
soit voisinage de~$t$ dans~$\R^d$.
Il existe alors des voisinages ouverts $V_0,\dots,V_d$ de $t_0,\dots,t_d$
tels que pour toute famille $(v_0,\dots,v_d)\in V_0\times \dots\times V_d$,
les points~$v_i$ soient affinement indépendants;
compte tenu de la continuité des coordonnées barycentriques d'un point fixé
en fonction d'un repère affine variable, on peut restreindre les~$V_i$
de sorte que pour toute famille $(v_0,\dots,v_d)$ comme ci-desuss,
le point $t$ appartienne à l'enveloppe convexe des~$v_i$.

Pour tout $i\in\{0,\dots,d\}$, $D\cap (C\times V_i)$ est un voisinage
ouvert de~$(x,t_i)$ dans~$D$,
donc son image~$U_i$ est un voisinage de $p(x,t_i)=x$ dans~$C$,
car $p$ induit une application ouverte de~$D$ vers~$C$
(proposition~\ref{prop.cellule-application-ouverte}, \emph c)).
Soit $V=V_0\cap\dots V_d$; c'est un voisinage de~$x$ dans~$C$.
Soit $\xi\in V$.
Pour tout~$i$, il existe par hypothèse un point $v_i\in V_i$
tel que $(\xi, v_i)\in D$.
Par choix des~$V_i$, le point~$(\xi,t)$ appartient à l'enveloppe
convexe des~$(\xi,v_i)$; comme $D$ est convexe, on a $(\xi, t)\in V$.

\item
Choisissons $d+1$ points affinement indépendants~$t_0,\dots,t_d$
de l'intérieur de~$D_x$ dans~$\R^d$ tel que $t$~appartienne à 
l'intérieur topologique de leur enveloppe convexe~$S$.
Appliquons l'assertion~\emph a) à chacun des points $(x,t_i)$. 
Il existe ainsi, pour tout $i$, un voisinage~$V_i$ de~$x$ dans~$C$
tel que pour tout $\xi\in V_i$, le point $(\xi,t_i)$ appartienne à~$D$.
Alors, $V=\bigcap V_i$ est un voisinage de~$x$ dans~$C$ et pour
tout point de~$V$, le point $(\xi,t_i)$ appartient à~$D$.
Par convexité, $D$ contient~$V\times S$. 
C'est donc un voisinage de~$(x,t)$ dans~$C\times\R^d$.
\qedhere
\end{enumerate}
\end{proof}

\subsection{}
Soit $P$ un polyèdre d'un espace affine~$V$.
On appelle dimension de~$P$ la borne supérieure des dimensions
des cellules contenues dans~$P$.
Soit $x$ un point de~$P$; on appelle dimension de~$P$ en~$x$
la borne inférieure des dimensions des polyèdres~$Q$ contenus
dans~$P$ qui sont un voisinage de~$x$ dans~$P$.
On a $\dim(P)=\sup_{x\in P}\dim_x(P)$.
De plus, $\dim(\emptyset)=-\infty$.

\subsection{}\label{defi.morphisme-pl}
Soit $V$ un espace affine et soit $P$ un polyèdre de~$V$.
Soit $W$ un espace affine.
On dit qu'une application $f\colon P\to W$ est \emph{paralinéaire}\index{paralinéaire!application ---} (\textsc{pl})
s'il existe une famille localement finie $(C_i)$ de cellules de~$V$
dont la réunion est~$P$ et telle que pour tout~$i$, $f|_{C_i}$
est la restriction  d'une application affine de~$V$ dans~$W$.

Une application paralinéaire est continue.

Si $P$ est compact, une fonction paralinéaire de~$P$ dans~$\R$
est exactement une fonction continue définissable (à paramètres dans~$\R$)
dans le langage des groupes abéliens ordonnés
(\cite{marker2002}, chapitre~3).

\subsection{}
Soit $X$ un espace topologique.
Soit $A$ une partie de~$X$  
et soit $(P_i)_{i\in I}$ une famille de parties de~$A$.
On dit que cette famille 
est un \emph{G-recouvrement}\index{G-recouvrement!d'un espace topologique}
de~$A$
si, pour tout point~$a\in A$, il existe une partie \emph{finie}~$J$
de~$I$ telle que $a\in P_j$ pour tout~$j\in J$ 
et telle que $\bigcup_{j\in J} P_j$ est un voisinage de~$a$ dans~$A$.

\subsection{}\label{def.sous-espace-pl-V}
Soit $V$ un espace affine et soit $X$ une partie de~$V$.
On dit que $X$ est un \emph{sous-espace paralinéaire}\index{paralinéaire!sous-espace}
de~$V$ s'il est G-recouvert par les cellules qu'il contient.

Un sous-espace paralinéaire est localement fermé.
Les sous-espaces paralinéaires fermés sont les polyèdres;
tout sous-espace ouvert est un sous-espace paralinéaire.

\subsection{Espace tangent}
Soit $P$ un sous-espace paralinéaire d'un espace affine~$V$.
Soit~$x\in P$. 
Si~$U$ est un voisinage ouvert suffisamment petit
de~$x$ dans~$P$, le sous-espace affine~$\langle U \rangle$
de~$V$ ne dépend que de~$x$, et pas
de~$U$. On l'appelle \emph{l'espace
tangent à~$P$ en~$x$} et on le note $T_xP$. Pour toute cellule~$C$
contenue dans~$P$ et contenant~$x$, on a~$\langle C \rangle \subset T_xP$. 

Si $\mathscr C$ est une décomposition cellulaire de~$P$ l'espace
affine~$T_xP$
est engendré par les cellules de~$\mathscr C$ contenant~$x$. 
En particulier, s'il existe une cellule~$C$ de~$\mathscr C$
telle que $x\in\mathring C$, alors $T_xP=\langle C\rangle$.

\index{espace tangent!en un point d'un sous-ensemble
paralinéaire}

\begin{lemm}
Soit $C$ une cellule de~$\R^n$ et soit $u\colon C\to\R$
une fonction convexe, semi-continue inférieurement.
Si la restriction à~$\mathring C$ de~$u$ est affine,
alors $u$ est affine.
\end{lemm}
\begin{proof}
Soit $\phi$ une fonction affine sur~$\R^n$ qui coïncide avec~$u$
sur~$\mathring C$; prouvons que $\phi$ coïncide avec~$\phi$ sur~$C$.
Soit $x\in\partial(C)$ et soit~$v$ un vecteur non nul de~$\R^n$
tel que $x+\R_+ v$ rencontre~$\mathring C$.
La fonction  $t\mapsto u(x+tv)$ est définie au voisinage
de~$0$ dans~$\R_+$, elle est convexe et semi-continue inférieurement,
donc continue, si bien que $u(x)=\lim \phi(x+tv)=\phi(x)$.
\end{proof}

\begin{prop}\label{prop.continuite-integrale}
Soit $m$, $n$, $d$ des entiers naturels tels que $n=m+d$,
soit $p\colon\R^n\to\R^m$ et $q\colon\R^n\to\R^d$ les projections 
sur les $m$ premières et $d$~dernières coordonnées.
Les fibres de~$p$ sont identifiées à~$\R^d$, muni de la mesure de Lebesgue.

Soit~$D$ une cellule de~$\R^n$, soit $C=p(D)$;
pour tout $x\in C$, on pose $D_x=D\cap p^{-1}(x)$, considérée
comme cellule de l'espace affine~$p^{-1}(x)$.
Soit $\alpha$ une fonction sur~$D$ à support $p$-compact;
pour tout $x\in C$, posons $u(x)=\int_{C_x} \alpha$.
Soit $r$ appartenant à $\N\cup\{+\infty\}$.
Si $\alpha$ est G-localement
de classe $\mathscr C^r$ (resp. G-localement borélienne), 
il en va de même de~$u$.
\end{prop}
\begin{proof}
En considérant des cellules compacte~$C'$ qui G-recouvrent~$C$
et en remplaçant~$D$ par chacune des cellules $D\cap p^{-1}(C')$, 
on se ramène au cas où la cellule~$C$ est compacte.
On remplace alors~$D$ par une sous-cellule compacte d'image~$C$
qui contient le support de~$\alpha$; ainsi, on est ramené à traiter
le cas où $C$ et~$D$ sont compactes.

On traite  le cas $d=1$; le cas général s'ensuit par récurrence.

Pour $x\in C$, posons $f(x)=\inf\{t\;;\; (x,t)\in C\}$
et $g(x)=\sup\{ t\;;\; (x,t)\in C\}$.
Comme on l'a vu dans la démonstration de l'assertion~\emph c)
de la proposition~\ref{prop.cellule-application-ouverte},
les fonctions~$f$ et~$g$ sont paralinéaires (et en particulier continues),
et $D$ est l'ensemble des points~$(x,t)\in \R^m\times\R^d$
tels que $x\in C$ et $f(x)\leq t\leq g(x)$.
On a donc
\[ u(x) = \int_{f(x)}^{g(x)} \alpha(x,t)\,dt \]
pour tout $x\in C$,
et l'assertion en résulte par les propriétés de régularité
des intégrales dépendant de paramètres.
% 
% Nous traitons le cas général par récurrence sur~$d$.
% Soit $C'$ la projection de~$C$ sur les $n-1$ premières coordonnées
% et définissons~$u'\colon C'\to\R$ de façon analogue à~$u$.
% Par le cas $d=1$, la fonction~$u'$ est G-localement de classe~$\mathscr C^p$.
% D'après le théorème de Fubini, on a 
% $ u(x) = \int_{C'_x} u' d\lambda'$,
% pour tout $x\in D$, d'où la proposition par récurrence.
\end{proof}

\section{Formes différentielles sur un sous-espace paralinéaire}

\subsection{}\label{sss.def.formes.polyt} 
Soit $V$ un espace affine de dimension finie et
soit $U$ un ouvert de~$V$.
Soit~$\omega\in \mathscr F_{V}^{p,q}(U)$
une forme de type~$(p,q)$ sur~$U$.

Soit $C$ une cellule de~$V$ qui est contenue dans~$U$.
Soit $j$ l'injection canonique de~$\langle C\rangle$ dans~$V$.
Par image inverse, $\omega$ définit une $(p,q)$-forme $j^*\omega$
sur~$j^{-1}(U)$; 
on dit que $\omega|_C=0$ si cette forme~$j^*\omega$ est ponctuellement
nulle en tout point de~$C$ (\S\ref{valeur-ponctuelle}).
Alors $\omega|_D=0$ pour toute cellule~$D$ contenue dans~$C$.

Soit $P$ un sous-espace paralinéaire de~$U$.
On dit que $\omega|_P=0$ si l'on a $\omega|_C=0$ pour toute
cellule~$C$ de~$V$ qui est contenue dans~$P$.  Dans ce cas,
on a $\omega|_Q=0$ pour tout sous-espace paralinéaire
qui est contenu dans~$P$.

\begin{lemm}
\label{lemm.PL-separation-prefaisceau-plonge}
Soit $(P_i)$ un G-recouvrement de~$P$ par des sous-espaces
paralinéaires de~$P$. Si $\omega|_{P_i}=0$
pour tout~$i$, alors $\omega|_P=0$.
\end{lemm}
\begin{proof}
Soit donc~$C$ une cellule de~$V$ qui est contenue dans~$P$;
démontrons que $\omega|_C=0$. 
Alors $(P_i\cap C)$ est un G-recouvrement de~$C$ 
par des sous-espaces paralinéaires
de~$C$; on a $\omega|_{P_i\cap C}=0$ pour tout~$i$.
Il existe alors un G-recouvrement de~$C$ par des cellules~$Q_j$
telles que $\omega|_{Q_j}=0$ pour tout~$j$.
Soit $x$ un point de~$C$. Par définition d'un G-recouvrement,
il existe un ensemble fini~$J$ d'indices~$j$ tel que $x\in \bigcup_{j\in J}
Q_j$ et $\bigcup_{j\in J} Q_j$ soit un voisinage de~$x$ dans~$C$.
Il existe alors~$j\in J$ tel que $\dim(Q_j)=\dim(C)$; par suite,
$\langle Q_j\rangle =\langle C\rangle$. 
Il résulte alors de la définition que la restriction de~$\omega$
à $\langle C\rangle\cap U$ s'annule en tout point de~$Q_j$,
et en particulier de~$x$. Ceci valant pour tout~$x\in C$,
on a $\omega|_C=0$, comme il fallait démontrer.
\end{proof}

\subsection{}
Soit $V$ un espace affine de dimension finie et
soit $P$ un sous-espace paralinéaire de~$V$.
Notons $j$ l'inclusion de~$P$ dans~$V$.

On définit $\mathscr N_P^{p,q}$ comme le sous-faisceau
de~$\mathscr F^{p,q}$ dont les sections sur un ouvert~$U$
sont les formes~$\omega$ telles que $\omega|_{P\cap U}=0$.

Soit $\mathscr G_V$ une sous-$\mathscr A_V$-algèbre de~$\mathscr F_V$.
On définit alors le faisceau~$\mathscr G_P^{p,q}$
des formes de type~$(p,q)$ à coefficients dans~$\mathscr G_V$ sur~$P$ 
par la formule
\[ \mathscr G_P^{p,q} = j^{-1} \left(
\mathscr G_V^{p,q}/(\mathscr  G_V^{p,q}\cap\mathscr N_P^{p,q})\right)
.\]
C'est un sous-faisceau du faisceau~$\mathscr F_P^{p,q}$.

\index{forme lisse!sur un espace paralinéaire}
Une section de ce faisceau est appelée \emph{forme}
de type~$(p,q)$  à coefficients dans~$\mathscr G_V$ 
sur le sous-espace paralinéaire~$P$.
Lorsque la classe~$\mathscr G_V$ est la classe~$\mathscr A_V$
des fonctions lisses, resp. la classe~$\mathscr B_V$ des fonctions
boréliennes, resp. celle des fonctions de classe~$\mathscr C^k$,...
on parlera de \emph{forme lisse}, resp. de forme borélienne,
resp. de forme de classe~$\mathscr C^k$,...

Pour tout voisinage ouvert~$U$ de~$P$  dans~$V$ 
et toute forme~$\alpha\in\mathscr G^{p,q}_V(U)$, 
on notera $\alpha|_P$ 
l'image canonique de~$\alpha$ dans~$\mathscr G^{p,q}_{P}(P)$.
D'après la première partie
du lemme suivant, on a $\alpha|_P=0$ 
si et seulement si $\alpha\in\mathscr N_P^{p,q}(U)$,
c'est-à-dire si $\alpha|_P=0$ au sens introduit au début de ce chapitre.

\begin{lemm}\label{lemm.extensions}
Soit $V$ un espace affine de dimension finie,
soit $P$ un sous-espace paralinéaire de~$V$.

Pour tout ouvert~$U$ de~$V$, l'application canonique
\[ \mathscr G_V^{p,q}(U)/ \left( \mathscr G_V^{p,q}(U)\cap \mathscr N_P^{p,q}(U)\right) \to \mathscr G_P^{p,q}(U\cap P) \]
est injective.

Si de plus $U\cap P$ est fermé dans~$U$, cette application est bijective.
% L'injectivité ne demande rien,
% La surjectivité demande juste sous-A_V-module.
\end{lemm}
\begin{proof}
Démontrons que l'application donnée est injective.
Soit $\omega\in\mathscr G_V^{p,q}(U)$ d'image nulle
dans~$\mathscr G_P^{p,q}(U\cap P)$ et démontrons
que $\omega\in\mathscr N_P^{p,q}(U)$.
Il existe donc une famille~$(U_i)$ d'ouverts de~$U$
telle que $(U_i\cap P)$ soit un recouvrement de~$U\cap P$
et telle que $\omega|_{U_i}\in\mathscr N_P^{p,q}(U_i)$
pour tout~$i$.
On a alors $\omega|_{P\cap U_i}=0$ pour tout~$i$,
d'où $\omega|_P=0$ (lemme~\ref{lemm.PL-separation-prefaisceau-plonge}).

Supposons maintenant que $U\cap P$ est fermé dans~$U$
et démontrons que cette application est surjective.
Commençons par observer que 
$\mathscr G_P^{p,q}\cap\mathscr N_P^{p,q}$
est un $\mathscr A_V$-module, donc est fin. Par suite, sa
cohomologie est triviale sur tout ouvert, si bien que 
l'application canonique
\[ \mathscr G_V^{p,q}(U) \to (\mathscr G_V^{p,q}/\mathscr G_V^{p,q}\cap\mathscr N_P^{p,q})(U) \]
est surjective
pour tout ouvert~$U$ de~$V$.

Soit $\omega\in\mathscr G_P^{p,q}(U\cap P)$.
Par définition et par ce qui précède, $\omega$ est donnée
par une famille $(U_i)$ d'ouverts de~$U$
recouvrant~$U\cap P$ et, pour tout~$i$, un élément
$\omega_i$ de $\mathscr G_V^{p,q}(U_i)$
sujets à la condition de compatibilité suivante: pour tout couple~$(i,j)$,
il existe une famille $(U_{i,j}^k)_k$ d'ouverts de~$U_i\cap U_j$
recouvrant $U_i\cap U_j\cap P$ telle que 
$\omega_i|_{U_{i,j}^k}-\omega_j|_{U_{i,j}^k}\in\mathscr N_P^{p,q}(U_{i,j}^k)$ pour tout~$k$.
Posons $V_{i,j}=\bigcup_k U_{i,j}^k$; 
on a 
$\omega_i|_{V_{i,j}}-\omega_j|_{V_{i,j}}\in \mathscr N_P^{p,q}(V_{i,j})$.
Puisque $V_{i,j}$ est un voisinage de~$U_i\cap U_j\cap P$ 
dans~$U_i\cap U_j$, 
on a donc $\omega_i|_{U_i\cap U_j}-\omega_j|_{U_i\cap U_j}\in\mathscr N_P^{p,q}(U_i\cap U_j)$.
Comme $P$ est fermé dans~$U$, $U_0=U\setminus P$ est un ouvert de~$U$;
Adjoignons-le à la famille~$(U_i)$ 
et considérons une partition de l'unité $(\phi_i)$ sur~$U$
formée de fonctions lisses et
subordonnée au recouvrement~$(U_i)$ de~$U$ ainsi obtenu.
Le support de~$\phi_0$ est disjoint de~$P$, donc $\phi_0$ est nulle au voisinage de~$P$.
La fonction $\phi=\sum_{i\neq 0}\phi_i=1-\phi_0$ est donc
égale à~$1$ au voisinage de~$P$.
Pour tout~$i\neq0$, le support de la section~$\phi_i\omega_i$ est contenu dans~$U_i$
et est contenu dans le support de~$\phi_i$ qui est fermé dans~$U$,
de sorte qu'il est lui-même fermé dans~$U$;
notons $(\phi_i\omega_i)^U$ le prolongement (par zéro) à tout l'espace
de cette section.
Posons $\omega'=\sum_{i\neq0} (\phi_i\omega_i)^U$.
Pour prouver que $\omega'|_P=\omega$,
il suffit de vérifier que pour tout~$i\neq0$, 
$\omega'|_{U_i}$ coïncide avec~$\omega_i$ modulo~$\mathscr N_P^{p,q}$.
Fixons un indice~$i\neq0$.
La famille~$(\phi_j|_{U_i})_j$ est une partition
de l'unité sur~$U_i$ subordonnée au recouvrement~$(U_i\cap U_j)_j$, et l'on a
\begin{align*}
\omega'|_{U_i} 
&= \sum_{j\neq0} (\phi_j \omega_j)^U|_{U_i} \\ 
&= \sum_{j\neq0} (\phi_j \omega_j|_{U_i\cap U_j})^{U_i} \\ 
& = \sum_{j\neq0} (\phi_j (\omega_j-\omega_i)|_{U_i\cap U_j})^{U_i} 
+\sum_{j\neq0} \phi_j\omega_i |_{U_i}\\
&  =\sum_{j\neq0} (\phi_j (\omega_j-\omega_i)|_{U_i\cap U_j})^{U_i} 
-\phi_0 \omega_i +\omega_i
\end{align*}
Puisque $\omega_j-\omega_i\in\mathscr N_{P}^{p,q}(U_i\cap U_j)$,
la forme $(\phi_j (\omega_j-\omega_i))^{U_i}$
appartient à $\mathscr N_P^{p,q}(U_i)$.
Comme $\phi_0$ est nulle au voisinage de~$P$,
le terme $-\phi_0\omega_i$ appartient à~$\mathscr N_P^{p,q}(U_i)$.
Cela prouve que $\omega'|_{U_i}$ coïncide avec~$\omega_i$ au voisinage de~$P$
modulo~$\mathscr N_P^{p,q}$
et conclut la démonstration.
\end{proof}

\subsection{}
Soit $P$ un sous-espace paralinéaire de~$V$.
Notons $\mathscr G_{P}$ le faisceau $\mathscr G^{0,0}_P$.
Par construction, $\mathscr G_P$ est le sous-faisceau du
faisceau des fonctions numériques sur~$P$ qui sont 
localement restriction d'une fonction de~$\mathscr G_V$ au voisinage de~$P$.
En vertu du lemme~\ref{lemm.extensions},
ses sections sont même globalement restriction d'une telle
fonction.
% est un $\mathscr A_V$-sous-module de~$\mathscr F_V$, 
Elles seront appelées \emph{fonctions de classe~$\mathscr G_V$} sur~$P$,
et \emph{fonctions lisses} lorsque $\mathscr G_V=\mathscr A_V$.

\subsection{}
Une forme nulle sur un sous-espace paralinéaire
étant nulle sur tout sous-espace, on dispose pour tout couple~$(P,Q)$
de sous-espaces paralinéaires de~$V$ tel que $Q\subset P$,
d'une application de restriction
de~$\mathscr G_P^{p,q}(P)$  dans~$\mathscr G_Q^{p,q}(Q)$.
On obtient ainsi un préfaisceau sur~$V$ pour la G-topologie,
que l'on note encore, par abus, 
$\mathscr G_{V}^{p,q}$. C'est un sous-G-préfaisceau
du G-préfaisceau~$\mathscr F_{V}^{p,q}$.

Soit~$P$ un sous-espace paralinéaire de~$V$.
Observons que si $Q$ est un ouvert de~$P$,
on a $\mathscr G_Q^{p,q}(Q)=\mathscr G_P^{p,q}(Q)$;
plus généralement, le faisceau~$\mathscr G_Q^{p,q}$ est la restriction à~$Q$
du faisceau~$\mathscr G_P^{p,q}$.
Autrement dit, la restriction à la catégorie des ouvert de~$P$
du G-préfaisceau $\mathscr G_{V}^{p,q}$.
est le faisceau~$\mathscr G_P^{p,q}$.
Cela justifie de noter encore
$\mathscr G_{P}^{p,q}$ la restriction à~$P$
du G-préfaisceau~$\mathscr G_{V}^{p,q}$.

La somme directe $\mathscr F^{*,*}_{V}$ est muni
d'une structure naturelle de $\mathscr A_V$-algèbre graduée
et $\mathscr G_{V}^{*,*}$ en est une sous-algèbre
graduée;
c'est en particulier le cas si $\mathscr G_V=\mathscr A_V$.

\subsection{}
L'opérateur~$\mathrm J$ passe au quotient par~$\mathscr N_P^{*,*}$,
pour tout sous-espace paralinéaire~$P$ de~$V$.
Il induit donc un opérateur~$\mathrm J$ sur
le G-préfaisceau~$\mathscr F^{*,*}_{V}$.
C'est un morphisme d'algèbres.
On dit encore qu'une forme~$\omega$ est symétrique si elle vérifie
$\mathrm J\omega=\omega$.

\subsection{}
Soit $\mathscr G_V$ et $\mathscr H_V$ des sous-faisceaux 
en $\R$-vectoriels de~$\mathscr F_V$
tels que les dérivées partielles de toute fonction de~$\mathscr G_V$
soient définies et appartiennent à~$\mathscr H_V$.
Alors, les opérateurs~$\di$ et~$\dc$ 
induisent deux opérateurs différentiels~$\di$ et~$\dc$
de $\mathscr G_{V}^{*,*}$ vers~$\mathscr H^{*,*}_{V}$.
Si $\mathscr G_V\subset \mathscr C^2_V$, ces opérateurs vérifient
$\di\di=\dc\dc=0$ et $\di\dc=-\dc\di$.

Lorsque $\mathscr G_V=\mathscr H_V=\mathscr A_V$ sont le faisceau des fonctions lisses sur~$V$, les opérateurs $\di$ et~$\dc$
sont des dérivations au sens des algèbres graduées.

Les formules du lemme~\ref{lemm.d'-d''-J} restent valables dans ce contexte.

\begin{lemm}\label{lemm.formes-nulles-local}
Si $P$ est un sous-espace paralinéaire de~$V$ tel que $\dim(P)<\max(p,q)$,
le G-préfaisceau $\mathscr F^{p,q}_P$ est nul.
\end{lemm}
\begin{proof}
Soit $Q$ un sous-espace paralinéaire de $P$ et 
soit $\alpha$ un élément de~$\mathscr  F^{p,q}_P(Q)=\mathscr F^{p,q}_Q(Q)$;
prouvons que $\alpha=0$.
Comme on peut raisonner localement sur~$Q$, on peut supposer que $\alpha$
est donné par une forme $\omega\in\mathscr F^{p,q}_V(W)$,
où $W$ est un voisinage ouvert de~$Q$ dans~$V$.
Pour toute cellule~$C$ de~$V$ qui est contenue dans~$Q$,
one a $\dim(C)\leq \dim(P)<\max(p,q)$,
et donc $\omega|_C=0$.
Cela démontre que $\omega|_{Q}=0$, d'où le lemme.
\end{proof}

\subsection{}
Soit $V$ et~$V'$ des espaces affines et soit $\phi\colon V\to V'$
une application affine.
 
Soit $P$ et~$P'$ des sous-espaces paralinéaires 
de~$V$ et~$V'$ respectivement  tels que $\phi(P)\subset P'$.
L'image par~$\phi$ d'une cellule  de~$V$ étant une cellule de~$V'$,
on en déduit que si une forme~$\alpha$ de type~$(p,q)$ au voisinage de~$P'$
est nulle  en restriction à~$P'$, 
son image réciproque (qui est définie au voisinage
de~$P$) est nulle en restriction à~$P$.
Il en résulte une application linéaire $\phi^*\colon \mathscr F^{p,q}_{P'}(P')
\to \mathscr F^{p,q}_P(P)$. On construit ainsi
un morphisme de G-préfaisceaux
$\phi^*\colon  \mathscr F^{p,q}_{V} \to
\phi_*\mathscr F^{p,q}_{V'}$ sur~$V$.

L'homomorphisme~$\phi^*$ respecte 
les structures de modules ou d'algèbres graduées,
commute à l'involution~$\mathrm J$,
et aux opérateurs différentiels~$\di$ et~$\dc$ lorsque
ceux-ci sont définis.

On a $\id^*=\id$ et $(\phi\circ \psi)^*=\psi^*\circ \phi^*$.

Soit $\mathscr G_V$ une sous-$\mathscr A_V$-algèbre de
de~$\mathscr F_V$.
Si une forme $\omega$ est à coefficients dans~$\mathscr G_V$, $\phi^*\omega$
est à coefficients dans la $\mathscr A_{V'}$-algèbre~$\mathscr G_{V'}$
engendrée par les fonctions sur~$V'$ 
qui sont localement de la forme $u\circ\phi$, 
où $u$ est une section de~$\mathscr G_V$. En particulier, si $\omega$
est lisse, resp. de classe~$\mathscr C^k$, resp. borélienne,
il en est de même de~$\phi^*\omega$.

Soit $f\colon P\to P'$ l'application paralinéaire déduite
de~$\phi$ par restriction.
Le morphisme~$\phi^*$ induit un  morphisme de G-préfaisceaux sur~$P$
de~$\mathscr F^{p,q}_{P}$ dans~$f_*\mathscr F^{p,q}_{P'}$.
On vérifie qu'il ne dépend que de~$f$, et pas de~$\phi$;
on le note naturellement~$f^*$.

% Ces homomomorphismes respectent les structures de modules 
% gradués, et d'algèbres graduées si $\mathscr F$ est stable par multiplication.
% Ils commutent à l'involution~$J$,
% et aux opérateurs différentiels~$\di$ et~$\dc$.
% lorsque $\mathscr F=\mathscr A$. 
 
Si $P$ et $Q$ sont deux
sous-ensembles linéaires par
morceaux de $V$ tels que $Q\subset P$
et si $j$ désigne l'inclusion de~$Q$ dans~$P$, observons que l'on a
$j^*\alpha=\alpha|_Q$.

\begin{rema}\label{rema.G-loc-inj}
Si l'application~$f$ possède G-localement une section,
alors l'homomorphisme $f^*\colon \mathscr F_{P'}^{p,q}(P')\to\mathscr F_P^{p,q}(P)$ 
est injectif.
C'est notamment le cas lorsque
l'application~$f$ est compacte et surjective, 
d'après la proposition~\ref{prop.sections-G-locales}.
\end{rema}
% \begin{proof}
% Pour tout point~$x$ de~$P'$ et toute cellule~$C'$ de~$P'$ contenant~$x$, 
% il existe une cellule~$C$ de~$P$ dont l'image par~$f$ contient~$x$
% et a même dimension que~$C$.
% 
% Soit $U$ un voisinage de~$P'$ dans~$V'$ et soit $\omega\in\mathscr A^{p,q}(V')$
% telle que $(\phi^*\omega) |_P$; démontrons que $\omega|_{P'}=0$.
% Soit $C'$ une cellule compacte de~$P'$; soit $x$ un point de~$C'$.
% Soit $\mathscr C$ une décomposition cellulaire finie de $P\cap f^{-1}(C')$;
% il existe un point~$y\in f^{-1}(x)$ 
% et une cellule~$C$ de~$\mathscr C$ contenant~$P$ 
% et dont l'image par~$f$ a même dimension que~$C$. On déduit de la nullité
% ponctuelle de~$\omega|_C$ en~$y$ que $\omega|_{C'}(x)=0$.
% Par suite, $\omega|_{C'}=0$. Comme $P'$ est G-recouvert
% par des cellules compactes, on a $\omega|_{P'}=0$.
% \end{proof}

\begin{exem}\label{exem.formes-cellules-valeurs}
Soit $P$ un sous-espace paralinéaire d'un espace affine~$V$.
Soit $W=\langle P\rangle$ le sous-espace affine engendré par~$P$ 
et soit $x_1,\dots,x_m$ des coordonnées sur~$W$.
Les formes 
\[ \di x_{i_1}\wedge\dots \wedge \di x_{i_p}
\wedge \dc x_{j_1}\wedge\dots\wedge \dc x_{j_q} ,\]
où $(i_1,\dots,i_p)$ et $(j_1,\dots,j_q)$ parcourent 
les suites strictement croissantes d'éléments
de $\{1,\dots,m\}$ de longueurs~$p$ et~$q$ respectivement,
forment une famille génératrice 
de $\mathscr F^{p,q}_P(P)$ comme module sur
$\mathscr F_P(P)$.

Pour qu'un élément de $\mathscr F^{p,q}_P(P)$
appartienne à~$\mathscr G^{p,q}_P(P)$, il suffit qu'on puisse
l'écrire comme une combinaison linéaire de ces formes à coeffients
dans~$\mathscr G_P(P)$.

Supposons de plus que $P$ soit une cellule.
Alors, les formes ci-dessus constituent une base de $\mathscr F^{p,q}_P(P)$,
c'est-à-dire que $\mathscr F^{p,q}_P(P)$ s'identifie 
à $\mathscr F_P(P) \otimes_\R A^{p,q}(\vec W)$.
L'évaluation en un point~$x\in P$ fournit alors un
morphisme d'algèbres graduées
$\alpha\mapsto \alpha(x)$ de~$\mathscr F^{*,*}_P(P)$
dans~$A^{*,*}(\vec W)$.
Pour que l'on ait $\alpha=0$, il faut et il suffit
que l'on ait $\alpha(x)=0$ pour tout $x\in P$.
On dira ainsi que la nullité de~$\alpha$ peut se vérifier \emph{ponctuellement}.

Soit $P'$ une cellule d'un espace affine~$V'$ et soit $f\colon V'\to V$
une application affine telle que $f(P')\subset P$. 
Soit $W'$ l'espace affine engendré par~$P'$;
l'application~$f$ induit un homomorphisme de~$W'$ dans~$W$,
et un homomorphisme linéaire $f^*$ de~$\vec W^*$ dans~$\vec W^{\prime*}$,
d'où un homomorphisme linéaire, toujours noté~$f^*$,
de~$A^{p,q}(\vec W)$ dans~$A^{p,q}(\vec W')$.
Pour toute forme~$\alpha\in \mathscr F^{p,q}_P(P)$
et tout point $y\in P'$, on a $(f^*\alpha)(y)=f^*(\alpha (f(y)))$.
\end{exem}

\begin{exem}\label{exem.forme-pl-produit}
Soit $P$ un sous-espace paralinéaire d'un espace affine
de la forme~$V\times W$.  Notons $p_1$ et $p_2$ les deux projections
de~$V\times W$ sur~$V$ et~$W$, 
choisissons des coordonnées~$(x_i)$ sur~$V$ et~$(y_j)$ sur~$W$.
D'après l'exemple précédent, les formes du type $\di x_i$, $\di y_j$,
$\dc x_i$ et $\dc y_j$ engendrent~$\mathscr F^*_P(P)$ comme algèbre
sur $\mathscr F_P(P)$.
On en déduit que les formes sur~$P$ du type $p_1^*\alpha \wedge p_2^*\beta$,
où $\alpha$ et $\beta$ sont des formes sur~$V$ et~$W$,
engendrent~$\mathscr F^*_P(P)$ comme module sur $\mathscr F_P(P)$.
\end{exem}

\begin{rema}\label{rema.prefaisceau-plonge}
Observons que le G-préfaisceau $\mathscr F^{p,q}_{{V}}$  est séparé.

Soit en effet un sous-espace paralinéaire~$P$ de~$V$
et soit $(P_i)$ un G-recouvrement de~$P$ par des sous-espaces paralinéaires.
Soit $\omega\in\mathscr F^{p,q}_P(P)$
et supposons que $\omega|_{P_i}=0$ pour tout~$i$; démontrons
que $\omega=0$.
Comme $\mathscr F^{p,q}_P$ est un faisceau,
il suffit de prouver que $\omega$ est nulle localement,
ce qui permet de supposer que~$\omega$
provient d'une forme~$\alpha$ définie sur un voisinage~$U$ de~$P$.
Dans ce cas, l'assertion a été démontrée au \S\ref{sss.def.formes.polyt}.

Cependant, ce G-préfaisceau n'est en général 
pas un faisceau pour la G-topologie de~$P$,
comme on le constate déjà
lorsque $\mathscr G_V=\mathscr A_V$, $\dim(V)>0$ et $P=V$:
une fonction $\mathscr C^\infty$ par morceaux sur~$V$
n'est pas $\mathscr C^\infty$ en général.
\end{rema}

\subsection{}\label{equerre}
On notera que la notion de fonction (et donc de forme) lisse 
sur un sous-ensemble paralinéaire d'un espace affine dépend de la façon
dont ce sous-ensemble est plongé.
\begin{figure}[htb]
\centering
\begin{tikzpicture}[scale=1]
\draw (-1.5,0) -- (1.5,0);
\draw (0,-0.5) -- (0,1.5) ;
\draw[very thick] (-1,0)--(0,0)--(1,1);
\draw (0.5,0.5) node[above] {$P$};
\end{tikzpicture}
\hfil
\begin{tikzpicture}[scale=1]
\draw (-1.5,0) -- (1.5,0);
\draw (0,-0.5) -- (0,1.5) ;
\draw[very thick] (-1,0)--(0,0)--(1,0);
\draw (0.5,0) node[above] {$Q$};
\end{tikzpicture}
\par\end{figure}
Soit par exemple~$S$
le segment~$[(-1,0), (0,0)]$
de~$\R^2$, soit~$P$ la
réunion
de~$S$ et du segment~$[(0,0),(1,1)]$,
et soit~$Q=[-1,1]$ l'image
de~$P$ dans~$\R$ par la première projection~$p\colon\R^2\ra\R$.
L'application~$p$ induit un isomorphisme d'espaces paralinéaires
de~$P$ sur~$Q$ (c'est-à-dire un homéomorphisme 
modulo lequel les fonctions paralinéaires sur la source et le but sont les mêmes). 

La fonction~$f$ sur~$Q$ définie par $f(x)=\max(x,0)$ est continue,
mais n'est pas lisse. En revanche, la fonction $f\circ p$
sur~$P$ est lisse, comme restriction de la seconde coordonnée.

% Notons également qu'il faut être prudent 
% avec les évaluations ponctuelles des formes en les
% points anguleux des polytopes. Par exemple, soit~$O$ l'origine de~$\R^2$ et
% soit~$\omega$
% la forme~$\di y$ sur
% le polytope~$P$. 
% On a
% \[T_OP=\R^2, \;\;A^{1,0}(\vec{T_OP})=\R \di x \oplus \R \di y
% \;\;
% \text{et}\;\;
% \omega(O)=\di y\neq 0.\]
% 
% Par contre,
% \[T_OS=\R, \;\;A^{1,0}(\vec{T_OS})=\R \di x 
% \;\;
% \text{et}\;\;
% \omega|_S(O)=0.\]

\begin{prop}\label{prop.pl-lisse-polytope}
Soit $P$ un sous-espace paralinéaire de~$V$ et soit $f$ une fonction
sur~$P$ qui est simultanément lisse et paralinéaire. 
Pour tout point~$x$ de~$P$, 
il existe une forme affine~$g$ sur~$V$ telle que $f$ et~$g|_P$
coïncident au voisinage de~$x$.
\end{prop}
\begin{proof}
Soit $\mathscr C$ une famille finie de cellules de~$V$ contenant~$x$
dont la réunion~$Q$ est un voisinage de~$x$ 
et telles que pour tout $C\in\mathscr C$,
la restriction de~$f$ à~$C$ soit affine. 
Nous allons démontrer qu'il existe une forme affine~$g$ sur~$V$
telle que $g|_Q=f|_Q$.
Quitte à remplacer~$V$ par son sous-espace affine engendré par~$Q$,
on suppose que $\langle Q\rangle=V$; pour simplifier les notations,
on suppose aussi que $V$ est un espace vectoriel, que $x=0$ et que $f(x)=0$.
Soit $\phi$ une fonction~$\mathscr C^\infty$ sur~$V$ au voisinage de~$Q$
telle que $\phi|_Q=f|_Q$ et soit $g$ la différentielle de~$\phi$ en l'origine.
Pour toute cellule~$C\in\mathscr C$, la fonction $f|_C$ coïncide avec~$g|_C$
car $f|_C$ est linéaire. On a donc $f|_Q=g|_Q$.

L'application $g\colon V\to\R$ est linéaire, mais elle n'est pas 
a priori rationnelle pour  la $\Q$-structure implicitement considérée sur~$V$.
Démontrons que c'est pourtant le cas. Pour $C\in\mathscr C$,
le sous-espace vectoriel~$V_C$ engendré par~$C$ est $\Q$-rationnel,
par définition des sous-espaces paralinéaires et des cellules que l'on considère,
de même que $f|_C$, considérée comme une forme linéaire~$f_C$ sur~$V_C$.
Comme la somme des espaces~$V_C$ est égale à~$V$, on a par dualité une
injection linéaire $\iota\colon V^\vee \to \bigoplus V_C^\vee$.  La démonstration prouve
que $g$ est l'antécédent de la famille $(f_C)$ dans~$V^\vee$.
Comme les $f_C$ sont $\Q$-rationnelles, de même l'application~$\iota$,
la forme~$g$ est $\Q$-rationnelle.
\end{proof}

\section{G-formes positives sur un sous-espace paralinéaire}

\begin{defi}
Soit $V$ un espace affine de dimension finie sur~$\R$
et soit $P$ un sous-espace paralinéaire de~$V$.
Soit $\omega$ une
forme  de type~$(p,p)$ sur~$P$.
On dit que $\omega$  est \emph{positive} (\resp \emph{faiblement positive},
\resp \emph{fortement positive}) 
si pour toute cellule~$C$ contenue dans~$P$
et tout point~$x\in C$, 
l'élément~$\omega|_C(x)$ 
de~$A^{p,p}(\overrightarrow{\langle C\rangle})$
est une $(p,p)$-forme positive (\resp faiblement positive,
\resp fortement positive) 
sur l'espace vectoriel~$\overrightarrow{\langle C\rangle}$
au sens de la définition~\ref{defi.positive-gt}.
\end{defi}

Compte tenu du lemme~\ref{lemme.image.inverse}
et de la compatibilité des morphismes d'évaluation à la restriction
à une sous-cellule, il suffit
de vérifier cette condition pour toute cellule maximale
d'une décomposition cellulaire de~$P$.

En particulier, ce sont des notions G-locales:
soit $\alpha\in\mathscr F^{p,p}_P(P)$ et soit $(P_i)$ un G-recouvrement
de~$P$, \emph{pour que $\alpha$ soit positive, il faut et il suffit
que $\alpha|_{P_i}$ soit positive pour tout~$i$.}
Comme $P$ est paracompact (c'est un sous-espace de~$\R^n$!),
l'assertion se déduit
de la proposition~\ref{prop.decomp-cell}
qui affirme l'existence d'une décomposition cellulaire 
adaptée au  G-recouvrement~$(P_i)$.
Les cas des formes faiblement positives et fortement positives
sont analogues.

\begin{prop}\label{prop.diff-positive}
Soit $P$ un sous-espace paralinéaire d'un espace affine~$V$.
\begin{enumerate}
\item
Pour toute forme lisse~$\alpha$, symétrique de type~$(p,p)$ sur~$P$,
il existe des formes lisses positives
$\alpha_1$ et~$\alpha_2$
telles que $\alpha=\alpha_1-\alpha_2$.
\item Supposons que $P$ soit compact
et soit $u$ une fonction lisse sur~$P$. Il existe
des fonctions lisses~$u_1$ et~$u_2$ sur~$P$
telles que $u=u_1-u_2$ et telles que les formes $\ddc u_1$
et $\ddc u_2$ soient positives.
\end{enumerate}
\end{prop}
\begin{proof}
\begin{enumerate}
\item
On peut supposer que $V=\R^n$.
Soit $\lambda$ la $(1,1)$-forme $\ddc\norm{x}^2$ sur~$\R^n$.
On a 
\[ \lambda = 2 \sum_{i=1}^n \di x_i \wedge \dc x_i \ ;\]
elle est donc symétrique et définie positive au sens
du paragraphe~\ref{subsec.formes-positives}.
D'après la proposition~\ref{prop.harvey},
la forme $\lambda^p$ appartient à l'intérieur 
du cône des formes de type~$(p,p)$ 
positives sur~$\R^n$.
Par  suite, il existe pour tout point~$x$ de~$P$ 
un voisinage ouvert~$U_x$ 
de~$x$ dans~$P$ et un nombre réel strictement positif~$c_x$
tel que $\alpha+c_x\lambda^p$ soit une forme positive
sur~$U_x$.

Soit $U$ une partie ouverte de~$\R^n$ telle que $P$ soit un sous-espace
fermé de~$U$. Il est classique qu'il existe une fonction $\psi\colon U\to\R_+$, 
de classe~$\mathscr C^\infty$ et propre. (Cela se voit en considérant
un recouvrement localement fini de~$U$ par des boules ouvertes
relativement compactes dans~$U$, en prenant l'inverse 
de la somme de fonctions-plateaux supportées par ces boules,
en ajoutant $\norm x^2$.)
Par compacité, il existe alors pour tout entier~$k$
un nombre réel~$c_k>0$ tel que 
pour tout $x\in P\cap \psi^{-1}([0,k])$, 
il existe $y\in P$ tel que $x\in U_y$
et $c_k\geq c_y$. Soit alors $\phi\colon\R_+\to\R$
une fonction de classe~$\mathscr C^\infty$ 
telle que $\phi(t)\geq c_k$
si $t\leq k$. Pour tout $x\in P$, la forme $\alpha(x)+\phi(\psi(x))\lambda(x)^p$
sur~$\R^n$ est positive. Il suffit donc
de poser $\alpha_1=\alpha+ (\phi\circ \phi)\lambda^p$
et $\alpha_2 = (\phi\circ\phi)\lambda^p$.

% (Le critère du déterminant donné par Lagerberg --- où 
% il ne sert à rien de supposer que $\alpha$ est symétrique ---
% dirait que  cette forme est positive.)

\item
Posons $p=1$ et $\alpha=\ddc u$.
Comme on a supposé $P$ compact, les arguments
de la première partie prouvent qu'il existe un nombre réel~$c>0$
tel que  la forme $\alpha+c \lambda$ soit fortement positive sur~$P$.
Il suffit alors de $u_1=u+c \norm x^2$
et $u_2=c \norm x^2$, d'où la proposition.
\qedhere
\end{enumerate}
\end{proof}

\begin{prop}\label{prop.produit-formes-positives-pl}
Soit $P$ un sous-espace paralinéaire d'un espace affine~$V$
et soit $\omega_1,\dots,\omega_s$ des formes  sur~$P$;
posons $\omega=\omega_1\wedge\dots\wedge\omega_r$.

\begin{enumerate}
\item
Si les $\omega_j$
sont toutes fortement positives, alors $\omega$
est fortement positive.

\item
Si les $\omega_j$ sont toutes positives, alors $\omega$ est positive.

\item 
Si les $\omega_j$ sont toutes faiblement positives 
et si au plus une d'entre elle n'est pas fortement positive,
alors $\omega$ est faiblement positive.
\end{enumerate}
\end{prop}
\begin{proof}
Par hypothèse, les formes~$\omega_j$ sont 
restrictions de formes au voisinage,de~$P$.
L'assertion se déduit donc cellule par cellule
de la proposition~\ref{prop.produit-positives}.
\end{proof}

\subsection{}
Soit $P$ un sous-espace paralinéaire d'un espace
affine~$V$ et soit $n=\dim(P)$. 
Les notions de formes faiblement positives, positives
et fortement positives coïncident en bidegrés~$(0,0)$ et $(n,n)$;
pour les formes symétriques, 
elles coïncident en bidegrés~$(1,1)$ et  $(n-1,n-1)$.

Cela se démontre par inspection  cellule par cellule.

% \section{Fonctions convexes sur un espace paralinéaire}

\def\CConv{\operatorname{CConv}}

\begin{defi}\label{defi.convexe-pl}
Soit $V$ un espace affine  de dimension finie et
soit $P$ une partie paralinéaire de~$V$.
On définit le faisceau~$\CConv_P$ comme le plus petit sous-faisceau
du faisceau~$\mathscr C_P$ des fonctions continues sur~$P$
qui vérifie les deux propriétés suivantes:
\begin{enumerate}
\item Il contient toute fonction lisse~$u$ sur un ouvert de~$P$
telle que la forme~$\ddc u$ soit positive;
\item Il est stable par limite uniforme.
\end{enumerate}
\end{defi}
Ses sections sont appelées fonctions convexes continues sur~$P$.

\begin{lemm}\label{lemm.convexe-cone}
Soit $V$ un espace affine  de dimension finie et
soit $P$ une partie paralinéaire de~$V$.
Pour tout ouvert~$U$ de~$P$, l'ensemble $\CConv_P(U)$
des fonctions convexes continues sur~$U$ est un sous-cône convexe
de~$\mathscr C(U)$.
\end{lemm}
\begin{proof}
Les fonctions continues~$u$ telles que $\lambda u$ soit convexe, 
pour tout nombre réel positif~$\lambda$, forment un sous-faisceau
de~$\CConv_P$ qui vérifie les conditions~(i) et (ii),
donc est égal à~$\CConv_P$. 
Cela prouve que $\CConv_P(U)$ est un cône
pour tout ouvert~$U$ de~$X$.

Démontrons que si $u$ et $v$ sont des fonctions convexes sur un ouvert~$U$
de~$P$, il en est de même de $u+v$.

Supposons d'abord que $v$ soit lisse.
Les fonctions continues~$f$ telles que $f+v$ soit convexe 
forment un sous-faisceau
de~$\CConv_U$  qui contient les fonctions lisses et est stable
par limite uniforme; il contient donc toutes les fonctions convexes sur~$U$.

Par commutativité de l'addition,
$u+v$ est convexe dès que $u$ ou~$v$ est lisse.

Dans le cas général, les fonctions continues~$f$ telles que $f+v$ 
soit convexe forme un sous-faisceau de~$\CConv_U$ qui contient les fonctions 
lisses convexes d'après le premier cas traité et est stable par limite uniforme;
il contient donc toutes les fonctions convexes sur~$U$;
par suite, $u+v$ est convexe.
\end{proof}

\begin{lemm}
\begin{enumerate}
\item
Si $P$ est une cellule, une fonction continue sur~$P$ est convexe
si et seulement si elle est convexe au sens usuel.
\item
Soit $f\colon W\to V$ une application affine et soit $Q$ un sous-espace
paralinéaire de~$W$ tel que $f(Q)\subset P$.
Pour toute fonction convexe~$u$ sur~$P$, la fonction $u\circ f$ est convexe sur~$Q$.
\end{enumerate}
\end{lemm}
\begin{proof}
\begin{enumerate}
\item
Pour éviter les confusions, nous appellerons ici \pl-convexes
les fonctions convexes au sens de la définition~\ref{defi.convexe-pl},
et simplement convexes celles qui le sont au sens usuel.

Les fonctions de $\CConv_P$ formé des fonctions qui sont convexes
forment un sous-faisceau de~$\CConv_P$.
En effet la convexité est une propriété locale, comme on le voit par 
par réduction à la dimension~1 via le critère de croissance des dérivées
à droite et à gauche.
Ce sous-faisceau contient les fonctions lisses~$u$ telles que $\ddc u\geq 0$;
on retrouve ici le critère hessien de convexité des fonctions~$\mathscr C^2$.
Il est stable par limite uniforme, par la définition directe.
Par suite, ce sous-faisceau coïncide avec~$\CConv_P$:
cela prouve que toute fonction \pl-convexe sur~$P$ est convexe.
 
Inversement, soit $u$ une fonction convexe continue sur~$P$;
nous allons démontrer qu'elle est localement limite uniforme de fonctions
convexes lisses sur~$P$; cela permet de supposer que $P$ est compact
et  nous allons alors montrer qu'elle est limite uniforme de
fonctions convexes lisses sur~$P$. On peut également supposer
que $P$ est d'intérieur non vide;
fixons une origine~$o$ dans l'intérieur de~$P$.
Lorsque $\lambda<1$ tend vers~$1$, les fonctions $u_\lambda\colon
x\mapsto u(\lambda x)$ sont définies au voisinage de~$P$
et convergent uniformément vers~$u$ sur~$P$, car $u$  est uniformément
continue. Cela permet de supposer que $u$ se prolonge au voisinage de~$P$
en une fonction qui est encore continue et convexe ; nous noterons~$u$
un tel prolongement.
Pour tout noyau positif régularisant~$(\rho_\eps)$ sur~$\R^n$,
les convolutions $u*\rho_\eps$ sont définies sur~$P$ pour $\eps$ assez petit,
lisses, et convergent uniformément vers~$u$ lorsque $\eps\to 0$.
Cela conclut la démonstration de~\emph a).

\item
Soit $\mathscr P$ le sous-faisceau de~$\mathscr C_P$
dont les sections sur un ouvert~$U$ sont
les fonctions continues~$u$ sur~$U$ telles que $u\circ f|_{Q\cap f^{-1}(U)}$ 
soit convexe sur~$Q\cap f^{-1}(U)$.
Il contient les fonctions lisses~$u$ telles que $\ddc u\geq 0$,
et est stable par limite uniforme. Il contient donc toutes les fonctions convexes sur~$P$.
\qedhere
\end{enumerate}
\end{proof}

\def\GR{G\textsubscript{$\R$}}
\def\signe{\operatorname{signe}}
\begin{lemm}\label{lemm.extension-G-lisse}
\begin{enumerate}
\item
Soit $C$ une $\R$-cellule d'un espace affine~$V$
et soit $u$ une fonction \GR-lisse sur~$\partial(C)$.
Il existe une fonction \GR-lisse sur~$C$ qui prolonge~$u$.
\item
Soit $K$ un sous-corps de~$\R$, 
soit $C$ une $K$-cellule \emph{compacte} d'un espace affine~$V$
et soit $u$ une fonction $K$-paralinéaire sur~$\partial(C)$.
Il existe une fonction $K$-paralinéaire sur~$V$ qui prolonge~$u$.
\end{enumerate}
\end{lemm}
\begin{proof}
\begin{enumerate}
\item
Choisissons une décomposition $\R$-cellulaire~$\mathscr D$ de~$\partial(C)$
telle que la fonction~$u$ soit lisse en restriction à chaque cellule.
Soit $o$ un point de~$\mathring C$.
Pour chaque cellule~$D$ de dimension~$n-1$ de~$\mathscr D$,
notons~$Q_D$ l'enveloppe convexe de~$D\cup\{o\}$.
Tout point $x\in Q_D$ s'écrit de manière unique 
$x=(1-\lambda) o + \lambda y$, avec $\lambda\in[0;1]$ et $y\in D$;
posons $v_D(x)=\lambda u(y)$. La fonction~$v_D$ est lisse sur~$Q_D$.
Les fonctions~$v_D$ se recollent en une fonction \GR-lisse $v$ 
sur~$C$ qui convient.
% Lorsque $u$ est paralinéaire, les fonctions~$v_D$ sont paralinéaires
% et la fonction~$v$ également.

\item
Lorsque $K=\Q$, l'énoncé est une conséquence immédiate du théorème~3.13
de~\cite{DucrosHrushovskiLoeserEtAl-2024}
qui affirme que $u$ s'exprime comme une combinaison min-max
de l'ensemble des fonctions $K$-affines qui apparaissent
dans une décomposition cellulaire de~$C$ adaptée à~$u$.
On vérifie que la démonstration qui y est donnée
vaut verbatim lorsqu'on y remplace
\emph{groupes abéliens ordonnés divisibles}
par \emph{$K$-espaces vectoriels ordonnés}; 
la référence aux décompositions cellulaires linéaires de~\cite{Eleftheriou-2018}
ne pose pas de problème puisque ce dernier article en démontre
l'existence dans cette généralité.\qedhere
\end{enumerate}
\end{proof}

\begin{rema}
Lorsque la $\R$-cellule~$C$ est rationnelle, nous ne savons
pas si l'on peut construire une fonction G-lisse sur~$C$
qui la prolonge. Par notre approche, 
nous nous heurtons à l'existence de cellules rationnelles
sans décomposition \emph{simpliciale} rationnelle.
\end{rema}

\begin{prop}\label{prop.extension-G-lisse}
Soit $P$ un sous-espace paralinéaire d'un espace affine~$V$.
\begin{enumerate}
\item
Soit $u$ une fonction \GR-lisse sur~$P$. Il existe
une fonction \GR-lisse sur~$V$ qui prolonge~$u$.
\item
Soit $K$ un sous-corps de~$\R$; supposons que $P$ soit $K$-paralinéaire
et soit $u$ une fonction $K$-paralinéaire sur~$P$.
Il existe une fonction $K$-paralinéaire sur~$V$ qui prolonge~$u$.
\end{enumerate}
\end{prop}
\begin{proof}
Dans le cas~\emph a), posons $K=\R$.
Les démonstrations sont alors analogues compte tenu
du lemme~\ref{lemm.extension-G-lisse}.
Choisissons en effet une décomposition cellulaire~$\mathscr C$ de~$Q$
en $K$-cellules compactes qui est adaptée à~$P$.
Pour tout entier~$d$, soit $Q_d$ la réunion de~$P$
et des cellules de~$\mathscr C$ de dimension~$\leq d$.
On construit par récurrence sur~$d$ un prolongement \GR-lisse~$v$ 
(resp. $K$-paralinéaire) de~$u$ à~$Q_d$. 
Pour $d=0$, on pose $v(x)=0$ pour tout sommet~$x\in\mathscr C$
tel que $x\not\in P$ et $v(x)=u(x)$ pour tout $x\in P$.
Supposons $d\geq 1$.
Soit~$v$ un prolongement \GR-lisse (resp. $K$-paralinéaire) de~$u$ à~$Q_{d-1}$.
Soit~$C\in\mathscr C$ une cellule de dimension~$d$ qui n'est pas contenue
dans~$P$ ; par hypothèse, la cellule~$C$ est compacte
et la fonction~$v$ est définie sur~$\partial(C)$.
D'après le lemme~\ref{lemm.extension-G-lisse},
il existe une fonction~$v_C$ sur~$C$ qui est \GR-lisse (resp. $K$-paralinéaire)
et prolonge~$v|_{\partial(C)}$.
Les fonctions~$v_C$ se recollent en une fonction~$w$ sur~$Q_d$
qui est \GR-lisse (resp. $K$-paralinéaire).
\end{proof}

\begin{prop}\label{prop.G-lisse-d-convexe}
Soit $P$ un sous-espace paralinéaire compact et convexe d'un espace affine~$V$.

\begin{enumerate}
\item 
Soit $u$ une fonction \GR-lisse sur~$P$.
Il existe des fonctions \GR-lisses~$u_1$ et~$u_2$
sur~$P$, convexes,   telles que $u=u_1-u_2$.
\item
Soit $K$ un sous-corps de~$\R$ et soit $u$ une fonction $K$-paralinéaire
sur~$P$. Il existe des fonctions $K$-paralinéaires~$u_1$  et~$u_2$
sur~$P$, convexes, telles que $u=u_1-u_2$.
\end{enumerate}
\end{prop}
\begin{proof}
Les démonstrations sont analogues.
Dans le cas~\emph a), on pose $K=\R$.
Quitte à remplacer~$V$ par le sous-espace affine engendré par~$P$,
on suppose que $P$ est d'intérieur non vide,
de sorte que le mot « convexe » coïncide avec son sens classique.
Posons $n=\dim(P)=\dim(V)$.
Munissons aussi~$V$ d'une norme euclidienne.

Choisissons une décomposition cellulaire~$\mathscr C$  de~$P$ 
telle que $u$ soit lisse (resp. $K$-paralinéaire) 
en restriction à chaque cellule de~$P$.

Dans le cas~\emph a),
pour tout $c\in\R_+$, la fonction $u+ c \norm{x^2}$ sur~$P$ 
est \GR-lisse ; elle  est lisse sur chaque cellule~$C$ de~$\mathscr C$,
et convexe sur~$C$ si $c$ est assez grand.
Cela permet de supposer que la restriction de~$u$ à chaque cellule~$C$
de~$\mathscr C$ est convexe.

Dans le cas~\emph b), la fonction $u|_C$ est affine,
donc convexe, pour toute cellule $C\in\mathscr C$.

Soit alors~$\mathscr E$ l'ensemble des hyperplans affines engendrés
par les $(n-1)$-cellules de~$\mathscr C$; pour tout $E\in\mathscr E$,
choisissons une forme $K$-affine~$f_E$ de lieu des zéros~$E$.
Pour toute famille~$\lambda=(\lambda_E)_{E\in\mathscr E}$ 
de nombres réels positifs,
posons $v_\lambda =u + \sum_E \lambda_E \abs{f_E}$; nous
allons montrer que $v_\lambda$ est convexe
si les~$\lambda_E$ sont assez grands.

Il s'agit de vérifier
que la fonction $t\mapsto v_\lambda((1-t)a + t b)$ 
de~$[0;1]$ dans~$\R$ est convexe
pour tout choix $(a,b)$ de points de~$P$.
Nous allons commencer par le prouver lorsque le segment $[a;b]$
évite toute cellule de codimension~$\geq 2$.

Fixons deux  tels points~$a,b$ et une famille~$\lambda=(\lambda_E)$ 
et notons $f$ cette fonction.
Pour toute cellule~$C$ de~$\mathscr C$,
notons $I_C$ l'intervalle de~$[0;1]$ formé des $t$ tels
que $(1-t)a+tb \in C$. Ces intervalles recouvrent~$[0;1]$
et la fonction~$f$ est lisse et convexe sur chacun d'entre eux.

Soit $t$ un point de~$\mathopen]0;1\mathclose[$;
posons $c=(1-t)a+tb$.
Les dérivées à droite et à gauche de~$f$ sont croissantes
dans des voisinages à droite et à gauche de~$t$.
Il reste à vérifier que la dérivée à gauche de~$f$ en~$t$ est plus petite
que sa dérivée à droite.

Soit $C,C'$ des  cellules de dimension~$n$ de~$\mathscr C$
contenant~$c$ 
telles que $C$ rencontre le segment~$\mathopen]c;b]$
et $C'$ rencontre le segment~$[a;c\mathclose[$.
Si $C=C'$, alors $f$ a même dérivée à droite et à gauche en~$t$.

Sinon, le point~$t$ appartient à une unique $(n-1)$-cellule~$D$
commune à~$C$ et~$C'$, car le segment~$[a;b]$ évite les cellules
de codimension~$\geq 2$; il reste à s'assurer que $f'_g(t)\leq f'_d(t)$.
On a
\[  f'_g(t) =  \langle \nabla u_{C'}(c), b-a\rangle + \sum_E \lambda_E
   \langle \nabla f_E, b-a\rangle \signe(f_E((1-t^-)a+t^-b))  \]
et
\[  f'_d(t) =  \langle \nabla u_C(c), b-a\rangle + \sum_E \lambda_E
   \langle \nabla f_E, b-a\rangle \signe(f_E((1-t^+)a+t^+b))) \]
où l'on désigne par~$t^+$ et~$t^-$ des points suffisamment proches de~$t$
tels que $t^-<t<t^+$, et la fonction signe vaut~$1$ sur~$\R_+^*$,
$0$ en~$0$ et~$-1$ sur~$\R_-^*$.
Alors,
\begin{multline*}
 f'_d(t)-f'_g(t) = \langle \nabla u_C(c)-\nabla u_{C'})(c),b-a\rangle  \\
{} + \sum_E \lambda_E \langle \nabla f_E, b-a\rangle 
\left( \signe(f_E((1-t^+)a+t^+b))-\signe (f_E((1-t^-)a+t^-b))\right. \end{multline*}
Dans cette formule, la différence de  signes
vaut~$0$ si le segment~$s\mapsto (1-s)a+sb$ ne traverse pas
l'hyperplan affine~$\langle E\rangle$ en~$t$;
dans le cas contraire elle vaut~$2$ si on $\langle\nabla f_E,b-a\rangle >0$
et $-2$ dans le cas contraire; notons-la $2\eps_E$. 
Ainsi, seul l'hyperplan $E=\langle D\rangle$ subsiste dans la somme
et l'on a
\begin{align*}
 f'_d(t)-f'_g(t) & = \langle \nabla u_C(c)-\nabla u_{C'}(c),b-a\rangle 
+ 2 \eps_E \lambda_E \langle \nabla f_E, b-a\rangle  \\
& = \langle \nabla u_C(c)-\nabla u_{C'}(c)+2\eps_E\lambda_E \nabla f_E , b-a \rangle .
\end{align*}

Sur tout point~$x$ de~$D$,
la forme linéaire $\nabla u_C(x)-\nabla u_{C'}(x)$ est nulle
sur l'espace directeur~$E$ de~$\langle D\rangle$, 
car $u_C$ et $u_{C'}$ coïncident sur~$D$,
donc cette forme linéaire est proportionnelle à la partie linéaire de~$f_E$,
qui est $\nabla f_E$.
Pour tout point $x\in D$, on peut ainsi écrire
\[ \nabla u_C(x)-\nabla u_{C'}(x) = \mu_{D}(x) \nabla f_E \]
où $\mu_{D}\colon D\to \R$ est continue. 
La condition $f'_g(t)\leq f'_d(t)$ est donc vérifiée
si l'on a $\eps_{E} \mu_{D}(c)+ 2\lambda_E \geq 0$.
Comme la décomposition cellulaire~$\mathscr C$ est finie
et constituée de cellules compactes, ce sera même le cas dès que
chaque terme de~$\lambda$ est assez grand,
indépendamment du choix du couple de points~$(a,b)$
tel que le segment~$[a;b]$
évite les cellules de codimension~$\geq 2$. 

Choisissons un tel~$\lambda$ et vérifions que $v$ est convexe.
Soit $(a,b)$ un couple de points de~$P$.
On peut trouver $a'$ et~$b'$
dans~$P$, arbitrairement proches de~$a$ et~$b$ respectivement
tels que le segment~$[a';b']$
évite toute cellule de codimension au moins~$2$.
Par passage à la limite, la convexité sur ces segments~$[a';b']$
entraîne la convexité sur le segment~$[a;b]$.
\end{proof}

\def\pf{\text{\upshape pf}}
\def\Rad{\text{\upshape Rad}}
\def\Bor{\text{\upshape Bor}}
\def\Lunloc{{\mathrm L^1_{\mathrm{loc}}}}
\def\pr{\operatorname{pr}}
\def\MAT{\mathrm M}
\def\ShHom{\mathop{\mathscr H\!\mathit{om}}\nolimits}

\chapter{Espaces paralinéaires (\textsc{pl})}

\section{Notion d'espace paralinéaire}

% \subsection{}
% Soit $X$ un espace topologique compact. Une structure polyédrale
% sur~$X$ est un homéomorphisme~$\iota$ de~$X$ avec un polyèdre compact d'un
% espace affine~$V$.  
% Par transport de structure, on étend alors à~$X$ les notions
% de dimension, de dimension en un point, de cellule, de polyèdre,... 
% 
 
\subsection{}
Soit $X$ un espace topologique séparé.
Une \emph{carte paralinéaire}\index{paralinéaire!carte ---} sur~$X$ 
est une partie localement fermée~$P$ de~$X$
munie d'une application continue~$\phi$ vers un espace affine réel~$V$
qui induit un homéomorphisme de~$P$ sur un sous-espace paralinéaire
de~$V$ ; la partie~$P$ est appelée domaine de la carte.
Une telle carte est dite \emph{cellulaire}
si l'image de~$P$ par~$\phi$ est une cellule (fermée) de~$V$.

Deux cartes paralinéaires $(P,\phi)$ et $(Q,\psi)$ sont
dites compatibles si $(P\cap Q,\phi|_{P\cap Q})$
et $(P\cap Q,\psi|_{P\cap Q})$ sont des cartes paralinéaires.

Un \emph{atlas paralinéaire} sur~$X$ est une famille $((P_i,\phi_i))_i$
de cartes paralinéaires sur~$X$, deux à deux compatibles,
telle que la famille $(P_i)$ soit un  G-recouvrement de~$X$.

On dit que deux atlas sur~$X$ sont équivalents si leur réunion
est un atlas, ce qui revient à dire que chaque carte  
de l'un est équivalent à toute carte de l'autre.

Si $X$ est un espace topologique muni d'un atlas,
la réunion de tous les atlas 
qui lui sont équivalents est un atlas sur~$X$, maximal.

\begin{defi}
Un \emph{espace paralinéaire}\index{paralinéaire!espace ---}
est un espace topologique séparé
muni d'une classe d'équivalence d'atlas paralinéaires.
\end{defi}

Lorsque nous parlerons de \emph{carte paralinéaire}, 
voire de \emph{carte}, en particulier de carte cellulaire,
d'un espace paralinéaire~$X$,
il sera sous-entendu que cette carte
appartient à l'un des atlas équivalents définissant
la structure de~$X$.\index{cellule!d'un espace paralinéaire}

Un espace affine~$V$ possède une structure d'espace paralinéaire 
évidente, donnée par l'atlas $(V,\id_V)$.
% Lorsque nous parlerons d'une cellule d'un espace affine~$V$,
% il sera implicitement supposée qu'elle l'est pour l'application  d'inclusion.

\subsection{}
Le produit de deux espaces paralinéaires~$X$ et~$Y$ possède
une structure naturelle d'espace paralinéaire.
Choisissons des atlas sur~$X$ et~$Y$ qui définissent
leur structure d'espace paralinéaire; on définit comme suit
un atlas paralinéaire sur $X\times Y$.
Ses cartes sont les applications de la forme 
$(\phi,\psi)\colon P\times Q\to E \times F$,
où $\phi\colon P\to E$ et $\psi\colon Q\to F$ sont des cartes
paralinéaires sur~$X$ et~$Y$ respectivement.
La structure d'espace paralinéaire définie par cet
atlas ainsi obtenu ne dépend  que des classes d'équivalence
des atlas choisis sur~$X$ et~$Y$.

\subsection{}
Soit $X$ un espace paralinéaire et soit $Y$ une partie de~$X$.
On dit que $Y$ est un \emph{sous-espace paralinéaire} de~$X$
s'il est G-recouvert par des domaines de cartes  de~$X$ qu'il contient.
Lorsque $X$ est un espace affine muni de son atlas canonique, 
on retrouve les notions définies en~\ref{def.sous-espace-pl-V}.

\subsection{}

 Soit $X$ un espace paralinéaire et soit $\mathscr A=((P_i,f_i))$ un atlas paralinéaire sur~$X$.
Soit $Y$ une partie de~$Y$.
 On dit que $Y$ est $\mathscr A$-polyédrale (ou est un $\mathscr A$-polyèdre)
 si, pour tout~$i$, la partie $f_i(P_i\cap Y)$ est un polyèdre
 de l'espace affine but de la carte~$f_i$. 
 L'intersection de deux $\mathscr A$-polyèdres,
 la réunion d'une famille finie de $\mathscr A$-polyèdres
 fermés sont des $\mathscr A$-polyèdres.

 Une partie $\mathscr A$-polyédrale est un sous-espace paralinéaire fermé.

  Soit $Y$ une partie $\mathscr A$-polyédrale de~$X$
et soit $h$ une application de~$Y$ à valeurs dans un espace affine~$W$.
On dit que $h$ est $\mathscr A$-polyédrale si, pour tout~$i$,
l'application de $f_i(P_i\cap Y)$ dans~$W$ déduit de $h|_{P_i}$
est polyédrale.

\subsection{}
Soit $Y$ un sous-espace paralinéaire de~$X$.
Alors, $Y$ est localement fermé dans~$X$.
De plus, la famille des cartes paralinéaires sur~$X$ dont le domaine
est contenu dans~$Y$ est un atlas paralinéaire de~$Y$
par lequel on le munit d'une structure d'espace paralinéaire.

Dans une cellule d'un espace affine, tout point possède une base
de voisinages formée de cellules. On en déduit  qu'un ouvert
d'une cellule est G-recouvert par les cellules qu'il contient,
puis qu'un ouvert d'un espace paralinéaire en est un sous-espace
paralinéaire.

L'intersection de deux sous-espaces paralinéaires 
est un sous-espace paralinéaire ;
la réunion d'une famille localement finie de sous-espaces
paralinéaires fermés est un sous-espace paralinéaire fermé.

Tout point d'un espace paralinéaire possède une base de voisinages
qui sont des sous-espaces paralinéaires compacts.

Un espace paralinéaire est paracompact si et seulement
si chacune de ses composantes connexes est réunion dénombrable
de domaines de cartes cellulaires compacts.

\begin{lemm}\label{lemm.pl-paracompact}
Tout sous-espace paralinéaire d'un espace paralinéaire 
dénombrable à l'infini (resp. paracompact)
est dénombrable à l'infini (resp. paracompact).
\end{lemm}
\begin{proof}
Soit $X$ un espace paralinéaire 
et soit $Y$ un sous-espace paralinéaire de~$X$.
Supposons que $X$ est dénombrable à l'infini et démontrons
que $Y$ l'est.
Par hypothèse, $X$ est réunion d'une famille
dénombrable $(P_n)$ de domaines de cartes compacts. 
Pour tout~$n$, $Y\cap P_n$ est alors homéomorphe à une partie 
localement fermée d'un espace numérique, donc est dénombrable à l'infini.
Par suite, $Y$  est dénombrable à l'infini.

Supposons maintenant que $X$ est paracompact et démontrons que $Y$ l'est.
D'après ce qui précède,
la trace de~$Y$ sur toute composante connexe de~$X$  est 
dénombrable à l'infini.
Il en résulte que $Y$ est paracompact.
\end{proof}
\begin{coro}\label{coro.voisinage}
Soit $X$ un espace paralinéaire et soit $Y$
un sous-espace paralinéaire de~$X$.
Si $Y$ est dénombrable à l'infini,
il possède un voisinage ouvert dans~$X$
qui est dénombrable à l'infini, et en particulier paracompact.
\end{coro}
\begin{proof}
Supposons que $Y$ soit dénombrable à l'infini et
soit $(Q_n)$ une famille dénombrable de parties compactes
recouvrant~$Y$. Pour tout~$n$, $Q_n$ possède un voisinage~$Q'_n$ dans~$X$
qui est un sous-espace paralinéaire compact.
Son intérieur est dénombrable à l'infini 
d'après le lemme~\ref{lemm.pl-paracompact}. 
La réunion~$Y'$ des~$Q'_n$ est un voisinage de~$Y$ dans~$X$
qui est dénombrable à l'infini.
\end{proof}
\subsection{}
Soit $X$ un espace paralinéaire. C'est un espace topologique.
Nous lui associons aussi un site~$X_{\mathrm G}$;
sa catégorie sous-jacente est formée des sous-espaces paralinéaires 
et des inclusions entre iceux;
elle est munie de la G-topologie pour laquelle 
les recouvrements sont les G-recouvrements.

\subsection{}
Soit $X$ et $Y$ des espaces paralinéaires.
On dit qu'une application continue $f\colon X\to Y$ est paralinéaire\index{paralinéaire!application ---} 
si $X$ est G-recouvert par des domaines de cartes $(P,\phi)$ 
tels que $f(P)$ soit contenu dans le domaine~$Q$
d'une carte~$(Q,\psi)$ de~$Y$ 
et tel que l'application  $\psi\circ f\phi^{-1}|_{\phi(P)}$
de~$\phi(P)$ dans~$\psi(Q)$ induite par~$f$ 
soit la restriction d'une application affine.

L'inclusion d'un sous-espace  paralinéaire 
est une application  paralinéaire,
de même que la composée de deux applications paralinéaires.
Les espaces paralinéaires forment donc une catégorie.

La bijection réciproque d'une application paralinéaire bijective est 
paralinéaire (cela est dû au fait que dans ce texte, nous n'imposons
à la partie linéaire
d'une application affine que d'être à coefficients dans un 
\emph{sous-corps} de~$\R$).
Une application paralinéaire est donc un isomorphisme si et seulement
si elle est bijective.

\subsection{}
On dit qu'une application paralinéaire $f\colon X\to Y$
est une \emph{immersion par morceaux}
s'il existe un G-recouvrement $(P_i)$ par des sous-espaces
paralinéaires de~$X$ 
telle que pour tout~$i$, l'application~$f$ induise 
un isomorphisme de~$P_i$ sur un sous-espace paralinéaire de~$Y$.

\begin{prop}\label{prop.sections-G-locales}
Soit $f\colon X\to Y$ un morphisme d'espaces paralinéaires.
Si $f$ est compact, alors $f(X)$ est un sous-espace paralinéaire 
fermé de~$Y$ sur lequel $f$ possède G-localement des sections
paralinéaires.
\end{prop}
\begin{proof}
La question étant G-locale sur~$Y$, on peut supposer
que $Y$ est compact; l'espace~$X$ est alors compact.
Considérons des G-recouvrements finis $(P_i)$ de~$X$
par des domaines compacts de cartes $(P_i,\phi_i)$ 
et $(Q_j)$ de~$Y$ par des domaines compacts de cartes $(Q_j,\psi_j)$,
et, pour tout~$i$, un indice~$j_i$ 
tel que $f(P_i)\subset Q_{j_i}$ et que l'application
induite de $\phi_i(P_i)$ dans $\psi_{j_i}(Q_{j_i})$
soit la restriction d'une application affine.

D'après le corollaire~\ref{coro.image-polyedre-sections},
$f(P_i)$ est un sous-espace paralinéaire de~$Q_{j_i}$
sur lequel~$f$ possède G-localement des sections.
La proposition s'ensuit.
\end{proof}

\subsection{}
Soit $X$ un espace paralinéaire.

On appelle dimension de~$X$ la borne supérieure $\dim(X)$ des dimensions
des domaines de cartes  de~$X$.
On a $\dim(\emptyset)=-\infty$.

Soit $x$ un point de~$X$; on appelle dimension de~$X$ en~$x$
la borne supérieure $\dim_x(X)$  des dimensions en~$x$
des domaines de cartes de~$X$ contenant~$x$.
On a $\dim(X)=\sup_{x\in X} \dim_x(X)$.

Soit $n\in\N$. On note $X^{(n)}$ l'ensemble des points~$x\in X$
tels que $\dim_x(X)\geq n$. C'est un sous-espace paralinéaire fermé de~$X$.
Il est G-recouvert par les domaines
de cartes cellulaires de~$X$ de dimension~$\geq n$.

\begin{defi}\label{PL-negligeable}
Soit $d$ un entier.
Un sous-ensemble d'un espace paralinéaire~$X$
sera dit
\emph{$d$-négligeable}\index{partie $d$-négligeable}
si sa trace sur tout domaine de carte de~$X$
est contenue dans un sous-espace paralinéaire de dimension~$<d$.
\end{defi}
Il suffit de vérifier cette propriété pour toute cellule 
d'un G-recouvrement donné de~$X^{(d)}$.
Si $X$ est paracompact et  si $A$ est une partie $d$-négligeable de~$X$,
il existe un sous-espace paralinéaire~$Y\subset X$, de dimension~$<d$,
tel que $A\subset Y$.

\iffalse
% Pas nécessaire
\subsection{}
Soit $X$ un espace paralinéaire.
On dit qu'un atlas~$\mathscr A$ sur~$X$ 
est \emph{de type fini}\index{atlas!de type fini} s'il vérifie
les conditions suivantes:
\begin{enumerate}
\item L'atlas~$\mathscr A$ ne contient qu'un nombre fini de cartes;
\item Tout domaine d'une carte de~$\mathscr A$ est fermé;
\item Pour toute carte~$(P,\phi)$ appartenant à~$\mathscr A$,
$\phi(P)$ est un polyèdre;
\item Pour tout couple $((P,\phi), (Q,\psi))$ de cartes appartenant
à~$\mathscr A$, l'ensemble \mbox{$\phi(P\cap Q)$} est réunion finie
de cellules sur chacune desquelles l'application $\psi\circ\phi^{-1}$ est
affine.
\end{enumerate}

Un atlas fini dont les domaines de cartes sont compacts est de type fini.

Un espace paralinéaire compact possède un atlas de type fini.

% De plus, les $\mathscr A$-polyèdres sont exactement
% les parties compactes paralinéaires de~$X$.
% Dans ce cas, la notion de polyèdre est indépendante du choix
% de l'atlas définissant la structure paralinéaire de~$X$.
\fi

\section{Décompositions cellulaires}

% \begin{defi}
% Soit $X$ un espace paralinéaire.
% 
% Une cellule est une partie paralinéaire fermée de~$X$
% qui est isomorphe à une cellule fermée d'un espace~$\R^n$.
% 
% Soit $f\colon X\to \R^n$ une application paralinéaire,
% une $f$-cellule est une cellule de~$X$ telle $f$ induise un isomorphisme
% de~$C$ sur une cellule fermée de~$\R^n$.
% \end{defi}
% Lorsque $X$ est un sous-espace paralinéaire de~$\R^n$, 
% il y a deux notions de cellules dans~$X$, l'une abstraite comme ci-dessus
% et l'autre associée au plongement;
% nous appellerons \emph{cellules fortes} ces dernières, c'est-à-dire
% les $j$-cellules, où $j\colon X\to \R^n$ est l'inclusion.
% Inversement, une cellule de~$X$ est une $f$-cellule si et seulement
% si $f(C)$ est une cellule forte.
 
\subsection{}
Soit $X$ un espace paralinéaire.  

Soit $C$ une partie de~$X$.
On dit que des cartes cellulaires $(C,f)$ et $(C,g)$ de domaine~$C$
sont équivalentes si l'application $g\circ f^{-1}$ de~$f(C)$ dans~$g(C)$
est la restriction d'une application affine.
On appellera \emph{cellule}\index{cellule!d'un espace paralinéaire}
de~$X$ une classe d'équivalence de telles
cartes cellulaires.
Une carte cellulaire d'une cellule sera appelée \emph{carte affine}.

En pratique, on se permettra d'écrire « Soit $(C,f)$ une cellule... »,
en choisissant une carte affine privilégiée,
voire, en sous-entendant le choix d'une telle carte, « soit $C$ une cellule ».
On dira aussi que $C$ est une $f$-cellule.

Sur une cellule de~$X$, on dispose
alors de la notion d'application affine à valeurs
dans un espace affine, voire dans une cellule. On dispose
aussi de la notion de barycentre, de convexité,...

Cette terminologie est incompatible avec celle introduite
dans le cas d'un espace affine~$V$ où les cellules introduites
initialement sont celles de la forme $(C,j)$, où $j $ est l'inclusion.
Pour éviter toute confusion, ces cellules seront appelées
\emph{cellules fortes} de~$V$.\index{cellule forte}

\subsection{}
Soit $C$ une cellule de~$X$ et soit $f$ une carte affine de~$C$. 
On appelle \emph{intérieur} de~$C$ l'image réciproque par~$f$ de l'intérieur
de~$f(C)$; c'est une partie connexe, ouverte et dense de~$C$.
C'est aussi
l'ensemble~$\mathring C$ des points en lesquels $C$ est une variété
topologique. 

On appelle \emph{bord} de~$C$ le complémentaire $\partial(C)$
de son intérieur; c'est un sous-espace paralinéaire de~$C$;
il est vide si $C$ est réduit à un point, 
et purement de dimension~$\dim(C)-1$ sinon.

Soit $D$ une cellule de~$X$. On dit que c'est une 
\emph{sous-cellule} de~$C$ si 
le domaine de~$D$ est contenu  dans celui de~$C$
et si l'inclusion de~$D$ dans~$C$ est une application affine.

\begin{lemm}
% [cf.~{\cite[prop.~2.1.12]{hiriart-urruty-lemarechal2001}}]
Soit $f\colon C\to D$ un morphisme affine de cellules.
Alors $f(C)=D$ si et seulement si $f(\mathring C)=\mathring D$.
\end{lemm}
\begin{proof}
On peut supposer que $C$ est une cellule forte d'un espace affine~$V$,
% de dimension~$\dim(C)$,
que $D$ est une cellule forte d'un espace affine~$W$,
% de dimension~$\dim(D)$
et que $f$ est induite par une application affine de~$V$ sur~$W$.
Comme $C$ et $D$ sont fermées dans~$V$ et~$W$ respectivement,
l'assertion découle du corollaire~\ref{coro.interieur-adherence}.
% Dans ce cas, $\mathring C$ et $\mathring D$ sont les intérieurs topologiques
% de~$C$ et~$D$ dans~$V$ et~$W$ respectivement, qui sont denses dans~$C$ et~$D$.
% 
% Par conséquent, si $f(\mathring C)=\mathring D$, on a
% $f(C)=f(\overline{\mathring C})=\overline{f(\mathring C)}
%     =\overline{\mathring D}=D$.
% 
% Inversement, supposons que l'on ait $f(C)=D$.
% Comme $f$ est ouverte,  $f(\mathring C)$ est une partie ouverte 
% de~$\mathring D$.
% Comme $\mathring C$ est dense dans~$C$ et comme $f(C)=D$,
% cet ouvert $f(\mathring C)$ est dense dans~$\mathring D$; 
% il est aussi convexe.
% On en déduit que $f(\mathring C)=\mathring D$. Soit $y\in \mathring D$,
% soit $(y_0,\dots,y_n)$ les sommets d'un simplexe~$S$ voisinage de~$y$
% dans~$\mathring D$.  L'ensemble des familles $(z_0,\dots,z_n)$
% de~$\mathring D$ qui sont affinement indépendantes et par rapport auxquelles
% les coordonnées barycentriques de~$y$ sont strictement positive 
% est un ouvert de~$\mathring D^n$. 
% Il est non vide car il contient $(y_0,\dots,y_n)$,
% donc il rencontre  $f(\mathring C)^n$. Ainsi, $y$ est dans
% l'enveloppe convexe de points de~$f(\mathring C)$, 
% donc appartient à~$f(\mathring C)$.
\end{proof}

\begin{defi}
On appelle \emph{décomposition cellulaire}\index{décomposition cellulaire} 
de~$X$
une famille localement finie~$\mathscr C$
de cellules fermées de~$X$
telle que :
\begin{enumerate}
\item Si $C$ et~$C'$ sont des cellules de~$\mathscr C$
telles que $C'$ soit contenue (ensemblistement) dans~$C$, 
alors soit $C'=C$,
soit $C'$ est une sous-cellule de~$C$ contenue dans~$\partial(C)$;
\item Le bord de toute cellule de~$\mathscr C$ est réunion
finie de cellules de~$\mathscr C$;
\item L'intersection de deux cellules quelconques de~$\mathscr C$
est une réunion de cellules de~$\mathscr C$.
\end{enumerate}
\end{defi}

Si $X$ est un sous-espace paralinéaire d'un espace affine,
une décomposition cellulaire de~$X$ est dite \emph{forte} 
si elle est constituée de cellules fortes.\index{décomposition cellulaire!forte}

On dit qu'une décomposition cellulaire~$\mathscr C$ de~$X$
est adaptée à un sous-espace paralinéaire fermé~$Y$ de~$X$ 
si $Y$ est réunion de cellules appartenant à~$\mathscr C$.
Plus généralement, si $Y$ est un sous-espace paralinéaire fermé de~$X$
et~$h$ une application paralinéaire de~$Y$ vers un espace affine, voire une cellule, on dit que $\mathscr C$ est adaptée au couple~$(Y,h)$
si $Y$ est réunion de cellules appartenant à~$\mathscr C$
et si, pour toute cellule~$C$ de~$\mathscr C$, 
$h|_Y$ est une application affine.

\begin{rema}
Soit $C$ et $D$ deux cellules
d'une décomposition cellulaire~$\mathscr C$.

\begin{enumerate}
\item
Si $C\cap \mathring D\neq\emptyset$, alors $C\supset D$.
\item
Si $C\neq D$, alors $\mathring C\cap \mathring D=\emptyset$.
\end{enumerate}

Démontrons d'abord~(1). Par définition d'une décomposition
cellulaire, $C\cap D$ est réunion de cellules
de~$\mathscr C$. Elles sont contenues dans~$D$ et l'une d'elles
au moins n'est pas contenue dans le bord de~$D$; elle est donc égale à~$D$.

L'assertion~(2) en découle: si $\mathring C\cap \mathring D$ n'est pas
vide, on a $C\subset D$ et $D\subset C$, donc $C=D$.
\end{rema}

\begin{rema}\label{rema.dec-cellulaire-partition}
Soit $\mathscr C$ une décomposition cellulaire d'un espace
paralinéaire~$X$. 
Soit $x$ un point de~$X$ et soit $C$ une cellule de~$\mathscr C$
de dimension minimale contenant~$x$. Alors, $x\in\mathring C$.
(Dans le cas contraire, $x$ appartiendrait au bord de~$C$
qui est réunion de cellules de dimension strictement plus petites.)
Cela prouve que la famille $(\mathring C)_{C\in\mathscr C}$
recouvre~$X$. D'après la remarque précédente, c'est une partition de~$X$.
\end{rema}

\begin{rema}\label{rema.dec-cellulaire-finitude}
Ces axiomes entraînent que chaque cellule de~$\mathscr C$
ne rencontre qu'un nombre fini d'autres cellules. 

Considérons en effet une  cellule~$C$; posons $d=\dim(C)$
et supposons par
récurrence que chaque cellule de dimension~$<d$ ne rencontre
qu'un nombre fini de cellules de~$\mathscr C$.
Si $d=0$, $C$ est réduit à un point, qui  n'appartient qu'à un nombre
fini de cellules puisque $\mathscr C$ est localement finie.
Supposons $d>0$. 
Soit $C'$ une cellule de~$\mathscr C$ qui rencontre~$C$;
soit $D$ une cellule de~$\mathscr C$ qui est contenue dans~$C\cap C'$.
Soit $C=D$, soit $D\subset\partial(C)$; dans les deux cas, 
$D$ rencontre~$\partial(C)$, donc $C'$ aussi.

Par définition d'une décomposition cellulaire,
$\partial (C)$ est réunion d'une famille
finie de cellules de dimension~$<d$; par récurrence chacune
d'elles ne rencontre qu'un nombre fini de cellules de~$\mathscr C$,
d'où l'assertion.
\end{rema}

\begin{lemm}\label{lemm.vois-ouvert-cellule}
Soit $X$ un espace paralinéaire, soit $\mathscr C$
une décomposition cellulaire de~$X$ et soit~$C$ une cellule appartenant à~$\mathscr C$.
Notons $W_C$ la réunion des cellules ouvertes~$\mathring D$,
où $D$ parcourt l'ensemble des cellules de~$\mathscr C$ contenant~$C$.
Alors $W_C$ est un voisinage ouvert de~$\mathring C$ dans~$X$
et son complémentaire est un sous-espace paralinéaire fermé.
\end{lemm}
\begin{proof}
Notons $W'_C$ le complémentaire de~$W_C$ dans~$X$.
Pour qu'une cellule~$D$ ne contienne pas~$C$,
il faut et il suffit qu'elle soit disjointe de~$\mathring C$,
si bien que $W'_C$ est la réunion des cellules~$D$ de~$\mathscr C$
telles que $D\cap \mathring C=\emptyset$.
Cela prouve que $W'_C$ est un sous-espace paralinéaire fermé de~$X$.
Il est disjoint de~$\mathring C$ par construction.
\end{proof}

\subsection{}\label{sss.dec-raffinement}
Soit $\mathscr C$ et $\mathscr D$  des décompositions cellulaires de~$X$.
On dit que $\mathscr D$ raffine~$\mathscr C$ 
si toute cellule de~$\mathscr D$ 
est une sous-cellule d'une cellule de~$\mathscr C$, 
et si toute cellule de~$\mathscr C$ 
est réunion de cellules de~$\mathscr D$.
Dans ce cas, 
toute cellule de~$\mathscr D$ est sous-cellule de toute cellule
de~$\mathscr C$ qui la contient.

Soit $D$ une cellule de~$\mathscr D$.

Soit $C$ une cellule de~$\mathscr C$;
on a $D\subset C$ ou $C\cap D\subset\partial(D)$. En effet,
$C$ est une union de cellules de~$\mathscr D$, donc $C\cap D$ 
est une union de cellules de~$\mathscr D$, contenue dans~$D$.

On en déduit que si $D$ est contenue dans une union de cellules de~$\mathscr C$,
elle est contenu dans l'une d'elles.

Soit $C\in\mathscr C$ une cellule de dimension minimale contenant~$D$.
Démontrons que $\partial(C)\cap D\subset \partial(D)$.
Par hypothèse, $\partial(C)$ est réunion de cellules~$C_i$ de~$\mathscr C$,
elles sont de dimension~$<\dim(C)$, donc $D\not\subset C_i$.
D'après ce qui précède, $C_i\cap D\subset \partial(D)$,
puis $\partial(C)\cap D\subset\partial(D)$.
En passant aux complémentaires, on a aussi $\mathring D\subset\mathring C$;
notons même que $C$ est l'unique cellule de~$\mathscr C$
telle que $\mathring D\subset\mathring C$.

\begin{lemm}\label{lemm.dec-cellulaire-plonge}
Soit $V$ un espace affine et 
soit~$\mathscr B$ une famille finie de couples $(B,h)$, 
où $B$ est un polyèdre de~$V$ et $h$ une application polyédrale sur~$B$.
Il existe une décomposition cellulaire finie forte de~$V$ qui
est adaptée à tout élément de~$\mathscr B$.

En particulier, il existe, pour tout polyèdre~$P$ de~$V$,
il existe une décomposition cellulaire finie forte de~$P$.
\end{lemm}
\begin{proof}
Quitte à raffiner la famille~$\mathscr B$, on suppose que pour tout
couple~$(B,h)\in\mathscr B$, $B$ est une cellule forte de~$V$ sur
laquelle $h$ est affine. 
Il existe alors une famille finie~$\Phi$ de formes affines
sur~$\R^n$ telle pour tout~$(B,h)$ appartenant à~$\mathscr B$,
la cellule~$B$ soit combinaison booléenne 
de demi-espaces définis par ces formes. 
Pour chaque famille $c=(\Box_\phi)_{\phi\in\Phi}$,
où $\Box_\phi\in\{\mathord\leq,\mathord=,\mathord\geq\}$, 
considérons le polyèdre $B_c$ formé des points $x\in V$
tels que $\phi(x)\mathrel{\Box_\phi} 0$ pour tout $\phi\in\Phi$;
s'il n'est pas vide, c'est une cellule forte de~$V$.
La famille des~$B_c$, tels que $B_c\neq\emptyset $, convient. 
Dans ce cas, l'intersection de deux cellules est ou vide ou une cellule
forte.

La dernière assertion s'en déduit en prenant pour $\mathscr B$ la famille
réduite au couple~$(P,0)$.
\end{proof}

% \begin{defi}
% Soit $X$ un espace paralinéaire.
% Un \emph{atlas cellulaire} sur~$X$ est un couple $(\mathscr C,f)$,
% où $\mathscr C$ est une décomposition cellulaire de~$X$
% et $f=(f_C)_{C\in\mathscr C}$ est une famille 
% telle que pour toute cellule~$C\in\mathscr C$,
% $f_C$ est un isomorphisme paralinéaire de~$C$ 
% sur une cellule forte d'un espace~$\mathbf R^n$ appliquant toute
% cellule~$D\in\mathscr C$ contenue dans~$C$ sur une cellule forte.
% \end{defi}
% 
% En remplaçant le but de~$f_C$ par l'espace affine qu'il engendre,
% on peut même supposer que $f_C(C)$ est une cellule forte de $\R^{\dim(C)}$.

\begin{defi}
Soit $X$ un espace paralinéaire et soit $\mathscr A=(X_i,f_i)$
un atlas sur~$X$ et 
soit $\mathscr C$ une décomposition cellulaire de~$X$.
On dit que $\mathscr C$ est adaptée à l'atlas~$\mathscr A$
si les conditions suivantes sont vérifiées:
\begin{enumerate}
\item Tout~$X_i$ est réunion de cellules de~$\mathscr C$;
\item Pour tout~$i$, toute cellule~$C\in\mathscr C$ qui est contenue dans~$X_i$
est une $f_i$-cellule.
\end{enumerate}
\end{defi}

\begin{rema}\label{rema.decomp-cell-f}
Soit $X$ un espace paralinéaire paracompact 
et soit $f\colon X\to\R^m$ une immersion par morceaux.
Il existe un G-recouvrement localement fini~$(X_i)$ de~$X$
tel que $f|_{X_i}$ soit injective, pour tout~$i$.
On peut même choisir les parties~$X_i$ compactes.
Alors, la famille $(X_i,f|_{X_i})$ est un atlas paralinéaire sur~$X$.
Si une décomposition cellulaire est  adaptée à cet atlas, ses cellules
sont des $f$-cellules.
% Inversement, pour toute décomposition cellulaire~$\mathscr C$
% de~$X$ formée de $f$-cellules, la famille $(\mathscr C, (f|_C))$
% est un atlas cellulaire. 
\end{rema}

\begin{prop}\label{prop.decomp-cell}
Soit $X$ un espace paralinéaire 
et soit $\mathscr A=(X_i,f_i)_{1\leq i\leq m}$
un atlas de type fini sur~$X$; soit $\mathscr B$ une famille
finie de couples $(P,h)$, où $P$ est une partie $\mathscr A$-polyédrale
de~$X$ et $h$ une application $\mathscr A$-polyédrale sur~$P$.
Il existe une décomposition cellulaire finie~$\mathscr C$ de~$X$,
qui est adaptée à l'atlas~$\mathscr A$ et adaptée
à tout élément de~$\mathscr B$.
\end{prop}
\begin{proof}
Si $m=0$, l'espace~$X$ est vide et la décomposition cellulaire vide convient.
Si $m=1$, $X=X_1$ est identifié au polyèdre~$f_1(X_1)$ d'un espace affine
et la décomposition cellulaire fournie par le lemme~\ref{lemm.dec-cellulaire-plonge} convient.
On raisonne par récurrence sur le cardinal~$m$ de l'atlas~$\mathscr A$
en supposant le résultat prouvé pour un espace paralinéaire
muni d'un atlas de type fini de cardinal~$<m$.

Supposons $m\geq 2$ et posons $Y=X_1\cup\dots \cup X_{m-1}$ et $Z=X_m$.
Par récurrence, il existe une décomposition cellulaire~$\mathscr C_Y$
de~$Y$  satisfaisant les conditions requises par la proposition
pour l'atlas $(X_i,f_i)_{i<m}$ et la famille des~$(B\cap Y,h|_{B\cap Y}$,
pour $(B,h)$ parcourant~$\mathscr B$,
à laquelle on adjoint le couple~$(Y\cap Z,f_m|_{Y\cap Z})$.
% Pour toute cellule~$C$ de~$\mathscr C_Y$ contenue dans~$Y\cap Z$
% et tout entier~$i<m$ tel que $C\subset X_i$, l'application 
% $f_m\circ f_i^{-1}$ de $f_i(C)$ dans~$f_m(C)$ est paralinéaire.
% Par récurrence, on peut raffiner la décomposition cellulaire~$\mathscr C_Y$
% et supposer que toutes ces applications $f_m\circ f_i^{-1}$ sont affines.
En particulier, toute cellule~$C\in\mathscr C_Y$ contenue
dans $Y\cap Z$ est une $f_m$-cellule.

D'après le cas $m=1$,
il existe aussi une décomposition cellulaire~$\mathscr C_Z$ de~$Z$
formée de $f_m$-cellules
qui est adaptée à la famille $(B\cap Z,h|_{B\cap Z})$
(pour $(B,h)$ dans~$\mathscr B$), à laquelle
on adjoint les cellules de~$\mathscr C_Y$ qui sont contenues dans~$Z$.
Observons que pour toute cellule~$C\in\mathscr C_Z$
qui est contenue dans~$Y$ et tout $i<m$ tel que $C\subset X_i$,
l'application~$f_i\circ f_m^{-1}$  de $f_m(C)$ dans~$f_i(C)$ est affine,
car $C$ est contenue dans une cellule~$C'\in\mathscr C_Y$ 
contenue dans~$Y\cap Z$ et l'application réciproque $f_m\circ f_i^{-1}$
de $f_i(C')$ dans~$f_m(C')$ est affine.

Soit $\mathscr C$ la réunion de~$\mathscr C_Z$ et de l'ensemble des cellules
$C\in\mathscr C_Y$ qui ne sont pas contenues dans~$Z$.
Par construction, toute cellule de~$\mathscr C$
est contenue dans l'un des~$X_i$ et tout $X_i$
est réunion de cellules, de même que tout élément de~$\mathscr B$.
De plus, 
pour tout couple d'indices~$(i,j)$ et toute cellule
$C\subset X_i\cap X_j$, 
l'application~$f_i$ établit un isomorphisme paralinéaire
de~$C$ sur une cellule forte, et l'application $f_j\circ f_i^{-1}$
de~$f_i(C)$ dans~$f_j(C)$ est affine.
Il reste à vérifier que $\mathscr C$ est une décomposition cellulaire
de~$X$.

Les deux propriétés suivantes découlent de la construction:
1) les cellules de~$\mathscr C$ contenues dans~$Z$ forment la décomposition cellulaire~$\mathscr C_Z$ de~$Z$;
2) toute cellule de~$\mathscr C_Y$ qui est contenue dans~$Z$ est réunion de
cellules de~$\mathscr C_Z$;
3) toute cellule de~$\mathscr C_Y$ est réunion de cellules de~$\mathscr C$.

Soit $C$ une cellule de~$\mathscr C$; démontrons que $\partial(C)$
est réunion de cellules de~$\mathscr C$.
C'est évident si $C\subset Z$, d'après ce qui précède.
Sinon, $C\not\subset Z$, donc $C$ est une cellule de~$\mathscr C_Y$;
son bord est donc réunion de cellules de~$\mathscr C_Y$,
donc est réunion de cellules de~$\mathscr C$.

Soit $C$ et $D$ des cellules de~$\mathscr C$. Démontrons que $C\cap D$
est intersection de cellules de~$\mathscr C$.
C'est encore évident si $C$ et $D$ sont toutes deux contenues dans~$Z$.
Si aucune des deux n'est contenue dans~$Z$, alors $C$ et $D$ appartiennent
à~$\mathscr C_Y$, donc $C\cap D$ est réunion de cellules de~$\mathscr C_Y$,
donc est réunion de cellules de~$\mathscr C$.
Supposons enfin que $C\not\subset Z$ et $D\subset Z$.
Comme $Z\cap Y$ est réunion de cellules de~$\mathscr C_Y$,
$C\cap Z$ est réunion de cellules de~$\mathscr C_Y$ qui sont contenues
dans~$Z$, donc de cellules de~$\mathscr C_Z$. Leur intersection avec~$D$
est réunion de cellules de~$\mathscr C_Z$, comme il fallait démontrer.

Soit $C$ et $D$ des cellules de $\mathscr C$ telles que $C\subset D$
et $C\neq D$.
Si $D\subset Z$, alors $C\in\mathscr C_Z$ donc  $C,D\in\mathscr C_Z$,
d'où $C\subset\partial(D)$,
car $\mathscr C_Z$ est une décomposition cellulaire de~$Z$.
Supposons alors que $D\not\subset Z$; on a donc $D\in\mathscr C_Y$.
De plus, $D\cap Z$
est réunion de cellules de~$\mathscr C_Y$ dont aucune n'est égale à~$D$,
elles sont donc contenues dans~$\partial(D)$, si bien que $D\cap Z\subset\partial(D)$.
Si $C\subset Z$, il en résulte que $C\cap D\subset\partial(D)$.
Sinon, on a $C\in\mathscr C_Y$, donc $C\subset\partial(D)$, car $\mathscr C_Y$
est une décomposition cellulaire de~$Y$.
\end{proof}

\begin{coro}\label{coro-decomp-cell-compact}
Soit $X$ un espace paralinéaire compact 
% qui  possède un atlas~$\mathscr A$
% de type fini, par exemple un espace compact,
et soit $\mathscr B$ une famille finie  couples $(P,h)$,
où $P$ est un sous-espace paralinéaire compact
de~$X$ et $h$ une application paralinéaire sur~$P$ à valeurs
dans une cellule.
% de $\mathscr A$-polyèdres.
Il existe une  décomposition cellulaire finie sur~$X$ 
qui est adaptée à~$\mathscr B$.
\end{coro}
\begin{proof}
 Comme $X$ est compact, il possède un atlas de type fini~$\mathscr A$;
les $\mathscr A$-polyèdres sont exactement les sous-espaces
paralinéaires compacts de~$X$ et les applications $\mathscr A$-polyédrales
sur un tel sous-espace sont exactement les applications paralinéaires.
L'assertion résulte alors de la proposition~\ref{prop.decomp-cell}.
\end{proof}

\begin{prop}\label{prop.decomp-cell-paracompact}
Soit $X$ un espace paralinéaire paracompact
% soit $\mathscr A$ un atlas localement fini sur~$X$ 
% dont les domaines sont fermés,
et soit $\mathscr B$ une famille localement finie
de couples~$(Y,h)$, où $Y$ est un sous-espace paralinéaire fermé de~$X$
et $h$ une application paralinéaire de~$Y$ dans une cellule.
Il existe une décomposition cellulaire $\mathscr C$ de~$X$,
formée de cellules compactes,
qui est adaptée 
% à l'atlas~$\mathscr A$ et 
à la famille~$\mathscr B$.
\end{prop}
\begin{proof}
On peut supposer que $X$ est connexe et donc réunion
d'une famille dénombrable $(P_n)$ de parties paralinéaires compactes. 
Puisque tout point de~$X$ possède un voisinage qui est un sous-espace 
paralinéaire compact, on en déduit qu'il existe une telle
suite où, pour tout~$n$,
 $P_n$ contenu dans l'intérieur de~$P_{n+1}$.

Posons $Q_0=P_0$. Pour tout~$n\geq 1$, 
notons $Q_n$  l'adhérence de~$P_n\setminus P_{n-1}$; 
c'est un sous-espace paralinéaire de~$X$, 
comme on le vérifie G-localement, ce qui permet de se ramener au cas plongé;
il est de plus compact.
Si $n$ et~$m$ sont des entiers tels que $\abs{n-m}\geq 2$, 
on a $Q_n\cap Q_m=\varnothing$.

Il n'est pas restrictif de supposer que pour tout~$n$, les
couples $(Q_n,0)$ et $(Q_n\cap Q_{n+1},0)$ appartiennent à~$\mathscr B$.

Pour tout~$n$, choisissons une décomposition cellulaire~$\mathscr C_n$ de~$Q_n$
qui est adaptée à la trace sur~$Q_n$ de la famille~$\mathscr B$.
% et à l'atlas de type fini de~$Q_n$, trace de l'atlas~$\mathscr A$.
Pour tout~$n$, choisissons une décomposition cellulaire~$\mathscr D_{n,n+1}$
de l'intersection $Q_n\cap Q_{n+1}$ qui est 
adaptée 
à la trace des décompositions~$\mathscr C_n$ et $\mathscr C_{n+1}$,
ainsi qu'à la trace de la famille~$\mathscr B$.

Soit $\mathscr C$ l'ensemble des cellules~$C$ satisfaisant l'une des
deux conditions suivantes, exclusives l'une de l'autre : 
\begin{enumerate}
\item Il existe $n$ tel que $C$ appartienne à $\mathscr C_n$,
et ne soit contenue ni dans~$Q_{n-1}$, ni dans~$Q_{n+1}$ ;

\item Il existe $n$ tel que 
$C\in\mathscr D_{n,n+1}$.
\end{enumerate} 
Par construction, toute cellule appartenant à~$\mathscr C_n$ est réunion
de cellules appartenant à~$\mathscr C$.
On démontre alors comme dans la preuve de la proposition précédente
que $\mathscr C$ est une décomposition cellulaire de~$X$.
Elle vérifie les conditions requises.
\end{proof}

\begin{coro}
Soit $X$ un espace linéaire paracompact.
Toute décomposition cellulaire de~$X$ possède un raffinement
en cellules compactes.
\end{coro}
\begin{proof}
Il suffit d'appliquer la proposition précédente à l'espace~$X$ et 
sa décomposition cellulaire donnée.
\end{proof}

% Avalé par la proposition précédente.
%
% \begin{coro}\label{coro.decomp-cell-paracompact}
% Soit $X$ un espace paralinéaire paracompact
% et soit $\mathscr B$ une famille finie de sous-espaces paralinéaires
% fermés de~$X$.
% Il existe une décomposition cellulaire de~$X$ qui est adaptée à
% tout élément de~$\mathscr B$.
% \end{coro}
% \begin{proof}
% En effet, un espace paralinéaire paracompact possède
% un atlas paralinéaire localement fini.
% \end{proof}

\begin{rema}
Soit $X$ et~$Y$ des espaces paralinéaires.

Soit $\mathscr C$ une décomposition cellulaire de~$X$,
soit $\mathscr D$ une décomposition cellulaire de~$Y$.
Pour toute cellule $C\in\mathscr C$ et toute cellule $D\in\mathscr D$,
la partie $C\times D$ est une cellule de~$X\times Y$;
son intérieur est $\mathring C\times\mathring D$
et son bord est $(\partial(C)\times D)\cup (C\times \partial(D))$.
La famille des $C\times D$ est alors une décomposition cellulaire
de~$X\times Y$.
\end{rema}

\section{Décompositions cellulaires relatives}

\begin{defi}
Soit $X$ et $Y$ deux espaces paralinéaires
et soit $f\colon  X \to Y$ une application paralinéaire.
Soit $\mathscr C$ une décomposition cellulaire de~$X$ et
soit $\mathscr D$ une décomposition cellulaire de~$Y$.
On dit que le couple $(\mathscr C,\mathscr D)$
est adapté à~$f$ si les conditions suivantes sont satisfaites:
\begin{enumerate}
\item Pour toute cellule $C\in\mathscr C$, $f$ induit une
application affine surjective de~$C$ sur une cellule de~$\mathscr D$.
\item Pour toute cellule~$D$ de~$\mathscr D$, $f^{-1}(D)$
est réunion de cellules de~$\mathscr C$.
\end{enumerate}
\end{defi}

\begin{rema}\label{rema.relative-restriction}
Soit $f\colon X\to Y$ une application paralinéaire entre espaces
paralinéaires, soit $\mathscr C$ une décomposition cellulaire de~$X$ et
soit $\mathscr D$ une décomposition cellulaire de~$Y$.
On suppose que le couple $(\mathscr C,\mathscr D)$ est adapté à~$f$.
Soit $Y'$ un sous-espace paralinéaire de~$Y$ qui est réunion 
d'une sous-famille $\mathscr D'$ de cellules de~$\mathscr D$; 
soit $X'=f^{-1}(Y)$ et soit $\mathscr C'$ l'ensemble
des cellules de~$\mathscr C$ dont l'image est contenue dans~$Y'$.
Alors 
$\mathscr C'$ est une décomposition cellulaire de l'espace paralinéaire~$X'$,
$\mathscr D'$ est une décomposition cellulaire de l'espace paralinéaire~$Y'$
et le couple $(\mathscr C',\mathscr D')$ est adapté à l'application
paralinéaire déduite de~$f$ par passage aux sous-espaces.
\end{rema}

\begin{lemm}
Soit $X$ et $Y$ des espaces paralinéaires paracompacts
et soit $f\colon  X \to Y$ une immersion par morceaux.
Soit $(\mathscr C,\mathscr D)$ un couple de décomposition
cellulaires qui est adapté  à~$f$.
Alors, pour toute cellule~$D$ de~$\mathscr D$,
l'application~$f$ fait de $f^{-1}(\mathring D)$
un revêtement de~$\mathring D$ dont les feuillets
sont les cellules ouvertes $\mathring C$,
où $C$ parcourt l'ensemble des cellules de~$\mathscr C$
contenues dans~$f^{-1}(D)$ et de dimension~$\dim(D)$.
\end{lemm}
\begin{proof}
Soit $D$ une cellule de~$\mathscr D$ et soit $C$
une cellule de~$\mathscr C$  contenue dans~$f^{-1}(D)$.
Par hypothèse, $f(C)$ est une cellule de~$\mathscr D$.
Si $\dim(C)<\dim(D)$, on a donc $f(C)\subset\partial(D)$.
Sinon, $f$ induit un isomorphisme de~$C$ sur~$D$ 
et donc un isomorphisme de~$\mathring C$ sur~$\mathring D$;
en outre $\mathring C$ est ouverte dans~$f^{-1}(\mathring D)$.
Le lemme en résulte.
\end{proof}

\begin{prop}\label{prop.relative-raffinement}
Soit $f\colon X\to Y$ une application paralinéaire 
entre espaces paralinéaires,
soit $\mathscr C$ une décomposition cellulaire de~$X$ et
soit $\mathscr D$ une décomposition cellulaire de~$Y$.
On fait les hypothèses suivantes:
\begin{enumerate}\def\theenumi{\roman{enumi}}
\item Pour toute cellule~$C$ de~$\mathscr C$, il existe
une cellule~$D$ de~$\mathscr D$ telle que $f(C)\subset D$;
\item Pour toute cellule~$C$ de~$\mathscr C$ et toute cellule~$D$
de~$\mathscr D$ telle que $f(C)\subset D$, l'application $f|_C\colon C\to D$
est affine.
\end{enumerate}
Soit $\mathscr D'$ une décomposition cellulaire de~$Y$ qui raffine~$\mathscr D$
et telle que pour toute cellule~$C\in\mathscr C$, $f(C)$ est réunion de cellules.
Soit $\mathscr C'$ la famille des $f^{-1}(D)\cap C$,
où $C$ est une cellule de~$\mathscr D$ et
$D$ est une cellule de~$\mathscr D'$ 
telles que $\mathring D\subset f(\mathring C)$.
Pour tout tel couple $(C,D)$,
$f^{-1}(D)\cap C$ est une sous-cellule de~$C$ et
son intérieur est $f^{-1}(\mathring D)\cap \mathring C$.
De plus, $\mathscr C'$ est une décomposition cellulaire de~$X$
qui raffine~$\mathscr C$
et le couple $(\mathscr C',\mathscr D')$ est adapté à~$f$.
\end{prop}
\begin{proof}
Commençons par observer,
sous les hypothèses de la proposition, 
que pour toute cellule~$C\in\mathscr C$
possède une structure naturelle de cellule, induite
par n'importe quelle cellule $D\in\mathscr D$ contenant~$f(C)$.

Soit $\mathscr E$ l'ensemble des couples $(C,D)$, où $C$ est une cellule de~$\mathscr E$ et $D$ est une cellule  de~$\mathscr D'$ telles que $\mathring D\subset f(\mathring C)$. Notons que pour tout tel couple, $D\subset f(C)$.

Soit $(C,D)$ un élément de~$\mathscr E$.
Soit $\Delta$ une cellule de~$\mathscr D$ telle que $f(C)\subset\Delta$;
par hypothèse, $f|_C\colon C\to\Delta$ est une application affine.
Alors, $f(\mathring C)=f(C)^\circ$ (prop.~\ref{prop.image-convexe}).
Comme $f^{-1}(\mathring D)\cap \mathring C\neq\emptyset$,
on a  $f^{-1}(D)\cap C\neq\emptyset$.
Puisque $\mathring D\subset f(C)\subset \Delta$, on a $D\subset\Delta$,
et $f^{-1}(D)\cap C$ est une sous-cellule de~$C$.
D'après la prop.~\ref{prop.image-convexe},
son intérieur est $f^{-1}(\mathring D)\cap \mathring C$,
car cette intersection n'est pas vide.

Soit $x$ un point de~$X$.
Soit $\Gamma$  l'unique cellule de~$\mathscr C$ telle que $x\in\mathring \Gamma$,
soit $\Delta$ l'unique cellule de~$\mathscr D'$ telle que $f(x)\in\mathring\Delta$.
Comme $f(\Gamma)$ est réunion de cellules de~$\mathscr D'$,
il en résulte que $\Delta\subset f(\Gamma)$.
Puisque $\mathring\Delta \cap f(\mathring\Gamma)$ n'est pas vide,
c'est l'intérieur de $\Delta\cap f(\Gamma)=\Delta$
(prop.~\ref{prop.intersection-convexe}); 
autrement dit, $\mathring \Delta \subset f(\mathring\Gamma)$
et le couple $(\Gamma,\Delta)$ appartient à~$\mathscr E$
et le point~$x$ est situé sur la cellule ouverte correspondante.

Soit $(C_1,D_1)$, $(C_2,D_2)$ des éléments de~$\mathscr E$
tels que $f^{-1}(D_1)\cap C_1 $ est contenu dans $f^{-1}(D_2)\cap C_2$
et rencontre $f^{-1}(\mathring D_2)\cap\mathring C_2$.
Par hypothèse, $f^{-1}(D_1\cap\mathring D_2)\cap (C_1\cap\mathring C_2)$
n'est pas vide ; soit $x$ un point de cette intersection.
Alors $\mathring C_2$ rencontre~$C_1$, donc $C_2\subset C_1$;
de même, $D_2\subset D_1$. Alors, $f^{-1}(D_1\cap\mathring D_2)\cap (C_1\cap \mathring C_2)=f^{-1}(\mathring D_2)\cap \mathring C_2$,
de sorte que $f^{-1}(D_1)\cap C_1$ contient l'intérieur de~$f^{-1}(D_2)\cap C_2$, donc cette cellule toute entière; elles sont ainsi égales.

Soit $(C_1,D_1)$, $(C_2,D_2)$ des éléments de~$\mathscr E$ et
soit $x\in (f^{-1}(D_1)\cap C_1) \cap (f^{-1}(D_2)\cap C_2)$.
Soit $(\Gamma,\Delta)$ l'élément de~$\mathscr E$ associé à~$x$ comme ci-dessus.
Alors $\mathring \Gamma$ rencontre $C_1\cap C_2$, 
donc $\Gamma\subset C_1\cap C_2$; de même, $\Delta\subset D_1\cap D_2$.
Ainsi, $f^{-1}(\Delta)\cap\Gamma$ est une cellule de~$\mathscr C'$
qui contient~$x$ et qui est
contenue dans l'intersection 
$(f^{-1}(D_1)\cap C_1) \cap (f^{-1}(D_2)\cap C_2)$.
Cela prouve que l'intersection de deux cellules de~$\mathscr C'$
est réunion de cellules de~$\mathscr C'$.

Soit $(C,D)$ un élément de~$\mathscr E$ et soit $x$ un point
du bord de la cellule $f^{-1}(D)\cap C$.
Soit $(\Gamma,\Delta)$ l'élément de~$\mathscr E$ associé à~$x$ comme ci-dessus.
Comme $\mathring\Gamma$ rencontre~$C$, on a $\Gamma\subset C$ ;
de même, $\Delta\subset D$.
Comme $x$ n'appartient pas à l'intérieur de~$f^{-1}(D)\cap C$,
on a $D\neq\Delta$ ou $C\neq\Gamma$.
Si $\Gamma\neq C$, alors $\Gamma$ est contenu dans le bord de~$C$,
et $f^{-1}(\Delta)\cap \Gamma\subset f^{-1}(D)\cap\partial(C)
\subset \partial(f^{-1}(D)\cap C)$.
Sinon, on a $\Delta\neq D$, donc $\Delta$ est contenu dans le bord de~$D$
et $f^{-1}(\Delta)\cap\Gamma \subset f^{-1}(\partial(D))\cap C
\subset \partial(f^{-1}(D)\cap C)$.
Ainsi, $f^{-1}(\Delta)\cap\Gamma$ est une cellule de~$\mathscr C'$
qui contient~$x$ et qui est
contenue dans le bord de $f^{-1}(D)\cap C$.
Cela prouve que le bord de toute cellule de~$\mathscr C'$
est réunion de cellules de~$\mathscr C'$.

Cela prouve que $\mathscr C'$ est une décomposition cellulaire de~$X$.

Démontrons qu'elle raffine~$\mathscr C$.
Par construction, toute cellule de~$\mathscr C'$ est une sous-cellule d'une cellule de~$\mathscr C$.
Soit $C$ une cellule de~$\mathscr C$ 
et soit $x$ un point de~$C$; soit $(\Gamma,\Delta)$ l'élément de~$\mathscr E$
associé à~$x$. Alors $\mathring\Gamma$ rencontre~$C$,
donc $\Gamma\subset C$ et $f^{-1}(\Delta)\cap\Gamma$
est une cellule de~$\mathscr C'$ qui contient~$x$ et qui est 
une sous-cellule de~$C$. Ainsi, $C$ est réunion de cellules de~$\mathscr C'$.

Il reste à prouver que le couple $(\mathscr C',\mathscr D')$ est
adapté à~$f$.
Soit $(C,D)$ un élément de~$\mathscr E$;
on a $f(f^{-1}(D)\cap C)=D$ car $D\subset f(C)$.
Inversement, soit $D$ une cellule de~$\mathscr D'$.
Soit $x$ un point de~$f^{-1}(D)$ et soit $(\Gamma,\Delta)$
l'élément de~$\mathscr E$ associé à~$x$.
On a $\Delta\subset D$ et $x\in f^{-1}(\Delta)\cap\Gamma$.
Cela prouve que tout point de~$f^{-1}(D)$ appartient à une cellule de~$\mathscr C'$ qui est contenue dans~$f^{-1}(D)$,
c'est-à-dire que $f^{-1}(D)$ est réunion de cellules de~$\mathscr C'$.

La proposition est ainsi démontrée.
\end{proof}

\begin{rema}\label{rema.decomp-cell-adapte-raffinement}
% \begin{enumerate}
% \item
% Lorsque $f$ est surjective, les décompositions
% cellulaires~$\mathscr C$ et~$\mathscr D$ se déterminent mutuellement.
% 
% \item
% Si $(\mathscr D,h)$ est un atlas cellulaire de~$Y$,
% alors $(\mathscr C,g)$, où pour toute cellule~$C\in\mathscr C$,
% on a posé $g_C=h_{f(C)}\circ f$,
% est un atlas cellulaire de~$X$.
% 
% \item
Reprenons les notations de la proposition, supposons en outre que 
le couple $(\mathscr C,\mathscr D)$ est adapté à~$f$.
Pour toute décomposition cellulaire~$\mathscr D'$ 
de~$Y$ qui raffine~$\mathscr D$, la proposition
fournit une décomposition cellulaire~$\mathscr C'$
de~$X$ qui raffine~$\mathscr C$ 
et telle que le couple $(\mathscr C',\mathscr D')$ est adapté à~$f$.

Remarquons en outre que, dans ce cas,
la condition $\mathring D\subset f(\mathring C)$ de l'énoncé
équivaut à ce que $f(C)$ soit l'unique cellule de~$\mathscr D$
dont l'intérieur contient~$\mathring D$.
\end{rema}

\begin{coro}\label{prop.decomp-relative}
Soit $X$ et $Y$ des espaces paralinéaires paracompacts et
soit $f\colon  X \to Y$ une application paralinéaire compacte.
Soit $\mathscr C$ une décomposition cellulaire de~$X$,
soit $\mathscr D$ une décomposition cellulaire de~$Y$.
Il existe une décomposition cellulaire~$\mathscr C'$ 
de~$X$ qui raffine~$\mathscr C$ et une décomposition
cellulaire~$\mathscr D'$ de~$Y$ qui raffine~$\mathscr D$
telles que le couple $(\mathscr C',\mathscr D')$ est adapté à
l'application paralinéaire~$f$.
\end{coro}
\begin{proof}
La famille des $f^{-1}(D)$, pour $D\in\mathscr D$, est localement finie.
Commençons ainsi par choisir une décomposition cellulaire~$\mathscr C_1$
de~$X$, plus fine que~$\mathscr C$, et compatible aux applications
paralinéaires $f_D\colon f^{-1}(D)\to D$, pour $D\in\mathscr D$.
Par définition, $f^{-1}(D)$ est réunion de cellules de~$\mathscr C_1$
qui sont des sous-cellules de cellules de~$\mathscr C$,
et pour chaque telle cellule~$C_1$, l'application
$f|_{C_1}\colon C_1\to D$ est affine.

Soit $C_1$ une cellule de~$\mathscr C_1$. 
Si $D$ est une cellule de~$\mathscr D$ telle que $f(C_1)\subset D$,
alors, $f|_{C_1}\colon C_1\to D$ est affine, par construction de~$\mathscr C_1$.
Soit $x$ un point de~$\mathring C_1$
et soit $D$ l'unique cellule de~$\mathscr D$ telle que $f(x)\in \mathring D$.
Comme $f^{-1}(D)$ est réunion de cellules de~$\mathscr C_1$
et comme il rencontre~$\mathring C_1$,
on a $C_1\subset f^{-1}(D)$, donc $f(C_1)\subset D$.

Comme $f$ est compacte, la famille $(f(C_1))_{C_1\in\mathscr C_1}$
est une famille localement finie de parties paralinéaires fermées de~$Y$.
Il existe alors une décomposition cellulaire~$\mathscr D_1$
de~$Y$, raffinant~$\mathscr D$, telle que toute partie de la forme~$f(C_1)$,
pour $C_1\in\mathscr C_1$, soit réunion de cellules.

Le corollaire résulte alors de la proposition,
appliquée aux décompositions cellulaires~$\mathscr C_1$ de~$X$
et $\mathscr D,\mathscr D_1$ de~$Y$.
\end{proof}

\begin{coro}\label{coro.dec-cellulaire-fibre}
Soit $f\colon X\to Y$ une application paralinéaire entre espaces
paralinéaires, soit $\mathscr C$ une décomposition cellulaire de~$X$,
soit $\mathscr D$ une décomposition cellulaire de~$Y$;
on suppose les conditions de la proposition~\ref{prop.relative-raffinement}
satisfaites.
Soit $y\in Y$ ; 
posons $X_y=f^{-1}(y)$ et
notons $\mathscr C_y$ l'ensemble des parties $C\in\mathscr C$
telles que $\mathring C\cap X_y\neq \emptyset$.

\begin{enumerate}
\item
Pour tout $C\in\mathscr C_y$, $C\cap X_y$ est une sous-cellule de~$C$
dont l'intérieur est $\mathring C\cap X_y$,
et la famille $\mathscr D_y:=(C\cap X_y)_{C\in\mathscr C_y}$
est une décomposition cellulaire
de l'espace paralinéaire~$X_y$.

\item
Pour toute cellule~$C$ de~$\mathscr C$, $C\cap X_y$
est une réunion de cellules de la décomposition cellulaire~$\mathscr D_y$. 

\item
Supposons, de plus, que le couple de décompositions cellulaires 
$(\mathscr C,\mathscr D)$
soit adapté à~$f$. Soit $D$ l'unique cellule de~$\mathscr D$
telle que $y\in \mathring D$.
Alors, $\mathscr C_y$ est l'ensemble des cellules~$C$ de~$\mathscr C$
telles que $f(C)=D$.
\end{enumerate}
\end{coro}
\begin{proof}
\begin{enumerate}
\item
Soit $\mathscr D'$ une décomposition cellulaire de~$Y$ qui raffine~$\mathscr D$
et dont $\{y\}$ est une cellule.
Notons $\mathscr C'$ la  famille des $f^{-1}(D)\cap C$, où $C$ est une cellule de~$\mathscr C$ 
et $D$ est une cellule de~$\mathscr D'$ telle que $\mathring D\subset f(\mathring C)$. D'après la proposition~\ref{prop.relative-raffinement},
c'est une décomposition cellulaire de~$X$ qui raffine~$\mathscr C$
et le couple $(\mathscr C',\mathscr D')$ est adapté à~$f$.
Le corollaire résulte alors de la remarque~\ref{rema.relative-restriction}
appliquée au sous-espace paralinéaire $Y'=\{y\}$ de~$Y$.

\item
Procédons par récurrence sur la dimension de~$C$. 
Si $C\in\mathscr C_y$, alors le résultat est vrai.
Sinon, $C\cap X_y$ est contenu dans $\partial(C)\cap X_y$;
par définition d'une décomposition cellulaire, $\partial(C)$ 
est réunion finie de cellules de dimension strictement inférieure à celle de~$C$
et on conclut par récurrence.

\item
Si le couple $(\mathscr C,\mathscr D)$ est adapté à~$f$,
la condition $y\in f(\mathring C)$ équivaut à
$y\in f(C)^\circ$, donc à $f(C)=D$, puisque $f(C)$ est une cellule
de~$\mathscr D$.
\qedhere
\end{enumerate}
\end{proof}

\def\Dom{\operatorname{Dom}}
\def\PL{\mathrm{PL}}
\def\pf{\text{\upshape pf}}
\def\red{\text{\upshape red}}
\def\lf{\text{\upshape lf}}
\def\Bor{\text{\upshape Bor}}
\def\loc{\text{\upshape loc}}
\def\Rad{\text{\upshape Rad}}
\def\Zar{\text{\upshape Zar}}
\def\Tau{{\mathrm T}}
\def\gZ{\mathrm Z}
\def\degcal{\operatorname{dc}\nolimits}
\def\codim{\operatorname{codim}}

\chapter{Espaces G-tropicaux}

Dans ce chapitre, on fixe un sous-corps~$K$ de~$\R$
et un sous-$K$-espace vectoriel non nul~$G$ de~$\R$.
Les espaces paralinéaires que nous considérons sont implicitement
munis d'une $(K,G)$-structure (cf.~\S\ref{ss.KG}).
Par exemple,
lorsque nous parlons de fonction affine sur~$\R^n$,
sa partie linéaire est définie sur~$K$ 
et son terme constant appartient à~$G$.

\section{Espaces G-tropicaux}

Le but de ce qui suit est de définir une notion de structure
d'\emph{espace G-tropical} sur un espace topologique~$X$ ; 
cela va consister en une liste substantielle de données sujettes à un certain 
nombre d'axiomes, que nous allons maintenant 
présenter. 

\subsection{La topologie}
On suppose que $X$ est topologiquement séparé et
localement compact.  On suppose de plus que $X$ possède une base
d'ouverts qui sont dénombrables à l'infini.
\label{G-tropical-topologie}

\subsection{La G-topologie}
On se donne un ensemble $\Dom(X)$ de parties localement fermées
de $X$ que nous appellerons  \emph{domaines de~$X$}.

On dit qu'une partie~$W$ de~$X$ est G-recouverte 
par une partie $\mathscr V$ de~$\Dom(X)$
si chaque élément de~$\mathscr V$ est contenu dans~$W$
et si, pour tout point $x\in W$, il existe
une partie finie~$\mathscr V_x$ de~$\mathscr V$ telle que
$x\in V$ pour tout $V\in\mathscr V_x$ et la réunion des~$V$,
pour $V\in\mathscr V_x$ est un voisinage de~$x$ dans~$W$.

Axiomes sur les domaines : 
\begin{itemize}
\item Toute partie ouverte de~$X$ est un domaine;
\item L'intersection de deux domaines est un domaine;
\item Toute partie localement fermée de~$X$ qui est G-recouverte
par des domaines de~$X$ est encore un domaine de~$X$;
\item Tout domaine de~$X$ est G-recouvert par les domaines compacts
qu'il contient.
\end{itemize}

Il en résulte les faits suivants:
\begin{itemize}
\item Tout recouvrement ouvert d'une partie ouverte est un G-recouvrement;
\item Toute réunion finie de domaines compacts est un domaine compact.
\end{itemize}

La catégorie des domaines de~$X$ (les morphismes étant
les inclusions), munie
de la notion de G-recouvrement,
sera appelée la G-topologie de~$X$.
Plus généralement, tout domaine de~$X$ hérite d'une G-topologie.

\subsection{Les fonctions paralinéaires}
On se donne un G-faisceau $\PL_X$ de fonctions numériques 
continues sur~$X$,
qu'on appelle \emph{fonctions paralinéaires} ;
on demande que pour tout domaine~$W$ de~$X$,
l'ensemble $\PL_X(W)$ soit un sous-espace affine 
de l'espace des fonctions continues sur~$W$.

Plus généralement, si $W$ est un domaine de~$X$ et si $f\colon W\to\R^m$
est une application, on dit que $f$  est paralinéaire
si ses composantes le sont.

Axiomes sur les fonctions paralinéaires.
Soit $W$ un domaine de~$X$ et soit $f\colon W\to\R^m$ une fonction paralinéaire.
\begin{itemize}
\item Si $W$ est compact, alors $f(W)$ est un polyèdre compact de~$\R^m$;
\item Si $P$ est un sous-espace paralinéaire de~$\R^m$
contenant~$f(W)$ et si $\phi\colon P\to\R$ est paralinéaire, alors
$\phi\circ f$ est paralinéaire;
\item Si $P$ est un sous-espace paralinéaire de~$\R^m$,
alors $f^{-1}(P)$ est un domaine de~$W$.
\end{itemize}

En particulier, $f$ induit un morphisme de sites annelés
de~$W$ vers l'espace~$\R^m$ muni de sa G-topologie linéaire par morceaux.

Une \emph{carte G-tropicale} sur $X$ est un triplet $(V,f,P)$ 
où $V$ est un domaine de $X$, où $f$ est une fonction paralinéaire de
$V$ dans un espace affine~$E$
et où $P$ est une partie paralinéaire de $E$ contenant $f(V)$.
En pratique, on pourra supposer que $E$ est l'espace~$\R^m$,
pour un certain~$m$, et la partie~$P$ sera souvent omise de la notation. 

Soit $(V,f)$ une carte G-tropicale et soit $x\in V$.
On appelle \emph{dimension tropicale} de~$f$ en~$x$
la borne inférieure des $\dim(f(W))$, où $W$ est un domaine
compact contenu dans~$V$ qui est voisinage de~$x$.
On la note $\dim_{\trop,x}(f)$.
Il existe un domaine contenu dans~$V$ qui la réalise.

Soit $x$ un point de~$X$.
On appelle $d_\trop(x)$ la borne supérieure des 
dimensions tropicales $\dim_{\trop,f}(f)$,
où $(V,f)$ parcourt l'ensemble des cartes G-tropicales
des domaines~$V$ de~$X$ contenant~$x$.

On note $d_\trop(X)$ la borne supérieure des dimensions
$\dim(f(V))$, où $(V,f)$ parcourt l'ensemble
des cartes G-tropicales sur des domaines de~$X$.
C'est aussi la borne supérieure des $d_\trop(x)$, pour $x\in X$.

\begin{defi}\label{defi.G-tropical}
Un \textit{espace G-tropical}\index{espace G-tropical}
est un espace topologique séparé,  localement compact~$X$,
possédant une base d'ouverts dénombrables à l'infini,
muni des structures suivantes : un ensemble de domaines, une G-topologie
et  un G-faisceau de fonctions paralinéaires,
%  une collection $\Sigma_f^{(n)}$ de parties paralinéaires, et une classe spécifique de parties paralinéaires calibrées
% appelées parties squelettiques élémentaires,
le tout étant sujet aux axiomes énoncés ci-dessus.
\end{defi}

\subsection{}
Un domaine d'un espace G-tropical a une structure naturelle
d'espace G-tropical.

\subsection{}
Un espace paralinéaire a une structure naturelle d'espace
G-tropical: ses domaines sont ses sous-espaces paralinéaires
et ses fonctions paralinéaires sont celles que l'on croit.

\subsection{}
Un espace $k$-analytique~$X$ a une structure naturelle
d'espace G-tropical : ses domaines sont ses domaines analytiques
et ses fonctions paralinéaires sont les éléments
du plus petit G-faisceau en $\Q$-espaces vectoriels sur~$X$
contenant les fonctions $\log (\abs f)$, où $f$ est une fonction
analytique \emph{inversible} sur un domaine analytique de~$X$.

\subsection{Morphismes}
Soit $X$ et $Y$ des espaces G-tropicaux.
On dit qu'une application $f\colon X\to Y$ est un 
morphisme d'espaces G-tropicaux si les trois conditions suivantes
sont satisfaites:
\begin{enumerate}
\item L'application $f$ est continue ;
\item Pour tout domaine~$V$ de~$Y$, $f^{-1}(V)$ est un domaine de~$X$;
\item Pour tout domaine~$V$ de~$Y$ et toute fonction paralinéaire~$\phi$
sur~$V$, la fonction $\phi\circ f$ sur $f^{-1}(V)$ est paralinéaire.
\end{enumerate}

L'application identique de~$X$ est un morphisme d'espaces G-tropicaux.
Si $f\colon X\to Y$ et $g\colon Y\to Z$ 
sont des morphismes d'espaces G-tropicaux,
leur composée $g\circ f \colon X\to Z$ est un morphisme
d'espaces G-tropicaux.

Soit $V$ un domaine de~$X$. L'injection canonique de~$V$
dans~$X$ est un morphisme d'espaces G-tropicaux.

Soit $V$ un domaine de~$X$ et $f\colon V\to E$ une application paralinéaire.
Alors $f$ est un morphisme d'espaces G-tropicaux.

%\begin{defi}\label{defi.trop-stable}
%Soit $X$ un espace G-tropical, soit $V$ un domaine de~$X$
%et soit $f$ une carte G-tropicale sur~$V$.
%On dit que $f$ est \emph{tropicalement stable} en un point~$x$
%de~$V$ si, lorsque $W$ est un voisinage compact assez petit de~$x$ dans~$V$,
%le germe $(f(W),f(x))$ de l'espace paralinéaire~$f(W)$ en~$f(x)$
%ne dépend pas de~$W$.
%
%On dit que $X$ est tropicalement stable si toute carte G-tropicale
%de~$X$ est tropicalement stable en tout point de son domaine.
%\end{defi}

%\footnote{La condition de stabilité est non triviale dans le cas
%des espaces de Berkovich, mais vraie (proposition~\ref{trop.cas.loc}).} 

\section{Parties paralinéaires d'un espace G-tropical}

Soit $X$ un espace G-tropical.

\begin{defi}
\label{defi.fidelement-pl}
Soit $P$ une partie localement fermée de~$X$
et soit $f\colon P\to\R^m$ une application. 
On dit que $f$ est une \emph{carte paralinéaire}\index{carte paralinéaire}
sur~$P$ si les propriétés suivantes soient satisfaites : 
\begin{enumerate}
\item il existe un domaine $V$ de $X$ contenant $P$ tel
que $f$ s'étende en une application paralinéaire de $V$ vers $\R^m$ ; 
\item $f$ induit un homéomorphisme de $P$
sur une partie paralinéaire de $\R^m$, dont on note $f^{-1}_P$
la réciproque ; 
\item pour toute carte tropicale $(W,g)$ de $X$,
l'image $f(P\cap W)$ est une partie paralinéaire de $\R^m$, 
et l'application $g\circ f^{-1}_P|_{f(P\cap W)}$ est paralinéaire.
\end{enumerate}

On dit que  $P$ est une partie
\emph{fidèlement paralinéaire}\index{partie fidèlement paralinéaire}
si elle possède une carte paralinéaire.
\end{defi}

\subsection{}
Lorsque $P$ est une partie fidèlement paralinéaire, 
nous noterons $\Lambda(P)$ l'ensemble des cartes paralinéaires sur~$P$.
Un \textit{domaine de définition}\index{carte paralinéaire!domaine de définition} d'une carte paralinéaire~$f$ sera un domaine~$V$ contenant~$P$ 
et muni d'un prolongement paralinéaire de~$f$ 
qui, sauf mention expresse du contraire, sera encore noté~$f$. 
Remarquons que si $P$ est contenu dans un domaine~$W$ de~$X$,
alors $P$~est fidèlement paralinéaire dans~$W$ 
si et seulement si elle l'est dans~$X$. 

\subsection{}
Soit $P$ une partie fidèlement paralinéaire de~$X$.
Toute application $f\in \Lambda(P)$ définit alors par transport
de structure une structure d'espace paralinéaire sur $P$, qui en vertu de la 
condition~c) ne dépend pas de $f$, et que nous 
appellerons sa structure canonique. 
Cette même condition assure que pour toute
carte G-tropicale $(W,g)$, l'intersection $W\cap P$
est une partie paralinéaire de $P$, et que $g|_{W\cap P}$
est paralinéaire. 
Nous considèrerons toujours $P$ comme munie
de sa structure canonique. 

\subsection{}
Soit $P$ une partie fidèlement paralinéaire de~$X$
et soit  $Q$ une partie paralinéaire de $P$.
Alors, $Q$ est une partie
fidèlement paralinéaire de $X$ 
(si $f\in \Lambda(P)$ alors $f|_Q\in \Lambda(Q)$ de façon évidente). 
Réciproquement, si $Q$ est une partie fidèlement paralinéaire de $X$
 contenue dans $P$ et si
$f\in \Lambda(P)$ alors $f|_Q$ est paralinéaire
(parce que $Q$ est fidèlement paralinéaire) et induit un homéomorphisme
de $Q$ sur son image (parce que c'est déjà le cas au niveau de $P$)
si bien que $f(Q)$ est paralinéaire ; par conséquent, $Q$
est une partie paralinéaire de $P$. 

\begin{defi}\label{definition-partie-paralineaireX}
Soit $\Sigma$ une partie localement fermée de $X$. 
Nous dirons que $\Sigma$ est \textit{paralinéaire}
si elle est G-recouverte par des parties fidèlement paralinéaires de~$X$ de la forme $\Sigma\cap W$, où $W$ est un domaine de~$X$. 
\end{defi}

Une partie fidèlement paralinéaire est paralinéaire. 

\subsection{}
Soit $\Sigma$ une partie paralinéaire de $X$. 
Soit $(P_i)$ un G-recouvrement de~$\Sigma$ 
par des parties fidèlement paralinéaires, chacune d'elles étant de la forme
$W_i\cap \Sigma$ où $W_i$ est un domaine de $X$ ; 
choisissons pour tout~$i$ une application $f_i\in \Lambda(P_i)$, de but $\R^{n_i}$. 
D'après la condition~c) de la définition d'une partie fidèlement
paralinéaire de $X$
la famille $(P_i,f_i)$ est un atlas paralinéaire sur~$\Sigma$, 
et la classe d'équivalence de cet atlas est indépendante du choix des $P_i$ et des
$f_i$. La partie paralinéaire~$\Sigma$ 
hérite ainsi d'une structure paralinéaire, dite canonique,
dont nous la munirons toujours implicitement (qui coïncide avec celle
déjà définie lorsque $\Sigma$ est fidèlement paralinéaire). 

\begin{rema}
Soit $\Sigma$ une partie paralinéaire de~$X$.
Alors $\Sigma$ est G-recouverte par des parties fidèlement paralinéaires
compactes de la forme $\Sigma\cap W$, où $W$ est un domaine compact de $X$. 

En effet, choisissons pour commencer un G-recouvrement $(P_i)$ de $\Sigma$ par des parties
fidèlement paralinéaires, chacune des $P_i$ étant de la forme $\Sigma\cap W_i$ où $W_i$
est un domaine de $X$. Et choisissons pour tout $i$ une application 
$f_i\colon P_i\to \R^{n_i}$ appartenant à $\Lambda(P_i)$. Quitte à remplacer $W_i$ par son intersection
avec un domaine de définition de $f_i$, on peut supposer que $f_i$ est définie sur $W_i$. 
Pour tout~$i$, on fixe un G-recouvrement $(Q_{ij})$ de $f_i(P_i)$ par des parties paralinéaires compactes, et 
l'on note $W_{ij}$ le domaine $f_i^{-1}(Q_{ij})$ de $W_i$. L'intersection 
$R_ij$ de $W_{ij}$ et $\Sigma$ est alors égale
à  $(f_i|_{P_i})^{-1}(Q_{ij})$ ; c'est une partie paralinéaire compacte de $P_i$, et en particulier une partie
fidèlement paralinéaire de $X$. Chacun des $R_{ij}$ possède un voisinage $W'_{ij}$ dans $W_{ij}$
qui est un domaine compact, et $R_{ij}=\Sigma \cap W'_{ij}$. La famille des $(R_{ij})$ constitue alors
un G-recouvrement de $\Sigma$ de la forme cherchée. 
\end{rema}

\begin{lemm}\label{lemme-proprietes-paralineairesX}
Soit $\Sigma$ une partie paralinéaire de $X$, munie de sa structure paralinéaire canonique. 

\begin{enumerate}
\item Soit $P$ une partie fidèlement paralinéaire de 
$X$ contenue dans $\Sigma$. C'est aors une partie paralinéaire de $\Sigma$, 
et sa structure paralinéaire héritée de celle de $\Sigma$ coïncide avec sa structure canonique. 

\item Soit $P$ et $S$ deux parties fidèlement paralinéaires de $X$ contenues dans $\Sigma$. Leur intersection
est paralinéaire dans chacune d'entre elles. 

\item
Soit $W$ un domaine de $X$ et soit $g$ une fonction paralinéaire sur $W$.
L'intersection $\Sigma\cap W$ est un sous-espace paralinéaire de $\Sigma$, 
et $g|_{\Sigma\cap W}$ est paralinéaire.

\item
Soit $\Tau$ une partie localement fermée de $X$ telle que $\Tau\subset\Sigma$.
Alors $\Tau$ est une partie paralinéaire de $\Sigma$ si et seulement si c'est une partie paralinéaire de $X$. 

\item Soit $\Tau$ une partie paralinéaire compacte de $\Sigma$. 
Il existe un domaine compact~$W$ de~$X$ tel que $W\cap \Sigma=\Tau$. 
\item Soit $x$ un point de $\Sigma$ et soit $\Tau$ un voisinage paralinéaire
compact de $x$ dans $\Sigma$. Il existe un voisinage domanial compact $W$ de $x$ dans $X$
tel que $W\cap \Sigma=\Tau$.

\end{enumerate}
\end{lemm}
\begin{proof}
\begin{enumerate}
\item  Soit $(P_i)$ un G-recouvrement de $\Sigma$ par des parties fidèlement paralinéaires
telles que chacune des $P_i$ soit égale à $\Sigma\cap W_i$
pour un certain domaine $W_i$ de $X$ ; pour tout $i$, choisissons un élément $f_i\colon
P_i\to \R^{i}$ de $\Lambda(P_i)$. 

On a alors $P\cap P_i=P\cap W_i$, et $P\cap P_i$ est dès lors une partie
paralinéaire de $P$ en restriction à laquelle $f_i$ est paralinéaire. 
Par conséquent $f_i(P\cap (P_i))$ est une partie paralinéaire de
$\R^{n_i}$, et $P\cap P_i$ est donc également une partie paralinéaire de $P_i$, et \textit{a fortiori}
de $\Sigma$ ; notons que les structures paralinéaires sur $P\cap P_i$ héritées
de celle de $P_i$ et de celles de $P$ coïncident, étant toutes deux induites
par l'homéomorphisme $f_i|_{P\cap P_i}\colon P\cap P_i\simeq f_i(P\cap P_i)$.
Puisque les $P_i\cap P$ constitue un G-recouvement de $P$, il s'ensuit que $P$ est une partie paralinéaire
de $\Sigma$, et que sa structure paralinéaire héritée de celle de $\Sigma$ coïncide avec sa structure
paralinéaire canonique. 

\item C'est une conséquence immédiate de a). 

\item
Cela se vérifie G-localement, ce qui permet de supposer que $\Sigma$ est fidèlement
paralinéaire, auquel cas cela découle de la condition c) de la définition d'une partie
fidèlement paralinéaire.

\item
Supposons que $\Tau$ soit une partie paralinéaire de $\Sigma$. 
Choisissons un G-recouvrement $(\Sigma \cap V_i)$ de $\Sigma$
par des parties fidèlement paralinéaires de $X$,
où les~$V_i$ sont des domaines de~$X$. 
Chacun des $T\cap V_i$ est une partie paralinéaire de~$\Sigma\cap V_i$, 
donc une partie fidèlement paralinéaire de $X$, 
et les $\Tau\cap V_i$ constituent un G-recouvrement de $\Tau$, 
qui est dès lors une partie paralinéaire de $X$.

Réciproquement, supposons que $\Tau$ soit une partie paralinéaire de $X$. 
Elle est alors par définition G-recouverte 
par des parties fidèlement paralinéaires de $X$, 
qui sont des parties paralinéaires de $\Sigma$ d'après a). C'est donc une partie
paralinéaire de $\Tau$. 

\item Soit $\Tau$ une partie paralinéaire compacte de $\Sigma$. Choisissons $(P_i,W_i, f_i)$
comme dans la preuve de a), en supposant de plus que les $P_i$ et les $W_i$ sont compacts, 
ce qui est toujours possible comme expliqué à la fin de \ref{definition-partie-paralineaireX}. 
Puisque $\Tau$ est compact, il existe un ensemble fini $I$ d'indices tel que $\Tau\subset \bigcup_{i\in I}
P_i$. Si $i\in I$, l'intersection $\Tau\cap P_i$ est paralinéaire, si bien que $f_i(\Tau\cap P_i)$
est une partie compacte de $\R^{n_i}$. L'image réciproque $f_i^{-1}(f_i(\Tau\cap P_i))$ est un domaine
compact $W'_i$ de $W_i$, dont la trace sur $\Sigma$ est égale à $\Tau\cap P_i$. La réunion 
$\bigcup_{i\in I}W'_i$ est alors un domaine compact de $X$ dont l'intersection avec $\Sigma$
est égale à $\Tau$. 

\item Soit $\Tau$ un voisinage paraliné	aire compact de $x$. Par l'assertion (e) que l'on
vient d'établir il existe un domaine compact $W$ de $X$ tel que $W\cap \Sigma=\Tau$. Par ailleurs puisque $\Tau$
est un voisinage de $x$ dans $\Sigma$ il existe un voisinage ouvert $U$ de $x$ dans $X$ tel que $U\cap \Sigma
\subset \Tau$. Soit $U'$un voisinage domanial compact de $x$ dans$U$. La réunion $W\cap U'$ est alors un voisinage domanial compact
de $x$ dans$X$ dont la trace sur $\Sigma$ est égale à $\Tau$. 
\qedhere
\end{enumerate}
\end{proof}

\begin{lemm}[Parties paralinéaires et G-topologie]
\label{lemme-partiespl-glocal}
% Si $\Sigma$ est une partie paralinéaire de $X$ et si $W$ est un domaine de $X$, l'intersection $W\cap\Sigma$ est une partie
% paralinéaire de $\Sigma$, et donc une partie paralinéaire de $X$ (et de $W$). 
% 
Soit $\Sigma$ une partie localement fermée de $X$. 
Supposons qu'il existe une famille $(W_i)$ de
domaines de $X$ qui G-recouvrent $\Sigma$ et 
tels que pour tout~$i$, l'intersection $\Sigma\cap W_i$
soit paralinéaire. La partie~$\Sigma$ est alors paralinéaire. 
\end{lemm}

\begin{proof}
Par définition, chacun des $\Sigma\cap W_i$ est G-recouvert
par des parties fidèlement paralinéaires de~$X$ de la forme
$\Sigma\cap W_{i,j}$, où les $W_{i,j}$ sont des domaines de~$W_i$.
Par suite, $\Sigma$ elle-même est G-recouverte par les 
parties fidèlement paralinéaires $\Sigma\cap W_{i,j}$ de~$X$, 
et est dès lors paralinéaire. 
\end{proof}

\begin{lemm}\label{lemm.paralineaire-Glocal}
Soit $(\Sigma_i)$ une famille localement
finie de parties paralinéaires fermées de $X$. 
Les assertions suivantes sont équivalentes : 
\begin{enumerate}\def\theenumi{\roman{enumi}}\def\labelenumi{(\theenumi)}
\item La réunion $\Sigma$ des $\Sigma_i$ est une partie paralinéaire de $X$. 
\item Il existe une famille $(W_j)$ de domaines compacts de $X$ qui G-recouvrent
$\Sigma$ et, pour tout $j$, une partie paralinéaire $T_j$
de $W_j$ contenant $\Sigma\cap W_j$. 
\end{enumerate}

Lorsqu'elles sont satisfaites,
$\Sigma_i\cap \Sigma_j$
est pour tout $(i,j)$ une partie paralinéaire de $\Sigma$, 
et donc de $\Sigma_i$ et $\Sigma_j$. 
\end{lemm}
\begin{proof}
Supposons que (i) soit vraie. Alors (ii) est vraie en prenant pour $(W_j)$
un G-recouvrement (arbitraire)
de $X$ par des domaines compacts, et en posant $T_j=\Sigma\cap W_j$ pour tout $j$. 

Supposons que (ii) soit vraie. Pour tout $j$, l'intersection
$\Sigma_\cap W_j$ s'écrit $\bigcup_{i\in I}\Sigma_i\cap W_j$ 
où $I$ est un ensemble fini d'indices (par compacité de
$W_j$ et locale finitude de la famille des $\Sigma_i$). 
Chacun des $\Sigma_i\cap W_j$ est paralinéaire, et est donc une partie
paralinéaire fermée de $T_j$. Il en résulte que $\Sigma\cap W_j$ est une partie
paralinéaire de $T_j$, et donc de $X$. On déduit alors
du lemme \ref{lemme-partiespl-glocal}
que $\Sigma$ est paralinéaire. 
\end{proof}

\begin{lemm}\label{paracomp.ouvert}
Soit $X$ un espace G-tropical paracompact 
(resp.\ dénombrable à l'infini) 
et soit $\Sigma$ une partie paralinéaire fermée de~$X$.
Pour tout ouvert~$U$ de $X$,
$\Sigma\cap U$ possède un voisinage ouvert dans~$U$
qui est paracompact (resp.\ dénombrable à l'infini).
\end{lemm}
\begin{proof}
En raisonnant composante connexe par composante connexe, 
on peut supposer que $X$ est connexe et donc dénombrable à l'infini. 
Il en est donc de même de sa partie fermée~$\Sigma$,
ainsi que de $\Sigma \cap U$  d'après le lemme~\ref{lemm.pl-paracompact}.
L'assertion découle donc du lemme~\ref{comp.fort.paracomp}.
\end{proof}

\subsection{}
Soit $\Sigma$ une partie paralinéaire de~$X$.
L'injection de~$\Sigma$ dans~$X$
est un morphisme d'espaces G-tropicaux.

\begin{prop}
Soit $p\colon Y \to X$ un morphisme d'espaces G-tropicaux.
Soit $\Sigma$ une partie paralinéaire de~$X$
et soit $\Tau$ une partie paralinéaire
de~$Y$ telle que $p(\Tau)\subset\Sigma$. Alors l'application 
$p|_\Tau\colon\Tau\to\Sigma$ est paralinéaire.
\end{prop}
\begin{proof}
L'assertion est G-locale sur~$X$ et~$Y$. Par définition d'une partie
paralinéaire, on peut donc  supposer que $\Sigma$ est
fidèlement paralinéaires.
Soit $f\in\Lambda(\Sigma)$. Comme $p$ est un morphisme
d'espaces G-tropicaux, $f\circ p$ est une application paralinéaire sur~$Y$.
Par suite, $f\circ p|_\Tau$ est paralinéaire (lemme~\ref{lemme-proprietes-paralineairesX}).
Comme $f$ identifie~$\Sigma$ à une partie paralinéaire d'un espace
numérique, cela démontre que $p$ induit une application paralinéaire
de~$\Tau$ vers~$\Sigma$.
\end{proof}

\section{Formes de type~$(p,q)$ sur un espace G-tropical}

Dans ce paragraphe, on fixe une classe de coefficients~$\mathscr G$.

\subsection{}\label{sss.carte-tropicale}
Soit $X$ un espace G-tropical.

Les cartes G-tropicales sur les domaines de~$X$ 
forment une catégorie essentiellement petite,
un morphisme d'une carte G-tropicale~$(f\colon X\ra E,P)$ dans une 
autre $(f'\colon X\ra E',P')$
étant une application affine $q\colon E\ra E'$ tel
que $f'=q\circ f$ et $q(P)\subset P'$.

Cette catégorie a un objet final pour lequel 
$E=\{0\}$ et $P$ est un point.
Elle est filtrante:
si $(f\colon X\to E,P)$
et $(f'\colon X\to E',P')$ sont deux cartes G-tropicales sur~$X$,
alors $(f\times f'\colon X\to E\times E', P\times P')$
est une carte G-tropicale sur~$X$ qui domine les deux
cartes G-tropicales données.

\begin{rema}
Soit $X$ un espace G-tropical compact et
soit $(f\colon X\to E,P)$ une G-carte tropicale sur~$X$.
\begin{enumerate}
\item L'ensemble $f(X)$
est un sous-espace paralinéaire compact de~$E$ contenu dans~$P$
(théorème.~\ref{prop.trop.global})
et la carte donnée se factorise par $(f\colon X\to E,f(X))$.
On dira parfois, par abus, que $f$ est une carte G-tropicale sur~$X$.
\item Pour toute décomposition cellulaire~$(P_i)$ de~$f(X)$,
la famille $(f^{-1}(P_i))$ est un G-recouvrement fini de~$X$
par des domaines compacts dont l'image par~$f$ est une cellule.
\end{enumerate}
\end{rema}

\subsection{}\label{sss.Gpq-pf}
Pour tout domaine~$V$ de~$X$ 
et tout couple~$(p,q)$ d'entiers naturels, notons
$\mathscr G_{\pf}^{p,q}(V)$ la  limite inductive des $\R$-espaces
vectoriels $\mathscr G^{p,q}_P(P)$, lorsque
$(f\colon V\to E, P)$ parcourt l'ensemble des cartes G-tropicales sur~$V$.

Lorsque $V$ parcourt l'ensemble des domaines de~$X$,
les espaces vectoriels~$\mathscr G_{\pf}^{p,q}(V)$ définissent 
un \emph{préfaisceau} sur le site~$X_\groth$.

Soit $V$ un domaine de~$X$ 
et soit $(f\colon V\to E,P)$ une carte G-tropicale sur~$V$. 
Toute forme $\alpha$ appartenant à $\mathscr G^{p,q}_P(P)$ 
définit un élément
de $\mathscr G^{p,q}_{\pf}(V)$ qui est noté $f^*\alpha$.

Par définition, toute section de
$\mathscr G^{p,q}_\pf(V)$ est de cette forme;
on peut même supposer que $P$ est ouvert.
Plus précisément, soit $(f\colon V\to E,P)$ une carte
G-tropicale sur~$V$, soit $\alpha\in\mathscr G^{p,q}_P(P)$;
il existe un ouvert~$U$ de~$E$ dont $P$ est une partie fermée;
d'après le lemme~\ref{lemm.extensions}, il existe
$\beta\in \mathscr G^{p,q}_{E}(U)$ telle que $\alpha=\beta|_P$;
on a alors $f^*\alpha=f^*\beta$.

% \subsection{}
% Lorsqu'on se contente de faire parcourir à~$V$ l'ensemble des ouverts de~$X$,
% on déduit du préfaisceau~$\mathscr G^{p,q}_\pf$  
% un préfaisceau sur l'espace topologique~$X$;
% on note $\mathscr G_{X}^{p,q}$ le faisceau associé.
% Notons $\pi\colon X_\groth\to X$ le morphisme de sites naturel;
% on dispose par construction d'un morphisme canonique de faisceaux 
% de $\mathscr G^{p,q}_X$ dans~$\pi_*\mathscr G^{p,q}_{X_\groth}$.

\begin{defi}
On note $\mathscr G_{X_\groth}^{p,q}$ le faisceau sur~$X_\groth$
associé au préfaisceau~$\mathscr G_\pf^{p,q}$.
Ses sections 
sont appelées \emph{G-formes de type~$(p,q)$ à coefficients dans~$\mathscr G$}%
\index{forme!G-forme sur un espace G-tropical}.
% celles du faisceau $\mathscr G^{p,q}_{X}$
% sont appelées
%  \emph{formes de type~$(p,q)$ à coefficients dans~$\mathscr G$}%
% \index{forme!forme sur un espace de Berkovich}
% sur~$X$.
 
Lorsque la classe~$\mathscr G$ est la classe~$\mathscr A$
des fonctions lisses, on parle de \emph{G-forme lisse de type~$(p,q)$}.
% et de \emph{forme lisse de type~$(p,q)$}.
\index{forme!forme G-lisse sur un espace G-tropical}
\end{defi}

% \begin{rema}\footnote{À vérifier}
Si $X$ est l'espace G-tropical associé à un espace paralinéaire,
on retrouve ainsi le faisceau des G-formes de type~$(p,q)$
à coefficients dans~$\mathscr G$.
% \end{rema}

\subsection{}\label{sss.support}
% Par définition, le support d'une forme est le complémentaire
% du plus grand ouvert de~$X$ en restriction auquel cette forme est nulle.
% 
On définit le \emph{support}\index{support d'une G-forme}
d'une G-forme de type~$(p,q)$ sur~$X$
comme le complémentaire du plus grand ouvert de~$X$ en
restriction auquel cette forme est nulle. 

% On dit qu'une G-forme sur~$X$ est 
% à \emph{support  propre}\index{propre!G-forme à support ---}
% si son support est compact et disjoint de~$\partial(X)$.

Si $(X_i)$ est un G-recouvrement de~$X$, le support d'une G-forme~$\omega$
coïncide avec la réunion des supports des G-formes $\omega|_{X_i}$.
Il est tout d'abord évident 
que $\supp(\omega|_{X_i})\subset \supp(\omega)\cap X_i$
pour tout~$i$. Inversement, soit $x$ un point de~$X$
qui n'appartient au support d'aucune des formes $\omega|_{X_i}$;
démontrons que $x$ n'appartient pas au support de~$\omega$.
Soit $I$ un ensemble fini d'indices tel que $x\in \bigcap_{i\in I}X_i$
et $\bigcup_{i\in I}X_i$ soit un voisinage de~$x$.  
Pour tout~$i\in I$, soit $V_i$ un voisinage de~$x$ dans~$X_i$
qui est un domaine compact
et sur lequel $\omega|_{V_i}=0$; soit $V=\bigcup_{i\in I}V_i$;
c'est un domaine compact de~$X$ et un voisinage de~$x$.
Par construction, la G-forme $\omega|_V$ est G-localement nulle, donc nulle.

\begin{prop}\label{lemm.support-comp-ferme}
Soit $X$ un espace G-tropical et soit $\omega$ une G-forme
de type~$(p,q)$ sur~$X$.
Le support de~$\omega$ est contenu dans l'ensemble des points~$x\in X$
tels que $d_\trop(x)\geq \max(p,q)$.
\end{prop}
\begin{proof}
Soit $x\in X$ un point tel que $d_k(x)<\max(p,q)$;
démontrons que $\omega$ est nulle au voisinage de~$x$.
Par hypothèse, il existe des domaines compacts
$V_1,\dots,V_n$ de~$X$ dont la réunion est un voisinage~$V$ de~$x$
dans~$X$, des cartes G-tropicales $f_i\colon V_i\to E_i$ et des 
formes $\alpha_i$ sur $f_{i,\trop}(V_i)$ 
tels que $\omega|_{V_i}=f_i^*\alpha_i$ pour tout~$i$.
La dimension tropicale de~$f_i$ en~$x$ est majorée
par~$d_\trop(x)$; quitte à diminuer les~$V_i$,
on peut supposer que $\dim(f_{i}(V_i))\leq d_\trop(x)$;
alors, $\alpha_i=0$. On a donc $\omega|_V=0$.
\end{proof}

\begin{coro}\label{coro.faisceaux-nuls}
Si $d_\trop(X)<\max(p,q)$, alors
les faisceaux~$\mathscr F_X^{p,q}$  et $\mathscr F_{X_{\mathrm G}}^{p,q}$
sont nuls.
\end{coro}

\subsection{}
Soit $\omega$ une G-forme (resp. une forme) 
de type~$(p,q)$ à coefficients dans~$\mathscr G$ sur~$X$.
Si $V$ est un domaine de~$X$,
une \emph{G-tropicalisation} de $\omega$ sur~$V$ est la donnée d'une 
carte G-tropicale $(f\colon V\ra E,P)$ et
d'une forme $\alpha\in \mathscr G^{p,q}_P(P)$
telle que $\omega=f^*\alpha$. 

On dira que $\omega$ est G-tropicale 
si elle admet une G-tropicalisation sur~$X$.

Si $V$ est un domaine de~$X$ tel que $\omega$
possède une G-tropicalisation sur~$V$, 
nous dirons aussi que~$V$ G-tropicalise $\omega$. 

Soit $V$ un domaine compact de~$X$ qui G-tropicalise $\omega$,
et soit $(f V\to E, P,\alpha)$ une G-tropicalisation de $\omega|_V$. 
Comme $f(V)$ est un sous-espace paralinéaire de~$E$, 
$(f\colon  V\to E,f(V),\alpha|_{f(V)})$ est encore 
une G-tropicalisation de $\omega|_V$ ; une telle 
G-tropicalisation sera simplement notée
$(f\colon V\to E,\alpha)$ : 
\emph{lorsque~$V$ est compact,
l'omission du sous-espace paralinéaire~$P$
signifie qu'il est égal à $f(V)$.}

\begin{rema}
Soit $\mathscr H$ une classe de coefficients contenant~$\mathscr G$.
Observons que les G-formes 
à coefficients dans~$\mathscr G$ s'identifient à des G-formes
à coefficients dans~$\mathscr H$.

En effet, le G-préfaisceau $\mathscr G^{p,q}_\pf$ sur~$X_\groth$ s'identifie
à un sous-G-préfaisceau de $\mathscr H^{p,q}_\pf$,
de sorte que 
le G-faisceau $\mathscr G^{p,q}_{X_\groth}$ 
sur~$X$
s'identifie alors à un sous-G-faisceau 
de $\mathscr H^{p,q}_{X_\groth}$.
Ces identifications sont compatibles avec les constructions
faites dans ce paragraphe.
\end{rema}

\subsection{}
Le faisceau
$\mathscr G_{X_{\mathrm G}}:=\mathscr G^{0,0}_{X_{\mathrm G}}$
est un faisceau en $\R$-algèbres
et les faisceaux $\mathscr G_{X_\mathrm G}^{p,q}$ 
sont des $\mathscr G_{X_{\mathrm G}}$-modules. 
Leur somme directe  $\mathscr G_{X_{\mathrm G}}^{*,*} = \bigoplus_{p,q}\mathscr G_{X_{\mathrm G}}^{p,q}$
est un faisceau en algèbres bigraduées commutatives:
si $\omega$ est de type~$(p,q)$ et $\omega'$ de type~$(p',q')$, alors
\[ \omega' \wedge \omega = (-1)^{(p+p')(q+q')} \omega\wedge\omega'. \]

% Par restriction au site topologique de~$X$, on en déduit
% des énoncés analogues pour les faisceaux $\mathscr G_{X}^{p,q}$.
 
\subsection{}
Le G-préfaisceau~$\mathscr G^{*,*}_{\pf}$
est naturellement muni d'un opérateur~$\mathrm J$
qui s'étend au faisceau $\mathscr G^{*,*}_{X_\groth}$. 
Ce sont des morphismes d'algèbres.
On dit qu'une section~$\omega$ de $\mathscr G^{p,q}_{X_\groth}$ 
est symétrique si $p=q$ et si $\mathrm J\omega=\omega$.

Soit $\mathscr H$ une classe de coefficients telles que les dérivées
partielles des fonctions de~$\mathscr G$ 
existent et appartiennent à~$\mathscr H$. 
On dispose alors d'opérateurs différentiels~$\di$ et $\dc$ 
de $\mathscr G^{*,*}_\pf$ dans~$\mathscr H^{*,*}_\pf$;
ils induisent des opérateurs différentiels 
de $\mathscr G^{*,*}_{X_\groth}$ dans~$\mathscr H^{*,*}_{X_\groth}$,
caractérisés par les relations
\[ \di f^*\alpha=f^*\di\alpha, \quad \dc f^*\alpha=f^*\dc\alpha, \]
pour toute carte G-tropicale $(f\colon U\ra E,P)$ et toute
forme~$\alpha$ de type~$(p,q)$ à coefficients dans~$\mathscr G$ sur~$P$.
Les différentielles vérifient la règle de Leibniz:
si $\omega$ et~$\omega'$ sont respectivement de type~$(p,q)$
et~$(p',q')$, on a
\begin{equation}
 \di (\omega\wedge \omega')
=\di\omega\wedge\omega'+(-1)^{p+q}\omega\wedge\di\omega', \end{equation}
et de même pour $\dc(\omega\wedge\omega')$.

Si $\mathscr G\subset \mathscr C^2$, ces opérateurs vérifient
$\di\di=\dc\dc=0$.

% Par restriction
% au site topologique de~$X$,
% on en déduit des énoncés analogues pour les faisceaux 
% de formes à coefficients dans~$\mathscr G$.
 
\subsection{}
Cette construction est fonctorielle. Plus précisément,
si $f\colon Y\to X$ est un morphisme
d'espaces G-tropicaux, on dispose de morphismes de faisceaux
% \[ f^*\colon f^{-1}(\mathscr G^{*,*}_X)\to \mathscr G^{*,*}_Y
% \quad\text{et}\quad 
\[  f^*\colon \mathscr G^{*,*}_{X_{\mathrm G}}
\to f_* (\mathscr G^{*,*}_{Y_{\mathrm G}}) \]
sur $X_\groth$ qui sont des morphismes d'anneaux gradués,
commutent
aux dérivations~$\di$ et $\dc$ (lorsqu'elles existent) et
à l'opérateur~$\mathrm J$.

Lorsque le contexte rend évident le morphisme~$f$,
on se permettra parfois de noter 
$\omega|_Y$
l'image 
dans $\mathscr G^{p,q}_{Y_{\mathrm G}}(Y)$
d'une G-forme $\omega\in\mathscr G^{p,q}_{X_{\mathrm G}}$.
% ce sera notamment le cas lorsque $f$ est une immersion fermée.

\subsection{}
Soit $(p,q)$ un couple d'entiers naturels.
Soit $\Sigma$ une partie paralinéaire de~$X$
et soit $j$ l'inclusion de~$\Sigma$ dans~$X$.
C'est un morphisme d'espaces G-tropicaux.
On dispose donc d'un morphisme de faisceaux 
de $\mathscr F^{p,q}_{X_\groth}$
dans~$j_*\mathscr F^{p,q}_{\Sigma_G}$.
On notera $\omega\mapsto \omega|_\Sigma$ l'image d'une forme~$\omega$
et on dira que c'en est la \emph{restriction} à~$\Sigma$.

Concrètement, cette restriction se calcule de la façon suivante.

Soit $V$ un domaine de~$X$, soit $(f\colon V\to E,P)$
une carte G-tropicale sur~$V$,
 et soit $\alpha\in\mathscr F^{p,q}_P(P)$.
Alors $\Sigma\cap V$ est une partie paralinéaire de~$V$,
l'application $f\colon \Sigma\cap V\to E$
est paralinéaire et son image est contenue dans~$P$,
l'image réciproque $f^*\alpha$ de~$\alpha$ 
est une forme de type~$(p,q)$
sur l'espace paralinéaire~$\Sigma\cap V$.
Par définition de l'ensemble $\mathscr F^{p,q}_\pf(V)$
comme limite inductive, la forme $f^*\alpha$
ne dépend que de la classe de~$f^*\alpha$ dans cet ensemble.
Cette construction définit un morphisme de préfaisceaux
de $\mathscr F^{p,q}_{X,\pf}$ dans $j_*\mathscr F^{p,q}_{\Sigma}$.

Par passage aux faisceaux associés, on en déduit
le morphisme 
de restriction de $\mathscr F^{p,q}_{X_\groth}$
dans~$j_*\mathscr F^{p,q}_{\Sigma_G}$.

\subsection{}\label{sss.forme-volume-restriction}
Soit $f\colon Y\to X$ un morphisme d'espaces G-tropicaux,
soit $\Tau$ une partie paralinéaires de~$Y$ et soit $\Sigma$
une partie  paralinéaire de~$X$ telle que $f(\Sigma)\subset \Tau$.
Pour toute forme $\omega\in\mathscr F^{p,q}_{X_\groth}(X)$,
on a $(f^*\omega)|_\Tau = (f|_\Tau)^*(\omega|_\Sigma)$.

Nous appliquerons notamment cette remarque 
lorsque $Y$ est un domaine de~$X$ et $\Tau=\Sigma\cap Y$,
ou bien lorsque $Y=X$ et $\Tau$ est une partie polyédrale de~$\Sigma$.

\section{Nullité des G-formes}

\begin{prop}\label{prop.controle-tige}\label{prop.separation.prefaisceau}
Soit~$X$ un espace G-tropical compact,
soit~$f\colon  X\to E$ une carte G-tropicale,
soit~$P$ un sous-ensemble paralinéaire de~$E$ 
contenant~$f(X)$ et soit~$\alpha\in \mathscr G^{p,q}_P(P)$. 
Les assertions suivantes sont équivalentes : 
\begin{enumerate}
\item La restriction de~$\alpha$ au sous-espace paralinéaire~$f(X)$ est nulle;
\item La section globale $f^*\alpha$ de~$\mathscr G^{p,q}_\pf$ est nulle.
\end{enumerate}
\end{prop}
\begin{proof}
L'hypothèse~(1)  entraîne que l'image de~$\alpha$
dans la carte tropicale $(f,f(X))$ est nulle, de sorte que (2)
vaut.
Supposons que~(2) est vérifiée et prouvons~(1). 

On peut remplacer $P$ par~$f(X)$.
Par définition, il existe une carte G-tropicale $g\colon X\to F$, 
une morphisme affine $p\colon F\to E$ tel que $p\circ g=f$,
et un sous-espace paralinéaire~$Q$ de~$F$ contenant $g(X)$
tels que $p(Q)\subset P$
et 
$p^*\alpha=0$ dans~$\mathscr G^{p,q}_Q(Q)$.
On peut aussi remplacer~$Q$ par $g(X)$;
on a alors $p(Q)=P$.

Le morphisme de~$Q$ sur~$P$ induit par $p$ possède
G-localement des sections.
D'après le corollaire~\ref{coro.image-polyedre-sections},
il existe une décomposition cellulaire~$(C_i)$ de~$P$
et, pour tout~$i$, une section de~$p_\trop$ sur~$C_i$
à valeurs dans~$Q$.
Puisque $p^*\alpha=0$,
on a donc $\alpha|_{C_i}=0$ pour tout~$i$. 
Par suite, $\alpha=0$.
\end{proof}

\begin{coro}\label{coro.separation.prefaisceau}
Soit $X$ un espace G-tropical compact,
soit $\omega\in\mathscr G_{\pf}^{p,q}(X)$
et soit $(X_i)$ un G-recouvrement de~$X$.
Supposons que pour tout~$i$, la restriction de~$\omega$ à~$X_i$
soit nulle; alors $\omega=0$.
\end{coro}
\begin{proof}
Tout G-recouvrement admet un G-recouvrement
plus fin formé de domaines compacts.
Comme $X$ est compact, cela permet de supposer que le recouvrement~$(X_i)$
est formé d'un nombre fini de domaines compacts.
Par définition du préfaisceau~$\mathscr G_{\pf}^{p,q}$,
il existe une carte G-tropicale  $f\colon X\to E$,  un sous-espace
paralinéaire~$P$ de~$E$ contenant $f(X)$,
et une forme $\alpha\in\mathscr G^{p,q}_P(P)$ 
tels que $\omega=f^*\alpha$. 

Il résulte que de la proposition~\ref{prop.separation.prefaisceau}
que $\alpha|_{f(X_i)}=0$ pour tout~$i$.
Comme $f(X)$  est la réunion
des $f(X_i)$, la forme $\alpha$ est nulle sur $f(X)$
(lemme~\ref{lemm.PL-separation-prefaisceau-plonge}).
On a donc $\omega=f^*\alpha=0$.
\end{proof}

\begin{rema}
L'hypothèse que $X$ est compact est cruciale 
pour le corollaire~\ref{coro.separation.prefaisceau}.
% \footnote{L'exemple ci-dessous utilise la structure G-tropicale d'un espace
% analytique qui sera définie plus tard. On peut l'ajuster
% à condition de dire qu'une somme disjointe d'espaces
% G-tropicaux a une structure naturelle d'espace G-tropicaux.}

Prenons pour espace G-tropical l'ensemble~$X=\R$, muni de la topologie
discrète, et pour carte l'application identique $f\colon\R\to\R$.
Soit $\alpha$ la forme $\di x$ sur~$\R$.
Soit $g\colon X\to\R^n$ une carte qui domine la carte~$f$,
et soit $p\colon\R^n\to\R$ une application affine telle que $f=p\circ g$.
Alors, $p$ est surjective, donc $p^*\alpha\neq 0$. Cela prouve
que la section~$f^*\alpha$ de $\mathscr G^{1,0}_\pf$ n'est pas nulle.
En revanche, pour tout~$t\in\R$, le singleton~$V_t=\{t\}$ est un domaine
de~$X$ de tropicalisation $P_t=\{t\}$ et $\alpha|_{P_t}=0$,
si bien que $\alpha|_{V_t}=0$.
Comme ces domaines~$V_t$ forment un G-recouvrement de~$X$,
cela montre que le préfaisceau $\mathscr G_{\pf}^{1,0}$
n'est pas séparé.
% 
% Supposons que $\abs {k^\times}$ est dense dans~$\R_+^\times$
% et qu'il existe une bijection $t\mapsto X_t$ 
% entre~$\abs {k^\times}$ et l'ensemble des composantes 
% connexes de~$X$. Par exemple, on peut prendre pour~$X$
% le complémentaire du point de Gauß dans~$(\P^1_{\C_p})^\an$
% dont l'ensemble des composantes connexes a même cardinal
% que $\P^1(\overline{\mathbf F_p})$.
% 
% Pour $t\in\abs {k^\times}$, 
% choisissons un élément~$a_t\in k^\times$  tel que $\abs {a_t}=t$.
% Soit alors $f\colon X\to \gm$ le moment qui applique tout point
% de~$X_t$ sur~$a_t$; ainsi, $f_\trop(X)=\log\abs{k^\times}$,
% de sorte que $T_\trop=\R$ est le seul sous-espace paralinéaire
% de~$T_\trop$ qui contient~$f_\trop(X)$.
% La forme différentielle $\di x$ n'est pas nulle sur~$T_\trop$.
% De manière analogue à ce qui a été fait dans
% la preuve de la proposition, on vérifie même que $f^*\di x$ n'est pas nulle
% dans~$\A^{1,0}_\pf(X)$.
% En revanche, pour tout~$t\in \abs{k^\times}$,
% la restriction à~$X_t$ de~$f^*\di x$ est nulle,
% car $\di x|_{\{t\}}=0$.
\end{rema}

\begin{prop}\label{prop.nullite-formes}
Soit $X$ un espace G-tropical, soit $f\colon X\to E$ une
carte G-tropicale,
soit $P$ un sous-espace paralinéaire de~$E$ 
contenant~$f(X)$ et soit $\alpha\in\mathscr F^{p,q}_P(P)$.

Les propriétés suivantes sont équivalentes:
\begin{enumerate}
\item Pour tout domaine compact~$V$ de~$X$,
la restriction de~$\alpha$ au polytope~$f(V)$  est nulle;
\item Pour tout domaine compact~$V$ de~$X$,
la restriction de~$f^*\alpha$ à~$V$ est nulle;
% \item L'image de~$f^*\alpha$ dans~$\mathscr F^{p,q}_{X}(X)$ est nulle;
\item L'image de~$f^*\alpha$ dans~$\mathscr F^{p,q}_{X_{\mathrm G}}(X)$ 
est nulle.
\end{enumerate}
\end{prop}
\begin{proof}
D'après la proposition~\ref{prop.separation.prefaisceau},
les assertions~(1) et~(2) sont équivalentes.

Supposons (1). Comme $X$ admet un G-recouvrement par des domaines
compacts, l'assertion~(3) s'en déduit.
% L'assertion~(3) est aussi vérifiée car tout point possède un voisinage qui
% est un domaine analytique compact.
% L'assertion~(3) entraîne évidemment l'assertion~(4).

Supposons~(3). Il existe donc un G-recouvrement~$(X_i)$
tel que $f^*\alpha|_{X_i}=0$ en tant que section 
du préfaisceau~$\mathscr F^{p,q}_\pf$.
Soit $V$ un domaine compact de~$X$.
La restriction de~$f^*\alpha$ à~$X_i\cap V$ est nulle pour tout~$i$;
comme la famille $(X_i\cap V)$ est un G-recouvrement de~$V$,
le corollaire~\ref{coro.separation.prefaisceau}
entraîne que $f^*\alpha|_V=0$, d'où l'assertion~(2).
Cela termine la preuve de la proposition.
\end{proof}

\begin{coro}\label{coro.support} % \label{support}
Soit~$X$ un espace G-tropical compact, soit~$f\colon X\to E$ 
une carte G-tropical
et soit~$P$ le sous-espace paralinéaire~$f(X)$. 
Soit~$\alpha\in \mathscr G_{P}^{p,q}(P)$, 
et soit~$\omega$ la G-forme~$f^*\alpha$. 
Notons~$K$ le support de~$\omega$ dans~$X$ 
et~$H$ celui de~$\alpha$ dans~$f(X)$.
On a $H=f(K)$.
\end{coro} 
\begin{proof}
Soit $x$ un point de~$X$.
Soit $Q$ un voisinage de $f(x)$ dans $P$ qui est un
polytope compact. Le domaine 
$W=f_\trop^{-1}(Q)$ de~$X$ est un voisinage de~$x$ et 
$\omega|_W=(f|_W)^{*}(\alpha|_Q)$.

Supposons que $x$ appartienne à~$K$.
Le compact~$K$ rencontre alors l'intérieur de~$W$,  
ce qui entraîne que $\omega|_W\neq 0$,
et donc que $\alpha|_Q\neq 0$. Il s'ensuit que $H$
rencontre~$Q$. Comme $Q$ peut être
choisi  arbitrairement petit, $f(x)$
appartient à~$H$,
d'où  l'inclusion $f(K)\subset H$. 

Réciproquement,
supposons que $f(x)\in H$.
Par définition de~$H$, 
la restriction à~$Q$ de la forme $\alpha$ est non nulle.
D'après la proposition~\ref{prop.nullite-formes},
la restriction de~$\omega$ à $f^{-1}(Q)$ est non nulle.
Par conséquent, $f^{-1}(Q)$ rencontre $K$, donc
$Q=f(f^{-1}(Q))$ rencontre $f(K)$.
Comme $Q$ est un voisinage arbitrairement petit
de~$f(x)$, celui-ci est adhérent à $f(K)$.
Puisque $f(K)$ est compact, $f(x)$ appartient à~$f(K)$.
\end{proof} 

% \begin{coro}\label{coro.injectivite-X-XG}
% Notons $\pi\colon X_{\mathrm G}\to X$ le morphisme canonique de sites.
% Le morphisme naturel de $\mathscr G^{p,q}_X$ 
% dans $\pi_*\mathscr G^{p,q}_{X_{\mathrm G}}$ est injectif.
% Son image est constituée des G-formes qui possèdent
% une tropicalisation au voisinage de tout point.
% \end{coro}
% En raison de ce corollaire, % ~\ref{coro.injectivite-X-XG},
% on identifiera, pour tout
% espace analytique~$X$, $\mathscr G^{p,q}_X(X)$ à un sous-espace
% de $\mathscr G^{p,q}_{X_{\mathrm G}}(X)$.
% Bien que le préfaisceau~$\mathscr G^{p,q}_\pf$ ne soit pas
% séparé, nous noterons quand même $f^*\alpha$
% la forme (ou la G-forme) image de la section correspondante
% du préfaisceau~$\mathscr G^{p,q}_\pf$.
% 
% \begin{proof}
% Soit $V$ un ouvert de~$X$ et 
% soit $\omega$ une section de~$\mathscr G^{p,q}_X(V)$.
% Par définition du faisceau~$\mathscr G^{p,q}_X$, 
% il existe un recouvrement ouvert de~$V$
% par des ouverts~$V_i$ tel que $\omega|_{V_i}$ soit l'image
% de la forme $f_i^*\alpha_i$, pour $f_i$ et $\alpha_i$ convenables.
% Supposons que l'image dans $\pi_*\mathscr G^{p,q}_{X_{\mathrm G}}$ soit nulle.
% Alors, l'image de $f_i^*\alpha_i$
% dans $\mathscr G^{p,q}_{X_{\mathrm G}}(V_i)$ est nulle,
% puisque c'est la restriction de l'image de~$\alpha$
% dans $\mathscr G^{p,q}_{X_{\mathrm G}}(V)$.
% La proposition précédente entraîne que $\omega|_{V_i}=0$.
% On a donc $\omega=0$.
% 
% La description de l'image résulte alors des définitions.
% \end{proof}
% 

\subsection{}\label{fonctions-loc-trop}
À toute carte G-tropicale $(f\colon V\to E,P)$ sur un domaine~$V$ 
de~$X$ et à toute fonction~$u$ sur~$P$,
associons la fonction $u\circ f$ sur~$V$. On définit
ainsi un  morphisme de préfaisceaux de $\mathscr F^{0,0}_\pf$
vers le G-faisceau des fonctions sur~$X$.
Il découle de la proposition~\ref{prop.nullite-formes}
que le morphisme de~$\mathscr F^{0,0}_{X_\groth}$ dans
le G-faisceau des fonctions sur~$X$ est injectif,
morphisme par lequel nous identifierons systématiquement 
une $(0,0)$-G-forme à une fonction.

Une section globale du faisceau~$\mathscr G^{0,0}_{X_\groth}$
est une fonction  sur~$X$ qui est G-localement
de la forme $\phi(f_1,\dots,f_n)$
où les~$f_i$ sont des fonctions paralinéaires sur~$X$,
et où $\phi$ appartient à la classe~$\mathscr G$. 
De telles fonctions seront appelées G-localement tropicales
de classe~$\mathscr G$.

% De même, 
% une section globale du faisceau~$\mathscr G^{0,0}_{X}$
% est une fonction  sur~$X$ qui est localement de cette forme.
% De telles fonctions sont dites localement tropicales de classe~$\mathscr G$.
% 
% Comme la valeur absolue d'une section de~$\mathscr O_X^\times$ est 
% constante au voisinage de tout point rigide, 
% une fonction G-localement tropicale est constante au voisinage
% de tout point rigide.
 
\begin{defi}
Soit $\mathscr A$ la classe des fonctions lisses.
Les sections de 
% $\mathscr A^{0,0}_X$, resp. de 
$\mathscr A^{0,0}_{X_\groth}$
sont appelées \emph{fonctions G-lisses} 
% (resp. fonctions G-lisses)
sur~$X$.\index{fonction G-lisse sur un espace G-tropical}
\end{defi}
Les fonctions G-lisses sont donc les fonctions  sur~$X$ 
sont G-localement 
de la forme $\phi(f_1,\dots,f_n)$
où les $f_i$ sont des fonctions paralinéaires
et où $\phi$ est une fonction de classe~$\mathscr C^\infty$.

\subsection{}
Soit $\mathscr C$ la classe  des fonctions continues.
Toute fonction G-tropicale de classe~$\mathscr C$ est une
fonction continue sur l'espace topologique~$X$.

Il n'est pas vrai, en général, que toute fonction continue
sur~$X$ soit de cette forme.
(L'application $x\mapsto \abs{T(x)}$ sur~$\A^1$
montre qu'il existe des fonctions continues sur l'espace
topologique~$\A^1$ qui ne sont
pas constantes au voisinage d'un point rigide
et donc ne sont pas G-localement de classe~$\mathscr C$.)

Pour éviter toute ambiguïté, les fonctions de~$\mathscr C^{0,0}_{X_\groth}$
seront appelées \emph{G-localement tropicalement continues}.
\index{fonction!tropicalement continue}

On parlera aussi de G-formes 
à coefficients tropicalement continus
pour qualifier les G-formes
à coefficients dans~$\mathscr C$.
\index{G-forme!à coefficients tropicalement continus}
 
\subsection{}
Soit $\mathscr B$ la classe des fonctions boréliennes.
Les sections 
% de~$\mathscr B^{0,0}_{X}$, resp. 
de~$\mathscr B^{0,0}_{X_\groth}$
seront appelées des fonctions \emph{G-localement tropicalement boréliennes}.
% resp. G-localement tropicalement boréliennes.
On parlera de G-formes à coefficients tropicalement boréliens
pour qualifier les G-formes
à coefficients dans~$\mathscr B$.
\index{G-forme!à coefficients tropicalement boréliens}
 
\begin{exem}\label{exem.indicatrice-borel}
Soit $X$ un espace G-tropical qui est associé, soit à un espace
paralinéaire, soit à un espace analytique.
\emph{La fonction indicatrice d'un domaine \emph{fermé}~$Y$ de~$X$
est G-localement tropicalement borélienne.}

Pour la démonstration, séparons les deux cas.

Traitons d'abord celui où $X$ est associé à un espace paralinéaire.
L'assertion étant G-locale, on peut supposer que $X$ est
un polyèdre compact de~$\R^n$. Dans ce cas, $Y$ est un polyèdre
compact contenu dans~$X$, défini par des inégalités entre
fonctions affines. Sa fonction caractéristique est donc borélienne.

Traitons maintenant le cas où $X$ est associé à un espace analytique.
Puisque l'assertion est G-locale, on peut supposer que $X$ est affinoïde.
D'après le théorème de Gerritzen--Grauert,
$Y$ est alors réunion finie
de domaines rationnels~$Y_i$ de~$X$;
comme l'ensemble des domaines rationnels est stable par intersection finie,
on peut même supposer que $Y$ est un domaine rationnel de~$X$.
Il existe donc des fonctions $f_1,\dots,f_m,g\in\mathscr O(X)$
telles que $(f_1,\dots,f_m,g)=(1)$
et des nombres réels $\lambda_1,\dots,\lambda_m>0$ 
tels que $Y$ soit défini par les inégalités 
$\abs{f_i}\leq \lambda_i \abs{g}$.

Soit $y\in X$. Si $y\not\in Y$, la fonction indicatrice de~$Y$
est nulle  au voisinage de~$y$, car $Y$ est fermé.
Si $y\in Y$, on a nécessairement $g(y)\neq 0$.  Quitte à
remplacer~$X$ par un voisinage affinoïde de~$y$ sur lequel 
$g$ est inversible, et à remplacer $f_i$ par $f_i/g$,
on suppose alors que $g=1$.
Si $\abs{f_i(y)}=0$, 
on peut encore restreindre~$X$ de sorte que l'inégalité
$\abs{f_i(y)}\leq \lambda_i$ sur tout~$X$ et retrancher cette inégalité
de la définition de~$Y$.
Nous sommes alors ramenés au cas où tous les~$f_i(y)$
sont non nuls et donc, quitte à restreindre encore~$X$,
que les fonctions~$f_i$ sont toutes inversibles sur~$X$.

Les fonctions $f_1,\dots,f_m$ définissent alors un moment
$f\colon X\to \gm^m$ qui tropicalise la fonction indicatrice~$\mathbf 1_Y$.
\end{exem}

% \begin{lemm}
% Pour qu'une forme $\alpha\in \mathscr F^{p,q}_X(X)$
% soit nulle, il faut et il suffit
% que sa restriction à tout domaine analytique compact 
% de $X$ le soit.
% Si $X$ est bon, il suffit qu'elle le soit
% sur tout domaine affinoïde de $X$. 
% \end{lemm}
% \begin{proof}
% Cela résulte du fait que 
% tout point possède une base de voisinages qui sont des intérieurs
% de domaines analytiques compacts, resp. des 
% domaines affinoïdes de $X$ si celui-ci est bon. 
% \end{proof}

\begin{prop}\label{lemm.otimes-coeff}
L'homomorphisme canonique
\[  \mathscr G_{X_{\mathrm G}}\otimes_{\mathscr A_{X_{\mathrm G}}} \mathscr A_{X_{\mathrm G}}^{p,q}\to\mathscr G^{p,q}_{X_{\mathrm G}} \]
est un isomorphisme.
\end{prop}
\begin{proof}
Commençons par une remarque. Soit $P$ un sous-espace paralinéaire 
d'un espace affine~$V$, et soit 
$x_1,\dots,x_m$ des coordonnées sur~$\langle P\rangle$.
Les formes 
\[ \di x_{i_1}\wedge\dots \wedge \di x_{i_p}
\wedge \dc x_{j_1}\wedge\dots\wedge \dc x_{j_q} \]
forment une famille génératrice 
de $\mathscr A^{p,q}_P(P)$ comme module sur
$\mathscr A_P(P)$, et une famille génératrice de $\mathscr G_P^{p,q}(P)$
comme module sur $\mathscr G_P(P)$;
si, de plus, $P$ est une cellule, ces formes constituent
des bases de ces modules.
Par suite, l'homomorphisme
\[ \mathscr G_P(P) \otimes_{\mathscr A_P(P)} \mathscr A^{p,q}_P(P)
 \to \mathscr G_P^{p,q}(P) \]
est surjectif, et c'est un isomorphisme si $P$ est une cellule. 

Démontrons ensuite que l'homomorphisme mentionné dans le lemme est surjectif.
Soit $V$ un domaine de~$X$, 
soit $\omega$ une section de $\mathscr G^{p,q}_{X_{\mathrm G}}(V)$;
il s'agit de prouver qu'elle appartient G-localement à l'image
de cet homomorphisme. On peut donc supposer que $V$ est compact
et G-tropicalise~$\omega$; la remarque précédente montre alors 
que $\omega$ appartient à l'image de $\mathscr G_{X_{\mathrm G}}(V)\otimes_{\mathscr A_{X_{\mathrm G}}(V)} \mathscr A^{p,q}_{X_{\mathrm G}}(V)$.

Démontrons enfin l'injectivité de cet homomorphisme.
Soit $V$ un domaine de~$X$,
soit $\eta$ une section sur~$V$ du produit tensoriel
dont l'image est nulle dans~$\mathscr G^{p,q}_{X_{\mathrm G}}(V)$;
démontrons que $\eta$ est nulle.
On peut raisonner G-localement
et donc supposer que $V$ est compact et qu'il existe
une carte G-tropicale $f\colon V\to \R^n$ tel que $P=f(V)$ soit une cellule,
un entier~$m\geq 0$ et pour $i\in\{1,\dots,m\}$,
des éléments $\phi_i\in\mathscr G_{P}(P)$ 
et $\alpha_i\in\mathscr A^{p,q}_P(P)$
tels que l'on ait $\eta=\sum_{i=1}^m f^*\phi_i \otimes f^*\alpha_i$.
Par hypothèse, $f^*(\sum_{i=1}^m \phi_i\alpha_i)=0$.
On a donc $\sum_{i=1}^m \phi_i\alpha_i=0$ 
(proposition~\ref{prop.nullite-formes}), 
d'où $\sum_{i=1}^m\phi_i\otimes\alpha_i=0$ en vertu de la remarque
faite au début de la preuve du lemme, d'où $\eta=0$.
\end{proof}

% Prématuré
%
% \begin{rema}
% On démontrerait de façon analogue que l'homomorphisme canonique
% \[  \mathscr G_{X}\otimes_{\mathscr A_{X}} \mathscr A_{X}^{p,q}\to\mathscr G^{p,q}_{X} \]
% est surjectif; nous ne pensons pas qu'il est injectif en général.
% \end{rema}
%  
% \begin{rema}\label{sss.coefficients-generaux}
% La proposition~\ref{lemm.otimes-coeff} suggère une définition   du
% faisceau des formes de type~$(p,q)$
% à coefficients dans n'importe quel 
% $\mathscr A_{X_{\mathrm G}}$-module~$\mathscr M$,
% à savoir le produit tensoriel
% \[ \mathscr M \otimes_{\mathscr A_{X_{\mathrm G}}}
%      \mathscr A_{X_{\mathrm G}}^{p,q}. \]
% Lorsque $\mathscr M$ n'est pas d'origine tropicale,
% il semble que le « bon objet » en soit un quotient non trivial
% par des relations qui ne sont pas claires à ce jour.
% \end{rema}

\section{Formes positives sur un espace G-tropical}

\begin{defi} Soit~$X$ un espace G-tropical
et soit~$\omega$ une G-forme symétrique sur~$X$
de type~$(p,p)$.
On dit que~$\omega$ est \emph{positive}
(\resp \emph{faiblement positive}, \resp \emph{fortement positive})
s'il existe 
un G-recouvrement~$(X_i)$ de~$X$ par des domaines et,
pour chaque indice~$i$, une carte G-tropicale $(f_i \colon X_i \ra E_i,
P_i)$ sur~$X_i$ et une forme~$\alpha_i$ de type~$(p,p)$ sur~$P_i$
qui est positive (\resp faiblement positive, \resp fortement
positive)  et telle que~$\omega|_{X_i}=f_i^*\alpha_i$. 
\end{defi} 

Dans la définition, il ne serait pas restrictif de supposer
que les~$X_i$ sont des domaines compacts 
et les $P_i$ égaux à $f_{i}(X_i)$.

\subsection{}\label{rema.image.inverse}
Soit $f\colon Y\to X$ un morphisme d'espaces G-tropicaux
et soit $\omega$ une forme de type~$(p,p)$ sur~$X$.
Si $\omega$ est positive 
(\resp faiblement positive, 
\resp fortement positive),
il en est de même de la forme~$f^*\omega$ sur~$Y$.

Cela résulte immédiatement de la définition.

\begin{lemm}\label{lemme-gpositif.compatibilite}
 Soit~$X$ un espace G-tropical,
soit~$(f\colon X\ra E,P)$
 une carte G-tropicale sur~$X$ 
 et soit~$\alpha$ une G-forme symétrique
 de type~$(p,p)$  sur~$P$.

 La G-forme~$f^*\alpha$ est positive
 (\resp faiblement positive, \resp fortement positive)
 si et seulement si c'est le cas de~$\alpha|_{f(V)}$
pour tout domaine compact~$V$ de~$X$.
\end{lemm} 
\begin{proof}
Supposons que pour tout domaine compact~$V$ de~$X$, la restriction de~$\alpha$ à~$f(V)$ soit positive. 
Soit alors $(X_i)$ un G-recouvrement de~$X$ par des domaines compacts; pour tout~$i$, posons $P_i=f_{i}(X_i)$, $f_i=f|_{X_i}$ et $\alpha_i=\alpha|_{P_i}$. 
Par hypothèse, $\alpha_i$ est une $(p,p)$-forme positive 
et l'on a $f^*\alpha|_{X_i}=f_i^*\alpha_i$.
La famille $(X_i)$,
les cartes G-tropicales $(f_i\colon X_i\to T,P_i)$ et les formes $\alpha_i$ 
attestent donc que $f^*\alpha$ est positive.

Réciproquement, supposons~$f^*\alpha$ positive
et soit $V$ un domaine compact de~$X$;
démontrons que la restriction de~$\alpha$ à~$f(V)$
est positive.
Par hypothèse, il existe un G-recouvrement~$(V_i)$
de~$V$ par des domaines compacts, 
et pour tout~$i$ une carte G-tropicale $f_i \colon V_i\ra E_i$
et une G-forme~$\alpha_i$ positive 
sur~$f_{i}(V_i)$ telle que~$f_i^*\alpha_i=(f^*\alpha)|_{V_i}$. 
Comme $V$ est compact, on peut  supposer que ce G-recouvrement est fini.

Pour tout~$i$, posons $h_i=(f_i,f|_{V_i})\colon V_i\to E_i\times E$,
notons $p_i$ et~$q_i$ les projections de $E_i\times E$ sur~$E_i$ et~$E$
respectivement; posons enfin $\beta_i=p_i^*\alpha_i$
et $\gamma_i=q_i^*\alpha$.
On a 
$
h_i^*\beta_i 
% = f_i^*\alpha_i = (p_i\circ h_i)^* \alpha_i =h_i^* p_i^*\alpha_i 
= f^*\alpha|_{V_i}
% =(q_i\circ h_i)^* \alpha = h_i^* q_i^* \alpha 
= h_i^*\gamma_i$.
Par suite, les restrictions à $h_{i}(V_i)$ des formes
$\beta_i$  et $\gamma_i$ sont égales (proposition~\ref{prop.nullite-formes}).
Comme~$V$ est égal à 
la réunion des~$V_i$, on a
\[f(V)=\bigcup f(V_i)=\bigcup q_{i}(h_{i}(V_i)).\]
Soit $Q$ l'espace paralinéaire somme disjointe
des $h_{i}(V_i)$, soit $\beta$
la forme sur~$Q$ donnée par~$\beta_i=\gamma_i$ sur $h_{i}(V_i)$.
Par construction, cette forme~$\beta$ est positive.
Soit $q\colon Q\to f(V)$ l'application paralinéaire
donnée par~$q_{i}$ sur~$h_{i}(V_i)$;
on a  $\beta=q^*\alpha$.
Comme $q$ est surjective et compacte, elle possède G-localement
des sections (prop.~\ref{prop.sections-G-locales}).
Par suite, $\alpha$ est positive.
 
Les cas des G-formes faiblement positives et des G-formes fortement positives
se traitent de même.
\end{proof}

\begin{exem}\label{exem.forme-positive}
Soit $X$ un espace G-tropical, soit $(u_1,\dots,u_n)$ une 
famille finie de fonctions G-localement tropicalement
de classe~$\mathscr C^1$ sur~$X$ 
et soit $M\colon X\to \MAT_n(\R)$ une fonction 
G-localement tropicale telle que $M(x)$ 
soit symétrique positive pour tout $x\in X$.
Alors, la G-forme 
\[ \sum_{i,j} M_{i,j} \di u_i \wedge \dc u_j \]
sur~$X$ est positive.

L'assertion à prouver est G-locale sur~$X$.
On peut donc supposer que $X$ est compact
et que $u_1,\dots,u_n,M$ sont G-tropicalisées
par une carte G-tropicale commune $f\colon X\to \R^p$ tel que $f(X)$
soit une cellule~$C$ de dimension~$p$ sur~$\R^p$.
Il existe alors des fonctions $v_1,\dots,v_n,N$ sur~$C$,
les~$v_i$ étant de classe~$\mathscr C^1$, telles que
$u_i=f^*v_i$ pour $i\in\{1,\dots,n\}$ et $M=f^*N$.
En particulier, $N(y)$ est symétrique positive pour tout $y\in C$.

D'après le lemme~\ref{lemme.image.inverse}
appliqué à la forme positive $\sum N_{i,j}(y) \di y_i\wedge\dc y_j$
et à l'application linéaire de~$\R^p$ dans~$\R^n$
de matrice $(\partial v_i/\partial y_j)$,
la forme de type~$(1,1)$ sur $\R^p$ donnée par 
$\sum N_{i,j} (y) \di v_i(y)\wedge \dc v_j(y)$
est positive.
Par suite, la forme de type~$(1,1)$
$\sum N_{i,j}  \di v_i\wedge \dc v_j$
sur~$C$ est positive.
L'assertion voulue résulte alors de la formule
\[\sum_{i,j} M_{i,j} \di u_i \wedge \dc u_j 
=f ^* (\sum_{i,j} N_{i,j}  \di v_i \wedge \dc v_j )\]
et de la définition d'une forme positive.
\end{exem}

\begin{prop}\label{prop.produit-formes-positives-berk}
Soit $X$ un espace G-tropical, 
soit $\omega_1,\dots,\omega_r$ des G-formes  sur~$X$,
et soit $\omega=\omega_1\wedge\dots\wedge\omega_r$.

\begin{enumerate}
\item
Si les $\omega_j$
sont fortement positives, alors $\omega$
est fortement positive.

\item
Si les $\omega_j$
sont positives, alors $\omega$ est positive.

\item
Si les $\omega_j$ sont faiblement positives
et si au plus une d'entre elles n'est pas fortement positive, 
alors $\omega$ est faiblement positive.
\end{enumerate}
\end{prop}
\begin{proof}
Cela découle de la proposition~\ref{prop.produit-formes-positives-pl}.
\end{proof}

\section{Topologies sur l'espace des G-formes}

Dans ce paragraphe, on définit des topologies~$\mathscr C^m$
sur les espaces de G-formes G-lisses. 
Mutatis mutandis, ces constructions fournissent
des topologies~$\mathscr C^m$ sur les espaces de G-formes
à coefficients formés de fonctions~$\mathscr C^m$;
les modifications correspondantes sont laissées au lecteur.

\subsection{}
Soit $C$ une cellule compacte de~$\R^c$, de dimension~$c$.

Pour tout entier~$m\geq 0$,
on dispose sur l'ensemble $\mathscr C^\infty(C)$
des fonctions de classe~$\mathscr C^\infty$ sur~$C$
de la semi-norme $\norm{\cdot}_{C,m}$ donnée par la borne supérieure des
valeurs des dérivées partielles d'ordre~$\leq m$;
c'est une norme.

Soit $(p,q)$ un couple d'entiers naturels.
L'ensemble des formes lisses de type~$(p,q)$ sur~$C$ 
est un module libre sur $\mathscr C^\infty(C)$,
de base la famille $(\di x_I\wedge \dc x_J)$,
où $(I,J)$ parcourt l'ensemble des couples de multi-indices
de longueurs~$p$ et~$q$ respectivement.
On définit alors la semi-norme~$\norm{\cdot}_{C,m}$
sur l'espace $\mathscr A^{p,q}(C)$ des formes lisses sur~$C$,
de type~$(p,q)$, par la borne supérieure des semi-normes des coefficients
d'une forme dans cette base; c'est encore une norme.

La topologie correspondante est appelée topologie~$\mathscr C^m$
sur~$\mathscr A^{p,q}(C)$.  Elle est normable et en particulier séparée.

Si $m\leq n$,
la topologie~$\mathscr C^m$ est moins fine que la topologie~$\mathscr C^n$.
La topologie qu'elles engendrent, définie par toutes les
normes $\norm{\cdot}_{C,m}$ pour $m\in\N$,
est appelée topologie~$\mathscr C^\infty$; elle est métrisable
et en particulier séparée.

\subsection{}
Soit $D$ une cellule compacte de~$\R^d$, de dimension~$d$
et soit $f\colon C\to D$ une application affine.
Pour tout entier~$m$, il existe un nombre réel~$M_{m}$
tel que 
$\norm{f^*\omega}_{C,m}\leq M_{m} \norm{\omega}_{D,m}$
pour toute forme $\omega\in\mathscr A^{p,q}(D)$.

Si $f$ est surjective, elle possède une section affine~$g$;
on en déduit l'existence, pour tout entier~$m$,
d'un nombre réel~$M'_m$ 
tel que 
$ \norm{\omega}_{D,m} \leq M'_m \norm{f^*\omega}_{C,m}$.
Autrement dit, l'application~$f^*$ est injective est stricte.

En particulier, les automorphismes affines d'une cellule~$C$
induisent des homéomorphismes pour les topologies~$\mathscr C^m$.

\subsection{}\label{subsec.topologie-forme-pl}
Soit $X$ un espace paralinéaire.
Pour tout élément~$m$ de~$\N\cup\{+\infty\}$,
on définit la topologie $\mathscr T_{m}$ sur $\mathscr A_\groth^{p,q}(X)$
comme la topologie la moins fine rendant continues
les applications $f^*$ où $f\colon C\to X$ est une application
paralinéaire d'une cellule compacte~$C$ de dimension~$d$
de~$\R^d$ dans~$X$.

À toute application $f\colon C\to X$ et tout entier~$r$, on peut associer
une (classe de) semi-norme $\mathscr C^r$,
$\norm\cdot_{f,m}$ sur~$\mathscr A^{p,q}_\groth(X)$,
image directe de la norme~$\mathscr C^r$ sur $\mathscr A^{p,q}_\groth(C)$.
La topologie~$\mathscr T_m$ est ainsi définie par ces seminormes
$\norm{\cdot}_{f,r}$, où $r$ parcourt les entiers~$\leq m$.

Pour définir cette topologie,
on peut en fait se limiter à une famille d'applications~$f$ 
qui sont des immersions
et dont les images constituent un G-recouvrement de~$X$.

Si $X$ est compact et $m$ est fini, 
on en déduit que cette topologie est normable.
Dans ce cas, on appellera norme~$\mathscr C^m$ standard sur~$X$
toute norme sur $\mathscr A_\groth^{p,q}(X)$.
Un exemple typique de norme standard consiste à prendre la borne
supérieure des normes $\norm{\cdot}_{f,m}$ où $f\colon C\to X$
parcourt un décomposition cellulaire de~$X$.

Cette topologie est métrisable si $X$ est paracompact.

Cette topologie est séparée.

\subsection{}
Soit $f\colon P\to Q$ une application paralinéaire entre espaces
paralinéaires.

Pour tout~$m$, l'application $\omega\mapsto f^*\omega$
est continue pour les topologies~$\mathscr T_{P,m}$
et~$\mathscr T_{Q,m}$.

Si $f$ est surjective, elle admet G-localement des sections paralinéaires
et l'application continue $f^*$ est injective et stricte.

\subsection{}
Soit $X$ un espace G-tropical.
Soit $m$ un élément de~$\N\cup\{+\infty\}$.
Soit $V$ un domaine de~$X$, 
soit $f\colon V\to E$ une application paralinéaire
à valeurs dans un espace affine~$E$.

Soit $\mathscr A^{p,q}_{f,\groth}(X)$
le sous-espace de $\mathscr A^{p,q}_\groth(X)$ constitué des G-formes
$\omega$ qui sont tropicalisées par~$f$ sur tout domaine compact de~$V$,
c'est-à-dire telles que pour tout domaine compact~$W$ de~$V$,
il existe $\alpha_W\in\mathscr A^{p,q}(f(W))$ vérifiant
$\omega|_W=f^*\alpha_W$. Rappelons qu'alors $\alpha_W$ est déterminé
par~$\omega|_W$ (proposition~\ref{prop.nullite-formes}).

Si $P$ est une partie paralinéaire de~$E$ contenant~$f(V)$
et $\alpha \in\mathscr A^{p,q}(P)$ est une G-forme,
alors $f^*\alpha\in\mathscr A^{p,q}_{f,\groth}(V)$
et $(f^*\alpha)_W = \alpha|_{f(W)}$ pour tout domaine compact~$W$ de~$V$.
(En général, une forme $\omega\in \mathscr A^{p,q}_{f,\groth}(X)$ 
n'est pas de la forme $f^*\alpha$, pour $\alpha$ comme ci-dessus,
même si $V=X$ et $f(V)=P$.
On prend par exemple pour~$X$ l'espace~$\R$ muni de la topologie
discrète et de sa structure paralinéaire évidente,
$f\colon X\to \R$ l'application identique, qui est paralinéaire;
les domaines compacts de~$X$ sont les ensembles finis;
toute fonction sur~$X$ est G-lisse et est $f$-tropicale 
sur tout domaine compact; bien peu sont de la forme $f^*\phi$
où $\phi$ est une fonction lisse sur~$\R$.)

Soit $W$ un domaine compact de~$V$. On considère le diagramme
\[ \mathscr A^{p,q}_{f,\groth}(X) \to \mathscr A^{p,q}_{f,\groth}(W)
\leftarrow \mathscr A^{p,q}(f(W)) \]
défini par l'application de restriction et l'application~$f^*$.
L'application~$f^*\colon\mathscr A^{p,q}(f(W))\to\mathscr A^{p,q}_{f,\groth}(W)$
est bijective; on munit son but de la topologie déduite 
par transport de structure de
la topologie~$\mathscr C^m$ sur $\mathscr A^{p,q}(f(W))$.
On munit alors $\mathscr A^{p,q}_{f,\groth}(X)$
de la topologie la moins fine rendant continue la restriction à~$W$
pour tout domaine compact~$W$ de~$V$;
on la note $\mathscr T_{f,m}$.

Pour cette topologie, l'adhérence de la G-forme nulle
est l'ensemble des G-formes~$\omega\in\mathscr A^{p,q}_\groth(X)$
telles que $\omega|_V=0$.
En effet, la topologie de $\mathscr A^{p,q}_{f,\groth}(W)$
est \emph{normable}, donc séparée;
par définition de la topologie de~$\mathscr A^{p,q}_{f,\groth}(W)$,
l'adhérence de~$0$ est l'ensemble des formes
dont toutes les restrictions aux domaines compacts~$W$ de~$V$ sont nulles;
c'est donc l'ensemble des éléments de~$\mathscr A^{p,q}_{f,\groth}(X)$
dont la restriction à~$V$ est nulle.
D'autre part, toute forme de $\mathscr A^{p,q}_\groth(X)$
dont la restriction à~$V$ est nulle appartient à~$\mathscr A^{p,q}_{f,\groth}(X)$ puisqu'elle est tropicalisée par la forme nulle sur $f(W)$,
pour tout domaine compact~$W$ de~$V$.

\subsection{}
Considérons deux applications paralinéaires $f\colon U\to E$ et 
$g\colon V\to F$ définies sur des domaines~$U$ et~$V$ de~$X$.
Supposons que $V$ soit contenu dans~$U$ et qu'il existe
une application affine 
$q\colon F\to E$ une application affine telle que $q\circ g=f|_V$.
Une forme G-tropicalisée par~$f$ sur tout domaine
compact de~$U$ est également G-tropicalisée
par~$g$ sur tout domaine compact de~$V$, 
et l'inclusion $\mathscr A^{p,q}_{\groth, f}(X)
\hookrightarrow \mathscr A^{p,q}_{\groth,g}(X)$ est continue.

En effet, il suffit de s'assurer que pour tout domaine
compact~$W$ de~$V$, l'application composée
$\mathscr A^{p,q}_{f,\groth}(X)\hookrightarrow\mathscr A^{p,q}_{g,\groth}(X)
\xrightarrow{\text{restr}}\mathscr A^{p,q}_{g,\groth}(W)$ est continue.
Cette application est l'application composée
$\mathscr A^{p,q}_{f,\groth}(X) \to \mathscr A^{p,q}_{f,\groth}(W)
 \hookrightarrow \mathscr A^{p,q}_{g,\groth}(W)$; 
la première flèche étant continue, il suffit alors de vérifier que
l'application d'inclusion
$\mathscr A^{p,q}_{f,\groth}(W)\hookrightarrow \mathscr A^{p,q}_{g,\groth}(W)$
est continue.
Par définition de la topologie quotient sur $\mathscr A^{p,q}_{f,\groth}(W)$,
il suffit de vérifier que l'application 
composée
$ \mathscr A^{p,q}(f(W)) \xrightarrow{f^*} \mathscr A^{p,q}_{f,\groth}(W)\hookrightarrow \mathscr A^{p,q}_{g,\groth}(W)$
est continue.
Or cette application est également l'application composée
$ \mathscr A^{p,q}(f(W)) \xrightarrow{q^*}  \mathscr A^{p,q}(g(W))
 \xrightarrow{g^*}  \mathscr A^{p,q}_{g,\groth} (W)$.
Ces deux applications~$q^*$ et~$g^*$ étant continues, l'assertion en résulte.

\subsection{}
Par abus de langage, 
appelons \emph{jeu de cartes} G-tropicales\index{cartes!jeu de}
sur~$X$ toute famille d'applications paralinéaires
définies sur des domaines de~$X$ à valeurs dans des espaces affines.
Nous ne spécifions pas de partie paralinéaire au but, 
si bien que les objets d'un jeu de cartes ne sont pas des cartes…
En pratique, nous ne considérerons que les restrictions
des fonctions d'un jeu de cartes à des domaines compacts;
leur image est alors une partie paralinéaire, 
ce qui permet de les considérer comme des cartes.

Un jeu de cartes est dit \emph{G-couvrant} si ses domaines forment
un G-recouvrement de~$X$.

\subsection{}\label{ss.topologie-jeu-de-carte}
Soit $\mathscr D$ un jeu de cartes sur~$X$.
Soit $\mathscr A^{p,q}_{\groth,\mathscr D}(X)$
l'intersection, dans $\mathscr A^{p,q}_\groth(X)$,
des sous-espaces $\mathscr A^{p,q}_{f,\groth}(X)$,
lorsque $f$ parcourt l'ensemble des cartes de~$\mathscr D$.

On le munit de la topologie la moins fine
rendant continues  les applications d'inclusion
$\mathscr A^{p,q}_{\mathscr D,\groth}(X)\to \mathscr A^{p,q}_{f,\groth}(X)$
est continue, lorsque ce dernier espace est muni de la
topologie~$\mathscr T_{f,m}$,  
où $f$ est une carte de~$\mathscr D$.
Cela fournit ainsi une topologie~$\mathscr T_{\mathscr D,m}$
sur $\mathscr A^{p,q}_{\groth,\mathscr D}(X)$
qu'on appelle la topologie~$\mathscr C^m$.

\subsection{}
L'adhérence de la forme nulle dans~$\mathscr A^{p,q}_{\groth,\mathscr D}(X)$
est l'ensemble des G-formes~$\omega$  qui appartiennent à
l'adhérence de~$0$ pour toutes les topologies~$\mathscr T_{f,m}$,
où $f$ est un membre du jeu de carte~$\mathscr D$.
Compte tenu des définitions et du calcul de l'adhérence de~$0$
dans~$\mathscr A^{p,q}_{f,\groth}(X)$,
ce sont les G-formes~$\omega$ dont la restriction à tout domaine~$V$
d'un membre de~$\mathscr D$ est nulle.

En particulier, si le jeu de cartes~$\mathscr D$ est G-couvrant,
la topologie~$\mathscr C^m$ sur $\mathscr A^{p,q}_{\mathscr D,\groth}(X)$
est séparée.

\begin{prop}\label{prop.continuite-wedge-jeu}
Soit $\mathscr D$ un jeu de cartes sur un espace G-tropical~$X$
et soit $n\in\N\cup\{+\infty\}$.
Le produit extérieur des G-formes induit 
des applications bilinéaires continues 
\[ \mathscr A^{p,q}_{\groth,\mathscr D} (X) \times \mathscr A^{p',q'}_{\groth,\mathscr D} (X)
 \to \mathscr A^{p+p',q+q'}_{\groth,\mathscr D}(X). \]
\end{prop}
\begin{proof}
Soit $f\colon U\to E$ une carte de~$\mathscr D$,
soit $m$ un entier naturel tel que $m\leq n$,
et soit $K$ un domaine compact de~$U$.
Pour tout couple $(p,q)$ d'entiers naturels,
on déduit d'un décomposition cellulaire convenable de l'espace
paralinéaire~$f(K)$ une norme sur $\mathscr A^{p,q}(f(K))$,
et on en tire une semi-norme $\norm\cdot_{K,m}$
sur $\mathscr A^{p,q}_{\groth,f}(X)$,
et en particulier sur $\mathscr A^{p,q}_{\groth,\mathscr D}(X)$.
Au niveau paralinéaire, le produit des formes définit une application
bilinéaire
\[ \mathscr A^{p,q}(f(K)) \times \mathscr A^{p',q'} (f(K)) \to 
\mathscr A^{p+p',q+q'}(f(K)) \]
qui est continue pour la topologie~$\mathscr C^m$.
On en déduit  que l'application bilinéaire de l'énoncé est continue,
lorsque ces espaces sont munis des semi-normes $\norm{\cdot}_{K,m}$
correspondantes. Le résultant s'en déduit formellement.
\end{proof}

\subsection{}
Soit $u\colon X\to Y$ un morphisme d'espaces G-tropicaux.

Soit $f\colon U\to E$ et $g\colon V\to F$ 
des applications paralinéaires
définies sur des domaines~$U$ de~$X$ et~$V$ de~$Y$ respectivement;
on dit que $f$ et~$g$ sont compatibles relativement à~$u$
si $u(U)\subset V$ et s'il existe 
une application affine $q\colon E\to F$
telle que $q\circ f=g\circ u|_U$.

Soit $\mathscr D$ un jeu de cartes G-tropicales sur~$X$ et 
soit $\mathscr E$ un jeu de cartes G-tropicales sur~$Y$;
on dit que $\mathscr D$ et~$\mathscr E$ 
sont \emph{compatibles} relativement à~$u$
si pour tout élément~$f$ de~$\mathscr D$,
il existe un élément~$g$ de~$\mathscr E$ tel
que $f$ et~$g$ soient compatibles relativement à~$u$.
On notera cette condition $\mathscr D\preceq_u \mathscr E$, 
voire $\mathscr D\preceq \mathscr E$
s'il n'y a pas d'ambiguïté sur~$u$ (en particulier lorsque $u=\id$).
Cela entraîne que  pour toute G-forme 
$\omega\in \mathscr A^{p,q}_{\mathscr E,\groth}(Y)$, la G-forme $u^*\omega$
sur~$X$ appartient à $\mathscr A^{p,q}_{\mathscr D,\groth}(X)$.

On dit que $\mathscr D$ \emph{couvre tropicalement}~$\mathscr E$ 
relativement à~$u$ si pour toute application paralinéaire
$g\colon V\to F$ appartenant à~$\mathscr E$
et tout domaine compact~$K$ de~$V$,
l'ensemble~$g(K)$ est G-recouvert par des parties
paralinéaires compactes de la forme $g\circ u(W)$,
où $W$ est un domaine compact du domaine d'un élément $f\colon U\to E$
de~$\mathscr D$ qui est compatible à~$g$ relativement à~$u$
tel que $u(W)\subset K$.
Par compacité, cela revient à dire que $g(K)$
est la réunion d'une famille finie  de tels domaines.

Cette propriété est en particulier vérifiée
si $\mathscr D$ \emph{couvre}~$\mathscr E$ relativement à~$u$,
c'est-à-dire 
si pour toute application paralinéaire
$g\colon V\to F$ appartenant à~$\mathscr E$,
tout domaine compact~$K$ de~$V$ est G-recouvert
par des parties compactes de la forme $u(W)$,
où $W$ est un domaine compact du domaine d'un élément $f\colon U\to E$
de~$\mathscr D$ qui est compatible à~$g$ relativement à~$u$
tel que $u(W)\subset K$.

% On définit une relation de préordre dans l'ensemble~$\mathscr D$
% en décrétant que $D\preceq D'$ 
% si pour toute carte G-tropicale $(W,g\colon W\to E')$ de~$D'$, 
% il existe une carte G-tropicale $(V,f\colon V\to E)$
% de~$D$ telle que $W\subset V$  
% et une application affine $q\colon E'\to E$ tel que $f|_W=q\circ g$. 
% Autrement dit, $q$ est un morphisme de la carte G-tropicale~$g$
% dans la carte G-tropicale~$f|_W$, cf.~\eqref{sss.carte-tropicale}.

% Cela entraîne les inclusions $\abs {D'}\subset\abs{D}$
% et $\mathscr A^{p,q}_{D,\groth}(X)\subset \mathscr A^{p,q}_{D',\groth}(X)$.

\begin{prop}\label{prop.raffine-topologie}
Soit $u\colon X\to Y$ un morphisme d'espaces G-tropicaux,
soit $\mathscr D$ et $\mathscr E$ des jeux de cartes sur~$X$ et~$Y$ 
tels que $\mathscr D\preceq_u \mathscr E$.
Soit $m \in\N\cup\{+\infty\}$.

\begin{enumerate}
\item
L'application $u^*\colon \mathscr A^{p,q}_{\mathscr E,\groth}(Y)
\to \mathscr A^{p,q}_{\mathscr D,\groth}(X)$ est continue
pour les topologies~$\mathscr C^m$ sur ces espaces.

% Il existe un nombre réel~$c$
% tel que $\norm{u^*\omega}_{D,A,m}\leq c\norm{\omega}_{D',A',m}$
% pour toute forme $\omega\in\mathscr A^{p,q}_{D',\groth}(Y)$.
% 
\item
Si, de plus, $\mathscr D$ couvre tropicalement~$\mathscr E$ 
relativement à~$u$, cette application linéaire vérifie
la propriété topologique suivante: pour tout voisinage~$\Omega$ de~$0$
dans~$\mathscr A^{p,q}_{\groth,\mathscr E}(Y)$,
il existe un voisinage~$\Upsilon$ de~$0$ 
dans $\mathscr A^{p,q}_{\groth,\mathscr D}(X)$ tel 
que toute G-forme $\omega\in\mathscr A^{p,q}_{\groth,\mathscr E}(Y)$ 
pour laquelle $u^*\omega\in\Upsilon$ vérifie $\omega\in\Omega$.
En particulier, l'application linéaire continue~$u^*$ est stricte
et l'image réciproque de l'adhérence de~$0$ par~$u^*$ est l'adhérence de~$0$.
\end{enumerate}
\end{prop}
\begin{proof}
\begin{enumerate}
\item
La topologie~$\mathscr C^m$ de~$\mathscr A^{p,q}_{\groth,\mathscr D}(X)$
est la topologie initiale des topologies~$\mathscr C^m$
sur $\mathscr A^{p,q}_{\groth,f}(X)$, pour $f\in\mathscr D$.
Pour démontrer que $u^*$ est continue, il suffit de démontrer
que pour tout $f\in\mathscr D$,
l'applications $u^*\colon\mathscr A^{p,q}_{\groth,\mathscr E}(Y)
\to \mathscr A^{p,q}_{\groth,f}(X)$ est continue.

Soit $f\colon U\to E$ une telle carte.
Soit $W$ un domaine compact de~$U$. Il suffit de démontrer
que l'application $u|_W^*\colon\mathscr A^{p,q}_{\groth,\mathscr E}(Y)
\to \mathscr A^{p,q}_{\groth,f}(W)$ est continue;
rappelons que la topologie de 
ce dernier espace $ \mathscr A^{p,q}_{\groth,f}(W)$ 
est définie par transport de structure depuis celle
de~$\mathscr A^{p,q}(f(W))$
par l'application $\alpha\mapsto f^*\alpha$.

Comme $\mathscr D$ et~$\mathscr E$ sont compatibles relativement à~$u$,
il existe une carte $g\colon V\to F$ de~$\mathscr E$
et une application affine $q\colon E\to F$ 
telle que $q\circ f=g\circ u|_U$.

Soit $K$ un domaine compact de~$V$ qui contient~$u(W)$.
On a $q(f(W))=g(u(W))\subset g(K)$, donc 
l'application $q^*\colon \mathscr A^{p,q}(g(K))\to\mathscr A^{p,q}(f(W))$
est continue (\S\ref{subsec.topologie-forme-pl}).
L'application  composée
\[ \mathscr A^{p,q}_{\groth,\mathscr E}(Y)
\to \mathscr A^{p,q}_{\groth,g}(V)
\to \mathscr A^{p,q}_{\groth,g}(K)
\simeq \mathscr A^{p,q}(g(K)) 
\xrightarrow{q^*} \mathscr A^{p,q}(f(W)) \simeq \mathscr A^{p,q}_{\groth,f}(W)
\]
est donc continue, et ce n'est autre que~$u^*$.

\item
Faisons maintenant l'hypothèse supplémentaire que $\mathscr D$ 
couvre tropicalement~$\mathscr E$ relativement à~$u$.

La propriété demandée étant compatible aux intersections finies,
on peut supposer qu'il existe 
une carte $g\colon V\to F$ de~$\mathscr E$,
un domaine compact~$K$ de~$V$, un entier naturel~$n\leq m$
et un nombre réel~$\eps>0$,
tels que $\Omega$ soit l'ensemble
des G-formes $\omega\in\mathscr A^{p,q}_{\groth,\mathscr E}(Y)$
telles que $\norm{\omega}_{g(K),n}<\eps$,
où $\norm{\cdot}_{g(K),n}$ est une norme sur 
l'espace $\mathscr A^{p,q}(g(K))$
associée à une décomposition cellulaire convenable de~$g(K)$,
transportée en une semi-norme sur 
$\mathscr A^{p,q}_{\groth,\mathscr E}(Y)$ par le diagramme
\[ \mathscr A^{p,q}_{\groth,\mathscr E}(Y)
\to \mathscr A^{p,q}_{\groth,g}(V)
\to \mathscr A^{p,q}_{\groth,g}(K)
\simeq \mathscr A^{p,q}(g(K)) .
\]

Posons $Q=g(K)$.
Par hypothèse, $\mathscr D$ couvre tropicalement~$\mathscr E$
relativement à~$u$, de sorte que $Q$ est G-recouvert 
par des parties paralinéaires
compactes de la forme $g(u(W))$, où $W$ est un domaine compact
du domaine d'un élément $f\colon U \to E$ de~$\mathscr D$
qui est compatible à~$g$ relativement à~$u$ et tel que $u(W)\subset K$.
Comme $g(K)$ est une partie paralinéaire compacte,
il existe ainsi une famille finie
$(f_i\colon U_i\to E_i)$ de telles cartes,
et, pour tout~$i$, un domaine compact~$W_i$ de~$U_i$
tel que $u(W_i)\subset K$,
tels que $g(K)$ soit la réunion des $g(u(W_i))$.
Pour tout~$i$, soit $q_i\colon E_i\to F$ une application affine
telle que $q_i\circ f_i=g\circ u|_{U_i}$;
posons aussi $P_i=f(W_i)$.

Par hypothèse, la réunion des~$q_i(P_i)$ est égale à~$Q$.
On choisit pour chaque~$P_i$ une norme $\norm\cdot_{P_i,n}$
qui induit la topologie~$\mathscr C^n$ sur $\mathscr A^{p,q}(P_i)$.

Soit $P$ l'espace paralinéaire compact somme disjointe des~$P_i$
et soit $q\colon P\to Q$ l'application paralinéaire déduite des~$q_i$.
L'espace $\mathscr A^{p,q}(P)$ 
est la somme directe des $\mathscr A^{p,q}(P_i)$;
on le munit de la norme~$\norm\cdot_{P,n}$, sup des normes~$\norm{\cdot}_{P_i,n}$.
Comme l'application~$q$ 
admet G-localement des sections, 
il existe un nombre réel~$c>0$ tel que
$\norm{q^*\alpha}_{P,n}\geq c \norm{\alpha}_{Q,n}$
pour toute G-forme $\alpha\in\mathscr A^{p,q}(Q)$.

Les normes $\norm{\cdot}_{P_i,n}$ sur~$\mathscr A^{p,q}(P_i)$
induisent des semi-normes sur $\mathscr A^{p,q}_{\groth,\mathscr D}(X)$
que l'on note de la même façon.
Soit $\Upsilon$ l'ensemble des G-formes~$\eta$ 
dans $\mathscr A^{p,q}_{\groth,\mathscr D}(X)$
telles que $\norm{\eta}_{P_i,n}<\eps/c$ pour tout~$i$.
Soit $\omega\in\mathscr A^{p,q}_{\groth,\mathscr E}(Y)$
telle que $u^*\omega\in\Upsilon$; démontrons que $\omega\in\Omega$.

Il existe une unique G-forme $\alpha\in\mathscr A^{p,q}(Q)$
telle que $\omega|_V=g^*\alpha$, et $\norm{\omega}_{Q,n}(\omega)
=\norm\omega_{Q,n}$ par construction.
Pour tout~$i$, on a $u(W_i)\subset K$, donc 
\[  u^*\omega|_{W_i} = u^* (g^*\alpha)|_{W_i} = (g\circ u)^*\alpha|_{W_i}
 = (q_i\circ f_i)^*\alpha |_{W_i}=f_i^* (q_i^*\alpha|_{P_i}). \]
Par définition de~$\Upsilon$, on a 
$ \norm{u^*\omega}_{P_i,n}<\eps/c$,
donc $\norm{q_i^*\alpha}_{P_i,n}<\eps/c$.
Ainsi, $\norm{q^*\alpha}_{P,n}<\eps/c$, si bien que $\norm\alpha_{Q,n}<\eps$.
Ainsi, $\norm\omega_{Q,n}<\eps$ et $\omega\in\Omega$.

Ceci établit la propriété topologique de l'énoncé.
Les conséquences données en résultent directement.\qedhere
\end{enumerate}
\end{proof}

\begin{coro}
\begin{enumerate}
\item
Soit $\mathscr D$ et $\mathscr E$ des jeux de cartes sur~$X$,
et soit $n\in\N\cup\{+\infty\}$.
Si $\mathscr D\preceq \mathscr E$ et si $\mathscr D$ est G-couvrant,
alors l'inclusion $\mathscr A^{p,q}_{\groth,\mathscr E}\subset
 \mathscr A^{p,q}_{\groth,\mathscr D}$ est continue et stricte.

\item
Soit $(\mathscr D_i)$ une famille de jeux de cartes G-couvrants sur~$X$
et soit $n\in\N\cup\{+\infty\}$.
Les topologies $\mathscr T_{\mathscr D_i,n}$ coïncident
sur l'intersection $\bigcap_i \mathscr A^{p,q}_{\groth,\mathscr D_i}$.
\end{enumerate}
\end{coro}
\begin{proof}
\begin{enumerate}
\item
Comme le jeu~$\mathscr D$ est G-couvrant, il couvre le jeu~$\mathscr E$.
L'assertion découle ainsi de la proposition~\ref{prop.raffine-topologie}.

\item
Soit $\mathscr E$ la réunion des familles $\mathscr D_i$.
Pour tout~$i$, le jeu~$\mathscr E$ raffine le jeu~$\mathscr D_i$,
et l'on a $\bigcap_i \mathscr A^{p,q}_{\groth,\mathscr D_i}
=\mathscr A^{p,q}_{\groth,\mathscr E}$.
L'assertion résulte ainsi de la précédente.\qedhere
\end{enumerate}
\end{proof}

\begin{prop}\label{prop.continuite-di-dc-jeu}
Soit $X$ un espace G-tropical, soit $m\in\N\cup\{+\infty\}$.
Soit $\mathscr D$ un jeu de cartes sur~$X$.

\begin{enumerate}
\item Pour toute G-forme $\omega\in\mathscr A^{p,q}_{\mathscr D,\groth}(X)$,
la forme $\mathrm J\omega$ appartient à~$\mathscr A^{q,p}_{\mathscr D,\groth}(X)$. L'application~$\mathrm J$ ainsi définie est linéaire et continue.
\item Supposons $m\geq 1$.
Pour toute G-forme $\omega\in\mathscr A^{p,q}_{\mathscr D,\groth}(X)$,
on a $\di\omega \in\mathscr A^{p+1,q}_{\mathscr D,\groth}(X)$
et $\dc\omega \in\mathscr A^{p,q+1}_{\mathscr D,\groth}(X)$.
Les applications~$\di$ et~$\dc$ ainsi définies sont continues
lorsqu'on munit $\mathscr A^{p,q}_{\mathscr D,\groth}(X)$
de la topologie~$\mathscr C^m$ et les
espaces $\mathscr A^{p+1,q}_{\mathscr D,\groth}(X)$
et $\mathscr A^{p,q+1}_{\mathscr D,\groth}(X)$
de la topologie~$\mathscr C^{m-1}$.
\end{enumerate}
\end{prop}
\begin{proof}
La démonstration découle de la définition des semi-normes.
\end{proof}

\section{Passages à la limite}

\subsection{}
Soit $m\in\N\cup\{+\infty\}$.
L'espace $\mathscr A^{p,q}_\groth(X)$ est la réunion
de ses sous-espaces $\mathscr A^{p,q}_{\groth,\mathscr D}(X)$,
où $\mathscr D$ parcourt l'ensemble des jeux de cartes G-tropicales
qui sont {G-couvrants} et
on munit $\mathscr A^{p,q}_\groth(X)$ de la topologie
localement convexe la plus fine pour laquelle toutes les inclusions
correspondantes
$\mathscr A^{p,q}_{\groth,\mathscr D}\subset \mathscr A^{p,q}_\groth(X)$
sont continues, lorsque $\mathscr A^{p,q}_{\groth,\mathscr D}$
est muni de la topologie~$\mathscr T_{\mathscr D,m}$.
On dira que c'est la topologie~$\mathscr C^m $ 
sur~$\mathscr A^{p,q}_\groth(X)$.

Si $m\leq n$, la topologie~$\mathscr C^n$ sur~$\mathscr A^{p,q}_\groth(X)$
est plus fine que sa topologie~$\mathscr C^m$.

Pour qu'une application linéaire $T\colon\mathscr A^{p,q}_{\groth}(X)\to E$
soit continue pour la topologie~$\mathscr T_{\mathscr D,m}$, 
$E$ étant un espace vectoriel localement convexe,
il faut et il suffit que sa restriction à $\mathscr A^{p,q}_{\groth,\mathscr D}$ soient continues, pour tout jeu de cartes G-tropicales~$\mathscr D$ sur~$X$
qui est G-couvrant.

\subsection{}
Pour notre théorie des courants, nous aurons besoin de généraliser cette construction dans deux directions différentes:
\begin{itemize}
\item Nous restreindre à des G-formes tropicalisées par des jeux de cartes 
particuliers;
\item Ne considérer que des G-formes à support compact.
\end{itemize}
Pour traiter cela de façon générale et uniforme, nous introduisons
les familles de jeux de cartes et les familles de supports le long
desquels nous formons les colimites convenables.

\subsection{}
Rappelons qu'on appelle
\emph{famille de supports}\index{supports!famille de ---} dans~$X$
un ensemble~$\Phi$ de parties fermées de~$X$
vérifiant les deux propriétés suivantes:
\begin{enumerate}\def\theenumi{\roman{enumi}}
\item
Toute réunion finie d'éléments de~$\Phi$ appartient à~$\Phi$;
\item
Toute partie fermée d'un élément de~$\Phi$ appartient à~$\Phi$.
\end{enumerate}

Les familles suivantes sont les principales familles de supports
que nous considérerons:
\begin{enumerate}
\item L'ensemble de toutes les parties fermées de~$X$; 
\item Si $T$ est une partie de~$X$, 
l'ensemble des parties fermées de~$X$ qui sont contenues dans~$T$;
\item L'ensemble des parties compactes de~$X$ (rappelons que $X$
est supposé topologiquement séparé).
\end{enumerate}
La première et la  dernière de ces familles admettent 
la généralisation commune suivante:
\begin{enumerate}
\setcounter{enumi}{3}
\item Soit $f\colon X\to Y$ un morphisme d'espaces G-tropicaux.
L'ensemble des parties fermées de~$X$ qui sont topologiquement propres sur~$Y$
est une famille de supports dans~$X$.
\end{enumerate}
D'autres familles seraient possibles, par exemple, l'ensemble
des parties fermées de~$X$ qui sont dénombrables à l'infini.

\subsection{}
Soit $\Gamma$ une famille de jeux de cartes G-couvrants sur~$X$
qui est filtrante pour la relation d'ordre~$\preceq$:
si $\mathscr D$ et $\mathscr D'$ appartiennent à~$\Gamma$,
il existe un jeu de cartes $\mathscr E$ dans~$\Gamma$
tel que $\mathscr D\preceq \mathscr E$ et $\mathscr D'\preceq \mathscr E$.

Les familles suivantes sont les principales familles de jeux de cartes
que nous considérerons:
\begin{enumerate}
\item 
L'ensemble de tous les jeux cartes G-couvrants sur~$X$.
Vérifions qu'il est filtrant. Soit $\mathscr D$ et $\mathscr D'$
des jeux de cartes G-couvrants sur~$X$.
Soit $\mathscr E$ la famille des cartes $(U\cap U', (f|_{U\cap U'},f'|_{U\cap U'}))$, où $(U,f)$ et $(U',f')$ parcourent les cartes de~$\mathscr D$
et~$\mathscr D'$ respectivement. C'est un jeu G-couvrant,
et il raffine~$\mathscr D$ et~$\mathscr D'$ par construction.

\item Supposons que $X$ soit un sous-espace paralinéaire fermée de~$\R^n$.
On peut prendre pour famille $\Gamma$ l'ensemble réduit 
à la carte identique de~$X$.

\item Lorsque $X$ est un espace analytique, nous prendrons pour
famille~$\Gamma$ les jeux de cartes $\mathscr D$
tels que les intérieurs des domaines des cartes de~$\mathscr D$
recouvrent~$X$ (un tel jeu est G-couvrant)
et dont les applications de cartes sont de la forme $x\mapsto
(\log (\abs{f_1}(x)), \dots, \log(\abs{f_n})(x))$ où les~$f_i$
sont des fonctions analytiques inversibles sur le domaine de la carte.

\item Lorsque $X$ est un espace analytique strict, 
on pourrait considérer les jeux G-couvrants
dont les domaines de définition sont  des domaines stricts.
\end{enumerate}

Soit $u\colon X\to Y$ un morphisme d'espaces G-tropicaux.
Soit $\Gamma$ une famille filtrante
de jeux de cartes G-couvrants sur~$X$,
soit $\Delta$ une famille filtrante de jeux de cartes
G-couvrants sur~$Y$.
On dit que $\Gamma\preceq_u \Delta$ si 
pour tout jeu de cartes $\mathscr D\in\Gamma$,
il existe un jeu de cartes $\mathscr E\in\Delta$
tel que $\mathscr D\preceq_u \mathscr E$.

\subsection{}
Soit $\Gamma$ une famille filtrante de jeux de cartes G-couvrants sur~$X$
et soit $\Phi$ une famille de supports sur~$X$.
On appelle $\Gamma$-$\Phi$-forme sur~$X$ 
une G-forme dont le support est un élément de~$\Phi$
et qui est G-tropicalisée par un jeu de cartes appartenant à~$\Gamma$.

On note $\mathscr A^{p,q}_{\Gamma,\Phi}$ l'ensemble 
des $\Gamma$-$\Phi$-formes de type~$(p,q)$ sur~$X$.
C'est la réunion des sous-espaces $\mathscr A_{\groth,\mathscr D,F}^{p,q}(X)$
des G-formes sur~$X$ dont le support est contenu dans
une partie fermée $F\in\Phi$ et qui sont G-tropicalisées 
par le jeu~$\mathscr D\in\Gamma$.
Pour tout élément $m\in\N\cup\{+\infty\}$,
on munit chacun des ces espaces $\mathscr A_{\groth,\mathscr D,F}^{p,q}(X)$
de la topologie induite par la topologie~$\mathscr C^m$ 
de $\mathscr A_{\groth,\mathscr D}^{p,q}(X)$, et leur réunion $\mathscr A^{p,q}_{\Gamma,\Phi}$ de la topologie localement convexe colimite.
On dit que c'est la topologie~$\mathscr C^m$
sur $\mathscr A^{p,q}_{\Gamma,\Phi}$.

Par définition, une application linéaire $T$ sur~$\mathscr A^{p,q}_{\Gamma,\Phi} $, à valeurs dans un espace vectoriel topologique localement convexe~$E$,
est continue pour la topologie~$\mathscr C^m$
si et seulement si chacune de ses restrictions à 
un sous-espace $\mathscr A_{\groth,\mathscr D,F}^{p,q}(X)$ est continue
pour sa topologie~$\mathscr C^m$.

Si $m\leq n$, la topologie~$\mathscr C^m$
sur $\mathscr A^{p,q}_{\Gamma,\Phi}$
est moins fine que la topologie~$\mathscr C^n$.

\subsection{}
Soit $\Gamma$ et $\Delta$ deux familles filtrantes
de jeux de cartes G-couvrants sur~$X$ tels que $\Gamma\preceq\Delta$.
(Cela équivaut à ce que $\Delta$ soit cofinal dans
l'ensemble $\Gamma\cup\Delta$, et entraîne que ce dernier ensemble 
est filtrant.)
Soit $\Phi$ et $\Psi$ des familles de supports
telles que $\Phi\subset\Psi$.
Alors, toute $\Gamma$-forme sur~$X$ à support
dans un fermé de~$\Phi$ est également une $\Delta$-forme
sur~$X$ à support dans un fermé de~$\Psi$,
et il résulte de la définition des topologies
en jeu l'inclusion $\mathscr A^{p,q}_{\Gamma,\Phi}\subset \mathscr A^{p,q}_{\Delta,\Psi}$ est continue pour les topologies~$\mathscr C^m$.

Lorsque les familles $\Gamma$ et $\Delta$ sont simultanément cofinales
dans leur réunion, il en résulte que les deux
espaces de formes $\mathscr A^{p,q}_{\Gamma,\Phi}$ et $\mathscr A^{p,q}_{\Delta,\Phi}$ coïncident, avec leurs topologies~$\mathscr C^m$.

\begin{remas}
\begin{enumerate}
\item
% On prendra garde que si $\mathscr D$ est un jeu de cartes G-tropicales, 
% la topologie induite sur~$\mathscr A^{p,q}_{\groth,\mathscr D}(X)$
% par la topologie~$\mathscr C^m$ de~$\mathscr A^{p,q}_\groth(X)$
% ne coïncide pas, en général, avec la topologie~$\mathscr T_{\mathscr D,m}$.
% Par exemple, dans le cas extrême où $\mathscr D$ est vide,
% la topologie~$\mathscr T_{\mathscr D,m}$  est grossière.
% En revanche, $\mathscr A^{0,0}_\groth(X)$ est un sous-espace
% de l'espace des fonctions continues sur~$X$ et hérite
% de la topologie de la convergence uniforme sur tout compact,
% qui est séparée, et n'est pas grossière si $X$ n'est pas vide.
Grâce à la théorie de l'intégration des formes qui fournira
des formes linéaires non triviales,
on verra plus loin des conditions  raisonnables qui assurent 
que la topologie de~$\mathscr A^{p,q}_{\Gamma,\Phi}(X)$ est séparée.

\item
Si $\Phi$ est l'ensemble des parties fermées de~$X$
contenues dans une partie fermée~$F$,
alors $\mathscr A^{p,q}_{\groth,\Phi}(X)=\mathscr A^{p,q}_{\groth,F}(X)$
et les topologies coïncident.

De même, si $X$ est compact, l'espace
$\mathscr A^{p,q}_{\Gamma,\cpct}(X)=\mathscr A^{p,q}_{\Gamma}(X)$
et leurs topologies coïncident.

\item
Conformément à l'usage en théorie des distributions,
nous imposons à la topologie des espaces~$\mathscr A_{\Gamma,\Phi}^{p,q}(X)$
d'être localement convexe. Cependant, 
les considérations de ce travail ne font essentiellement intervenir
que des applications linéaires à valeurs dans des espaces localement
convexes (notamment~$\R$!) ou dans 
des niveaux finis des systèmes inductifs en jeu,
de sorte que cette subtilité reste invisible.
\end{enumerate}
\end{remas}

\begin{prop}\label{prop.continuite-wedge}
Soit $\Gamma$ une famille filtrante de jeux de cartes G-couvrants sur~$X$
et soit $\alpha\in\mathscr A^{p',q'}_{\Gamma}$.
Soit $\Phi$ et $\Psi$ des familles de supports sur~$X$;
on suppose que pour toute partie fermée~$F$ de~$\supp(\alpha)$
appartenant à~$\Phi$ appartient également à~$\Psi$.

Pour toute forme $\omega\in\mathscr A^{p,q}_{\Gamma,\Phi}(X)$,
la forme $\alpha\wedge\omega$ appartient à $\mathscr A^{p,q}_{\Gamma,\Psi}(X)$
et l'application 
\[   A^{p,q}_{\Gamma,\Phi}(X) \to \mathscr A^{p+p',q+q'}_{\Gamma,\Psi}(X), 
\qquad \omega\mapsto \alpha\wedge\omega \]
est linéaire continue.
\end{prop}
\begin{proof}
Soit $\mathscr E\in\Gamma$ un jeu de cartes qui G-tropicalise~$\alpha$,
et soit $\mathscr D$ un jeu de cartes qui G-tropicalise~$\omega$;
il existe un jeu de cartes~$\mathscr D'\in\Gamma$
tel que $\mathscr D'\preceq \mathscr D$ et $\mathscr D'\preceq E$.
Alors, $\alpha$ et $\omega$ sont G-tropicalisées par~$\mathscr D'$,
de sorte que leur produit l'est aussi.
Son support est contenu dans $\supp(\alpha)\cap\supp(\omega)$,
donc appartient à~$\Psi$.

La linéarité de l'application donnée est évidente.

Par définition de la topologie~$\mathscr C^m$ sur $\mathscr A^{p,q}_{\Gamma,\Phi}(X) $ (le membre de droite étant localement convexe),
il suffit de prouver que pour tout jeu de cartes $\mathscr D\in\Gamma$,
l'application  restreinte
\[   A^{p,q}_{\mathscr D,\Phi}(X) \to \mathscr A^{p+p',q+q'}_{\Gamma,\Psi}(X) \]
est continue. 
Soit $\mathscr D'\in\Gamma$ un jeu de cartes 
tel que $\mathscr D'\preceq \mathscr D$ et $\mathscr D'\preceq E$;
cette application linéaire se factorise alors
par $\mathscr A^{p+p',q+q'}_{\mathscr D',\Psi}(X)$.
D'après la proposition~\ref{prop.continuite-wedge-jeu}, 
l'application linéaire
\[   A^{p,q}_{\mathscr D}(X) \to \mathscr A^{p+p',q+q'}_{\mathscr D'}(X) \]
est continue. L'assertion en découle.
\end{proof}

\begin{prop}\label{prop.continuite-pull-back}
Soit $u\colon Y\to X$ un morphisme d'espaces G-tropicaux. 
Soit $\Phi$ une famille de supports sur~$X$ et 
soit~$\Psi$ une famille de supports sur~$Y$;
on suppose que pour tout fermé $F$ de $X$ appartenant
à~$\Phi$, la partie fermée $u^{-1}(F)$ de~$Y$ appartient à~$\Psi$.
Soit $\Gamma$ une famille filtrante de jeux de cartes G-couvrants sur~$X$
et soit $\Delta$ une famille filtrante de jeux de cartes G-couvrants sur~$Y$
telles que $\Delta\preceq_u \Gamma$.

Soit $(p,q)$ un couple d'entiers naturels et 
soit $m\in\N\cup\{\infty\}$.

Pour toute G-forme $\alpha\in\mathscr A^{p,q}_{\Gamma,\Phi}(X)$,
la G-forme $u^*\alpha$ appartient à~$\mathscr A^{p,q}_{\Delta,\Psi}(Y)$.
L'application
\[ u^*\colon \mathscr A^{p,q}_{\Gamma,\Phi}(X) \to
\mathscr A^{p,q}_{\Delta,\Psi}(Y), \qquad
\alpha\mapsto u^*\alpha \]
est linéaire et continue.
\end{prop}
\begin{proof}
Soit $\mathscr D$ un jeu de cartes appartenant à~$\Gamma$
tel que $\alpha$ soit G-tropicalisée par~$\mathscr D$.
Par hypothèse, il existe un jeu de cartes $\mathscr E$
appartenant à~$\Delta$ tel que $\mathscr E\preceq_f \mathscr D$.
Alors, $u^*\alpha$ est G-tropicalisée par~$\mathscr E$.
Son support est contenu dans $u^{-1}(\supp(\alpha))$,
donc il appartient à~$\Psi$.
Par suite, $u^*\alpha\in\mathscr A^{p,q}_{\Delta,\Psi}(Y)$.

L'application $u^*$ est évidemment linéaire.

Pour vérifier sa continuité, la définition
des topologies~$\mathscr C^m$ en jeu montre qu'il suffit d'observer 
que pour tout couple de jeux de cartes $(\mathscr D,\mathscr E)$
comme ci-dessus, 
l'application
$ u^*\colon \mathscr A^{p,q}_{\mathscr D}(X) \to
\mathscr A^{p,q}_{\mathscr E}(Y)$
est continue.
Ce fait a été démontré dans la proposition~\ref{prop.raffine-topologie}.
\end{proof}

\begin{coro}\label{coro.continuite-pull-back}
% Soit $X$ un espace $k$-analytique topologiquement séparé;
% soit $\ell$ une extension complète de~$k$ et
% soit $Y$  un espace $\ell$-analytique topologiquement séparé;
Soit $f\colon Y\to X$ un morphisme d'espaces G-tropicaux. 
Soit $m\in\N\cup\{+\infty\}$.

\begin{enumerate}
\item L'application $f^*\colon \mathscr A^{p,q}_\groth(X)
           \to\mathscr A^{p,q}_\groth(Y)$ est continue
pour les topologies~$\mathscr C^m$ sur ces espaces;
\item Si le morphisme~$f$ est \emph{compact},
l'application $f^*\colon \mathscr A^{p,q}_{\groth,\cpct }(X)
           \to\mathscr A^{p,q}_{\groth,\cpct }(Y)$ est continue
pour les topologies~$\mathscr C^m$ sur ces espaces.
\end{enumerate}
\end{coro}

\subsection{}
Soit $X$ un espace G-tropical et soit $U$ un ouvert de~$X$.

Soit $\Phi$ une famille de supports sur~$U$,
soit $\Psi$ une famille de supports sur~$X$; 
on suppose que pour toute partie $F\in\Phi$, 
la partie~$F$ est fermée dans~$X$ et appartient à~$\Psi$.
(En particulier, on peut prendre pour $\Phi$ et~$\Psi$
les ensembles de parties compactes de~$U$ et~$X$ respectivement.)
 
Soit $\Gamma$ une famille filtrante de jeux de cartes de cartes
G-couvrants sur~$U$ et soit $\Delta$ une famille filtrante de jeux 
de cartes G-couvrants sur~$X$.
On suppose que pour tout $\mathscr D\in\Gamma$
et toute partie $F\in\Phi$, il existe $\mathscr E\in\Delta$
tel que $\mathscr E\preceq \mathscr D\cup\{\mathbf 0 \colon X\setminus F\to \R^0\}$.

Si $\Gamma$ et $\Phi$ sont donnés, 
on peut par exemple prendre pour famille~$\Delta$
la famille~$\Gamma^\Phi$ des jeux de cartes de la forme $\mathscr D\cup\{\mathbf 0 \colon X\setminus F\to \R^0\}$, où $\mathscr D\in\Gamma$ et $F\in\Phi$.
Notons que l'on a alors $\Gamma\preceq_j \Gamma^\Phi$.

Dans ces conditions, toute G-forme~$\omega\in\mathscr A^{p,q}_{\Gamma,\Phi}(U)$
admet un prolongement par~$0$, noté $j_!\omega$, caractérisée par 
les relations $j_!\omega|_{U}=\omega$
et $j_!\omega|_{X\setminus\supp(\omega)}=0$.
(Noter que $U$ et $X\setminus\supp(\omega)$ forment un recouvrement
ouverts de~$X$.)

Observons que l'opération $j_!$  commute aux opérateurs~$\mathrm J$,
$\di$ et~$\dc$.

\begin{prop}\label{prop.continuite-push-U}
Conservons les notations ci-dessus.
\begin{enumerate}
\item Pour toute G-forme~$\omega\in\mathscr A^{p,q}_{\Gamma,\Phi}(U)$,
la G-forme $j_!\omega$ sur~$X$ appartient à $\mathscr A^{p,q}_{\Delta,\Psi}(X)$.
et l'application 
\[ j_!\colon\mathscr A^{p,q}_{\Gamma, \Phi }(U)\to \mathscr A^{p,q}_{\Delta, \Psi }(X) \]
ainsi définie 
est linéaire et continue pour les topologies~$\mathscr C^m$.

\item
Cette application est injective.

\item
Supposons que $\Gamma\preceq_j \Delta$ et que toute
partie fermée~$F$ de~$X$ appartenant à~$\Psi$ qui contenue dans~$U$
appartient à~$\Phi$.
Supposons que plus que $d_\trop(x)<\max(p,q)$
pour tout $x\in X\setminus U$.
Alors cette application~$j_!$ est un homéomorphisme, d'inverse donné par~$j^*$.
\end{enumerate}
\end{prop}
\begin{proof}
\begin{enumerate} % \def\theenumi{\alph{enumi}}\def\labelenumi{\emph\theenumi)}
\item
Soit $\omega\in\mathscr A^{p,q}_{\Gamma,\Phi}(U)$;
soit $F$ son support et soit $\mathscr D\in\Gamma$ un jeu de cartes
qui la G-tropicalise.
Il est clair que la G-forme $j_!\omega$ est
G-tropicalisée par le jeu de cartes~$\mathscr D^F$ 
obtenu en adjoignant à~$\mathscr D$
la carte constante sur $X\setminus F$.
Son support est égal à~$F$, donc appartient à~$\Psi$.
Ainsi, $j_!\omega\in\mathscr A^{p,q}_{\Delta,\Psi}(X)$.
Il est évident que l'application~$j_!$ ainsi définie est linéaire.

Démontrons qu'elle est continue.
Par définition des topologies~$\mathscr C^m$ en jeu,
il suffit de démontrer que pour tout jeu de cartes~$\mathscr D\in\Gamma$
et toute partie fermée $F\in\Phi$,
l'application
\[ \mathscr A^{p,q}_{\mathscr D,F}(U) \to \mathscr A^{p,q}_{\Delta,\Psi}(X) \]
induite par restriction est continue.
Cette application se factorise par $\mathscr A^{p,q}_{\mathscr D^F, F}(X)$
si bien qu'il suffit de démontrer que l'application 
\[ \mathscr A^{p,q}_{\mathscr D,F}(U) \to \mathscr A^{p,q}_{\mathscr D^F,F}(X) \]
déduite de~$j_!$ est elle-même continue pour les topologies~$\mathscr C^m$.

Soit $g$ une carte appartenant à~$\mathscr D^F$
et soit $W$ une domaine compact de son domaine de définition~$V$.
Il y a deux cas à considérer, suivant que $g$ est une carte
de~$\mathscr D$ ou bien $g$ est la carte constante de domaine~$X\setminus F$.
Dans ce dernier cas, les semi-normes associées à~$g$ et au domaine
compact~$W$ sont nulles.
Dans le premier cas, les semi-normes associées à~$g$ et au domaine
compact~$W$ sont les mêmes que l'on considère cette carte comme
carte sur~$U$ ou sur~$X$.  La propriété de continuité s'en déduit.

\item
Comme $j^*(j_!\omega)=\omega$, l'application~$j_!$
est injective. 

\item
Soit $\alpha\in\mathscr A^{p,q}_{\Delta,\Psi}(X)$, soit $S$ son support.
Comme $S$ est contenu dans l'ensemble des points~$x$ de~$X$
tels que $d_\trop(x)\geq \max(p,q)$ 
(proposition~\ref{lemm.support-comp-ferme}), on a $S\subset U$.
Par suite, $\alpha|_U$ est à support dans~$\Phi$
et l'on a $\alpha=j_!(\alpha|_U)$.
Cela démontre que l'application~$j_!$ est bijective.
Sa réciproque est l'application~$j^*$, donc est continue
d'après la proposition~\ref{prop.continuite-pull-back}.
\qedhere\end{enumerate}
\end{proof}

\begin{prop}\label{prop.continuite-di-dc-J}
Soit $X$ un espace G-tropical.
Soit $\Gamma$ une famille filtrante de jeux de cartes G-couvrants sur~$X$,
soit $\Phi$ une famille de supports sur~$X$, 
et soit $m\in\N\cup\{+\infty\}$.
\begin{enumerate}
\item Pour toute G-forme $\omega\in\mathscr A^{p,q}_{\Gamma,\Phi}(X)$,
la forme $\mathrm J\omega$ appartient à~$\mathscr A^{q,p}_{\Gamma,\Phi}(X)$. 
L'application~$\mathrm J$ ainsi définie est linéaire et continue.
\item Supposons $m\geq 1$.
Pour toute G-forme $\omega\in\mathscr A^{p,q}_{\Gamma,\Phi}(X)$,
on a $\di\omega \in\mathscr A^{p+1,q}_{\Gamma,\Phi}(X)$
et $\dc\omega \in\mathscr A^{p,q+1}_{\Gamma,\Phi}(X)$.
Les applications~$\di$ et~$\dc$ ainsi définies sont continues
lorsqu'on munit $\mathscr A^{p,q}_{\Gamma,\Phi}(X)$
de la topologie~$\mathscr C^m$ et les
espaces $\mathscr A^{p+1,q}_{\Gamma,\Phi}(X)$
et $\mathscr A^{p,q+1}_{\Gamma,\Phi}(X)$
de la topologie~$\mathscr C^{m-1}$.
\end{enumerate}
\end{prop}
\begin{proof}
Compte tenu de la définition des topologies en jeu, l'assertion se ramène
à l'assertion analogue
à jeu de cartes fixé, qui est 
la proposition~\ref{prop.continuite-di-dc-jeu}.
\end{proof}

% 
% \subsection{}\label{rema.variantes-topologies}
% On munit les espaces des formes sur~$X$ des topologies induites
% par ces topologies~$\mathscr C^r$,
% et de même pour les formes à support compact, à support, etc.
% 
% Les diverses applications (multi)linéaires continues
% étudiées dans ce paragraphe préservent ces espaces de façon
% évidente et induisent donc des applications (multi)linéaires
% continues entre les espaces de formes correspondant.
% 

\chapter{Calibrages et intégration sur les espaces paralinéaires}

\section{Négligeabilité paralinéaire}

Soit $X$ un espace paralinéaire.

\subsection{}
Soit $d$ un entier~$\geq 0$; 
notons $\mathscr I_d$
le sous-préfaisceau de~$\mathscr A_{X_\groth}$ donné par
\[ \mathscr I_d(Y) = \ker( \mathscr A_{X_\groth}(Y) \to \mathscr A_{X_\groth}(Y^{(d)})), \]
pour tout sous-espace paralinéaire~$Y$ de~$X$,
où $Y^{(d)}$ désigne l'ensemble des points~$y\in Y$
tels que $\dim_y(Y)\geq d$.

C'est un faisceau. Soit en effet $Y$ un sous-espace paralinéaire
de~$X$, soit $(Y_i)_{i\in I}$ un G-recouvrement de~$Y$
et soit $f\in\mathscr A_{X_\groth}(Y)$ une fonction G-lisse dont la restriction
à $Y_i^{(d)}$ est nulle pour tout~$i$. Soit $x\in Y^{(d)}$;
par définition d'un G-recouvrement, il existe une partie finie~$J$
de~$I$ telle que $x\in Y_i$ pour tout $i\in J$,
et telle que $\bigcup_{i\in J} Y_i$ soit un voisinage de~$x$.
Pour tout~$i\in J$, soit $V_i$ un voisinage paralinéaire
compact de~$x$ dans~$Y_i$ tel que $\dim(V_i)=\dim_x(Y_i)$.
La réunion des~$V_i$ est un sous-espace paralinéaire
compact de~$X$, de dimension $\max_{i\in J} (\dim_x(Y_i))$.
Comme c'est un  voisinage de~$x$, 
il existe $i\in J$ tel que $\dim_x(Y_i)\geq d$. 
Alors, $f(x)=f|_{Y_i}(x)=0$.

\begin{lemm}\label{lemm.noyau}
Soit $\mathscr M$ un $\mathscr A_{X_\groth}$-module.
Soit $Y$ un sous-espace
paralinéaire de~$X$.
On a $\mathscr I_d \mathscr M (Y)= \ker(\mathscr M(Y)\to\mathscr M(Y^{(d)}))$.
\end{lemm}
\begin{proof}
L'inclusion $\mathscr I_d\mathscr M(Y)\subset \ker(\mathscr M(Y)\to\mathscr M(Y^{(d)}))$ est évidente.
Il suffit de vérifier l'autre inclusion G-localement; on peut donc supposer
que $Y$ est une cellule.
Soit $\omega\in\ker(\mathscr M(Y)\to\mathscr M(Y^{(d)}))$.
Si $\dim(Y)\geq d$, alors $Y=Y^{(d)}$ et $\omega=0$,
donc $\omega\in\mathscr I_d\mathscr M(Y)$.
Si $\dim(Y)<d$, alors $Y^{(d)}=\emptyset$, donc $\mathscr I_d(Y)$ contient~$1$,
d'où $\omega=1\cdot\omega\in \mathscr I_d\mathscr M(Y)$.
\end{proof}

\begin{coro}
Soit $\mathscr M$ et $\mathscr M'$ des $\mathscr A_{X_\groth}$-modules
tels que 
$\mathscr M\subset\mathscr M'$.
L'injection de $\mathscr M$ 
dans~$\mathscr M'$
induit une injection de $\mathscr M/\mathscr I_d\mathscr M$
dans 
$\mathscr M'/\mathscr I_d\mathscr M'$.
\end{coro}

\begin{coro}\label{prop.A(p,q)-torsion}
Soit $\mathscr G$ une classe de coefficients.
Si $d=\max(p,q)$, 
le faisceau d'idéaux~$\mathscr I_d\mathscr G_{X_\groth}$ 
est l'annulateur du faisceau $\mathscr G_{X_\groth}^{p,q}$.
\end{coro}
\begin{proof}
Soit $Y$ un sous-espace paralinéaire de~$X$.

Soit $f\in\mathscr I_d\mathscr G(Y)$ 
et soit $\omega\in\mathscr G_{X_\groth}^{p,q}(Y)$.
Soit $(Y_i)$ un G-recouvrement de~$Y$ par des cellules.
Si $\dim(Y_i)<d$, alors $\omega|_{Y_i}=0$;
sinon, $f|_{Y_i}=0$. Dans les deux cas, $f\omega|_{Y_i}=0$.
On a donc $f\omega=0$.

Inversement, soit $f\in\mathscr G_{X_\groth}(Y)$ 
et supposons que $f$
annule la restriction à~$Y$ du faisceau $\mathscr G^{p,q}_{X_\groth}$;
démontrons que $f\in\mathscr I_d\mathscr G(Y)$.
D'après le lemme~\ref{lemm.noyau},
il suffit de prouver que la restriction de~$f$ à toute
cellule de dimension~$d$ de~$Y$ est nulle; soit $C$ une telle cellule
et soit $j\colon C\to \R^d$ une immersion paralinéaire
qui induit un isomorphisme de~$C$ sur un sous-espace paralinéaire 
de~$\R^d$. Soit $u$ l'unique fonction 
sur~$j(C)$ telle que $f|_C=j^*u$. 
Quitte à considérer un G-recouvrement de la cellule~$C$,
on peut supposer que $u$ est la restriction 
d'une fonction de classe~$\mathscr G$  sur un voisinage de~$j(C)$.
Fixons alors un couple $(I,J)$ de multi-indices de longueurs~$(p,q)$
dans~$\{1,\dots,d\}$.
On  a 
\[ 
f|_C j^*(\di x_I\wedge\dc x_J) 
 =  j^*(u \di x_I\wedge\dc x_J)=0, 
\]
donc $u\di x_I\wedge\dc x_J=0$. Il découle alors de l'exemple~\ref{exem.formes-cellules-valeurs} que $u=0$, d'où \mbox{$f|_C=0$.}
\end{proof}

% \begin{rema}
% Soit $X$ un espace paralinéaire de dimension~$\leq n$
% et soit $\mathscr C$ une décomposition cellulaire de~$X$.
% Soit $\mathscr M$ un $\mathscr A_{X_\groth}/\mathscr I_n$-module.
% Démontrons que les applications
% \[ \mathscr M(X) \to \prod_{C\in\mathscr C} \mathscr M(C) 
% \to \prod_{C\in\mathscr C_n} \mathscr M(C)  \]
% sont des isomorphismes.
% 
% La décomposition $\mathscr C$ est un G-recouvrement de~$X$.
% Pour toute cellule~$C$ de dimension~$<n$, on a $\mathscr M(C)=0$,
% car $(\mathscr A_{X_\groth}/\mathscr I_n)(C)=0$.  
% Les conditions de recollement sont donc trivialement vérifiées.
% \end{rema}

\begin{rema}\label{rema.modIn}
Soit $X$ un espace paralinéaire de dimension~$\leq n$
et soit $\mathscr M$ un $\mathscr A_{X_\groth}$-module.
Posons $\mathscr N=\mathscr M/\mathscr I_n\mathscr M$; 
décrivons ses sections lorsque $X$ est paracompact.

Soit $\mathscr C$ une décomposition cellulaire de~$X$ et soit
$(f_C)$ une famille de sections de~$\mathscr M$, où $f_C\in\mathscr M(C)$
pour tout $C\in\mathscr C$.
Si $C$ et~$D$ sont deux cellules distinctes appartenant à~$\mathscr C$,
leur intersection est de dimension~$<n$, de sorte que les restrictions
de~$f_C$ et~$f_D$ à~$C\cap D$ coïncident modulo~$\mathscr I_n$.
La famille $(f_C)$ définit donc une section~$f$ de~$\mathscr N(X)$.
% Soit $X'$ la réunion des cellules ouvertes~$\mathring C$, 
% pour $C\in\mathscr C_n$; c'est le complémentaire du sous-espace linéaire 
% par morceaux fermé réunion des cellules de dimension~$<n$ de~$\mathscr C$.
% Il existe une unique section $\tilde f\in\mathscr M(X')$
% dont la restriction à~$\mathring C$ coïncide avec celle de~$f_C$ 
% pour tout $C\in\mathscr C_n$; c'est un relèvement de~$f|_{X'}$. 
% Comme $X'$ est purement de dimension~$n$, $\mathscr I_n(X')=0$,
% de sorte que~$\tilde f$ est l'unique relèvement de~$f|_{X'}$.

Comme $X$ est paracompact, toute section de~$\mathscr N$ peut se décrire
ainsi.

Soit enfin deux sections~$g$ et~$g'$ de~$\mathscr N$.
Choisissons une décomposition cellulaire~$\mathscr C$ et deux familles $(g_C)$
et $(g'_C)$ de sections de~$\mathscr M$
définissant respectivement~$g$ et~$g'$; il en existe, car $X$
est paracompact. 
% Soit $\tilde g$ et~$\tilde g'$ les sections
% de~$\mathscr M$ sur~$X'$ définissant essentiellement~$g$ et~$g'$.
Alors, $g=g'$ si et seulement si $g_C=g'_C$ modulo~$\mathscr I_n$
pour toute cellule~$C\in\mathscr C$,
c'est-à-dire $g_C=g'_C$ pour tout $C\in\mathscr C_n$.
\end{rema}

\subsection{}
Soit $X$ un espace paralinéaire de dimension~$\leq n$.
On dit qu'une section~$f$ 
de~$\mathscr F_{X_\groth}/\mathscr I_n\mathscr F_{X_\groth}$
est positive (resp. strictement positif, resp. G-localement constante)
si, G-localement, elle admet un représentant dans~$\mathscr F_{X_\groth}$
qui est positif (resp. strictement positif, resp. constant).

\begin{prop}\label{prop.A(n,q)-libre}
Soit $\mathscr G$ une classe de coefficients.
Soit $X$ un espace paralinéaire paracompact de dimension~$\leq n$
et  soit $(p,q)$ un couple d'entiers naturels tels que
$0\leq p,q\leq n$.
Le G-faisceau $\mathscr G^{p,q}_{X_\groth}/\mathscr I_n\mathscr G^{p,q}_{X_\groth}$
est libre de rang $\binom np\binom nq$ 
sur l'anneau $\mathscr G_{X_\groth}/\mathscr I_n\mathscr G_{X_\groth}$.
\end{prop}
\begin{proof}
Soit $\mathscr C$ une décomposition cellulaire de~$X$.
Pour chaque cellule $C\in\mathscr C$ telle que $\dim(C)=n$,
choisissons une immersion paralinéaire~$j_C$ de~$C$
dans~$\R^n$ dont l'image est une cellule.
Pour tout multi-indice~$I$ de longueur~$p$ dans~$\{1,\dots,n\}$
et tout multi-indice~$J$ de longueur~$q$ dans~$\{1,\dots,n\}$,
posons $\omega_{I,J,C}=j_C^*(\di x_I \wedge \dc x_J)$.
Si $C$ est une cellule de dimension~$<n$, posons
$\omega_{I,J,C}=0$.
Il existe une unique section $\omega_J\in\mathscr A^{p,q}_{X_\groth}/\mathscr I_n \mathscr A^{p,q}_{X_\groth}(X)$
dont la restriction à toute cellule~$C\in\mathscr C$ est égale à~$\omega_{I,J,C}$.
En effet, les conditions de recollement 
dans le faisceau quotient sont triviales,
puisque l'intersection de deux cellules distinctes 
est de dimension~$\leq n-1$.
Démontrons que la famille $(\omega_J)$ est une base du
G-faisceau $\mathscr G^{p,q}_{X_\groth}/\mathscr I_n\mathscr G^{p,q}_{X_\groth}$ sur l'anneau
$\mathscr G_{X_\groth}/\mathscr I_n\mathscr G_{X_\groth}$.

Elle est libre. En effet, si $(f_{I,J})$ est une famille
d'éléments de $\mathscr G_{X_\groth}(X)$ telle que $\sum f_{I,J}\omega_{I,J}=0$,
on a $\sum f_{I,J}|_C \omega_{I,J,C}=0$ pour toute cellule $C\in\mathscr C$.
Si $C$ est de dimension~$n$, l'argument utilisé pour déterminer
la torsion de $\mathscr G^{n,q}_{X_\groth}$
entraîne que $f_J|_C=0$. Sinon, $\dim(C)<n$,
donc $f_J|_C\in\mathscr I_{n}(C)$. 
On a ainsi $f_J\in\mathscr I_n\mathscr G_{X_\groth}(X)$
pour tout~$J$, d'où l'assertion.

Démontrons maintenant que cette famille est génératrice.
Soit $\omega\in\mathscr G^{p,q}_{X_\groth}/\mathscr I_n\mathscr G^{p,q}_{X_\groth}(X)$.
Soit $C$ une cellule de~$\mathscr C$.
Si $\dim(C)<n$, on pose $f_{I,J,C}=0$ pour tous~$I,J$.
Supposons maintenant $\dim(C)=n$.
Il existe une décomposition cellulaire~$\mathscr D_C$ de~$j_C(C)$
et, pour toute cellule~$D$ de~$\mathscr D_C$, un élément
$\omega_D$  de~$\mathscr A^{p,q}_{D}(D)$ 
tel que $\omega|_{j_C^{-1}(D)} =j_C^* \omega_D$.
D'après l'exemple~\ref{exem.formes-cellules-valeurs},
il existe une unique famille $(g_{I,J,D})$ de fonctions  de classe~$\mathscr G$
au voisinage de~$D$ telles que 
$\omega_{D}=\sum g_{I,J,D} \di x_I\wedge\dc x_J$.
La famille $(g_{I,J,D})$, où $D$ parcourt~$\mathscr D_C$,
se recolle en un élément~$f_{I,J,C}$ de~$\mathscr G_{X_\groth}/\mathscr I_n\mathscr G_{X_\groth}(C)$;
les intersections deux à deux étant de dimension~$\leq n-1$.
Pour la même raison, la famille $(f_{I,J,C})$ se recolle
en une section $f_{I,J}$ 
de~$\mathscr G_{X_\groth}/\mathscr I_n\mathscr G_{X_\groth}(X)$
et l'on a $\omega=\sum f_{I,J}\omega_{I,J}$.
\end{proof}

\begin{rema}\label{rema.changement-base-A/N}
Soit $\mathscr G$ une classe de coefficients.

Soit $f$ une immersion par morceaux de~$X$ dans~$\R^n$. 
Pour tout sous-ensemble~$I$ de cardinal~$p$ de~$\{1,\dots,n\}$
et tout sous-ensemble~$J$ de cardinal~$q$  de~$\{1,\dots,n\}$,
notons  $\alpha_{I,J}$ 
la forme de type~$(p,q)$ sur~$\R^n$ donnée par 
\[ \alpha_J = \di x_I\wedge\dc x_J. \]
Il résulte de la  preuve  de la proposition~\ref{prop.A(n,q)-libre}
que la famille $(f^*\alpha_{I,J})_J$ 
est une base du faisceau~$\mathscr G^{p,q}_{X_\groth}$
comme $(\mathscr G_{X_\groth}/\mathscr I_n\mathscr G_{X_\groth})$-module. 
On dira que c'est la base
canonique de ce faisceau associée à l'immersion par morceaux~$f$.

Soit $g\colon X\to \R^n$ une seconde immersion par morceaux.
La famille $(g^*\alpha_{I,J})_{I,J}$
est aussi une base du faisceau~$\mathscr G^{p,q}_{X_\groth}$.
Soit $C$ une cellule de dimension~$n$ de~$X$ telle
$(f,g)(C)$ soit contenu dans un sous-espace affine de dimension~$n$
de~$\R^{2n}$. Soit $u$ l'unique application affine
de~$\R^n$ dans lui-même tel que $g|_C=u\circ f|_C$.
Pour tout couple~$(I,J)$ de sous-ensembles de~$\{1,\dots,n\}$
tels que $\Card(I)=p$ et $\Card(J)=q$, on a donc
\[ g^*\alpha_{I,J} = \sum_K u_{IK} \sum_{L}  u_{JL} 
f^*\alpha_{KL}, \]
où $u_{IK}$ (resp. $u_{JL}$) est le déterminant de la matrice déduite de~$u$
par extraction des lignes d'indices dans~$I$ et des colonnes d'indices dans~$K$
(resp. des lignes d'indices dans~$J$ et des colonnes d'indices dans~$L$).

Prenons $\mathscr G=\mathscr A$.
On voit en particulier que la matrice de changement
de base de la famille~$(f^*\alpha_{I,J})$ à la famille $(g^*\alpha_{K,L})$
est une section G-localement constante
du G-faisceau $\mathrm M_{\binom np\binom nq} (\mathscr A_{X_\groth}/\mathscr I_n)$.

Lorsque $p=n$ et $q=0$, $\binom np\binom nq=1$, et cette matrice est réduite
à une « fonction »~$J_{g/f}$ qu'on appellera \emph{jacobien relatif}
des deux immersions par morceaux~$f$ et~$g$.
C'est une section de~$\mathscr A_{X_\groth}/\mathscr I_n$
qui est G-localement constante et inversible.
Sur une cellule~$C$ de dimension~$n$ comme ci-dessus, elle est effectivement
donnée par le jacobien de l'automorphisme affine~$u$ de~$\R^n$
tel que $g|_C=u\circ f|_C$.
\end{rema}

\begin{defi}\label{defi.loc-constant}
Soit $\mathscr G$ une classe de coefficients.
On dira qu'une section~$\omega$ de $\mathscr G^{p,q}_{X_\groth}$
est \emph{G-localement constante} si pour tout
sous-espace paralinéaire~$Y$ de~$X$ et toute
immersion par morceaux $f\colon Y\to\R^n$, les coefficients
de~$\omega|_Y$ dans la base canonique $(f^*\omega_{I,J})$ 
associée à~$f$ sont des sections
G-localement constantes 
de~$(\mathscr G_{X_\groth}/\mathscr I_n\mathscr G_{X_\groth})(Y)$.
\end{defi}

D'après ce qui précède, une section~$\omega$ est 
G-localement constante si et seulement s'il existe un G-recouvrement~$(X_i)$
de~$X$ et, pour tout~$i$, une immersion par morceaux $f_i\colon X_i\to\R^n$,
tels que pour tout~$i$, les coefficients de~$\omega|_{X_i}$
dans la base~$(f_i^*\omega_{I,J})_{I,J}$  soient G-localement constants.\index{forme de type $(n,q)$ G-localement constante}

On définit de manière analogue, en considérant les bases
duales des précédentes, la notion d'une section
G-localement constante du  faisceau
\[ \ShHom_{X_\groth}
  (\mathscr G^{p,q}_{X_\groth}, (\mathscr G_{X_\groth}/\mathscr I_n\mathscr G_{X_\groth})). \]
Ses sections sont celles qui appliquent une $(p,q)$-forme  
G-localement
constante sur une section G-localement constante 
de $\mathscr G_{X_\groth}/\mathscr I_n\mathscr G_{X_\groth}$.

\subsection{}
Soit $f\colon X \to Y$ une immersion par morceaux compacte
d'espaces paralinéaires de dimensions~$\leq n$.
Soit $\mathscr G$ une classe de coefficients
et soit $(p,q)$ un couple d'entiers naturels tel que $\max(p,q)=n$.
Nous allons construire un morphisme de faisceaux
\[ f_*\colon  f_*{\mathscr G^{p,q}_{X_\groth}}
 \to  {\mathscr G^{p,q}_{Y_\groth}}. \]

Soit $U$ un sous-espace paralinéaire paracompact de~$Y$.
Alors, $f_U\colon f^{-1}(U)\to U$ est une immersion par morceaux
compacte et $f^{-1}(U)$ est paracompact.
D'après la proposition~\ref{prop.decomp-relative},
il existe un couple $(\mathscr C,\mathscr D)$ 
de décompositions cellulaires qui est adapté à~$f$.
Soit $\omega$ une $(p,q)$-forme sur $f^{-1}(U)$.
Soit $D$ une cellule de~$\mathscr D$ telle que $\dim(D)=n$.
L'ensemble des cellules $C\in\mathscr C_n$ telles que $f(C)=D$
est fini; pour toute telle cellule~$C$, $f|_C$ induit un isomorphisme
de~$C$ sur~$D$, de sorte que $\omega|_C$ induit une $(p,q)$-forme sur~$D$.
Notons $f_*\omega_D$ la somme de ces $(p,q)$-formes.

Comme $\max(p,q)=n$, il existe une unique $(p,q)$-forme $f_*\omega$ sur~$U$ 
dont la restriction à toute $n$-cellule~$D\in\mathscr D$
est égale à~$f_*\omega_D$: sa restriction à une cellule de~$\mathscr D$
qui n'est pas de dimension~$n$ est nulle, et les conditions
de recollement sont automatiques.

Cette construction est compatible au raffinement des décompositions
cellulaires et, par suite, aux restrictions;
elle fournit le morphisme de faisceaux annoncé.

Si la $(p,q)$-forme~$\omega$ est positive, 
il en est de même de~$f_*\omega$.

Si $f$ est injective, alors $f^*(f_*\omega)=\omega$; nous dirons
que $f_*\omega$ est le \emph{prolongement par zéro} 
de la $(p,q)$-forme~$\omega$.
Dans ce cas, $\omega$ est positive  (resp. nulle)
si et seulement si $f_*\omega$ l'est. 

\section{Orientations d'un espace paralinéaire}

\subsection{}
Soit $U$ une variété topologique connexe non vide de dimension~$n$.
On note $\Or(U)$ l'ensemble de ses orientations.
Il est vide lorsque $U$ n'est pas orientable,
et c'est un $\{\pm1\}$-torseur sinon.

Soit $V$ un ouvert connexe non vide de~$U$.
Si $U$ est orientable,
l'application naturelle de $\Or(U)$ dans~$\Or(V)$ 
est un isomorphisme  de~$\{\pm1\}$-torseurs.

\subsection{} 
Soit $X$ un espace paralinéaire et soit $n$ un entier
tel que $\dim(X)\leq n$.

Soit $\mathscr C$ un G-recouvrement de~$X$ par des cellules.
On dit qu'une cellule $C$ de~$X$ est $\mathscr C$-fine
s'il existe~$C'\in\mathscr C$ telle que $C\subset C'$.

Considérons alors le diagramme de $\{\pm1\}$-ensembles 
dont les objets sont les $\Or(\mathring C)$, pour $C$ parcourant
l'ensemble des cellules $\mathscr C$-fines de dimension~$n$,
et les flèches sont les isomorphismes naturels 
$\Or(\mathring D)\to \Or(\mathring C)$ 
lorsque $C$ et $D$ sont des cellules $\mathscr C$-fines
telles que $C\subset D$.
Notons $\Or_{\mathscr C}(X)$ la limite de ce diagramme 
dans la catégorie des $\{\pm1\}$-ensembles.
Un élément de $\Or_{\mathscr C}(X)$ 
est appelé \emph{orientation de~$X$} subordonnée au G-recouvrement~$\mathscr C$.

Lorsque $\dim(X)<n$, le diagramme ci-dessus est vide et sa limite
est le singleton~$\{1\}$.

Soit $\mathscr D$ un G-recouvrement de~$X$ par des cellules
qui est plus fin que le G-recouvrement~$\mathscr C$.
Toute cellule~$\mathscr D$-fine est $\mathscr C$-fine;
toute orientation de~$X$ subordonnée à~$\mathscr C$
induit donc une orientation subordonnée à~$\mathscr D$.

L'application $\Or_{\mathscr C}(X)\to \Or_{\mathscr D}(X)$
ainsi définie est $\{\pm1\}$-équivariante
et injective. En effet, si $C$ est une cellule
de dimension~$n$ de~$X$, la famille $(C\cap D)_{D\in\mathscr D}$
est un G-recouvrement de~$C$; il existe donc une cellule $D\in\mathscr D$
telle que $C\cap D$ soit un espace paralinéaire de dimension~$n$;
il contient une  cellule de dimension~$n$.

On note $\Or_\groth(X)$ la limite inductive de cette
famille de $\{\pm1\}$-ensembles
$\Or_{\mathscr C}(X)$ lorsque $\mathscr C$ 
parcourt l'ensemble des G-recouvrements
de~$X$ par des cellules. Les applications canoniques
de $\Or_{\mathscr C}(X)$ dans~$\Or_\groth(X)$ sont injectives.

Lorsque $\dim(X)<n$, on a $\Or_\groth(X)=\{1\}$.

\subsection{}
Soit $f\colon Y\to X$ une immersion par morceaux d'espaces paralinéaires
de dimensions~$\leq n$.
   
Soit $\omega\in\Or_\groth(X)$ une orientation de~$X$.
On construit comme suit une orientation~$f^*\omega\in\Or_\groth(Y)$ de~$Y$.
Soit $\mathscr C$ un G-recouvrement de~$X$ par des cellules.
Soit $\mathscr D$ un G-recouvrement de~$Y$ par des cellules,
plus fin que le G-recouvrement $(f^{-1}(C))_{C\in\mathscr C}$,
et tel que $f$ induise un isomorphisme de~$D$ sur son image,
pour toute cellule $D\in\mathscr D$.

Soit $D$ une cellule de dimension~$n$ de~$Y$ qui est~$\mathscr D$-fine;
alors $f$ induit un homéomorphisme de~$\mathring D$ sur son image~$f(\mathring D)$ qui est l'intérieur d'une cellule~$\mathscr C$-fine~$C$ de dimension~$n$.
Notons $(f^*\omega)_D$ l'unique orientation de~$\mathring D$
qui se déduit de l'orientation~$\omega_{C}$ de~$\mathring C$
par~$f$.

La famille $f^*\omega$ ainsi définie est une orientation de~$Y$
(subordonnée au G-recouvrement~$\mathscr D$); 
elle ne dépend pas des choix faits dans sa construction.

En particulier, l'application $Y\mapsto \Or_\groth(Y)$ est un G-préfaisceau 
en $\{\pm1\}$-ensembles sur~$X$.

\begin{prop}
Le G-préfaisceau $Y\mapsto \Or_\groth(Y)$ 
est un faisceau en $\{\pm1\}$-ensembles.
\end{prop}
On l'appelle le G-faisceau des orientations sur~$X$
et on le note $\Or_{X_\groth}$.
\begin{proof}
Soit $(X_i)$ un G-recouvrement de~$X$ et, pour tout~$i$,
soit $\omega_i$ une orientation de~$X_i$; on suppose
que $\omega_i|_{X_i\cap X_j}=\omega_j|_{X_i\cap X_j}$
pour tout couple~$(i,j)$.
Choisissons pour tout~$i$ un G-recouvrement~$\mathscr C_i$
par des cellules auquel $\omega_i$ soit subordonné.
La réunion~$\mathscr C$ de ces familles est un G-recouvrement de~$X$.
Soit $C$ une cellule de dimension~$n$ de~$X$ qui est~$\mathscr C$-fine;
lorsque $i$ parcourt des indices tels que $C$ soit~$\mathscr C_i$-fine,
les orientations $(\omega_i)_C$ sont égales à une même orientation
de~$\mathring C$, que l'on note~$\omega_C$.
La famille~$\omega$ de~$\omega_C$ est une orientation de~$X$ subordonnée
au G-recouvrement~$\mathscr C$; elle vérifie $\omega|_{X_i}=\omega_i$
pour tout~$i$, et c'est la seule orientation de~$X$ qui possède
cette propriété.
\end{proof}

\begin{lemm}\label{lemm.M-Or}
Soit $X$ un espace paralinéaire de dimension~$\leq n$ 
et soit $\mathscr M$ un $\mathscr A_{X_\groth}$-module
annulé par~$\mathscr I_n$.  Soit $Y$ un sous-espace paralinéaire
de~$X$.

\begin{enumerate}
\item Pour toute orientation~$\eps\in\Or_G(Y)$,
l'application canonique
\[ \theta_\eps\colon \mathscr M (Y) 
 \to (\mathscr M \times^{\{\pm1\}} \Or_{X_\groth}) (Y),
\qquad m\mapsto [m,\eps], \]
est bijective.
\item Pour tout couple $(\eps,\eps')$ d'orientations de~$Y$,
il existe une unique section inversible 
$\alpha\in(\mathscr A_{X_\groth}/\mathscr I_n) (Y)$
telle que $\theta_\eps(\alpha m)=\theta_{\eps'}(m)$
pour tout~$m\in \mathscr M(Y)$.
Cette section~$\alpha$ est G-localement constante
et vérifie $\alpha^2=1$.
\end{enumerate}
\end{lemm}
\begin{proof}
\begin{enumerate}
\item
Soit $m_1,m_2\in\mathscr M(Y)$ tels que $\theta_\eps(m_1)=\theta_\eps(m_2)$.
Soit $\mathscr C$ un G-recouvrement de~$Y$ par des cellules
telle que que $(m_1,\eps)$ et $(m_2,\eps)$
induisent le même élément de $\mathscr M(C)\times^{\{\pm1\}}\Or_G(C)$
pour toute cellule~$C\in\mathscr C$.
On donc $m_1|_C=m_2|_C$ pour toute $n$-cellule~$C$ appartenant à~$\mathscr C$,
car l'action de~$\{\pm1\}$ sur~$\Or_G(C)$  est alors libre.
Par suite, la restriction à~$Y^{(n)}$ de~$m_1-m_2$ est nulle.
On a donc $m_1-m_2\in \mathscr I_n \mathscr M$
(lemme~\ref{lemm.noyau}), d'où $m_1=m_2$ puisque $\mathscr M$
est annulé par~$\mathscr I_n$.

Soit maintenant $\mu$ une section sur~$Y$ du G-faisceau 
 $\mathscr M \times^{\{\pm1\}} \Or_{X_\groth}$.
Il existe un G-recouvrement~$\mathscr C$ de~$Y$ par des cellules
et, pour toute cellule~$C$ de~$\mathscr C$,
un couple $(m_C,\eps_C)\in\mathscr M(C)\times^{\{\pm1\}}\Or_G(C)$ 
qui définit $\mu|_C$.  Quitte à raffiner ce G-recouvrement,
on suppose que l'orientation~$\eps$ de~$Y$ est~$\mathscr C$-fine
et que $\eps_C\in\Or(\mathring C)$ pour toute cellule~$C$
telle que $\dim(C)=n$. 
Quitte à changer $\eps_C$ en~$-\eps_C$ et~$m_C$ en~$-m_C$,
on suppose que $\eps_C=\eps|_C$ pour toute cellule~$C$.

Alors les restrictions à~$C\cap C'$
de~$[m_C,\eps|_C]$ et~$[m_{C'},\eps|_{C'}]$ 
coïncident avec $\mu|_{C\cap C'}$.
D'après l'injectivité, déjà établie, appliquée sur~$C\cap C'$,
on a $m_C|_{C\cap C'}=m_{C'}|_{C\cap C'}$.
Les sections~$m_C$ se recollent donc en une section~$m\in\mathscr M(Y)$;
par construction, on a $\theta_\eps(m)=\mu$.

\item
Il existe un G-recouvrement~$\mathscr C$ de~$Y$ par des cellules
tel que les orientations~$\eps$ et~$\eps'$ soient $\mathscr C$-fines.
Soit $C$ une cellule appartenant à~$\mathscr C$.
Si $\dim(C)=n$, alors $\eps|_C$ est l'une des deux orientations de~$\mathring C$,
ainsi que $\eps'|_C$; soit $\alpha_C$ l'unique élément de~$\{\pm1\}$
tel que $\eps'|_C= \alpha_C\cdot\eps_C$.
Sinon, on a $\dim(C)<n$ et  on pose $\alpha_C=1$.

Soit $C$ et~$C'$ des cellules appartenant à~$\mathscr C$.
Supposons que $\dim(C\cap C')=n$.
Alors, $C\cap C'$ contient une cellule~$D$ de dimension~$n$;
les applications $\Or(\mathring C)\to \Or(\mathring D)$
et $\Or(\mathring C')\to \Or(\mathring D)$ sont des isomorphismes
de $\{\pm1\}$-torseurs, donc $\alpha_C=\alpha_{C'}$.
Il existe alors une unique 
section~$\alpha\in(\mathscr A_{X_\groth}/\mathscr I_n)(Y)$
telle que $\alpha|_C=\alpha_C$ pour toute cellule~$C$ 
appartenant à~$\mathscr C$.
On a $\alpha^2=1$, donc $\alpha$ est inversible,
et $\theta_{\eps'}^{-1}=\alpha \theta_\eps^{-1}$.
\end{enumerate}
\end{proof}

\begin{coro}
Il existe une unique structure
de $\mathscr A_{X_\groth}/\mathscr I_n$-module
sur le produit contracté 
$ \mathscr M \times^{\{\pm1\}} \Or_{X_\groth}$
telle que pour tout sous-espace paralinéaire~$Y$ et toute
orientation~$\eps$ de~$Y$,
l'application~$\theta_\eps$ soit linéaire.
\end{coro}
\begin{proof}
Le lemme précédent entraîne immédiatement
l'existence et l'unicité d'une telle structure  de module
au-dessus de tout sous-espace paralinéaire~$Y$ de~$X$
qui possède une orientation.
L'assertion en découle, car $X$ est G-recouvert par de tels sous-espaces.
\end{proof}

\section{Formes-volume sur un espace paralinéaire}

\subsection{}
On appelle \emph{espace paralinéaire de niveau~$n$}
un espace paralinéaire de dimension~$\leq n$. Dans cette terminologie,
l'entier~$n$ est une \emph{donnée} attachée à l'espace paralinéaire
considéré et non une simple propriété  de cet espace.
En particulier,
les sous-espaces paralinéaires d'un espace paralinéaire de niveau~$n$
seront considérés comme espaces paralinéaires de niveau~$n$,
même si leur dimension peut être strictement plus petite.

La théorie que nous introduisons ci-dessous 
est essentiellement vide lorsque les espaces sont
de dimension~$<n$, mais nos constructions ultérieures requerront la souplesse
de ce langage, notamment lorsqu'interviennent des restrictions
à des parties paralinéaires.

\begin{defi}\label{defi.faisceau-formes-volume}
Soit $\mathscr G$ une classe de coefficients.
Soit $X$ un espace paralinéaire de niveau~$n$.
On appelle G-faisceau des formes-volume à coefficients dans~$\mathscr G$ 
sur~$X$ le produit contracté
\[ \abs{\mathscr G^{n,0}_{X_\groth}}  
= \mathscr G^{n,0}_{X_\groth}  \times^{\{\pm1\}} \Or_{X_\groth}. \]
\end{defi}

L'espace~$\R^n$ est muni de la forme-volume canonique~$\omega_\R^n$
associée à la forme \mbox{$\di x_1\wedge\dots\wedge\di x_n$}
et à l'orientation canonique.

\subsection{}
Le G-faisceau sur~$X$ des formes-volume à coefficients dans~$\mathscr G$
est un $\mathscr G_{X_\groth}/\mathscr I_n\mathscr G_{X_\groth}$-module 
G-localement libre de rang~$1$
(proposition~\ref{prop.A(n,q)-libre} et lemme~\ref{lemm.M-Or}).

Lorsque $X$ est paracompact, ces résultats prouvent en fait que
ce G-faisceau est libre de rang~$1$ et en fournissent 
une classe explicite de  bases. Rappelons leur description.

Soit $\mathscr C$ une décomposition cellulaire de~$X$
et, pour toute cellule $C\in\mathscr C$, soit $f_C\colon C\to\R^n$
un isomorphisme de~$C$ sur une cellule de~$\R^n$.
Pour tout $C\in\mathscr C_n$, on dispose de la $(n,0)$-forme 
$f_C^*(\di x_1\wedge\dots\wedge\di x_n)$ sur~$C$
et de l'orientation~$\eps_C$ de~$C$ déduite de celle de~$\R^n$
via~$f_C$; notons~$\omega_C$ la $n$-forme volume associée sur~$C$.
Lorsque $C\in\mathscr C$ est de dimension~$<n$, on pose $\omega_C=0$.
La famille $(\omega_C)_{C\in\mathscr C}$ définit une $n$-forme volume sur~$X$
qui est une base du G-faisceau des $n$-formes-volume.
On dira qu'une telle $n$-forme-volume est standard.\index{forme-volume sur un espace paralinéaire!standard}

Soit $\alpha$ une autre base standard, associée
à une décomposition cellulaire~$\mathscr D$ et à une famille $(g_C)$
d'isomorphismes. 
Soit $C\in\mathscr C$ et $D\in\mathscr D$ des cellules
telles que $\dim(C\cap D)=n$; on a $\alpha_D|_{C\cap D}=\abs{J_{g_D/f_C}}\omega_C$, où $J_{g_D/f_C}$ désigne le jacobien relatif de l'immersion de~$C\cap D$ 
donnée par~$g_D$
par rapport à celle donnée par~$f_C$.

Cela prouve que deux formes-volume standard diffèrent par multiplication
par un élement G-localement constant strictement positif
de~$\mathscr F_{X_\groth}/\mathscr I_n\mathscr F_{X_\groth}$.

\begin{defi}
Soit $\mathscr G$ une classe de coefficients.
Soit $X$ un espace paralinéaire de niveau~$n$ 
et soit $\omega$ une forme-volume sur~$X$.
On dit que $\omega$ est \emph{positive} 
si pour tout sous-espace paracompact~$Y$ de~$X$
et toute forme-volume standard~$\alpha$ sur~$Y$,
l'unique élément~$u$ de~$\mathscr G_{X_\groth}/\mathscr I_n\mathscr G_{X_\groth}(Y)$
tel que $\omega = u\alpha$ est positif.
\end{defi}

C'est une propriété G-locale.

\subsection{}\label{sss.forme-volume-f^*}
Soit $f\colon X\to Y$ un morphisme
d'espaces paralinéaires de niveau~$n$.
En revenant à la définition d'un morphisme paralinéaire
(\ref{defi.morphisme-pl}), 
on voit qu'il existe  un G-recouvrement $(X_i)$ de~$X$ et
un G-recouvrement $(Y_j)$ de~$Y$ par des cellules
vérifiant la propriété suivante:
pour tout~$i$, il existe~$j$ tel que $f(X_i)\subset Y_j$
et tel que, soit $\dim(f(X_i))<n$, soit $f$ est un isomorphisme
de~$X_i$ sur un sous-espace paralinéaire de~~$Y_j$.

Soit $\omega$ une forme-volume sur~$Y$ à coefficients dans
une classe de coefficients~$\mathscr G$.
Pour tout~$j$, soit $\eps_j$ une orientation de~$Y_j$
et soit $\omega_j$ la $n$-forme sur~$Y_j$ 
telle que $\omega|_{Y_j}=[\omega_j,\eps_j]$.
Il existe une unique forme-volume  à coefficients dans~$\mathscr G$
sur~$X$ que l'on note $f^*\omega$
dont la restriction à~$X_i$ est nulle si $\dim(X_i)<n$,
et vaut $[f^*\omega_j,f^*\eps_j]$ sinon.
Elle ne dépend pas des choix faits.

Si la forme-volume~$\omega$ est positive, 
il en est de même de~$f^*\omega$.

L'application $\omega\mapsto f^*\omega$ 
est un morphisme de faisceaux
\[ f^* \colon  \abs{\mathscr G^{n,0}_{Y_\groth}}
 \to f_* \abs{\mathscr G^{n,0}_{X_\groth}}. \]

\subsection{}\label{sss.forme-volume-f_*}
Soit $f\colon X\to Y$ une immersion par morceaux compacte
d'espaces paralinéaires de niveau~$n$.
Soit $\mathscr G$ une classe de coefficients.
Nous allons construire un morphisme de faisceaux
\[ f_*\colon  f_*\abs{\mathscr G^{n,0}_{X_\groth}}
 \to  \abs{\mathscr G^{n,0}_{Y_\groth}}. \]

Soit $U$ un sous-espace paralinéaire paracompact de~$Y$.
Alors, $f_U\colon f^{-1}(U)\to U$ est une immersion par morceaux
compacte et $f^{-1}(U)$ est paracompact.
D'après la proposition~\ref{prop.decomp-relative},
il existe un couple $(\mathscr C,\mathscr D)$ 
de décompositions cellulaires qui est adapté à~$f$.
Soit $\omega$ une forme-volume sur $f^{-1}(U)$.
Soit $D$ une cellule de~$\mathscr D$ telle que $\dim(D)=n$.
L'ensemble des cellules $C\in\mathscr C_n$ telles que $f(C)=D$
est fini; pour toute telle cellule~$C$, $f|_C$ induit un isomorphisme
de~$C$ sur~$D$, de sorte que $\omega|_C$ induit une forme-volume sur~$D$.
Notons $f_*\omega_D$ la somme de ces formes-volume.
Il existe une unique forme-volume $f_*\omega$ sur~$U$ 
dont la restriction à toute $n$-cellule~$D\in\mathscr D$
est égale à~$f_*\omega_D$.

Cette construction est compatible au raffinement des décompositions
cellulaires et, par suite, aux restrictions;
elle fournit le morphisme de faisceaux annoncé.

Si la forme-volume~$\omega$ est positive, il en est de même de~$f_*\omega$.

Si $f$ est injective, alors $f^*(f_*\omega)=\omega$; nous dirons
que $f_*\omega$ est le \emph{prolongement par zéro} de la forme-volume~$\omega$.
Notons que $\omega$ est positive  (resp. nulle)
si et seulement si $f_*\omega$ l'est. 

\subsection{}
Soit $X$ un espace paralinéaire et soit $n$ un entier.
L'ensemble des sous-espaces paralinéaires~$\Sigma$ de~$X$,
fermés et de dimension~$\leq n$, est un ensemble ordonné filtrant;
si $j\colon\Sigma\to \Tau$ est une inclusion, on dispose d'une application 
$j_*\colon \abs{\mathscr G^{n,0}}(\Sigma)\to \abs{\mathscr G^{n,0}}(\Tau)$.
Ces applications font de la famille $(\abs{\mathscr G^{n,0}}(\Sigma))$
un système inductif dont la limite est notée $\mathscr V^{\mathscr G,n}_\pf(X)$,
ou $\mathscr V^n_\pf(X)$ s'il n'y pas d'ambiguïté possible quant à la classe~$\mathscr G$.
Plus généralement,
cette construction définit un préfaisceau~$\mathscr V^{\mathscr G,n}_{X_\groth,\pf}$ sur~$X_\groth$.

Pour tout $\Sigma$, l'application canonique
de~$\abs{\mathscr G^{n,0}}(\Sigma)$ dans~$\mathscr V^n_\pf(X)$ est injective.

On notera $\mathscr V^{\mathscr G,n}_{X_\groth}$  (ou $\mathscr V^n_{X_\groth}$
s'il n'y a pas d'ambiguïté sur la classe~$\mathscr G$)
le G-faisceau associé sur~$X_\groth$
et on dira que c'est le \emph{faisceau des formes-volume $n$-dimensionnelles}
à coefficients dans~$\mathscr G$ sur~$X_\groth$.

Notons que si $\dim(X)\leq n$, 
alors $\mathscr V^n_\pf=\abs{\mathscr G^{n,0}_{X_\groth}}$,
donc $\mathscr V^n=\abs{\mathscr G^{n,0}_{X_\groth}}$.
 
\begin{lemm}
Soit $X$ un espace paralinéaire, soit $n$ un entier.
\begin{enumerate}
\item Soit $\Sigma$ et $\Sigma'$ des sous-espaces paralinéaires
de~$X$, fermés et de dimension~$\leq n$,
soit $\omega$ et~$\omega'$ des formes-volume $n$-dimensionnelles
sur~$\Sigma$ et~$\Sigma'$ respectivement, 
notons $j$ et~$j'$ les injections de~$\Sigma\cap \Sigma'$ dans~$\Sigma$
et~$\Sigma'$ respectivement.
Pour que les classes de~$\omega$ et~$\omega'$ soient égales 
dans~$\mathscr V^n_\pf (X)$, il faut et il suffit que les relations
\[ \omega|_{\Sigma\cap \Sigma'}=\omega'|_{\Sigma\cap\Sigma'},
\qquad 
\omega=j_*\omega|_{\Sigma\cap \Sigma'},
\quad\text{et}\quad
\omega'=j'_*\omega|_{\Sigma\cap \Sigma'} \]
soient satisfaites.
\item Le préfaisceau $\mathscr V^n_\pf$ sur~$X_\groth$ est séparé.
\item Si $X$ est paracompact, $\mathscr V^n_{\pf}$ est un faisceau.
\end{enumerate}
\end{lemm}
\begin{proof}
\begin{enumerate}
\item
Ces relations entraînent que les classes de~$\omega$ et~$\omega'$
sont toutes deux égales à celle de~$\omega|_{\Sigma\cap\Sigma'}$.
Supposons inversement que les classes de~$\omega$ et~$\omega'$
soient égales. Soit $\Tau$ un sous-espace paralinéaire
contenant~$\Sigma$ et~$\Sigma'$ tel que $i_*\omega=i'_*\omega'$,
où  $i$ et~$i'$ désignent les inclusions de~$\Sigma$ et~$\Sigma'$ dans~$\Tau$.
Pour prouver les relations demandées, on peut raisonner G-localement
et donc supposer que $\Tau$ possède une décomposition cellulaire
compatibles aux sous-espaces~$\Sigma$, $\Sigma'$ et~$\Sigma\cap\Sigma'$.
L'égalité se vérifie alors directement cellule par cellule et découle
directement de la définition des images directes des formes.

\item
Cela résulte immédiatement de
ce que la véracité des trois relations de~(1) est une propriété G-locale.

\item
Soit $(X_i)_{i\in I}$ un G-recouvrement de~$X$, pour tout~$i$, 
soit $\Sigma_i$ un sous-espace paralinéaire de~$X_i$, fermé dans~$X_i$
et de dimension~$\leq n$, et soit $\omega_i$ une forme-volume
$n$-dimensionnelle sur~$\Sigma_i$. Supposons que pour tout couple~$(i,j)$,
les images de~$\omega_i$ et~$\omega_j$
dans $\mathscr V^n_\pf(X_i\cap X_j)$ coïncident
et démontrons qu'il existe un sous-espace paralinéaire~$\Sigma$
de~$X$, fermé et de dimension~$\leq n$, et une forme-volume~$\omega$
sur~$\Sigma$ dont la classe dans~$\mathscr V^n(X_i)$ est celle de~$\omega_i$.

Comme le préfaisceau~$\mathscr V^n_\pf$ est séparé,
on peut raffiner le G-recouvrement~$(X_i)$ et, en particulier,
le supposer  localement fini.

Soit $(V_j)_{j\in J}$ un recouvrement localement fini
de~$X$ par des parties compactes dont les intérieurs recouvrent~$X$;
il en existe car $X$ est paracompact. C'est a fortiori un G-recouvrement.
Pour tout~$j$, il existe une partie \emph{finie}~$I_j$ de~$I$
telle que $\bigcup_{i\in I_j} X_i$ contient~$V_j$.
La famille $(X_i\cap V_j)$ indexée par l'ensemble des couples 
$(i,j)$ tels que $j\in J$ et $i\in I_j$ est un G-recouvrement localement fini
de~$X$, plus fin que le recouvrement~$(X_i)$ initial.

Soit $\Sigma$ la réunion des parties~$\Sigma_i$;
c'est une partie paralinéaire de~$X$, fermée et 
de dimension~$\leq n$;
de plus, 
la famille~$(\Sigma_i)$ est un G-recouvrement de~$\Sigma$.
D'après les relations de~(1), on a $\omega_i|_{\Sigma_i\cap\Sigma_j}=\omega_j|_{\Sigma_i\cap\Sigma_j}$, pour tout couple~$(i,j)$.
Il existe donc une $n$-forme-volume~$\omega$ sur~$\Sigma$
dont la restriction à~$\Sigma_i$ est égale à~$\omega_i$, pour tout~$i$.
Fixons~$i$ et démontrons que les classes de~$\omega|_{\Sigma_i}$
et de~$\omega_i$ coïncident dans~$\mathscr V^n_\pf(X_i)$.
Notons~$f$ l'injection de~$\Sigma_i$ dans~$\Sigma\cap X_i$; 
il suffit de démontrer l'égalité  $\omega|_{\Sigma\cap X_i}=f_*\omega_i$
de $n$-formes-volume sur~$\Sigma\cap X_i$.
Comme le préfaisceau~$\mathscr V^n_\pf$ est séparé, il suffit
de vérifier cette égalité après restriction à~$\Sigma_j\cap X_i$, pour tout~$j$.

Notons $\alpha$ la $n$-forme-volume $\omega|_{\Sigma_i\cap\Sigma_j}$.
Comme les classes de~$\omega_i$ et $\omega_j$ dans~$\mathscr V^n(X_i\cap X_j)$
coïncide, la forme-volume $\omega_i|_{\Sigma_i\cap X_j}$ est le prolongement
par zéro de~$\omega_i|_{\Sigma_i\cap\Sigma_j}=\alpha$,
et $\omega_j|_{\Sigma_j\cap X_i}$ est le prolongement par zéro
de $\alpha$.
Ainsi, $\omega|_{\Sigma_j\cap X_i}= \omega_j|_{\Sigma_j\cap X_i}$
est le prolongement par zéro de~$\omega_j|_{\Sigma_i\cap\Sigma_j}=\alpha$.
D'autre part, $(f_*\omega_i)|_{\Sigma_j\cap X_i}$
est le prolongement par zéro de $\omega_i|_{\Sigma_i\cap\Sigma_j}=\alpha$.
Cela démontre l'égalité requise et termine la démonstration du lemme.
\qedhere
\end{enumerate}
\end{proof}

\subsection{}
Soit $f\colon X\to Y$ une immersion par morceaux d'espaces linéaires
par morceaux. On a alors un morphisme d'image réciproque
$f^*\colon \mathscr V^n_{Y_\groth}\to f_*\mathscr V^n_{X_\groth}$,
déduit du morphisme défini dans le paragraphe~\ref{sss.forme-volume-f^*}
lorsque $\dim(X)\leq n$.
Si, de plus, $f$ est compacte, on a un morphisme  d'image directe 
$f_*\colon f_*\mathscr V^n_{X_\groth}\to\mathscr V^n_{Y_\groth}$,
déduit du morphisme défini dans le paragraphe~\ref{sss.forme-volume-f_*}
lorsque $\dim(X)\leq n$.
Dans les deux cas le point clé est que ces opérations 
commutent au prolongement par zéro.

\section{Mesure associée à une forme-volume}
\label{ss.mesure-forme-volume}

\subsection{}
Soit $X$ un espace paralinéaire de niveau~$n$.
Soit $\omega$ une forme-volume dimensionnelle à coefficients
boréliens sur~$X$. 
Le but de ce paragraphe est de lui associer une mesure (positive) de Borel 
régulière~$\omega_\Bor$ sur~$X$ et, si cette mesure est localement finie,
une mesure de Radon~$\omega_\Rad$ sur~$X$ dont $\omega_\Bor$ est la valeur absolue. 
Ces constructions définiront de plus des morphismes de faisceaux.

\subsection{}
Soit $X$ un espace paralinéaire de niveau~$n$ 
et soit $f\colon X\to \R^n$ une immersion d'espaces paralinéaires.
On note $\lambda_{X,f}$ l'unique mesure de Radon positive sur~$X$
telle que $f_*(\lambda_{X,f})$ soit la restriction à~$f(X)$
de la mesure de Lebesgue sur~$\R^n$.
En particulier, on a $\lambda_{X,f}=0$ si $\dim(X)<n$.

\subsection{}
Soit $X$ un espace paralinéaire de niveau~$n$.
Soit $\omega$ une forme-volume $n$-dimensionnelle sur~$X$
à coefficients boréliens.

a) Supposons d'abord qu'il existe une immersion linéaire
par morceaux $f\colon X\to\R^n$ qui induit un isomorphisme
de~$X$ sur un sous-espace paralinéaire de~$\R^n$, 
une fonction borélienne~$\alpha^\sharp$ sur un voisinage de~$f(X)$
telle que $\omega= f^*(\alpha^\sharp \omega_{\R^n})$.
Dans ce cas, on associe  alors à~$\omega$ la mesure 
de Borel $\omega_\Bor=f^*(\abs{\alpha^\sharp}) \lambda_{X,f}$ sur~$X$;
elle est positive et régulière.

Vérifions que cette mesure ne dépend pas des choix de~$f$ et~$\alpha^\sharp$.

Soit donc $g\colon X\to\R^n$ un isomorphisme de~$X$
sur un sous-espace paralinéaire de~$\R^n$ et soit
$\beta^\sharp$ une fonction lisse sur un voisinage de~$g(X)$
telle que $\omega= g^*(\beta^\sharp \omega_{\R^n})$.
Démontrons que l'on a
\[ f^*(\alpha^\sharp) \lambda_{X,f}=
 g^*(\beta^\sharp) \lambda_{X,g} .\]
On peut raisonner G-localement sur~$X$, et donc que l'application
$(f,g)\colon X\to\R^{2n}$ induit un isomorphisme de~$X$
sur une cellule.

Si $\dim(X)<n$, $\lambda_{X,f}=\lambda_{X,g}=0$
et les deux membres de l'égalité à démontrer sont nuls.

Supposons donc que $\dim(X)=n$.
Il existe alors un unique automorphisme affine de~$\R^n$, $u$,
tel que $g=u\circ f$.
Notons $J_u$ le jacobien de~$u$.
L'image directe par~$u$ de la mesure sur de Lebesgue sur~$\R^n$
est égale à la mesure de Lebesgue divisée par~$\abs {J_u}$.
On a $g_*(\lambda_{X,f})=u_*(f_*(\lambda_{X,f}))$,
donc $g_*(\lambda_{X,f})$ est la restriction à~$g(X)$
la mesure de Lebesgue divisée par~$\abs{J_u}$.
Par suite, $g_*(\lambda_{X,f})=\abs{J_u}^{-1} g_*(\lambda_{X,g})$,
d'où
\[ \lambda_{X,g} =\abs{J_u} \lambda_{X,f}. \]
D'autre part, on a 
$ u^*(\di x_1\wedge\dots\wedge \di x_n)= J_u (\di x_1\wedge\dots\wedge dx_n)$,
d'où 
\[ 
g^*(\di x_1\wedge\dots\wedge \di x_n)= J_u f^*(\di x_1\wedge\dots\wedge dx_n).
\]
De plus, si $\eps$ est l'orientation canonique de~$\R^n$,
l'orientation~$f^*\eps$ de~$X$ est égale à l'orientation~$g^*\eps$
multipliée par le signe de~$J_u$. On a donc
\[ g^* \omega_{\R^n}= \abs{J_u} f^*\omega_{\R^n} .\]
Il en résulte l'égalité 
\[ g^*(\beta^\sharp) = \abs{J_u}^{-1} f^*(\alpha^\sharp). \]
L'égalité cherchée en découle.

b) Considérons, de plus, un sous-espace paralinéaire~$Y$ de~$X$. 
Alors $f|_Y$ est une immersion
de~$Y$ dans~$\R^n$, $\alpha|_Y$ est une forme-volume sur~$Y$
et la mesure qui lui est associée est égale à la restriction
à~$Y$ de la mesure~$\alpha_\Bor$.

c) Les mesures de Borel régulières sur~$X$ formant
un faisceau pour la G-topologie (proposition~\ref{prop.mesures-B-faisceau},
on en déduit que 
lorsque $Y$ parcourt l'ensemble des sous-espaces paralinéaires
de~$X$ tels qu'il existe un couple $(f,\alpha^\sharp)$
comme ci-dessus,  les mesures $({\omega|_Y})_\Bor$ construites
se recollent en une mesure de Borel, positive et régulière,
$\omega_\Bor$ sur~$X$.

d) On vérifie enfin immédiatement que
l'application $\omega\mapsto \omega_\Bor$ est un morphisme
de faisceaux de~$\abs{\mathscr B^{n,0}_{X_\groth}}$ dans~$\mathscr M_{\Bor,X}$.

\subsection{}
On dit que la forme-volume~$\omega$ est \emph{localement intégrable}
si la mesure $\omega_\Bor$ est localement finie.
Notons $\abs{\mathscr B^{n,0}_{X_\groth,\Lunloc}}$
le sous-faisceau de $\abs{\mathscr B^{n,0}_{X_\groth}}$
dont les sections sont les formes-volume localement intégrables.
C'est un sous $\mathscr A_{X_\groth}$-module de 
$\abs{\mathscr B^{n,0}_{X_\groth}}$.

Supposons que $\omega$ soit localement intégrable
et construisons 
une mesure de Radon~$\omega_\Rad$ dont $\omega_\Bor$ est la valeur absolue.
Reprenons pour cela chaque étape de la construction précédente.

Supposons d'abord qu'il existe une immersion linéaire
par morceaux $f\colon X\to\R^n$ qui induit un isomorphisme
de~$X$ sur un sous-espace paralinéaire de~$\R^n$, 
une fonction borélienne~$\alpha^\sharp$ sur un voisinage de~$f(X)$
telle que $\omega= f^*(\alpha^\sharp \omega_{\R^n})$.
Dans ce cas, la formule
$\omega_\Rad=f^*(\alpha^\sharp) \lambda_{X,f}$ définit une mesure
de Radon sur~$X$ telle que $\abs{\omega_\Rad}=\omega_\Bor$.

Les arguments qui montraient que la mesure~$\omega_\Bor$
ne dépend pas des choix de~$f$ et~$\alpha^\sharp$
prouvent qu'il en est de même pour la mesure de Radon~$\omega_\Rad$.

Les arguments des étapes~\emph b)  et~\emph c)
prouvent alors que lorsque $Y$  
parcourt l'ensemble des sous-espaces paralinéaires
de~$X$ tels qu'il existe un couple $(f,\alpha^\sharp)$
comme ci-dessus,  les mesures $({\omega|_Y})_\Rad$ construites
se recollent en une mesure de Radon~$\omega_\Rad$ sur~$X$.
On a $\abs{\omega_\Rad}=\omega_\Bor$.

Le morphisme $\omega\mapsto \omega_\Rad$
du faisceau~$\abs{\mathscr B^{n,0}_{X_\groth,\Lunloc}}$ 
dans le faisceau~$\mathscr M_{\Rad,X}$
des mesures de Radon sur~$X$
est $\mathscr A_{X_\groth}$-linéaire.

Bien sûr, toute forme-volume à coefficients mesurables et localement
bornés, en particulier toute forme-volume lisse
est localement intégrable.

Si $\omega$ est une $n$-forme-volume positive localement intégrable,
la mesure de Radon associée est positive.

% \subsection{}
% Considérons la catégorie~$I$ des espaces paralinéaires
% de dimension~$\leq n$ dont les morphismes sont les immersions
% par morceaux. 
% Les G-recouvrements définissent une topologie sur cette catégorie~$I$.
% 
% Les formes de type~$(p,q)$, les orientations,
% les formes-volume
% définissent des faisceaux sur cette catégorie; 
% on les notera $\mathscr A^{p,q}_I$, $\Or_I$, $\abs{\mathscr A^{n,0}_I}$
% respectivement.
% Il en est de même des mesures de Radon car les immersions
% par morceaux sont des isomorphismes G-localement sur la source
% et que les mesures de Radon définissent un faisceau pour la G-topologie.
% On notera $\mathscr M_{\Rad,I}$ ce préfaisceau.
% 
% \begin{lemm}
% Il existe un unique morphisme $\mathscr A$-linéaire 
% de préfaisceaux
% de~$\abs{\mathscr A^{n,0}_I}$ dans $\mathscr M_{\Rad,I}$.
% \end{lemm}
% \begin{proof}
% Soit $X$ un espace paralinéaire de dimension~$\leq n$
% et soit $\alpha$ une forme-volume sur~$X$.
% Il existe un G-recouvrement $(X_i)$ de~$X$ par des cellules
% qui sont des domaines de cartes $(X_i,f_i)$
% et une famille $(\alpha_i)$ 
% où pour tout~$i$,
% $f_i$ induit un isomorphisme de~$X_i$ sur une cellule de~$\R^n$,
% $\alpha_i\in\mathscr A^{n,0}_{f(X_i)}(f(X_i))$,
% et $\alpha|_{X_i}$ est déduit de la $(n,0)$-forme $f_i^*\alpha_i$
% et de l'image inverse par~$f_i$ de l'orientation canonique de~$\R^n$
% lorsque $\dim(X_i)=n$.
% Pour tout~$i$, soit $\alpha_i^\sharp$ l'unique fonction lisse
% 
% On doit démontrer que les mesures 
% \end{proof}
 
\begin{lemm}
Soit $f\colon Y\to X$ une immersion par morceaux d'espaces
paralinéaires de niveau~$n$.
Soit $\alpha$ une forme-volume à coefficients boréliens sur~$X$.
On a  l'égalité 
$ ({f^*\alpha})_\Bor = f ^*(\alpha_\Bor)$
de mesures de Borel régulières positives sur~$Y$.
Si $\alpha$ est localement intégrable, alors $f^*\alpha$
est localement intégrable et l'on a  
l'égalité 
$ ({f^*\alpha})_\Rad = f ^*(\alpha_\Rad)$
de mesures de Radon sur~$Y$.
\end{lemm}
Comme $f$ est une immersion par morceaux, l'image
inverse de la mesure de Borel régulière~$\alpha_\Bor$ par~$f$ est
définie G-localement (\cf \ref{app-image-inverse-mesure});
de même pour les mesures de Radon.

\begin{proof}
L'espace $Y$ est G-recouvert par des cellules~$C$
telles que $f$ induise un isomorphisme de~$C$ sur son image,
laquelle possède une immersion paralinéaire dans~$\R^n$.
L'assertion découle alors de la construction de ces mesures.
\end{proof}

\begin{lemm}\label{lemm.f_*-Bor-Rad}
Soit $f\colon Y\to X$ une immersion par morceaux compacte 
d'espaces paralinéaires de niveau~$n$. 
Soit $\alpha$ une forme-volume à coefficients boréliens sur~$Y$.
On a  une inégalité 
$ ({f_*\alpha})_\Bor \leq  f _*(\alpha_\Bor)$
de mesures de Borel régulières positives sur~$X$,
qui est une égalité si $f$ est injective.
Si $\alpha$ est localement intégrable, alors $f_*\alpha$
est localement intégrable et l'on a  
l'égalité 
$ ({f_*\alpha})_\Rad = f _*(\alpha_\Rad)$
de mesures de Radon sur~$X$.
\end{lemm}
\begin{proof}
En raisonnant localement, on se ramène au cas où $X$ est paracompact.
Soit $(\mathscr C,\mathscr D)$ un couple de décompositions cellulaires
qui est adapté à~$f$.
Après restriction à toute cellule~$D$ de~$\mathscr D$, 
les assertions découlent directement des définitions des mesures
et des images directes.
% Pour toute cellule~$D$ de~$\mathscr D$, choisissons
% une immersion paralinéaire~$i_D$ de~$D$ dans~$\R^n$.
% Soit $D$ une $n$-cellule appartenant à~$\mathscr D$ et
% soit $\mathscr C_D$ l'ensemble fini des $n$-cellules~$C$
% appartenant à~$\mathscr C$ telles que $f(C)=D$.
% Pour toute cellule $C\in\mathscr C_D$, l'application $i_C=i_D\circ f$
% est une immersion paralinéaire  de~$C$ dans~$\R^n$;
% il existe une unique forme-volume~$\omega_C$ sur $i_C(C)=i_D(D)$ 
% telle que $\alpha|_C =i_C^*\omega_C$. 
% La forme-volume $(f_*\alpha)|_D$ est égale à 
% $i_D^*(\sum_C\omega_C)$. Par suite, 
% on a 
% $ ({f_*\alpha})_\Bor \leq \sum_C  i_D^* \omega_{C,\Bor}$,
% qui est égale $f_*(\alpha_\Bor)|_D$. 
% Lorsque $f$ est injective,
% l'ensemble $\mathscr C_D$ possède au plus un élément;
% on a donc égalité.
% Supposons $\alpha$ localement intégrable. 
% La restriction à~$D$ de la mesure de Radon 
% $(f_*\alpha)_\Rad$ 
% est donc égale à la somme des mesures $(i_D^*\omega_C)_\Rad$,
% qui est égale $f_*(\alpha_\Rad)|_D$. 
\end{proof}

\subsection{}
Soit $X$ un espace paralinéaire et soit $n$ un entier.
En vertu du lemme~\ref{lemm.f_*-Bor-Rad}, 
les constructions précédentes induisent un morphisme de faisceaux
$\lambda_\Bor\colon \mathscr V^n_{X_\groth}\to \mathscr M_{\Bor,X}$
et un morphisme $\mathscr A_{X_\groth}$-linéaire
$\lambda_\Rad$ du sous-faisceau
de~$\mathscr V^n_{X_\groth}$ des formes-volume $n$-dimensionnelles
à coefficients boréliens
dont l'image est une mesure localement finie dans le faisceau
$\mathscr M_{\Rad,X}$ des mesures de Radon sur~$X$.

La formation de $\lambda_\Bor$ commute aux immersions par morceaux
injectives; celle de $\lambda_\Rad$ commute aux immersions par morceaux
compactes. En particulier, les deux commutent au prolongement
par zéro.

\section{Calibrages et intégrales des $(n,n)$-formes}

\begin{defi}
Soit $\mathscr G$ une classe de coefficients.
Soit $X$ un espace paralinéaire de niveau~$n$.
Le G-faisceau dual
\[ \ShHom_{X_\groth}
  (\abs{\mathscr G^{n,0}_{X_\groth}},
   \mathscr G_{X_\groth}/\mathscr I_n\mathscr G_{X_\groth}) \]
du faisceau des formes-volume sur~$X$ 
est appelé faisceau des \emph{calibrages} sur~$X$
(à coefficients dans~$\mathscr G$).
\end{defi}

Lorsque 
l'espace~$X$ est  paracompact, le faisceau des formes-volume
et son dual sont libres de rang~$1$. 
Si $\omega$ est une forme-volume standard, 
il existe un unique calibrage~$\mu$ tel que $\mu(\omega)=1$;
ces calibrages sont appelés standard.
Par définition, un calibrage est standard 
s'il applique toute forme-volume standard définie sur un sous-espace
paralinéaire
sur une section G-localement constante 
de~$\mathscr G_{X_\groth}/\mathscr I_n\mathscr G_{X_\groth}$
et strictement positive.
il suffit de le vérifier G-localement pour \emph{une} forme standard.

En général,
le G-faisceau des calibrages sur~$X$ 
est G-localement libre de rang~$1$ 
sur~$\mathscr G_{X_\groth}/\mathscr I_n\mathscr G_{X_\groth}$.
On dira qu'un calibrage est G-localement constant (resp. positif, resp.~...)
s'il est G-localement multiple d'un calibrage standard 
par une section G-localement constante (resp. positive,
resp.~...)
de~$\mathscr G_{X_\groth}/\mathscr I_n\mathscr G_{X_\groth}$.

\begin{exem}\label{exem.calibrages-dec-cell}
Soit $X$ un espace paralinéaire de niveau~$n$,
paracompact.

Soit $\mathscr C$ une décomposition cellulaire de~$X$;
pour toute cellule~$C$ appartenant à~$\mathscr C$,
soit $f_C$ un isomorphisme de~$C$ sur une cellule d'un 
espace affine~$V_C$.
Pour toute cellule~$C\in\mathscr C_n$,
choisissons une orientation~$\eps_C$
de l'espace vectoriel~$\overrightarrow{\langle f_C(C)\rangle}$
et une
fonction $\mu_C$ de classe~$\mathscr G$
de~$f_C(C)$ dans l'espace~$ \abs{\Lambda^n\overrightarrow{\langle f_C(C)\rangle}}$
des vecteurs-volumes sur cet espace vectoriel;
soit alors $\mu'_C $ l'unique fonction de~$f_C(C)$ à 
valeurs dans 
$\Lambda^n \overrightarrow{\langle f_C(C)\rangle }$ tel que $\mu_C$
soit la classe de~$(\mu'_C,\eps_C)$.

Il existe alors un unique calibrage~$\mu$ de~$X$
tel que pour toute cellule~$C$ de dimension~$n$
et 
tout élément $\omega\in A^{n,0}(\overrightarrow{\langle f_C(C)\rangle })$,
$\mu(f_C^*[\omega,\eps_C]) $
soit la classe dans~$\mathscr G/\mathscr I_n\mathscr G(C)$
de la fonction
déduite de $(\omega,\mu'_C)$ par 
l'accouplement de dualité
entre $A^{n,0}( \overrightarrow{\langle f_C( C)\rangle })$
et $\Lambda^n \overrightarrow{\langle f_C(C)\rangle}$
et composition par~$f_C$. On résume cette propriété
en écrivant $\mu|_C=f_C^*\mu_C$.

Si les fonctions~$\mu_C$ sont constantes (resp. positives,
resp. ...), alors le calibrage~$\mu$
est G-localement constant (resp. positif, resp. ...).

Pour tout calibrage~$\mu$ sur~$X$,
il existe une décomposition cellulaire~$\mathscr C$,
une famille $(f_C)$ d'isomorphismes
et une famille $(\mu_C)$ qui définissent~$\mu$ comme ci-dessus.

Si le calibrage~$\mu$ est G-localement constant (resp. positif),
on peut en outre exiger que pour toute cellule~$C$ de dimension~$n$,
la fonction~$\mu_C$ soit constante (resp. positive) sur~$C$.

Si $X$ est muni d'une immersion par morceaux~$f\colon X\to V$
vers un espace affine, on peut aussi exiger que $f_C=f|_C$ pour toute
cellule~$C$.

En particulier, 
si $X$ est un sous-espace paralinéaire d'un espace affine~$V$,
il existe une décomposition cellulaire forte~$\mathscr C$
de~$X$ telle que pour toute cellule~$C\in\mathscr C$, le calibrage~$\mu|_C$
soit donné par une fonction $\mu_C\colon C \to \abs{\Lambda^n \overrightarrow{\langle C\rangle}}$.
On dira alors que la décomposition cellulaire~$\mathscr C$ 
est adaptée au calibrage~$\mu$.
\end{exem}

\subsection{}
Soit $f\colon X\to Y$ une immersion par morceaux 
d'espaces paralinéaires de niveau~$n$
et soit $\nu$ un calibrage de~$Y$.
Soit $P$ un sous-espace paralinéaire de~$X$
tel que $f$ induit un isomorphisme de~$P$ sur son image.
Le calibrage~$\nu|_{f(P)}$ induit alors un calibrage~$(f^*\nu)_P$ de~$P$.
Il existe un unique calibrage~$f^*\nu$ de~$X$ dont la restriction
à~$P$ est égal à~$(f^*\nu)_P$, pour toute cellule~$P$ comme ci-dessus.

Si le calibrage~$\nu$ est positif (resp. strictement positif),
il en est de même du calibrage~$f^*\nu$.

Si le calibrage~$\nu$ est G-localement constant,
il en est de même du calibrage~$f^*\nu$.

\subsection{}
Supposons $f\colon X\to Y$ une immersion par morceaux propre
d'espaces paralinéaires de niveau~$n$
et soit $\mu$ un calibrage de~$X$. 
Nous allons construire un calibrage~$f_*\mu$ sur~$Y$.

Supposons d'abord que $Y$ est paracompact. 
Il en est alors de même de~$X$ car $f$ est propre.
Soit $(\mathscr C,\mathscr D)$ un couple de décompositions cellulaires
adapté à l'immersion~$f$ (prop.~\ref{prop.decomp-relative}).
Soit $D$ une cellule de dimension~$n$ de~$\mathscr D$.
L'ensemble des cellules~$C\in\mathscr C$ telles que $f(C)=D$
est fini. Pour toute telle cellule~$C$, $f$ induit un isomorphisme
de~$C$ sur~$D$. Notons $\nu_D$ la somme des calibrages de~$D$
induit par les calibrages~$\mu_C$ pour toutes ces cellules~$C$.
La famille $(\nu_D)_{D\in\mathscr D}$ définit un calibrage~$\nu$
de~$Y$.

Soit $Y'$ un sous-espace paralinéaire de~$Y$,
soit $X'=f^{-1}(Y')$, soit $f'\colon X'\to Y'$ l'immersion
par morceaux déduite de~$f$ par restriction; elle est propre.
Soit $(\mathscr C',\mathscr D')$ un couple de décompositions
cellulaires adapté à $f'$ et soit $\nu'$ le  calibrage de~$Y'$
déduit de~$\mu|_{X'}$ par la construction précédente.

Démontrons que l'on a $\nu'=\nu|_{Y'}$.
Il suffit de vérifier que pour toute cellule~$\Delta$ de~$Y'$
on a $\nu'|_{\Delta}=\nu|_{\Delta}$. 
Les intersections $(\Delta\cap D\cap D')$ pour $D\in \mathscr D$ et 
$ D'\in\mathscr D'$
forment un G-recouvrement de~$\Delta$. 
Il suffit donc de démontrer
que l'assertion lorsqu'il existe des cellules $D\in\mathscr D$
et $D'\in\mathscr D'$ telles que  $\Delta\subset D\cap D'$.
Pour toute cellule $C \in\mathscr C$ tel que $f(C)=D$,
$f$ induit un isomorphisme de $f^{-1}(\Delta)\cap C$,
et l'ensemble $f^{-1}(\Delta)$ est la réunion de
ces sous-espaces $f^{-1}(\Delta)\cap C$;
de plus, $f^{-1}(\mathring \Delta)$ 
est la réunion disjointe des $f^{-1}(\mathring \Delta)\cap C$.
Par suite $\nu|_\Delta$ est la somme, pour ces cellules~$C$,
des calibrages de~$\Delta$
déduits de $\mu|_{f^{-1}(\Delta)\cap C}$ au moyen de~$f$.
Par ailleurs, $\nu'|_{\Delta}$ se décrit de façon analogue
en remplaçant~$D$ par~$D'$, $f$ par~$f'$,
et les cellules~$C$ par les cellules~$C'\in\mathscr C'$
telles que $f(C')=D'$.
Soit $x$ un point de~$\mathring \Delta$; il appartient
à $\mathring D$ et à $\mathring D'$. Tout point
de $f^{-1}(x)$ appartient à une unique cellule~$C\in\mathscr C$
telle que $f(C)=D$, et à une unique cellule~$C'\in\mathscr C'$
telle que $f(C')=D'$, et l'on a 
$f^{-1}(\Delta)\cap C=f^{-1}(\Delta)\cap C'$.
Les deux sommes définissant $\nu|_\Delta$ et $\nu'|_\Delta$
s'identifient alors terme à terme.

Revenons à la situation générale, où $Y$ n'est plus supposé paracompact.
Il résulte de ce qui précède qu'il existe un unique calibrage de~$Y$,
que l'on note~$f_*\mu$,  tel que pour  tout sous-espace
paralinéaire paracompact~$Y'$  et tout couple
$(\mathscr C',\mathscr D')$ de décompositions cellulaires
adapté à $f|_{Y'}\colon f^{-1}(Y')\to Y'$,
la restriction à~$Y'$ de~$f_*\mu$
soit égale au calibrage de~$Y'$
construit comme ci-dessus.

Si le calibrage~$\mu$ est positif (resp. strictement positif),
il en est de même du calibrage~$f_*\mu$.

Si le calibrage~$\mu$ est G-localement constant,
il en est de même du calibrage~$f_*\mu$.

\subsection{}\label{sss.forme-volume-calibrage-pl}
Soit $X$ un espace paralinéaire de niveau~$n$
muni d'un calibrage~$\mu$ à coefficients 
dans une classe de coefficients~$\mathscr G$.
Ce calibrage est un morphisme de~$\abs{\mathscr G^{n,0}_{X_\groth}}$
dans~$\mathscr G_{X_\groth}/\mathscr I_n\mathscr G_{X_\groth}$.
Par composition avec l'opérateur~$\mathrm J$, il induit un morphisme
$\mathscr G^{0,n}_{X_\groth}$
dans $(\mathscr G_{X_\groth}/\mathscr I_n\mathscr G_{X_\groth})\times^{\{\pm1\}}\Or_{X_\groth}$.
Par produit tensoriel avec~$\mathscr G^{n,0}_{X_\groth}$, 
et en le multipliant par~$(-1)^{n(n-1)/2}$,
on en déduit alors un homomorphisme
\[ \mathscr G^{n,n}_{X_\groth} \to \mathscr V^{n}_{X_\groth} ,
 \qquad \omega \mapsto \langle \omega,\mu\rangle \]
du faisceau des $(n,n)$-formes vers celui des $n$-formes-volume sur~$X$.

Lorsque $\omega$ est une $(n,n)$-forme positive
et le calibrage~$\mu$ est positif, la forme-volume $n$-dimensionnelle
$\langle\omega,\mu\rangle$ est positive.

\subsection{}
Soit $X$ un espace paralinéaire de niveau~$n$
muni d'un calibrage~$\mu$ à coefficients boréliens.
Soit $\omega$ une $(n,n)$-forme sur~$X$ à coefficients boréliens.
Compte tenu des constructions du~\S\ref{ss.mesure-forme-volume},
nous disposons donc d'une mesure de Borel régulière
$\langle\abs{\omega},\abs{\mu}\rangle=\langle\omega,\mu\rangle_\Bor$ 
sur~$X$.

Si cette mesure est localement finie on dira que $\omega$
est localement $\mu$-intégrable 
(c'est par exemple le cas si $\mu$ et $\omega$
sont à coefficients localement bornés). 
Dans ce cas, on dispose alors d'une
mesure de Radon $\langle\omega,\mu\rangle_\Rad$,
que l'on notera souvent $\langle\omega,\mu\rangle$ par abus,
et dont la valeur absolue est la mesure 
$\lambda_{\langle\abs{\omega},\abs{\mu}\rangle}$.
Cette mesure est positive
si le calibrage~$\mu$ et la forme~$\omega$ sont positifs.

Les $(n,n)$-formes localement $\mu$-intégrables forment un sous-faisceau
en $\mathscr B_{X_\groth}$-module de $\mathscr B^{n,n}_{X_\groth}$,
et l'application $\omega\mapsto \langle\omega,\mu\rangle$ est linéaire.

On dira enfin que le calibrage~$\mu$ est localement intégrable
si toute forme à coefficients localement bornés
est localement $\mu$-intégrable. 
Sur une cellule, cela revient à dire que le calibrage 
est produit d'un calibrage standard par une fonction localement intégrable.
Les calibrages G-localement constants sont localement intégrables.

\begin{exem}
Lorsque $X$ est un sous-espace affine de dimension~$n$
d'un espace affine~$V$, $\omega$ une forme lisse
sur~$X$ et~$\mu$ un vecteur-volume sur~$X$,
on retrouve la notion d'intégrale définie 
dans le paragraphe~\ref{sss.prelim-integration}
à partir des idées de Lagerberg.

D'une certaine manière, les constructions de ce chapitre visaient
simplement à « G-faisceautiser »
cette formule élémentaire de Lagerberg sur un espace affine.
\end{exem}

\begin{prop}\label{prop.formes-volume-pl-f*}
Soit $f\colon Y\to X$ une immersion par morceaux propre 
d'espaces paralinéaire de niveau~$n$.

\begin{enumerate}
\item
Pour toute $(n,n)$-forme $\omega$ sur~$X$ et tout calibrage~$\mu$ de~$Y$,
on a l'égalité
$f_* \langle f^*\omega,\mu\rangle=\langle \omega,f_*\mu\rangle$ 
de formes-volume $n$-dimensionnelles sur~$X$.

% Si le calibrage~$\mu$ est positif, on a l'égalité 
% $f_* \langle\abs{f^*\omega},\abs\mu\rangle= \langle\abs\omega,f_*\mu\rangle$ 
% de mesures  de Borel sur~$X$,
% Si $f^*\omega$ est $\mu$-localement intégrable, 
% alors $\omega$ est $f_*\mu$-localement intégrable et
% on a l'égalité
% $f_* \langle f^*\omega,\mu\rangle=\langle \omega,f_*\mu\rangle$ 
% de mesures  de Radon sur~$X$.
 
\item
Pour toute $(n,n)$-forme~$\omega$ sur~$Y$ et tout calibrage~$\mu$ de~$X$,
on a l'égalité
$ \langle f_*\omega,\mu\rangle = f_* \langle \omega,f^*\mu\rangle$
de formes-volume $n$-dimensionnelles sur~$X$.
% Si la mesure de Borel
% $\langle \abs{f^*\omega},\abs{\mu}\rangle$ 
% est localement finie, alors il en est de même
% de la mesure de Borel $\langle\abs{\omega},\abs{f_*\mu}\rangle$
% et on a l'égalité
% $f_* \langle f^*\omega,\mu\rangle =\langle \omega,f_*\mu\rangle$
% de mesures de Radon sur~$X$.
\end{enumerate}
\end{prop}
\begin{proof}
En raisonnant localement, on se ramène au cas où $X$ est paracompact.
Il existe alors un couple $(\mathscr C,\mathscr D)$ de décompositions
cellulaires qui est adapté à~$f$. Les assertions indiquées
se vérifient alors, cellule par cellule,
à partir des constructions des intégrales et de
la définition du calibrage~$f_*\mu$ (resp. du calibrage~$f^*\mu$).
\end{proof}

\section{Calibrages et intégrales des $(n-1,n)$-formes}

\subsection{}\label{sss.forme-bord-construction}
Soit $\mathscr G$ une classe de coefficients.
Soit $X$ un espace paralinéaire de niveau~$n$, 
soit $\mu$ un calibrage sur~$X$ 
et soit $\omega$ une forme de type~\mbox{$(n-1,n)$}
à coefficients dans~$\mathscr G$.

Supposons que $X$ soit paracompact et considérons un
atlas cellulaire~$(\mathscr C,f)$ 
% où $\mathscr C$ est une décomposition cellulaire de~$X$ et $f=(f_C)$, 
tel que pour toute cellule~$C$ de dimension~$n$ de~$X$, 
$f_C$ soit un isomorphisme de~$C$ sur une cellule forte de~$\R^n$,
le calibrage~$\mu|_C$ soit de la forme~$f_C^*\mu_C$,
où $\mu_C$ est une fonction
de~$f_C(C)$ dans~$\Lambda^n \R^n$,
identifié à $\abs{\Lambda^n\R^n}$ en munissant 
$\R^n$ de son orientation canonique (cf.~\S\ref{exem.calibrages-dec-cell}),
et $\omega|_C$ est de la forme~$f_C^*(\alpha_C)$,
où $\alpha_C\in\mathscr G^{n-1,n}_{f_C(C)}(f_C(C))$.

Soit $F$ une cellule de dimension~$n-1$ appartenant à~$\mathscr C$.
Soit $C\in\mathscr C$ une $n$-cellule adjacente à~$F$.
Par définition d'un atlas cellulaire, $f_C(F)$ est une cellule
forte de~$\R^n$; munissons-la de l'orientation~$\eps_F$ déduite de
celle de~$\R^n$ par la règle
de la normale sortante à~$f_C(C)$.
Par contraction par~$\mu_C$ 
et multiplication par~$(-1)^{n(n-1)/2}$, 
on déduit de~$\alpha_C$
une $(n-1,0)$-forme sur~$f_C(C)$;
on la restreint à~$f_C(F)$, on la transforme en forme-volume
par le choix de l'orientation~$\eps_F$ de~$f_C(F)$,
et on la rapatrie sur~$F$ au moyen de l'isomorphisme~$f_C$.
On obtient ainsi une forme-volume à coefficients dans~$\mathscr G$ sur~$F$,
que l'on note 
$\langle\omega,\mu\rangle|_{(F,C)}$.
Elle est inchangée
lorsqu'on compose~$f_C$ par un automorphisme affine de~$\R^n$.

Soit $\mathscr C_{n-1}$ la réunion des cellules de dimension~$\leq n-1$
de~$\mathscr C$. C'est un sous-espace paralinéaire
de~$X$, de dimension~$\leq n-1$.
Il existe une unique forme-volume sur cet espace dont la restriction
à une cellule~$F$ de dimension~$n-1$
est la somme
\[ \sum_{\substack{C\supset F}} \langle\omega,\mu\rangle_{(F,C)}, \]
où $C$ parcourt l'ensemble des $n$-cellules de~$\mathscr C$ adjacentes à~$F$.
On note $\langle\omega,\mu\rangle_{(\mathscr C,f)}$
l'élément de~$\mathscr V^{n-1}(X)$ défini par cette forme-volume.
Il est à coefficients dans~$\mathscr G$.

% On en déduit en particulier une mesure de Borel positive
% régulière sur~$\mathscr C_{n-1}$,
% et donc sur~$X$, 
% que l'on notera $\langle\abs\omega,\abs{\mu}\rangle_{(\mathscr C,f)}$.
% Lorsque cette mesure est localement finie, on en déduit
% aussi une mesure de Radon sur~$X$, 
% encore notée $\langle\omega,\mu\rangle_{(\mathscr C,f)}$.

\begin{lemm}\label{lemm.restriction-n-1}
Soit $U$ un ouvert de~$X$, soit $(\mathscr D,g)$ un
atlas cellulaire tel que les hypothèses de la construction précédente
soient satisfaites pour $\omega|_U$ et le calibrage~$\mu|_U$.
Alors, la restriction à~$U$ de la  forme-volume
$\langle\omega,\mu\rangle_{(\mathscr C,f)}$
est égale à la forme-volume
$\langle\omega|_U,\mu|_U\rangle_{(\mathscr D,g)}$. 
% 
% mesure de Borel 
% $\langle\abs\omega,\abs{\mu}\rangle_{(\mathscr C,f)}$ 
% coïncide avec la mesure de Borel 
% $\langle\abs{\omega|_U},\abs{\mu_U}\rangle_{(\mathscr D,g)}$.
% Si la mesure 
% $\langle\abs\omega,\abs{\mu}\rangle_{(\mathscr C,f)}$ est
% localement finie, il en est de même de la
% mesure de Borel
% $\langle\abs{\omega|_U},\abs{\mu_U}\rangle_{(\mathscr D,g)}$,
% et les deux mesures de Radon
% $\langle\omega,{\mu}\rangle_{(\mathscr C,f)}$ et $\langle\omega|_U,\mu_U\rangle_{(\mathscr D,g)}$ coïncident.
\end{lemm}
\begin{proof}
Pour simplifier les notations, on 
note $\alpha $ la $(n-1)$-forme-volume sur~$\mathscr C_{n-1}$
construite ci-dessus
dont la classe est égale à $\langle\omega,{\mu}\rangle_{(\mathscr C,f)}$ 
et $\beta$ la $(n-1)$-forme-volume sur~$\mathscr D_{n-1}$
dont la classe est $\langle{\omega|_U},{\mu_U}\rangle_{(\mathscr D,g)}$.

Commençons par traiter le cas où la décomposition cellulaire~$\mathscr D$
raffine~$\mathscr C$ et où,
pour toute cellule~$D$ de dimension~$n$ de~$\mathscr D$, 
on a $g_D=f_C|_D$,
où $C$ est l'unique cellule de~$\mathscr C$ qui contient~$D$.

Sous ces hypothèses, on a $\mathscr C_{n-1}\cap U\subset\mathscr D_{n-1}$;
démontrons alors que $\beta$ est le prolongement par zéro de~$\alpha|_U$.
Soit~$G$ une cellule de~$\mathscr D$, de dimension~$n-1$, et soit~$F$
l'unique cellule de~$\mathscr C$ telle que $\mathring G\subset\mathring F$.

% Démontrons que les restrictions à~$G$ de~$\alpha$ et~$\beta$
% sont égales.
% Si $\dim(G)=\dim(F)=n$, la mesure~$\alpha$ ne charge pas~$\mathring F$,
% et la mesure~$\beta$ ne charge pas~$\mathring G$, par construction
% de ces mesures. On donc $\alpha|_{\mathring G}=\beta|_{\mathring G}=0$.
% Il en est de même 
% lorsque $\dim(G)<n-1$, car ces mesures
% sont des multiples de la mesure de Lebesgue $(n-1)$-dimensionnelle.
 
Supposons d'abord que $\dim(F)=n$.
Comme $U$ est ouvert dans~$X$, la cellule~$G$ 
est adjacente  à exactement deux cellules de dimension~$n$
de~$\mathscr D$, disons~$D'$ et~$D''$, toutes deux contenues dans~$F$.
Dans la définition de~$\beta$,
les deux termes $\beta_{(G,D')}$ et $\beta_{(G,D'')}$ sont opposés,
car ils sont donnés par la même~$(n-1,0)$-forme sur~$G$ et les
deux orientations opposées de~$F$. Par suite, $\beta|_G=0$.
D'autre part, le prolongement par zéro de~$\alpha|_U$
est nul sur~$G$ car $\mathring G$ est disjoint de~$\mathscr C_{n-1}$.

Traitons maintenant le cas où $\dim(F)=n-1$.
Toute $n$-cellule~$D\in\mathscr D$ qui est 
adjacente à~$G$ est contenue dans une unique
$n$-cellule~$C_D$ de~$\mathscr C$ qui est adjacente à~$F$. 
Comme $U$ est ouvert dans~$X$, on obtient ainsi
une bijection de l'ensemble des $n$-cellules de~$\mathscr D$ adjacentes à~$G$
sur l'ensemble des $n$-cellules de~$\mathscr C$ adjacentes à~$F$.
Par construction, on a donc $\alpha|_G=\beta|_G$. 

Ces relations prouvent que $\beta$ est le prolongement par zéro de~$\alpha|_{U\cap\mathscr C_{n-1}}$. En particulier, les formes-volume $(n-1)$-dimensionnelles
sur~$U$,
$\langle\omega,\mu\rangle_{(\mathscr C,f)}|_U$
et
$\langle\omega|_U,\mu|_U\rangle_{(\mathscr D,g)}$, sont égales.

Traitons maintenant le cas général.
Soit $(\mathscr D',g')$ un atlas cellulaire adapté aux
atlas $(\mathscr C,f)$ et $(\mathscr D,g)$.
Soit $D'$ une cellule de dimension~$n$ de~$\mathscr D'$,
soit $D\in\mathscr D$ et $C\in\mathscr C$ les uniques cellules
contenant~$D'$. Par construction,
$g'_{D'}$ se déduit de~$g_D|_{D'}$
et de~$f_C|_{D'}$ par composition par des automorphismes affines de~$\R^n$.
D'après le cas déjà traité, 
on a
\[ 
\langle\omega,\mu\rangle_{(\mathscr C,f)}|_U
= 
\langle\omega,\mu\rangle_{(\mathscr D',g')}
= 
\langle\omega,\mu\rangle_{(\mathscr D,g)},
\]
d'où le lemme.
% Cela conclut la preuve du lemme dans le cas des mesures de Borel.
% Le cas des mesures de Radon est analogue.
\end{proof}

\subsection{}\label{sss.forme-bord-calibrage-pl}
Il découle en particulier du lemme~\ref{lemm.restriction-n-1}
que les formes-volume ainsi construites ne dépendent pas
du choix du couple~$(\mathscr C,f)$,
et que leur formation commute à la restriction à un sous-espace ouvert.

Il existe alors une unique forme-volume $(n-1)$-dimensionnelle sur~$X$
dont la restriction à tout \emph{ouvert} paracompact~$U$
est donnée par la construction précédente.
On la note $\langle\omega,{\mu}\rangle$; elle est à coefficients
dans~$\mathscr G$.

L'application $\omega\mapsto \langle\omega,\mu\rangle$ 
se prolonge en un morphisme de faisceaux $\mathscr G_{X_\groth}$-linéaire 
de~$\mathscr G^{n-1,n}_{X}$ dans~$\mathscr V^{n-1}_{X}$.

\subsection{}
En particulier, on dispose pour toute $(n-1,n)$-forme~$\omega$
à coefficients boréliens
d'une mesure de Borel régulière positive $\langle\omega,\mu\rangle_\Bor$.
On dit que $\omega$ est \emph{localement $\mu$-intégrable}
lorsque cette mesure est localement finie; on dispose alors 
d'une mesure de Radon  $\langle\omega,\mu\rangle_\Rad$.
Bien entendu, lorsque le calibrage est à coefficients localement
bornés, toute forme~$\omega$ à coefficients 
localement bornés, en particulier toute G-forme lisse,
est localement $\mu$-intégrable.\index{forme localement intégrable sur un espace paralinéaire calibré}

\begin{prop}\label{prop.formes-volume-bord-pl-f*}
Soit $f\colon Y\to X$ une immersion par morceaux propre 
d'espaces paralinéaires de niveau~$n$.

\begin{enumerate}
\item
Pour toute $(n-1,n)$-forme $\omega$ sur~$X$ et tout calibrage~$\mu$ de~$Y$,
on a l'égalité
$f_* \langle f^*\omega,\mu\rangle=\langle \omega,f_*\mu\rangle$ 
de formes-volume $(n-1)$-dimensionnelles sur~$X$.

% Si le calibrage~$\mu$ est positif, on a l'égalité 
% $f_* \langle\abs{f^*\omega},\abs\mu\rangle= \langle\abs\omega,f_*\mu\rangle$ 
% de mesures  de Borel sur~$X$,
% Si $f^*\omega$ est $\mu$-localement intégrable, 
% alors $\omega$ est $f_*\mu$-localement intégrable et
% on a l'égalité
% $f_* \langle f^*\omega,\mu\rangle=\langle \omega,f_*\mu\rangle$ 
% de mesures  de Radon sur~$X$.
 
\item
Pour toute $(n-1,n)$-forme~$\omega$ sur~$Y$ et tout calibrage~$\mu$ de~$X$,
on a l'égalité
$ \langle f_*\omega,\mu\rangle = f_* \langle \omega,f^*\mu\rangle$
de formes-volume $(n-1)$-dimensionnelles sur~$X$.
% Si la mesure de Borel
% $\langle \abs{f^*\omega},\abs{\mu}\rangle$ 
% est localement finie, alors il en est de même
% de la mesure de Borel $\langle\abs{\omega},\abs{f_*\mu}\rangle$
% et on a l'égalité
% $f_* \langle f^*\omega,\mu\rangle =\langle \omega,f_*\mu\rangle$
% de mesures de Radon sur~$X$.
\end{enumerate}
\end{prop}
\begin{proof}
En raisonnant localement, on se ramène au cas où $X$ est paracompact.
Il existe alors un couple $(\mathscr C,\mathscr D)$ de décompositions
cellulaires qui est adapté à~$f$. Les assertions indiquées
se vérifient alors, cellule par cellule,
à partir des constructions des intégrales et de
la définition du calibrage~$f_*\mu$ (resp. du calibrage $f^*\mu$).
\end{proof}

\begin{prop}[Compensation des bords]
\label{prop.pl-compensation}
Soit $X$ un espace paralinéaire de niveau~$n$,
soit $\mu$ un calibrage sur~$X$
et soit $\omega$ une $(n-1,n)$-forme sur~$X$.
Soit $(X_i)_{i\in I}$ un G-recouvrement localement fini de~$X$;
pour toute partie~$J$ de~$I$, posons $X_J=\bigcap_{i\in J}X_i$
et notons $u_J$ l'immersion de~$X_J$ dans~$X$.
On a l'égalité de formes-volume $(n-1)$-dimensionnelles 
\[ \langle\omega,\mu\rangle = \sum_{J\neq\emptyset} (-1)^{\Card(J)-1}
   (u_J)_* \langle \omega|_{X_J},\mu|_{X_J}\rangle. \]
\end{prop}
\begin{proof}
D'après le lemme~\ref{lemm.restriction-n-1},
l'égalité à démontrer est locale. On peut donc supposer
que $X$ est paracompact, que les sous-espaces~$X_i$ sont fermés dans~$X$
et que l'ensemble~$I$ est fini.

Soit $(\mathscr C,f)$ un atlas cellulaire de~$X$
qui est adaptée au calibrage~$\mu$, à la forme~$\omega$ et
au recouvrement~$(X_i)$; on suppose aussi que pour 
pour toute $n$-cellule $C\in\mathscr C$, 
$f_C$ est un isomorphisme de~$C$ sur une cellule forte de~$\R^n$
et le calibrage~$\mu|_C$ est de la forme~$f_C^*\mu_C$,
où 
$\mu_C$ est une fonction de~$f_C(C)$ dans~$\Lambda^n\R^n$,
identifié à~$\abs{\Lambda^n\R^n}$ en munissant $\R^n$
de son orientation canonique;
on note aussi 
$\alpha_C\in\mathscr G^{n-1,n}_{f_C(C)}(f_C(C))$ l'unique
$(n-1,n)$-forme telle que
$\omega|_C=f_C^*(\alpha_C)$.

Pour chaque cellule~$F$ de~$\mathscr C$ de dimension~$n-1$
et chaque $n$-cellule~$C$ adjacente à~$F$,
on a défini plus haut une forme-volume $(n-1)$-dimensionnelle
$\langle \omega,\mu\rangle|_{(F,C)}$ sur~$F$
que l'on considère comme une forme-volume sur~$X$ portée par~$F$.
Par construction, 
la forme-volume $(n-1)$-dimensionnelle $\langle\omega,\mu\rangle$
sur~$X$
est donnée par 
\[\langle\omega,\mu\rangle=
 \sum_{F\supset C} \langle\omega,\mu\rangle_{(F,C)}. \]
Plus généralement, pour toute partie~$J$ de~$I$,
l'ensemble des cellules~$C\in\mathscr C$ qui sont contenues dans~$X_J$
est une décomposition celullaire de~$X_J$
et l'on a de même la relation
\[\langle\omega|_{X_J},\mu|_{X_J}\rangle=
 \sum_{F\subset C\subset X_J} \langle\omega,\mu\rangle_{(F,C)}. \]

Soit $F$ une cellule de dimension~$n-1$ de~$\mathscr C$ et $C$
une $n$-cellule adjacente à~$F$.
Dans le membre de gauche de la formule à démontrer,
le terme $\langle\omega,\mu\rangle_{(F,C)}$ apparaît une seule fois,
avec coefficient~$1$.
Analysons le membre de droite. Notons $I_C$ l'ensemble des~$i\in I$
tels que $C\subset X_i$; on a donc $C\subset X_J$ 
si et seulement si $J\subset I_C$. Comme les $X_i$
recouvrent~$X$, l'ensemble~$I_C$ n'est pas vide.
Ainsi, le terme
$\langle\omega,\mu\rangle_{(F,C)}$ 
apparaît avec le coefficient
\[ \sum_{\emptyset\neq J\subset I_C} (-1)^{\Card(J)-1}
  = 1 - (1-1)^{\Card(I_C)}  = 1. \]
Cela conclut la preuve de la proposition.
\end{proof}

\subsection{Discordance}
Soit $X$ un espace paralinéaire de niveau~$n$,
soit $\mu$ un calibrage sur~$X$
et soit $f\colon X\to E$ une application paralinéaire.

Soit $\mathscr C$ une décomposition cellulaire de~$X$
qui est adaptée au calibrage~$\mu$ et à l'application paralinéaire~$f$:
pour toute cellule~$C$ de~$\mathscr C$,
la restriction  $\mu|_C$ à~$C$ 
du calibrage~$\mu$
est donné par une fonction 
$\mu_C\colon C\to\abs{\Lambda^n\langle \vec C\rangle}$
et la restriction de~$f$ à~$C$ est affine.

Soit $F$ une cellule de dimension~$n-1$ de~$\mathscr C$
et soit~$\mathscr C_{>F}$ l'ensemble des 
cellules $C\in \mathscr C_n$ dont $F$ est une face. 
Fixons une orientation~$o$ sur le sous-espace affine~$\langle F\rangle$. 
Pour tout $C\in \mathscr C_{>F}$, munissons l'espace
affine~$\langle C\rangle$ de l'orientation~$o_C$ donnée par $o$ 
et la normale sortante, comme au paragraphe~\ref{sss.integrale.bord}. 
Pour tout $C\in \mathscr C_{>F}$, notons $\tilde\mu_C$ la
fonction de~$C$ dans~$\Lambda^n\langle\vec C\rangle$
telle que $\mu_C$ soit la classe du couple~$(\tilde\mu_C,o_C)$. 

L'application~$f|_C$ est affine, donc induit une application  linéaire
$\Lambda^n f|_C$ de~$\Lambda^n \vec{\langle C\rangle}$  dans~$\Lambda^n \vec E$.
Notons $\tilde\mu_F$
la somme des images des~$\Lambda^n f|_C(\tilde\mu_C|_F)$
pour~$C$ parcourant~$ \mathscr C_{>F}$; 
c'est une fonction de~$F$ dans~$\Lambda^n\vec E$.
La classe~$\mu_F$ du couple $(\tilde\mu_F,o)$ est une fonction
de~$F$ dans
le produit contracté $\Lambda^n\vec E\times^ { \{\pm1\} } \Or(F)$;
elle ne dépend pas du choix de l'orientation~$o$;
on dit que c'est la \emph{discordance}\index{discordance!d'un 
calibrage} du calibrage~$\mu$ 
le long de la face orientée~$F$, 
relativement à l'application~$f$.

\subsection{}
Considérons une décomposition cellulaire~$\mathscr C'$ 
qui raffine la décomposition cellulaire~$\mathscr C$
et étudions la discordance du calibrage~$\mu$
le long des cellules de dimension~$n-1$
appartenant à~$\mathscr C'$.

Soit $F'$  une telle cellule.

Supposons d'abord que $F'$ rencontre une $n$-cellule ouverte de~$\mathscr C$.
Alors, $F'$ 
est face de deux cellules
de dimension~$n$ de~$\mathscr C'$ portant le même vecteur-volume
et induisant des orientations opposées sur~$F'$.
Par suite, la discordance du calibrage~$\mu$ est nulle sur~$F'$.

Sinon, $F'$ est contenue dans une $(n-1)$-cellule~$F$
de~$\mathscr C$ et les deux discordances coïncident 
sur l'intérieur de~$F'$.

\subsection{}
Soit $X$ un espace paralinéaire de niveau~$n$, calibré, 
et soit $f\colon X\to E$ une application paralinéaire.

Si $X$ est paracompact, 
on dit que $X$ est \emph{harmonieux}\index{espace paralinéaire calibré harmonieux}
relativement à~$f$
s'il existe une décomposition cellulaire~$\mathscr C$ de~$X$, adaptée
à~$\mu$ et~$f$, telle que la discordance de~$\mu$ (relativement à~$f$)
le long de toute $(n-1)$-cellule appartenant à~$\mathscr C$ soit nulle.
Alors, tout ouvert de~$X$ est harmonieux.

Lorsque $X$ est harmonieux, le paragraphe précédent entraîne
que la propriété définissant l'harmonie
reste vraie pour toute décomposition
cellulaire de~$X$ qui est adaptée à~$\mu$ et~$f$.

Dans le cas général, on dit que $X$ est \emph{harmonieux}
si chacun de ses ouverts paracompacts est harmonieux.
Il suffit de vérifier cette propriété sur les membres
d'un recouvrement ouvert.

\begin{rema}\label{rema.harmonieux-calibrage-constant}
Soit $X$ un espace paralinéaire de niveau~$n$, calibré,
soit $f\colon X\to E$ une application paralinéaire.
Soit $C$ une $f$-cellule de dimension~$n$ telle que $\mathring C$
soit ouverte dans~$X$, et harmonieuse.
Il existe un unique calibrage $\mu\in \abs{\Lambda^n \langle f(C)\rangle}$
tel que le calibrage~$\mu_C$ de~$C$ soit constant, égal à~$f*\mu$.

Considérons en effet une décomposition cellulaire~$\mathscr C$
de~$C$ adaptée à son calibrage. Soit $D$ une $(n-1)$-cellule
appartenant à~$\mathscr C$; soit $C_1$ et~$C_2$ 
les deux $n$-cellules de~$\mathscr C$ dont le bord contient~$D$.
La discordance de~$\mu_C$ le long de~$D$ relativement à~$f$
est la différence des deux calibrages de~$C_1$ et~$C_2$
(considérés comme calibrages de~$\langle f(C)\rangle$.
Comme elle est nulle, les calibrages de~$C_1$ et~$C_2$ coïncident.
Comme le complémentaire dans~$C$ de la réunion des $(n-2)$-cellules
de~$\mathscr C$ est connexe, l'assertion en résulte.
\end{rema}

\begin{rema}
Dans le cas général, on peut regrouper les discordances telles
que définies ci-dessus en un objet global que nous allons
décrire succinctement mais qui n'interviendra pas dans la suite 
de ce travail.

Soit $\Sigma$ une partie paralinéaire de~$X$ de dimension~$\leq n-1$.
On va construire un morphisme de G-faisceaux $\{\pm1\}$-équivariant
du G-faisceau~$\Or_{\Sigma}$ des $(n-1)$-orientations de~$\Sigma$
dans le quotient du faisceau des fonctions boréliennes sur~$\Sigma$
à valeurs dans~$\Lambda^n \vec E$ modulo $(n-2)$-négligeabilité.
Cette construction est locale; cela permet de supposer que $\Sigma$
est une partie paralinéaire fermée d'un ouvert paracompact~$U$ de~$X$.
On choisit alors une décomposition cellulaire~$\mathscr C$ de~$U$
qui est adaptée à~$\mu$, à $f$ et à~$\Sigma$. On choisit
une orientation de~$\Sigma$; notons $\Sigma'$ la réunion des $(n-1)$-cellules
ouvertes de~$\mathscr C$ qui sont contenues dans~$\Sigma$.
Pour toute $(n-1)$-cellule~$F$ de~$\mathscr C$ qui est contenue dans~$\Sigma$,
on dispose alors du $n$-vecteur~$\tilde\mu_F\in\Lambda^n \vec E$;
on obtient une fonction de~$\Sigma'$ dans~$\Lambda^n \vec E$,
que l'on prolonge à tout~$\Sigma$ par~$0$.
À $(n-2)$-négligeabilité  près, cette fonction est indépendante du choix 
de la décomposition cellulaire.
Si l'on change l'orientation de~$\Sigma$ en l'orientation opposée,
elle fournit la fonction opposée, d'où l'équivariance annoncée.

Soit $\Tau$ une partie paralinéaire de~$X$ de dimension~$\leq n-1$
qui contient~$\Sigma$.
La restriction à~$\Sigma$ de la discordance de~$\Tau$ (relativement à~$f$)
est la discordance de~$\Sigma$ (relativement à~$f$).

L'espace~$X$ est harmonieux relativement à~$f$ si et seulement 
toutes ces fonctions discordance sont nulles.
\end{rema}

\subsection{}
Lorsque le calibrage~$\mu$ est G-localement constant, il existe un plus petit
sous-espace paralinéaire de~$X$, purement de dimension~$n-1$,
en dehors duquel $X$ est harmonieux.

L'assertion est locale, de sorte qu'on peut supposer que $X$ est paracompact.
On peut alors choisir la décomposition cellulaire~$\mathscr C$ 
de sorte
que pour toute cellule~$C$, la fonction~$\mu_C$ soit constante. 
Alors, les fonctions~$\mu_F$ sont constantes
et le support de la discordance (relative à~$\mathscr C$)
est la réunion des cellules~$F$ de dimension~$(n-1)$ telles
que $\mu_F\neq0$.  C'est bien un sous-espace linéaire par morceaux de~$X$
purement de dimension~$n-1$. 

% \subsection{}
% Considérons encore un sous-espace paralinéaire~$X$
% de dimension~$\leq n$ d'un espace affine~$V$, 
% soit $\mu$ un calibrage sur~$X$.
% Soit $\mathscr C$ une décomposition cellulaire forte de~$X$
% adaptée au calibrage~$\mu$: pour toute cellule~$C$ de~$\mathscr C$,
% la restriction~$\mu|_C$ à~$C$ 
% du calibrage est donné par 
% une fonction $\mu_C\colon C\to\abs{\Lambda^n\langle \vec C\rangle}$.
\subsection{}
\label{sss.forme-bord-pl-discordance}
Soit maintenant $\omega\in\mathscr F^{n-1,n}_X(X)$ une G-forme 
de type~$(n-1,n)$ sur~$X$ qui est G-tropicalisée par
une application paralinéaire~$f\colon X\to E$,
c'est-à-dire que $\omega=f^*\alpha$,
où $\alpha$ est une forme de type~$(n-1,n)$
définie sur un voisinage paralinéaire de~$f(X)$ dans~$E$.
Expliquons alors comment calculer la forme-volume associée. 

Considérons comme précédemment une décomposition
cellulaire de~$X$ qui est adaptée à~$\mu$ et~$f$.
Soit $\partial\mathscr C$ la réunion
des cellules de~$\mathscr C$ dont la dimension
est au plus $n-1$; c'est un sous-espace paralinéaire
fermé de~$X$.

Avec les notations du~\S\ref{sss.forme-bord-construction},
% on prend $f_C=\id_C$ pour tout~$C$, 
% et pour forme~$\alpha_C$ la restriction de~$\alpha$ à~$C$.
Identifions~$E$ à un espace~$\R^N$
et écrivons $\alpha = \sum \alpha_{IJ} \di x_I \wedge \dc x_J$.
En identifiant formes-volume $(n-1)$-dimensionnelles sur~$F$
et $(n-1)$-formes grâce à l'orientation~$o$ de~$F$ choisie, on a 
\begin{align*}
\sum_{C\supset F} \langle\omega,\mu\rangle_{(F,C)} 
& =(-1)^{n(n-1)/2}
 \sum_{C\supset F} \sum_{I,J} f^*\alpha_{IJ}|_F f^*(\di x_I) \langle f^*(\dc x_J),\tilde\mu_C\rangle  \\
& = (-1)^{n(n-1)/2}
 \sum_{C\supset F} \sum_{I,J} f^*\alpha_{IJ}|_F f^*(\di x_I) \langle \dc x_J, \Lambda^n (\tilde\mu_C)\rangle  \\
& = (-1)^{n(n-1)/2}
 \sum_{I,J} f^*\alpha_{IJ}|_F f^*(\di x_I) \langle \dc x_J, \mu_F \rangle.
\end{align*}
En particulier, si la discordance de~$\mu$
le long de~$F$ est nulle  (relativement à~$f$),
la contribution de la cellule~$F$ 
à la mesure~$\langle\omega,\mu\rangle$ est nulle.

Lorsque $f$ est l'injection de~$X$ dans un espace affine~$\R^N$,
cette formule montre aussi que
la forme-volume $(n-1)$-dimensionnelle $\langle\omega,\mu\rangle$
se calcule sur~$F$
par contraction de la forme~$\omega$ et de
la discordance~$\mu_F$ du calibrage~$\mu$ le long de~$F$.

\subsection{}
Lorsque $X$ est un sous-espace paralinéaire d'un espace
affine~$E$ et que l'application~$f$ est l'injection canonique,
on parlera plus simplement de discordance et d'espace harmonieux,
en omettant la mention de~$f$.

\begin{prop}[Formules de Stokes et Green]
\label{prop.stokes-pl}
Soit $X$ un espace paralinéaire de niveau~$n$, calibré.

\begin{enumerate}
\item
Soit $\omega$ une $(n-1,n)$-forme de classe~$\mathscr C^1$ 
à support compact sur~$X$.
On a  la \emph{formule de Stokes}:
\begin{equation}
 \int_X \langle \di\omega,\mu\rangle =\int_X \langle\omega, \mu\rangle.
\end{equation}

\item
Soit $\alpha$ et $\beta$ des $(p,p)$ et $(q,q)$-formes 
\emph{symétriques} sur~$X$, où $p+q=n-1$,
de classe~$\mathscr C^2$.
Si l'intersection de leurs supports est compacte, 
on a la \emph{formule de Green}:
\begin{equation}
 \int_X \langle \alpha\wedge \ddc\beta - \ddc\alpha\wedge\beta, \mu\rangle
= \int_{X} \langle \alpha\wedge\dc\beta-\dc\alpha\wedge\beta , \mu\rangle.
\end{equation}
\end{enumerate}
\end{prop}
\begin{proof}
Comme le support de~$\omega$ est compact, il possède un voisinage
ouvert~$U$ qui est paracompact. Si $U$ est muni
du calibrage induit, on a $\int_X \langle \di\omega,\mu\rangle
=\int_U \langle \di\omega, \mu\rangle $
et $\int_X \langle \omega,\mu\rangle=\int_U \langle \omega,\mu\rangle$.
On est ainsi ramené au cas où $X$ est paracompact.

Soit alors $\mathscr C$ une décomposition cellulaire de~$X$
en cellules compactes adaptée au calibrage de~$X$. 
Par définition, $ \int_X \langle\di\omega,\mu\rangle$ 
est la somme (finie), sur toutes les cellules~$C$ de dimension~$n$ 
de~$\mathscr C$, de $\int_C \langle \di\omega,\mu\rangle$.
Démontrons que l'on a 
\[ \int_C \langle\di\omega,\mu\rangle = \int_C \langle \omega,\mu\rangle.\]
On identifie~$C$ à une cellule compacte d'un espace affine de dimension~$n$,
$V$. 
Soit $F$ une face de dimension~$n-1$ de~$C$; c'est l'intersection
de~$C$ avec un hyperplan affine~$H$ de~$V$; notons~$V_+$
le demi-espace de~$V$ qui contient~$C$.
Si le support de~$\omega$ ne rencontre aucune autre face
de dimension~$n-1$ que~$F$, la formule à démontrer 
s'identifie à la formule de Stokes
\[ \int_{V_+} \langle\di\omega,\mu\rangle = \int_{V_+} \langle \omega,\mu\rangle\]
du lemme~\ref{lemm.stokes-affine}.
Par un argument de partition de l'unité, on en déduit
le résultat voulu si le support de~$\omega$ ne rencontre
aucune face de dimension~$n-2$ de~$C$.

On raisonne alors par récurrence sur le nombre de faces
de dimension~$n-2$ qui rencontre le support de~$\omega$.
Soit $F$ une telle face
et soit $u\colon V\to\R^2$ une application affine surjective,
nulle sur~$C$.
Soit $\theta$ une fonction lisse, positive, à support compact sur~$\R^2$
qui vaut~$1$ au voisinage de~$0$.
Pour tout~$\eps>0$, on pose $\theta_{\eps}(x)=\theta(x/\eps)$.
Comme la forme $(1-\theta_\eps)\omega$ est nulle au voisinage de~$F$,
elle vérifie l'hypothèse de récurrence, c'est-à-dire
\[ \int_C \langle \di ((1-\theta_\eps)\omega),\mu\rangle = \int_C\langle (1-\theta_\eps)\omega,\mu\rangle. \]
Pour démontrer la proposition, il suffit de vérifier que lorsque $\eps\to0$,
\[ \int_C \langle \di(\theta_\eps\omega),\mu\rangle
 - \int_C \langle \theta_\eps\omega,\mu\rangle \]
tend vers~$0$.
Or, le volume dans~$C$ du support de $\theta_\eps\omega$ tend vers~$0$
en~$\mathrm O(\eps^{2})$, tandis que les coefficients
de $\di(\theta_\eps\omega)$ sont $\mathrm O(1/\eps)$ ;
le premier terme tend donc vers~$0$ en~$\mathrm O(\eps)$.
De même, pour toute face~$D$ de~$C$ de dimension~$n-1$,
le volume dans~$D$ du support de $\theta_\eps\omega$
tend vers~$0$ en~$\mathrm O(\eps)$, ce qui entraîne que
le second terme tend vers~$0$ en~$\mathrm O(\eps)$.

Comme dans la preuve du lemme~\ref{lemm.green-affine},
la formule de Green se déduit de la formule de Stokes
appliquée à la forme $\alpha \wedge \dc\beta-\dc\alpha\wedge\beta$.
\end{proof}

\chapter{Espaces G-tropicaux calibrés}

\section{Espaces G-tropicaux chordés}

\subsection{}
Soit $X$ un espace G-tropical.
Soit $\Sigma$ et $T$ des parties paralinéaires fermées de~$X$.
Si leur réunion $\Sigma\cup T$ est une partie paralinéaire de~$X$,
alors $\Sigma$ et $T$ en sont  des sous-espaces paralinéaires,
si bien que $\Sigma\cap T$ est une partie paralinéaire de~$X$.

En général, l'intersection 
de deux parties paralinéaires fermées~$\Sigma$ et~$T$ de~$X$ 
peut être très sauvage.
On obtient un tel exemple comme suit: $X$ est la réunion,
dans~$\R^2$ du graphe~$\Sigma$ de la fonction nulle 
et du graphe~$\Tau$ de la fonction $x\mapsto x \sin(1/x)$,
toutes deux définies sur~$[0;1]$. Il est muni de la structure
paralinéaire image réciproque de la structure standard de~$[0;1]$.
Les parties~$\Sigma$ et~$\Tau$ sont paralinéaires, fermées,
mais leur intersection n'est pas paralinéaire, 
en raison de son comportement au voisinage de l'origine.
 
Les espaces chordés que nous introduisons maintenant sont munis
d'une classe raisonnable de parties paralinéaires,
que nous appelons squelettiques, qui est en particulier stable
par intersection finie et par réunion finie lorsqu'elles sont fermées;
en outre, cette classe contiendra automatiquement
toutes les parties paralinéaires
qui sont purement de dimension « maximale » 
(proposition~\ref{prop.pur-n-squelette}).

\subsection{}\label{defi.squel-elem}
Un espace G-tropical \emph{chordé} est un espace G-tropical
muni d'un 
un ensemble de parties paralinéaires
de $X$ et de ses domaines,
appelées parties \emph{squelettiques élémentaires},
et soumises aux axiomes suivants : 
\begin{enumerate}
\item Si $V$ est un domaine de $X$, 
une partie squelettique élémentaire de $V$ est fermée dans~$V$ ;
\item Si $W$ est un domaine de~$V$ et si $\Sigma$
est une partie squelettique élémentaire de~$V$,
alors $\Sigma\cap W$ est une partie squelettique élémentaire de~$W$;
\item Si $(V_i)$ est une famille finie
de domaines fermés de $X$, si $\Sigma_i$ est pour tout $i$ une 
partie squelettique élémentaire de $V_i$
et si $\Sigma$ désigne la réunion 
des $\Sigma_i$, il existe une famille $(W_j)$
de domaines de $X$ qui G-recouvre $\Sigma$ et, pour tout $j$, 
une partie squelettique élémentaire de $W_j$ contenant $\Sigma\cap W_j$. 
% 
% \item Pour toute carte G-tropicale $(V,f)$ de $X$, il
% existe un G-recouvrement $(W_j)$ de~$\Sigma_f^{(n)}$
% tel que pour tout~$j$, la partie
% $\Sigma_f^{(n)}\cap W_j$ de~$W_j$ soit contenue
% dans une partie squelettique élémentaire de~$W_j$.\footnote{Vérifier que
% le changement n'affecte pas la suite…}
\end{enumerate}

Tout domaine~$V$ d'un espace G-tropical chordé~$X$ 
hérite d'une telle structure: ses parties squelettiques
élémentaires  sont les parties squelettiques élémentaires 
de~$X$ qui sont contenues dans~$V$.

\subsection{}
Soit $X$ un espace G-tropical chordé.

On dit qu'une partie 
de $\Sigma$ de $X$ est \emph{squelettique}\index{partie squelettique d'un espace G-tropical chordé}
s'il existe une famille $(V_i)$ de domaines de $X$ 
tels que les $\Sigma\cap V_i$ forment un G-recouvrement de~$\Sigma$ 
et telle 
que pour tout $i$, 
l'intersection $\Sigma\cap V_i$ soit une partie paralinéaire fermée
d'une partie squelettique élémentaire de $V_i$.

Soit $\Sigma$ une partie de~$X$ et soit $V$ un domaine de~$X$
tel que $\Sigma\subset V$. Alors $\Sigma$ est squelettique dans~$X$
si et seulement si elle l'est dans~$V$.
(Si $\Sigma$ est squelettique dans~$V$,
la même famille $(V_i)$ convient; sinon, on la remplace
par la famille des~$V_i\cap V$ qui a même trace sur~$\Sigma$.)

Nous dirons qu'un point~$x$ de~$X$ est \emph{squelettique}\index{point squelettique}
s'il appartient à une partie squelettique de~$X$.
Cela revient au même de demander qu'il appartienne à une partie
squelettique élémentaire, ou encore que la partie~$\{x\}$ soit squelettique.

\begin{exem}\label{exem.paralineaire-chorde}
Soit $X$ un espace paralinéaire.
Les parties paralinéaires fermées des domaines de~$X$
le munissent d'une structure d'espace G-tropical chordé.
Ses parties squelettiques sont exactement ses parties paralinéaires.
\end{exem}

\begin{prop}\label{prop.squelettique-paralineaireX}
Soit $X$ un espace G-tropical chordé.

\begin{enumerate}
\item 
Toute partie squelettique de $X$ est paralinéaire.
\item
Soit $\Sigma$ une partie squelettique de~$X$
et soit $T$ une partie de~$\Sigma$. Pour que $T$ soit une partie
squelettique de~$X$, il faut et il suffit qu'elle soit paralinéaire.
\item
La réunion de deux parties squelettiques fermées de~$X$
est squelettique.
\item Toute partie de~$X$ qui est G-recouverte par des parties
squelettiques est squelettique.
\item
L'intersection de deux parties squelettiques de~$X$ est squelettique.
\end{enumerate} 
\end{prop}
\begin{proof}
\begin{enumerate}
\item Soit $\Sigma$ une partie squelettique de~$X$.
Par définition,
il existe une famille $(V_i)$ de domaines de $X$ 
tels que les $\Sigma\cap V_i$ constituent un 
G-recouvrement de~$\Sigma$ et telle 
que pour tout $i$, 
l'intersection $\Sigma\cap V_i$ soit une partie paralinéaire fermée
d'une partie squelettique élémentaire $S_i$ de $V_i$. En particulier, 
$\Sigma\cap V_i$ est paralinéaire. Il s'ensuit par le lemme \ref{lemme-partiespl-glocal}
que $\Sigma$ est paralinéaire. 

\item
Si $T$ est squelettique, elle est paralinéaire, d'après~a).
Supposons donc que $T$ est paralinéaire est démontrons
que c'est une partie squelettique.
Soit $(V_i)$ une famile de domaines de~$X$ 
telle que les $\Sigma\cap V_i$ forment un G-recouvrement de~$\Sigma$
et telle que, pour tout~$i$, 
$\Sigma\cap V_i$ soit une partie paralinéaire d'une partie squelettique 
élémentaire~$S_i$ de~$V_i$.
Pour tout $i$, $T\cap V_i$ est une partie paralinéaire de~$T$,
donc de~$S_i$, et les $(T\cap V_i)$ forment un G-recouvrement de~$T$.

\item % Réunion finie
Soient $\Sigma$ et $T$ deux parties
squelettiques fermées de~$X$. 
Soit $(V_i)$ une famille de domaines de~$X$
telle que les $\Sigma\cap V_i$
constituent un G-recouvrement de~$\Sigma$
et telle que pour tout~$i$, $\Sigma\cap V_i$
soit une partie paralinéaire d'une partie squelettique élémentaire~$\Sigma_i$
de~$V_i$. Choisissons de même une famille $(W_j)$ de domaines de~$X$
et, pour tout~$j$, une partie squelettique élémentaire~$T_j$ de~$W_j$,
qui attestent du caractère squelettique de~$T$.

Pour montrer que $\Sigma\cup T$ est squelettique, on peut par définition raisonner
G-localement sur $X$, ce qui permet de supposer que les $V_i$ et les $W_j$ sont fermés et en nombre fini. 
L'axiome c) de la définition d'une partie squelettique élémentaire assure alors l'existence 
d'une famille $(U_\ell)$ de domaines de $X$ qui G-recouvrent la réunion $R$ des $\Sigma_i$ et les $T_j$, 
et, pour tout $\ell$, d'une partie squelettique élémentaire $R_\ell$ de $U_\ell$ contenant
$R\cap U_\ell$. 
Mais alors les $U_\ell$ constituent un G-recouvrement de $\Sigma\cap T$ et pour tout $\ell$, la réunion 
$(\Sigma\cup T)\cap U_\ell$ est contenue dans $R_\ell$ ; et comme cette réunion est la réunion finie des $\Sigma
\cap V_i
\cap U_\ell$ et des $T\cap W_j\cap U_\ell$, c'est une partie paralinéaire de $R_\ell$. En conséquence, $\Sigma
\cup T$ est une partie squelettique
de $X$.

\item Soit $\Sigma$ une partie de $X$ qui est G-recouverte par une famille
$(\Sigma_i)$ de parties squelettiques. Pour montrer que $\Sigma$
est squelettique, on peut par définition raisonner localement pour la topologie
de $\Sigma$ induite par la G-topologie de $X$, 
ce qui permet de supposer que les $\Sigma_i$ sont des fermés de $X$ et que la
famille $(\Sigma_i)$ est finie, et l'on déduit alors de c)
que $\Sigma$ est squelettique. 

\item 
Soit $\Sigma, T$ des parties squelettiques de~$X$; démontrons
que $\Sigma\cap T$ est squelettique.
Soit $U$ un ouvert de~$X$ dans lequel $\Sigma$ est fermé,
soit $V$ un ouvert de~$X$ dans lequel $T$ est fermé.
Posons $W=U\cap V$. 
Puisque $\Sigma\cap W$ et $T\cap W$ sont fermés dans~$W$,
leur réunion est une partie squelettique de~$W$, d'après~\emph a),
dans laquelle chacune est paralinéaire. Par suite,
leur intersection $\Sigma\cap T\cap W=\Sigma\cap T$ est paralinéaire,
et donc squelettique d'après~\emph b). \qedhere
\end{enumerate} 
\end{proof}

\begin{defi}
Soit $X$ un espace G-tropical chordé.
On dit qu'une partie squelettique calibrée de~$X$ est pure de dimension~$p$
si son calibrage est G-localement constant et si sa restriction
à tout ouvert non vide est non nul.
\end{defi}

Une telle partie est purement de dimension~$p$. Par économie
d'écriture, nous dirons que son calibrage est « partout non nul ».

Soit $P$ une partie squelettique de dimension~$\leq p$ munie
d'un $p$-calibrage G-localement constant~$\mu_P$.
Le support de~$\mu_P$ est une partie squelettique fermée de~$P$,
purement de dimension~$p$ ; on le notera $\abs{(P,\mu_P)}$,
ou $\abs{P}$ si le calibrage~$\mu_P$ est sous-entendu.
C'est une partie squelettique calibrée pure de dimension~$p$.

\subsection{}\label{rema.polyedres-purs-G-faisceau}
Soit $P$ une partie squelettique calibrée fermée pure de dimension~$p$.
Si $V$ est un domaine de~$U$,
la partie squelettique $p$-calibrée $\abs{P\cap V}$ de~$V$ est pure.
Si $W$ est une partie paralinéaire de~$V$,
on a  $\abs{\abs{P\cap V}\cap W}=\abs{P \cap W}$:
ces deux parties squelettiques calibrées sont la réunion
des cellules de dimension~$p$ de~$P$ contenues dans~$W$
dont le calibrage est partout non nul.

On a ainsi muni la famille des parties squelettiques fermées $p$-calibrées
pures d'un domaine de~$X$ d'une structure de préfaisceau 
pour la G-topologie de~$X$.
\emph{Ce préfaisceau est un faisceau.}
En effet, soit $V$ un domaine de~$X$ et soit $(V_i)$
un G-recouvrement de~$V$; pour tout $i$, soit $P_i$ une partie
squelettique $p$-calibrée pure de~$V_i$; on suppose que pour tous~$i,j$,
on a $\abs{P_i\cap (V_i\cap V_j)}=\abs{P_j\cap (V_i\cap V_j)}$;
démontrons qu'il existe une unique partie squelettique $p$-calibrée pure~$P$ 
de~$V$
telle que $\abs{P\cap V_i}=P_i$ pour tout~$i$.

Établissons l'unicité. 
Considérons deux telles parties squelettiques~$P$ et~$Q$; 
on a $\abs{P\cap V_i}=\abs{Q\cap V_i}=P_i$ pour tout~$i$.
Soit $C$ une $p$-cellule compacte contenue dans~$P$
et démontrons qu'elle est contenue dans~$Q$, avec même calibrage.
Puisque $C$ est G-recouverte par les $V_i$, 
on peut supposer, quitte à considérer un G-recouvrement de~$C$,
qu'il existe~$i$ tel que $C\subset V_i$. Alors $C\subset P_i$
et son calibrage est celui hérité de~$P_i$.
Puisque $P_i=\abs{Q\cap V_i}$, elle est également contenue
dans~$Q$ et son calibrage est celui hérité de~$Q$. Cela conclut
la preuve de l'unicité de~$P$.

Prouvons maintenant l'existence de~$P$.
Soit $P$ la réunion des~$P_i$; démontrons que c'est une
partie squelettique fermée de~$V$. 
Il suffit de le vérifier localement; soit $x$ un point de~$V$;
comme les~$V_i$ forment un G-recouvrement de~$V$,
il existe un voisinage domanial compact~$W$ de~$x$ dans~$V$
et un ensemble fini~$I$ d'indices tels que $W\subset \bigcup_{i\in I} V_i$
et tels que pour tout~$i$, $V_i\cap W$ soit compact.

Démontrons que $\abs{P_j\cap W}\subset \bigcup_{i\in I} \abs{P_i\cap W}$
pour tout~$j$.

Soit $C$ une $p$-cellule contenue dans $\abs{P_j\cap W}$;
on a $C=\bigcup_{i\in I} C \cap V_i$, et chacune de ces intersections
est une partie paralinéaire compacte de~$C$;
par suite, $C$ est la réunion des $(C\cap V_i)^{(p)}$,
parties purement de dimension~$p$ de $C\cap V_i$.
Pour $i\in I$, on a 
\[ (C\cap V_i)^{(p)}\subset \abs{P_j\cap (V_i \cap V_j\cap W)}
=\abs{P_i\cap (V_i\cap V_j\cap W)} \subset \abs{P_i\cap W}. \]
Par suite, 
$ C \subset \bigcup_{i\in I} \abs{P_i\cap W}$ d'où,
\[ \bigcup_{i\in I} \abs{P_i\cap W} = \bigcup_j \abs{P_j\cap W}. \]

Prenant la trace sur l'intérieur~$\mathring W$ de~$W$ dans~$V$, 
on obtient
\[ \left(\bigcup_{i\in I} \abs{P_i\cap W}\right) \cap \mathring W
 = \left(\bigcup_j \abs{P_j\cap W}\right) \cap \mathring W
=\bigcup_j \left( \abs{P_j\cap W} \cap \mathring W\right)
= P \cap \mathring W \]
puisque 
pour tout~$j$, on a
\[  \abs{P_j\cap W}\cap \mathring W 
= \abs{\abs{P_j\cap W} \cap \mathring W}
= \abs{P_j\cap \mathring W} 
= P_j \cap \mathring W. \]
Cela prouve que 
\[ P\cap \mathring W = \bigcup_{i\in I} P_i \cap \mathring W. \]
Comme les~$P_i$ sont fermés, les parties $P_i\cap \mathring W$
sont des parties squelettiques fermées de~$\mathring W$;
leur réunion est alors une partie squelettique fermée de~$\mathring W$.
Cela conclut la démonstration que $P$ une partie squelettique fermée.

Démontrons que $P$ est G-recouverte par ses $p$-cellules
qui sont contenues dans l'un des~$P_j$.
Il suffit de le vérifier localement, après
restriction à un ouvert de la forme~$\mathring W$
comme ci-dessus, et cela résulte alors de la dernière formule établie.
Si $C$ est une $p$-cellule de~$P$
contenue dans~$P_i$ et dans~$P_j$, les calibrages de~$C$ déduits
des calibrages de~$P_i$ et de~$P_j$ coïncident,
par la condition de coïncidence des~$P_i$.
Ces calibrages munissent~$P$ d'un calibrage
qui, par construction, prolonge celui de~$P_i$.
En particulier, ce calibrage est partout non nul.

Démontrons que $\abs{P\cap V_i}=P_i$.
Puisque $P_i\subset P\cap V_i$, on en déduit
que $P_i \subset \abs{P\cap V_i}$.
Inversement, soit $C$ une $p$-cellule de $P\cap V_i$;
prouvons qu'elle est contenue dans~$P_i$.
Quitte à raisonner G-localement sur~$C$, on peut supposer
qu'il existe~$j$ tel que $C\subset P_j$.
Alors, $C\subset \abs{P_j\cap V_i}= \abs{P_i\cap V_j}\subset P_i$.

\subsection{}
Soit $P$ et $Q$ des parties squelettiques fermées $p$-calibrées pures de~$X$.

On dit que $P\preceq Q$ si $P\subset Q$ et si, pour toute $p$-cellule~$C$
de~$X$ contenue dans~$P$ sur lequel les calibrages de~$P$ et~$Q$
sont constants, le calibrage de~$C$ pour~$P$ est inférieur ou égal au 
calibrage de~$C$ pour~$Q$. Cette condition se vérifie G-localement.

La relation « $P\preceq Q$ » est une relation d'ordre dans l'ensemble
des parties squelettiques fermées $p$-calibrées pures de~$X$.

\begin{lemm}\label{lemm.squelette-calibre-treillis}
L'ensemble des parties squelettiques fermées $p$-calibrées pures de~$X$
est un treillis.
\end{lemm}
\begin{proof}
Soit $P$ et $Q$ deux parties squelettiques fermées $p$-calibrées pures
de~$X$. 
L'existence des bornes inférieure et supérieure de la paire~$\{P,Q\}$
se vérifie G-localement sur~$X$, et même G-localement sur 
la partie squelettique $P\cup Q$.

On définit ainsi G-localement deux parties squelettiques calibrées
de dimension~$p$, $P\wedge Q$ et $P\vee Q$, contenues dans $P\cup Q$.
La partie $P\cup Q$ est squelettique, et est G-recouverte
par les cellules compactes~$C$ telles que:
d'une part $C\subset P$ et le calibrage de~$P$ est constant sur~$C$
ou $\mathring C\cap P=\emptyset$, 
et d'autre part $C\subset Q$ et le calibrage de~$Q$ est constant sur~$C$
ou $\mathring C\cap Q=\emptyset$.
Il suffit de déterminer le calibrage d'une telle cellule
dans~$P\wedge Q$ et dans~$P\vee Q$.
Soit $C$ une telle cellule. Si $C\subset P$, on note~$\mu^P_C$
le vecteur-volume de~$C$ dans~$P$; sinon, on pose $\mu^P_C=0$.
On définit~$\mu^Q_C$ de façon similaire.
Les vecteurs-volume d'une cellule définissent une droite réelle orientée;
on définit alors $\mu^{P\wedge Q}_C=\inf(\mu^P_C,\mu^Q_C)$
et $\mu^{P\vee Q}=\sup (\mu^P_C,\mu^Q_C)$.
Les vecteurs-volume $\mu^{P\wedge C}_C$ 
définissent un calibrage~$\mu^{P\wedge Q}$ de $P\cup Q$;
on définit $P\wedge Q$ comme la partie squelettique $p$-calibrée fermée pure
$\abs{(P\cup Q, \mu^{P\wedge Q})}$; il est immédiat
que c'est la borne supérieure de~$\{P,Q\}$.
On définit de même $P\vee Q$. Ce sont pures.
\end{proof}

\section{Espaces G-tropicaux vertébrés}

\begin{defi}\label{defi.polyedre-variation-maximale}
Soit $X$ un espace G-tropical tel que $d_\trop(X)\leq n$ et
soit $(V,f\colon V\to\R^m)$ une carte G-tropicale de~$X$. 
On appelle \emph{polyèdre de variation maximale} de~$f$
l'ensemble $\Sigma_f^{(n)}$ des points de~$V$
en lesquels la dimension tropicale de~$f$ vaut~$n$.\index{polyèdre de variation maximale}
\end{defi}

C'est une partie fermée de~$V$, 
car l'ensemble des points
en lesquels la dimension tropicale de $f$ est strictement
inférieure à $n$ est ouvert par définition.

Dans cette notation, l'exposant~$n$ rappelle la borne
fixée sur la dimension tropicale de~$X$;
sa présence annonce la situation des espaces
analytiques où $\Sigma_f^{(n)}$ coïncidera
avec le lieu purement $n$-dimensionnel d'une partie~$\Sigma_f$.

\begin{defi}\label{defi.espace-vertebre}
On appelle espace G-tropical \emph{vertébré}\index{espace G-tropical vertébré}
la donnée d'un espace G-tropical chordé~$X$ 
et d'un entier~$n$ tel que $d_\trop(X)\leq n$,
tels que pour toute carte G-tropicale $(V,f)$ sur~$X$,
le sous-espace $\Sigma_f^{(n)}$ soit squelettique. 
\end{defi}

Avec ces notations, on dira plutôt que $X$ est un espace
G-tropical vertébré de niveau~$n$.

\begin{exem}
Soit $X$ un espace paralinéaire, considéré
comme espace G-tropical chordé dont les parties squelettiques
sont les parties paralinéaires (exemple~\ref{exem.paralineaire-chorde}).
Soit $n$ un entier naturel tel que $\dim(X)\leq n$.
Alors $d_\trop(X)=\dim(X)\leq n$ et 
pour tout domaine~$V$ de~$X$ et toute application paralinéaire
$f\colon V\to\R^N$, le sous-espace $\Sigma_f^{(n)}$
de~$V$ est paralinéaire, donc squelettique.
Cela munit~$X$ d'une structure d'espace G-tropical vertébré
de niveau~$n$.
\end{exem}

\subsection{}
Soit $V$ un domaine
d'un espace G-tropical vertébré~$X$ de niveau~$n$.
L'espace G-tropical~$V$ hérite d'une structure d'espace chordé 
et vérifie $d_\trop(V)\leq n$. 
Muni de cet entier~$n$, c'est un espace vertébré de niveau~$n$.

Noter que dans la définition d'un espace G-tropical vertébré, 
le niveau~$n$ est une donnée de la structure d'espace vertébré,
et non une propriété de l'espace chordé sous-jacent.
Par exemple, lorsqu'un domaine~$V$ vérifie $d_\trop(V)<n$,
il sera important de le considérer comme espace vertébré de niveau~$n$,
même si sa structure n'est pas très intéressante parce que 
ses parties $\Sigma_f^{(n)}$  sont vides.

Dans un espace vertébré, les parties $\Sigma_f^{(n)}$ sont squelettiques;
elles sont en particulier paralinéaires, et ce simple fait a déjà
des conséquences remarquables.

\begin{prop}\label{prop.image-squelette-pur}
Soit $X$ un espace G-tropical et $f\colon X\to\R^m$ une application
paralinéaire.
On suppose que pour tout domaine~$V$ de~$X$, 
le sous-espace $\Sigma_{f|_V}^{(n)}$ de~$V$ est paralinéaire;
c'est notammment le cas si $X$ est un espace vertébré de niveau~$n$.

\begin{enumerate}
\item Le sous-espace paralinéaire $\Sigma_f^{(n)}$ de $X$
est purement de dimension $n$, 
et l'application $f|_{\Sigma_f^{(n)}}$ est une immersion  par morceaux. 

\item Pour tout domaine $V$ de $X$, l'espace paralinéaire
$\Sigma_{f|_V}^{(n)}$ est égal à $(\Sigma_f^{(n)}\cap V) ^{(n)}$. 

\item Si $X$ est compact alors $f(X)^{(n)}=f(\Sigma_f^{(n)})$. 
\end{enumerate}
\end{prop}
\begin{proof}
\begin{enumerate}
\item
Par hypothèse, $\Sigma_f^{(n)}$ est une partie paralinéaire de~$X$
et l'application $f|_{\Sigma_f^{(n)}}\colon \Sigma_f^{(n)}\to\R^m$
est paralinéaire.

Rappelons d'abord que pour tout domaine compact~$V$ de~$X$,
les images 
$f(V)$ et $f(\Sigma_f^{(n)}\cap V)$ sont des polyèdres
compacts de~$\R^m$ tels que $f(\Sigma_f^{(n)}\cap V)\subset f(V)$,
et ces polyèdres sont de dimension au plus~$n$.

Soit $x$ un point de~$\Sigma_f^{(n)}$ 
et soit $V$ un domaine compact de~$X$ qui est un voisinage de~$x$.
L'application $\Sigma_f^{(n)}\cap V \to \R^m$ déduite de~$f$
est paralinéaire ; pour démontrer que $\Sigma_f^{(n)}$
est de dimension~$n$ en~$x$, il suffit de prouver
que le polyèdre compact $f(\Sigma_f^{(n)}\cap V)$ est de dimension~$n$.
Notons~$P$ ce polyèdre; 
pour tout  nombre réel $\eps>0$,
posons $P_\eps=P+\mathopen]-\eps,\eps\mathclose[^m$
et notons $V_\eps$ l'ensemble des points~$y$ de~$V$ tels
que $f(y)\not\in P_\eps$.
Par construction, $V_\eps$ est un domaine de~$V$, fermé, donc compact,
qui ne rencontre pas~$\Sigma_f^{(n)}$. Par suite, 
la dimension tropicale de~$f$ en tout point de~$V_\eps$ est~$<n$,
de sorte que $f(V_\eps)$ est un polyèdre compact de~$\R^m$
de dimension~$<n$. Par conséquent, on a $f(V)^{(n)}\subset P_\eps$.
En passant à la limite lorsque $\eps$ tend vers~$0$,
on en déduit que $f(V)^{(n)}\subset P$. 
Comme $f$ est de dimension tropicale~$n$ en~$x$ et comme $V$
est un voisinage de~$x$, le polyèdre~$f(V)$ est de dimension~$n$.
Par conséquent, on a $f(V)^{(n)}\neq \emptyset$, 
de sorte que $P$ est de dimension~$n$.

Il en résulte que $\Sigma_f^{(n)}$ est purement de dimension~$n$
et que l'application de~$\Sigma_f^{(n)}$ dans~$\R^m$
déduite de~$f$ est une immersion par morceaux.

\item
Soit $V$ un domaine de $X$. Il résulte immédiatement de la définition que 
$\Sigma_{f|_V}^{(n)}$ est contenu dans $\Sigma_f^{(n)}$.
D'après ce qui précède, la partie paralinéaire $\Sigma_{f|_V}^{(n)}$ est 
purement de dimension~$n$; 
elle est donc contenue dans $(\Sigma_f^{(n)}\cap V)^{(n)}$. 

Inversement, soit $x$ un point de $(\Sigma_f^{(n)}\cap V)
^{(n)}$ et soit $U$ un domaine compact de $V$ qui est un voisinage de~$x$ 
dans~$V$. Comme $x$ appartient à  $(\Sigma_f^{(n)}\cap V) ^{(n)}$, 
le domaine~$U$ contient
une partie paralinéaire (non vide) $P$ de dimension~$n$ 
de $\Sigma_f^{(n)}\cap V$. 
La restriction de $f$ à~$\Sigma_f^{(n)}$ étant une immersion par morceaux,
d'après~a),  on en déduit que $f(P)$ est de dimension~$n$.
En particulier, la dimension tropicale de~$f|_{\Sigma_f^{(n)}\cap V}$
est égale à~$n$, c'est-à-dire 
$x\in \Sigma_{f|_V}^{(n)}$. Ceci achève de prouver~b). 

\item
Puisque $X$ est compact, $f(\Sigma_f^{(n)})$ et $f(X)$
sont des polyèdres compacts de~$\R^m$
tels que $f(\Sigma_f^{(n)})\subset f(X)$.
Puisque la restriction de $f$ à~$\Sigma_f^{(n)}$ 
est une immersion par morceaux, $f(\Sigma_f^{(n)})$
est purement de dimension $n$, si bien que
$f(\Sigma_f^{(n)})\subset f(X)^{(n)}$.

Réciproquement, soit $s$ un point de $f(X)^{(n)}$, 
soit $P$ un voisinage paralinéaire compact de~$s$ dans~$f(X)$, 
et soit $V$ le domaine compact $f^{-1}(P)$ de~$X$; on a $f(V)=P$. 
Comme $V$ est compact, la dimension de~$f(V)$
est la borne supérieure des dimensions tropicales de~$f|_V$
en chacun des points de~$V$.
Puisque $s\in f(X)^{(n)}$, le polyèdre~$P$ est de dimension~$n$, 
si bien qu'il existe un point $x\in V$ en lequel
 $f|_V$ est de dimension tropicale $n$.
En particulier, $x\in \Sigma_f^{(n)}$;
comme $f(x)\in P$, cela entraîne que $P$  rencontre $f(\Sigma_f^{(n)})$. 
par suite, 
le point $s$ est adhérent à $f(\Sigma_f^{(n)})$. Ce dernier étant compact, 
il s'ensuit que $s$ appartient à $f(\Sigma_f^{(n)})$. \qedhere
\end{enumerate}
\end{proof}

\begin{prop}\label{prop.pur-n-squelette}
Soit $X$ un espace G-tropical vertébré de niveau~$n$ et 
soit $\Sigma$ une partie paralinéaire de~$X$ 
qui est purement de dimension $n$.
Il existe un G-recouvrement $(\Sigma_i)$ de~$\Sigma$ par des parties
paralinéaires compactes et, pour tout~$i$,
une carte G-tropicale~$f_i$ sur un domaine compact~$V_i$ 
telle que $\Sigma_i\subset \Sigma_{f_i}^{(n)}$.
En particulier, $\Sigma$ est squelettique.
\end{prop}
\begin{proof}
Par définition d'une partie paralinéaire,
elle  possède  un G-recouvrement par des parties fidèlement paralinéaires
compactes~$\Sigma_i$.
Puisque $\Sigma$ est purement de dimension~$n$,
la famille $(\Sigma_i^{(n)})$ est encore un G-recouvrement de~$\Sigma$.
D'après le lemme~\ref{lemme-proprietes-paralineairesX}, \emph e),
il existe pour tout~$i$ un domaine compact~$W_i$ de~$X$
tel que $\Sigma_i^{(n)}=\Sigma\cap W_i$.

Choisissons alors une fonction~$f_i$ dans $\Lambda(\Sigma_i^{(n)})$ 
et un domaine de définition~$V_i$ de~$f_i$, que l'on peut supposer compact. 
Comme $\Sigma_i^{(n)}$ est purement de dimension~$n$ 
et comme $f_i$ induit un homéomorphisme de $\Sigma_i^{(n)}$ sur son image, 
la dimension tropicale de~$f_i$ en tout point de $\Sigma_i^{(n)}$ vaut~$n$.
Par conséquent, $\Sigma_i^{(n)}$ est une partie paralinéaire 
de~$\Sigma_{f_i}^{(n)}$. 

La seconde assertion 
résulte de la proposition~\ref{prop.squelettique-paralineaireX}, \emph b),
car les~$\Sigma_{f_i}^{(n)}$ sont des parties squelettiques
(axiome~\emph e)),
\end{proof}

\begin{defi}\label{defi.riche}
Soit $X$ un espace G-tropical vertébré de niveau~$n$.
\begin{enumerate}
\item On dit que $X$ est \emph{riche}\index{espace G-tropical vertébré riche}
si pour toute carte G-tropicale $f$ définie
sur un domaine compact~$V$ de~$X$, il existe
une partie squelettique compacte~$\Sigma$ de~$V$
telle que $f(\Sigma)=f(V)$.
\item On dit que $X$ est \emph{très riche} s'il est riche
et si toute partie squelettique compacte est contenue
dans une partie squelettique purement de dimension~$n$.
\end{enumerate}
\end{defi}

Un domaine d'un espace riche est riche.

Une partie ouverte d'un espace très riche est très riche.

Un espace paralinéaire de niveau~$n$ est riche ; 
il est très riche si et seulement
s'il est purement de dimension~$n$.

\begin{exem}
Soit $k$ un corps complet pour une valeur absolue
non archimédienne et soit $X$ un espace $k$-analytique, 
topologiquement séparé, et purement de dimension~$n$.
Dans la seconde partie de ce livre, nous allons le munir d'une structure
d'espace G-tropical vertébré. Dans le but d'éclairer les constructions
et définitions de la présente partie, indiquons ici quelques
aspects de cette structure à venir.

Les domaines de~$X$, comme espace G-tropical, sont ses domaines analytiques.

Pour définir ses fonctions paralinéaires et ses fonctions affines,
on procédéra à rebours de l'axiomatisation générale en commençant
par définir les fonctions affines sur 
l'espace G-tropical~$X$: ce sont celles qui sont localement 
combinaisons $\Q$-linéaires 
de fonctions de la forme $\log \abs f$, 
où $f$ est une fonction analytique inversible sur un domaine analytique de~$X$.
Les fonctions paralinéaires sur l'espace G-tropical~$X$
sont alors les fonctions qui sont G-localement affines.

Parmi toutes les cartes G-tropicales sur~$X$, il suffira de considérer
celles de la forme $f_\trop\colon V\to \R^n$
où $V$ est un domaine analytique de~$X$, $f\colon V\to\gm^n$
est un morphisme d'espaces analytiques (nous appellerons \emph{moment}
de tels morphismes),
et $f_\trop= (\log \abs{f_1},\dots,\log\abs{f_n})$.

Les parties squelettiques élémentaires de~$X$
sont les parties de la forme~$\Sigma_f=f^{-1}(S_n)$, 
où $V$ est un domaine de~$X$ et $f\colon V\to\gm^n$ est un moment
et $S_n$ est le squelette standard de~$\gm^n$.
Alors, l'ensemble $(\Sigma_f)^{(n)}$ n'est autre que l'ensemble
$\Sigma_{f_\trop}^{(n)}$.

Si $X$ est un domaine analytique d'un bon espace,
en particulier, si $X$ est sans bord,
nous démontrerons que l'espace G-tropical associé à~$X$
est décent (définition~\ref{defi.espace-decent}
et proposition~\ref{prop.presque-bon-decent})
et localement différentiellement séparé
(corollaire~\ref{coro.berk-local-diff-sep}).

Lorsque $X$ erst purement de dimension~$n$,
nous démontrerons que l'espace G-tropical associé à~$X$
est riche (corollaire~\ref{coro.riche}), 
et même très riche si $X$ est sans bord (théorème~\ref{theo.berk-tres-riche}).
\end{exem}

\begin{rema}\label{rema.topologie-formes-squelettes}
Soit $X$ un espace G-tropical vertébré de niveau~$n$.
Dans ce cas, on peut donner une description squelettiques 
de la topologie sur l'espace des G-formes.

Soit $\mathscr D$ un jeu de cartes G-couvrant sur~$X$.
Soit $\Sigma$ une partie squelettique de~$X$.
Pour tout élément~$m\in\N\cup\{+\infty\}$,
l'application de restriction $\mathscr A^{p,q}_{\mathscr D, \groth}(X)
\to \mathscr A^{p,q}_\groth(\Sigma)$ est continue lorsque
ces deux espaces sont munis de leurs topologies~$\mathscr C^m$.

Nous allons démontrer que \emph{lorsque l'espace~$X$ est riche,
la topologie~$\mathscr C^m$ de $\mathscr A^{p,q}_{\mathscr D, \groth}(X)$
est la topologie localement convexe
la plus fine qui rende toutes ces applications continues.}
On peut même se limiter aux parties squelettiques~$\Sigma$
qui sont des cellules compactes compactes, 
contenues dans le domaine de définition~$V$
d'une carte~$f$ appartenant à~$\mathscr D$,
et telles que $f|_\Sigma$ soit injective.
A fortiori, on peut se limiter aux parties squelettiques compactes de~$X$.

En effet, soit $W$ un domaine compact d'une carte $(V,f)$ 
appartenant à~$\mathscr D$ et soit $r$ un entier tel que $r\leq m$.
Comme $X$ est supposé riche, il existe une partie
squelettique~$\Sigma$ de~$W$, compacte, telle que $f(\Sigma)=f(W)$.
L'application $f\colon \Sigma \to f(W)$ possède G-localement
des sections. Leurs images définissent une famille finie $(\Sigma_i)$
de parties squelettiques compactes de~$\Sigma$
telles que $\bigcup_i f(\Sigma_i)=f(W)$
et telles que $f|_{\Sigma_i}$ est injective pour tout~$i$.
Quitte à raffiner la famille~$(\Sigma_i)$, on peut supposer qu'elle
est constituée de cellules compactes.
Choisissons des normes~$\mathscr C^r$ standard sur $f(W)$ et les~$\Sigma_i$.
Pour toute forme~$\alpha$ sur~$f(W)$ et tout entier~$r$
tel que $r\leq m$, on a une majoration uniforme 
\[ \norm{\alpha}_{f(W),r} \ll \sup_i \norm{f^*\alpha|_{\Sigma_i}}_{\Sigma_i,r}.\]
L'assertion en résulte.
\end{rema}

\section{Supports forts}

Soit $X$ un espace G-tropical vertébré de niveau~$n$.

\subsection{}\label{sss.support-fort} % \label{lemm.support-fort}
Soit $\omega$ une G-forme de type~$(p,q)$ sur~$X$,
où $p$ et~$q$ sont deux entiers tels que $\max(p,q)=n$.
(En pratique, on aura $p\in\{n-1,n\}$ et $q=n$.)
On dit qu'une partie squelettique fermée~$\Sigma$ 
\emph{supporte fortement}~$\omega$ si elle contient le support de~$\omega$
et si, pour toute partie squelettique fermée~$\Tau$ contenant~$\Sigma$,
la forme $\omega|_\Tau$ est le prolongement par zéro de~$\omega|_\Sigma$.
% \footnote{Il faut expliquer en quoi cette dernière condition est/n'est pas automatique… Il semble que c'est automatique si les coefficients sont continus!}

% Nous dirons que  $\omega$ supporte fortement \emph{universellement}~$\omega$
% si, pour toute extension complète~$L$ de~$k$,
% la partie polyédrale fermée $\Sigma_L$ de~$X_L$
% (\S\ref{sss.chgt-base-polyedre})
% supporte fortement~la forme~$\omega_L$ sur~$X_L$.

Notons que dans ce cas, la partie squelettique fermée~$\Sigma$ 
supporte fortement 
% (resp. fortement universellement) 
$\omega\wedge\alpha$ pour toute G-forme~$\alpha$,
ainsi que les dérivées $\di\omega$, $\dc\omega$, etc.

Si $\Sigma$ est un support fort de~$\omega$,
alors $\Sigma\cap Y$ est un support fort de~$\omega|_Y$
pour tout domaine~$Y$ de~$X$.

Soit $(V_i)$ un G-recouvrement de~$X$. 
D'après~\S\ref{sss.support},
pour que $\Sigma$ supporte fortement~$\omega$,
% (resp. fortement universellement)~$\omega$,
il faut et il suffit 
que pour tout~$i$, $\Sigma\cap V_i$ 
supporte fortement~$\omega|_{V_i}$.
% (resp. fortement universellement) 

On peut vouloir résumer les propriétés précédentes en disant
que les G-formes fortement supportées par~$\Sigma$
constituent un sous-module différentiel gradué du G-faisceau 
$\mathscr F^{*,*}_{X_\groth}$.

\begin{rema}
Soit $\omega$ une G-forme de type~$(p,q)$ sur~$X$,
où $p$ et~$q$ sont deux entiers tels que $\max(p,q)=n$
et soit $\Sigma$ une partie squelettique fermée de~$X$
qui contient le support de~$\omega$.
Si la forme~$\omega$ est à coefficients tropicalement continus,
alors $\Sigma$ est un support fort de~$\omega$,
mais ce n'est pas vrai en général.

Par exemple, si $\theta$ est la fonction caractéristique de
l'origine dans l'espace G-tropical~$\R$,
la $(1,0)$-forme $\omega=\theta \di x$ a pour support~$\Sigma=\{0\}$,
qui est une partie squelettique fermée de~$\R$.
Ce n'en est pas un support fort, car $\omega|_\Sigma=0$,
dont le prolongement à~$\R$ est la forme nulle,
mais $\omega\neq0$.
\end{rema}

\begin{prop}\label{prop.support-fort-pur}
Soit $\omega$ une G-forme de type~$(p,q)$ sur~$X$,
où $p$ et $q$ sont des entiers tels que $\max(p,q)=n$
et soit $\Sigma$ une partie squelettique fermée de~$X$
qui supporte fortement~$\omega$.
Alors, $\Sigma^{(n)}$ est une partie squelettique fermée de~$X$
qui supporte fortement~$\omega$.
\end{prop}
\begin{proof}
Comme $\Sigma^{(n)}$ est une partie paralinéaire fermée de~$\Sigma$,
c'est une partie squelettique fermée de~$X$.

Soit $T$ une partie squelettique fermée de~$X$ qui contient~$\Sigma^{(n)}$;
démontrons que $\omega|_T$ est le prolongement par zéro de~$\omega|_{\Sigma^{(n)}}$. Quitte à remplacer~$T$ par $T\cup\Sigma$, 
on suppose que $T$ contient~$\Sigma$.  
Quitte à raisonner G-localement, on peut supposer que $T$
possède une décomposition cellulaire adaptée à~$\Sigma$,
et il s'agit de démontrer 
la restriction de~$\omega$ à toute cellule fermée qui n'est pas contenue
dans~$\Sigma^{(n)}$ est nulle.
C'est en effet le cas d'une cellule qui n'est pas dans~$\Sigma$,
par définition d'un support fort, et d'une cellule qui n'est
pas dimension~$n$, car la restriction d'une $(p,q)$-forme
à une telle cellule est nulle.

Prouvons maintenant que le support de~$\omega$
est contenu dans~$\Sigma^{(n)}$.
Soit $x$ un point de~$\Sigma^{(n)}$; prouvons que~$\omega$
est nulle au voisinage de~$x$.
L'assertion est G-locale au voisinage de~$x$ ; on peut donc supposer
que $X$ est compact,
que $\omega$ est G-tropicalisée par une carte~$f$ de domaine~$X$
et que $\Sigma^{(n)}=\emptyset$, c'est-à-dire que $\dim(\Sigma)<n$;
il s'agit alors de démontrer que $\omega=0$.

Notons~$P$ le polyèdre compact~$f(X)$ et
soit $\alpha$ une $(p,q)$-forme sur~$P$ 
telle que $\omega=f^*\alpha$.
Par hypothèse, $\omega|_{\Sigma\cup\Sigma_f^{(n)}}$
est le prolongement par zéro de~$\omega|_\Sigma$.
Soit $\mathscr C$ une décomposition cellulaire de~$\Sigma\cup\Sigma_f^{(n)}$
compatible à~$\Sigma$  et~$\Sigma_f^{(n)}$.
Soit $C$ une $n$-cellule de~$\mathscr C$; comme $\dim(\Sigma)<n$,
$\mathring C$ ne rencontre pas~$\Sigma$, donc $\omega|_C=0$
par définition du prolongement par zéro.
Comme $\Sigma_f^{(n)}$ est purement de dimension~$n$, 
cela entraîne que $\omega|_{\Sigma_f^{(n)}}=0$.
La composition de l'injection de~$\Sigma_f^{(n)}$ dans~$X$ 
et de~$f$ se factorise par~$f(\Sigma_f^{(n)})=P^{(n)}$ 
d'après la proposition~\ref{prop.image-squelette-pur}.
D'après la proposition~\ref{prop.nullite-formes}, 
on a $\alpha|_{P^{(n)}}=0$.
Comme $\max(p,q)=n$, on a donc $\alpha|_P=0$, d'où $\omega=0$.
\end{proof}

% \subsection{}
% Si $L$ est une extension complète de~$k$ et $p\colon X_L\to X$
% désigne le morphisme de changement de base, 
% alors la partie polyédrale fermée~$\Sigma_L$ de~$X_L$
% (\S\ref{sss.chgt-base-polyedre})
% supporte fortement la forme~$\omega_L$ sur~$X_L$.
% Les définitions entraînent de plus
% l'égalité $\omega_L|_{\Sigma_L}=p^*(\omega|_\Sigma)$.

\begin{lemm}\label{lemm.Sigmaf-support-fort}
Si la forme~$\omega$ est G-tropicalisée par
une carte G-tropicale~$f$ de domaine~$X$, 
alors $\Sigma_f^{(n)}$ supporte fortement~$\omega$.
\end{lemm}
\begin{proof}
Le support de~$\omega$ est contenu dans l'ensemble
des points~$x\in X$ en lesquels $f$ est de dimension
tropicale~$\geq n$ qui, par définition, est~$\Sigma_f^{(n)}$.

Soit $\Tau$ un sous-espace paralinéaire fermé de~$X$
qui contient~$\Sigma_f^{(n)}$. Démontrons que $\omega|_\Tau$
est le prolongement par zéro de~$\omega|_{\Sigma_f^{(n)}}$.
Soit $\mathscr C$ une décomposition cellulaire en $f$-cellules
de~$\Tau$ qui est compatible à~$\Sigma_f^{(n)}$. Il suffit
de prouver que pour toute cellule~$C$ de~$\mathscr C$
telle que $\mathring C$  est disjoint de~$\Sigma_f^{(n)}$,
on a $\omega|_C=0$.

Par hypothèse, $f$ est de dimension tropicale~$<n$ en tout point
de~$\mathring C$, donc il existe une cellule~$D$, compacte,
d'intérieur non vide, de~$\mathring C$ telle que $\dim(f(D))<n$.
Comme $C$ est une $f$-cellule, on  $\dim(f(C))=\dim(f(\mathring C))=\dim(f(D))$.
D'après la proposition~\ref{prop.nullite-formes},
on a donc $\omega|_{C}=0$.

Cela démontre que $\Sigma_f^{(n)}$ supporte fortement~$\omega$.
% 
% Cette partie supporte en fait fortement universellement~$\omega$.
% puisque pour toute extension complète~$L$ de~$k$,
% $f_L$ tropicalise~$\omega_L$
% et $\Sigma_{f_L}^{(n)}=(\Sigma_f^{(n)})_L$ 
% (théorème~\ref{theo.chgt-base-polyedre}).
\end{proof}

\begin{prop}\label{lemm.squelette-support-fort}
Soit $n$ un entier et soit $X$ un espace G-tropical vertébré
de niveau~$n$. Soit $(p,q)$ un couple d'entiers naturels tel que 
$\max(p,q)=n$ et
soit $\omega$ une G-forme de type~$(p,q)$ sur~$X$.
Supposons que $X$ est paracompact,
ou que le support de~$\omega$ est compact.
Il existe alors une partie squelettique fermée de~$X$
qui supporte fortement~$\omega$.
\end{prop}
\begin{proof}
Lorsque $X$ est paracompact,
on considère un G-recouvrement localement fini de~$X$ 
par des domaines compacts qui G-tropicalisent~$\omega$;
lorsque $\omega$ est à support compact, on considère un G-recouvrement
fini d'un voisinage du support de~$\omega$ par de tels domaines.
Pour tout tel domaine~$V$, 
soit $f_V$ une carte G-tropicale sur~$V$ G-tropicalisant~$\omega|_V$;
d'après l'exemple précédent, 
$\Sigma_{f_V}^{(n)}$ supporte fortement~$\omega|_V$.
Soit $\Sigma$ la réunion des parties squelettiques~$\Sigma_{f_V}^{(n)}$.
D'après la proposition~\ref{prop.squelettique-paralineaireX},
c'est une partie squelettique fermée de~$X$;
de plus, pour tout domaine~$V$ du G-recouvrement donné,
la partie squelettique fermée $\Sigma\cap V$ de~$V$ 
supporte fortement $\omega|_V$,
puisque $\Sigma\cap V$ contient~$\Sigma_{f_V}^{(n)}$.
Par suite, $\Sigma$ supporte fortement~$\omega$.
\end{proof}
 
% \begin{lemm}\label{lemm.forme-volume-restriction}
% Soit $(p,q)$ un couple d'entiers tel que $\max(p,q)=n$.
% Soit $\omega\in\mathscr F^{p,q}(X)$ 
% et soit $\Sigma$ une partie polyédrale fermée de~$X$  
% qui supporte fortement~$\omega$.
% Pour toute partie polyédrale fermée~$\Tau$ de~$X$ telle que $\Sigma\subset\Tau$,
% la restriction de~$\omega$ à la partie polyédrale $\overline{\Tau\setminus\Sigma}$ est nulle.
% \end{lemm}
% \begin{proof}
% L'assertion est G-locale sur~$X$. 
% On peut ainsi supposer que $X$ est compact, que $\omega=f^*\alpha$
% où $f$ est un moment sur~$X$, et que $\Sigma$ contient~$\Sigma_f^{(n)}$.
% Soit $\mathscr C$ une décomposition cellulaire 
% de~$\Tau$ compatible à~$\Sigma$. Les cellules~$C$ de~$\mathscr C$
% telles que $\mathring C$ est disjoint de~$\Sigma$
% forment un G-recouvrement de $\overline{\Tau\setminus\Sigma}$;
% il suffit donc de prouver que la restriction de~$\omega$ à une telle
% cellule~$C$ est nulle. Or $\mathring C$ est disjoint de~$\Sigma_f^{(n)}$,
% par hypothèse, donc 
% $f(C)$ est un sous-espace paralinéaire de dimension~$<n$,
% si bien que $\omega|_C=(f_\trop|_C)^*\alpha
% =(f_\trop|_C)^* (\alpha|_{f_\trop(C)})=0$.
% \end{proof}
 
\begin{prop}\label{prop.restr-support-fort}
Soit~$n$ un entier et soit~$X$ un espace G-tropical vertébré de niveau~$n$.
Soit~$(p,q)$ un couple d'entiers naturels tel que $\max(p,q)=n$
et soit~$\omega$ une G-forme de type~$(p,q)$.
Soit $\Sigma$ un support fort de~$\omega$; alors le support de~$\omega|_\Sigma$
est égal au support de~$\omega$.
\end{prop}
\begin{proof}
La propriété étant G-locale, on peut supposer que $X$ est 
est compact et le domaine d'une carte G-tropicale~$f$ 
qui G-tropicalise~$\omega$;
soit $\alpha$ une $(p,q)$-forme telle que $\omega=f^*\alpha$.

Soit $x\in X$.
Si $x\in\supp(\omega|_\Sigma)$, alors $x\in\supp(\omega)$.
Supposons inversement que $x\in\supp(\omega)$ et prouvons
que $x\in\supp(\omega|_\Sigma)$.

Soit $\Tau=\Sigma\cup\Sigma_f^{(n)}$. Par définition d'un support
fort, le support de~$\omega|_\Tau$ est égal au support de~$\omega|_\Sigma$.
De même, comme $\Sigma_f^{(n)}$ est un support fort de~$\omega$,
le support de~$\omega|_\Tau$ est égal au support de~$\omega|_{\Sigma_f^{(n)}}$.
Il s'agit donc de prouver que $x\in\supp(\omega|_{\Sigma_f^{(n)}})$.

Le point~$f(x)$ appartient au support de~$\alpha$ 
dans~$f(X)$ (corollaire~\ref{coro.support}), 
support qui n'est autre
que le support de la restriction de~$\alpha$ à~$f(X)^{(n)}$, 
puisque $\alpha$ est une $(p,q)$-forme.
L'application paralinéaire~$f$ induit une immersion par morceaux 
de~$\Sigma_f^{(n)}$ sur son image $f(\Sigma_f^{(n)})=f(X)^{(n)}$
(proposition~\ref{prop.image-squelette-pur}),
par laquelle $\omega|_{\Sigma_f^{(n)}}$ 
est l'image réciproque de~$\alpha$. 
Comme cette immersion par morceaux possède G-localement des sections,
le support de~$\omega|_{\Sigma_f^{(n)}}$ est l'image réciproque
du support de~$\alpha$; il contient en particulier le point~$x$.
\end{proof}

\section{Cartes agréables, polyèdres et cônes tropicaux}

\begin{defi}\label{defi.f-cellule-trop}
Soit $X$ un espace G-tropical vertébré de niveau~$n$, 
soit $f\colon X\to E$ une carte G-tropicale  et soit $C$ une partie de~$X$.
On dit $C$ est une \emph{$f$-cellule} si c'est
une $f$-cellule de l'espace paralinéaire~$\Sigma_f^{(n)}$.
\end{defi}

Une $f$-cellule de~$X$ est une partie squelettique de~$X$;
elle est connexe et non vide, donc est contenue dans
une unique composante connexe de~$X$.

\begin{rema}
Soit $C$ une partie de~$X$ et soit $f\colon X\to E$ une carte
tropicale.
Si $f$ induit un homéomorphisme de~$C$ sur une cellule 
\emph{de dimension~$n$} de~$E$, alors 
$C$ est contenue dans~$\Sigma_f^{(n)}$; c'est alors une $f$-cellule.
\end{rema}

\begin{defi}\label{defi.carte-agreable}
On dit qu'une carte G-tropicale $f\colon X\to E$ 
est \emph{agréable}\index{carte G-tropicale agréable}
si $f(\Sigma_f^{(n)})$ est paralinéaire et 
si l'application paralinéaire $f|_{\Sigma_f^{(n)}}$ est propre sur son image.
\end{defi}
C'est en particulier le cas si $f|_{\Sigma_f^{(n)}}$ est propre,
en particulier si $\Sigma_f^{(n)}$ possède une décomposition cellulaire 
finie en $f$-cellules.

Lorsque $f$ est une carte agréable, nous noterons~$\Pi_f$
l'ensemble $f(\Sigma_f^{(n)})$ et l'appellerons le \emph{polyèdre
tropical}\index{polyèdre tropical}
de la carte G-tropicale agréable~$f$.
Il est purement de dimension~$n$ et 
l'application~$f$ induit une immersion par morceaux
de~$\Sigma_f^{(n)}$ sur~$\Pi_f$.

\subsection{}\label{exem.compact-agreable}
Si $X$ est compact, toute carte G-tropicale est agréable
et l'on a $\Pi_f=f(X)^{(n)}$
(proposition~\ref{prop.image-squelette-pur}).

\begin{prop}\label{prop.agreable-restriction}
Soit $X$ un espace G-tropical vertébré de niveau~$n$, 
soit $f\colon X\to E$ une carte G-tropicale agréable et 
soit $P$ un sous-espace paralinéaire~$P$ de~$E$.
Considérons 
la carte G-tropicale $f_P\colon f^{-1}(P)\to E$ 
sur le domaine $f^{-1}(P)$ (considéré comme espace G-tropical
vertébré de niveau~$n$) déduite de~$f$ par restriction.
La carte~$f_P$ est agréable et 
on a les relations
\begin{gather}
\Sigma_{f_P}^{(n)} = (\Sigma_f^{(n)}\cap f^{-1}(P))^{(n)} \\
\Pi_{f_P}= (\Pi_f\cap P)^{(n)} .
\end{gather}
\end{prop}
\begin{proof}
L'égalité $\Sigma_{f_P}^{(n)} = (\Sigma_f^{(n)}\cap f^{-1}(P))^{(n)} $
est l'assertion~\emph b) de la proposition~\ref{prop.image-squelette-pur}.
On a donc
\[ f(\Sigma_{f_P}^{(n)}) = f((\Sigma_f^{(n)}\cap f^{-1}(P))^{(n)}).\]
Comme $f|_{\Sigma_f^{(n)}}$ est une immersion par morceaux compacte
sur son image et comme $\Sigma_f^{(n)}\cap f^{-1}(P)$ 
en est une partie paralinéaire, il résulte de
la proposition~\ref{prop.sections-G-locales} que
\[ f((\Sigma_f^{(n)}\cap f^{-1}(P))^{(n)})
= f(\Sigma_f^{(n)}\cap f^{-1}(P))  ^{(n)}
= (f(\Sigma_f^{(n)})\cap P)  ^{(n)}= (\Pi_f\cap P)^{(n)}.
\]
Comme $\Sigma_{f_P}^{(n)}$ est une partie fermée de~$\Sigma_f^{(n)}$,
l'application $f$ induit une immersion par morceaux compacte
de~$\Sigma_{f_P}^{(n)}$ sur son image.
D'après ce qui précède, cette image est égale à $f(\Sigma_{f_P}^{(n)})$,
donc est une partie paralinéaire de~$E$.
Ainsi, $f_P$ est une carte G-tropicale agréable et son 
polyèdre tropical est égal à 
$\Pi_{f_P}=f(\Sigma_{f_P}^{(n)})=(\Pi_f\cap P)^{(n)}$.
\end{proof}

\subsection{}
Soit $X$ un espace $G$-tropical vertébré de niveau $n$ 
et soit $f\colon X\to E$ une carte tropicale. 
Soit $x$ un point de $\Sigma^{(n)}_f$.

Considérons un voisinage ouvert paracompact~$W$ de~$x$.
Alors $\Sigma^{(n)}_f\cap W= \Sigma^{(n)}_{f|_{W}}$;
considérons-en une décomposition cellulaire~$\mathscr C$
en $f$-cellules. 

Soit $V$ la réunion des $n$-cellules de~$\mathscr C$ qui contiennent~$x$.
C'est un voisinage paralinéaire compact de~$x$ dans~$\Sigma^{(n)}_f$
et la restriction de~$f$ à~$V$
est une immersion linéaire par morceaux de~$V$ dans~$E$
qui identifie chaque cellule~$C$ à une cellule forte de~$E$.
Par construction, $x$ est le seul antécédent de $f(x)$ sur $V$. 
Soit $W'$ un autre voisinage ouvert paracompact de $X$, soit $\mathscr C'$ une 
décomposition cellulaire de $\Sigma_f^{(n)}\cap W'$ et soit $V'$ la réunion des
$n$-cellules de $\mathscr C'$ contenant $x$. L'intersection $V\cap V'$ est un voisinage 
paralinéaire compact de~$x$ dans~$V$. par propreté topologique de $f$ il existe un voisinage
paralinéaire compact~$U$ de~$f(x)$ dans~$E$ tel que
$f^{-1}(U)\subset V\cap V'$. Il s'ensuit que les polyèdres~$f(V)$ et~$f(V')$ 
coïncident au voisinage de~$f(x)$ dans~$E$. Leur germe
commun est le germe d'un unique cône de sommet~$f(x)$.
On dit que c'est le \emph{cône tropical} de~$f$ en~$x$
et on le note~$\Pi_{f,x}$.\index{cône tropical}\label{defi.cone-tropical}

\subsection{}
Soit $U$ un voisinage domanial 
compact de~$x$, assez petit
de sorte que $U\cap \Sigma_f^{(n)}\subset V$.
Pour toute $n$-cellule~$C$ de~$\mathscr C$ qui contient~$x$,
l'intersection 
$U\cap C$ est un voisinage de~$x$ dans~$C$,
donc $f(U\cap V)$ est un voisinage de~$f(x)$ dans~$f(C)$.
Comme la famille des~$f(C)$  est un G-recouvrement de~$f(V)$,
on en déduit que $f(U\cap V)$ est un voisinage de~$f(x)$
dans~$f(V)$, purement de dimension $n$. 
On a alors

\[f(U)^{(n)} =f(\Sigma^{(n)}_{f|_U}) \subset f(U\cap \Sigma^{(n)}_f)
=f(U\cap V)\subset f(U)^{(n)},\]
où la première égalité provient de la proposition~\ref{prop.image-squelette-pur} \emph c) 
et la dernière du fait que $f(U\cap V)$ est purement de dimension $n$. 
Il en résulte que $f(U^{(n)})=f(U\cap V)$ ; par conséquent, le germe de polyèdre $f(U)^{(n)}$
en~$f(x)$ ne dépend pas de~$U$ dès que 
$U\cap \Sigma_f^{(n)}\subset V$, puisqu'il est alors égal
au germe de polyèdre de $f(V)$ en $f(x)$, c'est-à-dire encore à 
$\Pi_{f,x}$.

\section{Espaces G-tropicaux calibrés}

Soit $X$ un espace G-tropical vertébré de niveau~$n$.

\begin{defi}
On appelle \emph{$n$-calibrage} de~$X$ 
la donnée, pour toute 
partie squelettique~$\Sigma$ de~$X$,
d'un $n$-calibrage strictement positif~$\mu_{\Sigma}$ sur~$\Sigma$.
On exige en outre que si $T$ est une partie squelettique
contenue dans~$\Sigma$, on ait 
l'égalité
$\mu_T=\mu_\Sigma|_T$.

Un \emph{espace G-tropical calibré} de niveau~$n$ 
est un espace G-tropical vertébré de niveau~$n$ muni 
d'un $n$-calibrage.
\end{defi}

Dans les applications que nous avons en vue, le calibrage
est toujours G-localement constant.
 
\begin{exem}
Si $X$ est un espace paralinéaire calibré de niveau~$n$,
alors l'espace G-tropical vertébré associé est  naturellement
muni d'un $n$-calibrage.
\end{exem}

\subsection{}
Supposons que $X$  soit muni d'un $n$-calibrage.
Alors, tout domaine de~$X$ hérite d'un $n$-calibrage.

\begin{lemm}\label{lemm.gros-site}
Soit $\mathscr S$ un ensemble de parties squelettiques de~$X$
telle que toute partie squelettique de~$X$
soit G-recouverte par des parties paralinéaires d'éléments de~$\mathscr S$.
Pour tout $\Sigma\in\mathscr S$, supposons donné un $n$-calibrage~$\mu_\Sigma$
de~$\Sigma$ de sorte que pour tous~$\Sigma$ et~$\Tau\in\mathscr S$,
les calibrages $\mu_\Sigma$ et~$\mu_\Tau$ coïncident sur $\Sigma\cap\Tau$.
Il existe alors un unique $n$-calibrage de~$X$ qui coïncide avec
les calibrages donnés sur les parties squelettiques appartenant à~$\mathscr S$.
\end{lemm}
Un cas particulier important du lemme est celui où $\mathscr S$
est l'ensemble des parties squelettiques élémentaires de~$X$
et de ses domaines.
\begin{proof}
% Un calibrage de~$X$ est une section du faisceau des $n$-calibrages
% sur le « gros site » formé de toutes les parties squelettiques de~$X$ muni
% de la G-topologie sur chacune de ces parties. Par hypothèse,
% on s'est donné une section de ce faisceau sur un sous-site cofinal;
% elle se prolonge donc de manière unique pour des raisons formelles.
% 
% Donnons la démonstration.
% 
Soit $\Sigma$ une partie squelettique de~$X$.

Par hypothèse, elle possède un G-recouvrement de la forme~$(\Sigma_i)$,
où, pour tout~$i$, $\Sigma_i$ est une partie paralinéaire
d'un élément~$S_i$ de~$\mathscr S$. On a nécessairement
$\mu_\Sigma|_{\Sigma_i}= \mu_{S_i}|_{\Sigma_i}$.
Soit $i,j$ et des indices;
par hypothèse, les calibrages~$\mu_{S_i}$ et~$\mu_{S_j}$
coïncident sur~$S_i\cap S_j$;
par conséquent, 
les calibrages~$\mu_{S_i}|_{\Sigma_i}$ et $\mu_{S_j}|_{\Sigma_j}$
coïncident sur l'intersection $\Sigma_i\cap \Sigma_j$.
Il existe donc un unique calibrage~$\mu_\Sigma$ de~$\Sigma$
qui coïncide avec $\mu_{S_i}|_{\Sigma_i}$ sur~$\Sigma_i$, pour tout~$i$.

Ce calibrage~$\mu_\Sigma$ ne dépend pas du choix du G-recouvrement~$(\Sigma_i)$,
comme on le voit en considérant la réunion de deux G-recouvrements.

La famille de ces calibrages convient, et c'est la seule.
\end{proof}

\subsection{}
Soit $f\colon X\to E$ une carte G-tropicale agréable.
On munit alors le polyèdre~$\Pi_f$
du calibrage $f_*\mu_{\Sigma_f^{(n)}}$ image du calibrage
de~$\Sigma_f^{(n)}$ par l'immersion par morceaux propre $f|_{\Sigma_f^{(n)}}$;
on note $\mu_{\Pi_f}$ ce calibrage.

\begin{exem}\label{exem.agreable-restriction-calibrage}
Soit $f\colon X\to E$ une carte G-tropicale agréable.
Soit $P$ un sous-espace paralinéaire de~$\Pi_f$.
Posons $Y=f^{-1}(P)$ et posons $g=f|_Y$.
D'après la proposition~\ref{prop.agreable-restriction},
la carte $g=f|_Y$ est agréable, 
on $\Sigma_g^{(n)}=(\Sigma_f^{(n)}\cap f^{-1}(P))^{(n)}$
et $\Pi_g=P^{(n)}$. Le calibrage de~$\Sigma_g^{(n)}$
est la restriction de celui de~$\Sigma_f^{(n)}$.
Il découle alors des définitions
que le calibrage de~$\Pi_g$ est la restriction 
du calibrage de~$\Pi_f$.
\end{exem}

\begin{exem}\label{rema.trop-calibrage-projection}
Soit $f\colon X\to E$ une carte G-tropicale,
soit $p\colon E\to E'$ une application affine, et soit $g=p\circ f$.
Supposons que la carte G-tropicale~$g$ soit agréable.

Soit $\Pi'_f$ la réunion des $n$-cellules de~$E$
contenues dans~$\Pi_f$  qui ne sont pas contractées par~$p$.
Comme c'est la réunion des $n$-cellules d'une décomposition cellulaire
donnée de~$\Pi_f$ qui ne sont pas contractées par~$p$,
l'ensemble~$\Pi'_f$ est un sous-espace paralinéaire fermé de~$\Pi_f$.
Démontrons que $p$ induit une immersion
linéaire par morceaux \emph{compacte} de $\Pi'_f$ sur~$\Pi_g$
et que le calibrage canonique de~$\Pi_g$ 
est l'image de la restriction à~$\Pi'_f$
du calibrage canonique de~$\Pi_f$.

Comme $g=p\circ f$, la dimension tropicale de~$g$ est en tout point
inférieure à celle de~$f$ ; on a donc 
$\Sigma_g^{(n)}\subset \Sigma_f^{(n)}$.

Démontrons d'abord que $f_\trop^{-1}(\Pi'_f)=\Sigma_g^{(n)}$.
Soit~$x$ un point de~$\Sigma_f^{(n)}$ tel que $f(x)\in\Pi'_f$.
Soit $D$ une $n$-cellule contenue dans~$\Pi_f$ contenant~$f(x)$ et
non contractée par~$p$ ; comme $f$ est une immersion
par morceaux, on peut supposer qu'il existe une $n$-cellule~$C$ de~$\Sigma_f^{(n)}$ telle que $f$ induise un isomorphisme de~$C$ sur~$D$.
Comme $D$ n'est pas contractée par~$p$, la dimension tropicale de~$g$
en~$x$ est au moins égale à~$n$, donc $x\in\Sigma_g^{(n)}$.
Inversement, soit~$x$ un point de~$\Sigma_g^{(n)}$
et soit~$C$ une $n$-cellule de~$\Sigma_g^{(n)}$ dont
l'image par~$f$ est une $n$-cellule~$D$ de~$\Pi_f$;
comme $g|_C$ est injective, l'application~$p|_D$ est injective,
d'où $f(x)\in\Pi'_f$.

On a $\Pi_g=g(\Sigma_g^{(n)})=p(f(\Sigma_g^{(n)}))=p(\Pi'_f)$ et
l'application $p\colon \Pi'_f\to \Pi_g$ 
est une immersion linéaire par morceaux. Remarquons
qu'elle est compacte: en effet, soit $A$ une partie compacte de~$\Pi_g$,
alors $p^{-1}(A)\cap \Pi'_f=f(g^{-1}(A)\cap\Sigma_g^{(n)})$
donc est compacte puisque, $g$ étant agréable, 
$g^{-1}(A)\cap\Sigma_g^{(n)}$ est compact.

D'après la première partie, l'image par~$f$
du calibrage de $\Sigma_g^{(n)}$ est égale à la restriction
à~$\Pi'_f$ du calibrage de~$\Pi_f$.
Le calibrage de~$\Pi_g$ est donc l'image par~$p$ de
ce calibrage de~$\Pi'_f$.
\end{exem}

\subsection{}\label{defi.cone-tropical-calibre}
Soit $X$ un espace $G$-tropical calibré de niveau~$n$ 
et soit $f\colon X\to E$ une carte tropicale. 
Soit~$x$ un point de $\Sigma^{(n)}_f$. Nous allons calibrer
le cône tropical~$\Pi_{f,x}$ défini au~\S\ref{defi.cone-tropical}.

Soit $V$ un voisinage paralinéaire compact de~$x$ dans~$\Sigma_f^{(n)}$,
assez petit pour que le germe du cône~$\Pi_{f,x}$ 
en~$x$ soit le germe de~$f(V)$ en~$f(x)$.
On en déduit un germe de calibrage de~$\Pi_{f,x}$ en~$f(x)$
qui se prolonge de manière unique en un calibrage de ce cône
qui est adapté à une décomposition cellulaire conique,
et donc à toute décomposition cellulaire conique assez fine.
Ce calibrage ne dépend pas du choix de~$V$.

Si $U$ est un voisinage domanial compact de~$x$,
assez petit, le germe du polyèdre calibré~$\Pi_{f|_U}$
en~$f(x)$ coïncide avec le germe du cône tropical calibré~$\Pi_{f,x}$.

\section{Formes-volume sur un espace G-tropical vertébré}
\label{ss.forme-volume-Gtrop}

Soit $p$ et~$n$ des entiers naturels tels que $p\leq n$.
Dans ce paragraphe, nous définissons un faisceau de formes-volume 
$p$-dimensionnelles sur un espace G-tropical vertébré~$X$ de niveau~$n$.
Ses sections sont
localement de type Lebesgue sur une partie squelettique  
de dimension au plus~$p$.

On fixe une classe de coefficients~$\mathscr G$.

\subsection{}
Soit~$X$ un espace G-tropical vertébré de niveau~$n$.

Soit~$\mathscr P_p(X)$ l'ensemble des parties squelettiques fermées de~$X$
de dimension~$\leq p$;
ordonnons-le par l'inclusion.
Pour tout~$\Sigma\in \mathscr P_p(X)$, on dispose
de l'espace $\mathscr V^p(\Sigma)$ des formes-volume
$p$-dimensionnelles sur~$\Sigma$.
Les morphismes de prolongement par zéro font de
la famille~$(\mathscr V^p(\Sigma))_{\Sigma \in \mathscr P_p(X)}$
un système inductif filtrant ; notons~$\mathscr V_{\pf}^p(X)$ 
sa limite inductive. Observons que pour tout~$\Sigma$,
l'application canonique de $\mathscr V^p(\Sigma)$ 
dans $\mathscr V_\pf^p(X)$ est injective.

Soit~$V$ un domaine de~$X$. 
Soit~$\lambda \in \mathscr V_{\pf}^p(X)$, provenant d'une 
forme-volume~$\lambda_\Sigma\in \mathscr V^p(\Sigma)$, 
à coefficients dans~$\mathscr F$,
pour une certaine partie squelettique fermée~$\Sigma\in \mathscr P_p(X)$. 
L'image de~$\lambda_\Sigma|_{\Sigma \cap V}$ 
dans~$\mathscr V^p(V)$ ne dépend que de~$\lambda$, et
sera notée~$\lambda|_V$. 
Grâce à ces flèches de restriction, l'application
$V\mapsto \mathscr V^p_\pf(V)$ apparaît comme
un préfaisceau sur~$X_{\mathrm G}$ 
qu'on note $\mathscr V_{X_{\mathrm G},\pf}^{\mathscr F,p}$,
où plus simplement $\mathscr V_{X_\groth,\pf}^{p}$ s'il
ne peut y avoir d'ambiguïté quant à la classe~$\mathscr F$.

Ce préfaisceau 
a une structure naturelle de module sur le faisceau d'anneaux 
des fonctions numériques sur~$X$ dont la restriction à toute
partie squelettique appartient à~$\mathscr G$.

\begin{lemm}\label{mesure-polyedrale-separe}
Le préfaisceau~$\mathscr V_{X_{\mathrm G},\pf}^{p}$ est séparé.
\end{lemm}
\begin{proof}
Soit~$V$ un domaine de~$X$, 
soit~$(V_i)$ un G-recouvrement de~$V$ et soit 
$\lambda$ un élément de~$\mathscr V_{X_\groth,\pf}^{p}(V)$
tel que~$\lambda|_{V_i}=0$ pour tout~$i$. Démontrons que~$\lambda=0$. 

Choisissons une partie squelettique fermée $\Sigma$ de~$V$ 
telle que $\lambda$ provienne d'un élément~$\lambda_\Sigma$ 
de~$\mathscr V^p(\Sigma)$. 
Par hypothèse, la restriction de~$\lambda_\Sigma$ 
à~$\Sigma\cap V_i$ est nulle pour tout~$i$ ; comme les~$\Sigma\cap V_i $ 
constituent un G-recouvrement de~$\Sigma$, on a~$\lambda_\Sigma=0$, 
et donc~$\lambda=0$. 
\end{proof}

\subsection{}
On note $\mathscr V^{\mathscr G,p}_{X_{\mathrm G}}$ 
le G-faisceau associé au G-préfaisceau
$\mathscr V_{X_{\mathrm G},\pf}^{\mathscr G,p}$ 
et on dit que c'est le \emph{faisceau
des formes-volume $p$-dimensionnelles à coefficients dans~$\mathscr G$
sur~$X_{\mathrm G}$.}
C'est un faisceau en modules sur le faisceau d'anneaux 
des fonctions numériques sur~$X$ dont la restriction à toute
partie squelettique appartient à~$\mathscr G$.

On note $\mathscr V_{X}^{\mathscr G,p}$ 
le faisceau associé pour la topologie usuelle;
on dit que c'est le \emph{faisceau des formes-volume $p$-dimensionnelles 
à coefficients dans~$\mathscr G$ sur~$X$.}
\index{forme-volume sur un espace G-tropical vertébré}

Notons $\pi \colon X_{\mathrm G}\to X_{\mathrm {top}}$ 
le morphisme de sites canonique;  
il découle du lemme~\ref{mesure-polyedrale-separe} que 
$\mathscr{V}^{p}_{X}$ est le
sous-faisceau de $\pi_*\mathscr{V}_{X_\groth}^{p}$ 
constitué des sections qui appartiennent localement,
et non seulement G-localement, 
à~$\mathscr V_{X_{\mathrm G,\pf}}^{p}$. 

\begin{prop}\label{mes-fasc-paracompact}
Supposons que $X$ est paracompact.
Soit~$(X_i)$ un G-recouvrement de~$X$ ; pour tout~$i$, 
soit~$\lambda_i\in \mathscr V^{p}_\pf(X_i)$. 
Si~$\lambda_i|_{X_i\cap X_j}=\lambda_j|_{X_i\cap X_j}$ 
pour tout~$(i,j)$ il existe une unique
forme-volume~$\lambda\in \mathscr V^{p}_\pf(X)$ 
telle que $\lambda|_{X_i}=\lambda_i$ pour tout~$i$.
\end{prop}
\begin{proof}
L'unicité de~$\lambda$ provient du lemme~\ref{mesure-polyedrale-separe}
ci-dessus. 
En vertu de cette assertion d'unicité, l'existence peut être prouvée après
raffinement du recouvrement~$(Y_i)$. 
Puisque $X$ est paracompact, on peut donc supposer
que le recouvrement~$(Y_i)$ est localement fini. 
Pour tout~$i$, choisissons une partie squelettique
fermée~$\Sigma_i$ de~$Y_i$ telle que~$\lambda_i$ provienne d'un élément
$\lambda_{\Sigma_i}\in \mathscr V^p(\Sigma_i)$. 
Comme le G-recouvrement~$(Y_i)$ est localement fini, 
la réunion~$\Sigma$ des $\Sigma_i$ 
est une partie squelettique fermée de~$X$ 
(proposition~\ref{prop.squelettique-paralineaireX})
dont la famille~$(\Sigma_i)$ est un G-recouvrement. 
Les~$\lambda_{\Sigma_i}$ se recollent alors 
en une section~$\lambda_\Sigma\in \mathscr V^p(\Sigma)$.
Notons $\lambda$ la classe de~$\lambda_{\Sigma}$ dans~$\mathscr V^p(X)$;
alors~$\lambda|_{X_i}=\lambda_i$ quel que soit~$i$. 
\end{proof}

\begin{coro}
Soit $Y$ un domaine \emph{paracompact} de~$X$
et soit $\lambda\in\mathscr V^{\mathscr F,p}_{X_\groth}(Y)$
une forme-volume $p$-dimensionnelle sur~$Y$.
Il existe une partie squelettique fermée~$\Sigma$ de~$Y$ de dimension~$\leq p$
et une $p$-forme-volume~$\lambda_\Sigma$ sur~$\Sigma$ qui définit~$\lambda$.
\end{coro}
Il est vraisemblable que le corollaire ne vaille pas 
sans l'hypothèse que $Y$ est paracompact.

\subsection{}
Soit $Y$ un domaine fermé de~$X$; notons $i$
l'immersion canonique de~$Y$ dans~$X$.
Comme toute partie squelettique fermée de~$Y$ 
est une partie squelettique fermée de~$X$,
le prolongement par zéro définit un homomorphisme de G-préfaisceaux
de $i_*\mathscr V^p_{Y_\groth,\pf}$ dans~$\mathscr V^p_{X_\groth,\pf}$.
On en déduit un morphisme de G-faisceaux 
de  $i_*\mathscr V^p_{Y_\groth}$ dans~$\mathscr V^p_{X_\groth}$.
Si $\lambda$ est une forme-volume $p$-dimensionnelle sur~$Y$,
on notera $i_*\lambda$ la forme-volume $p$-dimensionnelle 
correspondante sur~$X$.

\subsection{}
On dit qu'une forme-volume $p$-dimensionnelle~$\lambda$ sur~$X$
est positive s'il existe un G-recouvrement~$(X_i)$ de~$X$
tel que, pour tout~$i$, $\lambda$ soit représentée
par une formule-volume positive sur une partie squelettique fermée
de~$X_i$.

Dans ce cas, 
toute forme-volume sur une partie squelettie fermée d'un domaine~$Y$
de~$X$ induisant la forme-volume $\omega|_Y$ est elle-même positive.

\section{Mesures associées à une forme-volume $p$-dimensionnelle}

Soit~$X$ un espace G-tropical vertébré de niveau~$n$
et soit $p$ un entier naturel tel que $p\leq n$.
Nous allons associer à toute forme-volume~$\lambda$,
$p$-dimensionnelle, à coefficients boréliens, sur~$X_{\mathrm G}$
une mesure de Borel régulière $\lambda_{\Bor}$ sur~$X$
et, lorsque cette mesure est localement finie,
une mesure de Radon~$\lambda_{\Rad}$ sur~$X$
telle que $\lambda_{\Bor}=\abs{\lambda_{\Rad}}$
qui est positive lorsque $\lambda$ est une forme-volume positive.

Soit $\mathscr G$ une classe de coefficients qui est contenue dans~$\mathscr B$.

\subsection{}
\label{sss.mesure-forme-volume}
Soit~$\Sigma$ et~$\Sigma'$ des parties squelettiques fermées de~$X$ 
telles que $\Sigma \subset \Sigma'$. Soit~$\lambda$ appartenant 
à~$\mathscr V^{\mathscr G,p}(\Sigma)$ 
et soit~$\lambda'$ son prolongement par zéro à $\Sigma'$. 
Notons $\mu$ et $\mu'$ les mesures de Borel sur~$\Sigma$ et~$\Sigma'$ 
associées à~$\lambda$ et~$\lambda'$; lorsqu'on les considère
comme mesures de Borel sur~$X$, elles sont égales.

Les mesure de Borel~$\mu$ et~$\mu'$ sont simultanément localement
finies ou non. Lorsqu'elles le sont, les formes-volume~$\lambda$
et~$\lambda'$ donnent lieu à des mesures de Radon~$\tilde\mu$
et $\tilde \mu'$ sur~$\Sigma$
et~$\Sigma'$ dont $\mu$ et $\mu'$ sont les valeurs absolues.
On a de plus $\tilde\mu=\tilde\mu'$ lorsque ces mesures de Radon
sont identifiées à des mesures de Radon sur~$X$.

Par conséquent, il existe  un unique morphisme
de faisceaux $\lambda\mapsto\lambda_{\Bor}$
sur~$X_{\mathrm G}$ qui, pour tout domaine~$Y$ de~$X$, 
toute partie squelettique fermée~$\Sigma$ de~$Y$
et toute forme-volume~$\lambda_\Sigma$ sur~$\Sigma$, $p$-dimensionnelle
et à coefficients dans~$\mathscr G$,
applique la classe de~$\lambda_\Sigma$ dans~$\mathscr V^{\mathscr G,p}(Y)$
sur la mesure de Borel associée à~$\lambda_\Sigma$,
considérée comme une mesure de Borel sur~$Y$.

Notons $\mathscr V^{\mathscr G,p}_{X_{\mathrm G},\pf}$
le sous-faisceau
de $\mathscr V^{\mathscr G,p}_{X_{\mathrm G}}$ 
constitué des formes-volume~$\lambda$
telles que $\lambda_\Bor$ soit localement finie.
Il existe un unique morphisme $\lambda\mapsto\lambda_\Rad$
de ce faisceau à valeurs dans le faisceau des mesures de Radon sur~$X$ qui,
pour tout domaine~$Y$ de~$X$, toute partie squelettique fermée~$\Sigma$ de~$Y$
et toute forme-volume~$\lambda_\Sigma$ sur~$\Sigma$
dont la mesure de Borel associée soit localement finie,
applique la classe de~$\lambda_\Sigma$ dans~$\mathscr V^{\mathscr G,p}(Y)$
sur la mesure de Radon associée à~$\lambda_\Sigma$,
considérée comme une mesure de Radon sur~$Y$.

Observons que l'on a 
$\mathscr V^{\mathscr G,p}_{X_\groth,\lf}=\mathscr V^{\mathscr G,p}_{X_\groth}$ si toute fonction de~$\mathscr G$ est localement bornée.
Si toute fonction de~$\mathscr G$ est continue,
on a aussi
on a $\lambda_\Bor=0$ si et seulement si $\lambda=0$,
et l'application $\lambda\mapsto \lambda_\Rad$ est 
une injection $\R$-linéaire.

\subsection{}\label{def.fonction-L1loc}
Soit $u\colon X\to\R\cup\{\pm\infty\}$ une fonction sur~$X$.
Les conditions suivantes sont équivalentes:
\begin{enumerate}\def\theenumi{\roman{enumi}}\def\labelenumi{(\theenumi)}
\item Pour toute partie squelettique~$\Sigma$ de~$X$ qui est une cellule
de dimension~$n$ et tout  isomorphisme paralinéaire~$f$ 
de~$\Sigma$ sur une cellule~$C$ de~$\R^n$, 
la fonction $u\circ f^{-1}$ est localement intégrable sur~$C$ pour la mesure 
de Lebesgue. 
\item Pour toute partie squelettique~$\Sigma$ de~$X$ qui est une cellule
de dimension~$n$, il existe un isomorphisme paralinéaire~$f$ 
de~$\Sigma$ sur une cellule~$C$ de~$\R^n$ tel que 
la fonction $u\circ f^{-1}$ soit localement intégrable sur~$C$ pour la mesure 
de Lebesgue. 
\item Pour toute forme volume~$\lambda$ sur~$X$
à coefficients localement bornés, la fonction~$u$
est localement intégrable par rapport à la mesure~$\lambda_\Bor$.
\end{enumerate}
On dit alors que $u$ est \emph{localement intégrable} sur~$X$.\index{fonction localement intégrable sur un espace G-tropical vertébré}
C'est une condition G-locale.

On note $\mathscr L^1_\loc(X)$ l'espace vectoriel
des fonctions localement intégrables,
à valeurs réelles, sur~$X$. 
C'est même un module sur l'anneau des fonctions numériques 
localement bornées sur~$X$.

Plus généralement, on notera $\mathscr L^1_{X,\loc}$ le G-faisceau
des fonctions localement intégrables sur~$X$ qui associe
à un domaine~$V$ de~$X$ l'ensemble des fonctions localement intégrables
sur~$V$.

\subsection{}
On dit qu'une partie~$N$ de~$X$ est \emph{négligeable}\index{partie négligeable!d'un espace G-tropical vertébré} si les propriétés 
équivalentes suivantes sont satisfaites:
\begin{enumerate}\def\theenumi{\roman{enumi}}\def\labelenumi{(\theenumi)}
\item Pour toute partie squelettique~$\Sigma$ de~$X$ qui est une cellule
de dimension~$n$ et tout  isomorphisme paralinéaire~$f$ 
de~$\Sigma$ sur une cellule~$C$ de~$\R^n$, 
le sous-espace~$f(N\cap\Sigma)$ de~$\R^n$ est négligeable pour la mesure 
de Lebesgue. 
\item Pour toute partie squelettique~$\Sigma$ de~$X$ qui est une cellule
de dimension~$n$, il existe un isomorphisme paralinéaire~$f$ 
de~$\Sigma$ sur une cellule~$C$ de~$\R^n$, 
le sous-espace~$f(N\cap\Sigma)$ de~$\R^n$ est négligeable pour la mesure 
de Lebesgue. 
\item La fonction caractéristique de~$N$ est localement intégrable
et pour toute forme volume~$\lambda$ sur~$X$
à coefficients localement bornés et à support compact, la fonction~$u$
est d'intégrale nulle par rapport à la mesure~$\lambda_\Bor$.
\end{enumerate}
C'est une propriété G-locale.

% Tout fermé de Zariski de~$X$ de dimension~$<d$ est négligeable.

Toute partie $n$-négligeable d'une partie squelettique de~$X$ 
(déf.~\ref{PL-negligeable}) est négligeable.

Comme de coutume, on dira 
qu'une propriété sur~$X$ est vérifiée \emph{presque partout}
si l'ensemble des points de~$X$ où elle n'est pas vérifiée
est négligeable.
On notera ainsi $\mathrm L^1_\loc(X)$ l'espace vectoriel quotient 
de~$\mathscr L^1_\loc(X)$ par son sous-espace vectoriel 
des fonctions nulles presque partout.

Plus généralement, on notera $\mathrm L^1_{X,\loc}$ le G-faisceau
des fonctions localement intégrables sur~$X$ modulo celles nulles 
presque partout.

\iffalse
\begin{lemm}
Supposons $X$ équidimensionnel de dimension~$d$ et \emph{sans bord.}
Soit $u\colon X\to\R$ une fonction continue sur~$X$.
Si $u$ est négligeable, alors $u$ est nulle.
\end{lemm}
\begin{proof}
Comme $X$ est équidimensionnel, tout ouvert de~$X$ est de dimension~$d$
et l'ensemble des points~$x\in X$ tels que $d_k(x)=d$ est dense dans~$X$.
Il suffit donc de démontrer que $u(x)=0$ si $d_k(x)=d$.

Soit $x$ un tel point. D'après la proposition~\ref{prop.dim-trop-max},
il existe un voisinage analytique compact~$V$ de~$x$
et un moment~$f$ sur~$V$ tel que $\dim_{\trop,x}(f)=d$.
D'après le théorème~\ref{theo.proprietes-Sigmaf}, (2), 
le point~$x$ appartient à~$f$ et $\dim_x(\Sigma_f)=d$.
Soit $C\subset\Sigma_f$ une cellule de dimension~$d$ contenant~$x$.
 
Par hypothèse,  $u|_C$ est Lebesgue-négligeable;
comme $u|_C$ est continue, elle est nulle. On a donc $u(x)=0$.
\end{proof}
\fi

\clearpage

\section{Intégration des formes différentielles}

Soit $X$ un espace G-tropical calibré de niveau~$n$.

\subsection{}\label{sss.forme-volume}
Soit $\omega\in\mathscr F^{n,n}_{X_\groth}(X)$,
soit $\Sigma$ une partie squelettique fermée de~$X$ 
qui supporte fortement~$\omega$.
% Alors $\Sigma^{(n)}$ est un support  fort de~$\omega$.
Par la construction du~\S\ref{sss.forme-volume-calibrage-pl},
en accouplant la restriction~$\omega|_\Sigma$
à~$\Sigma$ de la forme~$\omega$
et le $n$-calibrage~$\mu_\Sigma$ de~$\Sigma$, on obtient une forme-volume
$n$-dimensionnelle~$\tilde\omega_\Sigma$ sur~$\Sigma$, 
et donc une forme-volume $n$-dimensionnelle sur~$X$ 
au sens du~\S\ref{ss.forme-volume-Gtrop}.
D'après la proposition~\ref{prop.formes-volume-pl-f*} et la
définition d'un support fort, cette forme-volume sur~$X$
ne dépend pas du choix de~$\Sigma$;
on la note $\widetilde\omega$. 
Elle est positive si $\omega$ est une forme positive,
car le calibrage canonique est positif (lemme~\ref{lemm.calibrage-canonique}).

\begin{lemm}\label{lemm.forme-volume-restriction}
Soit $\omega\in\mathscr F^{n,n}_{X_\groth}(X)$. 
On suppose qu'il existe une partie squelettique fermée de~$X$
qui supporte fortement~$\omega$.
Pour tout domaine analytique~$Y$ de~$X$, il existe
une partie squelettique fermée de~$Y$ qui supporte fortement~$\omega|_Y$
et l'on a l'égalité $\widetilde{\omega|_Y}= \tilde\omega|_Y$
de formes-volume $n$-dimensionnelles sur~$Y$.
\end{lemm}
\begin{proof}
Soit $\Sigma$ une partie squelettique fermée de~$X$ qui supporte
fortement~$\omega$. Alors, $\Tau=\Sigma\cap Y$ est une partie
squelettique fermée de~$Y$ qui supporte fortement~$\omega|_Y$.
De plus, on a
\begin{align*}
 \widetilde{\omega|_Y} & = \langle (\omega|_Y)|_\Tau,\mu_\Tau \rangle
 && \text{par définition de~$\widetilde{\omega|_Y}$} \\
& = \langle \omega|_\Tau,\mu_\Tau\rangle && \text{d'après  (\ref{sss.forme-volume-restriction}) } \\
& = \langle (\omega|_\Sigma)|_\Tau,(\mu_\Sigma)|_\Tau\rangle && \text{d'après  (\ref{sss.forme-volume-restriction}) et la définition d'un calibrage} \\
& = \langle \omega|_\Sigma,\mu_\Sigma\rangle |_T
= \widetilde\omega|_Y 
\end{align*}
parce que l'accouplement entre $(n,n)$-formes et calibrages
est un morphisme de faisceaux sur le site~$X_\groth$.
Le lemme est ainsi démontré.
\end{proof}

\begin{prop}
Il existe un unique morphisme de faisceaux
$\mathscr F^{n,n}_{X_\groth}\to \mathscr V^n_{X_\groth}$
sur~$X_\groth$, noté $\omega\mapsto\widetilde\omega$,
qui prolonge l'application définie en~\textup{(\ref{sss.forme-volume})}
lorsque $\omega$
possède un support fort. 
Ce morphisme est $\mathscr F^{0,0}_{X_\groth}$-linéaire.
Si $\omega$  est une G-forme positive, 
la forme-volume $\widetilde\omega$ est positive.
Si $\omega$ est à coefficients dans une classe~$\mathscr G$,
il en va de même de la forme-volume~$\widetilde\omega$.
\end{prop}
\begin{proof}
Compte tenu du lemme~\ref{lemm.forme-volume-restriction},
la première assertion résulte de ce que toute forme sur un domaine 
paracompact possède un support fort, et que tout domaine 
est G-recouvert par ses domaines paracompacts.
Les autres propriétés découlent alors de la construction.
\end{proof}

\subsection{}
Explicitons cette construction lorsque $\omega$ est tropicale.
Soit $f\colon X\to E$ une carte G-tropicale et soit $\alpha$ une
$(n,n)$-forme sur un sous-espace paralinéaire de~$E$
contenant $f(X)$ telle que $\omega=f^*\alpha$.
Alors, $\Sigma_f^{(n)}$ est un support fort de~$\omega$
(lemme~\ref{lemm.Sigmaf-support-fort})
et on a  l'égalité de formes-volume sur~$\Sigma_f^{(n)}$ :
\begin{equation}
 \tilde\omega=\langle f_\trop^*\alpha,\mu_{\Sigma_f^{(n)}}\rangle. 
\end{equation}
Si, de plus, la carte G-tropicale~$f$ est agréable, 
la proposition~\ref{prop.formes-volume-pl-f*} entraîne alors que
\begin{equation}\label{n-forme-volume-but}
f_{\trop,*}(\tilde\omega)
 = \langle \alpha, f_*\mu_{\Sigma_f^{(n)}}\rangle
= \langle \alpha, \mu_{\Pi_f}\rangle.
\end{equation}

\subsection{}
\label{defi.integrale.n,n}
Soit $\omega$ une forme de type~$(n,n)$ sur~$X_\groth$
à coefficients tropicalement boréliens.
Nous venons de lui associer une 
forme-volume $n$-dimensionnelle~$\tilde\omega$.
Nous  noterons alors $\omega_{\Bor}$ la mesure de Borel sur~$X$
associée à~$\tilde\omega$ 
par la construction du~\S\ref{sss.mesure-forme-volume}.

Nous dirons que $\omega$ est
\emph{localement intégrable}\index{forme localement intégrable} si 
cette mesure est localement finie,
et que $\omega$ est intégrable si cette mesure est finie.
Lorsque $\omega$ est localement intégrable, nous noterons~$\omega_\Rad$
la mesure de Radon associée à~$\tilde\omega$;
on a $\abs{\omega_\Rad}=\omega_\Bor$. Lorsque $\omega$
est une G-forme positive, la  mesure~$\omega_\Rad$ est positive.

Si $f$ est une fonction borélienne sur~$X$,
on notera
$\int_X f \abs{\omega}$ et $\int_X f\omega$ 
les intégrales de~$f$
contre les mesures~$\omega_\Bor$ et~$\omega_\Rad$ lorsqu'elles existent.
Cela définit en particulier l'intégrale $\int_X\omega$
pour toute G-forme de type $(n,n)$ à support compact
qui est localement intégrable (par exemple lisse).

Les G-formes localement intégrables constituent un sous-espace vectoriel
de~$\mathscr F^{n,n}(X_\groth)$ contenant les G-formes lisses. 
Les G-formes intégrables en sont un sous-espace vectoriel qui
contient les G-formes lisses à support compact.

\subsection{}\label{ss.G-forme-L1loc}
Soit $u$ une fonction localement intégrable sur~$X$
et soit $\omega$ une G-forme lisse de type~$(n,n)$ sur~$X$.
Alors, $u$ est localement intégrable par rapport à la mesure~$\omega_\Bor$,
de sorte que $u\omega_\Rad$ est une mesure de Radon sur~$X$.

L'application $(u,\omega)\mapsto u\omega_\Rad$ est bilinéaire;
elle induit une application linéaire de 
$\mathscr L^1_\loc(X_\groth)\otimes \mathscr A^{n,n}(X_\groth)$
dans l'ensemble des mesures de Radon sur~$X$.
Cette application se factorise par $\mathrm L^1_\loc({X_\groth})
\otimes \mathscr A^{n,n}(X_\groth)$.
On la notera bien sûr
$\alpha\mapsto \alpha_\Rad$.

Cette construction s'étend en des morphismes de G-faisceaux
\[ \mathscr L^1_{\loc,X_\groth}\otimes_{\mathscr A_{X_\groth}} \mathscr A^{n,n}_{X_\groth}
\to
\mathscr L^1_{\loc,X_\groth}\otimes_{\mathscr A_{X_\groth}} \mathscr A^{n,n}_{X_\groth}
\to \mathscr M_{\Rad,X_\groth}
\]
sur~$X$, où $\mathscr M_{\Rad,X_\groth}$
est le G-faisceau des mesures de Radon sur~$X$.
% Remarquons que ces deux produits tensoriels ont été
% noté 
% $(\mathscr L^1_{\loc,X_\groth})^{n,n}$
% et
% $(\mathrm L^1_{\loc,X_\groth})^{n,n}$
% au paragraphe~\ref{sss.coefficients-generaux}.

Une section du faisceau
 $\mathscr L^1_{\loc,X_\groth}\otimes_{\mathscr A_{X_\groth}}\mathscr A^{p,q}_{X_\groth}$
sera appelée G-forme de type~$(p,q)$ à coefficients localement intégrables.

\iffalse
\begin{prop}\label{prop.mesure-chgt-base}
Soit $L$ une extension complète de~$k$, soit $p\colon X_L\to X$
le morphisme de changement de base.
Pour toute forme~$\omega$ de type~$(n,n)$ sur~$X_\groth$
à coefficients tropicalement boréliens,
on a $p_*(\omega_{L,\Bor})=\omega_{\Bor}$.
Lorsque $\omega$ est localement intégrable,
on a $p_*(\omega_{L,\Rad})=\omega_{\Rad}$.
\end{prop}
\begin{proof}
En revenant aux définitions, on déduit de la compatibilité
du calibrage canonique à l'extension des scalaires
(proposition~\ref{prop.calibrage-chgt-base})
l'égalité $p_*\widetilde{\omega_L}=\omega_L$ de formes-volume
$n$-dimensionnelles. La proposition en résulte.
\end{proof}
\fi

\section{Mesures de bord}

Soit $X$ un espace G-tropical calibré de niveau~$n$.

\subsection{}\label{sss.forme-volume-bord}
Soit 
$\omega\in\mathscr F^{n-1,n}_{X_\groth}(X)$,
soit $\Sigma$ une partie squelettique fermée de~$X$ 
qui supporte fortement~$\omega$.
En accouplant la restriction à~$\Sigma^{(n)}$ de la forme~$\omega$
et le calibrage canonique de~$\Sigma^{(n)}$ par
la construction du~\S\ref{sss.forme-bord-calibrage-pl},
on obtient une forme-volume
$(n-1)$-dimensionnelle sur~$\Sigma^{(n)}$. 
D'après la proposition~\ref{prop.formes-volume-bord-pl-f*}
et la définition d'un support fort,
la forme-volume $(n-1)$-dimensionnelle sur~$X$ qu'on en déduit
ne dépend pas du choix de~$\Sigma$; on la note~$\widetilde\omega$.

% D'après le lemme précédent,
% la proposition~\ref{prop.formes-volume-pl-f*}
% et la définition des calibrages canoniques,
% elle ne dépend pas du choix de~$\Sigma$.

\begin{lemm}\label{lemm.forme-bord-restriction}
Soit $\omega\in\mathscr F^{n-1,n}_{X_\groth}(X)$. 
On suppose qu'il existe une partie squelettique fermée de~$X$
qui supporte fortement~$\omega$.
Pour tout domaine analytique \emph{ouvert}~$Y$ de~$X$, il existe
une partie squelettique fermée de~$Y$ qui supporte fortement~$\omega|_Y$
et l'on a l'égalité $\widetilde{\omega|_Y}= \tilde\omega|_Y$
de formes-volume $(n-1)$-dimensionnelles sur~$Y$.
\end{lemm}
\begin{proof}
Soit $\Sigma$ une partie squelettique fermée de~$X$ qui supporte
fortement~$\omega$. Alors, $\Tau=\Sigma\cap Y$ est une partie
squelettique fermée de~$Y$ qui supporte fortement~$\omega|_Y$.
De plus, on a
\begin{align*}
 \widetilde{\omega|_Y} & = \langle (\omega|_Y)|_\Tau,\mu_\Tau \rangle
 && \text{par définition de~$\widetilde{\omega|_Y}$} \\
& = \langle \omega|_\Tau,\mu_\Tau\rangle && \text{d'après  (\ref{sss.forme-volume-restriction}) } \\
& = \langle (\omega|_\Sigma)|_\Tau,(\mu_\Sigma)|_\Tau\rangle && \text{d'après  (\ref{sss.forme-volume-restriction}) et la définition d'un calibrage} \\
&= \langle \omega|_\Sigma,\mu_\Sigma\rangle|_\Tau = \widetilde\omega|_Y 
\end{align*}
car l'accouplement entre $(n-1,n)$-formes et calibrages
est un morphisme de faisceaux sur le site topologique de~$X$.
Le lemme est ainsi démontré.
\end{proof}

\begin{prop}
Notons $\pi\colon X_{\groth}\to X$ le morphisme canonique de sites.
Il existe un unique morphisme de faisceaux
$\pi_*\mathscr F^{n-1,n}_{X_\groth}\to \pi_*\mathscr V^{n-1}_{X_\groth}$
sur l'espace topologique~$X$, noté $\omega\mapsto\widetilde\omega$,
qui prolonge l'application définie en~\textup{(\ref{sss.forme-volume-bord})}
lorsque $\omega$ possède un support fort. 
Ce morphisme est $\mathscr F^{0,0}_X$-linéaire.
\end{prop}
\begin{proof}
Compte tenu du lemme~\ref{lemm.forme-bord-restriction},
cela résulte de ce que toute G-forme sur un ouvert 
paracompact possède un support fort, et que tout espace G-tropical
est recouvert par ses ouverts paracompacts.
\end{proof}

\subsection{}
Explicitons cette construction lorsque $\omega$ est tropicale.
Soit $f\colon X\to E$ une carte G-tropicale et soit $\alpha$ une
$(n-1,n)$-forme sur un sous-espace paralinéaire de~$E$
contenant $f(X)$ telle que $\omega=f^*\alpha$.
Alors, $\Sigma_f^{(n)}$ est un support fort de~$\omega$
(lemme~\ref{lemm.Sigmaf-support-fort})
et on a  l'égalité de formes-volume $(n-1)$-dimensionnelles
sur~$\Sigma_f^{(n)}$ :
\begin{equation}
 \tilde\omega=\langle f^*\alpha,\mu_{\Sigma_f^{(n)}}\rangle. 
\end{equation}
Si, de plus, l'application $f|_{\Sigma_f^{(n)}}$ est topologiquement
propre, 
la proposition~\ref{prop.formes-volume-pl-f*} entraîne alors que
\begin{equation}
\label{n-1-forme-volume-but}
f_{\trop,*}(\tilde\omega)
 = \langle \alpha, f_*\mu_{\Sigma_f^{(n)}}\rangle
= \langle \alpha, \mu_{\Pi_f}\rangle.
\end{equation}
Rappelons alors (\cf \S\ref{sss.forme-bord-pl-discordance})
que la forme-volume 
$(n-1)$-dimensionnelle~$\langle\alpha,\mu_{\Pi_f}\rangle$
se calcule par accouplement 
avec la discordance du calibrage de~$\Pi_f$.

\begin{prop}[Compensation des bords]
\label{prop.compensation}
Soit $(X_i)_{i\in I}$ un G-recouvrement localement fini de~$X$
et soit $\omega\in\mathscr F_{X_\groth}^{n-1,n}(X)$ une G-forme 
de type $(n-1,n)$ sur~$X$.
Pour toute partie~$J$ de~$I$, notons $X_J=\bigcap_{j\in J}X_j$
et $u_J$ l'immersion de~$X_J$ dans~$X$.
On a l'égalité
\[ \tilde\omega = \sum_{\emptyset \neq J\subset I}
 (-1)^{\Card(J)-1} (u_J)_* \widetilde{\omega|_{X_J}}. \]
\end{prop}
\begin{proof}
En raisonnant localement, on suppose que $X$ est paracompact
et donc que $\omega$ possède un support fort.
La formule découle alors de la proposition~\ref{prop.pl-compensation}.
\end{proof}

\subsection{}
\label{defi.integrale.n-1,n}
Soit $\omega$ une G-forme de type~$(n-1,n)$ sur~$X$ à coefficients
tropicalement boréliens.
Nous venons de lui associer une forme-volume 
$(n-1)$-dimensionnelle~$\tilde\omega$.
Nous  noterons alors $\omega_{\Bor}$ la mesure de Borel sur~$X$
associée par la construction du~\S\ref{sss.mesure-forme-volume}.

Lorsque la forme-volume~$\tilde\omega$ est localement intégrable,
la mesure~$\omega_\Bor$ est localement finie; on note alors~$\omega|_\Rad$
la mesure de Radon associée à~$\tilde\omega$.
On a $\abs{\omega_\Rad}=\omega_\Bor$.
 
Si $f$ est une fonction borélienne sur~$X$,
on notera
$\int_X^\partial f \abs{\omega}$ et $\int_X^\partial f\omega$ 
les intégrales de~$f$
contre les mesures~$\omega_\Bor$ et~$\omega_\Rad$ lorsqu'elles existent.
Cela définit en particulier l'intégrale $\int_X^\partial\omega$
pour toute G-forme de type $(n-1,n)$ à support compact
qui est localement intégrable (par exemple lisse).

\iffalse
Lorsque $\omega$ est une forme (et pas seulement une G-forme),
le théorème~\ref{theo.support-mesure-bord} entraîne
que le support de~$\omega_\Bor$ est contenu dans~$\partial(X)$,
justifiant ainsi l'expression « mesure de bord » et la notation 
$\int^\partial$.\footnote{Il arrive souvent que le bord de~$X$
ait une structure d'espace analytique sur une extension complète
de~$k$. Cependant cette structure n'est pas canonique
et nos intégrales $\int^\partial_X$ ne s'interprètent apparemment
pas comme des intégrales sur cet espace analytique mal défini.}

\begin{prop}
Soit $L$ une extension complète de~$k$, soit $p\colon X_L\to X$
le morphisme de changement de base.
Pour toute forme~$\omega$ de type~$(n-1,n)$ sur~$X_\groth$
à coefficients tropicalement boréliens,
on a $p_*(\omega_{L,\Bor})=\omega_{\Bor}$.
Lorsque $\omega$ est localement intégrable,
on a $p_*(\omega_{L,\Rad})=\omega_{\Rad}$.
\end{prop}
\begin{proof}
En revenant aux définitions, on déduit de la compatibilité
du calibrage canonique à l'extension des scalaires
(proposition~\ref{prop.calibrage-chgt-base})
l'égalité $p_*\widetilde{\omega_L}=\omega_L$ de formes-volume
$(n-1)$-dimensionnelles. La proposition en résulte.
\end{proof}
\fi

\section{Intégrales et tropicalisations}

Soit $X$ un espace G-tropical calibré de niveau~$n$.

\begin{prop}\label{proposition.formule-au-but}
Soit $f\colon X\to E$ une carte G-tropicale agréable,
soit $\Pi_f$ le polyèdre tropical de la carte~$f$ 
et soit $\mu_{\Pi_f}$ son calibrage.

Soit $p\in\{n-1,n\}$ et soit $\alpha$ une $(p,n)$-forme à coefficients
boréliens sur un sous-espace
paralinéaire~$P$ de~$E$ contenant~$f(X)$.

\begin{enumerate}
\item
Si $p=n$, on a l'égalité
\[ f_{\trop,*} (f^*\alpha)_\Bor \geq \langle \alpha,\mu_{\Pi_f}\rangle_\Bor \]
de mesures de Borel sur~$\Pi_f^{(n)}$.

\item
Si $p=n-1$, on a l'inégalité
\[ f_{\trop,*} (f^*\alpha)_\Bor \geq \langle \alpha,\mu_{\Pi_f}\rangle_\Bor \]
de mesures de Borel sur~$\Pi_f^{(n)}$.

\item
La mesure $(f^*\alpha)_\Bor$ est localement finie 
si et seulement si la forme $\alpha|_{\Pi_f}$ est localement intégrable
sur~$\Pi_f$;  
dans ce cas, on a l'égalité 
\[ f_{\trop,*} (f^*\alpha)_\Rad = \langle \alpha,\mu_{\Pi_f}\rangle_\Rad \]
de mesures de Radon sur~$\Pi_f$.

\item
En particulier, si $\alpha$ est intégrable sur~$\Pi_f$,
la forme $f^*\alpha$ est intégrable sur~$X$,
on  a l'égalité
\begin{equation} \label{eq.integrale-but}
 \int_X  f^*\alpha = \int_{\Pi_f} \langle \alpha,\mu_{\Pi_f}\rangle  \end{equation}
lorsque $p=n$ et l'égalité
\begin{equation} \label{eq.integrale-but-bord}
 \int_X^\partial  f^*\alpha = \int_{\Pi_f}^\partial \langle \alpha,\mu_{\Pi_f}\rangle  \end{equation}
lorsque $p=n-1$.
\end{enumerate}
\end{prop}
\begin{proof}
Posons $\omega=f^*\alpha$. 
D'après la relation~\eqref{n-forme-volume-but} lorsque $p=n$
et la relation~\eqref{n-1-forme-volume-but} lorsque $p=n-1$, 
on a l'égalité $(f_\trop)_*(\tilde\omega)=\langle\alpha,\mu_{\Pi_f}\rangle$
de formes-volume $p$-dimensionnelles sur $\Pi_f$.
Le lemme~\ref{lemm.f_*-Bor-Rad} entraîne alors
que $f_*(\tilde\omega_\Bor)\geq \langle\alpha,\mu_{\Pi_f}\rangle_\Bor$.
Loreque $p=n$, on vérifie, en revenant aux définitions
et en utilisant la positivité du calibrage canonique,
que l'on a en fait l'égalité, d'où~(1).
En appliquant le lemme~\ref{lemm.f_*-Bor-Rad} aux mesures
de Radon correspondantes, on en déduit~(2), puis~(3) par intégration.
\end{proof}

\subsection{}\label{ss.integrale-but-suffit}
La formule~\eqref{eq.integrale-but} définit explicitement
l'intégrale d'une forme tropicale. Prouvons inversement que cette relation
dans des cartes tropicales permet de reconstituer
la mesure de Radon associée à une G-forme de type~$(n,n)$
% ou à une forme de type~$(n-1,n)$ sur un espace analytique,
et en donne ainsi une définition tropicale. 

Soit $\omega$ une G-forme
de type~$(n,n)$ sur~$X$; supposons-la à support compact et 
à coefficients tropicalement boréliens localement bornés.

Soit $(V_i)_{i\in I}$ une famille finie de domaines compacts
de~$X$ tropicalisant~$\omega$ 
et dont dont la réunion~$V$ recouvre le support de~$\omega$.
Pour toute partie non vide~$J$ de~$I$, posons $V_J=\bigcap_{j\in J} V_j$.
On a alors
\[ \int_X\omega=\int_V\omega 
= \sum_{\emptyset\neq J\subset I} (-1)^{\Card(J)-1}  \int_{V_J}\omega. \]
Pour toute partie non vide~$J$, la forme~$\omega$ est tropicale sur~$V_J$,
et $V_J$ est compact, si bien que toute carte G-tropicale sur~$V_J$.
Ainsi, l'intégrale $\int_{V_J}\omega$
peut se calculer à l'aide de la formule~\eqref{eq.integrale-but}.

\section{Formules de Stokes et Green}

Soit $X$ un espace G-tropical calibré de niveau~$n$.

\begin{prop}\label{prop.stokes}
\begin{enumerate}
\item
Soit $\omega$ une G-forme à coefficients~$\mathscr C^1$
et à support compact sur~$X$,
de type~$(n-1,n)$. On a la \emph{formule de Stokes} :
\[ \int_X \di\omega = \int^\partial_X \omega. \]
\item
Soit $\alpha$ et $\beta$ des G-formes à coefficients~$\mathscr C^2$
sur~$X$,  symétriques,
de type~$(p,p)$ et~$(q,q)$ respectivement, où $p+q=n-1$.
Si l'intersection de leurs supports est compact, on a
la \emph{formule de Green}:
\[ \int_X (\alpha\wedge \ddc \beta - \ddc\alpha\wedge\beta) 
= \int^\partial_X (\alpha\wedge\dc\beta-\dc\alpha\wedge\beta). \]
\end{enumerate}
\end{prop}
\begin{proof}
Comme le support de~$\omega$ est compact, il existe
une partie squelettique fermée~$\Sigma$ de~$X$ qui la supporte fortement.
La formule de Stokes découle alors de la formule correspondante
en géométrie paralinéaire (proposition~\ref{prop.stokes-pl}),
appliquée à~$\Sigma^{(n)}$, à la forme~$\omega|_{\Sigma^{(n)}}$ 
et au calibrage canonique de~$\Sigma^{(n)}$.

La formule de Green s'en déduit comme dans le lemme~\ref{lemm.green-affine}
en appliquant la formule de Stokes à la forme à support compact
$\omega= \alpha\wedge\dc\beta-\dc\alpha\wedge\beta$.
Lorsque $\alpha$ et~$\beta$ sont toutes deux à support compact,
elle découle aussi de la formule de Green en géométrie paralinéaire,
appliqué à une partie polyédrale qui supporte
fortement
ces deux formes.
\end{proof}

% \begin{rema}\label{rema.stokes}
% Lorsque $\omega$ est une forme (et non seulement une G-forme) sur~$X$,
% le support de la mesure associée à~$\omega$ est contenu dans~$\partial(X)$;
% la formule de Stokes prend alors la forme familière:
% \[ \int_X \di\omega = \int_X^\partial \omega. \]
% De même, lorsque $\alpha$ et~$\beta$ sont des formes, la formule de Green
% s'écrit
% \[ \int_X (\alpha\wedge \ddc \beta - \ddc\alpha\wedge\beta) 
% = \int_X^\partial (\alpha\wedge\dc\beta-\dc\alpha\wedge\beta). \]
% \end{rema}

\chapter{Espaces tropicaux}
\section{Espaces G-tropicaux affinés}
\label{sec.G-trop-aff}

Soit $X$ un espace G-tropical.

\subsection{}
On se donne un sous-préfaisceau en $\Q$-espaces
vectoriels~$\mathrm L_X$ du G-faisceau $\PL_X$ qui contient les fonctions
constantes,
qu'on appelle le préfaisceau des fonctions affines sur~$X$.

On suppose:
\begin{enumerate}
\item Pour tout domaine~$V$ de~$X$, le
préfaisceau sur~$V$ induit par~$\mathrm L_X$ est un faisceau
pour sa topologie usuelle.
Autrement dit, \emph{toute fonction paralinéaire~$f$ sur un domaine~$V$ 
de~$X$ qui est localement affine est affine.}
\item Pour tout domaine~$V$ de~$X$,
une fonction sur~$V$ est paralinéaire si et seulement si 
elle est G-localement affine.
\end{enumerate}

On dit qu'une carte G-tropicale $(V,f)$ sur un domaine
est \emph{tropicale}  si $f$ est affine.

\subsection{}
Un espace G-tropical muni d'un tel sous-préfaisceau
est appelé \emph{espace G-tropical affiné}.\index{espace G-tropical!affiné}

\subsection{}
Tout domaine d'un espace G-tropical affiné
est naturellement muni d'une structure d'espace G-tropical affiné.

\begin{exem}
Soit $X$ un sous-espace paralinéaire d'un espace affine~$E$.
Soit $\mathrm L_X$ le G-préfaisceau 
dont les sections sur un sous-espace paralinéaire~$U$
de~$X$ sont les fonctions sur~$U$ qui sont localement (pour la topologie
ordinaire) restrictions de fonctions affines sur~$E$.
Alors $\mathrm L_X$ est un affinage de~$X$
qui munit~$X$ d'une structure d'espace paralinéaire affiné.
\end{exem}

\begin{exem}
Nous définirons plus tard l'espace G-tropical associé  
à un espace analytique au sens de Berkovich
et démontrerons (proposition~\ref{prop.parties-polyedrales-moments})
que 
% si l'espace est presque bon,
la restriction des fonctions de la forme $r \log (\abs{f})+c$,
où $f$ est une fonction analytique inversible, $r\in\Q$ et $c\in\R$,
en constituent un affinage.
\end{exem}

\subsection{}
Si $X$ et $Y$ sont des espaces G-tropicaux affinés,
un morphisme $f\colon X \to Y$ d'espaces G-tropicaux
est dit affine  si pour tout domaine~$V$ de~$Y$
et toute fonction affine $\phi\in\mathscr L_Y(V)$,
la fonction $\phi\circ f$ sur le domaine~$f^{-1}(V)$  de~$X$
est affine.

Si $f\colon X\to E $ est une application paralinéaire
vers un espace affine~$E$ 
muni de son affinage canonique,
il revient au même de dire que $f$ est un morphisme
affine ou que les composantes de~$f$ (dans un repère affine donné)
sont des fonctions affines sur~$X$.

\section{Formes lisses sur un espace G-tropical affiné}
Soit $\mathscr G$ une classe  de coefficients.

\begin{defi}
Soit $(p,q)$ un couple d'entiers naturels.
Soit $\omega$ 
une G-forme de type~$(p,q)$ à coefficients dans~$\mathscr G$
sur un domaine~$V$ de~$X$.

On dit que $\omega$ est \emph{tropicale}
s'il existe une carte tropicale qui G-tropicalise~$\omega$.

On dit que $\omega$ est une \emph{forme}  
(de type~$(p,q)$ à coefficients dans~$\mathscr G$)
si pour tout point~$x$ de~$V$,
il existe un \emph{voisinage ouvert}~$U$  de~$x$ dans~$V$
sur lequel $\omega$ est tropicale.
\end{defi}

Lorsque $\mathscr G$ est la classe des fonctions lisses,
on parle de forme lisse
sur l'espace G-tropical affiné~$X$.\index{forme lisse sur un espace G-tropical affiné}
Si de plus $p=q=0$, on parle de fonction lisse.

\subsection{}
Les formes lisses forment un sous-G-préfaisceau 
en $\R$-espaces vectoriels de~$\mathscr A^{p,q}_X$
qui est un faisceau pour la topologie usuelle.
Ces faisceaux sont stables par les opérateurs~$\di$, $\dc$, $\mathrm J$
et par le produit extérieur.

\subsection{}
Un espace G-tropical affiné~$(X,\mathrm L_X)$
est dit affinement séparé
si pour tout couple $(x,y)$ de points de~$X$ avec $x\neq y$,
il existe une fonction affine~$f$ sur~$X$ telle que $f(x)\neq f(y)$.
A fortiori, les fonctions lisses sur~$X$ séparent les points.
On dira alors que $X$ est différentiellement séparé.

Si $X$ est affinement (resp. différentiellement)
séparé, il en est de même de tout domaine.

Un espace G-tropical affiné $(X,\mathrm L_X)$ est dit 
localement affinement (resp. différentiellement) séparé 
si chacun de ses points possède 
un voisinage affinement (resp. différentiellement) séparé.

Cela entraîne que le faisceau des fonctions lisses sépare localement
les points au sens du~\S\ref{ss.app-sep}.

\begin{coro}\label{coro.trop-lisse-separe}
Soit $(X,\mathrm L_X)$ un espace G-tropical affiné
qui est localement différentiellement séparé.
Soit $K$ une partie compacte de~$X$ et soit $U$ un voisinage ouvert de~$K$.
Il existe une fonction lisse sur~$X$ qui vaut~$1$ au voisinage de~$K$
et dont le support est compact et contenu dans~$U$.
\end{coro}
\begin{proof}
C'est une conséquence du corollaire~\ref{coro.app-lisse-separe}.
\end{proof}

\begin{theo}[Stone--Weierstraß] \label{theo.trop-sw}
Soit $(X,\mathrm L_X)$ un espace G-tropical affiné qui est localement
différentiellement séparé et soit $U$ un ouvert de~$X$.
Soit $f$ une fonction continue à valeurs réelles sur~$X$
dont le support est compact et contenu dans~$U$.
Soit $a$ et $b$ des nombres réels tels que $a\leq f\leq b$.
Pour tout~$\eps>0$, il existe une fonction lisse~$g$ sur~$X$
dont le support est compact, contenu dans~$U$,
telle que $\abs{f(x)-g(x)}<\eps$ pour tout $x\in X$
et telle que $a\leq g\leq b$.
\end{theo}
\begin{proof}
C'est une conséquence du théorème~\ref{theo.app-sw}.
\end{proof}

\begin{coro}\label{coro.trop-sw}
Soit $(X,\mathrm L_X)$ un espace G-tropical affiné
qui est localement différentiellement séparé.

\begin{enumerate}
\item L'ensemble des fonctions lisses à support compact sur~$X$
est dense dans l'espace~$\mathscr C_\cpct(X)$
des fonctions continues à support compact sur~$X$, 
muni de la topologie limite inductive des espaces
de fonctions à support dans un compact donné.
\item L'ensemble des fonctions lisses à support compact sur~$X$
est dense dans l'espace~$\mathscr C(X)$ des fonctions continues sur~$X$
muni de la topologie de la convergence uniforme sur tout compact.
\end{enumerate}
\end{coro}
\begin{proof}
C'est une conséquence du corollaire~\ref{coro.app-sw}.
\end{proof}

\begin{prop}[Partitions de l'unité lisses]
\label{prop.trop-partition.1.lisse}
On suppose que $X$ est paracompact et
que $(X, \mathrm L_X)$ est localement
différentiellement séparé.
Soit $(U_i)_{i\in I}$ un recouvrement ouvert de~$X$.
Il existe une partition de l'unité\index{partition de l'unité} 
subordonnée à~$(U_i)$
formée de fonctions lisses sur~$X$.
\end{prop}
\begin{proof}
Cela résulte de la proposition~\ref{prop.app-partition.1.lisse}.
\end{proof}

\begin{coro}
\label{prop.trop-partition.1.lisse-compact}
Soit $F$ une partie fermée de~$X$ qui est dénombrable à l'infini
(par exemple, compacte).
On suppose que $(X, \mathrm L_X)$ est localement différentiellement séparé.
Soit $(U_i)_{i\in I}$ un recouvrement de~$F$ par des ouverts de~$X$.
Il existe une famille localement finie $(\theta_i)$ 
de fonctions lisses sur~$X$, à support compact, positives,
telle que pour tout~$i$, le support de~$\theta_i$ soit contenu dans~$U_i$
et que $\sum\theta_i$ vaille~$1$ au voisinage de~$F$.
\end{coro}
\begin{proof}
Comme $F$ est dénombrable à l'infini, elle possède un voisinage ouvert~$U$
dans~$X$ qui est paracompact
(lemme~\ref{comp.fort.paracomp}). 
Comme $(U, \mathrm L_U)$ 
est localement différentiellement séparé,
l'assertion résulte 
de la proposition~\ref{prop.app-trop-partition.1.lisse-compact}.
\end{proof}

\begin{coro}\label{coro.pl-fin}
On suppose que $X$ est paracompact et que $(X,\mathrm L_X)$
est localement différentiellement séparé.
Les faisceaux $\mathscr A^{p,q}_X$ de  formes lisses sur~$X$ sont fins.
\end{coro}
\begin{proof}
D'après le corollaire~\ref{coro.app-fin},
le faisceau des fonctions lisses sur~$X$ est fin.
Comme les faisceaux de formes sont des faisceaux de modules
sur le faisceau des fonctions lisses, ils sont fins.
\end{proof}

\subsection{}
Décrivons maintenant la topologie naturelle sur les espaces des formes;
nous reprenons les notations introduites au paragraphe~\ref{ss.topologie-jeu-de-carte}
lorsque nous avons définie la topologie des espaces de G-formes.

Soit $W$ un domaine compact de~$X$.
Notons $\mathscr D^\circ_W$ l'ensemble des jeux de cartes~$D$ 
sur~$X$ tels que la réunion des intérieurs des domaines de cartes
appartenant à~$D$ recouvre~$W$.
L'espace $\mathscr A^{p,q}(X)$ est réunion croissante, lorsque~$D$
parcourt l'ensemble préordonné filtrant~$\mathscr D^\circ_W(X)$
des sous-espaces $\mathscr A^{p,q}_D(X)$.

\section{Description locale d'objets G-locaux}

Nous introduisons maintenant la  notion d'espace G-tropical affiné décent:
ce sont des espaces possédant suffisamment de fonctions affines pour
pouvoir décrire localement les fonctions paralinéaires
et les domaines. L'intérêt de cette notion est qu'alors
les squelettes et les G-formes peuvent alors être décrites \emph{localement}
au moyen de fonctions affines.

\begin{defi}\label{defi.espace-decent}
On dit qu'un espace G-tropical affiné~$X$ est \emph{décent} 
en un point~$x$ si 
pour tout domaine~$V$ de~$X$ contenant~$x$,
les deux conditions suivantes sont vérifiées:
\begin{enumerate}
\item
Pour toute fonction affine~$f$ sur~$V$,
il existe un voisinage ouvert~$U$ de~$x$ dans~$X$
et une fonction affine~$g$ sur~$U$ 
telle que $f$ et~$g$ coïncident sur $U\cap V$;
\item
Il existe un voisinage ouvert~$U$ de~$x$ dans~$X$
tel que $U\cap V$ soit défini dans~$U$ par conjonction
et disjonction d'inégalités larges entre fonctions affines sur~$U$.
\end{enumerate}
\index{espace G-tropical affiné décent}
\end{defi}

On dit que $X$ est décent s'il est décent en chacun de ses points.

Tout domaine d'un espace G-tropical affiné décent est décent.

\begin{prop}\label{prop.decent-extension}
Soit $X$ un espace G-tropical affiné et soit $\mathscr G$
une classe de coefficients.
Soit $V$ un domaine de~$X$ et soit $\omega$ une forme sur~$V$
à coefficients dans~$\mathscr G$.

\begin{enumerate}
\item Soit $x$ un point de~$V$ en lequel $X$ est décent.
Il existe un voisinage domanial~$U$ de~$x$ dans~$X$ et une forme~$\omega'$
de classe~$\mathscr G$ sur~$U$ 
telle que $\omega$ et $\omega'$ coïncident sur~$V\cap U$.
\item Si le support de~$\omega$ est décent et dénombrable à l'infini
et si $X$ est localement différentiellement séparé, il existe
une forme~$\omega'$ de classe~$\mathscr G$ sur~$X$ qui prolonge~$\omega$.
\item Si le support de~$\omega$ est décent et compact 
et si $X$ est localement différentiellement séparé, 
il existe une forme~$\omega'$ de classe~$\mathscr G$ à support compact sur~$X$ 
qui prolonge~$\omega$.
\end{enumerate}
\end{prop}
\begin{proof}
\begin{enumerate}
\item Soit $V'$ un voisinage domanial compact de~$x$ dans~$V$
sur lequel $\omega$ est tropicale,
soit $f\colon V'\to\R^n$ une carte tropicale et soit $\alpha$
une forme de classe~$\mathscr G$ sur $f(V')$ telle que $\omega|_{V'}=f^*\alpha$.

La partie~$f(V')$ de~$\R^n$ est compacte.
D'après le lemme~\ref{lemm.extensions},
il existe une forme~$\alpha'$ de classe~$\mathscr G$ 
sur~$\R^n$ qui prolonge~$\alpha$.

Comme $X$ est décent en~$x$ (définition~\ref{defi.espace-decent}), 
il existe 
une fonction affine~$f'$ 
définie sur un voisinage ouvert~$U$ de~$x$ qui prolonge~$f|_{U\cap V'}$.
Quitte à restreindre~$U$, on peut supposer que $U\cap V$ est contenu
dans le voisinage~$V'$ de~$x$ dans~$V$; alors, $U\cap V'=U\cap V$. 
Posons $\omega=f'^*\alpha'$; c'est une forme de classe~$\mathscr G$
sur~$U$ qui prolonge la restriction de~$\omega$ à~$U\cap V$.

\item 
Pour tout $x\in \supp(\omega)$, considérons
un voisinage ouvert~$U_x$ de~$x$ et une forme~$\omega_x$ 
sur~$U_x$, de classe~$\mathscr G$, qui prolonge $\omega|_{U_x\cap V}$.
Soit $(\theta_x)$ une famille localement finie de fonctions lisses sur~$X$,
à support compact, positives, telle que pour tout~$x$,
le support de~$\theta_x$ soit contenu dans~$U_x$
et que $\sum \theta_x$ vaille~$1$ au voisinage de~$\supp(\omega)$
(proposition~\ref{prop.trop-partition.1.lisse-compact}).
Soit $\omega'$ la somme localement finie $\sum_x \theta_x \omega_x$;
c'est une forme de classe sur~$\mathscr G$ sur~$X$ 
qui prolonge~$\omega$.

\item
Comme le support de~$\omega$ est compact, il est dénombrable à l'infini,
et la forme construite dans l'alinéa précédent est à support compact.
\qedhere
\end{enumerate}
\end{proof}

\begin{prop}\label{prop.decent-extensible}
Soit $X$ un espace G-tropical vertébré de niveau~$n$, affiné, très riche.
Soit $\Sigma$ une partie squelettique de~$X$
et soit $x$ un point de~$\Sigma$ en lequel $X$ est décent.
Il existe un voisinage ouvert~$U$ de~$x$ dans~$X$
et une carte tropicale~$f$ sur~$U$ telle que 
$\Sigma\cap U\subset \Sigma_f^{(n)}$.
\end{prop}
\begin{proof}
Soit $V$ un voisinage domanial compact de~$x$.
Comme $X$ est très riche, il existe une partie squelettique compacte~$\Sigma'$
de~$X$, purement de dimension~$n$, telle que $\Sigma\cap V \subset \Sigma'$.
Cela permet de supposer que $\Sigma$ est compact
et purement de dimension~$n$.

D'après la proposition~\ref{prop.pur-n-squelette},
il existe une famille finie $(V_i,f_i)$ de  cartes G-tropicales
définies sur des domaines compacts, 
telle que $\Sigma\subset \bigcup_i \Sigma_{f_i}^{(n)}$.
Si l'on restreint la réunion précédente aux indices~$i$
tels que $x\in V_i$, on obtient un voisinage de~$x$ dans~$\Sigma$.
On peut donc supposer que $x\in V_i$ pour tout~$i$.

L'espace~$X$ est décent.
On peut donc choisir, pour tout~$i$, un voisinage~$U_i$ de~$x$
et une carte tropicale $g_i$ sur~$U_i$
qui coïncide avec~$f_i$ sur $V_i\cap U_i$.
Soit $U$ l'intersection des~$U_i$
et soit $g$ la carte tropicale sur~$U$ obtenue par
concaténation des~$g_i$.
Pour tout~$i$, on a $\Sigma_i\cap U\subset \Sigma_{g_i}^{(n)}\subset
\Sigma_g^{(n)}$. Par suite, $\Sigma_g^{(n)}$ contient~$\Sigma\cap U$.
\end{proof}

\begin{prop}\label{prop.local-G-local}
Soit $X$ un espace G-tropical affiné, soit $\omega$ une G-forme
de classe~$\mathscr G$ sur~$X$ et soit $x$ un point de~$X$
en lequel $X$ est décent.
Il existe un voisinage domanial compact~$V$ de~$x$ dans~$X$,
une carte tropicale $f\colon V\to E$ et une G-forme~$\alpha$
de classe~$\mathscr G$ sur $f(V)$ telles que $\omega|_V=f^*\alpha$.
\end{prop}
\begin{proof}
Soit $(V_i)$ une famille finie de domaines compacts 
dont la réunion~$V$ est un voisinage de~$x$ et qui tropicalisent
la G-forme~$\omega$.
Pour tout~$i$, on fixe un carte tropicale~$f_i$ sur~$V_i$ 
qui tropicalise~$\omega|_{V_i}$ et une forme $\alpha_i$
sur $f_{i}(V_i)$ telles que $\omega|_{V_i}=f_i^*\alpha_i$.

Comme $X$ est décent en~$x$, il existe des cartes tropicales~$g_i$
définies au voisinage de~$x$ telles que $g_i$ et~$f_i$
coïncident sur~$V_i$ au voisinage de~$x$.
Quitte à restreindre les~$V_i$ (et donc leur réunion~$V$) 
et à remplacer~$f_i$ par~$g_i$,
on peut donc supposer que chaque~$f_i$ est définie sur~$V$.

Pour chaque~$i$, il existe une famille $(g_{i,j})$ de fonctions 
affines au voisinage de~$x$ 
telle  que $V_i$ soit défini sur
un voisinage compact~$W$ de~$x$ dans~$V$
par conjonction et disjonction
d'inégalités de la forme $g_{i,j}\leq g_{i,j}(x)$.

Considérons la carte tropicale~$h$ sur~$W$ définie par la réunion
des familles $(f_i)$ et $(g_{i,j})$.
Par construction, on a $W\cap V_i= h^{-1}(h(W\cap V_i))$;
en effet, si $y\in W$ et $z\in W\cap V_i$  vérifient
$h(y)=h(z)$, alors $g_{i,j}(y)=g_{i,j}(z)$
pour tout~$j$, si bien que $y$ vérifie le système d'inégalités
qui définit~$V_i$ dans~$W$; autrement dit, $y\in V_i$.
Par conséquent,  on a aussi
$h(W\cap V_i\cap V_j) = h(W\cap V_i) \cap h(W\cap V_j)$. 

On déduit de~$\alpha_i$ une forme~$\beta_i$ sur  $h(W\cap V_i)$
telle que $h|_{W\cap V_i}^* \beta_i=\omega|_{W\cap V_i}$.
Sur $W\cap V_i\cap V_j$, les formes $h|_{W\cap V_i}^*\beta_i $ et $h|_{W\cap V_j}^*\beta_j$ coïncident toutes deux avec $\omega|_{W\cap V_i\cap V_j}$.
Par suite, $\beta_i$ et $\beta_j$ coïncident sur $h( W\cap V_i\cap V_j)$
qui est égal à $h(W\cap V_i) \cap h(W\cap V_j)$ comme on l'a vu.
Ces formes~$\beta_i$ se recollent en une G-forme~$\beta$ sur~$h(W)$.
Pour tout~$i$, on a $h^*\beta|_{W\cap V_i}=h|_{W\cap V_i}^*\beta_i
=\omega|_{W\cap V_i}$. Il s'ensuit $h^*\beta = \omega$.
\end{proof}

\begin{coro}
Soit $X$ un espace G-tropical affiné qui est décent en tout point.
Une fonction G-localement tropicalement continue sur~$X$
(resp.  tropicalement borélienne)  est tropicalement 
continue (resp. borélienne).
En particulier, la fonction indicatrice d'un domaine fermé
de~$Y$ est tropicalement borélienne.
\end{coro}
\begin{proof}
En effet, toute fonction G-continue (resp. borélienne)
sur un sous-espace paralinéaire de~$\R^n$ est continue
(resp. borélienne).
La seconde assertion résulte de l'exemple~\ref{exem.indicatrice-borel}.
\end{proof}

\section{Topologies sur les espaces de formes lisses}
\label{sec.topologie-formes-lisses}

\subsection{}\label{subsec.topologie-formes-lisses}
Soit $X$ espace G-tropical affiné.
Disons qu'un jeu de \emph{cartes tropicales} sur~$X$
est couvrant si la réunion  des intérieurs de ses domaines
de définition recouvre~$X$.  Un tel jeu de cartes est G-couvrant.

Notons $\Gamma_X$ la famille des jeux de cartes tropicales qui
sont couvrants. Elle est filtrante.
Par construction, l'espace des $(p,q)$-formes sur~$X$
coïncide avec l'espace $\mathscr A^{p,q}_{\Gamma_X}(X)$.
Il hérite ainsi d'une topologie~$\mathscr C^m$,
pour tout $m\in\N\cup\{+\infty\}$.
Cette topologie est moins fine que la topologie induite
par celle des G-formes ; plus explicitement, 
l'inclusion des formes dans les G-formes est continue
pour les topologies~$\mathscr C^m$.

De même, pour toute famille de supports~$\Phi$ sur~$X$,
on en déduit des topologies~$\mathscr C^m$
sur l'espace $\mathscr A^{p,q}_{\Phi}(X)$ des formes à
support dans~$\Phi$ sur~$X$, moins fines
que les topologies induites par celles des G-formes correspondantes.

\begin{prop}\label{prop.continuite-wedge}
Soit $\alpha$ une forme de type~$(p',q')$ sur~$X$.
Soit $\Phi$ et $\Psi$ des familles de supports sur~$X$;
on suppose que pour toute partie fermée~$F$ de~$\supp(\alpha)$
appartenant à~$\Phi$ appartient également à~$\Psi$.

Pour toute forme $\omega\in\mathscr A^{p,q}_{\Phi}(X)$,
la forme $\alpha\wedge\omega$ appartient à $\mathscr A^{p,q}_{\Psi}(X)$
et l'application 
\[   A^{p,q}_{\Phi}(X) \to \mathscr A^{p+p',q+q'}_{\Psi}(X), 
\qquad \omega\mapsto \alpha\wedge\omega \]
est linéaire continue.
\end{prop}
\begin{proof}
Cela découle de la proposition~\ref{prop.continuite-wedge}
appliquée à la famille filtrante~$\Gamma_X$.
\end{proof}

\begin{prop}\label{prop.continuite-pull-back}
Soit $u\colon Y\to X$ un morphisme d'espaces G-tropicaux affinés.
Soit $\Phi$ une famille de supports sur~$X$ et 
soit~$\Psi$ une famille de supports sur~$Y$;
on suppose que pour tout fermé $F$ de $X$ appartenant
à~$\Phi$, la partie fermée $u^{-1}(F)$ de~$Y$ appartient à~$\Psi$.

Soit $(p,q)$ un couple d'entiers naturels et 
soit $m\in\N\cup\{\infty\}$.

Pour toute forme $\alpha\in\mathscr A^{p,q}_{\Phi}(X)$,
la G-forme $u^*\alpha$ est une forme appartenant à~$\mathscr A^{p,q}_{\Psi}(Y)$.
L'application
\[ u^*\colon \mathscr A^{p,q}_{\Phi}(X) \to
\mathscr A^{p,q}_{\Psi}(Y), \qquad
\alpha\mapsto u^*\alpha \]
est linéaire et continue.
\end{prop}
\begin{proof}
Cela découle de la proposition~\ref{prop.continuite-pull-back}
appliquée aux familles filtrantes~$\Gamma_X$ et~$\Gamma_Y$,
une fois remarqué que l'on $\Gamma_Y\preceq_u\Gamma_X$.
\end{proof}

\begin{coro}\label{coro.continuite-pull-back'}
Soit $f\colon Y\to X$ un morphisme d'espaces G-tropicaux affinés.
Soit $m\in\N\cup\{+\infty\}$.

\begin{enumerate}
\item L'application $f^*\colon \mathscr A^{p,q}(X)
           \to\mathscr A^{p,q}(Y)$ est continue
pour les topologies~$\mathscr C^m$ sur ces espaces;
\item Si le morphisme~$f$ est \emph{compact},
l'application $f^*\colon \mathscr A^{p,q}_{\cpct }(X)
           \to\mathscr A^{p,q}_{\cpct }(Y)$ est continue
pour les topologies~$\mathscr C^m$ sur ces espaces.
\end{enumerate}
\end{coro}

\begin{prop}\label{prop.continuite-push-U'}
Soit $X$ un espace G-tropical affiné et soit $U$ une partie ouverte de~$X$;
notons $j\colon U\to X$ l'inclusion.
Soit $\Phi$ une famille de supports sur~$U$,
soit $\Psi$ une famille de supports sur~$X$; 
on suppose que pour toute partie $F\in\Phi$, 
la partie~$F$ est fermée dans~$X$ et appartient à~$\Psi$.
(En particulier, on peut prendre pour $\Phi$ et~$\Psi$
les ensembles de parties compactes de~$U$ et~$X$ respectivement.)

\begin{enumerate}
\item Pour toute forme~$\omega\in\mathscr A^{p,q}_{\Phi}(U)$,
la G-forme $j_!\omega$ sur~$X$ est une forme qui appartient 
à $\mathscr A^{p,q}_{\Psi}(X)$.
et l'application 
\[ j_!\colon\mathscr A^{p,q}_{\Phi }(U)\to \mathscr A^{p,q}_{\Psi }(X) \]
ainsi définie 
est linéaire et continue pour les topologies~$\mathscr C^m$.

\item
Cette application est injective.

\item
Supposons que $\Gamma\preceq_j \Delta$ et que toute
partie fermée~$F$ de~$X$ appartenant à~$\Psi$ qui contenue dans~$U$
appartient à~$\Phi$.
Supposons que plus que $d_\trop(x)<\max(p,q)$
pour tout $x\in X\setminus U$.
Alors cette application~$j_!$ est un homéomorphisme, d'inverse donné par~$j^*$.
\end{enumerate}
\end{prop}
\begin{proof}
Cela résulte de la proposition~\ref{prop.continuite-push-U},
appliqué aux familles~$\Gamma_U$ et~$\Gamma_X$,
ce qui est licite car pour tout jeu de cartes tropicales
couvrant~$U$ et toute partie~$F$ de~$U$ qui est fermée dans~$X$,
le jeu de carte obtenu en adjoignant la carte
tropicale constante de domaine~$X\setminus U$ est couvrant sur~$X$.
\end{proof}

\begin{prop}\label{prop.continuite-di-dc-J'}
Soit $X$ un espace G-tropical affiné,
soit $\Phi$ une famille de supports sur~$X$, 
et soit $m\in\N\cup\{+\infty\}$.
\begin{enumerate}
\item Pour toute forme $\omega\in\mathscr A^{p,q}_{\Phi}(X)$,
la forme $\mathrm J\omega$ appartient à~$\mathscr A^{q,p}_{\Phi}(X)$. 
L'application~$\mathrm J$ ainsi définie est linéaire et continue.
\item Supposons $m\geq 1$.
Pour toute G-forme $\omega\in\mathscr A^{p,q}_{\Phi}(X)$,
on a $\di\omega \in\mathscr A^{p+1,q}_{\Phi}(X)$
et $\dc\omega \in\mathscr A^{p,q+1}_{\Phi}(X)$.
Les applications~$\di$ et~$\dc$ ainsi définies sont continues
lorsqu'on munit $\mathscr A^{p,q}_{\Phi}(X)$
de la topologie~$\mathscr C^m$ et les
espaces $\mathscr A^{p+1,q}_{\Phi}(X)$
et $\mathscr A^{p,q+1}_{\Phi}(X)$
de la topologie~$\mathscr C^{m-1}$.
\end{enumerate}
\end{prop}
\begin{proof}
L'assertion découle de la proposition~\ref{prop.continuite-di-dc-J},
appliquée à la famille filtrante~$\Gamma_X$ des jeux de cartes tropicales
couvrants sur~$X$.
\end{proof}

\section{Formes positives}

Soit $X$ un espace G-tropical affiné.
Soit $\mathscr G$ une classe de coefficients.

\begin{lemm}
Soit $\omega$ une forme de type~$(p,p)$ et symétrique sur~$X$
qui est positive en tant que G-forme (\resp faiblement positive,
\resp fortement positive).
Pour tout point~$x\in X$, il existe 
une carte tropicale~$f$ définie sur un voisinage~$U$ de~$x$ 
à valeurs dans un espace affine~$E$, 
et une forme positive~$\alpha$ (\resp faiblement positive, \resp fortement positive) sur un sous-espace paralinéaire 
de~$E$ contenant~$f(U)$,
telle que $\omega|_U=f^*\alpha$.
\end{lemm}
\begin{proof}
Soit $x$ un point de~$X$. Soit $V$ un voisinage domanial compact de~$x$
sur lequel $\omega$ est tropicale;
soit $f\colon V\to E$ une carte tropicale qui tropicalise~$\omega$ et
soit~$\alpha$ une forme sur un sous-espace paralinéaire de~$E$
contenant~$f(V)$ tels que $\omega|_V=f^*\alpha$.
D'après le lemme~\ref{lemme-gpositif.compatibilite},
la restriction à~$f(V)$ de~$\alpha$
est positive (\resp faiblement positive, \resp fortement positive).
Il suffit donc de prendre pour carte tropicale la restriction à~$\mathring V$
de~$f$ et pour forme~$\alpha$ sa restriction à~$f(V)$.
\end{proof}

\begin{prop}\label{prop.trop-diff-positives}
Soit $\omega$ une forme de type~$(p,p)$ et symétrique sur~$X$,
à coefficients dans~$\mathscr G$ dont le support
est contenu dans un ouvert paracompact 
et localement différentiellement séparé de~$X$.
\begin{enumerate}
\item Il existe des formes positives
% \footnote
%    {grâce à la forte positivité de $\lambda^p$; 
%    j'avais avant: respectivement fortement positive et positive,}
$\omega_1$ et~$\omega_2$ sur~$X$, à coefficients dans~$\mathscr G$,
telles que $\omega=\omega_1-\omega_2$.
\item Si, de plus, le support de~$\omega$ est compact,
les formes~$\omega_1$ et~$\omega_2$ peuvent être choisies
à support compact.
\end{enumerate}
\end{prop}
L'hypothèse sur le support de~$\omega$ est notamment réalisée 
lorsque $X$ est localement différentiellement séparé et paracompact.
Dans le cas des espaces analytiques, elle sera également réalisée
si le support de~$\omega$ est compact et si l'espace~$X$ est presque bon.
\begin{proof}
Soit $U$ un voisinage ouvert paracompact et
localement différentiellement séparé du
support de~$\omega$.  Dans le second cas, on s'arrange en outre
pour que $U$ soit relativement compact dans~$X$,
ce qui est possible car $X$ possède une base d'ouverts qui sont dénombrables
à l'infini.

Soit $(f_i,P_i)$ une famille de cartes tropicales,
de sorte que $f_i$ soit un moment sur un ouvert~$V_i$
de~$X$ et $P_i$ un polytope compact contenant $f_{i,\trop}(V_i)$,
et, pour tout~$i$, soit
$\alpha_i$ une forme sur~$P_i$ telle que 
$\omega\restr{V_i}=f_i^*\alpha_i$.
Il existe alors
une famille localement finie $(\lambda_i)$
de fonctions lisses sur~$X$,
subordonnée au recouvrement $(U\cap V_i)_{i\in I}$
telles que $0\leq\lambda_i\leq 1$
pour tout~$i$ et $\sum\lambda_i$ est  identiquement égale à~$1$
au voisinage du support de~$\omega$
(corollaire~\ref{prop.trop-partition.1.lisse-compact}).
Pour tout~$i$, choisissons des formes positives
$\alpha_{i,1}$ et $\alpha_{i,2}$ sur $P_i$ telles que
$\alpha_i=\alpha_{i,1}-\alpha_{i,2}$
(proposition~\ref{prop.diff-positive}).
Posons alors $\omega_1=\sum \lambda_i f_i^*\alpha_{i,1}$
et $\omega_2=\sum\lambda_i f_i^*\alpha_{i,2}$;
ce sont des formes lisses et positives
telles que $\omega=\omega_1-\omega_2$.
\end{proof}

\begin{rema}
Nous appliquerons cette proposition aux formes lisses.
Elle s'applique aussi aux G-formes lisses sur un espace tropical affiné décent,
car de telles G-formes sont des formes à coefficients G-lisses
(proposition~\ref{prop.local-G-local}).
\end{rema}

\begin{prop}\label{prop.riche-support}
Soit $X$ un espace G-tropical de niveau~$n$, riche. 
Soit $\omega$ une G-forme à coefficients tropicalement continus sur~$X$.

\begin{enumerate}
\item
Pour que $\omega$ soit nulle, resp. positive, resp. faiblement positive,
resp. fortement positive, il faut et il suffit
que pour toute partie squelettique compacte~$P$ de~$X$, 
la restriction~$\omega|_P$ de~$\omega$ à~$P$
soit nulle, resp. positive, resp. faiblement positive, 
resp. fortement positive.
\item
Les points squelettiques~$x$ du support de~$\omega$  
sont denses dans~$\supp(\omega)$.
\end{enumerate}
\end{prop}
\begin{proof}
\begin{enumerate}
\item
Si $\omega$ est nulle (resp\dots), il en est de même de ses
restrictions~$\omega|_P$ à toute partie squelettique comme dans l'énoncé.
On démontre la réciproque par contraposition.
Supposons que $\omega$  ne soit pas nulle
et démontrons qu'il existe une partie polyédrale~$P$ de~$X$,
compacte, purement de dimension~$n$, telle que $\omega|_P$
ne soit pas nulle. (Les cas positif, faiblement positif, 
fortement positif se démontrent de façon similaire.)

Par définition d'une G-forme, il existe un
G-recouvrement de~$X$ par des domaines compacts de~$X$
sur lesquels $\omega$ est G-tropicale.
Puisque $\omega$ n'est pas nulle, il existe un domaine compact~$V$ de~$X$
sur lequel $\omega$ est G-tropicale et non nulle.
Soit $f\colon V\to E$ une carte G-tropicale qui G-tropicalise~$\omega$,
soit $\alpha$ une forme sur~$f(V)$ telle que $\omega|_V=f^*\alpha$.
Comme $\omega|_V\neq0$, la forme~$\alpha$ n'est pas nulle.

Comme $X$ est riche, il existe une partie squelettique compacte~$\Sigma$
de~$V$ telle que $f(\Sigma)=f(V)$. En appliquant la
proposition~\ref{prop.nullite-formes} 
(lemme~\ref{lemme-gpositif.compatibilite} 
dans le cas des formes positives) à l'espace paralinéaire~$\Sigma$, considéré 
comme espace G-tropical, on en déduit que $\omega|_\Sigma\neq0$.

\item
Soit $U$ un ouvert de~$X$ rencontrant le support de~$\omega$;
on a $\omega|_U\neq0$. 
D'après~\emph a), il existe une partie squelettique compacte~$P$ de~$U$
telle que $\omega|_P\neq0$. Le support de~$\omega|_P$ est non vide,
il est contenu dans le support de~$\omega$
et il est constitué de points squelettiques.
\qedhere
\end{enumerate}
\end{proof}

\begin{coro}\label{coro.tres-riche-support}
Soit $X$ un espace tropical de niveau~$n$, très riche.
Soit $\omega$ une G-forme à coefficients tropicalement continus sur~$X$.

\begin{enumerate}
\item
Pour que $\omega$ soit nulle, resp. positive, resp. faiblement positive,
resp. fortement positive, il faut et il suffit
que pour toute partie squelettique compacte~$P$ de~$X$, 
purement de dimension~$n$,
la restriction~$\omega|_P$ de~$\omega$ à~$P$
soit nulle, resp. positive, resp. faiblement positive, 
resp. fortement positive.
\item
Les points~$x$ du support de~$\omega$  
tels que $d_\trop(x)=n$
sont denses dans~$\supp(\omega)$.
\end{enumerate}
\end{coro}
\begin{proof}
\begin{enumerate}
\item
Une direction est évidente; supposons que $\omega$ ne soit pas
nulle (les autres cas sont analogues).
L'espace~$X$ est riche ;
d'après la proposition précédente, il existe une partie squelettique
compacte~$\Sigma$ de~$X$ telle que $\omega|_\Sigma\neq0$. 
Comme $X$ est très riche, il existe une partie squelettique compacte~$\Sigma'$
de~$X$, purement de dimension~$n$,
telle que $\Sigma\subset\Sigma'$; on a donc $\omega|_{\Sigma'}\neq0$.

\item
Soit $U$ un ouvert de~$X$ qui rencontre le support de~$\omega$;
soit $x$ un point squelettique de $U$ qui appartient à~$\supp(\omega)$.
Comme $X$ est très riche, il existe une partie squelettique~$\Sigma$ de~$U$,
purement de dimension~$n$, telle que $x\in U$.
Alors, $d_\trop(x)=n$. \qedhere
\end{enumerate}
\end{proof}

\section{Espaces tropicaux}\label{s.espace-tropical}

\begin{defi}\label{defi.esp-harmonieux}
On dit qu'un espace G-tropical calibré et affiné~$X$ est harmonieux 
si pour tout ouvert~$U$ de~$X$ 
et toute carte tropicale $f\colon U\to E$ sur~$U$,
le calibrage canonique de $\Sigma_f^{(n)}$ est harmonieux relativement à~$f$.
\end{defi}

\begin{defi}\label{defi.esp-tropical}
Un espace tropical de niveau~$n$
est la donnée d'un espace G-tropical calibré et affiné~$X$ de niveau~$n$,
%% tropicalement stable (définition~\ref{defi.trop-stable}),
riche, muni d'une partie fermée~$\partial(X)$.
On fait en outre les hypothèses suivantes sur l'ouvert~$\Int(X)$,
complémentaire de~$\partial(X)$:
\begin{enumerate}\def\theenumi{\roman{enumi}}\def\labelenumi{(\theenumi)}
\item L'ouvert $\Int(X)$ est localement différentiellement séparé,
décent, très riche et harmonieux.
%% \item Toute partie squelettique de~$\Int(X)$ 
%% est extensible en chacun de ses points;
\item Pour toute  partie squelettique~$\Sigma$ de~$X$
et tout domaine~$V$ de~$X$, 
la trace de~$\partial(X)$ sur~$\Sigma$ 
et la trace du bord topologique~$\partial(V/X)$ sur~$\Sigma$
sont $n$-négligeables.
\end{enumerate}
% \footnote{En suspens: imposer tropicalement stable, 
% et/ou extensibilité sur l'intérieur, resp. partout}
\end{defi}

On dit que $\partial(X)$ est le \emph{bord} de l'espace tropical~$X$
et que $\Int(X)$ est son \emph{intérieur}.

\subsection{}
Soit $X$ un sous-espace paralinéaire de dimension~$n$ 
d'un espace affine~$E$.
Soit $\mathscr C$ une décomposition cellulaire de~$X$
en cellules fortes de~$E$ et munissons~$X$
d'un calibrage qui est constant sur chaque cellule de~$\mathscr C$.

Alors, $X$ est un espace tropical de niveau~$n$
et de bord  la réunion des $(n-1)$-cellules de~$\mathscr C$.

\subsection{}
Soit $X$ un espace tropical de niveau~$n$ de bord~$\partial(X)$.
Soit $V$ un domaine de~$X$.
Posons $\partial(V)= \partial(V/X)\cup (V \cap \partial(X))$.
Alors $V$ est un espace tropical de niveau~$n$ de bord~$\partial(V)$.

Par construction, $V\setminus \partial(V)$ 
est un ouvert de $X\setminus \partial(X)$ ; c'est donc
un espace G-tropical vertébré très riche, un espace G-tropical affiné décent
et un espace G-tropical calibré et affiné harmonieux.

Soit $\Sigma$ une partie squelettique de~$V$.
La trace de~$\partial(V)$ sur~$\Sigma$ 
est la réunion de deux parties $n$-négligeables, donc est
$n$-négligeable. 
Soit $W$ un domaine de~$V$. Le bord topologique
$\partial(W/X)$ est égal à $\partial(W/V) \cup (W \cap \partial (V/X))$;
sa trace sur~$\Sigma$ est $n$-négligeable par hypothèse,
si bien que la trace de $\partial(W/V)$ est $n$-négligeable.

%% Soit $\Sigma$ une partie squelettique de~$\Int(V)$.
%% Alors $\Sigma$ est extensible en tout point, car 
%% $\Int(V)$ est une partie ouverte de~$X$ contenue dans~$\Int(X)$.

Cela prouve que $V$ est un espace tropical de niveau~$n$
de bord~$\partial(V)$.

\subsection{}
Le cœur de notre travail consistera à munir tout espace analytique~$X$
de dimension~$n$ d'une structure d'espace tropical de niveau~$n$
dont le bord est donné par la théorie de Berkovich.

\begin{prop}
\label{theo.support-mesure-bord}
Soit $X$ un espace tropical de niveau~$n$.
Pour toute $(n-1,n)$-forme~$\omega$ sur~$X$, 
le support de la mesure de bord~$\tilde\omega$ est contenu dans~$\partial(X)$.
\end{prop}
\begin{proof}
Comme $\Int(X)$ est ouvert dans~$X$,
il s'agit de démontrer que $\tilde\omega|_{\Int(X)}=0$.
Comme $\tilde\omega|_{\Int(X)}=\widetilde{\omega|_{\Int(X)}}$,
on peut remplacer~$X$ par~$\Int(X)$ et supposer que $X$ est sans bord;
il s'agit de démontrer que $\tilde\omega=0$.

L'assertion est locale sur~$X$. 
Comme $\omega$ est une forme, on peut supposer qu'il existe
une carte tropicale $f\colon X\to E$ et une $(n-1,n)$-forme~$\alpha$
sur un sous-espace paralinéaire de~$E$ contenant~$f(X)$
telles que $\omega=f^*\alpha$.
On peut aussi supposer que $X$ est paracompact.
Alors $\Sigma_f^{(n)}$ est un support fort de~$\omega$ qui est paracompact.

Considérons une décomposition cellulaire $\mathscr C$ de~$\Sigma_f^{(n)}$
formée de $f$-cellules qui est adaptée au  calibrage de~$\Sigma_f^{(n)}$.
La forme-volume $(n-1)$-dimensionnelle~$\tilde\omega$ 
est supportée par la réunion
des $(n-1)$-cellules de~$\mathscr C$. 
Soit donc $F$ une $(n-1)$-cellule de~$\mathscr C$.
Soit $\mathscr C_{>F}$ l'ensemble des $n$-cellules de~$\mathscr C$
contenant~$F$; il est fini et non vide, car $\Sigma_f^{(n)}$ est purement
de dimension~$n$.
Fixons une orientation~$o$ de~$f(F)$ et, pour tout~$C\in\mathscr C_{>F}$,
notons~$o_C$ l'orientation de $f(F)$ déduite de celle de~$f(C)$
par la règle de la normale sortante à~$f(F)$.
Il existe un unique $n$-vecteur, $\tilde\mu_C$, sur $\langle f(C)\rangle$
tel que $f_*(\mu_C)$ soit la classe du couple $(\tilde\mu_C, o_C)$.

La formule volume $(n-1)$-dimensionnelle $\tilde\omega$
se déduit d'une 
forme-volume $(n-1)$-dimensionnelle 
$\langle\omega,\mu_{\Sigma_f^{(n)}}\rangle$ 
sur $\Sigma_f^{(n)}$;
rappelons-en la construction (\S\ref{sss.forme-bord-construction}).
Par contraction
par le $n$-vecteur $\mu_C$ 
et multiplication par $(-1)^{n(n-1)/2}$,
on déduit de la forme~$\alpha|_{f(C)}$
une $(n-1,0)$-forme sur $f(C)$;
on la restreint à~$f(F)$, on la transforme
en forme-volume par le choix de l'orientation~$o$
et on la rapatrie sur~$F$ au moyen de l'isomorphisme $f|_F$.
On obtient ainsi une forme-volume $\tilde\omega_C$ sur~$F$.
Par définition, la restriction à~$F$ 
de la $(n-1)$-forme-volume~$\tilde\omega$
est la somme
$ \tilde\omega_F = \sum_{C\in\mathscr C_F} \tilde\omega_C$.
Ainsi, $\tilde\omega_F$ est déduite de l'orientation~$(-1)^{n(n-1)/2}o_F$
et de la  $(n-1)$-forme $\sum_C \langle\alpha,\tilde\mu_C\rangle
=\langle\alpha,\tilde\mu_F\rangle$ sur~$f_\trop(F)$,
où l'on a posé $\tilde\mu_F=\sum_{C\in\mathscr F}\tilde\mu_C$.
Cette expression n'est autre que la discordance de~$C$
relativement à la carte tropicale~$f$, donc est nulle par définition
du bord d'un espace tropical. Ainsi, $\tilde\omega_F=0$.
\end{proof}

\begin{coro}
Soit $X$ un espace tropical de niveau~$n$,
soit $\omega$ une G-forme de type $(n-1,n)$ sur~$X$
et soit $(X_i)$ un G-recouvrement de~$X$ par des domaines 
tels que pour tout~$i$, $\omega|_{X_i}$ soit une forme.
Alors, le support de~$\widetilde\omega$ est contenu dans
$\bigcup_i \partial(X_i)$.
\end{coro}

\begin{theo}[Condition d'harmonie] \label{theo.condition-harmonie}
Soit $X$ un espace tropical de niveau~$n$,
soit $f\colon X\to E$ une carte tropicale agréable.
Alors $\Pi_f$ est harmonieux en dehors de $f(\partial(X)\cap \Sigma_f^{(n)})$.
\end{theo}
\begin{proof}
Soit $\mathscr D$ une décomposition cellulaire de $\Pi_f$
adaptée au calibrage canonique~$\mu_{\Pi_f}$ et à~$\partial(X)$.
Soit $F$ une~$(n-1)$-cellule de~$\mathscr D$
qui n'est pas contenue dans $f(\partial (X)\cap\Sigma_f^{(n)})$.
La discordance de la face~$F$ 
est un $(0,n)$-vecteur~$\delta_F$ de~$E$; 
supposons par l'absurde qu'il n'est pas nul
et choisissons un multi-indice~$J$ de longueur~$n$ 
tel que $\langle \dc x_J,\delta_F\rangle\neq0$; 
choisissons aussi un multi-indice~$I$ de longueur~$n-1$
tel que la restriction à~$F$ de la $(n-1)$-forme $\di x_I$ ne soit pas nulle.

Soit $x$ un point de~$\mathring F$ qui n'appartient pas 
à $f(\partial(X))$.
Soit $u$ une fonction continue, non identiquement nulle
sur~$\Pi_f$ dont le support est compact, disjoint de~$f(\partial(X))$
et des faces de dimension~$n-1$ autres que~$f(F)$.
Posons $\alpha=u\di x_I\wedge\dc x_J$ et $\omega=f^*\alpha$.
Par construction, $\langle\alpha, \mu_{\Pi_f}\rangle$
est une forme-volume non nulle.

D'après la proposition~\ref{theo.support-mesure-bord},
le support de la forme-volume~$\tilde\omega$ 
est contenu dans~$\partial(X)$.
Il est aussi disjoint de~$\partial(X)$, par construction. 
On a donc $\tilde\omega=0$.
D'après la relation~\ref{n-1-forme-volume-but},
on a donc 
$\langle\alpha,\mu_{\Pi_f}\rangle =f_*(\tilde \omega)=0$,
ce qui contredit la construction de~$\alpha$.
\end{proof}

\section{Exemples}

\subsection{Segments}
Soit $I$ un intervalle de~$\R$.
Il possède une structure évidente d'espace paralinéaire,
qui en fait un espace G-tropical. Ses domaines
sont les réunions localement finies d'intervalles. 
Munissons-le du $1$-calibrage constant, donné par le vecteur-volume~$\abs{e_1}$.
Munissons-le également de la structure d'espace affiné pour laquel
une fonction sur un domaine connexe de~$I$ est affine si et seulement
si c'est la restriction d'une fonction affine sur~$\R$.
Alors $I$ est un espace tropical de niveau~$1$
dont le bord est la frontière de~$I$ dans~$\R$.

Si $1\notin I$, on peut réaliser cet espace comme la couronne $\abs T\in I$
sur un corps trivialement valué.

\subsection{Un autre type de segment}
Soit $I$ un intervalle de~$\overline\R$ 
et posons $A=\partial(I)\cap\overline\R$.
On munit~$I$ d'une structure d'espace G-tropical dont les domaines~$V$
sont les réunions localement finies d'intervalles qui sont
voisinages de $V\cap A$. Une fonction sur un tel domaine est dite
paralinéaire si elle l'est au sens usuel et si elle est constante
au voisinage de tout point de~$V\cap A$.
Une partie de~$I$ est squelettique si c'est une partie paralinéaire
de $I\cap\R$.
On munit~$I$ du $1$-calibrage constant donné par le vecteur-volume~$\abs {e_1}$.
On dit qu'une fonction paralinéaire sur un domaine connexe~$V$
de~$I$ est affine si c'est la restriction à~$V$ d'une fonction affine
sur~$I$. En particulier, si $V$ est un domaine connexe de~$I$
qui rencontre~$A$, toute fonction affine sur~$V$ est constante.

Cet espace G-tropical est un espace tropical de niveau~$1$,
de bord~$A$.

Lorsque $I=[-\infty;0\mathclose[$, on peut réaliser cet exemple 
comme le disque unité ouvert sur un corps trivialement valué.

\subsection{Une étoile}\label{defi-etoile}
Soit $B$ un ensemble. On considère l'ensemble~$X_B$
quotient de l'ensemble produit $B\times[-\infty;0]$
par la relation d'équivalence qui identifie 
tous les points de la forme~$(b,0)$ à un même point~$\eta$.
On le munit de la topologie la moins fine
pour laquelle une partie~$V$ de~$X_B$ est ouverte
si et seulement si d'une part $p^{-1}(V)\cap (\{b\}\times [-\infty;0])$
est ouverte dans~$\{b\}\times[-\infty;0]$ pour tout $b\in B$,
et d'autre part, si $\eta\in V$, alors l'ensemble des $b\in B$
tels que $p^{-1}(V)$ ne contient pas $\{b\}\times[-\infty;0]$
est fini. L'application~$p$ est continue. L'espace~$X_B$ est compact.
Lorsque $B$ est infini, la topologie de~$X_B$ est strictement moins fine
que la topologie quotient.

Pour $b\in B$, l'application $t\mapsto p(b,t)$ induit un homéomorphisme
de~$[-\infty;0]$ sur une partie~$X_b$ de~$X_B$ que l'on appelle
la branche d'indice~$b$.
Le point~$\eta$ est le centre de l'étoile~$X_B$.

Les domaines de~$X_B$ sont définis de manière analogue:
une partie~$V$ de~$X_B$ est un domaine si $V_b=\{t\sozat (b,t)\in V\}$
est un domaine de $\{b\}\times[-\infty;0]$ et si, 
dès que $\eta\in V$, l'ensemble des $b\in B$
tels que $V_b\neq [-\infty;0]$ est fini.

Les parties squelettiques de~$X_B$ sont également définies
de la sorte: une partie~$S$ de~$X_B$ est squelettique si
$S\cap X_b$ est une partie paralinéaire de~$X_b\simeq[-\infty;0]$,
c'est-à-dire une partie paralinéaire de~$\mathopen]-\infty;0]$
et si, dès que $\eta\in S$, l'ensemble des $b\in B$
tels que $S\cap X_b\neq\{\eta\}$ est fini. 

On calibre chaque branche~$X_b$ à l'aide du vecteur-volume~$\abs{e_1}$.

Une fonction~$f$ sur un domaine~$V$ est dite paralinéaire
si, pour tout $b\in B$, la fonction $f_b\colon t\mapsto f(p(b,t))$
sur le domaine~$V_b$
est paralinéaire, constante au voisinage de~$-\infty$ si $-\infty\in V_b$,
et si, dès que $\eta\in V$,
il existe une partie finie~$B_1$ de~$B$ telle que 
pour tout $b\in B\setminus B_1$, la fonction~$f_b$
est définie et constante sur~$[-\infty;0]$.

Une fonction paralinéaire~$f$ sur un domaine~$V$ est dite affine
si, pour tout~$b\in B$, la fonction~$f_b$ sur~$V_b$
est affine, et si, 
dès que $\eta\in V$,
la somme (presque nulle) des~$f_b'(0)$, pour $b\in B$
tel que $V_b$ soit un voisinage de~$0$ dans~$[-\infty;0]$,
soit égale à zéro.

Alors $X_B$ est un espace tropical de niveau~$1$, sans bord.

Soit $f$ une fonction sur un ouvert~$U$ de~$X$. Pour que $f$ soit lisse,
il faut et il suffit que chacune des fonctions~$f_b$ sur
l'ouvert~$U_b=\{t\sozat p(b,t)\in U\}$ de~$[-\infty;0]$ soit lisse,
constante au voisinage de~$-\infty$ si $-\infty\in U_b$,
et que si $\eta\in U$,
l'ensemble des~$b\in B$ tels que $f_b$ ne soit pas définie et constante 
sur~$[-\infty;0]$ est fini, et que la somme (presque nulle), pour $b\in B$
tel que~$U_b$ est voisinage de~$0$, des $f_s'(0)$ soit nulle.

On peut réaliser~$X_B$ comme l'espace analytique $\P^1_k$
sur un corps algébriquement clos trivialement valué;
l'ensemble~$B$ s'identifie à~$\P^1(k)$
et le point~$\eta$ à la valeur absolue triviale sur
le corps des fonctions~$k(\P^1_k)$.

\subsection{}
On peut construire une variante de l'étoile précédente
à partir d'une famille d'intervalles $[-\infty;0]$
indexée par un ensemble~$B$ et d'une famille d'intervalles $[-1;0]$
indexée par un ensemble~$A$, en recollant les origines en un même point~$\eta$,
en lequel on peut, ou non, imposer la condition d'harmonie.

Lorsque $A$ est fini, cet espace s'interprète comme un domaine
de l'étoile associée à~$A\cup B$.

Lorsque $A$ est infini, le point~$\eta$ appartient au bord de l'espace tropical
obtenu, même si la condition d'harmonie y est imposée,
car $\eta$ est alors adhérent à l'ensemble des points 
de la forme~$p(a,0)$, pour $a\in A$.

\subsection{}
Soit $(X_i, f_{ij})$ un système projectif d'espaces tropicaux de même niveau~$n$,
indexé par un ensemble ordonné filtrant à droite~$I$
et soit $X=\varprojlim X_i$ sa limite projective.
Pour tout~$i$, on note $f_i\colon X\to X_i$ l'application canonique.

On fait l'hypothèse que pour $i\leq j$, le morphisme
$f_{ij}\colon X_j\to X_i$ est surjectif et induit un isomorphisme
entre parties squelettiques:
pour tout domaine~$V$ de~$X_i$ et toute partie squelettique~$S$
de~$X_i$, 
$f_{ij}^{-1}(S)$ est une partie squelettique du domaine~$f_{ij}^{-1}(V)$
telle que $f_{ij}$ induit un homéomorphisme de $f_{ij}^{-1}(S)$ sur~$S$,
et toute partie squelettique de $f_{ij}^{-1}(V)$ est de cette forme.

On peut alors munir alors~$X$ d'une structure d'espace tropical de niveau~$n$
de la façon suivante.

Par définition de la topologie de~$X$ et surjectivité des applications~$f_{ij}$,
une partie~$U$ de~$X$ est ouverte s'il existe un indice~$i$
et une partie ouverte~$U_i$ de~$X_i$ telle que $U=f_i^{-1}(X_i)$.

On dit qu'une partie~$V$ de~$X$ est un domaine s'il existe un indice~$i$
et un domaine~$V_i$ de~$X_i$ tel que $V=f_i^{-1}(V_i)$.

On dit qu'une fonction~$u$ sur un domaine~$V$ de~$X$
est paralinéaire (resp. affine) s'il existe un indice~$i$,
un domaine~$V_i$ de~$X_i$ tel que $V=f_i^{-1}(V_i)$
et une fonction paralinéaire (resp. affine)~$u_i$ sur~$V_i$
telle que $u=u_i\circ f_i$.

Avec ces définitions, une fonction~$u$ sur un domaine~$U$ de~$X$ est lisse 
s'il existe un indice~$i$, un domaine~$U_i$ de~$X_i$
tel que $U=f_i^{-1}(U_i)$
et une fonction lisse~$u_i$ sur~$U_i$ telle que $u=u_i\circ f_i$.

On dit qu'une partie~$S$ d'un domaine~$V$ de~$X$ 
est squelettique s'il existe un indice~$i$, un domaine~$V_i$
de~$X_i$ tel que $V=f_i^{-1}(V_i)$ et une partie squelettique~$S_i$ de~$X_i$
telle que $f_i\colon X\to X_i$ induise un homéomorphisme
de~$S$ sur~$S_i$. Soit alors $j$ un indice tel que~$j\geq i$;
posons~$V_j=f_{ij}^{-1}(V_i)$ : c'est un domaine de~$X_j$
tel que $V=f_j^{-1}(V_j)$.
Par hypothèse, $S_j=f_{ij}^{-1}(S_i)$ est une partie squelettique de~$V_j$
et $f_{ij}$ induit un isomorphisme paralinéaire de~$S_j$ sur~$S_i$, compatible
aux calibrages.
Ainsi, $f_j$ induit un homéomorphisme de~$S$ sur~$S_j$.
On utilise ces homéomorphismes paralinéaire 
pour calibrer la partie squelettique~$S$.

On peut appliquer cette construction en partant d'un espace analytique~$X$,
sur un corps complet archimédien~$k$ et
d'un ouvert de Zariski~$U$ de~$X$,
en prenant
pour famille~$(X_i)$ les morphismes propres $h_i\colon X_i\to X$,
qui sont un isomorphisme au-dessus d'un ouvert dense de~$X$ contenant~$U$.
La limite projective $\widetilde X$ est un espace tropical
sur lequel on peut interpréter les constructions de~\cite{YuanZhang-2025}.

\chapter{Courants sur un espace tropical}

\section{Courants}
\label{ss.courants}

Soit $X$ un espace tropical de niveau~$n$.

\subsection{}
On appelle \emph{courant} de bidimension~$(p,q)$ sur~$X$ une forme
linéaire continue sur l'espace~$\mathscr A_\cpct^{p,q}(X)$ des formes
lisses de type~$(p,q)$ à support compact, lorsque
cet espace est muni de la topologie induite
par la topologie~$\mathscr C^\infty$ 
sur l'espace des formes lisses à support compact
définie au paragraphe~\ref{subsec.topologie-formes-lisses}

On dit qu'un courant est d'ordre~$\leq r$ s'il est continu
pour la topologie~$\mathscr C^r$ sur l'espace des formes (il est alors
d'ordre~$\leq s$ pour tout $s\geq r$).

On notera~$\langle S,\alpha \rangle$ l'image de la forme~$\alpha$
par le courant~$S$.

On note~$\mathscr D_{p,q}(X)$ l'ensemble des courants
de bidimension~$(p,q)$.   
On le munit de la topologie de la convergence simple.

On posera aussi~$\mathscr D^{p,q}=\mathscr D_{n-p,n-q}$
et on dira que c'est l'espace des courants de bidegré~$(p,q)$.

\begin{rema}\label{rema.continuite-courant}
Explicitons la condition de continuité
dans la définition d'un courant.
L'espace $\mathscr A_{\cpct}^{p,q}(X)$ est la limite
inductive des espaces $\mathscr A^{p,q}_{\mathscr D,K}(X)$,
où $\mathscr D$ parcourt l'ensemble des jeux de cartes tropicales
couvrants sur~$X$ et $K$ l'ensemble des parties compactes de~$X$.
Par définition, une forme linéaire 
sur l'espace $\mathscr A_{\cpct}^{p,q}(X)$ est continue
si et seulement si sa restriction à chaque sous-espace $\mathscr A^{p,q}_{\mathscr D,K}(X)$ est continue.

Fixons une telle donnée $(\mathscr D,K)$.
L'espace $\mathscr A^{p,q}_{\mathscr D,K}(X)$ est muni
de la topologie induite de $\mathscr A^{p,q}_{\mathscr D}(X)$,
laquelle est définie par les classes de semi-normes $\norm\cdot_{W,r}$
associées aux domaines compacts~$W$ d'une carte~$g$ de~$\mathscr D$
et à un entier~$r$.
(Pour fixer une semi-norme dans chaqque classe,
il faut choisir une décomposition cellulaire
de $g(W)$ et des systèmes de coordonnées affines sur chaque cellule.)

La continuité de~$S$ sur cet espace $\mathscr A^{p,q}_{\mathscr D,K}(X)$
signifie donc qu'il existe un nombre réel~$c$, un entier~$r$ et 
une famille finie $(W_i)$ de domaines
de compacts de~$X$, où pour tout~$i$, $W_i$ est contenu
dans le domaine d'une carte de~$\mathscr D$, tels que,
pour toute forme lisse $\omega\in \mathscr A^{p,q}_{\mathscr D,K}(X)$,
on ait
\[ \abs{\langle S, \omega\rangle} \leq c \sup_i \norm{\omega}_{W_i,r}. \]

Comme l'espace tropical~$X$ est riche, 
la construction faite dans la remarque~\ref{rema.topologie-formes-squelettes}
fournit une description squelettique de la continuité de~$S$:
il existe un nombre réel~$c$, un entier~$r$ et une partie squelettique
compacte~$\Sigma$ de~$X$ telle que,
pour toute forme lisse 
$\omega\in \mathscr A^{p,q}_{\mathscr D,K}(X)$,
on ait
\[ \abs{\langle S, \omega\rangle} \leq c \norm{\omega|_{\Sigma}}_{\Sigma,r}, \]
où $\norm\cdot_{\Sigma,r}$ est une norme~$\mathscr C^r$ standard sur~$\Sigma$.

On peut résumer cette discussion en disant qu'un courant
possède un ordre relatif à tout jeu de cartes couvrant et toute partie 
compacte de~$X$.
L'ordre construit dépend de la partie compacte considérée,
comme dans le cas des distributions réelles.

On prendra garde que l'ordre construit dépend également
du jeu de cartes considéré.
On peut en effet construire un courant sur un espace
tropical compact pour lequel ces ordres ne sont pas majorés.
% \footnote{Construire quelque part un courant d'ordre infini sur la 
% droite projective, supporté par le point de Gauss, 
% en prenant la somme des dérivées d'ordres de plus
% en plus grand sur un système de branches.}
\end{rema}

\begin{lemm}\label{lemm.continuite-courant}
Supposons que $X$ soit localement différentiellement séparé.
Soit $\mathscr B$ un jeu de cartes sur~$X$ 
tel que tout jeu  de cartes couvrant sur~$X$ 
admet un raffinement couvrant formé de cartes appartenant à~$\mathscr B$.
Soit $S$ une application linéaire de $\mathscr A_\cpct^{p,q}(X)$
à valeurs dans espace vectoriel topologique localement convexe~$E$.
Soit $r$ un élément de~$\N\cup\{+\infty\}$.
Pour que $S$ soit continue pour la topologie~$\mathscr C^ r$ il faut 
et il suffit que pour toute carte tropicale~$f$ 
définie sur un ouvert~$U$ de~$X$ appartenant à la base~$\mathscr U$
et toute fonction~$\rho$ sur~$U$,
lisse, positive et à support compact, 
la restriction à~$\mathscr A^{p,q}_f(U)$
de l'application linéaire $\omega\mapsto S(\rho\omega)$
soit continue pour la topologie~$\mathscr C^r$.
\end{lemm}
\begin{proof}
Soit $\mathscr D=(f_i\colon U_i\to E_i)$ 
un jeu de cartes couvrant sur~$X$ 
et soit $K$ une partie compacte de~$X$; 
il s'agit de prouver que la restriction de~$S$
à l'espace $\mathscr A^{p,p}_{\mathscr D,K}(X)$ est continue
pour sa topologie~$\mathscr C^\infty$ (\resp $\mathscr C^r$).
Par hypothèse, il existe un jeu de cartes~$\mathscr D'$, couvrant,
formé de cartes de~$\mathscr B$ qui  raffine le jeu~$\mathscr D$.
Cela permet de supposer que les cartes de~$\mathscr D$
appartiennent à la base d'ouverts~$\mathscr U$.
On choisit une partie finie~$I$ de l'ensemble des indices~$i$ 
telle $K\subset \bigcup_{i\in I} \Int(U_i)$.
Soit $(\lambda_i)_{i\in I}$ une famille de fonctions lisses et positives
sur~$X$ telles que pour tout~$i$, le support de~$\lambda_i$
soit compact et contenu dans~$\Int(U_i)$,
et telles que $\sum_{i\in I}\lambda_i$ soit égale à~$1$ au voisinage de~$K$.
Pour toute forme $\omega\in\mathscr A^{p,q}_{\mathscr D,K}$,
on a donc $\omega = (\sum_{i\in I}\lambda_i)\omega$, de sorte
que $ S(\omega) = \sum_{i\in I} S(\lambda_i\omega)$.
Il suffit ainsi de prouver que chacune des applications linéaires
$\omega\mapsto S(\lambda_i\omega)$ est continue sur $\mathscr A_{\mathscr D,K}$.

Soit $i\in I$.
L'application de restriction à~$U_i$ induit 
une application de~$\mathscr A^{p,q}_{\mathscr D,K}(X)$
dans~$\mathscr A^{p,q}_{f_i}(U_i)$; cette application est continue
pour les topologies~$\mathscr C^m$.
Par hypothèse, l'application linéaire $\alpha\mapsto
S(\lambda_i\alpha)$ est continue sur~$\mathscr A^{p,q}_{f_i}(U_i)$,
pour la topologie~$\mathscr C^\infty$ (\resp.~$\mathscr C^r$).
La composée de ces applications linéaires est donc continue.
\end{proof}

\subsection{}
Soit~$T$ un courant de bidimension~$(p,q)$;
il découle de la proposition~\ref{prop.continuite-di-dc-J'}
que l'application $T\circ\mathrm J^{-1}$ est un 
courant de bidimension~$(q,p)$ que l'on note $\mathrm JT$.
Cet opérateur~$\mathrm J$ sur les courants est continu.

On dit qu'un courant~$T$ de bidimension~$(p,p)$ 
est symétrique\index{courant!--- symétrique}
si l'on a~$\mathrm JT=T$.  

\subsection{}
Soit~$T$ un courant et soit~$\alpha$ une forme lisse sur~$X$.
Il découle de la proposition~\ref{prop.continuite-wedge}
que l'application $\beta\mapsto \langle T, \alpha\wedge\beta\rangle$
est un courant  ; on le note $T\wedge\alpha $.
Il est d'ordre~$r$ si $T$ est d'ordre~$\leq r$.

L'application $T \mapsto T\wedge\alpha$ est continue.

On observe la relation 
$\mathrm J(T\wedge\alpha)=\mathrm JT\wedge\mathrm J\alpha$.

Si~$\alpha\in\mathscr A^{p,q}_\cpct(X)$
et~$T$ est de bidimension~$(p',q')$, le courant
$T\wedge\alpha $ est de bidimension~$(p'-p,q'-q)$;
si~$X$ est équidimensionnel et si
$T$ est de bidegré~$(p',q')$, alors~$T\wedge \alpha$ est de bidegré 
$(p'+p,q'+q)$.

Avec ces notations, 
on définit aussi $\alpha\wedge T = (-1)^{(p+q)(p'+q')} T\wedge\alpha$.

Munis de ces lois d'action,
les espaces de courants sont des modules 
sur l'anneau des fonctions lisses sur~$X$.
Plus précisément, l'espace vectoriel bigradué~$\mathscr D(X)$,
somme directe des espaces~$\mathscr D_{p,q}(X)$,
est un module gradué (à gauche et à droite) 
sur l'algèbre graduée~$\mathscr A(X)$ 
des formes lisses sur~$X$. 

\subsection{Propriété de faisceau}
Soit $T$ un courant sur~$X$.
Soit~$U$ un ouvert de~$X$ et soit $j$ l'immersion de~$U$ dans~$X$.
Comme l'opération~$j_!$  de prolongement par~$0$
des formes à support compact sur~$U$ est
une application linéaire continue (prop.~\ref{prop.continuite-push-U'}),
l'application $\alpha\mapsto \langle T,j_!\alpha\rangle$
est un courant~$T|_U$ sur~$X$ qu'on appelle restriction de~$T$ à~$U$.
Si $T$ est d'ordre~$\leq r$, il en est de même de~$T|_U$.

L'application~$U\mapsto\mathscr D_{p,q}(U)$ est ainsi un préfaisceau,
et pour tout entier~$r$,
les courants d'ordre~$\leq r$ en forment un sous-préfaisceau.

Pour tout courant~$T$ et tout ouvert~$U$,
on a $\mathrm JT|_U=(\mathrm JT)|_U$.

\begin{prop}\label{prop.courants-faisceau}
Supposons que l'espace tropical~$X$ soit localement différentiellement séparé.
Le préfaisceau~$\mathscr D_{p,q}$ sur~$X$ est 
un faisceau de~$\mathscr A^{0,0}$-modules.
Pour tout entier~$r$, le sous-préfaisceau des courants d'ordre~$\leq r$
en est un sous-faisceau.
\end{prop}
En particulier, l'ordre d'un courant est  une notion locale.
\begin{proof}
Soit~$(U_i)$ un recouvrement ouvert d'un ouvert~$U$ de~$X$,
pour tout~$i$, soit~$S_i$ un courant sur~$U_i$ tels
que la restriction à~$U_i\cap U_j$ des courants~$S_i$ et~$S_j$
coïncident, pour tous~$i,j$.
Démontrons qu'il existe un unique courant~$S$ sur~$U$
dont la restriction à~$U_i$ est égale à~$S_i$.

Soit~$\omega$ une~$(p,q)$-forme à support compact sur~$U$.
Soit~$(\phi_i)$ une famille 
finie de fonctions lisses à support compact sur~$U$
telle que le support de~$\phi_i$ est contenu dans~$U_i$
et telle que~$\sum\phi_i$ soit égale à~$1$ au voisinage du support
de~$\omega$ (corollaire~\ref{prop.trop-partition.1.lisse-compact}).
% En effet, soit~$V$ un voisinage ouvert paracompact du support 
% de~$\omega$
% dans~$U$; pour tout~$i$, posons~$V_i=U_i\cap V$.
% La famille~$(V_i)$ est un recouvrement ouvert de~$V$;
% il existe donc (proposition~\ref{prop.partition.1.lisse}) 
% une partition de l'unité~$(\phi_i)$ 
% subordonnée à~$(V_i)$ et constituée de fonctions lisses sur~$V$.
On a~$\omega=\sum \phi_i \omega$, et seuls un nombre fini
de termes de cette somme sont non nuls.
Si~$S$ est un courant sur~$U$ dont la restriction à~$U_i$
est égale à~$S_i$, on a donc nécessairement
$\langle S,\omega\rangle =\sum \langle S_i,\phi_i\omega\rangle$,
d'où l'unicité d'un courant~$S$ vérifiant les conditions requises.

Soit~$(\phi'_i)$ une seconde famille finie
de fonctions lisses à support compact sur~$U$
telle que le support de~$\phi'_i$
soit contenu dans~$U_i$
et que~$\sum\phi'_j$ soit égale à~$1$ au voisinage du support
de~$\omega$.
% Soit~$V'$ un second ouvert paracompact contenant le support 
% de~$\omega$;
% posons~$V'_i=U_i\cap V'$ et considérons une partition de 
% l'unité~$(\phi'_i)$
% subordonnée à~$(V'_i)$  et constituée de fonctions lisses sur~$V'$.
On a aussi~$\omega=\sum_{i,j} \phi'_j\phi_i \omega$, si bien que
\[
\sum_j \langle  \phi'_j\omega , S_j\rangle
 = \sum_{i,j} \langle \phi'_j\phi_i \omega ,S_j\rangle 
= \sum_{i,j} \langle \phi'_j\phi_i\omega,S_j\rangle
 = \sum_{i} \langle \phi_i\omega,S_i\rangle ,\]
où la deuxième égalité découle de la condition de coïncidence
des courants~$S_i$.

Il existe donc une unique application~$S$ de l'espace~$\mathscr 
A^{p,q}_\cpct(U)$
dans~$\R$ telle que
$\langle S,\omega\rangle=\sum \langle S_i,\phi_i\omega\rangle$
pour tout famille $(\phi_i)$ de fonctions lisses à support
compact telle que $\sum\phi_i$ soit égale à~$1$ au voisinage
du support de~$\omega$.
Cette application est linéaire. 

\medskip

Démontrons que c'est un courant  sur~$U$. 
Soit $K$ une partie compacte de~$U$.
Considérons alors une famille  finie
$(\phi_i)$ de fonctions lisses à support compact dont la 
somme est égale à~$1$ au voisinage de~$K$ 
et telle que le support de~$\phi_i$
soit contenu dans~$U_i$.
Pour tout~$i$, 
l'application $\omega\mapsto \phi_i\omega$ 
de $\mathscr A^{p,q}_K(U)$ dans~$\mathscr A^{p,q}_\cpct(U_i)$
est continue,
donc
la forme linéaire $\omega\mapsto \langle S_i,\phi_i\omega\rangle$ 
sur $\mathscr A^{p,q}_K(U)$ 
est continue. Pour tout $\omega\in\mathscr A^{p,q}_K(U)$,
on a $\langle S,\omega\rangle = \sum_i \langle S_i,\phi_i\omega\rangle$.
Par suite, la restriction de~$S$
à $\mathscr A^{p,q}_K(U)$ est continue.
Cela prouve que $S$ est continue; c'est donc un courant sur~$U$.

\medskip

Soit~$\omega$ une~$(p,q)$-forme dont le support est compact
et contenu dans~$U_j$; soit~$(\phi_i)$ une famille comme ci-dessus.
Pour tout~$i$, le support de~$\phi_i\omega$ est contenu dans~$U_i\cap U_j$,
donc~$\langle S_i,\phi_i\omega\rangle =\langle S_j,\phi_i\omega,S_i\rangle$.
Alors, 
\[ 
\langle S,\omega \rangle = \sum \langle S_i,\phi_i\omega\rangle
= \sum\langle S_j,\phi_i\omega\rangle
= \langle S_j,\omega\rangle \]
puisque $\sum\phi_i$ est égal à~$1$ au voisinage du support de~$\omega$.
Cela démontre que la restriction à~$U_j$ du courant~$S$ est égale à~$S_j$
et conclut la démonstration que les courants forment un faisceau.

Avec les notations qui précèdent, 
l'ordre de~$S$ est~$\leq r$ si les $S_i$ sont tous d'ordre~$\leq r$.
Cela entraîne que 
courants d'ordre~$\leq r$ forment un sous-faisceau du faisceau
des courants.
\end{proof}

\begin{coro}
Si $X$ est localement différentiellement séparé et paracompact, 
les faisceaux~$\mathscr D^{p,q}_X$ sur~$X$
sont des faisceaux fins. 
\end{coro}
\begin{proof}
Puisque ce sont des faisceaux de $\mathscr A^{0,0}_X$-modules,
cela résulte du corollaire~\ref{coro.pl-fin}.
\end{proof}

\subsection{Support d'un courant}
Soit $X$ un espace tropical de niveau~$n$,
localement différentiellement séparé et
soit $T$ un courant de bidimension~$(p,q)$ sur~$X$.
Comme pour toute section d'un faisceau,
le support de~$T$ est défini comme le complémentaire du plus grand
ouvert de~$X$ sur lequel~$T$ est nul. 

Il résulte des définitions que  
pour toute forme~$\omega$ de bidegré~$(p,q)$ sur~$X$,
dont le support est compact et disjoint de~$\supp(T)$,
on a $\langle T,\omega\rangle=0$.

\begin{lemm}
Soit $X$ un espace tropical de niveau~$n$,
localement différentiellement séparé et
soit $T$ un courant de bidimension~$(p,q)$ sur~$X$.
Soit $\Phi$ la famille de supports sur~$X$ constituée des parties
fermées~$F$ telles que $F\cap \supp(T)$ soit compact.

Il existe une unique application,
définie sur l'espace $\mathscr A^{p,q}_{\Phi}(X)$
des formes lisses de bidegré~$(p,q)$ sur~$X$ dont le support
rencontre~$\supp(T)$ en une partie comapcte,
qui applique~$\alpha$ sur $\langle T,\lambda\alpha\rangle$
pour toute fonction lisse à support compact~$\lambda$ sur~$X$
qui vaut~$1$ au voisinage de~$\supp(T)\cap \supp(\alpha)$.

Cette application est une forme linéaire continue qui prolonge~$T$.
\end{lemm}

En particulier, si le support de~$T$ est compact, $\langle T,\alpha\rangle$
est défini pour toute forme lisse de bidegré~$(p,q)$ sur~$X$,
et l'application $\mathscr A^{p,q}(X)\to \R$ ainsi définie
est une forme linéaire continue.

\begin{proof}
D'après le corollaire~\ref{coro.trop-lisse-separe},
il existe une fonction lisse~$\lambda$, à support compact,
et identiquement égale à~$1$ au voisinage de~$\supp(T)\cap\supp(\alpha)$.
Alors $\lambda\alpha$ est une $(p,q)$-forme à support compact
sur~$X$ et l'on peut former l'expression $\langle T,\lambda\alpha\rangle$.

Vérifions qu'elle ne dépend pas du choix de~$\lambda$.
En effet, si $\lambda'$ est un autre choix,
on a $\lambda=\lambda'$ au de~$\supp(T)\cap\supp(\alpha)$,
de sorte que $\supp(\lambda-\lambda')\cap \supp(T)\cap \supp(\alpha)=\emptyset$.
Il en résulte que 
$\supp((\lambda-\lambda')\alpha)\cap \supp(T)=\emptyset$.
On a donc $\langle T, (\lambda-\lambda')\alpha\rangle=0$,
d'où $\langle T,\lambda\alpha\rangle=\langle T,\lambda'\alpha\rangle$.

Cela prouve l'existence et l'unicité d'une application comme indiqué.

Si $\alpha$ est à support compact, on peut supposer
que la fonction~$\lambda$ est égale à~$1$ au voisinage de~$\supp(\alpha)$;
on a alors $\alpha=\lambda\alpha$ ce qui prouve que l'application
construite prolonge~$T$. On la notera toujours~$T$.

Démontrons que ce prolongement~$T$ est linéaire.

Soit $c\in\R$; pour calculer $\langle T,c \alpha\rangle$,
on peut prendre la même fonction~$\lambda$ que celle qui sert
à définir $\langle T,\alpha\rangle$; 
on en déduit que $\langle T,c\alpha\rangle=c \langle T,\alpha\rangle$.

Soit $\alpha$ et $\beta$ deux formes lisses de bidegré~$(p,q)$
dont les supports rencontrent~$\supp(T)$ sur des parties compactes.
On choisit une fonction lisse~$\lambda$, à support compact,
valant~$1$ au voisinage de $(\supp(\alpha)\cup \supp(\beta))\cap\supp(T)$;
On a alors
\[\langle T,\alpha+\beta\rangle=\langle T,\lambda(\alpha+\beta)\rangle
=\langle T,\lambda\alpha\rangle+ \langle T,\lambda\beta\rangle
=\langle T,\alpha\rangle+ \langle T,\beta\rangle,\]
d'où la linéarité.

Démontrons enfin la continuité de~$T$.
Par définition de la topologie sur l'espace~$\mathscr A^{p,q}_{\Phi}(X)$,
il suffit de prouver que pour toute partie fermée~$F$ appartenant à~$\Phi$,
la restriction de~$T$ à l'espace des formes à support dans~$F$ est continue.
On choisir alors $\lambda$ valant~$1$ au voisinage de~$F\cap\supp(T)$.
On a $\langle T,\alpha\rangle=\langle T,\lambda\alpha\rangle$
pour toute forme~$\alpha$ à support dans~$F$,
et la continuité résulte de ce que l'application $\alpha\mapsto \lambda\alpha$
est continue.
\end{proof}

\subsection{Image directe d'un courant}\label{sss.image-directe-courant}
Soit~$X$ un espace tropical de niveau~$n$,
soit~$Y$ un espace tropical de niveau~$m$
et soit~$\phi\colon Y\ra X$
un morphisme d'espaces tropicaux qui est (topologiquement) propre.

% Le morphisme~$\phi$ induit une application linéaire naturelle,
% notée~$\phi^*$, de~$\mathscr A^{p,q}(X)$ dans~$\mathscr A^{p,q}(Y)$.
% Soit~$\omega$ une forme lisse à support propre sur~$X$.
% Soit~$\omega_L$ la forme qui s'en déduit sur~$X_L$; son support
% est encore compact  et ne rencontre  pas  le bord de~$X_L$
% car un espace sans bord reste sans bord 
% par changement de base par une extension de corps valués complets.
% Comme~$\phi$ est propre, la forme~$\phi^*\omega_L$
% est encore à support propre.
 
L'application~$\phi^*$ de~$\mathscr A^{p,q}(X)$
dans~$\mathscr A^{p,q}(Y)$ est linéaire continue
(corollaire~\ref{coro.continuite-pull-back});
comme $\phi$ est topologiquement propre, elle applique
$\mathscr A^{p,q}_\cpct(X)$ dans
dans~$\mathscr A^{p,q}_\cpct(Y)$ et l'application linéaire
induite est continue (\loccit).
Elle induit par dualité une application linéaire
$\phi_*$ de~$\mathscr D_{p,q}(Y)$ dans~$\mathscr D_{p,q}(X)$.

% Le théorème~\ref{theo.integrale-fibre} précisait la nature
% d'un cas particulier de cette construction,
% celle de l'image directe d'un courant sur~$Y$ associé à une forme.
 
Si $T$ est un courant d'ordre~$\leq r$, le courant~$\phi_*T$
est encore d'ordre~$\leq r$.

On a $\phi_* (\mathrm J T)=\mathrm J \phi_*(T)$ pour tout courant~$T$.
En particulier, l'image directe d'un courant symétrique est symétrique.

On a enfin $\supp(\phi_*T)\subset \phi(\supp(T))$.

\medskip

Supposons de plus que les espaces tropicaux~$X$ et $Y$ sont 
localement holomorphiquement séparés, 
mais relâchons l'hypothèse de propreté sur~$\phi$.
Dans ce cas, le courant $\phi_*T$ est défini dès que le support de~$T$
est $\phi$-compact, par la formule $\langle \phi_*T,\alpha\rangle
=\langle T,\phi^*\alpha\rangle$ pour toute forme lisse 
à support compact~$\alpha$ sur~$X$, puisqu'alors l'intersection
des supports de~$T$ et de~$\phi^*\alpha$ est compacte.
On a $\supp(\phi_*T)\subset\phi(\supp(T))$.

En particulier, si le support de~$T$ est compact, 
celui du courant $\phi_*T$ est également compact,
et l'égalité $\langle T,\phi^*\alpha\rangle=\langle \phi_*T,\alpha\rangle$
vaut pour toute forme lisse~$\alpha$ sur~$X$.

\subsection{Prolongement par zéro}\label{ss.prolongement-courant}
Soit~$X$ un espace tropical de niveau~$n$.
Soit $U$ un ouvert de~$X$, notons $j\colon U\hookrightarrow X$
l'immersion canonique,
et soit $T$ un courant sur~$U$ de bidimension~$(p,q)$.
On dit que \emph{$T$ se prolonge par zéro à~$X$}
si, pour toute forme lisse~$\alpha$ sur~$X$, de bidegré~$(p,q)$
et à support compact, le support du courant $T\wedge \alpha|_U$
sur~$U$ est compact.
Dans ce cas, la formule
\[ \langle j_!T,\alpha \rangle = \langle T, \alpha|_U\rangle \]
définit un courant $j_!T$ sur~$X$ qu'on appelle le prolongement
par zéro de~$T$.

Cette application de prolongement est linéaire, 
commute à l'opérateur~$\mathrm J$
et aux opérateurs différentiels~$\di$, $\dc$, $\ddc$, etc.

\begin{lemm}\label{lemm.prolongement-courant-trop}
Supposons de plus que  $d_\trop(x)<\max(p,q)$
pour tout $x\in X\setminus U$.

Tout courant de bidimension~$(p,q)$ se prolonge par zéro à~$X$.

L'application de restriction de $\mathscr D^{p,q}(X)$ dans~$\mathscr D^{p,q}(U)$
est un isomorphisme, l'application réciproque est donnée 
par $T\mapsto j_!T$.
\end{lemm}
Dans le cas des espaces de Berkovich,
l'exemple typique sera le cas où $q=p$ et où $U$ est le complémentaire
d'un fermé de Zariski de dimension~$<p$.
\begin{proof}
Soit $T$ un courant sur~$U$.
Pour toute $(p,q)$-forme lisse $\alpha$ à support compact sur~$X$,
le support de~$\alpha$ est contenu dans~$U$
(lemme~\ref{lemm.support-comp-ferme}),
donc le support de $T\wedge \alpha|_U$ est contenu dans~$U$. 
Par suite,  le courant~$T$ se prolonge par zéro à~$X$.
Plus précisément, l'application $\alpha\mapsto \alpha|_U$ est 
un isomorphisme d'espace vectoriels topologiques
de $\mathscr A_\cpct^{n-p,n-q}(X)$ sur $\mathscr A_\cpct^{n-p,n-q}(U)$
(lemme~\ref{prop.continuite-push-U'}), de sorte que 
le courant~$j_!T$ est le seul courant de bidimension~$(p,q)$ sur~$X$
qui prolonge~$T$.
\end{proof}

\section{Courants positifs}

Soit $X$ un espace tropical de niveau~$n$.
\begin{defi}\label{defi.courant-positif}
Soit $p$ un entier.
Un courant~$S$ de bidimension~$(p,p)$ est dit 
\emph{faiblement positif}, respectivement \emph{positif},
respectivement \emph{fortement positif}, 
s'il est symétrique
et si pour toute forme
$\alpha\in\mathscr A^{p,p}_\cpct(X)$
qui est fortement positive, respectivement positive, respectivement
faiblement positive,
on a $\langle S,\alpha\rangle\geq 0$.\index{courant positif}
\end{defi}

\subsection{}
Les trois notions  de positivité pour
les formes de type~$(0,0)$ et $(n,n)$ coïncident;
il en est donc de même pour les courants.
Comme les différentes notions de positivité
coïncident pour les formes symétriques de type $(1,1)$ et $(n-1,n-1)$,
il en est de même pour les courants symétriques.

\begin{rema}
Soit $T$ un courant tel que $T$ et $-T$ sont positifs; démontrons
que $T=0$. Soit $\omega$ une  forme lisse à support compact sur~$X$;
prouvons que $\langle T,\omega\rangle=0$.
Si $\omega$ est antisymétrique, cela découle de la symétrie de~$T$;
supposons donc que $\omega$ est symétrique.
Il existe aussi des formes positives $\alpha_1$ et $\alpha_2$,
lisses à support compact telles que $\omega=\alpha_1-\alpha_2$
(proposition~\ref{prop.trop-diff-positives}). Par linéarité, on peut 
donc supposer que  $\omega$ est positive.
Mais alors, $\langle T,\omega\rangle$ est à la fois positif
($T$ est positif) et négatif ($-T$ est positif), donc nul.
\end{rema}

\begin{lemm}\label{lemm.courants-positif-faisceau}
Supposons que l'espace tropical~$X$ soit 
localement différentiellement séparé.
Les courants de bidimension~$(p,p)$ positifs
(\resp faiblement positifs, \resp fortement positifs)
forment un sous-faisceau du faisceau~$\mathscr D_{p,p}(X)$
des courants de bidimension~$(p,p)$.
\end{lemm}
\begin{proof}
Soit $S$ un courant de bidimension~$(p,p)$ sur~$X$.

Supposons que $S$ soit positif.
Soit $U$ un ouvert de~$X$. Pour toute forme~$\omega$ sur~$U$,
de type~$(p,p)$, positive et à support compact,
le prolongement par~$0$ de~$\omega$ est une forme positive à support
compact sur~$X$. Il en résulte que $S|_U$ est positif.

Soit maintenant~$(U_i)$ un recouvrement ouvert de~$X$ 
tel que $S|_{U_i}$ soit positif pour tout~$i$; démontrons
que $S$ est positif.
Soit $\omega$ une forme de type~$(p,p)$ sur~$X$,
positive et à support compact. Reprenons la preuve 
que les faisceaux forment un courant (proposition~\ref{prop.courants-faisceau}).
Il existe une famille finie $(\phi_i)$ 
de fonctions lisses, positives et à support compact sur~$X$
telle que pour tout~$i$, le support de~$\phi_i$ soit contenu dans~$U_i$,
et telle que $\sum \phi_i$ 
soit égale à~$1$ au voisinage du support de~$\omega$.
On a de plus 
 $\langle S,\omega\rangle=\sum \langle S,\phi_i\omega\rangle\geq 0$.
Pour tout~$i$, on a a $\langle S,\phi_i\omega\rangle\geq 0$,
car $\phi_i\omega$ est une forme positive dont le support est
compact et contenu dans~$U_i$.
Cela démontre que $S$ est positif.

Appliqué à chaque ouvert de~$X$, cet argument démontre que
les courants positifs de bidimension~$(p,p)$ forment un 
sous-faisceau du faisceau des courants de bidimension~$(p,p)$.
Les cas des courants faiblement positifs et fortement positifs
se démontrent de même.
\end{proof}

\begin{lemm}\label{rema.formes-test}
Supposons que $X$ soit localement différentiellement séparé.
Soit $T$ un courant sur~$X$.
On suppose que pour tout ouvert~$U$ de~$X$,
toute forme tropicale~$\omega$ sur~$U$ qui est positive
(\resp faiblement positive, \resp fortement positive)
et toute fonction lisse positive~$\rho$ 
dont le support est compact et contenu dans~$U$,
on a $\langle T,\rho\omega\rangle\geq 0$.
Alors $T$ est positif 
(\resp fortement positif, \resp faiblement positif).
\end{lemm}
\begin{proof}
On traite le cas des formes positives, les deux autres sont analogues.
Soit $\omega$ une forme lisse positive à support compact sur~$X$;
démontrons que $\langle T,\omega\rangle\geq 0$.
Pour tout~$x\in X$, soit $U_x$ un voisinage ouvert de~$X$
sur lequel $\omega$ est tropicale.
Soit $(\lambda_x)_{x\in X}$ une famille à support fini de fonctions lisses
positives
telle que, pour tout~$x\in X$, le support de~$\lambda_x$ soit compact
et contenu dans~$U_x$, et telle que $\sum\lambda_x$ soit
égale à~$1$ au voisinage du support de~$\omega$
(corollaire~\ref{prop.trop-partition.1.lisse-compact}).
On a alors $\omega = \sum_{x\in X} \lambda_x \omega$.
Par hypothèse, $\langle T,\lambda_x\omega\rangle\geq 0$ pour tout~$x$.
Donc $\langle T,\omega\rangle\geq 0$.
\end{proof}

\subsection{}
L'image directe d'un courant positif (\resp faiblement positif,
fortement positif) par un morphisme topologiquement propre
d'espaces tropicaux est de même type.
Cela résulte du fait que la même propriété vaut
pour l'image inverse des formes (\S\ref{rema.image.inverse}).

\begin{lemm}\label{lemm.produit-courant-positif-forme}
Soit $\alpha$ une forme lisse symétrique sur~$X$
et soit $T$ un courant symétrique sur~$X$.

\begin{enumerate}
\item Supposons que la forme $\alpha$ soit fortement positive.
Alors, si $T$ est positif (\resp faiblement positif, \resp fortement positif), il en est de même de $T\wedge\alpha$.

\item Supposons que la forme $\alpha$ soit positive.
Si $T$ est positif, le courant $T\wedge\alpha$ est positif.

\item Supposons que la forme $\alpha$ soit faiblement positive.
Si $T$ est fortement positif, le courant $T\wedge\alpha$ est faiblement
positif.
\end{enumerate}
\end{lemm}
\begin{proof}
Ces assertions découlent
des propriétés de positivité d'un produit de deux formes 
(prop.~\ref{prop.produit-formes-positives-berk}).
\end{proof}

\begin{prop}\label{prop.mesure-courant-positif}
Soit $X$ un espace tropical de niveau~$n$,
localement différentiellement séparé.

\begin{enumerate}
\item Toute mesure de Radon positive~$\mu$ sur~$X$ induit, par restriction
à~$\mathscr A_\cpct(X)$, un courant positif~$[\mu]$ d'ordre~$0$.
\item L'application $\mu\mapsto [\mu]$ ainsi définie est
une bijection continue de l'espace des mesures de Radon positives sur~$X$
sur l'espace des courants positifs de bidimension~$(0,0)$ sur~$X$.
\end{enumerate}
\end{prop}
\begin{proof}
\begin{enumerate}
\item Soit $\mu$ une mesure de Radon positive sur~$X$;
démontrons que la forme linéaire~$[\mu]$ est un courant;
ce sera alors un courant positif.
Soit $K$ une partie compacte de~$X$ et soit $\mathscr D$ un jeu de cartes
couvrant sur~$X$. Il suffit de démontrer que la restriction de~$[\mu]$
à l'espace $\mathscr A^{0,0}_{\mathscr D,K}(X)$ est continue pour
sa topologie~$\mathscr C^0$.
Soit $\norm\mu_K$ la norme de~$\mu$ sur~$K$: pour toute fonction
continue à support compact sur~$K$, on a $\abs{\mu(f)}\leq \norm\mu_K \norm f_K$. Cette inégalité  vaut a fortiori pour toute fonction de~$\mathscr A_{\mathscr D,K}(X)$ et entraîne la continuité voulue.

\item Cette implication est injective car l'espace des
fonctions lisses à support compact est dense dans l'espace
des fonctions continues à support compact
(corollaire~\ref{coro.trop-sw}).

Il reste à prouver qu'elle est surjective. Soit $S$ un courant
positif de bidimension~$(0,0)$ sur~$X$.
Soit $K$ une partie compacte de~$X$.
Soit $\lambda$ une fonction lisse et positive, à support compact sur~$X$,
qui vaut~$1$ au voisinage de~$K$.
Pour toute fonction lisse~$f$ sur~$X$ dont le support est contenu
dans~$K$, on a $-\norm f_K \lambda \leq f \leq \norm f_K \lambda$.
Comme $S$ est positif, il en résulte l'inégalité
\[ \abs{S(f)} \leq \norm f_K S(\lambda). \]
Par suite, $S$ est continu lorsque l'espace des fonctions lisses
à support compact est muni de la topologie de la convergence compacte.
Par densité, $S$ s'étend en une forme linéaire positive sur~$\mathscr C_c(X)$,
c'est-à-dire en une mesure de Radon positive.
\qedhere\end{enumerate}
\end{proof}

La démonstration précédente n'a utilisé que la positivité de la forme linéaire~$S$ et pas sa continuité. La proposition 
suivante montre comment ce fait se généralise en tout degré.

\begin{prop}\label{prop.positif-courant}
Soit $X$ un espace tropical de niveau~$n$,
localement différentiellement séparé,
soit~$p$ un entier et soit~$S$ une application linéaire 
de~$\mathscr A^{p,p}_\cpct(X)$ dans~$\R$
telle que $S(\mathrm J\omega)=S(\omega)$
pour tout $\omega\in\mathscr A^{p,p}_\cpct(X)$.
Si $S(\omega)\geq 0$ pour  toute forme fortement positive~$\omega$,
alors $S$ est un courant faiblement positif, d'ordre~$0$.
\end{prop}
\begin{proof}
Il s'agit de prouver la continuité de~$S$ lorsque $\mathscr A^{p,p}_\cpct(X)$
est muni de la topologie~$\mathscr C^0$.
D'après le lemme~\ref{lemm.continuite-courant},
il suffit de démontrer l'énoncé suivant:
soit $f$ une carte tropicale définie sur une partie ouverte~$U$ de~$X$, 
soit $\rho$ une fonction lisse, positive et à support compact sur~$U$,
la forme linéaire $\omega\mapsto S(\rho\omega)$ sur~$\mathscr A^{p,p}_f(U)$
est continue pour la topologie~$\mathscr C^0$.

Soit $W$ un domaine compact de~$U$ qui contient le support de~$\rho$.
Notons $E$ l'espace affine codomaine de~$f$ et munissons-le
d'une norme euclidienne; notons $\eta=\ddc\norm x^2$ la $(1,1)$-forme
fortement positive sur~$E$.
Soit $P$ l'espace paralinéaire compact~$f(W)$;
considérons une norme~$\mathscr C^0$ sur l'espace~$\mathscr A^{p,p}(P)$.
On déduit de la proposition~\ref{prop.harvey} qu'il existe
un nombre réel~$c>0$ tel que la forme 
$\alpha+c \norm\alpha \eta^p$ est fortement positive,
pour tout $\alpha\in\mathscr A^{p,p}(P)$.

Pour toute $\omega\in\mathscr A^{p,p}_f(U)$;
il existe une unique forme $\alpha\in\mathscr A^{p,p}(P)$
telle que $\omega|_W=f^*\alpha$, et l'application $\omega\mapsto
\norm{\omega}_{P,0}$ est une semi-norme~$\mathscr C^0$ sur~$\mathscr A_f(U)$.
On la note $\norm{\cdot}_{W,0}$.

Soit $\omega\in\mathscr A^{p,p}_f(U)$;
il existe une unique forme $\alpha\in\mathscr A^{p,p}(P)$
telle que $\omega|_W=f^*\alpha$, si bien que 
$ \omega + c \norm\alpha f^*\eta^p$ est fortement positive.
Il en est de même de $\rho(\omega+c\norm\alpha_{P,0} f^*\eta^p)$.
Par l'hypothèse sur~$S$, on a donc 
\[ S(\rho\omega) + c \norm\omega_{W,0} S(\rho f^*\eta^p) \geq 0. \]
En appliquant ce raisonnement à~$-\omega$, on en déduit l'inégalité
\[ \abs{ S(\rho\omega)}\leq c S(\rho f^*\eta^p) \norm \omega_{W,0}. \]
Cela démontre la continuité requise.

Ainsi, $S$ est un courant.
La condition $S(\mathrm J\omega)=S(\omega)$ implique alors
que c'est un courant symétrique,
et la condition de positivité $S(\omega)\geq 0$
pour $\omega$ positive entraîne que c'est un courant faiblement positif.
\end{proof}

\begin{coro}\label{coro.courant-L1loc}
Soit $u$ une fonction localement intégrable sur~$X$.
La forme linéaire $\omega\mapsto \int_X u\omega$ sur~$\mathscr A^{n,n}_\cpct(X)$
définit un courant d'ordre~$0$ 
et de bidimension~$(n,n)$ sur~$X$, fortement positif si $u$ est positif.
\end{coro}
On note $[u]$ ce courant.
\begin{proof}
Les fonctions $u_+=\sup(u,0)$ et $u_-=\sup(-u,0)$ sont
localement intégrables.
Les formes linéaires qu'elles définissent sont 
fortement positives, donc sont des courants d'ordre~$0$.
Comme on a $u=u_+-u_-$, la forme linéaire donnée est la différence
de ces deux courants, donc est un courant.
\end{proof}

\subsection{}\label{ss.courant-mesure}
Soit $X$ un espace tropical de niveau~$n$,
localement différentiellement séparé.
Soit $p$ un entier naturel et soit $T$ un courant positif
(\resp faiblement positif, \resp fortement positif)
de bidimension~$(p,p)$.

Soit $\omega$ une forme positive (\resp fortement positive,
\resp faiblement positive) de bidegré~$(p,p)$.
Le courant $T\wedge\omega$ est positif, de bidimension~$(0,0)$;
d'après la proposition~\ref{prop.mesure-courant-positif},
il est donc associé à une unique mesure~$\mu^T_\omega$ sur~$X$.
Cette mesure applique une fonction lisse à suppport compact~$u$
sur $\langle uT, \omega\rangle$.
Lorsque $\omega$ est à support compact, cette mesure est à support compact.
Lorsque $\omega$ est positive, cette mesure est positive.

\begin{exem}\label{coro.courant-positif}
Soit $X$ un espace tropical de niveau~$n$,
localement différentiellement séparé.
Soit $p$ un entier naturel et soit $T$ un courant positif
(\resp faiblement positif, \resp fortement positif)
de bidimension~$(p,p)$.
Soit $u$ une fonction borélienne et localement bornée sur~$X$.
L'application $\omega\mapsto \mu^T_\omega(u)$ est une forme
linéaire sur $\mathscr A^{p,p}_\cpct(X)$ que nous noterons~$uT$.

Lorsque $u$ est positive, cette forme linéaire~$uT$ est positive,
donc est un courant positif (\resp faiblement positif,
\resp fortement positif) sur~$X$.

Lorsque $u$ est une fonction borélienne et localement bornée
arbitraire, on l'écrit $u=u^+-u^-$, où $u^+=\sup(u,0)$ et $u^-=\sup(-u,0)$,
comme différence de deux fonctions positives boréliennes localement
bornées sur~$X$. Comme $uT=u^+T-u^-T$, c'est un courant sur~$X$, d'ordre~$0$.

Cette notation est compatible avec celle déjà introduite lorsque
$u$ est une fonction lisse puisque l'on a alors
$\langle uT, \omega\rangle=\langle T,u\omega\rangle=\mu^T_\omega(u)$.
\end{exem}

\begin{exem}\label{prop.courant.positif-mesure}
Soit $(\mu_n)$ une suite de mesures de Radon positives 
et soit $\mu$ une mesure de Radon positive sur~$X$.
Si $[\mu_n]\to [\mu]$ au sens des courants, alors $\mu_n\to\mu$ au sens
des mesures. Cela résulte de la densité des fonctions lisses 
à support compact dans les fonctions continues à support compact 
(corollaire~\ref{coro.trop-sw}).
\end{exem}

\begin{exem}
Il n'est pas vrai que tout courant d'ordre~$0$ se prolonge en une mesure.
Considérons l'étoile~$X_B$ associée à un ensemble infini~$B$
(\cf\S\ref{defi-etoile}, dont nous reprenons les notations).
Soit $(c_b)_{b\in B}$ une famille de nombres réels.
Pour toute fonction lisse~$f$ sur~$X$, posons
\[ \langle T,f\rangle = \sum_{b\in B} c_b \big( f_b(0) -f_b(-\infty)\big). \]
Cela définit un courant sur~$X$, de bidimension~$(0,0)$,
mais il ne provient d'une mesure que lorsque 
la famille $(c_b)$ est sommable. Cette mesure est alors égale à
\[ \sum_{b\in B} c_b \delta_\eta - \sum_{b\in B} c_b \delta_{p(b,-\infty)}. \]
\end{exem}

\section{Courants d'intégration}\label{ss.courants.integration}

\subsection{}
Soit~$X$ un espace tropical de niveau~$n$.
Pour toute~$(n,n)$-forme lisse~$\alpha$ à support compact sur~$X$,
posons alors~$\langle \delta_X,\alpha\rangle
=\int_X\alpha$,
où l'intégrale d'une~$(n,n)$-forme  lisse à  support compact
sur~$X$ a été définie en~(\ref{defi.integrale.n,n}).
On définit ainsi une forme linéaire
sur l'espace~$\mathscr A^{n,n}_\cpct(X)$. 
Elle est positive ; d'après la proposition~\ref{prop.positif-courant},
c'est donc un courant sur~$X$, 
qu'on appelle le \emph{courant d'intégration sur~$X$}
et on le note~$\delta_X$.\index{courant d'intégration}
Il est de bidimension~$(n,n)$ et d'ordre~$0$.
Une forme antisymétrique de type~$(n,n)$ sur~$X$ étant nulle, 
donc son intégrale aussi, le courant~$\delta_X$ est symétrique. 

\subsection{}
On définit de manière analogue le courant d'intégration « sur le bord » de~$X$,
noté~$\delta^\partial_{X}$, par la formule
$\langle \alpha,\delta^\partial_X\rangle=\int^\partial_X \alpha$,
pour toute~$(n-1,n)$-forme  à support compact~$\alpha$ sur~$X$.

Cette forme linéaire est continue pour la topologie~$\mathscr C^0$.
Compte tenu de la définition
de~$\int_X\alpha$, cela découle en effet de la continuité 
pour la topologie~$\mathscr C^0$
de l'intégrale de bord 
d'une forme sur un polytope calibré dans un espace affine.

Il est de bidimension~$(n-1,n)$ et d'ordre~$0$; son support
est contenu dans le bord de~$X$.

\subsection{}
Soit $Y$ un espace tropical de niveau~$m$
et soit $\phi\colon Y\to X$ un morphisme topologiquement propre
d'espaces tropicaux. 
Un cas particulier important est celui d'une partie squelettique
de~$X$ munie de son $n$-calibrage canonique,
plus généralement d'une partie squelettique de dimension~$\leq d$ quelconque
munie d'un $d$-calibrage.

Les constructions précédentes fournissent des courants
$\phi_*\delta_Y$ et $\phi_*\delta^\partial_{Y}$
sur~$X$ de bidimensions~$(m,m)$ et $(m-1,m)$ respectivement.

Le courant~$\phi_*\delta_Y$ est fortement positif.

% Lorsque $\phi$ induit un homéomorphisme sur son image
% et un isomorphisme entre corps résiduels complétés,
% on pourra omettre~$\phi$ de la notation et noter $\delta_Y$
% et $\delta^\partial_{Y}$ ces courants sur~$X$.  
% Cette situation se produit notamment
% lorsque $Y$ est un sous-espace analytique fermé d'un domaine
% analytique fermé de~$X$
% (auquel cas, $\ell=k$), ou lorsque $Y$ est la fibre d'un
% morphisme $f\colon X\to Z$ en un point~$z$ (auquel cas, $\ell=\mathscr H(z)$),
% le morphisme $\phi$ étant dans les deux cas l'immersion canonique.
 
\subsection{}\label{sss.[omega]}\label{ss.positivite-forme-courant}
Soit $\omega$ une G-forme de type~$(p,q)$ 
à coefficients tropicalement boréliens localement bornés sur~$X$. 
On définit un courant $[\omega]$ de bidimension~$(p,q)$ et d'ordre~$0$
sur~$X$ par la formule
$\alpha\mapsto \int_X \omega\wedge\alpha$ 
définie par l'intégrale des G-formes.
Lorsque $\omega$ est une forme lisse, on a 
$[\omega]=\delta_X\wedge\omega$.

Si $\omega$ est positive (\resp faiblement positive, \resp fortement positive),
il en va de même du courant~$[\omega]$.
Cela découle de la définition  d'un courant positif,
des propriétés de stabilité de la positivité par produit
(prop.~\ref{prop.produit-formes-positives-berk}),
et de ce que l'intégrale d'une $(n,n)$-forme positive est positive.

On définit aussi le courant $[\omega]^\partial$
par la formule $\alpha\mapsto \int^\partial_X \omega\wedge\alpha$
défini à l'aide de l'intégrale de bord des G-formes.
Il est d'ordre~$0$ et de bidimension $(n-p-1,n-q)$.
Lorsque $\omega$ est une forme lisse,
on a $[\omega]^\partial=\delta^\partial_X\wedge\omega$.

Il découle de la définition de la topologie des espaces
de courants que les applications $\omega\mapsto [\omega]$
et $\omega\mapsto [\omega]^\partial$ sont continues.

% Soit $L$ une extension complète de~$k$ et soit $p\colon X_L\to X$
% le morphisme de changement de base.
% Comme les intégrales des $(n,n)$-formes et des $(n-1,n)$-formes
% commutent à l'extension des scalaires, 
% (proposition~\ref{prop.mesure-chgt-base}), on a 
% les égalités $p_*([\omega_L])=[\omega]$
% et $p_*([\omega_L]^\partial)=[\omega]^\partial$ de courants sur~$X$.
 
\subsection{}
Nous allons maintenant généraliser  cette construction aux 
« G-formes » qui sont combinaisons linéaires de G-formes usuelles
à coefficients dans un anneau de fonctions non nécessairement tropicales.

Soit $V$ un domaine de~$X$ et soit $U$ un voisinage ouvert de~$V$
dans~$X$ dans lequel $V$ est fermé. 
Soit $u\in\mathrm L^1_\loc(V)$, $\omega \in\mathscr A^{p,q}_{X_\groth}(V)$
et $\alpha\in\mathscr A^{n-p,n-q}_\cpct(U)$; la G-forme $\omega\wedge\alpha|_V$
est à support compact dans~$V$, de sorte que $u$ est intégrable
pour la mesure associée. L'application $\alpha\mapsto
\int_V u(\omega \wedge \alpha)_\Rad$ est un courant d'ordre~$0$ sur~$U$.
La section correspondante de $\pi^* \mathscr D_{p,q} (V)$
ne dépend pas du choix de~$U$.
On définit ainsi un morphisme de G-faisceaux 
\[ \mathrm L^1_{\loc,X}\otimes \mathscr A^{p,q}_{X_\groth} \to 
\pi^*(\mathscr D_{p,q})_X . \]
Par adjonction, on en déduit un morphisme de faisceaux
\[ \pi_*(\mathrm L^1_{\loc,X}\otimes \mathscr A^{p,q}_{X_\groth}) \to 
\pi_*\pi^* (\mathscr D_{p,q})_X . \]
Comme l'unité $\id\to\pi_*\pi^*$ est un isomorphisme
de foncteurs, on en déduit un morphisme de faisceaux
\[ \pi_*(\mathrm L^1_{\loc,X}\otimes \mathscr A^{p,q}_{X_\groth}) \to 
 (\mathscr D_{p,q})_X . \]

L'image d'une section~$\omega$ 
sera notée~$[\omega]$. 
La notation est cohérente avec le cas des G-formes lisses;
via l'isomorphisme de la proposition~\ref{lemm.otimes-coeff},
elle est aussi cohérente avec les cas des G-formes à coefficients
tropicalement boréliens localement bornés.
Ce courant est d'ordre~$0$ et de bidimension~$(n-p,n-q)$.

Par composition avec l'homomorphisme canonique
\[ \pi_*(\mathscr L^1_{\loc,X}\otimes \mathscr A^{p,q}_{X_\groth}) \to 
 \pi_*(\mathrm L^1_{\loc,X}\otimes \mathscr A^{p,q}_{X_\groth}), \]
on en déduit un 
homomorphisme
\[ \pi_*(\mathscr L^1_{\loc,X}\otimes \mathscr A^{p,q}_{X_\groth}) \to 
(\mathscr D_{p,q})_X  \]
qu'on note encore $\omega\mapsto[\omega]$.

Pour toute forme~$\alpha$ de type~$(n-p,n-q)$, lisse à support compact,
on a 
\[ \langle\mathrm J[\omega],\alpha\rangle
= \langle [\omega],\mathrm J^{-1}\alpha\rangle
=\int_X \omega\wedge \mathrm J^{-1}\alpha
= \int_X \mathrm J^{-1} (\mathrm J \omega \wedge \alpha)
= \int_X \mathrm J\omega\wedge\alpha
= \langle [\mathrm J\omega],\alpha\rangle.\]
Ainsi,  $\mathrm J[\omega]= [\mathrm J\omega]$.

On laisse au lecteur le soin d'écrire la variante de cette construction
pour les intégrales de bord.

\begin{prop}
\label{prop.courant-restriction}
Soit $\omega$ une G-forme (à coefficients tropicalement
boréliens et localement bornés) sur~$X$.

\begin{enumerate}
\item
Pour que le courant~$[\omega]$ soit nul (respectivement positif,
respectivement faiblement positif, respectivement fortement positif),
il faut et il suffit
que pour toute partie squelettique~$\Sigma$ de~$X$,
la forme~$\omega|_\Sigma$ soit nulle (respectivement positive,
respectivement\dots)
en dehors d'une partie $n$-négligeable.

\item
Si $\omega$ est à coefficients tropicalement continus,
le courant~$[\omega]$ est nul (respectivement positif,
respectivement faiblement positif, respectivement fortement positif)
si et seulement si $\omega|_\Sigma$ est nulle
(respectivement positive, respectivement\dots)
pour toute partie squelettique~$\Sigma$ de~$X$ qui
est purement de dimension~$n$.
\end{enumerate}
\end{prop}
\begin{proof}
La condition est évidemment suffisante; prouvons qu'elle est nécessaire.
Soit $\omega$ une G-forme de type~$(p,q)$ 
dont le courant associé~$[\omega]$ est nul (respectivement positif,
etc.).
Soit $\Sigma$ une partie squelettique;
prouvons que la forme~$\omega|_\Sigma$ est 
nulle (respectivement positive, etc.)
en dehors d'une partie localement $n$-négligeable de~$\Sigma$.
Soit $\Sigma'$ l'ensemble des points de~$\Sigma$ au voisinage
desquels $\omega|_\Sigma$ a la propriété voulue;
démontrons que $\Sigma\setminus\Sigma'$ est $n$-négligeable.

C'est une propriété locale, ce qui permet de supposer l'espace paracompact.
Elle se teste alors sur un G-recouvrement localement fini
ce qui permet de supposer qu'il existe un domaine compact~$V$
de~$X$ dont~$\Sigma\cap V$ est 
une partie fidèlement paralinéaire fermée sur lequel
$\omega$ est tropicale.
Quitte à restreindre encore~$V$,
il existe donc une application paralinéaire~$f$ sur~$V$ 
qui G-tropicalise~$\omega|_V$ et atteste
que $\Sigma\cap V$ est fidèlement paralinéaire
(définition~\ref{defi.fidelement-pl}).
Quitte à introduire une décomposition cellulaire de~$\Sigma \cap V$
en $f$-cellules et à projeter, on se ramène au cas
où $f$ est à valeurs dans~$\R^n$ et où $\Sigma \cap V$ est une $f$-cellule 
purement de dimension~$n$.
Par définition d'un espace tropical, $f(\partial(V))$
est $n$-négligeable. Quitte à  raffiner la décomposition cellulaire
choisie, on suppose que $C=f(\Sigma \cap V)$ est une cellule forte de~$\R^n$
dont le bord contient~$f(\partial(V))$.

Soit $\alpha$ une $(p,q)$-forme sur~$C$ telle que $\omega|_\Sigma=f^*\alpha$.
Si le courant~$[\omega]$ est nul,
justifions que pour toute $(n-p,n-q)$-forme~$\beta$,
lisse, à support compact, sur~$\mathring C$,
on a $\int_C \alpha\wedge\beta=0$;
si ce courant est positif (resp. faiblement positif, resp. fortement positif),
justifions que pour toute forme~$(n-p,n-q)$-forme~$\beta$,
lisse, positive (resp. fortement positive, resp. faiblement positive),
à support compact, sur~$\mathring C$, 
on a $\int_C \alpha\wedge\beta\geq0$.

Considérons la forme $\eta=f^*\beta$ sur~$V$; par construction,
c'est une $(n-p,n-q)$-forme  lisse sur~$V$ 
et son support est contenu dans~$\Int(V)$ qui est lui-même
contenu dans $\Int(V/X)$ ; elle se prolonge donc par zéro
en une $(n-p,n-q)$-forme lisse sur~$X$, 
que l'on note encore~$\eta$ qui est à support compact.
Si $\beta$ est positive (resp. fortement positive, resp. faiblement positive),
la forme~$\eta$ l'est également.

Si le courant~$[\omega]$ est nul, on a $\int_X \omega\wedge\eta=0$.
D'après la proposition~\ref{proposition.formule-au-but}, 
il vient donc $\int_C \alpha\wedge\beta=0$.
Prenons pour~$\beta$ une forme du type $u \beta_0$,
où $\beta_0$ est une $(n-p,n-q)$-superforme « constante » sur~$\R^n$
et $u$ est une fonction lisse à support compact sur~$C$.
En faisant varier~$u$,
on déduit de ce qui précède
que l'ensemble des $x\in C$ tels que $\alpha(x)\wedge\beta_0$
soit non nulle est de mesure nulle dans~$C$.
(On utilise le fait si $\lambda$ est une fonction borélienne bornée sur~$C$
à valeurs réelles telle que $\int_C u\lambda = 0$ pour toute
fonction continue à support compact~$u$ sur~$C$, alors l'ensemble
des~$x\in C$ tels que $\lambda(x)\neq 0$ est de mesure nulle.)
Faisons maintenant varier~$\beta_0$ dans une base des
superformes sur~$\R^n$; on obtient un ensemble
négligeable~$N$ de~$C$ tel que, pour tout $x\in C\setminus N$,
$\alpha(x)\wedge\gamma=0$, pour toute superforme~$\gamma$
de type~$(n-p,n-q)$.
Par dualité, $\alpha(x)=0$ pour tout $x\in C\setminus N$,
d'où le résultat.

Si le courant~$[\omega]$ est positif, on a $\int_X\omega\wedge\eta\geq0$
puis, comme ci-dessus, $\int_C\alpha\wedge\beta\geq 0$.
On déduit de même que pour toute $(n-p,n-q)$-superforme « constante »
$\beta_0$ sur~$\R^n$  qui est positive (resp. fortement positive, resp.
faiblement positive), l'ensemble des $x\in C$ 
tels que $\alpha(x)\wedge\beta_0$ ne soit pas positive
est de mesure nulle dans~$C$.
(On utilise le fait si $\lambda$ est une fonction borélienne bornée sur~$C$
à valeurs réelles telle que $\int_C u\lambda \geq 0$ pour toute
fonction continue positive et à support compact~$u$ sur~$C$, alors l'ensemble
des~$x\in C$ tels que $\lambda(x)< 0$ est de mesure nulle.)
Faisons maintenant varier~$\beta_0$ dans une partie dénombrable dense
du cône des formes positives (resp. fortement positives,
resp. faiblement positives). On obtient un ensemble
négligeable~$N$ de~$C$ tel que, pour tout $x\in C\setminus N$,
$\alpha(x)\wedge\gamma$ est positive, pour toute superforme~$\gamma$
qui est positive (resp\dots).
Par dualité, $\alpha(x)$ est positive (resp. faiblement
positive, resp. fortement positive) pour tout $x\in C\setminus N$,
d'où le résultat.

Si le support de~$\omega|_\Sigma$ est négligeable, son intérieur est vide.
Si $\omega|_\Sigma$ est à coefficients continus,
son support est l'adhérence de son intérieur,
la dernière assertion en découle immédiatement.
\end{proof}

\begin{coro}
Soit $X$ un espace tropical de niveau~$n$,
soit $\omega$ une G-forme de type~$(n,n)$ sur~$X$, 
à coefficients tropicalement continus et à support compact.
Si $\omega$ est positive et non nulle, on a $\int_X \omega>0$.
\end{coro}
\begin{proof}
Soit $\omega$ une forme positive à support compact telle que $\int_X\omega=0$.
Soit $K$ le support de~$\omega$. Comme $\omega$ est positive, 
les formes $(u-\inf_K (u))\omega$ et $(\sup_K (u)-u)\omega$ sont positives,
leurs intégrales le sont aussi, 
d'où 
\[ 0= \inf_K(u) \int_X \omega\leq \int_X u \omega \leq \sup_K (u) \int_X\omega = 0  \]
et l'égalité $\int_X u\omega=0$.
En particulier, le courant~$[\omega]$ est nul.
Alors, $\omega|_\Sigma=0$ pour toute partie
squelettique~$\Sigma$.
En appliquant cette égalité à un support fort de~$\omega$
(proposition~\ref{lemm.squelette-support-fort}),
on en déduit que $\omega=0$. Cela démontre le corollaire.
\end{proof}
 
\begin{coro}%  \label{prop.injectivite-n,n}
\label{coro.injectivite-n,n}
Soit $X$ un espace tropical de niveau~$n$.
Soit $p$ et $q$ des entiers tels que $\sup(p,q)=n$.
Notons
$\pi\colon X_\groth\to X$ le morphisme de sites naturel.
L'homomorphisme de faisceaux 
$\pi_*\mathscr C_{\groth,X}^{p,q}\to (\mathscr D_{p,q})_X$
est
injectif.
\end{coro}
\begin{proof}
Traitons le cas où $q=n$, l'autre cas étant analogue.
Soit $U$ un ouvert de~$X$ et soit $\omega$ une forme de type~$(p,n)$
sur~$X$ telle que $[\omega]=0$; prouvons que $\omega=0$.
On peut raisonner localement sur~$U$ et donc supposer que $U$ est paracompact.
Dans ce cas, $\omega$ possède un support fort, $\Sigma$
(proposition~\ref{lemm.squelette-support-fort}),
et le support de~$\omega|_\Sigma$ est égal au support de~$\omega$
(proposition~\ref{prop.restr-support-fort}).
Puisque $[\omega]=0$, la proposition~\ref{prop.courant-restriction}
entraîne que $\omega|_\Sigma=0$.
On a donc $\supp(\omega)=\emptyset$, c'est-à-dire $\omega=0$.
\end{proof}

\begin{coro}\label{coro.injectivite-p,q}
Soit $X$ un espace tropical de niveau~$n$, \emph{très riche}.
Pour tout couple~$(p,q)$ d'entiers naturels,
l'homomorphisme de faisceaux 
$\pi_*\mathscr C_{\groth,X}^{p,q}\to (\mathscr D_{p,q})_X$
est injectif.
\end{coro}
\begin{proof}
Soit $U$ un ouvert de~$X$ et soit 
$\omega$ une $(p,q)$-forme à coefficients tropicalement continus sur~$U$.
Supposons que $\omega\neq0$. Le corollaire~\ref{coro.tres-riche-support}
affirme l'existence d'une partie squelettique~$\Sigma$ de~$U$,
purement de dimension~$n$, telle que $\omega|_\Sigma$ soit non nulle.
La proposition~\ref{prop.courant-restriction} entraîne alors
que le courant~$[\omega]$ sur~$U$ n'est pas nul.
\end{proof}

% \begin{coro}\label{coro.injectivite-G-forme-courant}
% On suppose que la valuation de~$k$ n'est pas triviale.
% Soit $X$ un espace strictement $k$-analytique, purement de dimension~$n$,
% et soit $\omega$ une G-forme de type~$(p,q)$, à coefficients
% tropicalement continus,  sur~$X$
% qui est tropicalisée par un G-recouvrement strict de~$X$,
% par exemple, une forme sur~$X$.
% 
% \begin{enumerate}
% \item
% Si $[\omega]=0$, alors $\omega=0$.
% 
% \item
% Si le courant~$[\omega]$ est positif (resp. faiblement positif,
% resp. fortement positif), il en est de même de la forme~$\omega$.
% \end{enumerate}
% \end{coro}
% \begin{proof}
% Supposons que $[\omega]$ est nul (resp. positif, etc.).
% Il en est alors de même de sa restriction $[\omega]|_{\Int(X)}
% =[\omega|_{\Int(X)}]$, 
% à~$\Int(X)$.
% D'après le corollaire~\ref{coro.positivite-courant-forme},
% la forme $\omega|_{\Int(X)}$ est nulle (resp. positive, etc.).
% Le corollaire découle alors de la proposition~\ref{prop.G-forme-interieur}.
% \end{proof}
 
\begin{coro}\label{coro.positivite-courant-forme}
Soit $X$ un espace tropical de niveau~$n$, très riche.
Soit $p$ un entier naturel et soit $\omega$ une forme de type~$(p,p)$ sur~$X$,
symétrique, à coefficients tropicalement  continus.
Pour que le courant~$[\omega]$ soit positif (resp. faiblement positif, resp.
fortement positif), il faut et il suffit
qu'il en soit de même de la forme~$\omega$.
\end{coro}
\begin{proof}
Si $\omega$ est positive, on a déjà expliqué (\S\ref{ss.positivite-forme-courant}) que le courant associé est positif, pour les trois types de positivité.
Établissons la réciproque par contraposition.
Supposons que $\omega$ ne soit pas positive.
Le corollaire~\ref{coro.tres-riche-support}
affirme l'existence d'une partie squelettique~$\Sigma$ de~$X$,
de dimension~$n$, telle que $\omega|_\Sigma$ n'est pas positive.
La proposition~\ref{prop.courant-restriction} entraîne alors
que le courant~$[\omega]$ sur~$X$ n'est pas positif.
Le cas des deux autres types de positivité se démontre de même.
\end{proof}

\begin{coro}\label{coro.restriction-interieur}
Soit $X$ un espace tropical de niveau~$n$, très riche.
Soit $\omega$ une G-forme de type~$(p,q)$ sur~$X$
à coefficients tropicalement continus.

\begin{enumerate}
\item Si $\omega|_{\Int(X)}=0$, alors $\omega=0$.
\item Si $p=q$ et si $\omega|_{\Int(X)}$ est positive,
alors $\omega$ est positive.
\end{enumerate}
\end{coro}
\begin{proof}
Démontrons la première assertion, la seconde se démontre de même.
Supposons que $\omega|_{\Int(X)}=0$.
Pour démontrer que $\omega=0$, il suffit de démontrer
que $\omega|_{\Sigma}=0$ pour toute partie squelettique compacte~$\Sigma$
de~$X$.
Soit $\Sigma$ une telle partie; comme $X$ est très riche,
il existe une partie squelettique compacte~$\Tau$ de~$X$,
purement de dimension~$n$, telle que $\Sigma\subset \Tau$.
Par hypothèse, $\omega|_{\Int(X)\cap\Tau}=0$.
Puisque $\partial(X)\cap\Tau$ est $n$-négligeable,
$\Int(X)\cap\Tau$ est dense dans~$\Tau$.
Comme $\omega$ est à coefficients tropicalement continus,
on en déduit $\omega|_\Tau=0$ et, en particulier, $\omega|_\Sigma=0$,
comme il fallait démontrer.
\end{proof}

Rappelons (corollaire~\ref{coro.courant-L1loc})
que toute fonction localement intégrable~$u$,
a fortiori toute fonction continue~$u$, définit un courant~$[u]$
sur~$X$ de bidimension~$(n,n)$, donc de bidegré~$(0,0)$.
\begin{coro}
Soit $X$ un espace tropical de niveau~$n$, très riche. 
L'homomorphisme $u\mapsto [u]$ de $\mathscr C_X$ dans~$\mathscr D^{0,0}_X$
est injectif. 
\end{coro}
\begin{proof}
Soit $U$ un ouvert de~$X$ et $u\in\mathscr C_X(U)$ telle que $[u]=0$.
Démontrons que $u$ est nulle. 
Raisonnons par l'absurde. Soit $x$ un point de~$U$ tel que 
$u(x)\neq 0$ et soit~$V$ un voisinage ouvert de~$x$ dans~$U$ 
sur lequel $u$~ne s'annule pas. 
Il existe une fonction continue~$v$ sur~$U$,
à support compact contenu dans~$V$, 
telle que $uv$ soit lisse et non nulle.
On a alors $[uv]=v[u]=0$, donc $uv=0$, ce qui contredit
le corollaire~\ref{coro.injectivite-p,q}.
\end{proof}

\begin{coro}
Soit $X$ un espace tropical de niveau~$n$, très riche. 
Le support du courant~$\delta_X$ est égal à~$X$.
\end{coro}
\begin{proof}
Soit $U$ un ouvert de~$X$ tel que $\delta_X|_U=0$.
On $\delta_X=[1]$, et $\delta_X|_U=[1_U]$. Comme $U$ est un ouvert
d'un espace très riche, il est très riche. Par suite, $1_U=0$,
ce qui signifie que $U$ est vide.
\end{proof}

\begin{lemm}\label{lemm.limite-mesure-courant}
Soit $U$ un ouvert de~$X$ tel que $d_\trop(x)<n$ 
pour tout $x\in X\setminus U$.
Soit $(u_n)$ une suite 
de fonctions localement localement intégrables sur~$U$,
convergeant simplement vers une fonction~$u$.
S'il existe une fonction localement intégrable~$v$ sur~$X$
telle que $\abs{u_n}\leq v|_U$, alors
Alors la suite de courants $([u_n])$ converge vers le courant~$[u]$ sur~$X$.
\end{lemm}
Dans cet énoncé, les fonctions~$u_n$ (\resp~$u$) définissent a priori des courants~$[u_n]$
de bidimension~$(n,n)$ sur~$U$; d'après le lemme~\ref{lemm.prolongement-courant-trop},  ces courants se prolongement par zéro à~$X$ et on a
noté également~$[u_n]$ et~$[u]$ ces prolongements.
\begin{proof}
Soit $\alpha$ une $(n,n)$-forme lisse à support compact sur~$X$.
Son support~$S$ est une partie compacte de~$U$, par hypothèse
sur la dimension tropicale des points de~$X\setminus U$.
% Par suite, la suite~$(u_n)$ est uniformément bornée sur~$S$.
Pour tout~$n$, on a $\langle[u_n],\alpha\rangle=\int_S u_n \alpha_\Rad$,
et de même pour~$u$.
La mesure $\abs v \alpha_\Bor=\abs{ v\alpha_\Rad}$
étant finie, il résulte donc du théorème de convergence dominée que
$\langle[u_n],\alpha\rangle\to \langle[u],\alpha\rangle$.
\end{proof}

\section{Courants positifs et G-topologie}
\label{sec.courants-positifs-G}

Le but de ce chapitre est d'étendre aux G-formes
l'action des courants positifs, 
c'est-à-dire, étant donné un courant positif~$T$ sur~$X$,
définir $\langle T,\alpha\rangle$ pour toute G-forme~$\alpha$
(à support compact)
sur un domaine~$V$ (non nécessairement ouvert) de~$X$,
et de montrer que les courants ainsi étendus forment un G-faisceau.

% \begin{lemm}
% Soit $\mathscr B$ une classe de fonctions numériques.
% Soit $X$ un espace tropical et soit $V$ un domaine de~$X$.
% Soit $\omega$ une G-forme à coefficients dans~$\mathscr B$ sur~$X$
% et soit $\rho$ une G-fonction sur~$X$.
% On suppose que $\omega|_V=0$ et que $\rho(x)=0$ 
% pour tout $x\notin V$.
% Alors $\rho\omega=0$.
% \end{lemm}
% \begin{proof}
% L'assertion est G-locale. On peut donc supposer que $X$ et~$V$
% sont compacts et que $\omega$ et $\rho$ sont tropicales.
% Soit $f\colon X\to\R^n$ une carte affine qui les tropicalise,
% soit $\alpha$ une forme lisse sur~$f(X)$ 
% et soit $u$ une fonction lisse sur~$f(X)$
% telles que $\omega=f^*\alpha$ et $\rho=f^*u$.
% On a $\rho\omega=f^*(u\alpha)$; il suffit donc de démontrer 
% que $u\alpha=0$. 
% En raisonnant G-localement, on peut donc supposer que $f(X)$ 
% est une cellule de~$\R^n$.
% Comme on l'a vu dans l'exemple~\ref{exem.formes-cellules-valeurs},
% cette égalité se vérifie ponctuellement sur la cellule~$f(X)$.
% 
% Posons $P=f(V)$; comme $\omega|_V=0$, on a $\alpha|_P=0$ ;
% en particulier, $\alpha$ est ponctuellement nulle en tout point de~$P$,
% et $u\alpha$ aussi.
% 
% Par ailleurs, soit $t\in f(X)\setminus P$; il existe un  point $x\in X$
% tel que $t=f(x)$ ; comme $t\notin P$, le point~$x$ n'appartient pas à~$V$
% si bien que $u(t)=f(x)=0$. Cela entraîne que $u\alpha$ est ponctuellement
% nulle en~$t$.
% \end{proof}
 
\begin{lemm}\label{lemm.inegalite-semi-locale}
Soit $\mathscr B$ une classe de fonctions numériques.
Soit $X$ un espace tropical et soit $V$ un domaine fermé de~$X$.
Soit $\omega$ une G-forme de type~$(p,p)$
à coefficients dans~$\mathscr B$ sur~$X$.
On suppose que $\omega|_V$ et $\omega|_{X\setminus V}$ sont positives.
Alors $\omega$ est positive.
\end{lemm}
\begin{proof}
L'assertion est G-locale. On peut donc supposer que $X$ et~$V$
sont compacts et que $\omega$ est tropicale.
Soit $f\colon X\to\R^n$ une carte affine qui tropicalise~$\omega$,
soit $\alpha$ une forme lisse sur~$f(X)$ 
telle que $\omega=f^*\alpha$.
Il suffit de démontrer que $\alpha$ est positive.
En raisonnant G-localement, on peut donc supposer que $f(X)$ 
est une cellule de~$\R^n$.

Posons $P=f(V)$; comme $\omega|_V$ est positive, 
la forme $\alpha|_P$ est positive
(lemme~\ref{lemme-gpositif.compatibilite}).

Posons $U=f(X)\setminus P$ ; alors $f^{-1}(U)\subset X\setminus V$,
de sorte $\omega|_{f^{-1}(U)}$ est positive. 
Cela entraîne que $\alpha|_U$ est positive
(lemme~\ref{lemme-gpositif.compatibilite}).

Par définition, la positivité d'une forme
est une condition ponctuelle sur la cellule~$f(X)$.
Par suite $\alpha$ est positive.
\end{proof}

\begin{prop}\label{prop.mesure-restriction}
Soit $X$ un espace tropical de niveau~$n$, 
localement différentiellement séparé.
Soit $p$ un entier naturel et soit $T$ un courant
positif de bidegré~$(p,p)$
et soit $\omega$ une forme lisse de type~$(p,p)$ sur~$X$.
Soit $V$ un domaine de~$X$ tel que $\omega|_V\geq 0$.
Alors, la restriction à~$V$ de la mesure $\mu^T_\omega$ 
(exemple \ref{coro.courant-positif})
est positive.
\end{prop}
\begin{proof}
L'assertion est locale sur~$X$,
ce qui permet de supposer que $V$ est fermé.
Considérons des formes lisses et positives sur~$X$ 
telles que $\omega=\omega_1-\omega_2$ 
(proposition~\ref{prop.trop-diff-positives}); posons $\alpha=\omega_1+\omega_2$.
Puisque $\pm\omega\leq \alpha$, on observe
l'inégalité de mesures positives
$\abs{\mu^T_\omega}\leq\mu^T_{\alpha}$.
Quitte à remplacer~$X$ par un ouvert relativement compact,
on suppose également que $\mu^T_\alpha$ est finie.

Il s'agit de démontrer l'inégalité $\mu^T_\omega(A)\geq0$ pour
toute partie compacte~$A$ de~$V$.

Fixons $\eps>0$.
Par définition de la mesure d'une partie compacte associée
à une forme linéaire sur l'espace des fonctions continues à support compact
(\cite{INT-1-4}, chap.~4, \S4),
il existe des fonctions continues~$f$ et~$g$ sur~$X$
telles que $\abs{f-\mathbf 1_A}\leq g$ et $\mu^T_\alpha(g)\leq\eps$.
Comme $X$ est localement différentiellement séparé,
il existe, d'après le théorème~\ref{theo.trop-sw}, 
des fonctions lisses~$f'$ et~$g'$ sur~$X$ telles que $\norm{f-f'}\leq\eps$
et $\norm{g-g'}\leq\eps$.
Posons $u=f'+g'+2\eps$ et $v=2g'+4\eps$; ce sont des fonctions lisses
sur~$X$ telles que $0\leq u-\mathbf 1_A\leq v$. De plus,
$ \mu^T_\alpha(v)\leq 2\eps +4\eps \norm{\mu^T_\alpha}$.
Alors,
\[ \abs{\mu^T_\omega(u) - \mu^T_\omega(A)}
\leq \abs{\mu_\omega^T} (u-\mathbf 1_A) 
\leq \abs{\mu^T_\omega}(v) \leq \mu^T_\alpha(v). \]
En particulier, $\mu^T_\omega(A)\geq\mu^T_\omega(u)-\mu^T_\alpha(v)$.

Démontrons maintenant l'inégalité $u\omega \geq -v\alpha$ de
formes lisses sur~$X$. En effet, cette inégalité vaut sur
le domaine fermé~$V$ de~$X$ sur lequel $u\omega$ et $v\alpha$ sont 
des formes positives. D'autre part, sur l'ouvert $X\setminus V$,
on a $\abs u\leq v$, si bien que 
\[ u\omega=u\omega_1-u\omega_2 \geq -v\omega_1-v\omega_2 = -v\alpha. \]
L'assertion découle alors du lemme~\ref{lemm.inegalite-semi-locale}.
Par conséquent, on a $\mu^T_\omega(u)\geq -\mu^T_\alpha(v)$ d'où,
finalement,
\[ \mu^T_\omega(A) \geq -2 \mu^T_\alpha(v) \geq -4 \eps(1+2\norm{\mu^T_\alpha}) .\]
Comme $\eps$ est arbitraire, cela démontre l'inégalité $\mu^T_\omega(A)\geq0$
et conclut la démonstration.
\end{proof}

\begin{coro}\label{coro.independance-extension}
Soit $X$ un espace tropical et soit $T$ un courant positif sur~$X$,
de bidegré~$(p,p)$.
Soit $\omega$ et $\omega'$ des formes de type~$(p,p)$
à support compact sur~$X$
telles que $\omega|_V=\omega'|_V$.
Les restrictions à~$V$ des mesures $\mu^T_\omega$
et $\mu^T_{\omega'}$ coïncident.
\end{coro}
\begin{proof}
La mesure de Radon $\mu^T_{\omega'}-\mu^T_\omega$
est égale à $\mu^T_{\omega'-\omega}$. Puisque la restriction à~$V$
de $\omega'-\omega$ est nulle, la proposition précédente affirme
que la restriction à~$V$ de cette mesure de Radon est à la fois
positive et négative, donc nulle.
L'assertion en résulte.
\end{proof}

\begin{theo}
Soit $X$ un espace G-tropical affiné décent 
et localement différentiellement séparé.
Soit $T$ un courant positif de bidegré~$(p,p)$ sur~$X$.
Il existe un unique morphisme $\R$-linéaire du G-faisceau des G-formes
G-lisses de type~$(p,p)$ dans le G-faisceau des mesures de Radon sur~$X$
qui prolonge le morphisme $\omega\mapsto \mu^T_\omega$ lorsque 
$\omega$ est une forme lisse de type~$(p,p)$ définie sur un ouvert de~$X$.
\end{theo}

Le support de cette mesure $\mu^T_\omega$  est contenu
dans l'intersection des supports de~$T$ et de~$\omega$.

En particulier, pour toute G-forme $\omega$  de type~$(p,p)$
sur un domaine~$V$
de~$X$ tel que $\supp(T)\cap \supp(\omega)$ soit compact dans~$V$,
on dispose d'une mesure à support compact~$\mu^T_\omega$ sur~$V$.
On notera $\langle T,\omega\rangle$ la masse totale de cette mesure,
c'est-à-dire l'intégrale de la fonction constante~$1$ sur~$V$. 
Cette notation est compatible avec celle déjà introduite
lorsque $V$ est ouvert et $\omega$ est une forme lisse.

\begin{proof}
Soit $\omega$ une G-forme G-lisse de type~$(p,p)$ sur un domaine~$V$ de~$X$.
Soit $(V_i)$ un G-recouvrement de~$V$ par des domaines compacts sur
lesquels $\omega$ est une forme. Pour tout~$i$, 
il existe une forme lisse~$\omega_i$ à support  compact sur~$X$
qui prolonge~$\omega|_{V_i}$ (proposition~\ref{prop.decent-extension}, c)).
Nécessairement, la restriction à~$V_i$ de~$\mu^T_\omega$ 
est la restriction à~$V_i$ de~$\mu^T_{\omega_i}$
qui est imposée par les conditions du théorème.
Il s'ensuit que la mesure~$\mu^T_\omega$ est entièrement déterminée,
d'où l'unicité d'un tel morphisme de G-faisceaux.
Établissons maintenant son existence.

Soit $\omega$ une G-forme G-lisse sur un domaine~$V$ de~$X$.
Soit $\mathscr W_V$ la famille des domaines~$W$ de~$V$
tels qu'il existe un voisinage ouvert~$W'$ de~$W$ dans~$X$
et une forme $\omega'$ sur~$W'$ telle que $\omega'|_{W}=\omega|_W$.
Tout domaine compact de~$V$ sur lequel $\omega$ 
est une forme appartient à~$\mathscr W_V$
(proposition~\ref{prop.decent-extension}, c)).
Par conséquent, $\mathscr W_V$ est un G-recouvrement de~$V$.

Soit $W\in\mathscr W_V$ et soit $W'$ et $\omega'$ comme ci-dessus.
D'après le corollaire~\ref{coro.independance-extension},
la mesure $\mu^T_{\omega'}|_{W}$
ne dépend pas du choix de la forme~$\omega'$ ; on la notera $\mu_W$.
Soit $W_1$ et $W_2$ dans~$\mathscr W_V$, soit $W'_1, \omega'_1$ et $W'_2,\omega'_2$ comme ci-dessus;
les restrictions à~$W_1\cap W_2$ de $\omega'_1$ et $\omega'_2$
sont égales à la restriction de~$\omega$; 
d'après le corollaire~\ref{coro.independance-extension},
les mesures $\mu_{W_1}$ et $\mu_{W_2}$ coïncident sur $W_1\cap W_2$.
Il existe donc une unique mesure~$\mu^T_\omega$ sur~$V$ 
dont la restriction à~$W$  est égale à~$\mu_W$ pour tout
domaine~$W$ appartenant à~$\mathscr W_V$.

Lorsque $V$ est ouvert et $\omega$ est une forme,
le domaine~$V$ appartient à~$\mathscr W_V$, de sorte que $\mu^T_\omega$
coïncide avec la mesure~$\mu^T_\omega$ précédemment définie.

Soit $V$ et $V'$ des domaines de~$X$ tels que $V'\subset V$.
Soit $W\in\mathscr W_V$, soit $W'$ et $\omega'$ comme ci-dessus;
la mesure $\mu^T_\omega|_W$ est la trace de la mesure $\mu^T_{\omega'}$.
Le domaine~$W'$ est un voisinage ouvert de~$V'\cap W$
et la forme~$\omega'$ prolonge $\omega|_{V'\cap W}$;
par suite, la restriction à $V'\cap W$
de la mesure $\mu^T_{\omega|_{V'}}$ 
est donnée la trace de la mesure $\mu^T_{\omega'}$;
c'est donc la trace de la mesure $\mu^T_\omega|_W$.
Comme les parties~$W\cap V'$, où $W$ parcourt~$\mathscr W_V$, constituent
un G-recouvrement de~$V'$, la mesure $\mu^T_{\omega|_V'}$ est la restriction
à~$V'$ de la mesure~$\mu^T_\omega$.

Cela prouve que les applications $\omega\mapsto \mu^T_\omega$ 
définissent un morphisme de G-faisceaux. 

Il reste à démontrer qu'il est $\R$-linéaire.
Soit $\alpha$ et $\beta$ des G-formes G-lisses de type~$(p,p)$
sur un domaine~$V$ de~$X$,
soit $a$ et $b$ des nombres réels.
Les domaines~$W$ de~$V$ qui possèdent un voisinage ouvert~$W'$
sur lequel les G-formes $\alpha|_W$ et $\beta|_W$ se prolongent
en une forme définissent un G-recouvrement de~$V$.
Pour tout tel domaine~$W$, la restriction à~$W$
de la mesure $\mu^T_{a\alpha+b\beta}$ est égale 
à la trace de la mesure $a\mu^T_\alpha+b\mu^T_\beta$.
Par suite, $\mu^T_{a\alpha+b\beta}= a\mu^T_\alpha+b\mu^T_\beta$.
\end{proof}

\begin{prop}\label{prop.mu-T-omega-pos}
Soit $X$ un espace G-tropical affiné décent 
et localement différentiellement séparé.
Soit $T$ un courant positif sur~$X$, de bidegré~$(p,p)$.

Si $\omega$ est une G-forme G-lisse positive sur un domaine~$V$ de~$X$,
la mesure $\mu^T_\omega$ sur~$V$ est positive.
\end{prop}
\begin{proof}
Soit $\omega$ une G-forme G-lisse positive sur~$X$.
La positivité de la mesure $\mu^T_\omega$ se vérifie G-localement.
Il suffit donc de prouver que pour domaine compact~$V$ tel que $\omega|_V$
se prolonge en une forme~$\omega'$ sur un voisinage ouvert~$V'$ de~$V$,
la restriction à~$V$ de la mesure~$\mu^T_\omega$ est positive. 
Or, par construction, la restriction à~$V$ de cette mesure~$\mu^T_\omega$
est égale à la restriction à~$V$ de~$\mu^T_{\omega'}$,
laquelle est positive d'après la proposition~\ref{prop.mesure-restriction}
puisque $\omega'|_V=\omega|_V$ est positive.
\end{proof}
\begin{coro}
L'application $\omega\mapsto \langle T,\omega\rangle$ 
est continue lorsque l'espace $\mathscr A^{p,p}_{\groth, \cpct}(X)$ 
est muni de sa topologie~$\mathscr C^0$.
\end{coro}
\begin{proof}
Soit $\mathscr D$ un jeu de cartes G-tropicales G-couvrant sur~$X$ et soit $F$
une partie compacte de~$X$. Il s'agit d'établir
la continuité de la restriction de cette forme linéaire 
à l'espace $\mathscr A^{p,p}_{\groth, \mathscr D, F}$ 
des G-formes sur~$X$ qui sont G-tropicalisées par le jeu~$\mathscr D$
et dont le support est contenu dans~$F$.
Comme les inclusions induites par les raffinements de jeux de cartes G-couvrants
sont continues, il sera loisible de raffiner~$\mathscr D$ au cours
de la démonstration.

Toute carte G-tropicale de~$\mathscr D$ est G-recouverte par des cartes
tropicales de domaines compacts. Quitte à raffiner le jeu~$\mathscr D$,
on peut donc supposer qu'il est constitué de  cartes tropicales
définies sur des domaines compacts.

Soit $f\colon W\to E$ une telle carte. Comme $X$ est décent,
il existe un recouvrement fini $(W_i)$ de~$W$ par des domaines compacts
et pour tout~$i$, une fonction affine $f_i$ sur un voisinage ouvert~$U_i$
de~$W_i$ telle que $f_i|_{W_i}=f|_{W_i}$.

Quitte à raffiner le jeu~$\mathscr D$, on peut donc supposer
que pour toute carte $f\colon W\to E$, 
il existe un voisinage domanial compact~$W'$
de~$W$ sur lequel $f$ se prolonge en une application affine que nous
noterons encore~$f$.
La famille $\mathscr E$ des cartes tropicales $(W',f)$ est alors
un jeu de cartes tropicales sur~$X$ qui est couvrant.
On munit les espaces $\mathscr A^{p,p}_{f,\groth}(W)$ 
et $\mathscr A^{p,p}_{f,\groth}(W')$  de normes standard~$\mathscr C^0$.

Soit $S$ une partie finie de~$\mathscr D$ telle que la réunion des~$W$,
pour $W\in S$, contienne~$F$. Pour toute partie non vide~$P$ de~$S$,
notons~$W_P$ l'intersection des~$W$, pour $W\in P$. Par la formule 
d'inclusion-exclusion, on a 
\[ \mathbf 1_W 
   = \sum_{\emptyset\neq P \subset S} (-1)^{\Card(P)-1} \mathbf 1_{W_P}. \]
Soit $\omega\in\mathscr A^{p,p}_{\groth, \mathscr D,F}(X)$.
Comme le support de~$\omega$ est contenu dans~$F$, on a donc 
\[ \langle T,\omega\rangle = \mu^T_\omega(1) = \mu^T_\omega(\mathbf 1_W)
 = \sum_{\emptyset\neq P} (-1)^{\Card(P)-1} \mu^T_\omega(\mathbf 1_{W_P}) . \]
Il suffit de prouver la continuité de chacun des termes de cette somme.
Chacun d'entre eux est du type $\omega\mapsto \mu^T_\omega(\mathbf 1_C)$,
où $C$ est un domaine compact d'une carte~$W$ de~$\mathscr D$.
Il suffit donc de prouver que pour tel domaine~$C$ d'une carte~$W$,
l'application $\omega\mapsto\mu^T_\omega(\mathbf 1_C)$ 
sur $\mathscr A^{p,p}_{\groth, \mathscr D,F}(X)$ est continue.

Notons $E$ l'espace affine codomaine de la carte~$f$ et munissons-le
d'une norme euclidienne ; notons $\eta=\ddc \norm{x}^2$
la $(1,1)$-forme fortement positive sur~$E$.
Soit $P$ l'espace paralinéaire compact~$f(W)$; 
considérons une norme~$\mathscr C^0$ sur l'espace~$\mathscr A^{p,p}(P)$. 
On déduit
de la proposition~\ref{prop.harvey} qu'il existe un nombre réel $c>0$
tel que la forme $\alpha+ c \norm\alpha \eta^p$ est 
positive, pour tout $\alpha\in\mathscr A^{p,p}(P)$.

Pour toute G-forme $\omega\in\mathscr A^{p,p}_{\groth,\mathscr D,F}(X)$,
il existe une unique forme $\alpha\in\mathscr A^{p,p}(P)$
telle que $\omega|_W=f^*\alpha$ et l'application $\omega\mapsto \norm\omega_{P,0}$ est une semi-norme~$\mathscr C^0$ sur~$\mathscr A^{p,p}_{\groth,\mathscr D,F}(X)$. On la note $\norm{\cdot}_{W,0}$.

Soit $\omega\in\mathscr A^{p,p}_{\groth,\mathscr D,F}(X)$;
il existe une unique forme $\alpha\in\mathscr A^{p,p}(P)$
telle que $\omega|_W=f^*\alpha$, 
si bien que $\omega + c\norm \alpha f^*\eta^p$ est positive sur~$W$.
Par suite, la mesure $\mu^T_\omega + c \norm\alpha \mu^T_{f^*\eta^p}$
sur~$W$ est positive. En particulier, 
\[ \mu^T_\omega(\mathbf 1_C) \geq - c \mu^T_{f^*\eta^p}(\mathbf 1_C)
  \norm{\omega}_{W,0}. \]
On en déduit que 
\[ \abs{\mu^T_\omega(\mathbf 1_C)}\leq
 c \mu^T_{f^*\eta^p}(\mathbf 1_C) \norm{\omega}_{W,0}, \]
ce qui conclut la démonstration.
\end{proof}

\subsection{}
Si $\alpha$ est une G-forme sur~$X$ et $T$ un courant positif sur~$X$,
on définit ainsi un courant $T\wedge \alpha$ sur~$X$ :
il applique une forme $\omega$ sur $\langle T, \alpha\wedge\omega\rangle$.
Sa continuité résulte du corollaire précédent; c'est même un courant
d'ordre~$0$.

\begin{prop}\label{prop.pos-wedge-G-pos}
Soit $T$ un courant positif sur~$X$ et soit $\alpha$ une G-forme sur~$X$.
Si $\alpha$ est fortement positive, alors $T\wedge\alpha$ est un courant
positif.
\end{prop}
\begin{proof}
Soit $\omega$ une forme lisse à support compact sur~$X$.
Par définition, le courant $T\wedge\alpha$ applique~$\omega$
sur $\mu_{\alpha\wedge\omega}^T(1)$. 
Il suffit donc de démontrer que 
la mesure $\mu_{\alpha\wedge\omega}^T$ est positive,
et, d'après la proposition~\ref{prop.mu-T-omega-pos}, 
il suffit de prouver que la G-forme $\alpha\wedge\omega$ est positive.
Comme $\omega$
est positive et $\alpha$ est fortement positive,
cela résulte de la proposition~\ref{prop.produit-formes-positives-berk}.
\end{proof} 

\subsection{}
La construction s'étend par linéarité (et raisonnement local) 
aux courants qui sont localement
différence de courants positifs.

\section{Courants squelettiques}

\begin{defi}
Soit $X$ un espace tropical de niveau~$n$, localement
affinement séparé, soit $p$ un entier naturel 
et soit~$T$ un courant de bidimension~$(p,p)$ sur~$X$.

\begin{enumerate}
\item
Soit $f$ un carte tropicale sur~$X$ définie sur un ouvert~$U$ de~$X$
et soit $P$ une partie squelettique  fermée de~$U$ de dimension~$\leq p$,
munie d'un $p$-calibrage \emph{G-localement constant}.
On dit que $P$ décrit le courant~$T$
relativement à la carte~$f$ si, pour tout $(p,p)$-forme  lisse~$\omega$
sur~$U$ qui est tropicalisée par~$f$ et toute fonction lisse~$\rho$
à support compact sur~$U$, on a 
\[ \langle T,\rho\omega \rangle = \int_{P}  \rho\omega|_{P}. \]

\item
On dit que $T$ est un \emph{courant squelettique} si, pour tout ouvert~$U$
de~$X$ et toute carte tropicale~$f$ sur~$U$, le courant $T|_U$ est 
localement descriptible relativement au moment~$f$ par une partie
squelettique munie d'un $p$-calibrage G-localement constant.\index{courant squelettique}
\end{enumerate}
\end{defi}

Rappelons (remarque~\ref{rema.formes-test})
qu'un courant est déterminé par sa valeur sur les
formes du type $\rho\omega$ considérées dans la définition,
dès lors que l'espace ambiant est localement différentiellement séparé.

Un courant squelettique est symétrique et d'ordre~$0$.

Si les calibrages des espaces squelettiques
calibrés décrivant un courant squelettique sont positifs, 
ce courant est fortement positif.

Les courants squelettiques forment un sous-faisceau
en $\R$-espaces vectoriels du faisceau des courants de bidimension~$(p,p)$
sur~$X$.

Par définition, les courants squelettiques ont une description
en termes de squelettes purement de dimension~$p$;
nous parlerons ainsi de « courant squelettique de dimension~$p$ ».

\begin{exem}
Soit $X$ un espace tropical de niveau~$n$, localement différentiellement
séparé,
et soit $P$ un sous-espace squelettique de~$X$, 
fermé et de dimension~$p$, muni d'un $p$-calibrage G-localement constant.
L'application~$\delta_P$ qui applique une forme lisse à support
compact~$\omega$ sur~$X$ sur l'intégrale $\int_P \omega|_P$
est un courant squelettique.
\end{exem}

\begin{exem}\label{exem.deltaX-paralineaire}
Soit $X$ un espace tropical de niveau~$n$, sans bord.
Le courant d'intégration sur~$X$ est un courant
squelettique de dimension~$n$:
pour toute carte tropicale~$f$ 
sur un ouvert de~$X$,
il est décrit par le polyèdre de variation maximale~$\Sigma_f^{(n)}$
muni de son $n$-calibrage canonique.
\end{exem}

\iffalse
\begin{exem}\label{exem.integration-fibre}
Soit $p\colon Y\to X$ un morphisme d'espaces analytiques équidimensionnels
et sans bord.
Soit $x\in X$ tel que $d_k(x)=\dim(X)=m$.
Posons également $n=\dim(Y)$ et $d=\dim(Y_x)$,
de sorte que $n=m+d$.
\footnote{En passant aux espaces réduits, on se ramène au cas où
$p$ est plat au-dessus de~$x$, et l'assertion est alors connue.}
\emph{Le courant d'intégration~$\delta_{Y_x}$ sur la fibre~$Y_x$
est un courant squelettique de dimension~$d$.}

Soit $V$ un ouvert de~$Y$ et soit $f$ un moment sur~$V$.
Démontrons que $\Sigma_{f_x}$ est une partie polyédrale de~$X$.

Comme $d_k(x)=\dim_x(X)$ et comme $X$ est sans bord,
il existe d'après la remarque~\ref{rema.prolongement-moment}
un moment~$g$ au voisinage de~$x$ dans~$X$,
à valeurs de~$\gm^m$, tel que $x\in\Sigma_g$.
Quitte à restreindre~$X$, on peut supposer que $g$
est défini sur~$X$.

D'après le lemme~\ref{lemm.dimensions-tropicales-fibre},
on a $\Sigma_{f_x} =  \Sigma_{(g\circ p,f)} \cap Y_x$.
L'injection de~$Y_x$ dans~$Y$ est propre,
elle envoie la partie polyédrale $\Sigma_{f_x}$ dans
la partie polyédrale~$\Sigma_{(g\circ p,f)}$, donc
induit une application paralinéaire propre
de~$\Sigma_{f_x}$ dans~$\Sigma_{(g\circ p,f)}$.
Son image en est donc une partie paralinéaire
(proposition~\ref{prop.sections-G-locales}),
donc une partie polyédrale de~$Y$.

Il résulte alors de l'exemple~\ref{exem.deltaX-paralineaire}
qu'elle décrit~$T$ relativement au moment~$f$, une fois
munie de l'image du calibrage canonique.
\end{exem}
\fi

\begin{lemm}\label{lemm.courant-paralineaire-restriction}
Supposons que $P_f$ décrive~$T$ relativement à une carte
tropicale~$f\colon V\to E$ définie sur un ouvert~$V$ de~$X$.
Alors, pour tout ouvert~$W$ de~$V$, le sous-espace calibré
$P_f\cap W$ décrit~$T$ relativement  à la carte~$f|_W$ sur~$W$.
\end{lemm}
\begin{proof}
Soit $\omega$ une $(p,p)$-forme lisse sur~$W$ qui est
tropicalisée par~$f|_W$ et soit $\rho$ une fonction lisse à support
compact sur~$W$.
Soit $\alpha$ une $(p,p)$-forme lisse au voisinage de~$f(W)$
telle que $\omega=(f|_W)^*\alpha$ (cf.~\S\ref{sss.Gpq-pf}).

Soit $K$ le support de~$\rho$; alors $f(K)$ est une partie
compacte de~$f(W)$ contenue dans l'ouvert de définition de~$\alpha$.
En multipliant~$\alpha$ par une fonction lisse sur~$E$ qui vaut~$1$ 
au voisinage de~$f(K)$ et dont  le support est contenu
dans l'ouvert de définition de~$\alpha$, on obtient une forme~$\alpha'$
sur~$E$ 
qui coïncide avec~$\alpha$ au voisinage de~$f(K)$.
Soit~$\rho'$ le prolongement par~$0$ de~$\rho$ à~$V$.
On a $\rho' f^*\alpha'|_W = \rho (f|_W)^* \alpha $
et $\omega'=\rho' f^*\alpha'$ est le prolongement par zéro
de $\omega=\rho (f|_W)^*\alpha$.

On a donc
\[ \langle T|_W, \rho\omega \rangle
 = \langle T, \rho'\omega' \rangle
= \int_{P_f} \rho'\omega'|_{P_f} 
 = \int_{P_f\cap W} \rho'\omega'|_{P_f\cap W}
 = \int_{P_f\cap W} \rho\omega|_{P_f\cap W}. \]
Cela prouve que le squelette $p$-calibré
$P_f\cap W$ décrit~$T$ relativement au moment~$f|_W$.
\end{proof}

\subsection{}\label{ss.support}
Soit $P$ une partie squelettique de dimension~$\leq p$ munie
d'un $p$-calibrage G-localement constant~$\mu_P$.
Le support de~$\mu_P$ est une partie squelettique fermée de~$P$,
purement de dimension~$p$ ; on le notera $\abs{(P,\mu_P)}$,
ou $\abs{P}$ si le calibrage~$\mu_P$ est sous-entendu.
Il est naturellement muni du calibrage restreint, qui est partout non nul.

Dans une description d'un courant squelettique, on peut ainsi
toujours supposer que la partie squelettique  considérée
est purement de dimension~$p$ et que son calibrage  est partout non nul.

\subsection{}
Soit $P$ un espace paralinéaire purement de dimension~$p$ et
soit $f\colon P\to\R^n$ une application paralinéaire;
notons~$P^{(f)}$ l'ensemble des points 
$x\in P$ tels que $\dim(f(Q))=p$
pour tout voisinage paralinéaire~$Q$ de~$x$ dans~$P$.
Démontrons que $P^{(f)}$ est une partie paralinéaire fermée de~$P$, 
purement de dimension~$p$, et que la restriction de~$f$ à~$P^{(f)}$
est une immersion par morceaux.
Il suffit de le vérifier localement, ce qui permet de supposer que
$P$ possède une décomposition cellulaire~$\mathscr C$
adaptée à~$f$. Dans ce cas, l'ensemble~$P^{(f)}$ 
est la réunion des $p$-cellules~$C$
de~$\mathscr C$ telle que $f|_C$ est injective, d'où l'assertion.

On vérifie à l'aide d'une décomposition cellulaire
que pour toute  partie paralinéaire compacte~$Q$ de~$P\setminus P^{(f)}$,
on a $\dim(f(Q))<p$.

% Lorsque $P$ est une partie squelettique d'un espace analytique~$X$
% et $f$ un moment sur~$X$, on écrira $P^{(f)}$ au lieu de~$P^{(f_\trop)}$.

Dans le cas où $P$ décrit un courant squelettique relativement à
une carte tropicale~$f$,
alors $P^{(f)}$ le décrit également ;  cela permettra de supposer
que le calibrage de~$P$ est partout non nul et 
que $f|_P$ est une immersion par morceaux.

\begin{prop}\label{prop.comparaison-polyedres-T}
Soit $X$ un espace tropical de niveau~$n$, localement
affinement séparé,
soit $p$ un entier naturel
et soit $T$ un courant squelettique de dimension~$p$ sur~$X$.
Soit $f\colon X\to E$ et $g\colon X\to E'$ des cartes tropicales sur~$X$,
soit $P_f$ et~$P_g$ des parties squelettiques $p$-calibrés 
fermées décrivant le courant~$T$
relativement aux cartes~$f$ et~$g$.
On a l'égalité de parties squelettiques $p$-calibrées :
$(\abs{P_f}^{(f)})^{(g)}=(\abs{P_g}^{(g)})^{(f)}$.
\end{prop}
% 
% Soit~$V$ et~$W$ des ouverts de~$X$, soit $f$ et~$g$ des moments
% sur~$V$ et~$W$ respectivement, et soit~$P_f$ et~$P_g$ des sous-espaces
% paralinéaires fermés calibrés de~$\Sigma_f$ et~$\Sigma_g$ respectivement,
% décrivant le courant paralinéaire~$T$ relativement à~$f$ et~$g$ respectivement.
% On a alors l'égalité
% $ \abs{P_f \cap\Sigma_g} = \abs{P_g \cap \Sigma_f}$
% de polyèdres calibrés.

\begin{proof}
On peut supposer que $P_f=\abs{P_f}^{(f)}$ et que $P_g=\abs{P_g}^{(g)}$;
il s'agit alors de démontrer l'égalité 
de polyèdres calibrés $P_f^{(g)}=P_g^{(f)}$.

Introduisons la carte tropicale $h=(f,g)$ sur~$X$, 
soit $P_h$ une partie squelettique $p$-calibrée qui décrit le courant~$T$
relativement à la carte~$h$ et telle que $P_h=\abs{P_h}^{(h)}$.
Il suffit de prouver que l'on a l'égalité  $P_f=P_h^{(f)}$ de parties
squelettiques $p$-calibrées.
En effet, comme $f$ est une immersion par morceaux
sur~$P_f$, c'est aussi le cas de~$h$,
d'où $P_f^{(h)}=P_f$ et
$P_f^{(g)}=(P_f^{(h)})^{(g)}=P_h^{(f)(g)}$;
par symétrie, on a aussi $P_g^{(f)}=P_h^{(g)(f)}$.
Ainsi, l'égalité $P_f^{(g)}=P_g^{(f)}$ de parties squelettiques
calibrées équivaut à l'égalité
$P_h^{(f)(g)}=P_h^{(g)(f)}$.
Cette dernière égalité résulte de ce que 
les deux espaces paralinéaires $P_h^{(f)(g)}$ et $P_h^{(g)(f)}$
coïncident: si $\mathscr C$ est une décomposition cellulaire
assez fine de~$P_h$, c'est la réunion des cellules sur lesquelles
$f$ et~$g$ sont simultanément injectives.

Prouvons maintenant l'égalité $P_f=P_h^{(f)}$.
Posons $P=P_f\cup P_h^{(f)}$.
C'est une partie squelettique de~$X$,
purement de dimension~$p$, munie de deux calibrages,
$\mu$ et~$\mu'$, respectivement déduits par prolongement par~$0$
de celui de~$P_f$ et de celui de~$P_h^{(f)}$.
Il suffit de démontrer que $\mu=\mu'$.
Notons que $f$ et $h$ sont des immersions par morceaux sur~$P$.

L'espace $P$ est G-recouvert par des $h$-cellules compactes~$C$
de dimension~$p$ telles que $\mathring C$ soit ouvert dans~$P$
(ce sont également des $f$-cellules);
on peut également supposer que l'intérieur de ces cellules
est soit entièrement contenu dans~$P_f$, 
soit entièrement contenu dans~$P_h^{(f)}$,
et que les calibrage~$\mu|_C$ et~$\mu'|_C$ sont constants relativement à~$f$.

Soit $C$ une telle cellule et soit $V$ un ouvert de~$X$ 
tel que $V\cap C=\mathring C$.

Démontrons que $(\mathring C,\mu|_{\mathring C})$ décrit le courant~$T|_V$
relativement à~$f$.
C'est effectivement le cas si $C$ est une cellule de~$P_f$,
d'après le lemme~\ref{lemm.courant-paralineaire-restriction}.
C'est également le cas si $\mathring C$ est disjointe de~$P_f$, 
car alors le courant~$T|_V$ est nul en toute forme $f$-tropicale
(d'après le lemme~\ref{lemm.courant-paralineaire-restriction}),
de même que le calibrage $\mu_{\mathring C}$, par définition de~$\mu$.

Démontrons que $(\mathring C,\mu'|_{\mathring C})$ décrit
le courant~$T|_V$ relativement à~$h$.
Si $C$ est une cellule de~$P_h$,  
cela résulte du lemme~\ref{lemm.courant-paralineaire-restriction}.
Sinon, $\mathring C$ est disjointe de~$P_h$,
donc le courant~$T|_V$ est nul en toute forme $h$-tropicale,
et $\mu'|_{\mathring C}=0$.

Soit $\alpha$ une $(p,p)$-forme sur~$\R^m$, 
invariante par translation et non nulle sur $f(C)$.
La forme $\omega=f^*\alpha$ est $f$-tropicale par construction,
elle également $h$-tropicale car $h=(f,g)$.
Soit $\rho$ une fonction lisse à support compact sur~$V$, 
positive, et dont la restriction à~$\mathring C$ n'est pas nulle.
On a 
\[ \langle T,\rho\omega \rangle
= \int_{P_f} \rho f^*\alpha|_{P_f} 
= \int_{f(C)} \rho_C  \langle \alpha,\mu_C\rangle, \]
où $\rho_C$ est la fonction sur~$f(C)$
déduite de~$\rho$ par restriction à~$C$
et l'isomorphisme~$f|_C$, 
et $\mu_C$ est le vecteur-volume sur~$f(C)$
tel que $\mu|_C=f^*\mu_C$.

Notons de même $\rho'_C$ la fonction sur~$h(C)$
déduite de~$\rho$ par~ l'isomorphisme~$h|_C$.
La projection~$q$ de~$\R^{m+n}$ sur~$\R^m$
induit un isomorphisme de $h(C)$
sur $f(C)$. 
Notons $\mu'_C$ le vecteur-volume sur $h(C)$ tel que
$\mu'|_C=h^*\mu'_C$; on a alors
\[ \langle T,\rho\omega \rangle
= \int_{P_h} \rho h^*(q^*\alpha)|_{P_h} 
= \int_{h(C)} \rho'_C  \langle q^*\alpha,\mu'_C\rangle
 = \int_{f(C)} \rho_C \langle \alpha, q_{*}\mu'_C\rangle. \]
On a donc
\[  \int_{f(C)} \rho_C  \langle \alpha,\mu_C\rangle
= \int_{f(C)} \rho_C \langle \alpha,q_*(\mu'_C)\rangle.\]
Comme la forme~$\alpha$ est arbitraire,  
les calibrages $\mu_C$ et~$q_*(\mu'_C)$ sont constants,
et la fonction~$\rho_C$ est positive et non nulle, 
on en déduit  l'égalité $\mu_C=q_*(\mu'_C)$.
On a donc $\mu'|_C=\mu|_C$.
Cela termine la démonstration.
\end{proof}

%\begin{coro}
%Soit $f$ un moment sur un ouvert~$V$ de~$X$ et
%soit~$P_f$ un sous-espace paralinéaire fermé calibré
%décrivant~$T$ relativement à~$f$.
%Alors, le polyèdre calibré~$\abs{P_f}^{(f)}$
%est le plus petit sous-espace paralinéaire fermé calibré
%décrivant~$T$ relativement à~$f$,
%et c'est le seul sous-espace paralinéaire fermé calibré
%décrivant~$T$ relativement à~$f$ dont le calibrage soit partout non nul
%et sur lequel $f_\trop$ soit une immersion par morceaux.
%\end{coro}
%On le notera~$P_{f}^T$.

Par exemple, lorsque $T$ est le courant~$\delta_P$
associé à une partie squelettique $p$-calibrée~$P$ de~$X$, 
on a $P_f^T=(\abs P\cap V)^{(f)}$
pour tout ouvert~$V$ et toute carte tropicale~$f$ sur~$V$.

Si $f$ et~$g$
sont des cartes tropicales sur un ouvert~$V$ de~$X$,
la conclusion de la proposition~\ref{prop.comparaison-polyedres-T}
se récrit $(P_f^T)^{(g)}=(P_g^T)^{(f)}$.
 
\begin{prop}\label{prop.courant-paralineaire-faisceau}
Soit $X$ un espace tropical de niveau~$n$, localement
affinement séparé, soit $p$ un entier naturel
et soit $T$ un courant squelettique de dimension~$p$ sur~$X$.
Soit $f$ une carte tropicale sur~$X$.

\begin{enumerate}
\item Il existe une partie squelettique 
fermée~$P$ de~$X$, munie d'un $p$-calibrage G-localement constant, qui décrit~$T$
relativement à~$f$.
\item Soit $P$ une telle partie squelettique.
La partie squelettique $p$-calibrée~$\abs P^{(f)}$
est la plus petite squelettique fermée $p$-calibrée
de $X$ décrivant~$T$ relativement à~$f$, 
et c'est la seule partie squelettique fermée $p$-calibrée
décrivant~$T$ relativement à~$f$ dont le calibrage soit partout non nul
et sur laquelle $f$ soit une immersion par morceaux.
\end{enumerate}
\end{prop}
\begin{proof}
Par hypothèse, il existe un recouvrement ouvert~$(U_i)$ de~$X$
et, pour tout~$i$, une partie squelettique~$P_i$ de~$U_i$,
munie d'un $p$-calibrage G-localement constant, qui décrit le courant~$T|_{U_i}$
relativement à~$f|_{U_i}$.
On peut supposer que chaque $P_i$ est purement de dimension~$p$
et que le calibrage y est partout non nul
et que $f|_{P_i}$ est une immersion par morceaux,
autrement dit, que $P_i=\abs{P_i}^{(f)}$.

Comme $P_i\cap (U_i\cap U_j)$ et $P_j\cap (U_i\cap U_j)$ décrivent
toutes deux le courant $T|_{U_i\cap U_j}$ relativement
à $f|_{U_i\cap U_j}$, on a $ P_i\cap (U_i\cap U_j)=P_j\cap (U_i\cap U_j)$
(proposition~\ref{prop.comparaison-polyedres-T}).
Il existe donc une unique paralinéaire squelettique fermée~$P$ de~$X$
tel que $P\cap U_i=P_i$ pour tout~$i$.
De plus, il existe un unique $p$-calibrage sur~$P$
dont la trace sur~$U_i$ coïncide avec le $p$-calibrage de~$P_i$;
il est G-localement constant.

Démontrons que $P$, muni de ce calibrage, décrit~$T$ relativement à la
carte tropicale~$f$.

Soit $\omega$ une forme $f$-tropicale sur~$X$ et soit $\rho$
une fonction lisse à support compact sur~$X$.
Soit $(\theta_i)$ une famille localement finie de fonctions lisses
à support compact, positives, telle que pour tout~$i$, le support de~$\theta_i$
soit contenu dans~$U_i$, et que $\theta=\sum\theta_i$ vaille~$1$
au voisinage du support de~$\rho$
(corollaire~\ref{prop.trop-partition.1.lisse-compact}).

 Alors, $\omega=\rho\omega=\rho\theta\omega
=\sum \rho\theta_i\omega$. Par suite,
\[
 \langle T,\rho\omega\rangle  =\sum \langle T,\rho\theta_i\omega\rangle 
 = \sum \int_{P_i} \rho\theta_i \omega 
 = \sum \int_P \rho\theta_i \omega 
 = \int_P \rho\omega. 
\]
Cela conclut la démonstration de (1). 

Montrons maintenant~(2). Soit~$P$ comme dans (1).
Il est clair que $\abs P^{(f)}$
décrit encore~$T$ relativement à~$f$.
Soit~$Q$ une partie squelettique fermée de~$X$
munie d'un calibrage G-localement constant et décrivant~$T$ relativement à~$f$. 
Appliquons
la proposition~\ref{prop.comparaison-polyedres-T} lorsque $g=f$.
On en déduit que $\abs Q^{(f)}=\abs P^{(f)}$ ; en particulier, 
$\abs P^{(f)}\subset Q$, d'où
la minimalité de $\abs P^{(f)}$.
Par ailleurs, le calibrage de $\abs P^{(f)}$ est partout non nul,
et $f|_{\abs P^{(f)}}$ est une immersion par morceaux. Soit 
maintenant~$Q$ une partie squelettique fermée de~$X$
munie d'un calibrage G-localement constant partout non nul 
et décrivant~$T$ relativement à~$f$,
et telle que $f|_Q$
soit une immersion par morceaux.
On a alors $\abs Q^{(f)}=Q$ ;
en appliquant ce qui précède à $Q$ au lieu de $P$, 
on voit que $\abs Q^{(f)}$ est la plus petite partie
polyédrale fermée calibrée de $X$ décrivant $T$ relativement 
à $f$ ; il vient $\abs Q^{(f)}=\abs P^{(f)}$, c'est-à-dire
$Q=\abs P^{(f)}$. 
\end{proof}

\subsection{}\label{ss.PfT}
Nous noterons~$P_f^T$ la partie squelettique $p$-calibrée minimale 
décrivant le courant $T$ relativement à $f$. Elle est pure
et $f|_{P_f^T}$ est une immersion par morceaux.

Par exemple, lorsque $T$ est le courant~$\delta_P$
associé à une partie squelettique $p$-calibré~$P$ de~$X$, 
on a $P_f^T=(\abs P\cap V)^{(f)}$
pour tout ouvert~$V$ et toute carte tropicale~$f$ sur~$V$.

Soit $f$ une carte tropicale sur un ouvert~$V$ de~$X$
et soit $U$ une partie ouverte de~$X$.
Alors $P_f^T\cap U$ décrit~$T$ relativement à~$f|_U$
(lemme~{lemm.courant-paralineaire-restriction})
et vérifie les conditions de l'assertion~\emph b)
de la proposition~\ref{prop.courant-paralineaire-faisceau};
c'est donc $P_{f|_U}^T$.

En particulier, $P_f^T$ est contenu dans le support de~$T$,
car le polyèdre vide représente le courant nul.

Si $f$ et~$g$
sont des cartes tropicales sur un ouvert~$V$ de~$X$,
la conclusion de la proposition~\ref{prop.comparaison-polyedres-T}
se récrit $(P_f^T)^{(g)}=(P_g^T)^{(f)}$.

\iffalse
\begin{prop}\label{prop.PfT=PgT}
Soit $f$ et $g$ des moments sur~$X$, à valeurs dans un tore~$T$,
tels que $f_\trop=g_\trop$. On a $P_f^T=P_g^T$.
\end{prop}
\begin{proof}
Si $\alpha$ est une forme sur~$T_\trop$, on a $f^*\alpha=g^*\alpha$,
de sorte qu'une forme est tropicalisée par~$f$ 
si et seulement si elle est tropicalisée par~$g$.
Par suite, le polyèdre calibré~$P_f^T$ décrit le courant~$T$
relativement au courant~$g$. Comme $f_\trop=g_\trop$,
l'application $g_\trop$ est une immersion par morceaux sur~$P_f^T$. 
On a donc $P_g^T=P_f^T$.
\end{proof}
\fi

\begin{lemm}\label{lemm.squelettique-positif}
Soit $X$ un espace tropical de niveau~$n$, localement 
différentiellement séparé, et soit $T$ un courant squelettique
de bidimension~$(p,p)$ sur~$X$.
Les conditions suivantes sont équivalentes:
\begin{enumerate}\def\theenumi{\roman{enumi}}\def\labelenumi{(\theenumi)}
\item Le courant~$T$ est faiblement positif;
\item Le courant~$T$ est positif;
\item Le courant~$T$ est fortement positif;
\item Pour toute carte tropicale sur~$X$ définie sur un ouvert~$U$,
le calibrage de~$P_f^T$ est positif.
\end{enumerate}
\end{lemm}
En particulier,
les trois notions de positivité des courants coïncident
sur les courants squelettiques.

\begin{proof}
Les implications (iii)$\Rightarrow$(ii) et (ii)$\Rightarrow$(i) sont évidentes.

Démontrons l'implication (i)$\Rightarrow$(iv).
Supposons que $T$ est faiblement positif.
Soit $f$ une carte tropicale définie sur un ouvert~$U$ de~$X$.
Pour démontrer que le calibrage de~$P_f^T$ est positif,
il suffit de démontrer que sa restriction à toute $f$-cellule~$C$,
compacte et de dimension~$p$, de~$P_f^T$ est positif. 
On peut également supposer que le calibrage de~$C$ est standard,
de la forme $f^*\mu$, où $\mu$ est un $p$-vecteur volume 
sur~$\langle f(C)\rangle$. Par définition de~$P_f^T$, $\mu$ n'est pas nul.
Soit $x\in\mathring C$;
observons que $\mathring C$ est ouvert dans~$P_f^T$.
Soit $\rho$ une fonction lisse, positive, sur~$X$ 
égale à~$1$ au voisinage de~$x$ et 
dont le support ne rencontre~$P_f^T$ qu'en~$\mathring C$.
Soit $\alpha$ une $(p,p)$-forme positive standard, non nulle, sur $f(C)$,
et soit $\omega$ le prolongement par~$0$ de $\rho f^*\alpha$.
Par définition de~$P_f^T$, on a 
\[ \langle T,\rho\omega\rangle = \int_{P_f^T} \rho\omega|_{P_f^T}
 = \int_{C} \rho \langle \alpha, \mu\rangle. \]
Puisque cette expression est positive et non nulle,
on a $\langle\alpha, \mu\rangle>0$. Cela prouve que le vecteur-volume~$\mu$
est positif.

Supposons~(iv) et démontrons que le courant~$T$ est fortement positif.
Soit $\omega\in\mathscr A^{(p,p)}_\cpct(X)$ une forme lisse à support
compact sur~$X$. Soit $(U_i)_{i\in I}$ un recouvrement ouvert fini
d'un voisinage du support de~$\omega$ sur les membres desquels $\omega$
est tropicale. Pour tout~$i$, soit $f_i$ une carte tropicale
sur~$U_i$ qui tropicalise~$\omega|_{U_i}$.
Soit $(\rho_i)$ une partition de l'unité lisse au voisinage de~$\supp(\omega)$
subordonnée au recouvrement~$(U_i)$. 
Puisque $\sum\rho_i$ est égale à~$1$ au voisinage de~$\supp(\omega)$,
on a $\omega=\sum_i \rho_i\omega$. Alors,
\[ \langle T,\omega\rangle = \sum_i \langle T,\rho_i\omega\rangle
 = \sum_i \int_{P_{f_i}^T} \rho_i \omega|_{P_{f_i}^T}. \]
Si $\omega$ est faiblement positive, alors 
$\langle \omega|_{P_{f_i}^T}, \mu_{P_{f_i}^T}\rangle$ 
est une mesure positive sur~$P_{f_i}^T$. 
Il en résulte que $\langle T,\omega\rangle$ est positif, ce qu'il fallait démontrer.
\end{proof}

\begin{prop}\label{rema.courant-para-polyedre}
Soit $X$ un espace tropical de niveau~$n$,
localement différentiellement séparé, et soit $p$ un entier naturel.
Pour toute carte tropicale~$f$ sur un ouvert~$V$ de~$X$, donnons-nous
une partie squelettique $p$-calibrée fermée pure~$P_f$.
Supposons que la famille~$(P_f)$ vérifie les conditions suivantes:
\begin{enumerate}
\item Pour tout~$f$, l'application paralinéaire $f|_{P_f}$ est 
une immersion par morceaux;
\item On a $P_{f|W}=P_f\cap W$ pour toute carte tropicale~$f$
sur un ouvert~$V$ de~$X$ et tout ouvert~$W$ contenu dans~$V$;
\item S'il existe une application affine~$q$ tel que $f=q\circ g$,
alors $P_f=P_g^{(f)}$.
\end{enumerate}
Alors, il existe un unique courant squelettique~$T$ sur~$X$
tel que $P_f$ décrive~$T$ relativement à~$f$, pour toute carte
tropicale~$f$ sur un ouvert de~$X$.
\end{prop}
\begin{proof}
Soit tout d'abord un ouvert~$V$ de~$X$,
des cartes tropicales~$f,f'$ sur~$V$, des formes $\omega,\omega'$,
respectivement $f$-tropicales et $f'$-tropicales,
des fonctions lisses $\rho,\rho'$ à support compact sur~$V$
telles que $\rho\omega=\rho'\omega'$.
En posant $g=(f,f')$, on voit que 
\[ \int_{P_f} \rho\omega|_{P_f}
 = \int_{P_g} \rho\omega|_{P_g}
 = \int_{P_g} \rho'\omega'|_{P_g}
 = \int_{P_{f'}}\rho'\omega'|_{P_{f'}}, \]
en vertu de l'hypothèse~(3).

L'unicité d'un courant~$T$ vérifiant les propriétés requises
résulte de la remarque~\ref{rema.formes-test}.
% Soit maintenant $\omega$ une forme lisse à support compact sur~$X$.
% Soit $(U_i)$ un recouvrement ouvert de~$X$ telle que 
% $\omega_i=\omega|_{U_i}$ soit tropicalisée par un moment~$f_i$. 
% Soit $(\rho_i)$ une famille localement finie de fonctions lisses
% à support compact, 
% positives, telle que pour tout~$i$, le support de~$\rho_i$ soit contenu
% dans~$U_i$, et telle que $\rho=\sum\rho_i$ vaille~$1$ au voisinage
% du support de~$\omega$, de sorte que  $\omega=\rho\omega$.
% Si un tel courant~$T$ existe, on a nécessairement
% \[ \langle T,\omega\rangle= \langle T,\rho\omega\rangle
% = \sum_i \langle T,\rho_i\omega\rangle 
% = \sum_i \langle T|_{U_i}, \rho_i\omega_i\rangle
% = \sum_i \int_{P_{f_i}} \rho_i\omega_i|_{P_{f_i}}; \]
% cela prouve qu'il y a au plus un courant~$T$ vérifiant les propriétés
% requises.
Établissons son existence.

Soit $\omega$ une forme lisse à support compact sur~$X$.
Par définition, il existe un recouvrement fini $(U_i)$ du 
support de~$\omega$ par des parties ouvertes de~$X$ sur lesquelles
$\omega$ est tropicale. Soit $(\rho_i)$ une famille de fonctions
lisses à support compact 
dont la somme $\rho=\sum\rho_i$ vaut~$1$ au voisinage du support de~$\omega$
(de sorte que $\omega=\rho\omega$)
et telles que pour tout~$i$, le support de~$\omega_i$ soit contenu dans~$U_i$.
Pour tout~$i$, soit $f_i$ un moment sur~$U_i$ qui tropicalise~$\omega|_{U_i}$.
On veut poser
\[ \langle T,\omega\rangle = \sum_i \int_{P_{f_i}} \rho_i\omega|_{U_i}. \]
Vérifions d'abord que cette expression ne dépend pas du choix
de la famille~$(U_i,f_i,\rho_i)$.
Soit donc $(V_j,g_j,\theta_j)_{j}$ une autre famille finie vérifiant
les mêmes propriétés; posons $\theta=\sum_j\theta_j$.
Pour tout couple~$(i,j)$, la fonction $\rho_i\theta_j$ est lisse
et son support est une partie compacte de~$U_i\cap V_j$.
Puisque $ \omega=\theta\omega = \sum_j\theta_{j}\omega$, on a 
\[
\sum_i \int_{P_{f_i}} \rho_i\omega|_{U_i}
= \sum_{i,j} \int_{P_{f_i}} \rho_i \theta_j \omega|_{U_i}. \]
De même, on a 
\[ \sum_j \int_{P_{g_j}} \theta_j\omega|_{V_j} 
= \sum_{i,j} \int_{P_{g_j}} \rho_i \theta_j \omega|_{V_j}. \]
Fixons un couple~$(i,j)$.
Par hypothèse, $P_{f_i}\cap V_j = {P_{f_i|_{U_i\cap V_j}}}$
et ${P_{g_j}\cap U_i} = {P_{g_j|_{U_i\cap V_j}}}$; comme
$\rho_i\theta_j$ est à support compact contenu dans~$U_i\cap V_j$, on a donc
\[ \int_{P_{f_i}} \rho_i \theta_j \omega|_{U_i}
= \int_{P_{f_i|_{U_i\cap V_j}}} \rho_i\theta_j \omega|_{U_i\cap V_j} \]
et 
\[ \int_{P_{g_j}} \rho_i \theta_j \omega|_{V_j}
= \int_{P_{g_j|_{U_i\cap V_j}}} \rho_i\theta_j \omega|_{U_i\cap V_j} . \]
Soit $h$ le moment~$(f_i,g_j)$ sur~$U_i\cap V_j$.
La troisième propriété des parties~$P_f$ entraîne alors
\[  
\int_{P_{f_i|_{U_i\cap V_j}}} \rho_i\theta_j \omega|_{U_i\cap V_j} 
= \int_{P_h} \rho_i\theta_j \omega|_{U_i\cap V_j}
= \int_{P_{g_j|_{U_i\cap V_j}}} \rho_i\theta_j \omega|_{U_i\cap V_j} .\]
Cela entraîne l'égalité
\[ \sum_i \int_{P_{f_i}} \rho_i\omega|_{U_i} = \sum_j \int_{P_{g_j}} \theta_j\omega|_{V_j} \]
et légitime la définition de $\langle T,\omega\rangle$.

Il est immédiat que l'application $\omega\mapsto \langle T,\omega\rangle$
est linéaire.

Pour démontrer que c'est un courant, 
on applique le critère du lemme~\ref{lemm.continuite-courant}.
Soit $U$ un ouvert de~$X$, soit $f$ une carte tropicale sur~$U$ 
et soit $\rho$ une fonction lisse sur~$X$ dont le support est 
compact et contenu dans~$U$, 
Pour toute forme lisse~$\omega$ sur~$X$, 
tropicalisée par~$f$ sur~$U$, on a 
\[ \langle T, \rho\omega\rangle = \int_{P_f} \rho\omega|_{P_f}. \]
Le membre de droite provient de la forme linéaire $
\alpha \mapsto \int_{P_f} \rho|_{P_f}\alpha $
sur~$\mathscr A^{p,p}(P_f)$
qui est continue pour la topologie~$\mathscr C^0$.
Cela prouve que $T$ est un courant, et même un courant d'ordre~$0$,
et que c'est un courant paralinéaire décrit par~$P_f$ relativement à~$f$.
\end{proof}

\begin{coro}
Soit $X$ un espace tropical de niveau~$n$,
localement différentiellement séparé, et soit $p$ un entier naturel.
Les courants squelettiques de bidimension~$(p,p)$ sur~$X$
forment un treillis.
\end{coro}
\begin{proof}
Soit $S$ et $T$ des courants squelettiques de bidimension~$(p,p)$ sur~$X$.
Pour toute carte tropicale~$f$ sur un ouvert de~$X$,
posons $P_f^{S\wedge T}=P_f^S\wedge P_f^T$
et $P_f^{S\vee T}=P_f^S\vee P_g^T$
(lemme~\ref{lemm.squelette-calibre-treillis}).
Les familles
de parties squelettiques $p$-calibrées fermées $(P_f^{S\wedge T})_f$
et $(P_f^{S\vee T})_f$ 
définissent des courants squelettiques $S\wedge T$ et $S\vee T$
sur~$X$ tels que $S\vee T=\sup(S,T)$ et $S\wedge T=\inf(S,T)$.
\end{proof}

\subsection{}\label{ss.courants-squel-diff}
En particulier, tout courant squelettique~$T$ sur~$X$
se décompose comme la différence $T^+-T^-$ des deux courants
squelettiques positifs $T^+=\sup(0,T)$ et $T^-=\sup(0,-T)$.

\begin{rema}
Soit $T$ un courant squelettique de bidimension~$(p,p)$ sur~$X$
et soit $\omega$ une forme lisse sur un ouvert~$U$ de~$X$, 
tropicalisée par une carte tropicale~$f$ sur~$U$. 
Comme $T$ est différence de deux courants squelettiques positifs,
on dispose d'une mesure $\mu^T_\omega$ sur~$U$.
Par construction, elle applique toute fonction~$\rho$, lisse à support
compact sur~$U$, sur $\langle T,\rho\omega\rangle$.
Par définition d'un courant squelettique, on a donc
\[ \mu^T_\omega (\rho) = \int_{P_f^T} \rho\omega|_{P_f^T}. \]
Cette formule s'étend alors au cas où $\rho$ est borélienne.
\end{rema}

\begin{prop}\label{prop.paralineaire-PfT}
Soit $T$ un courant squelettique de dimension~$p$ sur~$X$,
soit $f$ une carte tropicale sur un ouvert~$U$ de~$X$, 
soit $q$ un entier naturel tel que $q\leq p$,
soit $\omega$ une forme  de type $(q,p)$ sur~$U$, $f$-tropicale, 
et soit $\alpha$ une forme de type $(p-q,0)$ sur~$U$
telle que $\alpha\wedge\omega$ soit à support compact sur~$U$.
Alors on a
\[ \langle T,\alpha\wedge\omega\rangle = \int_{P_f^T} \alpha\wedge\omega. \]
\end{prop}
On en déduit une formule analogue pour des formes~$\omega$ de type $(p,q)$
et des formes~$\alpha$ de type $(0,p-q)$.
\begin{proof}
Soit $(U_i)$ une famille finie d'ouverts de~$U$ recouvrant le support
de~$\alpha\wedge\omega$ et telle que pour tout~$i$,
la forme $\alpha$ soit tropicalisée sur~$U_i$ par une carte tropicale~$g_i$
sur~$U_i$. Soit $(\theta_i)$ une partition de l'unité 
au voisinage du support de~$\alpha\wedge\omega$ qui est subordonnée au
recouvrement~$(U_i)$. On a 
\[ \langle T, \alpha\wedge\omega\rangle
 = \sum_i \langle T, \theta_i \alpha \wedge\omega\rangle. \]
Cela ramène au cas où $\alpha$ est tropicalisée par une carte~$g$
et où la famille $(U_i)$ est réduite à l'ouvert~$U$.
Par définition d'un courant squelettique, on a alors
\[ \langle T, \alpha\wedge\omega\rangle = \int_{P_{(f,g)}^T} \alpha\wedge\omega.\]
Comme $\omega$ est de type~$(q,p)$, sa restriction à toute cellule
de~$P_{(f,g)}^T$ qui est contractée par~$f$ est nulle. Ainsi,
\[ \langle T, \alpha\wedge\omega\rangle = \int_{(P_{(f,g)}^T)^{(f)}} \alpha\wedge\omega.\]
On a vu au \S\ref{ss.PfT} que 
la partie squelettique $p$-calibrée $(P_{(f,g)}^T)^{(f)}$ 
est égale à $(P_{f}^T)^{(f,g)}$, c'est-à-dire à $P_f^T$ puisque, $f$
étant une immersion par morceaux sur $P_f^T$, 
l'application $(f,g)$ en est également une.
Ainsi,
\[ \langle T, \rho \alpha\wedge\omega\rangle
= \int_{P_f^T} \rho\alpha\wedge\omega ,\]
d'où la proposition.
\end{proof}

\section{Courants squelettiques et G-topologie}
\label{sec.courants-squelettiques-G}

Comme les courants squelettiques sont différence de courants
positifs (\S\ref{ss.courants-squel-diff}),
il résulte du paragraphe~\ref{sec.courants-positifs-G}
que leur action sur les formes s'étend aux G-formes 
(à support compact) sur des domaines
(non nécessairement ouverts) de~$X$.
Le but de ce paragraphe est de donner une description
squelettique de cette extension 
et de montrer que les courants squelettiques ainsi étendus
forment un G-faisceau.

\begin{lemm}\label{lemm.comparaison-polyedres-T}
Soit $X$ un espace tropical de niveau~$n$, localement différentiellement séparé.
Soit $T$ un courant squelettique de dimension~$p$ sur~$X$.
Soit $f$ et~$g$ deux cartes tropicales sur~$X$ et soit $V$
un domaine de~$X$ sur lequel $f$ et $g$ coïncident.
Alors $\abs{P_f^T\cap V}=\abs{P_g^T\cap V}$.
\end{lemm}
\begin{proof}
Par définition, $\abs{P_f^T\cap V}$ est la réunion des $p$-cellules
fermées de~$P_f^T$ qui sont contenues dans~$V$ et dont le calibrage
est partout non nul.
Soit $C$ une telle cellule;
prouvons qu'elle est contenue dans $P_g^T$, avec même calibrage.
Comme $C\subseteq P_f^T$, $f|_C$ est une immersion par morceaux;
puisque $C\subseteq V$, $g|_C=f|_C$ donc $g|_C$
est une immersion par morceaux. Par suite, $C\subseteq (P_f^T)^{(g)}$,
lequel est égal à $(P_g^T)^{(f)}$ comme partie squelettique $p$-calibrée
(proposition~\ref{prop.comparaison-polyedres-T});
en particulier, $C\subseteq P_g^T$, avec même calibrage.
Cela prouve l'inclusion 
$\abs{P_f^T\cap V}\subset \abs{P_g^T\cap V}$
de parties squelettiques $p$-calibrées.
Par symétrie, on a égalité.
\end{proof}

\emph{Dans toute la suite de ce paragraphe, nous supposons 
que l'espace tropical~$X$ est localement différentiellement séparé
et décent} (définition~\ref{defi.espace-decent}).

\begin{prop}\label{prop.courant-squ-G}
Soit $T$ un courant squelettique de dimension~$p$ sur~$X$.
Il existe une unique application
$(V,f)\mapsto P_f^T$, qui,  à tout domaine~$V$ de~$X$ 
et toute carte G-tropicale~$f$ sur~$V$ 
associe une partie squelettique $p$-calibrée sur~$V$, 
vérifiant les conditions suivantes:
\begin{enumerate}\def\theenumi{\roman{enumi}}\def\labelenumi{(\theenumi)}
% \item
% Si $V$ est un domaine analytique de~$X$, $f$ et $g$
% des moments sur~$V$ tels que $f_\trop=g_\trop$, alors $P_f^T=P_g^T$;
\item
Si $U$ est un ouvert de~$X$ et $f$ une carte tropicale sur~$U$,
alors $P_f^T$ est 
la plus petite partie squelettique $p$-calibrée 
définissant~$T$ relativement à~$f$,
c'est-à-dire celle déjà noté ainsi;
\item
Si $V$ est un domaine de~$X$, $f$ une carte G-tropicale sur~$V$
et $W$ un domaine de~$V$, alors $P_{f|_W}^T=\abs{P_f^T\cap W}$.
\end{enumerate}
De plus, pour toute carte G-tropicale~$f$ sur un domaine~$V$ de~$X$,
la partie squelettique $p$-calibrée~$P_f^T$ est pure
et la restriction de~$f$ à~$P_f^T$ est une immersion par morceaux.
\end{prop}
\begin{proof}
Démontrons l'unicité d'une telle application.

On démontre d'abord que les conditions imposées déterminent
$P_f^T$ lorsque $V$ est un domaine de~$X$ et $f$ une carte tropicale sur~$V$,
et entraînent que $P_f^T$ est une partie
squelettique sur laquelle $f$ est une immersion par morceaux.

Il suffit de démontrer les autres propriétés localement sur~$V$.
Soit $x$ un point de~$V$. 
Comme $X$ est décent, il existe un voisinage ouvert~$U$
de~$x$ dans~$X$ et un carte tropicale~$g$ sur~$U$ tel que $f|_{U\cap V}
=g|_{U\cap V}$. %  (lemme~\ref{lemm.dense-moment}).
% La condition~(i) impose $P_{g|_{U\cap V}}^T=P_{f|_{U\cap V}}^T$.
La condition~(i) impose $P_{g|_{U\cap V}}^T=\abs{P_g^T\cap (U\cap V)}$,
où $P_g^T$ est la partie squelettique $p$-calibrée décrivant le courant~$T$
relativement  à la carte tropicale~$g$ sur l'\emph{ouvert}~$U$ de~$X$.
La condition~(ii) appliquée à~$V$ et~$U\cap V$ et à la carte tropicale~$f$
impose $P_{f|_{U\cap V}^T}= \abs{P_f^T \cap (U\cap V)}$;
appliquée à~$U$ et~$U\cap V$ et à la carte~$g$, elle impose
$P_{g|_{U\cap V}^T}= \abs{P_g^T \cap (U\cap V)}$.
Comme $U\cap V$ est un ouvert de~$V$, 
on a $\abs{P_f^T \cap (U\cap V)}
=\abs{P_f^T}\cap (U\cap V)$; 
la condition~(ii) appliquée à~$V$, $V$ et à la carte~$f$
entraîne que $P_f^T=\abs{P_f^T}$,
si bien que $\abs{P_f^T\cap (U\cap V)}=P_f^T \cap (U\cap V)$.
Ainsi, les conditions de la proposition entraînent
que 
\[ P_f^T \cap (U\cap V)=  \abs{P_g^T \cap (U\cap V) }, \]
donc est déterminé sur~$U\cap V$.

Comme la restriction de~$g$ à~$P_g^T$ est une immersion par
morceaux et que $f|_{U\cap V}=g|_{U\cap V}$,
cette formule montre également 
que la restriction de~$f$ à~$P_f^T\cap (U\cap V)$
est une immersion par morceaux.

Démontrons maintenant que les conditions imposées déterminent~$P_f^T$
lorsque $f$ est une carte G-tropicale sur un domaine~$V$ de~$X$
et entraînent que $P_f^T$ est pure, 
et que $f|_{P_f^T}$ est une immersion par morceaux.

La relation~(ii) appliquée à~$W=V$ entraîne que $P_f^T$ est pure.

Soit $(V_i)$ un G-recouvrement de~$V$ qui tropicalise~$f$.
Les parties $P_i=P_{f|_{V_i}}^T=\abs{P_f^T \cap V_i}$ 
sont déterminées par ce qui précède;
d'après la remarque~\ref{rema.polyedres-purs-G-faisceau},
cela détermine~$P_f^T$.
De plus, la restriction de~$f$ à~$P_f^T\cap V_i$ est une immersion par
morceaux pour tout~$i$, donc la restriction de~$f$ à~$P_f^T$ 
est une immersion par morceaux.

Établissons enfin l'existence d'une telle application.
Soit $(V,f)$ une carte G-tropicale.
Choisissons un G-recouvrement~$(V_i)$ de~$V$ tropicalisant~$f$.
Pour tout~$i$, choisissons une famille $(U_{j}, g_{j})_{j\in J_i}$
où $U_j$ est un ouvert de~$X$ et $g_j$ une carte tropicale sur~$U_j$ 
tels que $g_{j}|_{U_j\cap V_i}
= f|_{U_j\cap V_i}$, et telle que les $U_j$ recouvrent~$V_i$.
L'existence d'une telle famille découle de l'hypothèse faite sur~$X$.
Quitte à remplacer les~$V_i$ par $V_i\cap U_j$, on suppose
que $(V_i)$ et $(U_j)$ sont indexées par le même ensemble
et que $V_i\subset U_i$ pour tout~$i$.

Pour tout~$i$, définissons une  partie squelettique $p$-calibrée
pure de~$V_i$ par la formule
$ P_i = \abs{P_{g_i}^T\cap (U_i\cap V)}$.
Vérifions qu'elles se recollent au sens des parties
squelettiques $p$-calibrées pures, c'est-à-dire
que pour tous $i,j$,
on a
$\abs{P_i \cap V_j}=\abs{P_j\cap V_i}$,
soit encore
\[ \abs{P_{g_i}^T \cap (V_i \cap V_j)}=\abs{P_{g_j}^T\cap (V_i\cap V_j)}. \]
Sur le domaine analytique $V_i\cap V_j$ de~$U_i\cap U_j$, 
les cartes~$g_i$ et~$g_j$  coïncident avec~$f$;
l'égalité voulue découle donc du lemme~\ref{lemm.comparaison-polyedres-T}.
Soit $P$ la partie squelettique $p$-calibrée pure obtenue
par recollement des~$P_i$
(remarque~\ref{rema.polyedres-purs-G-faisceau}).
Vérifions qu'elle ne dépend pas de la famille $(V_i,U_i,g_i)$:
si $(V'_j,U'_j,g'_j)$ en est une autre, 
définissant une partie squelettique $p$-calibrée~$P'$,
la propriété de recollement appliquée à la réunion de ces deux
familles entraîne   que $P=P'$.
Notons-le $P_f^T$.

Il reste à vérifier les propriétés~(i) et~(ii).

% La propriété~(i) découle de ce que la construction
% de~$P_f^T$ ne fait intervenir que $f_\trop$.
 
La propriété~(i) est obtenue via la famille $(V,U,f)$.

La propriété~(ii) est obtenue en conservant la même famille $(V_i,U_i,g_i)$:
La partie squelettique $p$-calibrée pure $P_{f|_W^T}$ 
est obtenue par recollement des $\abs{P_{g_i}^T \cap (U_i\cap W)}$
qui est égal à $\abs{\abs{P_{g_i}^T \cap (U_i \cap V)} \cap W}$,
si bien que leur recollement est égal
à $\abs{ P_f^T \cap W}$.
\end{proof}

\begin{prop}
Soit $T$ un courant squelettique de dimension~$p$ sur~$X$,
soit $f$ une carte G-tropicale sur un domaine~$V$ de~$X$,
soit $\omega$ une G-forme sur~$V$ qui est tropicalisée par~$f$
et soit $\rho$ une fonction G-lisse sur~$V$
telles que $\rho\omega$ soit à support compact dans~$V$.
Alors 
\[ \langle T, \rho\omega\rangle = \int_{P^T_f} \rho\omega|_{P^T_f} . \]
\end{prop}
\begin{proof}
Soit $(V_i)$ une famille finie de domaines compacts de~$V$
recouvrant le support de~$\rho\omega$ et sur lesquels $\rho$ est tropicale
et $f$ est affine.
Pour toute partie finie non vide d'indices~$I$, posons $V_I=\bigcap_{i\in I}V_i$.
Par construction,
\[ \langle T,\rho\omega\rangle = \sum_{I\neq \emptyset}(-1)^{\Card(I)-1}
  \mu^T_{\omega}(\rho\mathbf 1_{V_I}) \]
et
\[ \int_{P_f^T} \rho|_{P_f^T} = \sum_{I\neq \emptyset}(-1)^{\Card(I)-1}
  \int_{P_f^T\cap V_I} \rho\omega|_{P^T_f\cap V_I}. \]
Il suffit de comparer ces deux expressions terme à terme, 
ce qui permet de supposer que $V$ est compact,
que $f$ est affine et que $\rho$ est lisse sur~$V$.
Il existe une forme~$\alpha'$ sur~$f(V)$ telle que $\omega=f^*\alpha'$;
comme $f(V)$ est compact, cette forme lisse s'étend en une forme lisse~$\alpha$
sur tout le codomaine de~$f$.
Choisissons une famille finie $(U_j, g_j)_{j\in J}$ recouvrant~$V$,
où $U_j$ est un ouvert de~$X$ et $g_j$ une carte tropicale sur~$U_j$
qui prolonge $f|_{V\cap U_j}$,
une fonction lisse~$\rho_j$ sur~$U_j$ qui prolonge~$\rho|_{V\cap U_j}$;
pour tout~$j$,
la forme lisse $g_j^*\alpha$ sur~$U_j$ prolonge~$\omega|_{V\cap U_j}$.
Soit $(\theta_j)$ une partition de l'unité au voisinage de~$V$
subordonnée au recouvrement~$(U_j)$.
Comme le support de~$\rho\omega$  est contenu dans~$V$, on a
$ \rho\omega= \sum_j \theta_j \rho\omega$, d'où
\[ \langle T,\rho\omega\rangle = \sum_{j\in J} \langle T, \theta_j\rho\omega\rangle\]
et
\[ \int_{P_f^T}\rho\omega|_{P_f^T} = \sum_{j\in J} \int_{P_f^T} \theta_j\rho\omega|_{P_f^T}. \]
Là encore, il suffit de comparer ces expressions terme à terme,
ce qui permet de supposer que $f$ et $\rho$
sont les restrictions à~$V$
d'une carte tropicale $g$ définie sur~$X$
et d'une fonction lisse (encore notée~$\rho$) sur~$X$.
Alors, $\omega$ est la restriction à~$V$ de la forme lisse~$g^*\alpha$ sur~$X$.
Par définition de l'extension d'un courant positif aux G-formes,
il s'agit de démontrer que 
$\mu^T_{g^*\alpha}(\rho\mathbf 1_V)= \int_{P^T_f} \rho \omega|_{P^T_f}$.
Par définition d'un courant squelettique,
pour toute fonction lisse~$u$ à support compact sur~$X$,
on a
\[ \mu ^T_{g^*\alpha} (u) = \langle T, ug^*\alpha\rangle
 = \int_{P^T_g} u g^*\alpha|_{P^T_g}. \]
Cette formule s'étend donc à toute fonction continue à support
compact, et même à toute fonction intégrable.
En particulier,
\[ \mu^T_{g^*\alpha}(\rho\mathbf 1_V)
 = \int_{P^T_g} \rho \mathbf 1_V g^*\alpha|_{P^T_g}.
  \]
D'après la proposition~\ref{prop.courant-squ-G},
on a $\abs{P^T_g\cap V} = P^T_f$, si bien que 
\[ \mu^T_\omega(\rho)=\int_{P^T_f} \rho f^*\alpha|_{P^T_f}, \]
comme il fallait démontrer
\end{proof}

\begin{prop}\label{prop.comparaison-polyedres-T-G}
Soit $T$ un courant squelettique 
de dimension~$p$ sur~$X$
Soit $f,g$ des cartes G-tropicales sur~$X$. On a l'égalité
de parties squelettiques $p$-calibrées  pures:
$ (P_f^T)^{(g)} = (P_g^T)^{(f)} $.
\end{prop}
\begin{proof}
On peut prouver cette relation G-localement.
Il suffit donc de démontrer que pour tout domaine~$V$ de~$X$,
tout ouvert~$U$ de~$X$ dans lequel $V$ est fermé 
et tel que
$f|_V$ et~$g|_V$ sont les restrictions de cartes tropicales~$f'$ et~$g'$ 
sur~$U$, on a 
\[ \abs{(P_f^T)^{(g)} \cap V} = \abs{(P_g^T)^{((f)} \cap V}. \]
Par construction, $\abs{P_f^T\cap V}=\abs{P_{f'}^T\cap V}$.
Alors,
\[ \abs{(P_f^T)^{(g)} \cap V}
 = \abs{P_f^T \cap V}^{(g)}
 = \abs{P_{f'}^T\cap V}^{(g)}
 = \abs{(P_{f'}^T)^{(g')}\cap V}. \]
De même,
\[  \abs{(P_g^T)^{(f)} \cap V}
 = \abs{(P_{g'}^T)^{(f')}\cap V}. \]
D'après la proposition~\ref{prop.comparaison-polyedres-T}
appliquée aux cartes tropicales~$f'$ et~$g'$ sur~$U$,
on a $(P_{g'}^T)^{(f')}= (P_{f'}^T)^{(g')}$; la proposition en résulte.
\end{proof}

\begin{rema}\label{rema.G-courant-para-polyedre}
Inversement, 
considérons une famille~$(P_f)$ de parties squelettiques $p$-calibrées
pures, indexée par les cartes G-tropicales de~$X$
et satisfaisant les conditions suivantes:
\begin{enumerate}\def\theenumi{\roman{enumi}}\def\labelenumi{(\theenumi)}
% \item
% Si $V$ est un domaine analytique de~$X$, $f$ et $g$
% des moments sur~$V$ tels que $f_\trop=g_\trop$, alors $P_f^T=P_g^T$;
\item
Pour toute carte G-tropicale~$f$, la restriction de~$f$ à~$P_f$
est une immersion par morceaux.
\item
Si $V$ est un domaine de~$X$, $f$ une carte G-tropicale sur~$V$
et $W$ un domaine de~$V$, alors $P_{f|_W}=\abs{P_f\cap W}$.
\item
S'il existe une application affine~$q$ telle que $f=q\circ g$,
alors $P_f=P_g^{(f)}$.
% Si $V$ est un domaine de~$X$, $f$ et $g$ des cartes G-tropicales sur~$V$,
% alors $(P_f)^{(g)}= (P_g)^{(f)}$.
\end{enumerate}

D'après la remarque~\ref{rema.courant-para-polyedre},
il existe un unique courant squelettique~$T$ sur~$X$
tel que $P_f^T=P_f$ pour toute carte tropicale~$f$ définie sur un ouvert de~$X$.

Il résulte de la construction que l'on a
$P_f^T=P_f$ pour toute carte G-tropicale sur un domaine~$V$ de~$X$.
\end{rema}

\subsection{}
Soit $T$ un courant squelettique de dimension~$p$ sur~$X$.
Soit~$V$ un domaine de~$X$; observons que $V$ est décent
et localement différentiellement séparé.
La famille des parties squelettiques
$p$-calibrées $(P_f^T)$, lorsque $f$ parcourt les cartes G-tropicales
définies sur un domaine de~$V$ vérifie les conditions
du paragraphe~\ref{rema.G-courant-para-polyedre}.
Elle définit donc une forme linéaire sur les G-formes de~$V$
dont la restriction aux formes  est un courant squelettique
que nous noterons~$T|_V$.

On définit ainsi un G-préfaisceau sur~$X$ qui prolonge le
faisceau des courants squelettiques.

\begin{lemm}
Ce G-préfaisceau est un G-faisceau.
\end{lemm}
\begin{proof}
Soit $V$ un domaine de~$X$ et soit $(V_i)_{i\in I}$ un G-recouvrement
de~$V$; pour tout~$i$, soit $T_i$ un courant squelettique
de dimension~$p$ sur~$V_i$  et
supposons que l'on a $T_i|_{V_i\cap V_j}=T_j|_{V_i\cap V_j}$
pour tous~$i,j$. Il s'agit de démontrer
qu'il existe un unique courant squelettique~$T$ de dimension~$p$
sur~$V$ tel que $T_i=T|_{V_i}$ pour tout~$i$.
Quitte à remplacer~$X$ par~$V$, on suppose que $V=X$ pour alléger les notations.

Suivant la remarque~\ref{rema.courant-para-polyedre},
il s'agit de définir des parties squelettiques $p$-calibrées~$P_f$ indexées
par les cartes tropicales~$f$ définies sur un ouvert~$U$ de~$X$
vérfiant les conditions de cette remarque.
 
Fixons $U$ et~$f$. On définit la partie squelettique calibrée~$P_f$ 
de~$U$ G-localement par rapport au G-recouvrement $(V_i\cap U)$.
Soit $i,j$ des indices dans~$I$;
par définition du courant $T_i|_{U\cap V_i\cap V_j}$,
on a 
\[ P^{T_i|_{U\cap V_i\cap V_j}}_{f}
= \abs{P^{T_i}_{f} \cap (U\cap V_i\cap V_j)}. \]
Comme les courants $T_i$ et $T_j$ coïncident sur $V_i\cap V_j$,
on a 
\[ P^{T_i|_{U\cap V_i\cap V_j}}_{f}
= P^{T_j|_{U\cap V_i\cap V_j}}_{f}. \]
Enfin, on a 
\[ P^{T_j|_{U\cap V_i\cap V_j}}_{f}
= \abs{P^{T_j}_{f} \cap (U\cap V_i\cap V_j)}. \]
Par suite, 
on a l'égalité 
\[  \abs{P^{T_i}_{f} \cap (U\cap V_i\cap V_j)}
=  \abs{P^{T_j}_{f} \cap (U\cap V_i\cap V_j)}. \]
Par recollement (remarque~\ref{rema.polyedres-purs-G-faisceau}, 
on en déduit qu'il existe
une unique partie squelettique $p$-calibrée~$P_{f}$
sur $U$ telle que pour tout $i\in I$, on ait
\[ \abs{P_{f} \cap (U\cap V_i)}
 = \abs{P^{T_i}_f \cap (U\cap V_i)}. \]

Il reste à démontrer que ces parties squelettiques $p$-calibrées~$P_f$ 
vérifient les conditions de la remarque~\ref{rema.courant-para-polyedre}:

a) L'application paralinéaire~$f$ est une immersion 
par morceaux sur~$P_f$ car c'en est-une sur chaque $P^f\cap V_i\subset P_i$;

b) Si $V$ est un ouvert de~$U$, on a $P_{f|_V}=P_f\cap V$.
Il suffit de le vérifier sur chaque~$V_i$, où cela résulte
de la condition correspondante pour $P_f^{T_i}$.

c) Si $q$ est une application affine telle que $f=q\circ g$,
la condition $\abs{P_f}=\abs{P_g}^{(f)}$ 
se vérifie G-localement en restriction à chaque~$V_i$,
et elle résulte  alors de la condition correspondante
pour $P_f^{T_i}$.

Pour conclure la démonstration du lemme, 
le courant squelettique~$T$ associé à ces parties squelettiques $p$-calibrées
vérifie $T|_{V_i}=T_i$ pour tout~$i$,
par construction, et c'est le seul.
\end{proof}

\begin{prop}\label{prop.paralineaire-PfT-G}
Soit $T$ un courant squelettique de dimension~$p$ sur~$X$,
soit $f$ une carte G-tropicale sur un domaine~$U$ de~$X$, 
soit $q$ un entier naturel tel que $q\leq p$,
soit $\omega$ une G-forme G-lisse de type $(q,p)$ sur~$U$, 
$f$-tropicale, 
soit $\alpha$ une G-forme G-forme de type $(p-q,0)$
telle que $\alpha\wedge\omega$ soit 
à support compact sur~$U$.
Alors on a
\[ \langle T, \alpha\wedge\omega\rangle = \int_{P_f^T} \alpha\wedge\omega. \]
\end{prop}
On en déduit une formule analogue pour des G-formes~$\omega$ de type $(p,q)$
et des G-formes~$\alpha$ de type $(0,p-q)$.
\begin{proof}
Par construction, $\alpha\wedge\omega$ est une G-forme à support
compact sur~$X$; reprenons la construction  
de $\langle T,\alpha\wedge\omega\rangle$.
Soit $(V_i)$ une famille finie de domaines compacts
recouvrant le support de~$\alpha\wedge\omega$ et
telle que pour tout~$i$, la forme $\alpha|_{V_i}$ soit G-tropicalisée
par une carte G-tropicale~$g_i$. Compte tenu des définitions,
on se ramène à supposer que $U=V_i$ et que $\alpha$ est 
G-tropicalisée par une carte G-tropicale~$g$.
Alors, on a 
\[ \langle T, \alpha\wedge\omega\rangle
= \int_{P_{(f,g)}^T}  \alpha \wedge\omega. \]
Comme $\omega$ est de type $(q,p)$, sa restriction 
à toute cellule de~$P_{(f,g)}^T$ qui est contractée par~$f$
est nulle. Ainsi, 
\[ \langle T,  \alpha\wedge\omega\rangle
= \int_{(P_{(f,g)}^T)^{(f)}}  \alpha \wedge\omega. \]
D'après la proposition~\ref{prop.courant-squ-G},
la partie squelettique $p$-calibrée $(P_{(f,g)}^T)^{(f)}$ 
est égale à $(P_{f}^T)^{(f,g)}$, c'est-à-dire à $P_f^T$ puisque, $f$
étant une immersion par morceaux sur $P_f^T$, 
l'application $(f,g)$ en est également une.
Ainsi,
\[ \langle T,  \alpha\wedge\omega\rangle
= \int_{P_f^T} \alpha\wedge\omega ,\]
d'où la proposition.
% 
% Il suffit de remarquer que 
% Pour tout $x\in X$, soit $V_x$ un voisinage analytique ouvert de~$x$,
% $g_x\colon V_x\to T_x$ un moment sur~$V_x$ et $\alpha_x$
% une G-forme au voisinage de~$f_{x,\trop}(V_x)$ 
% telle que $\alpha|_{V_x}=g_x^*\alpha_x$.
% Soit $(\lambda_x)$ une famille à support fini de fonctions lisses
% sur~$X$ dont la somme est égale à~$1$ au voisinage
% du support de~$\alpha\wedge\omega$ et telle que, pour tout~$x\in X$,
% le support de~$\lambda_x$ soit une partie compacte de~$V_x$.
% Notons~$h_x$ le moment $(f,g_x)$ sur~$V_x$.
% Par construction de l'action d'un courant paralinéaire sur une G-forme,
% on a
% \[ \langle T,\alpha\wedge\omega\rangle
%  = \sum_{x\in X} \int_{P_{h_x}^T} \lambda_x \alpha\wedge\omega . \]
% Comme $\omega|_{V_x}$ est $f|_{V_x}$-tropicale et de type $(q,p)$,
% sa restriction à $P_{h_x}^T$ est nulle hors  de $(P_{h_x}^T)^{(f)}=P_{f|_{V_x}}^T=P_f^T\cap V_x$.
% Ainsi,
% \[ \langle T,\alpha\wedge\omega \rangle
% = \sum_{x\in X} \int_{P_{f^T\cap V_x}} \lambda_x \alpha\wedge\omega 
% = \sum_{x\in X}\int_{P_f^T} \lambda_x \alpha\wedge\omega
% = \int_{P_f^T} \alpha\wedge\omega, \]
% ainsi qu'il fallait démontrer.
\end{proof}

\begin{prop}\label{prop.paralineaire-atlas}
Soit $T$ un courant squelettique de dimension~$p$ sur~$X$
et soit~$F$ une partie fermée de~$X$.
Soit $(U_i)$ un G-recouvrement de~$X$
par des domaines ;
on fait l'une des deux hypothèses suivantes:
\begin{enumerate}\def\theenumi{\roman{enumi}}\def\labelenumi{(\theenumi)}
\item Le recouvrement est localement fini et les~$U_i$ sont fermés;
\item La partie~$F$ possède un voisinage ouvert paracompact dans~$X$.
\end{enumerate}
Pour tout~$i\in I$, soit $f_i$ une carte G-tropicale sur~$U_i$.
Il existe alors une partie squelettique $p$-calibrée~$P$ pure de~$X$
telle que pour tout entier~$r$ tel que $0\leq r\leq p$,
toute G-forme~$\omega$ de type~$(p-r,p)$ sur~$X$
telle que $\omega|_{U_i}$ soit $f_i$-tropicale  pour tout~$i$,
et toute G-forme~$\rho$ sur~$X$ 
telle que $\supp(\rho\wedge\omega)\subset F$,
on ait $\langle T,\rho\wedge\omega\rangle
= \int_P \rho\wedge\omega$.
\end{prop}
Par définition, toute partie compacte d'un espace G-tropical
possède une base de voisinages ouverts dénombrables à l'infini,
de sorte que la proposition s'applique lorsque $F$ est compacte. 

\begin{proof}
Commençons par traiter le cas où $(U_i)$ est un G-recouvrement
localement fini de~$F$ par des domaines fermés 
et soit $U$ leur réunion.
Pour tout ensemble fini non vide~$I$ d'indices,
on a posé $U_I=\bigcap_{i\in I}U_i$ 
et notons~$f_I$ une carte G-tropicale sur~$U_I$ 
qui est plus fine que chacune des~$f_i|_{U_I}$ pour $i\in I$ 
(par exemple leur concaténation), de sorte que pour tout~$i\in I$,
toute forme sur~$U_I$ qui est $f_i$-tropicale est $f_I$-tropicale. 
Choisissons pour partie squelettique~$P$
la réunion (localement finie) des
parties squelettiques fermées~$P_{f_I}^T$. Pour tout~$I$,
le calibrage de~$P_{f_I}^T$ induit un calibrage de~$P$,
nul hors de~$P_{f_I}^T$; munissons~$P$ de
la somme alternée (localement finie) de ces calibrages,
celui de $P_{f_I}^T$ étant multiplié par $(-1)^{\Card(I)-1}$.
Démontrons que $\abs P$  convient.

Considérons donc une G-forme~$\omega$ de type $(p-r,p)$
telle que $\omega|_{U_i}$ soit G-tropicale pour tout~$i$,
et une G-forme~$\rho$ de type~$(r,p)$
telles que $\supp(\rho\wedge\omega)$ soit compact et contenu dans~$F$.
En écrivant  l'égalité de G-formes à coefficients tropicalement mesurables
\[ \rho\wedge\omega = \mathbf 1_U \rho\wedge\omega = \sum_{\emptyset\neq I}
(-1)^{\Card(I)-1} \mathbf 1_{U_I} \rho\wedge\omega, \]
  il vient 
\[ \langle T,\rho\wedge\omega\rangle
 = \sum_{\emptyset\neq I} (-1)^{\Card(I)-1}
  \langle T|_{U_I}, \rho\wedge\omega |_{U_I} \rangle . \]
Pour tout~$I$, la G-forme~$\omega|_{U_I}$ est $f_I$-tropicale
et est de type $(q,p)$, 
la proposition~\ref{prop.paralineaire-PfT} entraîne que l'on a
\[ \langle T,\rho\wedge\omega \rangle
=  \sum_{\emptyset\neq I} (-1)^{\Card(I)-1}
\int_{P_{f_I}^T} \rho\wedge \omega.  \]
On a donc $\langle T,\rho\wedge\omega\rangle=\int_P \rho\wedge\omega$,
d'où la proposition dans le cas considéré.

Démontrons maintenant la proposition
sous l'hypothèse que $F$ possède un voisinage ouvert paracompact~$U$.
Quitte à considérer le courant paralinéaire $T|_U$ sur~$U$, on peut
supposer que $X$ est paracompact.
Pour tout $x\in F$, il existe un voisinage domanial 
compact~$W_x$ de~$x$ dans~$X$  et un ensemble fini d'indices~$I_x$
tel que $x\in U_i$ et tel que
$W_x\cap U_i$ soit fermé dans $W_x$
pour tout $i\in I_x$, et tel
que la réunion des~$U_i$ pour $i\in I_x$ contienne~$W_x$.

D'après le lemme~\ref{lemm.app-paracompact},
il existe un recouvrement localement fini~$(W_j)_{j\in J}$ de~$X$
par des domaines compacts de~$X$, qui raffine
le recouvrement ouvert $(U_i)_{i\in I}$.
et tel que les intérieurs~$\mathring Z_j$ recouvrent~$X$.
Pour tout~$j\in J$, on choisit un indice~$i$
tel que $W_j\subset U_i$ et on note $g_j$
la carte G-tropicale~$f_i|_{W_j}$.

On est ainsi ramené au cas où l'hypothèse~(i) est satisfaite,
ce qui conclut la démonstration.
\end{proof}

\section{Calcul différentiel sur les courants}
\label{ss.calcul-diff-courants}

Soit $X$ un espace tropical de niveau~$n$.

\subsection{}
L'application linéaire~$\di$ de~$\mathscr A^{p,q}_\cpct(X)$
dans~$\mathscr A^{p+1,q}_\cpct(X)$ est continue
(prop.~\ref{prop.continuite-di-dc-J}).
On note~$\mathrm b'$ sa transposée
de~$\mathscr D_{p+1,q}(X)$ dans~$\mathscr D_{p,q}(X)$.
Si $T$ est un courant d'ordre~$\leq r$, alors $\mathrm b' T$
est un courant d'ordre~$\leq r+1$.

On définit de même~$\mathrm b''\colon\mathscr D_{p,q+1}(X)\ra\mathscr D_{p,q}(X)$ comme la transposée de l'application~$\dc$.

Il résulte de la définition de la topologie des espaces de courants
que ces applications linéaires~$\mathrm b'$ et $\mathrm b''$ sont continues.

Par dualité, la relation $\dc\omega=\mathrm J\di\mathrm J^{-1}\omega$ 
entraîne l'égalité
\begin{equation}
\mathrm b''=\mathrm J \mathrm b' \mathrm J^{-1}.
\end{equation}

On a aussi les relations
$(\mathrm b')^2=0$, $(\mathrm b'')^2=0$,
et $\mathrm b'\mathrm b''=-\mathrm b''\mathrm b'$.

% \subsection{}
% Supposons maintenant que~$X$ soit purement de dimension~$n$.
% On définit l'opérateur de symétrie~$\mathrm J$
% sur les courants par la formule:
% \[ \langle\mathrm JS,\omega\rangle = (-1)^n \langle S,\mathrm J\omega\rangle.\]
% De la sorte, $J[\alpha]=[J\alpha]$ pour toute forme~$\alpha\in\mathscr A^{p,q}(X)$.
% En effet,
% \[  \langle\mathrm J [\alpha],\omega\rangle=
% (-1)^n \int_X \alpha\wedge\mathrm J\omega
% =(-1)^n\int_X \mathrm J(\mathrm J\alpha\wedge\omega)
% = \int_X \mathrm J\alpha\wedge\omega
% = \langle [\mathrm J\alpha],\omega\rangle. \]
% 
% On dit qu'un courant~$S$ de bidegré~$(p,p)$
% est symétrique si~$\mathrm JS=(-1)^p S$.

\subsection{}
Soit~$\omega$ une G-forme de type~$(p,q)$ sur~$X$ 
et soit~$[\omega]$ le courant de bidegré~$(p,q)$ qui lui est associé. 
Pour toute forme~$\alpha\in\mathscr A^{n-p-1,n-q}_\cpct(X)$,
on a
\[ \di(\omega\wedge\alpha) 
      = \di \omega \wedge\alpha + (-1)^{p+q}\omega\wedge\di\alpha. \]
La formule de Stokes (proposition~\ref{prop.stokes}) entraîne alors
l'égalité
\[ \int^\partial_{X}\omega \wedge\alpha =  \int_X \di(\omega\wedge\alpha)=
    \int_X \di\omega \wedge\alpha 
   + (-1)^{p+q} \int_X \omega\wedge\di\alpha,\]
autrement dit
\[ [\di\omega] =  [\omega]^\partial +(-1)^{p+q+1} b'[\omega]. \]
Si l'on définit l'opérateur~$\di$ de~$\mathscr D^{p,q}(X)$
dans~$\mathscr D^{p+1,q}$ par
\begin{equation} \di = (-1)^{p+q+1} \mathrm b', \end{equation}
il vient alors
\begin{equation}
\di {[\omega]} =  [\di\omega] - [\omega]^\partial . \end{equation}
Lorsque $X$ est sans bord et $\omega$ est une \emph{forme} sur~$X$,
le courant $[\omega]^\partial$ est nul, et l'on a
\begin{equation}
\di {[\omega]}= [\di\omega].
\end{equation}

\subsection{}
De même, on pose 
\begin{equation}  \dc = (-1)^{p+q+1} \mathrm b''. \end{equation}
On en déduit que les relations du lemme~\ref{lemm.d'-d''-J} 
sont encore valides sur les espaces de courants:
\begin{gather}
\dc=\mathrm J \di \mathrm J^{-1} = -\mathrm J^{-1}\di\mathrm J \\
(\di)^2=(\dc)^2=0 \\
\dc\di=-\di\dc \\
\ddc\mathrm J=\mathrm J\ddc.
\end{gather}

Soit $\omega$ une G-forme de type~$(p,q)$. On a 
\begin{align*}
\dc {[\omega]} & = \mathrm J \di \mathrm J^{-1} [\omega]
 =  \mathrm J \di {[\mathrm J^{-1}\omega]}
 =  \mathrm J [\di\mathrm J^{-1}\omega]
-  \mathrm J [\mathrm J^{-1}\omega]^\partial \\
& = [\mathrm J\di\mathrm J^{-1}\omega]
 - \mathrm J [ \mathrm J^{-1}\omega]^\partial , 
\end{align*}
de sorte
que 
\begin{equation}
\dc{[\omega]}=[\dc\omega] - \mathrm J [\mathrm J^{-1}\omega]^\partial.
\end{equation}
En outre, si $X$ est sans bord et $\omega$ est une forme, 
alors $\mathrm J\omega$ est également une forme, 
donc on a 
\begin{equation}
\dc {[\omega]}=[\dc\omega].
\end{equation}

\subsection{}
Supposons que l'espace~$X$ est sans bord.
Comme le courant d'intégration~$\delta_X$ est égal à~$[1]$,
il est $\di$ et $\dc$-fermé.

% Plus généralement, pour tout cycle~$z$ sur~$X$,
% le courant~$\delta_z$ est $\di$ et $\dc$-fermé.

\begin{lemm}
Pour tout courant~$S\in\mathscr D_{p,q}(X)$
et toute forme~$\alpha\in\mathscr A^{p,q}_\cpct(X)$,
on a
\[
\langle \ddc S,\alpha\rangle = \langle S, \ddc\alpha\rangle.
\]
\end{lemm}
\begin{proof}
En effet,
$ \ddc S =((-1)^{p+q+2}\bi) ((-1)^{p+q+1} \bc)S= - \bi\bc S$
est l'image de~$S$ par le transposé de~$- \dc\di=\ddc$.
\end{proof}

\begin{exem}
Soit $X$ un espace espace tropical de niveau~$n$.
Soit $\alpha$ une G-forme de type~$(p,q)$ 
à coefficients tropicalement de classe~$\mathscr C^2$.
On a donc
\begin{equation}
\ddc[\alpha] 
 =  \di \left([\dc\alpha]-\mathrm J[\mathrm J^{-1}\alpha]^\partial\right) 
=  [\ddc\alpha]-[\dc\alpha]^\partial-\di \mathrm J[\mathrm J^{-1}\alpha]^\partial.
\end{equation}

Supposons de plus que $\alpha$  soit symétrique.
Alors, $p=q$ et $\mathrm J^{-1}\alpha=\alpha$, 
de sorte que
\begin{equation}
\ddc[\alpha]
=  [\ddc\alpha]-[\dc\alpha]^\partial-\di \mathrm J[\alpha]^\partial.
\end{equation}

Supposons de plus que $X$ soit sans bord et $p=0$,
c'est-à-dire que $\alpha$
soit une fonction, G-localement tropicalisée par une fonction
de classe~$\mathscr C^2$, et démontrons que l'on a  
\begin{equation}
\ddc[\alpha] = [\ddc\alpha] - [\dc\alpha]^\partial.
\end{equation}
Compte tenu de la relation précédente, il suffit de vérifier que 
le courant $[\alpha]^\partial$ est nul.
Or, pour toute forme $\beta$ de type~$(n-1,n)$, 
lisse et à support compact, le nombre réel
$\langle [\alpha]^\partial,\beta\rangle$, égal à $\int^\partial_X \alpha\beta$,
est l'intégrale de~$\alpha$ contre la mesure de bord associée
à~$\beta$. Comme $\beta$ est une forme et comme $X$ est sans bord,
cette mesure de bord est nulle, d'où l'égalité
$[\alpha]^\partial=0$.

On retrouve en particulier
la formule classique définissant le laplacien combinatoire
d'un graphe (\cite{rumely1989}, \cite{zhang93}).
\end{exem}

\begin{lemm}
Soit $T$ un courant de bidimension~$(p,q)$
sur~$X$ et soit $\alpha$ une forme de type $(p',q')$.
On a l'égalité:
\[
\di(T\wedge \alpha) = (\di T)\wedge\alpha + (-1)^{p+q} T\wedge\di\alpha.
\]
\end{lemm}
\begin{proof}
En effet, pour toute forme~$\beta$ de type~$(p-p',q-q')$ sur~$X$, on a
\begin{align*}
\langle \di(T\wedge\alpha),\beta\rangle & = 
   (-1)^{p-p'+q-q'+1} \langle T\wedge\alpha ,\di\beta \rangle \\
& = (-1)^{p-p'+q-q'+1} \langle T,\alpha\wedge \di\beta \rangle \\
& = (-1)^{p-p'+q-q'+1} \langle T,(-1)^{p'+q'}(\di(\alpha\wedge \beta)
 - \di\alpha\wedge \beta) \rangle \\
& = (-1)^{p+q+1} \langle T,\di(\alpha\wedge \beta)\rangle
 +(-1)^{p+q} \langle T, \di\alpha\wedge \beta \rangle \\
& =  \langle \di T\wedge \alpha, \beta\rangle
 +(-1)^{p+q} \langle T\wedge \di\alpha,  \beta \rangle,
\end{align*}
d'où l'égalité voulue.
\end{proof}

\subsection{}
Soit $U$ un ouvert de~$X$. Pour tout courant~$T$ sur~$X$,
on a $\di(T|_U)=(\di T|_U)$ et $\dc(T|_U)=(\dc T)|_U$.

% Soit $\ell$ une extension complète de~$k$, 
Soit $Y$ un espace tropical de niveau~$m$
et soit $\phi\colon Y\to X$ un morphisme compact
d'espaces tropicaux.
Pour tout courant~$T$ sur~$Y$, on a $\di(\phi_*T)=\phi_*(\di T)$
et $\dc(\phi_*T)=\phi_*(\dc T)$.

\begin{prop}\label{prop.deltaY-ferme}
Soit $Y$ un espace tropical de niveau~$m$ 
et soit $\phi\colon Y\to X$ un morphisme compact d'espaces tropicaux.
On a $\di (\phi_*\delta_Y) = \phi_*(\delta^\partial _{Y})$.
En particulier, si $X$ est sans bord,
le courant~$\delta_X$ est $\di$-fermé.
\end{prop}
\begin{proof}
Soit $d=\dim(Y)$ et soit $\alpha$ une $(d-1,d)$-forme lisse 
à support compact sur~$X$.
On a
\begin{align*}
 \langle \di(\phi_*\delta_Y),\alpha \rangle 
& = - \langle \phi_*\delta_Y,\di\alpha\rangle 
  = - \int_Y \phi^*\di\alpha \\
&  = - \int_Y \di\phi^*\alpha 
  = - \int^\partial_{Y} \phi^*\alpha  \\
&  = - \langle \phi_*\delta^{\partial}_Y,\alpha\rangle,
\end{align*}
d'où la formule annoncée.
\end{proof}

\begin{exem}\label{exem.d'-paralineaire}
Soit $T$ un courant squelettique de dimension~$p$ sur~$X$.
Soit $f$ une carte G-tropicale sur un domaine~$V$ de~$X$ et soit $P^T_f$
la partie squelettique $p$-calibrée pure
qui décrit~$T$ relativement à~$f$.
Soit $q$ un entier tel que $0\leq q\leq p-1$.
Démontrons que pour toute
G-forme~$\omega$ de type~$(q,p)$ sur~$V$ qui est $f$-tropicale,
et toute G-forme G-lisse~$\rho$
de type $(p-q-1,0)$ sur~$V$ telle que $\rho\wedge\omega$ soit
à support compact,  on a
\[ \langle T,\di(\rho\wedge\omega)\rangle 
= \int_{P_f^T}^\partial \rho\wedge\omega. \]

%Par définition de la dérivée d'un courant
%de bidimension~$(p,p)$, on a $\langle \di T,\rho\wedge\omega\rangle
%= - \langle T,\di (\rho\wedge\omega) \rangle$.
On a $\di(\rho\wedge\omega)=\di\rho\wedge\omega +(-1)^{p-q-1}\rho\wedge\di\omega$.
Comme les formes $\omega$ et $\di\omega$ sont de type $(q,p)$ et $(q+1,p)$
respectivement, 
la proposition~\ref{prop.paralineaire-PfT-G} entraîne les relations
\[ \langle T,\rho\wedge \di\omega \rangle = \int_{P_f^T} \rho\wedge \di\omega \]
et
\[ \langle T,\di\rho\wedge\omega \rangle = \int_{P_f^T} \di\rho\wedge\omega. \]
Ainsi,
\[ \langle T,\di(\rho\wedge\omega) \rangle
= \int_{P_f^T} (\di\rho\wedge\omega+(-1)^{p-q-1}\rho\wedge \di\omega)
= \int_{P_f^T} \di(\rho\wedge\omega) . \]
La formule de Stokes entraîne alors 
\[ \langle T,\di(\rho\wedge \omega) \rangle
= \int_{P_f^T}^\partial \rho\wedge\omega . \]

Lorsqu'on suppose que $f$ est une carte tropicale et que $\rho$ et $\omega$
sont des formes lisses, on a également
\[ \langle T, \di(\rho\wedge\omega)\rangle = 
- \langle \di T, \rho\wedge\omega\rangle.\]
La formule précédente se récrit alors
\[\langle \di T, \rho\wedge\omega \rangle
= -  \langle T, \di (\rho\wedge\omega)\rangle 
= - \int_{P_f^T}^\partial \rho\wedge\omega.\]
Informellement, le bord du courant squelettique~$T$ se calcule,
au signe près, en intégrant
les formes sur les bords des parties squelettiques calibrées associées à~$T$.
\end{exem}

\begin{rema}
La fermeture du courant~$\delta_X$, qui résulte
de l'absence de bord pour~$X$ (proposition~\ref{prop.deltaY-ferme}),
se traduit tropicalement
par des annulations de discordance (condition d'harmonie,
théorème~\ref{theo.condition-harmonie}).
Esquissons comment le résultat s'étend à un courant squelettique fermé~$T$.

Considérons une carte tropicale~$f$ définie sur un ouvert~$U$,
à valeurs dans~$\R^N$.
Soit $x$ un point de~$P_f^T$. 
Comme au \S\ref{defi.cone-tropical}, on définit un germe de cône
calibré~$\Pi_{f,x}^T$ dans~$\R^N$ au voisinage de~$f(x)$;
c'est l'image par l'immersion par morceaux~$f$ du germe 
de partie squelettique $p$-calibrée~$P_{f,x}^T$ en~$x$.
On démontre comme pour la condition d'harmonie que
ce germe~$P_{f,x}^T$ est harmonieux.
\end{rema}

\section{Cartes harmoniques}

Soit $X$ un espace tropical de niveau~$n$, sans bord.

\subsection{}
Soit $u\colon X\to\R^d$ une carte G-tropicale,
soit $P$ une partie paralinéaire de~$\R^d$ contenant~$u(X)$
et soit $\alpha$ une forme lisse sur~$P$.
Considérons la G-forme $\omega=u^*\alpha$.

Si $\omega$ est une forme, les courants $\di{[\omega]}$
et $[\di\omega]$ coïncident, mais ce n'est pas le cas en général,
leur différente 
est le courant de bord~$[\omega]^\partial$.
Nous allons montrer que si $u$ est « harmonique », ce phénomène
disparaît pour les G-formes du type~$u^*\alpha$.

\begin{defi}
On dit qu'une fonction paralinéaire~$u$ sur~$X$
est \emph{harmonique}\index{fonction paralinéaire harmonique}
si le courant $\ddc[u]$ est nul.
\end{defi}

Plus généralement, une fonction~$u$ sur~$X$ est dite harmonique
si elle est localement combinaison $\R$-linéaires de fonctions 
paralinéaires harmoniques.
Les fonctions harmoniques forment un sous-faisceau en $\R$-espaces vectoriels
du faisceau des fonctions continues sur~$X$.
Il contient les fonctions affines.

\begin{rema}
Du point de l'analyse fonctionnelle, il n'est pas clair que cette définition
soit optimale.  Contrairement à la situation en analyse complexe,
il n'y a aucune raison dans le contexte général des espaces tropicaux
qu'une fonction continue~$u$ telle que $\ddc[u]=0$
soit harmonique au sens précédent.

D'autre part, si $u$ est simplement supposée localement intégrable,
il lui est associée un courant~$[u]$ mais nous ne savons
pas si la condition $\ddc[u]=0$ entraîne même la continuité de~$u$
(à modification près qui ne change pas le courant).
\end{rema}

% Considérons une suite~$(a_n)$ d'éléments deux à deux
% distincts de~$k$ qui converge vers un point $a\in k$,
% et prenons pour~$X$ le complémentaire dans la droite affine
% du compact $\{a, (a_n)\}$.
% Pour tout $n\in\N$, la fonction~$u_n=\log\abs{T-a_n}$
% est lisse et harmonique sur~$X$. 
% Soit~$(\lambda_n)$ une suite de nombres réels strictement positifs 
% et de somme~$1$;
% posons $u=\sum \lambda_n$. 
% Cette série converge simplement, elle converge uniformément
% sur toute partie paralinéaire compacte de~$X$.
% (Cela se déduit de l'inégalité $\norm{T-\omega}(\eta_s)\geq s$
% pour tout $\omega\in K$ et tout $s\in\R_+$.)
% Sa somme~$u$ est une fonction sur~$X$ qui est continue sur toute partie
% paralinéaire et le courant $\ddc[u]$ est nul par passage à la limite.
% Cette fonction n'est pas continue. En fait, si…

\begin{prop}\label{prop.int-bord-harmonique-zero}
Soit $u=(u_1,\dots,u_d)$ une suite finie de fonctions
paralinéaires harmoniques
sur~$X$. Soit $P$ un sous-espace paralinéaire de~$\R^d$
contenant~$u(X)$ et soit $\alpha$ une forme lisse 
de type~$(n-1,n)$ sur~$P$.
Soit $W$ un domaine de~$X$.
Pour toute fonction lisse~$\rho$ sur~$W$
telle que le support de $\rho u^*\alpha$ 
ne rencontre pas~$\partial(W)$,
on a $\int_W^\partial \rho u^*\alpha=0$.
\end{prop}
\begin{proof}
Soit $\Sigma$ une partie squelettique de~$W$
qui est un support fort de~$\rho u^*\alpha$
(proposition~\ref{lemm.squelette-support-fort}).
Soit $\mathscr C$ une décomposition cellulaire de~$\Sigma$
qui est adaptée au bord~$\partial(W)$, 
au calibrage de~$X$,
et telle les~$u_i$ soient affines sur chaque cellule.
Pour toute cellule~$C$ de dimension~$n$, notons~$\mu_C$
son calibrage.

On a
\[
 \int_W^\partial \rho u^*\alpha
 = \sum_{D\in\mathscr C_{n-1}}
\int_D \left(
  \sum_{C\supset D} \langle (\rho u^*\alpha)_C, \mu_C\rangle\right). \]

Soit $D$ une $(n-1)$-cellule. Si $D$ rencontre~$\partial(W)$,
chaque terme est nul, 
car le support de~$\rho u^*\alpha$ ne rencontre pas~$\partial(W)$.

Supposons désormais que $D$ ne rencontre pas~$\partial(W)$.
Si $C$ est une cellule contenant~$D$ telle que l'application
affine~$u|_C$ ne soit pas injective, on a $u^*\alpha=0$ sur~$C$.
Si~$\mathscr C_D$ est l'ensemble des cellules~$C$ de dimension~$n$
qui contiennent~$D$ telle que l'application affine~$u|_C$
soit injective, on a donc
\[
 \int_W^\partial \rho u^*\alpha
 = \sum_{D\in\mathscr C_{n-1}}
\int_D \left(
  \sum_{C\in \mathscr C_D} \langle (\rho u^*\alpha)_C, \mu_C\rangle\right). \]
Si $u|_D$ n'est pas injective, alors $u|_C$ n'est injective 
pour aucune cellule~$C$ contenant~$D$, donc $\mathscr C_D$ est vide.

Supposons que $u|_D$ soit injective et notons~$u_D^{-1}$
la bijection réciproque de $u_D\colon D\to u(D)$.
Soit  $C\in\mathscr C_D$;
notons $u_C^{-1}$ la bijection
réciproque de~$u|_C \colon  C \to u(C)$; sa restriction à~$u(D)$
coïncide avec~$u_D^{-1}$.
Sur $D$, on a l'égalité de $(n-1,0)$-formes 
\[ \langle (\rho u^*\alpha)_C, \mu_C \rangle
= \rho  u_C^* \langle \alpha_{u(C)} , u_*\mu_C\rangle ,  \]
donc 
\[ \int_D \langle (\rho u^*\alpha)_C, \mu_C \rangle
= \int_{D} \rho u_C^* \langle \alpha_{u(C)}, u_*\mu_C\rangle
= \int_{u(D)} \rho\circ u_D^{-1} \langle \alpha_{u(C)}, u_*\mu_C\rangle. 
\]
La contraction d'une $(n-1,n)$-forme sur~$\R^n$
avec un $n$-vecteur-volume est une opération ponctuelle;
comme $\alpha$ est supposée lisse, 
on peut écrire 
\[ \langle \alpha_{u(C)}, u_*\mu_C\rangle
= \langle \alpha, u_*\mu_C\rangle |_C. \]
Alors
\[ \int_{u(D)} \rho\circ u_D^{-1} \langle \alpha_{u(C)}, u_*\mu_C\rangle
= \int_{u(D)} \rho\circ u_D^{-1} \langle \alpha, u_*\mu_C\rangle  \]
et
\[ \int_{D} \sum_{C\in\mathscr C_D}
\langle (\rho u^*\alpha)_C, \mu_C \rangle
= \int_{u(D)} \rho\circ u_D^{-1} \langle \alpha, \sum_{C\in\mathscr C_D} u_*\mu_C\rangle. \]

Il suffit donc de prouver l'égalité
\[ \sum_{C\in\mathscr C_D} u_*\mu_C = 0  \]
pour toute cellule~$D$ de dimension~$(n-1)$ qui ne rencontre pas~$\partial(W)$.

Pour utiliser l'hypothèse que toute composante~$v$
de~$u$ est harmonique, on écrit l'égalité
$0=\langle \ddc v,\omega\rangle$ pour toute forme lisse~$\omega$
qui est à support compact dans~$\mathring W$.
Par la formule de Green (proposition~\ref{prop.stokes}), on a
\[
 0 = \int_X v \ddc\omega  
= \int_X  \ddc v \wedge\omega + \int_X^\partial
\left( v \dc\omega - \dc v \wedge\omega\right). \]
Le premier terme est nul car $v$ est~\pl,
de même que le second car le support de l'intégrale de bord
est contenu dans le bord de~$W$
(théorème~\ref{theo.support-mesure-bord},
$\int^\partial_W v\dc\omega$ est l'intégrale de~$v$
contre la mesure de bord associée à~$\dc\omega$).
Ainsi, on a 
\[ \int_X^\partial \dc v \wedge \omega = 0. \]

Fixons une $(n-1)$-cellule~$D\in\mathscr C$ dont l'intérieur ne rencontre
pas~$\partial(W)$. Soit $x\in\mathring D$.
Comme $\Int(W)\subset\Int(X)$, il existe 
une carte tropicale~$F$ sur un voisinage ouvert~$U$ de~$x$, à valeurs
un espace~$\R^N$,
telle que $\mathring D\cap U \subset \Sigma_F^{(n)}$
(proposition~\ref{prop.decent-extensible}).
En particulier, $F$ est une immersion par morceaux sur $\mathring D\cap U$.
Si $p\colon \R^N\to \R^{n-1}$ est une projection linéaire générique,
l'application affine~$f=p\circ F$ 
est une immersion par morceaux d'un voisinage de~$x$ dans~$D$
vers~$\R^{n-1}$.

Dans la suite, on va se contenter de considérer des formes~$\omega$
du type $\theta f^*\beta$, où $\theta$ est une fonction lisse
dont le support ne rencontre aucune $n$-cellule de~$\Sigma$
autres que celles qui contiennent~$D$,
et $\beta$ est une $(n-1,n-1)$-forme lisse sur~$\R^{n-1}$.

Choisissons un $(n-1)$-vecteur $\mu_D$, une orientation~$\eps_D$.
Pour toute $n$-cellule de~$\mathscr C$ contenant~$D$,
on en déduit une orientation de~$C$ (par la règle de la normale sortante)
et un vecteur~$e_C$ dans~$C$ tel que le 
calibrage canonique de~$C$ soit égal à $[e_C\wedge \mu_D, \eps_C]$.
Fixons aussi une fonction affine~$\phi_C$ sur~$C$ qui est nulle sur~$D$
et telle que $\phi_C(e_C)=1$.

% On peut écrire $\omega|_C= \omega_C + \di\phi_C \wedge\omega'_C
% + \dc\phi_C\wedge\omega''_C+ \di\phi_C\wedge\dc\phi_C\wedge \omega'''_C$,
% où les quatre formes $\omega_C,\dots,\omega'''_C$
% ne font pas intervenir la coordonnée~$\phi_C$;
% elles sont respectivement de type $(n-1,n-1)$, $(n-2,n-1)$, $(n-1,n-2)$
% et $(n-2,n-2)$.

Soit $a_C=\partial_{e_C}(v|_C)$ la dérivée de~$v|_C$ dans la direction de~$e_C$;
alors $v_C=v-a_C \phi_C$ est invariant par~$e_C$ et donc provient de~$D$
par la projection de~$C$ sur~$D$ parallèlement à~$e_C$.
Sur $C$, on a donc $\dc v= \dc v_C+ a_C \dc\phi_C$.
Compte tenu de la construction de~$\omega=\theta f^*\beta$,
lorsqu'on calcule le produit extérieur $\dc v\wedge \omega$,
qu'on le contracte par le calibrage canonique~$\mu_C$
et qu'on le restreint à~$D$, il reste
\[  a_C \theta \langle  f^*\beta|_D, \mu_D\rangle. \]
On a donc
\[ \int_D \sum_{C\supset D} a_C \theta \langle f^*\beta, \mu_D\rangle = 0. \]
Comme $\theta$ et $\beta$ sont arbitraires, on en déduit
$ \sum_{C\supset D} a_C f_*\mu_D = 0$;
en simplifiant par le $(n-1)$-vecteur non nul $f_*\mu_D$, on a ainsi
prouvé l'égalité
\[ \sum_{C\supset D} \partial_{e_C} (v|_C) = 0. \]

Inversement, ces égalités, lorsque $D$ parcourt l'ensemble
des cellules de dimension~$n-1$ de~$\mathscr C$,
entraînent que $\int_X^\partial \dc v \wedge\omega = 0$,
et donc que $\langle [\ddc v],\omega\rangle=0$.

% \footnote{Je pense que cette égalité traduit exactement la condition
% pour que la fonction~$v$ soit harmonique, et que ça se généralise
% aux fonctions G-lisses.}
  
% et qu'on  il ne
% reste que deux termes:
% \[ \dc v\wedge\omega = \dc v_C \wedge \dc \phi_C \wedge \omega''_C
%  + a_C \dc \phi_C \wedge \omega_C. \]
% Lorsqu'on les contracte avec~$\mu_C=[e_D\wedge\mu_D,\eps_C]$, on obtient
% des $(n-1)$-formes sur~$C$ dont la restriction à~$D$ est égale à 
% \[ 
% \langle \dc\phi_C \wedge \omega''_C|_D, \mu_D\rangle
% + a_C \langle  \omega_C|_D, \mu_D\rangle. \]
% Observons que $\omega_C|_D = \omega|_D$.

Avec les notations précédentes,
$\mu_C=\mu_D\wedge e_C$ et $u_*(\mu_C)=u_*(\mu_D)\wedge u_*(e_C)$.
Le premier facteur $u_*(\mu_D)$ est indépendant de~$C$,
le second est le vecteur dont les coordonnées
sont les $a_C \phi_C(e_C)= \partial_{e_C}(v|_C)$, 
lorsque $v$ parcourt les coordonnées de~$u$.
On a donc
\[ \sum_{D\supset C} u_*(\mu_C) = u_*(\mu_D) \wedge \sum_{C\supset D} u_*(e_C)
= 0 \]
puisque les composantes du vecteur $\sum_C u_*(e_C)$ sont données
par $\sum_C \partial_{e_C}(v|_C)=0$.
\end{proof}

\begin{theo}\label{theo.harmonique-bord-nul}
Soit $u=(u_1,\dots,u_d)$ une suite finie de fonctions paralinéaires harmoniques.
Soit $P$ un sous-espace paralinéaire de~$\R^d$ contenant~$u(X)$
et soit $\alpha$ une forme lisse sur~$P$.
On a $[u^*\alpha]^\partial = 0$.
\end{theo}
\begin{proof}
Il s'agit de démontrer que $\langle [u^*\alpha]^\partial,\omega\rangle=0$
pour toute forme lisse~$\omega$ à support compact sur~$X$.
Considérons une partition de l'unité finie $(\rho_j)$ 
formée de fonctions lisses telle que $1=\sum\rho_j$
au voisinage du support de~$\omega$ et telle que pour tout~$j$,
$\omega$ soit lisse sur un voisinage du support de~$\rho_j$.
Puisque l'on a
\[ \langle [u^*\alpha]^\partial,\omega\rangle
= \sum_j \langle [u^*\alpha]^\partial,\rho_j\omega\rangle, \]
il suffit de traiter le cas où la forme-test considérée
s'écrit $\rho\omega$ où $\rho$ est une fonction lisse à support compact
et~$\omega$ est une forme tropicale.
Soit $f\colon X\to E$ une carte tropicale,
soit $P$ un sous-espace paralinéaire de~$E$ contenant~$f(X)$,
et soit $\beta$ une forme lisse sur~$P$ telle que $\omega=f^*\beta$.
On a
\[ \langle [u^*\alpha]^\partial, \rho\omega\rangle
= \int_{X}^\partial u^*\alpha \wedge \rho\omega
= \int_{X}^\partial \rho u^*\alpha \wedge f^*\beta
= \int_{X}^\partial \rho (u,f)^*\gamma \]
où $\gamma = p_1^*\alpha\wedge p_2^*\beta$ est une forme sur~$\R^n\times E$.
Comme $(u,f)$ est harmonique, la proposition précédente
entraîne la nullité recherchée.
\end{proof}

\begin{coro}[Condition d'harmonie] \label{coro.condition-harmonie}
Soit $X$ un espace tropical de niveau~$n$,
soit $u\colon X\to \R^d$ une application, harmonique.
Soit $\mathscr D$ une décomposition cellulaire de $\Pi_u$
adaptée au calibrage canonique~$\mu_{\Pi_u}$ et à~$\partial(X)$.
Toute $(n-1)$-cellule de~$\mathscr D$ qui n'est pas contenue
dans $u(\partial (X)\cap\Sigma_u))$ est harmonieuse.
\end{coro}
\begin{proof}
Soit $F$ une~$(n-1)$-cellule de~$\mathscr D$
qui n'est pas contenue dans $u(\partial (X)\cap\Sigma_u)$.
La discordance du calibrage de~$X$ le long de~$F$ relativement à~$u$
est un $(0,n)$-vecteur~$\delta_F$ de~$\R^d$; 
supposons par l'absurde qu'il n'est pas nul
et choisissons un multi-indice~$J$ de longueur~$n$ tel que $\langle \dc x_J,\delta_F\rangle\neq0$; choisissons aussi un multi-indice~$I$ de longueur~$n-1$
tel que la restriction à~$F$ de la $(n-1)$-forme $\di x_I$ ne soit pas nulle.

Soit $x$ un point de~$\mathring F$ qui n'appartient pas 
à $u(\partial(X))$.
Soit $\phi$ une fonction continue, non identiquement nulle,
et à support compact disjoint de~$u(\partial(X))$
et ne rencontrant pas d'autre face de dimension~$n-1$ que~$F$;
posons $\alpha=\phi\di x_I\wedge\dc x_J$ et $\omega=u^*\alpha$.
Par construction, $\langle\alpha, \mu_{\Pi_u}\rangle$
est une forme-volume non nulle.

D'après le théorème~\ref{theo.harmonique-bord-nul},
le support de la forme-volume~$\tilde\omega$ 
est contenu dans~$\partial(X)$.
Il est aussi disjoint
de~$\partial(X)$, par construction. On a donc $\tilde\omega=0$.
D'après la proposition~\ref{prop.formes-volume-pl-f*},
on a donc 
$\langle\alpha,\mu_{\Pi_u}\rangle =u_*(\tilde \omega)=0$,
ce qui contredit la construction de~$\alpha$.
\end{proof}

\subsection{}
On dit qu'une G-forme~$\omega$ sur~$X$ est \emph{harmoniquement tropicalisable}
s'il existe une application paralinéaire harmonique $u\colon X\to\R^d$
et une forme lisse~$\alpha$ au voisinage d'un polyèdre contenant~$u(X)$
telle que $\omega=u^*\alpha$. Elles forment une sous-algèbre
différentielle graduée de~$\mathscr A_{X^G}(X)$, invariante par~$\mathrm J$.
\index{G-forme!harmoniquement tropicalisable}

On définit alors le sous-faisceau $\mathscr A^{p,q}_{X,\harm}$
du faisceau $\pi_*\mathscr A^{p,q}_{X^\groth}$
(déduit du G-faisceau $\mathscr A^{p,q}_{X_G}$
par restriction à la catégorie des ouverts de~$X$) constitué
des formes qui sont localement harmoniquement tropicalisables.
C'est un sous-faisceau différentiel gradué de~$\pi_*\mathscr A_{X^G}$,
invariant par~$\mathrm J$.

\begin{coro}\label{coro.harm-diff}
L'inclusion $\mathscr A_{X,\harm} \to \mathscr D_{X}$
des G-formes localement harmoniquement tropicalisables 
dans les courants commute aux opérateurs différentiels~$\di$ et~$\dc$.
\end{coro}
\begin{proof}
Soit $\omega$ une G-forme localement harmoniquement tropicalisable.
Il faut démontrer les égalités $\di {[\omega]}=[\di\omega]$
et $\dc{[\omega]}=[\dc\omega]$. Via~$\mathrm J$, il suffit de traiter
la première relation.
L'assertion est locale sur~$X$, ce qui ramène au cas où $\omega$
est harmoniquement tropicalisable. Alors,
$\di{[\omega]}-[\di\omega]=[\omega]^\partial$, qui est nul
d'après le théorème~\ref{theo.harmonique-bord-nul}.
\end{proof}

\chapter{Théorie de Bedford--Taylor}

\def\Harm{\operatorname{Harm}\nolimits}
\def\PSH{\operatorname{Psh}\nolimits}
\def\LPSH{\operatorname{L^1Psh}\nolimits}
\section{Fonctions plurisousharmoniques}

Soit $X$ un espace G-tropical affiné.

\begin{defi}\label{defi.psh}
On note $\PSH_X$ le plus petit sous-faisceau du faisceau des
fonctions sur~$X$, à valeurs dans~$\R\cup\{-\infty\}$, 
tel que pour tout ouvert~$U$ de~$X$,
l'ensemble $\PSH_X(U)$ vérifie les propriétés:
\begin{enumerate}\def\theenumi{\roman{enumi}}\def\labelenumi{\theenumi)}
\item Il contient les fonctions lisses~$u$ sur~$U$ telles que $\ddc u\geq 0$;
\item Il est stable par limite décroissante de suites.
\end{enumerate}
Ses sections sont appelées des \emph{fonctions plurisousharmoniques (psh)}.
\index{fonction plurisousharmonique}\index{psh}
\end{defi}

\begin{lemm}\label{lemm.psh-cone}
Pour tout ouvert~$U$ de~$X$, l'ensemble des fonctions psh sur~$U$
est un cône convexe formé de fonctions semi-continues supérieurement;
il contient les fonctions constantes.
\end{lemm}
La démonstration est identique à celle du lemme~\ref{lemm.convexe-cone}.
\begin{proof}
Les fonctions constantes sont lisses et psh.

Les fonctions semi-continues supérieurement~$u$ telles 
que $\lambda u$ soit psh, 
pour tout nombre réel positif~$\lambda$, forment un sous-faisceau
de~$\PSH_X$ qui vérifie les conditions~(i) et (ii),
donc est égal à~$\PSH_X$. 
Cela prouve que pour tout ouvert~$U$ de~$X$,
l'ensemble $\PSH_X(U)$ est un sous-cône
de l'ensemble des fonctions semi-continues supérieurement sur~$U$.

Démontrons que si $u$ et $v$ sont des fonctions psh sur un ouvert~$U$
de~$X$, il en est de même de $u+v$.

Supposons d'abord que $u$ soit lisse.
Les fonctions~$f$ telles que $f+u$ soit psh forment un sous-faisceau
de~$\PSH_U$  qui contient les fonctions lisses psh et est stable
par limite décroissante; il contient donc toutes les fonctions psh sur~$U$.
En particulier, $u+v$ est psh lorsque $u$ est lisse.

Dans le cas général, les fonctions~$f$ telles que $f+v$ 
soit psh forme un sous-faisceau de~$\PSH_U$ qui contient les fonctions 
lisses psh d'après le premier cas traité et est stable par limite décroissante;
il contient donc toutes les fonctions psh sur~$U$;
par suite, $u+v$ est psh.
\end{proof}

\begin{rema}
Cette définition est inspirée par le théorème
de~\cite{FornaessNarasimhan-1980} en analyse complexe
qui affirme que dans ce contexte,
une fonction psh sur un espace de Stein
est limite d'une suite décroissante 
de  fonctions psh lisses.

Dans le présent travail, nous considérerons surtout 
des fonctions psh continues.

Par ailleurs, rappelons que si 
une suite décroissante de fonctions continues converge simplement 
vers une fonction continue,
elle converge même localement uniformément (théorème de Dini), 
puisqu'un espace tropical est localement compact.
\end{rema}

\begin{lemm}\label{lemm.PSH-ordinal}
Pour tout ordinal~$\alpha$, on définit par récurrence transfinie
un sous-faisceau $\PSH_{X,\alpha}$ du faisceau des
fonctions sur~$X$ par les conditions suivantes:
\begin{enumerate}
\item On définit $\PSH_{X,0}$ comme le faisceau des fonctions lisses~$u$
telles que $\ddc u\geq 0$;
\item Si $\alpha$ est un ordinal, on définit $\PSH_{X,\alpha+1}$
comme le sous-faisceau des fonctions sur~$X$ qui sont localement
limites décroissantes de fonctions de~$\PSH_{X,\alpha}$;
\item Si $\alpha$ est un ordinal limite, on définit $\PSH_{X,\alpha}$
comme le sous-faisceau des fonctions sur~$X$ qui appartiennent localement
à un $\PSH_{X,\beta}$ pour $\beta<\alpha$.
\end{enumerate}
La famille ordinale $(\PSH_{X,\alpha})$ est une famille croissante
de sous-faisceaux de~$\PSH_X$ qui vaut~$\PSH_X$ pour $\alpha$ assez grand.
\end{lemm}
\begin{proof}
Par récurrence transfinie, il résulte de la définition de~$\PSH_X$,
que $\PSH_{X,\alpha}\subset \PSH_X$ pour tout ordinal~$\alpha$.
La construction montre que cette famille de sous-faisceau est croissante.
Elle est nécessairement stationnaire, par un argument de cardinal.
Soit $\alpha$ un ordinal à partir duquel la famille est constante
et 
démontrons que $\PSH_X\subset \PSH_{X,\alpha}$.
Il suffit pour cela de démontrer que $\PSH_{X,\alpha}$ vérifie les
deux conditions de la définition de~$\PSH_X$.
Il est évident que $\PSH_{X,\alpha}$ contient les fonctions lisses~$u$
telles que $\ddc u\geq 0$, qui appartiennent au sous-faisceau~$\PSH_{X,0}$.
Soit maintenant~$U$ un ouvert de~$X$ et $(u_n)$ une suite
décroissante de fonctions de~$\PSH_{X,\alpha}(U)$ qui converge simplement
vers une fonction localement intégrable~$u$ et démontrons
que $u\in\PSH_{X,\alpha}(U)$. 
Par construction, on a $u\in\PSH_{X,\alpha+1}(U)$;
par définition de~$\alpha$, on a donc $u\in \PSH_{X,\alpha}(U)$.
\end{proof}

\begin{prop}\label{prop.uof-psh}
Soit $f\colon Y\to X$ un morphisme d'espaces G-tropicaux affinés.
Pour toute fonction psh~$u$ sur~$X$, la fonction $u\circ f$ sur~$Y$ est psh.
Plus précisément, pour tout ordinal~$\alpha$
tel que $u$ appartient à~$\PSH_{X,\alpha}$,
la fonction $u\circ f$ appartient à~$\PSH_{Y,\alpha}$.
\end{prop}
\begin{proof}
% Pour tout ouvert~$U$ de~$X$, notons $\mathscr P(U)$ l'ensemble
% des fonctions~$u$ sur~$U$ telles que $u\circ  f$ soit psh sur~$f^{-1}(U)$.
% Cela définit un sous-faisceau du faisceau~$\PSH_X$.
% La proposition~\ref{prop.psh-courant}
% entraîne qu'il contient les fonctions lisses psh car, 
% si $u$ est lisse, $\ddc(u\circ f)=f^* \ddc(u)$. 
% Puisqu'il est également stable par limite décroissante,
% il contient le faisceau des fonctions psh. 
% 
Compte tenu du lemme~\ref{lemm.PSH-ordinal}, il suffit
de démontrer la seconde assertion. On procède par récurrence sur~$\alpha$.
Le cas $\alpha=0$ résulte de ce que pour toute fonction psh lisse~$u$, 
la fonction~$u\circ f$ est psh lisse.
L'assertion découle alors de la construction par récurrence
des faisceaux~$\PSH_{X,\alpha}$ et $\PSH_{Y,\alpha}$.
\end{proof}

\begin{rema}
En géométrie complexe, on pourrait définir une hiérarchie ordinale 
analogue des faisceaux de fonctions psh, mais leur théorie
établit que cette hiérarchie est stationnaire à partir du cran~1.
Dans notre cadre, 
nous ne savons même pas si la hiérarchie ordinale des espaces
de fonctions psh est stationnaire.
Cependant, tous les exemples de fonctions~psh que nous connaissons 
appartiennent au premier cran de cette hiérarchie.
\end{rema}

\begin{defi}\label{defi.Cpsh}
On note $\CPSH_X$ le plus petit sous-faisceau du faisceau~$\mathscr C_X$
des fonctions continues sur~$X$ tel que 
pour tout ouvert~$U$ de~$X$, 
l'ensemble $\CPSH_X(U)$ vérifie les propriétés:
\begin{enumerate}\def\theenumi{\roman{enumi}}\def\labelenumi{\theenumi)}
\item Il contient les fonctions lisses~$u$ sur~$U$ telles que $\ddc u\geq 0$;
\item Il est stable par limite uniforme.
\end{enumerate}
\end{defi}
On note $\CPSH_X$ ce sous-faisceau;
ses sections sont appelées des \emph{fonctions continues plurisousharmoniques (cpsh)}.\index{fonction continue plurisousharmonique}\index{cpsh}

\begin{lemm}\label{lemm.CPSH-ordinal}
Pour tout ordinal~$\alpha$, on définit par récurrence transfinie
un sous-faisceau $\CPSH_{X,\alpha}$ par les conditions suivantes:
\begin{enumerate}
\item On définit $\CPSH_{X,0}$ comme le faisceau des fonctions lisses~$u$
telles que $\ddc u\geq 0$;
\item Si $\alpha$ est un ordinal, on définit $\CPSH_{X,\alpha+1}$
comme le sous-faisceau des fonctions sur~$X$ qui sont localement
limites uniforme de fonctions de~$\CPSH_{X,\alpha}$;
\item Si $\alpha$ est un ordinal limite, on définit $\CPSH_{X,\alpha}$
comme le sous-faisceau des fonctions sur~$X$ qui appartiennent localement
à un $\CPSH_{X,\beta}$ pour $\beta<\alpha$.
\end{enumerate}
La famille ordinale $(\CPSH_{X,\alpha})$ est une famille croissante
de sous-faisceaux de~$\CPSH_X$ qui vaut~$\CPSH_X$ pour $\alpha$ assez grand.
\end{lemm}
\begin{proof}
La démonstration est analogue à celle du lemme~\ref{lemm.PSH-ordinal}
en remplaçant partout « limite décroissante » par « limite uniforme ».
\end{proof}

\begin{prop}\label{prop.unif-psh}
Soit $(f_n)$ une suite de fonctions de~$X$ dans~$\R$
qui converge localement uniformément vers une fonction~$f$.
Si les~$f_n$ sont psh, alors $f$ est psh.
\end{prop}
\begin{proof}
Comme les fonctions constantes sont psh,
l'assertion découle du lemme suivant.
\end{proof}

\begin{lemm}\label{lemm.unif-decr}
Soit $X$ un ensemble et soit $(f_n)$ une suite de fonctions de~$X$ dans~$\R$
qui converge uniformément vers une fonction~$f$.
La suite $(f_n+2 \norm{f-f_n}_\infty)$ converge uniformément
vers~$f$ et admet une sous-suite qui est décroissante.
\end{lemm}
\begin{proof}
Puisque $\norm{f-f_n}_\infty$ tend vers~$0$, il est clair
que la suite $(f_n+2 \norm{f-f_n}_\infty)$ converge uniformément vers~$f$.
L'assertion est évidente s'il existe
une infinité d'entiers~$n$ tels que $\norm{f-f_n}_\infty=0$.
On peut donc supposer que $\norm{f-f_n}>0$ pour tout~$n$.
Soit $n$ un entier.
Puisque $(f_n)$ converge uniformément vers~$f$,
il existe $m>n$ tel que $\norm{f_m-f}_\infty < \norm{f-f_n}_\infty/3$.
Des inégalités 
$f_m\leq f+\norm{f_m-f}_\infty$ et $f-\norm{f_n-f}_\infty\leq f_n$,
on déduit que 
\[ f_m + 2 \norm{f_m-f}_\infty \leq f+3\norm{f_m-f}_\infty
\leq f_n + 3\norm{f_m-f}_\infty+\norm{f_n-f}_\infty
\leq f_n + 2 \norm{f_n-f}_\infty. \]
Le lemme s'en déduit.
\end{proof}

\begin{prop}\label{prop.CPSH=C-cap-PSH}
On a $\CPSH_X = \mathscr C_X\cap \PSH_X$ : les fonctions continues 
plurisousharmoniques sont les fonctions continues (à valeurs réelles)
qui sont  plurisousharmoniques.
Plus précisément, pour tout ordinal~$\alpha$,
on a $\CPSH_{X,\alpha}=\mathscr C_X\cap \PSH_{X,\alpha}$.
\end{prop}
\begin{proof}
Compte tenu des lemmes~\ref{lemm.PSH-ordinal}
et~\ref{lemm.CPSH-ordinal}, il suffit de vérifier 
la seconde assertion.
Prouvons cette égalité par récurrence transfinie. 
Les cas de l'ordinal~$0$ et d'un ordinal limite sont évidents.
Il suffit donc de prouver que pour tout ordinal~$\alpha$,
une fonction continue~$u$ définie sur un ouvert~$U$
est localement limite décroissante
de fonctions de~$\CPSH_{X,\alpha}$ si et seulement
si elle est localement limite uniforme de telles fonctions.

Soit $U$ un ouvert de~$X$
et soit $(u_n)$ une suite décroissante de fonctions de~$\CPSH_{X,\alpha}(U)$
qui converge simplement vers une fonction continue~$u$ sur~$U$.
Le théorème de Dini assure que la convergence est uniforme sur toute partie
compacte de~$U$.
Comme $U$ est localement compact, la convergence est
localement uniforme, si bien que la fonction~$u$ 
appartient à~$\CPSH_{X,\alpha+1}$.

Soit $U$ un ouvert de~$X$ et soit $u$ une fonction continue sur~$U$
qui est localement limite décroissante d'une suite
de fonctions de~$\CPSH_{X,\alpha}$.
Soit $(U_i)$ un recouvrement ouvert de~$U$
tel que $u|_{U_i}$ soit limite décroissante d'une suite
de fonctions de~$\CPSH_{X,\alpha}(U_i)$.
D'après ce qui précède, $u|_{U_i}$ appartient à~$\CPSH_{X,\alpha+1}(U_i)$.
Par la propriété de faisceau, $u$ appartient à~$\CPSH_{X,\alpha+1}(U)$.

Soit $U$ un ouvert de~$X$
et soit $(u_n)$ une suite de fonctions de~$\CPSH_{X,\alpha}(U)$
qui converge uniformément vers une fonction continue~$u$ sur~$U$.
D'après le lemme précédent, il existe une sous-suite de la suite
$(u_n+2\norm{u_n-u})$ qui converge uniformément, et en décroissant,
vers~$u$. Elles appartiennent à~$\CPSH_{X,\alpha}$,
donc à $\PSH_{X,\alpha}$, par l'hypothèse de récurrence,
si bien que $u$ appartient à~$\mathscr C_X\cap \PSH_{X,\alpha+1}$.

Soit $U$ un ouvert de~$X$ et soit $u$ une fonction continue sur~$U$
qui est localement limite uniforme d'une suite
de fonctions de~$\CPSH_{X,\alpha}$.
D'après ce qui précède, $u$ appartient localement à~$\mathscr C_X \cap \PSH_{X,\alpha+1}$,
donc est une section de ce faisceau sur l'ouvert~$U$.
\end{proof}

\begin{prop}\label{prop.psh-L1loc}
Le faisceau des fonctions psh et localement intégrables sur~$X$
est le plus petit sous-faisceau~$\LPSH_X$ du faisceau
des fonctions localement intégrables sur~$X$ tel que,
pour tout ouvert~$U$ de~$X$, 
l'ensemble $\LPSH_X(U)$ vérifie les propriétés:
\begin{enumerate}\def\theenumi{\roman{enumi}}\def\labelenumi{\theenumi)}
\item Il contient les fonctions lisses~$u$ sur~$U$ telles que $\ddc u\geq 0$;
\item Si $(u_n)$ est une suite décroissante d'éléments
de~$\LPSH_X(U)$ qui converge simplement vers une fonction
localement intégrable~$u$ sur~$U$, alors $u$ appartient à~$\LPSH_X(U)$.
\end{enumerate}
\end{prop}
\begin{proof}
On définit une suite $\LPSH_{X,\alpha}$ de sous-faisceaux
du faisceau des fonctions localement intégrables sur~$X$
de manière analogue à ce qui a été fait pour~$\PSH_{X,\alpha}$
et on démontre par récurrence transfinie que
$\LPSH_{X,\alpha}=\mathrm L^1_\loc\cap \PSH_{X,\alpha}$.
Seul le cas d'un ordinal successeur est non trivial;
supposant 
$\LPSH_{X,\alpha}=\mathrm L^1_\loc\cap \PSH_{X,\alpha}$,
démontrons que cette égalité persiste pour~$\alpha+1$.
L'inclusion
$\LPSH_{X,\alpha+1}\subset \mathrm L^1_\loc\cap \PSH_{X,\alpha+1}$
est évidente.

Inversement, soit $u$ une fonction de $\PSH_{X,\alpha+1}(U)$
qui est localement intégrable; il s'agit de démontrer
qu'elle appartient à~$\LPSH_{X,\alpha+1}(U)$.
Cela se vérifie localement sur~$U$
ce qui permet de supposer que $u$ est limite
décroissante d'une suite~$(u_n)$ de fonctions de~$\PSH_{X,\alpha}(U)$.
Pour tout~$n$, $u_n$ est semicontinue supérieurement, donc localement
majorée, et est minorée par la fonction localement intégrable~$u$,
si bien que $u_n$ est localement intégrable.
On a donc $u_n\in\mathrm L^1_\loc\PSH_{X,\alpha}=\LPSH_{X,\alpha}$ 
par l'hypothèse de récurrence. Ainsi, $u\in \LPSH_{X,\alpha+1}$.
\end{proof}

\begin{defi}
On dit qu'une fonction psh sur un ouvert de~$X$ est localement approchable
si c'est une section du faisceau~$\PSH_{X,1}$.
\end{defi}

\section{Convexité tropicale}

Soit $X$ un espace G-tropical affiné.

\begin{prop}\label{exem.psh-approchable}
Soit $u_1,\dots,u_n$ des fonctions psh sur~$X$.
Soit $\Omega$ un ouvert convexe de~$\R^n$ et 
soit $g\colon\Omega\to\R$ une fonction lisse, convexe, croissante par
rapport à chaque variable.
%  plus généralement la limite d'une suite décroissante de telles fonctions.
On suppose que $(u_1(x),\dots,u_n(x))\in\Omega$ pour tout $x\in X$.
Alors, la fonction  
$u\colon x\mapsto g(u_1(x),\dots,u_n(x))$ sur~$X$ est psh.

Plus précisément, si $\alpha$ est un ordinal
tel que pour tout~$i$, $u_i$ appartient~$\PSH_{X,\alpha}$,
la fonction~$u$ appartient à~$\PSH_{X,\alpha}$.
\end{prop}
\begin{proof}
On raisonne par récurrence sur $\alpha$.

Supposons d'abord $\alpha=0$,
c'est-à-dire que toutes les~$u_j$ sont lisses.
Alors $u$ est lisse et l'on a 
\[ \ddc u = \sum_{i,j} \frac{\partial^2 g}{\partial y_i\partial y_j}
 (u_1,\dots,u_n) \di u_i\wedge \dc u_j
+ \sum_{j} \frac{\partial g}{\partial y_j}\circ u\cdot \ddc u_j. \]
Puisque~$g$ est convexe, le premier terme
est une forme positive, en vertu de l'exemple~\ref{exem.forme-positive}.
Puisque $g$ est croissante par rapport à chaque variable,
$({\partial g}/\partial y_j)\circ u$ est une fonction positive sur~$X$;
par hypothèse, les $(1,1)$-formes $\ddc u_j$ sont positives;
par suite, le second terme est également positif. Cela démontre
que $\ddc u$ est une $(1,1)$-forme lisse positive,
si bien que la fonction~$u$ est donc psh.
Elle appartient donc à~$\PSH_{X,0}$.

Supposons maintenant $\alpha>0$.

Supposons que $\alpha$ soit un ordinal limite.
Soit $x$ un point de~$X$ ; il existe un voisinage ouvert~$U$ de~$x$ 
et un ordinal~$\beta<\alpha$ tels que
$u_k|_U \in \PSH_{X,\beta}(U)$ pour tout~$k$.
L'hypothèse de récurrence entraîne que
$g(u_1,\dots,u_n)|_U=g(u_1|_U,\dots,u_n|_U)$ appartient à
$\PSH_{X,\beta}$, donc à~$\PSH_{X,\alpha}$.
Il s'ensuit que $u$ appartient à~$\PSH_{X,\alpha}$.

Supposons maintenant que~$\alpha$ soit le successeur d'un ordinal~$\beta$.
Soit $x$ un point de~$X$ ; 
il existe un voisinage ouvert~$U$ de~$x$ et,
pour tout $k\in\{1,\dots,n\}$, 
une suite décroissante $(v_{k,m})$ de fonctions de~$\PSH_{X,\beta}(U)$
qui converge vers~$u_k|_U$.
Commme $g$ est continue et croissante par rapport à chaque variable,
la suite $(g(v_{1,m}|_U,\dots,u_{n,m}|_U))_m$
converge en décroissant vers $g(u_1,\dots,u_n)|_U$.
Par l'hypothèse de récurrence, ces fonctions appartiennent à~$\PSH_{X,\beta}$.
Par suite, $g(u_1,\dots,u_{n})$ appartient à $\PSH_{X,\alpha}$.
\end{proof}

\begin{coro}
On reprend les hypothèses de la proposition mais on suppose seulement
que $g$~est la limite d'une suite décroissante~$(g_k)$
de fonctions lisses, convexes, croissantes en chaque variable.
Si $\alpha$ est un ordinal
tel que pour tout~$i$, $u_i$ appartient à~$\PSH_{X,\alpha}$,
la fonction~$u$ appartient à~$\PSH_{X,\alpha+1}$.
Si la suite $(g_k)$ converge uniformémement vers~$g$,
alors $u$ appartient à~$\PSH_{X,\sup(\alpha,1)}$.
\end{coro}
\begin{proof}
La suite $(g_k(u_1,\dots,u_n))$ est décroissante
et converge simplement vers $g(u_1,\dots,u_n)$.
D'après la proposition, ces fonctions sont dans~$\PSH_{X,\alpha}$.
Par suite, $g(u_1,\dots,u_n)$ est dans~$\PSH_{X,\alpha+1}$.

Faisons l'hypothèse supplémentaire que la suite~$(g_k)$
converge uniformément vers~$g$ et démontrons
que $g(u_1,\dots,u_n)$ est dans $\PSH_{X,\sup(1,\alpha)}$.
On raisonne encore par récurrence sur~$\alpha$.
Le cas $\alpha=0$ est traité par la première partie;
supposons alors $\alpha>0$.

Supposons que $\alpha$ soit un ordinal limite.
Soit $x$ un point de~$X$ ; il existe un voisinage ouvert~$U$ de~$x$ 
et un ordinal~$\beta<\alpha$ tels que
$u_k|_U \in \PSH_{X,\beta}(U)$ pour tout~$k$.
La première partie de la démonstration affirme que
$g(u_1,\dots,u_n)|_U=g(u_1|_U,\dots,u_n|_U)$ appartient à
$\PSH_{X,\beta+1}$, donc à~$\PSH_{X,\alpha}$ puisque $\beta+1<\alpha$.

Supposons maintenant que~$\alpha$ soit le successeur d'un ordinal~$\beta$.
Soit $x$ un point de~$X$ ; 
il existe un voisinage ouvert~$U$ de~$x$ et,
pour tout $k\in\{1,\dots,n\}$, 
une suite décroissante $(v_{k,m})$ de fonctions de~$\PSH_{X,\beta}(U)$
qui converge vers~$u_k|_U$.
Pour tout entier~$m$, la fonction 
$g_m(v_{1,m}|_U,\dots,v_{n,m}|_U)$
appartient à~$\PSH_{X,\beta}$, car $g_m$ est lisse. 
On a 
\[ g_{m+1}(v_{1,m+1}|_U,\dots,v_{n,m+1}|_U)
\leq g_m (v_{1,m+1}|_U,\dots,v_{n,m+1}|_U)
\leq g_m (v_{1,m1}|_U,\dots,v_{n,m}|_U), \]
si bien que la suite $(g_m (v_{1,m1}|_U,\dots,v_{n,m}|_U))$
est décroissante.
Comme $(g_m)$ converge uniformément vers~$g$
et comme pour tout~$k$, la suite~$(v_{k,m})$  converge simplement 
vers~$u_k|U$, cette suite converge simplement
vers $g(u_1|_U,\dots,u_n|_U)$.
Cela démontre
que $g(u_1,\dots,u_n)|_U$ appartient à~$\PSH_{X,\beta+1}$,
c'est-à-dire à $\PSH_{X,\alpha}$.
\end{proof}

\begin{coro}
Soit $\alpha$ un ordinal et 
soit $u_1,\dots,u_n$ des fonctions psh sur~$X$,
appartenant à~$\PSH_{X,\alpha}$.
La fonction $x\mapsto \max(u_1(x),\dots,u_n(x))$ 
appartient à~$\PSH_{X,\sup(\alpha,1)}$,
et est en particulier~psh.
\end{coro}
\begin{proof}
D'après le lemme~\ref{lemm.approx-max},
la fonction $t\mapsto \max(t_1,\dots,t_n)$ sur~$\R^n$
est limite uniforme de la famille
de fonctions $t\mapsto \log(e^{at_1}+\dots+e^{at_m})/a$,
lorsque $a\to+\infty$. 
Ces fonctions sont lisses, convexes et croissantes en chaque variable.
L'assertion découle alors du corollaire précédent.
\end{proof}

\begin{coro}
Soit $\Omega$ un ouvert convexe de~$\R^n$ 
et soit~$g\colon\Omega\to\R$ une fonction convexe, croissante par rapport
à chaque variable. Pour toute suite $(u_1,\dots,u_n)$
de fonctions continues psh sur~$X$, 
la fonction $g(u_1,\dots,u_n)$ est continue et psh.
Plus précisément, si $\alpha$ est un ordinal
tel que $u_1,\dots,u_n$ appartiennent à~$\CPSH_{X,\alpha}$,
on a $g(u_1,\dots,u_n)\in \CPSH_{X,\sup(1,\alpha)}$.
\end{coro}
\begin{proof}
Comme $g$ est convexe sur l'ouvert~$\Omega$, elle est continue;
puisque les~$u_i$ sont continues, la fonction $g(u_1,\dots,u_n)$ est continue.
D'après la proposition~\ref{prop.CPSH=C-cap-PSH},
il suffit de démontrer qu'elle appartient à~$\PSH_{X,\sup(1,\alpha)}$.
D'après cette même proposition, les fonctions~$u_k$
appartiennent à~$\PSH_{X,\alpha}$.

Soit $x\in X$ et soit $\Omega'$ un voisinage ouvert convexe
relativement compact de $(u_1(x),\dots,u_n(x))$ dans~$\Omega$.
Comme les~$u_i$ sont continues,
la partie~$U$ de~$X$ formée 
des $x\in X$ tels que $(u_1(x),\dots,u_n(x))\in\Omega'$
est ouverte. 
Par convolution, $g|_{\Omega'}$ est limite uniforme d'une
suite $(g_k)$ de fonctions lisses qui sont convexes et croissantes
en chaque variable.
D'après le corollaire précédent,
$g(u_1,\dots,u_n)|_U$ appartient à~$\PSH_{X,\sup(1,\alpha)}$.
Cela conclut la démonstration.
\end{proof}
\begin{prop}\label{prop.convexe-psh}
Soit $X$ un espace G-tropical affiné.
Soit $f\colon X\to E$ une carte tropicale, 
soit $P$ un sous-espace paralinéaire de~$E$ contenant $f(X)$ 
et soit $v$ une fonction convexe continue sur~$P$
(définition~\ref{defi.convexe-pl}).
Alors la fonction $f^*v$ sur~$X$ est continue et psh.
\end{prop}
\begin{proof}
Le sous-faisceau de~$\mathscr C_P$ formé des fonctions~$v$
telles que $f^*v$ soit psh contient les fonctions lisses telles
que $\ddc v\geq 0$ et est stable par limite uniforme.
Il contient donc le faisceau des fonctions convexes sur~$P$.
Cela démontre la proposition.
\end{proof}

\begin{defi}\label{defi.trop-convexe}
Soit $X$ un espace G-tropical affiné.
On dit qu'une fonction~$u$ sur~$X$
est \emph{tropicalement convexe}
% \label{defi.tropicalement-convexe}
si pour tout point~$x$ de~$X$, il existe
un voisinage~$U$ de~$x$, 
une carte tropicale $f\colon U\to E$ sur~$U$,
une partie paralinéaire~$P$ de~$T_\trop$ qui contient $f(U)$
et une fonction convexe convexe~$v$ sur~$P$ tel que $u|_U=f^*v$.
\index{fonction tropicalement convexe}
\end{defi}

Une fonction tropicalement convexe est psh.

\section{Fonctions psh et courants positifs}

Soit $X$ un espace tropical de niveau~$n$.
On suppose que $X$ est sans bord;
en particulier, $X$ est localement différentiellement séparé,
très riche et décent.

\begin{prop}\label{prop.psh-courant}
\begin{enumerate}\def\theenumi{\alph{enumi}}\def\labelenumi{\theenumi)}
Soit $u$ une fonction sur~$X$ à valeurs dans~$\R\cup\{-\infty\}$,
localement intégrable.

\item
Si $u$ est plurisousharmonique, le courant $\ddc[u]$ est positif.

\item
Si $u$ est lisse, elle est plurisousharmonique si et seulement si
le courant $\ddc[u]$ est positif.
\end{enumerate}
\end{prop}
\begin{proof}
Pour tout~$U$, notons $\mathscr P(U)$ l'ensemble des fonctions
localement intégrables~$u$ sur~$U$ telles que le courant~$\ddc [u]$ sur~$U$
soit positif. Il est immédiat que $\mathscr P$ est un sous-faisceau
du faisceau des fonctions localement intégrables (à valeurs dans~$\R\cup\{-\infty\}$)
qui contient les fonctions lisses psh.
Vérifions qu'il est également stable par limite décroissante.
Soit $(u_n)$ une suite décroissante de fonctions localement intégrables
sur un ouvert~$U$ de~$X$ qui converge simplement 
vers une fonction~$u$ localement intégrable sur~$U$.
Soit $\omega$ une $(n,n)$-forme lisse et à support compact sur~$U$.
Si $\omega$ est positive, le théorème de convergence 
monotone entraîne que la suite $(\int u_n\omega)$ converge
vers $\int u\omega$. Comme $\omega$ est différence de deux formes
positives à support compact, cette convergence est encore vérifiée.
Cela démontre que la suite de courants $([u_n])$ 
converge vers le courant~$[u]$.
Si de plus $\ddc[u_n]\geq 0$ pour tout~$n$,
alors $\ddc[u]\geq 0$ par passage à la limite.
Par conséquent, $\mathscr P$ contient~$\CPSH_X$.
Cela démontre l'assertion~\emph a).

Supposons maintenant que $u$ est lisse.
Si $u$ est plurisousharmonique, alors $\ddc[u]$ est positif d'après~\emph a).
Supposons inversement que le courant~$\ddc[u]$ soit positif.
Comme c'est 
le courant associé à la forme lisse~$\ddc u$,
cette forme est positive, 
d'après le corollaire~\ref{coro.positivite-courant-forme}.
La fonction~$u$ appartient donc à~$\CPSH_X(U)$ par définition de ce cône.
\end{proof}
 
\begin{prop}
Soit $X$ l'espace tropical de niveau~$n$ associé à~$\R^n$.
Soit $U$ un ouvert convexe de~$\R^n$, soit $u$ une fonction sur~$U$
à valeurs dans~$\R\cup\{-\infty\}$ qui n'est pas identiquement~$-\infty$.
Les assertions suivantes sont équivalentes:
\begin{enumerate}\def\theenumi{\roman{enumi}}\def\labelenumi{(\theenumi)}
\item La fonction~$u$ appartient à~$\PSH_X$;
\item La fonction~$u$ est à valeurs finies et est convexe;
\item La fonction~$u$ est localement intégrable et vérifie~$\ddc [u]\geq0$;
\item La fonction~$u$ appartient à~$\CPSH_{X,1}$.
\end{enumerate}
En particulier, les faisceaux sur~$X$
des fonctions psh, des fonctions continues psh et
des fonctions localement convexes coïncident.
\end{prop}
\begin{proof}
\def\Conv{\operatorname{Conv}\nolimits}
Soit $U$ un ouvert de~$X$ et soit $u$ une fonction lisse sur~$U$
telle que $\ddc u\geq 0$. La hessienne de~$u$ est donc une forme
quadratique positive en tout point. Si $U$ est convexe,
on en déduit que $u$ est une fonction convexe sur~$U$.

(i)$\Rightarrow$(ii).
Soit $\Conv_X^*$ le sous-faisceau de~$\mathscr C_X$ constitué des fonctions
qui, localement, sont soit finies et localement convexes, 
soit identiquement $-\infty$. D'après ce qui précède,
il contient les fonctions lisses à $\ddc$ positif. 
Par ailleurs, soit $U$ un ouvert convexe de~$X$
et soit $(u_k)$ une suite de fonctions finies convexes sur~$U$
qui converge simplement vers une fonction~$u$ sur~$U$.
Démontrons que $u$ est soit finie et convexe, soit identiquement~$-\infty$.
On peut supposer que les~$u_k$ sont toutes finies et convexes.

Supposons qu'il existe $a\in U$ tel que $u(a)=-\infty$
et prouvons que $u$ est identiquement égale à~$-\infty$ sur~$U$
Soit $b\in U$; pour tout $t\in[0;1\mathclose[$,
on a $u_k((1-t)a+tb) \leq (1-t)u_k(a)+t u_k(b)$;
lorsqu'on passe à la limite, on obtient $u((1-t)a+tb)=-\infty$.
Ainsi, $u$ est $-\infty$ sur le segment semi-ouvert~$[a;b\mathclose[$.
Il en résulte que $u$ est identiquement~$-\infty$ sur~$U$.

Sinon, $u$ est finie, et en passant à la limite dans les inégalités
de convexité pour les~$u_k$, on voit que $u$ est convexe.

Il en résulte que le faisceau des fonctions~psh 
est contenu dans le faisceau~$\Conv_X^*$.

Démontrons~(iv) sous l'une des hypothèses (ii) ou~(iii).
Les fonctions~$u$ de~$\Conv_X^*$ qui ne sont pas identiquement~$-\infty$
sont finies et continues sur~$U$, donc localement intégrables.

L'assertion est locale; on peut donc supposer que $U$
est une boule ouverte~$B(a;r)$ et qu'il existe~$s>r$
tel que $\ddc [u]$ soit positif sur~$B(a,s)$.
Soit $\rho$ une fonction lisse, positive, à support compact,
d'intégrale~$1$; pour tout $\eps>0$, 
posons $\rho_\eps(x)=\eps^{-n}\rho(x/\eps)$.
Les convolées $u_\eps=u*\rho_\eps$ sont des fonctions lisses
qui convergent uniformément vers~$u$ sur $B(0;r)$ 
lorsque $\eps\to0$.
Démontrons que $u_\eps$ est convexe.
C'est le cas, par convolution, si $u$ est convexe.
Sinon, on a $\ddc[u]\geq 0$ par hypothèse,
si bien que $\ddc(u_\eps)=\ddc(u*\rho_\eps) =\ddc[u]*\rho_\eps\geq 0$,
et cela entraîne que $u_\eps$ est convexe.
Il en résulte que $u$ est limite uniforme sur~$B(a,r)$ 
de fonctions lisses et convexes. Elle appartient donc à~$\CPSH_{X,1}$.

L'implication~(iv)$\Rightarrow$(iii) résulte 
de la proposition~\ref{prop.psh-courant},
et l'implication~(iv)$\Rightarrow$(i) découle
de la proposition~\ref{prop.CPSH=C-cap-PSH}.
\end{proof}

\section{Inégalités de Chern--Levine--Nirenberg}

Nous nous inspirons ici du chapitre~2 de~\cite{demailly1985} 
pour démontrer des inégalités de type Chern--Levine--Nirenberg
(voir~\cite{ChernLevineNirenberg-1969}) dans notre contexte.

Soit $X$ un espace tropical de niveau~$n$, sans bord.

\begin{prop}
Soit $T$ un courant de bidegré~$(p,p)$, positif et fermé,
soit $q$ un entier tel que $0\leq q\leq p$ et
soit $\omega$ une forme lisse de type~$(p-q,p-q)$ sur~$X$.
Pour tout voisinage compact~$K$ du support de~$\omega$,
il existe un nombre réel~$c_{K,\omega}\geq0$ tel que 
pour toute famille $(u_0,\dots,u_q)$ de fonctions lisses sur~$X$,
telles que $u_1,\dots,u_q$ soient psh, on a 
\[ \Abs{\langle u_0 \ddc(u_1)\wedge\dots\wedge\ddc(u_q)\wedge T, \omega \rangle} \leq c_{K,\omega} \norm {u_0}_K  \norm{u_1}_K\dots \norm{u_q}_K. \]
\end{prop}
\begin{proof}
On prouve cette proposition par récurrence sur~$q$.
Considérons des formes lisses $\alpha_0$ et $\beta_0$,
positives et à support contenu dans~$\mathring K$
telles que $\omega=\alpha_0-\beta_0$
(proposition~\ref{prop.trop-diff-positives}).
Posons $\omega_0=\alpha_0+\beta_0$;
c'est une forme lisse positive, de bidegré~$(p-q,p-q)$
et à support compact contenu dans l'intérieur de~$K$.
Soit enfin~$C$ un nombre réel de sorte que la conclusion de la proposition
soit vraie lorsque $\omega$ est remplacée par la forme~$\ddc\omega_0$
et $q$ est remplacé par~$q-1$.

Soit $u_1,\dots,u_p$ des fonctions lisses sur~$X$,
et soit $u_0$ une fonction lisse sur~$X$.
On commence par écrire
\begin{multline*}
 \langle u_0 \ddc u_1\wedge \dots\wedge\ddc u_q\wedge T,\omega\rangle  \\
=\langle u_0 \ddc u_1\wedge \dots\wedge\ddc u_q\wedge T,\alpha_0\rangle
- \langle u_0 \ddc u_1\wedge \dots\wedge\ddc u_q\wedge T, \beta_0\rangle
.\end{multline*}
Comme les formes $\ddc u_1,\dots,\ddc u_q$ sont fortement positives,
et comme le courant~$T$ est positif,
$\ddc u_1\wedge\dots\wedge\ddc u_q\wedge T$ est positif
(lemm~\ref{lemm.produit-courant-positif-forme}).
Comme $\alpha_0$ est positive,
le premier terme est majoré par
\[ \norm{u_0}_K \langle \ddc u_1\wedge \dots\wedge\ddc u_q\wedge T,\alpha_0\rangle. \]
Un raisonnement analogue avec l'autre terme, ainsi qu'avec $-u_0$,
fournit la majoration
\[ \Abs{\langle u_0 \ddc u_1\wedge \dots\wedge\ddc u_q\wedge T, \omega\rangle}
\leq \norm{u_0}_{K} \langle  \ddc u_1\wedge \dots\wedge\ddc u_q\wedge T,\omega_0\rangle.
\]

Comme $T$ est fermé, on a 
\[ \ddc u_1\wedge \ddc u_2\wedge\dots\wedge \ddc u_q \wedge T =
\ddc (u_1 \ddc u_2\wedge\dots\wedge \ddc u_q \wedge T ) ,\]
de sorte que
\[ \langle \ddc u_1\wedge \dots\wedge\ddc u_q\wedge T,\omega_0\rangle
= \langle u_1 \ddc u_2\wedge \dots\wedge\ddc u_q\wedge T,\ddc\omega_0\rangle.
\] 
Par l'hypothèse de récurrence, 
on a donc 
\[ \langle \ddc u_1\wedge \dots\wedge\ddc u_q\wedge T,\omega_0\rangle
\leq  c_{K,\ddc\omega_0}  \norm{u_1}_K\dots\norm{u_q}_K, \]
d'où la majoration voulue.
\end{proof}

\subsection{}
On suppose que $X$ est sans bord.
Soit $\rho$ une fonction lisse psh sur~$X$
tel que l'ouvert~$U$ de~$X$ défini par $\rho<0$ 
soit relativement compact.

On pose $\rho_-=\sup(-\rho,0)$; c'est une fonction continue
et positive sur~$X$, lisse et strictement positive sur~$U$, nulle hors de~$U$.
Pour tout nombre réel~$a>0$,
la fonction $\theta_a\colon t\mapsto \sup(-t,0)^a$ sur~$\R$ 
est continue ; si $a>1$, elle est de dérivable
est vérifie $\theta_a'=-a\theta_{a-1}$; 
elle est donc de classe~$\mathscr C^{\lfloor a\rfloor}$;
par suite, la fonction $\rho_-^{a}$ sur~$U$ est à coefficients
de classe~$\mathscr C^{\lfloor a\rfloor}$;
on a $\di\rho_-^a=-a\rho_-^{a-1}\di\rho$
et $\dc\rho_-^a=-a\rho_-^{a-1}\dc\rho$.

Soit $a$ un nombre réel tel que $a>1$.
Pour tout~$k\in\{0,\dots,n\}$,
on définit une forme de type~$(n-k,n-k)$ sur~$U$ par 
\[  \beta_k = \rho_-^{k+a} (\ddc \rho)^{n-k} + 
    (k+a) \rho_-^{k+a-1} \di\rho \wedge \dc\rho \wedge (\ddc \rho)^{n-k-1}. \]
(Lorsque $k=n$, on omet le second terme.)
La forme~$\beta_k$ est à coefficients 
de classe~$\mathscr C^{\lfloor a\rfloor-1}$,
et son support contenu dans~$\overline W$,
donc est compact, car $k+a-1>0$.
Comme $\rho$ est lisse et psh, la $(1,1)$-forme lisse~$\ddc \rho$ est positive ;
de plus, $\di\rho\wedge\dc\rho$ est une $(1,1)$-forme positive ; 
par suite, la forme~$\beta_k$ est positive.

On a 
\[ \dc \beta_k = -2(k+a)\rho_-^{k+a-1} \dc \rho \wedge (\ddc\rho)^{n-k} \]
et 
\[ \ddc \beta_k = 2(k+a) \left(
    -\rho_-^{k+a-1} (\ddc\rho)^{n-k+1} + 
        (k+a-1) \rho_-^{k+a-2} \di\rho\wedge\dc\rho \wedge (\ddc \rho)^{n-k}
    \right) .\]
Par suite, on a l'inégalité de formes 
\[  -2(k+a) \beta_{k-1} \leq \ddc \beta_k \leq 2(k+a) \beta_{k-1}.\]

\begin{prop}\label{prop.cln-2}
On suppose $a\geq 3$ et on 
pose $C(k,a) = (4e)^k a^{-1}(k+a) (2+a) \dots (k+1+a)$.
Soit $V$ une fonction psh négative lisse sur~$U$.
Pour tout entier~$k$
et toute suite $(u_1,\dots,u_k)$ de fonctions lisses psh sur~$X$,
on a 
\[ \int_U \beta_k \wedge \abs V \ddc u_1\wedge \dots\wedge \ddc u_k
\leq C(k,a) \norm{u_1}_U \dots \norm{u_k}_U \int_U \abs V \beta_0 . \]
\end{prop}
\begin{proof}
On démontre l'inégalité par récurrence sur~$k$. 
Elle est évidente lorsque $k=0$, car $C(0,a)=1$.
Supposons maintenant $k\geq 1$.
Notons $J_k$ l'intégrale à majorer
et posons $\psi_k = \ddc u_1\wedge \dots\wedge \ddc u_k$.
Comme $\beta_k$ est ponctuellement nulle hors de~$U$, 
on a 
\[ J_k = \int_U (-V\beta_k )\wedge  \psi_{k-1}\wedge \ddc u_k
= \int_X (-V\beta_k )\wedge  \psi_{k-1}\wedge \ddc u_k
 = \int_X  (-V\beta_k) \wedge \ddc (u_k \psi_{k-1}). \]
Comme $X$ est sans bord, la formule de Green (proposition~\ref{prop.stokes})
entraîne alors
\[ J_k = \int_X \ddc (-V\beta_k) \wedge u_k \psi_{k-1} .
\]
Alors, 
\begin{align*}
J_k & = -\int_X \di (\dc V\wedge\beta_k +V\dc\beta_k) \wedge u_k \psi_{k-1} \\
& = - \int_X  (\ddc V \wedge \beta_k - \dc V \wedge\di\beta_k 
+ \di V \wedge\dc\beta_k + V \ddc\beta_k) \wedge u_k \psi_{k-1} \\
& = -\int_X  (\ddc V \wedge \beta_k 
+ 2\di V \wedge\dc\beta_k + V \ddc\beta_k) \wedge u_k \psi_{k-1}
\end{align*}
en utilisant la symétrie de~$V$, $\beta_k$ et $\psi_{k-1}$
(lemme~\ref{lemm.didc=-dcdi}).
Puisque
\[ \di (V \dc\beta_k \wedge u_k \psi_{k-1})
 = \di V \wedge \dc\beta_k \wedge u_k \psi_{k-1}
+ V u_k \ddc \beta_k \wedge \psi_{k-1}
- V \dc\beta_k \wedge \di u_k \wedge \psi_{k-1}, \]
la formule de Stokes (proposition~\ref{prop.stokes})
et le fait que $X$ soit sans bord
entraînent que
\[ \int_X \di V \wedge \dc\beta_k \wedge u_k \psi_{k-1}
 = \int_X V \dc \beta_k \wedge \di u_k \wedge\psi_{k-1}
 - \int_X V u_k \ddc \beta_k \wedge \psi_{k-1}. \]
si bien que 
\[ J_k = \int_X  u_k (V \ddc\beta_k -\ddc V \wedge \beta_k) \wedge\psi_{k-1}
 +2 V \di u_k \wedge \dc \beta_k  \wedge \psi_{k-1}. \]
En utilisant de nouveau que~$\beta_k$ est ponctuellement
nulle hors de~$U$, on a 
\[ J_k = \int_U  u_k (V \ddc\beta_k -\ddc V \wedge \beta_k) \wedge\psi_{k-1}
 +2 V \di u_k \wedge \dc \beta_k  \wedge \psi_{k-1}. \]

On a 
\[ 2V \di u_k \wedge \dc \beta_k 
 = - 4(k+a) V \rho_-^{k+a-1} \di u_k \wedge \dc \rho \wedge (\ddc \rho)^{n-k}.
\]
Pour deux fonctions~$u$, $v$ de classe~$\mathscr C^1$,
et pour $h$ strictement positive, 
la positivité de la forme $(\di u- h \di v)\wedge (\dc u - h \dc v)$ 
entraîne la majoration
\[ \di u \wedge \dc v - \di v \wedge \dc u
 \leq h^{-1} \di u \wedge \dc u + h \di v \wedge \dc v . \]
Lorsque $u_k$ est strictement positive, on en déduit la majoration
\begin{multline*}
 2 V \di u_k \wedge \dc \beta_k \wedge \psi_{k-1} \\
\leq 4(k+a)^2 (-V) u_k \rho_-^{k+a-2} \di\rho \wedge \dc\rho \wedge (\ddc\rho)^{n-k}
  \\
 + \rho_-^{k+a} (-V) u_k^{-1} \di u_k \wedge \dc u_k \wedge (\ddc \rho)^{n-k}.
\end{multline*}
Utilisant que $V$ est psh sur~$U$, donc $\ddc V\geq 0$, il vient alors
\begin{multline*}
J_k \leq \int_U u_k (-V) \left(-\ddc \beta_k 
    +4(k+a)^2 \rho_-^{k-2+a} \di\rho\wedge\dc\rho \wedge (\ddc \rho)^{n-k}\right) \wedge \psi_{k-1}  \\
{} + \int_U (-V) \rho_-^{k+a} (\ddc \rho)^{n-k} u_k^{-1} \di u_k \wedge \dc u_k
    \wedge\psi_{k-1} . \end{multline*}
Compte tenu du calcul de $\ddc\beta_k$,
la forme entre parenthèse dans la première intégrale est égale à
\[ 2(k+a) \rho_-^{k+a-1} (\ddc\rho)^{n-k+1} 
 + (k+a+1) \rho_-^{k+a-2} \di\rho\wedge\dc\rho \wedge (\ddc \rho)^{n-k}, \]
donc est majorée par $c\beta_{k-1}$, où $c=2(k+a)(k+a+1)/(k+a-1)$.
Ainsi, 
\[ J_k \leq c \norm{u_k}_U J_{k-1} + \int_U  (-V) \beta_k\wedge\psi_{k-1}
 \wedge u_k^{-1} \di u_k\wedge \dc u_k, \]
inégalité valable sous l'hypothèse que $u_k$
est strictement positive. 

Soit $B$ un nombre réel ; posons aussi $M=\norm{u_k}_U$.
nous allons appliquer l'inégalité précédente
en remplaçant $u_k$ par $v_k=\exp(Bu_k)$.  
On a  $\di v_k = \di (\exp(B u_k)) = B v_k \di u_k$, $\dc v_k = B v_k \dc u_k$
et
\[ \ddc v_k = \di( B \exp(B u_k) \dc u_k) 
 = B^2 v_k \di u_k \wedge \dc u_k + B v_k \ddc u_k 
  = v_k^{-1} \di v_k \wedge \dc v_k + B v_k \ddc u_k .\]
Comme $\ddc u_k$ est positive et $u_k\geq -M$,
on a donc
\[ \ddc v_k \geq B e^{-BM} \ddc u_k + v_k^{-1} \di v_k \wedge \dc v_k . \]
Par suite,
\begin{align*}
 Be^{-BM} J_k  & = \int_U \beta_k \wedge (-V) \psi_{k-1} \wedge (B e^{-BM}\ddc u_k) \\
& \leq \int_U \beta_k \wedge (-V) \psi_{k-1}\wedge \ddc v_k
 - \int_U \beta_k \wedge (-V) \psi_{k-1} \wedge v_k^{-1} \di v_k \wedge \dc v_k \\
& \leq c \norm{v_k}_U  J_{k-1} \\
& \leq c e^{BM} J_{k-1},
\end{align*}
si bien que $ J_k \leq c B^{-1} e^{2BM} J_{k-1}$.
En choisissant $B=1/2M$, on en déduit 
\[ J_k \leq 2ec \norm{u_k}_U J_{k-1}
 = 4e \frac{(k+a) (k+a+1)}{k+a-1} \norm {u_k}_U J_{k-1}. \]
La proposition s'en déduit.
\end{proof}

\section{Théorie de Bedford--Taylor: produit par un courant
positif de bidegré~$(1,1)$}
\label{ss.produits-courants-positifs-new}

Soit $X$ un espace tropical de niveau~$n$, sans bord.

\begin{lemm}\label{lemm.unTn-uT}
Soit $(T_n)$ une suite de courants positifs et
soit $(u_n)$ une suite de fonctions continues.
On suppose que la suite~$(T_n)$ converge vers un courant~$T$
et que la suite~$(u_n)$ converge localement uniformément
vers une fonction~$u$.
On a $ u_n T_n \to u T$.
\end{lemm}
\begin{proof}
On peut raisonner localement 
et supposer que $(u_n)$ converge uniformément vers~$u$.
Soit $\eps>0$ et 
soit $v$ une fonction lisse telle que $\norm{u-v}\leq \eps$;
on écrit
\[ u_n T_n-uT=(u_n-v) T_n+v(T_n-T)+(v-u)T. \]
Soit $\omega$ une forme lisse à support compact, 
et soit $\alpha,\beta$ des formes lisses positives à support compact
telles que $\omega=\alpha-\beta$.
Comme $T_n$ est positif, on a 
\[ \abs{\langle (u_n-v) T_n, \omega \rangle}
\leq  \abs{\langle (u_n-v) T_n, \alpha \rangle}+ 
  \abs{\langle (u_n-v) T_n, \beta \rangle}  
\leq \norm{u_n-v} \langle T_n,\alpha+\beta \rangle . \]
Comme $v$ est lisse, le second terme
\[ \langle v(T_n-T),\omega\rangle 
= \langle T_n-T, v\omega\rangle   \]
tend vers~$0$ quand $n$ tend vers~$+\infty$.
Enfin, comme $T$ est positif,
\[ \abs{ \langle (v-u)T,\omega\rangle}\leq \norm{v-u} \langle T,\alpha+\beta\rangle. \]
Cela démontre que
\[ \limsup_n \abs{\langle u_nT_n-uT,\omega\rangle}
\leq 2 \norm{u-v} \langle T,\alpha+\beta\rangle
\leq 2\eps\langle T,\alpha+\beta\rangle.  \]
Comme ceci vaut pour tout~$\eps$, on a 
\[ \lim_n \langle u_n T_n,\omega\rangle = \langle uT,\omega\rangle. \]
Cela prouve que $u_nT_n\to uT$.
\end{proof}

\subsection{}
Soit $T$ un courant de bidegré~$(p,p)$ sur~$X$ qui est positif et fermé.
L'hypothèse que $T$ est fermé signifie 
$\di T=0$ et $\dc T=0$. Par définition d'un courant positif,
le courant~$T$ est symétrique ;  ainsi,
chacune de ces annulations entraîne l'autre.

\begin{lemm}\label{lemm.d'd''-wedge}
Soit $S$ un courant fermé symétrique de bidegré~$(p,p)$, 
soit $q$ un entier tel que $q\leq p-1$ et 
soit $\alpha$ 
une forme lisse symétrique de type~$(q,q)$.
On a  \[ \ddc(\alpha\wedge S)=(\ddc\alpha)\wedge S.\]
\end{lemm}
\begin{proof}
Soit $\beta$ une forme lisse de type $(p-q-1,p-q-1)$; 
démontrons que l'on a
\[ \langle \ddc\alpha\wedge S, \beta \rangle
= \langle \alpha \wedge S, \ddc\beta\rangle . \]
Comme les courants $\ddc\alpha\wedge S$
et $\alpha\wedge S$ sont symétriques, on peut supposer
que $\beta$ est symétrique.
Par définition, on a 
\[ \langle \ddc\alpha\wedge S, \beta \rangle
- \langle \alpha \wedge S, \ddc\beta\rangle 
= \langle S, \ddc\alpha\wedge\beta- \alpha\wedge\ddc\beta\rangle. \]
Or, on a 
\begin{multline*} 
\di( \alpha\dc\beta-\dc \alpha\wedge\beta)
 = \di \alpha \wedge\dc\beta + \alpha \ddc\beta
- \ddc \alpha \wedge\beta+\dc \alpha\wedge\di\beta .\end{multline*}
De plus, on démontre 
comme dans la preuve du lemme~\ref{lemm.green-affine},
la forme de type~$(p,p)$
\[ 
 \di \alpha \wedge\dc\beta +\dc\alpha\wedge\di\beta \]
est antisymétrique.
%
% \mathrm J(\di \alpha \wedge\dc\beta +\dc\alpha\wedge\di\beta)
% = \mathrm J(\di \alpha )\wedge (\mathrm J\dc\beta) 
% +  (\mathrm J \dc\alpha)\wedge(\mathrm J\di\beta) 
%  = - \dc \mathrm J \alpha \wedge \di \mathrm J\beta
% -  \di \mathrm J \alpha\wedge \dc\mathrm J\beta
%  = - \dc  \alpha \wedge \di \beta  -  \di  \alpha\wedge \dc\beta
%
Sa valeur sur le courant symétrique~$S$ est donc  nulle.
Comme $S$ est fermé, cela entraîne l'égalité voulue.
\end{proof}

\subsection{}
Soit $T$ un courant de bidegré~$(p,p)$ sur~$X$
qui est positif et fermé.
Soit $u$ une fonction psh (continue) sur~$X$. 
Comme est $T$ est positif,
on peut former le courant~$uT$ (exemple~\ref{coro.courant-positif})
et considérer le courant~$\ddc(uT)$.
Si $u$ est lisse, le lemme~\ref{lemm.d'd''-wedge} entraîne que l'on a 
\[ \ddc(uT)=\ddc(u \wedge T) = \ddc u \wedge T = T \wedge \ddc(u), \]
car $T$ est fermé.

En géométrie complexe, cette remarque est la base de la construction,
par Bedford--Taylor~\cite{bedford-t82}, de produits de courants.
Nous allons utiliser la même méthode 
dans ce contexte.
Pour poursuivre la construction, il faut garantir que le courant
$\ddc(uT)$ est encore positif.
C'est le cas si $T$ est positif et $u$ est psh lisse
et, plus généralement, si $u$ est psh en vertu du lemme suivant.

\begin{lemm}\label{lemm.bt-positif}
Soit $T$ un courant de bidegré~$(p,p)$ sur~$X$, positif  et fermé.
Pour toute fonction continue psh~$u$,
le courant $\ddc(uT)$ est positif et fermé.
\end{lemm}
\begin{proof}
Pour tout ouvert~$U$ de~$X$, 
soit $\mathscr P(U)$ l'ensemble des fonctions continues~$u$
sur~$U$ telles que $\ddc(uT|_U)$ soit un courant positif.
Le préfaisceau~$\mathscr P$ ainsi défini est un sous-faisceau de~$\mathscr C_X$.
Si $u$ est psh lisse, on $\ddc(uT)=\ddc(u)\wedge T$, 
donc $u$ appartient à~$\mathscr P$.
Si $(u_n)$ est une suite de fonctions de~$\mathscr P(U)$
qui converge uniformément vers une fonction~$u$,
la suite de courants $(u_nT|_U)$ converge vers $uT|_U$,
donc $\ddc(uT|_U)$ est limite de la suite de courants $(\ddc(u_nT|_U))$.
Ces courants étant positifs par hypothèse, le courant $\ddc(uT|_U)$ est positif et $u$ appartient à~$\mathscr P(U)$.
Cela entraîne que le faisceau~$\mathscr P$ contient le faisceau~$\CPSH_X$,
ce qu'il fallait démontrer.
\end{proof}

\begin{prop}\label{prop.bt-projection}
Soit $j\colon Y\to X$ un morphisme topologiquement propre d'espaces
tropicaux de niveaux~$m$ et~$n$ respectivement.
Soit $u$ une fonction~psh continue sur~$X$ et soit $S$ un courant
positif fermé sur~$Y$. Le courant $j_*S$ sur~$X$
est positif fermé et l'on a l'égalité
\[ \ddc(u)\wedge j_*S = j_*(\ddc (j^*u)\wedge S). \]
\end{prop}
\begin{proof}
L'ensemble des fonctions psh~$u$  qui vérifient cette relation
forment un sous-faisceau en $\R_+$-modules du faisceau
des fonctions continues sur~$X$.
Il est stable par limite uniforme.
Compte tenu de la définition du faisceau~$\CPSH_X$, il suffit
donc de vérifier qu'il contient les fonctions lisses~$u$
telles que $\ddc u\geq 0$.
Dans ce cas, pour toute forme lisse~$\omega$ à support
compact sur un ouvert~$U$ de~$X$, on a
\begin{align*} \langle \ddc(u j_*S),\omega\rangle
 & = \langle uj_*S, \ddc \omega\rangle 
  = \langle j_*S, u\ddc\omega \rangle  \\
& = \langle S, j^*u \ddc j^*\omega\rangle 
 = \langle j^* u S, \ddc j^*\omega \rangle
= \langle \ddc(j^* uS), j^*\omega\rangle \\
&= \langle j_* \ddc(j^*uS),\omega\rangle,\end{align*}
ainsi qu'il fallait démontrer.
\end{proof}

\section{Courants de Bedford--Taylor}
\label{ss.courant-BT}

\subsection{}
Soit $u$ et $v$ des fonctions psh telles que $h=u-v$ est lisse.
Alors on a $\ddc(uT)=\ddc(hT)+\ddc(vT) = \ddc(h)\wedge T + \ddc(vT)$,
car $h$ est lisse.
En particulier, si $\ddc h=0$, on en déduit que $\ddc (uT)=\ddc(vT)$.
Nous ne savons pas si cette égalité persiste en général,
c'est-à-dire sans supposer que $h$ est lisse. Cela nous conduit
à poser la définition suivante.

\begin{defi}\label{defi.courant-BT}
% Soit $T$ un courant positif fermé, soit $u$ et~$v$ des fonctions psh
% telles que $\ddc(u)=\ddc(v)$. 
% Si $u$ et $v$ sont lisses, plus généralement si $v-u$ est lisse,
% on a 
% \[ \ddc(vT)=\ddc(uT)+\ddc((v-u)T)=\ddc(uT)+\ddc(v-u)\wedge T
% =\ddc(uT),\]
% ce qui justifie de noter $\ddc(u)\wedge T$  ce courant.
% Nous ne savons cependant pas vérifier cette égalité en général.
On dit qu'un courant positif fermé~$T$ est de \emph{Bedford--Taylor}\index{courant de Bedford--Taylor} si
pour tout ouvert~$U$ de~$X$ et
pour tout couple $(u,v)$ de fonctions psh sur~$U$ telles que
$\ddc(u)= \ddc(v)$, on a $\ddc(uT|_U)= \ddc(vT|_U)$.
\end{defi}

Si c'est le cas, il 
est légitime de noter $ \ddc(u)\wedge T$ le courant $\ddc(uT)$. 
On s'autorisera cependant cette notation même si $T$ 
n'est pas de Bedford--Taylor.
Cet abus de notation sera relativement sans danger
car tous les courants que nous construisons dans ce texte sont
de Bedford--Taylor.

\begin{prop}
Soit $(U_i)$ un recouvrement ouvert de~$X$ tel que
$T|_{U_i}$ est de Bedford--Taylor pour tout~$i$; alors $T$ est 
de Bedford--Taylor.
\end{prop}
\begin{proof}
En effet, soit $u,v$ des fonctions psh sur un ouvert~$V$
de~$X$ telles que $\ddc(u) = \ddc(v)$;
l'égalité de courants $\ddc(uT) = \ddc(vT)$ sur~$V$
résulte de sa validité sur chacun des~$U_i\cap V$.
On en déduit que les courants de Bedford--Taylor forment 
un sous-faisceau du faisceau des courants positifs fermés.
\end{proof}

\begin{exem}
Le courant~$\delta_X$ est de Bedford--Taylor.

En effet, $u\delta_X=[u]$ donc $\ddc(u\delta_X)=\ddc([u])=\ddc u$.
\end{exem}

\begin{prop}
Si $\alpha$ est une forme lisse positive fermée et~$T$ est de Bedford--Taylor,
alors il en est de même du courant $T\wedge\alpha$.
\end{prop}
\begin{proof}
Soit en effet des fonctions~$u,v$ psh telles que $\ddc(u) = \ddc(v)$; 
si $\alpha$ est une forme lisse à support compact, on a
\[ \ddc(u(T\wedge\alpha)) =\ddc( uT \wedge\alpha) =\ddc(uT)\wedge\alpha.\]
De même, $\ddc(v(T\wedge\alpha))=\ddc(vT)\wedge\alpha$.
Comme $T$ est un courant de Bedford--Taylor,
on a $\ddc(uT) = \ddc(vT)$, d'où
$\ddc(uT)\wedge\alpha = \ddc(vT)\wedge\alpha$.
Cela démontre l'égalité requise.
\end{proof}

\begin{prop}
Soit $(T_k)$ une suite de courants de Bedford--Taylor
qui converge faiblement vers un courant~$T$. Le courant~$T$ est
de Bedford--Taylor.
\end{prop}
\begin{proof}
Le courant~$T$ est positif fermé.
Si $u$ est une fonction psh, on a $\ddc(uT)=\lim \ddc(uT_k)$ 
(lemme~\ref{lemm.unTn-uT}). 
Il en résulte que $T$ est un courant de Bedford--Taylor.
\end{proof}

\begin{coro}
Soit $T$ un courant de Bedford--Taylor et soit $u$
une fonction continue positive. Le courant~$uT$ est de Bedford--Taylor.
\end{coro}
\begin{proof}
Cela se déduit de la proposition 
par approximation locale par une fonction lisse.
\end{proof}

\begin{prop}\label{prop.ddcuT-BT}
Soit $T$ un courant de Bedford--Taylor.
Pour toute fonction
continue psh~$u$, le courant $\ddc(uT)$ est de Bedford--Taylor.
\end{prop}
\begin{proof}
Soit $u$ une fonction psh sur~$X$.
Soit~$\mathscr P$ le sous-préfaisceau de~$\mathscr C_X$ tel que,
pour tout ouvert~$U$, $\mathscr P(U)$ soit l'ensemble
des fonctions continues~$f$ sur~$U$ telles que
$\ddc(fT)$ soit un courant de Bedford--Taylor; c'est un faisceau.
Si $f$ est lisse psh, alors $\ddc(fT)=\ddc(f) \wedge T$,
qui est de Bedford--Taylor,  puisque $T$ l'est,
de sorte que $\mathscr P$ contient les fonctions lisses psh.
Ce faisceau est également stable 
par limites uniformes (lemme~\ref{lemm.unTn-uT}).
Par suite, $\mathscr P$ contient le faisceau des fonctions psh.
En particulier, 
le courant~$\ddc(uT)$ est de Bedford--Taylor.
\end{proof}

\section{Théorie de Bedford--Taylor: itération}
\label{sec.bt-iter}
% Soit $u_1,\dots,u_m$ des fonctions psh 
% et soit $T$ un courant positif fermé. On voudrait noter 
% $\ddc(u_1)\wedge\dots\wedge\ddc(u_m)\wedge T$ le courant
% $\ddc(u_1 \ddc(u_2(\dots (\ddc(u_mT)))))$, puisque c'est ce qu'il
% vaut lorsque les~$u_j$ sont lisses.
% Dans le cas général, il n'est pas clair que ce courant
% ne dépende que des courants $\ddc(u_1),\dots,\ddc(u_m)$ et~$T$,
% et ceci dès le cas $m=1$.
% 

\subsection{}\label{ss.bt-def}
Soit $u_1,\dots,u_q$ des fonctions
psh sur~$X$ et soit~$T$ un courant positif fermé.
Compte tenu du lemme~\ref{lemm.bt-positif},
on définit par récurrence un courant positif fermé
\[ \ddc(u_1 \ddc(u_2 \ddc (\dots \ddc(u_qT)))). \]
Par construction, ce courant  est même $\ddc$-exact si $q\geq 1$.

Lorsque $T$ est un courant de Bedford--Taylor, 
on démontre par récurrence à l'aide de la proposition~\ref{prop.ddcuT-BT}
qu'il en est de même du courant
\[ \ddc(u_1 \ddc(u_2 \ddc (\dots \ddc(u_qT)))). \]
et que ce courant 
ne dépend que de la suite de courants $(\ddc(u_1),\dots,\ddc(u_q),T)$,
justifiant ainsi la notation.

Lorsque les $u_j$ sont lisses, ce courant est égal à 
\[ \ddc(u_1) \wedge \ddc(u_2) \dots \wedge \ddc( u_q) \wedge T. \]

Dans la suite de ce texte, nous nous autoriserons l'abus de notation
consistant à écrire ce courant
\[ \ddc(u_1) \wedge \ddc(u_2) \dots \wedge \ddc( u_q) \wedge T \]
même si $T$ n'est pas de Bedford--Taylor.

En prenant $T=\delta_X$, on a $\ddc(u)\wedge \delta_X = \ddc(u)$,
pour toute fonction~psh~$u$. 
Plus généralement, on a
\[ \ddc(u_1) \wedge \ddc(u_2) \dots \wedge \ddc( u_q) \wedge \delta_X 
= \ddc(u_1) \wedge \ddc(u_2) \dots \wedge \ddc(u_q).
\]

% \begin{lemm}\label{lemm.ddc(uT)-limite}
% \begin{enumerate}\def\theenumi{\alph{enumi}}\def\labelenumi{\theenumi)}
% \item
% Soit $(u_n)$ une suite de fonctions psh lisses sur~$X$
% qui converge uniformément vers~$u$. 
% On a $\ddc(u_n)\wedge T \to \ddc(uT)$.
% \item
% Si $u$ est localement psh-approchable, 
% le courant $\ddc(uT)$ est positif et fermé.
% \end{enumerate}
% \end{lemm}
% \begin{proof}
% \emph a)
% Par définition du courant~$uT$, la suite de courants~$(u_n T)$ converge 
% vers~$uT$.
% Par continuité des opérateurs de différentiation dans les courants,
% on a donc $\ddc(u_nT) \to \ddc (uT)$.
% Comme $u_n$ est lisse et $T$ fermé, on a $\ddc(u_nT)=(\ddc u_n) \wedge T$,
% d'où l'assertion.
% 
% \emph b)
% C'est une assertion locale sur~$X$.
% On peut donc supposer qu'il existe une suite $(u_n)$
% de fonctions psh lisses sur~$X$ qui converge uniformément vers~$u$.
% Pour tout~$n$, la $(1,1)$-forme lisse $\ddc u_n$ est positive
% et le courant~$T$ est positif,
% il en résulte que $\ddc(u_n T)$ est un courant positif
% (lemme~\ref{lemm.produit-courant-positif-forme}).
% Par passage à la limite, $\ddc(uT)$ est un courant positif.
% Il est évidemment fermé.
% \end{proof}

\begin{prop}\label{prop.BT-continu}
Soit $(T_k)$ une suite de courants positifs fermés qui converge
vers un courant positif fermé~$T$.
Soit $(u_0^k)$, $(u_1^k),\dots,(u_q^k)$ des suites de fonctions continues
sur~$X$ qui convergent localement
uniformément vers des fonctions $u_0,u_1,\dots,u_q$ sur~$X$.
On suppose que pour tout $j\in\{1,\dots,q\}$ et tout~$k$,
la fonction~$u_j^k$ est psh.
Alors, 
la suite de courants 
\[ u_0^k \ddc(u_1^k) \wedge \dots \wedge \ddc(u_q^q) \wedge T_k \]
converge vers le courant
\[ u_0 \ddc(u_1) \wedge \dots \wedge  \ddc(u_q)\wedge T. \]
\end{prop}
\begin{proof}
On raisonne par récurrence sur~$q$.
Pour $q=0$, l'assertion résulte du lemme~\ref{lemm.unTn-uT}.
Supposons $q\geq 1$ et l'assertion démontrée pour~$q-1$.
Pour tout~$k$, posons
$S_k=  \ddc(u_2^k \ddc(\dots\ddc(u_q^k T_k)))$
et $S= \ddc(u_2 \ddc(\dots\ddc(u_q T)))$;
ce sont des courants positifs fermés, par récurrence,
et l'on a $S_k\to S$.
D'après le lemme~\ref{lemm.unTn-uT}, on a $u_1^kS_k\to u_1 S$.
Par continuité de l'opérateur~$\ddc$ sur les courants,
on a donc $\ddc(u_1^kS_k)\to \ddc(u_1 S)$.
Comme pour tout~$k$, le courant $\ddc(u_1^kS_k)$ est positif fermé,
le courant $\ddc(u_1S)$ l'est également.
En appliquant de nouveau le lemme~\ref{lemm.unTn-uT},
on en conclut que 
$ u_0^k \ddc(u_1^k S_k)$ converge vers 
$ u_0 \ddc(u_1 S)$, ce qu'il fallait démontrer.
\end{proof}

Nous aurons besoin de la variante suivante.
\begin{prop}\label{prop.BT-continu'}
Soit $(T_k)$ une suite de courants positifs fermés qui converge
vers un courant positif fermé~$T$.
Soit $f\colon X\to \R^n$ une carte harmonique.
Soit $(v_0^k)$, $(v_1^k),\dots,(v_q^k)$ des suites de fonctions 
convexes continues sur~$\R^n$ qui convergent localement
uniformément vers des fonctions $v_0,v_1,\dots,v_q$;
pour tout~$i$ et tout~$k$, posons $u_i^k=f^*v_i^k$ et $u_i=f^*v_i$.
Alors, 
la suite de courants 
\[ u_0^k \ddc(u_1^k) \wedge \dots \wedge \ddc(u_q^q) \wedge T_k \]
converge vers le courant
\[ u_0 \ddc(u_1) \wedge \dots \wedge  \ddc(u_q)\wedge T. \]
\end{prop}
\begin{proof}
La démonstration est analogue à la précédente; on en reprend les notations.
Pour $q=0$, il s'agit du lemme~\ref{lemm.unTn-uT}
qui n'exige que la continuité des fonctions en jeu.
Supposons $q\geq 1$ et l'assertion démontrée par~$q-1$.
Posons $S_k=\ddc(u_2^k \ddc(\dots\ddc(u_q^k T_k)))$
et $S= \ddc(u_2 \ddc(\dots\ddc(u_q T)))$;
ce sont des courants positifs fermés, par récurrence,
et l'on a $S_k\to S$. D'après le lemme~\ref{lemm.unTn-uT},
on a $u_1^kS_k\to u_1 S$ d'où, par continuité de l'opérateur~$\ddc$
sur les courants, la convergence
$\ddc(u_1^kS_k)\to \ddc(u_1 S)$.
Pour tout~$k$, le courant~$\ddc(u_1^kS_k)$ est positif fermé 
en vertu du corollaire~\ref{coro.wedge-weak-psh},
si bien que le courant~$\ddc(u_1 S)$ est positif fermé.
En appliquant de nouveau le lemme~\ref{lemm.unTn-uT},
on en conclut que 
$ u_0^k \ddc(u_1^k S_k)$ converge vers 
$ u_0 \ddc(u_1 S)$, ce qu'il fallait démontrer.
\end{proof}

\begin{prop}
Soit $T$ un courant positif fermé.
Soit $u_1,\dots,u_m$ des fonctions psh et soit $\sigma$ une permutation
de~$\{1,\dots,m\}$; on a 
\[ \ddc(u_1)\wedge \ddc(u_2)\wedge\dots\wedge \ddc(u_m)\wedge T
= \ddc(u_{\sigma(1)})\wedge \ddc(u_{\sigma(2)})\wedge\dots\wedge \ddc(u_{\sigma(m)})\wedge T. \]
\end{prop}
\begin{proof}
Soit $v$ une fonction psh sur~$X$.
Soit $\mathscr P$ le sous-faisceau  de $\CPSH_X$
formé des fonctions psh~$u$
telles que $\ddc(u \ddc(vT))=\ddc(v \ddc(uT))$.
Si $u$ est lisse, on a 
\[ \ddc(u\ddc(vT))=\ddc(u)\wedge \ddc(vT) 
= \ddc( vT\wedge \ddc(u)) =\ddc(v \ddc(u T)), \]
de sorte que ce faisceau~$\mathscr P$ contient les fonctions lisses.
Il est stable par limites uniformes. 
Par suite, il contient le faisceau des fonctions psh.

Cela entraîne l'assertion voulue lorsque $m=2$.

Le cas où $\sigma$ est la transposition échangeant deux
entiers successifs 
en découle immédiatement.
Le cas général s'en déduit car $\sigma$ est composée de telles transpositions.
\end{proof}

\begin{prop}\label{prop.bt-projection-p}
Soit $j\colon Y\to X$ un morphisme topologiquement propres d'espaces
tropicaux de niveaux~$m$ et~$n$ respectivement.
Soit $u_1,\dots,u_p$ des fonctions~psh sur~$X$ et soit $S$ un courant
positif fermé sur~$Y$. Le courant $j_*S$ sur~$X$
est positif fermé et l'on a l'égalité
\[ \ddc(u_1)\wedge \ddc(u_2) \wedge  \dots \ddc(u_p) \wedge j_*S = j_*(\ddc (j^*u_1) \wedge \ddc(j^*u_2) \dots \wedge\ddc (j^*u_p) \wedge S). \]
\end{prop}
\begin{proof}
La démonstration se déduit de la proposition~\ref{prop.bt-projection}
par une récurrence immédiate sur~$p$.
\end{proof}

\section{Extension aux différences de fonctions psh}
\begin{lemm}\label{rema.lisse-diff-psh}
Soit $u$ une fonction lisse sur~$X$. 
Tout point~$x$ de~$X$ possède un voisinage sur lequel
$u$ s'écrit comme la différence de deux fonctions lisses~psh.

Plus précisément, soit $U$ un ouvert contenu dans un domaine
compact~$W$ sur lequel $u$ est tropicalisée par 
une carte tropicale~$f$, il existe des fonctions lisses psh~$u_1,u_2$
sur~$U$, $f$-tropicales, telles que $u|_U=u_1-u_2$.
\end{lemm}
\begin{proof}
Soit $f\colon W\to T$ un moment tropicalisant~$u$,
soit $P$ le polyèdre compact $f(W)$ 
et soit $v$ une fonction lisse au voisinage de~$P$ 
telle que $u|_W=v\circ f_\trop$.

D'après la proposition~\ref{prop.diff-positive},
il existe des fonctions lisses~$v_1$ et~$v_2$ au voisinage de~$P$
telles que $v=v_1-v_2$ et telles que les formes $\ddc v_1$
et~$\ddc v_2$ sont positives.
Les fonctions $u_1=v_1\circ f_\trop|_U$ et $v_2=u_2\circ f_\trop|_U$ sur~$U$
sont lisses, psh, et l'on a $u=u_1-u_2$.
\end{proof}

\begin{prop}\label{rema.G-lisse-diff-psh}
\begin{enumerate}
\item
Soit $u$ une fonction G-lisse sur~$X$.
Soit $W$ un domaine
analytique compact~$W$ sur lequel $u$ est tropicalisée par 
un moment~$f$, il existe des fonctions convexes~$v_1,v_2$
sur l'enveloppe convexe de~$f(W)$ telles que $u|_U=f^*v_1-f^*v_2$.

\item
Soit $u$ une fonction paralinéaire sur~$X$ et
soit $W$ un domaine analytique compact de~$X$ sur lequel $u$
est tropicalisée par un moment~$f$.
Il existe des fonctions paralinéaires convexes~$v_1,v_2$
sur l'enveloppe convexe de~$f(W)$ telles que $u|_U=f^*v_1-f^*v_2$.
\end{enumerate}
\end{prop}
\begin{proof}
D'après la proposition~\ref{prop.local-G-local},
il existe
un voisinage domanial compact~$V$ de~$x$,
une carte tropicale~$f\colon V\to E$ sur~$V$ et une fonction G-lisse~$v$
sur~$f(V)$ telle que $u|_V=f^*v$. Lorsque $u$ est paralinéaire, $v$ l'est.
D'après les propositions~\ref{prop.extension-G-lisse}
et~\ref{prop.G-lisse-d-convexe},
il existe alors des fonctions convexes~$v_1$ et~$v_2$
sur l'enveloppe convexe de~$f(V)$ telles que $v=v_1-v_2$ sur~$f(V)$,
que l'on peut choisir paralinéaires sur~$u$ l'est.
D'après la proposition~\ref{prop.convexe-psh},
les fonctions $u_1=f^*v_1$ et $u_2=f^*v_2$ sur~$V$ 
sont tropicalement convexes et sont paralinéaires si~$u$ l'est;
elles vérifient $u=u_1-u_2$.
\end{proof}

\begin{coro}
Soit $u$ une fonction G-lisse sur~$X$.
Tout point de~$X$ en lequel $X$ est décent
possède un voisinage sur lequel
$u$ s'écrit comme la différence de deux fonctions 
tropicalement convexes, que l'on peut choisir paralinéaires
si $u$ l'est.
\end{coro}

\subsection{} \label{ss.coro.produit-approchables}
\def\RPSH{\mathrm{CPsh}^{\pm}}
Soit $\RPSH_X$ le sous-faisceau en $\R$-espaces vectoriels
de~$\mathscr C_X$ engendré par le faisceau~$\CPSH_X$ des fonctions psh.
Ses sections seront appelées dpsh 
(« différence de fonctions plurisousharmoniques »).\index{fonction dpsh}
D'après le lemme~\ref{rema.lisse-diff-psh}, ce sous-faisceau 
contient les fonctions lisses.
De même, les fonctions localement différence
de deux fonctions tropicalement convexes\index{fonction tropicalement d-convexe}
seront appelées tropicalement d-convexes; de telles
fonctions sont dpsh.
Lorsque $X$ est décent, toute fonction G-lisse
est tropicalement d-convexe  (proposition~\ref{rema.G-lisse-diff-psh}).

\subsection{}\label{ss.bt-Rpsh}
Soit $T$ un courant positif fermé sur~$X$ de bidegré~$(q,q)$.
Il existe un unique morphisme $(p+1)$-linéaire 
sur  $\mathscr C^0_X\times (\RPSH_X)^p$,
\[ (u^0,u^1,\dots,u^p)\mapsto  u^0 \ddc u^1\wedge\dots\wedge\ddc u^p, \]
à valeurs dans le faisceau des 
courants de bidegré~$(p+q,p+q)$ sur~$X$
qui coïncide avec cette application 
lorsque $u^0,u^1,\dots,u^p$ sont psh
et $u^0$ est positive. 
Par construction, l'image de ce faisceau est constituée de différences
de deux courants positifs.
Il est symétrique en $(u^1,\dots,u^p)$.
Si $T$ est de Bedford--Taylor, 
il ne dépend que de~$u^0$ et des courants $\ddc u^1,\dots,\ddc u^p$.
Si, de plus, $u^0$ est dpsh, on a 
\[ \ddc (u^0 \ddc u^1 \wedge\dots\wedge \ddc u ^p)= 
 \ddc u^0 \wedge \ddc u^1 \wedge\dots\wedge \ddc u ^p.
\]

Soit $f^1,\dots,f^p$ des fonctions lisses telles que $\ddc (f^i)=0$.
On a 
\[ u^0 \ddc (u^1+f^1) \wedge\dots\wedge \ddc (u ^p+f^p) \wedge T= 
 u^0 \ddc (u^1) \wedge\dots\wedge \ddc (u ^p) \wedge T, \]
même si $T$ n'est pas de Bedford--Taylor. 
Autrement dit, ce morphisme passe au quotient en un morphisme
de source $\mathscr C^0_X \times (\RPSH_X/\Harm_X)^p$
où $\Harm_X$ est le faisceau sur~$X$
des fonctions lisses~$h$ telles que $\ddc h=0$.

% \begin{rema}[Remarque à effacer plus tard]
% Si $\ddc(u^1),\dots,\ddc(u^p)$ sont des courants
% positifs, on voudrait prouver
% que leur produit $\ddc(u^1)\wedge\dots\wedge\ddc(u^p)$
% est un courant positif, mais ça n'a pas l'air si simple…
% 
% (C'est vrai si les $u^j$ sont paralinéaires.)
% \end{rema}
 
\begin{rema}\label{rema.support-bt-squelette}
Supposons 
que $u^1,\dots,u^p$ sont tropicalement d-convexes
et soit~$\alpha$ une forme lisse de type $(n-p,n-p)$ sur~$X$.
Alors $u_0\mapsto \langle u_0\ddc u^1\wedge\dots\wedge \ddc u^p, \alpha\rangle$
est une mesure de Radon sur~$X$, car les~$u^j$ sont dpsh.

Démontrons que son support est localement contenu
dans une partie squelettique. (Il est donc 
contenu dans une partie squelettique de~$X$ si 
ce support est compact ou si $X$ est paracompact.)

Plus précisément, soit $U$ un ouvert de~$X$,
soit $f\colon U\to E$ une carte tropicale,
soit $P$ une partie paralinéaire de~$E$
contenant $f(U)$
et soit $a^1,\dots,a^p$ des fonctions d-convexes sur~$P$ 
telles que $u^j=f^*a^j$ pour $j\in\{1,\dots,p\}$,
soit enfin $\omega$ une forme de type~$(n-p,n-p)$
sur~$P$ telle que $\alpha=f^*\omega$.
Démontrons que la trace sur~$U$ du support de cette mesure de Radon
est contenu dans~$\Sigma_f^{(n)}$.

Lorsque les~$a^j$ sont lisses et convexes, la restriction à~$U$
de cette mesure est égale à la mesure associée
à la forme lisse positive $f^*(\ddc a^1\wedge \dots \wedge \ddc a^p\wedge \omega)$
et son support est contenu dans~$\Sigma_f^{(n)}$.
Cette propriété est locale sur~$X$, et est préservée
par limite uniforme, d'après la continuité du produit
de courants à la Bedford--Taylor.
Il en résulte qu'elle vaut dès que les~$a^p$ sont convexes.
Par multilinéarité, le résultat vaut également 
lorsque les~$a^j$ sont d-convexes.
\end{rema}

\begin{rema}
Lorsque les fonctions~$u^j$ comme ci-dessus sont par ailleurs G-$\mathscr C^2$,
l'expression $u^0 \ddc u^1\wedge\dots\wedge \ddc u^p$ peut être interprétée
comme une G-forme de type~$(p,p)$. Le lecteur prendra
garde que le courant 
défini au \S\ref{ss.coro.produit-approchables}
n'est pas en général celui associé  à cette G-forme.
Ce dernier courant ne jouera d'ailleurs pas de rôle dans la théorie,
ce qui rend inoffensif cet abus de notation.
\end{rema}

\begin{defi}\label{defi.MA}
Soit $X$ un espace tropical de niveau~$n$,  sans bord.
Si $u$ est une fonction dpsh sur~$X$, 
le courant $(\ddc u)^n$
est une mesure sur~$X$,
appelée \emph{mesure de Monge-Ampère} de~$u$, et notée $\MA(u)$.
Si $u$ est psh, cette mesure est positive.
\end{defi}
% Si $u$ est localement psh-approchable, cette mesure est positive
% (proposition~\ref{prop.produit-ddc}).

\begin{rema}\label{rema.courants-chgt-base-zero}
Soit $f\colon Y\to X$ un morphisme d'espaces tropicaux.
Soit $p$ un entier naturel tel que $p>\dim(X)$
et soit $u^1,\dots, u^p$ des fonctions dpsh sur~$X$.
Les fonctions $f^*u^1,\dots,f^*u^p$ sont dpsh
et l'on a 
\[ \ddc(f^*u^1)\wedge \dots \wedge \ddc(f^*u^p) = 0. \]

L'assertion est évidente si les $u^i$ sont lisses.
Dans ce cas, le courant
$ \ddc(f^*u^1)\wedge \dots \wedge \ddc(f^*u^p) = 0$
est associé à la forme lisse de même nom,
laquelle est 
l'image réciproque de la forme 
$\ddc(u^1)\wedge\dots \wedge \ddc(u^p)$ sur~$X$,
et celle-ci est nulle puisque $p>\dim(X)$.

Pour établir le cas général, on peut supposer que les $u^i$ sont psh.
Les suites $(u^1,\dots,u^p)$ de fonctions psh qui vérifient
la propriété requise forment un sous-faisceau
de~$(\CPSH_X)^p$. Par construction
du produit 
$\ddc(u^1)\wedge\dots \wedge \ddc(u^p)$,
il est stable par limite uniforme.
Il contient les suites de fonctions lisses d'après ce qui précède,
donc contient toutes les suites de fonctions psh.
\end{rema}

\section{Un exemple de calcul de mesure de Monge-Ampère}

\subsection{}
Soit $X$ un espace tropical de niveau~$n$, sans bord.
Soit $(f_1,\dots,f_m)$ une famille
finie de fonctions \emph{affines} sur~$X$ et soit $f\colon X\ra\R^m$
la carte tropicale correspondante.
Posons $g=\max(f_1,\dots,f_m)$.
C'est une fonction continue psh sur~$X$ (exemple~\ref{exem.psh-approchable}).
% et même globalement
% tropicalement psh-approchable, comme on le verra bientôt.
Le but de ce paragraphe est de fournir une description
tropicale de la mesure positive $\MA(g)$.

Pour cela, nous commençons par en étudier l'analogue paralinéaire.

\subsection{}
Soit $C$ un cône polyédral calibré de dimension~$\leq n$ dans~$\R^m$ 
et soit~$\xi$ son sommet. On suppose que le calibrage~$\mu$ de~$C$
est adapté à une décomposition cellulaire~$(C_j)_{j\in J}$
de~$C$ dont les cellules sont des cônes polyédraux de sommet~$\xi$;
notons~$\mu_j$ le vecteur-volume de~$C_j$.
Soit $V_j$ le sous-espace affine de~$\R^m$ engendré par~$C_j$.
 
Soit $I$ l'ensemble des $i\in\{1,\dots,m\}$ tels que $\xi_i=\max(\xi_1,\dots,\xi_m)$.
Pour tout nombre réel~$\eps>0$ et tout $x\in\R^m$, posons 
$g_\eps(x)=\eps \log (\sum_{i\in I} \exp(x_i)/\eps))$.
Cette fonction sur~$\R^m $ est convexe et lisse
(proposition~\ref{MA-delta_0/1}). Lorsque $\eps\to 0$,
elle converge uniformément vers $g_0\colon x\mapsto \max_{i\in I}x_i$.
En particulier, elle converge uniformément vers 
la fonction $g\colon x\mapsto \max (x_1,\dots,x_n)$
dans le voisinage  de~$\xi$ défini par les inégalités $x_j<\max(\xi_1,\dots,\xi_n)$ pour $j\not\in I$.

Soit $j\in J$.
Appliquons la proposition~\ref{MA-delta_0/1} aux cônes~$C_j-\xi$ (de sommet~$0$)
et aux formes linéaires $x\mapsto x_i$, pour $i\in I$. 
La mesure positive $\langle (\ddc g_1)^n,\mu_{j}\rangle$
sur~$C_j$ est absolument continue par rapport à la mesure
de Lebesgue et est de masse finie; notons~$\lambda_j$ cette masse.
C'est un nombre réel positif ou nul; il est strictement positif
si et seulement la famille $(x_i|_{V_j})_{i\in I}$ est très séparante
(définition \ref{defi.tres-separante}) ; un indice $i_0$ étant fixé, 
cela revient à demander que l'image de $V_j$ sous $(x_i-x_{i_0})_i$ 
soit de dimension $<n$. 
Pour toute fonction~$\psi$ sur~$C_j$,
continue et à support compact, on a
\[ \int_{C_j} \psi \langle (\ddc g_\eps)^n,\mu\rangle 
\to \lambda_j \psi(0) \]
lorsque $\eps\to 0$.

Posons $\lambda_C = \sum_{j\in J} \lambda_j$. 
C'est un nombre réel positif ou nul.
Il est nul si et seulement l'image sous $(x_i-x_{i_0})$
de $C$ est e dimension $<n$. 
Il ne dépend pas du choix de la décomposition cellulaire~$(C_j)$.

Observons que $C^{(n)}=\emptyset$ si et seulement si $\dim(C)<n$;
dans ce cas, on a $\lambda_C=0$.
Dans le cas contraire, $C^{(n)}$ est un cône de sommet~$\xi$
et l'on a $\lambda_C=\lambda_{C^{(n)}}$.

Soit $U$ un voisinage paralinéaire de~$\xi$ dans~$C$
et soit $\psi$ une fonction continue à support compact sur~$V$.
Appliquons la proposition~\ref{MA-delta_0/1} aux cônes~$C_j-\xi$ (de sommet~$0$)
et aux formes linéaires $x\mapsto x_i$, pour $i\in I$. 
On en déduit que  pour toute fonction~$\psi$ sur~$C$, continue et à support
compact, on a
\[ \int_{C} \psi \langle (\ddc g_\eps)^n,\mu\rangle 
\to \lambda_C \psi(0) \]
lorsque $\eps\to 0$.

\begin{rema}\label{rema.lambda-projection}
Comme ci-dessus,
considérons un cône polyédral calibré~$C\subset\R^m$ de sommet~$\xi$,
et soit $I$ l'ensembles des~$i$ tels que $\xi_i=\max(\xi_1,\dots,\xi_m)$.
Soit $p_I\colon \R^m\to\R^I$ la projection naturelle.
Alors $p_I(C)$ est un cône polyédral de~$\R^I$, de sommet~$p_I(\xi)$.
Posons $C'=C^{(p_I)}$ ; c'est le plus grand sous-espace paralinéaire
de~$C$ tel que  
l'application linéaire~$p_I$ induise une immersion par morceaux affine
surjective de~$C'$ sur~$p_I(C)^{(n)}$; c'est un cône.
Munissons alors $p_I(C)^{(n)}$
(ou $p_I(C)$, si l'on veut) du calibrage image de la restriction à~$C'$
du calibrage de~$C$. Il résulte de la construction
que l'on a $\lambda_C=\lambda_{p_I(C)}$.
\end{rema}

\begin{prop}\label{prop.ma-berk-local}
Soit $X$ un espace tropical de niveau~$n$, sans bord.
Soit $(f_1,\dots,f_m)$ une famille
finie non vide de fonctions affines sur~$X$ et soit $f\colon X\ra\R^m$
la carte tropicale correspondante. Soit $j\in \{1,\ldots,m\}$. 

Posons $g=\max(f_1,\dots,f_m)$.
Pour tout~$\eps>0$ et tout $x\in X$,
posons $g_\eps(x)= \eps \log(\sum_{i=1}^m \exp(f_i(x)/\eps))$.
Les fonctions~$g_\eps$ sont lisses, psh, tropicalisées par~$f$;
de plus, $g_\eps$ converge uniformément vers~$g$ quand $\eps$~tend vers~$0$.
En particulier, la fonction $g=\max({f_1},\dots,{f_m})$
sur~$X$ est psh. % tropicalement psh-approchable. 

Pour tout~$x\in \Sigma_f^{(n)}$, notons
$\Pi_{f,x}$ le cône tropical de $f$ en 
$x$ (définition \ref{defi.cone-tropical})
et ~$\lambda_x$ le nombre réel
positif ou nul associé au cône calibré $\Pi_{f,x}$ de~$\R^m$. 
L'ensemble des $x\in \Sigma_f^{(n)}$ tels que $\lambda_x\neq 0$ est fermé
et discret dans~$X$ et l'on a
\[ \MA(g)=\sum_{x\in \Sigma_f^{(n)}} \lambda_x \delta_x.\]
En particulier, $\MA(g)$ est nul au voisinage de~$x$
si la dimension tropicale de~$(f_i-f_j)_i$ en $x$ est $<n$. 
\end{prop}
\begin{proof}
Soit $h$ la fonction~$(x_1,\dots,x_m)\mapsto\max(x_1,\dots,x_m)$ sur~$\R^m$;
pour tout $\eps>0$, soit $h_\eps$ la fonction
donnée par $(x_1,\dots,x_m)\mapsto \eps\log(\sum_{i=1}^m \exp(x_i/\eps))$.
D'après le lemme~\ref{lemm.approx-max}, la fonction~$h_\eps$ sur~$\R^m $
est convexe et
l'on a $g=f^*h=\lim_{\eps\ra0^+}f^*h_\eps$ (uniformément).
D'après la proposition~\ref{prop.convexe-psh}, la fonction~$g$ sur~$X$
est donc psh
et la mesure positive~$\MA(g)$ est limite
de mesures lisses $ \ddc (f^*h_\eps)^n$.

La partie squelettique $\Sigma_f^{(n)}$ de~$X$
est un support fort des formes $\ddc(f^*h_\eps)^n$;
elle contient donc le support de la mesure associée et,
par passage à la limite, le support de la mesure~$\MA(g)$.

Soit $x\in X$. 
D'après ce qui précède, la mesure~$\MA(g)$
est nulle au voisinage de~$x$ si $x\not\in\Sigma_f^{(n)}$.
% Supposons que
% la dimension tropicale de~$f$ en~$x$ soit~$<n$. Il existe
% alors un voisinage domanial compact~$V$ de~$x$
% tel que $f(V)$ soit un sous-espace paralinéaire
% de dimension~$<n$; 
% la restriction à~$V$ de $\ddc (f^*h_\eps)^n$ est donc nulle,
% pour tout~$\eps$, de sorte que la mesure $\MA(g)$ est nulle sur~$V$.
On suppose dans la suite que $x\in\Sigma_f^{(n)}$. 

Soit $I$ l'ensemble des $i\in\{1,\dots,m\}$ tels que $g(x)={f_i(x)}$.
On a donc $g=\max_{i\in I}({f_i})$ au voisinage de~$x$.

Supposons que $x\not\in\Sigma_{p_I\circ f}^{(n)}$; dans ce cas,
le même argument que précédemment entraîne que~$\MA(g)$ est nulle 
au voisinage de~$x$. D'autre part, le cône
$p_I(\Pi_{f,x})$ est de dimension~$<n$, et donc $\lambda_x=0$
d'après la remarque~\ref{rema.lambda-projection}.
Sinon, on peut considérer le cône tropical de $p_I\circ f$
en~$x$, et son invariant~$\lambda$ coïncide avec celui de~$\Pi_{f,x}$,
comme on le voit en combinant la remarque~\ref{rema.trop-calibrage-projection}
et la remarque~\ref{rema.lambda-projection}.
On peut donc supposer que $I=\{1,\dots,m\}$.
On peut aussi supposer que $X$ est compact.

Soit $\mathscr C$ une décomposition cellulaire de $\Sigma^{(n)}_f$
en $f$-cellules commodes (lemme~\ref{lemm.exists-dec-cell-commode}).
Soit $V$ la réunion dans~$\Sigma^{(n)}_f$ des~$n$-cellules de~$\mathscr C$
qui contiennent~$x$; c'est un voisinage compact de~$x$ dans~$\Sigma^{(n)}_f$.
Posons $\xi=f(x)$.

Soit $\phi$ une fonction continue sur~$X$. 
dont le support est contenu dans~$V$. On a
\[ \int_X \phi \MA(g) = \lim_{\eps\to0^+} \int_X \phi \MA(f^*h_\eps)
 = \lim_{\eps\to0^+} \sum_{\substack{C\in\mathscr C \\ \dim(C)=n \\ x\in C}}
   \int_C  \phi \MA(f^*h_\eps). \]
Fixons une $n$-cellule $C\in\mathscr C$ contenant~$x$ et posons $D=f(C)$.
Par hypothèse, $D$ est une cellule forte de~$\R^m$
et $f$ induit un homéomorphisme paralinéaire de~$C$ sur~$D$;
soit $\psi\colon D\to \R$ l'unique fonction continue telle que
$\psi\circ f|_C=\phi|_C$.
Comme $C$ est une $f$-cellule commode,
l'image par $f$ du calibrage canonique~$\mu_C$ de~$C$
est un vecteur-volume constant~$\mu_D$ de~$D$.
On a  donc
\[ \int_C \phi \MA(f^*h_\eps) =  \int_D \psi \langle\ddc(h_\eps)^n,\mu_D\rangle.
\]
Lorsque $\eps\to 0$, le second membre converge vers $\lambda_D\psi(0)
=\lambda_D\phi(x)$,
où $\lambda_D$ est 
l'invariant~$\lambda$ du cône de sommet~$\xi$ engendré par~$D$,
calibré par le vecteur-volume~$\mu_D$.
Par suite,
\[ \int_X \phi\MA(f^*h_\eps) =\sum_{C\in\mathscr C_n} \int_C \phi\MA(f^*h_\eps)
\to \left(\sum_D \lambda_D \right) \phi(x). \]
Il reste à observer que $\lambda_x=\sum_D \lambda_D$,
ce qui découle du fait que le cône tropical $\Pi_{f,x}$
est le cône de sommet~$\xi$ engendré  par le polyèdre $f_\trop(V)$,
muni de l'image du calibrage canonique.

Nous avons ainsi démontré que $\MA(g)|_U=\lambda_x\delta_x$
pour tout voisinage ouvert~$U$ de~$x$ tel que $U\cap\Sigma_f^{(n)}\subset V$.

En particulier, le support de la mesure~$\MA(g)$ est discret;
il est fermé par construction.
La formule globale $\MA(g)=\sum_{x\in\Sigma_f^{(n)}} \lambda_x\delta_x$ en résulte.
\end{proof}

\section{Produits de courants squelettiques}

Soit $X$ un espace tropical de niveau~$n$, sans bord.

\begin{prop}\label{prop.di(uT)}
Soit $T$ un courant squelettique fermé  de bidimension~$(p,p)$ sur~$X$ et
soit $u$ une fonction G-lisse sur~$X$.

Soit $f$ une carte tropicale sur un ouvert~$U$ de~$X$, 
soit $r$ un entier naturel tel que $r\leq p$,
soit $\omega$ une forme lisse $f$-tropicale de type $(p-r,p)$ et
soit $\rho$ une forme lisse à support compact de type~$(r,0)$ sur~$U$; 
on a 
\[ \langle \di(uT),\rho\wedge\omega \rangle = \int_{P_f^T} \di(u)\wedge\rho\wedge\omega. \]

En particulier, on a $\di(uT)=\di u \wedge T=T\wedge \di u$.
\end{prop}
\begin{proof}
D'après l'exemple~\ref{exem.d'-paralineaire}, % était \ref{exem.d'(Ta)},
on a 
\[ \langle \di(uT),\rho\omega\rangle
= \int_{P_f^T} \di u \wedge \rho\wedge\omega - \int_{P_f^T}^\partial u \rho\wedge\omega. \]
Comme le courant~$T$ est fermé, ce même exemple affirme en outre
que
\[ 
- \int_{P_f^T}^\partial v\rho\wedge\omega 
    = \langle \di T, v\rho\wedge\omega \rangle = 0 \]
pour toute fonction lisse~$v$ sur~$U$.
Par densité des fonctions lisses dans les fonctions continues, 
on a donc $\int_{P_f^T}^\partial u\rho\wedge\omega=0$.
Ainsi, 
\[ \langle \di(uT),\rho\wedge\omega\rangle
= \int_{P_f^T} \di u \wedge \rho\wedge\omega
= \langle T\wedge \di u, \rho\wedge\omega\rangle, \]
en vertu de la proposition~\ref{prop.paralineaire-PfT}.

L'assertion est en particulier vérifiée si $r=0$, auquel cas
$\rho$ est une fonction lisse.
D'après la remarque~\ref{rema.formes-test}, cela démontre
l'égalité $\di(uT)=T\wedge\di u$.
\end{proof}

\begin{prop}\label{prop.ddcuT}
Soit $T$ un courant squelettique fermé de type~$(p,p)$ sur~$X$.
Soit $f\colon X\to \R^n$ une carte tropicale, 
soit $v$ une fonction G-lisse sur~$\R^n$ et soit $u=f^*v$.
Pour toute forme lisse $f$-tropicale~$\omega$ symétrique de type~$(p-1,p-1)$ 
et toute fonction lisse~$\rho$ sur~$X$
telles que le support de~$\rho\omega$ soit compact, on a 
\[ \langle \ddc(uT),\rho\omega\rangle 
  = \int_{P_f^T} \ddc u \wedge \rho\omega
 - \int_{P_f^T}^\partial \dc u \wedge \rho\omega. \]
\end{prop}
\begin{proof}
Soit $W$ un domaine compact de~$X$ qui est un voisinage du support
de~$\rho\omega$.
Soit $(V_i)$ une famille finie de domaines
compacts de~$X$ dont les intérieurs recouvrent~$W$
et tels que $\rho|_{\mathring V_i}$ soit tropicale, pour tout $i$.
Soit $V$ la réunion des~$V_i$.

Pour tout~$i$, soit $f_i$ une carte tropicale sur~$V_i$ tropicalisant~$\rho$
et soit $h_i$ la carte tropicale $(f_i,f|_{V_i})$.
D'après la proposition~\ref{prop.paralineaire-atlas}
et la remarque qui la suit, il existe un sous-espace
squelettique $p$-calibré~$Q$ contenant~$P_f^T\cap W$
tel que, 
pour toute forme lisse~$\theta$ dont le support est contenu
dans~$\mathring W$ de type~$(p-q,0)$
et toute forme lisse~$\eta$  de type~$(q,p)$
dont la restriction à~$\mathring V_i$ est $h_i$-tropicale, pour tout~$i$,
on ait
$ \langle T,\theta\wedge\eta\rangle=\int_Q \theta\wedge\eta$.

On écrit 
\[ \langle \ddc(uT),\rho\omega\rangle
=\langle uT,\ddc(\rho\omega)\rangle
= \langle T,u\ddc(\rho\omega)\rangle. \]
Sur~$V_i$, la forme $\ddc(\rho\omega)$  est $h_i$-tropicale;
la fonction~$u$ est G-lisse. Par construction de~$Q$, on a donc
\[ \langle T,u\ddc(\rho\omega)\rangle
= \int_Q u \ddc(\rho\omega). \]

Appliquons la formule de Green (proposition~\ref{prop.stokes-pl}),
ce qui est loisible puisque $u$ et~$\rho\omega$ sont symétriques;
on a:
\[ \int_Q u \ddc(\rho\omega) = \int_Q \ddc u\wedge\rho\omega
+ \int_Q^\partial u \dc(\rho\omega)- \dc u \wedge\rho\omega. \]

La G-forme $\ddc u \wedge\rho\omega$ sur~$X$ est
de type $(p,p)$; pour tout~$i$, sa restriction à~$V_i$
est $h_i$-tropicale;  son support est contenu dans~$K$.
En appliquant la proposition~\ref{prop.paralineaire-atlas},
on obtient donc
\[ \int_Q \ddc u \wedge\rho\omega
 = \langle T, \ddc u \wedge\rho\omega \rangle.
\]
La G-forme $\ddc u \wedge\omega$ sur~$X$ est $f$-tropicale
et de type~$(p,p)$.
Compte tenu de la proposition~\ref{prop.paralineaire-PfT}, on a ainsi
\[ \langle T, \ddc u \wedge\rho\omega \rangle 
= \langle T, \rho\ddc u \wedge\omega \rangle 
= \int_{P_f^T} \rho\ddc u \wedge\omega
= \int_{P_f^T} \ddc u \wedge\rho\omega
.\]

La fonction G-lisse~$v$ sur~$\R^n$ est limite localement
uniforme de fonctions lisses~$v_n$;
posons $u_n=f^*v_n$.
Pour tout~$n$, on peut écrire
\[
 \int_Q^\partial u_n \dc(\rho\omega)   = \int_Q \di(u_n\dc(\rho\omega))\\
 = \langle T, \di(u_n \dc(\rho\omega))\rangle, \]
en appliquant la proposition~\ref{prop.paralineaire-atlas}
à la forme $\di(u_n\dc(\rho\omega))$.
Puisque $T$ est fermé, on a ensuite
\[ \int_Q^{\partial} u_n \dc(\rho\omega)=
 - \langle \di T, u_n \dc(\rho\omega)\rangle= 0.
\]
En passant à la limite, on obtient 
\[ \int_Q^\partial u\dc(\rho\omega) = 0. \]

Démontrons maintenant que 
\[ \int_Q^\partial \dc u \wedge\rho\omega
= \int_{P_f^T}^\partial \dc u \wedge\rho\omega. \]
On commence par appliquer le théorème de Stokes
et la proposition~\ref{prop.paralineaire-atlas}:
\[ \int_Q^\partial \dc u \wedge\rho\omega
= \int_Q \di (\dc u \wedge\rho\omega)
= \langle T, \di (\dc u \wedge\rho\omega)\rangle . \]
On a 
\begin{align*}
 \di (\dc u \wedge\rho\omega) 
& = \ddc u \wedge\rho\omega
- \dc u \wedge \di (\rho\omega)  \\
& = \ddc u \wedge\rho\omega
- \dc u \wedge \di \rho\omega 
- \dc u \wedge \rho\di\omega .
\end{align*}
Les G-formes $\ddc u \wedge\omega$,
$\dc u \wedge\omega$ et $\dc u \wedge\di\omega$
sont $f$-tropicales de type $(p,p)$, $(p-1,p)$
et $(p,p)$ respectivement.
D'après la proposition~\ref{prop.paralineaire-atlas}, on a donc
\begin{align*}
\int_Q^\partial \dc u \wedge\rho\omega 
& =  \int_Q \left( \ddc u \wedge\rho\omega
- \dc u \wedge \di \rho\omega 
-  \dc u \wedge \rho\di\omega \right) \\
& = \langle T,  \ddc u \wedge\rho\omega
- \dc u \wedge \di \rho\omega 
-  \dc u \wedge \rho\di\omega \rangle \\
& =  \int_{P_f^T} \left( \ddc u \wedge\rho\omega
- \dc u \wedge \di \rho\omega 
-  \dc u \wedge \rho\di\omega \right) \\
& = \int_{P_f^T} \di(\dc u \wedge\rho\omega) \\
& = \int_{P_f^T}^\partial \dc u \wedge\rho\omega
. \end{align*}

Cela démontre la formule annoncée.
\end{proof}

\begin{theo}\label{theo.ddcuT-paralineaire}
Soit $X$ un espace tropical de niveau~$n$, sans bord.
Soit $T$ un courant squelettique \emph{fermé} 
de bidimension~$(p,p)$ sur~$X$ et soit $u$ une fonction paralinéaire sur~$X$.
Le courant $\ddc (uT)$ est squelettique.
\end{theo}
\begin{proof}
L'assertion étant locale, on peut supposer que $X$ est paracompact
et que $u$ est tropicale (proposition~\ref{prop.local-G-local}).

Il suffit de prouver que pour toute carte tropicale~$f$ sur
un ouvert~$U$ de~$X$,
il existe une partie squelettique $(p-1)$-calibrée~$Q_f^T$ de~$U$
telle que pour
toute forme~$\omega$ $f$-tropicale de type~$(p-1,p-1)$, symétrique
et toute fonction lisse à support compact~$\rho$ sur~$U$, 
on a $\langle\ddc(uT),\rho\omega\rangle=\int_{Q_f^T} \rho\omega$.
On peut supposer la carte~$f$ assez fine pour que $u$
soit $f$-tropicale. Pour simplifier les notations, on suppose $U=X$.

Le courant $\ddc(uT)$ est symétrique.

Comme $u$ est paralinéaire, la G-forme $\ddc u$ 
sur~$X$ est nulle, de sorte que, d'après la proposition~\ref{prop.ddcuT},
on a 
\[ \langle \ddc (uT),\rho\omega \rangle
 = - \int_{P_f^T}^\partial  \dc u \wedge\rho\omega \]
pour toute forme lisse $f$-tropicale~$\omega$
de type $(p-1,p-1)$ et toute fonction lisse~$\rho$
telles que $\rho\omega$ soit à support compact.

Soit $\mathscr C$ une décomposition cellulaire de~$P_f^T$ 
en $f$-cellules,
 assez fine pour que la restriction de~$u$
à chaque cellule de~$\mathscr C$ soit $f$-affine.
Pour tout entier~$d$,
notons~$\mathscr C_d$ l'ensemble des cellules de dimension~$d$ de~$\mathscr C$.

On a 
\[  \int_{P_f^T}^\partial  \dc u \wedge\rho\omega
= \sum_{C\in\mathscr C_p}  \int_C^\partial \dc u \wedge\rho\omega. \]

L'intégrale de bord~$\int_C^\partial \dc u\wedge\rho\omega$ 
est une somme d'intégrales
sur des cellules~$D\in\mathscr C_{p-1}$ de réunion~$\partial(C)$.
La formule précédente montre que la forme linéaire
$\rho\mapsto \langle \ddc(uT),\rho\omega\rangle$
sur l'espace des fonctions lisses à support compact
est une mesure supportée par la réunion de ces cellules,
absolument continue par rapport à leur mesure de Lebesgue.
Pour déterminer cette mesure,
il suffit donc de la calculer après restriction à l'intérieur
d'une telle cellule~$D\in\mathscr C_{p-1}$.
On peut ainsi supposer que le support de~$\rho$ ne rencontre
aucune autre cellule de dimension~$p-1$ que~$D$.

Choisissons une forme affine~$v_D$ sur~$\R^n$
telle que $u|_D= f|_D^*v_D$. 
Si l'on remplace~$u$ par $u-f^*v_D$, on ne change pas le courant à calculer, 
car $f^*v_D$ est lisse et $T$ est fermé, de sorte 
que $\ddc(f^*v_D T)=\ddc(f^*v_D)\wedge T=0$,
étant donné que $\ddc(f^*v_D)=0$. 
On peut donc supposer que $u|_D=0$.

Choisissons une orientation~$\eps_D$ de~$f(D)$
et un $(p-1)$-vecteur non nul~$\nu_D$ sur~$f(D)$, 
de sorte que $f^*[\nu_D,\eps_D]$ est un vecteur-volume non nul sur~$D$.

Soit $C\in\mathscr C_p$ une cellule telle que $D\subset C$.
Travaillons un instant en coordonnées: 
supposons donc que $D=\{0\}\times\R^{p-1}$ et $C=\R_-\times\R^{p-1}$ 
munis de leurs orientations canoniques~$\eps_D$ et~$\eps_C$. Munissons~$D$
du $(p-1)$-vecteur $\nu_D=e_2\wedge\dots\wedge e_p$ et~$C$
du $p$-vecteur $\nu_C=  e_1\wedge\dots\wedge e_p$.
Il existe un unique nombre réel~$\lambda_C$
tel que le vecteur-volume de~$C$ soit donné par $\mu_C=\lambda_C[\nu_C,\eps_C]$.
Il existe aussi un unique nombre réel~$a_C$
tel que l'on ait $u(x_1,\dots,x_p)=a_C x_1$ sur~$C$.
On peut uniquement décomposer la forme~$\omega|_C$ sous la forme
\[ \omega|_C = \omega_0+ \di x_1 \wedge\omega_1 + \dc x_1 \wedge\omega_2
+ \di x_1\wedge \dc x_1\wedge\omega_3, \]
où les $\omega_i$ sont des formes ne faisant intervenir que
les variables $x_2,\dots,x_p$, respectivement de type
$(p-1,p-1)$, $(p-2,p-1)$, $(p-1,p-2)$ et $(p-2,p-2)$.
Avec ces notations,  on a aussi $\omega|_D=\omega_0|_D$.
Puisque le support de~$\rho$ ne rencontre aucune face de~$\partial(C)$
autre que~$D$,
on a 
\begin{align*}\allowdisplaybreaks[1]
 \int^\partial_C \langle \dc u \wedge\rho \omega ,
\mu_C \rangle
& = \int_D a_C \lambda_C \langle \dc x_1 \wedge \rho \omega, [\nu_C,\eps_C] \rangle\\
& = \int_D a_C \lambda_C \langle \dc x_1 \wedge \rho \omega, [e_1\wedge \nu_D,\eps_C] \rangle\\
& = \int_D a_C \lambda_C \langle \dc x_1 \wedge \rho (\omega_0+\di x_1\wedge\omega_1), [e_1\wedge\nu_D,\eps_C] \rangle\\
& = a_C\lambda_C  \int_D \langle \rho (\omega_0+\di x_1\wedge\omega_1)|_D,[\nu_D,\eps_D] \rangle \\
& = a_C\lambda_C \int_D \langle \rho\omega_0|_D,\nu_D\rangle\\
& =a_C\lambda_C  \int_D \langle \rho\omega|_D,\nu_D\rangle.
\end{align*}

Soit $\mu_D$ 
le $(p-1)$-vecteur-volume sur~$D$ défini par
$\mu_D=(\sum_{C\supset D} a_C\lambda_C) f^*[\nu_D,\eps_D]$.
Comme le support de~$\rho$ ne rencontre
aucune face de~$\partial(C)$ autre que~$D$, on a  
\[ \langle \ddc (uT),\rho\omega\rangle
  =  \int_D  \langle \rho\omega,\mu_D\rangle. \]

Lorsque~$D$ varie,
ces vecteurs-volumes~$\mu_D$ munissent la réunion~$Q$ des cellules
de dimension~$p-1$ de~$\mathscr C$ d'un $(p-1)$-calibrage~$\mu_Q$
telle que pour toute forme lisse~$\omega$,
$f$-tropicale et de type~$(p-1,p-1)$
et toute fonction lisse~$\rho$ 
telles que le support de~$\rho\omega$ soit compact, on ait
\[ \langle \ddc(uT),\rho\omega\rangle = \int_Q \theta\omega. \]

Cela conclut la démonstration du théorème.
\end{proof}

\begin{rema}
Soit $T$ un courant squelettique \emph{fermé}.
Les arguments de la proposition~\ref{prop.ddcuT} et du théorème~\ref{theo.ddcuT-paralineaire}
entraînent un résultat plus général sur la structure du courant
$\ddc(uT)$ lorsque $u$ est seulement supposée G-lisse.
D'après la proposition~\ref{prop.ddcuT},
ce courant est la somme de deux termes.

Le premier terme est le courant $T\wedge \ddc u$,
produit du courant paralinéaire~$G$ par la $G$-forme $\ddc u$.

Le second terme, donné par $\rho\omega\mapsto -\int_{P_f^T}^\partial
\dc u\wedge\rho\omega$ lorsque $\omega$ et $u$ sont $f$-tropicales,
pour un moment~$f$, a été explicité dans la preuve
du théorème~\ref{theo.ddcuT-paralineaire} lorsque $u$ est paralinéaire.
Cependant, en reprenant les arguments de cette démonstration,
on constate qu'on y construit (si $X$ est paracompact
et $u$ $f$-tropicale) un espace paralinéaire~$Q_f^T$
muni d'un « calibrage G-lisse » tel que ce terme
soit égal à $\rho\omega\mapsto \int_{Q_f^T} \rho\omega$,
lorsque $\omega$ est est $f$-tropicale
et $\rho\omega$ à support compact.
Dans les calculs de \emph{loc. cit.}, il suffit
en effet de remplacer le nombre réel~$a_C$ par la dérivée
partielle $\partial u/\partial x_1$, qui est une fonction
lisse sur chaque cellule~$D$.
\end{rema}

\begin{coro}\label{coro.ddcuT-paralineaire}
Soit $X$ un espace tropical de niveau~$n$, sans bord, 
soit $T$ un courant squelettique fermé sur~$X$ 
et soit $u_1,\dots,u_m$ des fonctions paralinéaires sur~$X$.
Le courant $\ddc(u_1)\wedge\dots \wedge \ddc(u_m)\wedge T$ 
est paralinéaire.
\end{coro}
\begin{proof}
Compte tenu de la définition
\[ \ddc(u_1) \wedge\dots \wedge \ddc(u_m)\wedge T
 \ddc ( u_1 \wedge \ddc(u_2)\wedge \dots \wedge \ddc(u_m)\wedge T), \]
le corollaire se déduit du théorème~\ref{theo.ddcuT-paralineaire}
par récurrence sur~$m$.
\end{proof}

\section{Tropicalisations harmoniques et courants de Bedford--Taylor}

On démontre ici plusieurs conséquences du théorème~\ref{theo.harmonique-bord-nul}
pour les courants de Bedford--Taylor.

\begin{prop}\label{prop.di-T-wedge=}
Soit $X$ un espace tropical de niveau~$n$, sans bord,
soit $T$ un courant squelettique sur~$X$ qui est de Bedford--Taylor
% paralinéaire fermé sur~$X$ 
et soit $\alpha$ une G-forme sur~$X$
qui est localement harmoniquement tropicalisable.
On a les égalités
\[ \di(T \wedge\alpha) =  T \wedge \di\alpha 
\quad\text{et}\quad
 \dc(T \wedge\alpha) = T \wedge \dc\alpha .\]
\end{prop}
\begin{proof}
Par partition de l'unité, il suffit de démontrer 
cette égalité de courants contre une forme du type $\rho\omega$ 
où $\rho$ est une fonction lisse dont
le support est contenu l'intérieur~$\mathring W$ d'un domaine
compact~$W$ de~$X$ 
telles qu'il existe une carte tropicale~$f$ sur~$W$ 
qui tropicalise~$\alpha$ et $\omega$;
on exige également que~$\alpha$ s'écrive~$u^*\beta$,
où $u\colon W\to\R^d$ est une carte G-tropicale 
dont la restriction à~$\mathring W$ est harmonique.

Notons $(p,q)$ le bidegré de~$\alpha$;
par définition de $T\wedge\alpha$, $T\wedge\di\alpha$
et du calcul différentiel sur les courants, on a
\[ \langle T\wedge\di\alpha, \rho\omega\rangle
= \int_{P_f^T} \di\alpha\wedge\rho\omega \]
et
\[ \langle \di(T\wedge\alpha), \rho\omega\rangle
= (-1)^{p+q+1}\langle T\wedge\alpha,\di(\rho\omega) \rangle 
= (-1)^{p+q+1}\int_{P_f^T} \alpha\wedge\di(\rho\omega) .\]
Il s'agit de prouver que
\[ 0 = \int_{P_f^T} (\di \alpha \wedge \rho\omega + (-1)^{p+q} \alpha \wedge\di(\rho\omega)) 
= \int_{P_f^T} \di(\alpha \wedge \rho\omega) . \]

En remplaçant $\alpha$ par~$\alpha\wedge\omega$ 
et en adjoignant~$f_\trop$ à~$u$, on se ramène
au cas où $\omega=1$;
la forme $\alpha$ est alors de type~$(p,p)$.
Le courant~$T$ est de type~$(p,p)$
et $P_f^T$ est purement de dimension~$p$.
Notons $\beta$ une forme lisse au voisinage de~$u(W)$ telle
que $\alpha=u^*\beta$.

Notons~$\Sigma$ un voisinage paralinéaire compact
de~$P_f^T\cap \supp(\rho)$ dans~$P_f^T$.
Soit $\mathscr C$ une décomposition cellulaire de~$\Sigma$
qui est adaptée à son $p$-calibrage
et telle les~$u_i$ soient affines sur chaque cellule.
Pour toute cellule~$C$ de dimension~$p$, notons~$\mu_C$
son $p$-calibrage.

Par la formule de Stokes, on a 
\[
\int_{P_f^T} \di(\rho\alpha) = 
\int_{\Sigma} \di(\rho u^*\beta) = 
 \int_{\Sigma}^\partial \rho u^*\beta
 = \sum_{D\in\mathscr C_{p-1}}
\int_D \left(
  \sum_{C\supset D} \langle (\rho u^*\beta)_C, \mu_C\rangle\right). \]

Soit $D$ une $(p-1)$-cellule. 
Si $C$ est une cellule contenant~$D$ telle que l'application
affine~$u|_C$ ne soit pas injective, on a $u^*\beta=0$ sur~$C$.
Si~$\mathscr C_D$ est l'ensemble des cellules~$C\in\mathscr C$ 
de dimension~$p$ qui contiennent~$D$ et telles que l'application affine~$u|_C$
soit injective, on a donc
\[
 \int_\Sigma^\partial \rho u^*\beta
 = \sum_{D\in\mathscr C_{p-1}}
\int_D \left(
  \sum_{C\in \mathscr C_D} \langle (\rho u^*\beta)_C, \mu_C\rangle\right). \]
Si $u|_D$ n'est pas injective, alors $u|_C$ n'est injective 
pour aucune cellule~$C$ contenant~$D$, donc $\mathscr C_D$ est vide.

Supposons que $u|_D$ soit injective et notons~$u_D^{-1}$
la bijection réciproque de $u_D\colon D\to u(D)$.
Soit  $C\in\mathscr C_D$;
notons $u_C^{-1}$ la bijection
réciproque de~$u|_C \colon  C \to u(C)$; sa restriction à~$u(D)$
coïncide avec~$u_D^{-1}$.
Sur $D$, on a l'égalité de $(p-1,0)$-formes 
\[ \langle (\rho u^*\alpha)_C, \mu_C \rangle
= \rho  u_C^* \langle \alpha_{u(C)} , u_*\mu_C\rangle ,  \]
donc 
\[ \int_D \langle (\rho u^*\alpha)_C, \mu_C \rangle
= \int_{D} \rho u_C^* \langle \alpha_{u(C)}, u_*\mu_C\rangle
= \int_{u(D)} \rho\circ u_D^{-1} \langle \alpha_{u(C)}, u_*\mu_C\rangle. 
\]
La contraction d'une $(p-1,p)$-forme sur~$\R^p$
avec un $p$-vecteur-volume est une opération ponctuelle;
comme $\alpha$ est supposée lisse, 
on peut écrire 
\[ \langle \alpha_{u(C)}, u_*\mu_C\rangle
= \langle \alpha, u_*\mu_C\rangle |_C. \]
Alors
\[ \int_{u(D)} \rho\circ u_D^{-1} \langle \alpha_{u(C)}, u_*\mu_C\rangle
= \int_{u(D)} \rho\circ u_D^{-1} \langle \alpha, u_*\mu_C\rangle  \]
et
\[ \int_{D} \sum_{C\in\mathscr C_D}
\langle (\rho u^*\alpha)_C, \mu_C \rangle
= \int_{u(D)} \rho\circ u_D^{-1} \langle \alpha, \sum_{C\in\mathscr C_D} u_*\mu_C\rangle. \]

Il suffit donc de prouver l'égalité
\[ \sum_{C\in\mathscr C_D} u_*\mu_C = 0  \]
pour toute cellule~$D$ de dimension~$(p-1)$ de~$\mathscr C$.

Utilisons l'hypothèse que toute composante~$v$
de~$u$ est harmonique: comme $T$ est un courant de Bedford--Taylor,
on a donc $\ddc(vT)=0$ sur~$\mathring W$.
Si $\omega$ est une forme lisse $f$-tropicale 
et $\rho$ est une fonction lisse
telles que $\rho\omega$ soit à support compact dans~$\mathring W$, 
la proposition~\ref{prop.ddcuT} fournit l'égalité  
\[ 0 = \langle \ddc(vT),\rho\omega\rangle
= \int_{\Sigma} \ddc v \wedge\rho\omega
- \int_\Sigma^\partial \dc v \wedge \rho\omega
= - \int_\Sigma^\partial \dc v \wedge \rho\omega \]
puisque $v$ est paralinéaire.

Fixons une $(p-1)$-cellule~$D\in\mathscr C$
et soit $x\in\mathring D$.
Considérons un morphisme de tores $p\colon T\to \gm^{p-1}$
tel que la composée $(p\circ f)_\trop $
induise une immersion par morceaux d'un voisinage
de~$x$ dans~$D$ vers~$\R^{p-1}$.

Dans la suite, on va se contenter de considérer des formes~$\omega$
du type $f^*p^*\gamma$, où 
$\gamma$ est une $(p-1,p-1)$-forme lisse sur~$\R^{p-1}$,
et des fonctions lisses~$\rho$ 
dont le support ne rencontre aucune $p$-cellule de~$\Sigma$
autres que celles qui contiennent~$D$.

Choisissons un $(p-1)$-vecteur $\mu_D$, une orientation~$\eps_D$.
Pour toute $p$-cellule de~$\mathscr C$ contenant~$D$,
on en déduit une orientation de~$C$ (par la règle de la normale sortante)
et un vecteur~$e_C$ dans~$C$ tel que le 
calibrage canonique de~$C$ soit égal à $[e_C\wedge \mu_D, \eps_C]$.
Fixons aussi une fonction affine~$\phi_C$ sur~$C$ qui est nulle sur~$D$
et telle que $\phi_C(e_C)=1$.

% On peut écrire $\omega|_C= \omega_C + \di\phi_C \wedge\omega'_C
% + \dc\phi_C\wedge\omega''_C+ \di\phi_C\wedge\dc\phi_C\wedge \omega'''_C$,
% où les quatre formes $\omega_C,\dots,\omega'''_C$
% ne font pas intervenir la coordonnée~$\phi_C$;
% elles sont respectivement de type $(n-1,n-1)$, $(n-2,n-1)$, $(n-1,n-2)$
% et $(n-2,n-2)$.

Soit $a_C=\partial_{e_C}(v|_C)$ la dérivée de~$v|_C$ dans la direction de~$e_C$;
alors $v_C=v-a_C \phi_C$ est invariant par~$e_C$ et donc provient de~$D$
par la projection de~$C$ sur~$D$ parallèlement à~$e_C$.
Sur $C$, on a donc $\dc v= \dc v_C+ a_C \dc\phi_C$.
Compte tenu de la construction de~$\omega=f^*p^*\gamma$,
lorsqu'on calcule le produit extérieur $\dc v\wedge \omega$,
qu'on le contracte par le calibrage canonique~$\mu_C$
et qu'on le restreint à~$D$, il reste
\[  a_C \theta \langle  f^*p^*\gamma|_D, \mu_D\rangle. \]
On a donc
\[ \int_D \sum_{C\supset D} a_C \rho \langle f^*p^*\gamma, \mu_D\rangle = 0. \]
Comme $\theta$ et $\beta$ sont arbitraires, on en déduit
$ \sum_{C\supset D} a_C f_*p^*\mu_D = 0$;
en simplifiant par le $(p-1)$-vecteur non nul $p_*f_*\mu_D$, on a ainsi
prouvé l'égalité
\[ \sum_{C\supset D} \partial_{e_C} (v|_C) = 0. \]

Avec les notations précédentes,
$\mu_C=\mu_D\wedge e_C$ et $u_*(\mu_C)=u_*(\mu_D)\wedge u_*(e_C)$.
Le premier facteur $u_*(\mu_D)$ est indépendant de~$C$,
le second est le vecteur dont les coordonnées
sont les $a_C \phi_C(e_C)= \partial_{e_C}(v|_C)$, 
lorsque $v$ parcourt les coordonnées de~$u$.
On a donc
\[ \sum_{D\supset C} u_*(\mu_C) = u_*(\mu_D) \wedge \sum_{C\supset D} u_*(e_C)
= 0 \]
puisque les composantes du vecteur $\sum_C u_*(e_C)$ sont données
par $\sum_C \partial_{e_C}(v|_C)=0$.
\end{proof}

\begin{coro}\label{coro.produit-BT=produit-G}
Soit $T$ un courant squelettique sur~$X$,
localement différence de courants de Bedford--Taylor,
et soit $u_1,\dots,u_d$ des fonctions G-lisses,
localement harmoniquement tropicalisables.
Alors le produit
\[ T \wedge \ddc (u_1) \wedge \dots \wedge \ddc (u_d) \]
pris au sens de la théorie de Bedford--Taylor coïncide avec le produit
du courant squelettique~$T$ par la G-forme
$\ddc(u_1)\wedge \dots \wedge \ddc(u_d)$.
\end{coro}
\begin{proof}
Pour résoudre le conflit de notations, désignons dans cette
preuve par $\cap$ le produit à la Bedford--Taylor
et par $\wedge$ le produit naïf entre G-formes,
ou celui d'un courant paralinéaire par une G-forme.
Il s'agit de prouver
\[ T \cap  \ddc (u_1) \cap \dots \cap \ddc (u_d) 
= T \wedge \ddc (u_1) \wedge \dots \wedge \ddc (u_d) .\]
Par récurrence, on peut supposer que
\[ T \cap  \ddc (u_1) \cap \dots \cap \ddc (u_{d-1}) 
= T \wedge \ddc (u_1) \wedge \dots \wedge \ddc (u_{d-1}) .\]
Ce courant~$S$ est localement différence de courants de Bedford--Taylor;
il s'agit alors de démontrer que $S\cap \ddc(u_1)=S\wedge \ddc(u_1)$. On est
ainsi ramené au cas $d=1$ et $S=T$. Alors, on a
\[ T \cap \ddc(u_1)=\ddc (u_1 T)= \di \big( \dc (u_1 T))
= \di (T \wedge \dc u_1) = T \wedge \ddc u_1  \]
d'après la proposition~\ref{prop.di-T-wedge=} appliquée avec $\dc$
puis avec $\di$.
Cela conclut la démonstration.
\end{proof}

\begin{coro}\label{coro.wedge-weak-psh}
Soit $X$ un espace tropical de niveau~$n$, sans bord.
Soit $T$ un courant positif fermé sur~$X$
et soit $u$ une fonction continue,
localement de la forme $f^* v$, où $f$ est une carte harmonique
et $v$ est une fonction convexe.
Alors $u$ est dpsh et le courant $T\wedge \ddc u$ sur~$X$ est positif.
\end{coro}
\begin{proof}
Le résultat est local sur~$X$; on peut donc supposer
qu'il existe une telle carte harmonique sur~$X$.
Par approximation, on peut également supposer que $v$ est lisse.
Alors, $u$ est $G$-lisse, de sorte que $u$ est dpsh.
De plus, la G-forme~$\ddc u$ est égale à $f^*(\ddc v)$, donc est positive.
Le corollaire précédent identifie le courant $T\wedge \ddc u$
au produit de~$T$ par la G-forme positive $\ddc u$.
par définition de ce dernier courant, il est positif.
\end{proof}

\section{Négligeabilité des ensembles polaires}

\begin{theo}\label{theo.trop-psh-locint}
Soit $X$ un espace tropical de niveau~$n$, sans bord.
On suppose que tout point~$x$ de~$X$ possède une base de voisinages ouverts~$U$
sur lesquels il existe une fonction psh lisse $\rho\colon U\to\R$
telle que l'ouvert de~$U$ défini par $\{\rho < 0 \}$ soit 
un voisinage relativement compact de~$x$ dans~$U$.

Soit $u_1,\dots,u_n$ des fonctions psh continues sur~$X$
qui sont localement approchables,
soit $V$ une fonction psh localement approchable sur~$X$.
Si $V$ est localement intégrable sur~$X$, est elle est localement intégrable
pour la mesure $\ddc u_1\wedge\dots\ddc u_n$ sur~$X$.
\end{theo}

\begin{proof}
L'assertion est locale sur~$X$.  
Soit $x$ un point de~$X$.
Comme les fonctions~$u_i$ et la fonction~$V$ sont localement approchables,
on peut donc supposer que les~$u_i$
sont limite uniforme de suites $(u_{i, \ell})_\ell$
et que $V$ est limite décroissante d'une suite $(V_p)_p$ 
de fonctions psh lisses sur~$X$.
Compte tenu de l'hypothèse sur~$X$, on peut en outre
supposer qu'il existe  une fonction psh lisse~$\rho$ sur~$X$
telle que l'ouvert~$U$ défini par $\rho<0$ soit un voisinage
relativement compact de~$x$.

Reprenons les notations de la proposition~\ref{prop.cln-2}
en prenant $m=n$ et $a=3$.
Pour tous entiers~$\ell$ et~$p$, on a donc l'inégalité
\[ \int_U \beta_n \wedge (-V_p) \ddc u_{1,\ell}\wedge \dots\wedge \ddc u_{n,\ell}
\leq C(n,3) \norm{u_{1,\ell}}_U \dots \norm{u_{n,\ell}}_U \int_U (-V_p) \beta_0 . \]
Comme $\beta_n$ est nulle hors de~$U$, on peut récrire cette inégalité
\[ \int_X \beta_n \wedge (-V_p) \ddc u_{1,\ell}\wedge \dots\wedge \ddc u_{n,\ell}
\leq C(n,3) \norm{u_{1,\ell}}_U \dots \norm{u_{n,\ell}}_U \int_U (-V_p) \beta_0 . \]
Faisons tendre $\ell$ vers~$+\infty$;
la proposition~\ref{prop.BT-continu} 
et l'exemple~\ref{prop.courant.positif-mesure} entraînent
\[ \int_X \beta_n \wedge (-V_p) \ddc u_{1}\wedge \dots\wedge \ddc u_{n}
\leq C(n,3) \norm{u_{1}}_U \dots \norm{u_{n}}_U \int_U (-V_p) \beta_0 . \]
Faisons maintenant tendre~$p$ vers~$+\infty$; par convergence monotone,
on obtient
\[ \int_X \beta_n \wedge (-V) \ddc u_{1}\wedge \dots\wedge \ddc u_{n}
\leq C(n,3) \norm{u_{1}}_U \dots \norm{u_{n}}_U \int_U (-V) \beta_0 . \]
La fonction~$\beta_n$ est continue et strictement positive sur~$U$.
Par suite, $(-V)$ est localement intégrable sur~$U$ 
pour la mesure $\ddc u_1\wedge\dots\wedge\ddc u_n$.
\end{proof}

\begin{coro}\label{coro.local-int}
Soit $X$ un espace tropical comme dans le théorème~\ref{theo.psh-locint}.
Soit $u_1,\dots,u_m$ des fonctions psh continues sur~$X$
qui sont localement approchables,
soit $\alpha$ une forme lisse de type~$(n-m,n-m)$ sur~$X$,
et soit $V$ une fonction psh localement approchable sur~$X$.
Si $V$ est localement intégrable sur~$X$, est elle est localement intégrable
pour la mesure $\ddc u_1\wedge\dots\ddc u_m\wedge\alpha$ sur~$X$.
\end{coro}
\begin{proof}
L'assertion est locale sur~$X$. 
On peut donc supposer que la forme~$\alpha$ est tropicale, 
du type $f^*\omega$, où $f\colon X\to\R^N$ est une carte affine. 
Soit $x\in X$. Pour vérifier la propriété d'intégrabilité
au voisinage de~$x$, on peut remplacer~$\alpha$ par $\rho\alpha$,
où $\rho$ est une fonction lisse à support compact, non nulle en~$x$.
Soit $v=f^*\norm{t}^2$; il existe alors un nombre réel~$c$ tel que
$- c (\ddc v)^{n-m}\leq\alpha\leq c(\ddc v)^{n-m}$.
Le théorème précédent, appliqué aux fonctions $u_1,\dots,u_m,v,\dots,v$,
entraîne le résultat voulu.
\end{proof}

\begin{coro}\label{coro.local-polaire}
Soit $X$ un espace tropical comme dans le théorème~\ref{theo.trop-psh-locint}.
Soit $u_1,\dots,u_m$ des fonctions psh continues sur~$X$
qui sont localement approchables,
soit $\alpha$ une forme lisse de type~$(n-m,n-m)$ sur~$X$.
et soit $V$ une fonction psh localement approchable sur~$X$.
Si $V$ est localement intégrable sur~$X$, 
la mesure $\ddc u_1\wedge\dots\ddc u_m\wedge\alpha$ sur~$X$
ne charge pas le lieu polaire de~$V$.
\end{coro}

\chapter{Intersection tropicale}
\section{Produit d'espaces tropicaux}
\label{sect.prod-trop}

Soit $X$ et $Y$ des espaces tropicaux.
Le but de ce paragraphe est de définir une structure d'espace
tropical sur le produit $X\times Y$.
On commence par munir ce produit d'une structure d'espace G-tropical.

\begin{lemm}
Soit $X$ et $Y$ des espaces G-tropicaux.
L'espace topologique produit est séparé, localement compact
et chacun de ses points a une base d'ouverts qui sont dénombrables à l'infini.
\end{lemm}
\begin{proof}
Comme $X$ et $Y$ sont séparés et localement compacts, il en est
de même de leur produit.
Soit $(x,y)\in X\times Y$ et soit $W$ un  voisinage de~$(x,y)$
dans~$X\times Y$. Par définition de la topologie produit,
il existe un voisinage ouvert~$U$ de~$x$ dans~$X$ 
et un voisinage ouvert~$V$ de~$y$ dans~$Y$ tels que $U\times V\subset W$.
Par hypothèse, $x$ possède un voisinage ouvert~$U_1$ contenu dans~$U$
qui est réunion d'une famille dénombrable $(K_i)_{i\in I}$ 
de parties compactes;
de même, $y$ possède un voisinage ouvert~$V_1$ contenu dans~$U$
qui est réunion d'une famille dénombrable $(L_j)_{j\in J}$ 
de parties compactes.
Alors $U_1\times V_1$ est un voisinage ouvert de~$(x,y)$ contenu
dans~$W$ qui est réunion de la famille dénombrable $(K_i\times L_j)_{(i,j)\in I\times J}$ de parties compactes.
\end{proof}

\subsection{}
Soit $X$ et~$Y$ des espaces G-tropicaux.
On dit qu'une partie localement fermée~$D$
de~$X\times Y$ est un domaine élémentaire s'il existe
une carte tropicale $(V,f,P)$ sur~$X$ (où $P$ est fermé), 
une carte tropicale $(W,g,Q)$ (où $Q$ est fermé)
sur~$Y$ et une partie paralinéaire fermée~$R$ de~$P\times Q$
telles que $D=(f\times g)^{-1}(R)$.
On dit qu'une partie localement fermée de~$X\times Y$ est un domaine
si elle est G-recouverte par ses domaines élémentaires.

Toute partie ouverte est un domaine.

L'intersection de deux domaines élémentaires est un domaine élémentaire, 
donc l'intersection de deux domaines est un domaine.

Toute partie localement fermée de~$X\times Y$
qui est G-recouverte par des domaines
est encore un domaine.

Tout domaine élémentaire est G-recouvert par des domaines élémentaires
compacts, si bien que tout domaine est 
G-recouvert par les domaines compacts qu'il contient.

Les axiomes sur les domaines sont donc vérifiés.

\subsection{}
On dit qu'une fonction~$\phi$ définie sur un domaine élémentaire
de la forme $V\times W$ comme ci-dessus est élémentairement paralinéaire
s'il existe une fonction paralinéaire~$h$ sur~$R$ telle
que $\phi(x,y)=h(f(x),g(y))$ pour tout $(x,y)\in V\times W$.

On dit qu'une fonction~$\phi$ définie sur un domaine de~$X\times Y$ 
est paralinéaire si ce domaine est G-recouvert par des domaines
élémentaires sur lesquels $\phi$ est élémentairement paralinéaire.

Les axiomes sur les fonctions paralinéaires sont vérifiés.

\subsection{}
Soit $X$ et $Y$ des espaces G-tropicaux affinés ;
construisons un affinage de l'espace G-tropical $X\times Y$.
On dit qu'une fonction paralinéaire~$h$ sur un domaine~$W$ de~$X\times Y$
est affine 
si pour tout point $(x,y)\in W$, il existe un domaine~$U$ de~$X$
et un domaine~$V$ de~$Y$ 
tels $(U\times V)\cap W$ soit un voisinage de~$(x,y)$ dans~$W$,
une fonction affine~$f$ sur~$U$ et
une fonction affine~$g$ sur~$V$
telles que l'on ait 
$h(\xi, \eta)=f(\xi)+g(\eta)$ 
pour tout $(\xi,\eta)\in  (U\times V)\cap W$.

Une fonction $h$ sur un domaine~$W$ de~$X\times Y$
est paralinéaire si et seulement si elle est G-localement affine.
Il suffit de traiter le cas des fonctions élémentairement paralinéaires.
Cela résulte alors de ce qu'une fonction paralinéaire
sur une partie paralinéaire de~$\R^n$ est G-localement affine.

\subsection{}
Supposons que $X$ et~$Y$ soient chordés, c'est-à-dire
munis d'une classe de parties squelettiques élémentaires vérifiant les 
axiomes du~\S\ref{defi.squel-elem}.
On dit alors qu'une partie fermée d'un domaine~$W$ de~$X\times Y$ 
est squelettique
élémentaire s'il existe des domaines~$U$ de~$V$ de~$X$ et~$Y$ respectivement,
une partie squelettique élémentaire~$\Sigma$ de~$U$,
une partie squelettique élémentaire~$\Tau$ de~$V$,
telle que la partie donnée soit égale à $(\Sigma\times\Tau)\cap W$.
Démontrons que l'on munit ainsi l'espace~$X\times Y$ 
d'une structure d'espace chordé.

Les axiomes~\emph a) et \emph b) de~\loccit résultent de la construction.

Démontrons l'axiome~\emph c).
Soit $(W_i)$ une famille finie de domaines fermés de~$X$;
pour tout~$i$, soit $\Upsilon_i$ une partie squelettique élémentaire de~$W_i$
et soit $\Upsilon$ la réunion des~$\Sigma_i$.
Pour tout~$i$, soit $U_i$ un domaine de~$X$, $\Sigma_i$ une partie
squelettique élémentaire de~$U_i$, $V_i$ un domaine de~$Y$,
$\Tau_i$ une partie squelettique élémentaire de~$V_i$
telles que $\Upsilon_i=(\Sigma_i\times\Tau_i) \cap W_i$.

L'assertion est G-locale, et donc locale, sur~$\Upsilon$.
Soit $(x,y)\in \Upsilon$.
Les espaces~$X$ et~$Y$ sont localement compacts
et 
les parties~$\Upsilon_i$ sont localement fermées dans~$W_i$,
donc dans~$X\times Y$; 
on peut donc remplacer~$X$ et~$Y$ par des voisinages domaniaux compacts
de~$x$ et~$y$ de sorte que $(x,y)\in \Upsilon_i$ pour tout~$i$;
comme les domaines~$W_i$ sont fermés dans~$X\times Y$,
ils sont donc compacts,
de même que leurs parties squelettiques élémentaires~$\Upsilon_i$.
Alors, $x\in U_i$ et $y\in V_i$ pour tout~$i$;
comme les~$U_i$ sont des parties localement fermées de~$X$ et
les~$V_i$ sont des parties localement fermées de~$Y$,
on peut restreindre de nouveau~$X$ et~$Y$ et supposer que les~$U_i$
et les~$V_i$ sont compacts.
Soit $\Sigma$ la réunion des~$\Sigma_i$ et soit $\Tau$ la réunion de~$\Tau_i$.
D'après l'axiome~\emph c) appliquée à la famille
finie des parties squelettiques élémentaires $\Sigma_i$
des domaines fermés~$U_i$ de~$X$,
il existe une famille $(U'_j)$ de domaines de~$X$ qui G-recouvre~$\Sigma$
et, pour tout~$j$, une partie squelettique élémentaire~$\Sigma'_j$
de~$U'_j$ qui contient~$\Sigma\cap U'_j$.
De même, il existe une famille $(V'_k)$ de domaines de~$Y$ qui G-recouvre~$\Tau$
et, pour tout~$k$, une partie squelettique élémentaire~$\Tau'_k$
de~$V'_k$ qui contient~$\Tau\cap V'_k$.

Pour tout couple $(j,k)$, posons $W'_{j,k}=U'_j\times V'_k$;
les $W'_{j,k}$ sont des domaines de~$X\times Y$ qui
G-recouvrent $\Sigma\times\Tau$, donc~$\Upsilon$.
Par ailleurs
$\Sigma'_j\times\Tau'_k$ est une partie squelettique élémentaire
de~$W'_{j,k}$ qui contient $(\Sigma\cap U'_j)\times(\Tau\cap V'_k)
=(\Sigma\times\Tau)\cap W'_{j,k}$,
donc contient $\Upsilon\cap W'_{j,k}$.

\begin{lemm}
Soit $X$ et $Y$ des espaces G-tropicaux vertébrés de niveaux respectifs~$m$
et~$n$. 
L'espace G-tropical chordé~$X\times Y$ est un
espace G-tropical vertébré de niveau~$m+n$.
\end{lemm}
\begin{proof}
Il s'agit de démontrer \emph a) que la dimension tropicale de~$X\times Y$
est au plus égale à $m+n$ et \emph b) que pour toute carte
G-tropicale $(W,h)$ sur~$X$, la partie $\Sigma_h^{(m+n)}$ 
est squelettique.

\begin{enumerate}
\item
Démontrons que $d_\trop(X\times Y)\leq m+n$.
Soit $u$ une carte tropicale sur un domaine compact~$W$ de~$X\times Y$;
démontrons que $u(W)$ est une partie paralinéaire de dimension
au plus~$m+n$.
L'assertion est G-locale sur~$W$;
on peut ainsi supposer
que $W$ est un domaine élémentaire compact 
sur lequel $u$ est élémentairement paralinéaire.
On se ramène ainsi au cas où il existe un domaine compact~$U$ de~$X$, 
un domaine compact~$V$ de~$Y$, 
des cartes G-tropicales~$f\colon U\to E$ et~$g\colon V\to F$, 
une partie paralinéaire fermée~$R$ de~$f(U)\times g(V)$
et une application paralinéaire $h\colon R\to\R^p$
tels que $W=(f,g)^{-1}(R)$ et $u = h\circ (f ,g)$.
On a $u(W)\subset h((f,g)(U\times V))=h(f(U)\times g(V))$.
Par hypothèse, $f(U)$ et $g(V)$ sont des parties paralinéaires
de dimensions au plus~$m$ et~$n$ respectivement;
par suite, $h(f(U)\times g(V))$ est une partie paralinéaire
de dimension au plus~$m+n$, et il en est donc de même de~$u(W)$.

\item
Soit $(W,h)$ une carte G-tropicale sur~$X$.
Par hypothèse, $W$ est G-recouvert par ses domaines élémentaires
de la forme $W_i=(U_i\times V_i)\cap (f_i\times g_i)^{-1}(R_i)$,
où, pour tout~$i$,
$(U_i,f_i,P_i)$ est une carte G-tropicale sur~$X$ (où $P_i$ est fermé)
$(V_i,g_i,Q_i)$ est une carte G-tropicale sur~$Y$ (où $Q_i$ est fermé),
$R_i$ est une partie paralinéaire fermée de~$P_i\times Q_i$,
$h|_{W_i}=\phi_i(f_i,g_i)$, 
et $\phi_i$ est une fonction paralinéaire sur~$P_i\times Q_i$.
Quitte à raffiner ce G-recouvrement,
on suppose que les~$U_i$ sont fermés dans~$X$,
les~$V_i$ sont fermés dans~$Y$ et
les applications paralinéaires~$\phi_i$ sont en fait affines;
on pose alors $h_i=\phi_i(f_i,g_i)$ sur $U_i\times V_i$.

L'assertion à démontrer est locale sur~$W$.
Soit $(x,y)\in W$.
Par définition d'un G-recouvrement, 
il existe un voisinage ouvert~$\Omega$ de~$(x,y)$ dans~$X\times Y$ et
un ensemble fini~$I$ d'indices tels que
$W_i\cap \Omega$ soit fermé dans~$W$ pour tout~$i$
et $\bigcup_{i\in I} W_i\cap\Omega = W\cap\Omega$.
Il suffit de prouver que $\Sigma_h^{(m+n)}\cap\Omega$ est une partie
paralinéaire d'une une partie squelettique élémentaire de~$\Omega$.
Quitte à restreindre~$\Omega$, on peut supposer que c'est le produit
d'un voisinage de~$x$ et d'un voisinage de~$y$,
si bien qu'on peut supposer que $\Omega=X\times Y$.
On a l'inclusion
\[ \Sigma_h^{(m+n)}
 \subset \bigcup_{i\in I} \Sigma_{h_i}^{(m+n)}. \]

Pour tout $i$, on $h_i=\phi_i(f_i,g_i)$, d'où l'inclusion
\[ \Sigma_{h_i}^{(m+n)}\subset \Sigma_{(f_i,g_i)}^{(m+n)}
=\Sigma_{f_i}^{(m)}\times \Sigma_{g_i}^{(n)}, \]
si bien que 
\[ \Sigma_h^{(m+n)} \subset \bigcup_{i\in I} 
(\Sigma_{f_i}^{(m)}\times \Sigma_{g_i}^{(n)}) \cap (f_i,g_i)^{-1}(R_i). \]
Notons~$\Sigma$ le terme de droite.
Pour tout~$i$, $\Sigma_{f_i}^{(m)}\times \Sigma_{g_i}^{(n)}$
est une partie squelettique (élémentaire) de~$U_i\times V_i$
sur laquelle $(f_i,g_i)$ est une application paralinéaire;
par suite, 
$(\Sigma_{f_i}^{(m)}\times \Sigma_{g_i}^{(n)}) \cap (f_i,g_i)^{-1}(R_i)$
est une partie squelettique fermée de~$U_i\times V_i$,
donc de~$X\times Y$.
Par suite, $\Sigma$ est une réunion finie de parties squelettiques fermées,
donc est squelettique.

Il suffit maintenant de démontrer que $\Sigma_h^{(m+n)}$
en est une partie paralinéaire.

Nous allons démontrer que $\Sigma_h^{(m+n)}$
est le lieu $(m+n)$-dimensionnel~$\Sigma'$ de $h|_\Sigma$.

On a $\Sigma'\subset \Sigma_h^{(m+n)}$.

Démontrons l'inclusion réciproque.
Soit $(\xi,\eta)\in\Sigma_h^{(m+n)}$;
il existe $i\in I$ tel que $(\xi,\eta)\in U_i\times V_i$
et $d_\trop(h|_{W_i} )=m+n$.
Démontrons qu'alors que la restriction de~$h$ à 
$({\Sigma_{f_i}^{(m)}\times\Sigma_{g_i}^{(n)}}) \cap (f_i,g_i)^{-1}(R_i)$ 
est de dimension tropicale~$m+n$ en~$(\xi,\eta)$.

On est donc ramené au cas où $X=U_i$ et $Y=V_i$;
on note ainsi $f=f_i$, $g=g_i$, $\phi=\phi_i$ et $R=R_i$;
on a donc $h=\phi (f,g)$ et $W=(f,g)^{-1}(R)$.

La restriction de~$f$ à~$\Sigma_f^{(m)}$ est une immersion par morceaux,
et celle de~$g$ à~$\Sigma_g^{(n)}$ est une immersion par morceaux,
de sorte que la restriction de~$(f,g)$ à~$\Sigma_f^{(m)}\times\Sigma_g^{(n)}$,
est encore une immersion par morceaux,
de même que sa restriction à~$\Sigma$.

Soit~$Z$ un voisinage domanial compact  de $(\xi,\eta)$
dans~$X\times Y$; démontrons que $h(Z\cap \Sigma)$ est
de dimension~$m+n$.
Comme $Z$ est G-recouvert par des voisinages domaniaux compacts
produit, on peut le supposer de la forme $K\times L$,
où $K$ et~$L$ sont des voisinages domaniaux compacts de~$\xi$ et~$\eta$
dans~$X$ et~$Y$ respectivement.
 
Par hypothèse, $h((K\times L) \cap W)$ est de dimension~$m+n$.
On a 
\[ h((K\times L) \cap W) 
 = \phi(f,g) ((K \times L) \cap (f,g)^{-1}(R)) 
 = \phi ( (f(K)\times g(L)) \cap R ) . \]
Considérons des décompositions cellulaires~$\mathscr C$ et~$\mathscr D$
de~$f(K)$ et~$g(L)$ respectivement. 
Il existe donc une cellule $C\in\mathscr C$ et une cellule $D\in\mathscr D$
telles que $\phi((C\times D)\cap R)$ soit de dimension~$m+n$.
Comme $\dim(C)\leq m$ et $\dim(D)\leq n$, cela entraîne
que l'on a $\dim(C)=m$, $\dim(D)=n$, et l'application affine~$\phi$ 
est injective sur le sous-espace $\langle C\rangle \times \langle D\rangle$.
De plus, $(C\times D)\cap R$ est de dimension~$m+n$;
il existe donc une $m$-cellule~$C'$ contenue dans~$C$ et
une $n$-cellule~$D'$ contenue dans~$D$ telles que $C'\times D'\subset R$;
notons que la restriction de~$\phi$ à~$C'\times D'$ est encore injective.

Posons $W'= (K\times L) \cap (f,g)^{-1}(C'\times D') $; 
c'est un domaine compact de~$W$;  
démontrons que $h(W'\cap \Sigma)$ est de dimension~$m+n$.
On a
\[ h(W'\cap \Sigma) = \phi (
   (f,g) (K\times L \cap \Sigma) \cap (C'\times D')  ). \]
Nous allons prouver que 
$ (f,g) (K\times L \cap \Sigma) $
contient $C'\times D'$; 
il en résultera que $h(W'\cap\Sigma)$ contient $\phi(C'\times D)$
dont on a prouvé qu'il est dimension~$m+n$, d'où l'assertion.

Il reste à établir que $(f,g)(K\times L\cap \Sigma)$ contient $C'\times D'$.
Observons que $C'\subset f(K)$ et $D'\subset g(L)$.
D'après la proposition~\ref{prop.image-squelette-pur},
on a $f(K)^{(m)} = f(\Sigma_{f|_K}^{(m)})$
et $g(L)^{(n)}=g(\Sigma_{g|_L}^{(n)})$.
Par conséquent, 
\begin{align*}
 C'\times D' & \subset f(K)^{(m)}\times g(L)^{(n)} \\
& \subset  f(\Sigma_{f|_K}^{(m)}) \times g(\Sigma_{g|_L}^{(n)}) \\
& \subset  f(\Sigma_{f}^{(m)} \cap K) \times g(\Sigma_{g}^{(n)} \cap L) \\
& = (f,g) ((\Sigma_{f}^{(m)} \cap K) \times (\Sigma_{g}^{(n)} \cap L))
, \end{align*}
donc 
\begin{align*} 
 C'\times D'  & \subset 
  (f,g) ((\Sigma_{f}^{(m)} \times \Sigma_{g}^{(n)}) \cap (K\times L)) \cap R \\
& =
   (f,g) ( (\Sigma_{f}^{(m)} \times \Sigma_{g}^{(n)}) \cap (K\times L) \cap (f,g)^{-1}(R) 
) \\
& = (f,g) ((K \times L) \cap \Sigma ). \end{align*}
Le lemme est ainsi démontré.
\qedhere
\end{enumerate}
\end{proof}

\subsection{}
Supposons que $X$ et~$Y$ sont des espaces G-tropicaux
calibrés de niveaux~$m$ et~$n$ respectivement
et munissons $X\times Y$ d'une structure d'espace G-tropical
calibré de niveau~$m+n$.
D'après le lemme~\ref{lemm.gros-site},
il suffit de construire une famille compatible
de calibrages sur les parties squelettiques de~$X\times Y$
de la forme  $\Sigma\times\Tau$, où $\Sigma$ est une partie
squelettique de~$X$ et $\Tau$ est une partie squelettique de~$Y$,
et la famille des calibrages produit convient.

\subsection{}
Supposons maintenant que $X$ et~$Y$ sont des espaces tropicaux
de niveaux respectifs~$m$ et~$n$,
d'intérieurs respectifs~$\Int(X)$ et~$\Int(Y)$.

On définit pour bord de~$X\times Y$ 
la partie $(\partial(X)\times Y)\cup (X\times\partial(Y))$,
de sorte que l'intérieur de~$X\times Y$ est 
sa partie $\Int(X)\times \Int(Y)$.

Il s'agit de démontrer (i) que cet intérieur est
localement différentiellement séparé, décent, très riche et harmonieux,
et (ii) que pour toute partie squelettique~$\Sigma$ de~$X\times Y$
et tout domaine~$W$ de~$X\times Y$,
la trace de~$\partial(X)$ et du bord topologique~$\partial(W/X\times Y)$
sur~$\Sigma$ sont $(m+n)$-négligeables.

(i) Pour prouver~(i), on peut remplacer~$X$ et~$Y$ par leurs intérieurs,
et donc les supposer sans bord.

Démontrons que $X\times Y$ est localement différentiellement séparé.
Il suffit de prouver que si $X$ et~$Y$ sont différentiellement séparés,
alors $X\times Y$ est différentiellement séparé.
Soit $(x,y)$ et $(x',y')$ des points distincts de~$X$ et~$Y$,
soit $f$ une fonction lisse sur~$X$ qui prend des valeurs distinctes
en~$x$ et~$x'$ s'ils sont distincts et
soit $g$ une fonction lisse sur~$Y$ qui prend des valeurs distinctes
en~$x$ et~$x'$ s'ils sont distincts.
Les couples $(f(x), g(y))$ et $(f(x'), g(y'))$ sont distincts,
et il existe une application linéaire $\phi\colon\R^2\to\R$
qui les sépare.
La fonction $\phi(f,g)$ sur~$X\times Y$ est lisse ; 
elle sépare $(x,y)$ et~$(x',y')$.

Démontrons que $X\times Y$ est décent.

a) Soit $W$ un domaine de~$X\times Y$ et soit $(x,y)$  un point de~$W$.
Soit $h$ une fonction affine sur~$W$, à valeurs réelles.
Par définition d'une fonction affine sur~$W$, il existe
un domaine~$U$ de~$X$ et un domaine~$V$ de~$Y$
tels que $(U\times V)\cap W$ soit un voisinage de~$(x,y)$ dans~$W$,
une fonction affine~$f$ sur~$U$ et une fonction affine~$g$ sur~$V$
telles que $h$ coïncide avec~$f+g$ sur~$(U\times V)\cap W$.
Comme $X$ est décent en~$x$, il existe un voisinage ouvert~$U'$
de~$x$ dans~$X$ et une fonction affine~$f'$ sur~$U'$
tels que $f$ coïncide avec~$f'$ sur $U\cap U'$.
De même, comme $Y$ est décent en~$y$, 
il existe un voisinage ouvert~$V'$
de~$y$ dans~$Y$ et une fonction affine~$g'$ sur~$V'$
tels que $g$ coïncide avec~$g'$ sur $V\cap V'$.
Alors $U'\times V'$ est un voisinage ouvert de~$(x,y)$ dans~$X\times Y$, 
la fonction $f'+g'$ est une fonction affine $U'\times V'$
qui coïncide avec $f+g$ sur le voisinage
\[ ((U'\cap U) \times (V'\cap V)) \cap W
= (U'\times V')\cap (U\times V)\cap W \]
de~$(x,y)$ dans~$W$.

b)  Soit $W$ un domaine de~$X\times Y$ et soit $(x,y)$ un point de~$W$.
Démontrons qu'il existe un voisinage ouvert~$W'$ de~$(x,y)$ dans~$X\times Y$
tel que $W\cap W'$ soit défini dans~$W'$ par conjonction et disjonction
d'inégalités larges entre fonctions affines sur~$W'$.
Cette propriété est stable par réunion finie et ne dépend
que du germe de~$W$ en~$(x,y)$; 
par définition des domaines de~$X\times Y$,
on peut donc supposer que $W$ est un domaine élémentaire,
c'est-à-dire est de la forme
$(f,g)^{-1}(R)$, où $(U,f,P)$ et $(V,g,Q)$ sont des cartes
tropicales sur des domaines~$U$ et~$V$ de~$X$ et~$Y$ respectivement,
$P$ et~$Q$ étant fermés, et $R$ est une partie paralinéaire
fermée de~$P\times Q$.
Par définition, $R$ est défini dans~$P\times Q$ par conjonction 
et disjonction d'inégalités larges entre fonctions affines.

Comme $X$ est décent en~$x$, on peut supposer que $f$ 
est la restriction à~$U$ d'une fonction affine, toujours notée~$f$,
définie au voisinage de~$x$.
De même, on peut supposer que $g$ est la restriction à~$V$ 
d'une fonction affine, toujours notée~$g$, définie au voisinage de~$y$.

Il existe également une carte tropicale $(U',f',P')$  définie
au voisinage de~$x$ dans~$X$  telle que $f$ soit définie sur~$U'$,
et une partie paralinéaire fermée~$P''$ de~$P'$
telle que $U = (f')^{-1}(P'')$.
On introduit de même $(V',g',Q')$ pour le facteur~$Y$.
On a  alors
\[ W  \cap (U'\times V') = (f,g)^{-1} (R) 
     \cap (f')^{-1}(P'') \cap (g')^{-1}(Q''), \]
ce qui prouve que $W$ est localement définie par une conjonction
et disjonction d'inégalités larges entre fonctions affines.

Démontrons que $X\times Y$ est riche.

Soit $h\colon W\to E$ une carte G-tropicale
définie sur un domaine compact~$W$ de~$X\times Y$;
démontrons qu'il existe une partie squelettique compacte~$\Sigma$
de~$W$ telle que $h(\Sigma)=h(W)$.
En raisonnant G-localement sur~$W$, on se ramène à démontrer
l'assertion lorsque $W$ est un domaine élémentaire de~$X\times Y$
et $h$ est élémentairement paralinéaire,
c'est-à-dire qu'il existe des domaines compacts~$U$ et~$V$
de~$X$ et~$Y$, des applications paralinéaires~$f$ et~$g$
sur~$U$ et~$V$ et une partie paralinéaire compacte~$R$
de $f(U)\times g(V)$ et une application paralinéaire $\phi\colon R\to E$
telle que $W=(f,g)^{-1}(R)$ et $h=\phi\circ (f,g)|_W$.
Comme $X$ et~$Y$ sont riches,
il existe des parties squelettiques compactes $\Sigma$ de~$U$
et $\Tau$ de~$V$ telles que $f(\Sigma)=f(U)$ et $g(\Tau)=g(V)$.
Alors 
on  a
\begin{align*}
 h(W) & = \phi( (f,g) (W)) = \phi( (f,g)  ((f,g)^{-1}(R))) \\
& = \phi((f(U)\times g(V)) \cap R) = \phi( (f(\Sigma)\times g(\Tau)) \cap R) \\
& = \phi( (f,g) ((\Sigma\times\Tau) \cap (f,g)^{-1}(R))) \\
&= h( (\Sigma\times\Tau) \cap (f,g)^{-1}(R)))). \end{align*}
Par construction,
$\Sigma\times\Tau$ est une partie squelettique de~$U\times V$, 
et sa trace avec $(f,g)^{-1}(R)$ en est une partie paralinéaire
donc est une partie squelettique de~$U\times V$.
Elle est contenue dans~$W$, donc c'est une partie squelettique de~$W$.

Démontrons que $X\times Y$ est très riche.
On vient de démontrer qu'il est riche; il reste  à démontrer
que toute partie squelettique compacte est contenue dans une partie
squelettique purement de dimension~$m+n$.
On peut raisonner G-localement.
Compte tenu de la définition des parties squelettiques,
il suffit de traiter le cas d'une partie squelettique élémentaire.
Soit donc $\Sigma$ et~$\Tau$ des parties squelettiques compactes
de~$X$ et~$Y$ respectivement. Comme $X$ est très riche, il existe
une partie squelettique~$\Sigma'$ de~$X$ 
qui est purement de dimension~$m$
et contient~$\Sigma$; il existe de même une partie squelettique~$\Tau'$ 
de~$Y$ qui est purement de dimension~$n$ et contient~$\Tau$.
Alors $\Sigma'\times \Tau'$ est une partie squelettique 
de~$X\times Y$, purement de dimension~$m+n$, qui contient~$\Sigma\times\Tau$.

Soit $h\colon W\to E$
une carte tropicale sur une partie ouverte~$W$ de~$X\times Y$.
Démontrons que $W$ est harmonieux relativement à~$h$.
L'assertion est locale, ce qui permet de supposer que $X$ et~$Y$
sont dénombrables à l'infini ; alors, $X\times Y $ est dénombrable
à l'infini, et donc paracompact.
On peut également restreindre~$X$ et $Y$ de sorte
que $h$ s'écrit $\phi \circ (f,g)$, où $f\colon X\to A$ et~$g\colon Y\to B$ sont
des cartes tropicales sur~$X$ et~$Y$ et $\phi\colon A\times B\to E$ 
est une application affine.
Soit $\mathscr E$ une décomposition cellulaire de~$\Sigma_h^{(m+n)}$
en $h$-cellules qui est adaptée au calibrage de~$\Sigma_h^{(m+n)}$;
prouvons que $\Sigma_h^{(m+n)}$ est harmonieux le long de 
toute $(m+n-1)$-cellule appartenant à~$\mathscr E$.
Compte tenu de la définition, il suffit de traiter le cas où $h=(f,g)$.
Alors, $\Sigma_h^{(m+n)}={\Sigma_f^{(m)}\times\Sigma_g^{(n)}}$,
comme on l'a vu ci-dessus.
On peut modifier la décomposition cellulaire et la supposer
de la forme $\mathscr C\times\mathscr D$ où $\mathscr C$
et $\mathscr D$ sont des décompositions cellulaires de~$\Sigma_f^{(m)}$
et $\Sigma_g^{(n)}$ respectivement.

Une cellule de $\mathscr C\times \mathscr D$ est de la forme $F\times G$
où $F$ et~$G$ sont des cellules de~$\mathscr C$ et~$\mathscr D$;
elle est de dimension $m+n-1$ si et seulement $(\dim F,\dim G)=(m-1,n)$
ou $(m,n-1)$.
La discordance du calibrage de $\Sigma_h^{(m+n)}$ le long de~$F\times G$
est un $(m+n)$-vecteur $\delta_{F\times G}$ de $A\times B$.
Dans le premier cas, on a $\delta_{F\times G} = \delta_F \otimes g_*(\mu_G)$,
où $\delta_F$ est la discordance du calibrage de~$\Sigma_f^{(m)}$
le long de~$F$, donc est nulle.
Le second cas est similaire.

\subsection{}
Soit $W$ un domaine de~$X\times Y$ et soit $\Upsilon$ une partie squelettique
de~$X$.  Démontrons que la trace de $\partial(X\times Y)$
et celle de $\partial(W/X\times Y)$ sur~$\Upsilon$ sont $m+n$-négligeables.

Comme $X$ et $Y$ sont des espaces tropicaux, leurs domaines sont munis
de bords: le bord d'un domaine~$U$ de~$X$ a été défini
par $\partial(U)=\partial(U/X)\cup (U\cap \partial(X))$.
Définissons de même le bord d'un domaine~$W$ de $X\times Y$ par
la formule
\[ \partial (W) = \partial(W/X\times Y) \cup (W \cap \partial(X\times Y)). \]
Remarquons que si $W$ est de la forme $U\times V$, où $U$ est un domaine
de~$X$ et $V$ est un domaine de~$Y$, alors
\[ \partial(W) = \partial(U)\times V \cup U \times \partial(V). \]
Avec ces notations, il s'agit de démontrer 
et la trace de $\partial(W)$ sur~$\Upsilon$ est $m+n$-négligeable.

Soit $(Z_i)$ une famille de domaines de~$X\times Y$
telle que la famille $(Z_i\cap W\cap\Upsilon)$
soit un G-recouvrement de $W\cap\Upsilon$ et telle que 
que pour tout~$i$, 
la trace de $\partial(W\cap Z_i)$ sur~$\Upsilon$ soit négligeable.
Démontrons que la trace de $\partial (W)$ sur~$\Upsilon$ est négligeable.
Il existe une famille d'ouverts $(O_j)$ dans $X\times Y$,
recouvrant $W\cap\Upsilon$ et, pour tout~$j$,
un ensemble fini~$I_j$ d'indices telle que pour tout $i\in I_j$,
$Z_i\cap O_j$ soit un domaine fermé de~$O_j$, et leur réunion
contient $W\cap O_j\cap \Upsilon$.
La négligeabilité paralinéaire est une propriété locale;
il suffit de démontrer que pour tout~$j$, $\partial(W)\cap\Upsilon\cap O_j$
est négligeable.
On a 
\[ \partial(W)\cap O_j\cap \Upsilon =\partial(W\cap O_j) \cap \Upsilon
 \subset \bigcup_{i\in I_j} \partial (W\cap Z_i\cap O_j)\cap\Upsilon 
\subset \bigcup_{i\in I_j} \partial(W\cap Z_i)\cap \Upsilon,  \]
donc est négligeable.

Le domaine~$W$ est G-recouvert par des domaines élémentaires compacts
de~$X\times Y$.  D'après ce qui précède,
on peut donc se ramener au cas où $X$ et~$Y$
sont compacts et où il existe une carte G-tropicale~$f$ de domaine~$X$,
une carte G-tropicale~$g$ de domaine~$Y$ telles que
$W=(f\times g)^{-1}(R)$, 
où $R$ est la partie paralinéaire compacte~$(f\times g)(W)$ 

La partie squelettique~$\Upsilon$ est G-recouverte
par des parties paralinéaires de parties squelettiques élémentaires,
de la forme $\Sigma\times\Tau \cap \Omega$, où $\Omega$ est un domaine de~$X\times Y$ et $\Sigma$ et~$\Tau$ sont des parties squelettiques
élémentaires de domaines~$U$ et~$V$ de~$X$ et~$U$.
D'après ce qui précède, il suffit de démontrer que
\[ \partial(W\cap U\times V)\cap \Sigma\times\Tau \]
est négligeable. On peut donc supposer que $U=X$, $V=Y$
et $\Upsilon=\Sigma\times\Tau$.

Comme $\Sigma$ est une partie paralinéaire,
elle est G-recouverte par des parties fidèlement paralinéaires
de la forme $\Sigma\cap U$, où $U$ est un domaine de~$X$.
De même, $\Tau$
est G-recouverte par des parties fidèlement paralinéiares
de la forme $\Tau\cap V$, où $V$ est un domaine de~$Y$.
Cela permet de supposer qu'il existe des cartes tropicales~$f'$
et~$g'$ de domaines~$X$ et~$Y$ respectivement,
fidèles sur~$\Sigma$ et~$\Tau$ respectivement.

On remplace alors $f$ et~$f'$ par $(f,f')$, 
$g$ et~$g'$ par $(g,g')$, de sorte que le couple $(f,g)$ est fidèle
sur $\Sigma\times\Tau$.

La trace de $\partial(X)\times Y$ sur $\Sigma\times\Tau$
est contenue dans $(\partial(X)\cap\Sigma)\times\Tau$
donc est négligeable puisque $\partial(X)\cap\Sigma$ est $m$-négligeable.
De même, 
la trace de $X\times \partial(Y)$ sur $\Sigma\times\Tau$
est négligeable.
Il reste à démontrer que la trace de $\partial(W/(X\times Y))$
sur~$\Sigma\times\Tau$ est négligeable.

Choisissons une décomposition cellulaire~$\mathscr C$
de $f(X)\times g(Y)$ compatible avec~$R$ et avec 
$f(\Sigma)\times g(\Tau)$. Soit~$\mathscr D$ la décomposition 
cellulaire de $\Sigma \times \Tau$ déduite de 
$\mathscr C|_{f(\Sigma)\times g(\Tau)}$ via l'isomorphisme
paralinéaire $(f\times  g)|_{\Sigma\times \Tau}$
de $\Sigma\times \Tau$ sur $f(\Sigma)\times g(\Tau)$. 
Puisque $\mathscr C$ est compatible avec~$R$ 
et puisque $W=(f\times g)^{-1}(R)$, 
la décomposition~$\mathscr D$ est compatible avec $W\cap (\Sigma \times \Tau)$. 
Soit~$D$ une $(m+n)$-cellule de~$\mathscr D$ contenue dans~$W$, 
et posons $C=(f\times g)(D)$. 
Par construction, $C$ est une $(m+n)$-cellule de $\mathscr C$, et $C\subset R$. 
Puisque $f(X)\times g(Y)$ est de dimension au plus $m+n$, 
la cellule~$C$ est maximale dans~$\mathscr C$, 
et $\mathring C$ est dès lors un ouvert de $f(X)\times g(Y)$. 
À ce titre, 
$\mathring C$ est contenu dans $\Int(R/f(X)\times g(Y))$, 
si bien que 
$(f\times g)^{-1}(\mathring C)$ est contenu dans $\Int(W/X\times Y)$. 
En particulier, $\mathring D \subset \Int(W/X\times Y)$
(on a en effet par construction $\mathring D=(f\times g)^{-1}(\mathring C)
\cap (\Sigma\times \Tau)$). 
Ceci étant valable pour toute $(m+n)$-cellule $D$ 
de  $\mathscr D$ qui est contenue dans $W\cap (\Sigma \times \Tau)$, 
il s'ensuit que $\partial (W/(X\times Y))
\cap (\Sigma \times \Tau)$ est contenu dans une réunion de cellules
de~$\mathscr D$ de dimensions strictement inférieures à $m+n$, 
ce qui achève la démonstration.

\section{Formes sur un espace tropical produit}

\begin{prop}\label{prop.decoupage-produit}
Soit $X$ et $Y$ des espaces tropicaux de niveaux~$m$ et~$n$ respectivement.
Soit $\mathscr D_X$ un jeu de cartes tropicales couvrant sur~$X$,
soit $\mathscr D_Y$ un jeu de cartes tropicales couvrant sur~$Y$
et soit $\mathscr D_{X\times Y}$ un jeu de cartes tropicales
couvrant sur~$X\times Y$.

Soit $K,L,M$ des parties compactes de~$X$, $Y$ et~$X\times Y$
respectivement.

Il existe un jeu de cartes tropicales fini $\mathscr E_X$ sur~$X$,
un jeu de cartes tropicales fini~$\mathscr E_Y$ sur~$Y$
% et une partie~$\mathscr E_{X\times Y}$ du jeu de cartes 
% tropicales $\mathscr E_X\times \mathscr E_Y$ sur $X\times Y$,
tels que 
\begin{enumerate}\def\theenumi{\roman{enumi}}\def\labelenumi{(\theenumi)}
\item Le jeu $\mathscr E_X$ sur~$X$ raffine le jeu~$\mathscr D_X$
et couvre~$K$;
\item Le jeu $\mathscr E_Y$  sur~$Y$ raffine le jeu~$\mathscr D_Y$ 
et couvre~$L$;
\item Le jeu produit $\mathscr E_{X} \times \mathscr E_Y$ 
sur~$X\times Y$ raffine le jeu~$\mathscr D_{X\times Y}$ et couvre~$M$.
\end{enumerate}
\end{prop}
\begin{proof}
Il n'est pas restrictif de supposer que $M=K\times L$
et qu'il n'est pas vide.

Pour tout tout $z\in K\times L$
il existe une carte tropicale produit $(f_z,g_z)$ sur~$X\times Y$,
définie sur un voisinage $U_z\times V_z$ de~$z$, 
qui raffine une carte de $\mathscr D_{X\times Y}$. 
Il n'est pas restrictif de supposer que pour tout~$z$,
la carte $(U_z,f_z)$ raffine~$\mathscr D_X$
et que la carte $(V_z,g_z)$ raffine~$\mathscr D_Y$.
Recouvrons $K\times L$ par une sous-famille de ces domaines de cartes,
indexée par une partie finie~$Z$ de~$K\times L$.
Les cartes $(U_z,f_z)$ ainsi obtenues forment 
un jeu de cartes qui couvre~$K$ et qui raffine le jeu~$\mathscr D_X$.
Les cartes tropicales $(V_z,g_z)$ ainsi obtenues forment 
un jeu de cartes sur~$Y$ qui couvre~$L$ et raffine~$\mathscr D_Y$.

Soit $y\in L$.
L'intersection des~$V_z$, pour $z\in Z$ tels que $y\in V_z$,
est un voisinage~$V'_y$ de~$y$. 
On le munit de la carte
tropicale~$g'_y$ obtenue en concaténant les~$g_z|_{V'_y}$,
pour $z\in Z$ tels que $y\in V_z$.
Chacune de ces cartes raffine le jeu~$\mathscr D_Y$:
soit en effet $y\in Y$; comme les~$V_z$ recouvrent $L$,
il existe $z$ tel que $y\in V_z$ et l'on a alors $V'_y\subset V_z$.
Soit $J$ une partie finie de~$L$ telle que les $V'_y$, pour $y\in J$,
recouvrent la partie compacte~$L$.
Le jeu de cartes tropicales formé des $(V'_y,g'_y)$, pour $y\in J$,
raffine~$\mathscr D_Y$ et couvre~$L$.

Soit $I$ l'ensemble des applications~$i \colon J\to Z$
telles que $y\in V_{i(y)}$.
Pour $i\in I$, soit $U'_i$ l'intersection des~$U_{i(y)}$, pour $y\in J$.
C'est une partie ouverte de~$X$
qu'on munit de la carte tropicale déduite par concaténation des~$f_{i(y)}$,
pour $y\in J$. Comme $J$ n'est pas vide (car $L$ n'est pas vide),
on a l'inclusion $U'_i\subset U_{i(y)}$, où $y\in J$. Par suite, 
cette carte raffine le jeu~$\mathscr D_X$.

Démontrons que le jeu de cartes $(U'_i)_{i\in I}$ couvre~$K$.
Soit en effet $x\in K$. 
Pour tout $y\in J$, il existe $z\in Z$ tel que le point $(x,y)$ 
appartienne à~$U_z\times V_z$; on note $i(y)$ un tel point.
On a ainsi construit un élément $i\in I$ tel que $x\in U'_i$
puisque $x\in U_{i(y)}$ pour tout $y\in J$.
 
Vérifions maintenant que le jeu produit $(U'_i\times V'_y)_{i,y}$
raffine le jeu~$\mathscr D_{X\times Y}$.
Soit $i\in I$ et $y\in J$. Par construction de~$V'_y$,
on a $V'_y\subset V_z$ si $y\in V_z$;  comme $y\in V_{i(y)}$,
on donc $V'_y\subset V_{i(y)}$. Par suite,
on a $U'_i\times V'_y\subset U_{i(y)}\times V'_y\subset U_{i(y)}\times V_{i(y)}$, d'où l'assertion. Il est clair que ce jeu couvre $K\times L$.
\end{proof}

\subsection{}\label{ss.formes-produit-jeu}
Soit $\mathscr D$ un jeu de cartes tropicales couvrant sur $X\times Y$
et soit $M$ une partie compacte de~$X\times Y$.
D'après la proposition~\ref{prop.decoupage-produit},
il existe un jeu de cartes tropicales~$\mathscr E$ sur~$X$ qui couvre~$p_1(M)$
et un jeu de cartes tropicales~$\mathscr E'$ sur~$X$ qui couvre~$p_2(M)$
tels que le jeu produit $\mathscr E\times \mathscr E'$
raffine le jeu~$\mathscr D$ et couvre~$M$. Soit $\mathscr E$
et $\mathscr E'$ de tels jeux.

Soit $(\rho_\lambda)$ une partition de l'unité au voisinage de~$p_1(M)$
sur~$X$ subordonnée aux cartes de~$\mathscr E$
et $(\rho'_\mu)$ une partition de l'unité au voisinage de~$p_2(M)$
sur~$Y$ subordonnée aux cartes de~$\mathscr E'$.
Alors la famille $(\rho_\lambda\rho'_\mu)$ est une partition de l'unité
au voisinage de~$M$, subordonnée aux cartes de $\mathscr E\times\mathscr E'$.
Pour toute forme $\omega\in\mathscr A^{*}_{\mathscr D,M}$,
on peut écrire
\begin{equation}\label{eq.dec-omega}
 \omega = \sum_{\lambda,\mu} \rho_\lambda \rho'_\mu \omega.
% = \sum_{\lambda, \mu} \rho_\lambda \rho'_\mu j_!(\omega|_{U_\lambda\times V_\mu}).
\end{equation}
Pour tout couple $(\lambda,\mu)$, la forme $\omega|_{U_\lambda\times V_\mu}$
sur $U_\lambda\times V_\mu$ est tropicalisée par une carte $(f_\lambda,g_\mu)$.
Choisisssons un domaine compact~$W_{\lambda,\mu}$ de $U_\lambda\times V_\mu$
qui contient un voisinage de $\supp(\rho_\lambda)\times \supp(\rho'_\mu)$.
Il découle de l'exemple~\ref{exem.forme-pl-produit}
que $\omega|_{W_\lambda,\mu}$ est de la forme
\[ \sum_i 
(f_\lambda,g_\mu)^* \gamma_{\lambda,\mu,i} 
p_1^*f_\lambda^*\alpha_{\lambda,\mu,i}\wedge p_2^*g_\mu^*\beta_{\lambda,\mu,i},
\]
où les $\alpha_{\lambda,\mu,i}$ sont des formes de type~$(p,q)$ 
sur l'espace affine but de~$f_\lambda$
et les $\beta_{\lambda,\mu,i}$ sont des formes de type~$(p',q')$ 
sur l'espace affine but de~$g_\mu$
et les $\gamma_{\lambda,\mu,i}$ sont des fonctions lisses
sur le produit de ces deux espaces affines.

Pour tout triplet $(\lambda,\mu,i)$,  le support de la forme
\[ \rho_\lambda \rho'_\mu (f_\lambda,g_\mu)^*\gamma_{\lambda,\mu,i} p_1^*f_\lambda^* \alpha_{\lambda,\mu,i} \wedge p_2^* g_\mu^* \beta_{\lambda,\mu,i} \]
sur $U_\lambda\times V_\mu$ est contenu dans le
produit $\mathring U_\lambda\times \mathring V_\mu$.
On note $j_!(\cdot)$ son prolongement par zéro.
De même, le support de la forme $\rho_\lambda f_\lambda^*\alpha_{\lambda,\mu,i}$
sur~$U_\lambda$ est contenu dans~$\mathring U_\lambda$;
on note $\widetilde\alpha_{\lambda,\mu,i}$ son prolongement par zéro.
On définit de même $\widetilde\beta_{\lambda,\mu,i}$,
le prolongement par zéro de $\rho'_\mu g_\mu^*\beta_{\lambda,\mu,i}$.

On a ainsi
\begin{align*}
\omega & = \sum_{\lambda,\mu} \rho_\lambda \rho'_\mu \omega  \\
& = \sum_{\lambda,\mu} j_! (\rho_\lambda \rho'_\mu \omega|_{W_{\lambda,\mu}}) \\
& = \sum_{\lambda,\mu,i} j_! (\rho_\lambda \rho'_\mu (f_\lambda,g_\mu)^*\gamma_{\lambda,\mu,i} p_1^*f_\lambda^* \alpha_{\lambda,\mu,i} \wedge p_2^* g_\mu^* \beta_{\lambda,\mu,i})  \\
& = \sum_{\lambda,\mu,i} j_! ((f_\lambda,g_\mu)^*\gamma_{\lambda,\mu,i}
p_1^* \widetilde \alpha_{\lambda,\mu,i} \wedge p_2^*\widetilde\beta_{\lambda,\mu,i}).
\end{align*} 
Pour tout triplet $(\lambda,\mu,i)$ comme ci-dessus,
on peut choisir une fonction lisse~$\theta_{\lambda,\mu,i}$
sur~$X\times Y$
qui vaut~$1$ au voisinage de $\supp(\rho_\lambda)\times\supp(\rho'_\mu)$
et dont le support est contenu dans l'intérieur de $U_\lambda\times V_\mu$.
On a donc
\begin{align*}
\omega 
& = \sum_{\lambda,\mu,i} j_! (\theta_{\lambda,\mu,i} (f_\lambda,g_\mu)^*\gamma_{\lambda,\mu,i}
p_1^* \widetilde \alpha_{\lambda,\mu,i} \wedge p_2^*\widetilde\beta_{\lambda,\mu,i}) \\
& = \sum_{\lambda,\mu,i} j_! (\theta_{\lambda,\mu,i} (f_\lambda,g_\mu)^*\gamma_{\lambda,\mu,i}) p_1^* \widetilde \alpha_{\lambda,\mu,i} \wedge p_2^*\widetilde\beta_{\lambda,\mu,i}.
\end{align*}

En particulier, on a le corollaire suivant.

\begin{coro}\label{coro.formes-produit}
Toute forme à support compact $\omega$ sur~$X\times Y$
est une somme finie de formes à support compact du type 
\[ \gamma p_1^*\alpha \wedge p_2^*\beta \]
où $\alpha$ est une forme à support compact sur~$X$,
$\beta$ est une forme à support compact sur~$Y$,
et $\gamma$ est une fonction lisse à support compact sur~$X\times Y$.
\end{coro}

\begin{coro}\label{coro.densite-formes-produit}
Les formes sur $X\times Y$ du type $p_1^*\alpha\wedge p_2^*\beta$,
où $\alpha$ et $\beta$ sont des formes lisses à support compact sur~$X$
et~$Y$ respectivement,
engendrent un sous-espace vectoriel dense des formes lisses à support
compact sur~$X\times Y$.
\end{coro}
\begin{proof}
Compte tenu du corollaire précédent, il suffit de vérifier
que toute \emph{fonction} lisse à support compact~$c$ sur~$X\times Y$
est approchable par des fonctions de la forme $p_1^* a \cdot p_2^* b$,
où $a$ et $b$ sont des fonctions lisses à support compact sur~$X$
et~$Y$ respectivement.
Soit $K$ et $L$ des parties compactes de~$X$ et~$Y$ telles que $K\times L$
contienne le support de~$c$.
D'après la proposition~\ref{prop.decoupage-produit},
il existe  des jeux de cartes tropicales
finis, $\mathscr E_X$ sur~$X$, et $\mathscr E_Y$ sur~$Y$,
qui couvrent~$K$ et~$L$ respectivement,
telles que le jeu produit $\mathscr E_X\times \mathscr E_Y$
tropicalise~$c$ au voisinage de~$K\times L$.
Soit $(\lambda_i)$ une partition de l'unité sur~$X$ au voisinage de~$K$
subordonnée au jeu~$\mathscr E_X$ et
soit $(\mu_j)$ une partition de l'unité sur~$Y$ au voisinage de~$L$
subordonnée au jeu~$\mathscr E_Y$.
On a alors 
$ c = \sum_{i,j} \lambda_i \mu_j c$.
Fixons $i$ et~$j$. Soit $U$ le domaine d'une carte~$f\colon U\to\R^m$ 
de~$\mathscr E_X$
telle que $\supp(\lambda_i)\subset U$ et soit $V$ le domaine d'une carte~$g\colon V\to\R^n$
de~$\mathscr E_Y$ telle que $\supp(\mu_j)\subset V$.
Sur $U\times V$, la fonction~$c$ est tropicalisée par la carte $(f,g)$
et il existe une fonction lisse~$\gamma$ sur~$\R^{m+n}$
telle que $c=(f,g)^*\gamma$. Il existe alors une suite $(P_n)$
de polynômes sur~$\R^{m+n}$ 
telle que $(\partial^kP_n)$ converge uniformément vers~$\partial^k\gamma$
sur toute partie compacte de~$\R^{m+n}$,
pour tout multi-exposant $k\in\N^{m+n}$. 
Alors $(f,g)^*P_n$ converge vers~$c$ 
dans $\mathscr A^0_{\mathscr E_X\times\mathscr E_Y}(X\times Y)$.
Par suite, 
$\lambda_i\mu_j(f,g)^*P_n$ converge vers $\lambda_i\mu_j c$
dans $\mathscr A^0_\cpct(X\times Y)$.
Comme ce sont des fonctions de la forme requise, le corollaire s'en déduit.
\end{proof}

\subsection{}\label{ss.formes-produit-jeu'}
Reprenons les notations du paragraphe~\ref{ss.formes-produit-jeu}
en supposant de plus que $Y$ est compact et que le jeu~$\mathscr E'$
sur~$Y$ est réduit à une seule carte~$g$ de domaine~$Y$.
La partition de l'unité $(\rho'_\mu)$ est réduite à la fonction constante~$1$,
et on peut supposer que les fonctions $\theta_{\lambda,\mu,i}$ 
se déduisent de fonctions lisses sur~$X$.
La forme~$\omega$ considérée se décompose alors comme
\[ \omega = \sum_{\lambda, i} j_!(\theta_{\lambda,i} (f_\lambda,g)^*\gamma_{\lambda,i}) p_1^*\widetilde \alpha_{\lambda,i} \wedge p_2^* g^*\beta_{\lambda,i}.\]

\subsection{}
On note $\mathscr A^{p',q'}_X\boxtimes \mathscr A^{p'',q''}_{Y}$
le sous-faisceau de $\mathscr A^{\cdot,\cdot}_{X\times Y}$
en $\mathscr A_{X\times Y}$-modules
engendré par les formes du type $p_1^*\alpha\wedge p_2^*\beta$
où $\alpha \in \mathscr A^{p',q'}_{X}$
et $\beta \in \mathscr A^{p'',q''}_{Y}$;
il est contenu dans $\mathscr A^{p'+p'',q'+q''}_{X\times Y}$.
Ses sections sont appelées formes de type $(p',q';p'',q'')$
sur $X\times Y$.  Lorsque $p''=q''=0$, on dira plus simplement
$(p',q';0)$-forme.

% \begin{defi}
% \begin{enumerate}
% \item
% Soit $X$ un espace tropical de niveau~$m$ 
% et soit $f\colon X\to E$ une application affine
% et soit $P$ un sous-espace paralinéaire de~$E$
% tel que $f(X)\subset P$, de sorte que $(f,P)$ est une carte tropicale sur~$X$.
% Le sous-faisceau de~$\mathscr A^{p,q}_X$ en $\mathscr A_X$-modules
% engendré par $f^*\mathscr A_E$ est appelé le faisceau des formes
% de type $(p,q)$ faiblement tropicalisées par~$f$.
% 
% \item
% Soit $X$ un espace G-tropical de niveau~$m$
% et soit $f\colon X\to E$ une carte G-tropicale sur~$X$
% Le sous-G-faisceau de~$\mathscr A^{p,q}_{X,\groth}$ 
% en $\mathscr A_{X,\groth}$-modules
% engendré par $f^*\mathscr A_E^{p,q}$ est appelé le G-faisceau des G-formes
% de type $(p,q)$ faiblement G-tropicalisées par~$f$.
% \end{enumerate}
% \end{defi}

\begin{prop}\label{prop.dec-forme-produit}
\begin{enumerate}
\item
On a 
\[ 
% \mathscr A^{p,q}_{X\times Y,\groth} = \bigoplus
% \mathscr A^{p',q'}_{X_\groth}\boxtimes \mathscr A^{p'',q''}_{Y_\groth} 
% \quad\text{et}\quad
 \mathscr A^{p,q}_{X\times Y} = \bigoplus
\mathscr A^{p',q'}_{X}\boxtimes \mathscr A^{p'',q''}_{Y}, \]
où la somme directe porte sur les $(p',q',p'',q'')\in\N^4$ 
tels que $p=p'+p''$ et $q=q'+q''$.

% \item
% Soit $(f\colon X\to E,P)$ et $(g\colon Y\to F, Q)$ des cartes G-tropicales, 
% et soit $\omega$ une G-forme sur~$X\times Y$
% qui est G-tropicalisée par~$(f,g)$.
% la carte G-tropicale~$(f,g)\colon X\times Y\to E\times F$
% d'image contenue dans $P\times Q$.
% Pour tout $(p',q',p'',q'')$, la composante de~$\omega$ sur 
% $\mathscr A^{p',q'}_{X_\groth}\boxtimes \mathscr A^{p'',q''}_{Y_\groth}$
% s'écrit 
% $\rho\cdot (f\circ p_1)^*\alpha\wedge (g\circ p_2)^*\beta$, 
% où $\rho$ est une fonction G-lisse sur $X\times Y$ G-tropicalisée par~$(f,g)$,
% $\alpha$ est une $(p',q')$-forme sur~$P$
% et $\beta$ est une $(p'',q'')$-forme sur~$Q$. 
 
\item
Soit $(f\colon X\to E,P)$ et $(g\colon Y\to F, Q)$ des cartes tropicales, 
et soit $\omega$ une forme sur~$X\times Y$
qui est tropicalisée par 
la carte tropicale~$(f,g)\colon X\times Y\to E\times F$
d'image contenue dans $P\times Q$.
Pour tout $(p',q',p'',q'')$, la composante de~$\omega$ sur 
$\mathscr A^{p',q'}_{X_\groth}\boxtimes \mathscr A^{p'',q''}_{Y_\groth}$
est somme de termes du type
$\rho\cdot (f\circ p_1)^*\alpha\wedge (g\circ p_2)^*\beta$, 
où $\rho$ est une fonction lisse sur $X\times Y$ tropicalisée par~$(f,g)$,
$\alpha$ est une $(p',q')$-forme sur~$P$
et $\beta$ est une $(p'',q'')$-forme sur~$Q$. 
\end{enumerate}
\end{prop}
\begin{proof}
La démonstration se fait en plusieurs étapes.

1) Démontrons d'abord une assertion d'unicité.
Soit $W$ un ouvert de~$X\times Y$ et, 
pour tout $\pi=(p',q',p'',q'')$, soit $\omega_\pi$
une forme sur~$W$ appartenant à 
$\mathscr A^{p',q'}_{X_\groth}\boxtimes \mathscr A^{p'',q''}_{Y_\groth}$;
supposons que $\sum \omega_\pi=0$ et démontrons que $\omega_\pi=0$
pour tout~$\pi$.

Soit $(x,y)\in W$; démontrons que pour tout~$\pi$,
$\omega_\pi$ est nulle au voisinage de~$(x,y)$. 
Il existe un voisinage domanial compact~$U$ de~$x$
et un voisinage domanial compact~$V$ de~$y$,
une carte tropicale~$f\colon U\to E$ 
et une carte tropicale~$g\colon V\to F$
telles que  pour tout~$\pi$, $\omega_\pi$ soit tropicalisée
par la carte tropicale $(f,g)$ sur~$U\times V$.
Il suffit de prouver que pour tout~$\pi$,
la restriction de~$\omega_\pi$ à $U\times V$ est nulle.
Pour tout~$\pi$, soit $\gamma_\pi$ une forme sur $f(U)\times g(V)$
telle que $\omega_\pi=(f,g)^*\gamma_\pi$; 
compte tenu du lemme~\ref{lemm.extensions},
on peut même supposer
que $\gamma_\pi$ est la restriction d'une forme sur~$E\times F$.

La nullité de $\omega=\sum \omega_{\pi}$
équivaut à la nullité de la forme $\gamma=\sum \gamma_{\pi}$
sur le sous-espace paralinéaire compact $f(U)\times g(V)$,
et de même pour chacune de ses composantes $\omega_{\pi}$.
Soit $\mathscr C$ une décomposition cellulaire de $f(U)$,
$\mathscr D$ une décomposition cellulaire de~$g(V)$,
et considérons la décomposition cellulaire produit de $f(U)\times g(V)$.
Soit $C\in\mathscr C$ et $D\in\mathscr D$;
par hypothèse $\gamma|_{C\times D}=0$, et il s'agit
de démontrer que $\gamma_{\pi}|_{C\times D}=0$.
Cela ramène à prouver l'assertion sur une cellule-produit,
puis au cas de $\R^d\times\R^e$,
auquel cas elle est évidente par un calcul explicite.

2) Prouvons un énoncé d'existence.
Soit $\omega$ une forme sur $X\times Y$,
soit $(f\colon U\to \R^d,P)$ 
une carte tropicale définie sur un ouvert~$U$ de~$X$,
soit $(g\colon V\to \R^e,Q)$ une carte tropicale définie sur un ouvert~$V$ de~$Y$
et supposons que la restriction de~$\omega$ à l'ouvert $U\times V$
de~$X\times Y$ soit tropicalisée par la carte tropicale~$(f,g)$
d'image contenue dans la partie paralinéaire~$P\times Q$.
Soit $\Omega$ un voisinage ouvert de~$P\times Q$ dans $\R^d\times \R^e$
et soit $\gamma$ une forme sur~$\Omega$
telle que $\omega|_{U\times V}= (f,g)^*\gamma$.
D'après l'exemple~\ref{exem.forme-pl-produit},
on peut décomposer la forme $\gamma$ en  une somme
$\sum_\pi \sum_{I,J,K,L} \rho^{IJKL}_\pi \di x_I \wedge \dc x_J \wedge \di y_K \wedge \dc y_L$, 
où $I,J,K,L$ sont des multindices de longueurs $p',p'',q',q''$,
avec $\pi=(p',q',p'',q'')$, 
les variables de~$\R^d$ étant notées~$x_i$ et celles de~$\R^e$ étant
notées~$y_j$,
et où les $\rho^{IJKL}_\pi$ sont des fonctions lisses sur~$\Omega$.
Ainsi, 
\[ (f,g)^*\gamma = 
\sum_\pi \sum_{I,J,K,L} (f,g)^*\rho^{IJKL}_\pi f^*(\di x_I \wedge \dc x_J) \wedge g^*(\di y_K \wedge \dc y_L), \] 
qui est une décomposition du type demandé.

3) Démontrons l'assertion~\emph a) de la proposition.
L'indépendance des différents facteurs résulte de l'étape~1).
Démontrons que leur somme est le faisceau~$\mathscr A^{\cdot,\cdot}_{X\times Y}$.
Soit $\omega$ une forme sur un ouvert~$W$ de~$X\times Y$.
L'étape~2) démontre que $\omega$ appartient à localement à la somme considérée;
elle y appartient donc globalement.

4) L'assertion~\emph b) est alors le cas particulier de l'étape~2)
appliquée à $U=X$ et $V=Y$.
\end{proof}

\subsection{}
Soit $X$ et $Y$ des espaces tropicaux de niveaux~$m$ et~$n$ respectivement.
L'opérateur différentiel $\di$ sur 
sur les formes sur~$X\times Y$
se décompose comme la somme de deux opérateurs différentiels~$\di[X]$
et~$\di[Y]$.
Si $\omega$ est une forme de type $(p',q';p'',q'')$, 
on note $\di[X]\omega$ et $\di[Y]\omega$ les composantes de $\di\omega$
de type $(p'+1,q';p'',q'')$ et $(p',q';p''+1,q'')$ respectivement.
On vérifie dans des cartes que l'on a $\di\omega=\di[X]\omega+\di[Y]\omega$.
Ces applications s'étendent par linéarité à toutes les formes.

Par construction, si $\alpha$ est une forme sur~$X$, on a
\[ \di[X] (p_1^*\alpha)=p_1^*(\di \alpha) \quad\text{et}\quad \di[Y] (p_1^*\alpha)=0, \]
et des formules analogues si $\beta$ est une forme sur~$Y$.

De la formule de Leibniz 
\[ \di (\omega\wedge\eta)= \di\omega \wedge \eta+ (-1)^{p+q}\omega\wedge \di\eta \]
si $\omega$ est de type $(p,q)$, on déduit 
les deux égalités
\[ \di[X] (\omega\wedge\eta)= \di[X]\omega \wedge \eta+ (-1)^{p+q}\omega\wedge \di[X]\eta \]
et
\[ \di[Y] (\omega\wedge\eta)= \di[Y]\omega \wedge \eta+ (-1)^{p+q}\omega\wedge \di[Y]\eta .\]

On définit de même les opérateurs~$\dc[X]$ et~$\dc[Y]$.

\subsection{}
Par transposition, on déduit des opérateurs~$\di[X]$, $\dc[X]$, $\di[Y]$ et~$\dc[Y]$
des opérateurs linéaires analogues sur les espaces de courants, 
avec les mêmes règles de signes que celles du paragraphe~\ref{ss.calcul-diff-courants}. Par exemple, si $S$ est un courant et~$\omega$ une   forme
de type~$(p,q)$ sur~$X\times Y$, on a 
\[ \langle \di[X] S, \omega\rangle=(-1)^{p+q+1} \langle S, \di[X]\omega\rangle.\]

\section{Intégrales dépendant de paramètres}

\begin{lemm}\label{lemm.continuite-fibre}
Soit $X$ et $Y$ des espaces tropicaux de niveaux~$m$ et~$n$ respectivement.
Soit $r$ un élément de $\N\cup\{+\infty\}$.

Soit $\Phi$ la famille des parties fermées de~$X\times Y$
qui sont compactes au-dessus de~$Y$.

Soit $\omega\in\mathscr A_\Phi(X\times Y)$.
Pour tout $y\in Y$, on note $\omega_y$ la forme sur~$X$,
image réciproque de~$\omega$ par l'application $x\mapsto (x,y)$.

\begin{enumerate}
\item
Pour tout $y\in Y$, le support de~$\omega_y$ est compact.

\item
L'application $y\mapsto \omega_y$ de~$Y$
dans~$\mathscr A_{\cpct}(X)$ est continue lorsque ce dernier espace
est muni de la topologie~$\mathscr C^r$.
\end{enumerate}
\end{lemm}
\begin{proof}
Il n'y a rien à démontrer si $Y$ est vide; nous le supposons désormais non vide.

\begin{enumerate}
\item
Le support de~$\omega|_{X\times\{y\}}$ est contenu 
dans $\supp(\omega)\cap X\times\{y\}$. Il est donc compact.

\item
L'assertion est locale sur~$Y$, ce qui permet de 
se ramener au cas où $Y$ est compact. 
La famille~$\Phi$ est alors la famille des parties compactes de~$X\times Y$.
 
Soit $\mathscr D$ un jeu de cartes couvrant sur~$X\times Y$ 
qui tropicalise~$\omega$.
Soit $K$ la projection du support de~$\omega$ sur~$X$;
c'est une partie compacte de~$X$.
Appliquons la proposition~\ref{prop.decoupage-produit}
à la partie compacte $K\times Y$ de~$X\times Y$ :
il existe ainsi
des jeux de cartes finis~$\mathscr E$ sur~$X$
et~$\mathscr E'$ sur~$Y$ tels que 
le jeu produit $\mathscr E\times\mathscr E'$ raffine~$\mathscr D$
et couvre $K\times Y$.

Pour tout $y\in Y$, la forme $\omega_y$ est tropicalisée par~$\mathscr E$
et son support est contenu dans~$K$.
L'application $y\mapsto \omega_y$ est donc à valeurs
dans $\mathscr A_{\mathscr E,K}(X)$.
Pour démontrer qu'elle est continue pour la topologie~$\mathscr C^r$,
il suffit donc de prouver que c'est le cas
de l'application induite de~$Y$ dans~$\mathscr A_{\mathscr E,K}(X)$.

Soit $W$ un domaine compact contenue dans une des cartes~$(f\colon U\to E)$ 
du jeu~$\mathscr E$ et soit $\norm\cdot_{W,r}$ 
une seminorme~$\mathscr C^r$ sur $\mathscr A(X)$ 
associée à une décomposition cellulaire de~$f(W)$ et
un système de coordonnées sur ses cellules.
Il s'agit de prouver que pour $z$ assez proche de~$y$,
$\norm{\omega_z-\omega_y}_{W,r}$ est aussi petit que l'on veut.
Soit $(g\colon V\to F)$ une carte du jeu~$\mathscr E'$ tel
que $y$ appartient à l'intérieur de~$V$; 
soit $W'$ un voisinage domanial compact de~$y$ contenu dans~$V$.
Par définition, il existe une forme $\alpha$
sur $f(W)\times g(W')$ telle que 
$\omega|_{W\times W'}=(f,g)^*\alpha$; on peut supposer que $\alpha$
est une forme lisse sur $E\times F$.
Pour $z\in W'$, la forme $\omega_z|_W$ est 
égale à $f^*\alpha|_{g(z)}$, où pour tout $t\in F$, $\alpha_t$
est la restriction de~$\alpha$ à~$E\times\{t\}$, identifié à~$E$.

Posons $P=f(W)$ et $Q=g(W')$. 
Par définition, $\norm{\omega_z}_{W,r}=\norm{\alpha_{g(z)}}_{P,r}$.
On se ramène ainsi la continuité de l'application $t\mapsto \alpha_t$
de~$Q$ dans~$\mathscr A(P)$ pour la norme~$\norm\cdot_{P,r}$.

En raisonnant cellule par cellule, on se ramène au cas où $P$
est une cellule~$C$. Alors, les formes sur~$C$ ont une base
standard comme  module sur sur l'espace des fonctions lisses sur~$C$.
En fixant une telle base, on se ramène au cas 
où $\alpha$ est une fonction lisse.
Il s'agit alors de prouver que lorsque $\phi$ est une fonction
lisse sur $E\times F$, l'application $t\mapsto\phi(\cdot,t)$
de $F$ dans~$\mathscr C^\infty(P)$  
est continue pour la topologie~$\mathscr C^r$,
ce qui résulte de la formule de Taylor.
\qedhere
\end{enumerate}
\end{proof}

\subsection{}
Soit $X$ et $Y$ des espaces tropicaux de niveaux~$m$ et~$n$ respectivement.
Soit $S$ un courant de bidimension~$(p,q)$ sur~$X$.
Soit $\gamma$ une $(p,q;0)$-forme lisse sur~$X\times Y$ dont le support
est compact au-dessus de~$Y$.

Pour tout $y\in Y$,  on peut restreindre $\gamma$ à $X\times \{y\}$;
on obtient ainsi une $(p,q)$-forme lisse dont le support est compact,
qu'on notera $\gamma(\cdot,y)$. On note alors 
$S_*(\gamma)(y)$ la valeur du courant~$S$ sur cette forme.
On a ainsi défini une fonction $S_*(\gamma)$ sur~$Y$. 

\begin{lemm}\label{lemm.support-S*}
Le support de $S_*(\gamma)$ est contenue dans la partie 
fermée $p_2(\supp(\gamma))$.
\end{lemm}
\begin{proof}
Comme $\supp(\gamma)$ est compact au-dessus de~$Y$, son image
est fermée dans~$Y$.

Supposons que $y$ n'appartienne pas à l'image du support de~$\gamma$.
Par compacité de ce support au-dessus de~$Y$, 
il existe un voisinage ouvert~$U$ de~$y$ dans~$Y$ tel que $\gamma|_{X\times U}=0$.
Alors $\gamma(\cdot, y')=0$ pour tout $y'\in U$
si bien que $S_*(\gamma)$ est nulle sur~$U$. En particulier,
$y$ n'appartient au support de~$S_*(\gamma)$.
\end{proof}

\begin{prop}\label{prop.lisse-parametre}
Soit $f$ une carte tropicale sur un domaine~$U$ de~$X$, 
soit $g$ une carte tropicale sur un domaine~$V$ de~$Y$,
soit $\rho$ une fonction lisse sur~$X$ dont
le support est compact et contenu dans l'intérieur de~$U$.

\begin{enumerate}
\item
Soit $\gamma$ une forme lisse sur de bidegré~$(p,q;0)$ sur~$X\times Y$
qui est tropicalisée par $(f,g)$ sur $U\times V$.
La forme $\rho\gamma$ sur~$X\times Y$ est à support compact
au-dessus de~$Y$, de type~$(p,q;0)$, 
et la fonction $S_*(\rho\gamma)$ 
est lisse sur~$V$, tropicalisée par~$g$.

\item
L'application $\mathscr A^{p,q;0}_{(f,g)}(X\times Y) \to \mathscr A^{0}_g(Y)$
donnée par $\gamma\mapsto S_*(\rho\gamma)$ est continue.
\end{enumerate}
\end{prop}
\begin{proof}
\begin{enumerate}
\item
Notons $E$ et $F$ les espaces affines buts de~$f$ et~$g$.
Il s'agit de prouver que la restriction
de~$S_*(\rho\gamma)$ à tout domaine compact~$W$ de~$V$
est lisse et tropicalisée par~$g$.
% \footnote{Vérifier que
% les sorites sur « lisse tropicalisé par… » seront dits comme il faut. »}

Soit $W$ un tel domaine et
soit $K$ un domaine compact qui contient le support de~$\rho$.
Considérons un jeu de cartes tropicales fini sur~$X$ qui couvre~$K$,
toutes plus fines que~$f$, et qui tropicalise~$\rho$,
et adjoignons-lui la carte tropicale triviale de domaine
le complémentaire de~$K$. 
Notons~$\mathscr E$ le jeu ainsi obtenu.

Par définition d'une forme lisse tropicalisée,
il existe une forme lisse~$\alpha$ 
sur la partie paralinéaire~$f(K)\times g(W)$
telle  que $\gamma|_{K\times W}= (f,g)^*\alpha$.
Cette forme se prolonge en une forme lisse sur l'espace
affine $E\times F$, qu'on notera encore~$\alpha$

Pour toute forme lisse~$\beta$ de type~$(p,q;0)$ sur~$E\times F$, 
on définit une fonction $T_\beta$ sur~$F$ de la façon suivante.
Considérons la forme $\rho(x)f^*\beta$ sur $U\times F$;
son support est contenu dans $\supp(\rho)\times F$;
on la prolonge (par zéro) en une forme lisse~$\omega$ sur $X\times F$.
Pour tout~$z$, la forme~$\omega_z$ sur~$X$ est lisse et à support
compact (contenu dans le support de~$\rho)$ et l'on pose
$T_\beta(z)=\langle S,\omega_z\rangle$.

Nous allons
prouver que $T(\beta)$ est $\mathscr C^\infty$ pour toute forme
lisse~$\beta$ sur~$E\times F$.
Comme dans~\cite[chapitre~4]{Schwartz-1966}, 
cela va découler de la définition de la continuité d'un courant 
et des topologies sur les espaces de formes.

Lorsque $z'$ tend vers un point~$z$,
les formes lisses~$\omega_{z'}$ convergent 
vers la forme lisse~$\omega_z$ dans~$\mathscr A^{p,q}_\cpct(X)$,
d'après le lemme~\ref{lemm.continuite-fibre}.
Puisque~$S$ est une forme linéaire continue sur~$\mathscr A^{p,q}_\cpct(X)$,
il en résulte que $T_\beta(z')$ tend vers~$T_\beta(z)$.
Cela prouve que la fonction~$T_\beta$ est continue.

On fait un raisonnement similaire pour les dérivées partielles de~$T_\beta$.
Fixons un vecteur~$e$ de~$F$ et notons~$\partial_e\psi$ la dérivée partielle
de~$\psi$ par rapport à~$e$.
Alors, les formes lisses sur~$X$ (dépendant de~$t$) données par
\[ \frac{ \omega_{z+te}- \omega_z} t \]
convergent vers la forme~$\partial_e\omega_z$ qui est
définie de façon analogue à~$\omega$ 
en remplaçant~$\beta$ par~$\partial_e\beta$.
Par continuité du courant~$S$, on a donc 
\[ \lim_{t\to0} \frac{T_\beta(z+te)-T_\beta(z)}t 
    = \langle S, \partial_e\omega_z\rangle.\]
Cela prouve que $T_\beta$ 
admet une dérivée partielle par rapport au vecteur~$e$.
De plus, cette dérivée partielle est une fonction continue de~$z$, 
car elle est égale à $T_{\partial_e\beta}$.
Ainsi, $T_\beta$ est de classe~$\mathscr C^1$.
En itérant l'opération, 
on en déduit que $T_\beta$ est de classe~$\mathscr C^\infty$.

Pour tout $v\in W$, la forme $\rho(\cdot)\gamma(\cdot,v)$
sur~$X$ coïncide avec $\rho(\cdot) f^*\alpha(\cdot,g(v))$ sur~$U$,
et est nulle en dehors de~$K$. Cela prouve que que
\[ S_*(\rho\gamma) (v) = T_\alpha (g(v)) . \]
Par conséquent, $S_*(\rho\gamma)$ est lisse et tropicalisée
par~$g$.

\item
Démontrons maintenant la continuité de l'application linéaire
de~$\mathscr A^{p,q;0}_{(f,g)}(X\times Y)$ dans~$\mathscr A^0_g(Y)$
donnée par $\gamma\mapsto S_*(\rho\gamma)$.

Par définition de la continuité du courant~$S$, il existe
une famille finie~$(W_i)$ de domaines compacts de~$X$,
chacun étant contenu dans un domaine du jeu de cartes~$\mathscr E$,
un entier~$r$, et un nombre réel~$c$
tels que $\abs{\langle S, \theta\rangle}\leq c \sup_i \norm{\theta}_{W_i,r}$
pour toute forme lisse~$\theta$ sur~$X$
qui appartient à~$\mathscr A^{p,q}_{\mathscr E,K}(X)$.

Pour tout  $v\in W$, 
\[ S_*(\rho (f,g)^*\alpha)(v) = T_\alpha (g(v)) \]
et par construction, 
$T_\alpha (z) =\langle S, \rho (\cdot)f^*\alpha(\cdot, z)\rangle$
pour tout $z\in F$.
Les formes $\rho(\cdot)\alpha(\cdot,z)$ 
appartiennent à $\mathscr A^{p,q}_{\mathscr E,K}(X)$.
On a donc
\[ \abs{T_\alpha(z)} \leq  c \sup_i \norm{\rho(\cdot)f^*\alpha(\cdot)z}_{W_i,r}.\]
Il existe un nombre réel~$c'$ tel que 
$\norm{\rho(\cdot)f^*\alpha(\cdot,z)}_{W_i,r}\leq 
    c' \norm{\alpha}_{f(K)\times g(W_i),r}$
pour toute forme lisse~$\alpha$ sur~$E\times F$.
Ainsi, il existe  un nombre réel~$c''$ tel que
\[ \abs{T_\alpha(z)} \leq  c'' \sup_i \norm{\alpha}_{f(K)\times g(W_i),r} \]
pour toute forme lisse~$\alpha$ sur~$E\times F$ et tout $z\in g(W)$.
Cela entraîne que $\gamma\mapsto S_*(\rho\gamma)$
est continue lorsque la source
est munie de la topologie~$\mathscr C^r$,
et le but est muni de la topologie~$\mathscr C^0$.

Compte tenu des formules $\partial_e T_\alpha=T_{\partial_e \alpha}$,
on en déduit que $\gamma\mapsto S_*(\rho\gamma)$
est continue lorsque la source est munie de la topologie~$\mathscr C^{k+r}$
et le but est muni de la topologie~$\mathscr C^k$, pour tout entier~$k$.

Par conséquent, l'application $\gamma\mapsto S_*(\rho\gamma)$
est continue lorsque la source et le but sont munis
de la topologie~$\mathscr C^\infty$.
\end{enumerate}
\end{proof}

\begin{coro}\label{coro.S*-lisse}
Soit $\mathscr D$ un jeu de cartes sur~$X\times Y$
et soit $M$ une partie de~$X\times Y$ qui est compacte au-dessus de~$Y$.
Soit $\mathscr E_X$ et $\mathscr E_Y$ des jeux de cartes tropicales
sur~$X$ et~$Y$ tel que le jeu de cartes $\mathscr E_X\times \mathscr E_Y$
raffine~$\mathscr D$ et couvre~$M$.

Pour toute forme lisse $\gamma\in \mathscr A^{p,q;0}_{\mathscr D,M}(X\times Y)$
et toute fonction lisse~$\rho$ sur~$X$,
la fonction $S_*(\rho\gamma)$ 
appartient à $\mathscr A^0_{\mathscr E_Y,p_2(M)}(Y)$.
\end{coro}
\begin{proof}
D'après le lemme~\ref{lemm.support-S*},
le support de $S_*(\rho\gamma)$ est contenu dans $p_2(M)$.
Il suffit alors de démontrer que pour toute carte tropicale~$g$
appartenant au jeu~$\mathscr E_Y$,
la fonction~$S_*(\rho\gamma)$ est lisse et tropicalisée par~$g$ sur
le domaine~$V$ de~$g$.
Cela revient à dire que pour tout domaine compact~$W$
de~$V$, la restriction à~$W$ de~$S_*(\rho\gamma)$
est lisse et tropicalisée par~$g$.

La partie $M\cap (X\times W)$ de~$X\times Y$ est compacte;
soit $K$ sa projection sur~$X$. Le jeu~$\mathscr E_X$ couvre~$K$.
Soit $(\rho_\lambda)$ une partition de l'unité
sur~$X$ au voisinage de~$K$ subordonnée aux cartes de~$\mathscr E_X$.
Comme le support de~$\gamma$ est contenu dans~$M$, 
on en déduit que $\gamma$ coïncide avec 
$ \sum_{\lambda} \rho_\lambda \gamma $ sur $X\times W$,
si bien que 
$ S_*(\rho\gamma) $ coïncide avec $ \sum_\lambda S_*(\rho\rho_\lambda\gamma)$
sur~$W$.
Il suffit alors de démontrer que chacun des termes de cette somme est lisse
et tropicalisé par~$g$ sur~$W$.

Fixons un indice~$\lambda$ et considérons une carte~$f$ du jeu~$\mathscr E_X$
telle que le support de~$\rho_\lambda$ soit contenu dans le domaine~$U$ de~$f$.
L'assertion découle alors de la proposition~\ref{prop.lisse-parametre}.
\end{proof}

\begin{theo}\label{theo.S*-continu}
\begin{enumerate}
\item
L'application $\gamma\mapsto S_*(\gamma)$ définit une application
linéaire continue de $\mathscr A^{p,q;0}_\cpct(X\times Y)$ dans
$\mathscr A^0_\cpct(Y)$.

\item
Soit $\Phi$ la famille de supports constituée des parties fermées
de~$X\times Y$ qui sont compactes au-dessus de~$Y$. 
L'application $\gamma\mapsto S_*(\gamma)$ définit une application
linéaire continue de $\mathscr A^{p,q;0}_{\Phi}(X\times Y)$ dans
$\mathscr A^0(Y)$.
\end{enumerate}
\end{theo}
\begin{proof}
\begin{enumerate}
\item
Il suffit de démontrer que  pour tout jeu de cartes tropicales
couvrant~$\mathscr D$ et toute partie compacte~$M$ de~$X\times Y$,
l'application $S_*$ induit une application continue 
de $\mathscr A^{p,q;0}_{\mathscr D,M}(X\times Y)$
dans $\mathscr A^0_\cpct(Y)$. 
Son image est contenue dans $\mathscr A^0_{p_2(M)}(Y)$.

D'après la proposition~\ref{prop.decoupage-produit},
il existe des jeux de cartes tropicales~$\mathscr E_X$ et~$\mathscr E_Y$
sur~$X$ et~$Y$ respectivement, finis, tels que 
le jeu produit $\mathscr E_X\times \mathscr E_Y$ raffine~$\mathscr D$
et couvre~$M$.
D'après le corollaire~\ref{coro.S*-lisse},
on a $S_*(\gamma)\in \mathscr A^0_{\mathscr E_Y,p_2(M)}(Y)$ 
pour tout $\gamma\in\mathscr A^{p,q;0}_{\mathscr D,M}(X\times Y)$.
Il suffit de démontrer que l'application de 
$\mathscr A^{p,q;0}_{\mathscr D,M}(X\times Y)$ dans
$\mathscr A^0_{\mathscr E_Y,p_2(M)}(Y)$ induite par~$S_*$ est continue.

Soit $(\rho_\lambda)$ une partition de l'unité sur~$X$ au voisinage
du compact~$p_1(M)$, subordonnée au recouvrement formé
des intérieurs des domaines des cartes de~$\mathscr E_X$.
Comme dans~\emph a), on a 
\[ S_*(\gamma) = \sum_\lambda S_*(\rho_\lambda \gamma). \]
Il suffit de démontrer que chacune des applications linéaires
$\gamma \mapsto S_*(\rho_\lambda\gamma)$ est continue.

Soit $\lambda$ un indice, soit $f$ la carte correspondante de~$\mathscr E_X$,
soit $U$ son domaine.
Soit $g$ une carte de~$\mathscr E_Y$ et soit~$V$ son domaine.
D'après l'assertion~\emph b) de la proposition~\ref{prop.lisse-parametre},
l'application linéaire $\gamma\mapsto S_*(\rho_\lambda\gamma)$ 
induit une application
linéaire continue de $\mathscr A^{p,q;0}_{(f,g)}(X\times Y)$
dans~$\mathscr A^0_g(Y)$.
L'assertion en découle.

\item
Soit $\mathscr D$ un jeu de cartes tropicales couvrant sur~$X\times Y$ 
et soit $M$ une partie fermée de~$X\times Y$ qui est compacte au-dessus
de~$Y$. Démontrons que l'application de $\mathscr A^{p,q;0}_{\mathscr D,M}(X\times Y)$ dans~$\mathscr A^0(Y)$ déduite de~$S_*$ est continue.

Soit $(W_i)$ une famille de domaines compacts de~$Y$ dont les intérieurs
recouvrent~$Y$. Soit $\mathscr E_i^X$ et $\mathscr E_i^Y$ des jeux
de cartes tropicales sur~$X$ et~$W_i$ respectivement
tel que le jeu produit $\mathscr E_i^X\times \mathscr E_i^Y$ raffine~$\mathscr D$ et couvre la partie compacte $M\cap (X\times W_i)$.
On prouve comme précédemment que pour toute forme lisse $\gamma\in\mathscr A^{p,q;0}_{\mathscr D,M}(X\times Y)$ 
et toute fonction lisse~$\rho$ sur~$X$,
la fonction $S_*(\rho\gamma)$ est lisse appartient
à $\mathscr A^0_{\mathscr E_i^Y}$ pour tout $i$.
Soit $\mathscr E$ le jeu de cartes tropicales sur~$Y$ 
réunion des jeux~$\mathscr E_i^Y$. Comme les intérieurs des~$W_i$
recouvrent~$Y$, ce jeu est couvrant, et  $S_*$ applique
$\mathscr A^{p,q;0}_{\mathscr D,M}(X\times Y)$ 
dans~$\mathscr A^0_{\mathscr E}(Y)$.
Il suffit de démontrer 
que l'application linéaire qui s'en déduit par restrictions est continue.

Considérons une carte tropicale~$g$ appartenant à~$\mathscr E$,
soit~$V$ son domaine,
soit $W$ un domaine compact de~$V$ et soit $r$ un entier naturel;
on note $\norm\cdot_{W,r}$ une semi-norme $\mathscr C^r$ correspondante
sur l'espace $\mathscr A^0_{\mathscr E}(Y)$.

L'assertion~\emph a) appliquée à $X\times W$ entraîne la continuité
de l'application linéaire $\gamma\mapsto S_*(\gamma)|_W$,
car $W$ est compact.
Ainsi, pour que $\norm{S_*(\gamma)}_{W,r}$ soit petite, il suffit
que $\gamma$ soit suffisamment proche de~$0$ 
dans~$\mathscr A^0_{\mathscr D,M}(X\times W)$,
et a fortiori, dans $\mathscr A^{p,q;0}_{\mathscr D,M}(X\times Y)$.
Cela conclut la démonstration.\qedhere
\end{enumerate}
\end{proof}

\subsection{}
Soit $\Phi$ la famille de supports sur $X\times Y$ constituée
des parties fermées qui sont compactes au-dessus de~$Y$.
On définit un faisceau sur~$Y$
en associant à tout ouvert~$U$ de~$X$,
le sous-espace de $\mathscr A^{p,q;p',q'}(X\times U)$ constitué
des formes de type $(p,q;p',q')$ sur $X\times U$ 
dont le support est compact au-dessus de~$U$.
On le note $p_{2,*}\mathscr A^{p,q;p',q'}_{X\times Y, \Phi}$.

\begin{theo}\label{theo.integration-fibre}
Soit $S$ un courant de bidimension $(p,q)$ sur $X$.
Soit $\Phi$ la famille de supports sur $X\times Y$ constituée
des parties fermées qui sont compactes au-dessus de~$Y$.

\begin{enumerate}
\item
Il existe un unique morphisme de faisceaux sur~$Y$ : 
\[ p_{2,*}\mathscr A^{p,q;p',q'}_{X\times Y,\Phi}\to \mathscr A^{p',q'}_Y, \]
qui, pour tout ouvert~$U$ de~$Y$
et toute toute forme $\gamma$ de type $(p,q;0)$ sur~$X\times U$
dont le support est compact au-dessus de~$U$
et toute forme $\beta$ de type $(p',q')$ sur~$U$, 
applique $\gamma\wedge p_2^*\beta$ sur $S_*(\gamma)\beta$.

\item
Ce morphisme de faisceaux est fonctoriel en~$Y$. En particulier,
si $v\colon Y\to Y'$ est un morphisme d'espaces tropicaux
et $\omega$ est une forme sur $X\times Y'$, on a 
$S_*((\id_X\times v)^*\omega)=v^* S_*(\omega)$.

\item
L'application linéaire 
\[ \mathscr A^{p,q;p',q'}_\Phi(X\times Y) \to \mathscr A^{p',q'}(Y) \]
qui s'en déduit par passage aux sections globales est continue.
\item
L'application linéaire 
\[ \mathscr A^{p,q;p',q'}_\cpct(X\times Y) \to \mathscr A^{p',q'}_\cpct(Y) \]
qui s'en déduit par passage aux sections globales est continue.
\end{enumerate}
\end{theo}
Cette dernière application linéaire
coïncide avec l'application~$S_*$ lorsque $(p',q')=(0,0)$;
nous la noterons donc encore~$S_*$.
\begin{proof}
\begin{enumerate}
\item
L'unicité d'un tel morphisme de faisceaux 
résulte de ce que les formes de ce type engendrent 
le faisceau $p_{2,*}\mathscr A^{p,q;p',q'}_{X\times Y,\Phi}$.
En effet, soit $U$ un ouvert de~$Y$ 
et soit $\omega\in\mathscr A^{p,q;p',q'}_\Phi(X\times Y)$.
Soit $y$ un point de~$U$, soit $W$ un domaine compact de~$Y$ contenu dans~$U$
et qui est voisinage de~$y$. La forme $\omega|_{X\times W}$ est alors
à support compact; d'après le corollaire~\ref{coro.formes-produit},
elle est combinaison linéaire de formes du type $\gamma\wedge p_2^*\beta$;
il en résulte que $\omega|_{\mathring W}$ est combinaison linéaire
de formes du type requis.

Il suffit alors, pour démontrer l'existence 
de ce morphisme de faisceaux, de vérifier que si $U$ est un ouvert de~$Y$,
$(\gamma_i)$ une famille finie de formes de type $(p,q;0)$ sur $X\times U$
à supports compacts au-dessus de~$U$, $(\beta_i)$ une famille finie
de formes de type $(p',q')$ sur~$U$ 
telles que  la forme~$\omega=\sum \gamma_i \wedge p_2^*\beta_i$ est nulle,
on a l'égalité 
\[ \sum S_*(\gamma_i) \beta_i=0. \]
Posons $\varpi=\sum S_*(\gamma_i)\beta_i$.

Comme un espace tropical est supposé riche,
il suffit de vérifier que la restriction 
de~$\varpi$ à toute partie squelettique~$\Tau$ de~$U$
est nulle. 
En considérant un G-recouvrement de~$\Tau$,
on peut supposer que $\Tau$ est une cellule~$D$,
dont on note~$d$ la dimension.

Soit $(\eta_\lambda)$ une base standard des $(p',q')$-formes sur~$D$.
Pour tout $i$, on écrit $\beta_i|_D=\sum b_{i,\lambda}\eta_\lambda$,
où les $b_{i,\lambda}$ sont des fonctions lisses sur~$D$.
Pour tout $\lambda$, posons 
\[ \theta_\lambda = \sum_i \gamma_i|_{X\times D} p_2^*b_{i,\lambda} . \]
Les~$\theta_\lambda$ sont des formes lisses sur~$X \times D$
telles que $\omega|_{X\times D} 
  = \sum_\lambda \theta_\lambda \wedge p_2^*\eta_\lambda$.
Avec ces notations, on a donc
\[ \varpi|_D = \sum_{i,\lambda} S_*(\gamma_i)\beta_i|_D
=\sum_{i,\lambda} S_*(\gamma_i) b_{i,\lambda} \eta_\lambda
=\sum_{i,\lambda} S_*(\gamma_i  p_2^*b_{i,\lambda}) \eta_\lambda
= \sum_\lambda S_*(\theta_\lambda)\eta_\lambda. \]
Puisque $\omega|_{X\times D}=0$, il en résulte que $\theta_\lambda=0$
pour tout $\lambda$. Cela prouve donc que $S_*(\theta_\lambda)=0$,
d'où l'égalité $\varpi|_D=0$.
Cela conclut la démonstration de l'existence du morphisme de faisceaux
de l'énoncé.

\item
Cette propriété se vérifie localement. Il suffit alors 
d'établir l'assertion pour une forme $\omega$ du type $\gamma\wedge p_2^*\beta$,
auquel cas cela découle de la définition.

\item
Notons ici $\widetilde S_*$ cette application linéaire.
Soit $\mathscr D$ un jeu de cartes couvrant sur~$X\times Y$
et soit $M$  une partie fermée de~$X\times Y$ qui est compacte
au-dessus de~$Y$.  Soit $N$ la projection de~$M$ sur~$Y$;
c'est une partie fermée de~$Y$.
Nous allons construire un jeu~$\mathscr E'$ sur~$Y$
tel que l'image de~$\mathscr A^{p,q;p',q'}_{\mathscr D,M}(X\times Y)$
soit contenue dans $\mathscr A^{p',q'}_{\mathscr E,N}(Y)$.

Soit $(W_i)$ une famille de domaines compacts de~$Y$ 
dont les intérieurs recouvrent~$Y$.

Fixons un indice~$i$.
Compte tenu de la proposition~\ref{prop.decoupage-produit},
considérons 
des jeux de cartes tropicales~$\mathscr E_i^X$ sur~$X$ et~$\mathscr E_i^Y$ sur~$W_i$ 
tels que le jeu produit $\mathscr E_i^X\times\mathscr E_i^Y$
raffine~$\mathscr D$,
où le jeu $\mathscr E_i^X$ couvre  la partie compacte 
$K_i=p_1(M\cap (X\times W_i))$ et le jeu $\mathscr E_i^Y$ couvre~$W_i$.
 
Soit $\mathscr E$ le jeu de cartes tropicales sur~$Y$
obtenu par concaténation des jeux~$\mathscr E_i^Y$;
il est couvrant.
Prouvons  que $\widetilde S_*$ applique
$\mathscr A^{p,q;p',q'}_{\mathscr D,M}(X\times Y)$
dans $\mathscr A^{p',q'}_{\mathscr E, N}(Y)$.
Soit donc $\omega\in \mathscr A^{p,q;p',q'}_{\mathscr D,M}(X\times Y)$.
La forme $\widetilde S_*(\omega)$ est supportée par~$N$;
démontrons qu'elle est tropicalisée par~$\mathscr E$.

Soit $i$ un indice,  soit $g$  une carte de~$\mathscr E_i^Y$ et
soit~$V$ son domaine;
démontrons que $\widetilde S_*(\omega)$ est tropicalisée par~$g$.
Soit $W$ un domaine compact de~$V$. 
Reprenons une décomposition de~$\omega|_{X\times W}$ comme dans le paragraphe~\ref{ss.formes-produit-jeu'}:
\[ \omega = \sum_{\lambda, i} j_!(p_1^*\theta_{\lambda,i} (f_\lambda,g)^*\gamma_{\lambda,i}) p_1^*\widetilde \alpha_{\lambda,i} \wedge p_2^* g^*\beta_{\lambda,i}.\]
Ainsi,
\[ \widetilde S_*(\omega)|_W
 = \sum_{\lambda, i}  S_*(j_! p_1^*\theta_{\lambda,i}(f_\lambda,g)^* \gamma_{\lambda,i}) p_1^*\widetilde \alpha_{\lambda,i}) g^*\beta_{\lambda,i} . \]
D'après la proposition~\ref{prop.lisse-parametre}, \emph a), 
les fonctions 
\[ S_*(j_! p_1^*\theta_{\lambda,i}(f_\lambda,g)^* \gamma_{\lambda,i}) p_1^*\widetilde \alpha_{\lambda,i}) \]
sur~$W$ sont lisses, tropicalisées par~$g$.
Il en résulte que $\widetilde S_*(\omega)|_W$ est tropicalisée par~$g$.

Démontrons que l'application linéaire 
\[ \widetilde S_*\colon \mathscr A^{p,q;p',q'}_{\mathscr D,M}(X\times Y)
 \to \mathscr A^{p',q'}_{\mathscr E,N} (Y) \]
est continue.

Soit $D$ une cellule compacte de~$Y$ 
qui est contenue dans le domaine d'une carte~$g$ de~$\mathscr E$
et telle que $g|_D$ soit injective.
Choisissons une base standard $(\eta_\mu)$ des $(p',q')$-formes sur~$D$.
Soit $r$ un entier naturel.
Choisissons la norme~$\mathscr C^r$ standard sur $\mathscr A^{p',q'}_\groth(D)$
donnée par la borne supérieure des normes~$\mathscr C^r$ des coefficients
dans la base~$(\eta_\mu)$.

On choisit un domaine compact~$W$ de~$Y$ contenu dans le domaine de~$g$
tel que $D\subset W$.
Choisissons une décomposition de~$\omega|_{X\times W}$ comme ci-dessus:
\[ \omega|_{X\times W} = \sum_{\lambda, i} j_!(p_1^*\theta_{\lambda,i} (f_\lambda,g)^*\gamma_{\lambda,i}) p_1^*\widetilde \alpha_{\lambda,i} \wedge p_2^* g^*\beta_{\lambda,i}.\]
Ainsi,
\[ \widetilde S_*(\omega)|_W
 = \sum_{\lambda, i}  S_*(j_! p_1^*\theta_{\lambda,i}(f_\lambda,g)^* \gamma_{\lambda,i}) p_1^*\widetilde \alpha_{\lambda,i}) g^*\beta_{\lambda,i} . \]
On peut restreindre ces décompositions sur~$X\times D$.
Pour tout $(\lambda,i)$, il existe une unique famille $(b_{\lambda,i,\mu})$
de fonctions lisses sur~$D$ telle que 
\[ g^*\beta_{\lambda,i}|_D = \sum_{\mu} b_{\lambda,i,\mu} \eta_\mu.  \]
Pour tout $\mu$, posons 
\[ \theta_\mu = 
 \sum_{\lambda, i}  j_! (p_1^*\theta_{\lambda,i} (f_\lambda, \id^*) \gamma_{\lambda,i} p_1^* \widetilde\alpha_{\lambda,i}) p_2^* b_{\lambda,i,\mu} . \]
Ce sont des formes lisses de type $(p,q;0,0)$ sur $X\times D$ telles
que 
\[ \omega|_{X\times D} = \sum_\mu \theta_\mu  \wedge p_2^* \eta_\mu. \]
Ainsi 
\[ \widetilde S_*(\omega)|_D = \widetilde S_*(\omega|_{X\times D}) 
 = \sum_\mu S_*(\theta_\mu)  \eta_\mu. \]

Comme $S_*$ est continu (théorème~\ref{theo.S*-continu}),
il résulte 
de la remarque~\ref{rema.topologie-formes-squelettes}
qu'il existe une partie squelettique compacte de $X\times D$,
qu'on peut prendre de la forme $\Sigma\times D$, 
où $\Sigma$ est une partie squelettique compacte de~$X$,
un entier~$s$, une norme~$\mathscr C^s$ standard sur $\Sigma\times D$
et un nombre réel~$c$
tels $\abs{S_*(\theta_\mu)}_{D,r}\leq c \norm{\theta_\mu}_{\Sigma\times D,s}$.
On a donc
\[ \norm{\widetilde S_*(\omega)}_{D,r} 
   =\sup_\mu \norm{ S_*(\theta_\mu)}_{D,r} 
\leq c \sup_\mu \norm{\theta_\mu}_{\Sigma\times D,s}
= c \norm{\omega}_{\Sigma\times D, s} \]
pour une norme standard $\mathscr C^s$ sur $\Sigma\times D$.
 
Compte tenu de la remarque~\ref{rema.topologie-formes-squelettes},
cela prouve la continuité annoncée.

Ceci entraîne les assertions~\emph c) et~\emph d) 
par définition des topologies  en jeu.
\end{enumerate}
\end{proof}

\section{Produit tensoriel de courants}

\subsection{}
Soit $X$ un espace tropical de niveau~$m$,
soit $Y$ un espace tropical de niveau~$n$,
supposés sans bord.
On a défini au paragraphe~\ref{sect.prod-trop} un espace
tropical produit~$X\times Y$, de niveau~$m+n$,
tel que les projections $p_1\colon X\times Y\to X$ 
et $p_2\colon X\times Y\to Y$ soient des morphismes d'espaces
tropicaux.

\subsection{}
Soit $S$ un courant de bidimension~$(p,q)$ sur~$X$
et soit $T$ un courant de bidimension~$(r,s)$ sur~$Y$.
Le but de ce paragraphe est de définir un courant $S\boxtimes T$
de bidimension~$(p+r,q+s)$ sur l'espace tropical~$X\times Y$
tel que l'on ait $\langle S\boxtimes T,p_1^*\alpha\wedge p_2^*\beta\rangle
= \langle S,\alpha\rangle \langle T,\beta\rangle$
lorsque $\alpha$ et~$\beta$ sont des formes lisses à support compact
sur~$X$ et~$Y$ respectivement, 
de bidegrés~$(p,q)$ et~$(r,s)$ respectivement.
Par construction, ce courant sera nul 
sur les autres facteurs que 
$\mathscr A^{p,q}_{X}\boxtimes \mathscr A^{r,s}_{Y}$.

\subsection{}
Dans ce qui suit, il sera pratique d'évaluer un courant
de type~$(p,q)$ en une forme dont le type est arbitraire;
par convention, sa valeur sera~$0$ sur l'espace des formes
de bidegré~$(p',q')$, pour tout couple~$(p',q')$ distinct de~$(p,q)$.

% Soit $(p',q')$ et $(p'',q'')$ deux couples d'entiers.
% On note 
% $\mathscr A^{p',q'}_{X_\groth}\boxtimes \mathscr A^{p'',q''}_{Y_\groth}$
% le sous-G-faisceau de $\mathscr A^{\cdot,\cdot}_{X\times Y,\groth}$
% en $\mathscr A_{X\times Y,\groth}$-modules
% engendré par les G-formes du type $p_1^*\alpha\wedge p_2^*\beta$
% où $\alpha \in \mathscr A^{p',q'}_{X_\groth}$
% et $\beta \in \mathscr A^{p'',q''}_{Y_\groth}$;
% il est contenu dans $\mathscr A^{p'+p'',q'+q''}_{X\times Y,\groth}$.
 
\begin{theo}\label{prop.ST-produit}
Le courant $S\boxtimes T= T \circ S_*$ sur $X\times Y$
vérifie les relations 
\[ \langle S\boxtimes T, p_1^*\alpha \wedge p_2^*\beta\rangle
 = \langle S, \alpha\rangle \langle T, \beta\rangle \]
pour toute forme lisse $\alpha$  à support compact sur~$X$
et toute forme lisse~$\beta$ à support compact sur~$Y$,
et c'est le seul.
\end{theo}
\begin{proof}
Il résulte du corollaire~\ref{coro.densite-formes-produit}
qu'il existe au plus un courant sur~$X\times Y$ qui vérifie ces
relations. Il suffit donc de prouver que c'est le cas de $T\circ S_*$.

Puisque les applications continues $T$ et $S_*$ 
sont continues, l'application $T\circ S_*$ est un courant sur $X\times Y$.
Démontrons qu'il vérifie la relation indiquée.
Soit $\gamma$ une fonction lisse à support compact sur~$X\times Y$
qui vaut $1$ au voisinage du produit des supports de~$\alpha$ et de~$\beta$.
On a donc $p_1^*\alpha\wedge p_2^*\beta= \gamma p_1^*\alpha \wedge p_2^*\beta$,
si bien que
\[ \langle S \boxtimes T, p_1^*\alpha \wedge p_2^*\beta\rangle
 =  \langle S \boxtimes T, \gamma p_1^*\alpha \wedge p_2^*\beta\rangle
= \langle T, \langle S, \gamma(\cdot,y)\alpha\rangle \beta\rangle. \]
Pour démontrer l'égalité voulue, il suffit donc de démontrer que
\[ \langle S, \gamma(\cdot,y)\alpha\rangle \beta 
 = \langle S,\alpha\rangle \beta. \]
Il suffit de le prouver au voisinage de tout point~$y$ de~$Y$.
Si $y\not\in\supp(\beta)$,  la forme~$\beta$ est nulle au voisinage de~$y$, 
donc les deux membres de l'égalité sont nuls au voisinage de~$y$.
Sinon, 
la fonction $\gamma(\cdot,y)$ sur~$X$ vaut~$1$ au voisinage du support de~$\alpha$, si bien que $\gamma(\cdot,y)\alpha=\alpha$. 
On a donc $\langle S,\gamma(\cdot,y)\alpha\rangle=\langle S,\alpha\rangle$
dans ce cas, d'où l'égalité voulue.
\end{proof}

\begin{coro}\label{coro.ST-symetrique}
La construction du courant $S\boxtimes T$ est symétrique: 
si $\tau$ désigne l'échange des facteurs de~$X\times Y$, on a 
$\langle S\boxtimes T,  \omega \rangle 
    = \langle T \boxtimes S, \tau^*\omega\rangle$,
pour toute forme lisse à support compact sur $X\times Y$.
\end{coro}
\begin{proof}
D'après la proposition précédente,
si $\alpha$ et $\beta$ sont des formes lisses à support compact sur~$X$
et~$Y$ respectivement, on a 
\[ \langle S\boxtimes T, p_1^*\alpha\wedge p_2^*\beta\rangle
= \langle S,\alpha \rangle \langle T,\beta\rangle
= \langle T\boxtimes S, p_1^*\beta\wedge p_2^*\alpha\rangle. \]
L'assertion d'unicité de la proposition précédente permet de conclure.
\end{proof}

\begin{exem}
Soit $\omega$ une forme lisse sur~$X$, 
soit $\eta$ une forme lisse sur~$Y$. On a 
\[ [\omega] \boxtimes [\eta] = [p_1^*\omega \wedge p_2^*\eta]. \]

Posons $S=[\omega]$.
Il suffit de démontrer que si $\alpha$ est une forme lisse à support
compact sur~$X$, $\beta$ une forme lisse à support compact sur~$Y$
et $\gamma$ une fonction lisse à support compact sur~$X\times Y$,
on a 
\[ \int_{X\times Y} \gamma p_1^*(\omega\wedge\alpha)\wedge p_2^*(\eta\wedge\beta) = \int_Y S_*(\gamma p_1^*\alpha)  \eta \wedge \beta. \]
Soit $\Sigma$ une partie squelettique compacte de~$X$
qui supporte fortement $\omega\wedge\alpha$ 
et soit $\Tau$ une partie squelettique compacte de~$Y$
qui supporte fortement $\eta\wedge\beta$.
Alors $\Sigma\times \Tau$ supporte fortement
$p_1^*(\omega\wedge\alpha)\wedge p_2^*(\eta\wedge\beta)$, si bien
que 
\[ 
\int_{X\times Y} \gamma p_1^*(\omega\wedge\alpha)\wedge p_2^*(\eta\wedge\beta) =
\int_{\Sigma\times\Tau} \gamma p_1^*(\omega\wedge\alpha)\wedge p_2^*(\eta\wedge\beta) . \]
En vertu du théorème de Fubini, cette intégrale vaut
\[ \int_\Tau \left(\int_\Sigma \gamma (\cdot,y) (\omega\wedge\alpha)\right)
   \eta\wedge\beta ,\]
où $y$ désigne la variable de~$\Tau$.
Par construction, pour tout $y\in Y$, on a égalemement
\[ S_*(\gamma p_1^*\alpha) (y) = \int_{X} \gamma (\cdot,y) (\omega\wedge\alpha)
 = \int_\Sigma \gamma (\cdot, y) (\omega\wedge \alpha), \]
car $\Sigma$ supporte fortement $\omega\wedge\alpha$.
D'autre part, $\Tau$ supporte fortement le produit
$S_*(\gamma p_1^*\alpha) \eta\wedge \beta$, de sorte que
\[ \int_Y S_*(\gamma p_1 p^*\alpha)  \eta\wedge\beta
 = \int_\Tau S_*(\gamma p_1 p^*\alpha) \eta\wedge\beta. \]
Le résultat en découle.
\end{exem}

\begin{prop}\label{prop.SboxT-squelettique}
Si $S$ et $T$ sont des courants squelettiques, alors
$S\boxtimes T$ est un courant squelettique.
\end{prop}
\begin{proof}
Soit $W$ un ouvert de~$X$ et soit $h$ une carte tropicale sur~$X$.
Il s'agit de démontrer que $S\boxtimes T$ est localement descriptible
sur~$W$. Par définition de la structure d'espace tropical sur~$X\times Y$,
on peut ainsi supposer qu'il existe des parties ouvertes $U$ de~$X$
et~$V$ de~$Y$ telles que $W=U\times V$, une carte tropicale~$f$ sur~$U$
et une carte tropicale~$g$ sur~$V$, telles que $h=p\circ (f,g)$ 
où $p$ est une application affine.
On peut également supposer que $S|_U$ est descriptible par une
partie squelettique~$P_f$ munie d'un calibrage G-localement
constant, et que $T|_V$ est descriptible par une partie squelettique~$Q_g$
munie d'un calibrage G-localement constant.

Vérifions que $S\boxtimes T|_{U\times V}$ est descriptible
relativement au moment~$(f,g)$  (et donc relativement au moment~$h$)
par la partie squelettique $P_f\times Q_g$ munie du calibrage produit.
Soit $\omega$ une forme lisse sur~$U\times V$, tropicalisée par~$(f,g)$
et soit $\rho$ une fonction lisse à support compact sur $U\times V$. 
Il s'agit de démontrer que 
\[ \langle S \boxtimes T, \rho \omega\rangle
 = \int_{P_f\times Q_f} \rho \omega|_{P_f\times Q_g}. \]
En appliquant le paragraphe~\ref{ss.formes-produit-jeu}
au jeux réduits aux cartes $(U,f)$ et $(V,g)$, on voit que
$\omega$ est somme finie de formes du type
$\gamma p_1^*\alpha\wedge p_2^*\beta$,
où $\alpha$ est tropicalisée par~$f$ sur un ouvert~$U'$ de~$U$,
$\beta$ est tropicalisée par~$g$ sur un ouvert~$V'$ de~$V$
et $\gamma$ est à support compact dans $U'\times V'$.
On peut donc supposer que $\rho\omega$ est de ce type;
compte tenu du lemme~\ref{lemm.courant-paralineaire-restriction},
on peut supposer que $U'=U$ et $V'=V$.
La fonction $S_*(\gamma p_1^*\alpha)$ est à support compact dans~$U$
(lemme~\ref{lemm.support-S*})
et la forme $\beta$ est tropicalisée par~$g$.
On a donc
\[ \langle S \boxtimes T, \rho\omega\rangle
= \langle T, S_*(\gamma p_1^*\alpha)\rangle
 = \int_{Q_g} S_*(\gamma p_1^*\alpha) \beta. \]
Par ailleurs, pour tout $y\in Y$, on a 
\[ S_*(\gamma p_1^*\alpha) (y) = \langle S , \gamma(\cdot,y) \alpha\rangle
 = \int_{P_f} \gamma(\cdot,y)\alpha|_{P_f} \]
puisque $\alpha$ est tropicalisée par~$f$ sur~$U$ et $\gamma(\cdot,y)$
est à support compact sur~$U$.
Ainsi,
\[ \langle S \boxtimes T, \rho\omega\rangle
= \int_{Q_g} \left(\int_{P_f} \gamma(\cdot,y)\alpha|_{P_f} \right) \beta|_{Q_g}
= \int_{P_f\times Q_g} \gamma p_1^*\alpha \wedge p_2^*\beta|_{P_f\times Q_g}
\]
en vertu du théorème de Fubini.
Cela conclut la démonstration.
\end{proof}

\begin{prop}\label{prop.box-di-dc}
Soit $S$ et $T$ des courants sur~$X$ et~$Y$. 

On a $\di[X](S\boxtimes T)=(\di[X] S)\boxtimes T$
et $\di[Y](S\boxtimes T)=S \boxtimes \di[Y](T)$.
En particulier, si $S$ et $T$ sont $\di$-fermés,
$S\boxtimes T$ est $\di$-fermé.

Les assertions analogues valent pour les opérateurs~$\dc$.
\end{prop}
\begin{proof}
Il suffit de vérifier les relations demandées contre une forme-test
du type $p_1^*\alpha\wedge p_2^*\beta$, où $\alpha$
et $\beta$ sont des formes lisses à support compact sur~$X$ et~$Y$.
En utilisant la proposition~\ref{prop.ST-produit} à plusieurs reprises, il vient
\begin{align*}
 \langle \di[X](S\boxtimes T),p_1^*\alpha\wedge p_2^*\beta \rangle
&= \pm \langle S\boxtimes T, \di[X](p_1^*\alpha\wedge p_2^*\beta)\rangle \\
&= \pm \langle S\boxtimes T, p_1^*(\di\alpha)\wedge p_2^*\beta\rangle \\
&= \pm \langle S, \di\alpha\rangle \langle T,\beta \rangle  \\
&= \langle \di S,\alpha \rangle \langle T, \beta \rangle \\
&= \langle (\di[X] S)\boxtimes T, p_1^*\alpha \wedge p_2^*\beta\rangle. \end{align*}
Les autres formules se prouvent de même.

Si $S$ et $T$ sont $\di$-fermés,
alors $\di(S\boxtimes T)=\di[X](S\boxtimes T)+\di[Y](S\boxtimes T)
= (\di S)\boxtimes T+S\boxtimes(\di T)=0$.
\end{proof}

\begin{prop}\label{prop.box-positif}
Si $S$ et $T$ sont des courants positifs (\resp faiblement positifs,
\resp fortement positifs), il en est de même du courant
$S\boxtimes T$.
\end{prop}
\begin{proof}
Notons $(p,p)$ et $(q,q)$ les bidimensions de~$S$ et~$T$.
Soit $\omega$ une forme lisse positive à support compact sur $X\times Y$,
de bidegré $(p,p;q,q)$;
il s'agit de prouver que $\langle S\boxtimes T,\omega\rangle$
est positif. 
Il suffit pour cela de démontrer que la forme $\widetilde S_*(\omega)$
sur~$Y$ est positive.
Puisque $Y$ est très riche, il s'agit alors de démontrer
que le courant $[\widetilde S_*(\omega)]$ est positif.
Soit donc $\eta$ une forme positive à support compact, de bidegré~$(n-q,n-q)$;
prouvons que $\int_Y \widetilde S_*(\omega)\wedge \eta\geq 0$.
Il suffit pour cela de démontrer que la forme $\widetilde S_*(\omega) \wedge\eta$
est positive. Or, par construction, on a
$\widetilde S_*(\omega)\wedge\eta=\widetilde S_*(\omega\wedge p_2^*\eta)$.
Autrement dit, on peut supposer que $\omega$ est de type $(p,q;n,n)$;
posons $\varpi=\widetilde S_*(\omega)$. 
Soit $C$ une partie squelettique compacte de~$Y$ qui est une $n$-cellule;
démontrons que $\varpi|_C$ est positive;
La construction de $\varpi$ écrit $\omega|_{X\times C}=\theta p_2^*\nu$,
où $\nu$ est une $(n,n)$-forme standard sur~$C$
et $\theta$ une $(p,p;0,0)$-forme lisse à support compact sur $X\times C$,
et l'on a $\varpi|_C=S_*(\theta)\nu$.
La positivité de~$\omega$ entraîne que pour tout $y\in C$,
la restriction de $\theta$ à~$X\times\{y\}$ est une $(p,p)$-forme positive;
comme $S$ est un courant positif, on a 
\[ S_*(\theta)(y)=\langle S, \theta(\cdot;y)\rangle \geq 0. \]
Ainsi, $\varpi|_C$ est positive, et cela conclut la démonstration
que $S\boxtimes T$ est un courant positif si $S$ et $T$ le sont.

Les deux autres assertions se prouvent de même.
\end{proof}

\begin{prop}\label{prop.hemicontinuite}
\begin{enumerate}
\item
Soit $\mathscr S$ un filtre de courants sur~$X$ 
qui converge vers un courant~$S$ et soit $T$ un courant sur~$Y$.
Alors le filtre de courants sur~$X\times Y$
engendré par $\mathscr S\boxtimes T$ 
converge vers $S\boxtimes T$.
\item
Soit $S$ un courant sur~$X$ 
et soit $\mathscr T$ un filtre de courants sur~$Y$
qui converge vers un courant~$T$.
Alors le filtre de courants sur~$X\times Y$
engendré par $S\boxtimes \mathscr T$ 
converge vers $S\boxtimes T$.
\end{enumerate}
\end{prop}
\begin{proof}
Les deux assertions étant symétriques, le corollaire~\ref{coro.ST-symetrique}
nous permet de n'établir que la seconde.
Il s'agit de démontrer
que le filtre sur~$\R$
engendré par $\langle S\boxtimes \mathscr T,\omega\rangle$ 
converge vers $\langle S\boxtimes T,\omega\rangle$,
pour toute forme lisse à support compact~$\omega$ sur~$X\times Y$.
D'après le corollaire~\ref{coro.formes-produit},
on peut se limiter au cas où $\omega=
\gamma p_1^*\alpha \wedge p_2^*\beta$,
où $\gamma$ est une fonction lisse à support compact sur~$X\times Y$,
$\alpha$ est une forme lisse à support compact sur~$X$ et 
$\beta$ est une forme lisse à support compact sur~$Y$.
Pour tout courant~$U$ sur~$Y$, on a 
\[ \langle S\boxtimes U,\omega \rangle
 = \langle U, S_*(\gamma p_1^*\alpha) \beta \rangle. \]
La forme sur~$Y$ à laquelle le courant~$U$ est appliqué est constante,
d'où l'assertion.
\end{proof}

\section{Morphisme de Gysin}

\subsection{}
Soit $X$ un espace tropical de niveau~$m$,
soit $f\colon X\to \R^n$ une application paralinéaire.
Soit $p\colon X\times\R^n \to X$ la première projection et
soit $j\colon X\to X\times \R^n$ l'application $x\mapsto (x,f(x))$;
l'image de~$j$ est le graphe~$\Gamma_f$ de~$f$.

Pour tout $i\in\{1,\dots,n\}$, on définit une fonction 
paralinéaire~$u_i$
sur~$X\times\R^n$ par $u_i(x,y)=\max(f_i(x),y_i)$.
Si $f_i$ est psh, il en est de même de~$u_i$ (proposition~\ref{exem.psh-approchable});
c'est en particulier le cas si $f_i$ est affine.
Dans le cas général, $f_i$ est localement
différence de deux fonctions tropicalement convexes,
et est en particulier dpsh (proposition~\ref{rema.G-lisse-diff-psh}).

Soit $S$ un courant positif fermé sur~$X\times\R^n$. On pose alors
\[ S\cap \Gamma_f  = \ddc (u_1 \ddc(u_2\dots \ddc(u_n S))); \]
c'est un courant sur~$X\times\R^n$,
qu'on a noté plus simplement
\[ S \wedge \bigwedge_{i=1}^n \ddc (u_i) \]
lorsque $S$ est un courant de Bedford-Taylor.

\begin{prop}
On suppose que les~$f_i$ sont harmoniques.
Alors, le courant $S\cap\Gamma_f$ est positif fermé.
\end{prop}
\begin{proof}
Par construction, on a $u_i=g_i^*v$, 
où $g_i\colon X\times\R^n$
est la carte harmonique $(x,y)\mapsto (f_i(x), y_i)$
et $v$ la fonction convexe $(x,y)\mapsto \max(x,y)$.
Par récurrence sur~$n$, le corollaire~\ref{coro.wedge-weak-psh}
entraîne que $S\cap\Gamma_f$ est positif.
Il est fermé par construction.
\end{proof}

\begin{rema}
Par construction de l'intersection de Bedford--Taylor,
le courant $S\cap\Gamma_f$ est positif fermé
dès que les fonctions~$u_i$ sont psh.
Comme elles sont paralinéaires, elles sont dpsh,
si bien que ce courant est localement différence 
de deux courants positifs fermés.

Par ailleurs, nous démontrerons dans le cas des espaces analytiques
qu'une application $\Q$-\pl harmonique est psh
(corollaire~\ref{coro.psh-harmo}),
mais il est peu probable que cette coïncidence soit vraie
dans le cadre général des espaces tropicaux.
Il serait cependant possible d'étendre la notion
de fonction psh de sorte à y adjoindre les fonctions harmoniques.
\end{rema}

Si $S$ est un courant squelettique, le courant $S\cap\Gamma_f$
est squelettique (théorème~\ref{theo.ddcuT-paralineaire}).

Si $S$ est un courant de Bedford-Taylor, le courant $S\cap\Gamma_f$
est de Bedford-Taylor (\S\ref{ss.bt-def}). 

Cette construction est continue en~$S$ (pour la convergence faible
des courants) et en~$f$ (pour la convergence localement uniforme des
applications paralinéaires).

\begin{lemm}\label{lemm.support-S-inter-Gamma}
Soit $S$ un courant positif fermé sur $X\times\R^n$.
Si $f$ est harmonique, alors le support du courant 
$ S \cap \Gamma_f$ est contenu dans le graphe de~$f$.
\end{lemm}
\begin{proof}
Pour tout $k\in\{1,\dots,n\}$, soit $V_k$
l'ensemble des $(x,y)\in X\times \R^n$ tels que $f_k(x)\neq y_k$;
c'est une partie ouverte de~$X\times\R^n$.
La réunion des~$V_k$ est le complémentaire du graphe de~$f$.
Il suffit donc de démontrer que pour tout~$k$, la restriction à~$V_k$
de $S\cap \Gamma_f$ est nulle.

Soit $k\in\{1,\dots,n\}$ et soit $(x,y)\in V_k$.
Si $f_k(x)<y_k$, alors $u_k$ coïncide avec la $k$-ième fonction coordonnée
du second facteur au voisinage de~$(x,y)$, qui est une application affine ;
sinon, $u_k$ coïncide avec la fonction~$f_k$ au voisinage de~$(x,y)$.

Si $f_k$ est affine, cela prouve que la fonction~$u_k$ est affine,
donc lisse, sur $V_k$, et vérifie $\ddc u_k=0$.
En restriction à~$V_k$, on a donc
\[ \ddc (u_k \ddc(u_{k+1} \dots \ddc (u_n S)))= \ddc u_k \wedge
 \ddc(u_{k+1} \dots \ddc (u_n S))=0. \]
A fortiori, $S\cap \Gamma_f$ est nulle sur~$V_k$.

Dans le cas où $f_k$ est harmonique, cela montre que $u_k|_{V_k}$
est paralinéaire, harmonique, et
que la restriction à~$V_k$ du courant $\ddc [u_k]$ est nulle.
Comme le courant $ \ddc(u_{k+1} \dots \ddc (u_n S))$
est de Bedford--Taylor (car $S$ l'est, \S\ref{ss.bt-def}), 
on a donc $\ddc(u_k \ddc(u_{k+1} \dots \ddc (u_n S)))=0$ sur~$V_k$, 
puis $S\cap[\Gamma_f]=0$ sur~$V_k$.
\end{proof}

\begin{defi}
Soit $X$ un espace tropical de niveau~$m$,
soit $f\colon X\to \R^n$ une application harmonique,
soit $p\colon X\times\R^n\to X$ la première projection
et soit~$j\colon X\to X\times \R^n$ l'application 
$x\mapsto (x,f(x))$.
Si $S$ est un courant positif fermé~$S$ sur~$X\times \R^n$,
on pose 
\[ j^!S = p_* (S \cap \Gamma_f). \]
\end{defi}
Cette définition est légitime. En effet,
d'après la proposition précédente,
le support du courant $S\cap\Gamma_f$ est contenu
dans le graphe~$\Gamma_f$ de~$f$; il est donc
compact au-dessus de~$X$.

\begin{prop}\label{prop.j!-squelettique}
Soit $f\colon X\to\R^n$ une carte harmonique
et soit $S$ un courant positif fermé sur~$X\times\R^n$.
Si le courant~$S$ est squelettique, 
il en est de même du courant~$j^!S$.
\end{prop}
\begin{proof}
Il découle d'abord du corollaire~\ref{coro.ddcuT-paralineaire}
que le courant $S\cap\Gamma_f$ est squelettique.
Son support est contenue dans la partie squelettique~$\Gamma_f$
de~$X\times\R^n$ (lemme~\ref{lemm.support-S-inter-Gamma}).

Soit $U$ un ouvert de~$X$, soit $g$ une carte tropicale sur~$U$.
L'application $g\circ p$ est une carte tropicale sur $p^{-1}(U)$.
Soit $P$ la partie squelettique calibrée $P_{g\circ p}^{S\cap\Gamma_f}$
de~$X\times\R^n$, minimale, qui décrit $S\cap\Gamma_f$
relativement à cette carte tropicale. 
Elle est contenue dans le support de~$S\cap\Gamma_f$,
donc dans~$\Gamma_f$.
Soit $Q=p(P)$ son image dans~$X$ par l'isomorphisme $p|_{\Gamma_f}$;
c'est une partie squelettique calibrée de~$X$.
Vérifions qu'elle décrit le courant $j^!S$
relativement à la carte~$g$.
Soit $\omega$ une forme lisse $g$-tropicale sur~$U$
et soit $\rho$ une fonction lisse sur~$U$
telles que $\rho\omega$ soit à support compact.
Soit $\theta$ une fonction lisse sur~$p^{-1}(U)$
telle que $\theta\cdot p^*(\rho\omega)$ soit à support compact
et qui vaut~$1$ au voisinage de~$\supp(p^*\rho\omega)\cap \Gamma_f$.
On a alors
\begin{align*} \langle j^!S, \rho\omega\rangle
& = \langle p_* (S\cap \Gamma_f),\rho\omega\rangle
= \langle S\cap\Gamma_f, \theta p^*(\rho\omega)\rangle
= \langle S\cap\Gamma_f, (\theta p^*\rho) p^*\omega \rangle \\
& = \int_{P} (\theta p^*\rho) p^*\omega 
= \int_Q \rho\omega, \end{align*}
d'où l'assertion.
\end{proof}

\begin{prop}
Soit $f\colon X\to\R^n$ une application paralinéaire harmonique.
L'application~$j$ est un morphisme compact d'espaces G-tropicaux.
Pour tout courant positif fermé~$S$  sur~$X\times\R^n$ et 
toute G-forme lisse~$\omega$ sur~$X\times\R^n$,
à support compact,
on a $\langle S\cap \Gamma_f, \omega\rangle = \langle j^!S, j^*\omega\rangle$.
\end{prop}

\begin{coro}
Si $f$ est affine, 
l'application~$j$ est un morphisme compact d'espaces tropicaux et l'on a
l'égalité $j_*(j^! S )=S\cap\Gamma_f$.
\end{coro}

\begin{proof}[Démonstration de la proposition]
Comme $f$ est paralinéaire, l'application~$j$ est un morphisme d'espaces G-tropicaux.
Il induit un homéomorphisme de~$X$ sur le graphe de~$f$; il est en particulier compact.
Notons $p$ et $q$ les projections de~$X\times\R^n$ sur $X$ et~$\R^n$.

Soit $d$ l'entier tel que $S$ est de bidimension~$(d,d)$
et soit $\omega$ une G-forme G-lisse à support compact sur~$X\times\R^n$
de bidegré~$(d,d)$.
On a 
\[ \langle j^!S, j^*\omega\rangle = \langle S\cap \Gamma_f, p^*j^*\omega \rangle, \]
où cette dernière expression a un sens car l'intersection
de~$\Gamma_f$ et du support de $p^*j^*\omega$ est compacte.
Il s'agit de vérifier qu'elle est égale 
à $\langle S\cap \Gamma_f,\omega\rangle$.

L'application $(x,y)\mapsto (x,y-f(x))$ est un automorphisme 
paralinéaire de~$X\times\R^n$
qui transforme~$f$ en l'application affine nulle
et $S\cap\Gamma_f$ en $S\cap\Gamma_0$.
On peut ainsi supposer que $f\equiv 0$ et $\Gamma_f=X\times\{0\}$.

Il suffit alors de démontrer l'égalité lorsque $\omega$ est une \emph{forme} lisse;
en effet, cette égalité pour toute forme lisse entraînera l'égalité
de courants $j_*(j^!S)=S\cap\Gamma_f$ sur~$X\times\R^n$; comme il s'agit ce courants 
positifs, on en déduira l'égalité voulue pour toute G-forme.

D'après le corollaire~\ref{coro.formes-produit}, $\omega$ est somme finie
de formes du type $\gamma p^*\alpha \wedge q^*\beta$,
où $\alpha$ est une forme lisse à support compact sur~$X$,
$\beta$ est une forme lisse à support compact sur~$Y$
et $\gamma$ est une fonction lisse à support compact sur~$X\times Y$.
Par additivité, il suffit de prouver l'égalité voulue lorsque
la forme~$\omega$ est de ce type.
On peut aussi supposer que les formes~$\beta$ sont homogènes.

Traitons d'abord le cas où~$\beta$ n'est pas de bidegré~$(0,0)$.
Dans cas, $j^*\beta=0$, car $f\equiv0$, de sorte que $p^*j^*\beta=0$.
Démontrons donc que 
$\langle S\cap\Gamma_f,\gamma p^*\alpha\wedge p^*\beta\rangle=0$.
Par définition,
$ \langle S\cap \Gamma_f, \omega\rangle$
est une limite de nombres réels de la forme 
$ \langle S, p^*(\ddc u_1\wedge\dots\wedge\ddc u_n)\wedge \omega\rangle$
où $u_1,\dots,u_n$ sont des fonctions lisses convexes de~$y_1,\dots,y_n$
respectivement qui convergent vers $\max(y_1,0),\dots, \max(y_n,0)$.
Puisque $\beta$ n'est pas de bidegré~$(0,0)$, 
on a 
\[ \ddc u_1\wedge\dots\wedge\ddc u_n \wedge \beta=0 \]
donc $\langle S\cap\Gamma_f,\omega\rangle=0$.

Traitons maintenant le cas où~$\beta$ est bidegré~$(0,0)$;
on peut supposer $\beta=1$.
L'application qui à une fonction~$u$ sur~$X\times\R^n$,
lisse et à support compact, 
associe $\langle S\cap\Gamma_f, up^*\alpha\rangle$
se prolonge en une mesure signée sur~$X\times\R^n$,
à support dans $\Gamma_f$, 
car le courant $S\cap\Gamma_f$ est positif 
et son support est contenu dans~$\Gamma_f$.
Sa valeur ne dépend donc que de la restriction de~$u$ à~$\Gamma_f$.
On remarque alors que 
\[ p^*j^* (\gamma p^*\alpha) = p^*j^*\gamma p^*j^*p^*\alpha 
=p^*j^*\gamma p^*\alpha, \]
car $p\circ j=\id_X$, 
et que les fonctions $\gamma$ et $p^*j^*\gamma$
coïncident sur $\Gamma_f$. Cela entraîne l'égalité
$\langle S\cap\Gamma_f, \gamma p^*\alpha\rangle
=\langle S\cap\Gamma_f, p*j^*(\gamma p^*\alpha)\rangle $
qu'il fallait démontrer. 
\end{proof}
 
\section{Intersection tropicale}

Soit $X$ un espace tropical de niveau~$m$ et soit $f\colon X\to\R^n$
une application $\R$-paralinéaire harmonique.
Soit $j$ l'immersion du graphe~$\Gamma_f$ de~$f$ dans~$X\times\R^n$.

\begin{defi}
Pour tout courant positif fermé~$S$ sur~$X$ et tout
courant positif fermé~$T$ sur~$\R^n$, on pose
\[ S \cap_f T = j^!(S\boxtimes T). \]
\end{defi}

La définition fait sens car
si $S$ est un courant positif fermé sur~$X$
et $T$ est un courant positif fermé sur~$\R^n$,
le courant $S\boxtimes T$ est encore positif et fermé
(propositions~\ref{prop.box-positif} et~\ref{prop.box-di-dc}),
de sorte qu'on peut appliquer la construction~$j^!$.
Revenant aux définitions,
on a donc
\[ S\cap_f T =  p_*((S\boxtimes T)\cap\Gamma_f).\]

Si $S$ est de bidegré~$(p,q)$ et $T$ est de bidegré~$(q,q)$,
alors le courant $S\boxtimes T$ sur $X\times\R^n$
est de bidegré $(p,p;q,q)$, donc de bidegré $(p+q,p+q)$,
et le courant $(S\boxtimes T)\cap\Gamma_f$ est de bidegré~$(p+q+n,p+q+n)$.
Par suite, le courant $S\cap_f T$ sur~$X$ est de bidegré $(p+q,p+q)$.

\begin{prop}
Si les courants~$S$ et~$T$ sont squelettiques,
le courant $S\cap_f T$ est squelettique.
\end{prop}
\begin{proof}
Le courant $S\boxtimes T$ est squelettique 
d'après la proposition~\ref{prop.SboxT-squelettique}.
Il découle alors de la proposition~\ref{prop.j!-squelettique}
que $S\cap_f T=j^!(S\boxtimes T)$ est squelettique.
\end{proof}

\begin{rema}
Lorsque $X=\R^n$ et $f$ est l'application identique,
la construction $S\cap_f T$ fournit une définition
analytique de l'intersection tropicale. 
Elle est bilinéaire.
La proposition~\ref{prop.intertrop-assoc}
ci-dessous démontre qu'elle est associative,
et la proposition~\ref{prop.intertrop-comm}
prouve qu'elle est commutative.

Cela a été démontré indépendemment par~\cite{Mihatsch-2023}.
La construction donnée dans le texte copie l'approche
de~\cite{AllermannRau-2010}, elle-même inspirée de l'approche
de~\cite{fulton98}.
\end{rema}

\begin{prop}\label{prop.intertrop-forme}
Soit $f\colon X\to\R^n$ une carte harmonique,
soit $S$ un courant positif fermé de bidegré~$(p,p)$ sur~$X$
et soit~$\beta$ une forme lisse positive fermée de bidegré~$(q,q)$ sur~$\R^n$.
On a 
\[ S \cap_f [\beta] = S \wedge f^*\beta.  \]
\end{prop}
\begin{proof}
Notons $p$ et $q$ les projections de~$X\times\R^n$ vers~$X$ et~$\R^n$;
posons aussi $f=(f_1,\dots,f_n)$ et $q=(q_1,\dots,q_n)$.
Soit $\omega$ une forme lisse de bidegré~$(p+q-m,p+q-m)$
à support compact sur~$X$.
Soit $\theta$ une fonction lisse à support compact sur~$\R^n$
telle que $\theta\equiv 1$ au voisinage de $f(\supp(\omega))$.
Alors $q^*\theta\equiv 1$ au voisinage de $(\supp(\omega)\times\R^n)\cap \Gamma_f$
et $p^*\omega\wedge q^*\theta$ est à support compact.
Par définition d'un courant image directe, on a donc
\[ \langle S\cap_f [\beta], \omega\rangle
 = \langle p_*(S\boxtimes[\beta]\cap\Gamma_f,\omega\rangle
= \langle S \boxtimes [\beta] \cap\Gamma_f, p^*\omega \wedge q^*\theta \rangle. \]
Soit $(\mu_\eps)$ une famille de fonctions lisses psh sur~$\R$,
indexée par $\eps\in\R_+^*$,
qui converge uniformément vers $\mu\colon t\mapsto \sup(t,0)$ quand $\eps\to0$.
Pour tout $i\in\{1,\dots,n\}$, posons $u_i=\sup(f_i\circ p,q_i)=q_i+\sup(f_i\circ p-q_i,0)$
et $u_i^\eps=q_i+\mu_\eps\circ (f_i\circ p-q_i)$; les~$u_i$ sont des fonctions continues
sur~$X$, les~$u_i^\eps$ sont des fonctions G-lisses sur~$X\times\R^n$,
harmoniquement tropicalisées,
et l'on a $\lim_{\eps\to0} u_i^\eps=u_i$, uniformément. 
Plus précisément, l'application $\phi_i=f_i\circ p-q_i\colon X\to\R$
est une carte harmonique, et l'on a $u_i^\eps=q_i+\phi_i^*\mu_\eps$,
ainsi que $u_i=q_i+\phi_i^*\mu$. (Les fonctions~$\mu_\eps$ et~$\mu$
sont convexes.)
Par définition, on a 
\[ S\boxtimes[\beta]\cap\Gamma_f 
  = S\boxtimes [\beta] \wedge \bigwedge_{i=1}^n \ddc (q_i+\phi_i^* \mu)
= S\boxtimes[\beta] \wedge \bigwedge _{i=1}^n \ddc \phi_i^*\mu,  \]
car les $q_i$ sont affines.
Compte tenu de la proposition~\ref{prop.BT-continu'},
on a ainsi
\[ S\boxtimes[\beta]\cap\Gamma_f
  = \lim_{\eps\to0} S\boxtimes [\beta] \wedge \bigwedge_{i=1}^n \ddc u_i^\eps. \]
Par suite,
\begin{align*}
  \langle S\cap_f [\beta], \omega\rangle
 =\lim_{\eps\to0} \langle S\boxtimes [\beta] \wedge \bigwedge_{i=1}^n\ddc u_i^\eps, p^*\omega \wedge q^*\theta \rangle .
\end{align*}
Posons
\[ \Omega_\eps=\bigwedge_{i=1}^n\ddc u_i^\eps \wedge p^*\omega \wedge q^*\theta \rangle.\]
C'est une G-forme sur~$X\times\R^n$ qui est tropicalisée par~$f\times\id$
sur tout domaine de la forme $V\times\R^n$, où $V$
est un domaine analytique de~$X$ telle que $f|_V$ est affine
et $\omega|_V$ est tropicale.
On a
\[
\langle S\boxtimes [\beta] \wedge \bigwedge_{i=1}^n\ddc u_i^\eps, p^*\omega \wedge q^*\theta \rangle 
= \langle S\boxtimes [\beta] , \Omega_\eps\rangle.\]
Reprenons l'application linéaire 
\[ p_*\colon \mathscr A^{(p,p;n,n)}_\cpct(X\times\R^n)
\to \mathscr A^{(p,p)}(X) \]
d'intégration dans les fibres de~$p$ définie 
par le théorème~\ref{theo.integration-fibre}.
Pour tout domaine analytique~$V$ de~$X$ sur lequel $f$ est affine
et $\omega$ est tropicale, 
on dispose d'une forme lisse $p_*((\Omega_\eps)|_{V\times\R^n})$ sur~$V$,
qui est tropicalisée par~$f$ puis, par recollement
d'une G-forme G-lisse $p_*(\Omega_\eps)$ sur~$X$.
Compte tenu de la construction du courant~$S\boxtimes[\beta]$,
on a
\[
\langle S\boxtimes [\beta] \wedge \bigwedge_{i=1}^n\ddc u_i^\eps, p^*\omega \wedge q^*\theta_\eps \rangle 
= 
\langle S, p_*(\Omega_\eps \wedge q^*\beta)\rangle.
\]
Observons que 
\[ p_*(\Omega_\eps\wedge q^*\beta) = p_*(\bigwedge_{i=1}^n\ddc u_i^\eps \wedge q^*(\theta\beta)) \wedge \omega. \]
Par continuité de l'extension de~$S$ aux G-formes,
il suffit donc, pour établir la proposition, de démontrer  que les G-formes
\[ 
 p_*\big( \bigwedge_{i=1}^n\ddc u_i^\eps  \wedge q^*(\theta\beta) \big)
\]
sur~$X$ convergent vers $  f^*(\theta\beta) $ lorsque $\eps\to0$.

Pour simplifier les notations, on remplace~$\beta$ par~$\theta\beta$.
Il suffit d'établir cette convergence sur tout domaine sur lequel~$f$
est affine, ce qui ramène au cas où~$X$ est compact et $f$ est affine.

Puisque $u_i^\eps=q_i+\mu_\eps \circ (f_i\circ p-q_i)$, que $q_i$ est affine
% -- =(q_i-f_i\circ p)^*\mu_\eps$ 
et que $f_i\circ p-q_i$ est affine, on a 
\begin{align*} \ddc u_i^\eps & = \ddc ((f_i\circ p-q_i)^* \mu_\eps) \\
& = (f_i\circ p-q_i)^* \mu_\eps''(t)\di t \wedge \dc t \\
& = (f_i\circ p-q_i)^*\mu_\eps'' \cdot (p^*f^*\di y_i-q^*\di y_i)\wedge 
 (p^*f^*\dc y_i - q^*\dc y_i) , \end{align*}
où $(y_1,\dots,y_n)$ sont les fonctions coordonnées de~$\R^n$.
Pour toute partie $I$ de~$\{1,\dots,n\}$,
on note $\di y_I$ le produit des facteurs~$\di y_i$, pour $i\in I$,
et $\di y^I$ le produit des facteurs $\di y_i$, pour $i\not\in I$,
écrits dans l'ordre croissant des indices; on définit de même $\dc y_I$
et $\dc y^I$.
Si $I$ et $J$ sont des parties de $\{1,\dots,n\}$,
notons $\eps_{IJ}$ l'unique élément de~$\{\pm1\}$ tel que
\[ \di y_1\wedge \dc y_1 \wedge \dots
 = \eps_{IJ} \di y_I \wedge \dc y_J \wedge \di y^{I}\wedge \dc y^{J}. \]
En développant le produit $\bigwedge_{i=1}^n \ddc u_i^\eps$,
on obtient
\begin{align*} 
\bigwedge_{i=1}^n \ddc u_i^\eps & = 
\bigwedge_{i=1}^n (f_i\circ p-q_i)^*\mu_\eps''\cdot (p^*f^*\di y_i - q^* \di y_i)
\wedge (p^*f^*\dc y_i - q^*\dc y_i) \\
& = \prod_{i=1}^n (f_i\circ p-q_i)^*\mu_\eps'' \\
& \qquad\qquad
\sum_{I, J \subset\{1,\dots,n\}} 
\eps_{IJ} (-1)^{\Card(I)+\Card(J)}  p^*f^*(\di y^I \wedge \dc y^J)\wedge q^*(\di y_I \wedge \dc y_J) . 
\end{align*}
De là, on déduit que
\begin{align*}
p_*\big( \bigwedge_{i=1}^n\ddc u_i^\eps  \wedge q^*\beta \big)  \hskip -3cm \\
& = \sum_{I, J} \eps_{IJ} (-1)^{\Card(I)+\Card(J)} 
    p_* \big( q^*(\di y_I \wedge \dc y_J \wedge \beta)
        \wedge p^* f^*(\di y^I\wedge \dc y^J)\big) \\
& = \sum_{I, J} \eps_{IJ} (-1)^{\Card(I)+\Card(J)} p_* \big( q^*(\di y_I \wedge \dc y_J \wedge \beta) \big)
    \wedge f^*(\di y^I\wedge \dc y^J).
\end{align*}
La forme $\beta$ est de type~$(q,q)$;
la forme $q^*(\di y_I\wedge \dc y_J\wedge \beta)$ sur $X\times\R^n$
à laquelle est appliquée~$p_*$ 
est donc de type $(0,0;q+\Card(I),q+\Card(J))$.
Seules les parties~$I$ et~$J$ telles que $\Card(I)=\Card(J)=n-q$
interviennent.
Comme $\beta$ est de type~$(q,q)$, elle se décompose 
en 
\[ \beta = \sum_{I,J} \beta_{IJ} \di y^I \wedge \dc y^J , \]
où la somme est précisément sur ces couples de parties~$(I,J)$
telles que $\Card(I)=\Card(J)=n-q$.
En posant $\eta=\di y_1\wedge\dc y_1\wedge\dots$,
l'expression précédente  vaut donc 
\begin{multline*}
\sum_{I,J} p_* \big( \prod_{i=1}^n(q_i-f_i\circ p)^*\mu_\eps'') q^*(\beta_{IJ} \eta) \wedge p ^* f^*(\di y^I\wedge \dc y^J)\big)\\
 =  \sum_{I,J} p_* \big(\prod_i (q_i-f_i\circ p)^*\mu_\eps''\cdot  q^*(\beta_{IJ} \eta )\big) \wedge f^*(\di y^I\wedge \dc y^J)\big).
\end{multline*}
Observons alors que 
$p_* \big( \prod_{i=1}^n (q_i-f_i\circ p)^*\mu_\eps'' q^*(\beta_{IJ} \eta)  \big)$
est la fonction lisse sur~$X$ qui applique $x\in X$ 
sur
\[ \int_{\R^n} \prod_{i=1}^n \mu_\eps''(y_i-f_i(x))
  \beta_{IJ}\eta . \]
Comme $\mu_\eps''\to\delta_0$ lorsque $\eps\to 0$, 
on en déduit que cette expression converge uniformément,
avec toutes ses dérivées,
vers $ \beta_{IJ}(f(x))$,
c'est-à-dire vers la fonction lisse~$f^*(\beta_{IJ})$ sur~$X$.
Par suite,
l'expression qui nous intéresse converge pour la topologie~$\mathscr C^\infty$
des formes sur~$X$ vers
\[ \sum_{IJ}  f^*(\beta_{IJ} \di y^I\wedge \dc y^J) 
 = f^*(\beta) .\]
Cela conclut la démonstration.
\end{proof}

\begin{coro}
La proposition précédente vaut si $\beta$ est une G-forme G-lisse
positive de bidegré~$(q,q)$ sur~$\R^n$
telle que le courant~$[\beta]$ soit fermé.
\end{coro}
\begin{proof}
En effet, par convolution, la G-forme $\beta$ est
limite d'une suite $(\beta_k)$ de formes lisses positives fermées.
Pour tout~$k$, on a $S\cap_f[\beta_k]=S\wedge f^*\beta_k$
et l'assertion voulue s'en déduit par passage à la limite,
compte tenu de la continuité de l'intersection tropicale
et du produit d'un courant positif fermé par une G-forme.
\end{proof}

\begin{prop}\label{prop.intertrop-assoc}
Soit $f\colon X\to\R^n$ une carte harmonique
et soit $p\colon \R^n\to\R^m$  une application affine; 
soit~$S$ un courant positif fermé sur~$X$,
soit~$T$ un courant positif fermé sur~$\R^n$ 
et soit~$U$ un courant positif fermé sur~$\R^m$.
On a 
\[ S \cap_f (T\cap_p U) = (S\cap_f T) \cap_{p\circ f} U. \]
\end{prop}
\begin{proof}
Soit $\omega$ une forme lisse à support compact sur~$X$ ;
démontrons que
\[ \langle S\cap_f (T\cap_p U) ,\omega\rangle = \langle
 (S\cap_f T)\cap_{p\circ f} U,\omega\rangle. \]
Par passage à la limite
(\S\ref{ss.lagerberg-convolution}), 
on peut supposer que $U$ est
le courant~$[\gamma]$ associé à une forme lisse~$\gamma$.
Alors $T\cap U=T\wedge f^*\gamma$ 
et $(S\cap_f T) \cap_{p\circ f}U=(S\cap_f T)\wedge (p\circ f)^*\gamma$.
Il s'agit donc de démontrer l'égalité
\[ \langle S\cap_f (T\wedge p^*\gamma) ,\omega\rangle = \langle
 (S\cap_f T)\wedge (p\circ f)^*\gamma ,\omega\rangle. \]
De nouveau par passage à la limite
(\S\ref{ss.lagerberg-convolution}), 
on peut alors supposer que $T$
est le courant~$[\beta]$ associé à une forme lisse~$\beta$;
on a alors $T\wedge p^*\gamma=[\beta\wedge p^*\gamma]$
et $S\cap_f T=S\wedge f^*\beta$, de sorte qu'il s'agit de prouver
l'égalité
\[ \langle S\wedge f^*(\beta\wedge p^*\gamma) ,\omega\rangle = \langle
 (S\wedge f^*\beta)\wedge (p\circ f)^*\gamma ,\omega\rangle, \]
laquelle est évidente.
\end{proof}

\begin{prop}\label{prop.intertrop-comm}
Soit $S$ et $T$ des courants positifs fermés sur~$\R^n$
et soit $f=\id_{\R^n}$.
On a $S\cap_f T=T\cap_f S$.
\end{prop}
\begin{proof}
Soit $\omega$ une forme lisse à support compact sur~$\R^n$;
démontrons que $\langle S\cap_f T,\omega\rangle=\langle T\cap_f S,\omega\rangle$.
Par passage à la limite
(\S\ref{ss.lagerberg-convolution}), il suffit de traiter le cas où il existe
une forme lisse~$\beta$ telle que $T=[\beta]$,
puis celui où il existe une forme lisse~$\alpha$
telle que $S=[\alpha]$.
Alors, 
\[ \langle S\cap_f T,\omega\rangle=\langle [\alpha]\wedge \beta,\omega\rangle
= \int \alpha \wedge \beta\wedge \omega \]
tandis que
\[ \langle T\cap_f S,\omega\rangle
= \int \beta \wedge \alpha\wedge \omega. \]
Puisque les degrés de $\alpha$ et $\beta$ sont pairs, ces deux expressions
coïncident.
\end{proof}

\begin{defi}
Soit $f\colon X\to\R^n$ une carte harmonique
et soit $T$ un courant positif fermé de bidegré~$(q,q)$
sur~$\R^n$.
On pose $f^!T=\delta_X \cap_f T$; c'est un courant positif fermé
de bidegré~$(q,q)$.
\end{defi}

\begin{prop}
Soit $f\colon X\to\R^n$ une carte harmonique
et soit $T$ un courant positif fermé de bidegré~$(q,q)$
sur~$\R^n$.
\begin{enumerate}
\item
Si $T=[\alpha]$ est le courant associé à une G-forme G-lisse~$\alpha$,
alors $f^!T=[f^*\alpha]$.

\item
Si $\mathscr T$ est un filtre de courants positifs fermés sur~$\R^n$
qui converge vers~$T$,
alors le filtre de courants $f^!\mathscr T$ sur~$X$ converge vers~$f^!T$.
\item
Si $f\colon\R^m\times\R^n\to\R^n$ est la seconde projection,
et si le calibrage de $\R^m$ est le calibrage constant évident,
alors $f^!T=\delta_{\R^m}\boxtimes T$.
\item
Si $g\colon \R^n\to\R^p$ est une application affine,
on a $(g\circ f)^!T=f^!(g^! T)$.
\end{enumerate}
\end{prop}
\begin{proof}
\begin{enumerate}
\item 
C'est la proposition~\ref{prop.intertrop-forme}.
\item
D'après la proposition~\ref{prop.hemicontinuite},
la filtre $\delta_X\boxtimes \mathscr T$ converge vers~$\delta_X\boxtimes T$.
L'assertion découle alors de la proposition~\ref{prop.BT-continu}.
\item
Par approximation, on peut supposer que $T$ est le
courant~$[\omega]$ associé à une forme lisse, positive, 
fermée~$\omega$ (\S\ref{ss.lagerberg-convolution}).
On a donc
$ f^!T=f^![\omega]=[f^*\omega]$.
Si $\alpha$ et~$\beta$ sont des formes lisses à support compact sur~$\R^m$
et~$\R^n$, on a donc, notant $e\colon \R^m\times\R^n\to\R^m$ la première projection, 
\[ \langle f^![\omega], e^*\alpha\wedge f^*\beta\rangle
= \int_{\R^m\times\R^n} e^*\alpha f^*(\omega\wedge\beta)
= \int_{\R^m} \alpha \int_{\R^n} \omega\wedge\beta
= \langle \delta_{\R^m},\alpha\rangle \langle
  [\omega ], \beta\rangle.\]
Compte tenu de la définition de $\delta_{\R^m}\boxtimes [\omega]$
(théorème~\ref{prop.ST-produit}), 
on a donc $f^![\omega]=\delta_{\R^m}\boxtimes[\omega]$.

\item
Par approximation, on peut supposer que $T$ est le courant~$[\omega]$
associé à une forme lisse positive fermée.
Dans ce cas, on a  
\[ (g\circ f)^![\omega]
=[(g\circ f)^*\omega]=[f^*(g^*\omega)]=f^![g^*\omega]=f^!(g^![\omega]).\]
\end{enumerate}
Cela conclut la démonstration.
\end{proof}

\section{Exemples}

Les exemples ci-dessous sont des variantes de ceux
donnés par Mihatsch dans~\cite{Mihatsch-2023}.
On note plus simplement $S\cap T$ l'intersection tropicale
$S\cap_{\id} T$ de deux courants positifs fermés~$S$ et~$T$ sur~$\R^n$.

\subsection{}
Soit $V$ et~$W$ deux sous-espaces vectoriels de~$\R^n$
tels que $V+W=\R^n$; on dit qu'ils sont transverses.
Cela revient à dire que l'on a
$\codim(V\cap W)=\codim(V)+\codim(W)$.
Soit $\mu_V$ un vecteur-volume sur~$V$,
soit $\mu_W$ un vecteur-volume sur~$W$.
Il existe un unique vecteur-volume $\mu_V\cap\mu_W$
sur~$V\cap W$ 
telle que la suite exacte usuelle d'espaces vectoriels
\[ 0 \to V\cap W \to V \oplus W \to \R^n \to 0 \]
soit compatible avec les vecteurs-volume $\mu_V\cap\mu_W$,
$\mu_V\otimes\mu_W$ et le vecteur-volume standard de~$\R^n$.

\begin{prop}
Soit $V$ et~$W$ deux sous-espaces vectoriels de~$\R^n$, transverses,
munis de vecteurs-volume~$\mu_V$ et~$\mu_W$.
On a l'égalité
\[ [V,\mu_V] \cap [W,\mu_W] = [V\cap W, \mu_V\cap \mu_W] \]
de courants sur~$\R^n$.
\end{prop}
\begin{proof}
Revenons à la définition.  
Le graphe de~$f=\id$ est la diagonale de~$\R^n\times\R^n$;
notons $j$ l'immersion diagonale de~$\R^n$ dans~$\R^{2n}$
et $p$ la première projection de~$\R^{2n}$ vers~$\R^n$.
On a 
\[ [V,\mu_V]\cap [W,\mu_W] = j^! ([V,\mu_V]\boxtimes[W,\mu_W])
 = p_* (([V,\mu_V]\boxtimes[W,\mu_W]) \cap \Gamma_f). \]
Le courant $[V,\mu_V]\boxtimes[W,\mu_W]$ sur~$\R^{2n}$
est le courant d'intégration sur $V\times W$ muni du calibrage
$\mu_V\otimes \mu_W$.
Il s'agit donc calculer $[V\times W,\mu_V\otimes\mu_W]\cap\Gamma_f$.
Appliqué par récurrence, le lemme suivant entraîne l'égalité
\[ [V\times W,\mu_V\otimes\mu_W]\cap\Gamma_f =
[V\times W\cap\Delta, \mu_V\otimes\mu_W \cap\mu_\Delta],\]
où $\mu_V\otimes\mu_W\cap\mu_\Delta$ est le calibrage de $V\times W\cap\Delta$
qui est compatible avec la suite exacte
\[ 0 \to V\times W\cap\Delta \to V\times W \xrightarrow{(y_i-x_i)} \R^n\to0.\]
La première projection identifie $V\times W\cap\Delta$ avec $V\cap W$
et le calibrage $\mu_V\otimes\mu_W\cap\mu_\Delta$ avec
le calibrage $\mu_V\cap\mu_W$.
Cela conclut la démonstration.
\end{proof}

\begin{lemm}
Soit $V$ un sous-espace vectoriel de~$R^n$ et soit $f\colon \R^n\to\R$
une forme linéaire non nulle.
Soit $W=V\cap\ker(f)$.
Si $V=W$, alors $[V,\mu_V] \wedge \ddc \max(f,0)=0$.
Sinon, $W$ est un hyperplan de~$V$,
il existe un unique calibrage~$\mu_W$ de~$W$
tel que la suite exacte $0\to W\to V\xrightarrow f\R\to0$
soit compatible avec les calibrages~$\mu_W$, $\mu_V$ et le calibrage
standard de~$\R$,
et l'on a $[V,\mu_V]\wedge\ddc \max(f,0)=[W,\mu_W]$.
\end{lemm}
\begin{proof}
Supposons d'abord que $V=W$. Lorsque $t\to 0$ par valeurs positives, la fonction
convexe~$\max(f-t,0)$ converge uniformément vers~$\max(f,0)$,
de sorte que $[V,\mu_V]\wedge \ddc\max(f,0)=\lim [V,\mu_V]\wedge\ddc\max(f-t,0)$.
Puisque $\max(f-t,0)\equiv0$ au voisinage du support~$V$ du courant~$[V,\mu_V]$,
le courant $[V,\mu_V]\wedge\ddc \max(f-t,0)$ est nul, pour tout $t>0$.
Ainsi, $[V,\mu_V]\wedge\ddc\max(f,0)=0$, comme il fallait démontrer.

Traitons maintenant le cas où $f|_V\neq0$, de sorte que $W$ est un hyperplan
de~$V$.
On choisit une base de~$\R^n$ de sorte que $V=\R{p+1}\times\{0\}$
et $f=x_{p+1}$, si bien que $W=\R^p\times\{0\}$.
Il existe un unique nombre réel~$a$ tel que $\mu_V=a\abs{e_1\wedge\dots\wedge e_{p+1}}$. Soit alors $\omega$ une forme lisse de bidegré~$(p,p)$
sur~$\R^n$, à support compact. Par définition, on a 
\[ 
\langle [V,\mu_V] \wedge \ddc \max(f,0),\omega\rangle
=
\langle \ddc( \max(f,0) [V,\mu_V] ),\omega\rangle
= \int_V   \langle x_{p+1}^+\ddc \omega ,\mu_V\rangle . \]
Notant $V_+$ le demi-espace $f^{-1}(\R_+)$, muni du calibrage~$\mu_V$,
de bord~$W$,
on a donc
\begin{align*}
  \langle [V,\mu_V] \wedge \ddc \max(f,0),\omega\rangle 
& = \int_{V^+} x_{p+1}\ddc\omega \\
&= \int_{V^+}^\partial  (x_{p+1}\wedge \dc\omega - \dc x_{p+1}\wedge\omega),
\end{align*}
compte tenu de la formule de Green et de ce que $\ddc x_{p+1}=0$.
Puisque $x_{p+1}\equiv0$ sur~$W$, on obtient
\[ 
  \langle [V,\mu_V] \wedge \ddc \max(f,0),\omega\rangle 
 = - \int_{V^+}^\partial \dc x_{p+1}\wedge\omega. \]
Par définition de l'intégrale de bord et du calibrage~$\mu_W$,
cette expression vaut $\int_W \omega$, comme il fallait démontrer.
\end{proof}

\subsection{}
Soit $P$ un sous-espace paralinéaire de~$\R^n$, purement de dimension~$p$,
muni d'un $p$-calibrage~$\mu_P$.
Alors, $(P,\mu_P)$ définit un courant squelettique de bidimension~$(p,p)$
sur~$\R^n$ que l'on note~$[P,\mu_P]$.
Si ce calibrage est positif, le courant $[P,\mu_P]$ est positif;
si ce calibrage est harmonieux, ce courant est fermé.

\subsection{}
Soit $P$ et $Q$ des sous-espaces paralinéaires de~$\R^n$, 
purement de dimensions~$p$ et~$q$ respectivement,
et respectivement munis d'un $p$-calibrage~$\mu_P$ et d'un $q$-calibrage~$\mu_Q$
que l'on suppose positifs et harmonieux.

On dispose ainsi de l'intersection tropicale des deux
courants $[P,\mu_P]$ et $[Q,\mu_Q]$. 
C'est un courant positif fermé de bidimension~$p+q-n$;
il est squelettique, c'est-à-dire qu'il correspond à 
un sous-espace paralinéaire~$R$ de~$\R^n$,
purement de dimension~$r=p+q-n$, muni d'un $r$-calibrage
positif et harmonieux.

Plusieurs constructions de cette intersection tropicale
ont été données dans la littérature. 
Expliquons pourquoi notre construction la retrouve.

Par construction, l'intersection tropicale se calcule localement.
Au voisinage d'un point $x\in P\cap Q$ appartenant aux intérieurs
de deux cellules de~$P$ et~$Q$ qui s'intersectent transversalement,
le calcul est celui de l'intersection tropicale de deux espaces linéaires.
Dans le cas général, on calculer l'intersection tropicale
transverse de $[P,\mu_P]$ avec un translaté « générique » $y+[Q,\mu_Q]$,
et faire tendre $y$ vers~$0$ : par continuité,
le résultat convergera vers $[P,\mu_P]\cap [Q,\mu_Q]$.
C'est la règle de déplacement des éventails.

\part{Géométrie analytique}

\chapter{Algèbre graduée et réduction des anneaux normés}

\section{Algèbre graduée}

\subsection{}
Sauf mention expresse du contraire, \textit{anneau gradué} signifiera dans ce texte
\textit{anneau $\R^\times_+$-gradué}, la graduation 
étant notée multiplicativement et en exposant. 
Nous mettrons parfois l'exposant entre crochets pour éviter une éventuelle
confusion avec les puissances.

Un morphisme d'anneaux gradués est un morphisme d'anneaux préservant les graduations. 
Si $A$ est un anneau gradué nous noterons $A^\times$ l'ensemble des éléments inversibles \textit{homogènes}
de $A$. La catégorie des anneaux ordinaires s'identifie à celle des anneaux $A$ trivialement gradués, c'est-à-dire tels que
$A^{[1]}=A$. 

Une part importante de l'algèbre commutative classique s'étend au cas gradué, la plupart du temps sans difficulté aucune : il suffit de 
rajouter un peu partout des hypothèses d'homogénéité. Nous allons en donner quelques exemples ici, le plus souvent sans démonstration. 
Pour un exposé plus complet, on pourra se référer au texte fondateur 
de Temkin \cite{temkin2004} ainsi
par exemple qu'aux paragraphes de rappels et compléments sur le sujet dans \cite{ducros2013}, \cite{ducros2021}
ou encore \cite{ducros-2024b}.

\subsection{}
Suivant Temkin, nous appellerons (improprement) \textit{corps gradué} tout anneau gradué non nul dans lequel tout
élément homogène non nul est inversible. Par exemple si $k$ est un corps (usuel) et $r$ un réel strictement positif, 
l'anneau des polynômes de Laurent $k[t,t^{-1}]$ muni de la graduation pour laquelle $at^i$ est homogène
de degré $r^i$ pour tout $a\in k^\times$ et tout $i\in \Z$ est un corps gradué. Une extension d'un corps gradué
$K$ est un corps gradué $F$ muni d'un morphisme d'anneaux gradués de $K$ dans $F$, nécessairement injectif.

\subsection{}
Si $A=\bigoplus_{r>0}A^r$ est un anneau gradué et si $a$ est un élément homogène de $A$, nous 
poserons $\|a\|=0$ si $a=0$ et désignerons sinon par $\|a\|$ l'unique réel strictement positif $r$ tel que $a\in A^r$. 
Si $K$ est un corps gradué, nous utiliserons de préférence
la notation $\abs a$ plutôt que $\|a\|$. (Nous noterons donc le degré comme une norme ou
une valeur absolue ; l'intérêt de ce choix apparaîtra plus loin lorsque
nous manipulerons des réductions graduées). 

\subsection{}\label{ss-matrices-graduees}
Si $A$ est un anneau gradué, un $A$-module gradué est un $A$-module $M$ muni en tant que groupe
abélien d'une décomposition en somme directe $M=\bigoplus_{r>0} M^r$ telle que $A^sM^r\subset M^{rs}$ pour tout $(r,s)$. 
Nous rencontrerons essentiellement cette notion lorsque $A$
est un corps gradué, auquel cas nous parlerons de
$A$-espace vectoriel gradué. 

Si $M$ et $N$ sont deux $A$-modules gradués, nous considérerons toujours $M\otimes_A N$ comme muni de la graduation 
pour laquelle $(M\otimes_A N)^r$ est le sous-groupe de $M\otimes_A N$ engendré par les éléments de la forme $m\otimes n$ 
avec $m\in M^s$, $n\in N^t$ et $st=r$. 
Ce produit tensoriel gradué représente le foncteur qui envoie un $A$-module gradué $P$
sur l'ensemble des applications bilinéaires $b$ de $M\times N$ vers $P$ telles que $b(M^s, N^t)\subset P^{st}$ pour
tout $(s,t)$. 

Une famille $(m_i)$ d'éléments homogènes d'un $A$-module gradué $M$ est libre (resp. génératrice, resp. une base) si l'application qui à
une famille $(a_i)$ d'éléments homogènes de $A$ à support fini associe $\sum a_i m_i$ est injective (resp. surjective, resp. bijective). 

On démontre comme en algèbre linéaire standard que tout espace vectoriel gradué possède une base et que toutes ses bases ont même
cardinal, que l'on appelle ici aussi la dimension.

On prendra garde qu'un $A$-module gradué admettant une base 
n'est pas en général isomorphe à $A^{(I)}$ pour un certain ensemble $I$, mais à une somme directe $\bigoplus_{i\in I} A[r_i]$ où les $r_i$ sont des réels strictement positifs et où $A[r_i]$ désigne le $A$-module gradué de
$A$-module
sous-jacent $A$ et dont la graduation 
est donnée par la formule $(A[r_i])^s=A^{r_i^{-1}s}$ (la notation du décalage est choisie de sorte que $1$ soit de degré $r_i$).  Si $r$ désigne la famille $(r_i)$ on écrira $A[r]$ au lieu de
$\bigoplus_i A[r_i]$. 

On peut développer un calcul matriciel dans ce contexte. La donnée d'un morphisme de 
$A$-modules gradués, c'est-à-dire d'une application $A$-linéaire préservant les graduations, 
de $\bigoplus_j A[s]\to \bigoplus_i A[r]$, où
$r=(r_i)$ et $s=(s_j)$, 
est équivalente à la donnée d'une matrice $(a_{ij})$ où les $a_{ij}$ sont 
des éléments homogènes de $A$, où les $a_{ij}$ à $j$ fixé sont presque tous nuls, et où $\|a_{ij}\|s_i\in \{r_j,0\}$ pour tout 
$i$ ; nous noterons $\mathrm M_{r,s}(A)$ le $A^1$-module constitué des matrices $(a_{ij})$ comme ci-dessus ; 
si $t$ est un troisième polyrayon, la formule usuelle du produit matriciel définit une application bilinéaire 
de $\mathrm M_{r,s}\times \mathrm M_{s,t}$ vers $\mathrm M_{r,t}$, qui correspond à la composition 
des morphismes de $A$-modules gradués. 
nous désignerons par $\GL_{s,r}(A)$ l'ensemble de ses éléments possédant un inverse bilatère dans
$\mathrm M_{r,s}(A)$. Si $r=s$
nous écrirons simplement  $\mathrm M_s(A)$ et  $\GL_s(A)$. 

\subsection{}
La catégorie des $A$-modules gradués admet un foncteur $\mathrm{Hom}$ interne : 
si $M$ et $N$ sont deux $A$-modules gradués, on note $\mathrm{Hom}_{\mathrm{gr}}
(M,N)$ la somme directe $\bigoplus_{\rho>}\mathrm{Hom}(M,N[\rho^{-1}])$ ; 
c'est de manière naturelle un $A$-module gradué (par $\rho$). 
Si $r=(r_1,\ldots, r_n)$ et $(s=(s_1,\ldots, s_m)$ sont deux
polyrayons, 
\[\mathrm{Hom}_{\mathrm{gr}}(A[s], A[r])\simeq \bigoplus_{i,j}A[r_j^{-1}s_i]\]
(considérer les coefficients matriciels). On notera
$M^\vee$ le $A$-module gradué
$\mathrm{Hom}_{\mathrm{gr}}(M,A)$ ; on l'appelle le dual de $M$. 

\subsection{}
Soit $A$ un anneau gradué. On dit que $A$ est intègre s'il l'est en tant qu'anneau standard ; cela revient à demander que tout
produit fini d'éléments \textit{homogènes} non nuls
de $A$ soit un corps.

On peut définir le localisé 
de $A$ relatif à
une partie multiplicative constituée d'éléments homogènes, et en particulier son corps gradué des fractions si $A$
est intègre. 

Le nilradical de $A$ (au sens standard) est un idéal homogène de $A$. On dit que l'anneau gradué $A$ est réduit s'il l'est au sens
classique, c'est-à-dire si son nilradical est nul. Cela revient à demander que tout élément nilpotent
homogène de $A$ soit nul. 

Un idéal homogène $I$ de  $A$ sera dit premier (resp. maximal) si et seulement si l'anneau gradué
$A/I$ est intègre (resp. est un corps gradué).

On dit que  $A$ est local s'il possède un et un seul idéal homogène maximal. 
Si $\mathfrak p$ est un idéal homogène premier de $A$, 
le localisé $A_{\mathfrak p}$ de $A$ relatif à la partie multiplicative de $A$ constitué des éléments homogènes
de $A\setminus\mathfrak p$ est un anneau gradué local, que nous appellerons le localisé
de $A$ en $\mathfrak p$. 

On dit que $A$ est noethérien (resp. artinien) si et seulement si toute famille non vide
d'idéaux \textit{homogènes} de $A$ admet un élément maximal (resp. un élément minimal). On démontre comme
dans le cas classique que si $A$ est noethérien il possède un nombre fini d'idéaux homogènes premiers minimaux, 
que tout idéal homogène premier  de $A$ contient l'un de ces idéaux premiers homogènes
minimaux, et que le localisé de $A$ en un idéal homogène premier minimal 
est artinien. 

On démontre également que $A$ est artinien si et seulement s'il est noethérien et si tous ses idéaux premiers
homogènes sont maximaux. Si $A$ est artinien il s'écrit comme produit d'un nombre fini d'anneaux gradués locaux artiniens. 
Si $A$ est un anneau gradué local artinien de corps gradué résiduel $K$,
il existe une suite finie $A=I_0\supseteq I_1\ldots \supseteq I_m=(0)$ d'idéaux
homogènes de $A$ tels que $I_j/I_{j+1}$ soit isomorphe à $K$ comme $A$-module pour tout $j\leq m-1$. 
L'entier $m$ ne dépend pas de la suite d'idéaux choisie et est appelé la longueur de $A$. On définit la longueur d'un anneau gradué
artinien non nécessairement local comme la somme des longueurs de ses facteurs locaux.

\subsection{}\label{subsec-polynomes-gradues}
Soit $A$ un anneau gradué et soit $r=(r_i)_{i\in I}$ une famille de réels strictement positifs. On note $A[T_i/r_i]$ ou
plus brièvement $A[T/r]$ la
$A$-algèbre graduée d'anneau sous-jacent $A[T_i]_{i\in I}$ et dont la graduation est telle que $A[T_i/r_i]^s$ soit 
l'ensemble des polynômes de la forme $\sum a_I T^I$ avec $a_I\in A^{r^{-I}s}$ pour tout $I$. Informellement, chaque
$T_i$ est une indéterminée homogène telle que $\|T_i\|=r_i$ ; rigoureusement, ce slogan se traduit par le fait que
pour toute $A$-algèbre graduée $B$ et toute famille $(b_i)$ d'éléments de $B$
tels que $b
_i\in B_{r_i}$ quel que soit $i$, il existe un unique morphisme de $A$-algèbres graduées envoyant $T_i$ sur $b_i$
pour tout $i$ (à savoir l'évaluation en $(b_i)$). 

On démontre comme dans le cas classique que si l'ensemble d'indices $I$ est fini et si $A$ est noethérien
alors $A[T/r]$ est noethérien. 

Ces algèbres de polynômes permettent en particulier de développer dans le contexte des corps gradués une théorie
de l'indépendance algébrique, des extensions algébriques, des bases et du degré de transcendance, analogue à celle bien connue
dans le cadre classique (les bases de transcendance seront toujours
par définition constituée d'éléments homogènes).

Soit $K$ un corps gradué et soit $(r_i)_{i\in I}$ une famille de réels strictement positifs.
Nous noterons $K(T_i/r_i)_i$, ou souvent plus simplement $k(T/r)$ le corps des fractions gradué
de $K[T_i/r_i]_i$. 
Le sous-groupe $\abs{K(T/r)^\times}$ de
$\R_+^\times$ est engendré par $\abs{K^\times}$ et les $r_i$, et la composante
$K(T/r)^1$ est l'ensemble des fractions de la forme $\sum a_I T^I/\sum b_I T^I$ où les $b_I$ sont non tous nuls 
et où les $\abs{a_I}r^I$ et $\abs{b_I}r^I$ non nuls sont tous égaux à un même réel strictement positif. La
famille $(T_i)$ est une base de transcendance de $K(T_i/r_i)_i$ sur $K$. 

Soit $L$ une extension quelconque de $K$ et soit $(t_i)$ une famille d'éléments
de $L^\times$.
La famille $(t_i)$ est algébriquement indépendante sur $k$
si et seulement s'il existe un $K$-isomorphisme de $K(T_i/\abs{t_i})_i$ sur $K(t_i)_i$ envoyant
$T_i$ sur $t_i$ pour tout $i$. 

\subsection{}\label{ss-tenseurk1-kr}
Soit $K$ un corps gradué et soit $A$ une $K$-algèbre graduée. 
Soit $r\in \abs{K^\times}$ et soit $\lambda$ un élément non nul de $K^r$. 
L'application naturelle de $A^1\otimes_{K^1}K^r$ vers $A^r$ est bijective, de
réciproque $a\mapsto (a/\lambda)\otimes \lambda$. 
Autrement dit, le morphisme de $K$-algèbres 
de $A^1\otimes_{K^1}K$ vers $A$ est une injection d'image $\bigoplus_{r\in \abs{K^\times}}A^r$.

\subsection{}\label{deg-extension-graduee}
Soit $K$ un corps gradué. 
Soit $L$ une extension de $K$ ; c'est en particulier un $K$-espace vectoriel,
dont la dimension sera appelée le degré de
$L$ sur $K$. On sait
par \ref{ss-tenseurk1-kr}
que le morphisme naturel de $L^1\otimes_{K^1}K$ vers $L$ est injectif et
identifie  $L^1\otimes_{K^1}K$ au sous-corps gradué $M=\bigoplus_{r\in \abs{K^\times}}L^r$ de $L$. Notons qu'on a par
construction $\abs{M^\times}=\abs{K^\times}$ et $L^1=M^1$. 
Nous nous proposons maintenant décrire le degré et le degré de transcendance de l'extension graduée $K\hookrightarrow L$ en termes 
d'invariants plus traditionnels.

Traitons d'abord le cas du degré.
Soit $(a_i)$ une base de $L^1$ sur $K_1$ (au sens classique) Puisque $M=L^1\otimes_{K^1}K$, la famille
$(a_i)$ est encore une base de $M$ sur $K$. 
Soit $(b_j)$ une famille d'éléments de $L^\times$ telle que les
$\abs{b_j}$ forment un système de représentants de $\abs{F^\times}/\abs{K^\times}$. Si $j_1$ et $j_2$
sont deux indices distincts et si $\mu_1$ et $\mu_2$ sont deux éléments de $M^\times$ alors
$\abs{\mu_1b_{j_1}}\neq \abs{\mu_2b_{j_2}}$ par choix des $b_j$ et parce que $\abs{M^\times}
=\abs{K^\times}$. La famille $(b_j)$ est donc libre sur $M$. Par ailleurs si $\lambda$ est un élément de 
$L^\times$ il existe par choix de $(b_j)$
un élément $a$ de $K^\times$ et un indice $j$ tel que $\abs \lambda=\abs{ ab_j}$. Le produit
$(\lambda/ab_j)a$ appartient alors à $(L^1)^\times\cdot K^\times\subset M^\times$, et
l'on a $\lambda =(\lambda/ab_j)ab_j$, ce qui montre que $(b_j)$ est une famille génératrice de 
$L$ sur $M$. C'en est donc une base
(et l'on a par conséquent $[L=M]=
(\abs{L^\times}:\abs{K^\times}$). 
Par conséquent $(a_ib_j)_{i,j}$ est une base de $L$ sur $K$
si bien que \[[L:K]=[L^1:K^1](\abs{L^\times}:\abs{K^\times}.)\]

Traitons maintenant le cas du degré de transcendance. Soit $(\alpha_i)$ une base de transcendance (au sens classique)
de $L^1$ sur $K^1$. Il est immédiat que les $\alpha_i$ sont encore algébriquement indépendants sur $K$. Et comme $L^1$
est algébrique sur $K^1(\alpha_i)_i$ et que $M=L^1\otimes_{K^1}K$, le corps gradué $M$ est algébrique sur 
$K(\alpha_i)_i$ ; la famille $(\alpha_i)$ est donc une base de transcendance de $M$ sur $K$. 
Soit $(\beta_j)$ une famille d'éléments de $L^\times$ tels que les $\abs{\beta_j}$ forment une base de
$\Q\otimes_\Z(\abs{F^\times}/\abs{K^\times})$ comme $\Q$-espace vectoriel. La liberté de la famille
$(\abs{\beta_j})$ et le fait que $\abs{L^\times}
=\abs{K^\times}$ entraînent que pour tout couple $(J_1,J_2)$ de multi-indices distincts et toute famille
$(\mu_1,\mu_2)$ d'éléments de $L^\times$ on a $\abs{\mu_1\beta^{J_1}}\neq \abs{\mu_2\beta^{J_2}}$ ; 
il en résulte que les $\beta_j$ sont algébriquement indépendants sur $M$. 
Par ailleurs si $\lambda$ est un élément de $L^\times$ le caractère générateur de $(\beta_j)$ entraîne
l'existence d'un entier non nul $N$, d'un élément $a$ de $K^\times$ et d'un multi-indice $J$
tels que $\abs{\lambda^N}=\abs{a\beta^J}$. Le produit
$(\lambda^N/a\beta^J)a$ appartient alors à $(L^1)^\times\cdot K^\times\subset M^\times$, et
l'on a $\lambda^N =(\lambda^N/a\beta^J)a\beta^J$, ce qui montre que $\lambda$ est algébrique 
sur $M(\beta_j)_j$. Il en résulte que $(\beta_j)$ est une base de transcendance de $L$ sur $M$. 
La concaténation de $(\alpha_i)$ et $(\beta_j)$ est alors une base de transcendance de $L$ sur $K$, si bien que 
\[\mathrm{degtr}(L/K)=\mathrm{degtr}(L^1/K^1)+\dim_\Q \Q\otimes_\Z(\abs{L^\times}/\abs{K^\times}).\]

\begin{lemm}\label{lem-transcendantpur-gradue}
Soit $K$ un corps gradué et soit $(r_i)_{i\in I}$ une famille de réels strictement positifs.
Il existe un sous-ensemble $J$ de $I$ et une
 famille $(S_j)_{j\in J}$ de monômes en les $T_i$ à coefficients dans $K^\times$ vérifiant
$\abs{S_j}=1$
pour tout $j$ telle que pour toute
extension graduée $L$ de $K$ vérifiant l'égalité $\abs{L^\times}=\abs{K^\times}$ les $S_i$ soient
algébriquement indépendants sur $L^1$ et engendrent $L(T/r)^1$ comme
extension de $L^1$. 
\end{lemm}

\begin{proof}
On munit l'ensemble d'indices $I$ d'un bon ordre. Nous allons construire par récurrence sur 
l'élément $\ell$ de $I$ 
un ensemble $B_\ell$ de monômes en les $T_i$ pour $i\leq \ell$ tels que les conditions suivantes soient satisfaites :
\begin{enumerate}
\item pour tout $m\leq \ell$ l'ensemble $B_m$ est égal au sous-ensemble de $B_\ell$ constitué des monômes
ne faisant intervenir que les $T_i$ pour $i\leq m$, et $B_\ell\setminus \bigcup_{m<\ell}B_m$
a au plus un élément ; 
\item pour tout $\ell$ et toute
extension graduée $L$ de $K$ vérifiant l'égalité $\abs{L^\times}=\abs{K^\times}$ les éléments de $B_\ell$
sont algébriquement indépendants sur $L^1$ et engendrent $[L(T_i/r_i)_{i\leq \ell}]^1$ comme
extension de $L^1$. 
\end{enumerate}
La réunion des $B_\ell$ tels que 
$B_\ell\setminus \bigcup_{m<\ell}B_m$
soit non vide (et donc un singleton)
fournira alors une famille $(S_j)$ comme
requise dans l'énoncé. 

Il suffit de fabriquer $B_\ell$ en supposant que les $B_i$ pour $i<\ell$ ont été construits. 
Posons $B=\bigcup_{i<\ell}B_i$. Soit
$L$ une extension de $K$ telle que $\abs{F^\times}=\abs{K^\times}$. Posons
$\breve K=K(T_i/r_i)_{i<\ell}$
et $\breve L=L(T_i/r_i)_{i<\ell}$ ; on a alors $\abs{\breve L^\times}=\abs{\breve K^\times}$. 
Par construction, 
les éléments de $B$ appartiennent à $\breve L^1$, sont algébriquement indépendants sur $L^1$
et engendrent $\breve L^1$ comme extension de $L^1$. 
Supposons tout d'abord que $r_\ell$ n'est pas de torsion modulo $\abs{\breve K^\times}$. 
Il ne l'est alors pas modulo $\abs{\breve L^\times}$, ce qui entraîne que les composantes homogènes
de $\breve L[T_\ell/r_\ell]$ sont les $\breve L^r\cdot T_\ell^n$ où $r$ parcourt $\abs{\breve L^\times}$ et où $n$
parcourt $\N$, et celles de $\breve L(T_\ell/r_\ell)$ sont par conséquent les  $\breve L^r\cdot T_\ell^n$ où $r$
parcourt $\abs{\breve L^\times}$ et où $n$
parcourt $\Z$. En particulier $\breve L(T_\ell/r_\ell)^1$ est égal à $\breve L^1$, ce qui montre que  $ L(T_\ell/r_\ell)^1$
est engendré par $B$ sur son sous-corps $L^1$. On peut donc poser $B_\ell=B$. 
Supposons maintenant que $r_\ell$ est de torsion modulo $\abs{\breve K^\times}$, et soit $n$ son ordre 
modulo $\abs{\breve K^\times}$. Le groupe $\abs{\breve K^\times}$ étant engendré par $\abs{K^\times}$ et les
$r_i$ pour $i<\ell$ il existe un monôme $a$ dont le coefficient appartient à $K^\times$ et 
qui ne met en
jeu que des $T_i$ avec $i<\ell$ tel que $r_\ell^n=\abs a$. L'élément 
$T_\ell^n/a$ de $\breve K(T_\ell/r_\ell)$ appartient alors à $\breve K(T_\ell/r_\ell)^1\subset \breve L(T_\ell/r_\ell)^1$. 
Il est transcendant sur $\breve L^1$ puisque $T_\ell$ est transcendant sur $\breve L$ par construction. Il suffit maintenant de vérifier
que $\breve L(T_\ell/r_\ell)^1=\breve L^1(T_\ell^n/a)$ ; cela assurera qu'on peut prendre $B_\ell=B\cup\{T_\ell^n/a\}$ et
terminera la démonstration. 

Donnons-nous un élément
$\lambda=(\sum_p a_p T_\ell^p)(\sum b_p T_\ell^p)$
de $\breve L(T_\ell/r_\ell)^1$, où les $a_m$ et
les $b_p$ sont des éléments
homogènes de $\breve L$, où les $b_p$ sont  non tous nuls, et où les
$\abs{a_p}r_\ell^p$ et les $\abs{b_p}r_\ell^p$ non nuls  sont tous égaux. Puisque $r_\ell$ est
d'ordre $n$
modulo $\abs{\breve L^\times}$, les entiers $p$ tels que $a_p\neq 0$ et les entiers $p$
tels que $b_p\neq 0$ appartiennent tous à la même
classe modulo $n$. Il existe
donc un entier $d\in\{0,\ldots, n-1\}$ tel que $\lambda$
puisse s'écrire $(T^d\sum \alpha_pT^{np})/(T^d\sum \beta_p T^{np})$,
où les $\alpha_p$ et
les $\beta_p$ sont des éléments
homogènes de $\breve L$, où les $\beta_p$ sont  non tous nuls,  et où les
$\abs{a_p}r_\ell^{n^p}$ et
les $\abs{b_p}r_\ell^{np}$ non nuls  sont tous égaux. On a ainsi
\[ \lambda=\frac {\sum (\alpha_pa^p)(T^n/a)^p} {\sum (\beta_pa^p)(T^n/a)^p},\]
et les $\abs{\alpha_pa^p}$ et $\abs{\beta_pa^p}$ non nuls sont tous égaux à un même $\abs{c}$ pour un 
certain $c\in \breve L^\times$. L'égalité \[ \lambda=\frac {\sum (\alpha_pa^p/c)(T^n/a)^p} {\sum (\beta_pa^p/c)(T^n/a)^p}\]
montre
alors que $\lambda$ appartient à $\breve L^1(T_\ell^n/a)$, ce qu'on souhaitait établir. 
\end{proof}

\begin{coro}\label{coro-gradue-tf}
Soit $K$ un corps gradué et soit $L$ une extension graduée de type fini de $K$. 
L'extension $L^1$ de $K^1$ et le groupe abélien $\abs{L^\times}/\abs{K^\times}$ sont de
type fini.
\end{coro}
\begin{proof}
Par hypothèse, $L$ s'identifie à une extension finie de $K(T/r)$ pour un certain polyrayon $r=(r_1,\ldots, r_n)$
de longueur finie. Le groupe $\abs {K(T/r)^\times}/\abs{K^\times}$ est engendré par les $r_i$ et est donc
de type fini, et $K(T/r)^1$ est de type fini (et de plus transcendant pur) sur $K^1$ par le lemme précédent. 
Comme $[L^1:K(T/r)^1]\left(\abs{L^\times}/\abs{K(T/r)^\times}\right)=[L:K(T/r)]$ 
en vertu de~\ref{deg-extension-graduee}, $L^1$ est de type fini sur $K^1$ et 
$\abs{L^\times}/\abs{K^\times}$ est de type fini. \end{proof}

\subsection{}
Soit $K$ un corps gradué. Une valuation (graduée) sur $K$ est une application 
$v  \colon K^\times\to G$, où $G$ est un groupe
abélien ordonné noté multiplicativement, que l'on prolonge systématiquement en adjoignant un plus petit
élément absorbant $0$ à $G$ et en posant $v(0)=0$, et qui satisfait les axiomes suivants : 
\begin{itemize}
\item pour tout couple
$(\lambda, \mu)$ d'éléments de $K^\times \cup\{0\}$ on a
$v(\lambda\mu)=v(\lambda)v(\mu)$ ; 
\item pour tout $r>0$ et tout couple $(\lambda, \mu)$ d'éléments
de~$K^r$,
on a $v(\lambda+\mu)\leq \max(v(\lambda, v(\mu))$. 
\end{itemize}

On dit que deux valuations $v$ et $v'$ sur $K$, à valeurs respectives dans
les groupes abéliens ordonnés $G$ et $G'$, sont équivalentes s'il existe un groupe abélien ordonné
$G''$, une valuation $v''$ sur $K$ à valeurs dans $G''$, et deux morphismes injectifs croissants $i\colon G''\to G$
et $j\colon G''\to G'$ tels que $v=i\circ v''$ et $v'=j\circ v''$. 

À une valuation $v$ sur $K$ est associé un sous-anneau gradué $\mathscr O_v$ de $K$, caractérisé par le fait que
$(\mathscr O_v)^r=\{\lambda \in K^r, v(\lambda)\leq 1\}$ pour tout $r>0$.  Pour tout $\lambda\in K^\times$ on a
$\lambda\in \mathscr O_v$ ou $\lambda^{-1}\in \mathscr O_v$ (en particulier, $K$ est le corps gradué des fractions
de $\mathscr O_v$). L'anneau gradué $\mathscr O_v$ est  local, au sens où il possède un et un seul idéal 
homogène maximal $\mathfrak m$,  caractérisé par le fait que 
$\mathfrak m^{[r]}=\{\lambda \in K^r, v(\lambda)<1\}$
pour tout $r>0$. 
Réciproquement si $A$ est un sous-anneau gradué de $K$ tel que 
$\lambda\in A$ ou $\lambda^{-1}\in A$ pour tout  $\lambda\in K^\times$ alors $K^\times/A^\times$ 
possède un ordre compatible avec sa structure de groupe dont l'ensemble des
éléments $\leq 1$ est l'image de $A\cap K^\times$, et l'application quotient $v$ de $K^\times$ vers $K^\times/A^\times$
est, relativement à l'ordre en question, une valuation sur $K$ telle que $\mathscr O_v=A$. On vérifie comme dans le cas classique
que ces constructions mettent en bijection l'ensemble des classes d'équivalence de valuations sur $K$ et celui des sous-anneaux gradués
$A$ de $k$ tels que $\lambda\in A$ ou $\lambda^{-1}\in A$ pour tout  $\lambda\in K^\times$. 
Un tel sous-anneau gradué de $K$ est appelé un anneau gradué de valuation (de $K$). 

Si $F$ est une extension graduée de $K$, toute valuation $v$ sur $K$ admet un prolongement~$w$ à~$F$ :
comme dans
le cas classique, on peut par exemple construire $\mathscr O_w$ 
comme un sous-anneau gradué de $F$ dominant 
$\mathscr O_v$ et maximal pour la relation de domination.

\section{Schémas et fibrés vectoriels gradués}

\subsection{}
Soit $A$ un anneau gradué. Nous nous permettrons (lorsqu'il sera clair que l'on 
travaille dans le contexte gradué) de noter $\Spec A$ l'ensemble des idéaux
premiers \textit{homogènes}
de $A$, équipé de la topologie de Zariski, dont les fermés
sont les parties de la forme $\{\mathfrak p, I\subset \mathfrak p\}$ 
où $I$ est un certain idéal homogène de $A$ ; cet espace est muni d'un faisceau
en anneaux gradués qui envoie tout ouvert $D(f):=\{\mathfrak p, f\notin
\mathfrak p\}$ (où $f$ est un élément
homogène de $A$) sur le localisé gradué $A_f$ ; les fibres de ce faisceau
sont des anneaux gradués locaux, ce qui fait de $\Spec A$ un
\textit{espace localement annelé gradué}. 
Nous appellerons schéma gradué tout espace localement annelé gradué localement
isomorphe à un spectre d'anneau gradué. Topologiquement, tout schéma gradué est un espace spectral. 
Tout point $x$ d'un schéma gradué possède un corps gradué
résiduel $\kappa(x)$. Un schéma gradué est dit réduit si ses anneaux gradués locaux sont réduits, et noethérien
s'il possède un recouvrement ouvert fini par des spectres d'anneaux gradués noethériens. Un schéma gradué noethérien $X$
possède un ensemble fini de composantes irréductibles $X_i$. Chacune d'elle a un unique point générique $\eta_i$, 
l'anneau local gradué $\mathscr O_{X_i,\eta_i}$ est artinien, et sa longueur est appelée la multiplicité générique de $X$
le long de $X_i$. 

Un morphisme de schémas gradués est un morphisme d'espaces localement annelés
gradués. On peut définir dans ce contexte, en décalquant ce qui se
fait dans le cas classique, les notions de morphisme fini, d'immersions
(ouvertes, fermées, puis générales), de produits fibrés, de changement de base, etc.

Lorsque $A$ est trivialement gradué, son spectre au sens ci-dessus
coïncide avec son spectre au sens usuel (son faisceau structural est 
donc également trivialement gradué). Les schémas usuels sont donc des cas
particuliers de schémas gradués. 

Si $X$ est un schéma gradué sur un corps gradué $K$ et si $L$ est une extension graduée de $K$, nous noterons souvent~$X_L$
le $L$-schéma gradué $X\times_K L$. Le morphisme naturel de~$X_L$ vers~$X$ 
est surjectif : en effet, sa fibre en un point $x$ de
$X$ est le spectre de l'anneau gradué non nul $\kappa(x)\otimes_K L$.

\subsection{Espaces affines et projectifs gradués}\label{def-proj-gradue}
Soit 
$A$ un anneau gradué. 
Soit $r=(r_1,\ldots, r_n)$ un polyrayon. 
On lui associe un schéma gradué affine de type fini sur $A$, l'espace affine relatif de polyrayon $r$, 
noté $\A^r_A$ et défini comme étant le spectre de $\Spec A[T/r]$. 

Soit $\rho=(\rho_0,\ldots, \rho_n)$ un polyrayon.
On lui associe un $A$-schéma gradué de type fini sur $A$, 
l'espace projectif de relatif de polyrayon $\rho$, noté $\P^\rho_A$ et
qu'on peut définir de deux façons 
différentes : 

\begin{itemize}
\item par une variante graduée de la construction $\mathrm{Proj}$, appliquée
à $A[T_0/s_0,\ldots, T_n/s_n]$ (dont on doit alors considérer non
seulement la
$\R_+^\times$-graduation mais aussi la graduation monomiale) ; 
\item par recollement des $U_j:=\Spec A[\tau_{ij}/(s_i/s_j)]_{i\neq j}$
le long des isomorphismes $D(\tau_{kj})\simeq D(\tau_{jk}), 
\tau_{ij}\mapsto \tau_{ik}\tau_{jk}^{-1}$. 
\end{itemize}
L'ouvert $U_j$ de la seconde construction correspond à l'ouvert $D(T_j)$ de la
première, avec $\tau_{ij}=T_i/T_j$. 

\subsection{Propreté et projectivité}
On peut définir un morphisme propre entre schémas gradués exactement 
comme dans le cas classique : un morphisme $f$ est propre s'il est 
de type fini et satisfait
le critère valuatif (il faut bien sûr prendre ici ce critère au sens
des valuations graduées) ; cela revient à demander que $f$ soit de type fini, 
séparé et universellement fermé. La preuve est analogue à celle du cas classique, 
les deux points clef étant les suivants : 

\begin{itemize}
\item tout anneau local gradué est dominé par un anneau gradué de valuation 
(on en déduit que toute spécialisation $y\to x$ sur un schéma gradué $X$
est induite par un moorphisme $\Spec A\to X$ où $A$ est un anneau de valuation 
gradué) ; 
\item l'image d'un morphisme quasi-compact $f\colon  Y\to X$ est fermée
si et seulement si elle est stable par spécialisation. 
\end{itemize}
La première assertion se démontre aisément avec le lemme de Zorn. 
Pour la seconde, l'implication directe est claire. 
Pour l'implication réciproque, on se ramène au cas d'un morphisme $f\colon \Spec B\to \Spec A$ ; 
en quotientant $A$
par le noyau de $f^*$, on peut supposer que $A$
s'injecte dans $B$. Dans ce cas pour tout idéal homogène premier minimal
$\mathfrak p$ de $A$ le produit tensoriel
$B\otimes_A A_{\mathfrak p}$ est non nul, ce qui entraîne que $\mathfrak p$ 
appartient à l'image de $f$ ; comme celle-ci est stable par spécialisation, et
comme tout idéal premier homogène de $A$ contient par le lemme de Zorn un 
idéal premier homogène minimal, $f$ est surjective. 

Si $r=(r_0,\ldots, r_n)$ est un polyrayon et si $A$ est un anneau gradué, 
le $A$-schéma gradué $\P^r_A$ est propre -- on montre en
chassant les dénominateurs
qu'il satisfait au critère valuatif. En conséquence tout $A$-schéma
gradué projectif
(c'est-à-dire se factorisant par une immersion fermée dans $\P^r_A$ pour un
certain $r$) est propre

\subsection{}\label{ss.fibrevect-gradue}
Soit $(X,\mathscr O_X)$
un espace localement annelé gradué. Un fibré vectoriel (gradué) sur $X$
est un $\mathscr O_X$-module gradué
$E$ tel que pour tout $x\in X$ il existe un
voisinage ouvert $U$ de $x$, un polyrayon $r=(r_1,\ldots, r_n)$ et
un isomorphisme $E|_U\simeq \mathscr O_X[r]$. L'entier $n$ est alors
uniquement déterminé par $x$ et est appelé rang de $
E$ en $x$ ; ce rang est une fonction localement constante sur $X$. 
Un fibré en droites sur $X$ est un fibré vectoriel de rang partout
égal à $1$.

Soit $E$ et $F$ deux fibrés vectoriels sur $X$. 
Le faisceau $\underline{\mathrm{Hom}}_{\mathrm{gr}}(E,F)$ est défini
comme
la somme directe des faisceaux en $\mathscr O_X^{[1]}$-modules 
$\underline{\mathrm{Hom}}(E,F[\rho^{-1}])$,
pour $\rho\in\R_+^\times$.
C'est de manière naturelle un faisceau en $\mathscr O_X$-modules gradués. 
C'est même un fibré vectoriel : en effet,
si $U$ est un ouvert de $X$ tel que $E|_U\simeq \mathscr O_U[s]$
et $F|_U\simeq \mathscr O_U[r]$
avec $r=(r_1,\ldots, r_n)$ et $
s=(s_1,\ldots, s_m)$ alors $\underline{\mathrm{Hom}}_{\mathrm{gr}}(E,F)|U$
est isomorphe à $\bigoplus_{i,j} \mathscr O_U[r_j^{-1}s_i]$ (considérer
les coefficients matriciels). 

Le fibré vectoriel $\underline{\mathrm{Hom}_{\mathrm{gr}}}(E,\mathscr O_X)$
sera noté $E^\vee$ et est appelé le dual de $E$. 

On dit qu'un fibré en droites $E$ sur $X$ est engendré par ses sections
globales si pour tout $x\in X$ il existe une section globale homogène
de $E$
constituant une base de $E$ au-dessus d'un voisinage de $x$. 

Supposons que $X$ est un schéma gradué sur un corps gradué $K$, 
que $L$ est une extension graduée de $K$ et que $D$ est un fibré en droites
sur $X$. Si l'image réciproque $D_L$ de $D$ sur $X_L$ est engendrée par ses
sections globales, alors $D$ est engendré par ses sections globales. 
Il suffit en effet de remarquer (en raisonnant sur un recouvrement affine)
que $D_L(X_L)=D(X)\otimes_k L$, et que si $(\sum_j s_{ij}\otimes \ell_{ij})_i$
est une famille de sections globales (homogènes) de $D_L$ engendrant ce dernier, 
alors les $s_{ij}$ engendrent $D$ 
(cela vaudrait plus généralement si $K$ était un anneau gradué et $L$
une $K$-algèbre graduée fidèlement plate). 

\subsection{}
Soit $A$ un anneau gradué et soit $\rho=(\rho_0,\ldots, \rho_n)$ un polyrayon. 
Le schéma projectif gradué $\P^\rho_A$ possède un fibré en droites naturel
noté $\mathscr O(1)$. En reprenant les notations
de \ref{def-proj-gradue}, le $A$-module gradué
$\mathscr O(1)(\P^\rho_A)$ est le sous-$A$-module
gradué de $A[T_0,\ldots, T_n]$ engendré par les $T_j$. 
Pour tout $j$, la restriction de 
$\mathscr O(1)$ à $U_j$ est libre de base $T_j$,
si bien que $\mathscr O(1)|_{U_j}\simeq \mathscr O_{U_j}[r_j]$ ; 
sur $U_j\cap U_k$ on a l'égalité $T_j=\tau_{jk}T_k$ entre sections 
homogènes de $\mathscr  O(1)$. 

Soit $X$ un $A$-schéma gradué et soit $\rho=(\rho_0,\ldots, \rho_n)$ un polyrayon.
Se donner un $A$-morphisme $f$ de $X$ vers $\P^\rho_A$ revient à se donner
un fibré en droites $D$ sur $X$ et des sections globales $s_0,\ldots, s_n$
de $D$, chaque $s_i$ étant homogène de degré $\rho_i$, qui ne s'annulent pas
simultanément sur $X$ (le morphisme $f$
étant donné, prendre $D=f^*\mathscr O(1)$ et $s_i=f^*T_i$ ; le fibré $D$ et
les $s_i$ étant données, $f$ est caractérisé par les
conditions $f^{-1}(U_j)=D(s_j)$
et $f^*(T_i/T_j)=s_i/s_j$). 

On dit qu'un fibré en droites $D$ sur $X$ est très ample s'il existe
$\rho=(\rho_0,\ldots, \rho_n)$ et 
une immersion $f\colon X\to \P^\rho_A$ telle que $D\simeq f^*\mathscr O(1)$, 
et qu'il
est ample s'il existe $m\geq 0$ tel que $D^{\otimes m}$ soit très ample.

\section{Schémas et fibrés vectoriels gradués stricts}

\begin{lemm}\label{lemm-tenseur-K1K}
Soit $K$ un corps gradué et soit $A$ une $K^1$-algèbre au sens classique. 

\begin{enumerate}
\item L'application $I\mapsto I\cap A$ établit un isomorphisme d'ensembles
ordonnés entre l'ensemble des idéaux homogènes de 
$A\otimes_{K^1}K$ et celui des idéaux de $A$, de réciproque 
$J\mapsto J\otimes_{K^^1}K$. 

\item L'anneau $A$ est local (resp. noethérien, resp. artinien) si et seulement si l'anneau gradué
$A\otimes_{K_1}K$ est local (resp. noethérien, resp. artinien). 

\item L'anneau $A$ est intègre (resp. réduit, resp. est un corps) si et seulement si l'anneau gradué
$A\otimes_{K^1}K$ est intègre (resp. réduit, resp. est un corps gradué).

\item Un idéal $J$ de $A$ est premier (resp. maximal) si et seulement si $J\otimes_{K^1}K$
est premier (resp. maximal). 

\item Supposons $A$ noethérien et soient $\mathfrak p_1,\ldots, \mathfrak p_n$ ses
idéaux premiers minimaux. Les $\mathfrak p_i\otimes_{K_1}K$ sont les idéaux homogènes
premiers minimaux de $A\otimes_{K^1}K$. Pour tout $i$ le localisé de  $A\otimes_{K^1}K$
en $\mathfrak p_i\otimes_{K^1}K$ s'identifie à $A_{\mathfrak p_i}\otimes_{K^1}K$, a pour corps gradué
résiduel $\mathrm{Frac}(A/\mathfrak p_i)\otimes_{K^1}K$ et a même longueur que $A_{\mathfrak p_i}$. 

\item Soit $d$ un entier et soit $M$ un $A$-module. Le $A^1$-module $M$ est libre de rang $d$ si
et seulement si le $A$-module $M\otimes_{A^1}A$ est libre de rang $d$. 

\end{enumerate}

\end{lemm}

\begin{proof}
Commençons par remarquer que $A\otimes_{K^^1}K=\bigoplus_{r\in \abs{K^\times}}A\otimes_{K^1}K^r$. 
Si $x$ est un élément homogène non nul de $A$ il existe donc $y\in K^\times$ tel que
$\abs y=\abs x$; on a alors $x=y(y^{-1}x)$, et l'on remarque que $\abs{y^{-1}x}=1$, c'est-à-dire 
que $xy^{-1}x\in A$. 

Si $J$ est un idéal de $A$ alors $J\otimes_{K^1}K$ est l'idéal homogène
$\bigoplus_{r\in \abs{K^\times}}J\otimes_{K^1}K^r$
de l'anneau gradué $A\otimes_{K^1}K=\bigoplus_{r\in \abs{K^\times}}A\otimes_{K^1}K^r$. 
On a donc $(J\otimes_{K^1}K)\cap A=J$. 
Soit maintenant $I$ un idéal homogène de $A\otimes_{K^1}K$. 
L'injection naturelle de $(I\cap A)\otimes_{K^1}K$ vers $A$ prend ses valeurs dans $I$, et 
a pour image $I$ tout entier. Il suffit en effet de vérifier que si $x$ est un élément homogène non nul de $I$, 
il appartient à $(I\cap A)\otimes_{K^1}K$, ce que l'on voit en 
écrivant $x=y(y^{-1}x)$ comme en début de preuve. 
Ceci achève de démontrer (a), et (b) s'en déduit puisque les trois assertions en jeu
peuvent s'exprimer en termes des ensembles ordonnés des idéaux de $A$ et des idéaux homogènes
de $A\otimes_{K^1}K$. 

Il est clair que si $A\otimes_{K^1}K$ est intègre (resp. réduit, resp. intègre) il en va de même de $A$
qui est sa composante homogène de degré $1$. Pour la réciproque, donnons-nous 
deux éléments homogènes non nuls $x$ et $x'$ de $A\otimes_{K^1}K$. 
Par la remarque faite en début de preuve, on peut écrire $x=yz$ et $x'=y'z'$ où $z$ et $z'$ sont des éléments non nuls de 
$A$ et où $y$ et $y'$ appartiennent à $K^\times$. Si $A$ est intègre le produit $xx'=yy'zz'$ est alors non nul, si bien que 
$A\otimes_{K^1}K$ est intègre. Et si $A$ est réduit (resp. un corps) alors $x=yz$ n'est pas nilpotent (resp. est inversible) si bien que 
$A\otimes_{K^1}K$ est  réduit (resp. est un corps gradué). 

L'assertion (d) se montre en appliquant (c) à $A/J$. 

Montrons (e). 
Il résulte de (a) et (d) que les $\mathfrak p_i\otimes_{K^1}K$ sont les idéaux homogènes premiers minimaux de 
$A\otimes_{K^1}K$. Fixons l'indice $i$ et notons $\ell$
la longueur de $A_{\mathfrak p_i}$. 
La commutation du produit tensoriel aux quotients et localisations (tout ceci dans le contexte gradué) assure que
le localisé de  $A\otimes_{K^1}K$
en $\mathfrak p_i\otimes_{K^1}K$ s'identifie à $A_{\mathfrak p_i}\otimes_{K^1}K$
et a pour corps gradué
résiduel $\mathrm{Frac}(A/\mathfrak p_i)\otimes_{K^1}K$. Il reste à vérifier
que la longueur de 
$A_{\mathfrak p_i}\otimes_{K^1}K$ est égale à
$\ell$. 
Choisissons une suite
d'idéaux $A_{\mathfrak p_i}=I_0\supseteq\ldots \supseteq I_\ell=(0)$ d'idéaux de $A_{\mathfrak p_i}$
telle que $I_j/I_{j+1}$ soit isomorphe à $\mathrm{Frac}(A_{\mathfrak p_i})$ comme $A_{\mathfrak p_i}$-module
pour tout $j<\ell$. Alors $(I_j\otimes_{K^1}K)/(I_{j+1}\otimes_{K^1}K)$
est isomorphe à $\mathrm{Frac}(A_{\mathfrak p_i})\otimes_{K^1}K$ comme $A_{\mathfrak p_i}\otimes_{K^1}K$-module
pour tout $j<\ell$, et la suite d'idéaux homogènes \[A_{\mathfrak p_i}\otimes_{K^1}K=
I_0\otimes_{K^1}K\supseteq\ldots I_j\otimes_{K^1}K\supseteq\ldots \supseteq I_\ell\otimes_{K^1}K=(0)\]
atteste ainsi
que l'anneau gradué local artinien $A_{\mathfrak p_i}\otimes_{K^1}K$ est de longueur $\ell$. 

Prouvons enfin \emph f). L'implication directe est évidente. Pour la réciproque on peut invoquer la descente fidèlement
plate graduée (qui se démontre comme sa variante traditionnelle), mais on peut aussi le voir beaucoup plus
directement comme suit. 
Soit $(m_1,\ldots, m_d)$ une base de $M\otimes_{A^1}A$. Pour tout $i$, 
l'élément $m_i$ de $M\otimes_{A^1}A$ est homogène de degré $\abs{\lambda_i}$ pour un certain $\lambda_i\in K^\times$.
Les $\lambda_i^{-1}m_i$ forment encore une base
de $M\otimes_{A^1}A$ sur $A$ et comme ils sont de degré $1$, ils forment une base de $M$ sur $A^1$. 
\end{proof}

\subsection{}
Soit $K$ un corps gradué. Soit $A$ une $K$-algèbre
graduée. 
Le morphisme naturel de $A^1\otimes_{K^1}K$ vers $A$
identifie $A^1\otimes_{K^1}K$ à la sous-algèbre
$\bigoplus_{r\in \abs{K^\times}}A^r$ de $A$. 
Nous dirons que la $K$-algèbre graduée $A$ est stricte si 
$A=A^1\otimes_{K^1}K$, c'est-à-dire donc si $\abs a\in \abs{K^\times}$
pour tout élément homogène non nul $a$ de $A$. 

Supposons que ce soit le cas. Le morphisme naturel 
de $\Spec A$ vers $\Spec A^1$ est alors topologiquement un homéomorphisme 
(lemme \ref{lemm-tenseur-K1K}, (a) et (d)), et il résulte des constructions que
$\mathscr O_{\Spec A}=\mathscr O_{\Spec A^1}\otimes_{K^1}K$. 

Nous dirons qu'un $K$-schéma est strict s'il est recouvert par des ouverts
affines de la forme $\Spec A$, où $A$ est une $A$-algèbre stricte. 
Tout ouvert affine d'un tel schéma est alors de cette forme. 

Si $Y$ est un $K^1$-schéma nous noterons $Y_K$ 
le $K$-schéma gradué $Y\times_{K^1}K$, qui s'identifie à 
 l'espace localement
 annelé gradué
 $(\abs Y, \mathscr O_Y\otimes_{K^1}K)$ (où $\abs Y$ désigne l'espace
 topologique sous-jacent à $Y$) ; 
si 
 $X$ est un $K$-schéma gradué nous noterons~$X^1$ l'espace localement annelé
~$(\abs X, \mathscr O_X^1)$. 
Les foncteurs
$Y\mapsto Y_K$ et $X\mapsto X^1$ 
établissent une équivalence entre la catégorie des $K^1$-schémas et
celle des $K$-schémas gradués stricts. 
Ces foncteurs préservent le caractère noethérien, irréductible, 
réduit, de type fini ; ils préservent plus généralement les composantes irréductibles et leurs 
multiplicités génériques (dans le cas noethérien); ces affirmations se déduisent du lemme 
\ref{lemm-tenseur-K1K}. 

Soit $X$ un $K$-schéma gradué strict. Il est propre si et seulement si $X^1$ est un $K^1$-schéma
propre. En effet, si $X^1$ est propre sur $K^1$ alors $X$ est propre sur $K$
cas $X=X\times_{K^1}K$. Et si $X$ est propre sur $K$, alors $X^1$ satisfait le critère valuatif
de propreté usuel sur $K^1$, donc est propre : en effet, si $L$ est une extension de $K^1$ et $R$
un anneau de valuation de $L$ contenant $K^1$ alors $R\otimes_{K^1}K$ est un anneau gradué de valuation 
de $L\otimes_{K^1}K$ contenant $K$. Par conséquent
tout $(L\otimes_K K^1)$-point de $X$, c'est-à-dire tout
$L$-point de $X^1$,  s'étend en un unique $(R\otimes_{K^1}K)$-point de $X$, c'est-à-dire en un unique 
$R$-point de $X^1$, ce qui conclut. 

Soit $E$ un fibré vectoriel sur $X$. 
Nous dirons que $E$ est strict s'il existe un recouvrement ouvert $(V_i)$ de
$X$ tel que pour tout $i$, le fibré restreint $E|_{V_i}$ soit isomorphe à
$\mathscr O_{V_i}^{n_i}$ pour un certain $n_i$ (sans décalage de la graduation).
Il suffit pour que ce soit le cas que $\mathscr E|_{V_i}$ soit
isomorphe à $\mathscr O_{i}[\rho_i]$ pour un certain polyrayon 
de longueur finie $\rho_i=(\rho_{ij})_j$ où les $\rho_{ij}$ appartiennent à $K^\times$. 

Les foncteurs $F\mapsto F\otimes_{\mathscr O_{X^1}}\mathscr O_X$ et 
$E\mapsto E^1$ établissent une équivalence entre la catégorie des 
fibrés vectoriels sur $X^1$ et celle des fibrés vectoriels stricts sur $X$. 
Cette équivalence préserve le rang. Elle préserve aussi, dans le cas des fibrés
en droites, le fait d'être ample, d'être très ample, et d'être engendré par
ses sections globales : c'est une conséquence du fait que si $D$ est un fibré en 
droites strict sur $X$, pour toute section homogène
$s$ de $D$ il existe
$a\in K^\times$ tel que $as$ soit de degré
$1$ (et donc soit une section du fibré en droites $D^1$ sur $X^1$). 

\begin{exem}\label{exemple-deploiement-pr}
Soit $r=(r_1,\ldots, r_n)$ un polyrayon. Si les $r_i$ appartiennent tous à 
$\abs K^\times$, le schéma gradué $X=\A^r_K$ est strict, et $X^1=\A^n_K$. 
En effet, on choisit pour tout $i$ un élément $a_i$ de $K^\times$ tel que $\abs{a_i}=r_i$, et 
l'on remarque que
\[K[T_1/r_1,\ldots, T_n/r_n]=K[\tau_1,\ldots \tau_n]=K^ 1[\tau_1,\ldots, \tau_n]\otimes_{K^1}K\]
où $\tau_i=T_i/a_i$. 

Soit $\rho=(\rho_0,\ldots, \rho_n)$. un polyrayon. Si les $\rho_i$ appartiennent tous à 
$\abs K^\times$, le schéma gradué $Y=\P^\rho_K$ est strict, et $Y^1=\P^n_K$. 
De plus, le fibré $\mathscr O(1)$ sur $Y$ est lui aussi strict, le fibré en droites
correspondant sur $Y^1$ étant le fibré $\mathscr O(1)$ traditionnel. 
Cela découle en effet par exemple du cas de l'espace affine traité ci-dessus et 
de la description de $\P_A^\rho$ et de $\mathscr O(1)$ via les cartes affines standard. 
\end{exem}

\subsection{}
Soit $X$ un $K$-schéma gradué et soit $L$ une extension complète de $K$. Soit $E$ un fibré 
vectoriel sur $X$. 
Nous dirons que $L$ déploie $X$ (resp. déploie $X$ et $E$) si $X_L$ est strict (resp. si $X_L$ est strict et si 
l'image réciproque de $E$ sur $X_L$ est stricte). Nous dirons qu'un polyrayon $\rho$ déploie $X$ (resp. $X$ et $E$) 
si $K(T/\rho)$ déploie $X$ (resp. $X$ et $E$). 

Notons qu'il existe toujours un polyrayon $\rho=(\rho_i)_i$ déployant $X$ et
tous les fibrés vectoriels sur $X$ : il suffit que le groupe engendré
par $\abs{K^\times}$ et les $\rho_i$ soit égal à $\R_+^\times$ tout entier. 

Supposons $X$ de type fini et soit $(E_i)$ une famille finie de fibrés vectoriels sur $X$. 
Choisissons un recouvrement fini $(V_j)$ de $X$ par des ouverts affines tels que
$E_i|_{V_j}$ soit pour
tout $j$ isomorphe à $\mathscr O_{V_j}[\rho_{ij}]$ pour un certain polyrayon 
de longueur finie $\mathscr \rho_{ij}$. Soit $r=(r_a)$ un polyrayon de longueur finie tel que 
le groupe engendré par $\abs{K^\times}$ et les $r_a$ contienne toutes
les composantes des $\rho_{ij}$ ainsi que tous les degrés des éléments homogènes
non nuls des $\mathscr O_X(V_j)$. Le polyrayon $r$ déploie alors
$X$ et les $X_i$. 

\begin{lemm}\label{lemm-composantes-graduees}
Soit $X$ un $K$-schéma gradué. Soit $\rho$ un polyrayon déployant $X$. 
Posons $X_\rho=X\times_K K(T/\rho)$. 

\begin{enumerate}
\item Le schéma gradué $X$
est irréductible (resp. réduit) si et seulement si c'est le cas du
schéma~$X_\rho^1$. 

\item Si $X$ est noethérien et si $(X_i)$ désigne la famille des composantes irréductibles de 
$X$, les $X^1_{i\rho}$ sont les composantes irréductibles de $X^1_\rho$ et 
pour tout $i$ la multiplicité générique de $X$ le long de $X_i$ est égale à celle de $X^1_\rho$ le long 
de $X_{i\rho}^1$. 
\end{enumerate}
\end{lemm}

\begin{proof}
Commençons par \emph a). Un espace topologique possédant un recouvrement par des ouverts non vides
étant irréductible si et seulement si c'est le cas de chacun des ouverts en question, on se ramène 
au cas où $X=\Spec A$ pour une certaine $K$-algèbre graduée $A$. Le
morphisme naturel de $A$ dans $A_\rho:=A\otimes_K K(T/\rho)$ est injectif.
En conséquence, si $A_\rho$ est réduit (resp. intègre) il en va de même de
$A$. 
Par ailleurs, $A_\rho$ est un localisé de $A\otimes_K K[T/\rho]=A[T/\rho]$ par une 
partie multiplicative qui ne contient pas $0$ si $A$ est non nulle, puisque $K[T/\rho]$ s'injecte
alors dans $A[T/\rho]$. Il s'ensuit que si
$A$ est réduit (resp. intègre), il en va de même de $A_\rho$. 
On conclut alors la preuve de \emph a) en remarquant que comme $\rho$ déploie $A$, il résulte 
du lemme \ref{lemm-tenseur-K1K}
que $A_\rho^1$ est réduite (resp. intègre) si et seulement s'il en va de même de 
$A_\rho$. 

Montrons maintenant b). Il résulte de a) que les $X_{i\rho}$ sont des fermés irréductibles de 
$X_\rho$, dont la réunion est évidemment égale à $X_\rho$. De plus, comme
le morphisme naturel 
de $X_\rho$ vers $X$ est surjectif, les $X_{i\rho}$ sont deux à deux non comparables pour l'inclusion, 
et ce sont donc les composantes irréductibles de $X_\rho$. Or puisque $\rho$ déploie $X$, il déploie aussi chacun des $X_i$, 
si bien que topologiquement $X^1_\rho=X_\rho$ et $X^1_{i\rho}=X_{i\rho}$ pour tout $i$ ; les $X^1_{i\rho}$
sont donc bien les composantes irréductibles de $X^1_\rho$. 

Montrons enfin l'assertion relative aux multiplicités génériques. Celle-ci est locale au voisinage du point générique de chacune
des $X_i$. Cela permet de supposer que $X$ est irréductible et affine ; posons $A=\mathscr O_X(X)$ et notons $N$ le nilradical de $A$. 
 Soit $\eta$ son point générique
et soit $m$ la multiplicité générique de $X$, c'est-à-dire la longueur de $\mathscr O_{X,\eta}$. Quitte à retreindre encore $X$, on peut supposer
qu'il existe une suite décroissante d'idéaux homogènes $A=I_0\supset I_1\supset \ldots\supset I_m=\{0\}$ telle que 
$I_p/I_{p+1}$ soit isomorphe comme $A$-module gradué à $A/N$. La suite $(p)$ induit par extension du corps de base une suite
décroissantes d'idéaux homogènes  $A_\rho=I_{0\rho}\supset I_{1\rho}\supset \ldots\supset I_{m\rho}=\{0\}$ dont les quotients successifs
sont isomorphes à $A_\rho/N_\rho=(A/N)_\rho$. Or $(A/N)_\rho$ est réduit d'après la preuve de \emph a), si bien que $N_\rho$
est le nilradical de $A_\rho$. En conséquence, $m$ coïncide avec la longueur de $\mathscr O_{X_\rho,\xi}$ où $\xi$ est le point générique de
$X_\rho$ (qui est irréductible d'après \emph a), et donc avec la longueur de  $\mathscr O_{X^1_\rho,\xi}$ au vu du lemme
\ref{lemm-tenseur-K1K} (d), c'est-à-dire à la multiplicité générique de $X_\rho^1$.

\end{proof}

\begin{lemm}\label{lemm-deploiement-transitif}
Soit $X$ un $K$-schéma gradué. Soit $L$ une extension graduée de $K$
déployant $X$. Soit $M$ une extension graduée de $L$.  

\begin{enumerate}
\item L'extension $M$ de $K$ déploie
alors $X$ (resp. $X$ et $E$) et l'on a $X_M^1=(X_L^1)_{M^1}$.

\item Soit
$E$ un fibré vectoriel sur $X$
déployé par $L$. Le fibré $E$ est alors déployé par $M$ et $E_M^1$ est l'image réciproque de $E_L^1$ par le morphisme
de schémas $X_M^1\to X_L^1$. 
\end{enumerate}
\end{lemm}

\begin{proof}
Les deux assertions sont locales sur $X$, ce qui permet de supposer que $X$ est affine. 
Posons $A=\mathscr O_X(X)$ et, pour \emph b), $F=E(X)$. 
On a alors 
\begin{align*}
A_M=(A_L)\otimes_L M
=A_L^1\otimes_{L^1}L\otimes_L M
=A_L^1\otimes_{L^1}M
=A_L^1\otimes_{L^1}M_1\otimes_{M^1}M,
\end{align*}
où la seconde égalité provient du fait que $A_L$ est strict. 
Ceci achève la preuve de \emph a). 
Sous les hypothèses de \emph b) on a de même  
\begin{align*}
F_M=(F_L)\otimes_L M
=F_L^1\otimes_{L^1}L\otimes_L M
=F_L^1\otimes_{L^1}M
=F_L^1\otimes_{L^1}M_1\otimes_{M^1}M,
\end{align*}
ce qui montre \emph b).

\end{proof}

\subsection{}\label{ss.quasiproj-difference}
Soit $X$ un $K$-schéma gradué
quasi-projectif, ce qui veut dire qu'il existe une immersion
$i\colon X\hookrightarrow \P^r_K$ pour un certain polyrayon 
$r=(r_0,\ldots, r_n)$. Il existe alors un fibré
très ample $D$ sur $X$, à savoir 
$i^*\mathscr O(1)$. 
Soit $E$ un fibré en droites quelconque sur $X$. Soit $L$ une extension graduée de $K$ déployant 
$X$ et $E$ et telle que $r_i\in \abs{L^\times}$ pour tout $i$. Cette dernière condition assure que
$\P^r_L\simeq \P^{(1,\ldots, 1)}_L$, puis que $D_L$ est strict, le fibré 
$D_L^1$ étant l'image réciproque de $\mathscr O(1)$ par l'immersion 
$X^1\hookrightarrow \P^n_{K^1}$ induite par $i$ ; notons que $D_L^1$
est très ample. 

Appliquons ceci lorsque $L=K(T/r)$, en notant par un $r$ en indice le changemnbt de base correspondant. 
Le fibré $D_r^1$ est ample. Il existe donc $m>0$ tel que $(D_r^1)^{\otimes m}$ 
et $(D^r_1)^{\otimes m})\otimes (E_r^1)^\vee$ soient tous deux amples, et en particulier
engendrés par leurs sections globales. 
Les fibrés $D_r^{\otimes m}$ et $D_r^{\otimes m}
\otimes E_r^{\vee}$ sont alors engendrés par leurs sections globales, et
il en va dès lors de même de $D^{\otimes m}$ et
$D^{\otimes m}\otimes E^\vee$. On a ainsi écrit $E$ comme différence de deux
fibrés en droites engendrés par leurs sections globales. 

\section{Descente de la dimension et des degrés d'intersection}
Il existe une bonne théorie de la dimension pour les schémas gradués de type fini sur un corps gradué. Elle était brièvement
présentée sans démonstration dans \cite{ducros2021} ; nous allons donner un peu plus de détails ici. 

\begin{lemm}\label{lemm-dimension-graduee}
Soit $X$ un $K$-schéma gradué de type fini et soit $L$ une extension graduée de $K$ déployant $X$. 
La dimension du schéma $X_L^1$ ne dépend pas de $L$, et est 
égale à $\sup_{x\in X}\mathrm{deg.\,tr.\,}(\kappa(x)/K)$. 
\end{lemm}

\begin{proof}

Il suffit de montrer la formule
\[\dim X^1_L= \sup_{x\in X}\mathrm{deg.\,tr.\,}(\kappa(x)/K),\] puisque le terme de droite
de l'égalité est indépendant de $L$. Soit $\pi$ le morphisme de changement de base de $X_L$ vers $X$. Soit $x\in X$ et doit $d$ le degré
de transcendance de $\kappa(x)$ sur $K$. Soit $(\tau_1,\ldots, \tau_d)$
une base de transcendance de 
$\kappa(x)$ sur $K$ ; posons $r_i=\abs{\tau_i}$. La fibre 
$\pi^{-1}(x)$ s'identifie à $\Spec \kappa(x)\otimes_K L$, qui est non nul. 
Elle est donc non vide. 
On peut par ailleurs écrire 
\[\kappa(x)\otimes_K L=\kappa(x)\otimes_{K(\tau)}(K(\tau)\otimes_K L).\] 
Posons $A=K(\tau)\otimes_K L$. La fibre $\pi^{-1}(x)$ est muni par ce qui précède d'un morphisme
fini surjectif sur $\Spec A$. Soit $y\in \Spec A$. Son corps résiduel gradué est engendré en 
tant qu'extension de $L$ par les images des $\tau_i$, donc est de degré de  transcendance au plus
$d$ sur $L$. De plus, $A$ est intègre (c'est un localisé non nul de l'anneau gradué de polynômes
$K[\tau]\otimes_K L\simeq L[T_i/r_i]$) et 
le corps résiduel gradué du point générique de $\Spec A$ est le corps
gradué des fractions de $A$, qui s'identifie à $L(T_i/r_i)$ et est donc
de degré
de transcendance $d$ sur $L$. Par conséquent
$\sup_{y\in \Spec A}\mathrm{deg.\,tr.\,}(\kappa(y)/L)=d$, ce qui entraîne que
$\sup_{z\in \pi^{-1}(x)}\mathrm{deg.\,tr.\,}(\kappa(z)/L)=d$. 
Ceci valant pour tout $x\in X$ il vient 
\[\sup_{x\in X}\mathrm{deg.\,tr.\,}(\kappa(x)/K)=\sup_{z\in X_L}
\mathrm{deg.\,tr.\,}(\kappa(z)/L).\]
Il reste à montrer que ce dernier terme est égal à 
$\dim X_L^1$. Soit $z\in X_L^1$ (rappelons que topologiquement $X_L^1=X_L$). 
Le corps gradué résiduel $\kappa(z)$ est alors égal 
à $\kappa(z)^1\otimes_{L^1}L$, et son degré de transcendance sur $L$ est dès lors
égal
à $\mathrm{deg.\,tr.\,}(\kappa(z)^1/L^1)$. Mais $\kappa(z)^1$ est le corps résiduel classique
du point $z$ du $L^1$-schéma de type fini $X_L^1$, si bien que 
\[ \sup_{z\in X_L}
\mathrm{deg.\,tr.\,}(\kappa(z)/L)=\sup_{z\in X^1_L}
\mathrm{deg.\,tr.\,}(\kappa(z)^1/L^1)=\dim X_L^1,\]
ce qui achève la démonstration. 
\end{proof}

\begin{defi}
Soit $K$ un corps gradué et soit $X$ un $K$-schéma gradué de type fini. La dimension de $X$ (au-dessus de $K$) 
est le supremum des $\mathrm{deg.\,tr.\,}(\kappa(x)/K)$ pour $x$ parcourant $X$.
\end{defi}

Il résulte du lemme \ref{lemm-dimension-graduee} que la dimension de $X$ est finie dès que $X$ est non vide (elle vaut 
$-\infty$ si $X$ est vide). Elle est invariante par changement de corps de base : cela résulte du fait que si $L$ est une extension 
graduée quelconque de $K$, toute extension graduée $M$ de $L$ déployant $X_L$ est une extension graduée de $K$ déployant $X$.

Soit $Y$ une composante irréductible de $X$, soit $\eta$ son point générique et soit $\zeta$ un point 
de $Y$ distinct de $\eta$. Soit $Z$ l'adhérence de $\zeta$ dans $Y$. Soit $\rho$ un polyrayon 
déployant $Y$. Le lemme \ref {lemm-composantes-graduees} assure que $Y_\rho$ est irréductible, et 
$Z_\rho$ en est un fermé strict par surjectivité
de $Y_\rho\to Y$. En conséquence $Z_\rho^1=Z_\rho$ est un fermé
strict du schéma irréductible $Y_\rho^1=Y_\rho$ (les égalités écrites étant purement topologiques)
si bien 
que $\dim Z_\rho^1<\dim Y_\rho^1$, et donc que $\dim Z<\dim Y$. Ceci valant pour tout $\zeta$ on voit que
$\sup_{y\in Y}\mathrm{deg.\,tr.\,}(\kappa(y)/K)$ n'est atteint qu'en $\eta$, et $\dim Y$ est en conséquence égal
au degré de transcendance de $k(\eta)$ sur $K$. Il s'ensuit
que $\dim X$ est le supremum des $\mathrm{deg.\,tr.\,}(\kappa(x)/K)$ où $x$ parcourt l'ensemble des points
génériques des composantes irréductibles de $X$.

\begin{exem}
Soit $r$ un réel strictement positif qui n'est pas de torsion
modulo $K^\times$. Posons $L=K(T/r)$. C'est une extension de $K$, mais aussi une 
$K$-algèbre de type fini (isomorphe à $K[T/r, rS]/(ST-1)$). 
Posons $X=\Spec L$. 
La dimension de Krull de $X$ est nulle et sa dimension comme schéma gradué
ce type fini sur $L$ est dès lors nulle puisque c'est un $L$-schéma strict, mais sa dimension 
comme $K$-schéma gradué
de type fini vaut $1$. On peut le déduire du fait que $L$
est de degré de transcendance $1$ sur $K$, ou le voir
comme suit. On choisit une extension graduée $F$ de $K$ possédant un élément
non nul $a$ homogène de degré $r$. Le schéma gradué $X_F$ est alors égal à 
$\Spec F[T/r, rS]/(TS-1)$ et est donc isomorphe à $\Spec F[U,V]/(UV-1)$ (poser
$U=T/a$ et $V=aS$). Par conséquent $X_F$ est strict et $X_F^1=\gm[F^1]$, si bien 
que $\dim X=1$. 
\end{exem}

\begin{lemm}\label{lemme-suite-reguliere}
Soit $X$ un $K$-schéma gradué intègre de type fini sur un corps 
gradué $K$ et soit $n$ sa dimension. 
Soit $Y$ un fermé de Zariski irréductible de $X$
et soit $m$ sa dimension. Il existe un ouvert $U$ de $X$
rencontrant $Y$ et $n-m$ éléments homogènes 
$f_1,\ldots, f_{n-m}$ de $\mathscr O_X(U)$ tels que 
$U\cap Y$ soit égal au lieu des zéros des $f_i$.
\end{lemm}

\begin{proof}
Posons $d=n-m$ et raisonnons par récurrence sur $d$. On suppose donc le résultat
vrai en codimension $<d$. 
Si $d=0$ la famille vide des $f_i$ convient. Supposons $d>0$. Le fermé $Y$ est alors strict. 
Soit $U$ un ouvert affine de $X$ rencontrant $Y$. L'intersection $U\cap Y$ est un fermé strict de 
$U$, si bien qu'il existe un élément homogène $f\in \mathscr O_X(U)$ non nul et s'annulant sur $Y$. 
Soit $Z$ une composante irréductible du lieu des zéros de $f$ contenant $U\cap Y$ ; 
quitte à restreindre $U$, on peut supposer que $Z$ est exactement
le lieu des zéros de $f$. La dimension de $Z$ est égale à
$n-1$ : le lemme \ref{lemm-composantes-graduees} permet en effet de se ramener au cas non gradué, et c'est
alors une conséquence du Hauptidealsatz. L'hypothèse de récurrence assure l'existence d'un ouvert $V$
de $Z$ rencontrant $Y$ et d'éléments homogènes  $g_2,\ldots, g_d$ de 
$\mathscr O_Z(V)$ telles que $V\cap Y$ soit
le lieu des zéros des $g_i$. Quitte à restreindre $U$ et $V$
on peut supposer que $V=U\cap Y$. Chacune des $g_i$ se relève alors en un élément $f_i$ de 
$\mathscr O_X(U)$, et la famille $(f,f_2,\ldots, f_d)$ répond aux exigences de l'énoncé. 
\end{proof}

\subsection{}
Soit $K$ un corps (classique) et soit $X$ une $K$-variété algébrique propre, 
purement de dimension $n$. Si $D$ est un fibré en droites sur $X$, 
la théorie de l'intersection fournit un opérateur $c_1(D)\cap (\cdot)$ de degré $(-1)$
sur $\mathrm{CH}^\bullet(X)$ ; lorsque $D$ varie, les opérateurs ainsi obtenus commutent deux à deux. 
Si $D_1,\ldots, D_n$ sont $n$ fibrés en droites
sur $X$, on obtient en particulier une classe
 $\alpha=(c_1(D_1)\cap \dots\cap  c_1(D_n) \cap [X]$
dans $\mathrm{CH}^n(X)=\mathrm{CH}_0(X)$ ; si $p$ désigne le morphisme propre structural de $X$ vers $\Spec K$, la classe
$p_*\alpha$ est un élément de $\mathrm{CH}_0(\Spec K)$, que l'on identifie à $\Z$ en prenant
$[\Spec K]$ comme 
générateur. L'entier relatif $p_*\alpha$ est appelé le degré de la classe $\alpha$, ou encore le degré d'intersection 
des $L_i$, et nous le noterons $\int_X c_1(D_1)\cdots c_1(D_n)$.

Nous nous proposons maintenant
de définir les nombres d'intersection dans le cas gradué, sans y construire toute la théorie de l'intersection (ce qui serait selon toute vraisemblance
possible mais très fastidieux); nous allons procéder par descente à partir du cas non gradué. 

\begin{prop}\label{prop-intersection-graduee}
Soit $K$ un corps gradué, soit $X$ un $K$-schéma gradué propre et de type fini purement de dimension $n$, 
et soit $D_1,\ldots, D_n$ des fibrés en droites gradués sur $X$. Soit
$L$ une extension graduée de $K$ déployant $X$ et les $D_i$. 
Le nombre d'intersection $\int_{X^1_L}c_1(D^1_{1,L})\cdots c_1(D^1_{n,L})$ ne dépend alors pas de $L$. 
\end{prop}

\begin{proof}
Supposons donnée une seconde extension
graduée $F$
de $K$ déployant $X$ et les $D_i$ et montrons que l'entier obtenu en utilisant $F$ est le même 
que celui obtenu en utilisant $L$. Il suffit pour ce faire de
montrer que les deux coïncident avec l'entier
obtenu en utilisant une extension composée $M$ de $F$ et $L$ (qu'on fabrique en considérant
le quotient de l'anneau gradué non nul $F\otimes_K L$ par un idéal homogène maximal).
Par symétrie, il suffit de montrer que l'entier obtenu en utilisant $L$ est le même que celui obtenu en utilisant 
$M$. Mais cela découle alors
du lemme \ref{lemm-deploiement-transitif}
et de l'invariance du degré d'intersection schématique par changement du corps de base. 
\end{proof}

\begin{defi}
Soit $K, X, (D_i),$ et $L$ comme dans la proposition ci-dessus. L'entier 
$\int_{X_L^1}c_1(D^1_{1,L})\cdots c_1(D^1_{n,L})$ 
(qui ne dépend pas du choix de~$L$) sera noté
$\int_X C_1(D_1)\cdots c_1(D_n)$. 
\end{defi}

\begin{prop}\label{proprietes-intersection-graduee}
Soit $X$ un $K$-schéma gradué propre et purement de dimension $n$. 

\begin{enumerate}
\item 
L'application \[\mathrm{Pic}(X)^n\to \Z, \quad (D_1,\ldots, D_n)\mapsto
\int_X c_1(D_1)\cdots c_1(D_n)\] est symétrique et multi-linéaire. 

\item Soit $L$ une extension graduée de $K$.
On a \[
\int_{X_L} c_1(D_{1,L})\cdots c_1(D_{n,L})
=\int_X c_1(D_1)\cdots c_1(D_n).\]

\item Soit $X_1,\ldots, X_m$
les composantes irréductibles réduites de $X$ et pour tout $j$, soit $m_j$
la multiplicité générique de $X$ le long de~$X_j$.
Soit $D_1,\ldots, D_n$ des fibrés en droites sur~$X$. 
On a 
\[\int_Xc_1(D_1)\cdots c_1(D_n)=\sum_j m_j\int_{X_j} c_1(D_1|_{X_j})\cdots c_1(D_{n}|_{X_j}).\]

\item Supposons $X$ intègre et soit $Y\to X$ un morphisme propre et génériquement fini, de degré
générique $d$. Pour tout $i$, soit $E_i$ l'image réciproque de $D_i$ sur $Y$. 
On a alors 
\[\int_Y c_1(E_1)\cdots c_1(E_n)=d \int_X c_1(D_1)\cdots c_1(D_n).\]
\end{enumerate}
\end{prop}

\begin{proof}
Soit $j$ un indice entre $1$ et $n$, soit $D_1,\ldots, D_n$ et $E_j$ des
fibrés en droites sur $X$,  et soit $\sigma$ une permutation de $\{1,\ldots, n\}$. 
Soit $L$ une extension graduée de $K$ déployant $X$, les $D_i$ et $E_j$. 
On a 
\begin{align*}
\int_X c_1(D_{\sigma(1)})\cdots c_1(D_{\sigma(n)})&=
\int_{X^1_L} c_1(D^1_{\sigma(1),L})\cdots c_1(D^1_{\sigma(n),L})\\
&=\int_{X^1_L} c_1(D^1_{1,L} \cdots c_1(D^1_{n,L})\\
&=\int_X c_1(D_1)\cdots c_1(D_n)
\end{align*}

et
\[
\int_X c_1(D_1)\cdots c_1(D_j\otimes E_j)\cdots c_1(D_n)\]

\begin{align*}
&=
\int_{X^1_L} c_1(D^{1,L})\cdots c_1(D^1_{j,L}\otimes E^1_{j,L})\cdots c_1(D^1_{n,L})\\
&=\int_{X^1_L} c_1(D^{1,L})\cdots c_1(D^1_{j,L})\cdots c_1(D^1_{n,L})
+\int_{X^1_L} c_1(D^{1,L})\cdots c_1(E^1_{j,L})\cdots c_1(D^1_{n,L})\\
&=\int_X c_1(D_1)\cdots c_1(D_j)\cdots c_1(D_n)
+\int_X c_1(D_1)\cdots c_1(E_j)\cdots c_1(D_n),\end{align*}
ce qui achève de montrer \emph a). 
L'assertion  \emph b) provient du fait que si $M$ est une extension graduée
de $L$ déployant $X$ et les $D_i$, les membres de l'identité à établir sont tous deux égaux à
\[\int_{X_M^1}c_1(D_{1,M}^1)\cdots c_1(D^1_{n,M}).\]
Pour montrer \emph c), choisissons un polyrayon $r$ déployant $X$ et les $D_i$, et notons
par un $r$ en indice l'extension des scalaires à $k(T/r)$.  
On a alors
\begin{align*}
\int_Xc_1(D_1)\cdots c_1(D_n)&=
\int_{X_r^1}c_1(D^1_{1,r})\cdots c_1(D^1_{n,r})\\
&=\sum_j m_j \int_{X_{j,r}^1}c_1(D^1_{1,r}|_{X_{j,r}^1})\cdots c_1(D^1_{n,r}|_{X_{j,r}^1})\\
&=\sum_j m_j\int_{X_j} c_1(D_1|_{X_j})\cdots c_1(D_{n}|_{X_j}),
\end{align*}
où la troisième égalité provient de la formule à montrer dans le cas
classique (où elle est connue) et du lemme \ref{lemm-composantes-graduees} ; 
ceci achève de prouver \emph c). 

Pour montrer \emph d), choisissons un polyrayon $r$ déployant $X, Y$ et les $D_i$. 
Le schéma gradué $X_r^1$ est intègre et $Y_r^1\to X_r^1$ est propre, et il est génériquement fini de degré
générique $d$ d'après le lemme \ref{lemm-tenseur-K1K} \emph f). 
ll vient 
\begin{align*}
\int_Y c_1(E_1)\cdots c_1(E_n)&=
\int_{Y_1^r} c_1(E^1_{1,r})\cdots c_1(E^1_{n,r})\\
&=d \int_{X_1^r} c_1(D^1_{1,r})\cdots c_1(D^1_{n,r})\\
&=d \int_X c_1(D_1)\cdots c_1(D_n),
\end{align*}
ce qu'il fallait démontrer. 

\end{proof}

\begin{rema}\label{rema-intersection-decalage}
Soit $X$ un $K$-schéma gradué purement de dimension $n$, soit
$(D_i)$ une famille de $n$ fibrés en droites gradués sur $X$, et soit $(r_i)$ un polyrayon. 
Alors 
$\int_Xc_1(D_1)\cdots c_1(D_n)=\int_X c_1(D_1[r_1])\cdots c_1(D_n[r_n]))$. 
En effet, 
il suffit pour le voir de considérer une extension graduée $L$ de $K$ telle que $r_i\in \abs{L^\times}$ pour
tout $i$. Sur le schéma gradué $X_L$ on a alors $D_i[r_i]\simeq D_i$ pour tout $i$ (via une homothétie
de rapport $\lambda_i\in K^\times$ avec $\abs{\lambda_i}=r_i$). 
Compte-tenu de l'invariance du nombre d'intersection par extension des scalaires, on en déduit que 
$\int_X c_1(D_1[r_1])\cdots c_1(D_n[r_n])=\int_X c_1(D_1)\cdots c_1(D_n)$. 
\end{rema}

  \begin{exem}\label{exemple-c1-o1}
Soit $r=(r_0,\ldots, r_n)$ un polyrayon et soit $L$ une extension graduée de $K$ telle que les
$r_i$ appartiennent à $K^\times$. 
Il résulte de l'exemple \ref{exemple-deploiement-pr}
que 
$\int_{\P^r_K}c_1(\mathscr O(1))^n=\int_{\P_{L_1}^n}c_1(\mathscr O(1))^n$, si bien que 
$\int_{\P^r_K}c_1(\mathscr O(1))^n=1$. 
\end{exem}

\subsection{}\label{def-numeff}
Soit $(D_1,\ldots, D_d)$ une famille de fibrés en droites sur un $K$-schéma gradué propre $X$. 
On dit que $c_1(D_1)\ldots c_1(D_d)$ est numériquement effectif si pour tout sous-schéma gradué fermé
intègre $Y$ de $X$ purement de dimension $d$ le nombre $\int_Y c_1(D_1|_Y)\ldots c_1(D_d|_Y)$ est positif ou nul. 
On dit qu'un fibré en droites $D$ est numériquement effectif si $c_1(D)$ est numériquement effectif. 

Supposons que  $c_1(D_1)\ldots c_1(D_d)$ soit numériquement effectif. Soit $L$ une extension graduée de $K$, soit $Z$
un $L$-schéma gradué propre et soit $f\colon Y\to X$ un $K$morphisme. Le produit
$c_1(f^*D_1)\ldots c_1(f^*D_d)$ est alors numériquement effectif. En effet si $L=K$ cela se démontre exactement comme dans le
cas classique ; il reste donc à traiter le cas où $Z=X_L$, et par
passage à la limite on peut supposer $L$ de type fini. 
Si $L$ est une extension finie de $K$ la démonstration classique s'adapte encore. Il reste finalement
par récurrence sur le degré de transcendance à traiter le cas où $L=K(T/r)$ pour un certain $r>0$. Soit $Y$
un sous-schéma gradué fermé intègre purement de dimension $d$ de $X_L$ et soit $p$ la projection de $X\times_K \A^r_K$ vers $\A^r_K$. 
Le $L$-schéma $Y$ s'identifie à la fibre générique de $p$. Soit $Y'$ son adhérence réduite dans $X\times_K \A^r_K$. C'est un $K$-schéma
propre sur $\A^r_K$. Pour tout $t\in \A^r_K$, la fibre $Y'_t$ est propre et purement de dimension $d$, 
et $\int_{Y'_t}c_1(D_1|_{Y'_t})\ldots c_1(D_d|_{Y'_t})$ ne dépend pas de $t$ : on le vérifie en se ramenant au cas classique par extension des scalaires
à $K(S/\rho)$ pour un certain polyrayon $\rho$ déployant la situation, dans lequel l'assertion découle simplement de la
«conservation  du nombre» (\cite{fulton98}, théorème 10.2). 
En particulier $\int_Y c_1(D_1|Y)\ldots c_1(D_d|_Y)=\int_{Y'_0} c_1(D_1|_{Y'_0})\ldots c_1(D_d|_{'Y_0})$ et ce dernier 
est positif ou nul puisque $c_1(D_1)\ldots c_1(D_d)$ est numériquement effectif, ce qui conclut.

\section{Espaces de Zariski-Riemann sur les corps gradués}

\subsection{}\label{def-affine-zr}
Soit $L$ un corps gradué. Notons $\P_L$ l'ensemble des classes d'équivalences de valuations sur $L$. 
En associant à tout élément $v$ de $\P_L$ la restriction à $L^\times$ de l'indicatrice de l'anneau 
$\mathscr O_v$ on identifie $\P_L$ à un sous-ensemble $\{0,1\}^{L^\times}$. Les axiomes définissant une valuation 
sont des conditions fermées, qui font donc de $\P_L$ un espace compact et totalement discontinu lorsqu'on le munit
de la topologie induite par la topologie produit (qui est dite \textit{constructible}). 

En pratique on utilisera une topologie plus grossière que la topologie constructible, davantage adaptée
aux applications à la géométrie algébrique et à la géométrie analytique non archimédienne. Pour ce faire, introduisons
la notation suivante. Si $S$ est une partie de $L^\times$, on notera $\P_L\{S\}$ l'ensemble des valuations $v$ 
telles que
$S\subset \mathscr O_v$, c'est-à-dire encore telles que $v(\lambda)\leq 1$ pour tout $\lambda \in S$ ; 
remarquons que $\P_L\{S\}\cap  \P_L\{T\}=
\P_L\{S\cup T\}$ et que $\P_L\{\emptyset\}=\P_L$. On munira alors $\P_L$ de la topologie engendrée par les 
parties de la forme $\P_L\{S\}$ où $S$ est un sous-ensemble \textit{fini}
de $L^\times$, qui seront appelés \textit{ouverts affines} de $\P_L$. Remarquons que la topologie 
constructible de $\P_L$ est engendrée par les ouverts affines et leurs complémentaires. Les éléments
de l'algèbre de Boole engendrée par les ouverts affines de $\P_L$ sont précisément les parties à la fois ouvertes
et fermées de $\P_L$ pour la topologie constructible, qu'on qualifiera plus brièvement de parties constructibles.

Soit $K$ un corps gradué
muni d'un plongement dans $L$. 
Soit $v$ une valuation sur $K$. Nous noterons $\zr L{(K,v)}$ le sous-ensemble de $\P_L$ formé des
valuations  dont la restriction à $K$ est égale à $v$, et nous le munirons de la topologie
induite par celle de $\P_L$. Si $S$
est une partie de $L^\times$ nous noterons
$\zr L{(K,v)}\{S\}$ l'intersection de $\zr L{(K,v)}$ et $\P_L\{S\}$ ; la topologie de 
 $\zr L{(K,v)}$ est donc engendrée par les parties de la forme $\zr L{(K,v)}\{S\}$
 où $S$ est un sous-ensemble \textit{fini}
de $L^\times$ ; les ouverts de $\zr L{(K,v)}$ qui sont précisément de cette forme 
seront qualifiés d'affines, et ce sont les traces d'ouverts affines de $\P_L$. 
La topologie de $\zr L{(K,v)}$ induite par la topologie constructible de $\P_L$
sera appelée la topologie constructible de $\zr L{(K,v)}$. les éléments
de l'algèbre de Boole engendrée par les ouverts affines de $\zr L{(K,v)}$ sont précisément les parties à la fois ouvertes
et fermées
de $\zr L{(K,v)}$ pour la topologie constructible, qu'on qualifiera plus brièvement de parties constructibles.

Remarquons que $\zr L{(K,v)}$ est, en tant sous-ensemble de $\P_L$, défini par une conjonction (en général infinie)
de conditions fermées pour la topologie constructible. Il est donc compact pour 
la topologie constructible. Chacun de ses ouverts affines étant un ouvert fermé pour la topologie constructible,
il est compact pour cette dernière, et donc quasi-compact pour la topologie plus grossière
dont on a muni $\zr L{(K,v)}$. 

Soit $(K',v')$ une extension de corps gradués valués de $(K,v)$. Soit $L'$ une extension 
de $K$  munie de deux $K$-plongements
$K'\hookrightarrow L'$ et $L\hookrightarrow L'$. 
La restriction des valuations induit une application de
$\zr{L'}{(K',v')}$ vers $\zr L{(K,v)}$, qui est surjective lorsque $K'=K$. 
Si $S$ est une partie finie de $L^\times$, l'image réciproque de $\zr L{(K,v)}\{S\}$ sur $\zr L{K'}{(K',v')}$ est égale à $\zr{L'}{(K',v')}\{S\}$. 
L'application de $\zr{L'}{(K',v')}$ vers $\zr L{(K,v)}$ est donc continue et quasi-compacte. 

Lorsque $v$ est la valuation triviale, on écrira simplement $\zr LK$ au lieu de $\zr L{(K,v)}$. C'est essentiellement avec ce type d'espaces
de Zariski-Riemann que nous allons travailler, mais nous avons introduit le cas d'un corps gradué de base arbitraire parce que nous en aurons
tout de même besoin plus bas. 

Remarquons que $\zr LK$ possède un unique point générique, à savoir la valuation triviale.

\begin{exem}
Soit $k$ un corps classique, et soit $r>0$. Notons $K$ le corps gradué égal à
$k[t,t^{-1}]$ de la graduation pour laquelle $at^i$ est homogène
de degré $r^i$ pour tout $a\in k^\times$ et tout $i\in \Z$. Il résulte des définitions que
$\P_{K/k}$ contient exactement trois valuations : la valuation triviale $v$, la valuation $v^-$ qui prend une valeur
strictement inférieure à $1$ sur $t$, et la valuation $v^+$  qui prend une valeur strictement supérieure
à $1$ sur $t$. La valuation $v$ est dense et ouverte, et les valuations $v^-$ et $v^+$ sont fermées : 
le singleton $\{v\}$ est égal à $\zr Kk\{t,t^{-1}\}$, le singleton 
$\{v^+\}$ est le complémentaire de $\zr Kk\{t\}$, et le singleton $\{v^-\}$
est le complémentaire de $\zr Kk\{t^{-1}\}$.
\end{exem}

\subsection{}
On munit l'espace topologique $\zr LK$ d'un faisceau
d'anneaux gradués $\mathscr O_{\zr LK}$ défini par la formule
\[   \mathscr O_{\zr LK}(U)=\bigcap_{v\in U}\mathscr O_ v.\] Sa fibre en un élément $v$
de $\zr LK$ est précisément l'anneau gradué local $\mathscr O_v$; 
Il est en effet clair que
$\mathscr O_{\zr LK, v}
\subset \mathscr O_v$;
pour l'inclusion réciproque, il suffit de noter que si $\lambda$ est un élément de $L^\times\cap \mathscr O_v$ alors
$\lambda$ appartient à $\mathscr O(\zr LK\{\lambda\})$ tautologiquement, 
et comme $v\in \zr LK\{\lambda\}$ puisque $\lambda\in \mathscr O_v$, on voit
que $\lambda$ appartient $\mathscr O_{\zr LK, v}$. 
Si $U$
est un ouvert de $\zr LK$, nous désignerons par $\mathscr O_U$ la restriction de $\mathscr O $
à $U$.

Soit $K'$ une extension de $K$ et 
soit $L'$ une extension de $K$ munie de deux $K$-plongements
$K'\hookrightarrow L'$ et $L\hookrightarrow L'$. 
Soit $\pi$ l'application continue de $\zr{L'}{K'}$ vers $\zr LK$. L'inclusion 
de $L$ dans $L'$ induit alors un morphisme 
de faisceaux en anneaux gradués de $\mathscr O_{\zr LK}$ vers $\pi_*\mathscr O_{\zr{L'}{K'}}$. 

\subsection{}\label{ss-zr-torsion}
Supposons que $\abs{L^\times}/\abs{K^\times}$ est de torsion ; c'est 
en particulier le cas dès que $\abs{L^\times}=\abs{K^\times}$, 
c'est-à-dire dès que le morphisme naturel de $L^1\otimes_{K^1}K$ vers $L$ est
un isomorphisme. Pour tout $\lambda\in L^\times$ et tout 
entier $N>0$ tel que $\abs \lambda ^N\in \abs{K^\times}$, 
il existe un couple $(a,b)\in (L^1)^\times\times
K^\times$ tel que $\lambda^N=ab$ ; le couple $(a,b)$ est unique à multiplication près par un élément
de la forme $(c,c^{-1})$ où $c\in (K^1)^\times$. 
Toute valuation $u$
appartenant à $\zr {L^1}{K^1}$ s'étend alors uniquement en
une valuation $v$ appartenant à $\zr LK$, définie comme suit : si pour un élément
$\lambda$ de $L^\times$ on a $\lambda^N=ab$ avec $N>0$, $a\in (L^1)^\times$ et $b\in K^\times$,
on pose $v(\lambda)=u(a)^{1/N}$ ; c'est la seule valeur possible pour $v(\lambda)$ puisqu'on doit avoir
$v(b)=1$, et l'on vérifie 
que $u(a)^{1/N}$ ne dépend 
bien que de $\lambda$
et que cette construction définit bien
une valuation sur $L$ dont l'anneau contient $K$, et dont la restriction à $L^1$ est égale 
à $u$. 
Par ailleurs, si $(\lambda_1,\ldots, \lambda_n)$ est une famille finie d'éléments de $L^\times$,
et si l'on se donne pour tout $i$
une écriture $\lambda_i^{N_i}=
a_ib_i$ avec $N_i>0, a_i\in (L^1)\times$ et $b_i\in K^\times$, on
a $\zr LK\{\lambda_1,\ldots, \lambda_n\}=\zr LK\{a_1,\ldots, a_n\}$. 
Il s'ensuit que l'application continue naturelle $\pi$ de $\zr LK$ vers $\zr {L^1}{K^1}$ est un 
homéomorphisme. La composante homogène $\mathscr O_{\zr LK}^1$ de
$\mathscr O_{\zr LK}$
s'identifie alors
à $\pi^{-1}\mathscr O_{\zr {L^1}{K^1}}$ (qui est 
un faisceau d'anneaux classiques puisque $L^1$ et $K^1$ sont trivialement gradués), 
et l'on a $\mathscr O_{\zr LK}
=\mathscr O_{\zr LK}^1\otimes_{K^1}K$.

\subsection{}\label{def-strict-zr}
On ne suppose plus que $\abs{L^\times}$ est de torsion modulo $\abs{K^\times}$.
Il résulte du paragraphe précédent 
que l'application continue naturelle de $\zr {L^1\otimes_{K^1}K}K$ vers $\zr{L^1}{K^1}$ 
est bijective. Puisque l'application continue de $\zr LK$ vers $\zr {L^1\otimes_{K^1}K}K$
est surjective, l'application continue de $\zr LK$ vers $\zr {L^1}{K^1}$ est surjective. 

Nous dirons qu'un ouvert $U$ de $\zr LK$ est \textit{strict}
s'il est égal à l'image réciproque d'un ouvert de $\zr {L^1}{K^1}$ (qui est alors
uniquement déterminé : par surjectivité, c'est nécessairement l'image de $U$). Cela revient
à demander que $U$ soit une réunion d'ouverts de la forme $\zr LK\{S\}$ où $S$
est un sous-ensemble fini de $(L^1)^\times$. Il suffit pour que ce soit le cas que $U$ soit une réunion d'ouverts de la forme
$\zr LK\{S\}$ où $S$
est un sous-ensemble fini de $L^\times$ tel que $\abs s\in \abs{K^\times}^\Q$ pour tout $\lambda \in S$. 
En effet dans ce cas il existe pour tout $s\in S$ 
 un entier $N(s)>0$ et un élément $\mu(s)$ de $K^\times$
tel que $\abs{s^{N(s)}/\mu(s)}=1$, et 
$\zr LK\{S\}=\zr LK\{s^{N(s)}/\mu(s)\}_s$,

Soit $\lambda\in L^\times$ tel que $\abs{\lambda}\notin \abs{K^\times}^\Q$. 
Dans ce cas $\lambda$ est transcendant sur $L^1\otimes_{K^1}K$ (\ref{deg-extension-graduee}), si bien que tout valuation 
appartenant à $\zr{L^1\otimes_{K^1}K}K$ admet un prolongement à $L$ prenant une valeur strictement supérieure à $1$
en $\lambda$. Il en résulte que si $U$ est un ouvert strict et non vide de $\zr LK$, il n'est pas contenu dans $\zr LK\{\lambda\}$. 

On en déduit que si $S$ est un sous-ensemble
de $L^\times$ tel que $U:=\zr LK\{S\}$ contienne un ouvert strict et non vide de 
$\zr LK$ 
alors $\abs s\in \abs{K^\times}^\Q$ pour tout $s\in S$. 
C'est en particulier le cas si $S$ est fini et si l'ouvert affine $U$ est lui-même strict.
On peut alors écrire
$U=\zr LK\{S\}=\zr LK\{s^{N(s)}/\mu(s)\}_s$
en définissant $N(s)$ et $\mu(s)$ comme au paragraphe précédent, si 
bien que $U$ est l'image réciproque d'un ouvert affine de
$\zr{L^1}{K^1}$.

\subsection{}\label{ss.vocabulaire-foncteur-polyedral}
Notre but est maintenant de généraliser le théorème 1.3 de \cite{ducros2012b} en en donnant une version 
lorsque le corps gradué de $K$ de base est muni d'une valuation $v$, puis en établissant 
un résultat d'uniformité en $v$. Nous allons commencer par un peu de vocabulaire. Si $G$ est un groupe abélien ordonné quelconque
(noté multiplicativement) on note $\mathsf D_G$ 
la catégorie des groupes abéliens divisibles ordonnés non triviaux $H$ munis  d'un morphisme injectif croissant de $G$ vers $H$, 
les flèches étant les $G$-morphismes injectifs croissants. On note $\Gamma_G$ le foncteur d'oubli de 
$\mathsf D_G$ vers $\mathsf{Ens}$.
Soit $n$ un entier. Un sous-foncteur $D$ de $\Gamma_G^n$ est dit \textit{définissable} s'il est définissable dans le langage des 
groupes ordonnés à paramètres dans $G$, c'est-à-dire s'il peut être décrit par conjonction et disjonction finies 
d'inégalités de la forme $\phi\leq 1$ ou $\phi<1$, où $\phi$ est une transformation naturelle de la forme $(x_1,\ldots, x_n)\mapsto 
a\prod x_i^{r_i}$ où $a\in G$ et où les $r_i$ sont rationnels. Un sous-foncteur 
$D$ de $\Gamma_G^n$ 
de $\Gamma_G^n$ est dit \textit{polyédral} s'il peut être défini par conjonction et disjonction finies d'inégalités de la forme $\phi\leq 1$
avec $\phi$ comme ci-dessus. Soit $H$ un objet de $\mathsf D_G$. Par modèle-complétude de {\sc doag}, $D\mapsto
D(H)$ établit une bijection entre l'ensemble des sous-foncteurs définissables (resp. polyédraux) de 
$\Gamma_G^n$ et l'ensemble des parties de $H^n$ que l'on peut décrire par conjonction et disjonction finies 
d'inégalités de la forme $\phi\leq 1$ ou $\phi<1$ (resp. $\phi\leq 1$) où $\phi$
est comme ci-dessus. 

Soit $d$ un entier entre $0$ et $n$. Un $d$-pavé
de $\Gamma_G^n$ est un sous-foncteur de $\Gamma_G^n$ de la forme 
\[H\mapsto M(\{(h_1,\ldots, h_n), a_i< h_i< b_i\;\text{pour}\;1\leq i\leq d\;\text{et}\;h_i=c_i\;\text{pour}
\;d< i\leq n\})\] où
les $a_i$, les $b_i$ et les $c_i$ appartiennent à $G$, où $ai<b_i$ pour tout $i$ entre $1$ et $d$ et où $M$ est une matrice de
$\GL_n(\Q)$. C'est un sous-foncteur définissable de $\Gamma_G^n$. 
La dimension d'un sous-foncteur
définissable non vide $D$ de $\Gamma_G^n$ est le plus grand entier $d$ tel que $D$ contienne un
$d$-pavé, et elle peut être calculée sur un objet $H$ de $\mathsf D_G$. 
Elle est  égale à 
\[\max_{H\in \mathsf D_G, (h_1,\ldots, h_n)\in \mathscr D(H)}
\dim_{\Q}\langle G, h_1,\ldots, h_n\rangle^\Q/G^\Q.\]

Si $H\in \mathsf D_G$ et si $h\in D(H)$, la dimension de $D$ en  $h$ est le minimum
des dimensions des $P\cap D$ où $P$ est un $n$-pavé tel que $h\in P(H)$. 
Lorsque $D$ est non vide, sa dimension est 
égale à $\sup_{H\in \mathsf D_\Gamma, h\in D(H)}\dim_h D$, et l'on étend cette définition au cas vide en
posant $\dim \emptyset=-\infty$. On dit que $D$ est purement de dimension $d$ si
$\dim_h D=d$ pour tout $H\in \mathsf D_G$ et tout $h\in D(H)$ ; cela peut se vérifier
sur un objet $H$ donné de $\mathsf D_G$.

\begin{theo}\label{theo.foncteurs-polyedraux}
Soit $K$ un corps gradué valué, 
soit $L$ une extension
graduée de~$K$ et soit $\mathscr V$ un ouvert quasi-compact
de $\P_L$.
Soit $(f_1,\dots, f_n)$ une famille d'éléments de $L^\times$
et soit~$d$ le degré de transcendance gradué
de $K(f_1,\dots, f_n)$ sur~$K$.

\begin{enumerate}
\item
Soit $v$ une valuation sur $K$. 
Soit $D_{(K,v),L,\mathscr V ,(f_i)}$
le sous-foncteur de~$\mathbf G_{v(K^\times)}^n$ qui associe à tout $H \in \mathsf D_{v(K^\times)}$ 
l'ensemble des $n$\nobreakdash-uplets $(h_1,\dots, h_n) \in H^n$ 
tels qu'il existe $H'\in \mathsf D_H$ et une valuation  $w \in \mathscr V\cap \zr L{(K,v)}$ à
valeurs dans $H$ 
satisfaisant les conditions $w(f_i) = h_i$ pour tout~$i$.

\begin{enumerate}
\item Le sous-foncteur $D_{(K,v),L,\mathscr V ,(f_i)}$ de $\Gamma_{v(K^\times)}^n$
est polyédral et de dimension majorée par~$d$.
\item Si $\mathscr V = \P_L$ alors $D_{(K,v),L,\mathscr V ,(f_i)}$
est purement de dimension~$d$, et si de plus $n = d$
alors $D_{(K,v),L,\mathscr V ,(f_i)} = \Gamma_{v(K^\times)^n}$.
\end{enumerate}

\item Il existe un recouvrement fini $(\mathscr U_j)$
de $\P_K$ par des parties constructibles et, pour tout $j$, 
une famille finie $(\phi_{jk\ell})_{k,\ell}$ de monômes
de la forme $\prod_i x_i^{r_i}$ où les $r_i$ sont rationnels, et une famille $(a_{jk \ell})_{k,\ell}$ 
d'éléments de $K^\times$ tels que 
pour tout $v\in \mathscr U_j$ le sous-foncteur 
polyédral
$D_{(K,v),L,\mathscr V ,(f_i)} $ de
$\Gamma_{v(K^\times)^n}$
 soit décrit par la condition 
 \[\bigvee_k\bigwedge_\ell v(a_{jk\ell})\phi_{jk\ell}\leq 1.\]
 \end{enumerate}
\end{theo}

\begin{proof}
Nous allons en gros reprendre \textit{mutatis mutandis}
la preuve du théorème 1.3 de \cite{ducros2012b}, qui établissait
essentiellement (1) lorsque
$v$ est la valuation triviale. 
Il résulte des définitions que
 $D_{(K,v),L,\mathscr V_1\cup \mathscr V_2 ,(f_i)}$
 est égal à $D_{(K,v),L,\mathscr V_1 ,(f_i)} \cup D_{(K,v),L,\mathscr V_2 ,(f_i)} $
 pour tout couple $(\mathscr V_1,\mathscr V_2)$ d'ouverts quasi-compacts de $\P_L$. 
 Il suffit donc de démontrer le théorème pour chacun des termes d'un recouvrement fini de $\mathscr V$
 par des ouverts affines, ce qui permet de supposer
 $\mathscr V$ affine. Choisissons une partie finie $S$
 de $L^\times$ telle que $\mathscr V=\P_L\{S\}$, que l'on prend
 égale à l'ensemble vide si $\mathscr V=\P_L$.

La première étape consiste à se réduire au cas non gradué.
Pour ce faire, choisissons un polyrayon 
$r$ dont les composantes engendrent $\abs{L^\times}$. 
Posons $K'=K(T/r)$ et $L'=L(T/r)$. 
Pour toute valuation $v$ de $K$, notons 
$v'$ l'extension de Gauss de $v$ à $K'$, 
caractérisée par la formule
$v'(\sum a_I T^I)=\max v(a_I)$ pour tout élément homogène
$\sum a_I T^I$ de l'anneau gradué $K[T/r]$
(dont $K'$ est le corps gradué des fractions).
On a alors par construction $v'((K')^\times)=v(K^\times)$. Nous noterons
de même $w'$ l'extension de Gauss de $w$ à $L'$ pour toute valuation $w$ de $L$. 
La restriction de $v'$ à $K'_1$ sera notée $v'_1$ et celle de $w'$ à $L'_1$ sera notée $w'_1$. 

Par choix de $r$,
il existe pour tout $\lambda$ 
dans $(L')^\times$  un multi-exposant $e(\lambda)$ tel que 
$\abs{T^{e(\lambda)}\lambda}=1$. 
Si $v$ est une valuation sur $K$ on a 
$v'(T^e)=1$ pour tout $e$ et donc 
$v'(\lambda)=v'(T^{e(\lambda)}\lambda)$
pour tout $\lambda\in K^\times$ ; il vient
$v((K'_1)^\times)=v((K')^\times)=v(K^\times)$. 

Posons $S'=\{T^{e(\lambda)}\lambda\}_{\lambda \in S}$, 
$\mathscr V'=\P_L\{S'\}$ et $\mathscr V'_1=\P_{L'_1}\{S'\}$ ; 
et pour tout $i$, posons $g_i=T^{e(f_i)}f_i$ pour tout $i$. 

Supposons avoir démontré le théorème
pour 
$(K'_1, L'_1,\mathscr V'_1, (g_i))$ et montrons
qu'il vaut pour $(K, L, \mathscr V, (f_i))$.

Soit $v\in \P_K$. Nous allons tout d'abord
démontrer que les deux sous-foncteurs
$D_{(K,v),L,\mathscr V ,(f_i)}$
et $D_{(K'_1,v'_1), L'_1, \mathscr V_1', (g_i)}$
de $\Gamma_{v(K^\times)}^n$ sont égaux. 
Soit $H$ un objet de $\mathsf D_{v(K^\times)}$ et soit $h=(h_1,\ldots, h_n)$ un élément de $H^n$.
Supposons que $h\in D_{(K,v),L,\mathscr V ,(f_i)}(H)$. Il existe 
alors $H'\in \mathsf D_H$ et une valuation  $w$ 
appartenant à $\P_{L/(K,v)}$ à valeurs dans $H'$ telle
que $w(f_i)=h_i$ pour tout $i$. Pour tout $\lambda\in S$ on a $w'(\lambda)=w(\lambda)\leq 1$, et donc
$w'(T^{e(\lambda)}\lambda)\leq 1$. Par conséquent $w'\in \mathscr V'\cap \zr{L'}{(K',v')}$, si 
bien que $w'_1\in  \mathscr V_1'\cap \zr{L'_1}{(K_1',v'_1)}$. Et l'on a pour tout $i$
les égalités $w_1'(g_i)=w'(T^{(e(f_i)}f_i)=w'(f_i)=w(f_i)=h_i$. Par conséquent, $h$ appartient à 
$D_{(K'_1,v'_1), L'_1, \mathscr V'_1, (g_i)}(H)$. 
Réciproquement, supposons que $h$ appartient à $D_{(K'_1,v'_1), L'_1, \mathscr V'_1, (g_i)}(H)$. 
Il existe alors une valuation $\omega$ appartenant à  $\mathscr V_1'\cap \zr{L'_1}{(K_1',v'_1)}$, à valeurs 
dans un groupe $H'\in \mathsf D_H$, telle que $\omega(g_i)=h_i$ pour tout $i$. 
Le morphisme naturel de $L'_1\otimes_{K'_1}K'$ vers $L$ est injectif (c'est même un isomorphisme 
puisque $\abs{(L')^\times}=\abs{(K')^\times}$ par construction), et les restrictions à $K'_1$
de $\omega$ et $v'$ coïncident. Il résulte alors de \cite{ducros2018}, Lemma 7.2.1 qu'il existe
une valuation $\theta$ sur $L'$ telle que $\theta|_{L'_1}=\omega$ et $\theta|_{K'}=v'$. 
Soit $\lambda\in S$. On a 
$\theta(\lambda)=\theta(T^{e(\lambda)}\lambda)$ car $\theta(T^{e(\lambda)})=v'(T^{e(\lambda)})=1$, 
et $\theta(T^{e(\lambda)}\lambda)\leq 1$ car $T^{e(\lambda)}\lambda$ est un élément de $S'$, et que
$\theta|_{L'_1}$ est égale à $\omega$ qui appartient à $\mathscr V'_1$. Par conséquent
$\theta|_L$ appartient à $\mathscr V\cap \zr L{(K,v)}$. 
On a également pour tout $i$ les égalités
$\theta(f_i)=\theta(T^{-e(f_i)}g_i)=\theta(g_i)=h_i$, où la seconde égalité provient du fait que
$\theta(T^{-e(f_i)})=v'(T^{-e(f_i)})=1$. 
Il s'ensuit que $h\in D_{(K,v),L,\mathscr V ,(f_i)}(H)$. 

Puisque le théorème est supposé valoir pour $(K'_1, L'_1,\mathscr V'_1, (g_i))$ 
le sous-foncteur $D_{(K'_1,v'_1), L'_1, \mathscr V_1', (g_i)}$
de $\mathbf G_{v(K^\times)}$ est polyédral ; puisqu'il coïncide avec $D_{(K,v),L,\mathscr V ,(f_i)}$ par
ce qu'on vient de voir, ce dernier est également polyédral. 

Supposons maintenant que $\mathscr V=\P_L$. On a dans ce cas $S=\emptyset$ si bien que
$\mathscr V'_1$
est égal à $\P_{L'_1}$. Puisque le théorème est supposé valoir pour $(K'_1, L'_1,\mathscr V'_1, (g_i))$, 
le sous-foncteur $D_{(K'_1,v'_1), L'_1, \mathscr V_1', (g_i)}$
de $\Gamma_{v(K^\times)}^n$ est purement de dimension $d$, et égal à $\Gamma_{v(K^\times)}^n$
tout entier si $d=n$ ; puisqu'il coïncide avec $D_{(K,v),L,\mathscr V ,(f_i)}$ par
ce qu'on vient de voir, ce dernier est également purement de dimension $d$, et égal à 
$\Gamma_{v(K^\times)}^n$ tout entier si $d=n$. 

Le théorème valant pour $(K'_1, L'_1,\mathscr V'_1, (g_i))$, il existe un recouvrement fini 
$(\Upsilon_j)$ de $\P_{K'_1}$ par des parties constructibles, et une famille finie 
$(\psi_{jkl\ell})_{k,\ell}$ de monômes de la forme
$\prod_i x_i^{r_i}$ où les $r_i$ sont rationnels et une famille $(b_{jk\ell})_{k,\ell}$
d'éléments de $(K'_1)^\times$ tels que pour tout $\theta\in \Upsilon_j$ le sous-foncteur
polyédral  $D_{(K'_1,\theta), L'_1, \mathscr V_1', (g_i)}$
de $\Gamma_{\theta((K'_1)^\times)}^n$ 
 soit décrit par la condition 
 \[\bigvee_k\bigwedge_\ell \theta(b_{jk\ell})\psi_{jk\ell}\leq 1.\]
 
Soit $f\in K'_1$. Elle s'écrit $\sum a_I T^I/\sum b_I T^I$ ; si $v\in \P_K$, on a alors
$v'_1(f)\leq 1$ si et seulement si $\max v(a_I)\leq \max v(b_I)$. Il s'ensuit que l'image réciproque
par $v\mapsto v'_1$ de toute partie constructible de $\P_{K'_1}$ est une partie constructible 
de $\P_K$. Pour tout $j$, notons alors $\mathscr U_j$ l'image réciproque de $\Upsilon_j$ sous 
$v\mapsto v'_1$ ; les $\mathscr U_j$ constituent un recouvrement fini de $\P_K$ par des parties constructibles. 

Fixons $j$. Soit $v\in \mathscr U_j$. On sait que
$D_{(K,v),L,\mathscr V ,(f_i)}$ est égal à
$D_{(K'_1,v'_1), L'_1, \mathscr V_1', (g_i)}$. Puisque $v'_1$ appartient à $\Upsilon_j$
par choix de $v$, on en déduit que $D_{(K,v),L,\mathscr V ,(f_i)}$ est décrit par 
la condition  \[\bigvee_k\bigwedge_\ell v'_1(b_{jk\ell})\psi_{jk\ell}\leq 1.\]
Chacune des $b_{jk\ell}$ s'écrit sous la forme $\sum c_{jk\ell, I}T^I/\sum d_{jk\ell, I}T^I$ et 
l'on a alors \[v'_1(b_{jk\ell})=\frac{\max_I v(c_{jk\ell, I})}{\max_I v(d_{jk\ell, I})}.\]
Pour tout $J,J'$ tels que $c_{jk\ell ,J}$ et $d_{jk\ell ,J'}$ soient non nuls
notons $\mathscr U_{j,J,J'}$ l'ensemble des $v$ appartenant à 
telles que $\max_I v(c_{jk\ell, I})=v(c_{jk\ell,J'})$ et  $\max_I v(d_{jk\ell, I})=v(d_{jk\ell, J''})$. 
Alors $\P_K$ est recouvert par les  $\mathscr U_{j,J,J'}$ qui sont constructibles, et 
pour tout $v\in \mathscr U_{j,J,J'}$ le foncteur  $D_{(K,v),L,\mathscr V ,(f_i)}$ est décrit par 
la condition  \[\bigvee_k\bigwedge_\ell v(c_{jk\ell,J}/d_{jk\ell, J'})\psi_{jk\ell}\leq 1,\]
ce qui montre (2). 

Il suffit donc bien de démontrer le théorème pour $(K'_1,L'_1, \mathscr V'_1, (g_i))$. Autrement dit, 
on peut supposer que $K$ et $L$ sont trivialement gradués, ce que nous faisons désormais. 

Soit~$F$ le sous-corps de~$L$ engendré par~$K$, les~$f_i$ et $S$ et soit~$\mathscr  W$ l'ouvert quasi-compact
$\P_F\{S\}$ de $\P_F$ ; notons que si $S=\emptyset$; c'est-à-dire si $\mathscr V=\P_L$,
alors $\mathscr W=\P_F$. 
L'ouvert $\mathscr V$ est égal à l'image réciproque de $\mathscr W$
sur $\P_L$. Soit $v\in \P_K$ et soit $H$ un objet de $\mathsf D_{v(K^\times)}$. 
Comme toute valuation sur~$F$ s'étend en une valuation sur $L$, les foncteurs 
$D_{(K,v),L,\mathscr V, (f_i)}$ et $D_{(K,v),F,\mathscr  W,(f_i)}$.
sont égaux. Il suffit donc de savoir démontrer
le théorème pour $(K,F,\mathscr W, (f_i))$. Autrement dit on s'est ramené au cas où $L$ est de type fini sur $K$.

Il existe alors un~$K$-schéma intègre et de type fini~$\mathscr  X$ dont le corps des fonctions est isomorphe à~$L$, sur lequel
les éléments de $S$ et les~$f_i$ sont définies, et sur lequel les~$f_i$ sont inversibles. 

Soit~$v$ une valuation sur $K$. Soit $X_v$ le foncteur qui associe à une extension non trivialement valuée et
algébriquement close $(F,w)$ de $(K,v)$
le sous-ensemble de~$\mathscr  X(F)$ formé des points~$P$ tels que
$w(\lambda(P))\leq 1$ pour tout $\lambda \in S$. 
L'assertion A) du théorème du théorème 1.2 de \cite{ducros2012b} assure l'existence d'un sous-foncteur 
polyédral $\Delta_v$ de~$\Gamma_{v(K^\times)}^n$, de dimension~$\leq d$, tel que
$(w\circ f_1,\ldots, w\circ f_n)(X_v(F))=\Delta_v(w(F^\times))$ 
pour tout $(F,w)$, et son assertion (C2) assure que si $S=\emptyset$ (auquel cas $X_v=\mathscr X$) 
le foncteur polyédral $\Delta_v$ est purement de dimension $d$, et égal
à $\Gamma_{v(K^\times)}^n$
tout entier
si l'on a de surcroît $d=n$. 
L'assertion (B) de \loccit  assure par ailleurs que
$\Delta_v=D_{(K,v), L, \mathscr V, (f_i)}$, ce qui achève de prouver (1). 

Il reste à démontrer (2). Soit $E$ l'ensemble des familles finies $(a_{k\ell}, \phi_{k\ell})_{k,\ell}$ 
où les $\phi_{k,\ell}$ sont des monômes de la forme $\prod x_i^{r_i}$ avec les $r_i$ rationnels, 
et où les $a_{k\ell}$ appartiennent à $K^\times$. 
Pour toute $\Phi=(a_{k\ell}, \phi_{k\ell})_{k,\ell}$ appartenant à $E$,
notons $\mathscr U_\Phi$
l'ensemble des valuations $v$ appartenant à $\P_K$ telles que
$D_{(K,v), L, \mathscr V, (f_i)}$ puisse être décrit par la condition 
\[\bigvee_k\bigwedge_\ell v(a_{k\ell})\phi_{k\ell}\leq 1.\]
Il résulte de l'élimination des quantificateurs dans la théorie {\sc acvf}
que $\mathscr U_\Phi$ est une partie constructible de $\P_K$. 
L'énoncé (1) que nous avons établi assure que les $\mathscr U_\Phi$ recouvrent $\P_K$ ; 
par compacité constructible, c'est déjà le cas d'un nombre fini d'entre eux, ce qui achève de montrer (2). 
\end{proof}

\section{Fibrés vectoriels 
sur un espace de Zariski-Riemann}

On désigne toujours par $K$ un corps gradué et par $F$
une extension graduée de $K$.

\subsection{}
En associant à un ouvert $U$ de $\zr LK$ le sous-anneau gradué 
$\mathscr O(U)$ de $L$ dont la composante $\mathscr O(U)^r$ est constituée
des $f\in L_r$ telles que $v(f)\leq 1$ pour toute $v\in U$, on fait de
$\zr LK$ un espace localement annelé gradué ; la fibre $\mathscr O_v$ de son
faisceau structural en une valuation $v$ est précisément l'anneau gradué de~$v$. 
La notion de fibré vectoriel (gradué) sur un ouvert~$U$
de $\zr LK$ a donc un sens
(\ref{ss.fibrevect-gradue}). Si $U$ est non vide, il est connexe
et non vide 
(puisqu'il contient le point générique $\eta$
de l'espace irréductible $\zr LK$), donc
le rang d'un fibré vectoriel $E$ sur $U$ est bien défini. C'est en
particulier la dimension du $L$-espace vectoriel gradué $E_\eta$.

\subsection{}\label{ss-fibre-strict}
Supposons que l'ouvert $U$ est strict. Nous dirons qu'un fibré vectoriel $E$ sur $U$ de rang 
$n$ est strict s'il existe un 
recouvrement ouvert $(U_i)$ de $U$ par des ouverts stricts et pour tout $i$ un isomorphisme
entre $E|_{U_i}$ et $\mathscr O_{U_i}^n$. 

Soit $\pi$ l'application continue naturelle de $\zr LK$ vers $\zr {L^1}{K^1}$. Posons
$U^1=\pi(U)$. On a $U=\pi^{-1}(U^1)$ et $\mathscr O_{U^1}\simeq (\pi_*\mathscr O_U)^1$. 
Les foncteurs $E\mapsto (\pi_*E)^1$ et $F\mapsto \pi^{-1}(F)\otimes_{\pi^{-1}(
\mathscr O_{U^1})}\mathscr O_U$ 
établissent alors une équivalence entre la catégorie des fibrés vectoriels stricts de rang $n$ sur $U$ et celle des
fibrés vectoriels de rang $n$ au sens classique sur $U^1$, c'est-à-dire celle des $\mathscr O_{U^1}$-modules
localement libres de rang $n$.

\begin{rema}
Supposons que $U$ est strict, et soit $E$ un fibré vectoriel de rang $n$ sur $U$. Supposons qu'il existe un recouvrement 
$(U_i)$ de $U$ par des ouverts stricts et, pour tout $i$, un isomorphisme
$E|_{U_i}\simeq \mathscr O_{U_i}[r_i]$ où chacun des $r_i$ est de la forme $(r_{ij})_j$ 
avec les $r_{ij}$ appartenant à $K^\times$.  Le fibré vectoriel $E$ est alors strict. Pour le voir, 
on choisit pour tout $(i,j)$ un élément $a_{ij}$ de $K^\times$ tel 
que $\abs{a_{ij}}=r_{ij}$, et l'on remarque que $(x_1,\ldots, x_n)\mapsto
(a_{i1}x_1,\ldots, a_{in}x_n)$ établit un isomorphisme de
$\mathscr O_U[r_i]$ sur $\mathscr O_U^n$ pour tout $i$. 
\end{rema}

\subsection{}\label{def-metrique-graduee}
Soit $K$ un corps
gradué, soit $n$ un entier, soit $L$ une extension graduée
de $K$ et soit $U$ un ouvert de $\zr LK$. Soit $F$ un $L$-espace vectoriel gradué. 
On appelle métrique sur $F$ au-dessus de $U$ la donnée, pour tout élément homogène $s$ de $F$, 
d'une application $v\mapsto v(s)$ de source $U$ et possédant les propriétés  suivantes : 
\begin{itemize}
\item $v(s)$ appartient à $v(L^\times)$ si $s$ est non nulle, et vaut $0$ sinon ; 
\item on a $v(s+t)\leq \max (v(s),v(t))$ pour tout $v$ et tout couple $(s,t)$ d'éléments de $F$
homogènes de même degré ; 
\item on a $v(as)=v(a)v(s)$ pour tout $s$ et tout élément homogène $a$ de $L$. 
\end{itemize}

Supposons donnée une métrique sur $F$
au-dessus de $U$. Soit $(e_1,\ldots, e_n)$ une base du $L$-espace vectoriel gradué $F$ et soit $V$ un ouvert de $U$. 
Nous dirons que $(e_1,\ldots, e_n)$ est orthonormée au-dessus de $V$ (relativement à la métrique choisie)
si $v(\sum a_i e_i)=\sup_i v(a_i)$ pour toute famille $(a_i)$ d'éléments homogènes de $L$ et tout $v\in V$. 

Nous dirons que la métrique considérée admet localement sur $U$ une base orthonormée si pour tout $v$ de $U$
il existe un voisinage ouvert $V$ de $v$ dans $U$ et une base de $F$ qui est orthonormée au-dessus $V$. 

Soit $F$ et $F'$ deux $L$-espaces vectoriels gradués, chacun étant muni d'une métrique au-dessus de $U$. 
On appelle isométrie de $F$ sur $F'$
au-dessus de $U$ (ou simplement isométrie de $F$ sur $F'$ s'il n'y a pas d'ambiguïté sur $U$)
tout isomorphisme $u$ de $L$-espaces vectoriels gradués tel que $v(u(s))=v(s)$ pour tout $v\in U$ et tout élément
homogène $s$ de $F$. 

\subsection{}
Soit $E$ un fibré vectoriel
de rang $n$ sur un ouvert
non vide $U$
de $\zr LK$. 
Soit $s$ un élément homogène du $L$-espace vectoriel gradué 
$E_\eta$. 
Soit $v\in U$. Il existe un voisinage ouvert 
$V$ de $v$ dans $U$
et un polyrayon  $r=(r_1,\ldots, r_n)$ tels
que $E|_V$
soit isomorphe à $\mathscr O_V[r]$. Le fibré $E|_V$ possède donc une
base $(e_1,\ldots, e_n)$ formés d'éléments homogènes de $E(V)$, chaque $e_i$
étant de degré $r_i$. On peut écrire
d'une unique manière $s=\sum a_i e_i$, où chaque $a_i$ est un élément homogène de
$L$. L'élément $\sup_i v(a_i)$ de $v(L^\times)\cup\{0\}$ ne dépend alors 
que de $v$ et de $s$, et pas du choix de $V$, de $r$ ni de $(e_i)$. 
En effet, il suffit pour le voir de démontrer que l'ensemble des $\lambda\in L^\times$ tels que
$\sup_i v(a_i)\leq v(\lambda)$ ne dépend que de $v$ et de $s$. Mais cet ensemble est aussi celui des
$\lambda\in v(L^\times)$ tels que $v(\lambda^{-1}a_i)\leq 1$ pour tout $i$, c'est-à-dire tels que 
$a\lambda^{-1}a_i\in \mathscr O_v$ pour tout $i$, c'est-à-dire encore tels que $s\in E_v$. 
Il est dès lors licite de poser $v(s)=\sup_i v(a_i)$. 

La collection pour $s$ variable des applications $v\mapsto v(s)$ définit une métrique au-dessus de $U$ sur le $L$-espace vectoriel
gradué $E_\eta$ ; par construction, cette métrique possède une base orthonormée localement sur $U$.

\subsection{}
La construction ci-dessus définit un foncteur de la catégorie des fibrés vectoriels gradués de rang $n$ sur $U$ vers celle des
$L$-espaces vectoriels gradués de dimension $n$ munis d'une métrique au-dessus de $U$ admettant localement sur $U$ une base
orthonormée. 

Ce foncteur est une équivalence de catégories. On en construit un quasi-inverse comme suit. À un 
$L$-espace vectoriel gradué $M$ de dimension $n$ muni d'une métrique au-dessus de $U$ admettant localement sur $U$ une base
orthonormée, on associe le sous-faisceau $F$ du faisceau constant $\underline M$ sur $U$
dont l'ensemble des sections homogènes de degré $r$ sur un ouvert non vide $V$
de $U$ est $\{s\in M_r\;\text{tq.}\;\forall v\in V, v(s)\leq 1\}$. 

Si $(e_1,\ldots, e_n)$ est une base de $M$ et si $V$ est un ouvert de $U$, la base $(e_1,\ldots, e_n)$ est orthonormée au-dessus de
$U$ si et seulement si les $e_i|_V$ constituent une base de $F|_V$. 

\begin{rema}
Soit $(U_i)$ un recouvrement ouvert de $U$, soit $M$ un $L$-espace
vectoriel gradué de dimension finie et pour tout $i$, soit $(g_{ij})_j$
une base de $M$. Il existe alors au plus un fibré vectoriel $E$ sur $U$
tel que $E_\eta=M$ et tel que les $(g_{ij})$ forment une base
de $E$ sur $U_i$ pour tout $i$. En effet si un tel $E$ existe et si l'on 
munit $M$ de la métrique correspondante au-dessus de $U$ on a alors
$v(s)=\sup_j v(a_j)$ dès que $s$ est une section homogène de $M$ s'écrivant
$\sum a_j g_{ij}$ et dès que $v\in U_i$, ce qui suffit à reconstituer entièrement
la métrique de $M$ au-dessus de $U$ puisque les $U_i$ recouvrent $U$. 
\end{rema}
\subsection{}\label{metrique-graduee-extension}
Cette construction
est compatible au changement de corps, dans le sens suivant. Soit $L'$ un sous-corps gradué de 
$L$ et soit $K'$ un sous-corps gradué de $L'\cap K'$. Soit $\pi$ l'application naturelle de 
$\zr LK$ vers $\zr {L'}{K'}$. Supposons qu'il existe un ouvert $U'$ de $\zr {L'}{K'}$ et un fibré vectoriel $E'$
sur $U'$ tels que $U=\pi^{-1}(U')$ et $E=\pi^*E'$. 
On a alors $E_\eta=E'_\eta\otimes_{L'}L$ et en considérant un recouvrement de $U'$ par des 
ouverts sur lesquels $E'$ possède une base, on voit que si $t$ est une section homogène non nulle
de $E'$ et si $v$ est une valuation sur $U$ d'image $v'$ sur $U'$ alors
$v(t)=v'(t)$.

\begin{exem}
Soit $E$ et $F$ deux fibrés vectoriels gradués sur $U$. Le produit tensoriel $E\otimes F$ se décrit ainsi en termes
de $L$-espace vectoriel gradué et de métrique. 

\begin{itemize}
\item Le $L$-espace vectoriel gradué sous-jacent à $E\otimes F$ est $E_\eta \otimes_L  F_\eta$. 
\item La métrique sur $E_\eta\otimes_L F_\eta$ définie par $E\otimes_L F$ se décrit comme suit, à partir
des métriques sur $E_\eta$ et $F_\eta$ définies par $E$ et $F$ : pour tout $v\in U$ et tout élément homogène
$s$ de $E_\eta\otimes_L F_\eta$, on a \[v(s)=\inf_{s=\sum_i t_i\otimes r_i} \sup_i v(t_i)v(r_i).\]

\end{itemize}
\end{exem}

\begin{exem}
Soit $E$ un fibré vectoriel gradué sur $U$. Le dual 
$E^\vee$ se décrit ainsi en termes $L$-espace vectoriel gradué et de métrique. 

\begin{itemize}
\item Le $L$-espace vectoriel gradué sous-jacent à $E^\vee$ est le dual
$E_\eta^\vee$ du $L$-espace vectoriel gradué $E_\eta$. 

\item La métrique sur $E_\eta^\vee$ définie par $E^\vee$ se décrit comme suit, à partir
de la métrique sur $E_\eta$ définie par $E$ : pour tout $v\in U$ et tout élément homogène
$s$ de $E_\eta^\vee$, l'élément $v(s)$ de $v(L^\times)\cup\{0\}$ 
est caractérisé par le fait que 
$v(s(t))=v(s)v(t)$ pour toute section homogène $t$ de $E_\eta$.

\end{itemize}
\end{exem}

\begin{lemm}\label{lemm-critere-def-fibre}
Soit $M$ un sous-corps gradué de $L$ contenant $K$, soit $U$
un ouvert non vide de $\zr LK$ et soit $E$ un fibré vectoriel sur $U$.
Soit $\pi\colon \zr LK\to \zr MK$ l'application canonique.
Les assertions suivantes sont équivalentes : 
\begin{itemize}
\item[(i)]il existe un ouvert $V$ de $\zr MK$ et un fibré vectoriel $F$
sur $V$ tels que $U=\pi^{-1}(V)$ et $E\simeq \pi^*F$ ; 

\item[(ii)] il existe un $M$-espace vectoriel gradué $G$, un isomorphisme 
$G\otimes_M L\simeq E_\eta$, et une famille $(V_i)$ d'ouverts 
de $\zr MK$ tels que : 
\begin{itemize}
\item[1)] l'ouvert $U$ est la réunion des $\pi^{-1}(V_i)$ ; 
\item[2)] pour tout indice 
$i$ il existe une famille finie $(g_{ij})_j$ d'éléments de $G$
dont l'image dans $E_\eta=G\otimes_ML$ est une base de $E$ sur $\pi^{-1}(U_i)$. 
\end{itemize}
\end{itemize}

De plus si elles sont satisfaites on peut choisir $F$
comme dans (i) de sorte que $F_\eta=G$ et que $(g_{ij})_j$ 
soit pour tout $i$ une base de $F$ sur $V_i$. 
\end{lemm}

\begin{proof}
Si (i) est vraie alors (ii) est vraie en prenant $G=F_\eta$, en prenant pour $(V_i)$
un recouvrement ouvert de $V$ trivialisant $F$, et en se donnant pour chaque $V_i$ une base 
de $F$ sur $V_i$. 

Supposons réciproquement que (ii) soit vrai, et utilisons l'isomorphisme donné entre 
$G\otimes_M L$ et $E_\eta$ pour voir $G$ comme contenu dans $E_\eta$. 
Puisque les $\pi^{-1}(V_i)$ recouvrent $U$ et puisque $\pi$ est surjective, 
les $V_i$ recouvrent $V$ et $U=\pi^{-1}(V)$. 
Pour tout $i$, notons $(g_{ij})$ une famille d'éléments de $G$ constituant une base
de $E$ sur $\pi^{-1}(V_i)$. La métrique sur $E_\eta$ est telle que, pour tout $i$ et toute famille
$(a_j)$ d'éléments homogènes de $L$ tels que les $a_je_{ij}$ soient tous de même degré, on ait
$v(\sum a_j {ij})=\max v(a_j) p$ $v\in \pi^{-1}(V_i)$. Si les $a_j$ appartiennent tous à $M$ on 
voit que cette valeur ne dépend que de la restriction de $v$ à $M$, c'est-à-dire que de son image
sur $V$, et appartient à $v(M^\times)$. Les $\pi^{-1}(V_i)$ recouvrant $U$,
il en résulte que pour tout $g\in G^\times$ et 
tout $v\in U$, l'élément $v(g)$ de $v(L^\times)$ ne dépend que de la restriction 
$w$ de $v$ à $M$, et appartient à $w(L^\times)$. 
On a ainsi défini une métrique $g\mapsto (w\mapsto w(g))$ sur $G$.
Par construction, pour tout $i$ la famille $(g_{ij})$ est orthonormée au-dessus de
$V_i$ pour cette métrique. Elle définit donc un fibré vectoriel $F$ au-dessus de $V$, 
de fibre générique $G$, admettant $(g_{ij})_j$ pour base sur chacun des $V_i$. 

Si $W$ est un ouvert de $V$, une section homogène de $F$ sur $W$
est un élément homogène $t$ de $G$ tel que $w(t)\leq 1$ pour tout $w\in W$. 
Lorsqu'on voit $t$ comme un élément homogène de $E_\eta$ il vérifie l'inégalité
$v(t)\leq 1$ pour tout $v\in \pi^{-1}(W)$, et appartient donc à $E(\pi^{-1}(W))$. 
On vient ainsi de définir un morphisme de $F$ vers $\pi^*E$, soit encore
par adjonction un morphisme de $\pi^*F$ vers $E$. Comme ce morphisme envoie
pour tout $i$ la base $(g_{ij})$ de $F|_{V_i}$ sur la base $(g_{i})$ de $E|_{\pi^{-1}(V_i)}$,
c'est un isomorphisme. 
\end{proof}

\begin{defi}
Soit $M$ un sous-corps gradué de $L$ contenant $K$, soit $U$
un ouvert non vide de $\zr LK$ et soit $E$ un fibré vectoriel sur $U$.
Si les conditions équivalentes du lemme ci-dessus sont satisfaites,
nous dirons que $M$ est un \textit{corps gradué de définition} 
du couple $(U,E)$. 
\end{defi}

Lorsque $(E_\ell)$ est une famille de fibrés vectoriels sur $U$,
nous dirons que
$M$ est un corps gradué de définition de $(U,(E_\ell))$, si c'est un corps
gradué de définition de chacun des $(U,E_\ell)$. 

\begin{rema}\label{rem-corps-gradue-def}
Si $U$ est quasi-compact, toute famille finie $(E_\ell)$ de fibrés vectoriels sur $U$ possède 
un corps gradué de définition 
qui est de type fini sur $K$. En effet, fixons pour chaque $\ell$ un isomorphisme
entre $E_{\ell,\eta}$ et $L^{n_\ell}$ (où $n_\ell$ est le rang de $E_\ell$). 
Choisissons un recouvrement fini $(U_i)$ de $U$ où chaque $U_i$
est un ouvert affine $\zr LK\{S_i\}$ pour un certain sous-ensemble fini $S_i$
de $L^\times$, et où chacun des $E_\ell$ possède une base 
$(a_{i,\ell,j})_{1\leq j\leq n_\ell}$ sur $U_i$, chacun des 
$a_{i,\ell,j}$ étant lui-même un $n_\ell$-uplet d'éléments homogènes 
de $L$. 
Le sous-corps gradué de $L$ engendré par $K$, les $S_i$ et les composantes
des différents $a_{i,\ell,j}$ est alors de type fini sur $K$,
et est un corps gradué de définition de $(U,(E_\ell)_\ell)$. 
\end{rema}

\begin{lemm}\label{lemme-puissance-p}
Supposons $K$ de caractéristique $p>0$. Soit $E$ un fibré en droites sur $U$
et soit $M$ un corps gradué de définition de $(U,E)$. Soit $N$ un sous-corps
gradué de $M$ contenant $K$ et contenant $M^q$ pour une certaine puissance 
$q$ de $E$. Alors $N$ est un corps gradué de définition de $(U,E^{\otimes q})$.
\end{lemm}

\begin{proof}
Le choix d'un élément homogène arbitraire non nul de $E_\eta$ permet
d'identifier ce dernier à $L$. Dans ce
cas $E^{\otimes}q_\eta$ s'identifie encore à $L$, 
mais sa métrique est telle que $v_{E^{\otimes q}}(s^q)=v_E(s)^q$ pour tout élément
homogène $s$ de $L$ et tout $v\in U$, en rappelant en indice de la valuation 
le fibré vectoriel en jeu. 
Soit $\pi$ l'application naturelle de 
$\zr LK$ vers $\zr MK$ et soit
$\rho$ celle de $\zr MK$ vers $\zr NK$. Soit $V$ l'image de $U$
sur $\zr MK$. Par hypothèse et en vertu 
du lemme \ref{lemm-critere-def-fibre}
il existe un recouvrement ouvert $(V_i)$ de $V$
et pour tout $i$ un élément $s_i$ de $M^\times$ qui 
est une base de $E$ sur $\pi^{-1}(V_i)$. Quitte à raffine
le recouvrement $(V_i)$, on peut supposer que $V_i$ est pour tout $i$
de la forme $\zr MK\{S_i\}$ pour un certain sous-ensemble fini $S_i$
de $M^\times$. Mais on a alors également 
$V_i=\zr MK\{S_i^q\}$, et $S_i^q\subset N$. 
Par conséquent,
chacun des $V_i$ est égal à $\rho{-1}(W_i)$ pour un certain ouvert quasi-compact
$W_i$ de $\zr NK$. Pour tout $i$ et tout $v$ appartenant à 
$(\rho\circ \pi)^{-1}(W_i)=U_i$ on a $v_E(s_i)=1$ et donc
$v_{E^{\otimes q}})(s_i^q)=1$. Comme les $s_i^q$ appartiennent tous
à $N$ par hypothèse, le lemme \ref{lemm-critere-def-fibre}
assure que $N$ est un corps gradué de définition de $(U,E^{\otimes q})$. 
\end{proof}

\begin{lemm}\label{lemm-secglob-max}
Soit $U$ un ouvert quasi-compact 
de $\zr LK$ et soit $E$ un fibré en droites sur $U$. 
Soit $s$ une section homogène non nulle de $E_\eta$
et soit $(a_i)$ une famille finie
non vide d'éléments de $L^\times$. 
Les assertions suivantes sont équivalentes : 

\begin{enumerate}
\item Les $a_is$ sont des sections globales de $E$ qui engendrent ce dernier
au-dessus de $U$. 
\item On a
$v(s)^{-1}=\max_i v(a_i)$ pour tout
$v\in U$. 
\end{enumerate}
\end{lemm}
\begin{proof}
Posons $s_i=a_is$. Supposons que les $s_i$ sont des sections globales de $E$
sur $U$ et qu'elles l'engendrent. Soit $v\in U$. 
On a 
$v(s)^{-1}=v(a_i)v(s_i)^{-1}$
pour tout $i$. 
Comme les $s_i$ sont des sections de $E$, 
on a pour tout $i$
l'inégalité $v(s_i)\leq 1$, qui entraîne que  $v(s)^{-1}\geq v(a_i)$. Et 
puisque les $s_i$ engendrent $E$ sur $U$, il existe $j$ tel que 
$v(s_j)=1$, ce qui entraîne que $v(s)^{-1}=v(a_j)$. Par conséquent
$v(s)^{-1}=\max_i v(a_i)$. 

Réciproquement supposons que $v(s)^{-1}=\max_i v(a_i)$. 
Soit $v\in U$. On a alors pour tout $j$ les égalités
$v(s_j)=v(a_j)v(s)=v(a_i)/(\max_i v(a_i))\leq 1$, 
et $v(s_j)=1$ pour tout $j$ tel que $v(a_j)=\max_i v(a_i)$. 
Ceci entraîne que les $s_j$ sont des sections globales de $E$ sur $U$
et qu'elles l'engendrent. 
\end{proof}

\begin{coro}\label{coro-engendre-globalement-gradue}
Soit $U$ un ouvert quasi-compact 
de $\zr LK$ et soit $E$ un fibré en droites sur $U$. 
Soit $s$ une section homogène non nulle de $E_\eta$. Le fibré $E$
est engendré par ses sections globales si et seulement s'il existe
une famille finie non vide $(a_i)\in L^\times$ telle que
$v(s)^{-1}=\max_i v(a_i)$ pour tout $v\in U$. 
\end{coro}

\begin{rema}\label{rem-ai-sur-aj}
Supposons qu'il existe un sous-corps gradué $L'$ de $L$, un sous-corps gradué 
$K'$ de $L'\cap K$, un ouvert $U'$ de $\zr{L'}{K'}$ et un fibré en droites $E'$ sur
$U'$ tel que $U=\pi^{-1}(U')$ et $E=\pi^* E'$ où $\pi\colon \zr LK\to \zr{L'}{K'}$ est l'application naturelle.
Supposons que $E'$ soit engendré par sections globales et soit $t$ un élément homogène non nul de
$E'_\eta$. Il existe alors une famille finie $(b_i)$ d'éléments de $(L')^\times$ tels que 
$v(t)^{-1}=\max_i v(b_i)$ pour toute $v\in U'$. Mais cette égalité reste vraie pour toute $v\in U$
lorsqu'on voit $t$ comme une section homogène non nulle de $E_\eta=E'_\eta\otimes_{L'}L$
(\ref{metrique-graduee-extension}). Soit $s$ une section homog!ne non nulle quelconque 
de $E_\eta$. On a alors 
\[v(s)=v((s/t)t)=\max_i v((s/t)b_i)=\max_i v(a_i),\]
en posant $a_i=(s/t)b_i$. Et pour tout $(i,j)$, le quotient 
$a_i/a_j$ est égal à $b_i/b_j$ et appartient donc à $(L')^\times$. 
\end{rema}

\begin{lemm}\label{lemme-existe-maxv(ai)}
Soit $E$ une droite vectorielle graduée et soit $t$ un élément homogène non nul de $E$. 

\begin{enumerate}
\item L'unique métrique sur $E$ telle que $v(t)^{-1}=\max_i v(a_i)$ pour tout $v\in \zr LK$
est induite par un fibré en droites $D$ sur $\zr LK$ de fibre générique $E$. 
\item Pour tout $j$, le sous-corps gradué $K(a_i/a_j)_i$ de $L$ est un corps gradué de définition 
de $D$. 
\end{enumerate}
\end{lemm}

\begin{proof}
Montrons tout d'abord \emph a). 
Pour tout $j$, notons $U_j$ l'ouvert $\zr LK\{a_i/a_j\}_i$ de $\zr LK$, et 
$t_j$ l'élément homogène $a_jt$ de $E$. 
On a alors pour tout $v\in U_j$  les égalités
\[v(t_j)=v(a_j)v(t)=v(a_j)/(\max_i v(a_i))=1\] par définition de $U_j$. 
Ainsi $E$ possède-t-il localement une base orthonormée (ce qui,en dimension $1$, signifie simplement un 
vecteur homogène de norme constante $1$) sur $\zr L K$. Par conséquent, la métrique donnée sur $E$
est bien induite par un fibré en droites
$D$ sur $\zr LK$ dont $E$ est la fibre générique. 

Montrons maintenant \emph b). Soit $\rho$ le degré de $t_j$.
Le choix de $t_j$ comme base induit une isométrie de $E$ sur
$L[\rho]$, où ce dernier est muni de l'unique métrique pour laquelle
$v(1)=v(a_j)/\max_i(v(a_i))$, soit encore $v(1)^{-1}=\max_i v(a_i/a_j)$. 
Soit $F$ le sous-corps gradué $K(a_i/a_j)_i$ de $L$. 
Par l'assertion a) déjà traitée, la métrique sur $F[\rho]$ au dessus de 
$\zr FK$ telle que $v(1)^{-1}=\max_i v(a_i/a_j)$ est induite par un fibré en droites
$\Delta$ sur $\zr FK$ de fibre générique $F[\rho]$. L'image réciproque de $\Delta$
sur $\zr LK$ est alors un fibré en droites de fibre générique $L[\rho]$, dont la métrique
induite sur $L[\rho]$ est telle que $v(1)^{-1}=\max_i v(a_i/a_j)$. Par conséquent, 
l'image réciproque de $\Delta$ sur $\zr LK$ est isomorphe à $D$. 
\end{proof}

\section{Nombres d'intersection sur un espace de Zariski-Riemann}
Soit $K$ un corps gradué et soit $L$ une extension de $K$.

\subsection{Modèles}
Un \emph{pré-modèle}\index{pré-modèle} (resp. un \emph{modèle}\index{modèle}) 
de~$L$ sur~$K$ est un $K$-schéma gradué de type fini, séparé et intègre~$V$ 
muni d'un $K$-plongement $K(V)\hookrightarrow L$ 
(resp. d'un $K$-isomorphisme $K(V)\simeq L$). Sauf mention expresse
du contraire, un morphisme dominant entre pré-modèles sera toujours
supposé compatible avec les plongements de leurs corps des fonctions
dans~$L$. Modulo cette convention, il existe au plus un morphisme
dominant entre deux pré-modèles de $L$ sur $K$. 
Si $W$ et $V$ sont deux
pré-modèles de~$L$ sur~$K$, on dira que $W$ domine
$V$ (resp. est propre sur~$V$) pour
signifier qu'il existe un morphisme dominant (resp. un morphisme
propre et surjectif)
$W\to V$.

Si $V$ est un pré-modèle de $L$ sur $K$, on note $\zr LK\{V\}$ le
sous-ensemble de $\zr LK$ formé des valuations dont l'anneau domine
un anneau local de $V$. C'est un ouvert quasi-compact de $\zr LK$
: pour s'en assurer, on peut supposer que $V$ est affine, auquel
cas $\zr LK\{V\}=\zr LK\{f_1,\ldots,f_r\}$ pour n'importe quelle
famille génératrice homogèhe $(f_1,\ldots,f_n)$ de la $K$-algèbre 
graduée $\mathscr
O_V(V)$. L'application naturelle $\zr LK\{V\}\to V$ est surjective (tout
anneau local gradué de corps des fractions gradué
contenu dans $F$ est dominé par
un anneau de valuation gradué de $F$).

Si $W$ et $V$ sont deux pré-modèles de $F$ sur $K$ et si $W\to V$
est un morphisme dominant, alors $\zr LK\{W\}\subset \zr LK\{V\}$, et
$\zr LK\{W\}$ est égal
à $\zr LK\{V\}$ si et seulement si $W\to V$ est propre (c'est une simple
traduction du critère valuatif) ; en particulier
(prendre $V=\Spec K$) on a $\zr LK \{W\}=\zr LK$ si et seulement
si $W$ est propre.

Soit $V$ un pré-modèle de~$L$ sur $K$.
L'application~$\pi$ de~$\zr LK\{V\}$ dans~$ V$ qui applique
une valuation~$v$ sur son centre (dans~$V$) est continue;
en effet, l'image réciproque de tout ouvert~$U$ de~$V$
est l'ouvert~$\zr LK\{U\}$ de~$\zr LK$.
En outre, un élément
homogène $f\in\mathscr O_V(U)$ induit tautologiquement
un élément~$\pi^*(f)$ de~$\mathscr O_{\zr LK}(\pi^{-1}(U))$; si $f$
s'annule en un point~$x$ de~$U$, $\pi^*(f)$ appartient
à l'idéal maximal de toute valuation~$v$ qui domine~$x$.
Ainsi, $\pi$ est un morphisme d'espaces localement annelés
gradués.

\begin{lemm}\label{lemm-eclatement-indetermination}
Soit $V$ un $K$-schéma gradué intègre et séparé
muni d'un plongement de
$K(V)$ dans $L$, soit $r>0$ et soit 
$f$ un élément homogène de degré 
$r$ de $L$. 
Il existe un sous-schéma intègre et fermé
$W$ de $\P^{(1,r)}_K\times_K V$ muni d'un morphisme
propre et surjectif sur $V$, 
et un $K(V)$-plongement de $K(W)$ dans $L$
qui identifie $K(W)$ à $K(V)(f)$, 
tels que pour tout $w\in W$
ou bien $f\in \mathscr O_{W,w}$ ou bien $f^{-1}\in \mathscr
O_{W,w}$. 
\end{lemm}

\begin{proof}
Soit $U$ un ouvert affine non vide de $V$ ; posons
$A=\mathscr O(U)$, et soit $B$ le sous-anneau gradué intègre 
$A[f]$ de $L$. 
Le morphisme surjectif $A[T]\to B, T\mapsto f$ fait 
de $\Spec B$ un sous-schéma gradué fermé de 
$U\times_K \A^r_K$, où $\A^r_K=\Spec K[T/r]$ ; 
le morphisme de $\Spec B$ vers 
$\A^r_K$ défini par $f$ est par construction le morphisme
induit par la seconde projection de $\A^r_K\times_K U$. 

En immergeant  $\A^r_K$ dans $\P^{(1,r)}_K$ de manière standard, on fait de $B$
un sous-schéma gradué de $U\times_K \P^{1,r}_K$, puis de
$V\times_K \P^{(1,r)}_K$. 
Soit $W$ l'adhérence réduite de $\Spec B$ dans $V\times_K \P^{(1,r)}_K$. 
C'est un $K$-schéma gradué intègre. La première projection définit un morphisme
propre de $W$ sur $V$, qui est dominant puisque $W$ contient $\Spec B$
comme ouvert dense, et qui est donc surjectif. La seconde projection définit
un morphisme de $W$ vers $ \P^{1,r}-K$ qui prolonge le morphisme $f$
de $\Spec B$ vers $\A^r_K$. L'existence même de ce prolongement
signifie précisément que $f\in \mathscr O_{W,w}$ ou bien $f^{-1}\in \mathscr
O_{W,w}$ pour tout 
$w\in W$. 

\end{proof}

\begin{lemm}[Variante graduée du lemme de Chow]
Soit $V$ un pré-modèle de~$L$ sur~$K$.
Soit 
$W$ un pré-modèle de $L$ dominant $V$. Il existe 
un pré-modèle $W'$ de $L$ propre et birationnel
sur $W$ et quasi-projectif sur $V$. 
\end{lemm}

\begin{proof}
Soit $p$ l'unique morphisme 
dominant de $W$ sur $V$
induisant l'inclusion $K(V)\subset K(W)$ de sous-corps
gradués de $L$. 
Soit $(V_i)$ et $(W_i)$ deux recouvrements de 
$V$ et $W$, respectivement, par des ouverts affines
denses tels que $p(W_i)\subset V_i$ pour tout $i$. 
On peut écrire pour tout $i$ l'anneau gradué
$\mathscr O(W_i)$ comme un quotient de $\mathscr O_{V_i}[T/r_i]$ 
pour un certain polyrayon $r_i$, ce qui permet de voir 
$W_i$ comme un sous-schéma gradué fermé de $V_i\times_K \A^{r_i}_K$, 
puis comme un sous-schéma gradué de $V_i\times_K \P^{1,r_i}_K$, puis de
 $V\times_K \P^{1,r_i}_K$. Soit $Z_i$ l'adhérence réduite de $W_i$ dans 
$V\times_K \P^{1,r_i}_K$ et soit $U$ l'intersection de tous les $W_i$
dans $W$.  
Pour tout $i$, on dispose par construction d'une immersion 
de $W_i$ dans $Z_i$, d'où un morphisme de $U$ dans 
$\prod_K Z_i$ ; soit $Z$
l'adhérence réduite de $U$ dans $\prod_K Z_i$. 
Pour tout $i$, notons $W'_i$ l'image réciproque sur $Z$
de $W_i$ par la projection sur $Z_i$, où $W_i$ est vu
comme un ouvert
dense de $Z_i$.

Soit $W'$ la réunion des $W'_i$. 
Le schéma gradué intègre $W'$ contient $U$ comme ouvert dense, 
et a donc $K(W)$ comme corps gradué des fonctions. Il 
s'immerge par construction
dans $V\times_K \prod_i \P^{1,r_i}_K$,
puis dans $V\times_K \P^s_K$ pour un certain
polyrayon $s$ à l'aide d'une variante graduée du plongement de Segre. 
Par construction, chacun des $W'_i$ domine (en tant que schéma gradué
intègre de corps gradué des fonctions $K(W)$) l'ouvert $W_i$ de $W$, si bien
que $W'$ domine $W$.

Il suffit pour conclure de montrer que $W'$ est propre
sur $W$.
Chacun des $W'_i$ est par construction propre sur $W_i$, puisque $W'_i\to W_i$ est 
obtenue par changement de base à partir d'un morphisme entre deux 
$V$-schémas gradués propres. Nous allons montrer que l'image réciproque
de $W_i$ sur $W'$ par le morphisme $f\colon W'\to W$ est égale à $W'_i$,
ce qui terminera
la démonstration puisque la propreté est une propriété locale sur le but. 
La définition même de $W_i$ et de $f$ entraîne que $W'_i\subset f^{-1}(W_i)$. 
Or comme $W'_i\to W_i$ est propre et $f^{-1}(W_i)\to W_i$
séparé, l'image de $W'_i$ dans $f^{-1}(W_i)$ est un fermé
de $f^{-1}(W_i)$. 
Puisque $W'_i$ est un ouvert dense de $W'$ et 
a fortiori de $f^{-1}(W_i)$, il vient $W'_i=f^{-1}(W_i)$. 
\end{proof}

\begin{coro}\label{coro-quasiproj-cofinal}
Pour tout pré-modèle $W$ de $L$ sur $K$, 
il existe un pré-modèle $W'$ de $L$ sur $K$ propre et birationnel
sur $V$ et quasi-projectif sur $K$.
\end{coro}

\begin{proof}
Il suffit d'appliquer le lemme précédent avec $V=\Spec K$.
\end{proof}

\begin{prop}
Soit $V$ un pré-modèle de $L$ sur $K$. 
Soit $\pi$ le morphisme 
d'espaces localement annelés gradués de
$\zr LK\{V\}$
vers la limite de tous les pré-modèles de $L$ sur $K$ propres sur $V$. 
Le morphisme $\pi$ est un isomorphisme.
\end{prop}

\begin{proof}
Montrons d'abord que $\pi$ est ensemblisement bijectif. 
Soit $u$ et $v$ deux valuations de $\zr LK\{V\}$ telles que 
$\pi(u)=\pi(v)$. Soit $f\in L^\times$. En vertu du lemme 
\ref{lemm-eclatement-indetermination}, il exuste un pré-modèle $W$
de $L$ propre sur $V$ tel que pour tout $w\in W$ on ait $f\in \mathscr 
O_{w,W}$ ou $f^{—1}\in\mathscr O_{W,w}$. 
Par hypothèse, $u$ et $v$ ont même image $w$ sur $W$. 
Si $f\in \mathscr O_{W,w}$ alors $f\in \mathscr O_u$ et $f\in \mathscr O_v$ ; 
dans le cas contraire $f^{-1}$ appartient à l'idéal maximal de
$\mathscr O_{W,w}$, et donc à l'idéal maximal de $\mathscr O_u$ et à celui
de $\mathscr O_v$, et il n'appartient dès lors ni à $\mathscr O_u$
ni à $\mathscr O_v$. Il s'ensuit que $\mathscr O_u=\mathscr O_v$, c'est-à-dire
que $u=v$, et $\pi$ est injective. 

Montrons que $\pi$ est bijective. Supposons donné une famille compatibles
$(x_W)_W$ 
de 
points sur tous les pré-modèles de $L$ propres sur $V$. Il résulte du lemme
\ref{lemm-eclatement-indetermination} que la colimite $R$ des anneaux locaux
de ces points contient $f$ ou $f^{-1}$ pour tout $f\in L^\times$. Par conséquent,
$R$ est l'anneau gradué d'une valuation de $L$ ; par construction, 
$R$ domine tous les $\mathscr O_{W,x_W}$. C'est donc l'anneau
gradué d'une valuation $u$
de $L$ qui par construction est centrée sur $V$ (puisque $R$ domine
$\mathscr O_{V,x_V}$) et est envoyée par $\pi$ sur le système des $x_W$.

Montrons que $\pi$ est un homéomorphisme. Soit $E$ un sous-ensemble fini de 
$L^\times$. Toujours d'après le lemme \ref{lemm-eclatement-indetermination}
il existe un pré-modèle $W$ de $L$ propre sur $V$ et tel que pour tout $(f,w)\in 
E\times W$ on ait $f\in \mathscr O_{W,w}$ ou $f^{-1}\in \mathscr O_{W,w}$. 
Soit $W'$ l'ouvert de $W$ constitué des
points $w$ tels que $f\in \mathscr O_{W,w}$
pour toute $f\in E$. Alors $\zr LK\{V\}\cap \zr LK\{E\}$ est l'image
réciproque de $W'$ sur $\zr LK \{V\}$. Comme les ouverts de $\zr LK\{V\}$ 
de la forme  $\zr LK\{V\}\cap \zr LK\{E\}$ (pour $E$ variable) engendrent
la topologie de $\zr LK\{V\}$, la bijection continue $\pi$ est ouverte, et donc
est un homéomorphisme. 

Enfin, le raisonnement suivi pour établir la surjectivité de $\pi$ 
montre que si $u\in \zr LK\{V\}$ et si 
$(x_W)_W$ désigne l'image $\pi(u)$ alors
l'anneau gradué local $\mathscr O_u$ 
est la colimite des anneaux gradués locaux 
$\mathscr O_{W,x_W}$, ce qui montre que $\pi$
est un isomorphisme d'espaces localement annelés gradués. 
\end{proof}

\begin{defi} Soit $U$ un ouvert quasi-compact de $\zr LK$, soit $d$
un entier  et
soit $E$ un fibré vectoriel de rang $d$ sur $U$.
On appellera
\textit{pré-modèle} (resp. \textit{modèle}) de $E$ tout couple $(V,F)$ où $V$
est un pré-modèle (resp. un modèle) de $L$ sur $K$ tel que $U=\zr
LK\{V\}$ et où $F$ un fibré vectoriel sur $V$ dont l'image inverse sur $U=\zr LK\{V\}$ est isomorphe à $E$.
\end{defi}

Si $(V,F)$ est un pré-modèle d'un fibré vectoriel $E$ sur un
ouvert quasi-compact $U$ de $\zr LK$ alors pour tout
pré-modèle $W$ de $L$ sur $K$ qui est propre sur $V$, le couple $(W,F|_V)$
est un pré-modèle de $E$. 

\subsection{}\label{cocycles-niveau-fini}
Soit $E$ un fibré vectoriel de rang $d$ sur $U$. Choisissons
un recouvrement ouvert fini 
$(U_i)$ de $U$ où $U_i=\zr LK\{S_i\}$ pour un certain sous-ensemble 
fini $S_i$ de $L^\times$ et pour tout $i$ un isomorphisme
$\phi_i\colon E|_{U_i}
\simeq \mathscr O_{U_i}[r_i]$ pour un certain polyrayon $r_i$. Le fibré vectoriel
$E_i$ peut alors être reconstitué à isomorphisme près par le «cocycle»
$(\phi_i|_{U_i\cap U_j}\circ \phi_j^{-1}|_{U_i\cap U_j})_{i,j}$, qu'on peut
voir comme un cocyle matriciel. L'ensemble des ces données mettant en jeu un 
ensemble fini d'éléments de $L^\times$, il existe un pré-modèle
$(V,F)$ de $(U,E)$. Plus généralement, on peut construire un 
pré-modèle $(V, F_i)_i$ pour toute famille finie $(E_i)$ de fibrés vectoriels
sur $U$, en construisant d'abord un pré-modèle pour chacun des $E_i$, puis en 
prenant un pré-modèle qui les domine tous. 

Si $L$ est de type fini sur $K$, tout fibré vectoriel sur $U$, et plus
généralement toute famille finie de fibrés vectoriels sur $U$, admet un 
modèle.

Si $E$ est in fibré en droites sur $U$, il est engendré par ses sections
globales si et seulement s'il possède un pré-modèle $(V,F)$ où $F$
est engendré par ses sections globales. 

\begin{rema}
Supposons que $L$ soit de type fini et degré de transcendance 1 sur $K$. C'est
alors
comme dans le cas non gradué
le corps gradué des fonctions d'une $K$-courbe
graduée projective intègre et
normale $C$ qui est uniquement déterminée, et la flèche naturelle $\rho \colon
\zr LK\to C$ est un isomorphisme d'espaces localement annelés gradués ; tout
fibré vectoriel $E$
sur un ouvert quasi-compact $U$ de $\zr LK$ 
admet dès lors un modèle canonique, 
à savoir le couple $(\rho(U),\rho_*E)$. 
\end{rema}

\begin{lemm}\label{lemme-difference-globalement-engendre}
Soit $U$ un ouvert quasi-compact de $\zr LK$ et soit $E$ un fibré en droites
sur $U$. Il existe deux fibrés en droites $D_1$ et $D_2$ sur $U$ engendrés par
leurs sections globales et un isomorphisme $E\simeq D_1\otimes D_2^\vee$. 
\end{lemm}

\begin{proof}
Choisissons un pré-modèle $(V,G)$ de $(U,E)$. D'après le corollaire 
\ref{coro-quasiproj-cofinal} on peut supposer $V$ quasi-projectif. 
Il résulte alors de \ref{ss.quasiproj-difference}
qu'il existe deux fibrés en droites $F_1$ et $F_2$ sur $V$, engendré par leurs
sections globales, et tels que $G\simeq F_1\otimes F_2^\vee$. 
Il suffit pour conclure de prendre pour $D_1$ et $D_2$ les images réciproques 
respectives de $F_1$ et $F_2$ sur $U$. 
\end{proof}

\begin{exem}\label{exemple-o1-gradue}
Soit $D$ une droite vectorielle graduée sur $L$ et soit $t$ un élément homogène non nul de $D$, 
de degré $r$. Soit $(a_i)_{1\leq i\leq n}$ une famille finie d'éléments de $L^\times$ algébriquement indépendants 
sur $K$ et soit $E$ le fibré en droites sur $\zr LK$ dont la fibre générique est $D$ et dont la métrique 
est telle que $v(t^{-1})=\max(1, v(a_1),\ldots, v(a_n))$ (lemme \ref{lemme-existe-maxv(ai)}). 

Identifions $\P^{(1,\abs{a_1},\ldots, \abs{a_n})}_K$ à un prémodèle de $L$ sur $K$ via le plongement 
$K(T_i/T_0)_{1\leq i\leq n}\hookrightarrow L, (T_i/T_0)\mapsto a_i$ dont nous notons $F$ l'image. 
Soit $\Delta$ l'image réciproque sur $\zr FK$ du fibré en droites $\mathscr O(1)[r]$ de
$\P^{1,\abs{a_1},\ldots, \abs{a_n}}_K$. La fibre générique
de $\Delta$ est égale à $F\cdot T_0$ où $T_0$
est considéré comme homogène de degré $r$. Les éléments $T_0$ et $T_i=a_i T_0$ constituent une 
famille de sections globales de $\Delta$ qui engendrent ce dernier. Il résulte alors du lemme
\ref{lemm-secglob-max} que la métrique sur $F\cdot T_0$ au-dessus de $\zr FK$ induite par
$\Delta$ est telle que $v(T_0)^{-1}=\max( 1,v(a_1),\ldots v(a_n))$ pour tout $v$. 
Il s'ensuit que l'image réciproque de $\Delta$ sur $\zr LK$ est isomorphe à $E$, c'est-à-dire encore que 
 $(\P^{(1,\abs{a_1},\ldots, \abs{a_n})}_K,\mathscr O(1)[r])$ est un pré-modèle de $(\zr LK, E)$. C'en est un modèle
 si $L=K(a_i)_{1\leq i\leq n}$. 
\end{exem}

\subsection{}\label{ss-intersection-nongraduee}
Soit $K\hookrightarrow L$ une extension de corps
classiques gradués ; soit 
$n$ un entier
supérieur ou égal au degré de transcendance de
$L$ sur $K$. 
Soit $(E_1,\dots, E_n)$ une famille de
fibrés en droites sur $\zr LK$.
Il existe un  pré-modèle propre $(V, (F_i))$
de $(\zr LK, (E_i))$
(on peut même si on le souhaite prendre pour $V$
un modèle propre de $L$ sur $K$, mais ce n'est pas nécessaire pour ce qui suit). 
La formule de projection (lemme \ref{proprietes-intersection-graduee}
\emph d))
entraîne que l'entier défini par
\[ \begin{cases} [L:K(V)]\, \int_V c_1(L_1)\cdots c_1(L_n) & 
\text{si $\dim(V)=n$;}\\
0 &\text{si $\dim(V)<n$} \end{cases}\]
ne dépend pas du choix de~$V$ et des~$F_i$. 
On le notera
\[ \int_{\zr LK} c_1(E_1)\dots c_1( E_n). \]

\subsection{}\label{ss-intersection-changebasefini}
Soit $L'$ une extension finie de $L$ et soit $\pi$ le morphisme 
naturel de $\zr{L'}K$ vers $\zr LK$. Pour tout $i$, notons $E'_i$ 
le fibré en droites $\pi^*E_i$ sur $\zr{L'}K$.
Soit $(V, (F_i))$ un modèle propre de la famille des
$L_i$. C'est alors également un pré-modèle propre de la famille
des $E'_i$, si bien qu'on a les égalités 
\begin{align*}
 \int_{\zr {L'}K}c_1(E'_1)\dots c_1( E'_n)&=
[L':L] \int_V c_1(F_1)\cdots c_1(F_n)
\\
 &=[L':L]\int_{\zr LK} c_1(E_1)\dots c_1( E_n). 
 \end{align*}

\subsection{}\label{coro-intersection-graduee}
Soit $K'$ une extension 
de $K$.
Choisissons un modèle propre $(V, (F_i))$ de la famille des $E_i$. 
Le $K'$-schéma $V'=K'\otimes_k L$ possède alors $r$ composantes irréductibles 
$V_1,\ldots, V_r$
(considérées comme munies de la structure réduite), 
qui sont toutes de dimension $n$ (on se ramène à l'énoncé non gradué analogue grâce au lemme
\ref{lemm-composantes-graduees}). 
Notons $\mathscr O_j$ 
local de $V'$ au point générique de $V_j$, notons $m_j$ sa longueur
et $L_j$ son corps gradué résiduel, c'est-à-dire le corps gradué des fonctions de 
$V_j$. Remarquons que $K'\otimes_K L$ est noethérien (en écrivant $L$ comme corps gradué 
des fractions de $\mathscr O_V(U)$ où $U$ est un ouvert affine non vide de $V$, on voit en effet que $K'\otimes_K L$
est un localisé d'un anneau gradué noethérien) et que les $\mathscr O_j$ sont ses localisés
en ses différents idéaux premiers gradués minimaux. Les $\mathscr O_j$, les $m_j$ et les $L_j$
ne dépendent donc que de $K'$ et pas du choix du modèle $V$. Pour tout $(i,j)$, notons 
$E_{ij}$ l'image réciproque de $E_i$ sur $\zr{L_j}{K'}$.

Pour tout $i$ et tout $j$ notons 
$F'_i$ l'image réciproque de $F_i$ sur $V'$
et $F_{ij}$ son image réciproque sur $V_j$.
Pour tout indice $j$ la famille $(V_j, (F_{ij})_i)$ est alors un modèle propre
de la famille $(E_{ij})_i$. 
Il vient 
\begin{align*}
\int_{\zr LK} c_1(E_1)\dots c_1( E_n)&=\int_V c_1(F_1)\cdots c_1(F_n)\\
&=\int_{V'}c_1(F'_1)\cdots c_1(F'_n)\\
&=\sum_j m_j \int_{V_j}c_1(F_{1j})\cdots c_1(F_{nj})\\
&=\sum_j m_j \int_{\zr {L_j}{K'}}c_1(E_{1j})\dots c_1( E_{nj}).
,\qedhere
\end{align*}
où la troisième égalité résulte du lemme \ref{proprietes-intersection-graduee}
\emph c).

Mentionnons un cas particulier important : si $K'\otimes_K L$ est intègre, si l'on désigne par
$L'$ son corps gradué des fractions et par
$E'_i$ l'image réciproque de $E_i$ sur $\zr{L'}{K'}$ on a alors
\[\int_{\zr LK} c_1(E_1)\dots c_1( E_n)=\int_{\zr{L'}{K'}}c_1(E'_1)\dots c_1( E'_n).\]

\subsection{}\label{ss-numeff-nongradue}
Soit $(E_1,\ldots, E_m)$ une famille finie
de fibrés en droites sur $\zr LK$ et soit $(V,F_1,\ldots, F_m)$ un pré-modèle de $(\zr LK, E_1,\ldots, E_m)$  ;
remarquons que $V$ est propre. Nous dirons que $c_1(E_1)\ldots c_1(E_m)$ est numériquement effectif si 
$c_1(F_1)\ldots, c_1(F_m)$ est numériquement effectif (\ref{def-numeff}), 
ce qui ne dépend pas du choix du pré-modèle $V$. Nous dirons qu'un fibré en droites
$E$ sur $\zr LK$ est numériquement effectif si $c_1(E)$ est numériquement effectif. Tout fibré en droites
sur $\zr LK$ engendré par ses sections globales est numériquement effectif. 

Soit $L'$ une extension de $L$ et soit  $K'$ un sous-corps gradué de $L'$ contenant $K$. 
Pour tout $i$ notons $E'_i$ l'image réciproque de $E_i$ sur $\zr{L'}{K'}$. 
Si $c_1(E_1)\ldots c_1(E_m)$ est numériquement effectif il en va de même de 
$c_1(E'_1)\ldots c_1(E'_m)$ : cela résulte de la préservation  du caractère numériquement effectif 
par morphismes propres et extension des scalaires (voir \ref{def-numeff}).

\begin{exem}\label{exemple-o1-ZR}
Soit $D$ une droite vectorielle graduée sur $L$ et soit $t$ un élément homogène non nul de $D$.
Soit $(a_i)_{1\leq i\leq n}$ une famille finie d'éléments de $L^\times$ algébriquement indépendants 
sur $K$ telle que $K=K(a_i)_i$ et soit $E$ le fibré en droites sur $\zr LK$ dont la fibre générique est $D$ et dont la métrique 
est telle que $v(t^{-1})=\max(1, v(a_1),\ldots, v(a_n))$ (lemme \ref{lemme-existe-maxv(ai)}). 
L'exemples \ref{exemple-o1-gradue} exhibe un modèle 
de $(\zr LK, E)$ de la forme $(\P^r_K, \mathscr O(1)[s])$
(pour un certain polyrayon $s$ et un certain $s>0$). 
On déduit alors de l'exemple \ref{exemple-c1-o1} et de la remarque
\ref{rema-intersection-decalage} on voit que $\int_{\zr LK}c_1(E)^n=1$.

\end{exem}

\section{Réduction graduée des anneaux et modules
normés}
Nous allons ici présenter la théorie de la réduction 
graduée des anneaux et modules normés telle qu'introduite
par Temkin dans \cite{temkin2004}, en insistant sur le cas
particulier majeur des corps ultramétriques. 

\subsection{Réduction graduée}\label{ss-red-graduee}
Soit $M$ un groupe abélien muni d'une 
semi-norme ultramétrique $\|\cdot\|$, c'est-à-dire
d'une application $\|\cdot\|\colon M\to \R_+$
telle que 
\[\|0\|=0, \quad \|(-m)\|=\|m\|\quad \text{et}\quad \|m+n\|\leq \max(\|m\|, \|n\|)\]
pour tout $(m,n)$. 
On pose
\[\widetilde M=\bigoplus_{r>0}\{m\in M, \|m\|\leq r\}/\{m\in M, \|m\|
<r\}.\]
La loi interne de $M$ induit une loi interne sur $\widetilde M$ qui en fait
un groupe abélien gradué. Dans ce contexte, 
si $\mu$ est un élément homogène de $\widetilde M$ 
nous poserons $\|\mu\|=0$
si $\mu$ est nul, et nous désignerons
sinon par $\|\mu\|$ le degré de l'unique
composante homogène contenant $\mu$ 
(si la semi-norme de $M$ est notée $\abs \cdot$, nous
écrirons $\abs \mu$ plutôt que
$\|\mu\|$). 
Si $m$ est un élément de $M$ et si $r$
est un réel strictement positif tel que 
$r\geq m$, 
on notera $\widetilde m_r$ l'image de $a$ dans $\widetilde
A_r$. 
Nous désignerons par $\widetilde m$ (sans indice)
l'élément homogène de 
$\widetilde M$ égal à $\widetilde m_{\|m\|}$ si $\|m\|$ est non nul, 
et à $0$ sinon. 
Si $N$ est un sous-groupe dense de $M$ pour la
topologie définie par $\|\cdot\|$ il résulte des définitions 
que $\widetilde N=\widetilde M$. .

Soit $A$ un anneau muni d'une 
semi-norme ultramétrique et sous-multiplicative
$\|\cdot\|$.
(nous incluons dans la définition le fait que
$\|1\|\leq 1$, ce qui entraîne que $\|1\|=1$
ou $\|1\|=0$, ce dernier cas n'étant possible que si
$\|\cdot\|$ est identiquement nulle).
La multiplication de $A$ induit
alors une loi interne sur le groupe abélien 
gradué $\widetilde A$ qui en fait un anneau gradué ; nous l'appellerons
\textit{l'anneau gradué résiduel} de $A$.

L'anneau gradué $\widetilde A$ est intègre si et seulement si
$\|\cdot\|$ est multiplicative (nous incluons dans la
définition le fait que $\|1\|=1$). 

Remarquons que $\widetilde A^1$ est l'anneau résiduel
classique $\{a\in A, \|a\|\leq 1\}/\{a\in A, \|a\|<1\}$ de 
$A$, et que l'application $a\mapsto \widetilde a^1$
de $A$ vers $\widetilde A^1$ est l'application de réduction 
classique. 

Si $k$ est un corps ultramétrique complet, 
son anneau gradué résiduel $\widetilde k$ est un 
corps gradué, que nous appellerons le corps
gradué résiduel de $k$.

Soit $M$ un $A$-module muni d'une semi-norme 
$\|\cdot\|$ telle que $\|am\|\leq \|a\|\cdot \|m\|$ pour tout
$(a,m)$. La loi externe de $M$ induit alors une loi de
$\widetilde A\times \widetilde M$ vers $\widetilde M$ qui fait
de ce dernier un $\widetilde A$-module gradué. 

\subsection{}\label{ss-independance-lin-residuelle}
Soit 
$A$ un anneau muni 
une semi-norme sous-multiplicative $\|\cdot\|$, et soit
$M$ un $A$-module muni d'une semi-norme
$\|\cdot\|$ telle que $\|am\|\leq \|a\|\cdot \|m\|$ pour tout $(a,m)$. 
Soit $m=(m_i)$ une famille d'éléments de $A$ tels que $\|m_i\|\neq 0$ pour tout $i$. 

Soit
$(a_i)_{i\in I}$ une famille d'éléments presque tous nuls de $A$.
Supposons que les $\|a_i\|$ sont non toutes nulles, 
soit $r$ le
maximum des $\|a_i\|\cdot \|m_i\|$, 
et soit $I'$ l'ensemble des
indices $i$ tels que  $\|a_i\|\cdot\|m_i\|=r$. Posons $n=
\sum_{i\in I} a_im_i$. 
On a alors $\|n\|\leq r$ et $\widetilde n_r=\sum
_{i\in I'} \widetilde {a_i}\widetilde {m_i}$. 
On a $\|n\|<r$ si et seulement si
$\sum_{i\in I'}\widetilde a_i \widetilde m_i=0$, et
si $\|n\|=r$ alors $\widetilde n=\sum_{i\in I'}\widetilde
a_i \widetilde {m_i}$. 

Il en résulte que les
assertions suivantes sont équivalentes : 

\begin{itemize}
\item[(i)] On a $\|\sum a_i m_i\|=\sup_i \|a_i\|\cdot \|m_i\|$ pour toute famille
$(a_i)$ de scalaires presque tous nuls (notons que 
$\sup_i \|a_i\|\cdot \|m_i\|=\max_i \|a_i\|\cdot \|m_i\|$ dès que $I$ est non vide);
\item[(ii)] La famille $(\widetilde {m_i})_i$ d'éléments homogènes de $\widetilde M$ est libre sur $\widetilde A$. 

Et si elles sont satisfaites alors pour toute famille 
$(a_i)$ d'éléments de $A$ presque tous nuls,
la réduction $\widetilde {\sum a_im_i}$ est égale
à $\sum_{i\in I' }\widetilde a_i \widetilde {m_i}$ où $I'$ désigne l'ensemble des indices
$i$ tels que $\|a_i\|
\cdot \|m_i\|=\sup_j \|a_j\|
\cdot \|m_j\|$. 

\end{itemize}

\subsection{}\label{ss-independance-alg-residuelle}
Soit $k$ un corps ultramétrique complet
et soit $A$ une $k$-algèbre
munie d'une semi-norme multiplicative
$\|\cdot\|$. 
Soit $(a_i)$ une famille d'éléments de
$A$ tels que les $\|a_i\|$ soient tous non nuls. On a
$\|a^J\|=\|a\|^J$ et donc $\widetilde{a^J}=\widetilde a^J$ pour tout multi-exposant $J$. Il résulte alors
de \ref{ss-independance-lin-residuelle}
appliqué à la famille $(a^J)_J$
du $k$-espace vectoriel semi-normé $A$ que les assertions suivantes sont équivalentes : 
\begin{itemize}
\item[(i)] On a $\|\sum \lambda_J a^J\|=\max \abs{\lambda_J}\cdot \|a^J\|$ pour toute famille
$(\lambda_J)$ de scalaires presque tous nuls; 
\item[(ii)] La famille $(\widetilde {a_i})_i$ d'éléments homogènes de $\widetilde A$ est
algébriquement indépendante sur~$\widetilde k$.
\end{itemize}

Et si elles sont satisfaites alors pour toute famille 
$(\lambda_J)$ de scalaires presque tous nuls,
la réduction $\widetilde {\sum \lambda_J a^J}$ est égale
à $\sum_{J\in E}\widetilde \lambda_J \widetilde a^J$ où $E$ désigne l'ensemble des multi-indices
$J$ tels que $\abs{\lambda_J}
\cdot \|a\|^J=\max_{N} \abs{\lambda_N}
\cdot \|a\|^N$.

\begin{exem}\label{exem-ktr-tilde}
Soit $k$ un corps ultramétrique
et soit $r=(r_1,\ldots, r_n)$ un polyrayon de longueur finie. 
Posons $T=(T_1,\ldots, T_n)$ et notons $A$
la $k$-algèbre $k[T]$ munie de la norme 
multiplicative 
$\sum a_I T^I\mapsto \max \abs{a_I}r^I$. 
On déduit de
l'équivalence (i)$\iff$(ii)
de~\ref{ss-independance-alg-residuelle}, appliquée
à la famille~$(T_1,\dots,T_n)$ de $A$, 
que le morphisme 
de
$\widetilde k$-algèbres graduées 
de~$\widetilde k[\tau/r]$ vers~$\widetilde A$
envoyant~$\tau_i$ sur $\widetilde T_i$ est un isomorphisme. 

Soit $K$ le corps des fractions de $A$, muni de la valeur absolue induite par la norme de~$A$. 
Par ce qui précède, il existe un isomorphisme de $\widetilde k$-extensions graduées entre
$\widetilde k(\tau/r)$ et $\widetilde K$, qui envoie $\tau_i$ sur $T_i$. 
\end{exem}

\subsection{}\label{ss-invariants-valuatifs}
Soit $k\hookrightarrow L$ une extension de corps ultramétriques. 
Nous allons voir que quelques invariants classiques se décrivent en un sens plus
simplement en termes de réductions
graduées.

Posons $e=(\abs{L^\times}:\abs{k^\times})$ et
$f=[\widetilde L^1:\widetilde k^1]$ (autrement dit, $e$ et $f$ sont les indices de ramification 
et d'inertie de $L$ sur $k$). Par construction des corps gradués résiduel, 
$\abs{\widetilde k^\times}=\abs{k^\times}$ et $\abs{\widetilde L^\times}=\abs{L^\times}$. 
Il vient 
\[ef=(\abs{\widetilde L^\times}:\abs{\widetilde k^\times})[\widetilde L^1:\widetilde k^1]
=[\widetilde L:\widetilde k],\]
où la seconde égalité provient de \ref{deg-extension-graduee}. 

Soit $(\alpha_i)$ une base de $\widetilde L$ sur $\widetilde k$. Pour tout $i$, choisissons un élément
$a_i$ de $L^\times$ tel que $\widetilde{a_i}=\alpha_i$. Pour toute famille
$(\lambda_i)$ d'éléments de $k$ presque tous nuls on a en vertu de  \ref{ss-independance-lin-residuelle}
l'égalité $\abs{\sum \lambda_i a_i}=\max_i \abs{\lambda_i}\cdot \abs{a_i}$. En particulier, la famille
$(a_i)$ est libre, si bien que $[\widetilde L:\widetilde k]\leq [L:k]$. 

Supposons que $k$ est hensélien (ce sera toujours le cas en pratique dans ce travail,
où nous travaillerons avec des corps
ultramétriques complets) et que $L$ est une extension finie de $k$. 
On appelle \textit{défaut} de $L$ sur $k$ le quotient 
$\frac{[L:k]}{[\widetilde L:\widetilde k]}$, et on dit que l'extension est sans défaut si ce défaut vaut $1$. 
Le défaut est toujours une puissance de l'exposant caractéristique résiduel, et il vaut donc automatiquement $1$
lorsque la caractéristique résiduelle est nulle. Le corps hensélien $k$ est dit \textit{stable} si toutes ses extensions finies
sont sans défaut. Si $k$ est complet et de valuation discrète ou s'il est de caractéristique résiduelle nulle, il est stable. 

Ne supposons plus nécessairement $k$ hensélien, ni $L$ fini sur $k$. 
Soit $(\beta_j)$ une base de transcendance de $\widetilde L$ sur $\widetilde k$. Pour tout $j$, choisissons
un élément
$b_j$ de $L^\times$ tel que $\widetilde{b_j}=\alpha_i$. Posons $b=(b_j)$. Si $(\lambda_J)_J$ est une famille d'éléments de $k$
presque tous nuls paramétrée par les multi-exposants $J$ on a en vertu de  \ref{ss-independance-alg-residuelle}
l'égalité $\abs{\sum \lambda_Jb^J}=\max_i \abs{\lambda_J}\cdot \abs b^J$. En particulier, la famille
$(b_j)$ est algébriquement indépendante, si bien que le degré
de transcendance de $\widetilde L$ sur $\widetilde k$ est inférieur ou égal à celui de $L$ sur $k$. 
Rappelons par ailleurs que l'on a 
\begin{align*}
\mathrm{degtr}(\widetilde L/\widetilde k)&=\mathrm{degtr}(\widetilde L^1/\widetilde k^1)+\dim_\Q(\Q\otimes_\Z\abs{\widetilde L^\times}
/\abs{\widetilde k^\times})\\
&=\mathrm{degtr}(\widetilde L^1/\widetilde k^1)+\dim_\Q(\Q\otimes_\Z\abs{L^\times}
/\abs{k^\times})
\end{align*}
où la première
égalité provient de \ref{deg-extension-graduee}. On obtient ainsi 
l'inégalité d'Abhyankar 
\[\mathrm{degtr}(\widetilde L^1/\widetilde k^1)+\dim_\Q(\Q\otimes_\Z\abs{L^\times}
/\abs{k^\times})\leq \mathrm{degtr}(L/k).\]

Lorsque $L$ est de type fini sur $k$, on dit que $L$ est une extension d'Abhyankar de
$k$ si l'inégalité d'Abhyankar est une égalité.

\chapter{Rappels de géométrie analytique}
\def\pf{\text{\upshape pf}}
\def\red{\text{\upshape red}}
\def\lf{\text{\upshape lf}}
\def\Bor{\text{\upshape Bor}}
\def\loc{\text{\upshape loc}}
\def\Rad{\text{\upshape Rad}}
\def\Zar{\text{\upshape Zar}}
\def\Tau{{\mathrm T}}
\def\gZ{\mathrm Z}
\def\degcal{\operatorname{dc}\nolimits}
\def\codim{\operatorname{codim}}

\section{Géométrie analytique : conventions, notations, premiers rappels}

On fixe pour toute la suite de l'article un corps ultramétrique complet~$k$ ; 
\emph{sa valeur absolue peut être triviale.}
On note $k^\circ$
son anneau de valuation. Nous désignerons par 
$\widetilde k$ le corps \textit{gradué}
résiduel de $k$ (\ref{ss-red-graduee}). Sa composante
$\widetilde k^1$ de degré $1$ est le corps résiduel standard de $k$, 
c'est-à-dire le corps résiduel de l'anneau local $k^\circ$.

Dans ce texte, la notion d'espace~$k$-analytique 
sera à prendre \emph{au sens de Berkovich}, 
et plus précisément de \cite{berkovich1993}. Rappelons qu'un 
polyrayon est 
une famille (le plus souvent finie) de nombres réels strictement positifs  ; 
la \textit{longueur} \index{longueur d'un polyrayon}
d'un polyrayon est le cardinal de son ensemble d'indices. 
Faisons quelques rappels pour la commodité du lecteur. 

\subsection{}\label{ss-description-ma}
La construction fondamentale sur laquelle repose la géométrie de Berkovich est la suivante. 
Soit $A$ un anneau de Banach, c'est-à-dire un anneau $A$ muni d'une norme
sous-multiplicative $\|\cdot\|$ (on inclut dans la définition le fait que
$\|1\|\leq 1$, ce qui entraîne que $\|1\|=1$ dès que $A$ est non nul) pour laquelle il est complet. 
À un tel anneau $A$ on associe son spectre analytique $\mathscr M(A)$, qui est défini comme
l'ensemble des semi-normes multiplicatives bornées sur $A$, c'est-à-dire des applications 
$\phi\colon A\to \R_+$ vérifiant les propriétés suivantes : 
\begin{itemize}
\item on a $\phi(0)=0$ et $\phi(f+g)\leq\phi(f)+\phi(g)$
pour tous $f,g\in\mathscr A$ (propriété de semi-norme) ;
\item on a $\phi(1)=1$ et $\phi(fg)=\phi(f)\phi(g)$ pour tout $f,g\in A$  (multiplicativité);
\item il existe un nombre réel~$c$ tel que $\phi(f)\leq c\norm f$
pour tout $f\in A$ (caractère borné de $\phi$). 
\end{itemize}
Notons qu'on impose que $\phi(0)=0$ et $\phi(1)=1$ : si $A$ est nul, $\mathscr M(A)$
est donc vide. Cet ensemble $\mathscr M(A)$ est muni de la topologie
la moins fine pour laquelle les applications $\phi\mapsto\phi(f)$
sont continues pour tout $f\in A$. L'application $\phi\mapsto (\phi(a))_{a\in A}$
identifie topologiquement $\mathscr M(A)$ à un fermé de l'espace produit $\prod_{a\in A}[0,\|a\|]$, 
si bien que $\mathscr M(A)$ est compact. 
Si $\phi$ appartient à $\mathscr M(A)$, son noyau est un idéal premier de $A$, 
et l'application $\phi \mapsto \ker \phi$ de $\mathscr M(A)$ vers $\Spec A$ est continue. Son image
contient tous les idéaux maximaux de $A$ (c'est ce qui est réellement démontré dans
la preuve du théorème  1.2.1 de \cite{berkovich1990}) ; en particulier $\mathscr M(A)$ est non vide 
dès que $A$ est non nul. 

En général, on note typiquement plutôt $x$ que $\phi$ un élément de $\mathscr M(A)$, en y  pensant comme
à un point. Soit $x\in \mathscr M(A)$. La
semi-norme multiplicative~$x$ induit
une valeur absolue 
sur le corps des fractions 
de l'anneau intègre $\mathscr A/\ker x$,
corps des fractions dont le complété est noté $\hr x$.
L'application composée $\mathscr A\to \mathscr A/\ker x\to \hr x$
sera notée $f\mapsto f(x)$, et la valeur absolue du corps complet
$\hr x$ sera notée $\abs \cdot$ ; avec ces conventions on 
a $x(f)=\abs{f(x)}$ pour toute $f\in A$ et nous utiliserons 
sauf exception la notation de droite. 
Cette construction est fonctorielle : tout morphisme borné $A\to B$ d'anneaux de Banach 
induit par composition une application continue de $\mathscr M(B)$ vers $\mathscr M(A)$.

La semi-norme spectrale de $A$ est la semi-norme
$\rho$ commutant aux puissances définie par les
formules
\[ \rho(f)= \sup_{x\in\mathscr M(A)} \abs{f(x)}
= \liminf_{n\ra\infty} \norm {f^n}^{1/n}
= \inf_{n\ra\infty} \norm{f^n}^{1/n} \]
(voir~\cite{berkovich1990}, 1.3, pour les deux dernières
égalités). 
Nous désignerons par $A^\circ$ 
(resp. $A^{\circ \circ}$) l'ensemble des éléments $f$ de $A$
tels que $\rho(f)\leq 1$ (resp. $\rho(f)<1$), et par $\widetilde A$ l'anneau gradué résiduel de $A$
(\ref{ss-red-graduee}) relativement à $\rho$. Sa composante
$\widetilde A^1$ est l'anneau résiduel standard $A^\circ/A^{\circ \circ}$. 

Supposons maintenant que $A$ est une $k$-algèbre et
que sa norme est une norme de $k$-espace vectoriel. Dans ce cas $\lambda \mapsto \abs{\lambda(x)}$ définit
pour tout $x\in \mathscr M(A)$ une valeur absolue sur $k$ bornée relativement à la valeur absolue structurale de $k$, 
ce qui entraîne que $\abs{\lambda(x)}=\abs \lambda$ pour tout $\lambda$ : la semi-norme $x$
induit donc la valeur absolue structurale de $k$, et $(\hr x, \abs \cdot)$ est une extension de $(k,\cdot)$.
Comme la valeur absolue de $k$
est ultramétrique, celle de $\hr x$ l'est aussi (une valeur absolue $\abs \cdot$
étant ultramétrique si et seulement si 
$\abs n\leq 1$ pour tout $n\in \Z$). On a donc dans ce cas $\abs{(f+g)(x)}\leq \max (\abs{f(x)}, \abs{g(x)})$ pour tout $f,g$
dans $A$,  puis $\rho(f+g)\leq \max (\rho(f), \rho(g))$ (comme on le voit en faisant varier $x$). 

\subsection{}
Soit $n$ un entier. Pour 
tout polyrayon $r=(r_1,\dots,r_n)$,  nous désignerons
par 
$k\{r_1^{-1}T_1,\dots,r_n^{-1}T_n\}$ la sous-algèbre
de $k[[T_1,\dots,T_n]]$ formée des séries formelles
$f=\sum_{\nu\in\N^n} a_\nu T^\nu$ telles que $a_\nu r^\nu = 
a_\nu r_1^{\nu_1}\dots r_n^{\nu_n}$
converge vers~$0$ lorsque $\abs\nu=\sum \abs{\nu_j}$ tend vers~$+\infty$.
On la munit de la norme multiplicative ultramétrique
définie 
par $\norm f=\max \abs{a_\nu}r^\nu$. 
Pour abréger les notations, on notera aussi $k\{r^{-1}T\}$ cette
algèbre. C'est une algèbre de Banach dont tous 
les idéaux sont fermés et de type fini. 

Soit maintenant $A$ une $k$-algèbre de Banach commutative
(par convention, dans cet ouvrage, les normes de $k$-algèbres seront toujours ultramétriques). 
On dit que $A$ 
est une algèbre $k$-affinoïde\index{algèbre $k$-affinoïde}
s'il existe un entier~$n$, un polyrayon $(r_1,\ldots, r_n)$ et une surjection 
de $k\{r^{-1}T\}$ vers~$A$ telle que la norme de~$A$
soit équivalente à la norme quotient correspondante. 
On dit que $A$ est \emph{strictement} $k$-affinoïde
si l'on peut de plus prendre $r_j=1$ pour tout~$j$.\index{algèbre strictement
$k$-affinoïde}
Pour que $A$ soit strictement $k$-affinoïde, il suffit en fait que l'on puisse prendre
les $r_j$ de torsion modulo $\abs{k^\times}$
(\cite{bosch-g-r1984} 6.1.5/4). 

Une $k$-algèbre de Banach $A$ est dite affinoïde s'il existe
une extension valuée complète~$K$ de~$k$ telle que $A$
soit une algèbre $K$-affinoïde.\index{$k$-algèbre affinoïde}

Si $A$ est une $k$-algèbre affinoïde l'application continue naturelle de 
$\mathscr M(A)$ vers $\Spec A$ est surjective (\cite{berkovich1993}, Proposition 2.1.1). 

Soit $A$ une $k$-algèbre affinoïde et soit $\rho$ sa semi-norme spectrale.
L'algèbre $A$ est noethérienne, et ses idéaux sont fermés. 
Pour tout $f\in A$ on a $\rho(f)=0$ si et seulement si $f$ est nilpotent. En particulier, 
$\rho$ est une norme si et seulement si $A$ est réduite et si c'est le cas, $\rho$ est équivalente
à la norme structurale de $A$. 

\begin{exem}\label{exemple-kr}
Soit $r=(r_i)$ un polyrayon de longueur finie. Soit $T=(T_i)$ une famille
d'indéterminées et soit $k_r$ la $k$-algèbre constituée des séries 
formelles $\sum_{I\in \Z^n}a_I T^I$ à coefficients dans $k$, où $\abs{a_I}\cdot r^I$ tend vers zéro quand
$\abs I$ tend vers l'infini, munie de la norme $\sum a_I T^I\mapsto \max \abs{a_I}\cdot r^I$. 
La $k$-algèbre normée $k_r$ est une $k$-algèbre affinoïde : 
elle est en effet isomorphe à $k\{r^{-1}T, rS\}/(ST-1)$. Sa norme est multiplicative
(\cite{ducros2007}, 1.2.1). Supposons de plus que $r$ soit \textit{$k$-libre}, c'est-à-dire 
que les $r_i$ soient $\Q$-linéairement indépendants dans $\Q\otimes_\Z(\R_+^\times/\abs{k^\times})$. 
Dans ce cas $k_r$ est un corps (\cite{ducros2007}, 1.2.2), et c'est donc à la fois une extension complète de $k$
et une algèbre $k$-affinoïde. 
\end{exem}

\begin{rema}
Soit $A=k\{r^{-1}T\}$ une algèbre $k$-affinoïde, où $r=(r_i)$ est un polyrayon de longueur finie. 
Choisissons un ensemble $J$ d'indices
tel que le polyrayon $s:=(r_j)_{j\in J}$ soit $k$-libre et qui soit maximal pour cette propriété. 
Chacun des $r_i$ est alors de torsion modulo $\abs{k^\times}\cdot \prod_{j\in J}r_i^\Z$, 
si bien que la $k_s$-algèbre de Banach $A\widehat{\otimes}_k k_s$ est strictement 
$k_s$-affinoïde. 
\end{rema}

\subsection{}
\label{def.domaff} 
Soit~$\mathscr A$ une algèbre~$k$-affinoïde ; posons~$X=\mathscr M(\mathscr A)$.
Une partie~$V$ de~$X$
est appelée un {\em domaine affinoïde}
de~$X$
si le foncteur qui envoie une $k$-algèbre de Banach affinoïde~$\mathscr B$ 
sur l'ensemble des morphismes bornés de~$\mathscr A$ vers~$\mathscr B$ tels
que l'image de~$\mathscr M(\mathscr B)$ soit contenue dans~$V$ est représentable
par une algèbre~$k$-affinoïde~$A_V$. Si c'est le cas,~$\mathscr M(A_V)\simeq V$.

Un {\em système rationnel d'inéquations}
sur~$X$
est une combinaison booléenne d'inégalités
qui est de la forme
\[\abs{f_1}\leq \lambda_1\abs g\;\text{et}\;\abs{f_2}\leq \lambda_2\abs g\;
\text{et}\;\ldots\;\text{et}\;\abs{f_n}\leq \lambda_ n \abs g\]
où~$g$ et les~$f_i$ sont des fonctions analytiques 
sur~$X$ sans zéro commun, et où les~$\lambda_i$
sont des réels. 

Si~$\beta$ est un système rationnel d'inéquations sur~$X$, on notera~$\mathring\beta $ 
le système
défini à partir de~$\beta$ en remplaçant toutes inégalités larges par les inégalités
strictes correspondantes.

Soit
\[\beta :\abs{f_1}\leq \lambda_1\abs g\;\text{et}\;\abs{f_2}\leq \lambda_2\abs g\;
\text{et}\;\ldots\;\text{et}\;\abs{f_n}\leq \lambda_ n \abs g\]
un système rationnel d'inéquations sur~$X$. 

On qualifiera de \emph{rationnel}
le lieu de~$X$ défini par un tel système d'inéquations.
C'est un domaine 
affinoïde de~$X$, avec
\[A_V\simeq k\{T_1/\lambda_1,\ldots, T_n/\lambda_n\}/(f_i-T_ig)_i.\] 
Notons que si~$g$ s'annulait en un point de~$V$, il en irait de même de chacune des~$f_i$ ; 
en conséquence, $g|_V$ est inversible. 

Tout point de~$X$ possède une base de voisinages rationnels,
en particulier une base de voisinages affinoïdes. Plus précisément, 
pour tout~$x\in X$ et tout voisinage ouvert~$U$ de~$x$, 
il existe un système rationnel d'inéquations~$\beta$
dont le lieu de validité est contenu dans~$U$,
et tel que~$x$ satisfasse~$\mathring \beta$. 

\subsection{}
Le théorème de Gerritzen-Grauert\index{théorème de Gerritzen--Grauert}
affirme que tout domaine affinoïde de~$X$ est réunion 
finie de domaines rationnels
(pour le cas non strict, voir \cite{ducros2003}, lemme~2.4;
pour une preuve directe, voir aussi \cite{temkin2005}, théorème~3.1). 
Il en résulte que si $x$ est un point de $X$ situé sur un domaine affinoïde $V$
de $X$, le corps résiduel complété $\hr x$ ne dépend pas du fait que l'on voie $x$ comme
appartenant à $V=\mathscr M(A_V)$ ou à $X=\mathscr M(A)$. 

\subsection{}
À partir des espaces affinoïdes,
Berkovich définit dans~\cite{berkovich1993}
la catégorie des espaces $k$-analytiques 
par un procédé de recollement un peu délicat qu'il serait trop long de décrire ici
en détail ; nous allons tout de même rappeler quelques faits. Suivant Berkovich, 
si $X$ est un espace topologique  et si $(X_i)$ est une famille de parties de $X$, nous dirons que les
$X_i$ constituent un \textit{G-recouvrement} de $X$
si tout point $x$ de $X$ possède un voisinage de la forme $\bigcup_{i\in J}
X_i$ où $J$ est fini et où $x\in X_i$ pour tout $i\in J$. Si l'on ne s'astreint à recoller que des espaces
strictement affinoïdes, on obtient la classe des espaces strictement $k$-analytiques.

Soit $X$ un espace $k$-analytique. C'est en particulier un espace topologique
dont tout point possède une base de voisinages compacts (mais $X$ n'est pas nécessairement localement compact 
puisqu'un espace topologique localement compact est \textit{par définition} séparé, et que ce n'est 
pas forcément le cas de $X$). L'espace $X$ possède par ailleurs une 
classe distinguée de partie compactes,
ses 
\emph{domaines affinoïdes} (si~$X=\mathscr M(\mathscr A)$,
les domaines affinoïdes de~$X$ sont précisément ceux décrits au~\ref{def.domaff}
ci-dessus). Un \textit{domaine analytique} de $X$ est une partie de $X$ 
qui est G-recouverte par les domaines affinoïdes qu'elle contient. 
Tout domaine affinoïde de $X$ est un domaine analytique de $X$; 
plus généralement, tout compact de $X$ qui est une union finie de domaines 
analytiques de $X$ est un domaine analytique. Et tout ouvert de $X$ (et notamment
$X$ lui-même) est un domaine analytique.  En particulier, tout point $x$ de $X$ possède un voisinage
compact qui est une réunion finie de domaines affinoïdes contenant $x$.
Les espaces définis dans~\cite{berkovich1990} sont ceux qui sont
qualifiés de \emph{bons} dans \cite{berkovich1993}, c'est-à-dire
ceux dont tout point admet un voisinage affinoïde 
(et donc une base de voisinages affinoïdes.)
Tout domaine analytique de $X$ est une partie localement fermée de $X$ : 
l'intersection d'une famille finie de domaines analytiques est un domaine analytique ; 
la réunion d'une famille localement finie de domaines analytiques fermés est un domaine analytique.

Tout domaine analytique $V$ de $X$ hérite d'une structure canonique d'espace analytique, 
et d'un morphisme $V\hookrightarrow X$ dont l'application sous-jacente est l'inclusion. 
Tout morphisme $f\colon Y\to X$ se factorisant ensemblistement par $V$ se factorise en fait uniquement
par la structure analytique de $V$. 

\subsection{La G-topologie}\label{definition-gtopologie}
Soit $X$ un espace $k$-analytique. En plus du site topologique sur $X$ on définit un second site plus fin, noté~$X_\groth$. 
Ses objets sont les domaines analytiques de $X$, ses flèches sont les inclusions, et ses familles
couvrantes sont les G-recouvrements. La topologie de ce site est appelée la G-topologie de $X$. 

Comme tout ouvert de~$X$ est un domaine analytique et comme tout recouvrement ouvert est un G-recouvrement, 
on dispose d'un morphisme de site canonique
$\pi\colon X_\groth\to X$. 
L'unité d'adjonction $\id\to\pi_*\pi^*$ est un isomorphisme
(\cite{berkovich1993}, p.~26).

Le site~$X_\groth$ est annelé. Nous noterons $\mathscr O_X$ le faisceau structural de~$X_\groth$.
Sauf mention du contraire,
lorsque nous parlerons de faisceaux cohérents sur~$X$,
il s'agira plus précisément de $\mathscr O_X$-module cohérents sur~$X_\groth$.
Précisons que le faisceau $\mathscr O_X$ ainsi que sa restriction
au site topologique usuel de $X$ sont cohérents (\cite{ducros2009}, lemme 0.1 ; pour la correction d'une petite erreur
dans la preuve, signalée par Jérôme Poineau, voir \cite{ducros2018}, 1.1.1). 
Si $X=\mathscr M(A)$ alors $\mathscr O_X(X)=A$.

Tout morphisme $f\colon Y\to X$ entre espaces analytiques est en particulier un morphisme de sites annelés : si $V$
est un domaine analytique de $Y$ alors $f^{-1}(V)$ est un domaine analytique de $X$ et l'on dispose d'un 
morphisme $f^*\colon \mathscr O_X(V)\to \mathscr O_Y(f^{-1}(V))$. Si $X$ est un espace $k$-affinoïde 
$\mathscr M(A)$ et si $Y$ est un espace affinoïde $\mathscr M(B)$ sur une extension de $k$, l'application $f\mapsto f^*$ établit une bijection entre l'ensemble des $k$-morphismes
de 
$Y$ vers $X$ et l'ensemble des morphismes bornés de $k$-algèbres de $A$
vers $B$. 

Si $X=\mathscr M(A)$ est un espace $k$-affinoïde et si $M$ est un $A$-module de type fini, le faisceau $\widetilde M$
sur $X_\groth$ associé au préfaisceau $V\mapsto M\otimes_A\mathscr O_X(V)$ est cohérent. Le foncteur 
$M\mapsto\widetilde M$ établit une équivalence entre la catégorie des $A$-modules de type fini et celle des 
faisceaux cohérents sur $X$, de quasi-inverse $\mathscr F\mapsto \mathscr F(X)$
(\cf \cite{berkovich1993} ; ce résultat est dû à Kiehl dans le cas strictement affinoïde).

Lorsque $X$ est bon, 
le foncteur $\pi_*$ induit une équivalence de catégories entre
faisceaux cohérents sur~$X_\groth$ et faisceaux cohérents sur~$X$, 
et qu'un faisceau cohérent $\mathscr F$ sur $X_\groth$ est localement 
libre si et seulement si c'est le cas de $\pi_*\mathscr F$
(\cite{berkovich1993}, 1.3.4 ; notons que la définition de faisceau cohérent
qu'il utilise dans cette proposition est \textit{ad hoc}, mais qu'elle coïncide  bien en fait
avec la notion usuelle au vu de la cohérence des faisceaux structuraux sur les deux sites
annelés en jeu).

\subsection{Produits fibrés, extension des scalaires}
La catégorie des espaces $k$-analytiques admet des produits fibrés. Si $A\to B$ et $A\to C$ sont deux morphismes
d'algèbres $k$-affinoïdes, $\mathscr M(B)\times_{\mathscr M(A)}\mathscr M(C)=\mathscr M(B\widehat \otimes_A C)$. 

Si $L$ est un extension complète de $k$ on dispose d'un foncteur d'extension des scalaires 
$X\mapsto X_L$ de la catégorie des espaces $k$-analytiques vers celle des espaces $L$-analytiques. 
Si $X=\mathscr M(A)$ alors $X_L=\mathscr M(A\widehat \otimes_k L)$. En général on dispose d'un morphisme 
naturel de sites annelés de $X_L$ vers $X$, qui est surjectif (\cf \cite{ducros2007}, 0.5). 

Supposons $X$ bon, soit $y$
un point de $X_L$ et soit $x$  son image sur $X$. Le morphisme naturel 
de $\Spec \mathscr O_{X_L,y}$ vers $\Spec \mathscr O_{X,x}$ est plat à fibres localement d'intersection complète, et
à fibres géométriquement régulières si $L$ est \textit{analytiquement séparable} sur $k$ ; c'est le théorème
\cite{ducros2018}, th. 2.6.5.  La notion d'extension analytiquement séparable est définie dans 
\cite{ducros2009} (définition 1.6). Une extension finie de $k$ est analytiquement séparable si et seulement si elle 
est séparable ; si $r$ est un polyrayon de longueur finie et $k$-libre, $k_r$ est une extension analytiquement séparable de $k$ ; 
si $k$ est parfait, toute extension complète de $k$ est analytiquement séparable.  

On définit la catégorie des espaces analytiques au-dessus de $k$ comme la catégorie dont les objets sout les couples
$(X,L)$ où $L$ est une extension complète de $k$ et $X$ un espace $L$-analytique. Un mrophisme de $(X,L)$ vers $(Y,M)$ est
un couple formé d'un $k$-morphisme de $M$ vers $L$ et d'un morphisme d'espaces $L$-analytiques
de $X$ vers $Y_L$. 

Cette catégorie n'admet pas tous les produits fibrés (par exemple $\mathscr M(\C_p)\times_{\mathscr M(\Q_p)}
\mathscr M(C_p)$ n'existe pas en tant qu'espace analytique au-dessus de $\Q_p$) mais si $Y\to X$ est un morphisme d'espaces
$k$-analytiques et $X'\to X$ un morphisme d'espaces analytiques au-dessus de $k$ alors $Y\times_k X'$ existe : si $L$ est le corps
de définition fourni avec $X'$, le produit fibté $Y\times_k X'$ est simplement le produit fibré $Y_L\times_{X_L}X'$, calculé dans la catégorie
des espaces $L$-analytiques. 

\subsection{}
Soit $X$ un espace $k$-analytique, et soit $x\in X$. 

On note~$\mathscr O_{X,x}$ la fibre en $x$ de la restriction de $\mathscr O_X$
à la catégorie des ouverts de $X$, c'est-à-dire la colimite des $\mathscr O_X(U)$ où $U$
parcourt l'ensemble des voisinages ouverts de $X$. C'est un anneau local 
dont nous noterons $\kappa(x)$
le corps résiduel. 

Lorsque $X$ est bon, 
cet anneau est noethérien (\cite{berkovich1993}, th. 2.1.4), hensélien
(\cite{berkovich1993}, th. 2.1.5) et excellent (\cite{ducros2009}, th. 2.13). 

Au point~$x$ est également associé une extension
ultramétrique complète~$\hr x$ 
de~$k$, qu'on appelle
le \emph{corps résiduel complété} de~$x$ ; pour tout domaine analytique 
$V$ de $X$ contenant $x$, on dispose d'un morphisme d'évaluation 
$f\mapsto f(x)$ de $\mathscr O_X(V)$ dans $\hr x$. 
Si $X=\mathscr M(A)$, le corps $\hr x$ et l'application 
$f\mapsto f(x)$ de $A$ vers $\hr x$ coïncident avec 
ceux définis en \ref{ss-description-ma}. 
Le corps résiduel~$\kappa(x)$ de l'anneau local~$\mathscr O_{X,x}$ 
se plonge naturellement dans~$\hr x$.
On dira que $X$ est \emph{presque bon en~$x$}\index{presque bon!espace ---}
si $\kappa(x)$ est dense dans~$\hr x$
et si le germe~$(X,x)$ est séparé, et qu'il est \emph{presque bon} 
s'il est presque bon en tout point.
Si $X$ est presque bon en~$x$, tout domaine analytique de~$X$
contenant~$x$ est encore presque bon en~$x$.
Un espace bon est presque bon. Le point $x$ est traditionnellement
dit \textit{rigide} si $\hr x$ est une extension finie de $k$ (la terminologie
provient du fait que ces points sont ceux considérés par Tate dans sa théorie des
espaces analytiques rigides). 

Soit $f\colon Y\to X$ un morphisme entre espaces $k$-analytiques et 
soit $x\in X$. La fibre $f^{-1}(x)$ possède une structure naturelle d'espace
$\hr x$-analytique, et sera toujours implicitement considérée comme munie
de cette structure. Nous la noterons souvent $Y_x$. 

\subsection{}
Berkovich a construit (\cite{berkovich1993}, 2.6) un foncteur d'analytification
$\mathscr X\mapsto \mathscr X^\an$ qui va de la catégorie des $A$-schémas
localement de type fini vers celle des bons espaces $k$-analytiques. 
L'espace $\mathscr X^\an$ est muni d'un morphisme d'espaces localement annelés
$\mathscr X^\an\to \mathscr X$ qui est fidèlement
plat (\cite{berkovich1993}, Prop. 2.6.2). L'application $\pi$ induit
une bijection entre
l'ensemble
$\mathscr X_0^{\an}$ des points rigides de $\mathscr X\an$ et l'ensemble
$\mathscr X_0$ des points fermés de $\mathscr X$, qui préserve les corps
résiduels ; on en tire en particulier une bijection 
entre $\{x\in \mathscr X^{\an}, \hr x=k$ et $\mathscr X(k)$, qui est un 
homéomorphisme (la source étant munie de la topologie induite par
celle de $\mathscr X^{\an}$, et le but de la topologie déduite de celle
de $k$). Si $\abs{k^\times}\neq \{1\}$, l'ensemble $\mathscr X^{\an}_0$
est dense dans $\mathscr X^{\an}$. 

Si $x$ est un point de $\mathscr X^\an$ d'image $\xi$ sur $\mathscr X $, le morphisme local
$\mathscr O_{\mathscr X,\xi}\to \mathscr O_{\mathscr X^\an,x}$ est régulier
(\cite{ducros2009}, théorème 3.3). 

Donnons quelques détails sur la construction de $\mathscr X^\an$ lorsque $\mathscr X$ est affine. 
Écrivons $\mathscr X=\Spec A$ où $A$ est une $k$-algèbre de type fini. Topologiquement, 
$\mathscr X^\an$ est l'ensemble des semi-normes multiplicatives sur $A$ qui induisent la valeur
absolue structurale sur $k$, muni de la topologie induite par la topologie produit sur $\R^A$ ; 
le morphisme de $\mathscr X\an$ vers $\mathscr X$ envoie une semi-norme sur son noyau. 
Fixons un système fini de générateurs $a_1,\ldots, a_m$ de $A$, et soit $I$
le noyau de la surjection $T_i\mapsto a_i$ de $k[T_1,\ldots,T_m]$ sur $A$. 
 Pour tout $R>0$ le sous-ensemble $V_R$ de $\mathscr X\an$ constitué des points
 $x$ tels que $\abs{a_i(x)}\leq R$ pour tout $i$ est un domaine affinoïde de 
 $\mathscr X^\an$, 
d'algèbre affinoïde associée $k\{T_1/R,\ldots, T_n/R\}/I$. L'espace $\mathscr X^\an$ 
est la réunion croissante des $V_R$, qui en constituent un G-recouvrement. 

On dispose d'un théorème GAGA dans ce contexte : si $\mathscr X$ est propre et si $\pi$ désigne le morphisme structural 
de $\mathscr X^\an$ vers $\mathscr X$ alors $\mathscr F\mapsto \pi^*\mathscr F$ établit une équivalence entre la catégorie
des faisceaux cohérents sur $\mathscr X$ et celle des faisceaux cohérents sur $\mathscr X^\an$, qui préserve les groupes 
de cohomologie (\cite{poineau2010a}, annexe A). 

Le foncteur d'analytification commute à la formation des produits fibrés. 

Pour tout espace analytique $X$
et tout entier $n$, 
l'application $f\mapsto (f^*T_1,\ldots, f^*T_n)$ met en bijection $\Hom(X,\mathbf A^{n,\an}_k)$
et $\mathscr O_X(X)^n$.

\begin{exem}\label{exemple-etar}
Soit $r=(r_1,\ldots, r_n)$ un polyrayon. 
L'application $\sum a_I T^I\mapsto \max \abs{a_I}\cdot r^I$ de $k[T_1,\ldots, T_n]$ vers $\R_+$ est une
norme multiplicative prolongeant la valeur absolue de $k$. C'est donc un point $\eta_r$ 
de $\A^{n,\an}_k$, situé au-dessus du point générique de $\A^n_k$. Le corps $\hr {\eta_r}$ est le complété
de $k(T_1,\ldots, T_n)$ pour la valeur absolue induite par $\eta_r$.

Lorsque $r$ est $k$-libre le corps $\hr {\eta_r}$ s'identifie
au corps $k$-affinoïde $k_r$ décrit en \ref{exemple-kr} (ce dernier contient 
en effet $k(T_1,\ldots, T_n)$ comme sous-corps dense), le singleton
$\{\eta_r\}$ est alors le domaine
affinoïde de $\A^{n,\an}_k$ défini par les égalités $\abs{T_i}=r_i$ pour tout $i$, et l'on a
$\mathscr O_{\A^{n,\an}_k}(\{\eta_r\})=k_r$. 
\end{exem}

\begin{exem}[La droite affine]
Pour tout $a\in k$ et tout réel $r$,
notons $\eta_{a,r}$
l'application $\sum a_i (T-a)î \mapsto \max \abs{a_i}r^i$
de $k[T]$ vers $\R_+$. C'est une semi-norme multiplicative, 
et donc un point de $\A^{1,\an}_k$ ; si $r=0$ c'est l'application 
$P\mapsto \abs{P(a)}$ de noyau $(T-a)$, et $\eta_{a,0}$ est donc 
le $k$-point standard d'équation $T=a$ ; si $r>0$ alors
$\eta_{a,r}$ est une norme, ce qui revient à dire que 
$\eta_{a,r}$ est situé au-dessus du point générique de
$\A^1_k$. On a 
$\eta_{a,r}=\eta_{b,s}$ si et seulement si $r=s\geq \abs{b-a}$. 
Si $\abs{k^\times}$ est dense dans $\R_+^\times$ 
on a $\abs{P(\eta_{a,r})}=\sup_{b\in k, \abs{b-a}\leq r}\abs{P(b)}$. 

On a $\hr{\eta_{a,0}}=k$. Supposons $r>0$ et posons
$\tau=\widetilde{(T-a)(\eta_{a,r})}$.Il résulte 
de l'exemple \ref{exem-ktr-tilde}
que $\tau$ est transcendant sur $\widetilde k$ et que $\hrt{\eta_{a,r}}=\widetilde k(\tau)$. 

Donnons quelques précisions supplémentaires dans deux cas extrêmes. 
Si $r=\abs \lambda$ avec $\lambda\in k^\times$ alors 
$\hrt{\eta_{a,r}}=\widetilde k(\tau/\widetilde \lambda)$. Comme
$\abs{\tau/\widetilde \lambda}=1$ on voit que $ \hrt {\eta_{a,r}}^1$
est égal à $\widetilde k^1(\tau/\widetilde \lambda)$, et est donc une extension rationnelle
de degré de transcendance $1$ de $\widetilde k^1$ ; 
de même, on a $\abs{\hr {\eta_{a,r}}^\times}=\abs{k^\times}$. 
Si $r\notin \abs{k^\times}^\Q$ alors $\hrt{\eta_{a,r}}^1=\widetilde k^1$ et 
 $\abs{\hr {\eta_{a,r}}^\times}=\abs{k^\times}\cdot r^\Z$.

Supposons maintenant que $k$ est algébriquement clos (auquel cas $\abs{k^\times}$
est divisible).
On
démontre alors (voir \cite{berkovich1990}, 1.4.4)
que tout point de $\A^{1,\an}_k$ est de l'un des quatre types suivants. 

\begin{itemize}
\item
Les points de type~I sont les points rigides,
c'est-à-dire ceux pour lesquels $\hr x=k$, ou encore
les points de la forme $\eta_{a,0}$ pour $a\in k$.

\item
Les points de type~II sont les points de la forme
$\eta_{a,r}$ où $a\in k$ et où $r\in \abs{k^\times}$. 
Si $x$ est de type~II le corps 
$\hrt x^1$ est une extension rationnelle de degré de transcendance~$1$
de $\widetilde k^1$ et $\abs{\hr x^\times}=\abs {k^\times}$. En particulier
$d_k(x)=1$.

\item
Les points de type~III sont les points de la forme
$\eta_{a,r}$ où $a\in k$ et où $r\notin \abs{k^\times}$. 
Si $x$ est de type III le corps 
$\hrt x^1$ est égale à 
$\widetilde k^1$ et $\abs{\hr x^\times}=\abs {k^\times}\cdot r^\Z$. En particulier
$d_k(x)=1$. 

\item
Les points de type~IV n'existent que si le corps~$k$ n'est pas maximalement
complet, c'est-à-dire s'il existe des familles totalement ordonnées de boules
fermées de $k$ d'intersection vide (c'est par exemple le cas si
$k=\C_p$, ou si $k$ est le complété du corps des séries de Puiseux sur $\C$). 
Si $(B(a_i,r_i))_i$ est une telle famille de boules, le point $x$ de type IV
qu'on lui associe est la semi-norme
$\inf_i \eta_{a_i,r_i}$, qui est en fait une norme.  Le point $x$
est donc situé au-dessus du point générique de $\A^{1,\an}_k$. Le corps complet 
$\hr x$ est une extension immédiate de $k$, c'est-à-dire que $\hrt x=\widetilde k$, ou encore que 
$\hr x$ a même groupe des valeurs et même corps résiduel (au sens classique) que $k$ ; et 
cette extension est stricte. On a $d_k(x)=0$. 
\end{itemize}

\end{exem}

\subsection{}\label{ss.paracompact}
Rappelons (\cite{bourbaki1971}, chap.~1, \S9, \no~10,
définition~6, p.~69)
qu'un espace topologique~$X$ est dit \emph{paracompact}
s'il est séparé et si tout recouvrement ouvert possède un raffinement
qui est localement fini. Un espace compact est paracompact.
Une partie fermée d'un espace paracompact est paracompacte.

Un espace localement
compact est dit dénombrable à l'infini s'il est réunion dénombrable
d'ensembles compacts; un espace localement compact
dénombrable à l'infini est paracompact.
Plus généralement,  un espace topologique localement compact
est paracompact si et seulement si c'est l'espace somme
d'une famille d'espaces localement compacts dénombrables à l'infini
(\loccit, théorème~5, p.~70).

\begin{lemm}[cf. \cite{berkovich1993}, remarque 1.2.4~(iii)]
\label{lemm.vois-paracompact}
Un espace analytique possède une base de voisinages
ouverts dénombrables à l'infini
(et donc paracompacts).
\end{lemm}
Cet énoncé figure dans~\cite[2.2.8 et 3.2.1]{berkovich1990} 
pour les espaces affinoïdes; nous détaillons ci-dessous les arguments
de la remarque~1.2.4, (iii), de~\cite{berkovich1993} qui l'en déduit.

\begin{proof}
Soit $x$ un point de $X$. 
Nous allons montrer que tout voisinage de $x$ dans $X$
contient un voisinage ouvert de $x$ qui est dénombrable à l'infini. 
On peut pour ce faire restreindre $X$ et donc le supposer topologiquement séparé. 
Le point $x$ possède alors une base de voisinages compacts dans $X$ qui sont 
des réunions finies de domaines affinoïdes contenant $x$.
On peut donc supposer que $X$ est compact et est une union finie $V_1\cup V_2\ldots \cup V_n$
de domaines affinoïdes contenant $x$ ; on se donne un voisinage
ouvert $U$ de $x$ dans $X$ et on veut montrer qu'il contient un voisinage 
ouvert de $x$ qui est dénombrable à l'infini. 
Raisonnons par récurrence sur~$n$.
Si $n=1$ alors $X$ est affinoïde et le résultat est connu dans ce cas
comme on l'a rappelé. Supposons $n>1$ et le résultat 
vrai pour $n-1$. 
Posons $W=V_1\cup \dots\cup V_{n-1}$.
Par récurrence, $x$ possède
un voisinage ouvert et dénombrable à l'infini
$\Omega$ dans $W\cap U$, dont l'intersection 
$\omega$ avec $V_n$ est un voisinage ouvert et 
dénombrable à l'infini de $x$ dans $W\cap V_n\cap U$. 
Soit $(K_i)_{i\geq 0}$ une suite de parties compactes de~$W\cap V_n$
telles que $\omega =\bigcup_{i\geq0} K_i$ et $K_i\subset\mathring{K_{i+1}}$
pour tout~$i$.
Soit $i\geq 0$.
Soit $y\in K_i$; il existe un voisinage~$\Omega_y$
de~$y$ dans~$V_n$ tel que $\Omega_y\cap W\cap V_n\subset \mathring K_{i+1}$;
par le cas affinoïde on peut supposer,
quitte à le restreindre, 
que $\Omega_y$ est un ouvert dénombrable à l'infini de $V_n\cap U$.
L'espace~$K_i$ étant compact, il est recouvert par un nombre
fini de ces ouverts~$\Omega_y$ de $V_n$ ; leur réunion est un voisinage~$\Omega_i$
de~$K_i$ dans $V_n\cap U$,  ouvert et dénombrable à l'infini, tel 
que $\Omega_i\cap W\cap V_n\subset \mathring{K}_{i+1}$.
La réunion $\Omega'$ des~$\Omega_i$ est un voisinage ouvert de~$x$ dans~$V_n\cap U$,
ouvert et dénombrable à l'infini, et $\Omega'\cap W\cap V_n=\omega$.
La réunion $\Omega\cup \Omega'$ est dénombrable à l'infini et contient $x$. 
On peut écrire $\Omega=\Upsilon\cap W$ et $\Omega'=\Upsilon'\cap V_n$
où $\Upsilon$ et $\Upsilon'$ sont deux ouverts de $X$ contenus dans $U$. 
Quitte à remplacer $\Upsilon$ par son intersection avec le complémentaire dans $X$ du
compact  $V_n\setminus \omega$, on peut supposer que $\Upsilon\cap V_n=\omega$ ; 
et quitte à remplacer $\Upsilon'$ par son intersection avec le complémentaire dans $X$ du
compact  $W\setminus \omega$, on peut supposer que $\Upsilon'\cap W=\omega$. 
Mais on a alors $\Omega\cup \Omega'=\Upsilon\cup \Upsilon'$, si bien que $\Omega\cup \Omega'$
est un voisinage ouvert de $x$ dans $U$. 
\end{proof}

\section{Topologie de Zariski, théorie de la dimension}

\subsection{Topologie de Zariski}

Soit $X$ un espace $k$-analytique. Pour tout faisceau cohérent d'idéaux $\mathscr I$ sur $X$
on note $V(\mathscr I)$
son lieu des zéros, c'est-à-dire l'ensemble des point $x$ de $X$ tel que $f(x)=0$ pour toute section $f$ de $\mathscr I$
définie sur un domaine analytique de $X$ contenant $x$. Les parties de $X$ de la forme $V(\mathscr I)$ sont les fermés
d'une topologie appelée \textit{topologie de Zariski} de $X$. 

Supposons que $X$ est affinoïde, et écrivons $X=\mathscr M(A)$. Se donner un faisceau cohérent d'idéaux
$\mathscr I$ sur
$X$ revient alors à se donner un idéal $I$ de $A$ (on a $\mathscr I=\widetilde I$
et $I=\mathscr I(X)$), et $V(\mathscr I)$ est alors l'image réciproque de $V(I)$ par la surjection continue
de $\mathscr M(A)$ vers $\Spec A$ (il est en particulier compact). On dispose donc dans ce cas d'une  bijection croissante entre l'ensemble des fermés
de Zariski de $\mathscr M(A)$ et l'ensemble des fermés de Zariski de $\Spec A$. En particulier la topologie de $X$
est noethérienne. 

On ne suppose plus $X$ affinoïde. Les fermés de Zariski de $X$ sont fermés dans $X$ (cela se vérifie G-localement, ce
qui perrmet de se ramener au cas affinoïde). Si $X$ est compact, sa topologie de Zariski est noethérienne (on le déduit du cas affinoïde
en écrivant $X$ comme une union finie de domaines affinoïde). 

\subsection{Sous-espaces analytiques fermés}

Soit $X$ un espace $k$-analytique et soit $\mathscr I$
un faisceau cohérent d'idéaux sur $X$. Supposons d'abord que $X$ est affinoïde, 
et écrivons $X=\mathscr M(A)$. Posons $I=\mathscr I(X)$.
Le fermé $V(\mathscr I)$ coïncide avec l'ensemble des $x$ dans $\mathscr 
M(A)$ tel que $f(x)=0$ pour tout $x\in I$, qui est homéomorphe à $\mathscr M(A/I)$ \textit{via}
le morphisme induit par la surjection canonique de $A$ vers $A/I$. Ainsi, $V(\mathscr I)$ peut être muni d'une structure 
d'espace $k$-analytique, dépendant du choix de l'idéal $\mathscr I$, et d'un morphisme de $V(\mathscr I)$
dans $X$ dont l'application continue sous-jacente est l'inclusion.

On ne suppose plus $X$ affinoïde. En procédant G-localement à la construction précédente puis en recollant
on obtient une structure d'espace $k$-analytique au-dessus de $X$ sur $V(\mathscr I)$, 
l'application continue sous-jacente au morphisme de $V(\mathscr I)$ dans $X$ étant
l'inclusion $i\colon V(\mathscr I)\hookrightarrow X$, et
$i_*\mathscr O_{V(\mathscr I)}=\mathscr O_X/\mathscr I$. 

Un \textit{sous-espace analytique fermé} de $X$ est un fermé de Zariski $Y$ de $X$ muni d'une structure d'espace
$k$-analytique au-dessus de $X$ 
induite selon le procédé décrit ci-dessus par un faisceau cohérent d'idéaux $\mathscr I$ tel que $Y=V(\mathscr I)$ ; 
notons que $\mathscr I$ se reconstitue alors à partir de cette structure comme le noyau de $\mathscr O_X\to i_*\mathscr O_Y$, où
$i\colon Y\to X$ est le morphisme structural, qui est topologiquement l'inclusion. 

Tout fermé de Zariski $Y$ possède au moins une structure de sous-espace analytique fermé : celle donnée 
par le faisceau
\[\mathscr J:=V\mapsto \{f\in \mathscr O(V), f(x)=0\;\text{pour tout}\;\in Y\cap V\,\]
qui est bien cohérent
(\cite{ducros2009}, Prop. 4.2.1). C'est la plus petite structure
de sous-espace analytique fermé sur $Y$, au sens où $\mathscr J$ est le plus grand faisceau cohérent d'idéaux sur $X$ définissant
le fermé $Y$ ; c'est aussi la seule structure de sous-espace analytique fermée sur $Y$ dont le faisceau structural n'a pas de sections
nilpotentes non nulles ; on l'appelle la structure réduite sur $Y$. Si $Y=X$ la structure réduite est associée au faisceau 
d'idéaux dont les sections sont les fonctions localement nilpotentes ; le sous-espace analytique fermé correspondant
est noté $X_\red$. Le morphisme $X_\red\hookrightarrow X$ induit un homéomorphisme entre $X_\red$ et $X$ pour leur topologies usuelles, 
mais aussi pour leurs topologies de Zariski. 

Si $Y$ est un sous-espace analytique fermé de $X$, il résulte du théorème de Gerritzen-Grauert dans sa version due à Temkin
(\cite{temkin2005}, Th. 3.1) que les domaines analytiques de $Y$ sont les sous-ensembles de $Y$ qui admettent un G-recouvrement 
par des parties de la forme $V\cap Y$ où $V$ est un domaine affinoïde de $X$.

Si $V$ est un domaine analytique de $X$ la topologie de Zariski de $V$ est plus fine que la topologie de $V$
induite par la topologie de Zariski de $X$, et en général strictement plus fine. 

Si $E$ est une partie de $X$ et si $(X_i)$ est un G-recouvrement de $X$ par des domaines analytiques, alors $E$
 est un fermé (resp. un ouvert) de Zariski de $X$ si et seulement si $E\cap X_i$ est un fermé (resp. un ouvert) de Zariski
 de $X_i$ pour tout $i$ (\cite{ducros2009}, Prop. 4.2.1). 

Une \textit{immersion fermée}\footnote{Dans l'expression « immersion fermée » ou
dans l'expression « sous-espace analytique fermé », 
l'adjectif « fermé » fait implicitement référence à la topologie de Zariski. 
Ainsi, en général, un domaine analytique fermé $V$ de $X$
n'est pas un sous-espace analytique fermé de $X$, 
et le morphisme canonique $V\hookrightarrow X$ n'est pas une immersion fermée.}
est un morphisme d'espaces $k$-analytiques $f\colon Y\to X$ 
se factorisant par un isomorphisme
entre $Y$ et un sous-espace analytique fermé de $X$. 
Une immersion fermée préserve les corps résiduels complétés. 

Un morphisme d'espaces $k$-analytiques $f\colon Y\to X$ est dit séparé si sa diagonale $\delta\colon 
Y\to Y\times_X Y$ est une immersion fermée, et $X$ est dit séparé si le morphisme structural de 
$X$ vers $\mathscr M(k)$ est séparé. La séparation dans ce sens entraîne la séparation topologique, mais
la réciproque est fausse : le recollement de deux copies du disque unité le long du domaine affinoïde décrit
par l'égalité $|T|=1$ est topologiquement séparé car le domaine de recollement est fermé, mais n'est pas séparé.

\subsection{Dimension}\label{def-dimension}
On dispose d'une théorie de la dimension en géométrie analytique, due à Berkovich, au sujet de laquelle on pourra
consulter le chapitre 2 de \cite{berkovich1990}, la fin du paragraphe 1.2 de \cite{berkovich1993} ou encore l'article
\cite{ducros2007}. 

Soit $X$ un espace $k$-analytique. Sa dimension ($k$-analytique) est définie comme suit. 
Commençons par le cas affinoïde. Écrivons $X=\mathscr M(A)$
et choisissons une extension complète $L$ de $k$ telle
que $A_L:=A\widehat{\otimes}_k L$ soit strictement $L$-affinoïde. La dimension de Krull de $\mathscr A_L$ ne dépend alors
pas de $L$ (\cite{ducros2007}, 1.5)
et est appelée dimension de $X$. 
(Si $X$ est strictement $k$-affinoïde sa dimension est donc simplement sa dimension de Krull relative à sa topologie
de Zariski.) On a par construction $\dim X_\red=\dim X$ et $\dim X_L=\dim X$ pour toute extension complète $L$ de $k$.

Si  $Y$ est un fermé de Zariski de $X$, le choix d'une structure de sous-espace analytique fermé sur $Y$, c'est-à-dire
d'un idéal $I$ de $A$ définissant $Y$, fait de $Y$ un espace affinoïde (identifié à $\mathscr M(A/I)$). 
Sa dimension ne dépend pas du choix de $I$, et nous parlerons donc de la dimension de $Y$ sans spécifier $I$. 

Soit $X_1,\ldots, X_n$
les fermés de Zariski de $X$ correspondant aux idéaux premiers minimaux de $A$. Si $V$ est un domaine affinoïde de $X$
on a alors $\dim V=\sup_{i, V\cap X_i\neq \emptyset}\dim X_i$. En particulier $\dim V\leq \dim X=\sup_i \dim X_i$. 
Si $x$ est un point de $X$ on définit $\dim_x X$ comme le minimum des $\dim V$ où $V$ est un domaine affinoïde de $X$
contenant $x$ ; c'est aussi $\sup_{i,x\in X_i}\dim X_i$. Il vient $\dim X=\sup_i \dim X_i=\sup_{x\in X} \dim_x X$. 

Ne supposons plus $X$ affinoïde. Si $x$ est un point de $X$, on définit la dimension locale $\dim_x X$ de $X$
en $x$ comme le minimum des $\dim V$ où $V$ parcourt l'ensemble des domaines affinoïdes de $X$ contenant $x$, et l'on pose alors
$\dim X=\sup_{x\in X}\dim_x X$. Cette définition est compatible avec la précédente lorsque $X$ est affinoïde. 
Si $V$ est un domaine analytique de $X$ on a $\dim_x V=\dim_x X$ pour tout $x\in V$, si bien que  $\dim V\leq \dim X$. 
Si $d$ est un entier on dit que $X$ est \textit{purement de dimension $d$} si $\dim_x X=d$ pour tout $x\in X$. Si c'est le cas 
$X$ est ou bien vide, ou bien de dimension $d$.

Pour tout point $x$ de $X$ on pose 
\begin{equation}\label{eq.dk(x)}
d_k(x)= \mathrm{degtr} (\hrt x/\tilde k)=\mathrm{degtr}(\hrt x^1/\widetilde k^1)+
   \dim_\Q\left(\Q\otimes_\Z( \abs{\hr x^\times}/\abs{k^\times})\right),
\end{equation}
où la seconde égalité provient de 
\ref{deg-extension-graduee} ; 
Berkovich le note $d(\kappa(x)/k)$ ou 
$d(\hr x/k)$
L'intérêt majeur de cet invariant est l'égalité~$\dim (X)=\sup_{x\in X}d_k(x)$ 
(\cite{berkovich1993}, Lemma 2.5.2).

Soit $Y$ un fermé de Zariski de $X$. Munissons-le d'une structure de sous-espace analytique fermé. La dimension de l'espace
analytique correspondant ne dépend que de $Y$, et pas de la structure en question (par
exemple au vu de l'égalité
ci-dessus, ou par réduction au cas affinoïde \textit{via} la description des domaines analytiques de $Y$).
On pourra donc parler de la dimension de $Y$ sans spécifier la structure dont on le munit. 

\begin{rema}
Insistons sur le fait que la dimension considérée ici est en fait une dimension relative au corps de base $k$, même si nous ne rappelons
pas cette dépendance. Considérons par exemple l'algèbre de Banach $k_r$ associée à un 
polyrayon $k$-libre $(r_1,\ldots, r_n)$ (exemple \ref{exemple-kr}). C'est une extension complète de $k$. En tant qu'algèbre
$k_r$-affinoïde elle est stricte, et sa dimension de Krull est nulle. Par conséquent la dimension
$k_r$-analytique de $\mathscr M(k_r)$ est nulle. 
Par ailleurs on a $k_r\widehat{\otimes}k_r=(k_r)_r\simeq k_r\{T, S\}/(ST-1)$, qui est 
strictement $k_r$-affinoïde de dimension de Krull $n$. La dimension $k$-analytique de 
$\mathscr M(k_r)$ est donc égale à $n$. 
\end{rema}

\subsection{Les composantes irréductibles}\label{ss-composantes-irreductibles}
Lorsque nous parlerons d'un espace $k$-analytique irréductible, ce sera toujours en référence à sa topologie de Zariski. 
Un espace $k$-analytique est noethérien pour la topologie de Zariski dès qu'il est compact, mais ce n'est pas le cas en général : 
par exemple si $k$ est non trivialement valué et si $f\in k[\![t]\!]$ est une série de rayon infini qui n'est pas un polynôme, 
le lieu des zéros de $f$ sur $\A^{1,\an}_k$ est un ensemble infini de points rigides tendant vers l'infini. C'est donc un fermé de Zariski
de $\A^{1,\an}$ possédant une infinité de composantes connexes. 

On peut néanmoins développer une théorie des composantes irréductibles en géométrie analytique sur $k$. Brian Conrad l'a fait  dans
\cite{conrad1999} (dans le cas strictement $k$-analytique) 
en procédant comme en géométrie complexe, c'est-à-dire en construisant d'abord le normalisé $\widetilde X$ d'un
espace $k$-analytique, puis en définissant les composantes irréductibles de $X$ comme les images des composantes connexes de 
$\widetilde X$. Dans ce texte, nous nous appuierons sur une approche différente présentée dans \cite{ducros2009}, que nous allons 
brièvement rappeler. 

On y démontre tout d'abord que si $X$ est un espace $k$-analytique irréductible, il est purement de dimension $d$
pour un certain entier $d$, et tout fermé de Zariski strict de $X$ est alors de dimension $<d$ (\cite{ducros2009}, cor. 4.14)
puis que si $X$ s'écrit comme une union localement finie $\bigcup_i X_i$ de fermés de Zariski irréductibles
alors $X$ est égal à l'un des $X_i$. 

Si $X=\mathscr M(A)$ est un espace affinoïde, sa topologie de Zariski est noethérienne et il possède donc un ensemble 
fini de composantes irréductibles pour cette topologie, que l'on appelle simplement les composantes 
irréductibles  de $X$. Ce sont les images réciproques des composantes irréductibles de $\Spec A$.

Si $X$ est un espace $k$-analytique quelconque, on appelle composante irréductible de $X$ toute partie de $X$
qui est l'adhérence pour la topologie de Zariski de $X$ d'une composante irréductible d'un domaine affinoïde de $X$. 
On démontre (\cite{ducros2009}, Th. 4.20) que les composantes irréductibles de $X$ constituent un ensemble localement fini de fermés de Zariski irréductibles
de $X$ qui recouvrent $X$, sont deux à deux non comparables pour l'inclusion et sont maximaux ; plus précisément, tout 
fermé irréductible de $X$ est contenu dans une composante irréductible de $X$. 
Si $E$ est un ensemble de composantes irréductibles de $X$,
la réunion $\bigcup_{Z\in E}Z$ est un fermé de Zariski de $X$ dont les 
éléments de $E$ sont les composantes irréductibles. 

Si $x$ est un point de $X$ l'entier $\dim_x X$ est le maximum des dimensions des composantes irréductibles de $X$ contenant 
$x$ (\cite{ducros2009}, Lemme 4.21). 

Soit $V$ un domaine analytique de $X$. Pour toute composante irréductible $Y$ de $X$, l'intersection $Y\cap V$ est une union (éventuellement
vide, éventuellement infinie) de composantes irréductibles de $V$ de même dimension que $Y$ ; pour toute composante irréductible $T$ de $V$, 
l'adhérence de Zariski $Z$ de $T$ dans $X$ est une composante irréductible de $X$ de dimension $\dim T$, et $T$ est une composante
irréductible de $Z\cap V$ (\cite{ducros2009}, prop. 4.22).  

Si $U$ est un ouvert de Zariski de $X$ les assertions suivantes sont équivalentes (\cite{ducros2009}, Lemme 4.24) 
\begin{itemize}
\item[(i)] $U$ est Zariski-dense ;
\item[(ii)] $U$ est dense pour la topologie usuelle de $X$ ; 
\item[(iii)] $U$ rencontre toutes les composantes irréductibles de $X$.
\end{itemize}
Et si elles sont satisfaites, $U\cap V$ est dense dans $V$ pour tout domaine analytique $V$ de $X$
(\cite{ducros2018}, Cor. 1.5.13). 

Soit $L$ une extension complète de $k$. Soit $(X_i)$ la famille des composantes irréductibles de $X$. 
Pour tout $i$ l'image réciproque $X_{i,L}$ de $X_i$ sur $X_L$ a une famille finie $(Y_{ij})_j$ de composantes irréductibles. 
Chacune des $Y_{ij}$ a même dimension que $X_i$ et se surjecte sur $X_i$ ; de plus $(Y_{ij})_{i,j}$ est la famille
des composantes irréductibles de $X_L$ (\cite{ducros2009}, Lemme 4.25 et théorème 7.1.16
(i), (ii) et (iii)).

Mentionnons qu'on dispose d'un principe GAGA pour les composantes irréductibles : si $\mathscr X$ est un schéma localement de type
fini sur une algèbre affinoïde $A$, et si les $\mathscr X_i$ sont les composantes irréductibles de $\mathscr X$, les $\mathscr X_i^\an$ sont les
composantes irréductibles de $\mathscr X^\an$ (\cite{ducros2018}, Prop. 2.7.16). Si $\mathscr X$ est irréductible et si 
$\mathscr Y$est un fermé irréductible de $\mathscr X$ on a alors 
\[\dim \mathscr X^\an=\dim \mathscr Y^\an+\text{\upshape{codim}}(\mathscr Y,\mathscr X)\]
(\cite{ducros2018}, Lemma 2.7.10 ; la preuve est un peu plus simple lorsque $\mathscr X=\Spec A$, 
voir \cite{ducros2007}, Prop. 1.11). 

\begin{lemm}\label{lemm-adherence-point}
Soit $X$ un espace $k$-analytique et soit $x\in X$. Les assertions suivantes sont équivalentes : 

\begin{itemize}
\item[\text{(i)}] l'adhérence de Zariski $\overline{\{x\}}^{X_{\mathrm{Zar}}}$ est une composante irréductible de $X$ ; 
\item [\text{(ii)}] la dimension de $\overline{\{x\}}^{X_{\mathrm{Zar}}}$ est égale à $\dim_x X$. 
\end{itemize}
Si elles sont satisfaites,  $\overline{\{x\}}^{X_{\mathrm{Zar}}}$ est l'unique composante irréductible de $X$ contenant $x$.
\end{lemm}

\begin{proof}
Choisissons une composante irréductible $Z$ de $X$ contenant $x$ et de dimension $\dim_x X$. 
Le fermé de Zariski $\overline{\{x\}}^{X_{\mathrm{Zar}}}$  est alors contenu dans la composante 
irréductible $Z$, et les assertions (i) et (ii) sont donc toutes deux 
équivalentes à l'égalité $\overline{\{x\}}^{X_{\mathrm{Zar}}}=Z$. 
Si elles sont satisfaites et si $T$ est une composante irréductible de $X$ distincte de $Z$ alors $T\cap Z$
est un fermé de Zariski strict de $Z=\overline{\{x\}}^{X_{\mathrm{Zar}}}$, si bien que $T$ ne peut contenir $x$. 
\end{proof}

\section{Propriétés locales des espaces analytiques et des faisceaux cohérents}

\subsection{}\label{ss-liste-proprietes}
Soit $\mathsf P$ une des propriétés 
suivantes des anneaux locaux noethériens : 
vérifier la condition~$\mathrm R_m$, 
être régulier, être Gorenstein, 
être intersection complète. Soit $\mathsf Q$ une des propriétés suivantes 
d'un module de type fini sur un anneau local noethérien: être libre de rang~$r$, 
vérifier la condition~$\mathrm S_n$, 
être de Cohen-Macaulay

Soit $X$ un espace $k$-analytique et
soit $x$ un point de $X$. Soit $\mathscr F$ un faisceau cohérent sur $X$. 
Pour tout bon domaine analytique $V$ de $X$
contenant $x$, nous noterons $\mathscr F_{V,x}$ la fibre en $x$ de
$\mathscr F|_V$ vu comme faisceau sur le site topologique de $V$, c'est-à-dire
encore la colimite des $\mathscr F(U)$
où $U$ parcourt l'ensemble des voisinage ouverts de $x$ dans $V$. 
S'il existe un bon domaine analytique ~$V$ de~$X$ contenant~$x$
tel que 
l'anneau local noethérien $\mathscr O_{V,x}$
(resp. tel que le $\mathscr O_{V,x}$-module 
de type fini $\mathscr F_{V,x}$)
satisfasse $\mathsf P$ (resp. $\mathsf  Q$) 
ceci reste vrai pour tout bon domaine analytique de~$X$ qui contient~$x$
(théorème~3.4 de~\cite{ducros2009}).
On dit alors que $X$ (resp. $\mathscr F$)
satisfait la propriété~ $\mathsf P$ (resp. $\mathsf Q$)
en~$x$.

Le lieu des point en lesquels la propriété $X$
(resp. $\mathscr F$) satisfait $\mathsf P$ (resp. $\mathsf Q$) 
est un ouvert de Zariski de~$X$
(\cite{ducros2009}, théorème~4.4). 

On dit que $X$ (resp. $\mathscr F$) satisfait la propriété~$\mathsf P$ (resp. $\mathsf Q$)
s'il la satisfait
en chacun de ses points. Si $U$ est un domaine analytique de $X$ contenant $x$
alors $X$ (resp. $\mathscr F$) satisfait $\mathsf P$ (resp. $\mathsf Q$) en $x$ si et seulement si c'est le cas 
de $U$ (resp. $\mathscr F|_U$). 

Mentionnons une entorse à ces principes généraux : si $\mathscr F$ est libre de rang $r$ en tout point de $X$
nous ne dirons évidemment pas que $\mathscr F$ est libre de rang $r$, mais qu'il est localement libre de rang $r$ --
et d'après un résultat de Berkovich (\cite{berkovich1993}, 1.3.4) déjà rappelé et discuté en \ref{definition-gtopologie}, 
cela équivaut à dire que $\mathscr F$ est localement libre en tant que $\mathscr O_X$-module
sur le site annelé $X_\groth$.

Lorsque $\mathscr F=\mathscr O_X$ nous dirons souvent
«$X$ satisfait $\mathsf Q$ en $x$» plutôt que «$\mathscr O_X$ satisfait $\mathsf Q$ en $x$». 

Nous étendrons
les conventions ci-dessus à la conjonction de différentes propriétés d'algèbre commutative ; nous 
parlerons ainsi d'espace réduit (c'est-à-dire satisfaisant 
$(\mathrm R_0)$ et $(\mathrm S_1)$) ou normal (c'est-à-dire satisfaisant 
$(\mathrm R_1)$ et $(\mathrm S_2)$) en un point $x$, ou globalement. 
Tout espace $X$ possède un normalisé $\widetilde X$, muni d'un morphisme fini 
surjectif $\widetilde X\to X$ qui induit une bijection entre $\pi_0(\widetilde X)$ et l'ensemble
des composantes irréductibles de $X$ (\cite{ducros2009}, Déf. 5.10, Thm. 5.13, Thm. 5.17) ; 
la normalisation commute à l'analytification (\cite{ducros2018}, Lemma 2.7.15).

Soit $L$ une extension complète de $k$ et soit $\pi$ le morphisme 
structural de $X_L$ vers $X$. 
Le faisceau cohérent $\mathscr F$ satisfait $\mathsf Q$ en $x$ si et seulement si $\pi^*\mathscr F$ satisfait
$\mathsf Q$ en $y$ ; si
si $\mathsf P$ est la propriété d'être Gorenstein ou d'intersection complète, $X$ satisfait $\mathsf P$ en $x$ 
si et seulement si $X_L$ la satisfait en $y$ ; si $\mathsf P$ est la propriété d'être régulier ou de vérifier $(\mathrm R_n)$
alors $X$ satisfait $\mathsf P$ en $x$ dès que $X_L$ satisfait $\mathsf P$ en $y$, et la réciproque est vraie si $L$
est analytiquement séparable sur $k$ (\cite{ducros2018}, Prop. 2.6.7), mais elle ne l'est pas en général. 

Si $\mathsf P$ est la propriété d'être régulier ou de vérifier $(\mathrm R_m)$, ou une propriété impliquant d'une façon ou d'une autre la régularité
(comme le fait d'être réduit, ou normal), on est dès lors conduit à introduire la notion de validité \textit{géométrique}
de $\mathsf P$: 
on dit que $X$ satisfait la propriété $\mathsf P$ \textit{géométriquement} en $x$ si pour toute extension complète $L$ 
de $k$ et tout antécédent $y$ de $x$ sur $X_L$, l'espace $X_L$ satisfait $\mathsf P$ en $y$. Pour que ce soit le cas, 
il suffit qu'il existe une extension parfaite $L$ de $k$ et un antécédent $y$ de $x$ sur $X_L$ tel que $X_L$
satisfasse $\mathsf P$ en $y$ (\cite{ducros2018}, 2.6.9). 

Soit $\mathscr X$ un schéma localement de type fini sur une algèbre affinoïde $A$ et soit $\mathscr G$ 
un faisceau cohérent sur $\mathscr X$. Soit $x$ un point de $\mathscr X^\an$ d'image $\xi$ sur $\mathscr X$. 
Le schéma $\mathscr X$ (resp. le faisceau $\mathscr G$) satisfait $\mathsf P$ (resp. $\mathsf Q$) en $\xi$ si et seulement si
$\mathscr X^\an$ (resp. $\mathscr G^\an$) la satisfait en $x$ (\cite{ducros2018}, Lemma 2.4.6). En étendant les scalaires au complété
d'une clôture algébrique de $k$, on obtient un principe GAGA analogue pour les propriétés géométriques.

\section{Propriétés des morphismes}

\subsection{Bord et intérieur}
Soit $f \colon Y\ra X$ un morphisme d'espaces $k$-analytiques.
On note $\Int(Y/X)$ l'intérieur relatif de~$f$
et $\partial(Y/X)$ son complémentaire, le bord relatif de~$f$
(\cite{berkovich1993}, Definition~1.5.4). Par construction,
$\Int(Y/X)$ est un ouvert de~$Y$.
Si $\partial(Y/X)=0$, nous dirons que $f$ est \emph{sans bord}
(la terminologie de~\cite{berkovich1993} est \emph{closed}).
Lorsque $f$ est l'inclusion d'un domaine analytique,
$\Int(Y/X)$ coïncide avec l'intérieur~$\mathring Y$
de~$Y$ dans~$X$ au sens de la topologie générale
et $\partial(Y/X)$ coïncide donc avec le bord topologique de~$Y$
dans $X$
(\cite{berkovich1993}, Prop.~1.5.4).

Si $\mathscr X$ est un schéma localement de type fini sur une algèbre
affinoïde $A$, le morphisme structural de $\mathscr X^\an$ vers
$\mathscr M(A)$ est sans bord. 

Si $g\colon Z\ra Y$ et $f\colon Y\ra X$ sont deux morphismes
d'espaces $k$-analytiques alors 
(\cite{berkovich1993}, Prop. 1.5.5)
\begin{equation}\label{incl.interieurs}
\Int(Z/Y) \cap g^{-1}(Y/X) \subseteq \Int(Z/X),
\end{equation}
et l'on a 
\begin{equation}\label{eq.interieurs}
\Int(Z/Y) \cap g^{-1}(Y/X) = \Int(Z/X),
\end{equation}
si $f$ est localement séparé
(\cite{temkin2004}, Corollary~5.7), ce qui est notamment le cas dès que $Y$ est
séparé. 

Lorsque $X=\mathscr M(k)$, on note $\Int(Y)=\Int(Y/k)$ et $\partial(Y)=\partial(Y/k)$,
qu'on appelle l'intérieur et le bord de~$Y$ respectivement. Nous dirons que 
$Y$est sans bord si $\partial(Y)=\emptyset$. Si $Y$ est sans bord, ou plus généralement si 
$f\colon Y\to X$ est sans bord et si $X$ est 
bon, l'espace $Y$ est bon. 
Si $f\colon Y\ra X$ est un morphisme d'espaces $k$-analytiques
et que $X$ est séparé, le cas \ref{eq.interieurs} entraîne les égalités
\begin{equation}\label{eq.interieurs2}
\Int(Y/X) \cap f^{-1}(\Int(X))=\Int(Y)
\quad\text{et}\quad
\partial(Y) = \partial(Y/X) \cup f^{-1}(\partial(X)). 
\end{equation}

Si $Y\to X$ est un morphisme d'espaces $k$-analytiques et si si $X'$ est un espace analytique sur une extension complète de $k$
muni d'un $k$-morphisme vers $X$, on a l'inclusion $\Int(Y/X)\times_X X'\subset \Int(Y\times_X X'/X')$
(cela découle de la proposition 3.13 de \cite{berkovich1990}). 
La classes des morphismes sans bord est en particulier stable par changement de base analytique sur $k$. 
Elle est stable par composition d'après \ref{eq.interieurs} et est G-locale au but
(\cite{temkin2004}, Cor. 5.6).

\begin{exem}
1)
Soit $r$ un nombre réel~$s>0$. Soit $D$ le domaine affinoïde de $\A^{1,\an}_k$ 
défini par l'inégalité $\abs T\leq r$ ; c'est le disque fermé de rayon $r$
au sens de Berkovich, isomorphe à $\mathscr M(k\{T/r\})$. 
Son bord topologique dans $\A^{1,\an}_k$ est le singleton
$\{\eta_r\}$ (défini à l'exemple \ref{exemple-etar}). 
On a donc 
aussi $\{\eta_r\}=\partial (D/\A^{1,\an}_k)=\partial (D)$, la seconde égalité provenant
du fait que $\A^{1,\an}_k$ est sans bord. 

Si $E$ désigne le domaine analytique fermé (mais non compact) de $\A^{1,\an}_k$ défini par l'inégalité
$\abs T\geq r$, le bord topologique de $E$ dans $\A^{1,\an}_k$ est encore
$\{\eta_r\}$, si bien que $\{\eta_r\}=\partial (E/\A^{1,\an}_k)=\partial (E)$.

2) Soit $X$ un espace $k$-analytique, soit 
$f\in\mathscr O(X)$
et soit $W$ le domaine analytique de~$X$
défini par l'inégalité $\abs f\leq r$.
La fonction $f$  induit un   morphisme de $X$ vers $\A^{1,\an}_k$, encore noté $f$ ; 
avec cette convention, on a $W=f^{-1}(D)$. 
Comme $\A^{1,\an}$ est sans bord, $\partial(W)=\partial(W/\A^{1,\an})$. La
formule~\eqref{eq.interieurs2} appliquée
au diagramme $W\ra D\ra\mathbf A^{1,\an}$, 
couplée au fait que $\partial(D/\A^{1,\an}_k)=\{\eta_r\}$, 
assure que 
$\partial(W)=f^{-1}(\eta_r) \cup (\partial(W/D))$.
Et
la même formule appliquée à $W\to X\to \mathscr M(k)$
entraîne que $\partial(W)=\partial(W/X)\cup (\partial(X)\cap W)$.
Cela démontre que $\partial(W)$ 
contient $(\partial(X)\cap W)\cup f^{-1}(\eta_r)$.
Par ailleurs, la propriété pour un morphisme  d'être sans
bord étant stable par changement de base, le morphisme
$W\setminus f^{-1}(\eta_r)\ra X$  est sans bord; autrement
dit, $\partial( W/X)\subset f^{-1}(\eta_r)$.
On en déduit 
\begin{equation}\label{lemm.bord(W-)}
  \partial(W) = (\partial(X)\cap W) \cup f^{-1}(\eta_r). 
\end{equation}
Si $W'$ est le domaine analytique de~$X$ défini par l'inégalité
$\abs f\geq r$, on démontre de même que
\begin{equation}\label{lemm.bord(W+)}
  \partial(W') = (\partial(X)\cap W') \cup f^{-1}(\eta_r). 
\end{equation}
\end{exem}

\subsection{}
Soit $f\colon Y\to X$ un morphisme d'espaces $k$-analytiques.
On dit que $f$ est \emph{compact} 
si l'image réciproque de toute partie compacte est compacte.
Comme tout point possède un voisinage compact, cela revient à dire
que $f$ est topologiquement propre et topologiquement séparé.

Suivant~\cite{berkovich1993} (1.5.3), un morphisme est dit \emph{propre}
s'il est compact et sans bord.

\subsection{Morphismes finis}
Soit $A$ une algèbre $k$-affinoïde. Une $A$-algèbre de Banach finie est une $A$-algèbre de Banach $B$ qui est finie comme
$A$-algèbre, et dont la norme est équivalente à la norme quotient induite par une présentation finie $A^n\to B$ du $A$-module $B$.
Dans ce cas la norme de $B$ est en fait équivalente à la norme quotient associée à n'importe quelle présentation finie 
du $A$-module $B$ (c'est une conséquence de \cite{berkovich1990}, Prop. 2.1.9). Toute $A$-algèbre de Banach finie est $k$-affinoïde
(combiner \cite{bosch-g-r1984}, Prop. 6.1.3/4  et \cite{berkovich1990}, Cor. 2.1.8). Le foncteur d'oubli de la norme
induit une équivalence de catégories entre $A$-algèbres de Banach finies et $A$-algèbres finies\footnote{Si $\abs{k^\times}\neq \{1\}$ 
et si $A$ est strictement $k$-affinoïde, on déduit de \cite{berkovich1990}, Prop. 2.1.1 
et \cite{bosch-g-r1984},
Thm. 6.1.3/1 que toute $A$-algèbre de Banach qui est 
finie comme $A$-algèbre est une $A$-algèbre de Banach finie. Par contre si $k$ 
est trivialement valué et si
$ 0<r<s<1$, le morphisme continu naturel de $k\{T/s\}$ vers $k\{T/r\}$ est bijectif (sa source
et son but s'identifient
tous deux à $k[[T]]$) mais sa réciproque n'est pas bornée, si bien que $k\{T/s\}$ n'est pas une $k\{T/r\}$-algèbre
de Banach finie).} (\cite{berkovich1990}, 
Prop. 2.1.12).

Soit $f\colon Y\to X$ un morphisme entre espaces $k$-analytiques. 
Si $X$ et $Y$ sont affinoïdes, on dit que $f$ est fini si $f^*$ fait de $\mathscr O(Y)$
une $\mathscr O(X)$-algèbre de Banach finie ; cela revient à demander que $f$ soit sans bord
(\cite{berkovich1990}, Cor. 2.5.13).
En général (si l'on ne suppose plus $X$ et $Y$ affinoïdes) les assertions suivantes sont équivalentes : 
(\cite{berkovich1993}, Cor. 1.3.7). 
\begin{itemize}
\item[(i)] Il existe un G-recouvrement $(X_i)$ de $X$ par des domaines
affinoïdes tels que pour tout $i$, l'image réciproque $f^{-1}(X_i)$ soit affinoïde et que le morphisme induit de $f^{-1}(X_i)$ vers $X_i$ soit fini
\item[(ii)] Pour tout domaine affinoïde $V$ de $X$ l'image réciproque $f^{-1}(V)$ est affinoïde et le morphisme induit
de $f^{-1}(V)$ vers $V$ est fini. 
\end{itemize}
On dit que $f$ est fini si elles sont satisfaites ; cette définition est compatible avec la précédente si $X$ et $Y$ sont affinoïdes. 
Un morphisme fini est sans bord. 

La classe des morphismes finis est stable par composition et changement de base analytique sur $k$, 
et la propriété d'être fini est G-locale au but. 

Soit $y$ un point de $Y$. On dit que $f$ est fini en $y$ s'il existe un voisinage analytique $U$ de $f(y)$ dans $X$ et un voisinage 
analytique $V$ de $y$ dans $f^{-1}(U)$ tel que $f$ induise un morphisme fini de $V$ vers $U$. 
Si c'est le cas le point $y$ appartient à $\Int(Y/X)$, et si de plus $X$ et $Y$ sont bons $\mathscr O_{Y,y}$ est 
un $\mathscr O_{X,x}$-module de type fini. 
La finitude en un point est stable par composition
et changement de base analytique sur $k$.

\subsection{Dimension relative}
Soit $f\colon Y\to X$ un morphisme d'espaces $k$-analytiques et soit $y$ un point de $Y$ ; posons
$x=f(y)$. 
On appelle dimension (relative) de $f$ en $y$, ou encore dimension relative de $Y$ sur $X$ en $y$, 
et l'on note $\dim_y f$, la dimension de l'espace $\hr x$-analytique $f^{-1} (x)$ en $y$. Nous dirons que $f$ est
\textit{quasi-fini} en $y$ (resp. que $f$ est quasi-fini) si $\dim_yf=0$ (resp. si $f$ est partout de
dimension relative nulle). 

L'application $y\mapsto \dim_y f$ est semi-continue supérieurement pour la topologie de Zariski
(\cite{ducros2007}, Théorème 4.9). 

Le morphisme $f$ est fini en un point $y$ de $Y$ si et seulement si $f$ est quasi-fini en $y$ et
$y\in \Int(Y/X)$ (\cite{berkovich1993}, Cor. 3.1.10). 

\subsection{Morphismes plats}
Soit $f\colon Y\to X$ un morphisme d'espaces $k$-analytiques. 
Soit $y$ un point de $Y$ et soit $x$ son image sur $X$.

Supposons tout d'abord que $X$ et $Y$ sont bons. On dit 
que $f$ est plat en $y$, ou que $Y$ est plat sur $X$ en $y$, 
si pour tout morphisme $X'\to X$ dont la source $X'$ est un bon espace analytique sur $k$, et tout point $y'$ de $Y\times_X X'$ situé
au-dessus de $y$ et dont on note $x'$ l'image sur $X'$, le $\mathscr O_{X',x'}$-module $\mathscr O_{Y\times_XX',y'}$
est plat.

Ne supposons plus que $X$ et $Y$ sont bons. On dit alors que 
$f$ est plat en $y$ s'il existe un bon domaine 
analytique $U$ de $X$ contenant $x$ et un bon domaine analytique $V$ de $f^{-1}(U)$ contenant $y$ tels que 
$f|_V\colon V\to U$ soit plat en $y$ ; et c'est alors le cas pour tout tel couple $(U,V)$ (\cite{ducros2018}, 4.1.7). 

Soit maintenant $U$ un domaine analytique quelconque (non nécessairement bon) de $X$ contenant $x$, et soit $V$ un domaine
analytique quelconque de $f^{-1}(U)$ contenant $y$. Il résulte aussitôt de la définition que $f$
est plat en $y$ si et seulement si
$f|_V\colon V\to U$ est plat en $y$. 

On dit que $f$ est plat s'il est plat en tout point de $Y$. 
La platitude est préservée par composition et changement de base (la définition est conçue pour garantir cette dernière
propriété). 

Le morphisme structural d'un espace $k$-analytique vers $\mathscr M(k)$ est plat (\cite{ducros2018}, Lemma 4.1.13). 

L'ensemble des points de $Y$ en lesquels $f$ est plat est un ouvert de Zariski de $Y$ (\cite{ducros2018}, Thm. 10.3.2). 

Soit $\phi\colon \mathscr Y\to \mathscr X$ un morphisme entre schémas de type fini sur une algèbre affinoïde
et soit $y$ un point de $\mathscr Y^\an$ d'image $\eta$ sur $\mathscr Y$. Le morphisme $\phi^\an$ est plat en $y$ si et seulement 
si $\phi$ est plat en $\eta$ (\cite{ducros2018}, Lemma 4.2.1 et Prop. 4.2.4).

\begin{rema}
Supposons que $X$ et $Y$ sont bons. 
Si $f$ est plat en $y$, l'anneau local $\mathscr O_{Y,y}$ est plat sur $\mathscr O_{X,x}$. 
La réciproque est fausse en général, le problème étant que
la platitude de $\mathscr O_{Y,y}$ est plat sur $\mathscr O_{X,x}$ n'est pas en général
préservée par changement de base bon (un contre-exemple dû à Temkin est étudié en détail au paragraphe 4.4 de \cite{ducros2018}). 

La platitude de $\mathscr O_{Y,y}$ sur $\mathscr O_{X,x}$
entraîne toutefois la platitude de $f$ en $y$ lorsque $y$ est un point rigide (\textit{cf.}
\cite{ducros2018}, 4.2.3) ou plus généralement lorsque $y$ appartient à $\Int(Y/X)$ (\cite{ducros2018}, 
Thm. 8.3.4). 
\end{rema}

\subsection{Morphismes quasi-lisses}

Soit $f\colon Y\to X$ un morphisme entre espaces $k$-analytiques. On définit comme en géométrie algébrique 
le $\mathscr O_Y$-module des différentielles de Kähler relatives $\Omega_{Y/X}$ (\cite{berkovich1990}, \S 3.3) ; c'est un 
faisceau cohérent sur $Y$. On dispose en particulier sur tout espace $k$-analytique $X$ du faisceau cohérent $\Omega_{X/k}$. 
Si $x$ est un point de $X$, la tige $\Omega_{X/k}\otimes \hr x$ est de dimension $\geq \dim_x X$ 
(\cite{ducros2009}, Lemma 6.2 (i)) et l'on dit que $X$
est quasi-lisse en $x$ si cette dimension vaut exactement $\dim_x X$. L'espace $X$
est quasi-lisse en $x$ si et seulement s'il est géométriquement régulier en $x$
(\cite{ducros2009}, Prop. 6.3) et si $x$ est un $k$-point, il suffit pour que ce soit
le cas que $X$ soit régulier en $x$ (\cite{ducros2009}, Lemma 6.2 (iii); l'hypothèse que $\abs{k^\times} \neq \{1\}$ figurant
dans son énoncé  est inutile, et n'est pas utilisée dans la preuve).

Cette notion admet une variante relative : dans  \cite{ducros2018} (déf. 5.2.4) on définit la notion de morphisme quasi-lisse en un point
au moyen d'un critère jacobien 
(et un morphisme est dit quasi-lisse s'il l'est en tout point de sa source). 
On démontre (\cite{ducros2018}, Thm. 5.3.4)
que $f\colon Y\to X$ est quasi-lisse en $y$ si et seulement si $f$ est plat en $y$ et la fibre $f^{-1}(f(y))$ est 
quasi-lisse  en $y$. 
Si $d$ est un entier le lieu $U_d$ des points de $Y$ en lesquels $f$ est quasi-lisse et de dimension relative $d$ est un ouvert de Zariski de $Y$
(\cite{ducros2018}, Thm. 10.7.2 (2))  et $\Omega_{U_d/X}$ est G-localement libre de rang $d$ (\cite{ducros2018}, Cor. 5.3.2 ; la preuve dans le
cas absolu où $X=\mathscr M(k)$ est beaucoup plus simple, \textit{cf.} \cite{ducros2009}, Prop. 6.6). 

On dit que $f$ est quasi-étale en $y$ s'il est quasi-lisse et quasi-fini en $y$, et que $f$ est quasi-étale s'il est quasi-étale en tout point de $Y$.

\begin{rema}
Berkovich a introduit au paragraphe 3 de
\cite{berkovich1994} une notion de morphisme quasi-étale ; même si sa définition diffère en apparence de celle que nous donnons ici, 
les deux sont en fait équivalentes (\cite{ducros2018}, Lemma 5.4.11). 

Berkovich a par ailleurs défini aux paragraphes 3.3 et 3.5 
de  \cite{berkovich1993} des notions de morphisme étale et lisse. Si $Y$ et $X$ sont bons, le morphisme $f$ est lisse en $y$
si et seulement s'il est quasi-lisse et intérieur en $y$ (\cite{ducros2018}, Cor. 5.4.8). Il est vraisemblable que l'hypothèse de bonté 
est nécessaire pour que cette équivalence soit valable ; on peut toutefois s'en passer en dimension relative nulle : même si l'on ne
suppose plus $X$ et $Y$ bons, $f$ est  étale en $y$ si et seulement si $f$ est quasi-étale et intérieur en $y$ (\cite{ducros2018}, 
Remark 5.4.9). 
\end{rema}

\begin{lemm}\label{lemm.voisinage}
Soit $X$ un espace $k$-analytique, soit $V$ un  domaine analytique
et soit $Y$ un fermé de Zariski de~$X$ qui est contenu dans~$V$.
Alors, $V$ est un voisinage de~$Y$ dans~$X$.
\end{lemm}
\begin{proof}
Commençons par traiter le cas où $V$ est séparé.
L'injection de~$Y$ dans~$X$ est sans bord; comme $V$ est supposé séparé,
$Y$ est contenu dans $\Int(V/X)$ (formule~\eqref{eq.interieurs2}),
lequel est égal à l'intérieur topologique de~$V$ dans~$X$.

Traitons maintenant le cas général. Soit $y$ un point de~$Y$
et soit $(X_1,\dots,X_n)$ une famille finie 
de domaines affinoïdes de~$X$ contenant~$y$ 
et dont la réunion est un voisinage de~$y$ dans~$X$.
Pour tout~$i$, $X_i\cap V$ est séparé, de sorte qu'il existe
un ouvert~$U_i$ de~$X$ tel que $U_i\cap Y$ soit contenu dans~$X_i\cap V$.
Soit $U$ l'intersection de~$U_1\cap \dots\cap U_n$
et de l'intérieur de $X_1\cup\dots\cup X_n$;
c'est un ouvert de~$X$ qui contient~$y$.
Il est aussi contenu dans la réunion des~$U_i\cap X_i$, donc dans~$V$.
\end{proof}

\subsection{}
Soit $f\colon Y\to X$ un morphisme d'espaces $k$-analytiques.
On dit que $f$ est \emph{compact} 
si l'image réciproque de toute partie compacte est compacte.
Comme tout point possède un voisinage compact, cela revient à dire
que $f$ est topologiquement propre et topologiquement séparée.

Suivant~\cite{berkovich1993} (1.5.3), un morphisme est dit \emph{propre}
s'il est compact et sans bord.

\subsection{}
Rappelons que les propriétés pour~$f$ d'être \emph{fini} 
(resp. \emph{fini et plat}, resp.~\emph{compact}, resp.~\emph{propre})
sont locales sur le but 
(voir \cite{berkovich1993}, proposition~3.1.8 et corollary~3.2.3;
\cite{temkin2004}, corollary~5.6).

\section{Utilisation de l'invariant $d_k(x)$}

Nous nous proposons ici d'établir un certain nombre de résultats techniques en géométrie analytique 
qui nous seront utiles par la suite et mettent en jeu de façon essentielle l'invariant $d_k(x)$ introduit
en \ref{eq.dk(x)}.

\begin{lemm}\label{lemm.fibre-dim-zero}
Soit $f\colon Y\to X$ un morphisme d'espaces analytiques.
Pour tout $x\in X$ on a
\[ \dim Y \geq \dim  Y_x+d_k(x). \]
En particulier si $Y_x$ est non vide et si $\dim Y\leq d_k(x)$ on a
$\dim Y=d_k(x)$ et $\dim Y_x=0$.
\end{lemm}
\begin{proof}
On a pour tout $y\in Y_x$ 
la minoration
\[\dim Y\geq d_k(y)=d_{\mathscr H(x)}(y)+d_k(x).\]
Il vient 
\[\dim Y\geq \sup_{y\in Y_x}d_{\hr x}(y)+d_k(x)=\dim Y_x+d_k(x).\]
\end{proof}

\begin{lemm}\label{lemm.hauptidealsatz-anal}
Soit $Y$ un espace $k$-analytique purement de dimension~$n$,
soit $d$ un entier et soit $f\colon Y\to\A^d_k$ un morphisme.
Soit $x$ un point de~$\A^d_k$. 
Pour tout  point~$y$  de~$Y_x$,
on a $\dim_y Y_x\geq n-d$. Si de plus $d_k(x)=d$, on a égalité.
\end{lemm}\begin{proof}
Pour démontrer que $\dim_y Y_x\geq n-d$ on peut 
remplacer $Y$ par n'importe quel domaine analytique 
de $Y$ contenant $y$. Et si $L$ est une extension complète de
$k$ et $y'$ un antécédent de $y$ sur $Y_L$ dont non note $x'$
l'image sur $X_L$, il suffit de démontrer que 
$\dim_{y'}Y'_{x'}\geq n-d$. 
On peut donc supposer que $Y$ est affinoïde
et que $x\in\A^d_k(k)$; en composant $f$
avec une 
translation, on peut même supposer que $y=0$. 
Notons $f_1,\ldots,f_d$
les composantes du morphisme $f$, 
et posons $\mathscr B=\mathscr O_Y(Y)$. 
La fibre $Y_x$ est l'espace affinoïde
$\mathscr M(\mathscr B/(f_1,\dots,f_d))$. 
D'après le Hauptidealsatz de Krull, les composantes
irréductibles de $\Spec(\mathscr B/(f_1,\dots,f_d))$
sont de codimension au plus~$d$ dans~$\Spec(\mathscr B)$.
Par conséquent, les composantes irréductibles
de~$Y_x$ sont de dimension au moins $n-d$, 
si bien que $\dim_y Y_x\geq n-d$. 

Supposons que $d_k(x)=d$. 
On a alors
\[\dim_y Y_x\leq \dim Y_x\leq \dim Y-d=n-d,\]
où
la seconde inégalité provient du lemme \ref{lemm.fibre-dim-zero}. 
Il vient $\dim_y Y_x=n-d$. 
\end{proof}

\subsection{}

Soit $X$ un espace $k$-analytique. Nous noterons $X^\#$ l'ensemble des points $x$ de $X$ tels que 
$d_k(x)=\dim_x X$. Si $x$ est un point de $X$ et si $V$ est un domaine analytique de $X$ contenant 
$x$ on a $\dim_x V=\dim_x X$, si bien que $V^\#=X^\#\cap V$. 

Soit $x\in X^\#$. Soit $V$ un domaine analytique de $X$ et soit $Y$ un fermé de Zariski de $V$
contenant $x$. Comme $d_k(x)=\dim_x X$ on a $\dim_x Y\geq \dim_x X$ et donc $\dim_x Y=
\dim_x  X=\dim_x V$. Ainsi
l'adhérence de Zariski $\overline{\{x\}}^{V_{\text{\upshape Zar}}}$ est de dimension $\dim_x V$, ce qui entraîne que
$\overline{\{x\}}^{V_{\text{\upshape Zar}}}$ est une composante irréductible de $V$ de dimension $\dim_x X$, et c'est alors 
nécessairement la seule composante irréductible de $V$ contenant $x$. 

\begin{lemm}\label{lemm.abhyankar-finitude}
Soit $x$ un point de $X^\sharp$.

\begin{enumerate}
\item L'extension graduée
$\hrt x$ de~$\widetilde k$ est de type fini. 
\item L'extension $\hrt x^1$ de $\widetilde k^1$ 
et le groupe abélien $\abs{\hr x^\times}/\abs{k^\times}$
sont de type fini.
\item Supposons $k$ stable. Le corps valué complet $\hr x$ est alors également stable. 
\end{enumerate}
\end{lemm}
\begin{proof}
On peut supposer $X$ affinoïde de dimension $n=\dim_x(X)$.
Soit $f_1,\dots,f_n\in\mathscr O(X)$ telles
que les $f_i(x)$ soient non nulles 
et que $(\widetilde{f_1(x)},\dots,\widetilde{f_n(x)})$ soit une base de transcendance
graduée de $\hrt x$ sur $\widetilde k$. 
Soit $f=(f_1,\dots,f_n)\colon X \to \A^n_k$ le morphisme induit par
cette famille.
Par construction, $f(x)=\eta_r$, où $r=\abs{f(x)}$.
D'après l'exemple \ref{exem-ktr-tilde}, et au vu de la densité
de $k(T_1,\ldots, T_n)$ dans $\hr{\eta_r}$, le
corps gradué $\hrt{\eta_r}$ est une extension gradué transcendante pure de degré de transcendance $n$
sur $\widetilde k$. Il s'ensuit que $d_k(\eta_r)=n$. On déduit alors du lemme
\ref{lemm.fibre-dim-zero} que $\dim f^{-1}(\eta_r)=0$. 
En particulier, $\hr x$ est fini sur $\hr{\eta_r}$. En conséquence $\hrt x$ est fini sur $\hrt{\eta_r}$, 
et $\hrt x$ est dès lors une extension de type fini de $\widetilde k$, puisque c'est le
cas de $\hrt{\eta_r}$ par sa description explicite que nous venons de rappeler. 
Ceci montre (a), et (b) s'en déduit par le corollaire \ref{coro-gradue-tf}.

Montrons maintenant (c). On suppose que $k$ est stable. Comme $\hr x$ est fini sur $\hr{\eta_r}$ 
il suffit, pour prouver que $\hr x$ est stable, de démontrer que $\hr {\eta_r}$ est stable. Par récurrence sur la longueur 
de $r$ on se ramène au cas où celle-ci vaut 1, et cela découle alors de \cite{temkin2010}, Thm. 6.3.1 (iii). 
\end{proof}

\subsection{Dimension centrale d'un germe d'espace $k$-analytique}
\label{ss.dimcent}
Soit $(X,x)$ un germe d'espace $k$-analytique. 
La \emph{dimension centrale} $\dim_{\rm cent}(X,x)$ de~$(X,x)$ est le minimum,
lorsque $V$ parcourt l'ensemble des voisinages analytiques de~$x$
dans~$X$,
de la  dimension de l'adhérence de Zariski de~$x$ dans~$V$.
Elle est minorée par~$d_k(x)$ et majorée par 
$\dim_x X$, et est dès lors égale à 
$\dim_x X$ si $x\in X^\sharp$. 

Supposons que le germe $(X,x)$ soit bon. D'après
le corollaire~3.2.9 de \cite{ducros2018}, on a alors
l'égalité
\begin{equation}
\label{formule.dimcent}
\dim_{\rm cent}(X,x)+\dim_{\rm Krull}(\mathscr O_{X,x})=\dim_x (X).
\end{equation}

\subsection{}
Soit $X$ un espace $k$-analytique et soit $x\in X^\sharp$. 
Soit $V$ un bon domaine analytique de $x$. 
Le point $x$ appartient alors
à $V^\sharp$, ce qui entraîne que 
$\dim_{\rm cent}(V,x)=\dim_x V$, puis que
$\mathscr O_{V,x}$ est de dimension de Krull nulle, ce qui veut
dire que c'est un anneau local artinien, et donc un corps si $X$ est réduit. 
Il s'ensuit que $x$ appartient au lieu de Cohen-Macaulay de $X$, et à son lieu régulier
si $X$ est réduit.

\begin{lemm}\label{lemm.voisinage-irreductible}
Soit $X$ un espace $k$-analytique et soit $x$ un point de~$X^\sharp$.
Il existe une base de voisinages de~$x$ formée de domaines
analytiques irréductibles. Lorsque $x$ possède un voisinage affinoïde,
il existe une base de voisinages de~$x$ formée de domaines
analytiques irréductibles.
\end{lemm}
\begin{proof}
Quitte à restreindre~$X$, il suffit de prouver qu'il existe
un tel voisinage. Soit $U$ le lieu de normalité de~$X_\red$;
c'est un ouvert de Zariski dense de~$X$ contenant~$x$.
On peut donc supposer que $X_\red$ est normal.
Tout voisinage analytique (resp.  voisinage affinoïde) connexe
de~$X$ est alors irréductible.
\end{proof}

\begin{lemm}\label{plat.si}
Soit~$n$ et~$d$ des entiers.
Soit $f\colon Y\to X$ un morphisme fini entre 
espaces~$k$-analytiques purement de
dimension~$n$. Supposons que~$X$ est régulier
et que~$Y$ vérifie la propriété~$(\mathrm S_d)$.
Soit $x$ un point de $X$
tel que $d_k(x)\geq n-d$. 
Le morphisme~$f$ est plat
au dessus d'un voisinage de $x$. 
\end{lemm}

\begin{proof} Le morphisme $f$ étant fini, il est topologiquement propre. 
Il suffit donc pur prouver l'assertion de montrer que le lieu de platitude de $f$ 
contient tous les antécédents de $x$. 
Pour établir ce fait, on peut remplacer $X$ par n'importe quel de ses domaines
affinoïdes contenant $x$, et donc supposer $X$ affinoïde ; dans ce cas, 
$Y$ l'est également puisque $f$ est fini. 
Soit $y$ un antécédent de $x$. 
La finitude de $f$ entraîne que les
les dimensions centrales de~$(X,x)$ et~$(Y,y)$ sont égales
(\cite{ducros2018}, 3.2.4) ; par ailleurs,
l'hypothèse faite sur~$x$ assure que la dimension centrale
de~$(X,x)$ vaut au moins~$n-d$. Comme
on a d'autre part supposé que~$\dim_x (X)=\dim_y (Y)=n$, la
formule~\eqref{formule.dimcent}
ci-dessus implique que
\[\dim_{\text{Krull}}(\mathscr O_{X,x})
=\dim_{\text{Krull}}(\mathscr O_{Y,y})\leq d.\]
Puisque~$\mathscr O_{Y,y}$ satisfait la
propriété~$(\mathrm S_d)$, il est de Cohen-Macaulay. 
La $\mathscr O_{X,x}$-algèbre finie locale~$\mathscr O_{Y,y}$ est de Cohen-Macaulay,
et de même dimension
de Krull que l'anneau local régulier $\mathscr O_{X,x}$.
En conséquence 
$\mathscr O_{Y,y}$ est une $\mathscr O_{X,x}$-algèbre plate
(\cite{ega4.2}, EGA IV\textsubscript 2, Proposition~6.1.5), 
et le morphisme sans bord $f$ est dès lors plat en $y$. 
\end{proof}

\begin{coro}\label{plat.cohen}
Soit~$n$ un entier, 
soit~$f\colon  Y\to X$ un morphisme fini entre bons 
espaces~$k$-analytiques purement de
dimension~$n$. Supposons que $X$ est régulier
et que $Y$ est de
Cohen-Macaulay. Le morphisme~$f$ est alors plat.
\end{coro}

\begin{lemm}\label{lemm.K-inter-U}
Soit $X$ un espace $k$-analytique, soit $U$ un ouvert de Zariski de~$X$
et soit $K$ une partie compacte de~$X$contenue dans $X^\sharp$. 
Alors $K\cap U$ est compact.
\end{lemm}
\begin{proof}
Soit $(X_i)_{i\in I}$ la famille des composantes irréductibles de~$X$;
pour tout~$i$, soit $X'_i$ le complémentaire dans~$X_i$
de la réunion des autres composantes.

Soit $F$ un fermé de Zariski strict de~$X_i$.
Pour tout $x\in F$, on a $d_k(x)\leq \dim(F)<\dim(X_i)\leq \dim_x(X)$;
par conséquent, $K$ ne rencontre pas~$F$.

On en déduit d'abord que 
si $i$ et~$j$ des indices distincts dans~$I$, le compact $K$ ne rencontre
pas~$X_i\cap X_j$.
Autrement dit, $K$ est la réunion disjointe de ses parties ouvertes
$K\cap X'_i$; il en résulte
que $X'_i\cap K$ est compact pour tout~$i$, et vide pour presque tout~$i$.

On écrit alors $K\cap U=\bigcup_{i\in I} K\cap U\cap X'_i$.
Soit $i\in I$. Si $U\cap X_i=\emptyset$, alors $K\cap U\cap X'_i=\emptyset$.
Sinon, $U\cap X_i$ est un ouvert de Zariski dense de~$X_i$
et son complémentaire dans~$X_i$ ne rencontre pas~$K$, si bien
que $K\cap U\cap X'_i=K\cap X'_i$.

Cela prouve que $K\cap U$ est compact.
\end{proof}

\section{Morphismes finis et plats}

\subsection{Morphismes finis et plats : le cas global}\label{ss-degfiniplat-global}
Soit $f\colon Y\to X$ un morphisme fini d'espaces $k$-analytiques. 
Le morphisme $f$ est plat si et seulement le faisceau cohérent $f_*\mathscr O_Y$ est un $\mathscr O_X$-module 
G-localement libre, et localement libre si $X$ est bon : c'est une conséquence de la proposition 4.3.1 de \cite{ducros2018}, 
du fait que la platitude de $f$ peut se tester G-localement sur $X$, et du fait que dans le cas bon, un faisceau cohérent est G-localement libre
si et seulement s'il est localement libre. 

Supposons $f$ plat. Le faisceau G-localement libre $\mathscr F:=f_*\mathscr O_Y$ possède alors un rang sur $\mathscr O_X$, qui est une
fonction localement constante sur $X$
à valeurs entières, que l'on note $x\mapsto \deg_x (f)$. On dit que $\deg_x (f)$ est le degré de $f$ au-dessus de $x$.
Un point $x$ appartient à $f(Y)$ si et seulement si $\deg^x (f)\neq 0$ ; il en résulte que $f$ est ouvert. 

Soit $x\in X$ et soit  $y_1,\ldots, y_r$ les antécédents de $x$ sur $Y$. Soit $U$ un bon domaine 
analytique de $X$ contenant $x$, et soit $V$ l'image réciproque $f^{-1}(U)$ ; 
c'est un bon domaine analytique de $Y$ contenant $y$. Notons $\mathscr F\otimes \mathscr O_{U,x}$ 
la fibre en $x$ de $\mathscr F|_U$ vu comme faisceau sur l'espace topologique $U$. 
On a par propreté topologique 
$\mathscr F\otimes \mathscr O_{U,x}=\prod \mathscr O_{V,y_i}$. Chacun des facteurs $\mathscr O_{V,y_i}$ est libre
de rang fini $r_i$ sur$\mathscr O_{U,x}$, et $\deg_x f=\sum_i r_i$. 

\subsection{}\label{ss.deg-x-fini-plat}
Soit $f\colon Y\to X$ un morphisme 
d'espaces $k$-analytiques. 

Soit $\Sigma$ une partie connexe et non vide de~$X$
telle que $f$ soit fini et plat au-dessus d'un voisinage de~$\Sigma$.
Le degré de~$f$ au-dessus d'un point~$x$ de~$\Sigma$ ne dépend
alors pas de~$x$ et sera appelé le degré de~$f$ au-dessus de~$\Sigma$.
Ce degré est nul si et seulement s'il existe un point de $\Sigma$ qui n'appartient pas à 
$f(Y)$ ; et si c'est le cas, $\Sigma$ possède un voisinage ouvert ne rencontrant pas $f(Y)$. 

\subsection{Morphismes finis et plats : le cas local}\label{ss-degfiniplat-local}
Soit $f\colon Y\to X$ un espace $k$-analytique et soit $y$ un point de $Y$ ; posons $x=f(y)$. 
Supposons $f$ fini et plat en $y$. Le lieu de platitude de $f$ étant un ouvert (de Zariski) de $Y$, 
il existe un voisinage analytique compact $U$ de $x$ dans $X$ et un voisinage analytique compact $V$ de $y$ dans $f^{-1}(U)$
tels que $f$ induise un morphisme fini et plat de $V$ sur $U$. Par propreté topologique et parce que $y$
est isolé dans $f^{-1}(x)$, on peut restreindre $V$ et $U$
de sorte que $y$ soit le seul antécédent de $x$ sur $V$. Dans ce cas le degré de $f|_V$ au-dessus de $x$
ne dépend plus du choix de $V$ et $U$. Il est appelé le degré de $f$ en $y$ et est noté $\deg^y(f)$. Si $U'$ est un bon domaine
analytique de $U$ contenant $x$ et si l'on pose $V'=f^{-1}(U')\cap V$ alors $V'$ est un bon domaine analytique de $V$ contenant $y$
et $\mathscr O_{V',y}$ est libre de rang $\deg^y(f)$ sur $\mathscr O_{U',x}$. 

Si $f$ est fini et plat globalement et si $y_1,\ldots, y_r$ sont les antécédents de $x$ sur $Y$ on a 
$\deg_x(f)=\sum_i \deg^{y_i}(f)$. 

\begin{lemm}\label{lemm-finiplat-scs}
Soit $f\colon Y\to X$ un morphisme  d'espaces $k$-analytiques, 
que l'on suppose fini et plat en tout point de $Y$. 
La fonction $y\mapsto \deg^y(f)$ est semi-continue supérieurement.
\end{lemm}

\begin{proof}
Soit $y\in Y$ et soit $x$ son image sur $X$. 
Puisque $f$ est fini et plat en $y$, il existe un voisinage analytique compact $U$ de $x$ et un voisinage
analytique compact $V$ de $y$ dans $f^{-1}(U)$ tel que $f^{-1}(x)\cap V=\{y\}$ ensemblistement, 
et tel que $f$ induise un morphisme fini et plat de degré constant $\deg^y(f)$ de $V$ sur $U$. 
Soit $\eta$ un point de $V$ et soit $\xi$ son image sur $U$. On a alors
$\deg^y(f)=\deg_\xi(f)=\sum_{\zeta \in f^{-1}(\xi)\cap V} \deg^\zeta(f)$. Il s'ensuit que 
$\deg^\zeta(f)\leq \deg^y(f)$ pour tout $\zeta \in f^{-1}(\xi)\cap V$ ; en particulier
$\deg^{\eta}(f)\leq \deg^y(f)$. 
\end{proof}

\begin{lemm}\label{lemme.finiplat-local}
Soit $f\colon Y\to X$ un morphisme d'espaces $k$-analytiques
de dimensions~$\leq n$.
Soit $y$ un point de~$\Int(Y/X)$. 

\begin{enumerate}
\item
Si $f$ est de dimension relative nulle en~$x$, alors
le morphisme~$f$ est fini en~$y$.

\item
Si $d_k(f(y))=n$, 
alors le morphisme~$f$ est fini en~$y$.

\item
Si $d_k(f(y))=n$ et si $X$ est réduit en~$f(y)$, 
alors $f$ est fini et plat en~$y$.
\end{enumerate}
\end{lemm}
\begin{proof}
\begin{enumerate}
\item 
C'est le corollaire~3.1.10 de~\cite{berkovich1993}.

\item
D'après le lemme~\ref{lemm.fibre-dim-zero},
la fibre~$f^{-1}(x)$ est de dimension nulle.
L'assertion découle donc du~\emph a).

\item
Compte tenu de ce qui précède,
il suffit de vérifier que $f$ est plat en~$y$. Posons $x=f(y)$.
Fixons un domaine affinoïde~$V$ de~$X$ contenant~$x$
et soit $W$ son image réciproque dans~$Y$.
Comme $d_k(x)=n$, l'anneau~$\mathscr O_{V,x}$ est artinien;
comme $X$ est réduit en~$x$, cet anneau est réduit;
c'est donc un corps si bien que $\mathscr O_{W,y}$ est plat 
sur~$\mathscr O_{V,x}$. Par suite, $f$ est plat en~$y$.
\end{enumerate}
\end{proof}

\begin{lemm}\label{lemme.finiplat-global}
Soit $f\colon Y\to X$ un morphisme d'espaces $k$-analytiques
de dimensions~$\leq n$.
Soit $x$ un point de~$X$. On suppose  que 
$f^{-1}(x)$ ne rencontre pas $\partial(Y/X)$ 
et que $f$ est compact au-dessus d'un voisinage de~$x$.

\begin{enumerate}
\item
Si $f^{-1}(x)$ est de dimension nulle , alors
le morphisme~$f$ est fini au-dessus d'un voisinage de~$x$.

\item
Si $d_k(x)=n$, 
alors le morphisme~$f$ est fini au-dessus d'un voisinage de~$x$.

\item
Si $d_k(x)=n$ et si $X$ est réduit en~$x$, 
alors $f$ est fini et plat au-dessus d'un voisinage de~$x$.
\end{enumerate}
\end{lemm}
\begin{proof}
\begin{enumerate}
\item 
La fibre $f^{-1}(x)$ est discrète et compacte,
elle est donc finie. Notons $f^{-1}(x)=\{y_1,\dots,y_r\}$.

Soit $i\in\{1,\dots,r\}$. D'après le lemme~\ref{lemme.finiplat-local},
\emph a),
le morphisme~$f$ est en fini en chaque~$y_i$.
Il existe donc un voisinage~$V_i$ de~$y_i$
et un voisinage~$U_i$ de~$x$ tels que $f$ induise
un morphisme fini de~$V_i$ sur~$U_i$. 
La	compacité de $f$ permet de supposer ces
voisinages~$V_i$ deux à deux disjoints, 
et
elle assure également l'existence d'un voisinage
ouvert $U$ de~$x$ contenu dans l'intersection des~$U_i$
tel que $f^{-1}(U)$
soit contenu dans la réunion des~$V_i$.
Le morphisme de $f^{-1}(U)$ sur $U$
induit par $f$ est alors fini, d'où l'assertion. 

\item
D'après le lemme~\ref{lemm.fibre-dim-zero},
la fibre~$f^{-1}(x)$ est de dimension nulle.
L'assertion découle donc du~\emph a).

\item
Avec les notations précédentes,
il reste à vérifier que $f$ est plat en tout point $y\in f^{-1}(x)$,
ce qui est l'assertion~\emph c) du lemme~\ref{lemme.finiplat-local}.
\end{enumerate}
\end{proof}

\section{Réduction graduée des germes d'espaces
analytiques}

Nous nous proposons ici de rappeler les points
essentiels de la théorie 
de la réduction graduée des germes d'espaces
analytiques que Temkin
a développée dans \cite{temkin2004}, et qui généralisait
et simplifiait sa théorie de la réduction classique
des germes d'espaces strictement $k$-analytiques
mise au point dans \cite{temkin2000}.

\subsection{Réduction à la Temkin : le cas d'un espace affinoïde}\label{ss-reduction-temkin}
Soit $A$ une algèbre $k$-affinoïde. 
Nous désignerons toujours par $\widetilde A$ la réduction graduée de $A$
relative à la semi-norme spectrale $\rho$ de $A$ (\ref{ss-red-graduee}). 
C'est une $\widetilde k$-algèbre graduée, et elle est de type
fini. Pour le voir, choisissons un morphisme surjectif admissible $f\colon
k\{T/r\}\to A$. La proposition 3.1 de \cite{temkin2004} assure que $f$
induit un morphisme fini entre les réductions graduées de ses source et but. 
Par ailleurs, la réduction graduée de $k\{T/r\}$ s'identifie à l'algèbre de
polynômes $\widetilde k[T/r]$, en vertu de l'exemple \ref{exem-ktr-tilde}
et de la densité de $k[T]$ dans $k\{T/r\}$, d'où notre assertion. 

Posons $X=\mathscr M(A)$. 
Notons $\widetilde X$ le spectre homogène de $\widetilde A$, c'est-à-dire l'ensemble
de ses idéaux premiers homogènes. Si $\xi$ est un élément de $\widetilde X$ nous noterons
$\kappa(\xi)$ le corps gradué des fractions de $\widetilde A/\xi$, et $f\mapsto f(\xi)$ l'application 
naturelle de $A$ vers $\kappa(\xi)$. On munit $\widetilde X$ de la topologie de Zariski, 
dont une base d'ouverts est formée des $D(a):=\{\xi \in  \widetilde X, 
a(\xi)\neq 0\}$ où $a$ parcourt l'ensemble des éléments homogènes de $\widetilde A$. 
Puisque $\widetilde A$ est un anneau gradué noethérien, l'espace topologique
$\widetilde X$
est noethérien.

On dispose d'une application dite de réduction ou spécialisation $\pi \colon X\to
\widetilde X$, qui envoie un point $x$ sur le noyau du morphisme de $\widetilde A$
vers $\hrt x$ induit par l'évaluation en $x$. Cette application est anti-continue, c'est-à-dire que l'image
réciproque de tout ouvert est fermée. En effet, il suffit 
par quasi-compacité de vérifier que $\pi^{-1}(D(\widetilde f))$ est fermé pour toute $f\in A$. Or
si $\rho(f)=0$ alors $\widetilde f=0$  et  $\pi^{-1}(D(\widetilde f))=\emptyset$, et sinon 
 $\pi^{-1}(D(\widetilde f))$ est le lieu de validité de l'égalité $\abs f=\rho(f)$. 
 L'application $\pi$ est surjective, et chaque point générique d'une composante irréductible
 de $\widetilde X$ a un unique antécédent sur $X$ (\cite{temkin2004}, Prop. 3.3). 
 
L'inclusion $\widetilde A^1\hookrightarrow \widetilde A$ induit une application continue
de $\widetilde X$ vers $\widetilde X^1:=\Spec \widetilde A^1$, qui composée avec $\pi$ donne lieu à l'application 
de spécialisation classique de $X$ vers $\widetilde X^1$ (qui n'est en pratique utilisée que dans le cas
strictement $k$-affinoïde). 
Le morphisme naturel de $\widetilde A^1\otimes_{\widetilde k_1}\widetilde k$ 
dans $\widetilde A$ est injectif et a pour image $\bigoplus_{r\in \abs{\widetilde k^\times}}
\widetilde A^r$ (\ref{ss-tenseurk1-kr}). Il est donc bijectif si et seulement si l'ensemble
des degrés des éléments homogènes non nuls de $\widetilde A$ est égal à $\abs{k^\times}$, 
ce qui revient à demander que $\rho(A)=\abs k$. Supposons que ce soit le cas (ce qui entraîne que $A$
est strictement $k$-affinoïde). L'application continue 
$\widetilde X\to \widetilde X^1$ est alors un homéomorphisme, et modulo cette identification $\pi$
est la réduction classique. 

Le lemme suivant est bien connu, mais nous en donnons la preuve faute de référence explicite dans
la littérature. 

\begin{lemm}\label{lemme-adherence-tube}
Soit~$X$ un espace $k$-affinoïde et soit~$\pi \colon X\to \widetilde X$ l'application de spécialisation. 
Soit~$\xi\in \widetilde X$. L'adhérence de $\pi^{-1}(\xi)$ dans~$X$ est la réunion des~$\pi^{-1}(\eta)$
où~$\eta$ parcourt l'ensemble des générisations de~$\xi$ dans~~$\widetilde X$.
\end{lemm}

\begin{proof}
Soit~$E$ la réunion des~$\pi^{-1}(\eta)$
où~$\eta$ parcourt l'ensemble des générisations de~$\xi$ dans~$\widetilde X$.On peut également décrire 
$E$ comme l'intersection des~$\pi^{-1}(U)$ où~$U$ parcourt l'ensemble des ouverts de Zariski 
de~$\widetilde X$ contenant~$\xi$. Par conséquent~$E$ est un fermé de~$X$ contenant~$\pi^{-1}(\xi)$
et donc~$\overline{\pi^{-1}(\xi)}$. Réciproquement, considérons un point ~$y$ de~$E$ et montrons
que~$y\in  \overline{\pi^{-1}(\xi)}$. Posons~$\eta=\pi(y)$ ; par définition de~$E$, le point~$\xi$ est une
spécialisation de~$\eta$. 
Soit~$V$ un voisinage affinoïde de~$y$ dans~$X$ et soit~$\zeta$ la spécialisation de~$y$ sur~$\widetilde V$. 
L'inclusion~$f$ de~$V$ dans~$X$ est intérieure en~$y$. En vertu de la proposition 3.4 de \cite{temkin2004}, cela
signifie que le morphisme~$\widetilde f\colon \widetilde V\to
\widetilde X$ induit un morphisme fini du sous-schéma fermé
réduit~$\overline{\{\zeta\}}$ de~$V$
sur 
$\widetilde X$. L'image~$\widetilde f(\overline{\{\zeta\}})$ est donc fermée
dans~$\widetilde X$ (par la variante gradué du lemme de \textit{going-up}, qui se
démontre par réduction au cas classique via un changement de base de~$\widetilde k$ à~$\widetilde k(T/\rho)$
pour un polyrayon~$\rho$ déployant la situation). Or
l'image de~$\zeta$ sur~$\widetilde X$
est égale à~$\eta$ ; ce dernier est donc dense dans
le fermé~$\widetilde f(\overline{\{\zeta\}})$ de~$\widetilde X$,
qui est dès lors égal à~$\overline{\{\eta\}}$ et contient en particulier $\xi$. 
Ainsi ce dernier possède un antécédent $\omega$ par~~$\widetilde f$. Par surjectivité de la spécialisation
il existe un élément $v$ de $V$ se spécialisant sur $\omega$ ; mais l'image de~$v$ sur~$\widetilde X$ est alors
égale à~$\widetilde f(\omega)$, c'est-à-dire à $\xi$. Par conséquent $V$ rencontre~$\pi^{-1}(\xi)$
et~$x$ appartient à~$\overline{\pi^{-1}(\xi)}$.
\end{proof}

\subsection{Réduction à la Temkin : le cas des germes}
Soit $X$ un espace $k$-analytique 
séparé\footnote{La réduction de Temkin existe aussi
dans le cas non séparé, mais est alors plus délicate à définir et
à manipuler, et nous ne nous en servirons que dans le cas séparé ;
aussi avons-nous choisi de ne la présenter que dans ce cadre.} et soit $x\in X$.  
Temkin a
défini dans \cite{temkin2004}, 
la \emph{réduction} 
$\widetilde{(X,x)}$ de $X$ ; 
c'est un ouvert quasi-compact et non vide
de l'espace de Zariski-Riemann 
$\zr {\hrt x}{\tilde k}$ de $\hrt x$ sur
$\tilde k$.

Rappelons rapidement sa définition. 
Supposons tout d'abord que $X$ est affinoïde, d'algèbre 
associée $A$. 
Soit $R$ l'image de $\widetilde A$ dans $\hrt x$, par le morphisme induit par
l'évaluation, et soit $R^{\mathrm{hom}}$ l'ensemble des éléments
homogènes de $R$ ; l'anneau gradué~$R$
est une sous-$\widetilde k$-algèbre
graduée de type fini de $\hrt x$. 
Temkin définit alors $\widetilde{(X,x)}$ comme le sous-ensemble $\zr{\hrt x}{\widetilde k}
\{R^{\mathrm{hom}}\}$ de $\zr{\hrt x}{\widetilde k}$. Ce sous-ensemble est égal à $\zr{\hrt x}{\widetilde k}\{E\}$
pour n'importe quel ensemble fini $E$ de générateurs homogènes de $R$ 
sur~$\widetilde k$ ; c'est donc un ouvert quasi-compact et non vide 
de  $\zr{\hrt x}{\widetilde k}$ qui ne dépend que du germe affinoïde $(X,x)$ 
(voir le début de la section~4 de \cite{temkin2004}) 
et qu'on note $\widetilde {(X,x)}$. 

Si l'on ne suppose plus que $X$ est affinoïde, on choisit une famille finie $(V_i)$ de domaines affinoïdes de $X$
contenant $x$ et dont la réunion est un voisinage de $x$ dans $X$. On démontre
(\cite{temkin2004}, Prop. 4.1) que la réunion des
$\widetilde{(V_i,x)}$ ne dépend que du germe $(X,x)$, et c'est elle que l'on note $\widetilde {(X,x)}$. 
Il résulte de \loccit et du théorème 4.5 de \cite{temkin2004} que $(V,x)\mapsto \widetilde{(V,x)}$ est une application 
croissante, commutant aux unions et intersections finies, qui établit une bijection entre l'ensemble des (germes
de) domaines analytiques de $(X,x)$ et celui des ouverts quasi-compacts et non vides de $\zr{\hrt x}{\widetilde k}$. 

Soient $f_1,\ldots, f_n$ des éléments de $\mathscr O_{X,x}^\times$ ; pour tout $i$, posons $r_i=\abs{f_i(x)}$. 
Soit $V$ le domaine analytique de $(X,x)$ défini par les inégalités $\abs {f_i}\leq r_i$. Alors 
$\widetilde {(V,x)}$ est égal à $\widetilde {(X,x)}\cap \zr{\hrt x}{\widetilde k}\{\widetilde {f_i(x)}\}_i$;
en effet, on se ramène aussitôt au cas affinoïde, auquel cas c'est le lemme 4.4 de \cite{temkin2004}. 

Le germe $(X,x)$ est bon si et seulement si $\widetilde{(X,x)}$ est affine (\cite{temkin2004}, Thm. 5.1; pour la définition 
d'un ouvert affine dans ce contexte, voir \ref{def-affine-zr}) ; il est sans bord si et seulement si
$\widetilde{(X,x)}=\zr{\hrt x}{\widetilde k}$ (\cite{temkin2004}, Thm. 5.2). 

\begin{rema}\label{normezr}
Soit $(X,x)$ un germe d'espace $k$-analytique, soit $f$ une fonction 
sur $(X,x)$ et soit $(Y,x)$ un domaine analytique de $(X,x)$ 
tel que $\abs f$ soit identiquement égale à $\abs{f(x)}$ 
sur $(Y,x)$ ; comme $(Y,x)$ est contenu dans le domaine 
de $(X,x)$ défini par l'égalité $\abs f=\abs{f(x)}$, on a
\[ \widetilde{(Y,y)}\subset  \widetilde{(X,x)}\cap \zr {\hrt 
x}{\tilde k}\{\widetilde{f(x)}, \widetilde{f(x)}^{-1}\}.\]
 Par conséquent, $\widetilde{f(x)}$ est une section inversible de 
$\mathscr O_{\zr {\hrt x}{\tilde k}}(\widetilde{Y,y})$. 
\end{rema}

\subsection{Le cas strictement affinoïde}\label{ss-redgerm-strictaff}
Soit $X$ un espace strictement $k$-affinoïde d'algèbre des fonctions $A$ et soit $x$ un point de $X$. 
Soit $f$ un élément de $A$ inversible en $x$. La semi-norme spectrale $\rho(f)$ appartient à $\abs{k^\times}^\Q$ ; 
par conséquent, si $\abs{f(x)}\notin \abs{k^\times}^\Q$ on a $\abs{f(x)}<\rho(f)$ et l'image de $\widetilde f$
dans $\hrt x$ est nulle. Il s'ensuit que $\widetilde{(X,x)}$ est de la forme $\zr{\hrt x}{\widetilde k}\{\lambda_1,\ldots, \lambda_n\}$ où les
$\lambda_i$ sont des éléments de $\hrt x^\times$ tels que $\abs{\lambda_i}\in \abs{\widetilde k^\times}^\Q$
pour tout $i$. Par conséquent 
$\widetilde{(X,x)}$ est 
l'image réciproque d'un ouvert affine de $\zr{\hrt x^1}{\widetilde k^1}$
(\ref{def-strict-zr}). 

\subsection{Le cas où $\widetilde{(X,x)}$ est affine et strict}
\label{ss-redgerm-strictaff}
Si $X$ un espace $k$-analytique séparé et soit $x\in X$. 
Il résulte de \ref{ss-redgerm-strictaff}
que si $(X,x)$ possède un représentant strictement $k$-affinoïde,
$\widetilde{(X,x)}$ est l'image réciproque d'un ouvert affine de $\zr{\hrt x^1}{\widetilde k^1}$.

Supposons réciproquement que $\widetilde{(X,x)}$ soit
l'image réciproque d'un ouvert affine de $\zr{\hrt x^1}{\widetilde k^1}$. 
Il résulte de la preuve du lemme 3.5.1 de \cite{ducros2018} qu'il existe
un entier $n$, un point $y$ du polydisque unité $D$
de dimension $n$ 
et un morphisme sans bord
de $(X,x)$ vers $(D,y)$. Lorsque la valeur absolue
de $k$ n'est pas triviale, $y$ possède une base de voisinages strictement
affinoïdes dans $D$, si bien que $(X,x)$ possède un représentant
strictement affinoïde. 

\subsection{}\label{ss-redstrict-nontrivial}
Supposons que la valeur absolue de $k$ n'est pas triviale. 
On déduit alors des résultats rappelés au \ref{ss-redgerm-strictaff}
que 
$(X,x)$ est strictement $k$-analytique si et seulement
si $\widetilde {(X,x)}$ est strict, \cf \cite[Lemma 3.5.2]{ducros2018}. 
Mentionnons incidemment que si $(X,x)$ est à la fois bon et strict,
$\widetilde{(X,x)}$ est à la fois affine et strict, donc 
est l'image réciproque d'un ouvert affine de $\zr{\hrt x^1}{\widetilde k^1}$ 
(\ref{def-strict-zr}) ; il découle alors de
\ref{ss-redgerm-strictaff}
que le 
germe $(X,x)$ est 
strictement affinoïde. Autrement dit tout point d'un bon espace strictement 
$k$-analytique admet un voisinage strictement affinoïde
ce qui, à notre connaissance, ne figurait pas jusqu'ici explicitement dans la littérature. 

Notons que l'hypothèse que la valeur absolue de $k$ n'est pas triviale est indispensable à la validité
des énoncés ci-dessus. Supposons en effet que $\abs{k^\times}=\{1\}$, 
et soit $X$ l'ouvert 
de $\A^1_k$ défini par les inégalités $0<\abs T<1$ ; il est sans bord, si bien que pour tout $x\in X$ la réduction 
$\widetilde{(X,x)}$ est égale à $\zr {\hrt x}{\widetilde k}$, et est en particulier
stricte ; mais comme  $X$ n'est constitué que de points de type $3$, il ne possède aucun domaine
strictement affinoïde non vide. 

\section{Espaces stricts et réduction de Temkin ; le cas trivialement valué} 
\subsection{Modification de la terminologie dans le cas trivialement valué}
\label{terminologie-stricte}
Nous aurons besoin dans la suite de manipuler des espaces analytiques (séparés) 
sur un corps trivalement valué dont tous les germes sont à réduction stricte. 
Pour éviter l'introduction d'un vocabulaire spécifique à cette situation, 
nous allons nous écarter de la terminologie standard. Nous appellerons désormais espace strictement $k$-analytique tout espace
$k$-analytique $X$ possédant un G-recouvrement $(X_i)$ par des domaines analytiques séparés tels que 
$\widetilde{(X_i,x)}$ et $\widetilde{(X_i\cap X_j,x)}$ soient stricts pour tout $(i,j)$
et tout $x\in X_i\cap X_j$ ; si $X$ est séparé (ce qui sera essentiellement toujours
le cas ici) cela revient à demander que $\widetilde{(X,x)}$ soit strict pour tout $x\in X$. 

Notre notion d'espace strictement $k$-analytique coïncide avec la notion usuelle lorsque $\abs{k^\times}\neq \{1\}$, mais en diffère si $k$
est trivialement valué ; sous cette hypothèse les espaces stricts
au sens traditionnel le sont dans le nôtre, mais la réciproque
est fausse en général : par exemple l'ouvert $0<\abs T<1$  de $\A^1_k$ est strict dans notre sens, mais pas dans l'acception classique. 

Les deux notions sont toutefois équivalentes dans le cas affinoïde, 
comme en atteste le lemme ci-dessous, pour lequel il n'est pas nécessaire
de supposer $\abs{k^\times}\neq \{1\}$.

\begin{lemm}
Soit $A$ une algèbre $k$-affinoïde. Elle est strictement 
$k$-affinoïde si et seulement si l'espace $\mathscr M(A)$ est 
strict au sens défini ci-dessus. 
\end{lemm}

\begin{proof}
Posons $X=\mathscr M(A)$. Si $A$ est strictement $k$-affinoïde alors
$X$ est strictement $k$-analytique au sens classique,
et \textit{a fortiori} au nôtre. Réciproquement, supposons que $X$
soit strict dans notre sens. Pour montrer que $X$ est strict au sens classique 
il suffit de s'assurer que pour tout $f\in A$ la semi-norme spectrale
$\norm f_\infty$  appartient 
à $\{0\}\cup\abs{k^\times}^\Q$. 
Soit $f\in A$ 
un élément tel que $\norm f_\infty\neq 0$ ; nous allons
montrer que $\norm f_\infty\in \abs{k^\times}^\Q$. 
Choisissons  $x\in X$ tel 
que $\abs {f(x)}=\norm f_\infty$.
L'image de l'élément $\widetilde f$ de $\widetilde A$
dans $\hrt x$ est alors égale à 
$\widetilde{f(x)}$, et l'ouvert quasi-compact
strict 
$\widetilde{(X,x)}$ de $\zr{\hrt x}{\widetilde k}$ 
est alors contenu dans $\zr{\hrt x}{\widetilde k}\{\widetilde{f(x)}\}$. 
D'après \ref{def-strict-zr}, ceci entraîne que $\abs{\widetilde{f(x)}}$ appartient
à $\abs{\widetilde k^\times}^\Q$, c'est-à-dire que $\abs{f(x)}\in
\abs{k^\times}^\Q$. 
\end{proof}

Nous n'emploierons désormais l'adjectif «strict» que dans notre sens, 
sauf mention expresse du contraire. Nous dirons qu'un germe est strict s'il admet
un représentant strict. 

\begin{lemm}
Soit $(X,x)$ un germe séparé d'espace $k$-analytique. Le
germe ${(X,x)}$ est strict si et seulement si sa réduction $\widetilde {(X,x)}$ est stricte.
\end{lemm}

\begin{proof}
L'implication
directe découle des définitions ; montrons
maintenant la réciproque (qui est déjà connue dans le cas où la valeur absolue de $k$ n'est pas triviale). 
Supposons donc que $\widetilde{(X,x)}$ est strict. Il s'écrit alors comme une union finie $\bigcup U_i$ 
d'ouverts affines et stricts. Chaque $U_i$ est de la forme $\widetilde{(X_i,x)}$ où $X_i$ est un domaine
analytique de $X$ contenant $x$. Il suffit alors de démontrer que chacun des $(X_i,x)$ possède un représentant
strict $(Y_i,x)$ : on pourra alors en effet toujours supposé quitte à restreindre les $Y_i$ qu'ils sont fermés
dans $X$, et la réunion des $Y_i$ constituera alors un représentant strict de $(X,x)$. On s'est ainsi ramené au cas
où $\widetilde{(X,x)}$ est affine. On a alors vu au \ref{ss-redgerm-strictaff} qu'il existe un entier $n$, un point
$y$ du polydisque unité $D$ de dimension $n$ et un morphisme sans bord $f$ de $(X,x)$ vers $(D,y)$. On peut
supposer quitte à remplacer $X$ par un voisinage ouvert convenable de $x$ que ce morphisme est défini
sur $X$ tout entier. Soit $z\in X$. Comme $f$ est sans bord, $\widetilde{(X,z)}$ est l'image réciproque de 
$\widetilde{(D,f(z))}$, mais ce dernier est strict puisque $D$ est strictement $k$-affinoïde. Par conséquent 
$\widetilde{(X,z)}$ est strict, et $X$ est strict.  
\end{proof}

\begin{rema}
Si $\widetilde {(X,x)}$ est strict et si la valeur absolue de $k$ n'est pas triviale, on peut trouver un représentant
de $(X,x)$ à la fois strict et compact, car tout espace strictement $k$-analytique admet une base de voisinages
stricts et compacts. Mais c'est faux en général lorsque $k$ est trivialement valué :
l'ouvert strict de $\A^1_k$ défini par les inégalités $0<\abs T<1$ ne possède aucun domaine analytique compact
strict hormis le vide. 
\end{rema}

\begin{lemm}\label{lemme-affine-redstricte}
Soit $X=\mathscr M(A)$ un espace $k$-affinoïde et soit $x\in X$. 
Le germe $(X,x)$ est strict si et seulement le degré de tout élément
homogène non nul de l'image du morphisme canonique de
$\widetilde A$ vers $\hrt x$ est de torsion modulo $\abs{k^\times}$. 
\end{lemm}

\begin{proof}
Soit $B$ l'image du morphisme canonique de
$\widetilde A$ vers $\hrt x$ et soit $S$ un ensemble fini de générateurs
homogènes non nuls de $B$ sur $\widetilde k$. La  réduction $\widetilde{(X,x)}$ est égale
à
$ \zr{\hrt x}{\widetilde k}\{S\}$. Si le degré de tout élément
homogène non nul $B$ est de torsion modulo $\abs{k^\times}$, c'est en particulier le cas
du degré de tout élément de $S$, et il s'ensuit que $\widetilde {(X,x)}$ est strict. 
Réciproquement, supposons que $\widetilde {(X,x)}$ soit strict. Il s'écrit alors
$\zr{\hrt x}{\widetilde k}\{E\}$ pour une certaine famille finie $E$ d'éléments
de $\hrt x^1$, et l'on a également
$\widetilde{(X,x)}=\zr{\hrt x}{\widetilde k}\{S\cup E\}$. 
Il résulte de l'égalité $\zr{\hrt x}{\widetilde k}\{S\cup E\}=\zr{\hrt x}{\widetilde k}\{E\}$
que $B$ est finie sur la sous-$\widetilde k$-algèbre graduée $C$ de $\hrt x$ engendrée par $E$
(\cite{temkin2004}, Lemma 2.2). L'algèbre graduée $B$
est en particulier contenue dans une extension finie du corps gradué
des fractions $\mathrm{Frac}(C)$. Le degré de tout élément homogène non nul de $B$ est donc de torsion 
modulo $\abs{\mathrm{Frac}(C)^\times}$, lequel est égal à $\abs{k^\times}$ par construction. 
\end{proof}

\subsection{}
Soit $(X,x)$ un germe séparé et strict d'espace $k$-analytique. 
Il existe alors un unique ouvert non vide
et quasi-compact de $\zr{\hrt x^1}{\widetilde k^1}$ 
dont l'image réciproque sur $\zr{\hrt x}{\widetilde k}$
est égale à $\widetilde{(X,x)}$ (c'est l'image directe de $\widetilde{(X,x)}$) ; 
on le note $\widetilde{(X,x)}^1$. Les faits suivants se déduisent de leur version graduée.

Si $X$ est strictement $k$-affinoïde
et si $A$ désigne l'algèbre des fonctions analytiques sur $X$,
il résulte des définitions que 
$\widetilde{(X,x)}^1=\zr{\hrt x^1}{\widetilde k^1}\{E\}$
où $E$ est un ensemble fini de générateurs
sur $\widetilde k^1$ de l'image
de la réduction classique $\widetilde A^1$ dans
$\hrt x^1$ ($\widetilde A^1$ est de type fini, et son image
dans $\hrt x^1$  l'est donc aussi).

L'application $(V,x)\mapsto \widetilde{(V,x)}^1$ est une application 
croissante, commutant aux unions et intersections finies, qui établit une bijection entre l'ensemble des (germes
de) domaines strictement $k$-analytiques de $(X,x)$ et celui des ouverts quasi-compacts et non vides de
$\zr{\hrt x^1}{\widetilde k^1}$. 

Le germe $(X,x)$ est bon si et seulement si $\widetilde{(X,x)_1}$ est affine ;
il est sans bord si et seulement si  $\widetilde{(X,x)^1}
=\zr{\hrt x^1}{\widetilde k^1}$. 

Soit $(f_1,\ldots, f_n)$ une famille d'éléments de $\mathscr O_{X,x}^\times$ tels que 
$\abs{f_i(x)}=1$. Pour tout $i$, soit $V_i$ le domaine analytique de $(X,x)$ défini par les inégalités $\abs {f_i}\leq 1$. Alors 
$\widetilde {(V,x)}^1=\widetilde {(X,x)}^1\cap \zr{\hrt x^1}{\widetilde k^1}\{\widetilde {f_i(x)}\}_i$. 

\begin{exem}\label{exemple-torsion-strict}
Soit $X$ un espace $k$-analytique et soit $x$ un point de $X$
tel que $\abs{\hr x^\times}$ soit de torsion modulo $\abs k^\times$. 
Il résulte alors de \ref{ss-zr-torsion}
que $\widetilde{(X,x)}$ est strict, et donc que $(X,x)$ est strict, et que
l'application continue naturelle de $\widetilde{(X,x)}$ sur  $\widetilde{(X,x)}^1$
est un homéomorphisme 
\end{exem}

\section{Réduction de Temkin et modèles formels}

\subsection{Schémas formels et leurs fibres génériques}
Soit $\mathfrak X$ un $k^\circ$-schéma formel plat possédant un recouvrement
localement fini par des ouverts affines topologiquement de présentation finie
sur $k^\circ$. 
On note $\mathfrak X_{\mathrm s}$ sa fibre spéciale $\mathfrak X\otimes_{k^\circ}
\widetilde k^1$ ; c'est un $\widetilde k^1$-schéma possédant un recouvrement ouvert localement fini 
par des $\widetilde k^1$-schémas de type fini. Précisons que nous autorisons toujours le corps $k$ à être trivialement
valué ; dans ce cas, on a $k=\widetilde k=\widetilde k^1$
et $\mathfrak X$ est simplement le $k$-schéma $\mathfrak X_s$, mais vu
comme schéma formel en munissant pour tout ouvert
$\mathfrak U$ de $\mathfrak X$ l'anneau $\mathscr O_{\mathfrak X}(\mathfrak U)$ de sa topologie
de limite des $\mathscr O_{\mathfrak X}(\mathfrak V)$ où $\mathfrak V$ parcourt l'ensemble des ouverts
affines de $\mathfrak U$ et où chaque $\mathscr O_{\mathfrak X}(\mathfrak V)$ est muni
de la topologie discrète. 

À  un tel schéma formel est associée sa fibre générique $\mathfrak X_\eta$. C'est un espace strictement
$k$-analytique (il l'est même au sens classique), 
paracompact et topologiquement séparé, séparé si $\mathfrak X$ est séparé, et compact 
si $\mathfrak X$ est quasi-compact. Si $\mathfrak X=\Spf A$ pour une certaine $k^\circ$-algèbre
plate et topologiquement de présentation finie $A$, on a $\mathfrak X_\eta=\mathscr M(A\otimes_{k^\circ}k)$,
et le cas général s'obtient en recollant ces constructions locales. 

Supposons toujours que $\mathfrak X=\Spf A$ et posons $A_k=A\otimes_{k^\circ}k$. 
L'anneau $A_k^\circ$ est alors la fermeture intégrale de $A$
dans $A_k$, et la réduction classique $\widetilde {A_k}^1$  est une $\widetilde k^1$-algèbre
de type fini. C'est en effet évident si $k$ est trivialement valué car alors $A=A_k=\widetilde {A_k}$. 
Supposons maintenant que $k$ n'est pas trivialement valué. Choisissons un morphisme continu et surjectif 
de $k^\circ$-algèbres
$k^\circ\{T\}\to A$, où $T$ est une famille finie d'indéterminées. 
Il induit un morphisme surjectif admissible
de $k$-algèbres de $k\{T\}$ vers $A_k$, 
et le théorème 6.3.5/1 de \cite{bosch-g-r1984} assure alors
que $A_k^\circ$ est entière sur $(k\{T\}^\circ=k^\circ \{T\}$,
c'est-à-dire sur son image, qui n'est autre que $A$ ; et il assure aussi que 
que $\widetilde {A_k}^1$ est fini sur $\widetilde{k\{T\}}^1=\widetilde k^1[T]$, 
et a fortiori de type fini sur $\widetilde k^1$. 
Et comme $A_k^\circ$ est 
l'ensemble des $f\in A
$ telles que $f(x)\in \hr x^\circ$ pour tout $x\in \mathscr M(A_k)$
et que 
que $\hr x^\circ$ est normal pour tout tel $x$, l'anneau $A_k^\circ$ est 
intégralement clos dans $A_k$ ; étant entier sur $A$, il est égal à la fermeture
intégrale de celui-ci dans $A$.

Les inclusions de $A$ dans $A_k^\circ$
et de $k^{\circ \circ}A$ dans $A_k^{\circ \circ}$ induisent un morphisme 
naturel de $A\otimes_k\widetilde k^1$ vers $\widetilde {A_k}^1$. Ce morphisme est entier 
puisque $A_k^\circ $ est entière sur $A$, et il est même en fait fini, 
puisque $\widetilde {A_k}^1$ est de type fini sur $\widetilde k^1$. 

En composant l'application de spécialisation classique 
de $\mathfrak X_\eta=\mathscr M(A_k)$ vers $\Spec (\widetilde {A_k}^1)$
avec le morphisme fini de $\Spec  (\widetilde {A_k})^1$
vers $\Spec A\otimes_k\widetilde k^1=\mathfrak X_s$, on obtient une application 
de spécialisation de $\mathfrak X_\eta$ vers $\mathfrak X_s$. Elle est anticontinue
par construction, et elle est surjective : on se ramène en effet par extension des scalaires
à démontrer que tout point fermé de $\mathfrak X_s$ appartient à son image, ce qui est
assuré par la proposition~8.3/8 de~\cite{bosch2014}. 

Lorsque $\mathfrak X$ n'est plus supposé affine, les constructions ci-dessus fournissent
encore par recollement une application de spécialisation $\pi$ de $\mathfrak X_\eta$ vers
$\mathfrak X_s$ qui est anti-continue et surjective. 
Par construction, cette application inuit pour tout $x\in X$ un $\widetilde k^1$-plongement
de $\kappa(\pi(x))$ dans $\hrt x^1$.

\begin{lemm} \label{redtemk}
Soit $\mathfrak X$ un schéma formel plat et séparé sur $k^\circ$
admettant un recouvrement localement fini par des ouverts affines
topologiquement de présentation finie sur $k^\circ$,
soit $x\in
\mathfrak X_{\eta}$ et soit $V$ l'adhérence de~$\pi(x)$ dans
$\mathfrak X_s$ (munie de sa structure réduite).  

\begin{enumerate}
\item Le $\tilde k^1$-schéma intègre 
$V$ est de type fini. 
\item 
L'ouvert quasi-compact $\widetilde{(\mathfrak X_\eta,x)}^1$ de $\zr {\hrt
x^1}{\tilde k^1}$ est égal à $\zr {\hrt x^1}{\tilde k^1}\{V\}$. 

\item Soit $\mathfrak U$ un ouvert formel de~$\mathfrak X$. Alors $\mathfrak U_\eta$
est un voisinage de~$x$ si et seulement si $\mathfrak U_s$ contient~$V$.

\item Le point $x$ est un point intérieur de~$\mathfrak X_{\eta}$ si et
seulement si $V$ est propre.
\end{enumerate}
\end{lemm}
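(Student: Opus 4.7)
Le plan est de se ramener au cas affine, puis d'identifier directement les deux descriptions, et enfin de déduire la seconde assertion de la première.

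On commence par observer que les deux côtés de l'égalité à établir ne dépendent que du germe de $\mathfrak X$ en $\pi(x)$. En effet, $\widetilde{(\mathfrak X_\eta,x)}$ ne dépend que du germe $(\mathfrak X_\eta,x)$ par définition de la réduction à la Temkin, et l'adhérence de $\pi(x)$ dans~$\mathfrak X_s$ ne dépend, pour ce qui concerne sa structure au voisinage de~$\pi(x)$, que d'un voisinage ouvert affine de~$\pi(x)$ dans~$\mathfrak X$ (la notation $\zr{\hrt x}{\tilde k}\{V\}$ ne faisant intervenir que $V$ comme prémodèle, pas ses propriétés de propreté globales). On peut donc supposer que $\mathfrak X=\Spf \mathcal A$ est affine, avec $\mathcal A$ une $k^\circ$-algèbre topologiquement de présentation finie ; alors $\mathfrak X_\eta=\mathcal M(\mathcal A_k)$ est affinoïde, avec $\mathcal A_k=\mathcal A\otimes_{k^\circ}k$, et $\mathfrak X_s=\Spec(\mathcal A_{\tilde k})$ avec $\mathcal A_{\tilde k}=\mathcal A\otimes_{k^\circ}\tilde k$.

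Ensuite, je déroule la construction de la réduction à la Temkin dans ce cadre. Par définition, $\widetilde{(\mathfrak X_\eta,x)}$ est le sous-ensemble ouvert quasi-compact de $\zr{\hrt x}{\tilde k}$ formé des valuations $v$ dont l'anneau contient l'image~$\widetilde{\mathcal A}$ de~$\mathcal A$ dans~$\hrt x$ via l'évaluation~$\mathcal A\to\mathcal H(x)^\circ$ suivie de la réduction ; si $(f_1,\dots,f_n)$ est un système de générateurs topologiques de $\mathcal A$ sur~$k^\circ$, on a donc $\widetilde{(\mathfrak X_\eta,x)}=\zr{\hrt x}{\tilde k}\{\bar f_1,\dots,\bar f_n\}$, où $\bar f_i$ désigne l'image de~$f_i$ dans~$\hrt x$. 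D'autre part, la flèche de réduction $\pi\colon\mathfrak X_\eta\to\mathfrak X_s$ envoie $x$ sur l'idéal premier $\mathfrak p$ de $\mathcal A_{\tilde k}$ noyau du morphisme naturel $\mathcal A_{\tilde k}\to\hrt x$, et l'adhérence~$V$ de $\pi(x)$ munie de sa structure réduite s'identifie à $\Spec(\mathcal A_{\tilde k}/\mathfrak p)$ ; l'injection induite $\mathcal A_{\tilde k}/\mathfrak p\hookrightarrow\hrt x$ exhibe $V$ comme un prémodèle intègre de $\hrt x$ sur $\tilde k$ dont le point générique correspond à $\pi(x)$.

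L'identification est alors immédiate. L'anneau~$\widetilde{\mathcal A}\cdot\tilde k$ engendré dans~$\hrt x$ par les images des~$f_i$ et par~$\tilde k$ n'est autre, par construction, que l'image de $\mathcal A_{\tilde k}$, c'est-à-dire $\mathcal A_{\tilde k}/\mathfrak p$ vu dans $\hrt x$. Une valuation $v\in\zr{\hrt x}{\tilde k}$ a donc son anneau qui contient $\widetilde{\mathcal A}$ si et seulement si il contient $\mathcal A_{\tilde k}/\mathfrak p$, c'est-à-dire si et seulement si il domine l'anneau local de $V$ en~$\pi(x)$ ; or cette dernière condition est précisément la définition de $\zr{\hrt x}{\tilde k}\{V\}$. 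L'égalité annoncée en découle.

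Enfin, pour la seconde assertion, on rappelle qu'un point~$x$ de $\mathfrak X_\eta$ est intérieur si et seulement si $\widetilde{(\mathfrak X_\eta,x)}$ est égal à l'espace de Zariski--Riemann $\zr{\hrt x}{\tilde k}$ tout entier (c'est l'interprétation topologique de la réduction à la Temkin). Par ce qui précède, cela équivaut à $\zr{\hrt x}{\tilde k}\{V\}=\zr{\hrt x}{\tilde k}$, ce qui, d'après le critère valuatif de propreté rappelé dans la discussion préliminaire sur les espaces de Zariski--Riemann, équivaut à la propreté de~$V$ sur~$\tilde k$. Le principal point technique est l'identification précise entre $\widetilde{\mathcal A}\cdot\tilde k$ et $\mathcal A_{\tilde k}/\mathfrak p$ dans~$\hrt x$, mais c'est une conséquence directe du fait que l'évaluation en~$x$ combinée à la réduction envoie $\mathcal A^\circ$ sur $\widetilde{\mathcal A}$ et a pour noyau exactement l'idéal des éléments de~$\mathcal A$ de norme $<1$ en~$x$.
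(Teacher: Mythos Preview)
Your affine computation and the deduction of the second assertion are correct and essentially identical to the paper's argument. However, your reduction to the affine case contains a genuine gap.

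You claim that ``les deux côtés de l'égalité à établir ne dépendent que du germe de $\mathfrak X$ en $\pi(x)$''. This is true for the left-hand side (the Temkin reduction only depends on the germ $(\mathfrak X_\eta,x)$, and $\mathfrak U_\eta$ is an open neighbourhood of~$x$ for any formal open $\mathfrak U\ni\pi(x)$), but it is \emph{false} for the right-hand side. If you replace $\mathfrak X$ by a single affine open~$\mathfrak U$ containing~$\pi(x)$, the closure $V$ of $\pi(x)$ in $\mathfrak X_s$ is replaced by $V\cap\mathfrak U_s$, which is only an open subscheme of~$V$; and in general $\zr{\hrt x}{\tilde k}\{V\cap\mathfrak U_s\}$ is a \emph{strict} open subset of $\zr{\hrt x}{\tilde k}\{V\}$ (think of $V=\P^1_{\tilde k}$ and $\mathfrak U_s\cap V=\A^1_{\tilde k}$). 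Your parenthetical remark that $\zr{\hrt x}{\tilde k}\{V\}$ ``ne fait intervenir que $V$ comme prémodèle, pas ses propriétés de propreté globales'' is beside the point: it does depend on all of~$V$, not just on a neighbourhood of~$\pi(x)$ in~$V$. Of course, \emph{a posteriori} the lemma implies that these two opens coincide, but invoking this in the reduction would be circular.

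The paper's fix is to choose a \emph{finite} family $(\mathfrak U_1,\ldots,\mathfrak U_n)$ of affine formal opens whose special fibres cover~$V$ (this is where the locally-finite-covering hypothesis is used). Then $\pi^{-1}(V)\subset\bigcup_i\mathfrak U_{i,\eta}$ is an open neighbourhood of~$x$, so one may replace $\mathfrak X$ by $\bigcup_i\mathfrak U_i$; now both sides decompose as finite unions indexed by~$i$, namely $\widetilde{(\mathfrak X_\eta,x)}=\bigcup_i\widetilde{(\mathfrak U_{i,\eta},x)}$ and $\zr{\hrt x}{\tilde k}\{V\}=\bigcup_i\zr{\hrt x}{\tilde k}\{V\cap\mathfrak U_{i,s}\}$, and one is reduced to your (correct) affine computation for each~$i$.
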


\begin{proof}
Montrons (a). 
Soit $(\mathfrak U_i)_{i\in I}$ un recouvrement localement fini
de~$\mathfrak X$ par des ouverts affines
topologiquement de présentation finie sur~$k^\circ$.
Soit $J$ l'ensemble fini des $i\in I$ tels que $\pi(x)\in\mathfrak U_i$.
La réunion des $\mathfrak U_i$, pour $i\not\in J$, est
un ouvert de~$\mathfrak X$; son complémentaire est
donc fermé, il est contenu dans $\bigcup_{i\in J}\mathfrak U_i$,
et
contient~$\pi(x)$. Par
suite,
$V$ est un sous-schéma fermé
du $k$-schéma de type fini $\bigcup_{i\in J} \mathfrak U_{i,s}$, et il est donc de type fini. 

Comme $\pi^{-1}(V)$ est un
voisinage ouvert de~$x$ dans~$\mathfrak X_\eta$ qui est contenu dans
$\bigcup_{i\in J} \mathfrak U_{i,\eta}$, on peut remplacer $\mathfrak X$ par la
réunion des $\mathfrak U_i$ pour $i\in J$. Alors, $\widetilde{(\mathfrak
X_\eta,x)}=\bigcup_{i\in J} \widetilde{(\mathfrak U_{i,\eta},x)}$ et $V=\bigcup_{i\in J}
\mathfrak U_{i,s}\cap V$, et il suffit de montrer l'assertion requise
pour chacun des $\mathfrak U_i$, pour $i\in J$. En d'autres termes, 
pour prouver que
$\widetilde{(\mathfrak X_\eta,x)}^1$ coïncide 
avec $\zr {\hrt x^1}{\tilde k^1}\{V\}$,
on peut supposer que $\mathfrak X$ est un schéma formel affine
plat topologiquement de présentation finie sur~$k^\circ$. Écrivons
$\mathscr X=\Spf A$. 

La réduction stricte $\widetilde{(\mathfrak X_\eta,x)}^1$ est alors
égale à $\zr{\hrt x^1}{\widetilde k^1}\{R\}$, où $R$ désigne l'image 
du morphisme naturel de $\widetilde {A_k}^1$ dans 
$\hrt x^1$. Mais comme $\widetilde {A_k}^1$ est finie
sur $A\otimes_k \widetilde k^1$, l'anneau $R$ est fini
sur l'image $S$ du morphisme naturel de  $A\otimes_k \widetilde k^1$
dans $\hrt x^1$, si bien que 
 $\widetilde{(\mathfrak X_\eta,x)}^1$ est aussi égal à $\zr{\hrt x^1}{\widetilde k^1}\{S\}$. 
 Mais par construction, le morphisme de $A\otimes_k \widetilde k^^1$
dans $\hrt x_1$ est composé du morphisme d'évaluation de 
 $A\otimes_k \widetilde k^1$ vers $\kappa(\pi(x))$, dont l'image
 est l'anneau $\mathscr O(V)$, et du plongement canonique de 
 $\kappa(\pi(x))$ dans $\hrt x^1$ ; il en résulte que 
 $\widetilde{(\mathfrak X_\eta,x)}^1$ est égal à $\zr{\hrt x^1}{\widetilde k^1}\{V\}$, 
 ce qu'on souhaitait établir. 
 
Montrons (c). 
Soit maintenant $\mathfrak U$ un ouvert formel de~$\mathfrak X$.
Si $V\subset\mathfrak U_s$, alors $\pi^{-1}(V)$ est une partie ouverte
de~$\mathfrak X$ contenue dans $\mathfrak U_\eta$ et contenant~$x$,
de sorte $\mathfrak U_\eta$ est un voisinage de~$x$.
Supposons inversement que $\mathfrak U_\eta$ soit un voisinage de~$x$ ; 
(en particulier $x\in \mathfrak U_\eta$, si bien que 
$\pi(x)\in \mathfrak U_s$).
On a alors $\widetilde{(\mathfrak U_\eta,x)}^1=\widetilde{(\mathfrak X\eta,x)}_1$;
d'après (b), cela signifie que 
$\zr {\hrt x^1}{\tilde k^1}\{V\}= 
\zr {\hrt x^1}{\tilde k^1}\{V\cap\mathfrak U_s\}$.
Le morphisme dominant de $V\cap \mathfrak U_s$ vers $V$ est
donc propre par critère valuatif, ce qui entraîne que $V\cap \mathfrak U_s=V$, 
c'est-à-dire que $V\subset \mathfrak U_s$. 

Montrons enfin (d). Le point $x$ est un point intérieur
de $\mathfrak X_\eta$ si et seulement si $\widetilde{(\mathfrak X_\eta,x)}=
\zr{\hrt x^1}{\widetilde k^1}$, c'est-à-dire, au vu de (b), si et
seulement si $\zr{\hrt x^1}{\widetilde k^1}\{V\}=\zr{\hrt x^1}{\widetilde k^1}$. 
Par le critère valuatif de propreté, c'est le cas si et seulement si $V$ est propre
sur $\widetilde k^1$. 
\end{proof}

\section{Structures faiblement analytiques}

\begin{defi}
Soit $X$ un espace $k$-analytique, soit $A$ une partie fermée de~$X$
et soit~$d$ un entier naturel.
On dit que $A$ est faiblement analytique (de dimension~$\leq d$)
si pour tout domaine
analytique compact~$V$ de~$X$, il existe
une famille finie $(f_i\colon V_i\to W_i)$ de morphismes
d'espaces $k$-analytiques
définis sur des domaines analytiques compacts de~$V$
et une famille $(w_i)$, où $w_i\in W_i$ pour tout~$i$,
tels que $A\cap V=\bigcup_i f_i^{-1}(w_i)$
(et $\dim_{f_i}(x)\leq d$ pour tout $x\in A\cap V_i$).
\end{defi}

C'est une propriété G-locale.
Une union localement finie de parties faiblement analytiques
est naturellement munie d'une structure faiblement analytique.

\begin{exem}\label{exem.bord-fa}
Le bord d'un espace affinoïde~$X$ est
faiblement analytique
de dimension~$\leq \dim(X)-1$ : cela découle du théorème
3.2 de \cite{ducros2012b}. 
\end{exem}

\begin{rema}
Les notions de dimension d'une partie faiblement
analytique dépendent a priori des morphismes choisis
pour la présenter.
Par exemple, si $r\not\in\abs{k^\times}^{\Q}$,
l'application identique de~$\mathscr M(k_r)$
et le morphisme canonique $\mathscr M(k_r)\to\mathscr M(k)$
fournissent deux présentations de~$\mathscr M(k_r)$
comme partie faiblement analytique de lui-même,
de dimensions respectives~$0$ et~$1$. 
\end{rema}

\begin{prop}\label{prop.tube-faiblement-analytique}
Soit $X=\mathscr M(\mathscr A)$ un espace affinoïde,
soit $\widetilde X=\Spec(\widetilde{\mathscr A})$ sa réduction graduée
et soit $d$ un entier naturel. L'ensemble des points~$x\in X$
tel que $\dim(\overline{\pi(x)})\geq d$
est une partie faiblement analytique de dimension~$\leq \dim(X)-d$.
\end{prop}
En particulier, si $d=\dim(X)$, on retrouve en particulier que cet
ensemble est de dimension nulle ; on sait par ailleurs que c'est le bord de Shilov de~$X$.
\begin{proof}
Soit $V$ cette partie.
Choisisissons une famille finie $(f_1,\dots,f_n)$ d'éléments de~$\mathscr A$,
de normes spectrales $(r_1,\dots,r_n)$ non nulles,
telle que les $\widetilde{f_i}$ engendrent
l'algèbre graduée~$\widetilde{\mathscr A}$.
Soit $x\in X$ et soit $y=\pi(x)$.
Pour que $x\in V$, il faut et il suffit qu'il existe
une partie~$I\subset\{1,\dots,n\}$ de cardinal~$d$
telle que les $\widetilde{f_i}(y)$, pour $i\in I$, soient 
algébriquement indépendantes sur~$\tilde k$.
Cette dernière condition équivaut à l'égalité $f_I(x)=r_I$,
où $f_I=(f_i)_{i\in I}$ et $r_I=(r_i)_{i\in I}$.
D'autre part, pour $x\in f_I^{-1}(\eta_{r_I})$,
on a $\dim(X)\geq d_k(x) = d_{\hr{\eta_{r_I}}}(x)+d$,
de sorte que $d_{\hr{\eta_{r_I}}}(x)\leq \dim(X)-d$.
Comme $V$ est la réunion (finie) des $f_I^{-1}(\eta_{r_I})$,
cela prouve que c'est une partie faiblement analytique
de dimension~$\leq \dim(X)-d$.
\end{proof}

\chapter{Parties squelettiques d'un espace analytique}

\section{Tores et leurs squelettes canoniques} 
% \section{Tores, squelettes, tropicalisations, moments}

\subsection{Convention}
Dans la suite de ce travail, nous travaillerons très régulièrement avec 
les analytifiés $\A^{n,\an}$ et $\gm^{n,\an}$ des $k$-schémas
$\A^n$ et $\gm^n$, et seulement exceptionnellement avec ces schémas eux-mêmes. 
Aussi écrirons-nous désormais, afin d'alléger l'écriture, 
$\A^n$ et $\gm^n$ au lieu de $\A^{n,\an}$ et $\gm^{n,\an}$. Il sera donc considéré par défaut que
$\A^n$ et $\gm^n$ sont des espaces analytiques ; si nous avons besoin de travailler par endroit avec 
les schémas correspondants, nous le préciserons explicitement. 

\subsection{}
Dans ce travail, le groupe multiplicatif et ses puissances
jouent un rôle fondamental. Rappelons que l'anneau
des fonctions algébriques sur~$\gm^n$ est l'anneau $A=k[T_1^{\pm1},\dots,T_n^{\pm1}]$
des polynômes de Laurent à coefficients dans~$k$ 
en des indéterminées $T_1,\dots,T_n$ ;
l'espace analytique $\gm^n$ 
est l'ensemble des semi-normes
multiplicatives sur cet anneau qui étendent la valeur absolue de~$k$. 
L'espace $\gm^n$ est un ouvert de Zariski dense de $\A^n$ : c'est le lieu des points
en lesquels aucune fonction coordonnée ne s'annule. 
Pour tout polyrayon $r=(r_1,\ldots, r_n)$ le point
$\eta_r$ de de $\A^n$(défini à l'exemple \ref{exemple-etar})
appartient à $\gm^n$. Il correspond à la norme multiplicative
$\sum a_I T^I\mapsto \max \abs{a_I}r^I$ sur l'anneau des polynômes de Laurent. 
Si le corps $k$ a besoin d'être rappelé nous écrirons $\eta_{k,r}$ au lieu de $\eta_r$. 

Pour éviter le recours à un choix explicite de coordonnées,
nous adopterons la convention suivante : 
nous appellerons \emph{tore} (de dimension~$n$)
tout groupe analytique isomorphe à une
puissance~$\gm^n$ du groupe multiplicatif ;
nos tores sont donc implicitement supposés déployés. 

\subsection{}
Rappelons que toute fonction \emph{analytique} inversible sur~$\gm^n$ 
est un monôme, c'est-à-dire
de la forme $aT_1^{m_1}\dots T_n^{m_n}$,
où $a\in k^*$ et $(m_1,\dots,m_n)\in\Z^n$ est un vecteur appelé 
exposant.
(En dimension~1, cet énoncé est le lemme~9.7.1/1 
de~\cite{bosch-g-r1984},
le cas général en découle par récurrence.)
C'est en particulier une fonction algébrique.

Le morphisme $\gm^n\to \gm$ induit par une telle fonction est un homomorphisme
de groupes analytiques 
si et seulement si $a=1$. 

Soit $T$ un tore. L'ensemble $\mathsf X(T)$
des \emph{caractères} de~$T$, c'est-à-dire des morphismes 
de groupes analytiques de $T$ vers $\gm$, est un groupe abélien. 

On déduit de ce qui précède
que tout isomorphisme
de~$T$ avec~$\gm^n$ induit un isomorphisme de~$\mathsf X(T)$ 
avec~$\Z^n$ ; on en déduit aussi que si $T$ et~$T'$ sont deux tores, 
tout morphisme
d'espaces $k$-analytiques de~$T$ vers~$T'$ est affine, \emph{i.e.}
composé d'un morphisme de groupes analytiques et d'une translation ; 
c'est donc un morphisme de groupes si et seulement si il envoie
l'origine de~$T$ sur l'origine de~$T'$. Pour cette raison, lorsque
nous évoquerons un morphisme entre tores qui n'est pas nécessairement
un morphisme de groupes analytiques, nous le qualifierons de morphisme affine.

\subsection{Tropicalisations}
On considère le groupe abélien~$\R_+^\times$ comme un $\R$-espace 
vectoriel, \emph{via} l'exponentiation. 
Par ailleurs, le choix
d'une base de logarithmes permet de l'identifier à 
l'espace~$\R$. On fixe une fois pour toutes  un tel isomorphisme
\mbox{$\log\colon\R_+^\times\ra\R$}\index{log@$\log$}. 
Dans ce texte, il n'y a pas d'intérêt
particulier à choisir le logarithme néperien; en particulier,
nous n'aurons jamais à dériver cet isomorphisme, vu comme fonction
d'une variable réelle.
L'utilisation de cet isomorphisme $\log$ présentera pour nous deux intérêts : elle
nous permettra d'utiliser la géométrie paralinéaire sous sa forme standard, plutôt
que sa variante multiplicative ; et elle donnera lieu à un formalisme apparenté à celui 
de la géométrie complexe, dans lequel les logarithmes de fonctions holomorphes
inversibles jouent un rôle majeur.

Soit $T$ un tore. 
Nous noterons $T_\trop $ le $\R$-espace vectoriel $\Hom(\mathsf 
X(T),\R^\times_+)$, 
que l'on peut identifier à $(\R^\times_+)^n$ 
une fois choisie une trivialisation de $T$, 
ou à~$\R^n$ via l'isomorphisme~$\log$ choisi.
L'espace vectoriel~$T_\trop$ est appelé \emph{tropicalisation} du 
tore~$T$.\index{tropicalisation!d'un tore}

Étant donné un point~$t$ de~$T$, l'application qui applique
un caractère~$\phi\in\mathsf X(T)$ sur l'élément $\abs{\phi(t)}$
est un élément de~$T_\trop$ noté $\trop(t)$. L'application
$\trop\colon T\ra T_\trop$ ainsi définie est continue.

La formation de $T_\trop$ est fonctorielle : tout morphisme affine 
$f\colon T\to T'$ entre tores induit une application affine $f_\trop 
\colon T_\trop\to T'_\trop$ telle que  le diagramme
\[ \begin{tikzcd} T\ar{d} \ar{r}{f} &T'\ar{d} \\ T_\trop \ar{r}{f_\trop} & 
T'_\trop\end{tikzcd}\]
 commute. 

\subsection{Squelette de $\gm^n$}\label{ss.skel-gmn}
L'application de tropicalisation~$\gm^n \to (\gm^n)_\trop$ 
possède une section naturelle, notée~$\sq$
et définie comme suit : 
pour tout point~$r=(r_1,\ldots, r_n)\in (\gm^n)_\trop=\R^n$,
on note $\sq(r)$ le point $\eta_r$ de $\gm^n$. 
L'application~$\sq$ est continue;
elle induit un homéomorphisme de~$(\R_+^\times)^n$ sur un sous-espace
fermé~$S(\gm^n)$ de~$\gm^n$
que l'on appelle 
\emph{squelette canonique}\index{squelette canonique! de $\gm^n$} de~$\gm^n$.

On munit~$S(T)$ de l'unique structure de groupes pour laquelle~$\sq$
est un isomomorphisme de groupes.
Pour tout extension valuée~$K$ de~$k$,
l'application naturelle $T(K)\to S(T)$ donnée par $t\mapsto \sq(\trop(t))$
est un morphisme de groupes.

Par définition, pour qu'un point $x\in\gm^n$ appartienne à~$S(\gm^n)$,
il faut et il suffit que pour tout polynôme de Laurent $f=\sum a_m T^m$,
on ait
\[ \abs{f(x)} = \max_m \abs{a_m}\cdot  \abs{T_1(x)}^{m_1}\dots \abs{T_n(x)}^{m_n}. \]
(Si $x=\sq(r)$, la formule est vraie par définition; dans l'autre sens, 
cette formule prouve que $x=\eta_r$ avec $r_i=\abs{T_i(x)}$ pour tout~$i$.)

Du point de vue de la réduction graduée, cette dernière condition
revient à demander que les éléments $\widetilde{T_i(x)}$
du corps résiduel \emph{gradué} $\hrt x$ soient algébriquement indépendants
sur~$\tilde k$
(\ref{ss-independance-alg-residuelle}). 

\begin{lemm}\label{lemm.squelette-canonique}
\begin{enumerate}\def\theenumi{\alph{enumi}}\def\labelenumi{\theenumi)}
\item
Le squelette canonique $S(\gm^n)$ de~$\gm^n$ est 
l'ensemble des points de $\gm^n$ qui sont maximaux
pour la relation $\leq$ définie comme suit : $x\leq y$ si et seulement si 
$\abs{f(x)}\leq \abs{f(y)}$ pour toute fonction analytique~$f$ 
sur~$\gm^n$. 
\item
Soit $t\in S(\gm^n)$. Les réductions graduées $\widetilde{T_i(t)}$
sont algébriquement indépendantes et engendrent $\hrt t$.  
En particulier, on a $d_k(t) =n$.
\item
Pour tout ouvert de Zariski non vide~$U$ de~$\gm^n$,
on a $S(\gm^n)\subset U^\an$.
\end{enumerate}
\end{lemm}
\begin{proof}
\emph a)
Soit~$x\in \gm^n$. Pour tout $i$, 
on pose~$r_i=T_i(x)$, et l'on note $r$ le $n$-uplet $(r_1,\ldots, r_n)$. 
Soit~$f=\sum_{m\in\Z^n} a_m T_1^{m_1}\dots T_n^{m_n}$
une fonction analytique sur $\gm^n$. On a 
\[ \abs{f(x)}\leq\max _{m\in\Z^n} \abs{a_m} r_1^{m_1}\dots r_n^{m_n}=\abs{f(\eta_r)}.\]
En conséquence, $x\leq \eta_r$ ; en particulier, si $x$ est maximal alors $x=\eta_r$. 
Ainsi, tout point maximal appartient à $S(\gm^n)$. 

Réciproquement, 
soit $r\in (\R^\times_+)^n$ et soit~$x\in\gm^n$ tel que $x\geq \eta_r$. 
Pour tout $i$, on pose 
$s_i=\abs{T_i(x)}$, et l'on note $s$ le $n$-uplet $(s_1,\ldots, s_n)$. 
D'après ce qui précède, on a $x\leq \eta_s$ et,  \emph{a fortiori}, $\eta_r\leq \eta_s$. En appliquant cette inégalité à 
$T_i$ et $T_i^{-1}$ pour tout $i$, il vient $r=s$. Ainsi,
$\eta_r\leq x\leq \eta_r$
et $x=\eta_r$. Par conséquent, $\eta_r$ est maximal. 

\emph b)
C'est une conséquence de l'exemple \ref{exem-ktr-tilde} et de la densité de $k(T_1,\ldots, T_n)$
dans $\hr {\eta_r}$. 

% Par récurrence, on se ramène au cas où $n=1$.
% Soit $r\in\R_+^*$ et posons $t=\eta_r$. 
% Si $r\not\in\abs{k^*}^{\Q}$, $\abs{\hr t^*}/\abs {k^*}$ est
% isomorphe à~$r^\Z$ et $\hrt t =\tilde k$; on a donc $d_k(t)=1$.
% Supposons que $r\in\abs{k^*}^{\Q}$ ; soit $m$ l'ordre de~$r$
% modulo~$\abs{k^*}$ et soit $a\in k$ tel que $\abs a = r^m$.
% La classe de~$r$ engendre~$\abs{\hr t^*}/\abs {k^*}$ et ce groupe
% est isomorphe à~$\Z/m\Z$; de plus, l'image
% de~$T^m/a$ dans~$\hrt t$ est transcendant sur~$\tilde k$
% et engendre~$\hrt t$; cela prouve que $d_k(t)=1$.

\emph c)
Comme $\gm^n$ est irréductible algébriquement, donc analytiquement,
tout fermé de Zariski strict de~$\gm^n$ est de dimension~$<n$.
Ainsi, tout point de~$S(\gm^n)$ est Zariski dense.
L'assertion en résulte.
\end{proof}

\begin{coro}\label{coro.skel-gmn-funct}
Soit $f\colon \gm^n\to\gm^n$ un $k$-morphisme fini et plat,
de degré~$d$. Alors $f^{-1}(S(\gm^n))=S(\gm^n)$
et $f$ induit un homéomorphisme de~$S(\gm^n)$ sur lui-même. 
De plus, pour tout $x\in S(\gm^n)$, $\hrt x$ est degré~$d$
sur~$\hrt {f(x)}$ et $f^{-1}(f(x))=\mathscr M(\hr x)$,
en particulier, cette fibre est réduite.
\end{coro}
\begin{proof}
Le morphisme~$f$ est fourni par des monômes $c_{1} T^{m_1}, \dots, c_n T^{m_n}$,
où les~$m_i$ sont des multi-indices.
La matrice $\mathrm A=(m_1,\dots,m_n)$ est à coefficients entiers ;
on a $\abs{\det(A)}=d$. En particulier, cette matrice
est inversible dans~$\GL(n,\Q)$.
On en déduit que pour tout $x\in\gm^n$,
les $\widetilde{T_i(x)}$ sont algébriquement indépendents
si et seulement si il en est de même des $\widetilde{c_i} \widetilde{T(x)}^{m_i}$; ce qui prouve que $x\in S(\gm^n)$ si et seulement si $f(x)\in S(\gm^n)$.
Si c'est le cas, le corps engendré par les $\widetilde{T_i(x)}$
est de degré~$d$ sur celui engendré par les $\widetilde{c_i} \widetilde{T(x)}^{m_i}$. 
L'extension $[\hr x:\hr{f(x)}]$ est donc de degré au moins~$d$.
Comme $f$ est fini et plat de degré~$d$,
il en résulte que $x$ est le seul point de la fibre~$f^{-1}(f(x))$
et que cette fibre est réduite en~$x$. On a donc $f^{-1}(f(x))=\mathscr M(\hr x)$.
\end{proof}

\begin{coro}\label{coro-skel-ginvariant}
\begin{enumerate}
\item
Soit $\sigma\colon k\to K$ un morphisme isométrique de corps ultramétriques complets. 
Le morphisme $p$ d'espaces analytiques $\gm[K]^n\to \gm[k]^n$ qu'il induit
applique~$\eta_{K,r}$ sur~$\eta_{k,r}$ pour tout~$r$.

\item Supposons que $\widetilde K$ est algébrique sur $\sigma(\widetilde k)$. Le point
$\eta_{K,r}$ est alors l'unique antécédent de $\eta_{k,r}$ par $p$ et le morphisme naturel 
de $\hr{\eta_{k,r}}\widehat\otimes_k K$ vers $\hr{\eta_{K,r}}$ est un isomorphisme d'anneaux normés.

\item Pour tout~$x\in\gm^n$, le corps~$k$ est séparablement clos dans~$\hr x$. 
\end{enumerate}
\end{coro}
\begin{proof}
\begin{enumerate}
\item
Pour tout polynôme de Laurent $f=\sum a_m T^m$ appartenant à $k[T,T^{-1}]$ on a 
\[\abs{f(p(\eta_{K,r})}=\abs{\sigma(f)(\eta_{K,r})}=\sup_m \abs{\sigma(a_m)}r^m=\sup_m \abs{a_m}r^m=\abs{f(\eta_{k,r})},\]
ce qui conclut. 
 
\item Pour alléger les notations, utilisons $\sigma$ pour identifier $k$ à un sous-corps de $K$. 
Soit $y$ un antécédent de $\eta_{k,r}$ sur $\eta_{K,r}$. On a $\abs{T_i(y)}=
\abs{T_i(x)}=r_i$ pour tout $i$, et comme 
$\widetilde K$ est algébrique sur $\widetilde k$, les $\widetilde{T_i(y)}=\widetilde{T_i(x)}$ sont encore 
algébriquement indépendants sur $\widetilde K$ ; par conséquent, $y=\eta_{K,r}$. 
Ceci entraîne que $\mathscr M(\hr{\eta_{k,r}}\widehat\otimes_k K)=\{\eta_{K,r}\}$. 
Par ailleurs, la norme de $\hr{\eta_{k,r}}\widehat\otimes_k K$ est multiplicative 
(\cf \cite{ducros2009}, lemme 1.8 et exemples 1.9). C'est donc elle qui définit l'unique point 
$\eta_{K,r}$ de $\mathscr M(\hr{\eta_{k,r}}\widehat\otimes_k K)$. Ainsi $\hr{\eta_{k,r}}\widehat\otimes_k K$
s'injecte isométriquement dans $\hr{\eta_{K,r}}$. 
Il en résulte que si $\lambda$ est un 
un élément non nul de $\hr{\eta_{k,r}}\widehat\otimes_k K$ alors $\lambda$ ne s'annule pas
sur $\mathscr M(\hr{\eta_{k,r}}\widehat\otimes_k K)=\{\eta_{K,r}\}$, ce qui entraîne 
que $\lambda$ est inversible (\cite{berkovich1990}, Corollary 2.1.4). L'injection 
isométrique de $\hr{\eta_{k,r}}\widehat\otimes_k K$
dans $\hr{\eta_{K,r}}$ a donc pour image un sous-corps complet de ce dernier. Comme le corps
des fractions de cette image est dense dans le corps résiduel complété $\hr{\eta_{K,r}}$, 
l'injection isométrique de $\hr{\eta_{k,r}}\widehat\otimes_k K$
dans $\hr{\eta_{K,r}}$ est finalement bijective. 

\item
Soit $K$ le complété d'une clôture algébrique de~$k$.
Il s'agit de démontrer que tout point~$x\in S(\gm^n)$
possède un unique antécédent dans~$\gm[K]^n$.
On peut le déduire de \emph a) qui assure que 
pour tout  $r\in(\R_+^\times)^n$
le point~$\eta_{K,r}$ est 
invariant par
l'action de~$\Aut_k(K)$
et s'applique sur le point~$\eta_{k,r}$ de~$\gm^n$, dont il est dès lors 
l'unique antécédent. On peut aussi le déduire de \emph b) puisque $\widetilde K$ est algébrique 
sur $\widetilde k$. 
\qedhere
\end{enumerate}
\end{proof}

\subsection{Squelette canonique d'un tore} 
Soit~$T$ un tore et soit
$f\colon \gm^n\to T$ un isomorphisme. En vertu du corollaire
précédent,
le sous-espace fermé~$f(S(\gm^n))$ de $T$ ne dépend pas de~$f$. 
On l'appelle le \emph{squelette canonique}\index{squelette canonique! d'un tore} de~$T$
et on le note~$S(T)$. 

L'application naturelle $T\to T_\trop$
induit un homéomorphisme $S(T)\simeq T_\trop$.
Elle munit~$S(T)$
d'une structure naturelle d'espace vectoriel réel de dimension~$\dim(T)$.

Notons $\sq\colon T_\trop \to T$ l'homéomorphisme réciproque.
Si $P$ est un sous-ensemble de~$T_\trop$, on notera aussi~$P_\sq$ 
le sous-ensemble~$\sq(P)$ de~$S(T)$
qui lui correspond \emph{via} cet homéomorphisme. 

Le corollaire~\ref{coro-skel-ginvariant} s'étend
immédiatement au cas des tores.

Soit $t$ un point de $T$ et soit $\chi_1,\ldots, \chi_n$ une base
du groupe des caractères de $T$. Le degré de transcendance
sur $\widetilde k$ du sous-corps gradué
de $\hrt t$ engendré par les $\widetilde k$ et les $\widetilde{\chi_i(t)}$
ne dépend pas de la base $(\chi_i)$ : on le déduit d'une fait qu'on passe d'une
base de $\mathsf X(T)$ à une autre par une transformation monomiale inversible à
exposants entiers. Ce degré sera appelé le degré de transcendance résiduel 
(gradué) torique de $t$. 
Le point~$t$ appartient à~$S(T)$ si et seulement si son degré de 
transcendance résiduel torique est égal à $n$. 

\subsection{Sections de Shilov}\label{section.shilov}
Soit~$T$ un tore
et soit~$X$ un espace~$k$-analytique. 
Pour tout~$x\in X$, notons $\sq_x$ l'application
de~$T_\trop$ dans~$T\times_k X$ qui applique
un point $t\in T_\trop$ sur le point qui correspond 
à~$t$ sur le squelette canonique du~$\hr x$-tore~$T_{\hr x}$,
fibre au-dessus de~$x$ de la projection de $T\times_k X$ sur~$X$.
Pour tout $x\in X$, l'application $\sq_x$ est continue.

\begin{lemm}\label{lemm-continuite-shilov}
L'application 
$T_\trop\times X\ra T\times_k X$, $(t,x)\mapsto \sq_x(t)$
est continue.
\end{lemm}

En particulier, 
pour tout~$t\in T_\trop$,
l'application~$x\mapsto \sq_x(t)$ de~$X$ vers~$T\times_k X$
est une section \emph{continue} (parfois dite \emph{de Shilov})
de la seconde projection (\cite{berkovich1990}, lemma 3.2.2, (i)). 

\begin{proof}
La preuve est en fait la même que celle de la continuité de~$\sq_x$,
pour $x\in X$ fixé.
On peut supposer que $X=\mathscr M(A)$ est affinoïde
et que $T=\gm^n$; alors,  $T\times_k X$ est l'analytifié
du schéma $\gm[\mathscr A]^n$ et sa topologie
est définie par les normes des \emph{polynômes}  de Laurent
à coefficients dans~$\mathscr A$.  
La continuité de l'application étudiée découle alors de ce
que  pour tout $f=\sum a_I T^I\in A[T_1^{\pm1},\dots,T_n^{\pm1}]$
et toute partie ouverte~$J$ de~$\R_+$, 
l'ensemble des couples $(r, x) \in \R^n\times X$ 
tels que $\sup(\abs{a_I(x)} r^I) \in J$ est ouvert.
\end{proof}

\subsection{}\label{sq_X(P)}
Pour toute partie~$P$ de~$T_\trop$, on note $\sq_X(P)$
l'ensemble des points de~$T\times_k X$
de la forme $\sq_x(t)$, pour $t\in P$ et $x\in X$.

Supposons que $X$ soit connexe (il est alors connexe par arcs) et 
soit~$P$ une partie connexe (par arcs) de~$T_\trop$ ; démontrons que
l'ensemble $\sq_X(P) $ est connexe (par arcs). Donnons-nous deux
points~$t,s$ de~$P$ et deux points~$x,y$ de~$X$.
Comme~$P$ est connexe par arcs et comme $u\mapsto \sq_x(u)$
est  continue, les points~$\sq_x(t)$ et $\sq_x(s)$ sont 
 dans la même composante connexe (par arcs) de~$\sq_X(P)$;
comme la section de Shilov $\xi\mapsto s_{\sq,\xi}$ est continue,
les points~$\sq_x(s)$ et $\sq_y(s)$ 
appartiennent à la même composante connexe (par arcs) de~$\sq_X(P)$.
Par suite, $\sq_X(P)$ est connexe (par arcs).

\subsection{Une convention}\label{ss.convention}
Soit $T$ et $T'$ deux tores. 
Les $\R$-espaces vectoriels 
$T_\trop$ et $T'_\trop$ sont respectivement égaux à
$\Hom(\mathsf X(T),\R^\times_+)$ 
et $\Hom(\mathsf X(T'),\R^\times_+)$. 
On dispose par conséquent d'un isomorphisme naturel 
\[\Hom_{\R-\text{lin}}(T_\trop, T'_\trop)\simeq\Hom(\mathsf X(T'), \mathsf X(T))\otimes_{\Z}\R.\]
Dans le cas particulier où $T'=\gm$, on a $T'_\trop=\R^\times_+$
d'où un isomorphisme
naturel
 \[\Hom_{\R-\text{lin}}(T_\trop, \R^\times_+)\simeq \mathsf X(T)\otimes_{\Z}\R.\]
En particulier, l'espace vectoriel~$T_\trop$ 
possède une $\Q$-structure canonique,
donnant lieu aux notions de cellules, polyèdres, décomposition cellulaires,
$\Q$-rationnelles. 
De même, on dit qu'une application affine $f\colon T_\trop\ra T'_\trop$
est $\Q$-rationnelle si sa partie linéaire appartient 
à $\Hom(\mathsf X(T'),\mathsf X(T))\otimes\Q$.

Désormais, et sauf mention expresse du contraire,
nous dirons cellule, polyèdre, décomposition cellulaire 
pour cellule $\Q$-rationnelle, polyèdre $\Q$-rationnel,
décomposition cellulaire $\Q$-rationnelle de~$T_\trop$.
De même, nous dirons simplement qu'une application de $T_\trop$ dans~$T'_\trop$
est affine si elle est affine et $\Q$-rationnelle,
et qu'une application continue $f$ d'un polyèdre~$P$ de~$T_\trop$
vers~$T'_\trop$ est paralinéaire 
s'il existe une décomposition cellulaire $\Q$-rationnelle ~$\mathscr C$ de~$P$ 
telle que la restriction de~$f$ à chaque cellule de $\mathscr C$
s'étende en une application affine $\Q$-rationnelle de~$T_\trop$ 
vers~$T'_\trop$. 
 
% Désormais, et sauf mention expresse du contraire : 
% \begin{itemize}
% \item une cellule de $T_\trop$ sera toujours une cellule « à pentes rationnelles »,
% c'est-à-dire définie comme une intersection 
% de demi-espaces fermés décrits
% par des inégalités de la forme $\phi\leq a$, où
% $\phi \in \mathsf X(T)\subset \Hom_{\R-\text{lin}}(T_\trop, \R^\times_+)$ et 
% où~$a\in \R^\times_+$ ; 
% 
% \item un polytope de $T_\trop$ sera toujours une union finie de cellules à pentes
% rationnelles, et une décomposition cellulaire d'un polytope de $T_\trop$ sera toujours
% constituée de cellules à pentes rationnelles ; 
% 
% \item une application 
% $T_\trop\to T'_\trop$ sera dite \emph{affine} si elle est composée
% d'une translation et d'une application appartenant à 
% $\Hom(\mathsf X(T'),\mathsf X(T))\otimes \Q$. 
% 
% \item une application continue 
% $f$ d'un polytope $P$ de~$T_\trop$ vers $T'_\trop$ 
% sera dite
% \emph{linéaire par morceaux} s'il existe une décomposition cellulaire
% $\mathscr C$ de $P$ telle que la restriction de $f$ à chaque cellule de $\mathscr C$
% s'étende en une application affine de $T_\trop$ vers $T'_\trop$. 
% \end{itemize}

Compte tenu des isomorphismes canoniques
de $S(T)$ sur~$T_\trop$  et de $S(T')$ sur~$T'_\trop$,
la même convention vaudra pour les cellules, polyèdres, applications affines, 
etc. de~$S(T)$.

% Nous parlerons également à l'occasion de cellule 
% et de polytope de $S(T)$, d'application affine
% de $S(T)$ vers $S(T')$, et d'application
% linéaire par morceaux d'un polytope de $S(T)$ vers~$S(T')$ 
% ou vers~$T'_\trop$  : ces notions sont simplement déduites
% des notions correspondantes sur $T_\trop$ et $T'_\trop$ \emph{via} 
% les homéomorphisme naturels~$S(T)\simeq T_\trop$
% et~$S(T')\simeq S(T')_\trop$. 
% 

\subsection{Caractérisation de l'image réciproque du squelette canonique par des formules
explicites}\label{imrecisquel-explicite}
Soit~$X$ un espace~$k$-analytique et
soit~$f=(f_1,\ldots,f_n)$ une famille de fonctions analytiques
inversibles sur~$X$ ; on note~$\phi$ le morphisme~$X\to \gm^n$
défini par les~$f_i$. 

Soit~$x\in X$ ; pour tout~$i$,
posons~$r_i=\abs{f_i(x)}$. Le point~$\phi(x)$ 
appartient à~$S(\gm^n)$ si et seulement
si il est égal à~$\eta_{r}$ ; par définition, 
cela revient à demander que 
\[\left|\sum a_I f(x)^I\right|=\max |a_I|\cdot r^I\]
pour tout polynôme~$\sum a_I T^I\in k[T]$. 
Compte tenu de la description du squelette de~$\gm^n$ 
(\S\ref{ss.skel-gmn}),
cela revient aussi à dire que les $\widetilde{f_i(x)}$
sont algébriquement indépendants sur~$\tilde k$.

Ce critère entraîne en particulier les résultats suivants
qui nous seront utiles par la suite.

1) 
% Supposons $\phi(x)\in S(\gm^n)$.
Soit~$\alpha_1,\ldots,\alpha_n$ des éléments de~$k^\times$, 
et soit~$\psi \colon X\to \gm^n$ le morphisme induit par les~$\alpha_i f_i$. 
On a $\psi^{-1}(S(\gm^n))=\phi^{-1}(S(\gm^n))$.

% On a \[\left|\sum a_I \alpha^I \cdot f(x)^I\right|=\max \abs{a_I}\cdot \abs {\alpha }^I\cdot  r^I\]
% pour tout polynôme~$\sum a_I T^I\in k[T]$. 
% Par conséquent, $\psi(x)=\eta_{\abs {\alpha}\cdot r}$. 
% En particulier, $\psi(x)\in S(\gm^n)$.

2) % Supposons $\phi(x)\in S(\gm^n)$.
Soit~$(g_1,\ldots, g_n)$ une famille
de fonctions analytiques sur~$X$. Supposons que~$\abs{g_i(x)}<r_i$
pour tout~$i$, et 
soit~$\chi$ le morphisme de but~$\gm^n$ 
défini par les fonctions~$f_i+g_i$. 

Alors $\phi(x)\in S(\gm^n)$ si et seulement si $\chi(x)\in S(\gm^n)$.

% Il résulte immédiatement de l'inégalité ultramétrique que 
% \[\left|\sum a_I (f+g)(x)^I\right|=\max |a_I|\cdot r^I\]
% pour tout polynôme~$\sum a_I T^I\in k[T]$. En 
% conséquence, $\chi(x)=\eta_{r}=\phi(x)$. 
% En particulier, $\chi(x)\in S(\gm^n)$.
 
3) Soit~$\alpha_1,\ldots, \alpha_n$ des éléments de~$k^\times$, 
et~$u_1,\ldots, u_n$ des fonctions analytiques sur~$X$, 
telles que~$\abs {u_i(y)}<\abs {\alpha_i}$ pour tout~$i$
et tout~$y$
appartenant à~$X$. 
Soit~$\theta \colon  X\to \gm^n$ le morphisme
induit par les fonctions~$(\alpha_i+u_i)f_i$. 
Posons~$s_i=|\alpha_i|r_i$. On a alors
les équivalences
\[\phi(x)\in S(\gm^n)\iff \phi(x)=\eta_{r}\iff \theta(x)=\eta_{s}\iff \theta(x)\in S(\gm^n).\]

% Cela découle en effet immédiatement de la combinaison des
% assertions~1) et~2) ci-dessus, et du fait que pour tout~$i$, la fonction~$(\alpha_i+u_i)^{-1}$
% est de la forme~$(\alpha_i^{-1}+v_i)$ avec~$|v_i(y)|<|\alpha_i|^{-1}$ pour tout~$y\in X$. 
% 

\section{Moments, tropicalisations, polyèdres}

\begin{defi}
Soit $X$ un espace $k$-analytique  ; 
on appelle \emph{moment} sur~$X$ un morphisme de~$X$ vers un tore.
\end{defi}
Soit $f\colon X\to T$ un moment sur un espace $k$-analytique~$X$.
On désignera par $f_\trop\colon X\ra T_\trop$
l'application composée $X\to T\to T_\trop$.
On dira que l'application~$f_\trop$ est la \emph{tropicalisation} 
de~$f$ ;
c'est une application 
continue.\index{moment}\index{tropicalisation!d'un moment}

\begin{rema} 
Si $f\colon T\to T'$ est un morphisme affine entre tores, la
notation $f_\trop$ désigne donc aussi bien l'application induite 
$T_\trop \to T'_\trop$ que sa composée $T\to T'\to T'_\trop$ avec
l'application de tropicalisation de~$T$.
Cette ambiguïté n'aura, en pratique, guère de conséquences.
\end{rema}

\begin{lemm}\label{lemm.dense-moment}
Soit $X$ un espace analytique, soit $x$ un point de~$X$
et soit $V$ un domaine analytique de~$X$ contenant~$x$.
Soit $f\colon V\to \gm$ une fonction analytique inversible.
On suppose que $X$ est presque bon en~$x$.
Il existe alors un voisinage ouvert~$U$ de~$x$ et une fonction
analytique inversible~$g$ sur~$U$ telle que 
$\abs{f-g}<\abs{f}$ sur $U\cap V$; en particulier, 
$f_\trop$ et $g_\trop$ coïncident sur~$U\cap V$.
\end{lemm}
\begin{proof}
Soit $U$ un voisinage ouvert de~$x$ dans~$X$ et $g\in\mathscr O_X(U)$
tels que $\abs{f(x)-g(x)}<\abs{f(x)}$; il en existe car $\kappa(x)$ est
dense dans~$\hr x$ et $\abs{f(x)}>0$.
L'ensemble des points de~$U\cap V$ en lesquels $\abs{f-g}<\abs{f}$
est un voisinage de~$x$ dans~$V$; quitte à restreindre~$U$,
on peut supposer que cette inégalité vaut dans~$U\cap V$ tout entier.
Quitte à restreindre encore~$U$, on peut supposer que $g$ est inversible sur~$U$, d'où le lemme.
\end{proof}

\begin{theo}\label{prop.trop.global}
Soit $X$ un espace $k$-analytique compact et soit $f\colon X\to T$ 
un moment.
Le sous-espace compact $f_\trop(X)$ est un polyèdre de $T_\trop$ 
de dimension~$\dim(X)$.
\end{theo}
L'assertion
a été établie par Berkovich dans le cas où $X$ est G-localement
algébrisable, \emph{cf.} \cite{berkovich2004}, cor. 6.2.2. 
Pour le cas général, 
on pourra se reporter à \cite{ducros2003b}, \S 3.30,
ou à \cite{ducros2012b}, th.~3.2.

\begin{coro}\label{coro.dim-ftrop}
Soit $X$ un espace $k$-analytique compact et soit $V$ une partie fermée de~$X$
munie d'une structure faiblement analytique de dimension~$d$.
Soit $f\colon X\to T$ un moment.
Alors, $f_\trop(V)$ est une partie paralinéaire de dimension~$\leq d$ de~$T_\trop$.
\end{coro}
\begin{proof}
On se ramène immédiatement au cas où il existe un morphisme $g\colon X\to Y$
et un point $y\in Y$ tel que $V=g^{-1}(y)$.
Alors, le produit fibré $X\times_Y \hr y$ permet de considérer~$V$ comme un espace $\hr y$-analytique compact, de dimension~$\leq d$,
et $f|_V\colon V\to T_{\hr y}$ comme un moment sur cet espace.
On donc $\dim (f_\trop(V))\leq d$.
\end{proof}

\begin{coro}\label{coro.dim-ftrop-bord}
Soit $X$ un espace $k$-analytique compact de dimension~$n$.
Soit $f\colon X\to T$ un moment.
Si $X$ est affinoïde, alors $f_\trop(\partial (X))$ 
est une partie paralinéaire de dimension~$\leq n-1$ de~$T_\trop$;
en genéral, $f_\trop(\partial(X))$ est contenu dans 
une telle partie paralinéaire.
\end{coro}
\begin{proof}
Compte tenu de l'exemple~\ref{exem.bord-fa}
selon lequel $\partial(X)$ est une partie faiblement analytique
de dimension~$\leq \dim(X)-1$,
cela résulte du corollaire précédent.
Il s'agit en fait du théorème~3.2 de \cite{ducros2012b},
démontré de la même façon, même si la notion de structure
faiblement analytique n'avait pas été dégagée.
\end{proof}

\subsection{}Soit $X$ un espace
$k$-analytique et $f\colon X\to T$ 
un moment. Soit $x\in X$. 
On appellera degré de transcendance résiduel (gradué) 
de $f$ en $x$ le degré de transcendance résiduel torique du point 
$f(x)$ de $T$. Si $\chi_1,\ldots, \chi_n$ est une base de $\mathsf X(T)$, ce degré est
le degré de transcendance sur $\widetilde k$ 
du sous-corps gradué de $\hrt x$ engendré par
$\widetilde k$ et les $\widetilde{\chi_i\circ f(x)}$. 
Si $T=\gm^n$ et $f=(f_1,\ldots, f_n)$, c'est donc 
le degré de transcendance sur $\widetilde k$ 
du sous-corps gradué de $\hrt x$ engendré par
$\widetilde k$ et les $\widetilde{f_i(x)}$. 

Le degré de transcendance résiduel de $f$ en $x$ est majoré à la fois par $d_k(x)$ et par $n$. Il vaut $n$ 
si et seulement si $f(x)\in S(T)$. 

\begin{prop}
\label{prop.trop.local}
Soit $X$ un espace $k$-analytique et soit $x$ un point de $X$. 
Soit  $f=X\to T$ un moment ; posons 
$\xi=f_\trop(x)$. Notons $d$ le degré de transcendance résiduel de $f$
en $x$.

Il existe un voisinage analytique compact~$U$ de~$x$ dans~$X$ 
tel que pour pour tout voisinage analytique 
compact~$V$ de $x$ contenu dans~$U$, on ait les propriétés suivantes:
\begin{enumerate}\def\theenumi{\roman{enumi}}
\item Les germes de polyèdres $(f_\trop(U), \xi)$ 
et $(f_\trop(V),\xi)$ coïncident;

\item La dimension locale $\dim_\xi(f_\trop(V))$
est majorée par $d$, et a fortiori par $d_k(x)$. 

\item
Si $x\not\in\partial(X)$, alors $f_\trop(V)$ est purement
de dimension~$d$ en~$\xi$.
Si, de plus, $d=n$, alors 
$f_\trop(V)$  est un voisinage de~$\xi$ dans~$T_\trop$.

\item
Si $\dim_\xi(f_\trop(V))=n$, alors $f(x)\in S(T)$ ;
inversement, 
si $f(x)\in S(T)$ et si de plus $x\notin \partial (X)$, alors $\dim_\xi(f_\trop(V))=n$.
\end{enumerate}

\end{prop}
\begin{proof}
L'existence d'un voisinage analytique~$U$ vérifiant les i), ii) et iii) est le théorème~3.4 de~\cite{ducros2012b}.
Montrons (iv). 
Si $\dim_\xi(f_\trop(V))=n$, il résulte de (ii) que $n\leq d$, donc que $d=n$ et que $f(x)\in S(T)$. 
Réciproquement, si $f(x)\in S(T)$ alors $d=n$ et si de plus $x\notin \partial X$ il résulte
de (iii) que $\dim_\xi(f_\trop(V))=n$. 
\end{proof}

\begin{rema}\label{rem-description-trop-germ}
Soit $X$ un espace $k$-analytique et soit
$x\in X$. Soit $f\colon X\to \gm^n$ un moment. 
Si $V$ est un voisinage analytique compact de 
$x$ dans $X$, il résulte de la proposition ci-dessus
que le germe de $f_\trop(V)$ en $x$ ne dépend pas de $V$. 
Nous le noterons $f_\trop(X,x)$. 

La preuve du théorème 3.4 de \cite{ducros2012b} en donne une description 
en termes de réduction de Temkin, que nous allons rappeler ici, en supposant
que le germe est~$(X,x)$ séparé. 

Le théorème~\ref{theo.foncteurs-polyedraux} assure (en reprenant le vocabulaire
et les notations de~\ref{ss.vocabulaire-foncteur-polyedral})
l'existence d'un sous-foncteur $D$
de $\mathsf D_{\{1\}}^n$ tel que pour tout $H\in \mathsf D$ et tout
$n$-uplet $(h_1,\ldots, h_n)$ d'éléments de $H$, les assertions suivantes soient
équivalentes : 
\begin{itemize}
\item[(i)] $h\in D(H)$ ; 
\item[(ii)] il existe $H'\in \mathsf D_H$ et une valuation $w$
appartenant à $\widetilde{(X,x)}$ à valeurs dans $H'$ telle que 
$w(\widetilde{f_i(x)})=h_i$ pour tout $i$. 
\end{itemize}
Comme le groupe des paramètres est ici $\{1\}$, le foncteur $D$
est simplement défini par conjonctions et disjonctions d'inégalités
de la forme $\phi\leq 1$ où $\phi$ est un monôme $\prod x_i^{r_i}$ 
à exposants rationnels. L'ensemble $D(\R_+^\times)$
est alors un cône
rationnel, et
$f_\trop(X,x)=(f_\trop(x)\cdot D(\R_+^\times),f_\trop(x))$
en notation
multiplicative. 

En particulier, si $x$ est intérieur, 
c'est-à-dire si $\widetilde{(X,x)}=\zr {\hrt x}{\widetilde k}$,
ce germe de polytope ne dépend que 
de la classe d'isomorphie de l'extension graduée~$F$ 
de $\widetilde k$ dans~$\hrt x$ engendrée par les $\widetilde {f_i(x)}$,
munie de cette famille.
Dans ce cas, il est équidimensionnel, de dimension égale au degré de transcendance (gradué) de~$F$ sur~$\widetilde k$.
\end{rema}

\begin{defi}
Soit $X$ un espace analytique et soit $f\colon X \to T$ un moment.
Soit $x$ un point de~$X$.
On appelle dimension tropicale de~$f$ en~$x$  la borne inférieure
de dimensions des polyèdres~$f_\trop(V)$, où $V$ est un domaine analytique
compact qui contient~$x$. On la note $\dim_{\trop,x}(f)$.
\end{defi}

Soit $d$ le degré de transcendance résiduel de $f$ en $x$. 
D'après le théorème~\ref{prop.trop.global} et
la proposition~\ref{prop.trop.local},
on a
\begin{equation}\label{eq.dimtrop-dk-dim}
\dim_{\trop,x}(f) \leq d\leq d_k(x) \leq \dim_x(X),
\end{equation}
et $\dim_{\trop,x}(f)=d$ si $x\notin \partial X$. 
Si $X$ est compact, il résulte de la définition que
\begin{equation}\label{eq.dimtrop-sup}
 \dim(f_\trop(X)) = \sup_{x\in X} \dim_{\trop,x}(f).
\end{equation}

\begin{prop}\label{prop.dim-trop-max}
Soit $X$ un espace analytique, soit $x$ un point de~$X\setminus \partial (X)$.
Il existe un voisinage analytique compact~$V$ de~$x$ dans~$X$
et un moment $f$ sur~$V$ dont la dimension tropicale en~$x$
égale $d_k(x)$.
\end{prop}

\begin{proof}
Posons $d=d_k(x)$ et soit $a_1,\ldots, a_d$
une base de transcendance de $\hrt x$ sur
$\widetilde k$. Comme $x\notin \partial X$, l'espace $X$ est bon, et a fortiori
presque bon, en $x$. Il existe donc un voisinage analytique compact
$V$ de $x$ dans $X$ et $d$ fonctions analytiques inversibles $f_1,\ldots, f_d$
sur $V$ telles que $\widetilde{f_i(x)}=a_i$ pour tout $i$. Le degré de transcendance
résiduel de $f$ en $x$ est alors égal à $d$ par construction. 
Puisque $V$ est un voisinage de $x$ dans $X$, le point $x$ n'appartient pas
à $\partial V$ ; il s'ensuit que la dimension tropicale de $f$ en $x$ 
est elle aussi égale à $d$. 
\end{proof}

\begin{prop}\label{prop.abs-f-constante}
Supposons le corps~$k$ algébriquement clos.
Soit $X$ un espace $k$-analytique et soit $f$ une fonction analytique 
inversible sur~$X$.
Soit $x$ un point de~$X\setminus\partial(X)$
telle que $\abs f$ soit constante au voisinage de~$x$.
Il existe un élément $a\in k^\times$ et une fonction
$g\in \mathscr O_{X,x}^\times$ tels que $f=a+g$ et $\abs g<\abs a$
au voisinage de~$X$.
\end{prop}

\begin{proof}
L'hypothèse signifie que $f$ est de dimension tropicale nulle
en $x$. Puisque $x\notin \partial X$, la dimension tropicale de $f$ 
en $x$ coïncide avec son degré de transcendance résiduel. 
Celui-ci est donc nul, ce qui veut dire que $\widetilde{f(x)}$ est algébrique
sur $\widetilde k$, c'est-à-dire encore que $\widetilde{f(x)}$ appartient
à $\widetilde k$ puisque ce dernier est algébriquement clos (il hérite cette 
propriété de $k$). Il existe donc $a\in k^\times$ tel que $\abs a=\abs{f(x)}$
et $\widetilde a=\widetilde {f(x)}$. Si l'on pose $g=f-a$ on a alors
$\abs{g(x)}<a$ et donc $\abs g<a$ au voisinage de $x$. 
\end{proof}

\begin{lemm}\label{lemm.degre-positif}
Soit $X$  un espace analytique compact,
soit $T$ un tore 
soit $f\colon X\to T$ un moment; on note $n=\dim(T)$.
% On suppose que l'application continue $f_\trop\colon X\to T_\trop$
% est propre.
% et que $f$ est fini et plat au-dessus d'un voisinage de~$U_\sq$.
Alors $f(X)$ contient $\sq( f_\trop(X)^{(n)})$.
% Le degré de~$f$ au-dessus de~$U_\sq$ est alors strictement positif.
\end{lemm}
\begin{proof}
Soit $C$ un sous-espace compact paralinéaire de dimension~$n$ contenu
dans $f_\trop(X)$; prouvons que $f^{-1}(C_\sq)$ n'est pas vide.
Posons $W=f_\trop^{-1}(C)$;
c'est un domaine 
analytique compact de~$X$ et l'on a $f_\trop(W)=C$.
Il existe donc un point de~$W$ en lequel $f$ est
de dimension tropicale~$n$.
D'après la proposition~\ref{prop.trop.local}, (iii),
$f(W)$ rencontre $S(T)$, donc $C_\sq$ puisque $f_\trop(W)\subset C$.
Ainsi, $f(X)$ rencontre $C_\sq$, ce qu'on voulait démontrer.

Soit $s\in f_\trop(X)^{(n)}$. Soit $C$ un voisinage compact de~$s$
qui est un polyèdre compact de dimension~$n$ contenu dans~$f_\trop(X)$.
D'après ce qui 
précède, $f(X)$ rencontre~$C_\sq$; comme ces voisinages~$C$
forment une base de voisinages de~$s$,
les $C_\sq$ forment une base de voisinages de 
$\sq(s)$, et celui-ci est donc adhérent à~$f(X)$.
Puisque $f(X)$ est compact, $\sq(s)\in f(X)$.
\end{proof}

\begin{prop}\label{prop.projection-tropicale}
Soit $p\colon Y\to X$ un morphisme d'espaces analytiques, 
soit $y\in Y$ et soit $x$ le point 
$p(y)$. 
Soit $g=(g_1,\dots,g_n)$ une famille de fonctions inversibles sur~$Y$,
et soit~$d$ le degré de transcendance sur~$\hrt y$
du corps gradué engendré par la famille $(\tilde g_i(y))$.
Il existe $d$~morphismes de tores $q_1,\dots,q_d\colon \gm^n\to\gm$,
une famille $(V_r)$ de domaines analytiques compacts de~$Y$ contenant~$y$
et dont la réunion est  un voisinage de~$y$,
une famille $(U_r)$ de domaines analytiques compacts de~$X$ 
telle que chaque $U_r$ contienne $p(V_r)$ et, 
pour tout~$r$, une famille $(f_{r,i})$
de fonctions analytiques inversibles sur~$U_r$,
et
des familles $(a_{r,i,j})$ et $(b_{r,i,j})$ de nombres rationnels
telles que 
\[ g_{i,\trop}|_{V_r} = \sum_{j=1}^d a_{r,i,j} (q_j\circ  g)_\trop
+ b_{r,i} (f_{r,i}\circ p)_\trop \]
pour tout couple d'indices $(r,i)$. 

Si le germe~$(X,x)$ est presque bon, on peut supposer que chaque~$U_r$
est un voisinage de~$x$.
\end{prop}

\begin{proof}
La première partie de l'assertion est G-locale sur le germe $(X,x)$; cela permet
de supposer que $X$ est presque bon.

Soit $\mathscr V$ un ouvert 
quasi-compact de $\P_{\hrt y}$ tel que
$\mathscr V\cap \zr{\hrt y}{\widetilde k}=\widetilde{(Y,y)}$. 
Appliquons le théorème~\ref{theo.foncteurs-polyedraux}
à $(\hrt x, \hrt y, \mathscr V, (\widetilde{g_i(y)})_i)$, en reprenant la terminologie et les notations 
introduites en \ref{ss.vocabulaire-foncteur-polyedral}. 
Ce théorème assure l'existence
d'une famille finie $(a_{j\ell m}, \phi_{j\ell m})_{j,\ell,m}$, où les $a_{j\ell m}$ sont des éléments
de $\hrt x^\times$ et où chaque $\phi_{j \ell m}$ est un monôme
$\prod_{i=1}^n x_i^{r_i}$ à exposants rationnels, qui vérifie la propriété suivante, en
désignant pour tout $v\in \P_{\hrt x^\times}$ par $\mathscr D_{j,v}$
le sous-foncteur de $\mathbf G_{v(\hrt x^\times)}^n$
défini par la condition 
$\bigvee_\ell\bigwedge_m v(a_{j\ell m})\phi_{j\ell m}\leq 1$ : \textit{il existe un recuuvremet fini $(\mathscr U_j)$
de $\P_{\hrt x^\times}$ par des parties constructibles tel que pour tout $j$
et tout $v\in \mathscr U_j$ le foncteur polyédral $\mathscr D_{j,v}$ soit de dimension 
$\leq d$ et que pour tout $H\in \mathsf D_{v(\hrt x^\times)}$ et tout $h\in H^n$
les assertions suivantes soient équivalentes : 
\begin{itemize}
\item [(i)] $h\in \mathscr D_{j,v}(H)$ ; 
\item[(ii)] il existe $w\in \mathscr V\cap \zr{\hrt y}{(\hrt x,v)}$ à valeurs  dans un groupe ordonné $H' \in \mathsf D_H$ 
telle que $w(\widetilde{g_i(y)})=h_i$ quel que soit $i$. 
\end{itemize}
}

Pour tout $j$, tout $\ell$ et tout $v$, notons $\mathscr D_{j,\ell,v}$
le sous-foncteur de $\mathbf G_{v(\hrt x^\times)}$, contenu dans
$\mathscr D_{j,v}$, décrit par la condition $\bigwedge_m v(a_{j\ell m})\phi_{j \ell m}\leq 1$. 
Fixons un indice $j$ et une valuation $v\in \mathscr U_j$. Le foncteur polyédral $\mathscr D_{j,v}$ est de dimension 
$\leq d$ et c'est la réunion de ses sous-foncteurs $\mathscr D_{j,\ell,v}$. Chacun d'eux est donc de dimension 
$\leq d$, et est dès lors, au vu de sa forme explicite, contenu dans un translaté d'un sous-foncteur de
$\mathbf G_{v(\hrt x^\times)}$ décrit par une conjonction $\bigwedge_{m\in M_{j\ell,v}}\phi_{j\ell m}\leq 1$, où
$M_{j,\ell,v}$ est un ensemble d'indices de cardinal au moins $n-d$ tel que les $\phi_{j\ell m}$ pour 
$m\in M_{j\ell,v}$ soient $\Q$-multiplicativement indépendants. 

L'ensemble de tous les $M_{j,\ell,v}$ étant fini, il existe $d$ monômes $\psi_1,\ldots, \psi_d$ de la forme $\prod x_i^{e_i}$
avec les $e_i$ dans $\Z$ tels que pour tout $(j,\ell)$ et tout  $v\in \mathscr U_j$ la restriction de
$(\psi_1,\ldots, \psi_d)$ à $\mathscr D_{j\ell, v}$ soit injective. 
Il existe alors pour tout $(j,\ell)$ et tout $i$ entre $1$ et $n$ un élément 
$b_{ij\ell}$ de $\hrt x^\times$ et des nombres rationnels
$r_{ij\ell}, e_{1ij\ell}, \ldots, e_{dij\ell}$ tels que 
la $i$-ème coordonnée $x_i$ 
soit identiquement égale à 
$v(b_{ij\ell})^{r_{ij\ell}}\psi_1^{e_{1ij\ell}}\ldots \psi_d^{e_{dij\ell}}$ 
sur $\mathscr D_{j,\ell,v}$.

Pour tout $i$ entre $1$ et $d$, notons $q_i$ le morphisme de tores de $\gm^n$ dans $\gm$ donné par 
le monôme $\psi_i$, et $\gamma_i$ le moment $q_i\circ (g_1,\ldots, g_n)$. 

Comme $X$ est presque bon il existe un voisinage analytique
$U$ de $x$ dans $X$ et des fonctions analytiques inversibles 
$f_{ij\ell}$ sur $U$ telles que
$b_{ij\ell}=\widetilde{f_{ij\ell}(x)}$ pour tout $(i,j,\ell)$. 

Soit $w\in \widetilde {(Y,y)}$, et soit $H$ un groupe abélien divisible ordonné non trivial
dans lequel $w$ prend ses valeurs. La valuation $w$ appartient à $\mathscr V$, 
et son image $v$ sur $\widetilde{(X,x)}$ appartient à $\mathscr U_j$ pour un certain $j$. 
D'après l'équivalence des 
propriétés (i) et (ii) rappelée plus haut, le $n$-uplet $w(\widetilde{g_i(y)})$ appartient à $\mathscr D_{j,v}(H)$, et
donc à $\mathscr D_{j,v,\ell}(H)$ pour un certain $\ell$. 
Il s'ensuit qu'on a pour tout indice $i$ l'égalité 
\[w(\widetilde {g_i(y)})
=v(\widetilde{f_{ij\ell}(x)}^{r_{ij\ell}})w(\widetilde{\gamma_1(y)})^{e_{1ij\ell}}\ldots w(\widetilde{\gamma_d(y)})^{e_{dij\ell}}.\]

Pour tout $(i,j,\ell)$ notons
$N_{ij\ell}$ le plus petit multiple commun des dénominateurs de
$r_{ij\ell}, e_{1ij\ell}, \ldots, e_{dij\ell}$, et posons
\[g_{ij\ell}=g_i^{N_{ij\ell}}(f_{ij\ell}\circ p)^{-N_{ij\ell}r_{ij\ell}}\gamma_1^{-N_{ij\ell}e_{1ij\ell}}
\ldots \gamma_d^{-N_{ij\ell}e_{dij\ell}}.\]
Par construction, $g_{ij\ell}$ est un moment sur le voisinage $p^{-1}(U)$ de $y$, et si $w$ est une valuation 
de $\widetilde{(Y,y)}$ dont l'image sur $\widetilde{(X,x)}$ appartient à $\mathscr U_j$ 
on a $w(\widetilde{g_{ij\ell}(y)})=1$ ; autrement dit, 
$w\in \zr{\hrt y}{\hrt x}\{\widetilde{g_{ij\ell}(y)}, \widetilde{g_{ij\ell}(y)}^{-1}\}_{1\leq i\leq n}$. 
Posons $\rho_{ij\ell}=\abs{g_{ij\ell}(y)}$
pour tout $i$ compris entre $1$ et $n$. 
Pour tout $(j,\ell)$ notons $V_{j,\ell}$
le domaine analytique de $p^{-1}(U)$ défini comme le lieu de validité des égalités
$\abs{g_{ij\ell}}=\rho_{ij\ell}$ pour $i$ variant entre $1$ et $n$. Le point $y$ appartient
à $V_{j\ell}$ et $\widetilde{(V_{j\ell},y)}$ est égal à 
$\widetilde{(Y,y)}\cap \zr{\hrt y}{\hrt x}\{\widetilde{g_{ij\ell}(y)}, \widetilde{g_{ij\ell}(y)}^{-1}\}_{1\leq i\leq n}$. 
Il s'ensuit que les $\widetilde{(V_{j\ell},y)}$ recouvrent $\widetilde{(Y,y)}$. 
Par conséquent la réunion des $V_{j\ell}$ est un voisinage de $y$ dans $Y$. 
Or on a par construction pour tout $(j,\ell)$ et tout $i$ 
l'égalité
\[\abs {g_i}=\abs{f_{ij\ell}\circ p}^{r_{ij\ell}}
\abs{\gamma_1}^{e_{1ij\ell}}\ldots \abs{\gamma_d}^{e_{dij\ell}}\]
identiquement sur $V_{j\ell}$, ce qui conclut. 
\end{proof}

\section{Compléments sur les tropicalisations}

\begin{prop}\label{prop-trop-pt-abhyankar}
Soit $X$ un bon espace $k$-analytique
purement de dimension~$n$
et 
soit $f\colon X\to \gm^d$ un moment sur $X$.
Il existe un point~$y$ de $X$
tel que $d_k(y)=n$ et $f_\trop(X,y)=f_\trop(X,x)$.
\end{prop}

\begin{proof}
Écrivons $f=(f_1,\ldots,f_d)$, 
et notons $F$ l'extension graduée de $\widetilde k$
dans $\hrt x$ engendrée par les $\widetilde {f_i(x)}$.
On peut démontrer le résultat après avoir remplacé $X$ par n'importe quel
voisinage affinoïde de $x$.
On peut aussi étendre le moment~$f$, 
car
il n'y aura ensuite plus qu'à composer
avec une projection convenable pour obtenir le résultat souhaité pour~$f$. 
Cela permet de supposer qu'il existe un ensemble $I\subset \{1,\ldots, d\}$ tel que
$\widetilde{(X,x)}=\P_{\hrt x/\widetilde k}\{\widetilde{f_i(x)}\}_{i\in I}$. 
Quitte à renuméroter les $f_i$, on peut supposer 
qu'il existe un entier $\delta\in\{0,\dots,n\}$ tel que 
$\widetilde {f_1(x)}, \ldots, \widetilde{f_\delta(x)}$ 
soit une base de transcendance de~$F$ sur~$k$. 
Posons enfin $r_i=\abs{f_i(x)}$ pour tout~$i$. 
Quitte à restreindre~$X$ encore une fois, on peut supposer
que $\abs{f_i}\leq r_i$ sur~$X$ pour tout indice~$i$ appartenant à~$I$. 

Ainsi, la famille~$(f_i)_{i\in I}$ définit un morphisme~$h$ de~$X$ 
vers le polydisque fermé~$D$ en les variables $T_i$ pour $i\in I$ 
et de polyrayon $(r_i)_{i\in I}$. 
Puisque $\abs{f_i(x)}=r_i$ pour tout $i\in I$, 
la réduction $\widetilde {(X,x)}$ est égale à l'image réciproque 
de $\widetilde{(D,h(x))}$ sur $\P_{\hrt x/\widetilde k}$. 
Par conséquent, le morphisme~$h$ est sans bord en~$x$, et est dès lors 
sans bord au voisinage de~$x$.
 
Pour tout $j$ compris entre $\delta+1$ et $d$,
notons $\widetilde P_j$ le polynôme minimal de
$\widetilde {f_j(x)}$ sur $\widetilde k(\widetilde{f_1(x)},\ldots,
\widetilde{f_{j-1}(x)})$;
c'est un polynôme homogène lorsque l'indéterminée~$T$ est affectée 
du degré~$r_j$.  
Choisissons un relevé unitaire~$P_j$ de $\widetilde P_j$ 
dans $\mathscr O_{X,x}$
dont chaque coefficient est une fraction rationnelle en $f_1,\dots,f_{j-1}$. 

% si $a_j$ désigne le terme constant de $P_j$,
% on a alors $\abs{a_j(x)}=\rho_j$, et $\abs{f_j(x)} =\rho_j^{1/\deg P_j}$. 

La famille $g=(f_1,\ldots, f_\delta)$ peut être vue 
comme un morphisme de~$X$ dans $\A^\delta_k$.
On a $g(x)=\eta_r$;
en vertu du théorème~10.3.7 de \cite{ducros2018},
le morphisme~$g$ est plat en tout point de la fibre $g^{-1}(\eta_r)$,
car $\A^\delta_k$ est réduit.

Comme $\widetilde{P_j} (\widetilde{f_j(x)})=0$ pour tout $j>\delta$,
il existe un voisinage ouvert~$U$ de~$x$ dans~$X$ 
sur lequel le morphisme~$h$ défini ci-dessus est sans bord,
et sur lequel les~$P_j$ sont définis et vérifient les inégalités 
$\abs{P_j(f_j)}<r_j^{\deg(P_j)}$ pour tout~$j>\delta$.
Cet ouvert contient donc un point d'Abhyankar de rang maximal~$y$ 
de la fibre $g^{-1}(x)$. 
Comme $X$ est équidimensionnel, on a $n=\dim_y(Y)$;
par platitude, 
on a $\dim_y (X)=\dim_y (g^{-1}(\eta_r))+\delta$;
par le choix de~$y$, on a  $\dim_y(g^{-1}(\eta_r))= d_{\hr{\eta_r}}(y)$;
on a donc 
\[ n = d_{\hr{\eta_r}}(y)+\delta =d_{\hr{\eta_r}}(y)+d_k(\eta_r) =  d_k(y) .\] 

Démontrons que ce point~$y$ convient.

Soit~$G$ l'extension graduée de~$\widetilde k$ dans $\hrt y$ 
engendrée par les $\widetilde {f_i(y)}$, pour $i\in\{1,\dots,d\}$.
Démontrons par récurrence 
qu'il existe un isomorphisme $F\simeq G$ 
d'extensions graduées de~$\widetilde k$ 
qui envoie $\widetilde {f_i(x)}$
sur $\widetilde {f_i(y)}$ pour tout $i$.  
On commence au cran~$\delta$ : 
comme $g(y)=\eta_r$, on a $\abs{f_i(y)}=r_i$ pour tout $i\in\{1,\dots,\delta\}$,
et les $\widetilde{f_i(y)}$ (pour $1\leq i\leq \delta$) 
sont algébriquement indépendants sur~$\widetilde k$. 
Il existe donc un $\tilde k$-isomorphisme de corps gradués de
$\tilde k(\widetilde{f_1(x)},\dots,\widetilde{f_\delta(x)})$
sur 
$\tilde k(\widetilde{f_1(y)},\dots,\widetilde{f_\delta(y)})$
qui applique 
$\widetilde{f_i(x)} $ sur $\widetilde{f_i(y)}$ pour tout~$i$.
Supposons l'isomorphisme voulu construit jusqu'au cran~$j-1$. 
On a sur tout l'ouvert $U$, 
et en particulier en $y$, l'inégalité $\abs{P_j(f_j)}<r_j^{\deg(P_j)}$,
de sorte que
l'image du polynôme minimal de~$\widetilde{f_j(x)}$
sur 
 $\tilde k(\widetilde{f_1(x)},
\dots, \widetilde{f_{j-1}}(x))$ dans
$\tilde k(\widetilde{f_1(y)},
\dots, \widetilde{f_{j-1}}(y))[T]$
est un polynôme annulateur de $\widetilde{f_j(y)}$, 
qui est nécessairement son polynôme minimal par irréductibilité. 
On en déduit l'isomorphisme cherché au cran~$j$.

En particulier, on a $\abs{f_j(y)}=\abs{f_j(x)}=r_j$ pour tout~$j$,
c'est-à-dire $f_\trop(y)=f_\trop(x)$.

Comme le morphisme~$h$ est sans bord en~$y$, on a 
$\widetilde {(X,y)}=\P_{\hrt y/\widetilde k}\{\widetilde{f_i(y)}\}_{i\in I}$. 
Posons
$\psi=(\widetilde{f_i(y)})_{1\leq i\leq d}$, $\psi'
=(\widetilde{f_i(y)})_{i\in I}$, 
$\phi=(\widetilde{f_i(x)})_{1\leq i\leq d}$
et $\phi'=(\widetilde{f_i(x)})_{i\in I}$. 
On a alors
\begin{align*}
f_\trop(X,y)&=f_\trop(y)\cdot \psi_\trop(\zr{\hrt y}{\widetilde k}
\{\psi'\})(\R_+^\times)\\
&=f_\trop(y)\cdot \psi_\trop(\zr G{\widetilde k}
\{\psi'\})(\R_+^\times)\\
&=f_\trop(x)\cdot \phi_\trop(\zr F{\widetilde k}
\{\phi'\})(\R_+^\times)\\
&=f_\trop(x)\cdot \phi_\trop(\zr {\hrt x}{\widetilde k}
\{\phi'\})(\R_+^\times)\\
&=f_\trop(X,x),
\end{align*}
où la première et la dernière égalités proviennent
de la remarque~\ref{rem-description-trop-germ}, où la seconde 
et la quatrième égalités proviennent de la surjectivité 
des applications $\zr{\hrt y}{\widetilde k}
\to \zr G{\widetilde k}$
et $\zr{\hrt }{\widetilde k}
\to \zr F{\widetilde k}$
respectivement, 
et où la troisième égalité repose sur l'isomorphisme
construit plus haut entre $F$ et $G$ et le fait que
$f_\trop(y)=f_\trop(x)$. 
\end{proof}

\subsection{}
Soit $X$ un espace $k$-analytique, soit $f=(f_1,\ldots, f_d)$ un moment sur $X$ et soit $x$ un point intérieur de $X$. 
Soit~$F$ l'extension graduée de $\widetilde k$
dans $\hrt x$ engendrée par les $\widetilde {f_i(x)}$. 
Soit $\delta$ son degré de transcendance (gradué) sur~$\widetilde k$.
D'après la remarque~\ref{rem-description-trop-germ},
l'image par~$f_\trop$ du germe $(X,x)$ est un germe de polytope
purement de dimension~$\delta$ qui
ne dépend que de la classe d'isomorphie de~$(F, (\widetilde{f_i(x)}))$.

 \begin{prop}
 Soit $X$ un espace $k$-analytique sans bord, soit $f$ un moment sur $X$ et soit $x$ un point de $X$. Soit~$\delta$ la dimension tropicale de $f$ en $x$. 
Il existe un point~$y$ de $X$ et un morphisme~$g$ de $(X,y)$ 
vers un germe $(Y,z)$ tels que : 
 $f_\trop(g^{-1}(z),y)=f_\trop(X,x)$ et $\dim_y g^{-1}(z)=\delta$. 
  \end{prop}
  
  \begin{proof}
  Choisissons un point d'Abhyankar de rang maximal $y$ de $X$
 tel que $f_\trop(X,y)=f_\trop(X,x)$
(proposition~\ref{prop-trop-pt-abhyankar}). 
 Écrivons $f=(f_1,\ldots, f_d)$, et notons $F$ le sous-corps gradué de
 $\hrt y$ engendré par $\widetilde k$ et les $\widetilde{f_i(y)}$. 
 Quitte à renuméroter les $f_i$, on peut supposer que $(\widetilde{f_1(y)}, 
 \ldots,\widetilde {f_\delta(y)})$ est une base de transcendance graduée
de~$F$ sur~$\widetilde k$. 
 Soit~$n$ la dimension de~$X$ en~$y$. Choisissons une famille $(g_{\delta+1},\ldots, g_n)$ de
 fonctions analytiques définies au voisinage de~$y$ et telles que
 $(\widetilde{f_1(y)}, 
 \ldots,\widetilde {f_\delta(y)},
 \widetilde{g_{\delta+1}(y)},\ldots, \widetilde{g_n(y)}$ soit une base de transcendance de~$\hrt y$ sur~$\widetilde k$. 
 
Posons $r=(\abs{g_\delta+1}(y),\ldots,\abs{g_n(y)})$ et
soit $g$ le morphisme $(g_{\delta+1},\ldots, g_n)$ de $(X,y)$ vers $(\A^{n-\delta}, \eta_r)$.
Le théorème~10.3.7 de \cite{ducros2018}
assure que $g$ est plat en tout point de $g^{-1}(\eta_r)$ ; il s'ensuit
que $\dim_y g^{-1}(\eta_r)=n-(n-\delta)=\delta$. 

Le but est maintenant de montrer que $g$ convient; c'est-à-dire que
$f_\trop(g^{-1}(\eta_r),y))=f_\trop(X,y)$.
Pour cela, posons $t_i=\widetilde{g_i(y)}$ pour $i$ entre 
$\delta+1$ et $n$. Les $t_i$ forment une famille algébriquement indépendants sur $\widetilde k$, et le sous-corps
gradué $\hrt z$ de $\hrt y$ s'identifie à $\widetilde k(t_i)_{\delta+1\leq
i\leq n}$ ; quant au sous-corps de $\hrt y$ engendré par les
$\widetilde{f_1(y)}, \ldots, \widetilde {f_d(y)}$ et $\hrt z$, il s'identifie à $F(t_i)_{\delta+1\leq
i\leq n}$, les $t_i$ restant algébriquement indépendants sur $F$. 

La remarque~\ref{rem-description-trop-germ} décrit explicitement
les tropicalisations $f_\trop (X,y)$ et 
$f_\trop(g^{-1}(\eta_r),y)$ en termes de cônes.
Il suffit ainsi de démontrer que pour tout groupe abélien ordonné divisible non trivial~$H$ et tout
$d$-uplet
$(h_1,\ldots, h_d)$ d'éléments de~$H$,
les assertions suivantes sont équivalentes : 
\begin{itemize}
\item[(i)] il existe une valuation graduée $\abs \cdot $ sur $F$, triviale sur $\widetilde k$, telle que
$\abs{\widetilde{f_i(y)}}=h_i$ pour tout $i$ entre 
$1$ et $d$ ; 
\item[(ii)]  il existe une valuation graduée $\abs \cdot$ sur $F(t_i)_{\delta+1\leq
i\leq n}$, triviale sur $\widetilde k(t_i)_{\delta+1\leq
i\leq n}$, telle que $\abs{\widetilde{f_i(y)}}=h_i$ pour tout $i$ entre 
$1$ et $d$.
\end{itemize}
Il est clair que (ii)$\Rightarrow$(i). Et si (i) est vraie, on montre (ii) en munissant $F(t_i)_{\delta+1\leq
i\leq n}$ de la valuation de Gau\ss\ prolongeant $\abs \cdot $ et valant~$1$ sur chacun des~$t_i$. 
\end{proof}

\begin{prop}
Soit $X$ un espace $k$-analytique purement de dimension~$n$
et soit $f\colon X \to T$ un moment.
Supposons qu'il existe un entier $r\geq 0$ tel que le 
lieu des points de $X$ en lesquels $f$ est de dimension 
relative $r$ soit dense (c'est automatique si $X$ est irréductible).
\begin{enumerate}
\item
Pour tout $x\in X$, on a $\dim_{\trop,x}(f)\leq n-\dim(f^{-1}(f(x)))$.
En particulier, $\dim(f_\trop(X))\leq n-r$.
\item 
Pour tout point $\xi\in f_\trop(X)$ tel que $\xi\not\in f_\trop(\partial(X))$,
on a $\dim_\xi(f_\trop(X))=n-r$. 
\item 
Si $X$ est affinoïde et $r>0$, alors $f_\trop(X)=f_\trop(\partial(X))$.
\end{enumerate}
\end{prop}

Les points~\emph a) et~\emph b) de cet énoncé 
sont une généralisation du théorème~1.1 de~\cite{gubler2007a}
qui traite le cas où $X$ est un sous-espace analytique  fermé
d'un domaine analytique de~$\gm^n$ de la forme~$\trop^{-1}(\Delta)$,
où $\Delta$ est un polyèdre compact de~$\R^n$;
la preuve est similaire.
Compte tenu des deux premiers points, l'assertion~\emph c) est 
le théorème 6.6 de~\cite{GublerRabinoff-2025a}, 
mais notre approche en redonne une preuve.

\begin{proof}
\begin{enumerate}
\item
Soit $x\in X$ et soit $t$ son image sur $T$. 
Soit $s$ la dimension de la fibre $(f^{-1}(t))$;
on a $r\leq s$ par hypothèse.

Soit $z\in f^{-1}(t)$ tel que $d_{\hr t}(z)=s$;
on a $d_k(t)+s=d_k(z)\leq n$, si bien que $d_k(t)\leq n-s$.
Le sous-corps gradué de~$\hrt x$ engendré par les~$\widetilde{f_i(x)}$
est contenu dans~$\hrt t$, donc est de degré de transcendance gradué
$\leq n-s$. Par suite, la dimension tropicale de~$f$
en~$x$ est majorée par $n-s$.

\item
Nous allons démontrer l'énoncé suivant :
pour tout $\xi\in f_\trop(X)\setminus f_\trop(\partial(X))$,
on a $\dim_\xi(f_\trop(X))=n-r$ et $\dim_\xi(f_\trop(X))=n$ si $X$ est affinoïde.
Cela entraîne l'assertion~\emph b) du théorème;
quand $X$ est affinoïde, cela entraîne également
que $n-r=n$ si $f_\trop(X)\neq f_\trop(\partial(X))$, d'où~\emph c).

Si $K$ est une extension complète de $k$ alors
$\partial (X_K)$ est contenu dans l'image réciproque de $\partial(X)$, 
si bien qu'un 
point~$\xi$ de $f_\trop(X)$ qui n'appartient pas à $f_\trop(\partial X)$ 
n'appartient pas non plus à $\trop(X_K)$. 
On peut donc démontrer l'assertion après extension des scalaires à~$K$ ; 
on peut dès lors supposer que $k$ est algébriquement clos,
non trivialement valué,
et que $X$ et $f_\trop^{-1}(\xi)$ sont stricts.

Soit $d=\dim_\xi(f_\trop(X))$. On raisonne par récurrence sur~$d$. 
On suppose donc l'énoncé établi en dimension~$<d$. 
La dimension $\dim_\xi (f_\trop(X))$ est le maximum des $\dim_\xi f_\trop(Y)$,
où $Y$ parcourt l'ensemble~$E$ des composantes irréductibles réduites de~$X$ 
dont l'image par~$f_\trop$ contient~$\xi$. 
De plus, $\partial (Y)=\partial(X)\cap Y$ pour toute composante $Y\in E$,  
il suffit de démontrer l'énoncé pour toute composante~$Y\in E$.
Si $\dim_\xi(f_\trop(Y))<d$, l'assertion est vraie par l'hypothèse de
récurrence.
On s'est ainsi ramené au cas où $X$ est irréductible et réduit. 

Supposons $d=0$. 
Dans ce cas, $\xi$ est isolé dans~$f_\trop(X)$, si bien 
que $f_\trop(X)=\{\xi\}$ puisque, $X$ étant irréductible, et donc connexe,
sa tropicalisation~$f_\trop(X)$ est connexe.
Comme on suppose que $\xi\notin f_\trop(\partial(X))$, 
l'espace~$X$ est sans bord ; 
puisqu'il est compact, il est propre sur~$k$. 
La $k$-algèbre $\mathscr O_X(X)$ est alors de dimension finie (par propreté)
et est intègre car $X$ est irréductible et réduit ; 
comme $k$ est algébriquement clos, il vient $\mathscr O_X(X)=k$.
Ainsi,  le moment~$f$ est constant, donc  $r=n$, et l'on a bien $d=0=n-r$. 
Supposons de plus que $X$ est affinoïde. 
Alors $X$ est à la fois affinoïde et propre, donc fini, si bien que $n=0$ 
et donc que $d=n$. 

Supposons maintenant $d>0$.
L'ensemble $f_\trop(X)$ est un sous-espace paralinéaire compact
de~$T_\trop$; choisissons-en une décomposition cellulaire (finie)~$\mathscr C$.
Comme $d>0$, le point~$\xi$ n'est pas isolé dans~$f_\trop(X)$
et toute cellule maximale de~$\mathscr C$ contenant~$\xi$
est de dimension~$>0$.
Il existe alors un hyperplan ($\Q$-rationnel)~$H$ de~$T_\trop$ contenant~$\xi$
et tel que $\dim(H\cap C)=\dim(C)-1$ 
pour toute cellule maximale $C\in\mathscr C$ qui contiennent~$\xi$.
Écrivons une équation affine de~$H$ de la forme $\sum a_i t_i=b$,
où les~$a_i$ appartiennent à~$\Z$
et posons $h=\prod f_i^{a_i}$.

L'hypothèse que les cellules maximales passant par~$\xi$
ne sont pas contenues dans~$H$ assure que $h_\trop$ n'est pas constante sur~$X$.
Pour tout $\lambda \in k$, le fermé de Zariski $V(h-\lambda)$ 
de l'espace irréductible~$X$
est donc d'intérieur vide;
d'après le Hauptidealsatz, 
il est purement de dimension~$n-1$.

Soit $U_f$ l'ouvert de Zariski de~$X$ tel que $\dim_x(f^{-1}(f(x)))=r$
pour tout $x\in U_f$; par hypothèse, il est dense. 
Soit $Z_f$ son complémentaire.
Soit $\lambda\in k$; les composantes irréductibles de~$V(h-\lambda)$
sont de dimension~$n-1$, de sorte que si l'une d'elles est contenue
dans le fermé d'intérieur vide~$Z_f$, c'en est une composante irréductible.
D'autre part, les $V(h-\lambda)$ sont deux à deux disjoints.
Puisque $Z_f$ n'a qu'un nombre fini de composantes
irréductibles, l'ensemble~$\Lambda$ des $\lambda\in k$
tels qu'une composante irréductible de $V(h-\lambda)$ soit contenue dans~$Z_f$
est fini.

Le domaine analytique $f_\trop^{-1}(\xi)$ de~$X$ n'est pas vide,
il rencontre donc~$U_f$. 
La la réunion, pour $\lambda\in\Lambda$, 
des $h^{-1}(\lambda)\cap U_f\cap f_\trop^{-1}(\xi)$,
est purement de dimension~$n-1$. 

Son complémentaire dans $U_f\cap f_\trop^{-1}(\xi)$ est donc dense ; 
comme $k$ est algébriquement clos, non trivialement valué,
et comme~$f_\trop^{-1}(\xi)$ est strictement analytique, 
il existe un $k$-point~$x\in U_f\cap f_\trop^{-1}(\xi)$
qui n'appartient à aucune hypersurface $h^{-1}(\lambda)$,
pour $\lambda\in \Lambda$. 
Posons $\lambda=h(x)$.

Soit $Y$ une composante irréductible de~$V(h-\lambda)$
et soit $Y'$ le complémentaire dans~$Y$ de la réunion
des autres composantes.
Par le choix de~$\lambda$, on a $Y\cap U_f\neq\emptyset$.
Pour tout $y\in Y'\cap U_f$, 
on a $ \dim_y(f|_Y^{-1}(f(y))) = \dim_y(f|_{V(h-\lambda)}^{-1}(f(y)))$,
car $y$ n'appartient qu'à une seule composante de~$V(h-\lambda)$.
D'autre part, $h$ est un monôme en les~$f_i$, si bien que
$f|_{V(h-\lambda)}^{-1}(f(y)) = 
f^{-1}(f(y))$.
Cela démontre que la dimension relative générique de $f|_Y$ est égale à~$r$.

Soit $Y$ une composante irréductible de~$V(h-\lambda)$
telle que $\xi\in f_\trop(Y)$; c'est par exemple le cas
des composantes irréductibles de~$V(h-\lambda)$ qui contiennent~$x$.
Comme $Y$  est un fermé de Zariski de~$X$, 
on a $\partial(Y)\subset\partial(X)$,
donc $\xi\not\in f_\trop(\partial(Y))$.
Puisque $f_\trop(Y)\subset H$, 
on a on a $\dim_\xi(f_\trop(Y))\leq d-1$.
On peut alors appliquer l'hypothèse de récurrence à~$f|_Y$;
on en déduit que
que $\dim_\xi(f_\trop(Y))=\dim(Y)-r=n-1-r$;
par suite, $n-1-r\leq d-1$, d'où $n-r\leq d$. D'après le~\emph a),
il vient $d=n-r$.

Si $X$ est affinoïde, $Y$ l'est aussi, 
et l'hypothèse de récurrence assure
que $\dim_\xi(f_\trop(Y))=\dim(Y)$, si bien que $d=n$.
\qedhere
\end{enumerate}
\end{proof}

\begin{prop}\label{prop.dim-trop-d}
On suppose que la valuation de~$k$ n'est pas triviale.
Soit $X$ un espace strictement $k$-analytique compact
et soit $f\colon X\to T$ un moment.
Si $f_\trop(X)$ est de dimension~$d$, il existe un point~$x\in \Int(X)$
en lequel $f$ est de dimension tropicale~$d$.
\end{prop}
\begin{proof}
Quitte à remplacer~$f$ par~$p\circ f$, où $p\colon T\to\gm^d$ 
est un morphisme de tores convenable, on se ramène au cas où $T=\gm^d$.
Soit~$x$ un point de~$X$ en lequel $f$ est de dimension tropicale~$d$;
il en existe car $X$ est compact,
d'après la relation~\eqref{eq.dimtrop-sup}.
D'après la proposition~\ref{prop.trop.local},
$f(x)$ appartient au squelette de~$\gm^d$, c'est-à-dire $x\in\Sigma_f$.

Démontrons qu'il existe un domaine strictement analytique 
compact~$V$ de~$X$, contenant~$x$,
et un domaine analytique compact~$W$ 
de~$\gm^d$, tel que le morphisme~$f$
induise un morphisme plat de~$V$ sur~$W$
(au sens de~\cite{ducros2018}, definition~4.1.8;
comme les espaces sont stricts, cela équivaut en fait à
la notion de morphisme rig-plat en géométrie rigide, 
\loccit, corollary~{10.3.3}).
Le lieu des points de~$X$ en lesquels $f$ est plat est ouvert
(de Zariski), d'après \loccit, théorème~{10.3.2}, et contient le
point~$x$ (\loccit, théorème~10.3.7). Il contient
donc un voisinage strictement analytique compact~$V$ de~$x$.
L'image~$W$ de~$V$ est un domaine strictement analytique compact de~$\gm^d$
(\loccit,  théorème~9.2.1; c'est dans ce cas un théorème de Raynaud,
\cite{bosch-lutkebohmert1993b}, théorème~5.11).

Il existe alors un espace strictement analytique compact~$U$
et un morphisme $j\colon U\to V$ tel que la composée $g=f\circ j\colon U\to W$
soit surjective et de dimension relative nulle
(\cite{ducros2018}, théorème~9.1.3).
Puisque $f$ est de dimension tropicale~$d$ en~$x$ et que $V$
est un voisinage de~$x$, on a $\dim(f_\trop(V))=d$.
Or $f_\trop(V)=\trop(W)=g_\trop(U)$, si bien que $\dim(g_\trop(U))=d$.
Par ailleurs, $\dim(U)=d$, puisque $g$ est de dimension relative nulle.
En particulier, $\Sigma_g^{(d)}$ n'est pas vide.
D'après le théorème~\ref{prop.trop.global},
$g_\trop(\partial(U))$ est contenu dans un polyèdre
de dimension~$\leq (d-1)$, si bien que $\Sigma_g^{(d)}$ rencontre~$\Int(U)$.
Soit $u\in \Int(U)\cap\Sigma_g^{(d)}$ et soit $x=j(u)$; on a $x\in\Int(V)$,
donc $x\in\Int(X)$.
De plus, on a $\dim_{\trop,u}(g)=d$, donc $\dim_{\trop,x}(f)= d$.
\end{proof}

\begin{lemm}\label{lemm.moment-bord}
Supposons que la valeur absolue du corps~$k$ n'est pas triviale.
Soit~$X$ un espace analytique localement holomorphiquement séparé.
Pour tout point $x\in X$, il existe un voisinage analytique compact~$V$
de~$x$ et un moment $f\colon V\to T$ tel que $f_\trop(x)\not\in f_\trop(\partial(V/X))$.
\end{lemm}
\begin{proof}
Soit $W$ un voisinage analytique compact de~$x$ qui est holomorphiquement
séparé. 

Soit $y\in W$; démontrons qu'il existe une fonction~$f_y\in\mathscr O(W)$,
inversible en~$x$, telle que $\abs{f_y(y)}\neq\abs{f_y(x)}$.
Par définition d'un espace holomorphiquement séparé, il existe
$f\in\mathscr O(W)$ tel que  $\abs{f(y)}\neq\abs{f(x)}$.
Si $f(x)\neq 0$, la fonction~$f$ convient. Supposons $f(x)=0$;
alors $f(y)\neq 0$ et,
comme la valeur absolue de~$k$ n'est pas triviale, il existe
$\lambda\in k^\times$ tel que $\abs\lambda <\abs{f(y)}$;
la fonction $f+\lambda$ convient.

Notons~$W_y$ l'ensemble défini par l'inégalité
$\abs{f_y}\neq \abs{f_y(x)}$; c'est un voisinage de~$y$ dans~$W$.
Comme $\partial(W/X)$ est compact, il existe une famille
finie $(y_1,\dots,y_n)$ dans~$\partial(W/X)$ telle
que les $W_{y_i}$ recouvrent~$\partial(W/X)$.
Soit $r=\frac12 \inf \abs{f_{y_i}(x)}$ et soit $V$ le domaine
analytique compact de~$W$ défini par les conditions
$\abs{f_{y_i}}\geq r$. On a $\abs{f_{y_i}(x)}>r$ pour tout~$i$,
de sorte que $V$ est un voisinage de~$x$ dans~$W$,
et a fortiori dans~$X$. 
Pour tout~$i$, notons $f_i$ la fonction~$f_{y_i}|_V$; elle est inversible.

Soit $f\colon V\to \gm^n$ le moment $(f_1,\dots,f_n)$.
Démontrons qu'il vérifie la condition requise par le lemme.
Soit $y\in\partial(V/X)$. Il y a deux possibilités.
Ou bien il existe $i\in\{1,\dots,n\}$ tel que $\abs{f_i(y)}=r$;
dans ce cas, $f_\trop(y)\neq f_\trop(x)$.
Ou bien, $y\in\partial(W/X)$; dans ce cas, il existe~$i\in\{1,\dots,n\}$
tel que $y\in W_{y_i}$ et $\abs{f_i(y)}\neq \abs{f_i(x)}$, par construction de~$W_{y_i}$, donc $f_\trop(y)\neq f_\trop(x)$.
\end{proof}

\begin{rema}
Le lemme est faux lorsque la valeur absolue du corps~$k$ est triviale.
Prenons pour espace~$X$ le disque unité ouvert et point~$x$ l'origine;
l'application $r\mapsto \eta_r$ induit un homéomorphisme
de $[0;1\mathopen[$ sur~$X$, qui applique~$0$ sur le point~$x$.
Soit $V$ un voisinage analytique compact de~$x$ dans~$X$.
La composante connexe de~$x$ dans~$V$ est l'image d'un intervalle 
de la forme $[0;r]$, pour  $r\in\mathopen]0;1\mathclose[$;
c'est donc le disque fermé de rayon~$r$ et de centre~$x$.
Sur cet espace, toute fonction inversible est de norme constante,
donc ne sépare pas~$x$ du bord~$\eta_r$ de cette composante connexe.
\end{rema}

\section{Structure G-tropicale affinée sur un espace analytique}

\label{s.G-trop-aff}
Soit $X$ un espace $k$-analytique topologiquement séparé.\footnote{%
Par définition, un espace G-tropical est topologiquement séparé ;
dans la suite de ce texte, nous avons pris garde à indiquer cette
hypothèse lorsqu'elle était nécessaire, tout éventuel oubli ne serait
qu'inadvertance.}
Dans ce paragraphe, nous munissons l'espace~$X$
d'une structure d'espace G-tropical (définition~\ref{defi.G-tropical})
affiné (\S\ref{sec.G-trop-aff}); cela sera toujours cette structure
que nous considérerons.
Les structures paralinéaires considérées seront relatives
au couple $(\Q,\R)$. En particulier, l'espace~$\R^n$
est muni de sa $\Q$-structure standard et les polyèdres de~$\R^n$
sont définis  par des inégalités dont la partie linéaire est $\Q$-rationnelle.

\begin{prop}
Soit $X$ un espace $k$-analytique topologiquement séparé.
Lorsqu'on prend pour domaines de~$X$ ses domaines analytiques
et pour G-faisceau des fonctions paralinéaires
le G-faisceau en $\Q$-espaces vectoriels
engendré par les fonctions constantes et celles de la forme $f_\trop$,
on munit~$X$ d'une structure d'espace G-tropical.
\end{prop}
Nous munirons implicitement tout espace $k$-analytique séparé
de la structure d'espace G-tropical décrite dans cette proposition.
\begin{proof}
L'espace~$X$ a été supposé topologiquement séparé;
il est localement compact et possède une base d'ouverts
dénombrables à l'infini (lemme~\ref{lemm.vois-paracompact}).

Les fonctions paralinéaires sur~$X$ sont des fonctions continues.

Vérifions les autres axiomes des espaces G-tropicaux.
Soit $W$ un domaine analytique de~$X$
et soit $f\colon W \to\R^m$ une application paralinéaire.
\begin{itemize}
\item Si $W$ est compact, alors $f(W)$ est un polyèdre compact
de~$\R^m$. On peut en effet raisonner G-localement sur~$W$,
ce qui ramène au cas où $f$ est de la forme $q\circ h_\trop$,
où $h\colon W\to\gm^n$ est un moment sur~$W$ et $q\colon \R^n\to\R^m$ 
est une fonction affine à partie
linéaire rationnelle. 
L'assertion se déduit du théorème~\ref{prop.trop.global}.
\item Si $P$ est un sous-espace paralinéaire de~$\R^m$ contenant~$f(W)$
et si $\phi\colon P\to\R$ est paralinéaire, alors $\phi\circ f$
est paralinéaire. On peut encore raisonner localement,
et est évidente lrosque $f$ est de la forme $q\circ h_\trop$ comme ci-dessus.
\item Si $P$ est un sous-espace paralinéaire de~$\R^m$,
alors $f^{-1}(P)$ est un domaine de~$W$. On peut encore raisonner G-localement,
donc supposer que $f$ est de la forme $q\circ h_\trop$ comme ci-dessus;
ainsi $f^{-1}(P) = h_\trop^{-1}(q^{-1}(P))$. 
Comme $q$ est à partie linéaire $\Q$-rationnelle, $q^{-1}(P)$ 
est un polyèdre de~$\R^n$.  Ainsi, $f^{-1}(P)$ est défini
par des inégalités de la forme $\abs{u}\leq \lambda$,
où $u$ est une fonction inversible sur~$W$ et $\lambda$ un nombre réel strictement positif, donc est un domaine analytique.
\qedhere
\end{itemize}
\end{proof}

\subsection{}
Soit $X$ un espace $k$-analytique topologiquement séparé,
considéré comme espace G-tropical,
soit $f\colon X\to T$ un moment et soit $x$ un point de~$X$.
La dimension tropicale  de~$f$ en~$x$ coïncide avec la dimension
tropicale de la carte G-tropicale $f_\trop$.

Il en résulte que pour toute carte G-tropicale~$f$ sur~$X$,
on a l'inégalité 
\begin{equation}
\dim_{\trop,x}(f) \leq d_k(x) \leq \dim_x(X). 
\end{equation}

\begin{lemm}
Soit $X$ un espace analytique topologiquement séparé 
et soit~$P$ une partie fidèlement paralinéaire de~$X$.
Il existe un G-recouvrement $(P_i)$ de~$P$ par des parties paralinéaires et,
pour tout~$i$, 
un moment $f_i$ sur un domaine~$V_i$ de~$X$ contenant~$P_i$, tels que 
$f_{i,\trop}$ soit une carte paralinéaire sur~$P_i$.
\end{lemm}
\begin{proof}
Soit $f\in\Lambda(P)$ une carte paralinéaire sur~$P$.
On peut raisonner G-localement, et donc supposer
que $f$ est de la forme $q\circ h_\trop$, où $h$ est un moment sur~$X$
et $q$ une application affine. Alors, $h_\trop\in\Lambda(P)$.
\end{proof}
\begin{lemm}\label{lemm.bord-squelette-negligeable}
Soit $X$ un espace $k$-analytique topologiquement séparé,
de dimension~$\leq n$.
Pour toute partie paralinéaire~$P$ de~$X$, 
on a $\dim(P)\leq n$ et l'ensemble
$\partial(X)\cap P$ est $n$-négligeable
au sens de la définition~\ref{PL-negligeable}.
\end{lemm}
\begin{proof}
L'assertion est G-locale. Cela permet de supposer que $X$
est compact et qu'il existe
un moment~$f$ sur~$X$ tel que $f_\trop\in\Lambda(P)$.
D'après le corollaire~\ref{coro.dim-ftrop},
$f_\trop(X)$ est une partie paralinéaire de dimension~$\leq n$,
qui contient $f_\trop(P)$; par suite, $P$ est de dimension~$\leq n$.
D'après le corollaire~\ref{coro.dim-ftrop-bord},
$f_\trop(\partial(X))$ est  contenu dans une partie
paralinéaire de dimension~$\leq (n-1)$;
a fortiori, $\partial(X)\cap P$ est $n$-négligeable.
\end{proof}
\subsection{}
Un morphisme $p\colon Y\to X$ d'espaces analytiques topologiquement
séparés, non nécessairement
définis sur le même corps, induit un morphisme d'espaces G-tropicaux.

\subsection{}\label{ss.an-esp-affine}
Soit $X$ un espace $k$-analytique topologiquement séparé;
définissons maintenant une structure d'espace G-tropical affiné sur~$X$.

Soit $u$ une fonction numérique définie
sur un domaine~$V$ de~$X$.
On dit que $u$ est \emph{affine}\index{fonction affine sur un espace analytique}
au voisinage d'un point~$x$ de~$V$
s'il existe un voisinage~$U$ de~$x$ dans~$V$,
un moment~$f$ sur~$U$, un nombre rationnel~$a$ et un nombre réel~$c$
tels que $u=a f_\trop+c$ sur~$U$.
On dit que $u$ est affine si elle l'est au voisinage de tout point de~$V$.

Les fonctions affines sur un domaine de~$X$
forment un sous-préfaisceau du G-faisceau
des fonctions paralinéaires.
Par définition d'une fonction affine, 
les fonctions affines définissent un  faisceau
pour la topologie usuelle;
par définition d'une fonction paralinéaire, 
toute fonction paralinéaire est G-localement affine.

On a ainsi défini une structure d'espace G-tropical affiné sur~$X$
au sens du~\S\ref{sec.G-trop-aff}.

\begin{prop}\label{prop.presque-bon-decent}
Soit $X$ un espace analytique topologiquement séparé
et soit $x$ un point de~$X$.
Si $X$ est presque bon en~$x$, alors l'espace G-tropical affiné associé à~$X$
est décent en~$x$ (définition~\ref{defi.espace-decent}).
\end{prop}
\begin{proof}
Soit $V$ un domaine analytique de $X$ contenant $x$. 

Montrons tout d'abord que si $f$ est une fonction affine sur $V$, il existe un voisinage ouvert 
$U$ de $x$ dans $X$ et un prolongement affine de $f|_{V\cap U}$ à $U$. 
Pour ce faire, on peut remplacer $V$ par n'importe quel voisinage domanial de $x$
dans $V$, et donc supposer que $f=ag_\trop+b$ où $a\in \Q$, où $g$ est un moment sur $V$ et où $b\in \R$. 
Puisque $X$ est presque bon en $x$, il existe $h\in \mathscr O_{X,x}^\times$ telle que $\abs{h(x)-g(x)}<\abs{g(x)}$. 
Choisissons un voisinage ouvert $U$ de $x$ sur lequel $h$ est définie et tel que
$\abs{h-g}<\abs g$ sur $U\cap V$. 
On a alors $h_\trop=g_\trop$ sur $U\cap V$, et $ah_\trop+b$ est un prolongement de $f|_{U\cap V}$ à $U$ tout entier. 

Comme $X$ et presque bon en $x$, il est séparé en $x$, et $\widetilde{(V,x)}$ est alors un ouvert quasi-compact
et non vide de $\widetilde{(X,x)}$. On peut donc décrire 
\[\widetilde{(V,x)}=\bigcup_i \zr{\hrt x}{\widetilde k}\{f_{ij}\}_j\] où les $f_{ij}$ sont des éléments de $\widetilde k^\times$. 
Par presque bonté de $X$ en $x$, il existe une famille $(g_{ij})$ d'éléments de $\mathscr O_{X,x}^\times$ telle que
$\widetilde{g_{ij}(x)}=f_{ij}$ pour tout $(i,j)$ ; posons $r_{ij}=\abs{g_{ij}(x)}$. 
Choisissons un voisinage ouvert $U$ de $x$ sur lequel les $g_{ij}$ sont définis. 
La description de $\widetilde{(V,x)}$ assure alors que celui-ci coïncide avec le domaine de 
$(U,x)$ défini par la condition $\bigvee_i\bigwedge_j \abs{g_{ij}}\leq r_{ij}$ ; autrement dit, si l'on restreint suffisamment 
$U$ alors $V\cap U$ est le domaine de $U$ défini par la condition $\bigvee_i\bigwedge_j \abs{g_{ij}}\leq r_{ij}$. Il s'ensuit
que $X$ est décent en~$x$. 
\end{proof}

\section{Quelques classes de fonctions paralinéaires}

\subsection{}
Soit $X$ un espace $k$-analytique topologiquement séparé. 
Un \Gstr -recouvrement de~$X$ est un G-recouvrement~$(X_i)$ de~$X$ 
par des domaines analytiques stricts
strict 
(au sens de ce texte, qui diffère du sens classique si $k$ est trivialement valué, 
\cf \ref{terminologie-stricte}) ; 
un tel recouvrement existe si et seulement si $X$ lui-même est strict. Si c'est le cas, on note
$X_{\groth^{\mathrm{str}}}$ le site dont la
la catégorie sous-jacente est celle des domaines $k$-analytiques stricts de $X$
(les flèches étant les inclusions) et dont les familles couvrantes sont les \Gstr -recouvrements ; 
sa topologie est appelée la \Gstr -topologie. 

\subsection{}\label{ss-classes-pl}
Notons $\PL_X^\Z$ le sous-faisceau de $\PL_X$ sur $X_\groth$ qui associe à un domaine
analytique $V$ de $X$ l'ensemble des fonctions paralinéaires sur $V$ qui sont G-localement 
de la forme 
$\log \abs g+c$ où $g$ est une fonction analytique inversible et $c$ une constante réelle. 
Ses sections sont appelées fonctions paralinéaires \textit{entières}.

Supposons de plus que $X$ est strict. 
On note $\PL_X^{\mathrm{str}}$
le sous-faisceau de $\PL_X|_{X_{\groth^{\mathrm{str}}}}$
sur $X_{\groth^{\mathrm{str}}}$
qui associe à un domaine
strictement analytique $V$ de $X$ l'ensemble des fonctions paralinéaires sur $V$ qui sont \Gstr -localement 
de la forme 
$\lambda \log \abs g$ où $g$ est une fonction analytique inversible et $\lambda$ un nombre rationnel ; 
ses sections sont appelées fonctions paralinéaires \textit{strictes}. Et l'on note 
$\PL_X^{\mathrm{str},\Z}$ le sous-faisceau de $\PL_X^{\mathrm{str}}$ sur $X_{\groth^{\mathrm{str}}}$
qui associe à un domaine
strictement analytique $V$ de $X$ l'ensemble des fonctions paralinéaires sur $V$ qui sont \Gstr -localement 
de la forme 
$\log \abs g$ où $g$ est une fonction analytique inversible.  Ses sections sont appelées fonctions paralinéaires \textit{strictement entières}.

\begin{lemm}\label{lemm.pl-inf-sup}
Le sous-faisceau $\PL_X^\Z$ de $\PL_X$ en est un sous-faisceau en groupes abéliens, stable 
sous $\min$ et $\max$. Si  $X$ est strict, les sous-faisceaux 
$\PL_X^{\mathrm{str}}$ et $\PL_X^{\mathrm{str},\Z}$ de $\PL_X|_{X_{\groth^{\mathrm{str}}}}$
en sont des sous-faisceaux en groupes abéliens, stables
sous $\min$ et $\max$. 

\end{lemm}
\begin{proof}
Nous allons traiter le cas de $\PL_X^{\mathrm{str},\Z}$  ; les autres sont établis de manière analogue. 
On suppose donc que $X$ est strict, et l'on se donne un domaine analytique strict $V$ de $X$. 
Il est clair que $0\in \PL_X^{\mathrm{str},\Z}(V)$. Soit $f$ et $g$ deux fonctions appartenant à 
$\PL_X^{\mathrm{str},\Z}(V)$.
Soit $(V_i)$ et $(W_j)$ deux \Gstr -recouvrements de $V$ tels $f|_{V_i}$ soit pour tout $i$ égale
$\log \abs{f_i}$ pour une certaine fonction analytique inversible $f_i$ sur $X_i$, et
que $g|_{W_ij}$ soit pour tout $j$ égale à 
$\log \abs{g_j}$ pour une certaine fonction analytique inversible $g_j$ sur $W_j$. 
Pour tout $(i,j)$, posons $h_{ij}=f_i|_{V_i\cap W_j}\cdot g_j|^{-1}_{V_i\cap W_j}$. 
On a alors pour tout $(i,j)$ l'égalité 
$(f-g)|_{V_i\cap W_j}=\log\abs{h_{ij}}$, ce qui montre que  $f-g$ appartient à $\PL_X^{\mathrm{str},\Z}(V)$.
Ainsi, $\PL_X^{\mathrm{str},\Z}$ est bien un sous-faisceau en groupes abéliens 
de $\PL_X|_{X_{\groth^{\mathrm{str}}}}$. 

Pour tout $(i,j)$, notons $V_{ij}^-$ et $V_{ij}^+$ les domaines 
analytiques stricts 
de~$V_i\cap W_j$ définis par les inégalités $\abs{f_i/g_i}\leq 1$
et $\abs{g_i/f_i}\leq 1$ respectivement. Les $V_{ij}^+$ et $V_{ij}^-$ constituent un 
\Gstr -recouvrement de $V$. 
Sur $X_i^+$, on a $\max(f,g)=\log\abs{f_i}$ et $\min(f,g)=\log\abs{g_i}$;
sur $X_i^-$, on a $\max(f,g)=\log\abs{g_i}$ et $\min(f,g)=\log\abs{f_i}$.
Il en résulte que $\max(f,g)$ et $\min(f,g)$ appartiennent à $\PL_X^{\mathrm{str},\Z}(V)$. 
\end{proof}

\begin{prop}\label{prop.pl-lisse-berk}
Soit $X$ un espace $k$-analytique topologiquement séparé, 
soit $x\in X$ et et soit $u$ une fonction paralinéaire et lisse sur $X$. 
Il existe alors un entier~$n\geq 1$, un voisinage analytique 
$U$ de $x$ dans $X$, un nombre réel~$c$
et une fonction holomorphe inversible~$h$ 
sur $U$ telle que $n u|_U=c+\log(\abs h)$.

Si de plus la valeur absolue de~$k$ n'est pas triviale
et que le germe~$(X,x)$ est strict, on peut même supposer $c=0$. 
\end{prop}
\begin{proof}
% La fonction~$u$ est~\pl.
% Soit $V$ un domaine analytique fermé de~$X$ contenant~$x$,
% $n$ un entier non nul
% et $h$ une fonction analytique inversible sur~$V$
% tel que $nu=\log\abs h$ sur~$V$.
% Comme le germe~$(X,x)$ est presque bon, on peut supposer que $h$
% est définie sur~$X$, en vertu du lemme~\ref{lemm.dense-moment}.
% Quitte  à remplacer~$u$ par $nu-\log\abs h$, on peut également supposer
% que $u(x)=0$.
% 
La fonction~$u$ est lisse.
On peut donc supposer que $X$ est un espace analytique compact
et qu'il existe un moment $f\colon X\to \gm^m$ 
et une fonction lisse~$v$ sur~$f_\trop(X)$ telle que $u=f^*v$. 
Comme $u$ est paalinéaire, la G-forme $\ddc u$ sur~$X$ est nulle. 
Par suite, la G-forme $\ddc v$ sur~$f_\trop(X)$ est nulle.
Cela prouve que $v$ est une fonction paralinéaire sur~$f_\trop(X)$.

D'après la proposition~\ref{prop.pl-lisse-polytope}, il existe 
un voisinage~$P$ de~$f_\trop(x)$ dans~$f_\trop(X)$
et une fonction affine~$w$ sur~$\R^m$, à pentes rationnelles,
telle que $v|_P=w|_P$. La restriction de~$u$ et de~$f^*w$
au voisinage~$f^{-1}_\trop(P)$ de~$x$ coïncident.
Soit $n$ un entier tel que $nw$ soit à pentes entières;
on écrit $nw=c+\sum a_i x_i$, avec $a_i\in\Z$ et $c\in\R$.
Sur $f_\trop^{-1}(P)$, on a $nu=c+\log \abs{\prod f_i^{a_i}}$.
Ceci entraîne la première partie de la proposition.

Supposons maintenant que $X$ est strict et que la valeur absolue de~$k$
n'est pas triviale. Les points rigides de~$X$ sont alors denses dans~$X$.
En évaluant la relation $nu=c+\sum a_i \log\abs{f_i}$
en un tel point rigide~$y$ de $f_\trop^{-1}(P)$,
on obtient $nu(y)=c+ \sum a_i \log\abs{f_{i}(y)}$.
Comme $u$ est paralinéaire, on a $u(y)\in \log(\abs{\mathscr H(y)}^\times)
\subset \Q\log\abs{k^\times}$ car $y$ est rigide;
la même propriété vaut pour les $\log\abs{f_i(y)}$.
Par conséquent, il existe un entier non nul~$m$ et un élément non nul~$a$
de~$k$ tel que $mc=\log\abs a$. On a alors
$ nmu \equiv \log \abs{ a \prod f_i^{ma_i}}$ sur $f_\trop^{-1}(P)$.
\end{proof}
\begin{rema}
L'exemple suivant montre 
qu'on ne peut pas omettre  l'hypothèse que le germe~$(X,x)$
est strict dans l'énoncé de la proposition~\ref{prop.pl-lisse-berk}.

Soit $r\in\R_+^\times$ un nombre réel qui n'est  pas de torsion
modulo~$\abs {k^\times}$.
Dans le bidisque $\{\abs{T_1}\leq r, \abs{T_2}\leq r\}$
du plan affine~$\A^2$, considérons 
la réunion~$X$ des deux domaines analytiques $X_1=\{\abs{T_1}=r\}$
et $X_2=\{\abs{T_2}=r\}$. 
Le point de Gauss~$x=\eta_{r,r}$ appartient à~$X$.

La fonction constante de valeur~$\log(r)$ sur~$X$ est paralinéaire, 
parce qu'elle est égale à~$\log(\abs {T_1})$ sur~$X_1$
et à~$\log(\abs{T_2})$ sur~$X_2$. Démontrons par l'absurde qu'il n'existe pas
de fonction $f\in\mathscr O_{X,x}^\times$ telle que $\abs f=r$ au voisinage de $x$ ; 
on suppose donc qu'il existe une telle $f$. 

Comme $r$ n'appartient pas à~$\abs{k^\times}^\Q$,
la première projection identifie l'espace~$X_1$ 
au disque $\{\abs{T_2}\leq r\} $ sur le corps~$k_r=k\{T_1/r,r/T_1\}$
des séries $ \sum a_n T_1^n$ telles que $a_n r^n\to0$ quand $\abs n\to+\infty$.
Le corps résiduel $\hrt x$gradué s'identifie alors à~$\widetilde {k_r}(\tau)$,
où $\tau$ est la réduction de~$(T_2/T_1)(x)$ (qui est de degré $1$). 

Posons $g=f/T_1$.
Par hypothèse, la valeur absolue de la fonction~$g$
est constante, égale à~$1$, au voisinage de~$x$ dans~$X_1$.
Cela entraîne que sa réduction $\widetilde{g(x)}$
est un élément de~$\widetilde {k_r}_(\tau)$ inversible sur $\Spec(\widetilde {k_r}[\tau])$; c'est donc une constante non nulle.
Il existe alors $a\in k_r^\times$ tel que $\abs a=1$
et tel que $g-a$ soit de norme~$<1$ au voisinage de~$x$.
Écrivons $a=\sum a_n T_1^n$; comme $r\not\in \abs{k^\times}^\Q$,
on a $\abs{a_0}=1$ et $\abs{a_n}r^n<1$ pour tout $n\neq0$;
en remplaçant~$a$ par~$a_0$, on se ramène au cas où $a\in k^\times$.

On a ainsi démontré qu'il existe $a\in k^\times$ tel que $\abs a=1$
et $\abs{f-aT_1}<r$ au voisinage de~$x$ dans~$X_1$.
Soit $s$ un nombre réel tel que $\abs{f-aT_1}(x)<s<r$.
Au voisinage de~$x$ dans~$X$, on a donc $\abs{f-aT_1}\leq s$.
Cela entraîne que $\abs f=\abs{T_1}$ au voisinage de~$x$ dans~$X$,
et n'est donc pas constante de valeur~$r$
car $\abs{T_1}$ prend des valeurs~$<r$
sur tout voisinage de~$x$ dans~$X_2$.
\end{rema}

\begin{lemm}\label{lemm.separation-pl}
Supposons que la valeur absolue du corps~$k$ n'est pas triviale.
Soit~$X$ un espace analytique topologiquement séparé et
localement holomorphiquement séparé.
Soit~$x$ un point de~$X$ et soit~$V$ un voisinage de~$x$.
Il existe une fonction paralinéaire sur~$X$, que l'on
peut prendre stricte si $X$ est strict, 
qui est non nulle en~$x$
et à support  contenu dans~$V$.
\end{lemm}
\begin{proof}
Appliquons le lemme~\ref{lemm.moment-bord} à l'espace~$\mathring V$
et au point~$x$;
il existe alors un voisinage compact~$W$ de~$x$ contenu dans~$V$ 
et holomorphiquement séparé, et un moment~$f\colon W\to T$ sur~$W$
tel que $f_\trop(x)\not\in f_\trop(\partial(W/X))$.
Si $X$ est strict, on suppose également que $W$ est strict.
Considérons un cube~$Q \subset T_\trop$ 
disjoint de~$f_\trop(\partial(W/X))$
et dont l'intérieur contient le point $f_\trop(x)$;
on suppose que son arête appartient à~$\Q\log\abs{k^\times}$.

Soit  $a,b$ deux éléments de~$\Q\log\abs{k^\times}$ tels que $a<b$;
définissons une fonction $u$ sur~$\R$ par $u(t)=0$ pour $t\leq a$,
$u(t)=t-a$ pour $a\leq t\leq (a+b)/2$, 
$u(t)=b-t$ pour $(a+b)/2\leq t\leq b$
et $u(t)=0$ pour $t\geq b$. Elle est paralinéaire et les scalaires
qui interviennent dans sa définition sur dans~$\Q\log\abs{k^\times}$;
elle  est nulle hors de l'intervalle~$[a;b]$ et strictement
positive sur l'intervalle ouvert~$\mathopen]a;b\mathclose[$.
En écrivant le cube~$Q$ comme un produit d'intervalles,
on construit une fonction paralinéaire~$u$
sur~$T_\trop$, à scalaires dans~$\Q\log\abs{k^\times}$,
qui est nulle hors de~$Q$ et strictement positive sur l'intérieur de~$Q$.

La fonction $f_\trop^*u$ est paralinéaire sur~$W$, et paralinéaire stricte
si $W$ est strict ; et elle est
nulle au voisinage de~$\partial(W/X)$. Son prolongement par~$0$
à~$X$ tout entier est encore paralinéaire, et paralinéaire strict si $X$ est strict ; 
et ce prolngement répond aux conditions de l'énoncé.
\end{proof}

\begin{theo}[Stone--Weierstraß paralinéaire]\label{theo.sw-pl}
Supposons que la valeur absolue de~$k$ n'est pas triviale.
Soit $X$ un espace $k$-analytique (\resp strictement $k$-analytique) 
topologiquement séparé et localement holomorphiquement séparé.
Soit $U$ un ouvert de~$X$.
Soit $f$ une fonction continue à valeurs réelles sur~$X$
dont le support est compact et contenu dans~$U$.
Pour tout~$\eps>0$, il existe une fonction paralinéaire (\resp paralinéaire stricte)~$g$ sur~$X$
(strictement~$\Q$-\pl)
dont le support est compact, contenu dans~$U$, et telle
que $\abs{f(x)-g(x)}<\eps$ pour tout $x\in X$.
\end{theo}
\begin{proof}
% Soit $H$ l'ensemble des fonctions~\pl (strictement~\pl)
% sur~$X$ dont le support est compact et contenu dans~$U$.
% Pour $u,v\in H$, on a $\inf(u,v)$ et $\sup(u,v)\in H$
% (lemme~\ref{lemm.pl-inf-sup}).
% 
% [$X$ compact, $U=X$]
% Soit $x,y$ des points de~$X$.
% Soit $a,b$ des nombres réels; soit $\eps>0$.
% Soit $u$ une fonction~\pl (strictement~\pl) sur~$X$ 
% qui est strictement positive en~$x$ et nulle en~$y$
% (lemme~\ref{lemm.separation-pl}).
% Choisissons $s\in\Q\log\abs{k^\times}$ tel que $\abs{s-b}<\eps$,
% puis un nombre rationnel~$r$ tel que $\abs{r u(x)+s-a}<\eps$.
% La fonction $v=ru+s$ est~\pl (strictement~\pl) sur~$X$,
% elle vérifie $\abs{v(x)-a}<\eps$ et $\abs{v(y)-b}<\eps$.
% D'après la proposition~2 de Bourbaki, Topologie générale~X, p.~34,
% $H$ est dense dans l'espace des fonctions numériques continues sur~$X$.
% 
% [cas général]
Si $U$ est compact, on pose $U'=U$;
sinon, on note~$U'$ son compactifié d'Alexandroff.
Par construction, $U'$ est un espace topologique compact.
Soit~$H$ l'ensemble des fonctions paralinéaires  (\resp paralinéaires strictes) sur~$U$
qui prennent une valeur constante dans~$\Q\log\abs{k^\times}$
hors d'une partie compacte de~$U$;
elles s'identifient à des fonctions continues sur~$U'$.
Pour $u,v\in H$, les fonctions $\min (u,v)$ et $\max(u,v)$
appartiennent encore à $H$
(lemme~\ref{lemm.pl-inf-sup}).

Soit $x,y$ des points de~$U'$ tels que $x\neq y$,
soit $a,b\in\R$ et soit $\eps>0$.
Démontrons qu'il existe $v\in H$ 
tel que $\abs{v(x)-a}<\eps$ et $\abs{v(y)-b}<\eps$.

Par symétrie, on peut supposer que $x\in U$.
Soit $V$ un voisinage compact de~$x$ qui est contenu dans~$U$.
D'après le lemme~\ref{lemm.separation-pl} appliqué  
il existe une fonction paralinéaire (\resp paralinéaire stricte) $u$
sur~$X$ qui est strictement positive en~$x$ et nulle hors de~$V$.
En particulier, son support est compact, contenu dans~$U$,
et l'on a $u(y)=0$.
Choisissons $s\in\Q\log\abs{k^\times}$ tel que $\abs{s-b}<\eps$,
puis un nombre rationnel~$r$ tel que $\abs{r u(x)+s-a}<\eps$.
La fonction $v=ru+s$ est paraliénaire (strictement~paralinéaire stricte) sur~$X$,
elle vérifie $\abs{v(x)-a}<\eps$ et $\abs{v(y)-b}<\eps$.

Selon la proposition~2 de Bourbaki, Topologie générale~X, p.~34,
ceci entraîne
que $H$ est dense dans l'espace des fonctions numériques continues sur~$U'$.

Soit $f$ une fonction continue sur~$X$ dont le support est compact
et contenu dans~$U$. Soit~$\eps>0$.
D'après ce qui précède, il existe une fonction~$g\in H$
telle que $\abs{f(x)-g(x)}<\eps/2$ pour tout $x\in U$.

Si $U$ est compact, on prolonge~$g$ par~$0$ sur l'ouvert $X\setminus U$.
On obtient une fonction paralinéaire (\resp strictement paralinéaire)
dont le support est compact et contenu dans~$U$,
qui approche~$f$ à~$\eps$ près.

Supposons que $U$ n'est pas compact. Par définition de l'ensemble~$H$,
il existe $a\in\Q\log\abs{k^\times}$ tel que $g'=g-a$ 
soit une fonction paralinéaire (\resp paralinéaire stricte)
à support compact dans~$U$.
Comme $f$ est à support compact dans~$U$, on a $\abs{a}\leq \eps/2$.
Soit $x\in X$.
Si $x\in U$, 
on a $\abs{f(x)-g'(x)}\leq \abs{f(x)-g(x)}+\abs{a}\leq \eps$.
Sinon, on a $g'(x)=0=f(x)$. Cela conclut la démonstration de la proposition.
\end{proof}

\begin{coro}\label{coro.sw-pl}
On suppose que la valeur absolue de $k$ n'est pas triviale. 
Soit~$X$ un espace $k$-analytique (\resp strictement $k$-analytique)
topologiquement séparé et localement
holomorphiquement séparé.
\begin{enumerate}
\item L'ensemble des fonctions paralinéaires (\resp paralinéaires strictes)
à support compact sur~$X$
est dense dans l'espace~$\mathscr C_\cpct(X)$
des fonctions continues à support compact sur~$X$, 
muni de la topologie limite inductive des espaces
de fonctions à support dans un compact donné.
\item L'ensemble des fonctions paralinéaires (\resp strictement paralinéaires)
à support compact sur~$X$
est dense dans l'espace~$\mathscr C(X)$ des fonctions continues sur~$X$
muni de la topologie de la convergence uniforme sur tout compact.
\end{enumerate}
\end{coro}
La démonstration se déduit du théorème~\ref{theo.sw-pl}
de la même façon que le corollaire~\ref{coro.app-sw} se déduisait
du théorème~\ref{theo.app-sw},
\emph{mutatis mutandis}.

\section{Polyèdre caractéristique d'un moment}
\label{ss.polyedre-caracteristique}

% \footnote{Nous rappelerons ailleurs...}
% Nous ne rappelons pas ici la définition générale d'espace 
% linéaire par morceaux et  renvoyons le lecteur intéressé 
% au chapitre~1 de~\cite{berkovich2004} ou au chapitre~0 de~\cite{ducros2012b}. 
Si $P$ un espace paralinéaire et $n$ un entier,
rappelons que l'on note $P^{(n)}$ l'ensemble des points $x\in P$
tels que $\dim_{x}(P)\geq n$; c'est un sous-espace paralinéaire 
fermé de~$P$.

Les paragraphes qui suivent visent à construire, pour tout
espace analytique topologiquement séparé~$X$, 
une notion de partie squelettique.
D'après la construction générale, il va suffire de définir
la notion de partie squelettique élémentaire et
le procédé que nous proposons suppose donné un entier naturel~$n$
tel que $\dim(X)\leq n$. Même si la théorie ne voit pas
les composantes irréductibles de~$X$ de dimension~$<n$,
il est commode, pour des raisons de rédaction,
de ne pas supposer que $X$ est équidimensionnel,
ni que $n$ est égal à~$\dim(X)$.

Si $n$ est un entier naturel,
l'expression « Soit $X$ un espace analytique de niveau~$n$  »
signifiera ainsi que $X$ est un espace analytique 
topologiquement séparé tel que $\dim(X)\leq n$,
et que l'on garde trace de cet entier~$n$.

Lorsque $L$ est une extension complète de~$k$,
on considérera implicitement l'espace $L$-analytique~$X_L$ 
obtenu par changement
de base comme un espace analytique de niveau~$n$.
(Il est topologiquement séparé car le morphisme de changement
de base $X_L\to X$ est topologiquement séparé.)

\begin{defi}
Soit $X$ un espace $k$-analytique de niveau~$n$.
Soit $T$ un tore et soit $f\colon X\to T$ un moment sur~$X$.
On appelle \emph{polyèdre caractéristique}\index{polyèdre caractéristique d'un moment}
du moment~$f$
la réunion des images réciproques par $q\circ f$ 
du squelette canonique de~$\gm^n$,
lorsque $q$ parcourt l'ensemble des morphismes de tores de~$T$ dans~$\gm^n$.
On le note~$\Sigma_f$.
\end{defi}

Soit $x\in\Sigma_f$.  Soit $q\colon T\to \gm^n$ un morphisme
de tores tel que $q\circ f(x)\in S(\gm^n)$. 
On a donc $d_k(x)\geq d_k(f(x)) \geq d_k(q\circ f(x))=n=\dim(X)$,
donc $d_k(x)=\dim(X)= d_k(f(x))=n$.
Par suite, le point~$x$ n'appartient à
aucun fermé de Zariski
de dimension~$<\dim(X)$ d'aucun domaine analytique de~$X$.
En particulier, le point~$x$ n'appartient qu'à une seule composante
irréductible de~$X$ dans laquelle il est Zariski-dense.

Cela démontre aussi que $\Sigma_f$ est vide si $\dim(T)<n$.

Notons $f_\red\colon X_\red\to T$  le moment déduit de~$f$ par restriction.
On a $\Sigma_{f_\red}=\Sigma_f$.

\begin{rema}
Soit $X$ un espace $k$-analytique de niveau~$n$
et soit $f\colon X\to T$ un moment sur~$X$. 
Le polyèdre caractéristique~$\Sigma_f$ est contenu dans la réunion~$X^{(n)}$
des composantes irrédutibles de~$X$ qui sont de dimension~$n$;
en particulier, cette théorie ignore les éventuelles composantes
de dimension~$<n$. On pourrait travailler sur chacune d'elle en changeant
l'entier~$n$ de référence.
\end{rema}

\begin{prop}\label{prop.Sigmaf-reunion-finie}
Soit $X$ un espace $k$-analytique de niveau~$n$.
Le polyèdre caractéristique de~$f$ est la réunion
des images réciproques par $q_I\circ f$ 
du squelette canonique de~$\gm^{n}$,
où $I$ parcourt l'ensemble des parties 
de~$\{1,\dots,m\}$ de cardinal~${n}$.
\end{prop}
\begin{proof}
Il résulte de la définition que pour toute partie~$I$ de~$\{1,\dots,m\}$
de cardinal~$n$, on a $(q_I\circ f)^{-1}(S(\gm^n))\subset\Sigma_f$;
cela démontre une inclusion.
Inversement, soit $q\colon \gm^m\to \gm^n$ un morphisme de tores
et soit $x\in X$ un point tel que $q(f(x))\in S(\gm^n)$.
Posons $t=f(x)$; comme $\gm^m$ est sans bord,
on a donc $\dim_{\trop,t}(q)=n$ (prop.~\ref{prop.trop.local}, (iv)).
Le morphisme~$q$ est formé de $n$~monômes en les caractères
canoniques~$\chi_1,\dots,\chi_m$ de~$\gm^m$. Si $\dim_{\trop,t}(q)=n$,
il existe donc une partie~$I$ de~$\{1,\dots,m\}$ de cardinal~$n$
telle que $\dim_{\trop,t}((\chi_i)_{i\in I})$ soit égale à~$n$.
On a donc $q_I(t)\in S(\gm^n)$ (prop.~\ref{prop.trop.local}, (iv)),
d'où $q_I\circ f(x)\in S(\gm^n)$.
\end{proof}

\begin{rema}\label{rema.prolongement-moment}
Soit $X$ un espace $k$-analytique de niveau~$n$.
Soit $x\in X$ un point tel que $d_k(x)=n$  et tel que le germe $(X,x)$ est bon.
Alors, $x$ appartient au polyèdre caractéristique
d'un moment défini au voisinage de~$x$.

Plus précisément,
soit $f_1,\dots,f_d$ des germes de fonctions analytiques inversibles 
telles que les $\widetilde{f_j(x)} $ soient algébriquement
indépendantes sur
le corps gradué $\widetilde k$. 
Comme le germe~$(X,x)$ est bon, il existe des germes
de fonctions analytiques inversibles $f_{d+1},\dots,f_n$
telles les $\widetilde{f_j(x)}$ (pour $1\leq j\leq n$)
forment une base de transcendance graduée de $\hrt x$ sur~$\widetilde k$.
Cela signifie que $x\in\Sigma_f$.

L'assertion initiale s'obtient en partant de la famille vide $(f_1,\dots,f_d)$
pour $d=0$.
\end{rema}

\begin{prop}\label{prop.moment-bonne-proj}
Soit $X$ un espace $k$-analytique de niveau~$n$
et soit $f\colon X\to\gm^m$ un moment.
Soit~$x$ un point de~$X$ tel que $\dim_x(X)=n$.
Il existe un morphisme de tores $p\colon \gm^m\to\gm^n$
tel que $\Sigma_f$ et $\Sigma_{p\circ f}$ coïncident au voisinage de~$x$.
\end{prop}

\begin{proof}
On a toujours $\Sigma_{p\circ f}\subset\Sigma_f$; l'assertion est donc
évidente si $x\not\in\Sigma_f$.
On suppose désormais que $x\in\Sigma_f$ et on 
raisonne par récurrence sur la longueur~$m$ de~$f$.
Elle est au moins égale à~$n$, puisque $x\in\Sigma_f$ ;
si elle est égale à~$n$, on prend $p=\id$.

Supposons $m>n$; on écrit $f=(f_1,\dots,f_m)$.
Posons $r_i=\abs{f_i(x)}$. 
Comme le degré de transcendance de~$\hrt x$ sur~$\tilde k$
est égal à~$n$,
les éléments $a_i=\widetilde {f_i(x)}$ de~$\hrt x$
sont algébriquement dépendants et il existe
donc un élément homogène non nul $F$
de l'anneau gradué $\hrt k[T_1/r_1,\dots,T_m/r_m]$
tel que $F(a_1,\dots,a_m)=0$.  
Nous allons effectuer une « normalisation de Noether torique »,
reprise 
de~\cite[preuve de la proposition~3.2.7, p.~105]{MaclaganSturmfels-2015}. 
Pour $q\in\N$ assez grand, on effectue un changement de variables
monomial en $(T_1,\dots,T_m)$ par :
\[ \phi^*(T_1)=T_1 T_m^q, \dots, \phi^*(T_{m-1})=T_{m-1} T_m^{q^{m-1}},
\phi^*(T_m)=T_m. \]
Si $q$ est assez grand, on déduit de l'unicité du développement d'un entier
en base~$q$ la propriété suivante sur le polynôme $G = \phi^*(F)$ :
ses monômes ont des degrés deux à deux distincts en la variable~$T_m$;
on écrit ainsi
\[ G = \sum_{d\in\N} c_d(T_1,\dots,T_{m-1}) T_m^d , \]
où $c_d$ est un monôme, ces monômes n'étant pas tous nuls.
Posons $b_1=a_1 a_m^{-q}$, \dots, $b_{m-1}=a_{m-1}a_m^{-q^{m-1}}$
et $b_m=a_m$; par construction, on a $G(b_1,\dots,b_m)=0$:
\[ \sum_{d\in\N} c_d(b_1,\dots,b_{m-1}) b_m^d = 0. \]

Posons $g_i = f_i f_m^{-q^i}$ pour $1\leq i< m$ et $g_m=f_m$;
ce sont des fonctions inversibles sur~$X$ et
on a $b_i=\widetilde{g_i(x)}$ par construction.
Comme le changement de fonctions des~$f_i$ aux~$g_i$ est monomial inversible,
on a $\Sigma_f=\Sigma_g$.

Posons $g'=(g_1,\dots,g_{m-1})$; il est de la forme $p\circ f$,
pour $p$ convenable et
nous allons démontrer que l'on a
l'égalité $\Sigma_g=\Sigma_{g'}$ au voisinage de~$x$,
d'où l'assertion voulue par récurrence sur~$m$.

On a 
\[ \sum_{d\in\N} c_d(\widetilde{g_1(x)},\dots,\widetilde{g_{m-1}(x)})
 \widetilde{g_m(x)}^d = 0, \]
ce qui signifie que 
\[ \abs{\sum_d c_d (g_1(x),\dots,g_{m-1}(x)) g_m(x)^d}
 < \sup_d \abs{c_d (g_1(x),\dots,g_{m-1}(x))} \cdot \abs{g_m(x)}^d. \]
L'ouvert $W$ de $X$ constitué des points $y$ els que 
\[ \abs{\sum_d c_d (g_1(y),\dots,g_{m-1}(y)) g_m(y)^d}
 < \sup_d \abs{c_d (g_1(y),\dots,g_{m-1}(y))} \cdot \abs{g_m(y)}^d \]
contient par conséquent $x$. 

Soit $y\in W$. On a une relation 
\[ \sum_{d\in E} c_d(\widetilde{g_1(y)},\dots,\widetilde{g_{m-1}(y)}) \widetilde{g_m(y)}^d = 0 \]
où $E$ est l'ensemble non vide des entiers $d\in\N$ tels que
$\abs{c_d (g_1(y),\dots,g_{m-1}(y))}\cdot  \abs{g_m(y)}^d$ est maximal.
En particulier, $\widetilde{g_m(y)}$ est algébrique sur
$\tilde k(\widetilde{g_1(y)},\dots,\widetilde{g_{m-1}(y)})$,
si bien que $y\in\Sigma_g$ si et seulement si 
$y\in\Sigma_{g'}$.
Cela conclut la démonstration.
\end{proof}

\begin{lemm}\label{lemme-sigmaf-paralineaire}
Soit $X$ un espace $k$-analytique de niveau~$n$
et soit $f\colon X\to \gm^n$ un moment sur $X$. 
Le sous-ensemble $\Sigma_f$ de $X$ en est alors une partie paralinéaire
et l'application $f_\trop|_{\Sigma_f}\colon \Sigma_f\to\R^n$
est paralinéaire.
\end{lemm}
\begin{proof}
L'assertion est G-locale sur $X$ d'après le lemme
\ref{lemme-partiespl-glocal} ; on peut donc supposer $X$
affinoïde. 
Par ailleurs, l'assertion est insensible à la présence de nilpotents ; 
on peut donc remplacer $X$ par $X_{\mathrm{red}}$ et ainsi le supposer réduit. 
Soit $x\in \Sigma_f$. 
Toujours d'après le lemme \ref{lemme-partiespl-glocal}, il suffit d'exhiber
un voisinage affinoïde $V$ de $x$ dans $X$ tel que $V\cap  \Sigma_f$ soit
une partie paralinéaire de $V$ en restriction à laquelle $f_\trop$
est paralinéaire. Or cela résulte de la preuve du théorème 5.1
de \cite{ducros2012b} (et notamment du paragraphe 5.5.1 de celle-ci) 
qui montre même que $V$ peut être choisi de sorte que $\Sigma_f\cap V$ soit
\textit{fidèlement} paralinéaire. 
\end{proof}

\begin{prop}\label{prop-unionsigmaf-paralineaire}
Soit $X$ un espace $k$-analytique de niveau~$n$, 
soit $(X_i)$
une famille localement finie de domaines analytiques fermés;
pour tout $i$, soit $f_i \colon X_i\to \gm^{n_i}$ un moment. 
Posons $\Sigma=\bigcup_i \Sigma_{f_i}$. 
Il existe une famille
$(W_j)$ de domaines analytiques de~$X$
qui G-recouvrent $\Sigma$ et pour tout $j$, un moment 
$g_j\colon W_j\to \gm^{n}$ tel que $\Sigma\cap W_j$ soit une partie
paralinéaire de $\Sigma_{g_j}$. 
En particulier, $\Sigma$ est paralinéaire.
\end{prop}
\begin{proof}
L'assertion est G-locale sur $X$, 
ce qui permet de supposer que 
$X$ est affinoïde et que $(X_i)$ est fini. 
Pour tout~$i$, notons $(f_{i,j})$ les composantes de~$f_i$.
Soit $x$ un point de $\Sigma$.
Soit $I$ l'ensemble des indices tels que $x\in \Sigma_{f_i}$. 
Comme $x$ est situé sur au moins un $\Sigma_{f_i}$, on a $n=d_k(x)$, 
et il existe donc un voisinage affinoïde $V_x$ de $x$ dans $X$ 
qui est purement de dimension~$n$. 
De plus, quitte à restreindre $V_x$, on peut supposer que les
propriétés suivantes soient satisfaites : 
\begin{itemize}
\item $V_x\cap \Sigma_i=\emptyset$ dès que $i\notin I$ ; 
\item pour tout $i\in I$, et pour tout~$j$,
on a $\abs{f_{i,j}}\geq \abs{f_{i,j}(x)}/2$ sur $X_i\cap V_x$ ; 
\item pour tout $i\in I$ et tout~$j$,
il existe une fonction analytique inversible $f'_{i,j}$ sur $V_x$ 
telle que $\abs{f_{i,j}-f'_{i,j}}<\abs{f_{i,j}(x)}/2$ sur $X_i\cap V_x$ 
\end{itemize} 
(la troisième condition est réalisable car $X$ est affinoïde, 
donc bon, et en particulier presque bon en $x$). 
Pour tout~$i$, notons~$f'_i$ le moment  de composantes $(f'_{i,j})$.
Par construction, on a alors $\Sigma_{f_i}\cap V_x=\Sigma_{f'_i}\cap X_i$ pour tout $i\in I$. 
Soit $f$ la concaténation des $f'_i$. 
La proposition \ref{prop.moment-bonne-proj} assure l'existence d'un moment $g\colon V_x
\to \gm^n$ tel que $\Sigma_f\subset \Sigma_g$. En particulier, chacun des $\Sigma_{f_i}\cap V_x$ est contenu
dans $\Sigma_g$, et est par ailleurs
une partie paralinéaire de $V_x\cap X_i$ 
par le lemme \ref{lemme-sigmaf-paralineaire} ; 
c'est donc une partie paralinéaire 
de $\Sigma_g$. Et comme $\Sigma \cap V_x$ est la réunion des
$\Sigma_{f_i}\cap V_x$, c'est encore une partie paralinéaire de $\Sigma_g$.

Il découle alors de la proposition~\ref{lemm.paralineaire-Glocal} que $\Sigma$
est paralinéaire.
\end{proof}

\begin{coro}\label{coro.polcar-pl}
Soit $X$ un espace $k$-analytique  de niveau~$n$.
Le polyèdre caractéristique d'un moment $f\colon X\to \gm^m$ 
est une partie paralinéaire de~$X$,
et l'application $f_\trop|_{\Sigma_f}\colon \Sigma_f\to\R^m$  
est paralinéaire.
\end{coro}
\begin{proof}
Compte tenu de la proposition~\ref{prop.Sigmaf-reunion-finie},
on écrit $\Sigma_f$ comme la réunion des
$\Sigma_{q_I\circ f}$, où $I$ parcourt l'ensemble
des parties de~$\{1,\dots,m\}$ de cardinal~$n$.
Il résulte alors de la proposition précédente,
appliquée à la famille des moments $(q_I\circ f)$, 
que $\Sigma_f$ est une partie paralinéaire de~$X$.

Par définition, $f_\trop$ est une application paralinéaire sur~$X$
et donc sa restriction à toute partie paralinéaire est paralinéaire.
D'après le lemme~\ref{lemme-sigmaf-paralineaire}, 
la restriction à $\Sigma_{q_I\circ f}$ de~$(q_I\circ f)_\trop$ est une immersion par morceaux ; par suite, la
restriction de~$f_\trop$ est une immersion par morceaux.
Comme la famille des $\Sigma_{q_I\circ f}$ est un G-recouvrement de~$\Sigma$,
la restriction à~$\Sigma$ de~$f_\trop$ est une immersion par morceaux.
\end{proof}

\section{Polyèdres caractéristiques et changement de base}

\subsection{}
Soit $X$ un espace $k$-analytique de niveau~$n$.
Soit $f\colon X\to T$ un moment et soit $L$ une extension complète de~$k$.
On note $p\colon X_L\to X$ le morphisme de changement de base
et $f_L\colon X_L\to T_L$
le moment déduit par changement de base. 
On voudrait décrire le lien entre~$\Sigma_{f_L}$ et~$\Sigma_f$.

Dans les deux cas « extrêmes » qui correspondent
essentiellement au cas d'une extension résiduelle graduée qui 
est soit algébrique soit transcendente pure, les propositions~\ref{prop.polyedre-chgt-base-algebrique} et le corollaire~\ref{coro.Sigma-chgt-base}
indiquent que le polyèdre
caractéristique se comporte de façon naturelle.

\begin{prop}\label{prop.polyedre-chgt-base-algebrique}
Supposons que le corps résiduel gradué~$\tilde L$ soit algébrique
sur~$\tilde k$; c'est par exemple le cas si $L$ contient un sous-corps
dense qui est algébrique sur~$k$, ou si $L$ est une extension immédiate de~$k$.
Alors, on a $\Sigma_{f_L}=p^{-1}(\Sigma_f)$.
\end{prop}
\begin{proof}
En revenant la définition du polyèdre caractéristique
il suffit, grâce à la proposition~\ref{prop.Sigmaf-reunion-finie},
de traiter le cas où $X$ est irréductible, de dimension~$d$,
et où $T=\gm^d$.

Soit $y\in X_L$ et soit $x=p(y)$.
Pour que $x\in\Sigma_f$, il faut et il suffit que
la famille $(\widetilde{f_1(x)},\dots,\widetilde{f_d(x)})$
soit algébriquement indépendante sur~$\tilde k$;
pour que $y\in\Sigma_{f_L}$, il faut et il suffit que
la famille $(\widetilde{f_1(x)},\dots,\widetilde{f_d(x)})$
soit algébriquement indépendante sur~$\tilde L$;
l'assertion en résulte.
\end{proof}

\subsection{}
Soit $\mathscr A$ un anneau de Banach. 
Son spectre $\mathscr M(\mathscr A)$ est l'ensemble 
des semi-normes multiplicatives bornées sur~$\mathscr A$.
Il est muni d'une relation d'ordre pour laquelle $x\preceq y$
si $\abs{f(x)}\leq \abs{f(y)}$ pour tout $f\in\mathscr M(\mathscr A)$.
Le plus grand élément de~$\mathscr M(\mathscr A)$, s'il existe,
est donc un élément~$x$ tel que $\abs{f(x)}\geq \abs{f(y)}$ pour tout $y\in\mathscr M(\mathscr A)$.
Si la norme de~$\mathscr A$ est elle-même multiplicative,
c'est le le plus  grand élément de~$\mathscr M(\mathscr A)$.

\begin{lemm}
Soit $x\in\Sigma_f$.
Supposons que $p^{-1}(x)=\mathscr M(\hr x \mathop{\widehat\otimes} L)$
possède un plus grand élément, $y$.
Alors $y$ est l'unique antécédent de~$x$ sur~$\Sigma_{f_L}$.

Supposons que la norme tensorielle de $\hr x \mathop{\widehat\otimes} L$ 
soit multiplicative et notons~$y$ le point de~$X_L$ au-dessus de~$x$
qui correspond à cette norme.
Alors, le point~$y$ est l'unique antécédent de~$x$ dans~$\Sigma_{f_L}$.
\end{lemm}
\begin{proof}
On se ramène encore au cas où $X$ est irréductible, de dimension~$d$,
et où $f=(f_1,\dots,f_d)\colon X\to\gm^d$.
Posons $t=f(x)$ et $s=f_L(y)$. Notons~$q$ le morphisme canonique
de~$T_L$ sur~$T$.
Par changement de base, on a $p^{-1}(x)= q^{-1}(t) \otimes_{\hr t} \hr x$,
de sorte que le morphisme canonique $p^{-1}(x)\to q^{-1}(t)$ est surjectif,
et, le point~$y$ étant le plus grand élément de~$p^{-1}(x)$, 
le point~$s$ est ainsi le plus grand élément de $q^{-1}(t)$.
Soit $r\in (\R_+^\times)^d$ tel que $t=\eta_r$; on a donc $s=\eta_{L,r}$.
Puisque $f_L(y)=s$, cela prouve que $y\in\Sigma_{f_L}$.

Raisonnons par l'absurde et considérons
un antécédent~$y'$ de~$x$ dans~$\Sigma_{f_L}$ tel que $y'\neq y$.
Notons $\mathscr A=\mathscr O(q^{-1}(t))$, $\mathscr B=\mathscr O(p^{-1}(s))$,
munissons ces algèbres de leurs normes tensorielles qui en font
des algèbres de Banach. 
L'algèbre~$\mathscr A$ est intègre, car sa norme est multiplicative
(\cite[lemma~5.2.2]{berkovich1990}; 
\cite[lemme~1.8 et exemple~1.9]{ducros2009}).
L'algèbre $\mathscr B$ est finie sur~$\mathscr A$ car 
$f^{-1}(t)$ est de dimension nulle.
Il existe une fonction $\phi\in\mathscr O(f_L^{-1}(s)\cap p^{-1}(x))$
telle que $\abs{\phi(y')}=1$ et $\abs{\phi(y)}=0$.
Puisque l'anneau $\mathscr O(f_L^{-1}(s)\cap p^{-1}(x))$
est le complété du produit tensoriel $\mathscr B\otimes_{\mathscr A}
\operatorname{Frac}(\mathscr A)$, il existe un élément
$\psi$ dans ce produit tensoriel tel que $\norm{\psi-\phi}<1/2$.
Alors $\abs{\psi(y')}>1/2>\abs{\psi(y)}$.
Considérons $\lambda\in\mathscr B$ et $\mu\in\mathscr A\setminus\{0\}$
tels que $\psi=\lambda/\mu$. Comme $\abs{\mu(y')}=\abs{\mu(y)}=\abs{\mu(s)}$,
on a donc $\abs{\lambda(y')}>\abs{\lambda(y)}$, ce qui contredit
l'hypothèse que $y$ est le plus grand élément de~$p^{-1}(x)$.
\end{proof}

Rappelons
% \footnote{Ces rappels et les précédents auraient pu 
% remonter au chapitre précédent.}
(\cite[définition~3.2]{poineau2013}) qu'une extension
complète $L$ de~$k$ est dite universellement multiplicative
si pour toute extension complète~$K$ de~$k$, la norme tensorielle
de l'algèbre $K\mathop{\widehat \otimes}_kL$ est multiplicative.
Dans ce cas, \cite{poineau2013} démontre que l'application
$\sigma_L$ l'application de~$X$ dans~$X_L$ qui applique
un point $x\in X$ sur le plus grand élément de~$p^{-1}(x)$
est une section continue de la projection de~$X_L$ sur~$X$
(corollaire~3.7).

Voici un exemple important d'extension universellement multiplicative.
On considère une famille
$r=(r_1,\dots,r_n)$ de nombres réels strictement positifs;
soit $\eta_r$ le point de Shilov correspondant de~$\gm^n$, tel que $\abs{T_i(\eta_r)}=r_i$ pour tout~$i$, et l'on pose $L=\mathscr H(\eta_r)$.
Alors, l'extension $L$ de~$k$ est universellement multiplicative
et la section~$\sigma_L$ se déduit de
la « section de Shilov » $T_\trop \times X \to T\times_k X$.

\begin{coro}\label{coro.Sigma-chgt-base}
Soit $L$ une extension complète universellement multiplicative de~$k$.
On a $\Sigma_{f_L}=\sigma_L(\Sigma_f)$.
\end{coro}

Notons $X_{\mathrm u}$ l'ensemble des points universels de~$X$,
c'est-à-dire des points~$x\in X$ tels que l'extension~$\hr x$
de~$x$ soit universellement multiplicative.
C'est une partie fermée de~$X$ (\cite[proposition~3.12]{poineau2013})
et l'application $\sigma_L\colon X_{\mathrm u}\to X_L$ qui, à tout point
universel~$x$, associe le point de~$X_L$ au-dessus de~$x$
correspondant à la norme tensorielle de~$\hr x\mathop{\widehat \otimes} L$,
est une section continue au-dessus de~$X_{\mathrm u}$
de la projection canonique $X_L\to X$
(\loccit, corollaire~3.7).
Lorsque le corps~$k$ est algébriquement clos, on a $X_{\mathrm u}=X$
(\loccit, corollaire~3.14).

\begin{coro}
On $\Sigma_{f_L} \cap p^{-1}(X_{\mathrm u}) 
= \sigma_L( \Sigma_f \cap X_{\mathrm u})$.
En particulier, si $k$ est algébriquement clos, on a $\Sigma_{f_L}=\sigma_L(\Sigma_f)$.
\end{coro}

Lorsqu'on ne fait pas d'hypothèse sur l'extension~$L$ de~$k$,
nous pouvons préciser un résultat de~\cite{ducros2012b}.
\begin{prop}\label{prop.polyedre-chgt-base}
\begin{enumerate}
\item Pour toute extension complète~$L$ de~$k$,
l'application~$p$ induit une immersion par morceaux surjective
de~$\Sigma_{f_L}$ sur~$\Sigma_f$ qui  préserve la dimension locale.
\item Il existe une extension algébrique séparable~$F$ de~$k$,
de complétée~$\widehat F$, telle que,
pour toute extension complète~$L$ de~$\widehat F$, 
la projection canonique de~$\Sigma_{f_L}$ sur~$\Sigma_{f_{\widehat F}}$
est un isomorphisme d'espaces paralinéaires.
Si, de plus, $X$ est compact, on peut prendre~$F$ finie sur~$k$
(et donc $\widehat F=F$).
\item En particulier, si $k$ est algébriquement clos, 
alors la projection canonique de~$\Sigma_{f_L}$ sur~$\Sigma_{f}$
est un isomorphisme, pour toute extension complète~$L$ de~$k$.
\end{enumerate}
\end{prop}
\begin{proof}
D'après~\cite[théorème~5.1, 3)]{ducros2012b},
l'application~$p$ induit une immersion par morceaux surjective
de~$\Sigma_{f_L}$ sur~$\Sigma_F$, ce qui démontre
la première partie de l'assertion~\emph a).
Si $X$ est compact, 
le théorème~5.1, 4) de~\cite{ducros2012b} fournit l'assertion~\emph b),
l'extension~$F$ de~$k$ étant en outre finie.
Puisque la formation des polyèdres caractéristiques est G-locale,
l'assertion~\emph b) en résulte en composant toutes les extensions finies
de~$k$ fournies par un recouvrement affinoïde de~$X$.
L'assertion~\emph c) en découle immédiatement.

Prouvons maintenant que le morphisme canonique de~$\Sigma_{f_L}$
sur~$\Sigma_f$ préserve la dimension locale.
On peut raisonner G-localement sur~$X$, et supposer que $X$ est compact.
Soit $F$ une extension finie séparable de~$k$, qu'on suppose
galoisienne, stabilisant les polyèdres caractéristiques
et soit $M$ l'extension  composée de~$L$ et~$F$.
Soit $y\in \Sigma_{f_L}$, soit $x$ son image dans~$\Sigma_f$,
soit $z$ un point de~$\Sigma_{f_M}$ au-dessus de~$y$
et $t$ sa projection dans~$\Sigma_f$.
Comme la projection $\Sigma_{f_M}\to \Sigma_{f_F}$ est un isomorphisme,
on a $\dim_z(\Sigma_{f_M})=\dim_t(\Sigma_f)$.
D'après la proposition~\ref{prop.polyedre-chgt-base-algebrique},
le polyèdre caractéristique~$\Sigma_f$ est le quotient
de~$\Sigma_{f_F}$ par l'action de~$\Gal(F/k)$;
on a donc $\dim_t(\Sigma_{f_F})=\dim_x(\Sigma_f)$,
donc $\dim_z(\Sigma{f_M})=\dim_x(\Sigma_f)$.
Comme les projections $\Sigma_{f_M}\to \Sigma_{f_L}$
et $\Sigma_{f_L}\to \Sigma_f$ sont des immersions par morceaux
d'espaces paralinéaires,
on a $\dim_z(\Sigma_{f_M})\leq \dim_y(\Sigma_{f_L})\leq \dim_x(\Sigma_f)$.
Cela démontre que $\dim_y(\Sigma_{f_L})=\dim_x(\Sigma_f)$.
\end{proof}

\section{Polyèdres caractéristiques et dimension tropicale}

\subsection{}
 Soit $X$ un espace $k$-analytique de niveau~$n$, 
soit $m$ un entier et soit $f= X\to \gm^m$ un moment sur~$X$.
On note $(f_1,\dots,f_m)$ les composantes de~$f$.
Pour toute partie~$I$ de~$\{1,\dots,m\}$,
on note $q_I$ la projection canonique de~$\gm^m$ sur~$\gm^I$.

Rappelons (proposition~\ref{coro.polcar-pl}) que
le polyèdre caractéristique de~$f$ est une partie paralinéaire de~$X$ 
et que l'application $f_\trop|_{\Sigma_f}$ est une immersion par morceaux
d'espaces paralinéaires.

Le but de ce numéro est de relier la dimension tropicale
de~$f_\trop$ aux propriétés de~$\Sigma_f$.

\begin{theo}\label{theo.proprietes-Sigmaf}\phantomsection
Soit $x$ un point de $X$. 
\begin{enumerate}
\item
On a $\dim_{\trop,x}(f)\leq n$,
avec égalité 
si et seulement si $x\in\Sigma_f^{(n)}$.

\item
Supposons $x\in \Int(X)$. Les trois assertions suivantes
sont équivalentes : 
\textup{(i)} $x\in  \Sigma_f$ ; 
\textup{(ii)} $f$ est de dimension tropicale~$n$ en~$x$ ;
\textup{(iii)} $x\in \Sigma_f^{(n)}$. 

\item
Si $X$ est compact
ou plus généralement
si l'application $f_\trop$ est topologiquement
propre, 
on a $f_\trop(X)^{(n)}=f_\trop(\Sigma_f^{(n)})$.
\end{enumerate}
\end{theo}
\begin{proof}
\begin{enumerate}
\item
% 
% Soit $D$ une partie linéaire par morceaux
% compacte, non vide, de dimension~$n$ contenue dans~$\Sigma_f^{(n)}$.
% Il suffit de prouver que $f_\trop(D)$ est de dimension~$n$.
% D'après~(1), il existe une partie~$I$ de~$\{1,\dots,m\}$ de cardinal~$n$
% tel que $D_I=(q_I\circ f)^{-1}(S(\gm^n))\cap D$ soit de dimension~$n$.
% Alors $(q_I\circ f)_\trop (D_I)$  est de dimension~$n$.
% A fortiori, $f_\trop(D_I)$ est de dimension~$n$, d'où l'assertion.
 Soit $d$ le degré de transcendance résiduel de $f$ en $x$. 
On a $\dim_{\trop,x}(f)\leq d\leq \dim_x X$ (\eqref{eq.dimtrop-dk-dim}).
Comme $\dim_x X\leq n$, il vient $\dim_{\trop,x}(f)\leq d\leq n$. 
 
 Supposons que $\dim_{\trop,x}(f)=n$, et montrons
que $x\in \Sigma_f^{(n)}$. 
Puisque $\dim_{\trop,x}(f)\leq d\leq n$, on a 
$d=n$. Il existe donc un sous-ensemble $I$ de cardinal $d$ de $\{1,\ldots, m\}$ 
tels que les $\widetilde{f_i(x)}$ soient algébriquement indépendants sur $\widetilde k$. 
ll s'ensuit que $x\in \Sigma_f$. 
Il reste à vérifier que 
$\dim_x(\Sigma_f)=n$.
Soit~$V$ un voisinage analytique compact arbitraire de~$x$ dans~$X$
et démontrons que $\dim(\Sigma_f\cap V)=n$.
Comme $\dim_{\trop,x}(f)=n$, on a $\dim(f_\trop(V))=n$.
Soit~$C$ une cellule compacte de dimension~$n$
de $f_\trop(V)$ et soit $q\colon \gm^m\to \gm^n$ un morphisme
de tores tel que la restriction de~$q_\trop$ à~$\langle C\rangle$ 
soit injective. Alors, $\dim (q\circ f)_\trop(V)=n$.
D'après le lemme~\ref{lemm.degre-positif}, 
$(q\circ f)_\trop(V)^{(n)}_\sq$
est contenu dans $(q\circ f)(V)$, donc 
dans $(q\circ f)(\Sigma_f\cap V)$. 
Comme $\dim(\Sigma_f\cap V)\geq n$,
il vient $\dim(\Sigma_f\cap V)=n$, ce qu'il fallait démontrer.

Inversement, supposons que $x\in \Sigma_f^{(n)}$ et
montrons que $\dim_{\trop,x}(f)=n$. 
Soit $V$ un voisinage compact de~$x$.
Alors, \mbox{$V\cap \Sigma_f^{(n)}$} est un voisinage de~$x$ 
dans~$\Sigma_f^{(n)}$,
donc contient une cellule de dimension~$n$ puisque $\dim_x\Sigma_f=n$ par hypothèse. 
Comme la restriction de~$f_\trop$ à~$\Sigma_f^{(n)}$ 
est une immersion par morceaux, cela prouve que $\dim(f_\trop(V))\geq n$, 
et donc que $\dim(f_\trop(V))=n$ puisque $\dim V\leq n$. 
Comme $V$ est arbitraire, il vient $\dim_{\trop,x}(f)=n$. 

\item
Faisons l'hypothèse que $x\in\Int(X)$. 
Supposons $x\in \Sigma_f$. Il existe alors un sous-ensemble $I$
de $\{1,\ldots, m\}$ tels que les $\widetilde{f_i(x)}$ pour $i\in I$ soient 
algébriquement indépendantes sur $\widetilde k$ ; par conséquent, $d\geq n$, 
si bien que $d=n$ puisqu'on sait que $d\leq n$. 
Et comme $x\in \Int (X)$, la dimension tropicale 
de $f$ en $x$ est égale à $d$ (\ref{eq.dimtrop-dk-dim}), et donc à~$n$.

Supposons réciproquement que $x$ est de dimension 
tropicale $n$ en $x$. Le point $x$ appartient alors à 
$\Sigma_f^{(n)}$ d'après~(a). 

L'implication (iii)$\Rightarrow$(i) est évidente. 

\item
Supposons 
$f_\trop$ topologiquement propre. En raisonnant G-localement sur
$\R^m$ on voit que $f_\trop(X), f_\trop(\Sigma_f)$ et $f_\trop(\Sigma_f^{(n)})$ sont
des parties paralinéaires
fermées de $\R^m$. Comme $f_\trop$ induit une immersion par morceaux de 
$\Sigma_f$ sur $f_\trop(\Sigma_f)$, l'image $f_\trop(\Sigma_f^{(n)})$ est purement de 
dimension $n$ et est donc contenue dans $f_\trop(X)^{(n)}$.

Soit maintenant $\xi$ un point de $f_\trop(X)$ qui n'appartient pas
à $f_\trop(\Sigma_f^{(n)})$. 
Soit~$x$ un point de~$X$ tel que $f_\trop(x)=\xi$; on a donc $x\not\in\Sigma_f^{(n)}$.
D'après~\emph a), $f_\trop$ est de dimension tropicale~$< n$ en~$x$.
Comme $f_\trop$ est topologiquement propre, 
il existe un voisinage analytique compact~$V$ de~$f_\trop^{-1}(\xi)$
et un voisinage paralinéaire~$U$ de~$\xi$ dans~$f_\trop(X)$ 
tels que $f_\trop^{-1}(U)\subset V$ et 
que $\dim(f_\trop(V))\leq n-1$.
Puisque $U\subset f_\trop(V)$, il est de dimension au plus~$n-1$ 
et $\xi$ n'appartient à $f_\trop(X)^{(n)}$.
\qedhere
\end{enumerate}
\end{proof}

\begin{coro}\label{coro.proprietes-Sigmaf}
Soit $X$ un espace $k$-analytique de niveau~$n$ et
soit $f\colon X\to T$ un moment. 
Alors $\Sigma_f^{(n)}$ coïncide avec le polyèdre
de variation maximale~$\Sigma_{f_\trop}^{(n)}$ de 
l'application paralinéaire~$f_\trop$ 
(définition~\ref{defi.polyedre-variation-maximale}).
\end{coro}

\begin{proof}
C'est l'assertion~\emph a) du théorème~\ref{theo.proprietes-Sigmaf}.
\end{proof}

\begin{coro}\label{coro.Sigmaf=Sigmafn}
Soit $X$ un espace $k$-analytique de niveau~$n$, sans bord.
Pour tout moment~$f\colon X\to T$, on a $\Sigma_f^{(n)}=\Sigma_f$.
\end{coro}
\begin{proof}
Comme $X$ est sans bord, on a $X=\Int(X)$.
L'assertion découle alors du théorème~\ref{theo.proprietes-Sigmaf}, \emph b).
\end{proof}

\begin{rema}
Soit $X$ un espace $k$-analytique de niveau~$n$
et soit $f$ un moment sur~$X$.
Puisqu'elle est égale à~$\Sigma_{f_\trop}^{(n)}$,
la partie paralinéaire~$\Sigma_f^{(n)}$ de~$X$ ne dépend que de~$f_\trop$. 
Ce n'est en revanche pas le cas de~$\Sigma_f$ en général
(à moins que $X$ ne soit sans bord auquel cas $\Sigma_f=\Sigma_f^{(n)}$).

Par exemple, prenons pour~$X$ le domaine $\abs{T(x)}=1$  dans~$\gm$
et pour $f$ le moment donné par l'immersion canonique.
Alors $\Sigma_f$ est réduit au point de Gauss de~$\gm$.
Sur~$X$, $f_\trop$ est la fonction constante de valeur~$1$,
donc $f_\trop=1_\trop$ mais $\Sigma_1=\emptyset$.
\end{rema}

\section{Parties squelettiques d'un espace analytique}
\label{ss.polyedre.caracteristique}

Le but de ce paragraphe est de définir la classe 
des parties squelettiques d'un espace analytique,
le munissant ainsi d'une structure d'espace G-tropical vertébré.

\subsection{}
Soit $X$ un espace $k$-analytique de niveau~$n$.
Soit $g\colon X\to\R^m$ une application paralinéaire sur~$X$.
Rappelons que l'on note $\Sigma_g^{(n)}$ le lieu des points~$x$
en lesquel $g$ est de dimension tropicale~$n$.
Lorsque $f\colon X\to \gm^m$ est un moment sur~$X$ tel que $g=f_\trop$,
le corollaire~\ref{coro.proprietes-Sigmaf} affirme
que $\Sigma_g^{(n)}=\Sigma_f^{(n)}$, de sorte 
que $\Sigma_g^{(n)}$ est une partie paralinéaire de~$X$. 
Plus généralement:

\begin{lemm}
Soit $g\colon X\to\R^m$ une application paralinéaire sur~$X$.

\begin{enumerate}
\item Soit $q\colon\R^m \to\R^p$ une application affine injective.
Alors $\Sigma_{q\circ g}^{(n)}= \Sigma_g^{(n)}$.
\item
Il existe un G-recouvrement~$(W_j)$ de~$X$
et une famille~$(f_j)$, où $f_j\colon W_j\to \gm^n$ est un moment,
tels que $\Sigma_g^{(n)}\cap W_j\subset \Sigma_{f_j}$ pour tout~$j$.
\item 
L'ensemble $\Sigma_g^{(n)}$ est une partie paralinéaire de~$X$,
purement de dimension~$n$, sur laquelle $g$ est une immersion par morceaux.
\item
Si $X$ est compact, alors $g(X)^{(n)}=g(\Sigma_g^{(n)})$.
\end{enumerate}
\end{lemm}
\begin{proof}
\emph a)
En effet, comme $q$ est injective, $q\circ g$ et $g$ ont même
dimension tropicale en tout point.

Les énoncés~\emph b) et~\emph c) sont locaux sur~$X$,
ce qui permet de supposer que $X$ est paracompact.
Par définition d'une application paralinéaire,
il existe un G-recouvrement $(W_j)$ de~$X$ par des domaines compacts
et, pour tout~$j$, un moment 
$f_j\colon W_j\to \gm^{n}$,
un entier non nul~$d_j$
et un élement $a_j\in\R^m$
telle que $d_j g=f_{j,\trop}+a_j$. 
Puisque $X$ est paracompact, on peut supposer que ce G-recouvrement
est localement fini.

Pour tout~$j$, l'égalité $d_j g = f_{j,\trop}+a_j$ (et la non-nullité de~$d_j$)
entraîne que $\Sigma_{g|_{W_j}}^{(n)}=\Sigma_{f_j}^{(n)}$.
En particulier, il est contenu dans~$\Sigma_{f_j}$.
Soit $\Sigma$ la réunion des~$\Sigma_{f_j}$.
D'après la proposition~\ref{prop-unionsigmaf-paralineaire},
c'est une partie paralinéaire de~$X$;
de plus, il existe un G-recouvrement~$(V_i)$ de~$X$
et des moments~$h_i\colon V_i\to\gm^n$ tels que
$\Sigma\cap V_i\subset \Sigma_{h_i}$ pour tout~$i$.
Cela prouve~\emph b).

Démontrons~\emph c).
L'ensemble $\Sigma_g^{(n)}$ est la réunion
des $\Sigma_{g|_{W_j}} ^{(n)}$, donc est contenue dans~$\Sigma$.
Pour tout~$j$,
l'ensemble $\Sigma_{f_j}^{(n)}$ est une partie paralinéaire fermée
de~$\Sigma_{f_j}$;
par conséquent, $\Sigma_g^{(n)}$ est une réunion localement
finie de parties paralinéaires, donc est paralinéaire
en vertu du lemme~\ref{lemm.paralineaire-Glocal}.
Le reste de l'assertion
découle de la proposition~\ref{prop.image-squelette-pur},
de même que l'assertion~\emph d).
\end{proof}

\begin{defi}
Soit $X$ un espace $k$-analytique de niveau~$n$.
Soit $V$ un domaine de~$X$; 
on dit qu'une partie~$\Sigma$ de~$V$ est une \emph{partie squelettique
élémentaire} de~$V$ s'il existe un moment $f\colon V\to\gm^n$
tel que $\Sigma=\Sigma_f$.
\end{defi}

\begin{theo}
Soit $X$ un espace $k$-analytique de niveau~$n$.
Les parties squelettiques élémentaires de~$X$ et de ses domaines 
satisfont les axiomes des familles de parties squelettiques élémentaires
(\S\ref{defi.squel-elem}).
\end{theo}
\begin{proof}
Vérifions les axiomes~\emph a) à~\emph d) de la définition.
\begin{enumerate} 
\item
Si $V$ est un domaine analytique de~$X$ et $f\colon V\to\gm^n$ un
moment, alors $\Sigma_f=f^{-1}(S(\gm^n))$ est fermé dans~$V$
puisque $f$ est continu et $S(\gm^n)$ est fermé dans~$\gm^n$.

\item
Soit $V$ un domaine analytique de~$X$, soit $f\colon V\to\gm^n$
un moment et soit $W$ un domaine de~$V$. Alors, 
$\Sigma_{f}\cap W=\Sigma_{f|_ W}$, ce qui prouve que $\Sigma_f\cap W$
est une partie squelettique élémentaire de~$W$.

\item
C'est exactement le lemme~\ref{lemme-sigmaf-paralineaire}.

\item
Soit $(V,g)$ une carte G-tropicale de~$X$. 
Démontrons qu'il existe un G-recouvrement $(W_j)$ de~$V$
tel que pour tout~$j$, la partie
$\Sigma_f^{(n)}\cap W_j$ de~$W_j$ soit contenue
dans une partie squelettique élémentaire de~$W_j$.

L'assertion est locale sur~$X$;
cela permet de supposer que $X$ est paracompact.
On choisit maintenant un G-recouvrement $(W_j)$ de~$V$
par des domaines compacts, une famille~$(d_j)$
d'entiers non nuls, une famille $(a_j)$ dans~$\R^m$ et une
famille de moments $(f_j)$ tels que
$d_j g_\trop|_{W_j}=f_{j,\trop}+a_j$ pour tout~$j$.
Puisque $X$ est paracompact, on se ramène au cas où le G-recouvrement~$(W_j)$
est localement fini.

La preuve du lemme entraîne que $\Sigma_g^{(n)}$
est contenu dans la réunion des~$\Sigma_{f_j}$.
D'après la proposition~\ref{prop-unionsigmaf-paralineaire},
cette réunion est G-localement contenue dans
une partie squelettique élémentaire.
\qedhere
\end{enumerate}
\end{proof}

\subsection{}\label{ss.an-G-tropical-vertebre}
La théorie générale des parties squelettiques fournit
maintenant une notion de partie squelettique d'un espace analytique
associée à ces parties squelettiques élémentaires.

Soit $X$ un espace $k$-analytique de niveau~$n$.
Une partie~$\Sigma$ de~$X$ est squelettique s'il existe
une famille~$(V_i)$ de domaines analytiques de~$X$
telle que les $\Sigma\cap V_i$ forment un G-recouvrement de~$\Sigma$
et telle que pour tout~$i$, $\Sigma\cap V_i$ soit une partie
paralinéaire fermée d'une partie squelettique élémentaire de~$V_i$.

Notons que toute partie squelettique de~$X$
est constituée de points~$x$ tels que $d_k(x)=n$;
en un tel point~$x$, on a $\dim_x(X)=n$.
En particulier, si $\dim(X)<n$, la partie vide
est la seule partie squelettique de~$X$.

Ainsi, tout espace analytique de dimension~$\leq n$
hérite d'une structure d'espace G-tropical vertébré de niveau~$n$.
\index{espace G-tropical vertébré associé à un espace analytique}

\begin{prop}\label{prop.sigmanf-fini}
Soit $X$ et~$Y$ des espaces $k$-analytiques de niveau~$n$
et soit $p\colon Y \to X$ un morphisme.

\begin{enumerate}
\item
Soit $\Sigma$ une partie squelettique élémentaire d'un domaine~$V$ de~$X$.
Alors $p^{-1}(\Sigma)$ est une partie squelettique élémentaire de~$p^{-1}(V)$.
\item
Soit $\Sigma$ une partie squelettique de~$X$.
Alors $p^{-1}(\Sigma)$ est une partie squelettique de~$Y$,
et l'application~$p_\Sigma\colon p^{-1}(\Sigma)\to\Sigma$
est une immersion par morceaux.
\item
Supposons de plus que $p^{-1}(\Sigma)\subset \Int(Y/X)$.
L'application~$p_\Sigma$ est alors ouverte ; 
en particulier, elle préserve la dimension locale.
\end{enumerate}
\end{prop}
\begin{proof}
\emph a)
Par hypothèse, il existe un moment $f\colon V\to \gm^n$
tel que $\Sigma=f^{-1}(S(\gm^n))$.
Alors $p^{-1}(\Sigma)=(f\circ p)^{-1}(S(\gm^n))$
donc est une partie squelettique élémentaire de~$p^{-1}(V)$.

Les assertions restantes sont G-locales sur~$X$.
On peut donc supposer qu'il existe un moment $f\colon X\to \gm^n$
tel que $\Sigma\subset\Sigma_f$. 

\emph b)
Alors, $\Sigma_{f\circ p}=p^{-1}(\Sigma_f)$, par définition de ces ensembles.
Dans le diagramme commutatif
\[\begin{tikzcd}
{\Sigma_{f\circ p}}\ar[rr, "{p_{\Sigma_{f}}}"] \ar[rd,"{(f\circ p)_\trop}"']
	&&{\Sigma_f}\ar[ld, "{f_\trop}"] \\
&{T_\trop}&
\end{tikzcd}\]
les deux flèches obliques sont des immersions par morceaux. 
De plus, $p^{-1}(\Sigma)$ est l'image réciproque
de~$\Sigma$ par l'immersion par morceaux de~$\Sigma_{f\circ p}$
dans~$\Sigma_f$ déduite de~$p$.
Ainsi, $p^{-1}(\Sigma)$ est une partie paralinéaire
de~$\Sigma_{f\circ p}$ et
$p_\Sigma$ est une immersion par morceaux.

\emph c)
L'assertion est locale sur $p^{-1}(\Sigma)$ et a fortiori sur~$\Sigma$.
Soit $y\in p^{-1}(\Sigma)$. 
Alors $p$ est de dimension relative nulle et sans bord en~$y$, donc fini en~$y$.
Quitte à restreindre~$X$ et~$Y$, on peut supposer que $p$ est fini.
Considérons le diagramme commutatif d'espaces $k$-analytiques:
\[ \begin{tikzcd}
   Y_\red \ar{r}{p_\red} \ar{d} & X_{\red} \ar[dr] \ar{d} \\
   Y \ar{r}{p} & X \ar{r}{f} & T .\end{tikzcd} \]
L'immersion de~$X_\red$ dans~$X$ induit un homéomorphisme
de $\Sigma_{f_\red}$ sur~$\Sigma_f$;
de même, l'immersion de~$Y_\red$ dans~$Y$ induit un homéomorphisme
de~$\Sigma_{(f\circ p)_\red}$ sur~$\Sigma_{f\circ p}$.
Ainsi, il suffit de traiter le cas où $X$ et~$Y$ sont réduits.

L'ensemble~$Y'$ des points~$y$ de~$Y$ en lesquels $p$ est plat 
est un ouvert de Zariski de~$Y$.  Il n'est pas vide: en effet, si $V$
est un domaine affinoïde non vide de~$X$ et~$x$ un point de~$X$
en lequel  l'anneau local~$\mathscr O_{V,x}$ est artinien,
cet anneau local est un corps (car $X$ est réduit)
et le morphisme fini~$Y\times_X V\to V$ est plat au-dessus de~$x$.
Le fermé de Zariski $Y\setminus Y'$ est alors de dimension~$<n$,
donc son image $p(Y\setminus Y')$ est un fermé de Zariski  de~$X$
de dimension~$<n$.
Notons~$X'$ l'ouvert complémentaire;
il contient $\Sigma_f$, car $d_k(x)=n$ pour tout $x\in \Sigma_f$. 
Par construction, le morphisme $Y\times_XX'\to X'$ est fini et plat. 
Il est en particulier ouvert.
Par conséquent, l'application~$p_\Sigma$ est ouverte.
\end{proof}

\begin{prop}\label{prop-squelettes-nilpotents}
Soit $X$ un espace $k$-analytique de niveau~$n$.  
Pour qu'une partie~$\Sigma$ de~$X$ soit squelettique,
il faut et il suffit qu'elle soit squelettique
en tant que partie de~$X_\red$.
\end{prop}
\begin{proof}
Soit $\Sigma$ une partie squelettique de~$X$;
d'après la proposition précédente, c'est également
une partie squelettique de~$X_\red$.

Inversement, soit $\Sigma$ une partie squelettique de~$X_\red$
et démontrons que c'est une partie squelettique de~$X$.
L'assertion est G-locale sur~$X$; cela permet de supposer
que $X$ est affinoïde.
On peut alors choisir un recouvrement affinoïde fini~$(V_i)$
de~$X_\red$ tel que, pour tout~$i$,
$\Sigma\cap V_i$ soit contenue dans une partie squelettique
élémentaire~$\Sigma_{f_i}$ de~$V_i$.
D'après le théorème de Gerritzen--Grauert, on peut supposer en outre
que chaque~$V_i$ est un domaine rationnel de~$X_\red$;
c'est alors la trace d'un domaine rationnel~$W_i$ de~$X_\red$,
de sorte que $V_i=(W_i)_\red$.
Pour tout~$i$, il existe alors un moment~$g_i$ sur~$W_i$
induisant~$f_i$ par réduction. On a alors $\Sigma_{f_i}=\Sigma_{g_i}$,
ce qui prouve que $\Sigma\cap W_i$ est contenu
dans une partie squelettique élémentaire de~$W_i$.
Comme les~$W_i$ forment un G-recouvrement affinoïde fini de~$X$,
cela démontre que $\Sigma$ est une partie squelettique de~$X$.
\end{proof}

\begin{rema}
Soit $X$ un espace $k$-analytique de niveau~$n$,
soit $m$ un entier tel que $m\leq n$,
soit $Y$ un fermé de Zariski de~$X$, considéré 
comme espace $k$-analytique de niveau~$m$.
La notion de partie squelettique de~$Y$ 
ne dépend pas de la structure analytique précise choisie sur~$Y$;
elle coïncidera avec la notion fournie par sa structure réduite.
Si $\dim(Y)<n$, sa seule partie squelettique est l'ensemble vide.
\end{rema}

\begin{lemm}\label{lemm.polyedre-irreductible}
Soit $X$ un espace $k$-analytique de niveau~$n$.
Soit $(X_i)$ la famille de ses composantes irréductibles  de~$X$;
pour tout~$i$, considérons~$X_i$ comme un espace
$k$-analytique de niveau~$n$.
Soit $\Sigma$ une partie de~$X$. Les assertions suivantes
sont équivalentes:
\begin{enumerate}\def\theenumi{\roman{enumi}}\def\labelenumi{(\theenumi)}
\item L'ensemble~$\Sigma$ est une partie squelettique de~$X$;
\item Pour tout~$i$, l'ensemble~$\Sigma\cap X_i$ 
est une partie squelettique de~$X_i$;
\item Le sous-espace~$\Sigma$ est somme disjointe des ensembles
$\Sigma\cap X_i$, chacune d'elles étant squelettique dans~$X_i$.
\end{enumerate}
\end{lemm}
\begin{proof}
Soit $U$ l'ensemble des points de~$X$ 
qui n'appartiennent qu'à une seule composante irréductible de~$X$.
Observons que $U$ est un ouvert de Zariski de~$X$. 

Supposons que $\Sigma$ soit une partie  squelettique de~$X$.
Comme un point d'un polyèdre caractéristique n'est contenu dans
aucun fermé de Zariski de dimension~$<n$, on a $\Sigma\subset U$.
Par suite, $\Sigma$ est une partie squelettique de~$U$;
comme  $U\cap X_i$ est un ouvert de~$U$, $\Sigma\cap X_i$ est donc une partie
squelettique de~$U\cap X_i$;
comme $U\cap X_i$ est un ouvert de~$X_i$, $\Sigma\cap X_i$ est 
une partie squelettique de~$X_i$. Cela démontre l'implication
(i)$\Rightarrow$(ii).

Supposons que pour tout~$i$, $\Sigma\cap X_i$ soit une partie
squelettique de~$X_i$. 
Pour tout~$i$, $\Sigma\cap X_i$ est alors contenu dans~$U$,
d'où~(iii).

Sous les hypothèses de~(iii),
$\Sigma\cap X_i$ est une partie squelettique de~$U\cap X_i$,
donc de~$U$ puisque $U\cap X_i$ est ouvert dans~$U$,
et donc de~$X$ puisque $U$ est ouvert dans~$X$.
Comme $\Sigma$ est la réunion disjointe des parties
ouvertes et fermées $\Sigma\cap X_i$,  
c'est une partie squelettiques de~$X$.
\end{proof}

La proposition suivante indique que dans le cas d'un
bon espace analytique de dimension~$n$,
les parties squelettiques 
coïncident avec les parties qui sont, localement,
parties paralinéaires d'un polyèdre caractéristique de moment
de taille~$n$.
 
\begin{prop}\label{prop.parties-polyedrales-moments}
Soit~$X$ un espace analytique de niveau~$n$, 
soit~$\Sigma$ une partie squelettique de~$X$ 
% et soit $x\in\Int(X)$. 
et soit $x$ un point de~$\Sigma$ en lequel le germe $(X,x)$ est presque bon.
Il existe un voisinage ouvert~$U$ de~$x$ et
un moment $f\colon U\to\gm^n$ tel que $\Sigma\cap U\subset\Sigma_f$.
\end{prop}
\begin{proof}
Par définition, il existe une famille finie $(V_i)$ de domaines analytiques
compacts contenant~$x$, dont la réunion~$V$ est un voisinage de~$x$,
et, pour tout~$i$, un moment $f_i\colon V_i\to \gm^{n}$,
tel que $\Sigma\cap V_i\subset \Sigma_{f_i}$.
Pour tout $i$ et tout entier~$j$ tel que $1\leq j\leq n$, 
choisissons un voisinage analytique compact~$W_i$ de~$x$ dans~$V$
et une fonction analytique inversible~$g_{i,j}$ sur~$W_i$
telle que  $\abs{g_{i,j}-f_{i,j}}<\abs{f_{i,j}}$ sur $W_i\cap V_i$;
c'est possible car $X$ est presque bon en~$x$ (lemme~\ref{lemm.dense-moment}).
% $x\in\Int(X)$ et donc $X$ est presque bon en~$x$ 
D'après le paragraphe~\ref{imrecisquel-explicite}, 2),
cela entraîne l'égalité $\Sigma_{g_i}\cap V_i=\Sigma_{f_i}\cap W_i$,
où $g_i$ est le moment sur~$W_i$ défini par la concaténation des~$g_{i,j}$.
Soit~$W$ l'intersection des~$W_i$ et soit~$g\colon W\to\gm^m$ 
le moment sur~$W$
obtenu par concaténation des~$g_i$. L'ensemble~$W$
est un voisinage analytique compact de~$x$. Comme
$W\subset V = \bigcup_i V_i$, on a 
\[ \Sigma\cap W\subset\bigcup \Sigma_{f_i}\cap W 
\subset\bigcup \Sigma_{f_i}\cap W_i \subset \bigcup \Sigma_{g_i}\cap V_i
\subset\Sigma_g .\]
On conclut à l'aide de la proposition~\ref{prop.moment-bonne-proj}.
% 
% Comme $W$ est compact, $g_\trop(W)$ est un polyèdre compact de~$\R^m$;
% choisissons-en une décomposition cellulaire $(C_i)$.
% Il existe un morphisme de tores $p\colon\gm^m\to\gm^n$
% tel que $p_\trop|_{C_i}$ soit injective pour tout~$i$.
% Considérons le moment $f=p\circ g\colon W\to\gm^n$.
% On a $\Sigma_f\subset\Sigma_g$, par définition des polyèdres caractéristiques.
% Inversement, soit $y\in\Sigma_g\cap\Int(W)$.
% Soit $K$ un voisinage analytique compact de~$y$ dans~$W$.
% D'après le théorème~\ref{theo.proprietes-Sigmaf}, (3),
% le moment~$g$ est dimension tropicale~$n$ en~$y$,
% donc $g_\trop(K)$ est un polyèdre de dimension~$n$ contenu dans~$g_\trop(W)$,
% donc $f_\trop(K)=p_\trop\circ g_\trop(K)$ est également de dimension~$n$,
% par le choix de~$p$.
% Cela démontre que $f$ est de dimension tropicale~$n$ en~$y$,
% et donc que $y\in\Sigma_f$ d'après le théorème~\ref{theo.proprietes-Sigmaf}.
% Le voisinage ouvert $U=\Int(W)$ et le moment $f|_U$ satisfont donc la
% propriété requise.\footnote{Pour la deuxième partie de l'argument,
% il serait mieux de raisonner de nouveau sur le degré de transcendance résiduel;
% cela permettrait de supprimer l'hypothèse que $x$ est intérieur.}
\end{proof}

\begin{theo}\label{theo.chgt-base-polyedre}
Soit~$X$ un espace $k$-analytique de niveau~$n$.
Soit~$L$ une extension complète de~$k$ et notons $p\colon X_L\to X$
la projection canonique.
Soit~$\Sigma$ une partie squelettique de~$X$;
notons~$\Sigma_L$ la réunion des polyèdres caractéristiques~$\Sigma_{f_L}$,
où $f\colon V\to T$ parcourt l'ensemble des moments définis sur un domaine
analytique~$V$ de~$X$ tels que $\Sigma_f\subset\Sigma$.

L'ensemble~$\Sigma_L$ est 
une partie squelettique de~$X_L$
et le morphisme~$p$
induit une immersion par morceaux surjective de~$\Sigma_L$ sur~$\Sigma$
qui préserve la dimension locale.

De plus, pour tout moment~$f\colon V\to T$ tel que $\Sigma_f\subset\Sigma$,
on a $\Sigma_L\times_\Sigma \Sigma_f= \Sigma_{f_L}$.
\end{theo}
\begin{proof}
La première assertion est G-locale sur~$X$; comme elle est vérifiée
pour l'application~$\Sigma_{f_L}\to \Sigma_f$, il suffit
de démontrer la seconde assertion.

Soit $f\colon V\to T$ un moment
défini sur un domaine analytique~$V$ de~$X$.
Démontrons l'égalité $P_L\times_P \Sigma_f=\Sigma_{f_L}$.
Cela revient à démontrer que si $g\colon W\to T'$ est un moment 
et $x\in \Sigma_f\cap \Sigma_g$, alors les fibres de~$\Sigma_{f_L}$
et~$\Sigma_{g_L}$ au-dessus de~$x$ coïncident.
Quitte à remplacer~$X$ par $V\cap W$, on suppose que les moments~$f$
et~$g$ sont définis sur~$X$.

Soit $x\in \Sigma_f\cap \Sigma_g$.
Soit $C$ la complétion d'une clôture algébrique de~$k$
et soit $M$ une extension complète composée de~$L$ et~$C$.
Considérons le diagramme
\[ \begin{tikzcd}[column sep=small]
   & X_M \ar{rd} \ar{ld}[swap]{q'} \\
X_L \ar{rd}[swap]{p} && X_C \ar{ld}{q} \ar[bend right=30]{ul}[swap]{\sigma} \\
& X \end{tikzcd} \]
dans lequel $\sigma$ est la section de Poineau 
de l'application $X_M\to X_C$ dont l'existence résulte
de ce que $C$ est algébriquement clos.
Comme l'extension $C/k$ contient une sous-extension algébrique
dense, on a $\Sigma_{f_C}=q^{-1}(\Sigma_f)$.
Comme $C$ est algébriquement clos, on a $\Sigma_{f_M}=\sigma(\Sigma_{f_C})$.
Enfin, on a $\Sigma_{f_L}=q'(\Sigma_{f_M})$. Par suite,
\[ \Sigma_{f_L} = q'( \sigma (q^{-1}(\Sigma_f))), \]
donc la fibre de~$\Sigma_{f_L}$ au-dessus de~$x$ est donnée par la formule
\[ \Sigma_{f_L,x} = q'( \sigma (q^{-1}(x))) \]
dans laquelle $\Sigma_f$ n'intervient plus ;
cette fibre coïncide donc avec la fibre de~$\Sigma_{g_L}$ au-dessus de~$x$.
Cela conclut la démonstration du théorème.
\end{proof}

Le cas des polyèdres caractéristiques et la définition de~$\Sigma_L$
entraînent
de plus la caractérisation suivante:
\begin{prop}\label{prop.chgt-base-polyedre}
Conservons les notations du théorème~\ref{theo.chgt-base-polyedre}.
\begin{enumerate}
\item
Si l'extension~$L/k$ possède une sous-extension algébrique dense,
alors $\Sigma_L=p^{-1}(\Sigma)$;
\item
Si l'extension~$L/k$ est universellement multiplicative,
ou si le corps le corps~$k$ est algébriquement clos,
on a $\Sigma_L=\sigma_L(\Sigma)$.
\end{enumerate}
\end{prop}
\begin{proof}
Compte tenu de la seconde assertion du théorème,
l'assertion~\emph a) découle de la proposition~\ref{prop.polyedre-chgt-base-algebrique},
et l'assertion~\emph b) du corollaire~\ref{coro.Sigma-chgt-base}.
\end{proof}

\begin{lemm}\label{lemme-fibreetar-pl}
Soit $X$ un espace $k$-analytique de niveau $n$, soit $\Sigma$ une partie squelettique de $X$
et soit $d$ un entier $\leq n$. Soit $f\colon X\to \A^d$ un morphisme et soit $r$ un polyrayon de longueur $d$. 
La fibre $f^{-1}(\eta_r)$ est de dimension $\leq n-d$ et l'intersection $\Sigma\cap f^{-1}(\eta_r)$ est une partie paralinéaire
de $\Sigma$ de dimension $\leq n-d$ ; c'est aussi une partie squelettique de $f^{-1}(\eta_r)$ vu comme 
espace de niveau $n-d$. 
\end{lemm}

\begin{proof}
Comme $X$ est de niveau $n$, il est de dimension $\leq n$. 
Pour tout $x\in f^{-1}(\eta_r)$ on a donc $n\geq d_k(x)=d_{\hr{\eta_r}}(x)+d$, 
si bien que $d_{\hr{\eta_r}}(x) \leq n-d$. 
Par conséquent $\dim f^{-1}(\eta_r)\leq d$. 
Dans ce qui suit, nous considérons cette fibre comme un espace 
$\hr{\eta_r}$-analytique de niveau $n-d$. 

Pour démontrer le lemme, on peut raisonner G-localement sur~$\Sigma$, 
et donc supposer que $\Sigma$ est compact
et qu'il existe un domaine compact~$V$ de~$X$
et un moment $g\colon V\to \gm^n$ tel que $\Sigma \subset  \Sigma_g$.
Il existe alors un domaine compact~$W$ de~$V$ tel que $W\cap \Sigma_g=\Sigma$
(lemme~\ref{lemme-proprietes-paralineairesX}).
Quitte à remplacer~$X$ par~$W$, cela permet de supposer
que $X$ est compact, que $g$ est défini sur~$X$ et que $\Sigma=\Sigma_g$.

Comme l'énoncé est trivial au voisinage d'un point de $\Sigma$ 
qui n'est pas dans $f^{-1}(\eta_r)$,
on peut aussi supposer $f$ inversible sur $X$. 

Soit $x\in \Sigma_g\cap f^{-1}(\eta_r)$. On a alors 
 $\abs{f(x)}=r$ et $\widetilde{f(x)}$ est une famille de $d$ éléments algébriquement
indépendants sur $\widetilde k$. Comme $x$ appartient à $\Sigma_g$, 
 le corps gradué
 $\hrt x$ est de degré de transcendance $n$ sur $\widetilde k$ et la famille $\widetilde{g(x)}$
 est une base de transcendance
 de $\hrt x$ sur $\widetilde k$. Mais alors le sous-corps
 gradué de $\hrt x$ engendré par $\widetilde k$ et les
 familles $\widetilde{f(x)}$ et $\widetilde{g(x)}$ 
 est a fortiori de degré de transcendance~$n$ sur~$\widetilde k$, et donc de degré de transcendance $n-d$
 sur $\widetilde k(\widetilde{f(x)})$. 

Il existe dès lors
 un sous-ensemble~$J$ de $\{1,\ldots, n\}$ de cardinal~$n-d$
 tel que la famille $\widetilde{g_J(x)}$ forme
 une base de transcendance de~$\hrt x$ sur~$\widetilde  k(\widetilde{f(x)})$, où $g_J$ désigne la sous-famille de~$g$ 
constituée des composantes d'indice appartenant à $J$. 

 Le point $x$ appartient donc à $\Sigma_{g_J,f}\cap f^{-1}(\eta_r)$, qui est aussi le sous-espace
 paralinéaire~$\Tau_J$ de~$\Sigma_{g_J, f}$ défini par l'égalité $\abs f=r$, puisque $f(\Sigma_{g_J, f})$ est à valeurs
 dans le squelette standard de $\gm^d$.  

Si $y$ est un point de $f^{-1}(\eta_r)$, la famille $\widetilde{f(y)}$ 
est algébriquement indépendante sur $\widetilde k$;
par suite, $\widetilde{g_J(y)}$ est algébriquement indépendante 
sur~$\widetilde k(\widetilde {f(y)}) =\hrt{\eta_r}$ 
si et seulement si la famille concaténée 
$(\widetilde{g_J(y)}, \widetilde{f(y)})$ est algébriquement
indépendante sur~$\widetilde k$. 
Autrement dit, $\Tau_J=\Sigma_{g_J|_{f^{-1}(\eta_r)}}$.

Remarquons que $\Tau_J$ apparaît comme une partie squelettique
de~$X$ aussi bien que de~$f^{-1}(\eta_r)$ ; les structures
paralinéaires correspondantes coïncident 
car il est G-recouvert par des $g$-cellules.

D'après ce qui précède, 
$\Sigma_g\cap f^{-1}(\eta_r)= \bigcup_J \Sigma_g\cap \Tau_J$;
c'est donc une partie squelettique de~$X$
aussi bien que de $f^{-1}(\eta_r)$.
Comme $\dim(f^{-1}(\eta_r))\leq n-d$,
la dimension de~$\Sigma_g\cap f^{-1}(\eta_r)$ 
est au plus égale à~$n-d$.
\end{proof}

\begin{coro}\label{coro-bor-petit}
Soit $X$ un espace $k$-analytique de niveau~$n$ 
et soit~$\Sigma$ une partie squelettique de~$X$. 
L'intersection $\partial(X)\cap \Sigma$ est $n$-négligeable ;
si $X$ est affinoïde, c'est une partie paralinéaire
de~$\Sigma$ de dimension $\leq n-1$. 
\end{coro}
\begin{proof}
Pour démontrer que $\partial(X)\cap\Sigma$
est $n$-négligeable, il suffit 
de vérifier que $V\cap \Sigma\cap \partial X$ est $n$-négligeable 
pour tout domaine affinoïde~$V$ de~$X$. 
Comme $\partial V\supset \partial X\cap V$, il suffit
de traiter le cas où $X$ est affinoïde. 

D'après le théorème~3.2 de~\cite{ducros2012b},
il existe une famille finie~$(f_i)$ de fonctions analytiques sur~$X$
et une famille~$(r_i)$ de nombres réels strictement positifs
telles que $\partial(X)=
\bigcup_i f_i^{-1}(\eta_{r_i})$.
Le corollaire découle alors immédiatement du lemme ci-dessus. 
\end{proof}

\section{Les espaces de Berkovich sont riches}

\subsection{}
Nous nous proposons de montrer que tout espace analytique 
topologiquement séparé et purement de dimension~$n$
est riche en tant qu'espace G-tropical de niveau~$n$.
La preuve va reposer sur un certain nombre de résultats intermédiaires 
dans lesquels
les valuations de Gauss jouent un rôle central. 
Pour manipuler commodément ces dernières, 
nous utiliserons la notation multiplicative ; 
la tropicalisation de $\gm^n$ sera en particulier 
dans ce qui suit identifiée à $(\R_+^\times)^n$. 

Nous aurons besoin d'une variante graduée des valuations de Gauss. 
Plus précisément, soit $K$ un corps gradué et
soit $K(\tau_1,\ldots, \tau_n)$ une extension transcendante pure de~$K$ 
(les $\tau_i$ sont donc homogènes, et
algébriquement indépendants sur $K$). 
Soit $H$ un groupe ordonné (noté multiplicativement) et soit $h\in H^n$. 
La valuation de Gauss de paramètre $h$ sur $K(\tau_1,\ldots, \tau_n)$, relative à la famille $(\tau_i)$, est 
la valuation à valeurs dans~$H$ donnée par la formule $\sum a_I T^I/\sum b_I T^I\mapsto \max_I{v_0(a_I)}/\max_I v_0(b_I)$, 
pour tout couple $(\sum a_I T^I, \sum b_I T^I)$ d'éléments homogènes de $K[\tau_1,\ldots,\tau_n]$ avec les $b_I$ non tous nuls, 
où $v_0$ est la valuation triviale sur~$K$ (qui envoie $0$ sur $0$ et tout élément de $K^\times$ sur $1$). 

Notons $\mathsf D$  la catégorie des groupes abéliens divisibles ordonnés non triviaux et $\Gamma$ le foncteur d'oubli de $\mathsf D$
vers la catégorie des ensembles (autrement dit, $\mathsf D=\mathsf D_{\{1\}}$ et $\Gamma=\Gamma_{\{1\}}$ avec les notations de 
\ref{ss.vocabulaire-foncteur-polyedral}). 
Un \textit{cône rationnel} de $\Gamma^n$
sera un sous-foncteur polyédral de $\Gamma^n$ au sens de \ref{ss.vocabulaire-foncteur-polyedral}, 
c'est-à-dire un sous-foncteur de $\Gamma^n$ décrit par une condition de la forme
$\bigvee_i \bigwedge_j \phi_{ij}\leq 1$ où chaque $\phi_{ij}$ est un monôme de la forme $x_1^{e_1}\ldots x_n^{e_n}$, les exposants $e_i$ étant
rationnels (le groupe ordonné de base est ici égal à $\{1\}$ et il n'y a donc pas de termes constants ; un cône rationnel non vide de $\Gamma^n$
contient automatiquement l'origine $(1,\ldots, 1)$).

\begin{lemm}\label{lem-squelette-tilde}
Soit $n$ un entier et soit $r=(r_1,\ldots, r_n)$ un polyrayon. 
Posons
$\tau_i=\widetilde{T_i(\eta_r)}$ pour tout $i$. 
Soit $(U,\eta_r)$ un germe de domaine analytique de $\gm^n $
et soit $C\subset\mathbf G^n$ l'unique cône rationnel 
tel que $\trop(U\cap S(\gm^n))=r\cdot C(\R_+^\times)$ au voisinage de~$r$.
Soit $H$ un objet de $\mathsf D$, soit $h\in H^n$
et soit $\eta_h$ la valuation de Gauss sur l'extension $\hrt{\eta_r}$
de $\widetilde k$, de paramètre~$h$
et relative à la famille $(\tau_i)$. 
La valuation $\eta_h$ appartient à $\widetilde {(U,\eta_r)}$ 
si et seulement si $h\in C(H)$. 
\end{lemm}
\begin{proof}
Les formations de la réduction $\widetilde {(U,\eta_r)}$ et du cône $C$ 
commutent aux unions finies de germes de domaines analytiques; 
on peut supposer que le germe~$(U,\eta_r)$ est bon, 
c'est-à-dire que $\widetilde{(U,\eta_r)}$ est de la forme
$\mathbf P_{K/\widetilde k}\{\widetilde{f_1(\eta_r)},\ldots, \widetilde{f_m}(\eta_r)\}$, où les 
$f_i$ sont des fonctions analytiques inversibles définies au voisinage de $\eta_r$. Par
densité de $k(T_1,\ldots, T_n)$ dans $\hr {\eta_r}$, on peut supposer que chaque
$f_i$ est de la forme $g_i/h_i$, où $g_i$ et $h_i$ sont des polynômes 
et où $h_i\neq 0$. 
Comme seules importent les réductions graduées des
$f_i(\eta_r)$, on peut supposer qu'il existe pour tout $i$ deux réels strictement positifs
$\alpha_i$ et $\beta_i$ 
et des écritures $f_i=\sum_{I\in E_i}  a_{I,i}T^I$ et $g_i=\sum_{I\in F_i} b_{I,i}T^I$
où $E_i$ et $F_i$ sont finis et non vides
et où $\abs{a_{I,i}}r^I=\alpha_i$
pour tout $I\in E_i$ et $\abs{b_{I,i}}r^I=\beta_i$ 
pour tout $I\in F_i$ ; 
notons qu'on a alors $\abs{f_i(\eta_r)} =\alpha_i/\beta_i$ . 

Au voisinage de~$\eta_r$, le domaine $U$ est défini par la conjonction d'inégalités 
$\abs {f_i}\leq \alpha_i/\beta_i$. 
Soit $s$ un $n$-uplet de nombres réels strictement positifs, 
et posons $t=sr^{-1}$. Si $s$ est suffisamment proche de~$r$ le point
$\eta_s$ appartient à $U$ si et seulement si
 $\abs{f_i(\eta_s)}
\leq \alpha_i/\beta_i$ pour tout $i$, ce qui se récrit
$\max_{I\in E_i} t^I\leq \max_{I\in F_i}t^I$. Le cône rationnel $C$ est donc
décrit par la conjonction des inégalités
$\max_{I\in E_i} t^I\leq \max_{I\in F_i}t^I$,
pour $i$ variant entre $1$ et $n$. 

On déduit par ailleurs de la description 
de $(U,\eta_r)$ 
que 
\[\widetilde {(U,\eta_r)}=\zr{\hrt{\eta_r}}{\widetilde k}\left\{\frac{\sum_{I\in E_i}\widetilde{a_{I,i}}\tau ^I}
{\sum_{I\in F_i}\widetilde{b_{I,i}}\tau ^I}\right\}_{1\leq i\leq n}.
\]
Il en résulte que $\eta_h$ appartient à $\widetilde {(U,\eta_r)}$
si et seulement si 
$\max_{I\in E_i} h^I\leq \max_{I\in F_i}h^I$, c'est-à-dire si et seulement si 
$h\in C(H)$.
\end{proof}

\subsection{}
Soit $K$ un corps gradué et soit $L$ une extension graduée de $K$. 
Soit
$f=(f_1,\ldots,f_n)$ un élément de $(L^\times)^n$. 
Si $v\in \zr LK$
nous écrirons $f_\trop(v)$ au lieu de $(v(f_1),\ldots, v(f_n))$
et si $D$ est un sous-foncteur définissable de 
$\Gamma^n$, nous écrirons «$f_\trop(v)\in  D$» pour
signifier que $f_\trop(v)$ appartient à $D(H)$ où $H$ est un objet
de $\mathsf D$ dans lequel $v$ prend ses valeurs (la validité cet énoncé est 
indépendante du choix de $H$). Et nous noterons $f_\trop^{-1}(D)$
l'ensemble des $v\in \zr LK$ telles que $f_\trop(v)\in D$ ; c'est une partie
constructible de $\zr LK$. 
Par exemple
si $D$ est décrit par les inégalités $x_i\leq 1$ pour $i=1,\ldots n$ alors
$f_\trop^{-1}(D)$ est égal à $\zr LK\{f_1,\ldots, f_n\}$. En général si $D$
est décrit par une conjonction d'inégalités larges 
$f_\trop^{-1}(D)$ est un ouvert affine de $\zr LK$, et si $D$ est 
un cône rationnel c'est un ouvert quasi-compact de $\zr LK$.

Soit $\mathscr U$ un ouvert quasi-compact de $\zr LK$. 
D'après le théorème \ref{theo.foncteurs-polyedraux}, il existe un cône
rationnel $C$ de $\mathbf G^m$ tel que pour tout $H\in \mathsf D$ 
et tout $m$-uplet $h=(h_1,\ldots, h_m)$ 
d'éléments de $H$
les assertions suivantes soient équivalentes : 
\begin{itemize}
\item[(i)] il existe $H'\in \mathsf D_H$
et une valuation $w\in \mathscr U$ à valeurs dans $H'$ telle que
$f_{i,\trop}(w_i)=h_i$ pour tout $i$ ; 
\item[(ii)] $h\in C(H)$. 
\end{itemize}
Le cône $C$ sera noté $f_\trop(\mathscr U)$. 

\begin{lemm}\label{lem-puredim-squelette}
Soit $K\hookrightarrow K(\tau_1,\ldots, \tau_n)$ 
une extension transcendante pure de corps
gradués (chaque $\tau_i$ étant homogène) 
et soit $\mathscr U$ un ouvert quasi-compact et non vide
de $\mathbf P_{K(\tau)/K}$. Supposons que le cône
rationnel $\tau_\trop(\mathscr U)$ est purement de dimension~$n$. 
Alors pour tout $H\in \mathsf D$ et tout 
$h\in \tau_\trop(\mathscr U)(H)$ 
la valuation $\eta_h$ de $K(\tau)$ appartient à~$\mathscr U$. 
\end{lemm}

\begin{proof}
Si $f=\sum a_I \tau_I/\sum b_I \tau_I$
est un élément de $K(\tau)^\times$, 
alors pour tout $H\in \mathsf D$ et tout $h\in H^n$ on a 
$f_\trop(\eta_h)=\max_{I, a_I\neq 0}h^I/\max_{I, b_I\neq 0}h^I$, si bien que
$\eta_h$
appartient à $\zr {K(\tau)}K\{f\}$ si et seulement si 
$\max_{I,a_I\neq 0}h^I\leq \max_{I,b_I\leq 0} h^I$.

Il s'ensuit, l'ouvert $\mathscr U$ étant une combinaison booléenne positive
de parties de la forme $\zr {K(\tau)}K\{f\}$ avec $f\in K(\tau)^\times$, qu'il existe un cône
rationnel $D$ de $\Gamma^n$ tel que pour tout $H\in \mathsf D$ et tout $h\in H^n$, 
la valuation $\eta_h$ appartient à $\mathscr U$ si et seulement si 
$h$ appartient à $D(H)$. Si c'est le cas on déduit du fait
que $h_i=\tau_{i,\trop}(\eta_h)$ pour tout $i$ que
$h\in \tau_\trop(\mathscr U)(H)$. 
Par conséquent $D\subset \mathscr \tau_\trop(\mathscr U)$, et il reste à établir la réciproque. 
Autrement dit, nous voulons montrer que 
le sous-foncteur $E:=\tau_\trop(\mathscr U)\setminus D$ de $\Gamma^n$
est vide. 

Soit $H\in \mathsf D$ et soit $h=(h_1,\ldots, h_m)\in E(H)$. 
Par définition, il existe une valuation~$u$ située sur~$\mathscr U$
à valeurs dans un groupe abélien ordonné contenant $H$ 
telle que $\tau_\trop(u)=h$ et telle que $u\neq \eta_h$. 
Il en résulte l'existence d'un polynôme 
$P=\sum a_I \tau^I\in K(\tau)^\times$
tel que $P_\trop(u)<\max_{I, a_I\neq 0}h^I$. 
Ceci entraîne l'existence de 
deux multi-indices $I$ et $J$ distincts tels que
$a_I\neq0$, $a_J\neq0$ et $h^I=h^J$. 

Soit $E_{I,J}$ le cône rationnel de $\Gamma^n$ défini par l'égalité
$x^I=x^J$. 
Par ce qui précède, pour tout $H\in \mathsf D$ et tout $h\in E(H)$, il existe
deux indices $I$ et $J$ distincts tels que $h\in E_{I,J}(H)$. 
Par le théorème de compacité en théorie des modèles, 
$E$ est contenue dans une réunion finie $\bigcup_\ell E_{I\ell, J\ell}$,
laquelle est 
un cône rationnel de dimension $\leq n-1$
de $\Gamma^n$. 
En tant que sous-foncteur 
de $\tau_\trop(\mathscr U)$, le foncteur $E$ est le 
complémentaire de $D$ et est donc défini par une combinaison
booléenne positive d'inégalités strictes ; il est en conséquence
purement de dimension~$n$ puisque c'est le cas
de $\tau_\trop(\mathscr U)$ ; étant par ailleurs
contenu par ce qui précède
dans un cône rationnel de dimension $n-1$, le foncteur~$E$ est vide. 
\end{proof}

\begin{prop}\label{prop-tropicalisation-squelette}
Soit $X$ un espace $k$-analytique compact de niveau~$n$, 
soit $x$ un point de~$X$ tel que $d_k(x)=n$ 
et soit $f\colon X\to \gm^m$ un moment sur~$X$.
Il existe une partie squelettique~$\Sigma$ de~$X$ contenant~$x$
telle que les germes de cônes $f_\trop(X,x)$ et $f_\trop(\Sigma,x)$ coïncident. 
\end{prop}
\begin{proof}
Écrivons $f=(f_1,\ldots, f_m)$
et posons $\phi_i=\widetilde{f_i(x)}$ pour tout $i$. 
L'assertion à montrer est G-locale sur le germe $(X,x)$, 
ce qui permet de supposer que $X$ est affinoïde. 
Quitte à composer ensuite avec une projection convenable, on peut
également étendre le moment donné~$f$.
On peut ainsi supposer que la famille $(\phi_i)$ 
contient une base de transcendance de~$\hrt x$ sur~$\widetilde k$, 
c'est-à-dire que $x\in \Sigma_f$, 
et que l'ouvert affine $\widetilde{(X,x)}$ de $\P_{\hrt x/\widetilde k}$ est 
de la forme $\P_{\hrt x/\widetilde k}\{\phi_i\}_{i\in I}$ pour un certain
sous-ensemble~$I$ 
de $\{1,\ldots, m\}$. 

Soit $P$ le cône rationnel $\phi_\trop(\zr{\hrt x}{\widetilde k})$. 
Comme la famille $(\phi_i)$ contient une base de transcendance 
de $\hrt x$ sur $\widetilde k$, le cône $P$ est purement de dimension~$n$
(théorème \ref{theo.foncteurs-polyedraux} (1b)). 
Soit $Q$ l'intersection de $P$
et du cône rationnel
de $\Gamma^n$ défini par les inégalités
$x_i\leq 1$ pour $i\in I$.
Comme $\widetilde{(X,x)}$
est égal à $\P_{\hrt x/\widetilde k}\{\phi_i\}_{i\in I}$,
on a $\widetilde{(X,x)}=\phi_\trop^{-1}(Q)$
et $\phi_\trop(\widetilde{(X,x)})=Q$. 
Par conséquent, 
$f_\trop(X,x)=f_\trop(x)\cdot Q(\R_+^\times)$ 
(remarque \ref{rem-description-trop-germ}
). 

Le cône rationnel $P$ peut être décrit par une condition de la forme
$\bigvee_j \bigwedge_\ell \psi_{j\ell}\leq 1$ où chaque $\psi_{j\ell}$ est un monôme en les $x_i$
à exposants rationnels, avec coefficient $1$. Pour tout $j$, notons $C_j$
le sous-cône rationnel de $P$ décrit par la condition $\bigwedge_\ell \psi_{j\ell}\leq 1$. 
On a $P=\bigcup_j C_j$ ; comme $P$ est purement de dimension~$n$ on peut retirer de l'écriture
tous les $C_j$ de dimension~$<n$, et donc supposer que $\dim C_j=n$ pour tout~$j$. 
On a alors $Q=\bigcup_j Q\cap C_j$. Il suffit pour conclure de montrer que $f_\trop(\Sigma_f,x)$
contient $(Q(\R_+^\times), x)$, c'est-à-dire
qu'il contient  $((Q\cap C_j)(\R_+^\times), x)$ pour tout $j$. 
On fixe donc $j$ ; on écrit $C$ au lieu de $C_j$ et l'on pose $D=Q\cap C$.

Posons $\mathscr U=\phi_\trop^{-1}(C)$
et $\mathscr V=\phi_\trop^{-1}(D)$ ; par construction, $\mathscr U$ et 
$\mathscr V$ sont deux ouverts quasi-compacts 
de $\zr{\hrt x}{\widetilde k}$, et 
$\mathscr V= \mathscr U\cap \widetilde{(X,x)}$. 
Notons aussi 
que comme $C\subset P$, on a $\phi_\trop(\mathscr U)=C$. 

L'égalité 
$\mathscr V= \mathscr U\cap \widetilde{(X,x)}$
entraîne que $\mathscr V=\widetilde{(V,x)}$ où
$(V,x)$ est le domaine $(f_\trop^{-1}(C),x)$
de $(X,x)$. 
Quitte à renuméroter les $f_i$
(et les $\phi_i$) on peut supposer que la restriction à 
$C$
de $(x_i)_{1\leq i\leq n}$ est injective. 
Notons $C'$ et $D'$ les images de $C$ et $D$ dans
$\Gamma^n$ par la projection sur les $n$ premières
coordonnées ; la projection en question induit
par construction des bijections $C\simeq C'$ et 
$D\simeq D'$. 
Soit $\pi \colon (V,x)\to \gm^n$ 
le morphisme défini par les $f_i$ pour $1\leq i\leq n$, et soit~$y$ l'image 
de $x$. 

Posons
$\phi_{\leq n}=(\phi_i)_{1\leq i\leq n}$. 
L'image de $\mathscr U$ par $\phi_{\leq n,\trop}$
est égale à~$C'$, donc est de dimension~$n$. 
Ceci entraîne que $\phi_{\leq n}$
est de degré de transcendance au moins~$n$ sur~$\widetilde k$,
et donc exactement égal à~$n$; 
par conséquent, $y$ appartient au squelette standard~$S_n$ de $\gm^n$.
Le corps gradué $\hrt y$ est égal à $\widetilde k(\phi_{\leq n})$. 

Notons $\mathscr U'$ et~$\mathscr V'$ les images de~$\mathscr U$ 
et~$\mathscr V$ dans~$\P_{\hrt y/\widetilde k}$ ;
ce sont des ouverts quasi-compacts
et non vides 
de~$\P_{\hrt y/\widetilde k}$
d'après le théorème~7.2.5 de~\cite{ducros2018}
et l'on a $\mathscr V'\subset \mathscr U'$. 
Si $v$ est un élément de $\mathscr V'$
alors $\phi_{\leq n,\trop}(v)$ appartient à $D'$. 
Inversement, soit $u$ un élément de $\mathscr U'$ tel que
$\phi_{\leq n,\trop}(u)\in D'$. Choisissons un antécédent
$\omega$ de $u$ sur $\mathscr U$. Par construction, 
$\phi_\trop(\omega)$ appartient à l'image réciproque 
de $D'$ sur $C$ par la projection sur les $n$ premières coordonnées, 
qui n'est autre que $D$ puisque cette projection induit une bijection
$C\simeq C'$ ; par conséquent $\omega$ appartient 
à $\phi_\trop^{-1}(D)$, c'est-à-dire à $\mathscr V$ et $u$ appartient
donc à $\mathscr V'$. Il s'ensuit que
$\mathscr V'=\mathscr U'\cap \phi_{\leq n,\trop}^{-1}(D')$.

Soit $(U',y)$ et $(V',y)$ les germes de domaines
analytiques de $\gm^n$ tels que 
$\widetilde{(U',y)}=\mathscr U'$ et 
$\widetilde{(V',y)}=\mathscr  V'$. 

Dans les constructions ci-dessus, les germes 
$(U,x), (V,x), (U',y)$ et $(V',y)$ sont bien définis, mais leurs représentants
$U, V, U'$ et $V'$ sont arbitraires. Nous allons 
expliquer pourquoi l'on peut choisir $V$ et $V'$ compacts et tels
que $\pi$ induise un morphisme surjectif et plat de
$V$ sur $V'$. On part  pour ce faire de deux choix arbitraires de $V$ et $V'$ ; 
dans ce qui suit, «restreindre» $V$ (resp. $V'$) signifiera le remplacer
par un voisinage analytique de $x$ dans $V$ (resp. de $y$ dans $V'$), ce qui ne
modifiera pas les germes en jeu. 
Comme $d_k(y)=\dim \gm^n=n$ et comme $\gm^n$ est réduit, le  morphisme
$\pi \colon V\to \gm^n$ est plat en tout antécédent de $y$, et en particulier
en $x$. Le lieu de platitude de $\pi$ étant un ouvert (de Zariski) 
de $V$, on peut
restreindre $V$ de sorte que $\pi\colon V\to 
\gm^n$ soit plat.
Le germe $(V',y)$ est le plus petit germe de domaine par lequel~$\pi|_{(V,x)}
\colon 
(V,x)\to (\gm^n,y)$ 
se factorise
(théorème~7.3.1~(1) de~\cite{ducros2018}). 
Puisque~$\pi|_{(V,x)}$ se factorise par $(V',y)$
on peut, quitte à restreindre $V$, supposer que $V$
est compact et que $\pi(V)\subset V'$. Comme $\pi$ est plat et que
$V$ est compact, l'image 
$\pi(V)$ est un domaine analytique compact de
$\gm^n$ (\cite{ducros2018}, théorème~9.2.1). 
Le morphisme $\pi|_{(V,x)}$ se factorise alors
par $(\pi(V),y)$ ; par minimalité de 
$(V',y)$, il vient 
$(\pi(V),y)=(V',y)$. Quitte à remplacer
$V'$ par $\pi(V)$ on peut donc bien supposer
que $\pi$ induit un morphisme plat et surjectif de $V$
sur $V'$.

Comme
$\phi_\trop(\mathscr U)=C$ on a $\phi_{\leq n,\trop}(\mathscr U')
=C'$.
Comme $C'$ est purement de dimension~$n$, 
il résulte du
lemme \ref{lem-puredim-squelette}
que pour tout
$H\in \mathsf D$ et tout $h\in H^n$, 
la valuation $\eta_h$ de $\hrt y$ appartient 
à $\mathscr U'$ si et seulement si $h\in C'(H)$. En 
vertu de lemme~\ref{lem-squelette-tilde}
on a alors 
$\trop(U'\cap S(\gm^n),y)=(\trop(y)\cdot C'(\R_+^\times),\trop(y))$. 

Comme $\mathscr V'=\mathscr U'\cap \phi_{\leq n,\trop}^{-1}(D')$,
on a $(V',y)=(U',y)\cap f_\trop^{-1}(f_\trop(y)
\cdot D'(\R_+^\times)$, si bien
que $\trop(V'\cap S(\gm^n),y)= (\trop(y)\cdot D'(R_+^\times),\trop(y))$. 
Il existe donc un voisinage $\Omega'$ de $\trop(y)$
dans $\trop(y)\cdot D'(R_+^\times)$ qui est contenu dans 
$\trop(V'\cap S_n)$. 

Soit $\Omega$
l'image réciproque de $\Omega'$ sur $f_\trop(x)\cdot D(\R_+^\times)$
par la projection sur les
$n$ premières coordonnées. 
Soit $t\in \Omega$ et soit $t'$ son image dans $\Omega'$. 
Le point $t'$ appartient 
par construction à $\trop(V'\cap S(\gm^n))$.
Soit $v'\in V'\cap S(\gm^n))$ un point tel que $\trop(v')=t'$.
Comme $\pi(V)=V'$, il existe $v\in V$ tel que $\pi(v)=v'$.
Par définition de~$\Sigma_f$, cet ensemble contient
$\pi^{-1}(S(\gm^n))$, de sorte que l'on a $v\in\Sigma_f$.
Par ailleurs, $f_\trop(v)\in f_\trop(x)\cdot D(\R_+^n)$ et sa projection
sur $(\R_+^\times)^n$ coïncide avec~$t'$;
puisque cette projection est injective sur~$f_\trop(x)\cdot D(\R_+^n)$, on a $f_\trop(v)=t$.
\end{proof}

\begin{theo}\label{theo.riche}
Soit $X$ un espace $k$-analytique purement de dimension~$n$, 
compact, et soit $f$ un moment sur $X$. 
Considérons~$X$ comme un espace analytique de niveau~$n$.
Il existe une partie squelettique compacte~$\Sigma$ 
de~$X$ telle que $f_\trop(\Sigma)=f_\trop(X)$. 
\end{theo}
\begin{proof}
L'assertion est G-locale sur~$X$; on peut donc supposer
que $X$ est affinoïde.
Soit $x\in X$; d'après la proposition~\ref{prop-trop-pt-abhyankar},
il existe un point~$y_x$ de~$X$ tel que $d_k(y_x)=n$
et tel que les germes $f_\trop(X,x)$ et $f_\trop(X,y_x)$ coïncident.
En particulier, il existe un voisinage analytique compact~$V_x$ de~$x$
et un voisinage analytique compact~$V'_x$ de~$y_x$ tels que 
$f_\trop(V_x)=f_\trop(V'_x)$.
D'après la proposition~\ref{prop-tropicalisation-squelette},
il existe une partie squelettique~$\Sigma_x$ contenant~$y_x$
telle que les germes $f_\trop(X,y_x)$ et $f_\trop(\Sigma_x,y_x)$
coïncident.
Quitte à restreindre~$V'_x$ et à remplacer~$V_x$
par $V_x\cap f_\trop^{-1}(f_\trop(V'_x))$,  on peut supposer
que $\Sigma_x$ est compact et 
que $f_\trop(V_x)= f_\trop(V'_x)=f_\trop(\Sigma_x)$.
Comme $X$ est compact,
il existe une partie finie~$S$ de~$X$ telle que la famille $(V_x)_{x\in S}$
recouvre~$X$. 
Posons $\Sigma= \bigcup_{x\in S} \Sigma_x$;
c'est une partie squelettique compacte de~$X$ et l'on a 
\[ f_\trop(X)=\bigcup_{x\in S} f_\trop(V_x) = \bigcup_{x\in S} f_\trop (\Sigma_x) = f_\trop(\Sigma). \]
Cela conclut la démonstration du théorème.
\end{proof}

\begin{coro}\label{coro.riche}
Soit $X$ un espace $k$-analytique topologiquement
séparé et purement de dimension~$n$;
considérons-le comme un espace analytique de niveau~$n$.
Alors $X$ est riche (définition~\ref{defi.riche}).
\end{coro}
\begin{proof}
Soit $V$ un domaine compact de~$X$ et soit $f$ une carte G-tropicale
déinie sur~$V$; il s'agit de démontrer qu'il existe une partie
squelettique compacte~$\Sigma$ de~$V$ telle que $f(\Sigma)=f(V)$.
Soit $(V_i)_{i\in I}$ un G-recouvrement fini de~$V$
par des domaines analytiques compacts tels que,
pour tout $i\in I$, il existe $a_i\in\Q$ et un moment~$f_i$
sur~$V_i$ tel que $f|_{V_i}=a_i f_{i,\trop}$.
D'après le théorème~\ref{theo.riche}, il existe,
pour tout $i\in I$, une partie squelettique compacte~$\Sigma_i$
de~$V_i$ telle que $f_{i,\trop}(\Sigma_i)=f_{i,\trop}(V_i)$.
Soit $\Sigma$ la réunion de~$\Sigma_i$;
c'est une partie squelettique compacte de~$V$ et l'on a 
\[ f(\Sigma) = \bigcup_i f(\Sigma_i)
= \bigcup_i a_i f_{i,\trop}(\Sigma_i) = \bigcup_i a_i f_{i,\trop}(V_i)
 =  \bigcup_i f(V_i) = f(V). \]
Le corollaire est ainsi démontré.
\end{proof}

\section{Les espaces de Berkovich sans bord sont très riches}

\begin{theo}\label{theo.berk-tres-riche}
Soit $X$ un espace $k$-analytique sans bord, topologiquement
séparé et purement de dimension~$n$;
considérons-le comme un espace analytique de niveau~$n$.
Alors $X$ est très riche (définition~\ref{defi.riche}).
\end{theo}
\begin{proof}
D'après le corollaire~\ref{coro.riche}, l'espace~$X$ est riche ; 
il reste à démontrer
que toute partie squelettique compacte~$\Sigma$ de~$X$ est contenue
dans une partie squelettique purement de dimension~$n$.

Soit $x$ un point de~$\Sigma$.
Le germe $(X,x)$ est sans bord, donc presque bon ;
d'après la proposition~\ref{prop.parties-polyedrales-moments},
il existe un voisinage ouvert~$U_x$ de~$x$ 
et un moment~$f_x\colon U_x\to\gm^n$
tel que $\Sigma\cap U_x\subset \Sigma_{f_x}$.
Considérons un voisinage analytique compact~$V_x$ de~$x$ dans~$U_x$.
Démontrons que $x\in (\Sigma_{f_x} \cap V_x)^{(n)}$.

Comme $\Sigma$ est compact, il existe une partie finie~$S$ de~$\Sigma$
telle que les~$\mathring{V_x}$, pour $x\in S$, recouvrent~$\Sigma$.
Soit $\Tau$ la réunion des $(\Sigma_{f_s}\cap V_s)^{(n)}$, pour $s\in S$.
c'est une partie squelettique compacte de~$X$, 
purement de dimension~$n$.

Démontrons que $\Sigma\subset \Tau$. 
Soit $x\in \Sigma$; soit $s\in S$ tel que $x\in \mathring{V_s}$.
Comme $X$ est sans bord, l'ouvert~$\mathring V_s$ est sans bord,
donc $\Sigma_{f_s}\cap \mathring{V_s}$ est purement
de dimension~$n$ 
(corollaire~\ref{coro.Sigmaf=Sigmafn}),
si bien que $x$ appartient à $(\Sigma_{f_s}\cap V_s)^{(n)}$.
A fortiori, $x\in \Tau$.
\end{proof}

\begin{rema}
On ne peut pas, en général, enlever l'hypothèse que $X$ est sans bord.
Supposons par exemple qu'il existe $r\notin\abs{k^\times}^\Q$
et considérons le domaine analytique de~$\A^1_k$ 
défini par $\abs T=r$. C'est un espace $k$-analytique de dimension~$1$,
réduit au singleton~$\{\eta_r\}$, qui est aussi son bord.
Il ne contient aucun squelette de dimension~$1$, et n'est donc
pas très riche.
\end{rema}

\begin{rema}
La structure des courbes $k$-analytiques
(pour laquelle nous renvoyons au chapitre~4 de \cite{berkovich1990}, 
et à la présentation plus générale de~\cite{Ducros-2024}) entraîne
que toute courbe $k$-analytique ne comportant aucun point de type~(3) isolé
est très riche. En effet, soit $X$ une telle courbe et soit $x\in X$
un point tel que $d_k(x)=1$. Cela entraîne que $X$ est de type~(2)
ou~(3).
Si $x$ est de type~(2), il existe une branche aboutissant à~$x$;
il en est de même si $x$ est de type~(3) puisque, par hypothèse,
il n'est pas isolé.

En particulier, pour tout~$r>0$, le disque de rayon~$r$ de~$\A^1_k$
est très riche.
\end{rema}

\begin{rema}
Nous nous attendons à ce que le polydisque unité de~$\A^n_k$
soit très riche. 

En revanche, si $r\notin\abs{k^\times}\Q$,
le bidisque~$X$ de birayon $(r,1/r)$ n'est pas très riche.
Considérons en effet l'ouvert~$U$ de~$X$ défini
par $\abs{T_1T_2-1}<1$; nous allons
démontrer qu'il est non vide et contenu dans la fibre
de la première projection en~$\eta_r$,
qui est une courbe $\hr{\eta_r}$-analytique,
et ne contient donc aucune partie squelettique de dimension~$2$.

Soit $x\in U$. Comme $\abs{T_1(x)T_2(x)-1}<1$,
on a $\abs{T_1(x)}\abs{T_2(x)}=1$; par définition de~$X$,
on a $\abs{T_1(x)}\leq r$ et $\abs{T_2(x)}\leq 1/r$,
si bien que $\abs{T_1(x)=r}$.
Alors $y=p_1(x)$ est un point de~$\A^1_k$ tel que $\abs{T_1(y)}=r$;
comme $r\notin\abs{k^\times}^\Q$, on a $y=\eta_r$.

Il reste à vérifier que $U$ n'est pas vide.
Mais $p_1^{-1}(\eta_r)$ est le  domaine de $\A^1_{\hr{\eta_r}}$,
de coordonnée~$T_2$,
défini par $\abs{T_2}\leq 1/r$. Le point $\hr{\eta_r}$-rationnel
de ce domaine défini par $T_2=T_1(\eta_r)^{-1}$ appartient à~$U$.
\end{rema}

\chapter{Compléments de géométrie analytique}

\section{Anneaux G-locaux}

\subsection{}
Soit $X$ un espace $k$-analytique et soit $x$ un point de $X$. 
Nous avons défini plus haut la dimension centrale du germe $(X,x)$ 
(\ref{ss.dimcent}). Celle-ci possède une variante G-topologique : 
la dimension centrale du G-germe $(X_\groth, x)$, notée 
$\dim_{\mathrm c}(X_\groth ,x)$, et définie 
comme 
le minimum des dimensions de $\overline{\{x\}}^{V_{\mathrm{Zar}}}$ 
lorsque $V$ parcourt l'ensemble des domaines analytiques de~$X$ contenant~$x$. 

On a de manière évidente
\[d_k(x)\leq \dim_{\mathrm c}(X_\groth,x)
\leq \dim\overline{\{x\}}^{X_{\mathrm{Zar}}}\leq \dim_x X \;; \]
en particulier $\dim_{\mathrm c}(X_\groth,x)=\dim_x X$ dès que $d_k(x)=n$. 

Soit $V$ un domaine analytique de~$X$ contenant~$x$. 
L'ensemble des domaines analytiques de $V$ contenant $x$ est cofinal dans l'ensemble
des domaines analytiques de~$X$ contenant~$x$ ; par conséquent
$\dim_{\mathrm c}(V_\groth,x)=\dim_{\mathrm c}(X_\groth,x)$. 

Soit $Z$ un sous-espace analytique fermé de $X$ contenant $x$. L'ensemble des domaines analytiques de $Z$ contenant $x$
de la forme $V\cap Z$, où $V$ est un domaine analytique de $X$ contenant $x$, est cofinal dans l'ensemble
des domaines analytiques de $X$ contenant $x$. Par conséquent 
Par conséquent $\dim_{\mathrm c}(Z_\groth,x)$ est le minimum des 
$\dim \overline{\{x\}}^{(V\cap Z)_{\mathrm{Zar}}}$ où $V$
parcourt l'ensemble des domaines analytiques de $X$ contenant $x$. 
Mais pour un tel $V$ on a $\overline{\{x\}}^{(V\cap Z)_{\mathrm{Zar}}}
=\overline{\{x\}}^{V_{\mathrm{Zar}}}$, si bien que 
$\dim_{\mathrm c}(Z_\groth,x)=\dim_{\mathrm c}(X_\groth,x)$.

\subsection{}\label{ss-sorites-xgen}
Nous noterons $X^\gen$ l'ensemble des points $x$ de $X$ tels que $\dim_{\mathrm c}(X_\groth,x)=\dim_x X$.
Cet ensemble contient $\{x\in X,d_k(x)=\dim_x X\}$. 

Si $V$ est un domaine analytique de $X$ on a $\dim_{\mathrm c}(V_\groth,x)=\dim_{\mathrm c}(X_\groth,x)$
et $\dim_x V=\dim_x X$ pour tout $x\in V$, si bien que $V^\gen=X^\gen \cap V$. 

Si $Z$ est un sous-espace analytique fermé de $X$ on a $\dim_{\mathrm c}(Z_\groth,x)=\dim_{\mathrm c}(X_\groth,x)$
pour tout $x\in Z$, si bien que $Z^\gen$ est l'ensemble des points $x$ de $Z$ tels que $\dim_{\mathrm c}(Z_\groth,x)
=\dim_x Z$ ; cet ensemble ne dépend que du fermé de Zariski sous-jacent à $Z$, et pas de la structure de sous-espace
analytique fermé choisie sur $Z$.

On déduit du lemme \ref{lemm-adherence-point} que $x\in X^\gen$ si et seulement si $\overline{\{x\}}^{V_{\mathrm{Zar}}}$
est une composante irréductible de $V$ pour tout domaine analytique $V$ de $X$ contenant $x$. 
L'appartenance à~$X^\gen$ est donc une propriété de généricité universelle pour la topologie de Zariski, d'où la notation 
choisie. 

Soit $(X_i)$la famille des composantes irréductibles de $X$ ; pour tout $i$, soit $U_i$ l'ouvert de Zariski 
$X_i\setminus \bigcup_{j\neq i}X_j$ de $X$, et soit $U$ la réunion des $U_i$ (qui sont deux à deux disjoints). 
Si $x\in X^\gen$ alors  $\overline{\{x\}}^{X_{\mathrm{Zar}}}$ est une composante irréductible de $X$, et c'est la seule
composante irréductible de $X$ contenant $x$ (lemme  \ref{lemm-adherence-point}). Il s'ensuit que $X^\gen=U^\gen=\bigcup
U_i^\gen$ (cette dernière union étant disjointe). Pour tout $i$, l'ouvert non vide $U_i$ de l'espace irréductible $X_i$
est lui-même irréductible (\cite{Ducros-2021}, lemme 1.11), et $U_i^\gen$ est non vide : il contient par exemple
tous les points $x$ de $X_i$ tels que $d_k(x)=\dim X_i$. Notons aussi que $U_i^\gen=X_i^\gen$ puisque tout point de
$X_i^\gen$ est dense dans $X_i$ pour la topologie de Zariski.

\begin{defi}\label{defi.anneau-G-local}
Soit $X$ un espace $k$-analytique et soit $x$ un point de~$X$. 
L'anneau G-local de~$X$ en~$x$ 
est la colimite
des $k$-algèbres~$\mathscr O_X(V)$ 
où $V$ parcourt l'ensemble des domaines analytiques de $X$ contenant $x$. 
On le note $\mathscr O_{X_\groth,x}$.
\index{anneau G-local d'un espace analytique}
\end{defi}

C'est un anneau local hensélien et excellent
(\cite{ducros2024}, proposition~6.7, (1)). 

Définissons la dimension centrale du germe~$(X_\groth, x)$
comme 
le minimum des dimensions de $\overline{\{x\}}^{V_{\mathrm{Zar}}}$ 
lorsque $V$ parcourt l'ensemble des domaines analytiques de~$X$ contenant~$x$;
on la note~$\dim_{\mathrm c}(X_\groth, x)$.
Alors, on a 
\[  \dim_{\mathrm c}(X_\groth, x)+\dim_{\mathrm{Krull}}(\mathscr O_{X_\groth, x})=\dim_x X \]
(\cite{ducros2024}, proposition~6.7, (1)). 
En particulier, l'anneau local $\mathscr O_{X_\groth, x}$ est artinien si et seulement si
$x\in X^\gen$. 

Pour tout bon domaine analytique~$V$ de~$X$ contenant~$x$,
le morphisme naturel de $\mathscr O_{V,x}$ 
vers~$\mathscr O_{X_\groth, x}$ est régulier 
(\cite{ducros2024}, proposition~6.7, (3)). 

\subsection{}
Soit $Y$ un espace analytique défini sur une extension de $k$,  soit $f$ un $k$-morphisme de $Y$ vers $X$ et soit 
$y$ un antécédent de $X$. Le morphisme $f^*\colon \mathscr O_X\to f_*\mathscr O_Y$ induit un morphisme de
$\mathscr O_{X_\groth,x}$ vers $\mathscr O_{Y_\groth, y}$. 
Si $Y$ est $k$-analytique, le morphisme $f$ est plat en~$y$ 
si et seulement si $\mathscr O_{Y_\groth, y}$ 
est une $\mathscr O_{X_\groth,x}$-algèbre plate
(\cite{ducros2024}, théorème 7.5). 

\subsection{}\label{ss-XKG-plat}
Soit $K$ une extension complète de $k$, soit $y$ un point de $X_K$ et soit 
$x$ son image sur $X$. L'anneau $\mathscr O_{X_{K,\groth}, y}$ est alors plat
sur $\mathscr O_{X_\groth,x}$. 
En effet, soit $V$ un bon domaine analytique de $X$ contenant $x$. Alors
$\mathscr O_{V_K,y}$ est plat sur $\mathscr O_{V,x}$, si bien que par composition
$\mathscr O_{X_{K,\groth},x}$ est plats sur $\mathscr O_{V,x}$. L'énoncé
voulu s'obtient en passant à la colimite sur $V$.

\subsection{}
Soit $X$ un espace $k$-analytique et soit $F$ un faisceau cohérent sur $X$. 
Nous noterons
$F_{X_\groth,x}$ la colimite
des $F(V)$ où $V$ parcourt l'ensemble des domaines
analytiques de $X$ contenant $x$. C'est un $\mathscr O_{X_\groth,x}$-module
de type fini, qui s'identifie à 
$\mathscr O_{V_\groth,x}\otimes_{\mathscr O_{V,x}}F_{V,x}$. 

Soit $\mathsf P$ et $\mathsf Q$ 
deux propriétés algébriques comme en 
\ref{ss-liste-proprietes}. 
Puisque le morphisme naturel de $\mathscr O_{V,x}$ vers
$\mathscr O_{V_\groth,x}$ est régulier, l'espace $X$ (\resp le faisceau cohérent
$\mathscr F$) satisfait $\mathsf P$ (\resp~$\mathsf Q$) en~$x$ 
si et seulement si
l'anneau local noethérien $\mathscr O_{V_\groth,x}$ (\resp le
$\mathscr O_{V_\groth,x}$-module de type fini $\mathscr F_{V_\groth,x}$) satisfait~$\mathsf P$ (\resp~$\mathsf Q$). 

\begin{lemm}\label{lemme-domaine-cofinal}
Soit $f\colon Y\to X$ un morphisme fini et soit $x$ un point de $X$
possédant un unique antécédent $y$ sur $Y$. 
Soit $V$ un domaine analytique de $Y$ contenant $y$. Il existe un domaine affinoïde $U$ de $X$
contenant $x$ tel que $f^{-1}(U)\subset V$. 
\end{lemm}

\begin{proof}
Pour démontrer le lemme, on peut remplacer~$X$
par n'importe lequel de ses domaines analytiques contenant~$x$, 
ce qui permet de le supposer séparé. 
Choisissons un pré-modèle~$\mathsf X$ de~$\widetilde{(X,x)}$ 
de dimension égale à~$d_k(x)$. 
Comme $f$ est sans bord, $\mathsf X$ est aussi un pré-modèle 
de~$\widetilde{(Y,y)}$. Il existe alors un pré-modèle 
$\mathsf Y$ de $\widetilde{(Y,y)}$, propre et dominant sur~$\mathsf X$, 
et un ouvert de Zariski~$\mathsf V$ de~$\mathsf Y$ 
qui est un pré-modèle de $\widetilde{(V,y)}$. 
Puisque $f$ est fini, on a $d_k(y)=d_k(x)$; 
on en déduit l'égalité $\dim(\mathsf Y)=\dim(\mathsf X)=d_k(x)$.
Le complémentaire~$\mathsf Z$ de~$\mathsf V$ dans~$\mathsf Y$ 
est un fermé de Zariski strict de~$\mathsf Y$ 
et est donc de dimension $<d_k(x)$. 
Par propreté, son image $\mathsf T$ sur~$\mathsf X$ est un fermé de Zariski, 
lui aussi de dimension $<d_k(x)$ ; 
en particulier, il est distinct de~$\mathsf X$. 
Soit $\mathsf U$ un ouvert affine non vide de~$\mathsf X$ 
ne rencontrant pas~$\mathsf T$ et 
soit~$\mathsf W$ son image réciproque sur~$\mathsf Y$. 
Par construction, $\mathsf W\subset \mathsf V$. 
Soit~$U$ un domaine affinoïde de~$X$ contenant~$x$ tel que 
$\widetilde{(U,x)} =\zr{\hrt x}{\widetilde k}\{\mathsf U\}$.  
L'image réciproque~$W$ de~$U$ sur~$Y$ vérifie l'égalité 
$\widetilde{(W,x)} =\zr{\hrt y}{\widetilde k}\{\mathsf W\}$, 
si bien qu'on a une inclusion de germes $(W,y)\subset (V,y)$. 
Comme le morphisme de~$W$ dans~$U$ induit par~$f$ est topologiquement
propre, 
il en résulte qu'il existe un voisinage affinoïde de~$x$ 
dans~$U$ dont l'image réciproque par~$f$ est contenue dans~$V$. 
\end{proof}

\begin{coro}\label{coro-quasifini-cofinal}
Soit $f\colon Y\to X$ un morphisme d'espaces $k$-analytiques, 
soit~$y$ un point en lequel $f$ est quasi-fini 
et soit~$x$ l'image de~$y$ sur $X$. 
Il existe un domaine affinoïde~$U$ de~$X$ contenant~$x$
et un domaine affinoïde~$V$ de $f^{-1}(U)$ contenant~$y$ qui est fini sur~$U$.
\end{coro}

\begin{proof}
Pour démontrer le corollaire, on peut remplacer $(X,Y)$ 
par n'importe lequel couple $(X',Y')$,
où $X'$ est un domaine analytique de~$X$ contenant~$x$ et 
$Y'$ est un domaine analytique de $f^{-1}(X')$ contenant~$Y$.
Ceci permet de se ramener au cas où $X$ et $Y$ sont affinoïdes. 
D'après la version analytique du \textit{Main Theorem} de Zariski
(\cite{ducros2007}, théorème 3.2),
on peut alors supposer, 
quitte à remplacer $X$ et $Y$ par des voisinages affinoïdes convenables 
de~$x$ et~$y$ respectivement, 
qu'il existe une factorisation 
\[\begin{tikzcd}
Y\ar[r,"g"]\ar[rrr, bend left, "f"]&W\ar[r, hook]&T\ar[r,"h"]&X\end{tikzcd}
\]
de~$f$, 
où $h$ est étale, $W$ est un domaine affinoïde de $T$, où $g$ est fini. 
Soit~$t$ l'image de~$y$ sur~$T$. 
Il existe un voisinage affinoïde~$T'$ de~$t$ dans~$T$
fini étale sur un voisinage affinoïde~$X'$ de~$x$ dans~$X$, 
et tel que $t$~soit le seul antécédent de~$x$ sur~$T'$. 
Posons $W'=W\cap T'$ et $Y'=Y\times_T T'$.
On obtient par changement de base le diagramme
\[\begin{tikzcd}
Y'\ar[r,"g'"]\ar[rrr, bend left, "f'"]&W'\ar[r, hook]&T'\ar[r,"h'"]&X'\end{tikzcd}
\]
où $g'$ est fini et où $h'$ est fini étale. 
Le lemme précédent assure l'existence d'un domaine affinoïde~$U$
de~$X'$ contenant~$x$ tel que $(h')^{-1}(U)\subset W'$. 
Posons $V=(f')^{-1}(U)$. Il résulte de l'inclusion $(h')^{-1}(U)\subset W'$
et de la finitude de~$g'$ que $V$ est un domaine affinoïde de~$Y'$
(et donc de~$Y$) fini sur $(h')^{-1}(U)$. Par finitude de~$h'$, 
le domaine~$V$ est alors fini sur~$U$, et il contient~$y$ par construction. 
\end{proof}

\begin{prop}\label{prop-gfini}
Soit $f\colon Y\to X$ un morphisme d'espaces $k$-analytiques. 
Soit~$y$ un point de~$Y$ et soit~$x$ son image sur~$X$. 
Supposons que $f$ est quasi-fini en $y$.
\begin{enumerate}
\item On a $\dim_{\mathrm c}(X_\groth, x)=\dim_{\mathrm c}(Y_\groth,y)$. 
\item Le morphisme $f^*$ de $\mathscr O_{X_\groth, x}$ vers $\mathscr O_{Y_\groth, y}$ 
est fini. 
\item 
Si $f$ est fini en $y$ et si les germes $(X,x)$ et $(Y,y)$ sont bons,
le morphisme canonique  \[\mathscr O_{Y,y}\otimes_{\mathscr O_{X,x}}\mathscr O_{X_\groth, x}
\to \mathscr O_{Y_\groth, y}\]
est un isomorphisme. 
\item
Pour tout $x\in X$, on a
$\mathscr O_{X_{\red, \groth}, x}=(\mathscr O_{X_\groth,x})_\red$.
\end{enumerate}
\end{prop}

\begin{proof}
Par définition de la dimension centrale 
et des anneaux G-locaux, on peut pour démontrer~\emph a) et~\emph b),
remplacer~$X$ par un domaine affinoïde~$V$ de~$X$ contenant~$x$
et~$Y$ par un domaine affinoïde de $Y\times_X U$ contenant~$y$.
Compte tenu du corollaire \ref{coro-quasifini-cofinal},
on peut supposer que~$X$ et $Y$ sont affinoïdes, 
que $f$ est fini et que $y$ est le seul antécédent de~$x$ sur~$Y$. 

De même, pour démontrer~\emph c), on peut remplacer
remplacer~$X$ 
un voisinage affinoïde~$U$ de~$x$ dans~$X$, 
et $Y$ par un voisinage affinoïde de~$y$ dans $Y\times_X U$. 
On peut donc également supposer que~$X$ et $Y$ sont affinoïdes, 
que $f$ est fini et que $y$ est le seul antécédent de~$x$ sur~$Y$. 

L'ensemble des domaines affinoïdes de~$Y$ de la forme $f^{-1}(U)$,
où $U$ est un domaine affinoïde de~$X$ contenant~$x$,
est cofinal dans l'ensemble des domaines analytiques de~$Y$ 
contenant~$y$ (lemme \ref{lemme-domaine-cofinal}). 
En conséquence l'anneau local 
$\mathscr O_{Y_\groth,y}$ est la colimite des anneaux $\mathscr O_Y(f^{-1}(U))$ où $U$ parcourt
l'ensemble des domaines affinoïdes de~$X$ contenant~$x$. 
Pour un tel~$U$, on a l'égalité
$\mathscr O_Y(f^{-1}(U))=\mathscr O_Y(Y)\otimes_{\mathscr O_X(X)}\mathscr O_X(U)$, si bien que 
$\mathscr O_{Y_\groth, y}$ s'identifie à $\mathscr O_Y(Y)\otimes_{\mathscr O_X(X)}\mathscr O_{X_\groth,x}$, d'où~\emph b). 
Alors,
\[ \mathscr O_{Y,y}\otimes_{\mathscr O_{X,x}} \mathscr O_{X_\groth,x}
= 
\mathscr O_Y(Y)\otimes_{\mathscr O_X(X)}\mathscr O_{X,x}
\otimes_{\mathscr O_{X,x}} \mathscr O_{X_\groth,x}
= 
\mathscr O_Y(Y)\otimes_{\mathscr O_X(X)}\mathscr O_{X_\groth,x}
=
\mathscr O_{Y_\groth,y},\]
ce qui démontre~\emph c).

Prouvons \emph a). Soit $U$ un domaine analytique de $X$ contenant $x$ tel que l'adhérence de Zariski $Z$
de $x$ dans $U$ soit de dimension $\dim_{\mathrm c}(X_\groth, x)$. Alors $f^{-1}(Z)$ est un fermé de Zariski 
de $f^{-1}(U)$ contenant~$y$ et fini sur~$Z$.
Alors $f(f^{-1}(Z))$ est un fermé de Zariski de~$U$ qui contient~$x$;
il est donc égal à~$Z$.
Par suite, la dimension de~$f^{-1}(Z)$ est donc égale à $\dim Z$, si bien que
$\dim_{\mathrm c}(Y_\groth, y)\leq \dim_{\mathrm c}(X_\groth, x)$. 

Réciproquement, soit $V$ un domaine analytique de $Y$ contenant $y$ tel que l'adhérence de Zariski $T$ de $y$
dans $V$ soit de dimension $\dim_{\mathrm c}(Y_\groth, y)$. Par le lemme \ref{lemme-domaine-cofinal} 
il existe un domaine affinoïde $U$ de $X$ contenant $x$ tel que $f^{-1}(U)\subset V$. L'intersection 
 $T\cap f^{-1}(U)$ est encore de dimension $\dim_{\mathrm c}(Y_\groth, y)$ et son image par
 le morphisme fini $f$
 est un fermé de Zariski de $U$ contenant $x$ et de dimension $\dim_{\mathrm c}(Y_\groth, y)$. 
 Il vient $\dim_{\mathrm c}(X_\groth, x)\leq \dim_{\mathrm c}(Y_\groth, y)$, ce qui achève la démonstration. 

Démontrons enfin l'assertion~\emph d).
Soit $\mathscr N$ le nilradical de~$\mathscr O_X$,
de sorte que $X_\red = V(\mathscr N)$.
D'après~\emph c), on a 
$\mathscr O_{X_{\red,\groth},x}=\mathscr O_{X_\groth,x}/\mathscr N \mathscr O_{X_\groth,x}$.
L'idéal $\mathscr N \mathscr O_{X_\groth,x}$ est nilpotent;
d'autre part, comme les domaines analytiques d'un espace réduit
sont réduits, l'anneau $\mathscr O_{X_{\red,\groth},x}$ est réduit.
Il en résulte que $\mathscr N \mathscr O_{X_\groth,x}$
est le nilradical de $\mathscr O_{X_\groth,x}$,
d'où l'assertion. 
\end{proof}

\begin{coro}\label{coro-gfini}
Soit $f\colon Y\to X$ un morphisme fini d'espaces $k$-analytiques, 
soit $x$ un point de $X$
et soit $y_1,\ldots, y_r$ ses antécédents sur $Y$. 
Soit $F$ un faisceau cohérent sur $Y$. 
On a alors \[(f_*F)_{X_\groth,x}=\prod_{i=1}^r F_{Y_\groth,y_i}.\]
\end{coro}

\begin{proof}
Soit $V$ un bon domaine analytique de $X$ contenant $x$ ; posons $W=f^{-1}(V)$. Notons $H$ la restriction 
de $G$ à $W$ ; par un léger abus, notons $f_*H$ l'image directe de $H$ sous le morphisme fini de $W$ vers $V$
induit par $f$. Remarquons que $f_*H=(f_*G)|_V$. 
On a 
\begin{align*}
(f_*G)_{X_\groth,x}&=(f_*H)_{V,x}\otimes_{\mathscr O_{V,x}}\mathscr O_{X_\groth,x}\\
&=(\prod_{i=1}^r H_{W,y_i})\otimes_{\mathscr O_{V,x}}\mathscr O_{X_\groth,x}\\
&=\prod_{i=1}^r (H_{W,y_i}\otimes_{\mathscr O_{V,x}}\mathscr O_{X_\groth,x})\\
&=\prod_{i=1}^r G_{Y_\groth,y_i},
\end{align*}
où la deuxième égalité provient de l'analogue du corollaire pour les anneaux locaux classiques (voir par exemple
le lemme 4.1.15 de \cite{ducros2018}) 
et la dernière
de la proposition \ref{prop-gfini} \emph c). 

\end{proof}

\section{Longueurs et multiplicités génériques}

\begin{lemm}\label{lemme-longueur-artinien}
Soit $A$ un anneau local artinien d'idéal maximal 
$\mathfrak m$ et soit $B$ une $A$-algèbre locale artinienne plate et telle
que $\mathfrak mB$ soit l'idéal maximal de $B$. 
Soit $M$ un $A$-module de type fini. 
On a l'égalité
\[\ell_B(B\otimes_A M)=\ell_A(M).\]
\end{lemm}

\begin{proof}
Soit $\{0=M_0\subset M_1\ldots \subset M_{\\ell_A(M)}=M$ une filtration de $M$
telle que le $A-module$ $M_{i+1}/M_i$ soit isomorphe à $A/\mathfrak m$. 
Par platitude de $B$ sur $A$ cette filtration induit une filtration 
$\{0=B\otimes_A M_0\subset 
B\otimes_A M_1\subset \ldots\subset B\otimes_A M_{\ell_A(M)}$ 
de $B\otimes_A{\ell_A(M)}$, dont les quotients successifs sont tous
isomorphes comme $B$-modules à $B/\mathfrak m B$, qui est par hypothèse le corps
résiduel de $B$. En conséquence, $\ell_B(B\otimes_A M)=\ell_A(M)$. 
\end{proof}

\begin{coro}\label{coro-longueur-artinien}
Soit $A$ un anneau local artinien d'idéal maximal 
$\mathfrak m$ et soit $B$ une $A$-algèbre locale artinienne 
telle que le morphisme structural de $A$ dans $B$ soit
régulier. Soit $M$ un $A$-module de type fini. 
On a l'égalité
\[\ell_B(B\otimes_A M)=\ell_A(M).\]
\end{coro}

\begin{proof}
Comme le morphisme de $A$ vers $B$ est régulier, la $(A/\mathfrak m)$-algèbre 
artinienne $B/\mathfrak mB$ est géométriquement régulière, et a fortiori réduite. 
C'est donc un corps, ce qui signifie que $\mathfrak mB$ est l'idéal maximal de $B$. 
On conclut par une application directe du lemme précédent. 
\end{proof}

\subsection{}
Soit $X$ un espace $k$-analytique. 
Soit $x$ un point de $X^\gen$. 
L'anneau local $\mathscr O_{X_\groth, x}$ est artinien, ainsi que l'anneau local 
$\mathscr O_{V,x}$ pour tout bon domaine analytique de~$X$ contenant~$x$ (puisque
$\mathscr O_{X_\groth, x}$ est plat sur $\mathscr O_{V,x}$). 
Si $F$ est un faisceau cohérent sur $X$ nous noterons~$\ell_{X_\groth, x}(F)$
la longueur du $\mathscr O_{X_\groth,x}$-module de type fini 
$F_{X_\groth,x}$.
Si $V$ est un bon domaine
analytique de $X$ contenant $x$ nous noterons~$\ell_{V,x}$ la longueur du
$\mathscr O_{V,x}$-module de type fini~$F_{V,x}$. 

\begin{prop}\label{prop-longueur-generique}
Soit $X$ un espace $k$-analytique et soit $F$ un faisceau cohérent sur 
$X$. Soit $(X_i)$ la famille des composantes irréductibles de $X$ et pour tout $i$, soit $U_i$
l'ouvert de Zariski $X_i\setminus \bigcup_{j\neq i}X_j$ de $X$. 

\begin{enumerate}
\item Pour tout $i$, l'application $x\mapsto \ell_{X_\groth,x}(F)$
est constante sur~$U_i^\gen$. 
\item Pour tout $x\in X^\gen$ et tout bon domaine analytique~$V$ de~$X$
contenant~$x$ 
on a l'égalité
$\ell_{ X_\groth,x} (F)= \ell_{V,x} (F)$. 
\end{enumerate}
\end{prop}
Rappelons que les $U_i$ sont irréductibles, que les 
$U_i^\gen$ sont non vides, deux à deux disjoints, et qu'ils recouvrent $X^\gen$ (\ref{ss-sorites-xgen}). 

\begin{proof}
Montrons tout d'abord \emph a). Fixons $i$. 
On peut pour alléger les notations remplacer $X$ par $U_i$, c'est-à-dire supposer $X$ irréductible. 
Il s'agit alors de montrer que l'application $x\mapsto \ell_{X_\groth,x}(F)$ de $X^\gen$
dans $\N$ est constante. Notons que comme $X$ est irréductible, tout point de $X^\gen$ est dense
dans $X$ pour la topologie de Zariski.

Soit $I$ le faisceau des fonctions localement nilpotentes sur $X$. 
Faisons une remarque générale. 
Soit $x\in X^\gen$. L'anneau local artinien $\mathscr O_{X_\groth,x}/I\mathscr O_{X_\groth,x}$
est égal à $\mathscr O_{X_{\mathrm{red}, \groth},x}$, et est donc réduit. C'est en conséquence un corps, si bien 
que $I\mathscr O_{X_\groth,x}$ est l'idéal maximal de $\mathscr O_{X_\groth,x}$. 

Fixons maintenant $x_0\in X^\gen$. 
Comme $\mathscr O_{X_\groth,x_0}$ est noethérien, 
il existe $n$ tel que $I^n\mathscr O_{X_\groth,x_0}=0$.  Pour tout $i$ entre $0$ et $n-1$, 
notons $r_i$
la dimension du $\mathscr O_{X_\groth,x_0}/I\mathscr O_{X_\groth,x_0}$-espace vectoriel 
$I^i\mathscr O_{X_\groth,x_0}/
I^{i+1}F_{X_\groth,x_0}$. 

Soit $U$ le plus grand ouvert de $X$ au-dessus duquel
$I^n$ est nul et chacun des
$\mathscr O_{X_{\mathrm{red}, \groth},x}$-modules $I^i F/I^{i+1}\mathscr O_X$ est localement libre de rang $r_i$. 
C'est un ouvert de Zariski de $X$, non vide puisqu'il contient $x_0$ par construction ; par
conséquent, $X^\gen \subset U$. 
Soit $x$ un point de $X^\gen$. Puisque $x\in U$, on dispose d'une filtration 
\[0=I^nF_{X_\groth,x}\subset I^{n-1}F_{X_\groth,x}\subset \ldots
\subset IF_{X_\groth,x}\subset IF_{X_\groth,x},\]
dont le $i$-ème quotient $\mathscr I^iF_{X_\groth, x}/\mathscr I^{i+1}F_{X_\groth,x}$ est
un espace vectoriel de dimension $r_i$
sur le corps résiduel $\mathscr O_{X_\groth,x}/I\mathscr O_{X_\groth,x}$. 
Il s'ensuit que $\ell_{X_\groth,x}$ est égal à $\sum_i r_i$ pour tout $x\in X^\gen$, ce qui termine la preuve de
\emph a).

L'assertion \emph b) résulte quant à elle du fait que le morphisme
de $\mathscr O_{V,x}$ vers $\mathscr O_{X_\groth, x}$ est géométriquement régulier,
et du corollaire \ref{coro-longueur-artinien}. 
\end{proof}

\begin{defi}\label{defi-longueur-generique}
On conserve les notations de la proposition \ref{prop-longueur-generique}. 
La valeur constante de la fonction
$x\mapsto \ell_{X_\groth,x}(F)$ sur l'ensemble non vide~$U_i^\gen$ 
sera appelé la \textit{longueur générique de~$F$ le long de~$X_i$}. 
Si $F=\mathscr O_X$, nous dirons que c'est la \textit{multiplicité générique}
de~$X$ le long de~$X_i$. 
\end{defi}

\begin{prop}\label{prop.long-restr}
On conserve les notations de la proposition \ref{prop-longueur-generique}. 
Soit $V$ un domaine analytique de~$X$,
soit $W$ une composante irréductible de $V$
et soit soit~$i$ l'unique indice tel que $W\subset X_i$.
La multiplicité générique de $F|_V$ le long de~$W$
est égale à la multiplicité de~$F$ le long de~$X_i$.
\end{prop}
\begin{proof}
En effet, soit $x\in V^\gen\cap W$. Alors, $x\in U_i^\gen$.
Par définition des anneaux G-locaux,
$\mathscr O_{V_G,x}$ s'identifie à $\mathscr O_{X_G,x}$
de sorte que $\ell_{V_G,x}(F)=\ell_{X_G,x}$ pour tout
point $x\in V$. La proposition en découle.
\end{proof}

\begin{lemm}[Principe GAGA pour les longueurs génériques]
\label{lem-gaga-multiplicite}
Soit $\mathscr X$ un schéma localement de type fini sur une algèbre affinoïde $A$, soit $\mathscr Z$ une composante
irréductible de $\mathscr X$ et soit $\zeta$ son point générique. Soit $\mathscr F$ un faisceau cohérent sur $\mathscr X$. 

\begin{enumerate}
\item La longueur générique de $\mathscr F^\an$ le long de $\mathscr Z^\an$ est égale à la longueur générique de~$\mathscr F$ le long de~$\mathscr Z$, c'est-à-dire à la longueur du
$\mathscr O_{\mathscr Z\mathscr ,\zeta}$-module de type fini $\mathscr F_\zeta$. 

\item La multiplicité générique de $\mathscr X^\an$ le long de $\mathscr Z^\an$ est égale à la multiplicité générique de 
$\mathscr X$ le long de $\mathscr Z$, c'est-à-dire à la longueur de l'anneau local artinien $\mathscr O_{\mathscr Z,\zeta}$. 
\end{enumerate}
\end{lemm}

\begin{proof}
Choisissons $z\in (\mathscr Z^\an)^\gen$. Alors $z$ est Zariski-dense dans $\mathscr Z^\an$, si bien que l'image de $z$
sur $\mathscr Z$ est égale à $\zeta$. Les deux morphismes d'anneaux naturels de $\mathscr O_{\mathscr Z,\zeta}$ vers
$\mathscr O_{\mathscr Z^\an,z}$ et de $\mathscr O_{\mathscr Z^\an,z}$ vers $\mathscr O_{\mathscr Z^\an_\groth,z}$ sont réguliers, 
si bien que leur composée l'est aussi. L'assertion \emph a) découle alors du corollaire \ref{coro-longueur-artinien} ; la'assertion 
\emph b) est simplement
l'assertion \emph a) dans le cas particulier où $\mathscr F=\mathscr O_{\mathscr X}$. 
\end{proof}

\begin{lemm}[Longueur générique et extension des scalaires]\label{lemme-longueur-XK}
Soit $X$ un espace $k$-analytique et soit $K$ une extension complète de $k$. 
Soit $F$ un faisceau cohérent sur $X$. 
Soit $Z$ une composante irréductible de $X$, soit $\ell$ la longueur
générique de
$X$ le long de~$Z$, soit $\ell_K$ la longueur générique de $F_K$ le long de $Z_K$, 
et soit $\mu$ la multiplicité générique de $X_{\mathrm{red},K}$ le long de
$Z_K$. On a alors $\ell_K=\mu \ell$. 
\end{lemm}

\begin{proof}
Choisissons un point $y$ sur $Z_K^\gen\subset X_K^\gen$ dont on note
$x$ l'image sur $X$ ; par construction, $x\in Z$. 
L'anneau local $\mathscr 
O_{X_{K,\groth},y}$ est artinien car $y\in X_K^\gen$, et il est plat sur 
$\mathscr O_{X_\groth,x}$ 
(\ref{ss-XKG-plat}); ce dernier est donc également artinien, ce qui montre
que $x$ appartient à $X^\gen$, et donc à $Z^\gen$ car $x\in Z$. 

Pour alléger les notations posons $A=\mathscr O_{X_\groth,x}$, 
$B= \mathscr 
O_{X_{K,\groth},y}$ et $M=F_{X_\groth,x}$. On note
$\mathfrak m$ l'idéal maximal de $A$ et $\kappa=A/\mathfrak m$ son
corps résiduel. Enfin on pose $B'=B/\mathfrak mB=\kappa\otimes_A B$. 
Remarquons que $\kappa=\mathscr O_{X_{\mathrm{red},\groth},x}$ 
et $B'=\mathscr O_{X_{\mathrm{red},K,\groth},y}$ 
(proposition \ref{prop-gfini} \emph d) et \emph c)). 

Par définition de $\ell$ il existe une filtration 
\[\{0\}=M_0\subset M_1\subset \ldots\subset M_\ell=M\]
du $A$-module $M$ dont les quotients successifs sont
$A$-isomorphes à $\kappa$. Comme $B$
est plat sur $A$, cette filtration en induit une
\[\{0\}=B\otimes_AM_0\subset B\otimes_A M_1\subset \ldots\subset B\otimes_A M_\ell=B\otimes_A M\]
dans la catégorie des $B$-modules dont les quotients successifs sont tous 
$B$-isomorphes à $B'$. Il vient
\[
\ell_K=\ell_B(B\otimes_A M)=\ell\cdot \ell_B(B')=\ell\cdot \ell_{B'}(B')=\ell \mu.\qedhere\]
\end{proof}

\section{Degré et degré calibré d'un morphisme quasi-fini}

\subsection{}\label{ss.degre}
Soit $f\colon Y\to X$ un morphisme d'espaces $k$-analytiques.
Soit $y\in Y$. 
Si $\dim_y(f^{-1}(f(y))>0$, on pose $\deg^y(f)=+\infty$.
Sinon, le point~$y$ est isolé dans sa fibre $f^{-1}(f(y))$,
l'anneau local de $f^{-1}(f(y))$ en~$y$ est une $\hr {f(y)}$-algèbre 
de dimension finie et on note
$\deg^y(f)$ sa dimension.

Lorsque $f$ est fini et plat en $y$, cet entier $\deg^y(f)$ coïncide avec l'entier $\deg^y(f)$
défini en \ref{ss-degfiniplat-local}. 

Nous dirons que $\deg^y(f)$ est le degré de~$f$ en~$y$.

Soit $V$ un domaine analytique de~$Y$ contenant~$y$
et soit $U$ un domaine analytique de~$X$ tel que $f(V)\subset U$;
soit $g\colon V\to U$ le morphisme déduit de~$f$.
Pour tout $y\in V$, on a $\deg^y(f)=\deg^y(g)$, 
car pour tout $y\in V$ d'image~$x\in X$, les anneaux locaux
en~$y$ de $f^{-1}(f(y))$ et de $g^{-1}(g(y))$ coïncident.

Soit $Z$ un sous-espace analytique fermé de~$X$ contenant~$x$
et soit $g\colon Y\times_X Z \to Z$ le morphisme déduit de~$f$
par changement de base. Comme les espaces $\hr x$-analytiques
$f^{-1}(x) $ et~$ g^{-1}(x)$ coïncident, on a $\deg^y(f)=\deg^y(g)$.

\subsection{} % Degré d'un morphisme au-dessus d'un sous-ensemble du but}
Soit $f\colon Y\to X$ un morphisme d'espaces $k$-analytiques.
Pour tout $x\in X$,
on définit  de le degré de~$f$ au-dessus de~$x$ comme la somme
\[ \deg_x(f) = \sum_{y\in f^{-1}(x)} \deg^y(f). \]
Il est fini si et seulement si $f^{-1}(x)$ est compact et de dimension nulle;
alors $f^{-1}(x)$ est un espace analytique fini sur~$\hr x$,
et $\deg_x(f)=\dim_{\hr x}(\mathscr O(f^{-1}(x)))$.

Lorsque $f$ est globalement fini et plat, cet entier $\deg_x(f)$ coïncide avec l'entier $\deg_x(f)$
défini en \ref{ss-degfiniplat-global}. 

\begin{lemm}\label{lemm.degre.K/k}
Soit $f\colon Y\to X$ un morphisme d'espaces analytiques.
Soit $K$ une extension valuée complète de~$k$; 
notons $p\colon X_K\to X$ et $q\colon Y_K\to Y$ 
les morphismes de changement de base,
et $f_K\colon Y_K\to X_K$ le morphisme déduit de~$f$ par changement de base.
Soit $x_K\in p^{-1}(x)$. 
\begin{enumerate}
\item Pour tout $y\in Y$ d'image~$x=f(y)\in X$,
tout point $x_K\in p^{-1}(x)$,
on a
\[   \deg^y(f) = \sum_{z\in f_K^{-1}(x_K) \cap q^{-1}(y)} \deg^z({f_K}). \]
\item Pour tout $x\in X$ et tout point~$x_K\in p^{-1}(x)$,
on a $\deg_{x}(f)=\deg_{x_K}(f_K)$.
\end{enumerate}
\end{lemm}
\begin{proof}
Tout se passe dans les fibres~$f^{-1}(x)$ et 
fibre $f_K^{-1}(x_K)=f^{-1}(x)\times \mathscr M(\hr{x_K})$.
Cela permet de supposer que $X=\mathscr M(k)=\{x\}$ et $X_K=\mathscr M(K)=\{x_K\}$.

Prouvons l'égalité~\emph a).
Si $\dim_y(Y)>0$, il en est de même de $\dim_{z}(Y_K)$
pour tout antécédent~$z$ de~$y$, si bien que les deux membres
de l'égalité sont infinis.
Supposons maintenant $\dim_y(Y)$. Alors $\mathscr O_{Y,y}$
est une $k$-algèbre artinienne et 
$\mathscr O_{Y,y}\otimes_k K=\prod_{z} \mathscr O_{Y_K,z}$,
où $z$ parcourt l'ensemble des antécédents de~$y$ dans~$Y_K$.
L'égalité s'ensuit.

L'égalité~\emph b) s'en déduit en sommant sur tous les points de~$Y$.
\end{proof}

\subsection{}\label{ss-sorites-longueurs}
Soit $A$ un anneau local artinien et soit $M$ un $A$-module de longueur finie.
Nous noterons $\delta_A(M)$ 
le nombre rationnel $\ell_A(M)/\ell_A(A)$. 
Indiquons quelques propriétés élémentaires de cet invariant. 

Si $A$ est réduit, c'est un corps, on a  $\ell_A(A)=1$ et $\delta_A(M)$ 
est donc simplement $\ell_A(M)$, c'est-à-dire 
la dimension de $M$ comme $A$-espace vectoriel. 

Supposons $M$ plat sur $A$, c'est-à-dire libre comme $A$-module ;
soit $r$ son rang. Comme $M\simeq A^r$,
$\ell_A(M)=r\ell_A(A)$, si bien que $\delta_A(M)=r$. 

Si $M=A_{\mathrm{red}}$, alors $\ell_A(M)=1$ et $\delta_A(M)=1/\ell_A(A)$. 

Soit $B$ une $A$-algèbre finie.
Soit $\kappa$ le corps résiduel de $A$ et $\lambda$ celui de $B$. 
Choisissons une filtration complète de~$B$ comme $B$-module
ayant $\ell_B(B)$ quotients successifs tous isomorphes à 
$\lambda$. On a alors $\ell_A(B)=\ell_B(B)\ell_A(\lambda)$, et $\ell_A(\lambda)$
est simplement la dimension de $\lambda$ comme $\kappa$-espace vectoriel, 
si bien qu'on peut finalement écrire
$\delta_A(B)=(\ell_B(B)/\ell_A(A))\dim_\kappa(\lambda)$. 

Soit $C$ une $B$-algèbre locale finie, de corps résiduel $\mu$. 
On a 
\begin{align*}
\delta_A(C)&=[\ell_C(C)/\ell_A(A)]\dim_\kappa \mu \\
&=[\ell_C(C)/\ell_B(B)][\ell_B(B)/\ell_A(A)]\dim_\lambda(\mu)\dim_\kappa(\lambda)\\
&=[\ell_C(C)/\ell_B(B)] \dim_\lambda(\mu)\cdot[\ell_B(B)/\ell_A(A)] \dim_\kappa(\lambda)\\
&=\delta_B(C)\delta_A(B).
\end{align*}

\subsection{}
Soit $f\colon Y\to X$ un morphisme entre espaces $k$-analytiques. Soit $y$ un point de $Y$ et soit $x$ son image sur $X$. 
Supposons que $f$ est quasi-fini en $y$  et 
que $x\in X^\gen$
(\ref{ss-sorites-xgen}), 
ce qui revient à demander que 
$\mathscr O_{X_\groth, x}$ soit artinien.
L'anneau local $\mathscr O_{Y_\groth,y}$ est alors artinien
car il est fini sur $\mathscr O_{X_\groth, x}$
(proposition \ref{prop-gfini}), 
si bien que $y\in Y^\gen$.
D'après la proposition~\ref{prop-gfini}, on a
$\dim_{\mathrm c}(Y_\groth,y)=\dim_{\mathrm c}(X_\groth, x)$ ; 
comme ces deux entiers sont respectivement égaux à $\dim_y Y$ et $\dim_x X$
puisque $x\in X^\gen$ et $y\in Y^\gen$, il vient 
$\dim_y Y=\dim_x X$.

Nous appellerons \textit{degré calibré} de~$f$ en~$y$ le nombre rationnel 
$\delta_{\mathscr O_{X_\groth, x}}(\mathscr O_{Y_\groth, y})$;
nous le noterons $\degcal^y(f)$. 

Soit $U$ un domaine analytique de~$X$ contenant~$x$
et soit $V$ un domaine analytique de~$Y$ contenant~$y$
tels que $f(V)\subset U$ ; soit $g\colon V\to U$
le morphisme déduit de~$f$;  on a $\degcal^y(f)=\degcal^y(g)$.
En effect, les anneaux G-locaux de~$U$ et~$X$ en~$x$ coïncident,
et les anneaux G-locaux de~$V$ et~$Y$ en~$y$ coïncident.

Soit $g\colon Z\to Y$ un morphisme et soit $z$ un antécédent de $y$
tel que $\dim_z g=0$. 
On déduit de \ref{ss-sorites-longueurs}
que $\degcal^z(f\circ g)=\degcal^y (f)\degcal^z(g)$. 

Soit~$V$ un domaine analytique de~$X$ contenant~$x$ 
et soit $j$ l'immersion de $V$ dans $X$. 
Comme $\mathscr O_{V_\groth, x}=\mathscr O_{X_\groth,x}$,
on a $\degcal^x(j)=1$. 

\subsection{}\label{def.degcal_x}
Soit $f\colon Y\to X$ un morphisme entre espaces $k$-analytiques. 
Soit $x$ un point de~$X^\gen$. 
Supposons que la fibre $f^{-1}(x)$ soit finie sur~$\hr x$.
On pose alors
\[ \degcal_x(f) = \sum_{y\in f^{-1}(x)} \degcal^y(f). \]

Si $V$ est un domaine analytique de~$X$ contenant~$x$
et $g\colon Y\times_X V\to V$
est le morphisme déduit par changement de base, on a
$ \degcal_x(g)=\degcal_x(f)$.

Soit $f\colon X_\red\to X$ l'immersion canonique.
Soit $x\in X^\gen$.
Par définition, on a 
\[ \degcal_x(f)=\degcal^x(f)=\delta_{\mathscr O_{X_\groth,x}}(\mathscr O_{X_{\red,\groth},x}) . \]
Comme $\mathscr O_{X_{\red,\groth},x}=(\mathscr O_{X_{\groth,x}})_\red$,
il en résulte que
\[ \degcal_x(f)= \delta_{\mathscr O_{X_\groth,x}}((\mathscr O_{X_{\groth},x})_\red)
 = 1/ \ell (\mathscr O_{X_\groth,x}) . \]
Autrement dit,
le degré calibré de~$f$ en~$x$ est l'inverse de la multiplicité
générique de~$X$ le long de l'unique composante
irréductible de~$X$ qui contient~$x$.

\begin{prop}\label{exemple-degcal-fini}
Soit $f\colon Y\to X$ un morphisme fini d'espaces $k$-analytiques 
et soit~$Z$ une composante irréductible de~$X$. 
Soit~$\lambda$ la longueur générique de $f_*\mathscr O_Y$ le long de~$Z$, 
et soit~$\mu$ la multiplicité générique de~$X$ le long de~$Z$. 
Pour tout $x\in Z^\gen$, on a $\degcal_x(f)=\lambda/\mu$.

En particulier, si $f$ est l'immersion canonique de~$X_\red$
dans~$X$, on a $\degcal_x(f)=1/\mu$.
\end{prop}
\begin{proof}
Soit $x\in Z^\gen\subset X^\gen$ et 
soit $y_1,\ldots, y_r$ les antécédents de $x$ sur $Y$.
Soit $V$ un bon domaine analytique de $X$ contenant $x$ ; 
posons $W=f^{-1}(V)$.  On a alors 
\begin{align*}
\degcal_x(f)&=\sum_i \degcal^{y_i}(f)\\
&=\frac 1{\ell(\mathscr O_{X_\groth,x})}\sum_i \ell_{\mathscr O_{X_\groth,x}}(\mathscr O_{Y_\groth,y_i})\\
&=\frac 1\mu
\sum_i \ell_{\mathscr O_{X_\groth,x}}(\mathscr O_{Y_\groth,y_i})\\
&=\frac 1\mu\ell_{\mathscr O_{X_\groth,x}}(\prod_i \mathscr O_{Y_\groth,y_i})\\
&=\frac \lambda \mu,
\end{align*}
où la dernière égalité résulte du corollaire \ref{coro-gfini}
et du fait que $f$ est fini, d'où la proposition. 
\end{proof}

\begin{lemm}\label{lemme-degcal-deg}
Soit $f\colon Y\to X$ un morphisme entre espaces $k$-analytiques. 
Soit $y$ un point de $Y$ et soit $x$ son image sur $X$. 
Supposons que $x\in X^\gen$
et que $f$ est quasi-fini et plat en $y$. 
On a alors l'égalité $\degcal^y(f)=\deg^y(f)$. 
\end{lemm}

\begin{proof}
Pour établir le lemme,
on peut remplacer $X$ par n'importe quel domaine analytique $V$ de $X$ contenant $x$ et $Y$
par n'importe quel domaine analytique $W$ de $f^{-1}(V)$ contenant $y$. 
Cela permet, au vu
du corollaire 
\ref{coro-quasifini-cofinal}, de supposer que $X$ et $Y$
ont affinoïdes et que $f$ est fini. Comme $f$ est plat en $y$, l'anneau local 
$\mathscr O_{Y,y}$ est un $\mathscr O_{X,x}$-module libre de rang $\deg^y(f)$. 
En vertu de la proposition~\ref{prop-gfini}, \emph c),
$\mathscr O_{Y_\groth,y}$ est 
un $\mathscr O_{X_\groth, x}$-module libre de rang $\deg^y(f)$, si bien que 
$\degcal^y(f)=\delta_{\mathscr O_{X_\groth, x}}(\mathscr O_{Y_\groth,y})=\deg^y(f)$. 
\end{proof}

\begin{coro}\label{coro-degcal-deg}
Soit $f\colon Y\to X$ un morphisme entre espaces $k$-analytiques. 
Soit~$y$ un point de~$Y$ et soit $x$ son image sur $X$. 
Supposons que $x\in X^\gen$, que
$f$ est quasi-fini en $y$ et que $X$ est réduit en~$x$. 
On a alors l'égalité $\degcal^y(f)=\deg^y(f)$. 
\end{coro}

\begin{proof}
Comme $X$ est réduit en~$x$,
l'anneau local artinien $\mathscr O_{X_\groth, x}$ est un corps. 
Par conséquent, l'anneau~$\mathscr O_{Y_\groth,y}$
est plat sur $\mathscr O_{X_\groth, x}$, si bien que $f$ est plat en~$y$ 
(\cite{ducros2024}, théorème~7.5) ; 
on conclut à l'aide du lemme précédent.
\end{proof}

\begin{prop}\label{prop.degre-multiplicatif}
Soit $g\colon Z\to Y$ et $f\colon Y\to X$ deux morphismes d'espaces $k$-analytiques. Soit $z$ un point de $Z$ et soit $y$ son image sur $Y$. 
On suppose que $f$ est quasi-fini en $y$ et que $g$
est quasi-fini en $z$. 

\begin{enumerate}
\item
Soit $X'$ un sous-espace analytique fermé
réduit d'un domaine analytique de $X$ 
tel que $x\in X'$ et que
$\dim_x X'=\dim_{\mathrm c}(X_\groth, x)$, et soit 
$g'$ le morphisme 
de $Z\times_X X'$ vers 
$Y\times_X X'$ déduit de $g$. 
On a alors $\deg^z(f\circ g)=\deg^y(f)\degcal^z(g').$

\item
Supposons que le morphisme~$g$ est plat en~$z$ ou que 
la fibre~$f^{-1}(x)$ est réduite en~$y$. 
Alors,
$\deg^z(f\circ g)=\deg^y(f)\deg^z(g)$.
\end{enumerate}
\end{prop}

Remarquons que $\dim_{\mathrm c}(X'_\groth, x)=\dim_x X'$
par construction, c'est-à-dire que $x\in (X')^\gen$, si bien que $\degcal^z(g')$ a bien un sens. 
Notons aussi que si $x\in X^\gen$ on peut prendre 
$X'=X_{\mathrm{red}}$. 

\begin{proof}
Démontrons~\emph a).
Pour alléger les notations, posons $Y'=Y\times_X X'$ et 
$Z'=Z\times_X X'$, et notons~$f'$ le morphisme de $Y'$ vers $X'$
induit par $f$. 

On a alors $\deg^z(f'\circ g')=\deg^z(f\circ g)$ et $\deg^y f'=\deg^y f$.
Comme $X'$ est réduit,
le corollaire \ref{coro-degcal-deg} entraîne
les égalités $\degcal^z(f'\circ g')=\deg^z(f'\circ g')$ et
$\degcal^y(f')=\deg^y(f')$, car $f'\circ g'$ et~$f'$ se
déduisent de~$f\circ g$ et~$f$ par changement de base.  
On a par ailleurs $\degcal^z(f'\circ g')=\degcal^y (f')\degcal^z (g')$;
il vient  donc
$\deg^z(f\circ g)=\deg^y(f)\degcal^z(g')$. 

Comme~$g'$ se déduit de~$g$ par changement de base,
on  a $\deg^z(g)=\deg^z(g')$.
Pour démontrer~\emph b), il suffit de donc vérifier que si $g$ est plat en~$z$ 
ou si $f^{-1}(x)$ est réduit en~$y$, alors 
$\degcal^z(g')=\deg^z(g')$. 

Si $g$ est plat en $z$, alors $g'$ est encore plat en $z$, si bien que
$\degcal^z(g')$ est égal à $\deg^z(g')$ (lemme \ref{lemme-degcal-deg}).

Supposons maintenant que $f^{-1}(x)$ est réduit en $y$. 
D'après le corollaire \ref{coro-degcal-deg}, 
il suffit de prouver que $Y'$ est réduit en~$y$. 
Pour ce faire, on peut remplacer~$X$ par n'importe quel domaine
analytique~$U$ de~$X$ contenant~$x$, l'espace~$X'$ par $U\times_X X'$ 
et l'espace~$Y$ par n'importe quel domaine analytique~$V$
de $Y\times_X U$ contenant~$y$. 
On peut donc tout d'abord supposer 
que $X'$ est un sous-espace analytique fermé de $X$ puis, 
en vertu du corollaire \ref{coro-quasifini-cofinal}
supposer que $X$ et $Y$ sont affinoïdes et que $f$ est fini. 
L'anneau local $\mathscr O_{X'_\groth, x}$ est artinien et réduit ; c'est donc un corps. Comme il est 
fidèlement plat sur l'anneau local  $\mathscr O_{X', x}$, ce dernier est également un corps. 
Le morphisme $f'$ étant sans bord en $y$, il résulte alors du théorème~6.3.3 de \cite{ducros2018}
que $\mathscr O_{f^{-1}(x), y}$ est fidèlement plat sur $\mathscr O_{Y',y}$. Par conséquent, l'anneau $\mathscr O_{Y',y}$ s'injecte dans un anneau réduit et est dès lors lui-même réduit, ce qui signifie que $Y'$ est réduit en $y$. 
\end{proof}

\section{Semi-continuité du degré}

\begin{prop}\label{prop.deg-y-scs}
Soit $f\colon Y\to X$ un morphisme d'espaces $k$-analytiques.
La fonction $y\mapsto \deg^y(f)$ de $Y\to\N\cup\{+\infty\}$ 
est semi-continue supérieurement.
\end{prop}
\begin{proof}
L'ensemble des points~$y\in Y$ en lesquels $\dim_y(f^{-1}(f(y))>0$
est un fermé de Zariski de~$Y$; c'est aussi l'ensemble des points
$y$ tels que $\deg^y(f)$ soit infini. Notons~$V$ l'ouvert complémentaire.
Il suffit de prouver que $y\mapsto \deg^y(f)$ 
est semi-supérieurement sur~$V$. Autrement dit, on peut supposer 
que $f$ est partout de dimension relative nulle 

L'assertion à montrer est G-locale sur $Y$ ;  on peut ainsi supposer que $X$ et $Y$
sont affinoïdes.
Soit $y\in Y$. 
D'après le \emph{Main theorem} analytique (\cite{ducros2007}, Thm. 3.2)
il existe une factorisation 
$V\xrightarrow g W \xrightarrow j T\xrightarrow h X$  de~$f$,
où $V$ est un voisinage affinoïde de $y$ dans $Y$, où
$g\colon V\to W$ est fini, $j\colon W\to T$ est l'immersion
d'un domaine affinoïde, et $h\colon T\to X$ est étale.

Le morphisme~$h$ étant fini et plat en tout point, la fonction~$t\mapsto \deg^t(h)$
est semi-continue supérieurement
(lemme \ref{lemm-finiplat-scs}). 
Il en est donc de même 
de sa restriction à $W$, qui n'est autre
que $w\mapsto \deg^w ({h \circ j})$.
La fonction $v\mapsto \deg^v(g)$ 
est semi-continue supérieurement.
Soit en effet $v\in V$ ; posons $d=\deg^v(g)$. Comme $g$ est finie, il existe
des voisinages affinoïdes~$\Omega$ de~$v$ dans $V$ et~$\Upsilon$ de~$g(v)$
dans $W$ tels
que $g$ induise un morphisme fini de~$\Omega$ sur~$\Upsilon$.
On peut également supposer que $v$ est le seul antécédent de~$g(v)$
dans~$\Omega$ ; le rang ponctuel du faisceau cohérent $g_*\mathscr O_{\Omega}$
en~$g(v)$ est donc égal à~$d$; par le lemme de Nakayama, il est majoré par 
$d$ sur un voisinage $\Upsilon'$ de $g(v)$ dans $\Upsilon$. 
Soit $u$ un point de $g^{-1}(\Upsilon')$. 
On a 
\begin{align*}
d&\geq \dim_{\hr {g(u)}}(g_*\mathscr O_{\Omega}\otimes \hr {g(u)})\\
&=\dim_{\hr {g(u)}}
\prod_{\omega\in \Omega\cap g^{-1}(g(u)}\mathscr O_{g^{-1}(g(u)),\omega}\\
&=\sum_{\omega \in \Omega \cap g^{-1}(g(u))}\deg^\omega(g),
\end{align*}
si bien que $\deg^\omega(g)\leq d$ pour tout $\omega$
appartenant à  $\Omega \cap g^{-1}(g(u))$ ; c'est en particulier le cas
pour $\omega=u$, ce qui achève de prouver la semi-continuité supérieure de $g$.

Comme un morphisme quasi-étale est quasi-fini et à fibres réduites,
la proposition~\ref{prop.degre-multiplicatif} entraîne 
la relation $\deg^v(f)=\deg^{g(v)}({h\circ j}) \deg^v(g)$
pour tout $v\in V$. 
Il s'ensuit alors que $v\mapsto \deg^v(g)$ est semi-continue supérieurement sur $V$, 
donc au voisinage de $y$, ce qui achève la démonstration. 
\end{proof}

\begin{prop}\label{prop.deg-x-scs}
Soit $f\colon Y\to X$ un morphisme d'espaces $k$-analytiques
qui est compact et séparé. 
La fonction $x\mapsto \deg_x(f)$ de~$X$ dans~$\N\cup\{+\infty\}$
est semi-continue supérieurement.
\end{prop}
\begin{proof}
Nous allons reprendre les arguments de la démonstration
de la proposition précédente.
Soit $Z$ le lieu des $y\in Y$ tels que $\dim(f^{-1}(f(y)))>0$ ;
c'est un fermé de Zariski de~$Y$. Par compacité de~$f$,
son image est une partie fermée de~$X$; c'est lieu des points~$x$
en lesquels $\deg^x(f)$ est infini.
Il suffit donc de démontrer que la fonction~$x\mapsto\deg^x(f)$
est semi-continue supérieurement sur son complémentaire.
Cela permet de se ramener au cas où $f$ est compacte
à fibres de dimension nulle. On peut raisonner G-localement sur $X$, ce qui permet de le supposer séparé ; 
dans ce cas, $Y$ est également séparé. 

Soit $x$ un point de~$X$.
La fibre $f^{-1}(x)$ est finie; 
en appliquant 
le \emph{Main theorem} de~\cite{ducros2007}
à chacun de ses éléments (ce qui est possible
car $Y$ est séparé) et en utilisant la compacité de~$f$,
on obtient l'existence d'un diagramme
$V\xrightarrow g W \xrightarrow j T\xrightarrow h X$  de~$f$,
où $V$ est un voisinage analytique compact de~$f^{-1}(x)$,
$W$ est un espace analytique compact,
$g\colon V\to W$ est fini, $j\colon W\to T$ est l'immersion
d'un domaine analytique, et $h\colon T\to X$ est étale.
Par compacité de~$f$, l'image $f(V)$ est un voisinage de~$x$.
Il suffit ainsi de démontrer que $\deg^\cdot (h\circ j\circ g)$
est semi-continue supérieurement.

La fonction $w\mapsto \deg_w(g)\colon W \to \N$ est égale à 
$w\mapsto \dim_{\hr w}(g_*\mathscr O_V)\otimes \hr w$ ; elle est semi-continue
supérieurement par le lemme de Nakayama.

Soit $(w_i)$ la famille des antécédents de~$x$ dans~$W$;
pour tout~$i$, posons $d_i=\deg^{w_i}(h\circ j)=\deg^{w_i}(h)$.
Par définition, on a $\deg_x(h\circ j)=\sum d_i$.
Comme $h$ est étale, c'est un morphisme fini et plat en tout point;
par suite, pour tout~$i$, il existe un voisinage analytique compact~$T_i$
de~$w_i$ dans~$T$ qui est fini et plat sur un voisinage
analytique compact~$X_i$ de~$x$ dans~$X$.
On peut supposer que les~$T_i$ sont
deux à deux disjoints, que $T_i$ est de degré constant $d_i$ sur $X_i$,
et que $\deg_w(g)\leq _{w_j}(g)$ pour tout $w\in j^{-1}(T_i)$. 

La réunion des $j^{-1}(T_i)$ est un voisinage de~$(h\circ j)^{-1}(x)$ 
dans~$W$; comme $W$ est compact, il existe un voisinage~$X'$
de~$x$ dans~$\bigcap X_i$  tel que $(h\circ j)^{-1}(X')$ soit contenu
dans la réunion de $j^{-1}(T_i)$.

Pour $x'\in X'$, le degré $\deg_{x'}(h\circ j)$
est la somme des $\deg_{x'}(h\circ j)|_{j^{-1}(T_i)}$,
qui est majoré par $\sum d_i$.

Soit $x'$ un point de~$X'$; par définition,
\[ \deg_{x'}(f)= \sum_{y'\in f^{-1}(x')} \deg^{y'} (f)
= \sum_i \sum_{\substack{y'\in f^{-1}(x') \\ g(y') \in j^{-1}(T_i)}} \deg^{y'}(f).\]
Comme $h\circ j$ est quasi-étale, la proposition~\ref{prop.degre-multiplicatif}
entraîne
l'égalité $\deg^{y'}(f)=\deg^{g(y')}(h\circ\nobreak j)\deg^{y'}(g)$. Par suite,
\begin{align*}
 \deg_{x'}(f) 
& = \sum_i \sum_{\substack{y'\in f^{-1}(x') \\ g(y') \in j^{-1}(T_i)}} \deg^{g(y')}(h\circ j) \deg^{y'}(g) \\
& = \sum_i \sum_{\substack{w' \in j^{-1}(T_i) \\ h\circ j(w')=x'}}
 \sum_{y'\in g^{-1}(w')} \deg^{g(y')}(h\circ j) \deg^{y'}(g) \\
&= \sum_i \sum_{\substack{w' \in j^{-1}(T_i) \\  h\circ j(w')=x'}}
 \deg^{w'} (h\circ j)  \deg_{w'}(g). 
\end{align*}
Appliqué à $x'=x$, cette formule démontre que 
\[ \deg_{x}(f) = \sum_i d_i \deg_{w_i(g)}. \]
On a par ailleurs
par choix des $T_i$ les inégalités
$\deg_{w'}(g)\leq \deg_{w_i}(g)$,
et $\deg^{w'}(h\circ j)\leq d_i$
pour tout $i$ et tout $w'\in j^{-1}(T_i)$ ; il vient alors 
\[ \deg_{x'}(f) \leq \sum_i d_i \deg_{w_i}(g)  = \deg_x(f) \]
pour tout $x'$ dans~$X'$.
Cela entraîne la semi-continuité voulue.
\end{proof}

\begin{rema}
L'hypothèse de séparation est indispensable à la validité de la proposition 
\ref{prop.deg-x-scs} ci-dessus. En effet, soit $X$ le disque unité fermé et
soit $X'$ son domaine affinoïde défini par l'égalité $\abs T=1$. 
Soit $Y$ l'espace $k$-analytique
compact obtenu en recollant deux copies de $X$ le long de $X'$, 
le recollement se faisant via l'identité de $X'$ (et non via l'inversion, qui conduirait
à la droite projective) ; on pourrait alternativement
décrire $Y$ comme la fibre générique de la droite affine formelle
avec origine dédoublée. 
Soit $f \colon Y\to X$ le morphisme induit par l'identité sur chacune
des deux copies de $X$ constituant $Y$. Alors $\deg_x(f)=1$ si $x\in X'$ et $\deg_x(f)=2$ si $x\notin X'$. 
Par conséquent $x\mapsto \deg_x(f)$ est semi-continue inférieurement, mais pas supérieurement. 
\end{rema}

\begin{lemm}
Soit $f\colon Y\to X$ un morphisme d'espaces $k$-analytiques.
Soit $x$ un point de~$X$ et soit $W$ un domaine
analytique compact de~$f^{-1}(x)$.
Lorsque $V$ et $V'$ sont des voisinages analytiques 
compacts de~$W$ dans~$Y$ tels
 que $V \cap f^{-1}(x)=V'\cap f^{-1}(x)=W$, 
les fonctions $\xi\mapsto \deg_\xi(f|_V)$ et $\xi\mapsto \deg_\xi(f|_{V'})$
coïncident au voisinage de~$x$.
\end{lemm}
\begin{proof}
Par construction, $V\cap V'$ est un voisinage de~$V\cap f^{-1}(x)$ dans~$V$.
Comme $V$ est compact, l'application~$f|_V$ est propre;
il existe donc un voisinage ouvert~$U$ de~$x$ tel que $V\cap f^{-1}(U)\subset V\cap V'$, c'est-à-dire $f^{-1}(U)\cap V\subset f^{-1}(U)\cap V'$.
Par symétrie, il existe un voisinage ouvert~$U'$ de~$x$
tel que $f^{-1}(U')\cap V'\subset f^{-1}(U')\cap V$.
L'ouvert $U''=U\cap U'$ est un voisinage de~$x$
tel que $f^{-1}(U'')\cap V=f^{-1}(U'')\cap V'$.
En revenant aux définitions des fonctions $\deg_\xi(f|_V)$ et
$\deg_\xi(f|_{V'})$, on en déduit qu'elles coïncident sur~$U''$.
\end{proof}

\subsection{}\label{def-degre-bizarre}
Soit $f\colon Y\to X$ un morphisme d'espaces $k$-analytiques.
Soit $y$ un point de~$Y$ et soit $x$
son image sur $X$. Supposons que $y$ est isolé dans sa fibre~$f^{-1}(x)$.
Alors, $\{y\}$ est un domaine compact de~$f^{-1}(x)$
et il existe un voisinage analytique 
compact~$V$ de~$y$ dans~$Y$
tels que $V\cap f^{-1}(x)= \{y\}$.
D'après le lemme précédent, le germes en~$x$
de l'application $\xi\mapsto \deg_\xi(f|_V)$ ne dépendent
pas du choix de~$V$; on note $\xi\mapsto \deg^{(y)}_\xi(f)$ ce germe
d'application à valeurs dans $\N\cup\{+\infty\}$.

D'après la proposition~\ref{prop.deg-x-scs}, 
si le germe $(Y,y)$ est séparé, 
le germe d'application $\xi\mapsto \deg^{(y)}_\xi(f)$
est semi-continu supérieurement en~$x$.

Si $f$ est fini et plat en~$y$, le germe $\xi\mapsto \deg^{(y)}_\xi(f)$
est localement constant en~$x$.
En effet, il existe un voisinage analytique compact~$V$ de~$y$
tel que $f^{-1}(x)\cap V=\{y\}$ 
et un voisinage analytique compact~$U$ de~$x$
tels que $f$ induise un morphisme fini et plat de~$V$ sur~$U$.
L'assertion découle alors du~\S\ref{ss.deg-x-fini-plat}.

\section{Cycles}
\index{cycle}

Soit $X$ un espace $k$-analytique.

\begin{defi}
On appelle \emph{cycle} sur~$X$ une application~$z$ 
de l'ensemble des fermés de Zariski intègres (i.e., irréductibles et réduits)
de~$X$
dans~$\Z$ qui est \emph{localement finie} au sens suivant:
pour tout domaine affinoïde~$V$ de~$X$, l'ensemble des fermés
de Zariski intègres~$Y$ de~$X$ tels que $Y\cap V\neq\emptyset$ et $z(Y)\neq 0$ 
est fini.
\end{defi}

On notera  $\sum z(Y) [Y]$ le cycle $z$; cette notation est abusive
lorsque l'ensemble $E$ des fermés de Zariski~$Y$ de~$X$ tels que $z(Y)\neq 0$
est infini. 
On dit que $z$ est \emph{effectif} si l'on a $z(Y)\geq 0$ pour tout~$Y$.
Comme $E$ est G-localement fini, la réunion des $Y$ pour $Y$ parcourant $E$
est un fermé de Zariski de $X$, appelé le \emph{support}
du cycle $z$ ; on le note $\abs z$. 
Observons que $\abs z=\emptyset$ si et seulement si $z=0$.

L'ensemble $\gZ(X)$ est un groupe abélien. 

On dira qu'un cycle~$z$ est purement $p$-dimensionnel (resp. purement
$p$-codimensionnel) si $z(Y)\neq 0$ entraîne $\dim(Y)=p$
(resp. $\codim(Y)=p$). Dans ce cas, son support est purement
$p$-dimensionnel (resp. purement $p$-codimensionnel).
On notera $\gZ_p(X)$ (resp. $\gZ^p(X)$) le sous-groupe de~$\gZ(X)$
formé des cycles purement $p$-dimensionnels 
(resp. purement $p$-codimensionnels).

\subsection{}
Soit $Y$ un sous-espace analytique fermé de~$X$.
Soit $(Y_i)$ la famille des composantes irréductibles de~$Y$; 
pour tout~$i$, notons $n_i$ la multiplicité
générique de~$Y$ le long de~$Y_i$
(\ref{defi-longueur-generique}). 
Alors,  $[Y]=\sum n_i [Y_{i,\red}]$ est un  cycle sur~$X$,
qu'on appelle le cycle associé à~$Y$; il est effectif.

\begin{prop}\label{prop.cycle-restriction}
\begin{enumerate}
\item
Soit $V$  un domaine analytique de~$X$.
Il existe un unique morphisme de groupes $\gZ(X)\to \gZ(V)$,
noté $z\mapsto z|_V$,
qui commute aux sommes localement finies et
qui,  pour tout sous-espace analytique fermé~$Y$ de~$X$,
applique le cycle~$[Y]\in \gZ(X)$ sur le cycle  $[Y\cap V]\in \gZ(V)$.

\item
Soit $V$ un domaine analytique de~$X$. Pour tout $z\in \gZ(X)$,
on a $\abs{z|_V}=\abs z\cap V$. En particulier, 
pour que $z|_V=0$, il faut et il suffit que le support de~$z$
soit disjoint de~$V$.

\item
Ces applications font de  $V\mapsto \gZ(V)$ 
un G-préfaisceau en groupes abéliens sur~$X$;
c'est un faisceau.
\end{enumerate}
\end{prop}
\begin{proof}
Démontrons~\emph a). 
Soit $z$ un cycle sur~$X$; pour tout fermé irréductible~$Y$
de~$X$, soit~$n_Y=z(Y)$ la multiplicité de~$Y$ dans~$z$.
Soit $T$ un fermé de Zariski irréductible de~$V$ et soit $Y$
son adhérence de Zariski dans~$X$.
Si $\dim(Y)=\dim(T)$, on pose $n_T=n_Y$; sinon, on pose $n_T=0$.
La famille des~$Y$ tels que~$n_Y\neq 0$ est localement finie sur~$X$,
par définition d'un cycle. 
La famille des~$T$
tels que~$n_T\neq 0$ est la famille des composantes
irréductibles de $Y\cap V$, où $Y$ est un fermé irréductible de~$X$
tel que $n_Y\neq0$. Elle est donc encore localement finie.
Ainsi, la famille des~$(n_T)_T$ est un cycle sur~$V$ ;
on le note~$z|_V$. L'application $z\mapsto z|_V$ est manifestement
un morphisme de groupes qui commute aux sommes localement finies.

Soit $Y$ un sous-espace analytique fermé de~$X$. 
Soit $(Y_i)$ la famille de ses composantes irréductibles
et soit~$n_i$ la multiplicité générique de~$Y$ le long de~$Y_i$.
Le cycle~$z=[Y]$ qui est associé à~$Y$ est
donné par $\sum n_i [Y_i]$.
Pour tout~$i$, soit $(Z_{ij})$ la famille des composantes irréductibles
de~$Y_i\cap V$. D'après la proposition~\ref{prop.long-restr},
appliquée à l'espace~$Y$, à son domaine~$Y\cap V$
et au faisceau~$\mathscr O_Y$,
la multiplicité de~$Y\cap V$ le long de~$Z_{ij}$
est égale à~$n_i$. Par conséquent, le cycle associé à~$Y\cap V$
est égal à~$\sum_{i,j} n_i Z_{ij}$;
c'est également le cycle $[Y]|_V$ défini ci-dessus.
Le morphisme $z\mapsto z|_V$ satisfait donc les conditions de
l'assertion~\emph a); vérifions que c'est le seul.

Soit $\rho\colon \gZ(X)\to \gZ(V)$ un morphisme satisfaisant
ces conditions; démontrons que $\rho(z)=z|_V$ pour tout cycle~$z$.
Par additivité localement finie, il suffit de le vérifier
lorsque $z=[Y]$, où $Y$ est un fermé de Zariski intègre de~$X$.
Soit $(Z_i)$ la famille des composantes irréductibles de $Y\cap V$.
Comme $Y$ est réduit, son domaine~$Y\cap V$ est réduit,
de sorte que $\rho(Y)=[Y\cap V] = \sum [Z_i]=[Y]|_V$.

\medskip

L'assertion~\emph b) découle de la construction de morphisme de
restriction des cycles.

Démontrons maintenant~\emph c).
Soit $V$ et $W$ des domaines analytiques de~$X$ tels que $W\subset V$
et soit $Y$ un sous-espace analytique fermé de~$X$.
D'après ce qui précède,  on a 
\[ [Y]|_{W} = [Y\cap W] = [(Y\cap V)\cap W]= [Y\cap V]|_W 
= ([Y]|_V )|_W. \]
Cela entraîne que l'application $V\mapsto \gZ(V)$ est 
un  G-préfaisceau pour ces applications de restriction.
Démontrons que ce préfaisceau est un faisceau.

Soit $V$ un domaine analytique de~$X$, soit $(V_i)$ un G-recouvrement de~$V$,
pour tout couple~$(i,j)$, posons $V_{ij}=V_i\cap V_j$.
Pour tout~$i$, soit $z_i\in \gZ(V_i)$; supposons que
pour tout couple~$(i,j)$, on ait $z_i|_{V_{ij}}=z_j|_{V_{ij}}$
et démontrons qu'il existe un unique cycle $z\in \gZ(V)$
tel que $z|_{V_i}=z_i$ pour tout~$i$.

L'unicité d'un tel cycle~$z$ découle de~\emph b): 
si $z|_{V_i}=0$ pour tout~$i$, alors le support de~$z$
ne rencontre aucun des~$V_i$, donc est vide, donc $z=0$.

Prouvons maintenant l'existence d'un cycle~$z$ tel que $z|_{V_i}=z_i$
pour tout~$i$.

Soit $Y$ un sous-espace analytique fermé intègre de~$X$ 
et soit $m$ un entier.
Pour tout~$i$, soit $T_i$ la réunion des composantes irréductibles
de $Y\cap V_i$ apparaissant avec multiplicité~$m$ dans le cycle~$z_i$;
c'est un fermé de Zariski de~$Y\cap V_i$.
Le sous-espace fermé $T_i\cap V_j$ de~$V_{ij}$ 
est la réunion des composantes irréductibles
de $T\cap V_{ij}$ qui apparaissent avec multiplicité~$m$ dans 
le cycle~$z_i|_{V_{ij}}$. Comme $z_i|_{V_{ij}}=z_j|_{V_{ij}}$,
on a $T_i\cap V_j=T_j\cap V_i\subset T_j$.
Soit $T^{(m)}$ 
la réunion des~$T_i$; d'après ce qui précède, on a $T^{(m)}\cap V_i=T_i$
pour tout~$i$. Par suite, $T^{(m)}$ est un fermé de Zariski de~$V$
contenu dans~$Y$.

La famille $(Y\cap V_i)$ est un G-recouvrement de~$Y$;
il existe donc un indice~$i$  et une composante irréductible~$E$ de $Y\cap V_i$;
soit $m$ sa multiplicité dans~$z_i$.
Comme $E$ est Zariski dense dans~$Y$ et comme $T^{(m)}$ contient~$E$,
on a $T^{(m)}=Y$. Par suite, $Y\cap V_j=T_j$
pour tout~$j$; autrement dit, pour tout~$j$,
toute composante irréductible de $Y\cap V_j$
est de mulitplicité~$m$ dans~$z_j$, si bien que 
$T^{(n)}=\emptyset $ si $n\neq m$. Posons $z(Y)=m$.

Démontrons maintenant que la famille $z=(z(Y))_Y$ est un cycle sur~$V$.
Fixons un indice~$i$ et considérons un domaine analytique compact~$W$
de~$V$ contenu dans~$V_i$.
Par définition d'un cycle, l'ensemble des sous-espaces fermés~$T$ de~$V_i$
tels que $z_i(T)\neq 0$ et $T$ rencontre~$W$ est fini.
Par construction de l'application $Y\mapsto z(Y)$,
pour tout sous-espace analytique fermé intègre~$Y$ de~$V$
qui rencontre~$W$ et tel que $z(Y)\neq 0$,
il existe une composante irréductible~$T$ de~$Y\cap V_i$
telle que $z_i(Y)=z_i(T)\neq 0$ et $T$ rencontre~$W$; alors $Y$
est l'adhérence de Zariski de~$T$ dans~$V$.
Par suite, l'ensemble des sous-espaces analytiques fermés intègres~$Y$ de~$V$
tels que $z(Y)\neq 0$ et $Y\cap W\neq\emptyset$ est fini.
Comme $(V_i)$ est un G-recouvrement de~$V$, cela entraîne
que la famille~$(z(Y))_Y$ est localement finie; ainsi, $z$ est un cycle
sur~$V$.

Démontrons maintenant que $z|_{V_i}=z_i$ pour tout~$i$.
Soit $T$ un sous-espace analytique fermé intègre de~$V_i$
et soit $Y$ son adhérence de Zariski dans~$V$.
Compte tenu de la proposition~\ref{prop.cycle-restriction}
définissant la restriction d'un cycle à un domaine,
il suffit de démontrer que l'on a
$z_i(T)=z(Y)$ si $\dim(Y)=\dim(T)$, et $z_i(T)=0$ sinon.

Supposons d'abord $\dim(Y)=\dim(T)$; dans ce cas, $T$ est
une composante irréductible de~$Y\cap V$, donc on a $z(Y)=z_i(T)$
par construction du cycle~$z$.

Supposons maintenant $\dim(Y)\neq \dim(T)$ et démontrons
que $z_i(T)=0$; on va raisonner par contraposition et,
supposant que $z_i(T)\neq 0$, prouver que $\dim(Y)=\dim(T)$.
Pour tout~$j$, notons~$Z'_j$ la réunion des composantes de
dimension~$\dim(T)$ du cycle~$z_j$ ; pour $j=i$,
le fermé~$T$ de~$V_i$ apparaît dans~$z_i$,
car $z_i(T)\neq0$, si bien que $Z'_i$ contient~$T$.

Soit $Z'$ la réunion des~$Z'_j$. 
Le sous-espace fermé $Z'_j\cap V_k$ de~$V_{jk}$ est la réunion des composantes
du cycle~$z_j|_{V_{jk}}$ qui sont de dimension~$\dim(T)$.
Comme $z_j|_{V_{jk}}=z_k|_{V_{jk}}$, on $Z'_j\cap V_k=Z'_k\cap V_j
\subset Z'_k$. On a donc $Z'\cap V_j=Z'_j$ pour tout~$j$,
ce qui entraîne que $Z'$ est un fermé de Zariski de~$V$;
il est purement de dimension~$\dim(T)$. 

Comme $Z'$ contient~$Z'_i$, il contient~$T$,
donc également son adhérence de Zariski~$Y$, si bien 
que $\dim(Y)\leq \dim(Z')=\dim(T)$. Ainsi, $\dim(Y)=\dim(T)$.
Cela termine la démonstration de la proposition.
\end{proof}

\section{Fonctions méromorphes en géométrie analytique}

\begin{lemm}\label{lemm.regulier}
Soit $A$ une algèbre affinoïde et soit $f\in A$. 
Les propriétés suivantes sont équivalentes:
\begin{enumerate}\def\theenumi{\arabic{enumi}}\def\labelenumi{(\theenumi)}
\item La fonction~$f$ est un élément régulier de~$A$;
\item Le germe~$f_x$ est régulier pour tout $x\in\Spec(A)$;
\item Le germe~$f_x$ est régulier pour tout $x\in\mathscr M(A)$;
\item Pour tout domaine affinoïde~$V$ de~$\mathscr M(A)$,
$f|_V$ est un élément régulier de~$\mathscr A_V$;
\item Il existe un recouvrement $(V_i)$ de~$\mathscr M(A)$
par des domaines affinoïdes tels que pour tout~$i$, $f|_{V_i}$
soit un élément régulier de $\mathscr A_{V_i}$.
\end{enumerate}
\end{lemm}
\begin{proof}
%
% Commençons par une remarque. Soit $R$ un anneau et $a$ un élément de~$A$.
% Que $a$ soit un élément régulier 
% que la multiplication~$\mu_a$ dans~$R$ est injective.
% Si $\iota\colon R\ra S$ est une extension plate d'anneau, 
% le caractère régulier de~$a$ entraîne donc celui de~$\iota(a)$,
% et la réciproque vaut si $\iota$ est fidèlement plate.
% 
L'équivalence des propriétés~(1) et~(2) découle 
de la fidèle platitude du morphisme $ \coprod_{x\in\Spec(A)} \Spec(A_x)
\to\Spec(A)$,
celle de~(2) et~(3) de la fidèle platitude du morphisme  naturel
d'espaces localement annelés $\mathscr M(A)\ra\Spec(A)$.
De même, l'implication (3)$\Rightarrow$(4) découle de la platitude
du morphisme d'inclusion $V\ra\mathscr M(A)$.
L'implication (4)$\Rightarrow$(5) est évidente.
Enfin (5) entraîne~(3) car, sous ces hypothèses,
le morphisme $\coprod_i V_i\ra \mathscr M(A)$ est fidèlement plat.
\end{proof}

\subsection{}
Soit $X$ un espace analytique.
Soit $\mathscr S_X$ le sous-G-faisceau du G-faisceau~$\mathscr O_X$
dont les sections sur un domaine analytique~$V$
sont les fonctions holomorphes sur~$V$ dont la restriction
à tout domaine affinoïde~$U$ de~$V$ est un élément régulier 
dans~$\mathscr O(U)$.
Il découle du lemme~\ref{lemm.regulier} que pour
tout domaine affinoïde~$V$ de~$X$, $\mathscr S_X(V)$
est l'ensemble des éléments réguliers de~$\mathscr O_X(V)$.

\subsection{}
Par définition, le faisceau des
\emph{fonctions méromorphes}~$\mathscr M_X$\index{fonction méromorphe}
sur~$X$ est le faisceau pour la G-topologie associé au préfaisceau 
$U\mapsto \mathscr S_X(U)^{-1} \mathscr O_X(U)$.
Le morphisme naturel de préfaisceaux donné par $f\mapsto f/1$  
induit un morphisme de~$\mathscr O_X$ dans~$\mathscr M_X$;
il est injectif.

Lorsque $U$ est un domaine affinoïde de~$X$, on a encore
un morphisme naturel, injectif, de l'anneau total des fractions 
de~$\mathscr O_X(U)$ dans~$\mathscr M_X(U)$.

\subsection{}
Soit $f$ une section globale de~$\mathscr M_X$. 

On définit le \emph{faisceau des dénominateurs}~$\mathscr D_f$ de~$f$
comme le faisceau d'idéaux de~$\mathscr O_X$ constitué des sections~$a$
telles que~$af$ appartienne à l'image
de~$\mathscr O_X$ dans~$\mathscr M_X$.
C'est un faisceau cohérent. En effet, on peut se borner à vérifier
cette assertion sous l'hypothèse que $X$ est
un espace affinoïde sur lequel 
$f$ s'écrit $g/h$, pour $g\in\mathscr O_X(X)$ et $h\in\mathscr S_X(X)$.
Comme $h$ n'est pas diviseur de zéro, 
$\mathscr D_f$ apparaît comme le noyau du morphisme
$a\mapsto haf $ de~$\mathscr O_X$ dans~$\mathscr O_X/h\mathscr O_X$,
d'où l'assertion.

Comme $\mathscr D_f$ est un faisceau cohérent d'idéaux, son lieu
des zéros~$D_f$ est un fermé de Zariski de~$X$
(\cf\cite{ducros2009}, \S0.2.3).
Le complémentaire de~$D_f$ est le plus grand ouvert de Zariski de~$X$
sur lequel $f$ est holomorphe.
Par définition d'un élément régulier, 
si $V$ est un domaine affinoïde
de~$X$ sur lequel $f$ s'écrit $g/h$, avec $g\in\mathscr O_X(V)$
et $h\in\mathscr S_X(V)$, on a $D_f\cap V\subset \mathscr V(h)$,
donc $D_f\cap V$ ne contient aucune composante irréductible de~$V$.
D'après le th.~4.20 de~\cite{ducros2009},
pour tout domaine analytique~$V$ de~$X$, $V\cap D_f$ ne contient 
aucune composante irréductible de~$V$.

\begin{lemm}
Soit $A$ une algèbre affinoïde, soit $X=\mathscr M(A)$.
L'homomorphisme injectif de~$\Frac(A)$,
l'anneau total des fractions de~$A$, dans~$\mathscr M_X(X)$
est un isomorphisme.
\end{lemm}
\begin{proof} 
Démontrons qu'il est surjectif.
Soit $f\in\mathscr M_X(X)$, soit $\mathscr D_f$ son faisceau des dénominateurs,
soit~$I$ l'idéal correspondant de~$A$.

Démontrons que $I$ contient un élément régulier~$h$ de~$A$.

Raisonnons par l'absurde. Si $I$ ne contient aucun élément régulier,
c'est qu'il est contenu dans un idéal premier associé à~$A$,
car $A$ est un anneau noethérien (Bourbaki, AC IV, \S1, \no4, prop.~8).
Soit $\mathfrak p$ un tel idéal premier associé et soit $a$
un élément de~$A$ dont~$\mathfrak p$ soit l'annulateur.
Comme $\mathscr D_f$ est le faisceau cohérent associé à
l'idéal~$I$, on a, 
pour tout domaine analytique~$V$
de~$\mathscr M(A)$
l'égalité 
$\mathscr D_f(V)=I\mathscr O_X(V)$.
Par suite, pour tout tel domaine~$V$
 et toute section $h\in\mathscr D_f(V)$,
on a 
$a|_V h=0$.

Soit maintenant $(V_i)$ une famille finie de domaines affinoïdes de~$X$
recouvrant~$X$ telle que sur chaque~$V_i$, la fonction méromorphe~$f$ 
s'écrive $g_i/h_i$,
avec $g_i\in\mathscr O_X(V_i)$ et $h_i\in\mathscr S_X(V_i)$.
Pour tout~$i$, on a $h_i\in\mathscr D_f(V_i)$, donc $a|_{V_i}h_i=0$,
puis $a|_{V_i}=0$ puisque $h_i$ n'est pas diviseur de zéro
dans~$\mathscr O_X(V_i)$. Puisque les $V_i$ recouvrent~$X$,
il vient $a=0$, ce qui est absurde, son annulateur étant égal à~$\mathfrak p$.

Si $h$ est un élément régulier de~$A$ qui appartient à~$I$, 
la fonction méromorphe~$hf$ est une section globale de~$\mathscr O_X(X)$, 
donc appartient
à~$A$. Par conséquent, $f=(hf)/h$ appartient à~$\Frac(A)$.
\end{proof}

\begin{coro} Soit~$X$ un bon espace~$k$-analytique
et soit~$f$ une fonction méromorphe sur~$X$. 
Pour tout~$x\in X$ il existe un voisinage ouvert~$U$ de~$x$,
une fonction~$g\in \mathscr O_X(U)$ et une fonction~$h\in \mathscr S_X(U)$
telles que~$f|_U=g/h$.
\end{coro} 

\subsection{}\label{mero-reg}
Soit $X$ un espace analytique et soit $f\in\mathscr M_X(X)$
une fonction méromorphe sur~$X$.
On dit que $f$ est une 
fonction méromorphe régulière\index{fonction méromorphe!--- régulière}
(\cf \cite{ega4.4}, 20.1.8)
si pour tout
domaine affinoïde~$V$ de~$X$,
la restriction de~$f$ à~$V$ est un élément
régulier de
l'anneau~$\mathscr M_X(V)$. Alors, $f$ est un élément inversible
de l'anneau $\mathscr M_X(X)$.
De plus, il existe un ouvert de Zariski~$U$ de~$X$,
dont le complémentaire est rare, tel que $f$ définisse
une fonction holomorphe inversible sur~$U$.

Observons que le faisceau $\mathscr M_X^\times$ des
fonctions méromorphes régulières est le faisceau en groupes
abéliens associé au faisceau des fonctions holomorphes
régulières sur~$X$.

\section{Diviseur d'une fonction méromorphe régulière}

\subsection{}
Soit~$X$ un espace~$k$-analytique.
Soit~$f\in\mathscr S_X(X)$ une fonction analytique régulière sur~$X$.
Le sous-espace analytique fermé de~$X$ d'équation~$f=0$ 
est purement de codimension~$1$, il est vide si et seulement
si $f$ est inversible.
Notons~$\div(f)$ le cycle sur~$X$ associé; ce cycle est effectif
et nul si et seulement si $f$ est inversible.

Pour tout sous-espace analytique fermé intègre~$Y$ de~$X$,
notons $m_Y(f)$ le coefficient de~$Y$ dans~$\div(f)$.

\begin{lemm}\label{lemm.div(f)-local}
Soit $f$ une fonction analytique régulière sur~$X$.
Soit $Y$ un sous-espace analytique fermé intègre de~$X$.
Soit $V$ un bon domaine analytique de~$X$ et soit $y\in V\cap Y$
tel que l'anneau local $\mathscr O_{V\cap Y,y}$ soit artinien.
On a $m_Y(f) = \Long( \mathscr O_{V,y}/(f))$.
\end{lemm}
\begin{proof}
Si $f(y)\neq 0$, alors $Y$ n'est pas une composante irréductible
de $\{f=0\}$. Dans ce cas, $m_Y(f)=0$ ;
de plus, $f$ est inversible dans $\mathscr O_{V,y}$
donc $\Long(\mathscr O_{V,y}/(f))=0$.

Supposons maintenant $f(y)=0$.
Comme $\{y\}$ est Zariski dense dans~$Y$
(voir les commentaires après la définition~\ref{defi.anneau-G-local}),
on a $Y\subset\{f=0\}$.
Puisque $Y$ est purement de codimension~$1$, c'est une composante
irréductible de~$\{f=0\}$. Par définition,
$m_Y(f)$ est la longueur générique du faisceau cohérent  $\mathscr O_X/(f)$
le long de~$Y$. D'après la proposition~\ref{prop-longueur-generique},
c'est la longueur de l'anneau $\mathscr O_{V,y}/(f)$.
\end{proof}

\begin{lemm}\label{divadd}
Si~$f$ et~$g$ sont deux fonctions holomorphes régulières
sur~$X$, alors~$\div(fg)=\div (f)+\div (g)$.  
\end{lemm}
\begin{proof}
Soit $Y$ un sous-espace analytique fermé intègre de~$X$,
de codimension~$1$; démontrons que $m_Y(fg)=m_Y(f)+m_y(g)$.
Soit $V$ un domaine affinoïde de~$X$ et soit $y\in V\cap Y$
tel que $\mathscr O_{V\cap Y,y}$ soit artinien (il en existe).
Notons $\mathscr A=\mathscr O_{V,y}$;
la suite exacte 
\[0\to g\mathscr A/(fg)\to \mathscr A(fg)\to \mathscr A/(g)\to 0\] 
peut se récrire  \[0\to \mathscr A/(f)\to \mathscr A/fg\to \mathscr 
A/(g)\to 0\] puisque~$g$ n'est pas diviseur de zéro dans~$\mathscr A$. 
on a donc l'égalité 
$\Long(\mathscr O_{V,y}/(fg))=\Long(\mathscr O_{V,y}/(f))
+ \Long(\mathscr O_{V,y}/(g))$,
d'où l'assertion d'après le lemme~\ref{lemm.div(f)-local}.
\end{proof} 

\begin{prop}
\begin{enumerate}
\item
La formule $f\mapsto \div(f)$ définit 
un morphisme de G-faisceaux en monoïdes abéliens
du G-faisceau des fonctions analytiques régulières sur~$X$
dans le G-faisceau des cycles effectifs sur~$X$.
\item
Ce morphisme de faisceaux se prolonge de manière
unique en un morphisme de G-faisceaux en groupes abéliens
du G-faisceau des fonctions méromorphes régulières sur~$X$
dans le G-faisceau des cycles sur~$X$.
\end{enumerate}
\end{prop}
\begin{proof}
(1) Soit $V$ et $W$ des domaines analytiques de~$X$ tels que $W\subset V$.
Pour toute fonction analytique régulière~$f$ sur~$V$,
$f|_W$ est une fonction analytique régulière sur~$W$ 
et $\div(f|_W)=[\{f|_W=0\}]=[\{f=0\}]|_W=\div(f)|_W$.
Cela prouve que cette formule définit un morphisme de G-faisceaux;
d'après le lemme~\ref{divadd}, c'est un morphisme 
de G-faisceaux en monoïdes.

L'assertion~(2) en découle par définition du faisceau
des fonctions méromorphes sur~$X$.
\end{proof}

\section{Extensions d'Abhyankar et leur défaut}

On fixe un corps ultramétrique complet $k$. 

\subsection{}\label{ss-rappels-abhyankar}
Soit~$F$ une extension de type fini de $k$, de degré de transcendance $d$. 
Soit~$\abs \cdot$ une valeur absolue ultramétrique sur~$F$, 
prolongeant celle de $k$. 
Soit~$X$ un $k$-schéma intègre de corps des fonctions~$F$ 
et soit~$x$ le point de
$X^\an$ situé au-dessus du point générique de~$X$ et 
défini par la valeur absolue~$\abs \cdot$ de $F$. 
Rappelons (\ref{ss-invariants-valuatifs}) que le degré 
de transcendance gradué de~$\widetilde F$ sur~$\widetilde k$ 
est majoré par~$d$ et que l'extension valuée~$F$ de~$k$
est dite d'Abhyankar si on a égalité, 
auquel cas $\widetilde F$ est de type fini sur~$\widetilde k$. 
Comme $\hr x$ est  le complété de~$(F,\abs \cdot)$, 
l'extension valuée~$F$ de~$k$ est d'Abhyankar si et seulement si
$d_k(x)=d$. 

\begin{defi}
Soit $k$ un corps ultramétrique complet 
et~$L$ une extension complète de~$k$.
Nous dirons que $L$ est d'Abhyankar sur~$k$ s'il existe
un espace $k$-analytique~$X$,
un point~$x$ de~$X$ tel que $d_k(x)=\dim_x(X)$
et un $k$-isomorphisme  $\hr x\simeq_k L$ d'extensions complètes de $k$. 
\end{defi}

Si $L$ est une extension complète d'Abhyankar de $k$, 
un triplet $(X,x,\hr x\simeq_k L)$ comme ci-dessus
sera appelé une \textit{présentation analytique} de~$L$ sur~$k$. 
Par abus, nous omettrons le plus souvent
de mentionner l'isomorphisme $\hr x\simeq_k L$ et dirons simplement que $(X,x)$ est une présentation analytique de~$L$ sur~$k$. 

\begin{exem}
Soit $(F,\abs \cdot)$, $X$ et $x$ comme dans \ref{ss-rappels-abhyankar}. 
Si l'extension valuée $F$ de $k$ est d'Abhyankar alors l'extension complète $\widehat F$ de
$k$ est d'Abhyankar, et $(X^\an,x)$ en est une présentation analytique sur $k$. 
\end{exem}

\subsection{}
Soit $L$ une extension complète d'Abhyankar de $k$
et soit $(X,x)$ une présentation 
analytique de~$L$ sur~$k$. 
On a 
\[ \degtr_{\tilde k}(\tilde L) = \degtr_{\tilde k}(\hrt x)=d_k(x)=\dim_x(X).
\]
Nous noterons aussi $d_k(L)$ cet entier.

Comme $d_k(x)=\dim_x X$ on a $\dim_{\mathrm c}(X_\groth,x)=\dim_x X$. 
Par conséquent, si $Y$ est un sous-espace analytique fermé d'un domaine analytique de $X$ tel que
$x\in Y$ alors $\dim_x Y=\dim_x X$, si bien que 
$(Y,x)$ est encore une présentation analytique de~$L$ sur~$k$. En particulier, 
$(X_{\mathrm{red}},x)$ est une présentation analytique de $L$ sur $k$. Si $U$ désigne le lieu normal de 
$X_{\mathrm{red}}$ alors $U$ contient $x$ car l'anneau local artinien réduit $\mathscr O_{X_{\mathrm{red},\groth},x}$
est un corps, et $(U,x)$ est encore une présentation analytique de~$L$ sur~$k$. Choisissons enfin un domaine
affinoïde~$V$ connexe de~$U$ contenant~$x$. 
Le domaine~$V$ est normal, connexe et non vide, et donc intègre ; et le couple
$(V,x)$ est encore une présentation analytique de~$L$ sur~$k$. 
Notons que la dimension de~$V$ est alors égale à~$\dim_x V$, 
donc à~$d_k(L)$.

\subsection{}
Soit $L$ une extension complète d'Abhyankar de $k$.
Une famille $(t_i)$ d'éléments de $L$
 est une \textit{base d'Abhyankar} de $L$ sur $k$ si les $\widetilde {t_i}$ forment une base de transcendance de $\widetilde L$
sur~$\widetilde k$.  
Cela revient à demander que le cardinal de la famille soit égal à $d_k(L)$ et que 
\[\Abs{\sum a_It^I}=\sup_I \abs{a_I}\cdot \abs t^I\] pour toute famille~$(a_I)$ d'éléments presque tous nuls de~$k$. 
Supposons que ce soit le cas et posons $r=(\abs{t_1},\ldots, \abs{t_r})$. 
Il existe alors un unique isomorphisme d'extensions complète de~$k$ de~$\widehat {k(t)}$ sur
$\hr{\eta_r}$ qui envoie~$t_i$ sur~$T_i(\eta_r)$ pour tout~$i$. 

\begin{lemm}\label{lemm.abh-presentation}
Soit $L$ une extension d'Abhyankar de~$k$
et soit $(X,x)$ une présentation analytique de~$L$.
Soit $t=(t_1,\dots,t_n)$ une base d'Abhyankar de~$L$ sur~$k$.
Pour tout~$i$, posons $r_i=\abs{t_i}$.
Soit $\eps>0$.

\begin{enumerate}
\item
Il existe un domaine affinoïde~$V$ de~$X$ contenant~$x$,
des fonctions analytiques~$f_1,\dots,f_n$
telles que $\abs{f_i(x)-t_i}<\eps$ pour tout~$i$.

Supposons désormais que $\eps\leq \abs{t_i}$ pour tout~$i$.

\item
Le point~$f(x)$ de~$\A^n_k$ est égal à~$\eta_r$,
autrement dit, $(f_1(x),\dots,f_n(x))$ est une base d'Abhyankar de~$\hr x$
sur~$k$.

\item
Les sous-corps~$\widehat{k(t)}$ et~$\hr{\eta_r}$ de~$\hr x$ sont 
des extension complètes de~$k$ isomorphes,
les degrés $[\hr x : \widehat{k(t)}]$ et $[\hr x:\hr{\eta_r}]$
sont égaux, et finis.

\item
Les sous-corps résiduels $\hrt{\eta_r}$ et $\widetilde{k(t)}$
de~$\hrt x$ sont égaux.
\end{enumerate}
\end{lemm}
\begin{proof}
Pour démontrer le lemme, on peut remplacer~$X$ par n'importe lequel
de ses domaines analytiques contenant $x$. Cela permet de supposer que
$\dim X=\dim_x X=n$ et que $X$ est affinoïde.
Alors, $X$ est presque bon en~$x$.
Quitte à restreindre encore~$X$, on peut donc 
supposer qu'il existe $n$ fonctions analytiques
$f_1,\ldots, f_n$ sur $X$
telles que $\abs{f_i(x)-t_i}<\eps$ pour tout~$i$.
Cela démontre~\emph a).

Supposons maintenant que $\eps\leq \abs{t_i}$ pour tout~$i$.
Alors $\widetilde{f_i(x)}=\widetilde{t_i}$ pour tout~$i$,
si bien que les~$f_i(x)$ forment une base d'Abhyankar de~$L$ sur~$k$.
Le morphisme $f\colon X\to\A^n_k$ identifie~$\hr{\eta_r}$ au sous-corps
complet de~$\hr x$ engendré par les~$f_i(x)$ sur~$k$.
Il existe de plus un unique $k$-isomorphisme de corps valués
de~$\widehat{k(t)}$ sur~$\hr{\eta_r}$
qui applique $t_i$ sur~$f_i(x)$ pour tout~$i$.
Comme l'extension~$L$ de~$k$ est d'Abhyankar,
le corps gradué~$\hrt L$ est engendré sur~$\tilde k$
par les~$\widetilde{t_i}$;
et $\hrt x$ est engendré sur sur~$\tilde k$ par les~$\widetilde{f_i(x)}$.
Cela prouve~\emph d).

Comme $d_k(\eta_r)=n$ et $\dim(X)=n$, la fibre~$f^{-1}(\eta_r)$
est de dimension nulle.
en particulier, $\hr x$ est finie sur $\hr{\eta_r}$.
L'égalité des degrés dans~\emph c) se 
déduit de
\cite[Theorem 3.1.9]{temkin2021} et \cite[Lemma 6.3.3]{temkin2010}.
\end{proof}

\begin{defi}
Soit $F$ le complété d'une clôture algébrique de $k$. 
Nous dirons qu'une extension complète~$K$ de~$k$ est
\emph{presque algébrique} si elle admet
un $k$-plongement isométrique dans~$F$.
\end{defi}
Cela ne dépend bien sûr pas du choix de~$F$.

\begin{prop}\label{prop.calcul-defaut}
Soit $L$ une extension d'Abhyankar de $k$
et soit $K$ une extension complète presque algébrique de $k$. 
Soit $(X,x)$ une présentation analytique 
de $L$ et soit $\pi \colon X_K\to X$ le morphisme canonique. 
\begin{enumerate}
\item L'anneau $L\widehat{\otimes}_k K$ est artinien. 

\item Pour tout $y\in \pi^{-1}(x)$ l'application composée 
\[L\widehat\otimes_k K\simeq \hr x \widehat\otimes_k K\to \hr y\] 
est surjective.

\item Pour tout $y\in \pi^{-1}(x)$, notons
$\mathfrak m_y$ le noyau de la surjection
de $L\widehat\otimes_k K\to \hr y$ définie ci-dessus. 
L'application $y\mapsto \mathfrak m_y$ établit une bijection entre $\pi^{-1}(y)$ et
$\Spec (L\widehat\otimes_k K)$. 

\item Pour tout $y\in \pi^{-1}(y)$ le corps gradué $\hrt y$ est fini sur 
son sous-corps $\widetilde L\cdot \widetilde K$. 

\item Pour tout $y\in \pi^{-1}(y)$, la longueur de l'anneau 
local artinien $(L\widehat\otimes_k K)_{\mathfrak m_y}$ est égale à la multiplicité
générique de $X_{\mathrm{red}, K}$
le long de l'unique composante irréductible de $X_K$ qui contient $y$. 

\item Supposons que $K$ est le complété d'une extension quasi-galoisienne de $k$. 
Le groupe $\Aut(K/k)$ agit transitivement sur $\Spec (L\widehat\otimes_k K)$. 
\end{enumerate}
\end{prop}
\begin{proof}
Pour démontrer la proposition, on peut supposer que~$X$ est réduit.
On peut également le remplacer par n'importe lequel
de ses domaines analytiques contenant~$x$. 
Cela permet de supposer de plus que
$\dim X=\dim_x X=n$ et que $X$ est affinoïde
et qu'il existe $n$ fonctions analytiques $f_1,\ldots, f_n$ sur~$X$
telles que les $f_i(x)$ forment une base d'Abhyankar de~$\hr x$ sur~$k$.
Dans la suite de la preuve, nous identifions~$L$ à~$\hr x$. 
Pour tout~$i$, $r_i=f_i(x)$; le point~$f(x)$ de~$\A_k^n$ est donc égal à~$\eta_r$.

Comme $d_k(\eta_r)=n$ et que $X$ est de dimension $n$, la fibre $f^{-1}(\eta_r)$ est de dimension nulle ; 
en particulier, $\hr x$ est finie sur $\hr{\eta_r}$.
Comme $f^{-1}(\eta_r)$
est ensemblistement fini, on peut (en remplaçant $X$ par un voisinage
affinoïde convenable de $x$) supposer que $x$ est l'unique antécédent
de $\eta_r$ sur $X$. 
L'espace $X$ étant affinoïde et réduit, il résulte du théorème~6.3.7 
de~\cite{ducros2018} que la fibre $f^{-1}(\eta_r)$ est réduite, 
ce qui veut dire qu'elle s'identifie à $\mathscr M(\hr x)$. 

Notons $\eta_{K,r}$ le point squelettique de $\A^n_K$ de paramètre $r$ ; comme $K$ est presque algébrique
sur $k$, ce point~$\eta_{K,r}$ est l'unique antécédent de~$\eta_r$ 
sur~$\A^n_K$, si bien que $\pi^{-1}(x)=f^{-1}(\eta_{K,r})$
et $\hr{\eta_{K,r}}=\hr{\eta_r}\widehat\otimes_k K$
(corollaire~\ref{coro-skel-ginvariant}, \emph b)).

On a par ailleurs
\begin{align*}
\hr x\widehat{\otimes_k}K&=\hr x \widehat{\otimes_{\hr {\eta_r}}}(\hr{\eta_r}\widehat{\otimes_k}K)\\
&=\hr x \otimes_{\hr {\eta_r}}\hr{\eta_{K,r}},
\end{align*}
d'où il résulte que $\hr x\widehat{\otimes_k}K$ s'identifie à l'anneau des fonctions de la fibre
$f^{-1}(\eta_{K,r})$. C'est par conséquent une $K$-algèbre finie, et en particulier un anneau artinien,
dont
l'ensemble des idéaux maximaux s'identifie \textit{via} les applications d'évaluation 
à la fibre $f^{-1}(\eta_{K,r})=\pi^{-1}(x)$, ce qui montre \emph a),\emph b) et \emph c). 
Soit $y$ un point de $\pi^{-1}(x)$. Le corps gradué $\hrt y$ est fini sur $\hrt{\eta_{K,r}}=\widetilde K\cdot \hrt{\eta_r}$ ; il l'est
a fortiori sur $\widetilde K\cdot \hrt x$, d'où \emph d). 
Le facteur local de $\hr x\widehat{\otimes_k}K$ correspondant à $y$ 
est alors $\mathscr O_{f^{-1}(\eta_{K,r}),y}$, et le morphisme canonique
de $\mathscr O_{X_K,y}$ vers $\mathscr O_{f^{-1}(\eta_{K,r}),y}$
est régulier d'après le théorème~6.3.7 de~\cite{ducros2018}
(parce que $X$ est réduit). 
Le corollaire \ref{coro-longueur-artinien}
assure alors que la longueur de  $\mathscr O_{f^{-1}(\eta_{K,r}),y}$
est égale 
à celle de $\mathscr O_{X_K,y}$, d'où \emph e). 

Supposons maintenant que $K$ est le complété d'une extension quasi-galoisienne de $k$. 
Le groupe $\Aut(K/k)$ agit alors transitivement sur les fibres de $\pi$,  si bien que \emph f)
se déduit de~\emph c). 
\end{proof}

%\begin{rema} RESTE À DIRE QUELQUE PART
%
%Par ailleurs il existe au moins une extension quasi-galoisienne de $k$ dont la complétion est stable : il suffit de prendre
%une clôture algébrique de $k$. Comme l'assertion \emph i) fournit pour toute telle extension une formule décrivant le défaut de
%$L$ sur $\widehat{k(t)}$ qui est indépendante de $t$, elle entraîne \emph h). 
%
%Dans la suite de la preuve nous nous permettons d'identifier $L$ à $\hr x$. 
%Comme $t$ est une base d'Abhyankar de $\hrt x$, 
%le point $f(x)$ de $\A^n$ est égal à~$\eta_r$. Nous identifierons par le biais de $f$ le corps $\hr{\eta_r}$ au sous-corps 
%complet $\widehat{k(t)}$ de $\hr x$.
%
%Comme $d_k(\eta_r)=n$ et que $X$ est de dimension $n$, la fibre $f^{-1}(\eta_r)$ est de dimension nulle ; 
%en particulier, $\hr x$ est finie sur $\hr{\eta_r}$, d'où \emph g). 
%Comme $f^{-1}(\eta_r)$
%est ensemblistement fini, on peut (en remplaçant $X$ par un voisinage
%affinoïde convenable de $x$) supposer que $x$ est l'unique antécédent
%de $\eta_r$ sur $X$. 
%Le bon espace $X$ étant supposé réduit, il résulte du théorème~6.3.7 
%de~\cite{ducros2018} que la fibre $f^{-1}(\eta_r)$ est réduite, 
%ce qui veut dire qu'elle s'identifie à $\mathscr M(\hr x)$. 
%
%Notons $\eta_{K,r}$ le point squelettique de $\A^n_K$ de paramètre $r$ ; comme $K$ est presque algébrique
%sur $k$, ce point~$\eta_{K,r}$ est l'unique antécédent de~$\eta_r$ 
%sur~$\A^n_K$, si bien que $\pi^{-1}(x)=f^{-1}(\eta_{K,r})$.
%\end{rema}

\begin{prop}\label{prop.abh-default-gal}
Soit $L$ une extension complète de~$k$ qui est d'Abhyankar.
Soit $K$ une extension complète presque algébrique de~$k$
qui est stable et qui est le complété 
d'une extension algébrique quasi-galoisienne de~$k$. 
Désignons par $\ell$ la longueur de l'anneau artinien $L\widehat\otimes_k K$
et par $\widetilde \ell$ celle de l'anneau gradué
artinien $\widetilde L\otimes_{\widetilde k}\widetilde K$.

Soit $t=(t_1,\dots,t_n)$ une base d'Abhyankar de~$L$ sur~$k$.

L'extension~$L$ de~$\widehat{k(t)}$ est finie et
son défaut est égal à
\[  \ell [\widetilde F:(\widetilde L\cdot \widetilde K)] / \widetilde \ell, \]
pour tout corps résiduel~$F$ de~$L\widehat\otimes_kK$.
\end{prop}
\begin{proof}
Choisissons une présentation analytique~$(X,x)$ de~$L$.
D'après le lemme~\ref{lemm.abh-presentation},
on peut supposer qu'il existe un morphisme $f\colon X\to\A^n_k$
tel que $\abs{f_i(x)-t_i}<\abs{t_i}$ pour tout~$i$.
Alors, les extensions $K/\widehat{k(t)}$ et $K/\widehat{k(f(x))}$
sont finies, ont même degré et même degré résiduel;
elles ont donc même défaut.

On peut ainsi supposer que $t_i=f_i(x)$
pour tout~$i$.
Le point~$f(x)$ de~$\A^n_k$ est alors égal à~$\eta_r$, 
où $r=(\abs{t_1},\dots,\abs{t_n})$.
Le morphisme~$f$ est dimension nulle en~$x$;
quitte à restreindre~$X$, on peut supposer que $f^{-1}(\eta_r)=\{x\}$.

Notons $\pi\colon X_K\to X$ le morphisme de changement de base.

D'après la proposition~\ref{prop.calcul-defaut}, \emph f),
les différents corps 
résiduels~$F$ de l'anneau artinien~$L\widehat\otimes_k K$ 
sont permutés par $\Aut(K/k)$,
de sorte que les différents degrés résiduels
$[\widetilde F:(\widetilde L\cdot\widetilde K)]$ sont égaux
à un même entier~$\widetilde d$.
D'après la proposition~\ref{prop.calcul-defaut}, \emph c), 
ces corps~$F$  sont de la forme~$\hr y$, pour un unique
point $y\in\pi^{-1}(x)$, 
et l'assertion~\emph d) 
affirme que les degrés résiduels considérés sont finis.

Notons $\delta$ le défaut de $\hr x$ sur $\hr{\eta_r}$. 

Il reste à démontrer que l'on a $\delta=\ell \widetilde d/\widetilde \ell$.
On a : 
\[  % \begin{align*}
\delta=\frac{[\hr x:\hr{\eta_r}]}{[\hrt x :\hrt{\eta_r}]}\\
=\frac{[(\hr x\otimes_{\hr{\eta_r}}\hr{\eta_{K,r}}):\hr{\eta_{K,r}}]}
{[(\hrt x \otimes_{\hrt {\eta_r}}\hrt{\eta_{K,r}}):\hrt{\eta_{K,r}}]}.
\]
% \end{align*}
Fixons un point $y\in f^{-1}(\eta_{K,r})$ et  posons
$d=[ \hr y : \hr{\eta_{K,r}}]$.

Puisque les points de~$f^{-1}(\eta_{K,r})$ sont permutés
par~$\Aut(K/k)$,
on a
\begin{align*}
[(\hr x\otimes_{\hr{\eta_r}}\hr{\eta_{K,r}}):\hr{\eta_{K,r}}]
& = \sum_{z\in f^{-1}(\eta_{K,r})} [\hr z:\hr{\eta_{K,r}}] \\
& = \ell [ \hr y : \hr{\eta_{K,r}}]  = \ell d. \end{align*}
Comme $K$ est stable,
$\hr{\eta_{K,r}}$ l'est aussi, et
l'entier~$d$ est donc égal à $[\hrt y  :\hrt{\eta_{K,r}}]$.

Par un calcul explicite (il s'agit de corps gradués
de fractions rationnelles, et $\widetilde K$ est algébrique sur 
$\widetilde k$), on a $\hrt {\eta_{K,r}} =\hrt{\eta_r}\otimes_{\widetilde k}
\widetilde K$.
Alors, 
\[
 [(\hrt x {}\otimes_{\hrt {\eta_r}}\hrt{\eta_{K,r}} ):\hrt{\eta_{K,r}}]
 = \widetilde\ell \cdot [ \hrt x\cdot \hrt{\eta_{K,r}}: \hrt{\eta_r}], 
\]
où $\hrt x\cdot \hrt{\eta_{K,r}}$
est le sous-corps de~$\hrt y$ composé de $\hrt x$ et $\hrt{\eta_{K,r}}$,
car $\Aut(\widetilde K/\widetilde k)$ agit transitivement sur
les corps résiduels gradués de l'algèbre artinienne graduée
$\hrt x {}\otimes_{\hrt {\eta_r}}\hrt{\eta_{K,r}}$.

%% 
%% est simplement égal à la longueur de $\hrt x  \otimes_{\widetilde k}
%% \widetilde K$, c'est-à-dire à $\widetilde \ell$, multipliée par la dimension 
%% sur $\hrt{\eta_{K,r}} {}$ des différents composés de $\hrt x$ et $\hrt {\eta_{K,r}}$
%% au-dessus de $\hrt{\eta_r}$.
% Si l'on fixe un point (arbitraire) $y$ de $\pi^{-1}(x)$, on voit donc que 
% \begin{align*}
Ainsi,
\begin{align*}
\delta & =\frac{[(\hr x\otimes_{\hr{\eta_r}}\hr{\eta_{K,r}}):\hr{\eta_{K,r}}]}
{[(\hrt x \otimes_{\hrt {\eta_r}}\hrt{\eta_{K,r}}):\hrt{\eta_{K,r}}]} \\
& = \frac{\ell d}{\widetilde \ell \cdot [(\hrt x\cdot \hrt{\eta_{K,r}}): \hrt{\eta_r}]} \\
& = \frac{\ell }{\widetilde \ell} \frac{[\hrt y:\hrt{\eta_{K,r}}]}{[(\hrt x\cdot \hrt{\eta_{K,r}}): \hrt{\eta_r}]}.
\end{align*}
Or, dans $\hrt y$, on a 
\[ \hrt x \cdot \hrt{\eta_{K,r}} = \hrt x \cdot (\hrt{\eta_r}\otimes_{\tilde k} \widetilde K) 
= \hrt x \cdot \widetilde K, \]
de sorte que
\[ 
\frac{[\hrt y:\hrt{\eta_{K,r}}]}{[(\hrt x\cdot \hrt{\eta_{K,r}}): \hrt{\eta_r}]}
 = \frac{[\hrt y:\hrt{\eta_{K,r}}]}{[(\hrt x\cdot \widetilde K): \hrt{\eta_r}]}
 = [\hrt y:(\hrt x\cdot \widetilde K)]. \]
Ainsi,
\[ \delta = \ell 
  [\hrt y:(\hrt x\cdot \widetilde K)] / \widetilde\ell, \]
comme il fallait démontrer.
\end{proof}

\begin{theo}\label{theo.calcul-defaut}
Soit $L$ une extension d'Abhyankar de $k$ et soit $(X,x)$ une présentation
analytique de~$L$. 
Le corps $L$ est une extension finie de $\widehat{k(t)}$
et son défaut ne dépend pas de $t$. 
\end{theo}
\begin{proof}
Le fait que $L$ soit une extension finie de~$\widehat{k(t)}$
a été démontré dans le lemme~\ref{lemm.abh-presentation}.
Soit $K$ le complété d'une clôture algébrique de~$k$;
c'est une extension quasi-galoisienne de~$k$ qui est stable.
Le défaut de~$L$ sur~$\widehat{k(t)}$
a été calculé dans la proposition précédente et
la formule obtenue ne dépend pas de~$t$.
\end{proof}

% \begin{rema}\label{rem-defk(x)}
% Compte-tenu de (a), on voit que la longueur
% $\ell$ introduite en (b) est égale à $Nm$, où
% $N$ est le nombre d'antécédents de $x$ sur $X_K$ et $m$
% la valeur commune des $m(y)$
% pour $y\in \pi^{-1}(x)$. 
% \end{rema}

\begin{defi}
Soit~$L$ une extension d'Abhyankar de~$k$.
On appelle \emph{défaut}  de l'extension d'Abhyankar~$L$
le défaut commun des extensions finies $L/\widehat{k(t)}$,
lorsque $t$ parcourt l'ensemble des bases d'Abhyankar de~$L$ sur~$k$.
On le note $\df_k(L)$.
\end{defi}

Lorsque $L$ est une extension finie de~$k$, c'est une extension d'Abhyankar
avec $d_k(L)=0$, la famille vide étant une base d'Abhyankar de~$L$
sur~$k$ (et même la seule !). 
Le défaut $\df_k(L)$ que nous avons introduit coïncide donc avec
la notion classique.

Soit $X$ un espace $k$-analytique et soit $x$ un point de~$X$
tel que $d_k(x)=\dim_x(X)$. Par définition,
l'extension d'Abhyankar~$\hr x$ de~$k$ est d'Abhyankar;
son défaut est noté~$\df_k(x)$.

\subsection{}\label{ss-blabla-defaut}
% (Faire un ou plusieurs corollaires de ce blabla.)
% Si $L$ est de cette forme on peut donc définir $\df_k(L)$
% comme le défaut de $L$ sur $\widehat{k(t)}$ où $t$ est 
% une base d'Abhyankar de $L$ sur $k$, ou, une extension 
% presque algébrique, quasi-galoisienne et stable $K$
% de $L$ étant donnée, comme le quotient
% $\ell\widetilde d/\widetilde \ell$ où $\ell$ et $\widetilde 
% \ell$ sont les longueurs respectives de $L\widehat{\otimes}_k
% K$ et $\widetilde L\otimes_{\widetilde k}\widetilde K$ et
% où $\widetilde d$ est la
% valeur commune des degrés
% $[\hrt y:\widetilde K\cdot \hrt x$ pour
% $y$ parcourant $\mathscr M(\hr x
% \widehat{\otimes}_k K)$. 
% 
Soit $L$ une extension complète de~$k$ qui est d'Abhyankar
et soit~$t$ une base d'Abhyankar de~$L$ sur~$k$.

Soit $r=(r_1,\ldots, r_m)$ une famille 
de réels strictement positifs. 
Pour tout corps ultramétrique complet
$F$, notons $F_r$ le complété de 
$F(s_1,\ldots, s_m)$ pour la norme
de Gauss de paramètre $r$ (c'est aussi le corps
$\hr{\eta_{F,r}}$ où $\eta_{F,r}$ est le point
squelettique de $\gm [F]^m$
de paramètre $r$).
Soit~$s$ la base d'Abhyankar standard de $k_r$ sur $k$.

Alors la réunion de $s$ et $t$ est une base d'Abhyankar 
de~$L_r$ sur~$k$, si bien que $t$ est une base d'Abhyankar
de $L_r$ sur $k_r$, et l'on a
$\widehat{k_r(t)}\simeq \widehat{k(t)}_r$. 
Le degré de $L_r$ sur $\widehat{k_r(t)}$ est donc égal
à celui de $L$ sur $\widehat{k(t)}$, et l'on a 
\[\widetilde{L_r}=\widetilde L(\widetilde s)
=\widetilde L\otimes_{\widetilde k(\widetilde t)}
\widetilde k(\widetilde t)(\widetilde s)=\widetilde
L\otimes_{\widetilde k(t)}\widetilde {k_r(t)},\]
si bien que le degré résiduel (gradué) de~$L_r$ 
sur $\widehat{k_r(t)}$ est égal
est égal
à celui de~$L$ sur~$\widehat{k(t)}$. 

Cela démontre l'égalité 
$\df_{k_r}(L_r)=\df_k(L)$; autrement dit,
le défaut de $L_r$ sur $\widehat{k_r(t)}$
est égal à celui de $L$ sur $\widehat{k(t)}$.

\chapter{Calibrage canonique d'un espace analytique}

\section{Cellules commodes et moments agréables}

\begin{defi}\label{defi.f-cellule}
Soit $X$ un espace analytique de niveau~$n$, 
soit $f\colon X\to T$ un moment et soit $C$ une partie de~$X$.
On dit $C$ est une \emph{$f$-cellule} si $C\subset\Sigma_f$
et si c'est une $f$-cellule de l'espace paralinéaire~$\Sigma_f$.
% Soit $X$ un espace analytique de niveau~$n$, 
% soit $f\colon X\to \R^m$ une application paralinéaire
% et soit $C$ une partie squelettique de~$X$.
% On dit $C$ est une \emph{$f$-cellule} 
% si $f$ induit un homéomorphisme de~$C$ sur
% une cellule forte~$C_f$ de~$\R^m$.
\end{defi}

Toute $f_\trop$-cellule de l'espace vertébré~$X$
au sens de la définition~\ref{defi.f-cellule-trop}
est une $f$-cellule, car $\Sigma_f^{(n)}\subset\Sigma_f$.
Inversement, toute $f$-cellule~$C$ qui est contenue
dans~$\Sigma_f^{(n)}$ est une $f_\trop$-cellule; c'est en particulier
le cas dès que $\dim(C)=n$ ou que $C\subset\Int(X)$.

Une $f$-cellule de~$X$ est une partie squelettique de~$X$;
elle est connexe et non vide, donc est contenue dans
une unique composante connexe de~$X$, et même dans une unique
composante irréductible de dimension~$n$ de~$X$,
car une partie squelettique connexe non vide est
contenue dans une unique composante irréductible de dimension~$n$ de~$X$.

\begin{rema}
Soit $C$ une partie de~$X$ et soit $f\colon X\to\gm^m$ un moment.
Si $f_\trop$ induit un homéomorphisme de~$C$ sur une cellule 
\emph{de dimension~$n$} de~$\R^m$, alors c'est une $f_\trop$-cellule
au sens de la définition~\ref{defi.f-cellule-trop},
donc une $f$-cellule.
\end{rema}

\begin{defi}
Soit $X$ un espace $k$-analytique de niveau~$n$,
soit $f\colon X\to \gm^m$ un moment.
Soit $C$ une $f$-cellule.
On dit que $C$ est \emph{commode}\index{commode} 
si l'une des deux conditions suivantes est vérifiée:
\begin{enumerate}
\item On a $\dim(C)<n$;
\item La cellule~$\mathring C$ est ouverte dans~$\Sigma_f\cap\Int(X)$.
\end{enumerate}
\end{defi}

\begin{remas}\label{remas.cellules-commodes}
\begin{enumerate}
\item
Soit $C$ une $f$-cellule commode.
Supposons que la seconde condition d'une $f$-cellule commode soit satisfaite.
D'après le théorème~\ref{theo.proprietes-Sigmaf},
$\Sigma_f\cap\Int(X)=\Sigma_f^{(n)}$ est purement de dimension~$n$,
si bien que $\mathring C$ est de dimension~$n$.
et $C\subset\Sigma_f^{(n)}$.
En particulier, les deux conditions de la définition sont exclusives
l'une de l'autre.

Notons que l'on a $\mathring C\subset\Sigma_f^{(n)}\cap\Int(X)$,
qui est un ouvert de~$\Sigma_f\cap\Int(X)$. Cependant,
la condition imposée dans la définition est plus forte.

\item
Soit $\Sigma$ une partie squelettique de~$X$
qui est une partie ouverte de~$\Sigma_f$.
Soit~$\mathscr C$ une décomposition cellulaire de~$\Sigma$ en $f$-cellules.
Supposons que l'intérieur~$\mathring C$ de toute cellule~$C\in\mathscr C$ 
de dimension~$n$ soit disjointe du bord~$\partial(X)$ de~$X$.
Alors, toute cellule de~$\mathscr C$ est commode.

En effet, les $n$-cellules ouvertes de~$\mathscr C$
sont ouvertes dans~$\Sigma$, donc dans~$\Sigma_f$.

En particulier, si $X$ est sans bord, toute cellule
d'une décomposition cellulaire en $f$-cellules est commode.
\end{enumerate}
\end{remas}

\begin{lemm}\label{lemm.exists-dec-cell-commode}
Soit $X$ un espace analytique de niveau~$n$,
soit $f\colon X\to T$ un moment,
soit $\Sigma$ un sous-espace paralinéaire 
paracompact de~$\Sigma_f$.
Il existe une décomposition cellulaire $\mathscr C$ de~$\Sigma$
en $f$-cellules commodes.
\end{lemm}
Rappelons que l'hypothèse de paracompacité est nécessaire,
car tout espace paralinéaire qui possède une décomposition
cellulaire est paracompact.

% Par ailleurs, si $\Sigma_f$ possède une décomposition
% cellulaire dont les $n$-cellules sont courtoises envers~$f$,
% alors $\partial(X)\cap\Sigma_f$ est contenu
% dans la réunion des cellules de dimension~$<n$.
% Inversement, si $\mathscr C$ est une 
% décomposition cellulaire de~$\Sigma_f$
% formée de $f$-cellules, toute $n$-cellule de~$\mathscr C$ dont l'intérieur
% ne rencontre pas~$\partial(X)$ est courtoise; en effet,
% la définition d'une décomposition cellulaire entraîne
% que la troisième condition d'une cellule courtoise 
% est automatiquement vérifiée.

\begin{proof}
Posons $P=\partial(X)\cap\Sigma$ et démontrons d'abord
que $P$ est contenu dans un sous-espace paralinéaire 
de dimension~$<n$.
Comme $\Sigma$  est paracompact, il existe un ensemble~$\mathscr V$ 
de domaines analytiques compacts de~$X$ tel que la famille $(V\cap\Sigma)$ soit un G-recouvrement localement fini de~$\Sigma$.
Pour tout $V\in\mathscr V$, on a 
comme $P\cap V \subset\partial(V)\cap\Sigma$,
il existe un sous-espace paralinéaire~$Q_V$ de dimension~$<n$,
fermé et contenu dans~$V$,
tel que $P\cap V\subset Q_V$ (lemme~\ref{lemm.bord-squelette-negligeable}).
La réunion~$Q$ de ces sous-espaces~$Q_V$, lorsque $V$ parcourt~$\mathscr V$,
est un sous-espace paralinéaire de dimension~$\leq n-1$ de~$\Sigma$.

Comme $\Sigma$ est connexe et paracompact,
il est dénombrable à l'infini.
D'après le corollaire~\ref{coro.voisinage},
il possède un voisinage ouvert~$\Sigma'$ dans~$\Sigma_f$ 
qui est encore dénombrable à l'infini, et en particulier paracompact.

L'application $f_\trop\colon \Sigma_f\to T_\trop$ est une immersion
par morceaux.
Considérons alors une décomposition cellulaire~$\mathscr C'$ de~$\Sigma'$,
adaptée à~$Q$ et à~$\Sigma$,
telle que $(\mathscr C,(f|_C))$ soit un atlas cellulaire
(proposition~\ref{prop.decomp-cell} et remarque~\ref{rema.decomp-cell-f}).
En particulier, les cellules de~$\mathscr C'$ sont des $f$-cellules.
D'après la remarque~\ref{remas.cellules-commodes}, \emph b), 
toutes les cellules de~$\mathscr C'$ sont commodes.

Les cellules de~$\mathscr C'$ qui sont contenues
dans~$\Sigma$ constituent la décomposition cellulaire~$\mathscr C$
cherchée.
\end{proof}

\begin{coro}
Soit $f\colon X\to T$ un moment et
soit $\Sigma$ un sous-espace paralinéaire 
de~$\Sigma_f$.
Les $f$-cellules de~$\Sigma$ qui sont commodes
forment un G-recouvrement de~$\Sigma$.
\end{coro}
\begin{proof}
Comme tout point de~$X$ possède une base d'ouverts paracompacts,
on peut supposer que $X$ est paracompact.
Alors, $\Sigma_f$ est paracompact, donc $\Sigma$ est paracompact.
Il possède donc une décomposition
cellulaire en $f$-cellules commodes,
et elles constituent un G-recouvrement de~$X$.
\end{proof}

\begin{defi}\label{defi.moment-agreable}
Soit $X$ un espace analytique de niveau~$n$.
On dit qu'un moment $f\colon X\to T$ est \emph{agréable}\index{moment agréable} 
si la carte G-tropicale $(X,f_\trop)$ est agréable.
\end{defi}
 
Rappelons (définition~\ref{defi.carte-agreable}) que cela signifie
que 
$f_\trop(\Sigma_f^{(n)})$ est un sous-espace paralinéaire de~$T_\trop$
et que la restriction de~$f_\trop$ à~$\Sigma_f^{(n)}$ 
est compacte sur son image.

Tout moment propre est agréable.

Pour qu'un moment~$f$ soit agréable,
il suffit que $\Sigma_f^{(n)}$ possède une décomposition
cellulaire finie en $f$-cellules. Les deux propositions
suivantes fournissent deux exemples importants.

\begin{prop}\label{prop.compact-moment-agreable}
Si $X$ est un espace $k$-analytique compact de niveau~$n$,
% et soit $f$ un moment sur~$X$.
% Alors, $\Sigma_f^{(n)}$ possède une décomposition cellulaire
% finie en $f$-cellules 
tout moment sur~$X$ est  agréable.
\end{prop}
\begin{proof}
C'est un cas particulier du~\S\ref{exem.compact-agreable}.
% Soit $f\colon X\to T$ un moment.
% L'espace paralinéaire $\Sigma_f^{(n)}$  est compact
% et possède une décomposition cellulaire en $f$-cellules commodes
% (lemme~\ref{lemm.exists-dec-cell-commode}).
% L'assertion en résulte.
%%%% Inutile
%% L'espace $\Sigma_f^{(n)}$ est paralinéaire et compact,
%% donc $P=f_\trop(\Sigma_f^{(n)})$ est un sous-espace paralinéaire
%% compact de~$T_\trop$.
%% 
%% La compacité de~$X$ entraîne celle de~$\Sigma_f^{(n)}$, 
%% et $f_\trop|_{\Sigma_f^{(n)}}\colon \Sigma_f^{(n)}\to P$ est donc
%% compacte. En conséquence, $f$ est agréable.
%% 
%Par ailleurs, $f_\trop(\partial X)$ est contenu
%dans un sous-espace linéaire par morceaux~$Q$ de~$T_\trop$
%(prop.~\ref{prop.trop.global}).
%Comme $f_\trop$ induit une immersion linéaire
%par morceaux de~$\Sigma_f^{(n)}$ sur son image
%(théorème~\ref{theo.proprietes-Sigmaf}),
%il découle de la proposition~\ref{prop-decomp-relative}
%que le moment~$f$ est agréable.
\end{proof}

\begin{prop}\label{prop.algebrique-moment-agreable}
Soit $X$ un $k$-schéma de type fini de dimension~$\leq n$;
on considère l'espace~$X^\an$ comme un espace analytique de niveau~$n$.
Soit $T$ un $k$-tore et soit $f\colon X\to T$ un morphisme de $k$-schémas.
\begin{enumerate}
\item On a $\Sigma_{f}^{(n)}=\Sigma_{f}$ 
et $f_\trop(X^\an)^{(n)}=f_\trop(\Sigma_f)$.
\item Le sous-espace paralinéaire $\Sigma_{f}^{(n)}$  de~$X^\an$
possède une décomposition cellulaire finie en $f^\an$-cellules commodes.
\item En particulier, le moment $f\colon X^\an\to T^\an$ est agréable.
\end{enumerate}
\end{prop}
Notons que, $X^\an$ étant sans bord, 
$\Sigma_{f}^{(n)}$ coïncide avec~$\Sigma_f$
sur toute composante irréductible de dimension~$n$, et est vide ailleurs.
\begin{proof}
On sait que $f_\trop(X^\an)$ est un sous-espace paralinéaire de~$T_\trop$,
équidimensionnel; si cette dimension~$m$ vérifie $m<n$,
alors $\Sigma_f$ est vide et il n'y a rien à démontrer.
Supposons que $f_\trop(X^\an)$ soit de dimension~$n$ et démontrons
que l'on a $f_\trop(X^\an)=f_\trop(\Sigma_f)$.
L'inclusion $f_\trop(\Sigma_f)\subset f_\trop(X^\an)$ est évidente.

Il existe un ouvert dense~$U$ de~$X$, 
pour la topologie de Zariski, et un moment $g \colon U\to \gm^{m}$
tel que $g_\trop$ soit injective en restriction à~$\Sigma_f$
(\cite{ducros2012b}, proposition~2.7).
Il existe d'autre part un ouvert dense~$U'$ de~$U$, 
toujours pour la topologie de Zariski,
et un moment $g'\colon U'\to \gm^m$ qui est une immersion fermée.
(Les ouverts « très affines » forment une base d'ouverts de~$X$.)
Notons $h$ le moment $(f,g,g')$ sur $U'$ et $q$
la projection de $T\times \gm^m\times \gm^{m'}$ sur~$T$;
on a $f=q\circ h$. Par construction, $h$ est une immersion fermée
et $h_\trop$ est injective sur~$\Sigma_f$.

On a $\Sigma_f=\Sigma_{f|_{U'}}$,
et $h_\trop(U^{\prime\an})=h_\trop(X^\an)$ 
et $f_\trop(U^{\prime\an})=f_\trop(X^\an)$
(cf. \cite{ducros2012b}, théorème~1.2, assertions~A et~B).
On peut donc remplacer $X$ par~$U'$; 
posons aussi $S=T\times\gm^{m}\times \gm^{m'}$.

Comme $X^\an$ est sans bord, les espaces paralinéaires $\Sigma_f$ et $\Sigma_h$
sont purement de dimension~$n$. 

Le sous-espace paralinéaire~$\Sigma_f$ de~$\Sigma_h$
est l'ensemble des points en lesquels
$f_\trop|_{\Sigma_h}$ est de dimension tropicale~$n$.

Comme $h$ est une immersion fermée, $h^\an$ est propre;
comme la tropicalisation standard d'un tore est un morphisme
topologiquement propre, on en déduit que $h_\trop\colon X^\an\to  S_\trop$ 
est  topologiquement propre.
Par suite, $h_\trop(X^\an)^{(n)}=h_\trop(\Sigma_h)$ (théorème~\ref{theo.proprietes-Sigmaf}, \emph d)).

On sait aussi (cf.~\cite{ducros2012b}, loc. cit.) 
que $h_\trop(X^\an)$ est un sous-espace
paralinéaire fermé de~$S_\trop$, purement de dimension~$n$,
et qu'il possède une décomposition cellulaire finie~$\mathscr C$.

La famille des cellules~$C\in\mathscr C$
sur lesquelles $q_\trop$ est injective est une décomposition
cellulaire d'un sous-espace paralinéaire $h_\trop(X^\an)'$ de~$h_\trop(X^\an)$.

Comme $h_\trop$ est une immersion par morceaux et $f=q\circ h$,
le morphisme $f_\trop|_{\Sigma_h}$ est de dimension
tropicale~$n$ en un point~$x$ de~$\Sigma_h$
si et seulement si $q_\trop|_{h_\trop(X^\an)}$
est de dimension tropicale~$n$ en~$h_\trop(x)$.
Cela prouve que $\Sigma_f=h_\trop^{-1}(h_\trop(X^\an)')$.
Comme $h_\trop|_{\Sigma_f}$ est injective, 
il en résulte que $h_\trop$ induit un isomorphisme
paralinéaire de~$\Sigma_f$ sur~$h_\trop(X^\an)'$. 

On a donc $f_\trop(X^\an) = f_\trop(X^\an)^{(n)} =  q_\trop \circ h_\trop(X^\an)'
= q_\trop(h_\trop(\Sigma_f))=f_\trop(\Sigma_f)$,
ce qui conclut la démonstration de l'assertion~\emph a).

Démontrons~\emph b).
Dans la décomposition cellulaire finie de $h_\trop(X^\an)$
introduite plus haut, la famille des cellules qui sont contenues
dans~$h_\trop(X^\an)'$ constitue une décomposition cellulaire de $h_\trop(X^\an)'$.
On en déduit, par $h_\trop$, une décomposition cellulaire finie
de~$\Sigma_f$.
D'après la remarque~\ref{remas.cellules-commodes}, \emph b), 
ses cellules sont toutes commodes, ce qui établit l'assertion~\emph b),
et l'assertion~\emph c) en résulte.
\end{proof}

\begin{prop}\label{prop.moment-loc-agreable}
Soit $X$ un espace $k$-analytique  de niveau~$n$
et soit $f\colon X\to T$ un moment sur~$X$. 
Tout point de~$X$ possède un voisinage \emph{ouvert}~$U$
tel que $f|_U$ soit un moment agréable.
\end{prop}
\begin{proof}
Soit $x\in X$. 
Si $x\not\in\Sigma_f^{(n)}$, le complémentaire $X\setminus \Sigma_f^{(n)}$
convient. On suppose dans la suite que $x\in\Sigma_f^{(n)}$.

Soit $V_1$ un sous-espace paralinéaire compact de~$\Sigma_f^{(n)}$
qui est un voisinage de~$x$ dans~$\Sigma_f^{(n)}$.
La fibre $V_1\cap f_\trop^{-1}(f_\trop(x))$  de~$f_\trop(x)$ sur~$V_1$
est finie;
soit alors $V_2$ un voisinage paralinéaire compact de~$x$
dans~$\mathring V_1$ tel que $x$ soit le seul antécédent de~$f_\trop(x)$
dans~$V_2$. Notons $\partial(V_2)$ le bord de~$V_2$ dans~$V_1$;
c'est un sous-espace paralinéaire fermé de~$V_2$ 
qui ne contient pas~$x$.

Soit $A=f_\trop(V_2)\setminus f_\trop(\partial(V_2))$
et soit $W=f_\trop^{-1}(A)\cap V_2$.
L'ensemble~$A$ est ouvert dans~$f_\trop(V_2)$,
donc $W$ est ouvert dans~$V_2$; en outre, $W$ ne rencontre pas~$\partial(V_2)$,
donc il est ouvert dans~$V_1$. Comme $V_2\subset\mathring V_1$,
$W$ ne rencontre pas non plus le bord de~$V_1$
dans~$\Sigma_f^{(n)}$, donc est ouvert dans~$\Sigma_n^{(f)}$.

L'application~$f_\trop$ de~$W$ dans~$A$  est compacte et son
image est un sous-espace paralinéaire de~$T_\trop$.
Soit $U$ une partie ouverte de~$X$ telle que $U\cap\Sigma_f^{(n)}=W$.
On a  $\Sigma_{f|_U}^{(n)}=\Sigma_f^{(n)}\cap U=W$,
si bien que le moment $f|_U$ est agréable.
\end{proof}

% \begin{lemm}\label{lemm.qof-fini-plat}
% Soit $C$ une $f$-cellule de dimension~$n$ et
% soit $q\colon T\to\gm^n$ un morphisme surjectif de tores tel
% que $q_\trop|_{C_f}$ soit injective.
% Pour tout $x\in C$,
% le morphisme~$q\circ f$ est de dimension tropicale~$n$ en~$x$;
% en particulier, $C\subset\Sigma_f$.
% Si, de plus, $x\not\in\partial(X)$, 
% alors  $q\circ f$ est fini et plat en~$x$.
% \end{lemm}
% \begin{proof}
% Soit $V$ un voisinage analytique compact de~$x$. 
% Par hypothèse, $(q\circ f)_\trop(C\cap V)$ est d'intérieur
% non vide dans~$\R^n=(\gm^n)_\trop$,
% donc $(q\circ f)_\trop (V)$ est un sous-espace linéaire
% par morceaux de dimension~$n$. Par suite, la dimension tropicale
% de $q\circ f$ en~$x$ est égale à~$n$.
% La seconde assertion résulte du \S\ref{d_qof}.
% \end{proof}

\begin{rema}
Dans les deux cas précédents (compact,
proposition~\ref{prop.compact-moment-agreable}, et algébrique,
proposition~\ref{prop.algebrique-moment-agreable}),
les polyèdres caractéristiques
jouissent d'une propriété de modération particulièrement forte:
on peut remplacer $\Sigma_f^{(n)}$  par $\Sigma_f$ et
la décomposition cellulaire peut être choisie finie;
dans le cas algébrique, le bord est même vide.
\end{rema}

\begin{prop}\label{prop.agreable-decomp}
Soit $X$ un espace analytique de niveau~$n$
et soit $f\colon X\to T$ un moment agréable.
Alors, $\Sigma_f^{(n)}$ est paracompact.

Il existe une décomposition cellulaire~$\mathscr C$
de~$\Sigma_f^{(n)}$ en $f$-cellules commodes
et une décomposition cellulaire forte $\mathscr D$ 
de~$f_\trop(\Sigma_f^{(n)})$ 
tel que le couple 
$(\mathscr C,\mathscr D)$
soit adapté à l'immersion par morceaux compacte $f_\trop|_{\Sigma_f^{(n)}}$.
% \begin{enumerate}
% \item Les cellules de~$\mathscr C$ sont des $f$-cellules commodes;
% \item L'ensemble $\mathscr D=\{f_\trop(C)\,;\, C\in\mathscr C\}$
% est une décomposition cellulaire de $f_\trop(\Sigma_f^{(n)})$;
% \item Pour toute cellule $D\in\mathscr D$,
% l'ensemble $f_\trop^{-1}(D)\cap\Sigma_f^{(n)}$ 
% est réunion de cellules de~$\mathscr C$;
% \item Pour toute cellule $D\in\mathscr D$,
% l'application $f_\trop$ induit un revêtement
% fini de $f_\trop^{-1}(\mathring D)\cap\Sigma_f^{(n)}$
% sur~$\mathring D$, dont les feuillets sont
% les intérieurs des cellules de~$\mathscr C$ situées au-dessus de~$D$.
% \end{enumerate}

Enfin, pour tout couple $(\mathscr C,\mathscr D)$
comme ci-dessus, 
$f_\trop(\partial(X)\cap \Sigma_f^{(n)})$ est une partie
fermée de~$f_\trop(\Sigma_f^{(n)})$, contenue dans la réunion des cellules
de dimension~$\leq n-1$ de~$\mathscr D$.
\end{prop}

On dira qu'une décomposition cellulaire~$\mathscr C$
de~$\Sigma_f^{(n)}$ est \emph{agréable} (relativement à~$f$) s'il existe
une décomposition cellulaire~$\mathscr D$ (nécessairement unique)
telle que le couple $(\mathscr C,\mathscr D)$ vérifie les conclusions
de la proposition. 

\begin{proof}
L'espace $f_\trop(\Sigma_f^{(n)})$ est réunion d'une famille
dénombrable de sous-espaces compacts, dont l'image
réciproque par $f_\trop$ est compacte dans~$\Sigma_f^{(n)}$,
par définition d'un moment agréable. Par suite, $\Sigma_f^{(n)}$
est réunion d'une famille dénombrable de sous-espaces compacts.
Cela entraîne qu'il est paracompact.

D'après le lemme~\ref{lemm.exists-dec-cell-commode}, il existe
une décomposition cellulaire~$\mathscr C_1$ de~$\Sigma_f^{(n)}$
en $f$-cellules commodes.
D'après la proposition~\ref{prop.decomp-relative},
% appliquée au G-recouvrement $\mathscr C_1$ de~$\Sigma_f^{(n)}$
% et à l'inclusion de~$f_\trop(\Sigma_f^{(n)})$ dans~$T_\trop$,
il existe une décomposition cellulaire~$\mathscr C$ de~$\Sigma_f^{(n)}$
qui raffine~$\mathscr C_1$ et une décomposition
cellulaire $\mathscr D$  adaptée à l'inclusion de~$f_\trop(\Sigma_f^{(n)})$
dans~$T_\trop$ tel que le couple $(\mathscr C,\mathscr D)$
soit adapté à l'immersion par morceaux compacte $f_\trop$.
Ses cellules sont des $f$-cellules commodes 
car la décomposition cellulaire~$\mathscr C$
raffine~$\mathscr C_1$. 

La dernière assertion découle de la définition
d'une cellule commode.
% \item Posons $C=f_\trop(D)$ et soit $x\in f_\trop^{-1}(D)\cap\Sigma_f^{(n)}$;
% soit $C'$ 
% l'unique cellule fermée de~$\mathscr C$ telle que $x\in\mathring C'$.
% Alors $f_\trop(C')$ est une cellule de~$\mathscr D$ 
% dont l'intérieur rencontre $f_\trop(C)$; elle est donc contenue
% dans $f_\trop(C)$. Par suite, $C'\subset f_\trop^{-1}(D)$.
% L'assertion en résulte.
% 
% Soit $D$ une cellule de~$\mathscr D$ et posons $d=\dim(D)$.
% Soit $C$ une cellule de~$\mathscr C$ contenue dans~$f_\trop^{-1}(D)$.
% Par hypothèse, $f_\trop(C)$ est une cellule de~$\mathscr D$
% et $f_\trop$ induit un homéomorphisme de~$C$ sur~$ f_\trop(C)$.
% Par suite, $\dim(f_\trop(C))<d$ 
% si et seulement si $f_\trop(C)\subset\partial(D)$;
% de plus, si $\dim(C)=d$, alors $f_\trop(C)=D$ et $f_\trop$
% induit un homéomorphisme de~$\mathring C$ sur~$\mathring D$.
% Il en découle donc que $f_\trop^{-1}(\mathring D)\cap\Sigma_f^{(n)}$
% est la réunion \emph{disjointe}
% des~$\mathring C$, où $C$ parcourt l'ensemble des $d$-cellules
% de~$\mathcal C$ qui sont au-dessus de~$D$.
% Par compacité, l'ensemble de ces cellules est fini,
% et pour une telle cellule~$C$, 
% on a $\mathring C=f_\trop^{-1}(\mathring D)\cap C$,
% ce qui entraîne que $\mathring C$ est fermé dans
% $f_\trop^{-1}(\mathring D)$. Par suite, $\mathring C$
% est ouvert est fermé dans~$f_\trop^{-1}(\mathring D)$,
% d'où l'assertion~(4).
\end{proof}
% \begin{rema}\label{rema.agreable-raffinement}
% Soit $\mathscr C$ une décomposition décomposition cellulaire
% agréable de~$\Sigma_f^{(n)}$.
% Soit $\mathscr D'$ une décomposition cellulaire de~$P$ qui raffine
% la trace sur~$P$ de la décomposition cellulaire
% $\mathscr D=\{f_\trop(C);\;;C\in\mathscr C\}$ de~$f_\trop(\Sigma_f^{(n)})$.
% Pour $D'\in\mathscr D'$ et $C\in\mathscr C$
% tel que $f_\trop(C)\subset D'$, posons $C_{D'}=C\cap f_\trop^{-1}(D')$.
% Alors, l'ensemble~$\mathscr C'$
% de ces cellules $C_{D'}$ est une décomposition cellulaire
% de~$\Sigma_{f_P}^{(n)}$ vérifiant les conclusion de
% la proposition~\ref{prop.agreable-decomp}
% pour le moment agréable~$f_P$ 
% et l'on a $\mathscr D'=\{f_\trop(C')\;;\; C'\in \mathscr C'\}$.
% \end{rema}

\section{Moments et degrés}

\begin{lemm}\label{lemm.quasi-fini-plat-sq}
Soit $X$ un espace analytique de niveau~$n$,
soit $f\colon X\to\gm^n$ un moment.
Alors, $f$ est quasi-fini et plat en tout point de~$\Sigma_f$.
\end{lemm}
\begin{proof}
Soit $x\in\Sigma_f$. On a $f(x)\in S(\gm^n)$, donc $d_k(f(x))=n$.
D'après le lemme~\ref{lemm.fibre-dim-zero}, le morphisme~$f$
est quasi-fini en~$x$.
Comme $\gm^n$ est réduit, le théorème~10.3.7 de~\cite{ducros2018}
entraîne que $f$ est plat en~$x$.
\end{proof}

\begin{prop}\label{prop.commode-fini-plat}
Soit $X$ un espace analytique de niveau~$n$,
soit $f\colon X\to T$ un moment sur~$X$
et soit $C$ une $f$-cellule commode de dimension~$n$ de~$X$.
Soit $q\colon T\to\gm^n$ un morphisme de tores
tel que $q_\trop|_{f_\trop(C)}$ est injectif.
Alors le morphisme $q \circ f$ est de dimension relative nulle
en tout point de~$C$
et il existe un unique entier~$d>0$ tel que ce morphisme soit fini et plat 
de degré~$d$ en tout point de~$\mathring C$.
\end{prop}
\begin{proof}
Soit $q\colon T\to \gm^n$ un morphisme de tores tel que $q_\trop$
soit injectif sur $f_\trop(C)$.
La dimension tropicale de~$q\circ f$ est égale à~$n$
en tout point de~$C$, 
donc $C\subset (q\circ f)^{-1} (S(\gm^n))$ 
d'après la prop.~\ref{prop.trop.local}, (iv).
D'après le lemme~\ref{lemm.fibre-dim-zero},
le morphisme~$q\circ f$ est de dimension relative nulle 
en tout point de~$C$.
% voire le théorème~\ref{theo.proprietes-Sigmaf}, (2).

Comme $\mathring C$ ne rencontre pas~$\partial(X)$, 
il résulte du lemme~\ref{lemme.finiplat-local}
que le morphisme~$q\circ f$ est fini et plat en tout point de~$\mathring C$;
on a noté $\deg^x(q\circ f)$ son degré et 
il s'agit de démontrer que l'entier~$\deg^x({q\circ f})$
est indépendant du point~$x\in \mathring C$.

Soit $D$ la cellule~$\sq(q_\trop(f_\trop(C)))$ de~$\gm^n$.
Si un point $x\in\Int(X)$ vérifie $q\circ f(x)\in \mathring D$,
alors $q\circ f$ est de dimension tropicale~$n$ en~$x$,
donc $f$ aussi, si bien que $x\in\Sigma_f^{(n)}$.
Autrement dit, on a les inclusions
\[  \mathring C\subset(q\circ f)^{-1}(\mathring D)\cap \Int(X)
  \subset \Sigma_f^{(n)}.\]

Soit maintenant $x\in\mathring C$.
Par hypothèse, $\mathring C$ est ouvert dans~$\Sigma_f^{(n)}$;
il est donc ouvert dans $(q\circ  f)^{-1}(\mathring D)\cap\Int(X)$.
Comme $q\circ f$ est fini et plat en~$x$,
il existe un voisinage ouvert~$U$ de~$x$ dans~$\Int(X)$
et un voisinage ouvert~$V$ de~$q\circ f(x)$ dans~$\gm^n$
tels que $q\circ f$ induise un morphisme fini et plat
de degré $\deg^x ({q\circ f})$ de~$U$ sur~$V$,
et tel que $(q\circ f)^{-1}(\mathring D)\cap U\subset \mathring C$.
Un point $y\in\mathring C\cap U$ est alors le seul antécédent dans~$U$
de $q\circ f(y)$ par~$q\circ f$; 
on a donc  $\deg^y({q\circ f})=\deg^x({q\circ f})$
pour tout $y\in \mathring C\cap U$. La fonction~$x\mapsto \deg^x({q\circ f})$
est localement constante sur l'ensemble~$\mathring C$;
comme celui-ci est connexe, elle est constante.
\end{proof}

\begin{prop}\label{prop.dec-cell-deg}
Soit $X$ un espace $k$-analytique paracompact de niveau $n$
et soit $f\colon X\to \gm^n$ un moment. 
Soit $\Sigma$ un sous-espace paralinéaire paracompact de~$\Sigma_f$.
Il existe une décomposition cellulaire
$\mathscr C$ de~$\Sigma$ en $f$-cellules telle que pour toute
cellule $C\in \mathscr C$,
l'application $x\mapsto \deg^x(f)$ soit finie et constante sur~$\mathring C$.
\end{prop}
Rappelons que le degré~$\deg^x(f)$ d'un morphisme~$f$ en un point~$x$
a été défini au \S\ref{ss.degre} sans hypothèse de finitude ou de platitude.
Il peut éventuellement être infini.
\begin{proof}
Pour tout $x\in\Sigma_f$, on a $d_k(x)=n$, donc $f$ est quasi-fini
en~$x$ (lemme~\ref{lemm.fibre-dim-zero}); cela prouve que $\deg^x(f)$ est fini.

Nous procédons alors par récurrence sur le niveau $n$, 
et supposons donc la proposition vraie en niveau $<n$. 
Puisque $\Sigma$ est paracompact, 
il possède une décomposition cellulaire~$\mathscr D$ 
constituée de $f$-cellules commodes
(lemme~\ref{lemm.exists-dec-cell-commode}). 

Soit $D$ une cellule de $\mathscr D$.

Si $D$ est de dimension~$n$, la proposition~\ref{prop.commode-fini-plat}
assure que $f$ est fini et plat de degré constant sur~$\mathring D$;
en particulier, la fonction $x\mapsto \deg^x (f)$ est constante
sur~$\mathring D$.

Supposons maintenant que $\dim(D)<n$.
Il existe un domaine analytique compact~$V$ de $X$ dont l'intersection 
avec~$\Sigma$ est égale à~$V$ 
(lemme \ref{lemme-proprietes-paralineairesX} (e)) ; 
on a donc $D=\Sigma_{f|V}$. 
Comme $D$ est une $f$-cellule, $f(D)$ est une partie du 
squelette canonique~$S(\gm^n)$, 
et $f(D)_\trop$ est une $d$-cellule forte $\Delta$ de $\R^n$. 
Il existe un automorphisme~$\phi$ de~$ \gm^n$  tel que
l'espace affine engendré par $\phi_\trop(\Delta)$ soit défini par les $n-d$
égalités $x_{d+1}=\cdots=x_{n}=0$. Quitte à changer~$f$
en~$\phi\circ f$, on se ramène au cas où $\phi$ est l'identité.
La cellule $D$ est ainsi contenue 
dans $f^{-1}(S(\gm^n)\cap p^{-1}(\eta_{1}))$, 
où $p$ désigne la projection de~$\gm^n$ sur les $n-d$ dernières composantes
et où $\eta_1$ désigne, par abus, le point $\eta_{1,\dots,1}$ de~$\gm^{n-d}$.

% Puisque $\phi$ est donnée par une famille de monômes dont les exposants constituent
% une matrice de $\mathrm M_n(\Z)\cap\GL_n(\Q)$
% (le degré $\delta$ étant la valeur absolue 
% du déterminant de cette matrice), un point $x$ de $\gm^n$ appartient à
% $S_n$ si et seulement si $\phi(x)\in S_n$
% (si bien que $D=\Sigma_{g|_V}$) et puisque $\phi_\trop$ est bijectif, $\phi$ induit un
% homéomorphisme de $S_n$ sur $S_n$. De plus pour tout $x$ de $S_n$ un calcul direct montre que le degré
% de l'extension résiduelle (graduée) $\hrt{\phi(x)}\hookrightarrow \hrt x$ vaut $\delta$, ce qui montre que
% $\hr x$ est de degré $\delta$ sur $\hr {\phi(x)}$ et que la fibre $\phi^{-1}(\phi(x))$ est réduite
% (puisque le degré total de $\mathscr O(\phi^{-1}(\phi(x))$ sur $\hr{\phi(x)}$ est égal à $\delta$). 

Notons~$V'$ la fibre $(p\circ f)^{-1}(\eta_{1})\cap V$. 
Puisque $d_k(\eta_{1})=n-d$, 
la dimension de~$V'$ est au plus~$n-(n-d)=d$; on le considère comme un espace 
$\hr{\eta_{1}}$-analytique de niveau~$d$.
Notons $f'\colon V'\to \gm[\hr{\eta_1}]^d$ le morphisme 
d'espaces $\hr{\eta_1}$-analytiques donné par $(f_1,\dots,f_d)$.
On a $D=\Sigma_{f'}$. En effet, pour $x\in V'$,
on a $x\in\Sigma_{f'}$ si et seulement si $\widetilde{f_1(x)},
\dots,\widetilde{f_d(x)}$ sont algébriquement indépendants sur~$\hrt{\eta_1}$;
comme ce corps est transcendant pur engendré par les $\widetilde{f_i(x)}$
pour $i>d+1$, cela revient à exiger
que $\widetilde{f_1(x)},\dots,\widetilde{f_n(x)}$ soit algébriquement
indépendant sur~$\tilde k$, c'est-à-dire $x\in \Sigma_f$,
soit encore $x\in D$ puisque $D=\Sigma_f\cap V$.

Puisque $d<n$, 
l'hypothèse de récurrence assure l'existence 
d'une décomposition cellulaire~$\mathscr D_D$ de~$D$
telle que $x\mapsto \deg^x(f')$ soit constant sur toute cellule ouverte 
de~$\mathscr D_D$. 
Comme les fibres de~$f'$ sont celles de~$f$, on a $\deg^x(f')=\deg^x(f)$
pour tout $x\in D$.
Par conséquent, $x \mapsto \deg^x(f)$ est constant 
sur toute cellule ouverte de~$\mathscr D_D$. 

Choisissons maintenant une décomposition cellulaire~$\mathscr D'$ 
de la réunion des cellules~$D$ de~$\mathscr D$ de
dimension $<n$ et qui raffine chacune des $\mathscr D_D$. 

La famille~$\mathscr C$ formée des cellules de~$\mathscr D'$
et des cellules de~$\mathscr D$ qui sont de dimension~$n$
est une décomposition cellulaire de~$\Sigma$. Elle répond
aux exigences de l'énoncé.
\end{proof}

\subsection{}
Conservons les notations de la proposition précédente
et notons $d_C$ la valeur constante de $\deg^x(f)$ sur~$\mathring C$.
Comme $x\mapsto \deg^x(f)$ est semi-continue supérieurement,
on a $d_D\leq d_C$ si $C\subset D$.
Lorsque $X$ est sans bord, on a des relations
plus précises données par le théorème suivant qui montre,
en outre, que toute décomposition cellulaire de~$\Sigma_f$
convient.

\begin{theo}\label{theo.df-recurrence}
Soit $X$ un espace $k$-analytique de niveau~$n$, sans bord,
soit $f\colon X\to \gm^n$ un moment sur~$X$ et
soit $\mathscr C$ une décomposition cellulaire de~$\Sigma_f$
en $f$-cellules.
	
\begin{enumerate}
\item
Pour toute cellule~$C$ de~$\mathscr C$, il existe un unique entier~$d_C>0$
tel que $f$ soit fini et plat de degré~$d_C$ en tout point de~$\mathring C$.
\item
Soit $C\in\mathscr C$ et soit $d$ un entier tel que $\dim(C)\leq d\leq n$.
Soit $\Delta$ un ensemble non vide de cellules de~$\mathscr C$
qui contiennent~$C$, tel que $\bigcap_{D\in\Delta} f_\trop(D)$
soit une cellule de dimension~$d$, et maximal pour cette propriété;
notons $\Delta^*$ l'ensemble de ses éléments minimaux.
On a
\[ d_C = \sum_{D\in \Delta^*} d_D . \]
\end{enumerate}
\end{theo}
\begin{proof}
Sous les hypothèses du théorème, toute cellule de~$\mathscr C$
est une $f$-cellule commode (remarque~\ref{remas.cellules-commodes}, \emph b)).
D'après le lemme~\ref{lemme.finiplat-local}, le morphisme~$f$
est fini et plat en tout point de~$\Sigma_f$;
le contenu du théorème est la constance du degré sur chaque cellule ouverte
et une formule de récurrence pour ces degrés.

On démontre ces résultats par récurrence descendante sur $c=\dim(C)$.
Lorsque $c=n$, 
la première assertion découle de la proposition~\ref{prop.commode-fini-plat},
et la seconde revient à l'égalité triviale $d_C=d_C$,
car on a alors nécessairement $d=n$ et $\Delta=\Delta^*=\{C\}$.

Soit $c$ un entier~$<n$;
supposons les assertions~(1) et~(2) vérifiées 
pour toute cellule~$C$ de dimension~$>c$.
Pour une telle cellule $C$, on a donc un degré $d_C$ bien défini.

Soit $C\in\mathscr C$ une cellule de dimension~$c$.
Soit $d$ un entier tel que $c\leq d\leq n$.
Soit $\mathscr E_{C,d}$ l'ensemble des parties~$\Delta$ de~$\mathscr C$
formées de cellules contenant~$C$ et telles que
$\bigcap_{D\in\Delta} f_\trop(D)$ soit de dimension~$d$,
et maximale pour cette propriété.
Pour $\Delta\in\mathscr E_{C,d}$,
notons $\Delta^*$ l'ensemble des éléments minimaux de~$\Delta$,
lorsque ce dernier est ordonné par l'inclusion.

Comme $\Sigma_f$ est purement de dimension~$n$, il existe 
une $n$-cellule de~$\mathscr C$ qui contient~$C$.
Il existe alors une cellule $D\in\mathscr C$ de dimension~$d$ qui
contient~$C$. Comme $f_\trop(D)$ est de dimension~$d$,
il existe  une partie $\Delta$ appartenant à~$\mathscr E_{C,d}$
qui contient~$D$.  En particulier, $\mathscr E_{C,d}$ n'est pas vide.

\begin{lemm}\label{lemm.df-recurrence}
(Sous l'hypothèse de récurrence.)
Supposons $d>c$. Pour tout $\Delta\in\mathscr E_{C,d}$
et tout $x\in\mathring C$, 
on a $\deg^x(f)=  \sum_{D\in\Delta^*}d_D$.
\end{lemm}

Admettons provisoirement ce lemme.

Rappelons que $c<n$; appliqué à $d=n$ et à un élément
de l'ensemble non vide~$\mathscr E_{C,n}$,  le lemme~\ref{lemm.df-recurrence}
entraîne
que la fonction $x\mapsto \deg^x(f)$ est constante sur l'ensemble
non vide~$\mathring C$; notons $d_C$ cette valeur.
Cela établit déjà l'assertion~(1) pour~$C$.

Soit $d$ un entier tel que $d\leq c\leq n$.
Le lemme~\ref{lemm.df-recurrence} établit alors également l'assertion~(2) dès que $d>c$.
Dans le cas restant où $d=c$, on a 
$\Delta=\Delta^*=\{C\}$ et l'assertion~(2) se réduit alors à l'égalité 
tautologique $d_C=d_C$.

Cela termine la démonstration du théorème et
il reste à démontrer le lemme~\ref{lemm.df-recurrence}
que nous avions admis.

Notons $W_C$ la réunion des cellules ouvertes~$\mathring D$,
où $D$ parcourt l'ensemble des cellules de~$\mathscr C$ contenant~$C$;
c'est un voisinage ouvert de~$\mathring C$ dans~$\Sigma_f$.
Soit $V$ un voisinage ouvert de~$x$ dans~$X$ tel que $V\cap\Sigma_f\subset W_C$
et tel que $f$ induise un morphisme fini et plat de degré~$\deg^x(f)$
de~$V$ sur un voisinage ouvert~$U$ de~$f(x)$ dans~$\gm^n$.

Notons $P$  l'intersection des $f_\trop(D)$, pour $D\in\Delta$;
c'est une cellule de dimension~$d$ contenant $f_\trop(C)$.
Soit $E\in\mathscr C$ contenant~$C$ telle que $E\not\in\Delta$;
alors $\Delta\cup\{E\}$ n'appartient pas à~$\mathscr E_{C,d}$, par maximalité
de~$\Delta$,
de sorte que $P\cap f_\trop(E)$ est une cellule de dimension~$<d$.
Notons~$Q_1$ la réunion de ces $P\cap f_\trop(E)$,
lorsque $E$ parcourt l'ensemble des cellules de~$\mathscr C$
contenant~$C$ et n'appartenant pas à~$\Delta$;
c'est un sous-espace paralinéaire fermé de~$P$, de dimension~$<d$.

D'autre part, soit $D\in\Delta^*$ et soit $E$ une cellule contenue
dans~$\partial(D)$ disjointe de~$\mathring C$;
alors $P\cap f_\trop(E)$ est une cellule, disjointe de $f_\trop(\mathring C)$,
car $f_\trop$ est injectif sur~$D$.
La réunion~$Q_2$ de ces cellules $P\cap f_\trop(E)$
est un sous-espace paralinéaire de~$P$, fermé, 
disjoint de~$f_\trop(\mathring C)$, de sorte que $P\setminus Q_2$
est un voisinage de~$f_\trop(\mathring C)$ dans~$P$.

Posons $Q=Q_1\cup Q_2$; c'est un sous-espace paralinéaire de~$P$,
fermé.
L'ensemble $P\setminus Q_2$ est un voisinage de $f_\trop(\mathring C)$ dans~$P$;
choisissons un ouvert~$\Omega$ de~$P$ tel que $f_\trop(\mathring C)\subset \Omega
\subset P\setminus Q_2$.
Comme $Q_1$ est d'intérieur vide dans~$P$, sa trace
sur~$\Omega$ est d'intérieur vide dans~$U$,
de sorte que $U\setminus Q_1$ est dense dans~$\Omega$.
Cela prouve que l'adhérence de $P\setminus Q$ contient~$\Omega$;
elle contient en particulier $f_\trop(\mathring C)$,
puis $f_\trop(C)$ qui est son adhérence.

Soit $P'$ l'ensemble des points~$\xi\in P$
tels que d'une part $\sq(\xi)\in U$, et  d'autre part,
pour toute cellule $D\in\Delta$,
l'unique antécédent de~$\xi$ par~$f_\trop|_D$ appartient à~$V$.
C'est un ouvert de~$P$ qui contient $f_\trop(x)$.
Il rencontre donc $P\setminus Q$.
Soit alors $u\in U\cap (P'\setminus Q)_\sq$.

Soit $v\in V\cap f^{-1}(u)$. Comme $f(v)=u\in S(\gm^n)$, on a $v\in\Sigma_f$;
soit $D$ l'unique cellule de~$\mathscr C$ telle que $v\in \mathring D$.

Démontrons que $D\in\Delta^*$.
Comme $V\cap \Sigma_f\subset W_C$, on a $v\in W_C$, donc $D$ contient~$C$.
Comme $f_\trop(v)\in P\setminus Q_1$, la cellule~$D$ appartient à~$\Delta$.
Soit $E\in\Delta^*$ une cellule minimale contenue dans~$D$;
par hypothèse, il existe $w\in E$ tel que $f_\trop(w)=f_\trop(v)$,
car $f_\trop(v)\in P$; alors $w=v$ car $f_\trop$ est injective sur~$D$;
comme $v\in\mathring D$, cela entraîne $D=E$, et donc $D\in\Delta^*$,
comme annoncé.

Inversement, 
soit $D\in\Delta^*$.
On a $\trop(u)\in f_\trop(D)$, par hypothèse; soit $v\in D$
tel que $f_\trop(v)=\trop(u)$; comme $f(v)$ et $u$ appartiennent à~$S(\gm^n)$,
on en déduit que $f(v)=u$. 
Comme $\trop(u)\in P'$, on a $v\in V$.
Démontrons que $v\in\mathring D$.
Dans le cas contraire, il existerait une cellule~$E$ de~$\mathscr C$,
contenue dans~$\partial(D)$ telle que $v\in E$. 
On a $\trop(u)\in P\cap f_\trop(E)$.
On a $E\not\in\Delta$, par minimalité de~$D$.
Si $E$ contenait~$C$, on aurait $\trop(u)\in  Q_1$, ce qui n'est pas.
Donc $E$ ne contient pas~$C$, donc est disjoint de~$\mathring C$,
et $\trop(u)$ appartient alors à~$Q_2$, ce qui n'est pas.

Cela prouve que la relation « $v\in\mathring D$ » dans~$V\times \mathscr C$
définit une bijection
de l'ensemble $V\cap f^{-1}(u)$ sur l'ensemble $\Delta^*$.
Puisque $f$ induit un morphisme fini et plat de degré~$\deg^x(f)$
de~$V$ sur~$U$,
on a $\deg^x(f) =\deg_{f(x)}(f)
=\deg_u(f)=\sum_{v\in V\cap f^{-1}(u)} \deg^v(f)$.
Par l'hypothèse  de récurrence, 
on a $\deg^v(f)=d_D$ si $v\in\mathring D$ et $D\in\Delta^*$,
car $\dim(D)\geq d>\dim(C)$,
de sorte que $\deg^x(f)=\sum_{D\in\Delta^*} d_D$.

Cela conclut la démonstration du lemme~\ref{lemm.df-recurrence}
et, partant, du théorème~\ref{theo.df-recurrence}.
\end{proof}

\begin{theo}\label{theo.vecteur-volume-transvection-2}
Soit $X$ un espace analytique de niveau~$n$,
soit $f,f'\colon X\to\gm^n$ des moments tels que $f'_\trop-f_\trop$
soit constante sur~$X$.
Les ensembles $\Sigma_f\cap\Int(X)$, $\Sigma_{f'}\cap\Int(X)$,
 $\Sigma_{f_\trop}^{(n)}\cap \Int(X)$ et $\Sigma_{f'_\trop}^{(n)}\cap\Int(X)$
sont égaux.
Pour tout point~$x$ dans cet ensemble, 
les morphismes~$f$ et~$f'$ sont finis et plats en~$x$
et l'on a $\deg^x(f)=\deg^x(f')$.
\end{theo}
\begin{proof}
Les dimensions tropicales de~$f_\trop$ et~$f'_\trop$ sont égales
en tout point; en particulier, 
on a $\Sigma_{f_\trop}^{(n)}=\Sigma_{f'_\trop}^{(n)}$.
La première assertion résulte
donc du corollaire~\ref{coro.Sigmaf=Sigmafn}.
Pour un point~$x\in\Sigma_f\cap\Int(X)$, 
on a $d_k(f(x))=d_k(f'(x))=n$ puisque
ces points appartiennent au squelette de~$\gm^n$.
Comme $\gm^n$ est réduit,
le lemme~\ref{lemme.finiplat-local} entraîne que~$f$ et~$f'$
sont finis et plats en~$x$.
Il reste à démontrer que $\deg^x(f)=\deg^x(f')$.

Soit $K$ le complété d'une clôture algébrique de~$k$; les deux squelettes
canoniques $S(\gm^n)$ et $S(\mathbf G_{\mathrm m,K}^n)$ sont identifiés à~$\R^n$
par la tropicalisation; notons ainsi $y_K$ et~$y'_K$ les points
de $S(\mathbf G_{\mathrm m,K}^n)$ correspondant à~$y$ et~$y'$.
On constate sur les formules définissant les semi-normes correspondantes
que le point $y_K$ est invariant 
sous l'action du groupe $\Aut(K/k)$; 
comme $\gm^n$ est (ensemblistement) le quotient de~$(\gm^n)_K$ par l'action 
de ce groupe, le point~$y_K$
est le seul antécédent de~$y$ dans~$(\gm^n)_K$. De même,
le point~$y'_K$ est le seul antécédent de~$y'$ dans~$(\gm^n)_K$.

Notons $p\colon X_K\to X$ le morphisme de changement de base canonique.
D'après ce qui précède, $p^{-1}(x) \subset f_K^{-1}(y_K)$
et $p^{-1}(x)\subset (f')_K^{-1}(y'_K)$.
Appliquons le lemme~\ref{lemm.degre.K/k}. On obtient
\[ \deg^x(f)  = \sum_{z\in p^{-1}(x)} \deg^z(f_K)
\quad\text{et}\quad
  \deg^x(f')  = \sum_{z\in p^{-1}(x)} \deg^z(f'_K) .\]

Soit $z\in p^{-1}(x)$. Le point~$z$ est intérieur à~$X_K$.
De plus, il appartient à l'image réciproque du squelette canonique
par~$f_K$ et par~$f'_K$.
% si bien que $f_K$
% et $f'_K$ sont de dimension tropicale~$n$ en~$z$.
Quitte à étendre les scalaires à~$K$, on peut ainsi supposer 
que $k$ est algébriquement clos.

Notons $(f_1,\dots,f_n)$ et $(f'_1,\dots,f'_n)$ les composantes de~$f$
et de~$f'$.
Soit $i\in\{1,\dots,n\}$.
La fonction numérique~$\abs{f'_i/f_i}$ est constante  sur~$X$;
comme~$x\in\Int(X)$
et comme le corps~$k$ est maintenant supposé algébriquement clos, 
la proposition~\ref{prop.abs-f-constante}
affirme qu'il existe un élément~$\lambda_i\in k^\times$ 
et une fonction~$g_i\in \mathscr O_{X,x}^\times$ 
tels que 
$\abs {g_i} <\abs{\lambda_i}$ et~$f_i=\lambda_i+g_i$ au voisinage de~$x$. 
Quitte à composer~$f$ avec la translation par $(\lambda_1,\dots,\lambda_n)$
dans~$\gm^n$, on suppose que $\lambda_1=\dots=\lambda_n=1$.
Alors, $\trop(y)=\trop(y')$, donc $y=y'$ puisque ces deux points
appartiennent à~$S(\gm^n)$.

On a~$d_k(x)=n=\dim(X)$;
l'anneau local~$\mathscr O_{X,x}$ est donc artinien (\S\ref{ss.dimcent});
c'est en particulier un anneau de Cohen-Macaulay. 
Choisissons alors un voisinage affinoïde connexe~$V$ de~$y$ 
dans~$\gm^n$ et
et un voisinage affinoïde~$W$ de~$x$ dans~$\Int(X)$ 
tels que les conditions suivantes soient satisfaites : 
\begin{itemize}
\item Les fonctions~$g$ sont définies sur~$W$
et l'on a~$\abs {g_i}<1$ et~$f'_i=(1+g_i)f_i$ sur~$W$ ;
\item L'espace~$W$ est de Cohen-Macaulay ; 
\item Le point~$x$ est le seul antécédent de~$f(x)$ 
par~$f$ dans~$W$;
\item Le morphisme $f$ définit 
par restriction un morphisme fini et plat de~$W$ vers~$V$.
\end{itemize}
Comme~$x$ est le seul antécédent de~$f(x)$ par~$f$ dans~$W$,
on a $\deg_{f(x)}(f)=\deg^{x}(f)$.
Comme $V$ est connexe, il s'ensuit le morphisme
fini et plat $f|_W\colon W\to V$ est de degré constant~$d$.

Soit~$R$ un nombre réel~$>1$ tel que $R \abs g < 1$ sur~$W$
et soit $B=D(0,R)$  le disque ouvert de rayon~$R$ dans la droite
affine ; on  note $\theta$ la fonction coordonnée sur~$B$.
Soit $h$ le morphisme de~$W\times B$ vers~$\gm^n\times B$
défini par la famille
\[((1+\theta g_1)f_1, \dots, (1+\theta g_n)f_n ).\]
Pour tout~$t\in B$, on note~$h_t$ le 
morphisme de~$W_{\hr t}$ dans~$ \mathbf G_{{\mathrm m},\hr t}^n$ 
déduit de~$h$ par restriction à la fibre de la dernière projection en~$t$. 
En particulier, on a $h_0=f|_{W}$ et $h_1=f'|_{W}$. 

Rappelons (\S\ref{section.shilov}) que pour $r\in \R^n$
et $t\in B$, on a noté $\sq_t(r)$ l'unique 
point de~$\gm^n\times_k B$ caractérisé par les propriétés
suivantes:
\begin{itemize}
\item son image par la composition de la première
projection et de l'application de tropicalisation
est~$r$;
\item son image par la seconde projection est~$t$;
\item il appartient au squelette canonique 
du $\mathscr H(t)$-tore~${\mathbf G}^n_{\mathrm m,\mathscr H(t)}$,
immergé dans \mbox{$\gm^n\times B$.}
\end{itemize}

\begin{lemm}\label{lemm.comparaison-squelettes}
Soit $r$ un point de~$\R^n$.
Soit $z$ un point de~$W\times B$, d'image~$w$ dans~$W$
et~$t$ dans~$B$. Si $h_t(z)=\sq_t(r)$, on a $h_0(w)=\sq(r)$.
\end{lemm}
\begin{proof}
Puisque $R\abs g<1$ sur~$W$ et 
puisque~$h_t(z)=\sq_t(r)$, il résulte de
la seconde assertion du \S\ref{imrecisquel-explicite}
que~$h_{0,\hr t (z)}=\sq_t (r)$.
Le point $h_0(w)$ est alors l'image de~$\sq_t(r)$ dans~$\gm^n$,
c'est-à-dire~$\sq(r)$.
\end{proof}

Le point~$x$ appartient à l'intérieur de~$W$ car~$W\subset\Int(X)$,
et  c'est le seul antécédent de~$q\circ f(x)=h_0(x)$
sur~$W$ par~$h_0$. Il existe donc un pavé ouvert~$\Delta$ de~$\R^n$
contenant \mbox{$(q\circ f)_\trop(x)$} 
tel que~$\Delta_\sq$ soit contenu dans~$V$
et ne rencontre pas~$h_0(\partial W)$. 

\begin{lemm}\label{lemm.finiplat-Delta}
Notons $\sq_B(\Delta)$ l'ensemble des points $\sq_t(r)$,
pour $r\in\Delta$ et $t\in B$.
Le morphisme~$h$ est fini et plat au-dessus de $\sq_B(\Delta)$.
\end{lemm}
\begin{proof}
Soit $r\in\Delta$ et $t\in B$; 
soit $z\in W\times B$ un antécédent de~$\sq_t(r)$ par~$h$
et soit $w$ son image dans~$W$. 
D'après le lemme~\ref{lemm.comparaison-squelettes},
on a $h_0(w)=\sq(r)$.
Par le choix de~$\Delta$, cela entraîne que $w\in\Int(W)$;
en conséquence, $z\in(\Int(W))_{\hr t}$.
Nous avons ainsi prouvé que $h^{-1}(\sq_t(r))$ est
contenu dans l'espace sans bord \mbox{$\Int(W)\times B$}.

Le morphisme~$h$ est compact.
Soit en effet une partie compacte~$K$ de~$\gm^n\times B$.
Sa projection dans~$B$ étant compacte, il existe
un disque fermé~$B'$ contenu dans~$B$ tel que $K\subset\gm^n\times B'$.
Alors, $h^{-1}(K)$ est contenu dans~$W\times B'$, donc est compact.

Il résulte enfin du lemme~\ref{lemm.fibre-dim-zero}, 
appliqué au morphisme~$h_t$, 
que la fibre~$h^{-1}(\sq_t(r))$ (égale à $h_t^{-1}(\sq_t(r))$) 
est de dimension nulle.
D'après le lemme~\ref{lemme.finiplat-global}, appliqué maintenant à~$h$,
le morphisme~$h$ est alors fini au-dessus de~$\sq_t(r)$.

L'espace $\gm^n\times B$ est lisse, donc régulier;
l'espace  $W\times B$ est lisse sur l'espace
de Cohen-Macaulay~$W$, donc Cohen-Macaulay lui-même;
ces deux espaces $\gm^n\times B$ et $W\times B$
sont purement de dimension~$n+1$.
Le corollaire~\ref{plat.cohen} assure donc
que~$h$ est fini et plat au-dessus de~$\sq_t(r)$.
Cela conclut la démonstration du lemme~\ref{lemm.finiplat-Delta}.
\end{proof}

Revenons maintenant à la démonstration 
du théorème~\ref{theo.vecteur-volume-transvection-2}.
%Il existe un entier naturel~$\delta$ tel que pour tout $r\in\Delta$
%et tout $t\in B$,
%le morphisme~$h_t$ 
%soit fini et plat de degré~$\delta$ au-dessus du point~$\sq_t(r)$. 
L'espace $\sq_B(\Delta)$ est une partie connexe et non vide
de~$\gm^n\times B$ (\S\ref{section.shilov}).
Par suite, le morphisme~$h$ possède un degré~$\delta$
au-dessus de $\sq_B(\Delta)$.
Par changement de base, pour tout $t\in B$,
le morphisme~$h_t$ est fini et plat de degré~$\delta$
au-dessus du point~$\sq_t(r)$.

En particulier, le morphisme $h_0=f|_W$ est fini
et plat de degré~$\delta$ au-dessus de~$y=f(x)$.
On a donc $\delta=d$.

Le morphisme $h_1=f'|_W$ est aussi fini
et plat de degré~$\delta$ au-dessus de~$y=f'(x)$.

D'après l'assertion~(3) du \S\ref{imrecisquel-explicite},
les images réciproques de~$S(\gm^n)$ par~$f|_W$ et $f'|_W$
coïncident.
Puisque $f_\trop=f'_\trop$, on en déduit
que pour tout $t\in S(\gm^n)$, $f^{-1}(t)=(f')^{-1}(t)$.
En particulier, la fibre
de~$f$ en~$y$ coïncide avec celle de~$f'$,
si bien que le point~$x$ est le seul antécédent dans~$W$ de~$y$
par~$f'$.  
Il en résulte que $f'$ est fini et plat de degré~$\delta$
en~$x$.
Par définition, on a donc $\deg^x(f') =\delta=\deg^x(f)$,
d'où
le théorème~\ref{theo.vecteur-volume-transvection-2}.
\end{proof}

\begin{lemm}\label{lemm.dimensions-tropicales-fibre}
Soit $p\colon Y\to X$ un morphisme d'espaces analytiques.
Soit $y\in Y$, soit $x=p(y)$;
posons $m=\dim_x(X)$, $n=\dim_y(Y)$ et $d=\dim_y(Y_x)$; supposons $n=m+d$
(c'est par exemple le cas si $p$ est plat en~$y$).
Soit $f\colon X\to \gm^m$ et $g\colon Y\to \gm^d$ des moments;
posons $h=(f\circ p,g)\colon Y \to \gm^n$.

\begin{enumerate}\def\theenumi{\alph{enumi}}\def\labelenumi{\theenumi)}
\item
Pour que $y\in \Sigma_h$, il faut et il suffit que
$y\in\Sigma_{g|_{Y_x}}$ et $x\in \Sigma_f$.

 \item
Supposons $y\in \Int(Y)$.
Pour que $h$ soit de dimension tropicale~$n$ en~$y$,
il faut et il suffit que $g|_{Y_x}$ soit de dimension tropicale~$d$ en~$y$
et que $f$ soit de dimension tropicale~$m$ en~$x$.
 
\item
Supposons $y\in \Sigma_h\cap \Int(Y)$.
Alors le morphisme~$f$ est fini et plat en~$x$ et
les morphismes~$h$ et $g|_{Y_x}$ sont finis et plats en~$y$.

\item
Supposons que $y\in\Sigma_h$ et que $p$ est plat en~$y$.
Le morphisme~$h$ est quasi-fini en~$y$, le morphisme~$f$
est quasi-fini en~$x$ et le morphisme~$g|_{Y_x}$ est quasi-fini en~$y$.
De plus, on a
\[ \deg^y(h) = \deg^x (f) \deg^y (g|_{Y_x}). \]
\end{enumerate}
\end{lemm}
\begin{proof}
\emph a) Les formules explicites décrivant le squelette de~$\gm^n$
montrent  qu'un point~$t$ de~$\gm^n$ appartient à son squelette si et seulement
si sa projection~$s$ dans~$\gm^m$ est squelettique et 
si $t$ appartient au squelette de sa fibre~$\mathbf G^d_{\mathrm m,\mathscr H(s)}$.
L'assertion en résulte.

\emph b)
Comme $y$ n'appartient pas  au bord de~$Y$, il n'appartient pas
au bord de~$Y_x$ et $x$ n'appartient pas au bord de~$X$.
L'assertion~\emph b) du lemme découle alors de
ce qui précède
et de la proposition~\ref{prop.trop.local}, (iv).

\emph c)
On déduit de \emph a) les égalités
$d_k(x)=m$, $d_k(y)=n$ et $d_{\mathscr H(x)}(y)=d$.
D'après le lemme~\ref{lemme.finiplat-local}, \emph c),
le morphisme~$f$ est fini et plat
en~$x$ et   les morphismes~$h$ et $g|_{Y_x}$ sont finis et plats en~$y$,
car $y\in\Int(Y)$.

\emph d)
Considérons le diagramme commutatif
\[
\begin{tikzcd}[sep=large]
 Y \ar{rd}{h} \ar[swap]{d}{g'} \ar[bend right=60]{dd}[swap]{p} \\
 X\times \gm^d  \ar[swap]{r}{f'} \ar{d}{} & \gm^m\times\gm^d  \ar{d} \\
X \ar{r}{f} & \gm^m ,
\end{tikzcd}
\]
où le carré inférieur est cartésien.
Comme $y\in \Sigma_h$, le morphisme~$h$ est quasi-fini et plat en~$y$
(lemme~\ref{lemm.quasi-fini-plat-sq}).
D'après~\emph a), le point $y$ apartient à~$\Sigma_{g|_{Y_x}}$,
donc $g|_{Y_x}$ est quasi-fini et plat en~$y$;
de même, $x\in\Sigma_f$, donc $f$ est quasi-fini et plat en~$x$.

On a $h = f' \circ g'$. 
Comme $h$ est quasi-fini en~$y$, il en est  de même de~$g'$;
par ailleurs, le morphisme~$f'$ est quasi-fini en $g'(y)$,
par changement de base, car $g'(y)$ est au-dessus de~$x$.
Démontrons que $\deg^y(h)=\deg^{g'(y)}(f') \deg^y(g')$;
d'après la proposition~\ref{prop.degre-multiplicatif},
il suffit de démontrer que le morphisme~$g'$ est plat en~$y$.
La projection
$X\times\gm^d\to X$ est plate (\cite{ducros2018}, lemme~4.1.13);
le morphisme~$p$ est plat en~$y$ par hypothèse.
De plus, la fibre de $g'$ en~$x$ s'identifie au morphisme 
$g|_{Y_x}\colon Y_x\to\mathbf G^d_{\mathrm m,\mathscr H(x)}$ 
qui est plat en~$y$.
D'après le critère de platitude par fibres (\cite{ducros2018},
théorème~8.3.8), 
le morphisme~$g'$ est plat en~$y$.

Pour conclure, il suffit alors de démontrer 
les égalités $\deg^{g'(y)}(f')=\deg^x(f)$
et $\deg^y(g')= \deg^y(g|_{Y_x})$.

Commençons par la seconde. Par définition, $\deg^y(g')$ 
est la dimension sur~$\hr {g'(y)}$ de l'anneau local
de $(g')^{-1}(g'(y))$ en~$g'(y)$.
Considérons $X\times \gm^d$ comme $\gm[X]^d$;
alors $g'(y)$ s'identifie au point $g|_{Y_x}(y)$ de $\gm[\hr x]^d$.
De même, la fibre~$(g')^{-1}(g'(y))$ s'identifie à la fibre
$(g|_{Y_x})^{-1}(g|_{Y_x}(y))$. 
Il en résulte l'égalité $\deg^y(g')=\deg^y(g|_{Y_x})$.

Démontrons enfin  que $\deg^{g'(y)}(f')=\deg^x(f)$.
Par définition, $\deg^{g'(y)}(f')$ est la dimension
sur $f'(g'(y))=h(y)$ de l'anneau local 
de $(f')^{-1}(h(y))$ en~$g'(y)$.
Le morphisme~$f'$ est déduit de~$f$ par changement de base. Ainsi,
$(f')^{-1}(h(y))= f^{-1}(f(x)) \otimes_{\hr {f(x)}} \hr {h(y)}$.
L'anneau local de $f^{-1}(f(x))$ en~$x$ est une algèbre
locale artinienne 
de dimension~$\deg^x(f)$ sur~$\hr x$.
Son produit tensoriel par~$\hr {h(y)}$ est le produit des anneaux
locaux de~$(f')^{-1}(h(y))$ en tous les antécédents de~$x$.
Pour conclure la démonstration de l'égalité voulue,
il suffit de démontrer que le point~$x$ n'a qu'un seul antécédent
dans~$(f')^{-1}(h(y))$. 
Cela résulte du fait
que $\hr{f(x)}$ est séparablement clos dans~$\hr{h(y)}$:
le point~$h(y)$ appartient à~$S(\gm[\hr{f(x)}]^n)$,
le corollaire~\ref{coro-skel-ginvariant} s'applique donc. 
\end{proof}

% \begin{lemm}[Critère de platitude par fibres]
% \label{lemm.critere-plat-fibres}
% Soit $X,Y,S$ des espaces $k$-analytiques; soit $g\colon X\to S$,
% $h\colon Y\to S$ des morphismes, $f\colon X\to Y$ un $S$-morphisme;
% soit $\mathscr F$ un faisceau cohérent sur~$X$;
% soit $x$ un point de~$X$;
% on pose $y=f(x)$ et $s=h(y)=g(x)$.
% Supposons que $\mathscr F_{\mathscr H(x)}\neq0$.
% Alors, les conditions suivantes sont équivalentes:
% \begin{enumerate}\def\theenumi{\roman{enumi}}\def\labelenumi{(\theenumi)}
% \item Le faisceau cohérent~$\mathscr F$ est $g$-plat en~$x$ et 
% le faisceau cohérent $\mathscr F|_{X_s}$ est $f_s$-plat en~$x$;
% \item Le morphisme~$h$ est plat en~$y$ et le faisceau cohérent~$\mathscr F$ est $f$-plat en~$x$.
% \end{enumerate}
% \end{lemm}
% \begin{proof}
% Par définition de la platitude, on se ramène au cas où $X$, $Y$ et $S$ sont des espaces affinoïdes. En appliquant le théorème~8.3.6 de~\cite{ducros2011},
% on se ramène au cas où les points~$x$, $y$, $s$ sont rigides,
% et les platitudes indiquées correspondent alors à la platitude usuelle 
% des espaces topologiques localement annelés.
% L'assertion découle alors du critère de platitude par fibres usuel en 
% théorie des schémas ou en algèbre commutative
% (\cite{ega4.3}, 11.3.10.1).
% \end{proof}

\section{Calibrages canoniques}

Soit $X$ un espace analytique de niveau~$n$.

\begin{prop}\label{prop.vecteurs-volume}
Soit $f\colon X\to T$ un moment et soit $C$ une $f$-cellule de dimension~$n$.
Soit $q$ et $q'$ des morphismes de tores de~$T$
dans~$\gm^n$ tels que $q_\trop$ et $q'_\trop$ soient
injectifs sur~$f_\trop(C)$.
Soit $\sigma_q$ l'unique section affine de~$q_\trop$
dont l'image contient~$f_\trop(C)$; on définit~$\sigma_{q'}$
de façon analogue.

Soit $\mu$ un vecteur-volume sur~$\R^n$.
Pour presque tout $x\in C$,
on a l'égalité \begin{equation}
\label{comparaison-vecteurs-volume} 
\deg^x ({q\circ f})\sigma_{q,*}(\mu) 
=\deg^x ({q'\circ f})\sigma_{q',*}(\mu)
\end{equation}
de vecteurs volumes sur l'espace affine $\langle f_\trop(C)\rangle$.
\end{prop}
\begin{proof}
Notons $C_f=f_\trop(C)$.

\medskip

Supposons d'abord que $q'$ soit de la forme $\phi\circ q$,
où $\phi\colon \gm^n\to\gm^n$ est un morphisme surjectif de tores.
D'après la proposition~\ref{prop.degre-multiplicatif},
on a 
\[ \deg^x(q'\circ f)=\deg^x(\phi\circ q\circ f)
 = \deg^{q\circ f(x)} (\phi) \deg^x(q\circ f). \]
Le morphisme~$\phi$ est fini et plat et 
pour tout $t\in S(\gm^n)$, on a $\deg^t(\phi)=\deg(\phi)$,
en vertu du corollaire~\ref{coro.skel-gmn-funct}.
Par suite, on a $\deg^x(q'\circ f)=\deg(\phi) \deg^x(q\circ f)$
pour tout $x\in C$.

Par ailleurs, $\phi_\trop$ est une application affine bijective
de~$\R^n$ dans lui-même et l'on a $\sigma_{q'}=\sigma_q\circ\phi_\trop^{-1}$.
La valeur absolue du déterminant de l'application linéaire
$\vec\phi_\trop$ est égale au degré de~$\phi$. Par suite,
$\phi_\trop^{-1}$ applique le vecteur volume~$\mu$
sur $\deg(\phi)^{-1}\mu$, de sorte que
$\sigma_{q',*}(\mu)=\deg(\phi)^{-1} \sigma_{q,*}(\mu)$.
La relation voulue est donc vérifiée dans ce cas particulier.

\medskip

Supposons ensuite que $q'_\trop-q_\trop$ soit constante
sur $C_f$.
D'après le théorème~\ref{theo.vecteur-volume-transvection-2},
on a $\deg^x ({q\circ f})=\deg^x (q'\circ f)$ pour tout~$x\in C\cap \Int(X) $;
d'autre part, $C\cap \partial(X)$ est localement négligeable.
De plus, $\sigma_q$ et~$\sigma_{q'}$ diffèrent d'une constante,
si bien que $\sigma_{q,*}(\mu)=\sigma_{q',*}(\mu)$. 
Cela prouve la relation voulue dans ce cas.

\medskip

Montrons maintenant comment la conjonction de ces deux cas permet
d'obtenir le cas général.
Notons $\psi$ l'unique application linéaire de~$\R^n$
dans lui-même telle que $\psi\circ q_\trop$
et $q'_\trop$ diffèrent d'une translation sur~$C_f$.
Compte tenu de la convention  faite au paragraphe~\ref{ss.convention},
 l'espace affine $\langle C_f\rangle$ est $\Q$-rationnel,
si bien que $\psi$ appartient à~$\GL_n(\Q)$.
Il existe donc deux morphismes de tores $\phi$ et $\phi'$
de~$\gm^n$ dans lui-même tels que $\phi'_\trop\circ \psi=\phi_\trop$.
Ainsi, $(\phi'\circ q'\circ f)_\trop$ et  $(\phi\circ q\circ f)_\trop$
diffèrent d'une constante sur~$C$.
Posons $V=f_\trop^{-1}(C_f)$;
c'est un domaine analytique fermé de~$X$
qui contient~$C$.
Sur $V$, on a $(\phi'\circ q'\circ f)_\trop=(\phi\circ q\circ f)_\trop$.

D'après le premier cas particulier, on a 
donc  presque partout les relations
\[ \deg^x ({\phi\circ q\circ f|V}) \sigma_{\phi\circ q,*}(\mu) =
\deg^x ({q\circ f|V}) \sigma_{q,*}(\mu)  \]
et
\[ \deg^x (\phi'\circ q'\circ f|V) \sigma_{\phi'\circ q',*}(\mu) =
\deg^x (q'\circ f|V) \sigma_{q',*}(\mu)  .\]
D'autre part, il résulte du second cas particulier traité que
\[ \deg^x ({\phi\circ q\circ f|V}) \sigma_{\phi\circ q,*}(\mu) =
 \deg^x ({\phi'\circ q'\circ f|V}) \sigma_{\phi'\circ q',*}(\mu) .\]
Comme $V$ est un domaine de~$X$ contenant~$x$, 
on a les égalités de degrés
\[ \deg^x ({q\circ f|V}) = \deg^x ({q\circ f})
\quad\text{et}\quad
 \deg^x ({q'\circ f|V}) = \deg^x ({q'\circ f})
 \]
pour tout~$x\in V$. Cela conclut la démonstration 
de la proposition.
\end{proof}

\subsection{}
Soit~$\Sigma$ une partie squelettique de~$X$,
paracompacte et contenue dans le polyèdre caractéristique~$\Sigma_f$
d'un moment $f\colon V\to T$ défini sur un domaine analytique~$V$ de~$X$.
Nous allons définir un calibrage~$\mu_\Sigma$ de~$\Sigma$.

Soit $\mathscr C$ une décomposition cellulaire 
de~$\Sigma$ en $f$-cellules commodes
(lemme~\ref{lemm.exists-dec-cell-commode}).
Soit~$C$ une cellule de dimension~$n$ appartenant à~$\mathscr C$.
Soit $q\colon T\to \gm^n$ un morphisme de tores 
tel que $q_\trop$ soit injectif sur~$f_\trop(C)$.
D'après la proposition~\ref{prop.commode-fini-plat},
le morphisme $q\circ f$ est fini et plat sur~$\mathring C$
et son degré $x\mapsto \deg^x(q\circ f)$ prend une valeur finie,
strictement positive et constante~$d_C$ sur~$\mathring C$.
Notons $e=\abs{e_1\wedge\dots\wedge e_n}$ le vecteur-volume canonique
de~$\R^n$, considéré comme un calibrage constant,
et posons $\mu_C = d_C (q\circ f)_\trop^*(e)$;
c'est un calibrage standard de la cellule~$C$.
Il ne dépend pas du choix de~$q$ 
d'après la proposition~\ref{prop.vecteurs-volume}.
La famille $(\mu_C)$ définit un calibrage standard de~$\Sigma$;
on le note $\mu_{\Sigma,f,\mathscr C}$.

Vérifions que ce calibrage est indépendant
du choix de~$(V,f,\mathscr C)$.

Soit $V'$ un domaine analytique  de~$X$,
soit $f'\colon V'\to T'$ un moment tel que $\Sigma\subset\Sigma_{f'}$
et soit $\mathscr C'$
une décomposition cellulaire de~$\Sigma$
en $f'$-cellules commodes.
Posons $W=V\cap V'$ et soit $g$ le moment~$(f,f')$ sur~$W$ et
considérons une décomposition cellulaire~$\mathscr D$ de~$\Sigma$
en $g$-cellules commodes
qui est adaptée aux décompositions cellulaires~$\mathscr C$ et~$\mathscr C'$.

Soit $D$ une cellule de dimension~$n$ appartenant à~$\mathscr D$.
Soit $q\colon T\to\gm^n$ un morphisme
de tores tel que $q_\trop$ est injectif sur~$f_\trop(D)$.
Notons aussi $p\colon T\times T'\to T$
la première projection.

Soit $C$ une cellule de~$\mathscr C$ contenant~$D$.
Comme $\mathring D$ ne rencontre pas~$\partial(W)$,
il est contenu dans~$\Int(W/V)$, si bien que 
$q\circ f|_W$ est fini et plat de degré~$d_C$
en tout point de~$\mathring D$.
Il existe aussi un entier~$d_D>0$
tel que  $(q\circ p)\circ g$ soit fini et plat de degré~$d_D$
en  tout point de~$\mathring D$
(proposition~\ref{prop.commode-fini-plat}).
Comme on a  $(q\circ p)\circ g=q\circ f|_W$,
on a $d_C=d_D$. Par ailleurs, $(q\circ g)_\trop^* (e)
= (q\circ f)_\trop^* (e)$.
Par suite, les triplets $(V,f,\mathscr C)$
et $(W,g,\mathscr D)$ induisent le même calibrage sur~$D$.

Cela entraîne que le calibrage~$\mu_{\Sigma,f,\mathscr C}$
de~$\Sigma$
déduit de $(V,f,\mathscr C)$ coïncide avec 
celui déduit de $(W,g,\mathscr D)$;
par symétrie, il coïncide avec celui déduit de~$(V',f',\mathscr C')$.
Notons-le~$\mu_\Sigma$.

\subsection{}\label{sss.calibrage-restriction}
Soit $\Sigma'$ un sous-espace paralinéaire de~$\Sigma$.
Soit $\mathscr C'$ une décomposition cellulaire de~$\Sigma$
en $f$-cellules commodes
qui raffine $\mathscr C\cap\Sigma'$.
La description de~$\mu_{\Sigma',f,\mathscr C'}$
entraîne alors
l'égalité $\mu_\Sigma|_{\Sigma'}=\mu_{\Sigma'}$.

\begin{lemm}\label{lemm.calibrage-canonique}
Soit $X$ un espace analytique de niveau~$n$
et soit~$\Sigma$ une partie squelettique de~$X$.
Il existe un unique calibrage~$\mu_\Sigma$ de~$\Sigma$
vérifiant la propriété suivante: pour tout domaine analytique~$V$
de~$X$, tout moment~$f$ sur~$V$ et tout sous-espace
paralinéaire~$\Sigma'$ de~$\Sigma_f\cap\Sigma$
qui est paracompact, on a $\mu_\Sigma|_{\Sigma'}=\mu_{\Sigma'}$.
Ce calibrage $\mu_\Sigma$ est standard.
\end{lemm}
\begin{proof}
Par définition, l'espace squelettique~$\Sigma$ est G-recouvert
par des sous-espaces paralinéaires~$\Sigma'$
comme dans l'énoncé. Les calibrages formant un G-faisceau,
il existe au plus un tel calibrage.
Pour démontrer son existence, il suffit alors de vérifier
la condition de coïncidence. Or, si $\Sigma'$
est $\Sigma''$ sont des sous-espaces paralinéaires de~$\Sigma$
satisfaisant les conditions de l'énoncé, 
on a $\mu_{\Sigma'}|_{\Sigma'\cap\Sigma''}=\mu_{\Sigma'\cap\Sigma''}
=\mu_{\Sigma''}|_{\Sigma'\cap\Sigma''}$
d'après le paragraphe~\ref{sss.calibrage-restriction}.
\end{proof}

\begin{defi}\label{defi.calibrage-canonique}
Soit $X$ un espace analytique de niveau~$n$, topologiquement séparé,
 et soit $\Sigma$ une partie squelettique de~$X$.
On appelle \emph{calibrage canonique} de~$\Sigma$
le calibrage dont le lemme~\ref{lemm.calibrage-canonique}
affirme l'existence et l'unicité. 
\end{defi}

Par construction, ce calibrage est standard.
Si $\Sigma'\subset\Sigma$, on a $\mu_\Sigma|_{\Sigma'}=\mu_{\Sigma'}$.
Il résulte aussi des constructions
que si $\Sigma$ est contenu dans un domaine analytique~$V$ de~$X$,
le calibrage canonique~$\mu_\Sigma$ de~$\Sigma$,
vu comme partie squelettique de~$V$,
coïncide avec son calibrage canonique comme
partie squelettique de~$X$.

Ce calibrage canonique munit 
l'espace vertébré associé à l'espace analytique~$X$ 
d'une structure d'espace G-tropical calibré de niveau~$n$.
Nous supposerons toujours implicitement que $X$ est muni de cette structure.

\begin{prop}\label{prop.calibrage-chgt-base}
Soit $X$ un espace analytique de niveau~$n$.
Soit $\Sigma$ une  partie squelettique de~$X$,
soit $\mu$ son calibrage canonique.
Soit $L$ une extension complète de~$k$, soit $p\colon X_L\to X$
le morphisme de changement de base, soit $\Sigma_L$
la partie squelettique déduite de~$\Sigma$ par changement de base
(théorème~\ref{theo.chgt-base-polyedre}) 
et soit $\mu_L$ son calibrage canonique.
On a $p_*(\mu_L)=\mu$.
\end{prop}
\begin{proof}
Il suffit de traiter le cas où $X$ est (para)compact
et où il existe un moment $f\colon X\to T$ tel que $\Sigma=\Sigma_f$.
Alors, $\Sigma_L=\Sigma_{f_L}$ (théorème~\ref{theo.chgt-base-polyedre}).
Le morphisme de changement de base~$p$ étant compact, il existe
une décomposition cellulaire~$\mathscr C$ de~$\Sigma$
en $f$-cellules commodes et  une décomposition cellulaire~$\mathscr D$
de~$\Sigma_L$ en~$f_L$-cellules commodes telles que
le couple $(\mathscr D,\mathscr C)$ soit adapté au morphisme~$p$.
Soit $C$ une cellule de dimension~$n$ de~$\mathscr C$.
et soit $q\colon T\to\gm^n$ un morphisme
de tores tel que $q_\trop$ soit injectif sur $f_\trop(C)$.
Soit $x\in \mathring C$ et soit $t=q\circ f(x)\in S(\gm^n)$.
Soit $t_L\in (\gm^n)_L$ l'unique antécédent de~$t$ dans~$S(\gm^n)_L$.
D'après le lemme~\ref{lemm.degre.K/k}, on a 
\[ \deg^x({q\circ f}) = \sum_{  y\in p^{-1}(x) \cap (q\circ f_L)^{-1}(t_L)} \deg^y ({q\circ f_L} ). \]
Les points~$y$ qui apparaissent au second membre de cette égalité
sont des antécédents de~$x$ dans~$\Sigma_{f_L}$. Inversement,
soit $y\in\Sigma_{f_L}$ un point tel que $p(y)=x$; prouvons
que $q\circ f_L(y)=t_L$. Soit $D$ la cellule de~$\mathscr D$ telle
que $y\in\mathring D$; on a $p(D)=C$, donc $(q\circ f_L)_\trop=(q\circ f)_\trop\circ p$  est injectif sur~$D$, si bien que $q\circ f_L$ est de dimension tropicale~$n$ en~$y$. Cela entraîne que $q\circ f_L(y)\in S((\gm^n)_L)$;
comme c'est un antécédent de~$t$, on a $q\circ f_L(y)=t_L$.
On a donc
\[ \deg^x ({q\circ f}) = \sum_{  y\in p^{-1}(x)\cap \Sigma_{f_L}} 
    \deg^y ({q\circ f_L} ). \]
Par conséquent, $\mu_C = \sum_D p_* \mu_D$, où $D$ parcourt l'ensemble
des $n$-cellules~$D$ de~$\mathscr D$ au-dessus de~$C$. Ainsi, $\mu_\Sigma
= p_*(\mu_{\Sigma_L})$.
\end{proof}

\section{Calibrages canoniques et morphismes  finis}

\subsection{}\label{ss.degcal2}
Soit $p\colon Y\to X$ un morphisme 
entre espaces $k$-analytiques de niveau~$n$.
Soit~$\Sigma$ une partie squelettique de~$X$.
Supposons que les fibres de~$p$ au-dessus de tout point de~$\Sigma$ sont 
des ensembles finis.

Soit $x$ un point de~$\Sigma$ et soit $y\in f^{-1}(x)$.
Comme $d_k(x)=n$, 
on a $\dim(f^{-1}(x))=0$, de sorte que $f$ est quasi-finie en~$y$.
Par suite, $f^{-1}(x)$ est un $\hr x$-espace analytique fini.
On a ainsi défini au \S\ref{def.degcal_x} le nombre rationnel $\degcal_x(f)$.

\begin{prop}\label{prop.formule-degre-integral}
Soit $p\colon Y \to X$ un morphisme d'espaces $k$-analytiques de niveau~$n$.
Soit $\Sigma$ une partie squelettique de~$X$, 
soit $\mu_\Sigma$ son calibrage canonique.
Soit $\Tau$ la partie squelettique~$p^{-1}(\Sigma)$ de~$Y$
(proposition~\ref{prop.sigmanf-fini})
et soit $\mu_\Tau$ son calibrage canonique. 

Supposons que le morphisme $p_\Sigma\colon\Tau\to\Sigma$ est 
topologiquement compact.

\begin{enumerate}
\item 
La restriction de la fonction $x\mapsto \degcal_x(p)$ à
toute partie paralinéaire paracompacte de~$\Sigma$
est localement constante en dehors d'une partie paralinéaire
$n$-négligeable.

\item 
On a $p_*\mu_\Tau = \degcal(p) \mu_\Sigma$.
\end{enumerate}
\end{prop}

Sous les hypothèses de~\emph b), les fibres de~$f$ en tout point de~$\Sigma$
sont finies, de sorte que la fonction $x\mapsto\degcal_x(p)$ sur~$\Sigma$
est définie.
De plus, l'assertion~\emph a) donne sens au produit $\degcal(p)\mu_\Sigma$
et montre que c'est un calibrage standard de~$\Sigma$.

% Rappelons (proposition~\ref{prop.sigmanf-fini}) que $\Sigma$
% est une partie squelettique de~$X$. 
\begin{proof}
% Comme $\Sigma$ ne rencontre pas l'intersection de deux composantes
% irréductibles distinctes de~$X$, on peut se ramener au cas
% où $X$ est équidimensionnel et non vide.  Posons $n=\dim(X)$.
% Soit $Z$ la réunion des composantes irréductibles de~$Y$
% dont la dimension est~$<n$. Alors $p(Z)$ est un fermé de Zariski de~$X$
% de dimension~$<n$, donc ne rencontre pas~$\Sigma$.
% Quitte à remplacer~$X$ 
% par~$X\setminus p(Z)$ et $Y$ par $Y\setminus p^{-1}(p(Z))$,
% ce qui ne modifie pas~$\Sigma$ et $\Tau$,
% on peut supposer que $Y$ est purement de dimension~$n$.
% 
On sait (proposition~\ref{prop.sigmanf-fini})
que $p$ induit une immersion par morceaux compacte de~$\Tau$
sur~$\Sigma$.
Ainsi, l'image directe $p_*\mu_\Tau$ est définie, 
et est un calibrage standard de~$\Sigma$.

% Soit $Z$ un fermé de Zariski de~$X$ de dimension~$<n$
% tel que $p$ soit fini au-dessus de son complémentaire~$U$.
% Comme $\dim(Z)<n$, on a $\Sigma\subset U$.
% Comme la formation des calibrages canoniques 
% (lemme~\ref{lemm.calibrage-canonique})
% et des degrés calibrés 
% commute à la restriction à un domaine analytique,
% on peut
% remplacer~$X$ par un voisinage ouvert de~$\Sigma$ dans lequel il est fermé.
% Cela permet de supposer que $\Sigma$ est fermé dans~$X$ 
% et que $p$ est fini.
% La proposition~\ref{prop.sigmanf-fini}
% entraîne que $\Tau$ est une partie squelettique de~$Y$.
% L'application $p$ induit une immersion par morceaux compacte
% de~$\Tau$ dans~$\Sigma$, de sorte que
% l'image directe $p_*\mu_\Tau$ est définie, et est un calibrage de~$\Sigma$.
 
L'espace $X$ est G-recouvert par des domaines analytiques compacts~$V$
sur lesquels il existe un moment~$f$ tel que 
$\Sigma\cap V\subset\Sigma_f$.
Puisque la formation des calibrages canoniques  et des degrés calibrés
commute à la restriction à un domaine analytique,
% (lemme~\ref{lemm.calibrage-canonique}).
% De même, comme multiplicité et longueurs commutent à la
% restriction à un domaine analytique 
% (remarque~\ref{rema.longueur-domaine}),
% la fonction $\degcal_p$ commute à une telle restriction.
on peut remplacer~$X$ par~$V$ et~$Y$ par $p^{-1}(V)$,
ce qui nous ramène à prouver la proposition sous l'hypothèse
supplémentaire que 
$X$ est compact et qu'il existe un moment~$f\colon X\to\gm^n$
tel que $\Sigma$ soit une partie squelettique de~$\Sigma_f$.
En remplaçant~$Y$ par un voisinage analytique compact de~$\Tau$,
on se ramène au cas où $Y$ est compact.

Les moments~$f$ et~$f\circ p$ sont agréables
et l'on a $p^{-1}(\Sigma_f)=\Sigma_{f\circ p}$
et $p^{-1}(\Sigma_f^{(n)})=\Sigma_{f\circ p}^{(n)}$
(prop.~\ref{prop.sigmanf-fini}).
Choisissons une décomposition cellulaire~$\mathscr C$ de $\Sigma_f^{(n)}$
en $f$-cellules commodes,
et une décomposition cellulaire~$\mathscr D$ de $\Sigma_{f\circ p}^{(n)}$ 
en $f\circ p$-cellules commodes,
telles que le couple $(\mathscr D, \mathscr C)$ 
soit adapté à $p_{\Sigma_f^{(n)}}\colon \Sigma_{f\circ p}^{(n)}\to\Sigma_f^{(n)}$, à $\Sigma$ et à~$\Tau$. 

Soit $C$ une $n$-cellule de $\mathscr C$ 
et soit $D_1,\ldots, D_r$ les $n$-cellules de $\mathscr D$ dont l'image
par~$p$ est égale à $C$ ; 
l'immersion par morceaux de $\Sigma_{f\circ p}^{(n)}$
vers $\Sigma_f^{(n)}$ induit au-dessus de $\mathring C$ 
un revêtement dont les feuillets sont les $\mathring D_i$.
Soit $x\in \mathring C$ ; 
pour tout $i$, on note $y_i$ l'unique antécédent de $x$ sur $\mathring D_i$,
et l'on pose~$\xi=f(x)$. 

Par définition d'une cellule commode, le point $x$ appartient à $\Int (X)$.
De même, chacun des $y_i$ appartient $\Int(Y)$.
Le morphisme $f\circ p$ est fini et plat en chacun des $y_i$,
de degré $\deg^{y_i}(f\circ p)$. 
Le morphisme $f$ est fini et plat en $x$, de degré~$\deg^x(f)$.

Pour tout $i$, on a $\deg^{y_i}(f\circ p)=\degcal^{y_i}(f\circ p)$,
car $f\circ p$ est fini et plat en~$y_i$.
Par multiplicativité des degrés
calibrés, on a $\deg^{y_i}(f\circ p)=\degcal^{y_i}(p) \degcal^{x}(f)$,
donc $\deg^{y_i}(f\circ p)=\degcal^{y_i}(p) \deg^{x}(f)$
puisque $f$ est fini et plat en~$x$.
On a donc 
\[ \sum_{i=1}^r \deg^{y_i}(f\circ p)=
\sum_{i=1}^r \degcal^{y_i}(p)\cdot \deg^x(f)
= \degcal_x(p) \deg^x(f). \]
Compte tenu de la définition des calibrages canoniques,
cela entraîne l'égalité voulue.
\end{proof}

\begin{rema}\label{rema.stein-calibrage}
% \footnote{Vient du paragraphe suivant.}
Soit $F$ une extension finie de~$k$ 
et soit $X$ un espace $F$-analytique de niveau~$n$.
On peut aussi voir~$X$ comme un espace $k$-analytique de niveau~$n$; 
pour éviter les confusions nous le noterons ici~$X_0$;
on a $\dim(X_0)=\dim(X)$.

Comme l'extension~$F$ de~$k$ est algébrique,
le squelette canonique
$S(\mathbf G_{\mathrm m,F}^n)$ est l'image réciproque de $S(\gm^n)$
par la projection de $\mathbf G_{\mathrm m,F}^n$ sur~$\gm^n$,
et $S(\gm^n)$ est l'image de $S(\mathbf G_{\mathrm m,F}^n)$
(\cite{ducros2012b}, (0.15)).
On en déduit qu'une partie de~$X$ 
est squelettique si et seulement si
elle l'est en tant que partie de~$X_0$. 

Soit donc $\Sigma$ une partie squelettique de~$X$;
notons $\mu$ son calibrage canonique et~$\mu_0$ celui de~$\Sigma$
vu comme partie squelettique de~$X_0$.
La projection canonique  
de $\mathbf G_{\mathrm m,F}^n$ sur~$\gm^n$ est finie et plate 
de degré~$[F:k]$.
Il découle donc de la définition du calibrage canonique
que $\mu_0=[F:k]\mu$.
\end{rema}

\section{Calibrages et tropicalisations}

\begin{defi}\label{defi.polyedre-tropical}\index{polyèdre tropical}
Soit $n$ un entier, soit $X$ un espace $k$-analytique de niveau~$n$
et soit $f\colon X\to T$ un moment agréable sur~$X$.
On appelle \emph{polyèdre tropical} du moment~$f$
le polyèdre~$f_\trop(\Sigma_f^{(n)})$ muni du calibrage 
image directe du calibrage canonique de $\Sigma_f^{(n)}$.
On le note $\Pi_f$.
\end{defi}

Ce n'est autre que le polyèdre tropical de la carte tropicale
agréable~$f_\trop$.

Lorsque $X$ est compact, rappelons  
(théorème~\ref{theo.proprietes-Sigmaf})
que l'on a l'égalité de polyèdres $\Pi_f=f_\trop(X)^{(n)}$.

Comme le calibrage canonique de~$\Sigma_f^{(n)}$
est standard, il en est de même du calibrage
canonique de~$\Pi_f$.

Comme la formation du calibrage canonique de~$\Sigma_f$
commute à l'extension des scalaires
(proposition~\ref{prop.calibrage-chgt-base}),
il en est de même de la formation du calibrage canonique de~$\Pi_f$.

\subsection{}\label{ss.cal-can-but}
Voyons maintenant comment décrire explicitement le calibrage de~$\Pi_f$.
Soit $\mathscr C$ une décomposition cellulaire de~$\Sigma_f^{(n)}$
vérifiant les propriétés de la proposition~\ref{prop.agreable-decomp}
et soit $\mathscr D=\{f_\trop(C)\;;\; C\in\mathscr C\}$ la décomposition
cellulaire de~$\Pi_f$ qui lui correspond.
Soit $D$ une $n$-cellule appartenant à~$\mathscr D$
et soit $q\colon T\to\gm^n$ un morphisme de tores tel que $q_\trop|_D$
soit injectif.
Soit $\mathscr C_D$ l'ensemble des $n$-cellules $C\in\mathscr C$
telles que $f_\trop(C)=D$; pour $C\in\mathscr C_D$,
soit $d_C$ l'entier naturel que $q\circ f$ soit fini et plat
de degré~$d_C$ en tout  point de~$\mathring C$
(prop.~\ref{prop.commode-fini-plat}).
Notons $d_D=\sum_{C\in\mathscr C_D} d_C$.
Notons $e=\abs{e_1\wedge\dots\wedge e_n}$ le vecteur-volume canonique
de~$\R^n$, considéré comme un calibrage constant.
Soit $\sigma_D\colon \R^n\to T_\trop$ l'unique section affine
de~$q_\trop$ d'image~$\langle D\rangle$.
Par définition du calibrage canonique de~$\Sigma_f^{(n)}$
et de l'image directe d'un calibrage, on a 
$\mu_{\Pi_f,D} = d_D \sigma_{D,*}(e)$.

\begin{prop}
Soit $X$ un espace $k$-analytique de dimension~$n$,
soit $f\colon X\to T$ un moment agréable et soit $x\in\Pi_f$.
Soit $V$ un voisinage de~$x$ dans~$\Pi_f$ qui est une $n$-cellule 
\emph{ouverte} de~$T_\trop$.
Soit $q\colon T\to\gm^n$ un morphisme de tores 
tel que $q_\trop|_V$ est injectif.
Supposons que $q\circ f|_{f_\trop^{-1}(V)}$ 
est fini et plat au-dessus
de~$q_\trop(x)_\sq$, de degré~$d_x$.
Notons $\sigma_q\colon \R^n\to T_\trop$ l'unique section affine
de~$q_\trop$ d'image~$\langle V\rangle$ et
notons $e=\abs{e_1\wedge\dots\wedge e_n}$ le vecteur-volume canonique
de~$\R^n$, considéré comme un calibrage constant.
Alors, le calibrage canonique de~$\Pi_f$ est constant
sur un voisinage de~$x$ dans~$V$,
et est égal à $d_x\sigma_{q,*}(e)$.
\end{prop}
\begin{proof}
Reprenons les notations précédentes.
Comme $D$ est une $n$-cellule qui contient~$x$ et que $V$ est un voisinage
de~$x$ dans~$\Pi_f$, on a $\langle D\rangle = \langle V\rangle$.
En particulier, $q_\trop$ est injective sur~$D$.
Il suffit donc de démontrer que l'on $d_D=d_x$ 
pour toute $n$-cellule~$D\in\mathscr D$ contenant~$x$.

Soit $W$ un voisinage ouvert de~$q_\trop(x)_\sq$  dans~$\gm^n$
tel que $q\circ f$ induise un morphisme fini et plat
de $f_\trop^{-1}(V) \cap (q\circ f)^{-1}(W)$ sur~$W$, de degré~$d_x$.
Soit~$\Delta$ une cellule contenue dans~$D$, 
contenant~$x$ et de dimension~$n$,
telle que $q_\trop(\Delta)_\sq\subset W$.

Démontrons que l'on a $d_x=\sum d_C$.
Soit $y$ un point de~$\mathring \Delta\cap W$.
Démontrons que les antécédents de $q_\trop(y)_\sq$ 
sont les antécédents de~$y$ dans~$\Sigma_f^{(n)}$ par $f_\trop$.

Soit $z\in f_\trop^{-1}(V)$ tel que $q\circ f(z)=q_\trop(y)_\sq$.
Comme $\sq\circ q_\trop$ est injective sur~$V$, on a $f_\trop(z)=y$.
Prouvons alors que $z\in \Sigma_f^{(n)}$.
Par définition, on a $z\in \Sigma_f$.
Comme $q\circ f|_{f^{-1}(V)}$ est fini et plat en~$z$, 
l'application $q\circ f$ est ouverte au voisinage de~$z$, 
donc l'image de tout voisinage de~$z$ par $q\circ f$
contient un voisinage  de~$q\circ f(z)=q_\trop(y)_\sq$,
donc $q\circ f$ est de dimension tropicale~$n$ en~$z$.
Cela entraîne que $f$ est de dimension tropicale~$n$ en~$z$,
d'où $z\in\Sigma_f^{(n)}$.

Soit $z\in \Sigma_f^{(n)}$ tel que $f_\trop(z)=y$.
Soit $C$ l'unique cellule de~$\mathscr C$ contenant~$z$;
alors $f_\trop $ induit un homéomorphisme de~$C$ sur~$D$,
si bien que $(q\circ f)_\trop$ induit un homéomorphisme de~$C$
sur~$q_\trop(D)$;  il en résulte que $q\circ f$
est de dimension tropicale~$n$ en tout point de~$C$.
Par suite, $q\circ f(z)\in S(\gm^n)$,
d'où $q\circ f(z) = q_\trop(y)_\sq$ puisque $\trop((q\circ f)(z))
= q_\trop(f_\trop(z))=q_\trop(y)$.
\end{proof}

\begin{rema}
Soit $V$ une partie ouverte de~$\Pi_f$ dont l'adhérence est une cellule
de dimension~$n$ et telle que $f_\trop^{-1}(V)$ soit sans bord.
Soit $q$ un morphisme de tores $q\colon T\to\gm^n$ tel
que $q_\trop|_V$ soit injective.
Supposons que $q\circ f$ soit compact au-dessus d'un voisinage
de~$q_\trop(V)_\sq$.
Cette hypothèse est automatiquement vérifiée lorsque $X$ est compact.
Elle l'est aussi dans le cas algébrique: comme $\dim(X)\leq n$, 
il existe un ouvert de Zariski algébrique~$U$ de~$\gm^n$
non vide, tel que $q\circ f$ soit fini et plat au-dessus de~$U^\an$,
et $U^\an$ contient~$S(\gm^n)$ (lemme~\ref{lemm.squelette-canonique}).

Sous ces conditions, il découle du lemme~\ref{lemme.finiplat-global}
que le morphisme $q\circ f|_{f_\trop^{-1}(V)}$
est fini et plat au-dessus de tout point 
$x\in q_\trop(V)_\sq$; notons~$d_x$ son degré.
Comme $q_\trop(V)_\sq$ est connexe et non vide, il existe un unique entier~$d$
tel que $d_x=d$ pour tout $x\in q_\trop(V)_\sq$.
D'après la proposition précédente, le calibrage $\mu_{\Pi_f}$
est donc constant, égal à $d\sigma_{q,*}(e)$, sur~$V$
(où $\sigma_q$ et $e$ sont comme dans cette proposition).
Remarquons enfin que $d$ est strictement positif.
\end{rema}

\section{Cônes tropicaux}

\subsection{}
Dans les applications arithmétiques de la théorie 
des formes et courants sur un espace de Berkovich, 
nous aurons souvent à travailler avec des situations analytiques
issues de modèles formels. Nous  chercherons alors à décrire
des objets analytiques (tel l'opérateur de Monge-Ampère)
en termes de théorie de l'intersection sur la fibre spéciale.
Pour cela, nous ferons de la géométrie analytique sur cette fibre
spéciale, considérée comme espace analytique sur le corps
\emph{trivialement valué}~$\tilde k$.

Le résultat crucial sur lequel nous nous appuierons pour relier géométrie
analytique sur $k$ et géométrie analytique sur $\widetilde k$ (ce dernier étant trivialement valué) 
est le suivant ; rappelons que le cône tropical d'un moment $f$ en un point de $\Sigma_f^{(n)}$ 
a été défini en \ref{defi.cone-tropical}.
et son calibrage en~\ref{defi.cone-tropical-calibre}.

\begin{theo}\label{theo.comparaison-cones}
Soit $k$ un corps ultramétrique 
complet algébriquement clos.
Soit $(X,x)$ un germe d'espace analytique sans bord, génériquement quasi-lisse,
purement de dimension~$n$; on suppose $d_k(x)=n$.
Soit $f_1,\dots,f_m$ des fonctions analytiques sur~$(X,x)$ telles
que $\abs{f_i(x)}=1$ pour tout~$i$; soit $f\colon (X,x)\to\gm^n$ le moment
qu'elles définissent.
Soit $V$ une $\tilde k^1$-variété propre de corps des fonctions $\hrt x^1$
et soit $\xi$ le point de~$V^\an$ 
correspondant à la valeur absolue triviale sur~$\widetilde k^1(V)$;
soit $\tilde f\colon (V^\an,\xi)\to \mathbf G^m_{\mathrm m,\tilde k}$
le moment définit par ~$\widetilde{f_1(x)},\dots,\widetilde{f_m(x)}$.

Les dimensions tropicales de~$f$ en~$x$ et de~$\tilde f$ en~$\xi$ coïncident.
Si elles sont toutes deux égales à~$n$,
les cônes tropicaux (calibrés)~$\Pi_{\tilde f,\xi}$ et $\Pi_{f,x}$  
dans~$\R^m$ sont égaux.
\end{theo}
Nous faisons observer au  lecteur que ce théorème 
vaut si la valuation de~$k$ est triviale.  Sa preuve consistera
à montrer que les deux cônes tropicaux dont on veut prouver l'égalité
peuvent être décrits exactement de la même façon.
Pour, il sera \emph{essentiel} de ne pas avoir à supposer que
la valuation de~$k$ est non triviale.

\begin{proof}
Comme $X$ est sans bord, 
la dimension tropicale de~$f$  en~$x$ est égale au degré de transcendance
résiduel gradué de $f_1(x),\dots,f_m(x)$, c'est-à-dire 
à celui de l'extension $\widetilde k(\widetilde f_1(x),\dots,\widetilde f_m(x))$ 
de~$\widetilde k$. Mais comme les $\widetilde{f_i(x)}$ sont tous de degré $1$, 
ce degré de transcendance est aussi celui 
$\widetilde k^1(\widetilde f_1(x),\dots,\widetilde f_m(x))$
sur $\widetilde k^1$. 
Calculons maintenant la dimension tropicale de
de~$\widetilde f$ en~$\xi$. Le corps complet $\hr \xi$ est
le corps trivialement valué $\widetilde k^1(\xi)=\hrt x^1$, si bien
que $\hrt \xi=\hrt x^1$ et que $\widetilde {g(\xi)}=g$
pour tout $g\in \widetilde k^1(V)=\hrt x^1$. 
Puisque $V^\an$ est sans bord, il s'ensuit que
la dimension tropicale de
de~$\widetilde f$ en~$\xi$
est égale 
au degré de transcendance 
de 
$\widetilde k^1(\tilde f_1(\xi),\dots, \tilde f_m(\xi))$
sur $\widetilde k^1$. 
Cela démontre que les deux dimensions tropicales considérées coïncident.
 
Supposons les toutes égales à~$n$, de sorte que les cônes tropicaux
$\Pi_{f,x}$ et $\Pi_{\tilde f,\xi}$ sont définis.
La coïncidence des cônes rationnels sous-jacents 
est alors une conséquence de la remarque 
\ref{rem-description-trop-germ}. 
Il reste à démontrer que 
leurs calibrages coïncident. À partir de maintenant, nous n'allons manipuler dans cette preuve 
que les réductions ordinaires, c'est-à-dire les parties de degré $1$ des réductions graduées. 
Pour alléger les notation nous écrirons donc (uniquement dans la suite de la preuve) 
$\hrt x$ au lieu de $\hrt x^1$, etc. 

Commençons par rappeler la définition des calibrages sur nos cônes tropicaux. 
Notons tout d'abord que $f_\trop(x)=0$.
On choisit une application linéaire de~$\R^m$
sur $\R^n$, définie sur $\Z$, dont la restriction 
 à chaque face maximale de~$\Pi_{f,x}$ est injective. 
On note~$g$ la composée du moment~$f$
et du morphisme de tores $q\colon \gm^m\to \gm^n$ 
tel que $q_\trop$ soit l'application linéaire choisie. 
Posons $g=(g_1,\dots,g_n)$; on a $g_\trop(x)=0$.
Pour tout~$i$, $g_i$ est un monôme (sans coefficient) en $f_1,\dots,f_m$,
donc $\abs{g_i(x)}=1$. 
De plus, la dimension tropicale de~$g$ en~$x$ est égale à~$n$.
Par suite, $g(x)$ est le point de Gauß $\eta_1$ de $\gm^n$.

Par réduction, le morphisme~$g$ fournit un morphisme de germes
$\tilde g \colon (V^\an, \xi)\to (\mathbf G^n_{\mathrm m,\tilde k}, \eta_{1})$. 
Notons que $\tilde g=\tilde q\circ \tilde f$, 
où $\tilde q\colon \mathbf G^m_{\mathrm m,\tilde k} \to
\mathbf G^n_{\mathrm m,\tilde k}$ est le morphisme de tores donné par
les mêmes monômes que~$q$.

D'après le lemme~\ref{lemme.finiplat-local}, le point~$x$ est isolé
dans~$g^{-1}(\eta_1)$ et le morphisme~$g$
est fini et plat en~$x$.  

Comme $d_k(x)=n$, le groupe de la valuation de $\hr x$  est 
de torsion modulo~$\abs{k^\times}$, donc égal à $\abs{k^\times}$
puisque celui-ci est divisible, $k$ étant algébriquement clos.
Soit~$F$ la sous-extension non ramifiée maximale de $\hr x/\hr {\eta_1}$;
son corps résiduel~$\tilde F$ est la fermeture séparable de $\hrt {\eta_1}$ 
dans $\hrt x$. 
Notons que $\hrt {\eta_1}$ est égal à $ \tilde k(T_1,\ldots, T_n)$. 
Les corps $\hr {\eta_1}$, $\hr x$ et~$F$ sont stables 
(lemme \ref{lemm.abhyankar-finitude}). 
On a donc 
\begin{gather}
P = [\hrt x: \tilde F] =[\hr x: F] \\
D = [\tilde F: \tilde k(T_1,\ldots, T_n)]=[F: \hr{\eta_1}]  \\
DP = [\hrt x: \tilde k(T_1,\ldots, T_n)]=[\hr x: \hr{\eta_1}].
\end{gather}
Remarquons que $P$ est une puissance de l'exposant caractéristique résiduel
et que $\hrt x^P\subset \tilde F$.

Comme l'extension~$F$ de~$\hr{\eta_1}$ est séparable,
le lemme de Krasner assure qu'il existe une
extension finie séparable $F_1$ de~$k(T_1,\dots,T_n)$, de degré~$D$,
induisant~$F$ par produit tensoriel avec $\hr{\eta_1}$.
Ce corps~$F_1$  est le corps des fonctions d'un $k$-schéma
de type fini intègre~$\mathscr Y$ 
muni d'un morphisme étale dominant vers~$\gm^n$, génériquement de degré~$D$.
Par construction, $\eta_1$ possède un unique antécédent~$y\in\mathscr Y^\an$
et l'on a $\hr y=F$. Posons $Y=\mathscr Y^\an$.

Par construction, le $\hr {\eta_1}$-plongement $\hr y\to \hr x$ 
fournit un morphisme $\hr y\otimes_{\hr{\eta_1}}\hr x \to \hr x$,
c'est-à-dire un point $\hr x$-rationnel de $X\times_{\gm^n} Y$
au-dessus de~$x$. D'après l'antiéquivalence de catégories
entre espaces étales sur~$(X,x)$ et algèbres étales sur~$\hr x$
(\cite{berkovich1993}, théorème~3.4.1), le morphisme de germes
$g\colon (X,x)\to (\gm^n,\eta_1)$ se relève donc en un morphisme
$h\colon (X,x)\to (Y,y)$ tel que $h(x)=y$.

\begin{lemm}
Le morphisme~$h$ est fini et plat de degré~$P$.
\end{lemm}
\begin{proof}
Comme $\mathscr Y$ est intègre et étale sur~$\gm^n$, il est 
réduit et purement de dimension~$n$; il en est donc de même de~$Y$.
On a $d_k(y)=d_k(\eta_1)=n$. Comme $X$ est sans bord,
le lemme
\ref{lemme.finiplat-local}
entraîne que le morphisme de germes~$h$
est fini et plat.
Comme $X$ est génériquement lisse et comme $d_k(x)=n$,
l'espace~$X$ est lisse au voisinage de~$x$; en particulier,
l'anneau local $\mathscr O_{X,x}$ est un corps.
Le théorème~6.3.7 de~\cite{ducros2018} entraîne
alors que le morphisme d'anneaux locaux 
$\mathscr O_{X,x}\to \mathscr O_{X_y,x}$
est régulier.
En conséquence, la fibre
$h^{-1}(y)$ est réduite en~$x$.
Le degré du morphisme~$h$ en~$x$ est alors égal au degré
de l'extension~$\mathscr H(x)/\mathscr H(y)$, c'est-à-dire~$P$.
\end{proof}

Soit $\Omega$ un voisinage affinoïde de~$x$ dans~$X$
tel que le polyèdre calibré $\Pi_{f|_\Omega}$ 
soit un voisinage de~$0$, étoilé en $0$,
dans le cône calibré~$\Pi_{f,x}$ ; 
on exige en outre que $x$ soit le seul antécédent de~$\eta_1$
dans~$\Omega$. 
Le morphisme~$g|_\Omega$ est alors fini et plat au-dessus de~$\eta_1$.
Soit $W$ un voisinage affinoïde de~$\eta_1$ 
dans $\gm$ 
tel que $g$ induise un morphisme fini et plat 
surjectif de
$\Omega\cap g^{-1}(W)$  dans~$W$.
Il existe une cellule~$C$ de~$\R^n$, voisinage de l'origine,
telle que $C_\sq\subset W$.
Soit $W'=\trop^{-1}(C)$; c'est un domaine affinoïde de~$\gm^n$.
Alors $g_\trop(\Omega\cap g^{-1}(W\cap W'))=C$
et $g(\Omega\cap g^{-1}(W\cap W'))$ contient~$C_\sq$.
Quitte à remplacer~$\Omega$ par $\Omega\cap g^{-1}(W\cap W')$,
on peut donc supposer
que $g_\trop(\Omega)$ est une cellule, voisinage de l'origine
dans~$\R^n$, et que $g|_\Omega$ est fini et plat 
au-dessus de~$g_\trop(\Omega)_\sq$.

% \emph{Cet argument prouve que le point~$x$ possède une base de voisinages~$\Omega$ ayant cette propriété.}

Soit $\mathscr C$ une décomposition cellulaire du cône tropical~$\Pi_{f,x}$,
dont les cellules sont des cônes et qui est adaptée à son calibrage;
on suppose en outre que son image par~$q_\trop$ 
se prolonge en une décomposition cellulaire de~$\R^n$.

% Choisissons une décomposition cellulaire~$\mathscr C_1$ de~$f_\trop(\Omega)$
% et une décomposition cellulaire~$\mathscr C'_1$ de~$g_\trop(\Omega)$,
% adaptées à~$q_\trop$, telles que $g_\trop(\partial(\Omega))$
% ne rencontre aucune $n$-cellule ouverte de~$\mathscr C'_1$.
% On munit le cône~$\Pi_{f,x}$ de la décomposition cellulaire~$\mathscr C$
% dont les cellules sont les cônes (de sommet~$0$) engendrés
% par les cellules de~$\mathscr C_1$ qui contiennent~$0$.

Soit $C$ une $n$-cellule ouverte de~$\Pi_{f,x}$;
posons $C'=g_\trop (C)$.
Notons aussi $D=C\cap f_\trop(\Omega)$ et $D'=q_\trop(D)$.
L'ensemble~$D$ est ouvert et fermé dans 
$q_\trop^{-1}(D')\cap f_\trop(\Omega)$,
donc $f_\trop^{-1}(D)\cap\Omega$ 
est ouvert et fermé dans $g_\trop^{-1}(D')\cap\Omega$.
Par suite, la restriction de~$g$ à $f_\trop^{-1}(D)\cap\Omega$ 
est finie et plate au-dessus de~$D'_\sq$.
Comme $D'_\sq$ est connexe et non vide, 
il existe un unique entier~$N_{C,\Omega}$ 
tel que cette restriction soit de degré~$N_{C,\Omega}$.
Le vecteur-volume~$\mu_C$ correspondant au calibrage
de la cellule~$C$ de~$\Pi_{f,x}$ est égal à $N_{C,\Omega} \sigma_C(e)$,
où $e$ est le vecteur-volume canonique de~$\R^n$
et $\sigma_C\colon\R^n\to\R^m$ est l'unique section de~$q_\trop$
d'image~$\langle C\rangle$. Cela prouve que l'entier~$N_{C,\Omega}$
ne dépend pas du choix du voisinage~$\Omega$ choisi comme ci-dessus;
on le notera~$N_C$.

Le cône tropical~$\Pi_{\tilde f,\xi}$ jouit d'une description similaire,
à l'aide d'un voisinage affinoïde~$\Upsilon$ de~$\xi$ dans~$V^\an$
et de décompositions cellulaires de~$\tilde f(\Upsilon)$,
de $\Pi_{\tilde f,\xi}$ et de $\R^n$ adaptées à~$q_\trop$.
Par raffinement,
on suppose que ces décompositions cellulaires coïncident
avec celles introduites précédemment pour~$\Pi_{f,x}$.

Pour toute $n$-cellule~$C$ de~$\Pi_{\tilde f,\xi}$ qui rencontre
la $n$-cellule ouverte~$D$ de~$\mathscr C$, d'image~$D'$ dans~$\R^n$,
la restriction de~$\tilde g$ à $f_\trop^{-1}(C)\cap \Upsilon $
est finie  et plate au-dessus de~$D'_\sq$; on note~$\tilde N_C$
le degré correspondant, de sorte que le vecteur-volume~$\tilde\mu_C$
donné par le calibrage de~$\Pi_{\tilde f,\xi}$ est égal
à $\tilde N_C\sigma _C(e)$.

D'après la proposition suivante, on a 
$N_C=\tilde N_C$
pour toute $n$-cellule~$C$ de $\Pi_{f,x}$.
Le théorème s'ensuit.
\end{proof}

% Comme $d_k(x)=n$,
% le point~$x$ n'est situé sur aucun fermé
% de Zariski de $X$ de dimension $<n$. 
% Comme l'espace~$X$ est génériquement lisse, 
% il est donc lisse au voisinage de~$x$;
% quitte à le restreindre,
% il existe donc un $k$-schéma de type fini affine intègre lisse~$\mathscr X$
% et un ouvert de~$\mathscr X^\an$ qui est isomorphe à~$X$
% (voir par exemple le lemme~(0.21) de~\cite{ducros2012b}). 
% Comme $k(\mathscr X)$ est dense dans $\hr x$,
% on peut également supposer que les $f_i$ appartiennent
% à l'anneau $\mathscr O(\mathscr X)^\times$,
% et donc que $X=\mathscr X^\an$.
% Dans la suite de cette preuve, nous noterons
% $\mathscr G_{\mathrm m}$ le tore algébrique
% $\Spec(k[T,T^{-1}])$; ainsi, $f$ est l'analytifié d'un moment
% algébrique (noté encore~$f$) $\mathscr X\to\mathscr G_{\mathrm m}^n$.
% 
% Par conséquent, 
% le moment algébrique (noté encore~$g$), $\mathscr X\to\mathscr G_{\mathrm m}^n$,
% dont~$g$ est l'analytifié, est dominant. 
% Puisque $\mathscr X$ et $\mathscr G_{\mathrm m}^n$ ont même dimension,
% ce morphisme est fini et plat au-dessus d'un ouvert de Zariski non vide
% de $\mathscr G_{\mathrm m}^n$.
 
\begin{prop}\label{prop.NC-NCtilde}
Pour toute $n$-cellule~$C$ de $\Pi_{f,x}$,
on a $N_C=\tilde N_C$.
\end{prop}
\begin{proof}
On peut définir la $n$-cellule conique~$C$ 
comme le lieu de validité dans~$\R^m$
d'une conjonction d'inégalités $\Phi=\bigwedge \{\sum Pa_{r,i}x_i <0\}$.
De même, le cône~$C'$ de~$\R^n$ 
est défini par une conjonction 
d'inégalités~$\Phi'=\bigwedge \{\sum a'_{s,j}y_j<0\}$.

On note $\Psi$ la conjonction des inégalités monomiales
$\prod \abs{f_i}^{Pa_{r,i}}<1$; elle définit un ouvert de~$X$.
On note aussi $\widetilde\Psi$ la conjonction des inégalités monomiales
$\prod \abs{\widetilde{f_i(x)}}^{Pa_{r,i}}<1$;
en interprétant $\widetilde{f_i(x)}$ comme une fonction rationnelle
sur~$V$, cette conjection définit un ouvert de~$V^\an$.

De même, notons $\Psi'$ la  conjonction des inégalités
$\prod \abs{T_j}^{a'_{s,j}}<1$, définissant un ouvert
de $\mathbf G^n_{\mathrm m,k}$.
On notera  $\widetilde{\Psi'}$ la 
même conjonction d'inégalités
définissant un ouvert de $\mathbf G^n_{\mathrm m,\tilde k}$.

Le degré $N_C$ peut être caractérisé ainsi : 
dans l'ouvert de~$\Omega$ défini par la condition~$\Psi$,
tout point de $S(\gm^n)$ 
satisfaisant $\Psi'$ et suffisamment proche de $\eta_{1}$ a $N_C$ 
antécédents  (comptés avec degrés de l'extension de corps résiduels
et multiplicité) par  le morphisme~$g$.

Pour tout~$i$, choisissons une fonction~$h_i\in \mathscr O_{Y,y}$
telle que $\abs{h_i(y)}=1$ et $\widetilde{h_i(y)}=\widetilde{f_i(x)}^P$;
c'est possible puisque $\hrt x^P\subset \tilde F=\hrt y$
et puisque, $Y=\mathscr Y^\an$ étant sans bord (donc presque bon en
tout point), $\kappa(y)$ est dense dans~$\hr y$.
Au voisinage de~$x$, 
les fonctions~$f_i^P$ et~$h_i\circ h$
ont donc même valeur absolue.
Notons $\Theta$ la conjonction des inégalités
$\prod \abs{h_i}^{a_{r,i}}<1$. Elle définit un ouvert de~$Y$
dont l'image réciproque par~$h$ se décrit au voisinage, 
de $x$ par la conjonction des inégalités monomiales
$\prod \abs{f_i}^{Pa_{r,i}}<1$, c'est-à-dire par $\Psi$. 

L'extension~$\tilde F$ définit un 
revêtement fini étale 
$\widetilde{\mathscr Y}$ de $(\gm ^n)_{\tilde k}$ au voisinage 
de son point générique et
le morphisme $\tilde g\colon (V^\an, \xi)\to (\gm^n)_{\tilde k}$ se factorise 
par un morphisme 
$\tilde p\colon \widetilde{\mathscr Y^\an}\to (\gm^n)_{\tilde k}$. 
Soit $\zeta=\tilde p(\xi)$; alors le morphisme
$\tilde h\colon (V^\an,\xi)\to (\widetilde{\mathscr Y}^\an, \zeta)$
est fini et plat de degré~$P$.  De plus, le morphisme
$\tilde p$ est fini et étale en~$\zeta$.
La condition $\widetilde{\Psi}$ ne portant, 
par construction,
que sur des normes de fonctions sur 
$\widetilde{\mathscr Y}$,
elle définit une condition~$\widetilde{\Theta}$
sur~$\widetilde{\mathscr Y}^\an$.

\begin{lemm}\label{lemm.antecedents}
\begin{enumerate}
\item
Par le morphisme~$g$, 
tout point de~$S(\gm^n)$ satisfaisant~$\Psi'$ 
et suffisamment proche de~$\eta_1$ a exactement~$N_C/P$ 
antécédents (comptés avec degrés de l'extension de corps résiduels)
dans~$Y$ satisfaisant~$\Theta$.
\item
Par le morphisme~$\tilde g$,
tout point de $S(\mathbf G_{\mathrm m,\tilde k}^n)$ 
satisfaisant $\widetilde{\Psi'}$ et suffisamment 
proche de $\eta_{1,\tilde k}$ a $\tilde N_C/P$ 
antécédents (comptés avec degrés de l'extension de corps résiduels)
dans~$\widetilde{\mathscr Y}^\an$  satisfaisant~$\widetilde{\Theta}$.
\end{enumerate}
\end{lemm}
\begin{proof}
Nous ne démontrons que la première assertion,
la seconde en est le cas particulier d'un corps trivialement valué.

On commence par choisir des voisinages affinoïdes~$X_1$ de~$x$ 
dans~$\Omega$, $Y_1$ de~$y$ dans~$Y$ et $Z_1$ de~$\eta_1$ dans~$\gm^n$,
tels que $h$ induise un morphisme fini et plat de degré~$P$,
$h_1\colon X_1\to Y_1$, 
et $p\colon \mathscr Y\to \mathscr G_{\mathrm m}^n$ 
induise un morphisme fini étale
$p_1\colon Y_1\to \gm^n$. C'est possible car le morphisme~$h$
est fini et plat en~$x$ et le morphisme~$p$ est fini et étale
en~$y$.

Comme $x$ est le seul antécédent de~$\eta_1$ par~$g$ dans~$\Omega$,
le voisinage~$X_1$ de~$x$ est un voisinage de la fibre de~$g$; 
il contient donc l'image inverse d'un voisinage de~$\eta_1$.
Autrement dit, tous les antécédents dans~$\Omega$ d'un point assez proche
de~$\eta_1$ appartiennent à~$X_1$.
De même, tous les antécédents dans~$Y$ d'un point assez proche de~$\eta_1$
appartiennent à~$Y_1$.

Soit $z$ un point de~$\gm^n$ assez proche de~$\eta_1$ satisfaisant
la condition~$\Psi'$. 
Notons $y_1,\dots,y_a$ ses antécédents dans~$Y_1$ par~$p$.
Pour $i\in\{1,\dots,a\}$, notons $x_{ij}$ les antécédents de~$y_i$ dans~$X_1$
par~$h$; on remarque que $x_{ij}$ vérifie la condition~$\Psi$
si et seulement si~$y_i$ vérifie la condition~$\Theta$.
Par suite, la définition de~$N_C$ entraîne que
\[ N_C = \sum_{\Psi(x_{ij})} 
\rang_{\mathscr O_{Z,\eta_1}} (\mathscr O_{X,x_{ij}}) 
= \sum_{\Theta(y_i)} 
\rang_{\mathscr O_{Z,\eta_1}} (\mathscr O_{Y,y_i})
   \rang_{\mathscr O_{Y,y_i}}(\mathscr O_{X,x_{ij}}) . \]
Puisque $h_1$ est fini et plat de degré~$P$, on a 
\[ P = \sum_{j} \rang_{\mathscr O_{Y,y_i}} (\mathscr O_{X,x_{ij}}) \]
pour tout~$i\in\{1,\dots,a\}$.
Puisque $p_1$ est étale, on a aussi 
\[ 
\rang_{\mathscr O_{Z,\eta_1}} (\mathscr O_{Y,y_i})
= [ \hr{y_i}:\hr {\eta_1}]. \]
Ainsi,
\[ N_C = P \sum_{\Theta(y_i)} [\hr{y_1}:\hr{\eta_1}], \]
comme il fallait démontrer.
\end{proof}
% 
% On peut faire le même jeu au niveau résiduel. 
% On démontre de même que dans l'ouvert de~$\tilde Y$
% défini par la condition~$\widetilde \Psi$,
% tout point de $S(\mathbf G^n_{\mathrm m,\tilde k})$ 
% satisfaisant $\widetilde{\Psi'}$ et suffisamment proche 
% de $\eta_{1,\tilde k}$ a $\tilde N_C$ 
% antécédents (comptés avec 
% degrés de l'extension de corps résiduels et multiplicité)
% par le morphisme~$\tilde g$.
% 
 
Soit $\gimel$ le groupe~$\R^\times_+\times\R$ muni de l'ordre lexicographique;
son élément neutre est $(1,0)$ et les éléments de la forme $(1,x)$ avec
$x>0$ vérifient $(1,0) <(1,x) < (t,0)$ pour tout $t\in\R_+^\times$ 
tel que $t>1$.
Choisissons un élément $\mathbf g$ de~$\gimel^n$ 
de la forme $(1,c)$, où $c\in\R^n$ appartient à la cellule ouverte~$C$.
Il définit une valuation de Gauß $\eta_{\mathbf g}$  sur $k(T_1,\dots,T_n)$
et une valuation de Gauß $\eta_{\mathbf g,\tilde k}$  sur $\tilde k(T_1,\dots,T_n)$.

Appliquons l'« élimination des quantificateurs » dans~\textsc{acvf}
(\cite{robinson1956}, voir aussi~\cite{weispfenning1984}) :
soit $\Lambda_C$ une formule sans quantificateur du langage des 
anneaux  valués, 
à paramètres dans~$k$
qui est équivalente dans la théorie~\textsc{acvf} à la formule:
\[ (x\in \gm^n) \wedge \Psi'(x) \wedge
 \text{« $x$ a $N_C/P$ antécédents dans $\mathscr Y$ satisfaisant~$\Theta$ »}.\]
Compte tenu de la formule définissant les valuations de Gauß~$\eta_\gamma$,
on voit
 qu'il existe une formule~$\Lambda'_C$ sans quantificateur
du langage des groupes ordonnés à paramètres dans~$\abs{k^\times}$
tel que pour tout modèle~$\Gamma$ de la théorie~\textsc{doag}
contenant~$\abs{k^\times}$ et tout~$\gamma\in\Gamma$, l'élément~$\gamma$ 
satisfait~$\Lambda'_C$ si et seulement si le point~$\eta_\gamma$ 
satisfait~$\Lambda_C$. 

Le lemme \ref{lemm.antecedents}
ci-dessus
assure l'existence d'un nombre réel~$\eps>0$
tel que pour tout $r\in(\R_+^*)^n$, le point~$\eta_r\in\gm^n$ 
satisfait~$\Lambda_C$ 
dès que $\eps<\abs{r_i}<1/\eps$ pour tout~$i$ et que $r$ satisfait~$\Psi'$.

Lorsque la valeur absolue de~$k$ n'est pas triviale, le groupe $\abs{k^\times}$
est dense dans~$\R_+^\times$, car $k$ est algébriquement clos,
et l'on peut supposer que $\eps$ appartient à $\abs{k^\times}$.
L'implication
\[ \forall i (\eps<\abs{r_i}<1/\eps) \wedge \Psi'(r)
\Rightarrow \Lambda'_C(r) \]
est vérifiée dans le modèle $\R_+^*$ de la théorie~\textsc{doag};
elle est donc vérifiée dans le modèle~$\gimel$.
Par suite, la valuation~$\eta_{\mathbf g}$ satisfait~$\Lambda_C$.

Dans le cas où la valeur absolue de~$k$ est triviale,
on remarque que si $\eta_r$ satisfait ces
conditions, alors pour tout~$\alpha>0$,
le point~$\eta_{r^\alpha}$ les satisfait également,
car ces points définissent la même valuation; on voit ainsi que l'on
peut omettre la condition $\eps<\abs{r_i}<1/\eps$ dans l'argument
ci-dessus et l'on en déduit que 
la valuation~$\eta_{\mathbf g}$ satisfait~$\Lambda_C$.

Retraduisons ceci en termes de valuations.
La valuation~$\eta_{\mathbf g}$ du corps $k(T_1,\dots,T_n)$
possède ainsi $N_C/P$ extensions
à $k(\mathscr Y)$
et satisfaisant $\Theta$, comptées avec multiplicités. 
A priori, la  multiplicité associée à une telle extension 
est le degré de son hensélisé
sur celui du hensélisé de $(k(T_1,\dots,T_n),\eta_1)$;
par stabilité, cette multiplicité est simplement égale au produit
des indices d'inertie et de ramification de l'extension. 

On prouve de même que 
la valuation~$\eta_{\mathbf g,\tilde k}$
possède $\tilde {N_C}/P$ extensions à $\widetilde k(\widetilde{\mathscr Y})$
satisfaisant $\widetilde Theta$, 
comptés avec multiplicités ; et la multiplicité associée à une telle extension 
est simplement égale au produit
de ses indices d'inertie et de ramification. 

Il résulte alors du lemme suivant que $N_C/P=\widetilde{N_C}/P$,
si bien que $N_C=\widetilde N_C$, ce qui conclut
la démonstration de la proposition~\ref{prop.NC-NCtilde}.
\end{proof}

\begin{lemm}
Par composition avec la valuation associée à~$y$,
toute valuation~$\tilde v$ sur~$\tilde k(\tilde{\mathscr Y})$
triviale sur~$\tilde k$ induit une valuation~$v$ sur $k(\mathscr Y)$.

\begin{enumerate}
\item
Cette application $\tilde v\mapsto v$ induit
une bijection de l'ensemble des prolongements 
à~$\tilde k(\tilde{\mathscr Y})$ de~$\eta_{\mathbf g,\tilde k}$
sur l'ensemble des prolongements 
à~$k(\mathscr Y)$ de~$\eta_{\mathbf g}$.

Soit $\tilde v$  et~$v$ deux telles valuations.

\item
Les indices d'inertie de~$v$ et de~$\tilde v$ sont égaux.

\item
Les indices de ramification de~$v$ et de~$\tilde v$ sont égaux.

\item
La valuation~$v$ vérifie la condition~$\Theta$
si et seulement si la valuation~$\tilde v$ vérifie
la condition~$\tilde\Theta$.
\end{enumerate}
\end{lemm}
\begin{proof}
La valuation~$\eta_{\mathbf g}$ raffine~$\eta_1$ qui n'a qu'un seul
prolongement à~$k(\mathscr Y)$. En effet, comme $k(\mathscr Y)$ est finie
sur $k(T_1,\dots,T_n)$, le groupe des valeurs d'une valuation 
de~$k(\mathscr Y)$ qui prolonge~$\eta_1$ contient celui de~$\hr{\eta_1}$
comme sous-groupe d'indice fini, donc coïncide avec celui-ci puisqu'il
est égal à~$\abs{k^\times}$ et que $k$ est algébriquement clos.
En particulier, tout prolongement de~$\eta_1$ à~$k(\mathscr Y)$ 
appartient à~$Y^\an$, donc est égal à~$y$ 
qui est le seul antécédent de~$\eta_1$ dans~$Y^\an$.
La première assertion résulte alors de ce que
la valuation~$\eta_{\mathbf g}$
est la composée de la valuation de Gauß usuelle~$\eta_1$
sur~$k(T_1,\dots,T_n)$ et de la valuation $\eta_{\mathbf g,\tilde k}$
sur $\tilde k(T_1,\dots,T_n)$,

Puisque les extensions résiduelles correspondantes coïncident, 
les indices d'inertie de~$v$ et de~$\tilde v$ sont égaux.
Puisque $\abs{\hr y^\times}=\abs{\hr{\eta_1}\times}=\abs{k^\times}$,
leurs indices de ramification sont aussi égaux.

Démontrons maintenant que $v$ vérifie la condition~$\Theta$
si et seulement si $\tilde v$ la condition~$\tilde\Theta$.
Par construction, $\Theta$ est une conjonction d'inégalités
de la forme  $\abs{\phi}<1$, 
où $\phi\in\mathscr O_{Y,y}$ vérifie $\abs{\phi(y)}=1$,
et $\tilde\Theta$ est la conjonction des inégalités $\abs{\tilde{\phi(y)}}<1$
(pour les mêmes fonctions~$\phi$)
où $\tilde {\phi(y)}\in \hrt y$ est considéré comme un élément
de $\mathscr O_{\tilde {\mathscr Y},\zeta}$.
Rappelons que la valuation~$v$ est la composée de la valuation associée à~$y$
et de~$\tilde v$; comme $\abs{\phi(y)}=1$, les conditions
$\abs{\phi(v)}<1$ et $\abs{\tilde\phi(\tilde v)}<1$ sont équivalentes.
Le lemme en résulte
\end{proof}

Terminons ce paragraphe par un complément sur la structure
des cônes tropicaux lorsqu'ils ne sont pas de dimension maximale
qui indique néanmoins qu'elles ne peuvent être arbitraire.
En particulier, il résultera du théorème~\ref{theo.interieur-harmonieux}
qu'ils peuvent être munies d'un calibrage partout strictement
positif et harmonieux.

\section{Variation des tropicalisations}
 
\begin{lemm}\label{lemm.polconstant2}
Soit $X$ un espace analytique compact,
soit $f\colon X\to Y$ un morphisme d'espaces analytiques,
soit $g\colon X\to \gm^n$ un moment.
Pour tout $y\in Y$, on pose $X_y=f^{-1}(y)$.
Soit $y$ un point \emph{rigide} de~$Y$.
Il existe un voisinage~$U$ de~$y$ dans~$Y$
et un polyèdre~$\Delta\subset g_\trop(X_y)$ 
de dimension~$\leq \dim(X_y)-1$
tels que
$g_\trop(X_u)\subset g_\trop(X_y)$ 
et $g_\trop(X_u\cap \partial(X))\subset\Delta$
pour tout $u\in U$.
\end{lemm}
\begin{proof}
Le domaine analytique~$g_\trop^{-1}(g_\trop(X_y))$ de~$X$ 
contient le fermé de Zariski~$X_y$, donc en est un voisinage
(lemme~\ref{lemm.voisinage}).
Comme $X$ est compact, il existe un voisinage~$U$ de~$y$
tel que $f^{-1}(U)\subset g_\trop^{-1}(g_\trop(X_y))$.
Ainsi, si $u\in U$, 
$g_\trop(X_u)$ est contenu dans~$g_\trop(X_y)$.
 
Le bord~$\partial(X)$ de~$X$ 
est contenu dans une réunion finie de compacts qui sont
chacun de la forme~$\phi^{-1}(\eta_r)$,
où $\phi$ est un morphisme d'un domaine affinoïde~$V$
de~$X$ vers~$\A^1$ et $r$ est un nombre réel~$>0$
(\cite{ducros2012b}, lemme 3.1).
Soit~$\phi \colon V\to \A^1$ un tel morphisme et soit~$r>0$.
Posons $Q=g_\trop(X_y\cap \phi^{-1}(\eta_r))$
et $W=g_\trop^{-1}(Q)\cap \phi^{-1}(\eta_r)$. L'espace~$W$
est un domaine analytique compact de~$V\cap \phi^{-1}(\eta_r)$
et en contient le fermé de Zariski~$X_y\cap \phi^{-1}(\eta_r)$.
Par suite, $W$ est un voisinage de $X_y\cap\phi^{-1}(\eta_r)$
dans~$X\cap\phi^{-1}(\eta_r)$.
Puisque $X$ est compact, et quitte à diminuer~$U$,
on a $g_\trop(X_u\cap \phi^{-1}(\eta_r))\subset Q$ pour $u\in U$.

Par ailleurs, démontrons que $\dim(Q)\leq \dim(X_y)-1$.
Soit~$x\in X_y \cap \phi^{-1}(\eta_r)$;
on a $d_k(x)\leq \dim(X_y)$;
d'autre part, $d_k(x)=d_{\hr{\eta_r}}(x)+1$ ; par conséquent, 
$d_{\hr{\eta_r}}(x)\leq \dim(X_y)-1$.  Ainsi,
la dimension~$\hr {\eta_r}$-analytique
de~$X_y\cap \phi^{-1}(\eta_r)$ est 
majorée par~$\dim(X_y)-1$ et, \emph{a fortiori},
$\dim(Q)\leq \dim(X_y)-1$.

Le lemme en résulte.
\end{proof} % du lemme
  
\begin{theo}\label{theo.tropcont}
Soit $d$ un entier,
soit $X$ un espace analytique compact purement de dimension~$n$ qui
satisfait à la condition~$(\mathrm S_d)$,
soit $Y$ un espace analytique régulier purement de dimension~$d$ 
et soit $f\colon X \to Y$  un morphisme purement de dimension relative~$n-d$;
pour tout $t\in Y$, on pose $X_t=f^{-1}(t)$.

Soit $g\colon X\to \gm^N$ un moment.
Soit $y$ un point \emph{rigide} de~$Y$ et
soit $P$ le polyèdre calibré $g_\trop(X_y)^{(n-d)}$.

\begin{enumerate}
\item Il existe un voisinage ouvert~$U$ de~$y$ dans~$Y$ 
tel que, pour tout point~$u\in U$,
le polyèdre calibré $g_\trop(X_u)^{(n-d)}$ soit égal à~$P$.

\item 
On suppose de plus que $Y=\A^d$ et que $y$ est l'origine.
Posons $X'=f^{-1}(\gm^d)$ et soit $h\colon X'\to \gm^N\times\gm^d$
le moment défini par $(g,f)$; c'est un moment agréable.
Il existe un nombre réel~$r>0$ tel que
le polyèdre calibré $h_\trop(X')\cap \R^N\times \mathopen]-\infty,\log(r)\mathclose[^d $
soit égal au produit des polyèdre calibrés~$P$
et de la cellule $\mathopen]-\infty,\log(r)\mathclose[^d$
munie de son calibrage standard.
% 
% 
% Soit $h\colon f^{-1}(\gm^d) \to ....$
% 
% Pour toute cellule fermée~$C$ de~$\R^d$,  
% on note~$X_C$ le domaine affinoïde de~$X$
% formé des points~$x\in X$ tels que $f(x)\in\gm^d$ et $f_\trop(x)\in C$.
% 
% 
% Il existe un nombre réel~$r>0$ tel que, 
% pour toute cellule fermée~$\Gamma$ 
% de~$\mathopen]-\infty;\log(r)\mathclose[^d$ 
% et d'intérieur non vide, le polyèdre calibré $h_\trop(X_\Gamma)^{(n)}$ 
% est égal à~$P\times \Gamma$, muni du produit du 
% calibrage de~$P$ par le calibrage standard de~$\Gamma$. 
\end{enumerate}
\end{theo}
Dans une première
version de ce texte, l'énoncé de ce théorème supposait $d=1$.
Comme l'a noté A.~Mihatsch dans~\cite{Mihatsch-2024},
la démonstration s'étend à tout~$d$.
\begin{proof}
D'après le lemme~\ref{lemm.polconstant2},
il existe un voisinage ouvert~$U$ de~$y$ dans~$Y$ 
et un polyèdre~$\Delta$ de dimension~$\leq n-d-1$
% il existe un nombre réel~$r>0$ et un polyèdre~$\Delta$
% de dimension~$\leq n-d-1$
tels que $g_\trop(X_u)\subset g_\trop(X_y)$ 
et $g_\trop(X_u\cap \partial(X))\subset \Delta$ pour tout $u\in U$.
Quitte à le remplacer par la composante connexe de~$y$ dans~$U$,
on suppose que $U$ est connexe.

Choisissons une décomposition
 cellulaire~$\mathscr C$ de~$g_\trop(X_y)$ telle que 
$\Delta$ soit contenu dans la réunion des $(n-d-1)$-cellules de~$\mathscr C$.

% Il suffit de montrer les résultats 
% suivants : 
% \begin{enumerate}
% \item Pour tout~$t\in D(0,r)$, on a $P\subset g_\trop(V_t)$ et le 
% calibrage de~$g_\trop(V_t)$ sur~$C$ est égal à celui de~$g_\trop(V_0)$ ; 
% \item Pour tout segment~$I$ contenu dans~$\mathopen]0;r\mathclose[$
% et d'intérieur non vide,
% le calibrage de~$h_\trop(V_I)$ sur~$C\times I$ 
% se déduit de celui de~$g_\trop(V_0)$ sur~$C$ par 
% produit avec le calibrage standard de~$I$. 
%  \end{enumerate}
%   
% Il résulte de la définition des calibrages que l'on peut, pour ce faire, 
% remplacer~$V$ par~$g_\trop^{-1}(C)$ puis,  en composant~$g$ avec 
% une projection~$\gm^N\to \gm^{n-1}$ bien choisie (de façon à 
% induire une  injection sur~$C$), supposer que~$N=n-1$.\footnote{Un poil
% rapide, expliquer les vecteurs-volume}
% Notons que 
% l'égalité~$\mathring C\cap g_\trop(\partial V\cap V_t)=\emptyset$ (pour~$t\in 
% D(0,r)$) est préservée par cette modification. 

% Déplacé (ACL)
% Soit $C$ une $(n-d)$-cellule de~$\mathscr C$.
% Soit $p\colon\gm^N\ra\gm^{n-d}$ un morphisme affine de tores
% tel que la restriction à~$C$ de~$p_\trop$ soit injective;
% soit $\sigma_C\colon \R^{n-d}\ra\R^N$
% l'unique section de~$p$ dont l'image contient~$C$.
% Notons~$Y$ le domaine de Weierstrass $g_\trop^{-1}(C)$ de~$X$;
% il est donc purement de dimension~$n$.

% Posons aussi $D=p_\trop(C)$, $q=p\circ g$ et $h'=(p\circ g,f)|_{Y}$.

\begin{lemm}
Soit $C$ une $(n-d)$-cellule de~$\mathscr C$.
Soit $p\colon\gm^N\ra\gm^{n-d}$ un morphisme affine de tores
tel que la restriction à~$C$ de~$p_\trop$ soit injective;
soit $\sigma_C\colon \R^{n-d}\ra\R^N$
l'unique section affine de~$p$ dont l'image contient~$C$.
Posons $W=g_\trop^{-1}(C)$; c'est un domaine de Weierstrass de~$X$,
purement de dimension~$n$.

\begin{enumerate}
\item
Il existe un entier naturel~$\delta_C\neq0$ 
tel que le morphisme
$(p\circ g,f)|_W\colon W\to \gm^{n-d}\times \gm^d$
soit fini et plat de degré~$\delta_C$
au-dessus de tout point de~$\sq_{U}(p_\trop(\mathring C))$.

\item
Pour tout $u\in U$, on a $C\subset g_\trop(X_u)$.
\end{enumerate}
\end{lemm}
(Rappelons,
voir~\S\ref{sq_X(P)},
que l'ensemble $\sq_{U}(p_\trop(\mathring C))$ est constitué des points
de~$X$ de la forme $\sq_u(c)$, pour $u\in U$ 
et $c\in p_\trop(\mathring D)$.)
% Nous allons montrer que $h$ est fini et plat de degré 
% strictement positif au-dessus de~$x$, et que ce degré ne dépend pas 
% de~$x$. 

\begin{proof}
\begin{enumerate}
\item
Soit~$u\in U$ et soit~$S_u$ le squelette de~$(\gm^{n-d})_u$,
identifié à une partie de $\gm^{n-d}\times Y$.
Soit~$s$ un point de l'image canonique~$\sq_u(p_\trop(\mathring C))$ 
de~$p_\trop(\mathring C)$ dans~$S_u$.  
Prouvons que $(p\circ g,f)|_Y$ est fini et plat au-dessus de~$s$.

Soit $x$ un point de~$W$ tel que $(p\circ g, f)(x)=s$.
Le lemme~\ref{lemm.fibre-dim-zero}, appliqué au morphisme
de~$W_u$ dans  $\gm[u]^{n-d}$ induit par~$p\circ g$,
affirme que la dimension de ce morphisme en~$x$ est nulle.
Cela entraîne que  $\dim_x(p\circ g,f)=0$.
%
% \emph{La dimension relative de~$h$ est égale à~$0$ au-dessus de $x$.}  
% Si~$y$ est un antécédent de~$x$ par~$h$, on a
% \[d_{\hr t}(y)=d_{\hr x}(y)+d_{\hr t}(x)=d_{\hr x}(y)+n-1.\]
% Comme~$V_t$ est de dimension~$n-1$, 
% on a~$d_{\hr t}(y)\leq n-1$ et donc~$d_{\hr x}(y)=0$. 
% La fibre de~$h$ en~$x$ est donc de dimension nulle, 
% ce qu'on souhaitait établir.
%
Puisque $\partial(W)\subset \partial(X)\cup g_\trop^{-1}(\partial C)$,
on a $x\not\in\partial(W)$.
Puisque $f_W$ est propre, 
le lemme~\ref{lemme.finiplat-global}, \emph a), entraîne que
$f|_W$ est fini au-dessus d'un voisinage de~$s$.

%  
%  \emph{Le morphisme~$h$ est fini au-dessus de~$x$.} 
% La projection de~$x$ sur~$(\gm^{n-1})_\trop$ 
% appartient par construction à~$\mathring P$, 
% laquelle ne rencontre pas 
% $g_\trop(\partial V\cap V_t)$ ; tous les antécédents de~$x$ par~$h$ 
% sont dès lors des points intérieurs de~$V$. 
% Comme $V$ est compact, $h$ est de plus topologiquement propre,
% donc $h$ est fini au-dessus de~$x$. 

Comme~$d_{\hr t}(s)=n-d$,  on a $d_k(s)\geq n-d$. 
Les espaces~$W$ et~$\gm^{n-d}\times Y$ sont
tous deux purement de dimension~$n$, 
l'espace $k$-analytique~$\gm^{n-d}\times Y$ est régulier.
Comme $X$ vérifie la propriété~($\mathrm S_d$),
le lemme~\ref{plat.si} appliqué au morphisme~$(p\circ g,f)|_W$ entraîne que 
ce morphisme est plat au-dessus de~$s$. 

Comme $U$ est connexe,
l'ensemble $\sq_{U}(p_\trop(\mathring C))=\{\sq_u(p_\trop(\mathring C))\}_{u\in U}$ est connexe, d'après~\ref{sq_X(P)}. 
Par suite, le degré de~$(p\circ g, f)|_W$ au-dessus de~$s$ 
ne dépend pas du point~$s$ choisi.
Notons-le~$\delta_C$.

% Il suffit de montrer les résultats 
% suivants : 
% \begin{enumerate}
% \item Pour tout~$t\in D(0,r)$, on a $P\subset g_\trop(V_t)$ et le 
% calibrage de~$g_\trop(V_t)$ sur~$C$ est égal à celui de~$g_\trop(V_0)$ ; 
% \item Pour tout segment~$I$ contenu dans~$\mathopen]0;r\mathclose[$
% et d'intérieur non vide,
% le calibrage de~$h_\trop(V_I)$ sur~$C\times I$ 
% se déduit de celui de~$g_\trop(V_0)$ sur~$C$ par 
% produit avec le calibrage standard de~$I$. 
%  \end{enumerate}
 
% Soit $\delta$ l'entier fourni par le lemme précédent.

% x -> s 

Par hypothèse, $C\subset g_\trop(X_y)$, 
donc $p_\trop(C)\subset p\circ g_\trop(W_y)$;
d'après le théorème~\ref{theo.proprietes-Sigmaf}, 
on a $p_\trop(C)\subset p\circ g_\trop(\Sigma_{(p\circ g)_y}^{(n-d)})$ 
donc tout point de~$\sq_y(p_\trop(\mathring C))$ 
possède un antécédent par~$p\circ g$.
Cela entraîne que $\delta_C\neq 0$ et termine la démonstration
de l'assertion~\emph a).

\item
Soit $s$ un point de~$ \sq_{U}(p_\trop(\mathring C))$ 
et soit~$u$ son image sur~$U$. 
Par changement de base, 
le morphisme de~$W_u$ dans~$(\gm^{n-d})_u$ déduit de~$p\circ g$
est fini et plat de degré~$\delta_C$ 
au-dessus de~$s$.

Comme $\delta_C\neq 0$, 
il s'ensuit que~$p_\trop(\mathring C)\subset p\circ g_\trop(W_u)$ 
pour tout~$u\in U$. 
Par densité de~$\mathring C$ dans~$C$
et compacité de $W_u$, 
on a $p_\trop(C)\subset p \circ g_\trop(W_u)$ pour tout~$u\in U$.
Par définition de~$W$, ceci entraîne que  $C\subset g_\trop(W_u)$ 
pour tout $u\in U$.
\qedhere
\end{enumerate}
\end{proof}

Reprenons la démonstration du théorème~\ref{theo.tropcont}.

\emph a)
Démontrons d'abord la constance des \emph{polyèdres calibrés} 
$g_\trop(X_u)^{(n-d)}$ pour $u\in U$.

Soit $u\in U$.

D'après le lemme précédent,
toute $(n-d)$-cellule de~$g_\trop(X_y)$ est
contenue dans~$g_\trop(X_u)$. Par suite,
$g_\trop(X_y)^{(n-d)}\subset g_\trop(X_u)$.
Par définition de~$U$, on a également $g_\trop(X_u)\subset g_\trop(X_y)$.
Il en résulte que $g_\trop(X,u)^{(n-d)}=g_\trop(X_y)^{(n-d)}$.

De plus,  le calibrage
canonique de $g_\trop(X_u)$ est adapté 
à la décomposition cellulaire~$\mathscr C$,
car
aucune $(n-d)$-cellule ouverte de~$\mathscr C$
ne rencontre $g_\trop(X_u\cap \partial (X))$.

Soit $C$ une $(n-d)$ cellule de~$\mathscr C$,
soit $p\colon\gm^N\to \gm^{n-d}$ un morphisme affine de tores
tel que la restriction à~$C$  de~$p_\trop$ soit
injective et soit $\sigma_C\colon\R^{n-d}\to\R^N$
l'unique section affine de~$p$ dont l'image contient~$C$.
Par définition des calibrages canoniques,
le vecteur-volume $\mu_{C,u}$ de $g_\trop(X_u)$ sur~$C$ est donné par
\[ \mu_{C,u} = \delta_C \sigma_{C,*}(\abs{e_1\wedge\dots\wedge e_{n-d}}). \]
Ainsi, le polyèdre calibré $g_\trop(X_u)^{(n-d)}$
est égal à $g_\trop(X_y)^{(n-d)}=P$, ainsi qu'il fallait démontrer.

\emph b)
Nous supposons maintenant que $Y=\A^d$ et que $y=0$ est l'origine.

Le morphisme $f\colon X\to Y$  est topologiquement propre, 
donc le morphisme $f|_{X'}\colon X'\to\gm^d$ est topologiquement propre.
Par suite, le moment $h'=(g|_{X'},f|_{X'})$ est topologiquement propre,
et en particulier agréable.

On choisit un nombre réel~$r>0$ assez petit
tel que l'on puisse prendre $U=D(0,r)$  dans~\emph a).

Posons~$\Gamma=\mathopen]-\infty;\log(r)\mathclose[^d$
et notons $X_\Gamma = X'\cap f^{-1}(D(0,r))=X'\cap f_\trop^{-1}(\Gamma)$.

Prouvons d'abord que 
$h'_\trop(X_\Gamma) = P \times \Gamma$.

Soit $\gamma\in \Gamma$
et soit $u\in D(0,r)$ tel que $\trop(u)=\gamma$.
Soit $C$ une cellule de dimension~$(n-d)$ de~$\mathscr C$.
La cellule $C\times\{\gamma\}$ est contenue dans $g_\trop(X_u)$,
d'après~\emph a). Par suite, 
la cellule $C\times \Gamma$
est contenue dans~$h'_\trop(X_\Gamma)$.
Comme elle est de dimension~$n$,
elle est contenue dans~$h'_\trop(X_\Gamma)^{(n)}$.
Inversement, la définition 
de~$U$ au début de la démonstration garantit
que pour tout $u\in D(0,r)\cap \gm^d$,
$g_\trop(X'_u) \subset g_\trop(X_y)$.  Par suite, 
$h'_\trop(X_\Gamma)\subset g_\trop (X_y) \times \Gamma$.
On a donc 
$h'_\trop(X_\Gamma)^{(n)}\subset g_\trop (X_y)^{(n-d)} \times \Gamma$.
On a donc l'égalité voulue.

Démontrons maintenant l'égalité des calibrages.
Soit $C$ une $(n-d)$-cellule de~$\mathscr C$.
notons aussi 
$q\colon \gm^N\times\gm^d\ra\gm^{n}$ le morphisme
de tores $(p,\id)$. 
L'application affine $\tau_C=(\sigma_C,\id)$
est l'unique section affine de~$q_\trop$ dont l'image contient $C\times \Gamma$.
Nous allons démontrer que le calibrage de la cellule~$C\times\Gamma$ 
de $h'_\trop(X_\Gamma)^{(n)}$ est au produit
du calibrage~$\mu_C$ de~$C$ dans~$P$ par le calibrage standard de~$\Gamma$.

Posons $X_{C,\Gamma}=(h'_\trop)^{-1}(C\times\Gamma)$;
nous allons démontrer que le morphisme $q\circ h'|_{X_{C,\Gamma}}$ est fini
et plat de degré~$\delta_C$ au-dessus de tout point
de~$\sq(q_\trop(\mathring C\times \Gamma))$.

Soit $c\in p_\trop(\mathring C)$ et $\gamma\in\Gamma$ 
Par la construction de la décomposition cellulaire~$C$ et le choix de~$r$,
le point~$\sq(c,\gamma)$ n'appartient pas à $q\circ h'(\partial(X_{C,\Gamma}))$.
D'après le lemme~\ref{lemme.finiplat-global}, \emph c),  
le morphisme $q\circ h'|_{X_{C,\Gamma}}$ est fini et plat au-dessus de~$\sq(c,\gamma)$; notons $\delta$ son degré.

On a $q\circ h'=(p,\id) \circ (g,f)|_{X'}=(p\circ g, f)|_{X'}$.
Par changement de base, le morphisme
de $X_{C,\Gamma,\sq(\gamma)}$ dans~$\gm[\mathscr H(\sq(\gamma))]^{n-d}$
déduit de~$p\circ g$
est fini et plat de degré~$\delta$ au-dessus de~$\sq_{\mathscr H\sq(\gamma)}(
\sq(\gamma))$. 
D'après la première partie de l'énoncé et le choix de~$r$, 
ce degré est égal à~$\delta_C$.

Par conséquent (\cf \S\ref{ss.cal-can-but}),
le calibrage de la cellule $C\times\Gamma$
est constant, de vecteur-volume
\[ \mu_{C\times \Gamma} = \delta_C \tau_{C,*} (\abs{e_1\wedge\dots\wedge e_{n-1}\wedge e_n}). \]
De plus,
\[ \tau_{C,*} (\abs{e_1\wedge\dots\wedge e_{n-1}\wedge e_n)}
= \sigma_{C,*} (\abs{e_1\wedge\dots\wedge e_{n-d}}) 
\cdot \abs{ e_{n-d+1}\wedge\dots \wedge e_n}, \]
qui est le produit du vecteur-volume associé à~$C$ dans~$P$
et du vecteur-volume associé au calibrage standard de~$\Gamma$.
Ceci achève la démonstration du théorème.
\end{proof} 

\begin{rema}
Le théorème~\ref{theo.tropcont} ne peut pas être vrai
sans certaines hypothèses sur le morphisme~$f\colon X\to Y$.

Considérons par exemple le cas où $f$ est la normalisation
d'une courbe lisse en dehors d'un unique point double ordinaire~$y$
et $g$ est le moment trivial ($N=0$).
Dans ce cas, la tropicalisation de~$X_y$ est un point avec multiplicité~$2$.
En revanche, pour tout autre point $u\neq y$, la tropicalisation
de~$X_u$ est un point avec multiplicité~$1$.
Dans ce cas, la base~$Y$ n'est pas régulière.

De même, considérons dans l'éclaté du bidisque en l'origine
le domaine analytique défini par $\abs{y/x}\leq 1$.
Son image par l'application d'éclatement
est définie par l'inégalité $\abs{y} \leq \abs x$.
Sur tout voisinage de l'origine, certaines fibres sont vides et d'autre non.
La fibre de~$f$ en l'origine est de dimension~$1$ et les autres 
sont de dimension~$\leq0$.
\end{rema}

\def\pf{\text{\upshape pf}}
\def\degcal{\operatorname{dc}\nolimits}
\chapter{Formes différentielles réelles en géométrie ultramétrique}
\label{chap.fd-berk}

Soit $X$ un espace $k$-analytique  topologiquement séparé.
La définition~\ref{defi.calibrage-canonique} le munit
d'une structure d'espace G-tropical affiné.

On en déduit des notions de formes et G-formes différentielles
de type~$(p,q)$ sur~$X$, à coefficients dans une classe de coefficients.

Au \S\ref{ss.an-G-tropical-vertebre}, nous avons expliqué
comment ses parties squelettiques le munissent d'une structure
d'espace G-tropical vertébré de niveau~$n$.
La définition~\ref{defi.calibrage-canonique} munit
les parties squelettiques de~$X$ d'un $n$-calibrage,
de sorte que $X$ est muni d'une structure 
d'espace G-tropical calibré de niveau~$n$.

On en déduit
une théorie de l'intégration des $(n,n)$-formes et des $(n-1,n)$-formes
sur~$X$, ainsi qu'une théorie de courants.

Dans ce chapitre, nous explorons plus avant les propriétés
des formes différentielles sur les espaces analytiques.

Nous allons également démontrer que les espaces analytiques sans bord
sont harmonieux et munir les espaces analytiques purement de dimension~$n$
d'une structure d'espace tropical.

\section{Formes et fonctions lisses}

\begin{prop}
Soit $X$ un espace $k$-analytique topologiquement séparé,
considéré comme espace G-tropical affiné.

\begin{enumerate}
\item
Soit $\omega$ une G-forme de type~$(p,q)$ sur~$X$.
Il existe un G-recouvrement $(V_i)$ de~$X$ et, pour tout~$i$,
un \emph{moment} $f_i\colon V_i\to T_i$ sur~$V_i$,
une forme~$\alpha_i$ de type~$(p,q)$ sur une partie paralinéaire
de~$T_{i,\trop}$ contenant $f_{i,\trop}(V_i)$
tels que $\omega|_{V_i}=f_{i,\trop}^*\alpha_i$.

\item
Soit $\omega$ une forme de type~$(p,q)$ sur~$X$.
Il existe un recouvrement \emph{ouvert} $(V_i)$ de~$X$ et, pour tout~$i$,
un \emph{moment} $f_i\colon V_i\to T_i$ sur~$V_i$,
une forme~$\alpha_i$ de type~$(p,q)$ sur une partie paralinéaire
de~$T_{i,\trop}$ contenant $f_{i,\trop}(V_i)$
tels que $\omega|_{V_i}=f_{i,\trop}^*\alpha_i$.
\end{enumerate}
\end{prop}
\begin{proof}
On  ne traite que le cas des G-formes, l'autre est analogue.
Par hypothèse, il existe un G-recouvrement~$(W_i)$ de~$X$
et, pour tout~$i$, une application paralinéaire~$g_i$ sur~$W_i$
à valeurs dans un espace affine~$\R^{n_i}$, 
une forme~$\beta_i$ de type~$(p,q)$ sur une partie paralinéaire
de~$E_i$ contenant~$g_i(W_i)$, tels que
$\omega|_{W_i}=g_i^*\beta_i$ pour tout~$i$.
Par définition d'une application paralinéaire, il existe
pour tout~$i$ un G-recouvrement~$(V_{i,j})$ de~$W_i$
et, pour tout~$j$, un moment $f_{i,j}$ sur~$V_{i,j}$,
à valeurs dans le tore~$\gm^{n_{i}}$ 
un nombre rationnel $a_{i,j}$ et un élément~$c_{i,j}\in\R^{n_i}$
tels que $g_{i,j}=a_{i,j} f_{i,j,\trop}+c_{i,j}$.
Notons $p_{i,j}\colon \R^{n_i}_\trop \to \R^{n_i}$
l'application affine $t\mapsto a t + c_{i,j}$;
on a donc $g_{i,j}=p_{i,j}\circ f_{i,j,\trop}$, de sorte que
\[ \omega|_{V_{i,j}}= f_{i,j,\trop}^* (p_{i,j}^* \beta_i). \]
L'assertion en résulte.
\end{proof}

\begin{coro}
Une fonction $u$ sur~$X$ est lisse au voisinage 
d'un point~$x$ s'il existe un voisinage~$U$ de~$x$ dans~$X$,
des fonctions inversibles $f_1,\dots,f_n$ sur~$U$
et une fonction~$\phi$ sur~$\R^n$ au voisinage
de $(f_{1,\trop}(x),\dots,f_{n,\trop}(x))$
telle que $u=\phi\circ f_\trop$ sur~$U$.
\end{coro}

Dans la suite du texte, nous travaillerons essentiellement
avec ce type de cartes (G)-tropicales.

\begin{prop}
Soit $\mathscr G$ une classe de coefficients.
Soit $X$ un espace analytique topologiquement séparé
et soit~$u$ une fonction numérique sur~$X$. 

\begin{enumerate}
\item Pour que $u$ soit G-localement de classe~$\mathscr G$,
il faut et il suffit que $u|_{X_\red}$ le soit.

\item Si $X$ est presque bon, pour que $u$ soit localement
de classe~$\mathscr G$, il faut et il suffit que $u|_{X_\red}$ le soit.
\end{enumerate}
\end{prop}
Une variante de cet énoncé vaut pour les formes et les G-formes.
\begin{proof}
Si $u$ est localement (resp. G-localement) de classe~$\mathscr G$,
il en est de même de sa restriction à~$X_\red$. Démontrons la réciproque.

\begin{enumerate}
\item
Supposons que $u|_{X_\red}$ soit G-localement de classe~$\mathscr G$.
Soit $V$ un domaine affinoïde de~$X_\red$ sur lequel il existe
un moment $f\colon V\to \gm^n$ et une fonction~$\phi$ 
de classe~$\mathscr G$ sur~$\R^n$
telle que $u|_V= \phi \circ f_\trop$. 
On déduit du théorème de Gerritzen--Grauert 
(voir par exemple \cite{temkin2005}, théorème~3.1) que
$V$ est recouvert par une réunion de domaines affinoïdes
de la forme~$W_\red$, où $W$ est un domaine affinoïde de~$X$.
Sur $W_\red$, chacune des~$f_i$ se relève en une fonction $g_i$ sur~$W$;
ces fonctions sont inversibles et ont même norme que~$f_i$;
elles définissent un moment $g\colon W\to \gm^n$ tel que 
$u_W=\phi\circ g_\trop$. Cela prouve que $u$ est G-localement de classe~$\mathscr G$.

\item
Soit $x$ un point de~$X$ et soit $V$ un voisinage analytique de~$x$
dans~$X_\red$ sur lequel il existe 
un moment $f\colon V\to \gm^n$ et une fonction~$\phi$ de classe~$\mathscr G$
sur~$\R^n$
telle que $u|_V= \phi \circ f_\trop$. 
Comme $X$ est presque bon en~$x$, chaque~$f_i(x)\in\hr x$
est approchée par une fonction $g_i\in \mathscr O_{X,x}$.
En particulier $f_i$ et $g_i$ ont même norme au voisinage de~$g$.
Ainsi, les $g_i$ définissent un moment~$g$ défini au voisinage
de~$X$ sur lequel on a $u=\phi\circ g_\trop$; cela prouve
que $u$ est de classe~$\mathscr G$ au voisinage de~$x$.
\qedhere
\end{enumerate}
\end{proof}

\section{Séparation holomorphe}

\begin{prop}\label{lemm.support-comp-ferme-bk}
Soit $X$ un espace $k$-analytique topologiquement séparé,
considéré comme espace G-tropical affiné.
Soit $\omega$ une G-forme de type~$(p,q)$ sur~$X$.
\begin{enumerate}
\item
Le support de~$\omega$ est contenu dans l'ensemble des points~$x\in X$
tels que $d_k(x)\geq \max(p,q)$.
\item
Soit $V$ un domaine analytique de~$X$ et soit $Y$ un fermé de Zariski
de~$V$. Si $\dim(Y)<\max(p,q)$, le support de~$\omega$ ne rencontre pas~$Y$.
\end{enumerate}
\end{prop}
\begin{proof}
Démontrons l'assertion~\emph a).
Soit $x\in X$ un point tel que $d_k(x)<\max(p,q)$.
Comme $d_\trop(x)\leq d_k(x)$, on a donc $d_\trop (x)< \max(p,q)$;
l'assertion résulte donc de la proposition~\ref{lemm.support-comp-ferme}.

L'assertion~\emph b) en résulte car tout point~$y$ de~$Y$  
vérifie $d_k(y)\leq\dim(Y)$.
\end{proof}

\subsection{Séparation holomorphe}\label{hol-sep}
On dira qu'un espace analytique~$X$ est
\emph{holomorphiquement séparé} si
pour tout couple~$(x,y)$ de points de~$X$ avec~$x\neq y$
il existe une fonction analytique~$f$ sur~$X$ 
telle que~$\abs{f(x)}\neq \abs{f(y)}$.  Un tel espace
est topologiquement séparé.

Si~$X$ est un espace holomorphiquement séparé, il en va 
trivialement de même de tout domaine 
analytique de~$X$. 

Tout espace affinoïde est par définition 
holomorphiquement séparé ; en conséquence,
tout domaine analytique d'un espace affinoïde est holomorphiquement séparé. 
\index{espace ! holomorphiquement séparé}

Un espace analytique sera dit \emph{localement holomorphiquement
séparé} s'il est topologiquement séparé et 
si chacun de ses points possède un voisinage analytique 
holomorphiquement séparé. 

Un espace analytique topologiquement séparé
qui est bon, ou plus généralement dont 
tout point possède un voisinage isomorphe à un domaine
analytique d'un espace affinoïde, est localement holomorphiquement
séparé.  C'est en particulier le cas d'un espace analytique 
topologiquement séparé sans bord.

\index{espace analytique!localement holomorphiquement séparé}

\begin{lemm}
Soit $X$ un espace $k$-analytique holomorphiquement séparé. 
L'anneau des fonctions lisses sur~$X$ sépare les points.
\end{lemm}
\begin{proof}
Soit $x$, $y$ des points distincts de~$X$.
Par définition de la séparation
holomorphe, 
il existe une fonction
analytique~$f$ sur~$X$ telle
que $\abs{f(x)}\neq\abs{f(y)}$.

Supposons ces deux valeurs non nulles ; soit $\phi$
une fonction $\mathscr C^\infty$ à support compact sur~$\R$
valant~$0$ au voisinage de~$\log\abs{f(x)}$ et~$1$ au
voisinage de~$\log\abs{f(y)}$. 
La fonction $\phi\circ\log\abs f$, définie sur l'ouvert 
d'inversibilité de~$f$,
est lisse; prolongeons-la par~$0$ sur~$X$ tout entier. Comme $\phi$ 
est
à support compact, la fonction obtenue est nulle au voisinage
de l'ensemble des zéros de~$f$; elle est donc lisse
et prend des valeurs différentes en~$x$ et~$y$.

Si $f(x)=0$, on a alors $f(y)\neq 0$
et on raisonne de même à partir d'une fonction~$\phi$
à support compact valant~$1$ au voisinage de~$\log\abs{f(y)}$.

Le cas où $f(y)=0$ est analogue par symétrie.
\end{proof}

\begin{coro}\label{coro.berk-local-diff-sep}
Soit $X$ un espace $k$-analytique localement holomorphiquement séparé. 
L'espace G-tropical affiné~$X$ est localement différentiellement séparé.
En particulier,
le faisceau des fonctions lisses sur~$X$ sépare localement les points.
\end{coro}

\begin{coro}\label{coro.berk-lisse-separe}
Soit $X$ un espace analytique localement holomorphiquement séparé.
Soit $K$ une partie compacte de~$X$ et soit $U$ un voisinage ouvert de~$K$.
Il existe une fonction lisse sur~$X$ qui vaut~$1$ au voisinage de~$K$
et dont le support est compact et contenu dans~$U$.
\end{coro}
\begin{proof}
Comme $X$ est localement différentiellement séparé,
c'est une conséquence du corollaire~\ref{coro.trop-lisse-separe}.
\end{proof}

\begin{theo}[Stone--Weierstraß] \label{theo.berk-sw}
Soit $X$ un espace analytique localement holomorphiquement séparé
et soit $U$ un ouvert de~$X$.
Soit $f$ une fonction continue à valeurs réelles sur~$X$
dont le support est compact et contenu dans~$U$.
Soit $a$ et $b$ des nombres réels tels que $a\leq f\leq b$.
Pour tout~$\eps>0$, il existe une fonction lisse~$g$ sur~$X$
dont le support est compact, contenu dans~$U$,
telle que $\abs{f(x)-g(x)}<\eps$ pour tout $x\in X$
et telle que $a\leq g\leq b$.
\end{theo}
\begin{proof}
Comme $X$ est localement différentiellement séparé,
c'est une conséquence du théorème~\ref{theo.trop-sw}.
\end{proof}

\begin{coro}\label{coro.berk-sw}
Soit $X$ un espace analytique localement holomorphiquement séparé.

\begin{enumerate}
\item L'ensemble des fonctions lisses à support compact sur~$X$
est dense dans l'espace~$\mathscr C_\cpct(X)$
des fonctions continues à support compact sur~$X$, 
muni de la topologie limite inductive des espaces
de fonctions à support dans un compact donné.
\item L'ensemble des fonctions lisses à support compact sur~$X$
est dense dans l'espace~$\mathscr C(X)$ des fonctions continues sur~$X$
muni de la topologie de la convergence uniforme sur tout compact.
\end{enumerate}
\end{coro}
\begin{proof}
Comme $X$ est localement différentiellement séparé,
c'est une conséquence du corollaire~\ref{coro.trop-sw}.
\end{proof}

\begin{prop}[Partitions de l'unité lisses]
\label{prop.berk-partition.1.lisse}
Soit $X$ un espace analytique localement holomorphiquement séparé,
paracompact.
Soit $(U_i)_{i\in I}$ un recouvrement ouvert de~$X$.
Il existe une partition de l'unité\index{partition de l'unité} 
subordonnée à~$(U_i)$
formée de fonctions lisses sur~$X$.
\end{prop}
\begin{proof}
Comme $X$ est localement différentiellement séparé,
cela résulte de la proposition~\ref{prop.trop-partition.1.lisse}.
\end{proof}

\begin{coro}
\label{prop.berk-partition.1.lisse-compact}
Soit $X$ un espace analytique localement holomorphiquement séparé et
soit $F$ une partie fermée de~$X$ qui est dénombrable à l'infini
(par exemple, compacte).
Soit $(U_i)_{i\in I}$ un recouvrement de~$K$ par des ouverts de~$X$.
Il existe une famille localement finie $(\theta_i)$ 
de fonctions lisses sur~$X$, à support compact, positives,
telle que pour tout~$i$, le support de~$\theta_i$ soit contenu dans~$U_i$
et que $\sum\theta_i$ vaille~$1$ au voisinage de~$F$.
\end{coro}
\begin{proof}
Comme $X$ est localement différentiellement séparé,
cela résulte du corollaire~\ref{prop.trop-partition.1.lisse-compact}.
\end{proof}

\begin{coro}\label{coro.berk-lisse-fin}
Soit $X$ un espace analytique localement holomorphiquement séparé, paracompact.
Les faisceaux $\mathscr A^{p,q}_X$ de  formes lisses sur~$X$ sont fins.
\end{coro}
\begin{proof}
Comme $X$ est localement différentiellement séparé,
cela résulte du corollaire~\ref{coro.pl-fin}.
\end{proof}

\section{Extension des scalaires}

Soit $X$ un espace $k$-analytique de niveau~$n$.
On le considère comme un espace G-tropical de niveau~$n$.
On a montré au \S\ref{defi.integrale.n,n}
comment toute $(n,n)$-forme~$\omega$ sur~$X_\groth$
à coefficients tropicalement boréliens définit
une mesure de Borel~$\omega_\Bor$ sur~$X$,
et lorsque cette G-forme est localement intégrable,
une mesure de Radon~$\omega_\Rad$.

\begin{prop}\label{prop.mesure-chgt-base}
Soit $L$ une extension complète de~$k$, soit $p\colon X_L\to X$
le morphisme de changement de base.
Pour toute forme~$\omega$ de type~$(n,n)$ sur~$X_\groth$
à coefficients tropicalement boréliens,
on a $p_*(\omega_{L,\Bor})=\omega_{\Bor}$.
Lorsque $\omega$ est localement intégrable,
on a $p_*(\omega_{L,\Rad})=\omega_{\Rad}$.
\end{prop}
\begin{proof}
En revenant aux définitions, on déduit de la compatibilité
du calibrage canonique à l'extension des scalaires
(proposition~\ref{prop.calibrage-chgt-base})
l'égalité $p_*\widetilde{\omega_L}=\omega_L$ de formes-volume
$n$-dimensionnelles. La proposition en résulte.
\end{proof}

\begin{prop}\label{prop.mesure-chgt-base-bord}
Soit $L$ une extension complète de~$k$, soit $p\colon X_L\to X$
le morphisme de changement de base.
Pour toute forme~$\omega$ de type~$(n-1,n)$ sur~$X_\groth$
à coefficients tropicalement boréliens,
on a $p_*(\omega_{L,\Bor})=\omega_{\Bor}$.
Lorsque $\omega$ est localement intégrable,
on a $p_*(\omega_{L,\Rad})=\omega_{\Rad}$.
\end{prop}
\begin{proof}
En revenant aux définitions, on déduit de la compatibilité
du calibrage canonique à l'extension des scalaires
(proposition~\ref{prop.calibrage-chgt-base})
l'égalité $p_*\widetilde{\omega_L}=\omega_L$ de formes-volume
$(n-1)$-dimensionnelles. La proposition en résulte.
\end{proof}

\section{Intégrales et morphismes finis}

\subsection{}
Soit $n\in \N$ et soit $X$ et~$Y$ des espaces $k$-analytiques de niveau~$n$
et soit $f\colon Y\to X$ un morphisme qui est fini
au-dessus du complémentaire d'un fermé de Zariski de~$X$ de dimension~$<n$.
On a défini au \S\ref{ss.degcal2} une fonction $x \mapsto \degcal^x(f)$
de~$X$ dans~$\N$; nous la noterons~$\degcal_f$.

Soit $p\in\{n-1,n\}$ et soit $\omega$ une $G$-forme de type $(p,n)$  
sur~$X$. Posons $\eta=f^*\omega$.

\begin{lemm}\label{lemm.mesure-f*-degre-integral}
Soit $\Sigma$ un partie squelettique de~$X$ qui supporte
fortement~$\omega$;
notons $\tilde\omega$ la forme-volume $p$-dimensionnelle sur~$\Sigma$
déduite de~$\omega$ et du calibrage canonique de~$\Sigma$.

Alors $p^{-1}(\Sigma)$ est une partie squelettique
de~$Y$ qui supporte fortement~$f^*\omega$;
notons $\widetilde{f^*\omega}$
la forme-volume $p$-dimensionnelle sur~$p^{-1}(\Sigma)$
déduite de~$f^*\omega$ et du calibrage canonique de~$p^{-1}(\Sigma)$.
On a l'égalité:
\[ f_* \widetilde{f^*\omega} = \degcal_f \cdot \tilde\omega. \]
\end{lemm}
\begin{proof}
Posons $\Tau=f^{-1}(\Sigma)$. Notons $\mu_\Sigma$ et $\mu_\Tau$
les calibrages canoniques de~$\Sigma$ et~$\Tau$.
On a l'égalité $f_*\mu_\Tau=\degcal_f\cdot \mu_\Sigma$
(proposition~\ref{prop.formule-degre-integral}).
L'assertion découle alors de la proposition~\ref{prop.formes-volume-pl-f*}.
\end{proof}

Compte tenu de la définition des mesures de Borel et, éventuellement
de Radon, associées aux formes volumes $p$-dimensionnelles,
on en déduit la proposition suivante.

\begin{prop}\label{prop.mesure-f*-degre-integral}
Supposons que $\omega$ soit localement intégrable.
Alors, $f^*\omega$ est localement intégrable
et l'on a l'égalité 
\[ f_*(f^*\omega)_\Rad =\degcal_f \cdot \omega_{\Rad} \]
de mesures de Radon sur~$X$. 
\end{prop}
Remarquons que les supports des mesures $\omega_\Rad$
et $f_*(f^*\omega)_\Rad$ évitent tout fermé de Zariski 
d'intérieur vide de~$X$; en particulier, ils sont contenus dans le domaine
de définition du degré calibré.

\section{Existence de tropicalisations (presque) globales}

La définition d'une forme (resp. d'une G-forme)
est locale (resp. G-locale), mais lorsque la source 
est affinoïde, on peut en donner une version partiellement globale ; 
c'est l'objet de cette section. On fixe une classe admissible~$\mathscr F$
de fonctions numériques. Pour la définition d'un système rationnel~$\beta$
d'inéquations, et du système~$\mathring \beta$ associé, nous renvoyons
au paragraphe~\ref{def.domaff}.

\subsection{Description des G-formes et des
formes}
Soit~$X$ un espace~$k$-affinoïde, et soit~$\omega$ une G-forme
sur~$X$ à coefficients dans~$\mathscr F$. Soit~$E$ un ensemble de fonctions
analytiques sur~$X$. Nous dirons que~$\omega$ est {\em G-descriptible au moyen de~$E$}
s'il existe : 
\begin{itemize}
\item un recouvrement affinoïde fini~$(V_i)$ de~$X$ tel que chaque~$V_i$ puisse être décrit
par un système rationnel d'inéquations~$\beta_i$ 
ne mettant en jeu que des fonctions appartenant à~$E$ ; 
\item pour tout~$i$, une famille finie~$h_i$ de fonctions appartenant à~$E$ dont la restriction à~$V_i$ est inversible, et telle que le moment~$h_i|_{V_i}$ tropicalise~$\omega|_{V_i}$.
\end{itemize}

Si~$\omega$ est une forme, nous dirons qu'elle est {\em descriptible au moyen de~$E$} s'il existe~$(V_i,\beta_i,f_i)$ comme ci-dessus
telle que de surcroît
le lieu de validité des inéquations strictes~$\mathring{\beta_i}$ recouvre~$X$.  

\begin{rema}\label{descript-fonction-nonnulle}
Dans la définition ci-dessus, on peut toujours supposer que les~$V_i$ 
sont non vides. On peut alors choisir pour tout~$i$ un système~$\beta_i$ 
ne mettant en jeu que des fonctions non nilpotentes. En effet,
fixons~$i$ et partons d'un système~$\beta_i$ quelconque décrivant~$V_i$, 
système que l'on écrit
\[\abs{f_1}\leq \lambda_1 \abs g\;\text{et}\; \ldots\;\text{et}\; \abs{f_n}\leq \lambda_n \abs g.\]

La fonction~$g$ est inversible sur~$V_i$ qui est non vide, et elle est par conséquent non nilpotente. 
Et si l'une des~$f_j$ est nilpotente, 
l'inégalité~$\abs{f_j}\leq \lambda_j \abs g$ est vraie en tout point de~$X$,
et peut dès lors être retirée du système~$\beta_i$ sans modifier son lieu de validité. 
\end{rema}

\begin{lemm}\label{descript.trop}
Soit~$X$ un espace~$k$-affinoïde et soit~$\omega$ une
G-forme (resp. forme) à coefficients dans~$\mathscr F$ sur~$X$. Soit~$f=(f_1,\ldots, f_n)$
une famille finie de fonctions
\emph{inversibles}
sur~$X$. Si~$\omega$ est G-descriptible
(resp. descriptible) au moyen des~$f_i$, il existe une G-forme
(\resp une forme) $\alpha$ sur $f_\trop(X)$ telle que 
$\omega=f^*\alpha$. 
\end{lemm}

\begin{proof} Posons~$P=f_\trop(X)$.
Commençons par traiter le cas où~$\omega$
est une G-forme. Elle est alors tropicalisée
par un recouvrement affinoïde fini~$(V_j)$ de~$X$, chaque~$V_j$ étant défini
par un système rationnel d'inéquations~$\beta_j$ ne faisant intervenir que les fonctions~$f_i$. 

Par construction, chaque~$V_j$ est l'image réciproque par~$f_\trop$ d'un polytope~$P_j$, 
la forme~$\omega|_{V_j}$ provient d'une forme~$\alpha_j$ sur~$P_j$, et les~$P_j$ recouvrent~$P$. 
La proposition~\ref{prop.controle-tige}
garantit que les~$\alpha_j$ sont uniquement déterminées, et que~$\alpha_j|_{P_j\cap P_\ell}=\alpha_\ell|_{P_j\cap P_\ell}$
pour tout~$(j,\ell)$. En conséquence, les~$\alpha_j$ se recollent en une G-forme~$\alpha$, et~$\omega=f^*\alpha$
par construction. 

Supposons maintenant que~$\omega$ est une forme. On choisit~$V_j,\beta_j, P_j$ et~$\alpha_j$ comme ci-dessus, 
en supposant de plus que les lieux de validité des~$\mathring {\beta_j}$ recouvrent~$X$. Pour tout~$j$, le lieu de validité
de~$\mathring{\beta_j}$ est l'image réciproque d'un ouvert~$Q_j$ de~$P_j$, et les~$Q_j$ recouvrent~$P$. 

On a~$\alpha_j|_{P_j\cap P_\ell}=\alpha_\ell|_{P_j\cap P_\ell}$
pour tout~$(j,\ell)$ ; \emph{a fortiori}, $\alpha_j|_{Q_j\cap Q_\ell}=\alpha_\ell|_{Q_j\cap Q_\ell}$
pour tout~$(j,\ell)$. Les formes~$\alpha_j|_{Q_j}$ se recollent donc en une forme~$\alpha$ sur~$P$, 
et on a par construction~$\omega =f^*\alpha$. 
\end{proof}

\begin{prop}\label{trop.partiel}
Soit $X$ un espace $k$-affinoïde et soit $\omega$ une G-forme 
sur~$X$ à coefficients dans~$\mathscr F$. 
Il existe une famille finie $E$ de fonctions 
holomorphes
sur~$X$ telle que~$\omega$ soit G-descriptible 
au moyen de~$E$, et descriptible au moyen de~$E$ si~$\omega$
est une forme. 
\end{prop}

\begin{proof}
Par les définitions respectives d'une G-forme
et d'une forme, et en vertu des résultats généraux
rappelés au~\ref{def.domaff}, il existe une famille finie~$(\beta_i)$ de
systèmes rationnels d'inéquations
sur~$X$
satisfaisant les conditions suivantes, 
en notant~$V_i$ le
domaine rationnel égal au lieu de validité de~$\beta_i$ : 
\begin{itemize}
\item chacun des~$V_i$ tropicalise~$\omega$ ; 
\item l'espace~$X$ est la réunion des~$V_i$, et même
des lieux de validité des~$\mathring {\beta_i}$ si~$\omega$ est une forme. 
\end{itemize}

Pour tout~$i$, le système~$\beta_i$ est de la forme
\[\abs{f_{i,1}}\leq \lambda_{i,1}\abs{g_i}\;\text{et}\;\ldots\;\text{et}\;\abs{f_{i,n_i}}
\leq \lambda_{i,n_i}\abs{g_i}.\]

La fonction~$g_i|_{V_i}$ est inversible, et les fonctions de la
forme~$(a|_{V_i})/(g_i|_{V_i})$, où~$a$
est une fonction analytique sur~$X$, sont denses dans l'anneau des fonctions
de~$V_i$. 

Par hypothèse,~$\omega|_{V_i}$ est tropicalisée par une famille finie~$(h_{i,\ell})$ de fonctions
inversibles sur~$V_i$. Comme cette dernière propriété ne dépend par définition 
que des fonctions~$\abs{h_{i,\ell}}$,
on peut supposer que chaque~$h_{i,\ell}$ est de
la forme~$(a_{i,\ell}|_{V_i})/(g_i|_{V_i})$ pour une certaine fonction~$a_{i,\ell}$ sur~$X$ ; l'invisibilité 
de~$h_{i,\ell}$ entraîne celle de~$a_{i,\ell}|_{V_i}$. 

Soit~$E$ l'ensemble des fonctions de la forme~$g_i$, $f_{i,j}$
et~$a_{i,\ell}$ pour~$i,j$ et~$\ell$ variables. Par construction, $\omega$
est G-descriptible au moyen de~$E$, et si c'est une forme elle est même
descriptible au moyen de~$E$. 
\end{proof}

\begin{lemm}\label{isole.bord}
Soit $f\colon Y\to X$ un morphisme entre espaces $k$-affinoïdes et 
soit~$K$ un compact contenu dans $\Int(Y/X)$. Il existe une famille 
$(f_1,\ldots, f_n)$ de fonctions analytiques non nilpotentes sur $Y$ 
telle que les propriétés suivantes soient satisfaites, en désignant 
pour tout $i$ par $\rho_i$ le rayon spectral de $f_i$ : 
\begin{enumerate}
\item Pour tout $y\in K$ et tout $i$ on a $\abs{f_i(y)}<\rho_i$;
\item Pour tout $y\in \partial (Y/X)$ il existe $i$ tel que 
$\abs{f_i(y)}=\rho_i$.
\end{enumerate}
\end{lemm}
\begin{proof}
Notons $\widetilde Y$ et $\widetilde X$ les réductions 
graduées à la Temkin
des espaces affinoïdes~$Y$ et~$X$
(\ref{ss-reduction-temkin})
ainsi que $\tilde f\colon  \widetilde Y \to \widetilde X$ 
l'application induite par $f$. Soit $\pi \colon  Y\to \widetilde Y$ 
la flèche de réduction. Notons $\mathscr A$ et $\mathscr B$ les 
algèbres respectivement associées à $X$ et $Y$, et 
$\widetilde{\mathscr A}$ et $\widetilde{\mathscr B}$ leurs réductions 
graduées. 

Soit ${\bf y}\in \widetilde Y$. Notons $J_{\bf y}$ (resp $I_{\bf y}$) 
l'idéal de $\widetilde{\mathscr B}$ (resp. $\widetilde{\mathscr A}$) 
engendré par les éléments homogènes s'annulant sur $\bf y$ (resp. 
$\tilde f({\bf y})$). 
Soit $E$ le sous-ensemble de $\widetilde 
Y$ formé des points~$\bf y$ tel que $\widetilde{\mathscr B}/J_{\bf 
y}$ soit finie sur $\widetilde{\mathscr A}/I_{\bf y}$. En vertu du 
\emph{going-up} gradué, si ${\bf y}\in E$ alors $\overline{\{\bf 
y\}}\subset E$. 

L'intérieur $\Int (Y/X)$ est précisément égal à $\pi^{-1}(E)$
(\cite{temkin2004}, Proposition~3.1).

\medskip
Soit $y\in K$. Comme $K\subset  \Int(Y/X)$, on a $\pi(y)\in E$. 
L'image réciproque du fermé $\overline{\{\pi(y)\}}$ de $\widetilde Y$ 
est ouverte, et la compacité de $K$ entraîne alors que $\pi(K)$ est 
contenu dans une réunion finie $F=\bigcup \overline {\{{\bf y}_i\}}$, 
où les ${\bf y}_i$ appartiennent à $E$ ; notons que $F$ est un fermé 
de $\widetilde Y$, qui est contenu dans $E$. 

Par quasi-compacité de 
$\widetilde Y$, 
il existe une famille $(f_1,\ldots, f_n)$ de fonctions analytiques 
non nilpotentes sur $Y$, de rayons spectraux respectifs 
$\rho_1,\ldots, \rho_n$, telle que $F$ soit le lieu des zéros des 
$\widetilde{f_i}$, où $\widetilde{f_i}$  désigne l'image canonique de 
$f_i$ dans le groupe des éléments homogènes de degré~$\rho_i$ de 
$\widetilde{\mathscr A}$. 
Démontrons que la famille~$(f_1,\dots,f_n)$ vérifie les
conditions du lemme.

Soit $y\in K$. Puisque $\pi(y)\in F$, on a pour tout $i$ l'égalité 
$\tilde{f_i}(\pi(y))=0$, ce qui signifie exactement que 
$\abs{f_i(y)}<\rho_i$. 

Soit $y\in \partial (Y/X)$. Le point $\pi(y)$ n'appartient pas à $E$, 
et \emph{a fortiori} pas à $F$. Il s'ensuit qu'il existe $i$ tel que 
$\tilde{f_i}(\pi(y))\neq 0$, ce qui signifie exactement que 
$\abs{f_i(y)}$ est égal à~$\rho_i$. 
\end{proof}

\begin{prop}\label{prop.tropicalisation-globale}
Soit $Y\to X$ un morphisme entre espaces $k$-affinoïdes, 
où $Y$ est
irréductible 
de dimension $n$.
Soit $(\omega_1,\ldots, \omega_m)$ une famille finie de G-formes
(resp. formes) à coefficients
mesurables
sur~$Y$. 
Soit~$K$ un compact contenu dans $\Int(Y/X)$ constitué de points~$y$ 
tels que $d_k(y)=n$. 

Il existe 
un domaine rationnel~$V$ de $Y$
qui est un 
voisinage affinoïde de~$K$ dans~$\Int(Y/X)$
et un moment~$g\colon  V\to T$ tels que les deux propriétés suivantes 
soient satisfaites : 
\begin{enumerate}
\item Chacune des $\omega_i|_V$ est G-tropicalisée (resp. est tropicalisée)
par~$g$ ; 

\item On a $g_\trop(\partial(V/Y))\cap g_\trop (K)=\emptyset$.
\end{enumerate}
\end{prop}

\begin{proof}
Soit $\mathscr A$ l'algèbre
des fonctions analytiques sur $Y$. 
Il résulte de la proposition \ref{trop.partiel} qu'il existe un 
ensemble fini $\{h_1,\ldots, h_N\}$ de fonctions appartenant 
à $\mathscr A$ au moyen desquelles chacune des~$\omega_i$
est G-descriptible (resp. descriptible).  
On peut par ailleurs supposer qu'aucune des~$h_j$ n'est
nilpotente (remarque~\ref{descript-fonction-nonnulle}). 
%
%Soit $(Y_\alpha)$ la famille des composantes irréductibles de $Y$. 
%On pose $\mathscr Y=\Spec \mathscr A$ ; pour tout 
%$\alpha$, on désigne par $\mathscr Y_\alpha$
%la composante irréductible de $\mathscr Y$ qui correspond à~$Y_\alpha$. 
%
%Par le lemme d'évitement des idéaux premiers appliqué
%sur le schéma affine $\mathscr Y$, il existe une fonction
%holomorphe~$f$ sur~$Y$ qui s'annule en tout point de 
%$Y_\alpha\cap Y_\beta$ dès que $\alpha\neq \beta$ et qui est
%génériquement inversible sur $Y_\alpha$ pour tout $\alpha$.
%Notons~$Z$ (resp.~$\mathscr Z$)  le lieu des zéros de~$f$ sur~$Y$
%(resp. sur~$\mathscr Y$). 
%
%Soit $j\in \{1,\ldots,N\}$. L'ouvert~$D(f)$ de $\mathscr Y$ 
%est la réunion \emph{disjointe}
%des ouverts $\mathscr Y_\alpha\setminus \mathscr Z$. 
%Il existe donc $h'_j\in \mathscr A$ 
%sur $Y$ et un entier $n_j$ tels que pour tout $\alpha$ la fonction 
%$h'_jf^{-n_j}|_{Y_\alpha-Z}$ soit égale à $h_j$ si $h_j$ est 
%génériquement inversible sur $Y_\alpha$ et soit égale à~$1$ sinon.
%Posons~$h'_{N+1}=f$ ; par construction, $h'_j$ est génériquement
%inversible sur~$Y_\alpha$ pour tout~$j$ et tout~$\alpha$.

Soit $y\in K$. Comme $d_k(y)=n$,
le point $y$ n'est situé sur aucun fermé de Zariski de $Y$ de 
dimension $<n$ ; comme $Y$ est irréductible de dimension~$n$,
cela signifie que $y$ est Zariski-dense dans~$Y$. Il s'ensuit 
qu'aucune des fonctions~$h_j$ ne s'annule sur~$K$. 

%Remplaçons la famille  $(h_1,\dots,h_N)$ par~$(h'_1,\dots,h'_N,f)$.
%satisfait encore~$(*)$ \emph{sous 
%réserve que l'on se limite aux points $y\in Y-Z$}.
%Cela permet de suposer que~$(*)$ vaut en tous les points
%en lesquels $h_N=f$ est inversible et que pour tout~$j$,
%$h_j$ est génériquement inversible sur chaque~$Y_\alpha$, en particulier
%au voisinage de $K$. 

% \medskip
Le compact $K$ est contenu dans $\Int (Y/X)$. 
En vertu du lemme~\ref{isole.bord}, 
il existe un nombre fini de fonctions analytiques  
non nilpotentes $\lambda_1,\ldots, \lambda_r$ sur $Y$, de rayons 
spectraux respectifs $\rho_1,\ldots, \rho_r$, telles que : 
\begin{itemize}
\item
pour tout $\ell$ et tout $y\in K$, 
 $\abs{\lambda_\ell(y)}<\rho_\ell$ ; 

\item pour tout $y\in \partial (Y/X)$, il existe $\ell$ tel que 
$\abs{\lambda_\ell}=\rho_\ell$. 
\end{itemize}

Choisissons pour tout $j$ un nombre réel strictement positif $s_j$ tel que 
$\abs{h_j}>s_j$ sur $K$, et soit $W$ le domaine affinoïde 
rationnel de $Y$ défini 
par la conjonction d'inégalités $\abs{h_j}\geq s_j$. 

Pour tout 
$\ell\in \{1,\ldots, r\}$, la fonction~$\rho_\ell$ est non nilpotente ; 
comme tout point de~$K$ est Zariski-dense dans~$Y$, la
fonction~$\rho_\ell$ ne s'annule pas sur~$K$. Il existe dès lors deux nombres 
réels $r_\ell$ et $R_\ell$ tels que 
$0<r_\ell<R_\ell<\rho_\ell$ et tels que 
$r_\ell<\abs{\lambda_\ell}<R_\ell$ sur $K$. Soit~$V$
le domaine affinoïde de $W$ 
défini par la conjonction des inégalités $r_\ell\leq 
\abs{\lambda_\ell}\leq R_\ell$ pour $\ell\in \{1,\ldots, r\}$ ; 
c'est un domaine rationnel de $W$ et, partant, de~$Y$.

% \medskip
Soit $y\in V$. On a   
$\abs{\lambda_\ell(y)}\leq R_\ell<\rho_\ell$ pour
tout~$\ell$. En conséquence, le 
point $y$ n'appartient pas à $\partial (Y/X)$ ; il vient $V\subset 
\Int (Y/X)$. 

Soit $y\in \partial(V/Y)$. Par définition du domaine affinoïde $V$, on 
est dans l'un des deux cas suivants, non exclusifs l'un de l'autre. 

\begin{itemize}
\item \emph{Premier cas : $y\in \partial (W/Y)$}. 
Il existe alors $j$ tel que $\abs{h_j}(y)=s_j$, 
et $\abs{h_j(y)}\notin \abs{h_j}(K)$. 

\item \emph{Second cas : $y\in \partial (V/W)$.} 
Il existe alors~$\ell$ tel que
$\abs{\lambda_\ell(y)}\in\{r_\ell,R_\ell\}$. 
Par définition de $r_\ell$ et $R_\ell,$, on en 
déduit 
que $\abs{\lambda_\ell(y)}\notin \abs{\lambda_\ell}(K)$.
\end{itemize}

% \medskip
Par construction, les~$h_j$ et les~$\lambda_\ell$
sont inversibles sur~$V$ et définissent donc un moment~$g$ 
sur~$V$. Il résulte de ce qui précède que 
$g_\trop(\partial(V/Y))\cap g_\trop(K)=\emptyset$.

Par ailleurs comme pour tout~$i$,
la (G-)forme~$\omega_i$ est G-descriptible (resp. descriptible)
 au moyen des~$h_j$, il en va de même de sa restriction~$\omega_i|_V$. 
Celle-ci est
 \emph{a fortiori}
G-descriptible (\resp descriptible)
au moyen des~$h_j$ et des~$\lambda_\ell$ ; 
en vertu du lemme~\ref{descript.trop}, 
la forme~$\omega_i|_V$ est alors G-tropicalisée (resp. tropicalisée) par~$g$.
\end{proof}

\section{Harmonie intérieure}

\begin{theo}\label{theo.interieur-harmonieux}
Soit $X$ un espace analytique de niveau~$n$.
L'intérieur de~$X$ est harmonieux.
\end{theo}

\begin{proof}
Soit $U$ un ouvert de~$\Int(X)$, soit $f\colon U\to \R^n$ une carte
tropicale et soit $\mathscr C$ une décomposition cellulaire
de $\Sigma_f^{(n)}$ en $f$-cellules 
qui est adaptée au calibrage de~$\Sigma_f^{(n)}$.
Soit $F$ une $(n-1)$-cellule de~$\mathscr C$; démontrons que $F$
est harmonieuse relativement à~$f$.

Quitte à remplacer~$X$ par~$U$, on peut supposer que $f$
est définie sur~$X$ et que $X$ est sans bord.

Par définition de la structure d'espace affiné sur~$X$,
il existe un moment $g\colon X\to \gm^n$ sur~$X$
et un  isomorphisme affine~$p\colon \R^n\to\R^n$ tels que
$g_\trop=p\circ f$. Les discordance de~$F$
relativement à~$f$ et~$g_\trop$ se déduisent l'une de l'autre par transport
de structure, de sorte que l'on peut supposer que la carte tropicale
donnée est de la forme $f_\trop$, où $f\colon X\to T$ est un moment.

Soit $\mathscr C_{>F}$ l'ensemble des $n$-cellules de~$\mathscr C$
contenant~$F$; il est fini et non vide, car $\Sigma_f^{(n)}$ est purement
de dimension~$n$.
Fixons une orientation~$o$ de~$f_\trop(F)$ et, 
pour tout~$C\in\mathscr C_{>F}$,
notons~$o_C$ l'orientation de $f_\trop(C)$ 
déduite de celle de~$f_\trop(F)$
par la règle de la normale sortante à~$f_\trop(F)$.
Il existe un unique $n$-vecteur, $\tilde\mu_C$, sur 
le sous-espace affine de~$T_\trop$
engendré par~$f_\trop(C)$
tel que $(f_\trop)_*(\mu_C)$ soit la classe du couple $(\tilde\mu_C,o_C)$.

Notons $\tilde\mu_F$
la somme des fonctions~$\tilde\mu_C|_F$
pour~$C$ parcourant~$\mathscr C_{>F}$.
Par définition, la discordance de~$F$ relativement à~$f_\trop$ est 
la classe~$\mu_F$ du couple $(\tilde\mu_F,o)$ 
dans le produit contracté $\Lambda^n\vec E\times^ { \{\pm1\} } \Or(F)$.
Il s'agit de démontrer que $\tilde\mu_F=0$.

Soit $\Omega$ l'ensemble des applications linéaires $u$ de~$T_\trop$
dans~$\R^n$ telles que $u|_{\langle f_\trop(C)\rangle}$
soit injectif pour tout $C\in\mathscr C_{>F}$.
Soit $q\colon T\to\gm^n$ un morphisme de tores tel que $q_\trop\in\Omega$.

Pour tout $C\in\mathscr C_{>F}$, soit $\sigma_{C}\colon\R^n\to T_\trop$
l'unique section affine de~$q_\trop$ 
dont l'image est égale à~$\langle f_\trop(C)\rangle$.
Soit $e=e_1\wedge\dots\wedge e_n$ le $n$-vecteur canonique de~$\R^n$
et soit $\abs e$ le vecteur-volume associé.
L'hyperplan $H_0=\langle (q\circ f)_\trop(F)\rangle$
est naturellement orienté par l'image de l'orientation de~$F$.
Notons $H^+$ le demi-espace bordé par~$H_0$ que la règle de la normale
sortante munit de l'orientation canonique de~$\R^n$; notons~$H^-$ l'autre
demi-espace. 
Pour tout $C\in\mathscr C_{>F}$, la $n$-cellule $(q\circ f)_\trop(C)$
est contenue dans l'un des deux demi-espaces~$H^+$ et~$H^-$, 
et l'un d'eux seulement. 
Posons $\eta_{C}=1$ si $(q\circ f)_\trop(C)\subset H^+$
et $\eta_{C}=-1$ sinon. 

Soit $C\in\mathscr C_{>F}$; 
comme la cellule~$C$ est contenue dans $\Int(X)$, 
c'est une $q\circ f$-cellule commode. 
Il existe donc un entier~$d_{C}>0$ tel que le morphisme $q\circ f$ soit
fini et plat de degré~$d_{C}$ en tout point de~$\mathring C$
(proposition~\ref{prop.commode-fini-plat}).

Par définition du calibrage canonique, on a 
$ \mu_C =(q\circ f)_\trop^* (d_C \abs e)$,
d'où la relation
\[ (q_\trop)_*(\tilde\mu_C)  =\eta_{C} d_{C} e. \]
Le lemme~\ref{lemm.nullite} ci-dessous entraîne alors
que $(q_\trop)_*(\tilde\mu_F)=0$.
Autrement dit, $\tilde\mu_F$ appartient au noyau de l'application
linéaire $\bigwedge^{n}(q_\trop)_*$.

Ceci vaut pour tout morphisme affine~$q$ de tores tel que $q_\trop$
appartienne à~$\Omega$. Cet ensemble~$\Omega$ est un ouvert
de Zariski dense de $\Hom(T_\trop,\R^n)$, donc l'ensemble
$\Omega_\Z$ de ses points entiers est aussi Zariski dense,
et l'intersection des noyaux des applications
linéaires $\bigwedge^n (q_\trop)_*$,
pour $q\in \Hom(T,\gm^n)$ tel que $q_\trop\in\Omega$
est réduite à~$0$.
On en déduit que $\tilde\mu_F=0$,
ainsi qu'il fallait démontrer.
\end{proof}

\begin{lemm}\label{lemm.nullite}
On a $\sum_{C\in\mathscr C_{>F}} \eta_{C} d_{C} =0$.
\end{lemm}
\begin{proof} 
Soit $x$ un point de~$\mathring F$. Par définition, $\Sigma_f^{(n)}$
est purement de dimension~$n$, l'ensemble~$\mathscr C_{>F}$
n'est pas vide, et $q$ a été choisie de sorte que $(q\circ f)_\trop$
soit injective sur tout élément de~$\mathscr C_{>F}$. 
Par suite, $q\circ f$ est de dimension tropicale~$n$ en~$x$, 
si bien que $q\circ f (x)\in S(\gm^n)$.
D'après le lemme~\ref{lemme.finiplat-local}, 
le morphisme $q\circ f$ est fini et plat en~$x$,
car $x$ n'appartient pas au bord de~$X$.

Soit $U$ un voisinage ouvert et connexe de~$x$ dans~$X$
et soit $V$ un voisinage ouvert de $q\circ f(x)$ dans~$\gm^n$ 
tels que $q\circ f$ induise un morphisme fini et plat de~$U$ sur~$V$,
de degré $d_{q\circ f}(x)$. 
En particulier, $x$ est le seul point de sa fibre dans~$U$,
et $q\circ f(x)=(q\circ f)_\trop(x)_\sq$.
On peut en outre supposer que $U\cap\Sigma_f^{(n)}$ 
est contenu dans la réunion des cellules~$C$, pour $C\in\mathscr C_{>F}$.
Notons aussi que $\partial(U)=\emptyset$ car $U$ est un ouvert de~$X$
et $X$ est sans bord.

Soit $\mathscr C_{>F}^+$ l'ensemble des cellules $C\in\mathscr C_{>F}$
telles que $\eta_C=1$. 
Pour tout $C\in \mathscr C_{>F}^+$, $q\circ f(C)$ est un voisinage
de $q\circ f(x)$ dans $H^+_\sq$ 
et l'intersection des cellules ouvertes $q\circ f(\mathring C)$
rencontre~$V$. Choisissons donc un point~$z\in V$ 
tel que $z\in q\circ f(\mathring C)$ pour tout $C\in\mathscr C_{>F}^+$,
et $z\not\in H_{0,\sq}$.

Soit $C\in\mathscr C^+_{>F}$.
Par choix de~$z$, il existe un point $y\in\mathring C$
 tel que $q\circ f(y)=z$.
Comme $q\circ f|_C$ est injective, $y$ est le seul
antécédent de~$z$ sur~$C$; notons-le $y_C$.

Inversement, soit $y$ un point de~$U$ tel que $q\circ f(y)=z$;
d'après la proposition~\ref{prop.trop.local},
on a $y\in \Sigma_f^{(n)}$.
Comme $y\in U$, il existe une cellule $C\in\mathscr C_{>F}$
telle que $y\in C$;
comme $q\circ f(y)\in (H^+\setminus H_0)_\sq$,
on a $C\in\mathscr C_{>F}^+$.
Par suite, $y=y_C$.

Par définition du degré,  on donc 
\[ \sum_{C\in\mathscr C_{>F}^+} d_C 
= \sum_{C\in\mathscr C_{>F}^+} d_{q\circ f}(y_C)
 = d_{q\circ f}(x), \]
puisque $q\circ f$ induit un morphisme fini et plat de degré~$d_{q\circ f}(x)$
de~$V$ sur~$U$.

Notons $\mathscr C_{>F}^-$ l'ensemble des cellules $C\in\mathscr C_{>F}$
telles que $\eta_C=-1$.
Le même argument démontre que 
$\sum_{C\in\mathscr C_{>F}^-} d_C=d_{q\circ f}(x)$.

On a donc
\[ \sum_{C\in\mathscr C_{>F}} \eta_C d_C = 
 \sum_{C\in\mathscr C_{>F}^+}  d_C -  
 \sum_{C\in\mathscr C_{>F}^-}  d_C 
= d_{q\circ f}(x)-d_{q\circ f}(x)=0,
\]  
d'où le lemme.
\end{proof}

\subsection{}
La formule~\eqref{eq.integrale-but} définit explicitement
l'intégrale d'une forme tropicale de type~$(n,n)$ ou~$(n-1,n)$. 
On a démontré au \S\ref{ss.integrale-but-suffit}
que cette relation
dans des cartes tropicales permet de reconstituer
la mesure de Radon associée à une G-forme de type~$(n,n)$.
Démontrons ici qu'il en est  de même pour les formes
de type~$(n-1,n)$, ce qui fournira une définition tropicale
alternative de cette intégrale.

Soit $X$ un espace analytique de niveau~$n$,
soit $\omega$ une forme de type~$(n-1,n)$ sur~$X$,
à support compact et à coefficients localement bornés. 
Supposons aussi que $X$ est bon.

Pour tout point~$x$ du support de~$\omega$, on a $d_k(x)=n$,
donc il existe (lemme~\ref{lemm.voisinage-irreductible})
deux voisinages affinoïdes irréductibles~$U_x$ 
et~$V_x$ de~$x$ dans~$X$
tels que $V_x\subset \Int(U_x/X)$. Notons
que $\Int(V_x/U_x)=\Int(V_x/X)$ est une partie ouverte
de chacun des espaces~$V_x$, $U_x$ et~$X$.
Le support de~$\omega$ 
étant compact, il existe une partie finie~$S$ de~$\supp(\omega)$
telle que la réunion des~$\Int(V_x/X)$, pour $x\in S$, recouvre~$\supp(\omega)$.
Comme $X$ est supposé bon, il est 
localement holomorphiquement séparé
et le corollaire~\ref{prop.berk-partition.1.lisse-compact}
entraîne qu'il existe une famille $(\lambda_x)_{x\in S}$
de fonctions lisses sur~$X$ telle que, pour tout~$x\in S$,
le support de~$\lambda_x$ soit une partie compacte de~$\Int(V_x/X)$
et telle que $\sum_{x\in S} \lambda_x$ soit égale à~$1$
au voisinage de~$\supp(\omega)$.

On a  $\omega=\sum_{x\in S} \lambda_x \omega$, donc 
\[ \int_X^\partial\omega  = \sum_{x\in S} \int_X^\partial \lambda_x \omega. \]
Fixons $x\in S$.  Soit~$K_x$ le support  de~$\lambda_x \omega$; 
c'est une partie compacte de~$\Int(V_x/X)$
et l'on a $d_k(y)=n$ pour tout $y\in K_x$.
D'après la proposition~\ref{prop.tropicalisation-globale},
appliquée à l'injection de~$V_x$ dans~$U_x$,
il existe un voisinage affinoïde~$W_x$ de~$K_x$ dans~$\Int(V_x/X)$
sur lequel $\lambda_x\omega$ est tropicale.
Comme $\Int(W_x/X)$ est une partie ouverte de~$X$ qui contient
le support de~$\lambda_x\omega$, on a 
\[ \int_X^\partial \lambda_x\omega =  \int^\partial_{\Int(W_x/X)} \lambda_x\omega. \]
Comme $\Int(W_x/X)$ est une partie ouverte de~$W_x$ qui contient
le support de~$\lambda_x\omega$, on a  aussi
\[ \int^\partial_{W_x} \lambda_x\omega =  \int^\partial_{\Int(W_x/X)} \lambda_x\omega. \]
On a donc 
\[ \int_X^\partial \lambda_x\omega =  \int^\partial_{W_x} \lambda_x\omega, \]
et cette dernière intégrale peut se calculer à l'aide de la formule~\eqref{eq.integrale-but}.

A priori, le calcul tropical
de l'intégrale $\int^\partial_{W_x} \lambda_x\omega$ 
fait intervenir toute la tropicalisation de~$\partial(W_x)$.
Remarquons cependant que la pleine force de la proposition~\ref{prop.tropicalisation-globale} démontre qu'on peut choisir~$W_x$ et un moment
tropicalisant~$\omega$ de sorte que ce calcul
ne fasse intervenir que la tropicalisation de~$\partial(X)\cap W_x$.

\begin{rema}
Soit $X$ un espace analytique de niveau~$n$. Soit $f$ un moment
sur~$X$ et soit $C$ une $f$-cellule commode de dimension~$n$.
Alors $\mathring C$ est un ouvert de~$\Sigma_f^{(n)}$,
par définition d'une cellule commode;
comme il ne rencontre le bord de~$X$,
il est harmonieux relativement à~$f$ (théorème~\ref{theo.interieur-harmonieux}).
D'après la remarque~\ref{rema.harmonieux-calibrage-constant},
son calibrage est constant relativement à~$f$.

Mais cette propriété résulte directement de la construction du calibrage
de~$C$, considérée comme partie squelettique de~$\Sigma_f$,
au moyen d'une décomposition cellulaire de~$C$ en $f$-cellules commodes 
dont~$C$ elle-même
est l'unique cellule de dimension~$n$.
\end{rema}

\section{L'espace tropical associé à un espace analytique}

Soit $X$ un espace analytique purement de dimension~$n$, 
topologiquement séparé. On le considère comme espace G-tropical
calibré de niveau~$n$.
Il est riche (théorème~\ref{theo.riche}).

\begin{theo}\label{theo.an-tropical}
Soit $X$ un espace analytique purement de dimension~$n$,
topologiquement séparé.
Muni de son bord $\partial(X)$  au sens de la théorie des espaces analytiques,
l'espace G-tropical affiné et calibré de niveau~$n$ associé à~$X$
est un espace tropical de niveau~$n$.
\end{theo}
\begin{proof}
Vérifions les conditions de la définition~\ref{defi.esp-tropical}.

L'intérieur $\Int(X)$ de~$X$ est 
localement holomorphiquement séparé  (\ref{hol-sep});
il est donc localement différentiellement séparé 
(corollaire~\ref{coro.berk-local-diff-sep}).
Il est décent (proposition~\ref{prop.presque-bon-decent})
car il est sans bord, donc bon, donc presque bon.
Il est très riche (théorème~\ref{theo.berk-tres-riche})
et harmonieux (théorème~\ref{theo.interieur-harmonieux}).

Et il résulte du corollaire \ref{coro-bor-petit}
que $\Sigma\cap \partial X$ est $n$-négligeable pour toute partie paralinéaire $\Sigma$ 
de $X$, de même que $\Sigma \cap \partial (V/X)$ pour tout domaine $V$ de $X$ car
$\partial(V/X)\subset \partial V$. 
\end{proof}

On munit ainsi tout espace analytique~$X$, topologiquement séparé,
et purement de dimension~$n$, 
d'une structure d'espace tropical de niveau~$n$.

\subsection{}
Soit $X$ un espace analytique topologiquement séparé, purement
de dimension~$n$ et soit $V$ un domaine de~$X$.
Le domaine~$V$ est muni de deux façons, a priori distinctes,
d'une structure d'espace tropical de niveau~$n$,
soit comme espace analytique, soit comme domaine de l'espace
tropical~$X$.
La formule $\partial(V)=(\partial(X)\cap V) \cup \partial(V/X) $
montre que ces deux structures coïncident.

\begin{rema}
Soit $X$ un espace analytique de niveau~$n$; pour que l'on puisse
le munir d'une structure d'espace tropical de niveau~$n$,
il faut et il suffit que $X$ soit purement de dimension~$n$.
En effet, dans le cas contraire, 
il possède un domaine
analytique compact non vide~$V$ tel que $\dim(V)<n$.
L'espace~$X$ n'est donc pas riche: la tropicalisation constante de~$V$
n'est couverte par aucun squelette de~$V$ puisque la seule partie
squelettique de~$V$ est vide.
\end{rema}

\chapter{Courants sur les espaces analytiques}

Soit $X$ un espace $k$-analytique topologiquement
séparé et purement de dimension~$n$. Pour alléger la terminologie,
nous dirons plus simplement que $X$ est un $n$-espace $k$-analytique.\index{espace $k$-analytique!$n$-espace $k$-analytique}
Nous l'avons muni d'une structure d'espace G-tropical de niveau~$n$,
calibré et affiné, et démontré (théorème~\ref{theo.an-tropical}) que
muni de son bord au sens de la théorie des espaces $k$-analytiques,
c'est un espace tropical de niveau~$n$.

Il en résulte des notions de formes et G-formes différentielles
de type~$(p,q)$ sur~$X$, à coefficients dans une classe de coefficients,
une théorie de l'intégration des $(n,n)$-formes et des $(n-1,n)$-formes
ainsi qu'une théorie de courants.

Dans ce chapitre, nous explorons plus avant les propriétés
spécifiques des courants sur les espaces analytiques
qui semblent difficilement formulables dans le contexte des espaces tropicaux.

\section{Quelques formules}

\begin{prop}\label{prop.image-directe-chgt-base}
Soit $X$ un $n$-un espace $k$-analytique,
soit $K$ une extension complète de~$k$
et soit $p\colon X_K \to X$ le morphisme de changement de base.
On a les égalités de courants $p_*(\delta_{X_K})=\delta_X$
et $p_*(\delta_{X_K}^\partial)=\delta_X^\partial$.
\end{prop}
\begin{proof}
Pour la première relation, 
il s'agit de démontrer que pour toute forme lisse~$\omega$ 
de type~$(n,n)$,
à support compact, on a 
$ \int_{X_K} p^*\omega = \int_X \omega$;
c'est la formule~\ref{prop.mesure-chgt-base}.

La seconde se prouve de même en utilisant la formule~\ref{prop.mesure-chgt-base-bord}.
\end{proof}
 
\begin{prop}
Soit $f\colon Y\to X$ un morphisme compact de $n$-espaces $k$-analytiques.

\begin{enumerate}
\item 
Le degré calibré, plus précisément tout prolongement à~$X$,
est une fonction localement intégrable sur~$X$.

\item
On a les égalités de courants $f_*\delta_Y = \degcal_f \delta_X$
et $f_*\delta_Y^\partial=\degcal_f \delta_X^\partial$.

\item
Soit $\omega$ une G-forme lisse sur~$X$ 
et soit~$u$ une fonction localement intégrable sur~$X$, 
à valeurs dans~$\R\cup\{-\infty\}$ (\cf \S\ref{ss.G-forme-L1loc}).
Si $f^*u$ est localement intégrable sur~$Y$,
on a $f_*(f^*u\cdot f^*[\omega])=\degcal_f u\cdot [\omega]$.
\end{enumerate}
\end{prop}
Notons que pour \emph b), l'assertion que $f^*u$ est localement
intégrable est automatique si $u$ est localement bornée,
ou si $f$ ne contracte aucune composante de dimension~$n$
(car alors $f$ induit une immersion par morceaux
entre parties squelettiques).
\begin{proof}
\begin{enumerate}
\item
Il s'agit de démontrer que pour toute forme~$\omega$
à support compact sur~$X$, de type~$(n,n)$ ou $(n-1,n)$, 
on a $ \int_{Y} f^*\omega = \int_X \degcal_f\omega$,
respectivement
$ \int^\partial_{Y} f^*\omega = \int^\partial_X \degcal_f\omega$.
Cela résulte de la proposition~\ref{prop.mesure-f*-degre-integral}.

\item
La démonstration est analogue.

\item
Notons $(p,q)$ le degré de~$\omega$. Soit $\alpha$ une forme
lisse à support compact sur~$X$ de bidegré~$(n-p,n-q)$.
Par définition, on a
\begin{align*}
 \langle f_*(f^*u\cdot f^*[\omega]),\alpha\rangle 
&= \langle f^*u\cdot f^*[\omega], f^*\alpha\rangle \\
& = \int_Y f^*u f^*[\omega\wedge\alpha] \\
& = \int_Y f^* u (f^* (\omega\wedge\alpha))_\Rad \\
& = \int_X u f_* (f^* (\omega\wedge\alpha))_\Rad \\
& = \int_X u \degcal_f (\omega\wedge\alpha)_\Rad \\
& = \langle u \degcal_f \delta_X [\omega], \alpha\rangle, 
\end{align*}
compte tenu de la proposition~\ref{prop.mesure-f*-degre-integral}.
\qedhere
\end{enumerate}
\end{proof}

\begin{prop}
Soit $f\colon Y\to X$ un morphisme propre
de $n$-espaces $k$-analytiques sans bord.
On suppose que $f$ est fini au-dessus du complémentaire
d'un fermé de Zariski de dimension~$<n$
et que la fonction $\degcal_f$ est constante
de valeur~$d$ sur~$X^\gen$.
Soit $u$ une fonction localement intégrable sur~$X$;
si $f^*u$ est localement intégrable, on a
\[ f_*(\ddc f^*u) = d \cdot \ddc u. \]
\end{prop}
\begin{proof}
D'après la proposition précédente, on a
\[ f_*(\ddc [f^*u]) = \ddc(f_*[f^*u])= \ddc (\degcal_f [u]). \]
Puisque $\degcal_f$ est constant de valeur~$d$ sur~$X^\gen$, on peut écrire
\[ \ddc (\degcal_f [u])= \ddc (d\cdot [u])=d\cdot \ddc [u], \]
ce qu'il fallait démontrer.
\end{proof}
\begin{exem}\label{exem.f*ddcf*u}
Plaçons-nous sous les hypothèses de la proposition précédente.
Soit $h$ une fonction méromorphe sur~$X$ dont le lieu
de non-inversibilité est de dimension~$<n$; supposons que
$f^*h$ soit une fonction méromorphe régulière sur~$Y$.
Alors, $f^*\log\abs{h}$ est localement intégrable sur~$Y$.
La formule de Poincaré--Lelong (théorème~\ref{theo.poincare-lelong})
affirme que 
$\ddc[f^*\log\abs h]=\delta_{\div(f^*h)}$.
On a donc
$ f_* \delta_{\div(f^*h)}= d \cdot \ddc [\log \abs h]$.

Donnons deux cas particuliers.

\begin{enumerate}
\item
Supposons que $h$ soit une fonction méromorphe régulière sur~$X$.
On en déduit alors $f_* \delta_{\div(f^*h)}=d \delta_{\div(h)}$.
Si $X$ est réduit, le morphisme~$f$ est fini et plat
au-dessus de~$X^\gen$ de degré constant~$d$.
Cela reflète alors la formule $f_*[\div(f^*h)]=d\cdot [\div(h)]$
en théorie de l'intersection.

\item
Soit $U$ l'ouvert complémentaire de la réunion des composantes
immergées de~$X$, soit~$Y$ son adhérence analytique,
c'est-à-dire le plus petit sous-espace analytique fermé de~$X$
qui contient~$U$
(\cite{Ducros-2021}, définition~2.17). 
L'immersion fermée $f\colon Y\to X$ est nilpotente
et est un isomorphisme au-dessus de~$U$; on peut donc prendre $d=1$.
Pour toute fonction localement intégrable~$u$ sur~$X$,
la fonction~$f^*u$ sur~$Y$ est localement intégrable et l'on a
$f_*(\ddc [f^*u])=\ddc [u]$.

Si le lieu d'inversibilité de~$h$ est dense, la fonction méromorphe~$f^*h$
est régulière et justiciable de la formule de Poincaré--Lelong
du théorème~\ref{theo.poincare-lelong},
alors que $h$ pourrait ne pas l'être
et on obtient $\ddc \log \abs h = f_* \delta_{\div(f^*h)}$.

Pour un exemple concret de cette situation, supposons que~$X$
soit l'analytifié de $\Spec(k[S,T])/(T^2, ST)$;
c'est une droite affine, réduite hors de l'origine~$0$,
de sorte que $Y$ est l'analytifié de~$\Spec(k[S])$.
La fonction holomorphe~$S$ n'est pas régulière sur~$X$,
en raison de l'origine, mais sa restriction à~$Y$ l'est.
On obtient $\ddc \log \abs S=\delta_0$,
bien que le sous-espace analytique fermé de~$X$ défini par~$S$,
égal à $\mathscr M(k[T]/(T^2))$, soit de multiplicité~2.
\end{enumerate}
\end{exem}

\begin{prop}\label{prop.image-directe-fini}
Soit $f\colon Y\to X$ un morphisme propre
de $n$-espaces $k$-analytiques sans bord.
On suppose que $f$ est fini au-dessus du complémentaire
d'un fermé de Zariski de dimension~$<n$
et que la fonction $ \degcal_f$ est constante
de valeur~$d$ sur~$X^\gen$.

Soit $u_1,\dots,u_m$ des fonctions continues d-psh sur~$X$.
On a 
\[ f_*(\ddc f^* u_1\wedge\cdots\wedge \ddc f^*u_m)
    = d \cdot \ddc u_1\wedge \cdots\wedge\ddc u_m.\]
\end{prop}
\begin{proof}
L'assertion est locale sur~$X$;
par multilinéarité, il suffit alors de traiter où les fonctions~$u_i$ sont psh.
On raisonne alors par récurrence sur~$m$. 
Le cas $m=0$ est l'assertion~\emph a).
Supposons $m\geq 1$ et l'assertion vraie pour~$m-1$. Compte
de la définition du faisceau des fonctions psh, il suffit
de vérifier les deux assertions:
\begin{enumerate}\def\theenumii{\roman{enumii}}\def\labelenumii{(\theenumii)}
\item La formule est vraie si $u_1$ est lisse;
\item Le faisceau des fonctions psh continues
pour lesquelles cette formule vaut est stable par limite uniforme.
\end{enumerate}
Cette dernière assertion découle de la proposition~\ref{prop.BT-continu};
démontrons~\emph a).

Supposons que $u_1$ est une fonction lisse.
Soit $\omega$ une forme lisse à support compact de type~$(n-m,n-m)$ sur~$X$.
On a 
\begin{align*} 
\langle f_*(\ddc f^* u_1\wedge\dots\wedge \ddc f^*u_m), \omega\rangle
\hskip -4cm \\
& = \langle \ddc f^*u_1  \wedge \ddc f^*u_2\wedge \dots \ddc f^*u_m,
 f^*\omega\rangle \\
& =\langle \ddc (f^*u_1  (\ddc u_2\wedge \cdots \wedge\ddc f^*u_m)), f^*\omega \rangle\\
&= \langle f^*u_1 (\ddc f^*u_2\wedge \cdots \wedge \ddc f^*u_m), \ddc(f^*\omega) \rangle\\
&= \langle \ddc f^*u_2\wedge \cdots \wedge \ddc f^*u_m, f^* u_1 \ddc(f^*\omega) \rangle\\
&= \langle \ddc f^*u_2\wedge \dots \ddc f^*u_m, f^* (u_1 \ddc\omega) \rangle\\
&= \langle f_*(\ddc f^*u_2\wedge \cdots \wedge \ddc f^*u_m), u_1 \ddc\omega \rangle.
\end{align*}
Appliquons maintenant l'hypothèse de récurrence; on en déduit
\begin{align*}
 \langle f_*(\ddc f^* u_1\wedge\dots\wedge \ddc f^*u_m), \omega\rangle
 \hskip -4cm \\
& = \langle d \cdot \ddc u_2\wedge \dots \ddc u_m, u_1 \ddc\omega \rangle. \\
& = d \langle u_1 \ddc u_2\wedge \dots \ddc u_m, \ddc\omega \rangle. \\
& = d \langle \ddc (u_1 \ddc u_2\wedge \dots \ddc u_m),\omega \rangle.
\end{align*}
Autrement dit,
\[ f_*(\ddc f^* u_1\wedge\dots\wedge \ddc f^*u_m)
 = d \ddc ( u_1\ddc u_2\wedge \dots \ddc u_m), \]
ainsi qu'il fallait démontrer.
\end{proof}

\section{Continuité des intégrales fibre : cas des squelettes}

\subsection{}
Dans tout cette section, on considère 
un $m$-espace $k$-analytique~$X$,
un $n$-espace $k$-analytique~$Y$ 
et un morphisme $p\colon Y\to X$. 
On pose $d=n-m$; on va étudier le comportement
de l'intégrale d'une $(d,d)$-forme sur~$Y$ dans les fibres de~$p$
qui sont de dimension~$d$. Ces fibres seront considérées
comme espaces analytiques de niveau~$d$.

%   -- comportement de u_\omega (16.2, sauf la formule de projection,
%   -- et la première moitié de 16.3
\begin{prop}\label{prop.variation-cellule}
Soit $X$ et $Y$ deux espaces $k$-analytiques paracompacts, 
purement de dimensions respectives $m$ et $n$. Posons $d=n-m$. 
Soit $p$ un morphisme de $Y$ vers $X$. 
Soit $\mathscr B$ une classe de coefficients qui est ou bien la classe
des fonctions boréliennes localement bornées, ou bien la classe
$\mathscr C^r$ pour un certain $r\in \N\cup\{\infty\}$. 
Soit $\omega$ une $(d,d)$-forme sur $Y$ et soit $\lambda$ une fonction
numérique sur $Y$, toutes deux G-tropicalement de classe $\mathscr B$, 
telles que le support de $\lambda\omega$ soit relativement propre sur $X$. 
Supposons qu'il existe un moment $f\colon X\to\gm^m$
et un moment $g\colon Y\to \gm^d$ 
tels que
$\omega$ soit G-localement tropicalisée par le moment~$(f\circ p,g)$.
Pour tout $x\in\Sigma_f$, la fibre~$Y_x$ est purement de dimension~$d$;
on pose $u(x)= \int_{Y_x}\lambda \omega$. 

\begin{enumerate}
\item
Il existe une décomposition cellulaire~$\mathscr C$ de~$\Sigma_f$
et, pour toute cellule $C\in\mathscr C$, une fonction~$u_C$
de classe $\mathscr B$ sur~$C$
telle que $u(x)= u_C(x)$ pour tout $x\in\mathring C$.

\item Si $\mathscr B\in\{\mathscr C^r\}_{0\leq r\leq \infty}$, si $p$
est séparé et si
$\omega$ est
faiblement positive, alors $u$ est semi-continue supérieurement.

\item
Si  $\mathscr B\in\{\mathscr C^r\}_{0\leq r\leq \infty}$ et si $p\colon Y\to X$ est
sans bord la fonction $u|_{\Sigma_f}$ est G-localement de classe
$\mathscr B$. 
\end{enumerate}
\end{prop}
\begin{proof}
Puisqu'il ne s'agit que du comportement d'intégrales dans les fibres
de~$p$, l'énoncé est inchangé lorsqu'on remplace~$X$ par~$X_\red$,
ce qui permet de supposer que $X$ est réduit.
Le morphisme~$p$ est alors plat au-dessus de tout point de~$\Sigma_f$ 
(théorème~10.3.7 de~\cite{ducros2018}) ;
quitte à remplacer~$X$ par un voisinage analytique de~$\Sigma_f$,
on peut supposer que $p$ est plat. Ses fibres sont alors toutes
purement de dimension~$d$.

Soit $h\colon Y\to \gm^n$ le moment $(f\circ p,g)$. Soient 
$p_1$ et $p_2$ les projections de $X\times \gm^d$ vers
$X$ et  $\gm^d$ respectivement. 
Soit $f'$ le moment $(f\circ p_1,p_2)$ de $X\times \gm^d$ vers $\gm^n$ 
et soit $g'$
le morphisme $(p,g)$ de $Y$ vers $X\times\gm^d$.
Le  lemme \ref{lemm.dimensions-tropicales-fibre} (a)
assure qu'un point $y$ de $Y$ d'image $x$ sur $X$
appartient 
 à $\Sigma_h$ si et seulement si $x$ appartient à $\Sigma_f$
 et $y$ appartient à $\Sigma_{g|_{Y_x}}$. Il assure aussi, lorsqu'on l'applique au 
 morphisme $p_2$ et aux moments $f', f\circ p_1$ et $p_2$ sur l'espace 
 $X\times \gm^d$,  qu'un point $z$ de $X\times \gm^d$ de projection 
 $x$ sur $X$ appartient à $\Sigma_{f'}$ si et seulement si $x$ appartient à 
 $\Sigma_f$ et $z$ appartient au squelette canonique
 $S(\gm[\hr x]^d)$. Par conséquent, $(p_1, p_{2,\trop})$ identifie $\Sigma_{f'}$
 à $\Sigma_f\times \R^d$. De plus, puisque 
 $\Sigma_{g|_{Y_x}}=(g|_{Y_x})^{-1}(S(\gm[\hr x]^d))$, il vient
 $\Sigma_h=(g')^{-1}(\Sigma_{f'})$. La restriction de $g'$ induit 
 donc une application paralinéaire de $\Sigma_h$ dans $\Sigma_{f'}$. 
 Comme $f'\circ g'=h$, cette application est injective sur toute $h$-cellule
 de $\Sigma_h$.

Choisissons une décomposition cellulaire~$\mathscr C$
de~$\Sigma_f$ en $f$-cellules telle que la fonction $x\mapsto \deg^x (f)$
est constante sur chaque cellule ouverte (proposition~\ref{prop.dec-cell-deg}).
Choisissons de même une décomposition cellulaire~$\mathscr D$
de~$\Sigma_h$ en $h$-cellules telle que la fonction $y\mapsto \deg^y(h)$
est constante sur chaque cellule ouverte. On suppose
également, ce qui est loisible, que pour toute cellule~$D$ de~$\mathscr D$,
la forme~$\omega|_D$ est tropicalisée par~$h$
et que la fonction $\lambda|_D$ est de classe~$\mathscr B$.
Quitte à appliquer le corollaire~\ref{prop.decomp-relative},
on peut supposer que le couple $(\mathscr D,\mathscr C)$ est adapté
à l'application paralinéaire de~$\Sigma_h$ dans~$\Sigma_f$ déduite de~$p$.

Pour tout $C\in\mathscr C$, notons $\mathscr D_C$ l'ensemble des cellules~$D\in\mathscr D$ telles que $p(D)=C$.
Pour tout $x\in \mathring C$, la famille des $D\cap Y_x$, 
pour $D\in\mathscr D_C$,
est une décomposition cellulaire de~$\Sigma_{h}\cap {Y_x}$
et
l'on a $\mathring D\cap Y_x=(D\cap Y_x)^\circ$ pour toute $D\in \mathscr D_C$ et tout
$x\in \mathring C$
(corollaire~\ref{coro.dec-cellulaire-fibre}). De plus, pour toute cellule
$D$ de $\mathscr D_C$ et tout~$x\in\mathring C$, la dimension de~$D\cap Y_x$
est égale à~$\dim(D)-\dim(C)$ ; elle est en particulier indépendante de~$x$.
Nous noterons $\mathscr D'_C$ l'ensemble des cellules de $\mathscr D_C$ de dimension 
$\dim C+d$. 

Soit $C$ une cellule de~$\mathscr C$.
Pour toute cellule~$D\in\mathscr D$,
la fonction $y\mapsto \deg^y(h)$ est constante sur~$\mathring D$;
notons~$e_D$ sa valeur.
Pour toute cellule~$C\in\mathscr C$,
la fonction~$x\mapsto \deg^x(f)$ est constante sur~$\mathring C$;
notons~$d_C$ sa valeur.
Puisque $p$ est plat, pour tout $x\in\mathring C$
et tout $D\in\mathscr D_C$, la fonction $y\mapsto \deg^y(g|_{Y_x})$
sur~$\mathring D\cap Y_x$ est constante, de valeur~$e_D/d_C$
(lemme~\ref{lemm.dimensions-tropicales-fibre}, \emph d)).

Soit $D\in\mathscr D$.
Par hypothèse, $\omega|_D$ est tropicalisée par~$h$;
soit alors $\alpha_D$ une $(d,d)$-forme sur~$h_\trop(D)$
telle que $\omega|_D=h^*\alpha_D$. 
Soit $\lambda_D$ une fonction sur~$h_\trop(D)$
telle que $\lambda|_D=h^*\lambda_D$; 
elle est de classe $\mathscr B$. 
Définissons alors une fonction~$v_D$ sur la cellule~$p(D)$ de~$\mathscr C$
par
\[ v_D(x) = \int_{g_\trop(D_x)} \lambda_D \alpha_D, \]
l'intégrale de cette $(d,d)$-forme sur la cellule $g_\trop(D_x)$
de~$\R^d$ étant prise par rapport au calibrage canonique de~$\R^d$.
Puisque $g_\trop(D_x)=g'_\trop(D)_x$, la fonction~$v_D$ est G-localement de classe~$\mathscr B$ sur~$p(D)$
(proposition~\ref{prop.continuite-integrale}).

Soit $C$ une cellule de~$\mathscr C$.
Pour tout $x\in\mathring C$. Les cellules $D_x$ pour $D$ parcourant $\mathscr D_C$ forment une décomposition cellulaire
de $(\Sigma_h)_x$. On a donc 
\begin{align*}
 u(x) &= \int_{Y_x} \lambda\omega
 = \sum_{D \in\mathscr D'_C}  \int_{D_x} \lambda\omega\\
 & = \sum_{D \in\mathscr D'_C }  (e_D/d_C) \int_{g_\trop(D_x)} \lambda_D\omega_D 
 = \sum_{D \in\mathscr D'_C}  (e_D/d_C) v_D(x).
\end{align*}
La fonction~$u_C$ sur~$C$ définie par
\[ u_C (x) 
 = \sum_{D\in \mathscr D'_C }  (e_D/d_C) v_D(x) \]
est G-localement de classe~$\mathscr B$ et
coïncide avec~$u$ sur~$\mathring C$.

Quitte à raffiner la décomposition cellulaire~$\mathscr C$,
l'assertion~\emph a) s'en déduit.

Supposons à partir de maintenant que $\mathscr B$
est la classe $\mathscr C^r$ pour un certain
$r\in \N\cup\{\infty\}$. 
Choisissons une décomposition cellulaire~$\mathscr C$ 
et des fonctions~$u_C$ comme dans la preuve de \emph a). 

Soit $x$ un point de~$\Sigma_f$
et soit $C$ l'unique cellule de~$\mathscr C$ telle que $x\in\mathring C$.
Soit $\Gamma$ une cellule de $\mathscr C$ contenant $C$. 
Nous allons déterminer
la limite de $u(\xi)$ lorsque $\xi$ est un point
de $\mathring \Gamma$ tendant vers $x$, et la comparer avec $u(\xi)$. 

%Pour démontrer b) il suffira alors de prouver
%que $\lim_{\xi\to x, \xi\in \mathring \Gamma}\leq u(x)$.
%
%Pour démontrer c) il suffira de prouver que $\lim_{\xi\to x, \xi\in \mathring \Gamma}= u(x)$. En effet, 
%cette égalité (valable pour tout $x$ et tout $\Gamma$) 
%entraînera la continuité de $u$. Compte-tenu de cette continuité, l'assertion a)
%entraînera pour tout $C\in\mathring C$ l'égalité $u=u_C$ sur $C$ toute entière (et pas seulement sur
%$\mathring C$), d'où le caractère G-localement de 
%classe $\mathscr C^r$ de $u|_{\Sigma_f}$. 

Soit donc $\xi$ un point de $\mathring \Gamma$. 
On a 
\[ u(\xi) = u_\Gamma(\xi) = \sum_{\Delta \in\mathscr D'_\Gamma } 
 (e_\Delta/d_\Gamma) v_\Delta(\xi), \]
tandis que 
\[u (x) = u_C(x) =
  \sum_{D \in\mathscr D'_C } 
    (e_D /d_C) v_D(x). \]

Soit $\Delta$ une cellule appartenant à 
$\mathscr D'_\Gamma$. La fonction $v_\Delta$ étant continue, 
$v_\Delta(\xi)$ tend vers $v_\Delta(x)$ lorsque $\xi$ tend vers $x$. 
La fibre $\Delta_x$ est la réunion des cellules $D_x$ pour $D$ parcourant 
$\mathscr D_C$ telles que $D_x\subset \Delta_x$ 
(proposition \ref{coro.dec-cellulaire-fibre} b)). 
Par définition de $\mathscr D_C$, et comme $x\in \mathring C$, une cellule $D$
de $\mathscr D$ appartient à $\mathscr D_C$ si et seulement si $\mathring D_x$ est non vide.
Par conséquent, si une cellule de $D$ de $\mathscr D_C$ est telle que $D_x\subset \Delta_x$ alors
$\Delta$ rencontre $\mathring D$ si bien que $\Delta$ contient $D$.  
Inversement si $D\subset \Delta$ alors $D_x\subset \Delta_x$. 
Compte tenu de la définition de $v_\Delta$, il en résulte que 
\[ v_{\Delta}(\xi) \to \sum_{\substack{D\in \mathscr D'_C \\D\subset \Delta}} v_D(x) \]
lorsque $\xi\to x$.

Par suite, toujours lorsque $\xi\to x$, on a
\begin{align*}
u(\xi)&= \sum_{\Delta \in\mathscr D'_\Gamma } (e_\Delta/d_\Gamma) v_\Delta(\xi)\\
&\to \sum_{\Delta \in\mathscr D'_\Gamma } (e_\Delta/d_\Gamma) \sum_{\substack{D\in \mathscr D'_C \\D\subset \Delta}} v_D(x)\\
&=\sum_{D\in \mathscr D'_C}\sum_{\substack{\Delta\in \mathscr D'_\Gamma\\D\subset \Delta}} (e_\Delta/d_\Gamma) v_D(x)\\
&=\sum_{D\in \mathscr D'_C}v_D(x)\sum_{\substack{\Delta\in \mathscr D'_\Gamma\\D\subset \Delta}} (e_\Delta/d_\Gamma).
\end{align*}
Justifions la dernière égalité. Si $\Delta$ est une cellule de $\mathscr D_\Gamma$ contenant $D$ alors comme la projection de $D$ vers $C$
contient une fibre de dimension $d$, on a nécessairement $\dim \Delta=\dim \Gamma+d$, c'est-à-dire $\Delta\in \mathscr D'_\Gamma$.

Pour tout $x$ dans $X$ et tout $t\in \R^d$, nous noterons $(x,t)$ le point $\sq_x(t)$ de $X\times \gm^d$. 
C'est le point $\eta_t$ de la fibre $\gm[\hr x]^d$ de la projection de $X\times \gm^d$ sur $X$.

\begin{lemm}
Soit $D$ une cellule de $\mathscr D'_C$ et soit $y$ 
un point de l'intérieur de la cellule $D_x$. Posons $t=g_\trop(y)$. 
On a 
\[\sum_{\substack{\Delta \in \mathscr D'_\Gamma\\ D\subset \Delta} } e_\Delta/d_\Gamma=\deg^{(y)}_{(\xi,t)}(g')\]
pour tout $\xi \in \mathring \Gamma$ suffisamment proche de $x$. 
\end{lemm}

Rappelons que $\deg^{(y)}(g')$ est un germe de fonction au voisinage de $(x,t)$, défini en 
\ref{def-degre-bizarre}. 
\begin{proof}
La réunion~$E$ des cellules de~$\mathscr D$ qui contiennent le point~$y$ 
est un voisinage paralinéaire compact de~$y$ dans~$\Sigma_h$. 
Il existe donc un voisinage analytique compact~$V$
de~$y$ dans~$Y$ tel que $V\cap \Sigma_h=E$
(lemme \ref{lemme-proprietes-paralineairesX} (f)). 
Vérifions que $y$ est le seul antécédent de $(x,t)$ par~$g'|_V$.
Comme $(g')^{-1}(\Sigma_{f'})=\Sigma_h$, 
tout antécédent de~$x$ sur~$Y$ est situé sur~$\Sigma_h$. Il suffit
donc de s'assurer que~$y$ est le seul antécédent de $x$ sur~$E$. 
Or $E$ est réunion de $h$-cellules contenant~$y$, 
et la restriction de~$g'$ à chacune de ces cellules est injective, 
d'où l'assertion. 

Soit $E'$ la réunion des $\mathring \Delta$ où $\Delta$ parcourt l'ensemble des cellules
de $\mathscr D$ contenant $y$. C'est un sous-ensemble de $E$ dont le complémentaire
dans $E$ est la réunion des cellules de $\mathscr D$ contenues dans $E$ et ne contenant pas
$y$. Par conséquent, $E'$ est un ouvert de $E$. Il existe donc un ouvert $V'$ de $V$
tel que $V'\cap E=E'$ (et on alors $V'\cap \Sigma_h=E'$). Le point $y$ appartient à $E'$ par
construction, si bien que $V'$ est un voisinage ouvert de $y$ dans $V$. 

Observons que $y\in \mathring D$ car $y$ appartient à l'intérieur de $D_x$ et car $D$ appartient 
à $\mathscr D_C$. Par conséquent, pour toute cellule $\Delta$ de $\mathscr D$ le point $y$ appartient à $\Delta$
si et seulement si $\Delta$ contient $D$. 

Par compacité de $V$, et compte tenu du fait que $(g')^{-1}(x,t)\cap V=\{y\}$, il existe un voisinage ouvert $U$ de $x$
tel que $(g')^{-1}(\xi,t)\cap V$ soit contenu dans $V'$ pour tout $\xi$ appartenant à $U$. 

Soit $\xi$ un point de $\mathring \Gamma\cap U$. 
Par définition, $\deg_{(\xi,t)}(g'|_V)$ est égal à 
la somme des $\deg^{\eta}(g'|_V)$ 
où $\eta$ parcourt l'ensemble des antécédents de~$(\xi,t)$ dans~$V$. 
Nous allons montrer que tout antécédent $\eta$ de $(\xi,t)$
dans $V$ appartient 
à $\bigcup_{\Delta \in \mathscr D'_\Gamma} \Delta$, 
qu'il y a au plus un tel antécédent sur $\Delta$ 
pour
une cellule $\Delta$ donnée appartenant à $\mathscr D'_\Gamma$,
et qu'il appartient le cas échéant à $\mathring \Delta$. 

Soit $\eta$ un point
de~$V$ tel que $g'(\eta)=(\xi,t)$.  
%Démontrons qu'il existe une unique cellule $\Delta\in\mathscr D_\Gamma$ 
%contenue dans~$V$ telle que $\eta\in\Delta$, 
%que $\Delta$ contient $D$, 
%et que l'on a en outre $\eta\in\mathring\Delta$.
Comme $(g')^{-1}(\Sigma_{f'})=\Sigma_h$, 
le point~$\eta$ appartient à~$\Sigma_h$ ; puisque
$E=V\cap\Sigma_h$, on a $\eta\in E$. 
Le point $\xi$ appartenant à $U$, le point $\eta$ est situé sur $V'$, 
et donc sur $E'$  car $V'\cap \Sigma_h=E'$. 
Il existe en conséquence une cellule $\Delta$ contenant $y$
telle que 
$\eta\in \mathring \Delta$. 
La cellule ouverte $\mathring \Delta$ rencontre la fibre
$Y_\xi$ (elle contient $\eta$) ; d'après le corollaire~\ref{coro.dec-cellulaire-fibre} c)
on a donc $p(\Delta)=\Gamma$, 
autrement dit, $\Delta\in \mathscr D_\Gamma$.
Comme $\Delta$ contient $D$, la projection de $\Delta$ vers $\Gamma$ 
contient une fibre de dimension $d$, si bien que $\dim \Delta=\dim \Gamma+d$, c'est-à-dire $\Delta\in \mathscr D'_\Gamma$. 
%Par conséquent, $\Delta$ est une cellule maximale de $\mathscr D_\Gamma$.
%Comme $\eta$ appartient
%à $\mathring \Delta$, il s'ensuit que $\Delta$ est l'unique cellule de $\mathscr D_\Gamma$ contenant $\eta$. 
Puisque $h|_\Delta$ est injective, $\eta$ est le seul antécédent de $(\xi,t)$ sur $\Delta$. 
Soit $\Delta'$ une cellule de $\mathscr D'_\Gamma$ contenant $\eta$. 
La cellule $\Delta'$ rencontre alors $\mathring \Delta$, donc contient 
$\Delta$. Puisque $\dim \Delta'=\dim \Delta=\dim \Gamma+d$ 
il vient $\Delta'=\Delta$. 

Montrons maintenant que pour tout cellule $\Delta\in\mathscr D'_\Gamma$
et tout point~$\xi\in\mathring\Gamma$ suffisamment proche de~$x$, 
le point $(\xi,t)$ possède un antécédent $\eta$ sur $\Delta$.
Le point $y$ ayant été choisi dans $\mathring D_x$, 
le point $t=g_\trop(y)$ appartient à $g_\trop(\mathring D_x)$, qui est aussi 
l'intérieur topologique de $g_\trop(D_x)$ dans $\R^d$, car $D_x$ est de dimension $d$ ;
le point $t$ appartient
\textit{a fortiori} à l'intérieur topologique de $g_\trop(\Delta_x)$ dans $\R^d$. 
En appliquant le corollaire
\ref{coro.projection-voisinage} à la cellule $h_\trop(\Delta)$, à la projection 
de $\R^m\times \R^d$ sur $\R^d$ et au point $h_\trop(y)$, on en déduit que pour tout $\zeta$
suffisamment proche de $f_\trop(x))$ dans $f_\trop(\Gamma)$, le point 
$(\zeta,t)$ appartient à $h_\trop(\Delta)$. Par suite, pour tout point $\xi$ suffisamment proche de $x$ 
dans $\Gamma$, le point $(\xi,t)$ possède un antécédent sur $\Delta$ par $g'$.

Terminons maintenant la démonstration du lemme. 
Comme $\mathscr D_\Gamma$ est fini, il existe un voisinage~$W$ de~$x$ dans~$\Gamma$
tel que pour tout $\xi\in\mathring \Gamma\cap W$,
le point $(\xi,t)$ a un antécédent~$\eta$, et un seul, 
dans chaque cellule~$\Delta$ 
appartenant à~$\mathscr D'_\Gamma$; notons-le~$\eta_\Delta$.
Si de plus $\xi\in W\cap U$ alors $\eta_\Delta$ appartient à 
$\mathring \Delta$ ; en conséquence, $\deg^{\eta_\Delta}(g') = e_\Delta/d_\Gamma$
(lemme~\ref{lemm.dimensions-tropicales-fibre}). 
Ainsi, pour tout point $\xi\in\mathring\Gamma\cap W\cap U$, 
\[ \deg_{(\xi,t)}(g'|_V) 
= \sum_{\Delta\in\mathscr D'_\Gamma} \deg^{\eta_\Delta}(g'|_V)= 
\sum_{\Delta\in\mathscr D_\Gamma} \deg^{\eta_\Delta}(g')=\sum_{\Delta\in\mathscr D_\Gamma}
e_\Delta/d_\Gamma,\]
et cela conclut la démonstration
car $(\xi,\tau)\mapsto \deg_{(\xi,\tau)}(g'|_V)$ est un représentant du germe
$\deg^{(y)}(g')$ en $(x,t)$. 
\end{proof}

Reprenons la démonstration de la proposition. On a donc pour tout  $\xi$
suffisamment proche de $x$ dans $\mathring \Gamma$ les égalités
\[u(\xi)= \sum_{D\in \mathscr D'_C}v_D(x)\sum_{\substack{\Delta\in \mathscr D'_\Gamma\\D\subset \Delta}} (e_\Delta/d_\Gamma)
=\sum_{D\in \mathscr D'_C}v_D(x)\deg^{(y)}_{(\xi,t)}(g').\]
Comme l'ensemble des cellules de $\mathscr C$ qui contient $x$ est fini, il s'ensuit que
\[u(\xi)=\sum_{D\in \mathscr D'_C}v_D(x)\deg^{(y)}_{(\xi,t)}(g')\] pour tout $\xi$ 
dans $\Sigma_f$ suffisamment proche de $x$. 

Montrons b). On suppose que les fonctions de la classe $\mathscr B$ sont continues, que $p$ est séparé et que $\omega$ est faiblement positive.
Les fonctions $v_D$ sont alors continues et positives, et
$g'$ est séparé. Le germe $(\xi,\tau)\mapsto \deg^{(y)}_{(\xi,\tau)}(g')$ 
est semi-continu supérieurement (\ref{def-degre-bizarre}, c'est ici que sert
la séparation de $g'$). La fonction $u$ est donc semi-continue supérieurement
au voisinage de $x$. 
Celui-ci ayant été choisi arbitrairement sur $\Sigma_f$, la fonction $u$ est semi-continue supérieurement, ce qui montre \emph b). 

Montrons enfin \emph c). Supposons que $p$
est sans bord. Et supposons simplement pour le moment que  les fonctions de la classe $\mathscr B$ sont continues, 
et démontrons que $u|_{\Sigma_f}$ est continue. Par notre hypothèse sur $\mathscr B$ les fonctions $v_D$ sont  continues. 
Le morphisme $p$ est sans bord, et est composé de $g'$ et de la première projection ; par conséquent, $g'$ est sans bord. 
Il est donc fini en tout point de $(g')^{-1}(\Sigma_{f'})$ (lemme \ref{lemme.finiplat-local}, \emph b)).
Par ailleurs, pour tout $x\in X$, le morphisme 
$g|_{Y_x}$ est (fini et) plat en tout point situé au-dessus du squelette standard de $\gm[\hr x]^d$
 (lemme \ref{lemme.finiplat-local}, \emph c)) ; puisque $p$ est plat, 
 le critère de platitude par fibres (\cite{ducros2018}, théorème 8.3.8)
 assure que $g'$ est plat en tout point de $(g')^{-1}(\Sigma_{f'})$. 
 Le morphisme $g'$ étant fini et plat en tout point de $(g')^{-1}(\Sigma_{f'})$, 
 il résulte de \ref{def-degre-bizarre}
que le germe $(\xi,\tau)\mapsto \deg^{(y)}_{(\xi,\tau)}(g')$ 
est constant au voisinage de $(x,t)$. La fonction $u$ est donc continue
au voisinage de $x$. 
Celui-ci ayant été choisi arbitrairement sur $\Sigma_f$, la fonction $u$ est continue. 

Supposons maintenant que $\mathscr B=\mathscr C^r$ pour un certain $r$ appartenant à $[0,\infty]$. Pour toute
cellule $C\in \mathscr C$, les fonctions $u$ et $u_C$ coïncident sur $\mathring C$. Comme $u|_C$ et $u_C$ sont continues, 
$u$ et $u_C$ coïncident sur $C$. La fonction $u_C$ étant de classe $\mathscr C^r$, la fonction $u|_C$ l'est aussi. Ceci valant 
quel que soit $C$, la fonction $u$ est G-localement de classe $\mathscr C^r$. 
\end{proof}

En application du théorème précédent, nous pouvons donner
une variante relative du théorème selon les espaces sans bord
sont très riches.

\begin{theo}\label{theo-tresriche-relatif}
Soit $p\colon Y\to X$ un morphisme sans bord 
entre espaces $k$-analytiques presque bons et topologiquement séparés.
On suppose que $Y$ est purement de dimension~$n$
et localement holomorphiquement séparé, 
que $X$ est purement de dimension~$m$, 
et que les fibres de~$p$ sont purement de dimension~$d=n-m$. 
Supposons que l'espace~$X$, vu comme espace G-tropical de niveau~$m$,
est très riche. 
Alors, considéré comme espace G-tropical de niveau~$n$,
l'espace~$Y$ est très riche.
\end{theo}

\begin{proof}
Soit $\Sigma$ une partie squelettique compacte de~$V$. 
Il faut démontrer que $\Sigma$ est contenue dans une partie
squelettique compacte de~$V$ purement de dimension~$n$, 
ce qui peut se faire G-localement sur~$\Sigma$. 
D'après la proposition~9.18 de~\cite{ducros-t2024}, ou plutôt sa preuve, 
il existe un G-recouvrement fini~$(\Sigma_i)$ par des parties
squelettiques compactes et, pour tout~$i$,
des domaines compacts~$V_i$ de~$X$ et~$W_i$ de~$p^{-1}(V_i)$,
des moments $f_i\colon V_i\to \gm^{m}$ et $g_i\colon W_i\to \gm^d$,
tels que $\Sigma_i\subset \Sigma_{p^*f_i,g_i}$.
On peut ainsi supposer qu'il existe des domaines compacts~$V$ de~$X$ et~$W$ 
de~$p^{-1}(V)$,
des moments $f\colon V\to \gm^{m}$ et $g\colon W\to \gm^d$,
tels que $\Sigma\subset \Sigma_{p^*f,g}$.
Alors, $p(\Sigma)\subset\Sigma_f$ et pour tout $x\in\Sigma_f$,
$\Sigma\cap Y_x\subset \Sigma_{g|_{Y_x}}$
(lemme~\ref{lemm.dimensions-tropicales-fibre}). 

Soit $y\in\Sigma$ et soit $x=p(y)$.
Comme $X$ est presque bon en~$x$, il existe un voisinage ouvert~$V'$
de~$x$ dans~$X$ et un moment~$f'\colon V'\to\gm^m$
tel que $\abs{f_i-f'_i} <\abs{f_i}$ sur $V\cap V'$.
Comme $Y$ est presque bon en~$y$, il existe un voisinage ouvert~$W'$
de~$y$ dans~$p^{-1}(V')$ 
et un moment $g'\colon W'\to \gm^d$ tel que $\abs{g_j-g'_j}<\abs{g_j}$
sur $W\cap W'$.
Alors $\Sigma_{p^*f,g}$ et $\Sigma_{p^*f',g'}$ coïncident sur $W'$.
En remplaçant~$X$ par~$V'$ et~$f$ par~$f'$, $Y$ par~$W'$ et~$g$ par~$g'$,
$\Sigma$ par un voisinage compact de~$y$ dans~$\Sigma$, 
et en utilisant la compacité de~$\Sigma$,
on se ramène ainsi au cas où $f$ et~$g$ sont définis sur~$X$ et~$Y$.

La partie~$\Sigma_f$ de~$X$ est squelettique 
et l'espace~$X$ est supposé très riche.
Il existe donc une partie squelettique~$\Tau$ de~$X$ 
qui est purement de dimension~$n$ et qui contient
la partie squelettique compacte~$p(\Sigma)$ de~$\Sigma_f$.

En raisonnant à nouveau localement sur~$X$
et en appliquant la proposition~\ref{prop.parties-polyedrales-moments},
on se ramène au cas où il existe un moment~$f'\colon X\to\gm^m$ sur~$X$ 
tel que $\Tau\subset\Sigma_{f'}$.
Pour tout point $y\in\Sigma$, on
a $p(y)\in\Tau\subset \Sigma_{f'}$ et $y\in\Sigma_{f|_{Y_{p(y)}}}$,
donc $y\in\Sigma_{p^*f',g}$
(lemme~\ref{lemm.dimensions-tropicales-fibre}). 
Cela démontre que $\Sigma$ est contenu dans $\Sigma_{p^*f',g}$.
Quitte à remplacer~$f$ par~$f'$, on peut supposer
que $p(\Sigma)\subset \Tau\subset \Sigma_f$.

Notons $\alpha$ la $(m,m)$-forme sur~$X$ 
image réciproque par~$f_\trop$ de la forme strictement positive 
$\di x_1\wedge\dc x_1 \wedge \dots$ sur~$\R^m$.
De même, notons $\omega$ la $(d,d)$-forme sur~$Y$,
image réciproque par~$g_\trop$ de la forme strictement positive
analogue sur~$\R^d$.

\begin{lemm}
Soit $\lambda$ une fonction positive sur~$Y$,
G-tropicalement continue, à support compact,
telle que $\lambda|_{\Sigma_{p^*f,g}}\neq0$.
Alors $\int_Y \lambda\omega\wedge p^*\alpha >0$.
\end{lemm}
\begin{proof}[Démonstration du lemme]
Soit $y\in \Sigma_{p^*f,g}$ tel que $\lambda(y)\neq 0$;
posons $x=p(y)$.

Démontrons d'abord que que $\int_{Y_x} \lambda\omega >0$.
La forme $\omega|_{Y_x}$ est tropicalisée par~$g|_{Y_x}$;
le squelette $\Sigma_{g|_{Y_x}}$ en est un support fort;
il est purement de dimension~$d$ car $Y_x$ est sans bord
(corollaire~\ref{coro.Sigmaf=Sigmafn}).
Comme $\omega$ est l'image réciproque par~$g$ 
d'une forme strictement positive sur~$\R^d$, 
il en résulte que $\Sigma_{g|_{Y_x}}$ est égal au support 
de la mesure positive $(\omega|_{Y_x})_\Rad$ sur~$Y_x$.
Puisque le support de~$\lambda(y)>0$,
on en déduit que $\int_{Y_x}\lambda\omega >0$, comme annoncé.

Le morphisme~$p$ est sans bord;
d'après la proposition~\ref{prop.variation-cellule},
la fonction positive $u_{\lambda\omega}$
sur~$X$ définie par $u_{\lambda\omega}(t)=\int_{Y_t}\lambda\omega$ 
est continue sur~$\Sigma_{f}$.
Elle est donc strictement positive sur un voisinage de~$x$ dans~$\Sigma_{f}$.
Comme $\alpha$ est l'image réciproque par~$f_\trop$ 
d'une forme strictement positive sur~$\R^m$, 
le squelette $\Sigma_{f}$ est le support de la mesure positive~$\alpha_\Rad$.
Par conséquent, $\int_{\Sigma_{f}} u_{\lambda\omega}\alpha >0$,
c'est-à-dire $\int_X u_{\lambda\omega}\alpha>0$.
D'après le théorème~\ref{theo.integrale-fibre},
on en déduit
$\int_Y \lambda\omega\wedge p^*\alpha =\int_Y u_{\lambda\omega}\alpha>0$,
ce qu'il fallait démontrer.
\end{proof}

Terminons maintenant la démonstration du théorème.
Soit $W$ un voisinage compact de~$\Sigma$ dans~$Y$
et soit $\mathscr C$ une décomposition cellulaire 
en $(p^*f,g)$-cellules du squelette compact $\Sigma_{p^*f,g}\cap W$ 
qui est adaptée à~$\Sigma$.
Soit $C\in\mathscr C$ une cellule maximale qui contient une cellule
de~$\Sigma$; il suffit de prouver que $\dim(C)=n$.
Soit $y\in\mathring C$.

Par maximalité de~$C$,
la cellule ouverte~$\mathring C$ est ouverte dans~$\Sigma_{p^*f,g}$;
il existe donc un ouvert~$U$ de~$Y$ tel que $\mathring C=\Sigma_{p^*f,g}\cap U$.
Il existe alors une fonction lisse~$\lambda$ sur~$Y$,
à support compact contenu dans~$U$
telle que $\lambda(y)=1$
(corollaire~\ref{coro.berk-lisse-separe}).

D'après le lemme précédent, $\int_{Y}\lambda\omega\wedge p^*\alpha>0$,
si bien que $\lambda\omega\wedge p^*\alpha\neq0$.
Par construction, la cellule~$C$ est un support fort de
la $(n,n)$-forme $\lambda\omega p^*\alpha$.
Cela entraîne $\dim(C)=n$.
\end{proof}

\section{Continuité des intégrales fibre : cas général}

\begin{prop}\label{prop.polyedre-calibre-controle}
Soit $d$ un entier et soit $p\colon Y\to X$ un morphisme plat 
et compact entre espaces $k$-analytiques, 
à fibres purement de dimension~$d$. Soit $f\colon Y\to \gm^d$ un moment. 
Il existe un G-recouvrement $(X_i)$ de $X$ par des domaines analytiques
et, pour tout $i$, 
un moment $g_i$ sur $X_i$ tel que pour tout $x\in X_i$, le polyèdre de niveau $d$ calibré 
$f_\trop(Y_x)^{(d)}$ ne dépende que de $g_{i,\trop}(x)$.
\end{prop}

\begin{proof}
L'énoncé à montrer est tautologiquement G-local sur $X$, ce qui permet de supposer ce dernier affinoïde ; l'espace $Y$ est alors compact. 
En particulier, $X$ et~$Y$ sont topologiquement séparés;
les fibres de~$p$ sont considérées comme des espaces
G-tropicaux de niveau~$d$.

Soit $f'$ le $X$-morphisme $(p,f)$ de $Y$ vers $X\times_k \gm^d$. 
Soit $x$ un point de $X$. Le polyèdre $f_\trop(Y_x)^{(d)}$ est égal 
à $f_\trop(\Sigma_{f|_{Y_x}}^{(d)})$. Quant à son calibrage, il est égal au calibrage standard de $\R^d$ 
multiplié par la fonction qui envoie un point $v$ de $\R^d$ sur 
l'entier $\deg_{\sq_x(v)}(f|_{Y_x})=\deg_{\sq_x(v)}(f')$. 

%Par compacité de
%$Y$, le morphisme $f'$ prend ses valeurs dans $X\times_k Z$ où $Z$ est une polycouronne compacte, 
%disons donnée par les inégalités $r_j\leq \abs T_j\leq R_j$ pour $1\leq j\leq d$. 
Il suffit donc de montrer l'existence d'un G-recouvrement
$(X_i)$ de $X$ et, pour tout $x$, d'un moment $g_i$ sur~$X_i$ tel que pour tout $x\in X_i$, l'application 
de $\R^d$ dans $\N$ qui envoie $v$ sur $\deg_{\sq_x(v)}(f')$ ne dépende que de $g_{i,\trop}(x)$. 
Pour tout $x\in X$ notons $S_x$ le squelette standard du tore
$\gm[\hr x]^d$ ; par construction, $\Sigma_{f|_{Y_x}}=(f')^{-1}(S_x)$. Soit $S$
la réunion des $S_x$ pour $x$ parcourant $X$.  Le sous-ensemble $S$ de 
$X\times_k \gm^d$ est
l'image de la section de Shilov de l'application $(\mathrm{Id}, \trop)$ de $X\times_k \gm^d$ vers
$X\times \R^d$ ; comme la section de Shilov est continue (lemme \ref{lemm-continuite-shilov})
et que $X\times_k \gm^d$ est séparé, $S$ est une partie fermée de $X\times_k \gm^d$.

Soit $s$ un point de $S$, et soit $x$ son image sur~$X$. Puisque $s\in S_x$, le morphisme $f|_{Y_x}\colon Y_x \to \gm[\hr x]^d$
est plat et de dimension relative nulle en tout point de $f^{-1}(s)$. Le morphisme $f'$
est alors de dimension relative nulle en tout point de $(f')^{-1}(S)$ ; de plus le morphisme~$p$ est plat, et la projection de $X\times_k \gm^d$ 
sur~$X$ est plate ce qui entraîne en vertu du critère de platitude par fibre (\cite{ducros2018}, théorème 8.3.8) que 
$f'$ est plat en tout point de $(f')^{-1}(S)$. Le lieu $U$
des points de $Y$ en lesquels $f'$ et plat et de dimension relative nulle est un ouvert de
Zariski de $Y$, qui contient
le compact $(f')^{-1}(S)$ d'après ce qui précède. Il est alors loisible de remplacer $Y$ par un voisinage
analytique compact de $(f')^{-1}(S)$ dans $U$, ce qui permet de supposer que 
$f'$ est plat et de dimension relative nulle. 

Soit $(Y_j)$ un recouvrement fini de $Y$ par des domaines affinoïdes. Pour tout ensemble non vide
$J$ d'indices, posons $Y_J=\bigcap_{j\in J}Y_j$ et choisissons un recouvrement affinoïde fini
$(W_{J,\ell})_\ell$ de $Y_J$. Pour tout ensemble non vide 
$\Lambda$ d'indices, posons $W_{J,\Lambda}=\bigcap_{\ell \in \Lambda}W_{J,\ell}$.
Comme $Y_J$ est contenu dans un domaine affinoïde
de $Y$, les $W_{J,\Lambda}$ sont tous affinoïdes.
Par conséquent pour tout point $z$ de 
$ X\times_k Z$ on a
\[\deg_z(f')=\sum_{J\neq\emptyset}(-1)^{\abs J-1}\deg_z(f'|_{Y_J})
=\sum _{J,\Lambda \neq \emptyset}(-1)^{\abs J+\abs \Lambda }\deg_z(f'|_{W_{J,\Lambda}}).\]
Il suffit dès lors de savoir démontrer la proposition pour chacun des domaines affinoïdes
$W_{J,\Lambda}$
(elle fournira pour chacun de
ces domaines $W$ un jeu de cartes tropicales qui G-recouvre $X$ et contrôle
les polyèdres calibrés $f_\trop(W_x)^{(d)}$, et il suffira alors de prendre un raffinement commun de ces jeux pour contrôler
les polyèdres calibrés $f_\trop(Y_x)^{(d)}$). Par conséquent, on peut supposer que $Y$ est affinoïde. 

Soit $x\in X$ et soit $v\in \R^d$. 
Soient $y_1,\ldots, y_n$ les antécédents de $\sq_x(v)$ sous $f'$. Comme
$f'$ est plat et de dimension relative nulle, il résulte du théorème 8.4.6 de \cite{ducros2018} (et du fait qu'un espace 
de dimension nulle est automatiquement de Cohen-Macaulay) qu'il existe pour tout $i$ un voisinage affinoïde $Y_i$ de 
$y_i$ dans $Y$ tel que $f'|_{Y_i}$ admette une factorisation $Y_i\to T_i\to X\times_k \gm^d$ où $T_i$ est un espace affinoïde, où
$Y_i\to T_i$
est fini et plat de rang constant $e_i$ et où $T_i\to X\times_k \gm^d$ est quasi-étale. Quitte à les
restreindre, on peut supposer que les
$Y_i$ sont deux à deux disjoints. Il existe alors un voisinage affinoïde $X'_v$ de $x$ dans $X$ et un
voisinage paralinéaire compact $P_v$ de $\R^d$ tels que $(f')^{-1}(\sq_\xi(w))$ soit contenu dans 
$\coprod Y_i$ pour tout $\xi\in X'_v$ et tout $w\in P_v$. Il suffit alors de démontrer l'existence d'un nombre fini 
de cartes tropicales G-recouvrant $X'_v$ contrôlant les fonctions $w\mapsto \deg_{\sq_\xi(w)}(f')$ de $P_v$ dans $\N$ pour $\xi$
appartenant à $X'_v$.
Supposons en effet l'avoir prouvé. 
Le compact  $f_\trop(Y)$ est contenu dans $\bigcup_{v\in \mathscr V} P_v$ 
pour un certain ensemble fini $\mathscr V$. 
Soit $X'$ l'intersection des $X'_v$ pour $v\in \mathscr V$ ; 
c'est un voisinage affinoïde de $x$ dans $X$. 
En prenant un raffinement commun
des traces sur~$X'$ des jeux fournis sur les différents $X'_v$
pour $v\in \mathscr V$, 
on obtient un jeu fini de cartes tropicales G-recouvrant $X'$ et
contrôlant les fonctions $w\mapsto \deg_{\sq_\xi(w)}(f')$ pour $w\in \R^d$
et $\xi \in X'$ (pour le voir, on remarque que $\deg_{\sq_\xi(w)}(f')$ est nul dès que $w$ n'appartient
pas à $f_\trop(Y)$, et que si $w$ appartient à $f_\trop(Y)$ il appartient alors à $P_v$ pour un certain
$v\in \mathscr V$). 

On peut donc supposer, en remplaçant $X$ par $X'_v$ et $Y$ par l'image réciproque sour $(p,f_\trop)$ de $X'_v\times P_v$,
que $Y$ est réunion disjointe de domaines affinoïdes $Y_i$ sur chacun
desquels $f'$ admet une factorisation comme ci-dessus. En raisonnant G-localement sur $Y$ comme on l'a fait plus haut (c'est
même ici plus simple puisque les $Y_i$ sont deux à deux disjoints), on se ramène au cas où $f'\colon Y\to X\times_k\gm^d$ admet une factorisation 
$Y\to T\to X\times_k Z$ où $T$ est affinoïde, où $Y\to T$ est fini et plat de degré constant $e$, et où
$T\to X\times_k Z$ est quasi-étale. 

Le théorème 5.3 de \cite{ducros2024} assure que pour tout entier $m$, le lieu $V_m$ des points de
$ X\times_k \gm^d$ en lesquels les fibres géométriques du morphisme quasi-étale
$T\to  X\times_k Z$ sont de cardinal $\geq m$
est un domaine analytique compact de $X\times_k \gm^d$ (notons qu'il existe $M$ tel que 
$V_m$ soit vide dès que $m>M$). 
Sur le domaine analytique $V_m\setminus V_{m+1}$ de $ X\times_k \gm^d$ les fibres géométriques de
$T\to X\times_k \gm^d$ sont de cardinal exactement $m$, et le degré de $f'$
au-dessus d'un point de $V_m\setminus V_{m+1}$ est dès lors égal à $me$. 

La fonction $w\mapsto \deg_{\sq_\xi(w)}(f')$ ne dépend donc que des fonctions 
$w\mapsto \mathbf 1_{V_m}(\sq_\xi(w))$ pour $0\leq m\leq M$. On conclut alors à l'aide du lemme
\ref{lem-description-1V}. \end{proof}

\begin{lemm}\label{lem-description-1V}
Soit $X$ un espace $k$-affinoïde, soit $d$ un entier
et soit $V$ un domaine analytique compact de 
$X\times_k\gm^d$. 
Pour tout $x\in X$, soit $\Upsilon_x$ la fonction
\[\R^d\mapsto \{0,1\}, t\mapsto \mathbf 1_V(\sq_x(t)).\]
Il existe une famille finie
$(a_i)_{i\in I}$ de fonction analytiques
sur~$X$ et, pour tout $x \in X$, 
un voisinage affinoïde $U_x$ de $x$ dans $X$ et un sous-ensemble $I_x$ de
$I$ tel que $a_i|_{U_x}$ soit inversible pour tout $i\in I_x$ et tel que
pour tout $\xi\in U_x$, la fonction $\Upsilon_\xi$
ne dépende que de la famille $(\abs{a_i(\xi)})_{i\in I_x}$. 
\end{lemm}

\begin{proof}
Soit $T=(T_1,\ldots, T_d)$
la famille des fonctions coordonnées sur 
$\gm^d$. Comme $V$ est compact il existe un réel $R>1$ tel que 
$V$ soit contenu dans le domaine affinoïde $X\times_k Z$ de
$\gm^d$, où $Z$ désigne la polycouronne 
compacte défini par la conjonction d'inégalités 
$R^{-1}\leq \abs {T_i}\leq R$ pour $i$ variant de $1$ à $d$. 
Par le théorème de Gerritzen-Grauert, le domaine analytique compact
$V$ de $X\times_k Z$ est une union finie $\bigcup_n V_n$ de domaines rationnels
non vides. 
La fonction caractéristique $\mathbf 1_V$ est égale
à $\sup_n \mathbf 1_{V_n}$. l suffit
donc de montrer le lemme pour chacun des $V_n$, ce qui permet de supposer que
$V$ est un domaine rationnel non vide
de $X\times_k V$. 

Donnons-nous un système d'inégalités
\[\abs {f_1}\leq \mu_1 \abs {f_0}\;\text{et}\;\ldots\;\text{et}\;\abs {f_p}\leq \mu_p \abs {f_0}\]
décrivant le domaine rationnel $V$ où les $f_j$ sont des fonctions analytiques sur
$X\times_k Z$ sans zéro commun et où les $\mu_j$
sont des réels strictement positifs. 

Soit $J$ un ensemble d'entiers compris entre $0$ et 
$N$, et soit $D$ le lieu d'annulation simultanée des $f_j$ sur $X\times_k Z$. 
Nous allons montrer que $\mathbf 1_V$ s'exprime au voisinage de $D$
en fonction des $\abs{f_j}$ pour $j\notin J$. 
Soit $b$
le minimum de $\abs{f_0}$ sur le domaine $V$ ; c'est un réel strictement positif. 
Soit $W$ le voisinage affinoïde de $D$ dans $X\times_k Z$
défini par
la conjonction d'inégalités $\abs{f_j}\leq B_j$ pour $j\in J$ où 
$B_j=b/2$ si $j=0$ et $B_j=\mu_jb/2$ sinon. Le domaine 
affinoïde $V \cap W$ est alors : 
\begin{itemize}
\item vide si $0\in J$ ; 
\item décrit en tant que domaine de $W$ par les inégalités 
\[\abs{f_0}\geq b\;\text{et}\; \abs{f_j}\leq\mu_j\abs{f_0}\]
pour tous les $j>0$ tels que $j\notin J$ si $0\notin J$.
\end{itemize}
La fonction $\mathbf 1_V|_W$ s'exprime donc bien en fonction des
$\abs{f_j}$ pour $j\notin J$. 

Soit $x\in X$. Pour tout $j$, la fonction analytique~$f_j$ 
sur $X\times_k Z$ s'écrit comme une série 
$\sum_N a_{j,N}T^N$ en notation multi-indicielle, où
les $a_{j,N}$ sont des fonctions analytiques sur $X$, où $N$ parcourt
$\Z^d$ et où $\|a_j{,N}\|\cdot R^{|N|}$ 
tend vers zéro quand $|N|$ tend vers l'infini. 
Soit $E$ l'ensemble
des indices $j$ tels
que $f_j|_{Z_x}\neq 0$, c'est-à-dire encore tel qu'il existe
$N$ vérifiant $a_{j,N}(x)\neq 0$. 
D'après le paragraphe
précédent et par propreté, il existe un voisinage affinoïde $U_x$ de $x$
dans $X$ tel que sur $U_x\times_k Z$
la fonction $\mathbf 1_V$ 
s'exprime en fonction des $\abs{f_j}$ pour $j\in  E$. 
Soit $\Sigma$ l'image de $U_x\times [-\log R, \log R]^d$ par l'application 
$(z,t)\mapsto \sq_z(t)$, qui est continue 
(lemme \ref{lemm-continuite-shilov}) . C'est une partie compacte de $U_x\times_k Z$, dont
la fibre $\Sigma_\xi$ en tout point $\xi$ de $U_x$ est le squelette standard de la poly-couronne $Z_\xi$. 

Pour tout $j$ appartenant à $E$, la fonction 
$f_j|_{Z_x}$ est non nulle et ne s'annule donc pas
sur le squelette $\Sigma_x$ ; elle y est dès lors minorée par un réel 
strictement positif $\eps_j$. Par propreté topologique on peut
supposer, quitte à diminuer les $\eps_j$ et à restreindre $U_x$, qu'il existe 
un réel $\eps >0$ tel que $\abs{f_j}\geq \eps$ identiquement sur 
$\Sigma$ pour tout $j\in E$. 
Pour tout $j\in E$, choisissons un ensemble fini 
$\mathscr N_j$ d'indices tel que
$\abs {a_{j,N}}\cdot r^N<\eps$ identiquement sur $U_x$
pour tout $N\notin \mathscr N_j$ et tout
$r\in [R^{-1},R]^d$.
Si $N$ est un élément de $\mathscr N_j$ tel que 
$a_{j,N}(x)=0$ on peut restreindre $U_x$ de sorte que $\abs{a_{j,N}}\cdot r^N<\eps$ identiquement sur $U_x$
pour tout $r\in [R^{-1},R]^d$,
puis retirer $N$ de l'ensemble $\mathscr N_j$. 
On se ramène ainsi au cas où $a_{j,N}(x)\neq 0$
pour tout $j\in E$ et tout $N\in \mathscr N_j$ puis, en restreignant encore $U_x$, 
au cas où  $a_{j,N}$ est inversible sur $U_x$ pour tout 
$N\in \mathscr N_j$ et tout $j\in E$. 
On a 
\[\max_N \abs{a_{j,N}}\cdot r^N=
\max_{N\in \mathscr N_j}\abs{a_{j,N}}\cdot r^N\]
identiquement sur $U_x$ pour tout
$r\in [R^{-1},R]^d$. 
Par conséquent 
\[\abs{f_j(\sq_\xi(t))}=\max_{N\in \mathscr N_j}\abs{a_{j,N}(\xi)}\cdot \exp(t)^N\]
pour tout
$\xi\in U_x$ et tout $t\in [-\log R, \log R]^d$. 
Puisque $\mathbf 1_V(\sq_\xi(t))$ est nul dès que $t$ n'appartient pas 
à $ [-\log R, \log R]^d$, 
la fonction $\Upsilon_\xi$ peut être décrite, lorsque $\xi\in U_x$, uniquement en termes
des $\abs{a_{j,N}(\xi)}$ pour $j\in E$ et $N\in \mathscr N_j$. 
Les fonctions $a_{j,N}$ pour$j\in E$ et $N\in \mathscr N_j$ étant inversibles sur $U_x$, 
ceci achève la démonstration. 
\end{proof}

\begin{theo}\label{theo-variation-integrales}
Soit $d$ un entier et soit
$p \colon Y\to X$ un morphisme d'espaces 
$k$-analytiques, topologiquement séparé,
plat et à fibres purement de dimension~$d$. 
Soit~$\mathscr B$ une classe de coefficients 
qui est soit la classe des fonctions boréliennes localement bornées, 
soit la classe~$\mathscr C^r$ pour un certain~$r$ 
appartenant à $\N\cup\{\infty\}$. 
Soit~$\omega$ une $(d,d)$-forme G-tropicalement de classe~$\mathscr B$ sur~$Y$,
à support relativement compact sur~$X$. 
Soit~$u_\omega$ l'application $x\mapsto \int_{Y_x}\omega$ de~$X$ vers~$\R$. 

\begin{enumerate}
\item Il existe un G-recouvrement $(X_i) $ de $X$ par des domaines analytiques compacts et, pour tout $i$, 
un moment $f_i$ sur $X_i$, une décomposition cellulaire $\mathscr C_i$ de $f_{i,\trop}(X_i)$, et pour tout 
$C\in \mathscr C_i$ une application $u_C$ de classe $\mathscr B$ sur $C$, tels que pour tout indice
$i$, toute cellule $C\in \mathscr C_i$ et tout point $x\in f_{i,\trop}^{-1}(\mathring C)$, on ait $u_\omega(x) =u_C(f_{i,\trop}(x))$.

\item L'application $u_\omega$ est G-tropicalement borélienne.

\item Si 
$\mathscr B\in\{\mathscr C^r\}_{0\leq r\leq \infty}$, si $p$ est séparé et si $\omega$ 
est positive, l'application $u_\omega$ est semi-continue supérieurement.

\item Si $p$ est sans bord et si $\mathscr B\in \{\mathscr C^r\}_{0\leq r\leq \infty}$, 
l'application $u_\omega$ est G-tropicalement de classe $\mathscr B$. 
\end{enumerate}
\end{theo}

\begin{proof}
Commençons par une observation qui nous sera utile pour simplifier la situation à étudier. 
Supposons que $u_\omega$ s'écrive $u_1+u_2$, où $u_1$ et $u_2$ sont 
deux fonctions sur $X$ satisfaisant les conclusions de a). 
Alors $u_\omega$ les satisfait aussi. En effet, si
$(X_i, f_i, \mathscr C_i, u_C)_{i,C}$ atteste que $u_1$
satisfait les conclusions de a), et si $(Y_j, g_j, \mathscr D_j, v_D)_{j,D}$ en atteste pour $u_2$, alors 
$(X_i\cap Y_j, (f_i,g_j)|_{X_i\cap Y_j}, \mathscr E_{i,j}, w_E)_{i,j,E}$ en atteste pour $u_\omega$, 
où $\mathscr E_{i,j}$ est pour tout $(i,j)$ une décomposition cellulaire de
$(f_i,g_i)_\trop(X_i\cap Y_j)$ raffinant la trace de $\mathscr C_i\times \mathscr D_j$, et où
$w_E=u_{p_1(E)}\circ p_1+v_{p_2(E)}\circ p_2$.

Nous allons tout d'abord
démontrer les assertions (a), (b) et (c). 
Comme elles sont de nature  G-locale, on peut supposer $X$ affinoïde ; la forme
$\omega$ est alors à support compact. 
Nous allons maintenant expliquer comment nous ramener au cas où
il existe un un moment
$f$ sur $X$ et un moment
$g\colon Y\to \gm^d$
tels que 
$\omega$
soit G-localement
tropicalisée par $(f\circ p,g)$ -- nous dirons plus
brièvement que $\omega$ est \textit{bien présentée}
relativement au morphisme $p$, et que le couple $(f,g)$ en atteste. 
Observons que si l'on exhibe un moment $g\colon Y\to \gm^\delta$
avec $\delta\leq d$ et un moment $f$ sur $X$ tel que $\omega$ soit 
G-localement tropicalisée par $(f\circ p,g)$ alors $\omega$ est bien présentée 
relativement à $p$ : le couple $(f',g)$ en atteste, où $f'$ est le moment obtenu en adjoignant
$d-\delta$ fonctions constantes égales à~$1$ au moment~$f$.

Soit $x$ un point de $X$. 
Soit $y$ un antécédent de $x$ sur $Y$ appartenant au support de $\omega$. 
Puisque $\omega$ est G-tropicalement de classe $\mathscr B$, il existe une famille
finie de domaines affinoïdes de $Y$ contenant $y$, dont la réunion est un voisinage de $y$, 
et sur chacun desquels $\omega$ est tropicale. 
Il s'ensuit, grâce à la proposition
 \ref{prop.projection-tropicale} et par
 compacité de $p^{-1}(x)\cap \supp(\omega)$, qu'il existe une famille finie 
 $(W_j)$ de domaines
 affinoïdes de $Y$ dont la réunion est un voisinage de
  $p^{-1}(x)\cap \supp(\omega)$ et, pour tout $j$, un voisinage 
 un voisinage affinoïde
$V_j$
de $x$ contenant $p(W_j)$
tel que $\omega|_{W_j}$ soit bien présentée
relativement  à $p|_{W_j}\colon W_j\to V_j$. Par propreté topologique 
de $p$ il existe 
un voisinage affinoïde $V$ de $x$ dans $X$, contenu dans chacun des $V_j$, et tel que $p^{-1}(V)$
soit contenu dans $\bigcup_j W_j$. En remplaçant chacun des $W_j$ par son intersection avec $p^{-1}(V)$, 
on se ramène au cas où les $V_j$ sont tous égaux à un même voisinage affinoïde $V$ de $x$. Pour tout ensemble
non vide $J$ d'indices, désignons par $u_J$ l'application 
de $V$ dans $\R$ qui envoie $v$ sur $\int_{\bigcap_{j\in J }W_{j,v}}\omega$. On a alors 
$u_\omega|_V=\sum_J(-1)^{|J|}u_J$. À l'aide de l'observation faite en début de preuve, et du caractère G-local sur $X$ des
différents énoncés, on voit qu'il suffit de montrer (a) et (b) pour chacun des morphismes $W_j\to V$, ce qui nous ramène
bien pour ces deux assertions au cas où $\omega$ est bien présentée.
Nous allons maintenant vérifier qu'on peut également se ramener au cas bien présenté pour démontrer (c). 

Pour tout ensemble non vide d'indices $J$, posons 
$W_J=\bigcap_{j\in J}W_j$ et $\Omega_J=W_J\setminus \bigcup_{j\notin J}
W_j$ ; les $\Omega_J$ non vides forment une partition de $\bigcup_j W_j$

Pour tout $J$, choisissons une famille $(K_{J a})_a$ de domaines analytiques compacts de $W_J$ contenant $\Omega_J$,
dont l'intersection est égale à $\overline{\Omega_J}$. Remarquons que pour tout $J$ et tout $v\in V$ 
on a $\int_{W_{J,v}}\mathbf 1_{\Omega_J}\omega=
\int_{W_{J,v}}\mathbf 1_{\overline{\Omega_J}}\omega$. En effet, considérons un support fort 
$\Sigma$ de $\omega_{W_{J,v}}$. L'intersection $(\overline{\Omega_J}\setminus \Omega_J)\cap \Sigma$
est alors contenus dans $\Sigma \cap \bigcup_{j\notin J}\partial (W_J\cap W_j)$, qui est lui-même inclus dans une partie
paralinéaire compacte de dimension $<d$ de $\Sigma$, ce qui permet de conclure. 

La fonction $u_\omega|_V$ est égale à $\sum_J \mathbf 1_{\Omega_J} u_\omega$, et donc
à  $\sum_J\mathbf 1_{\overline{\Omega_J}} u_\omega$ par ce qui précède.
Lorsque $\omega$ est positive, on peut par conséquent écrire 
$u_\omega$ comme l'infimum des fonctions de la forme 
$\sum_J \mathbf 1_{K_{J, a_J}}u_\omega$, sur toutes les familles $(a_J)_J$ possibles. Il suffit dès lors
pour montrer (c) de
s'assurer que $\mathbf 1_{K_{J,a}}u_\omega$ est semi-continue supérieurement pour tout $J$ et tout $a$, ce qui revient
à supposer que $Y$ est l'un des $K_{J, a}$ ; or tout $K_{J,a}$ est inclus dans $W_J$ et dès lors
dans l'un des $V_j$ (prendre $j$ dans l'ensemble non vide $J$)
si bien qu'on s'est également ramené pour l'assertion (c) au cas où $\omega$ est bien présentée relativement à $p$.  

On suppose donc désormais $\omega$ bien présentée relativement à $p$
et l'on choisit un couple $(f,g)$ en attestant. En vertu de la proposition \ref{prop.polyedre-calibre-controle}, on peut (en raisonnant G-localement sur $X$) 
supposer qu'il existe une famille finie de fonctions inversibles 
$h_1,\ldots, h_r$ sur $X$ telles que pour tout $x\in X$, le polyèdre de niveau $d$ calibré $g_\trop(Y_x)$ 
ne dépende que de $h_\trop(x)$. Mais alors $\int_{Y_x}\omega$ ne dépend que
de $(f_\trop, h_\trop)(x)$. 

Posons $P=(f_\trop, h_\trop)(X)$. 
Soit $Z$ une composante irréductible de $X$, vue comme un espace analytique réduit 
de niveau $\dim Z$. Soit $P_Z$ le polyèdre $(f_\trop, h_\trop)(Z)$.
Puisque $Z$ est très riche, il existe une partie 
squelettique compacte $\Sigma_Z$ de $Z$ telle que $(f_\trop, h_\trop)(\Sigma_Z)=P_Z$. Soit $\Sigma$ la réunion des
$\Sigma_Z$ pour $Z$ parcourant l'ensemble des composantes irréductibles de $X$ ; notons que cette réunion est disjointe, puisque
$\Sigma_Z$ est constitué de points Zariski-denses dans $Z$. On a alors $(f_\trop,h_\trop)(\Sigma)=P$. 
Les assertions à montrer sont G-locales sur $X$. Il suffit en particulier de les prouver sur les éléments
d'un G-recouvrement de $X$ image réciproque d'un G-recouvrement de $P$ ; on peut par conséquent supposer que
l'application paralinéaire surjective 
$(f_\trop, h_\trop)|_\Sigma\colon \Sigma \to P$ admet une section paralinéaire à valeurs dans $\Sigma_Z$ pour une certaine
composante irréductible $Z$ de $X$ ; en remplaçant $\Sigma$ par l'image de cette
section, on peut supposer que $\Sigma=\Sigma_Z$ et que 
$(f_\trop,h_\trop)$ induit un isomorphisme de $\Sigma$ sur $P$. 

Puisque $\int_{Y_x}\omega$ ne dépend que
de $(f_\trop, h_\trop)(x)$,
il existe une unique application $v$ de $P$ dans $\R$ telle que
$u_\omega=v\circ (f_\trop, h_\trop)$. Pour démontrer (a), (b) et (c), il suffit de prouver qu'il existe une 
décomposition cellulaire $\mathscr C$ de $P$, et pour tout $C\in\mathscr C$, une fonction $u_C$
de classe $\mathscr B$ sur $C$ telle que $v$ et $u_C$ coïncident sur $\mathring C$, et que $v$ est semi-continue
supérieurement si $\omega$ est positive. 
Comme $(f_\trop, h_\trop)$ induit un isomorphisme de $\Sigma$ sur $P$, il suffit de démontrer les propriétés analogues pour la fonction 
$u_\omega|_\Sigma$. Mais celles-ci se déduisent de la proposition \ref{prop.variation-cellule}
appliquée au morphisme de $Y\times_X Z$ vers $Z$ induit par $p$ (notons que $Y\times_X Z$ et $Z$ sont équidimensionnels). 

Supposons maintenant que $p$ est sans bord et que $\mathscr B\in \{\mathscr C^r\}_{0\leq r\leq
\infty}$, et montrons que $u_\omega$ est G-tropicalement 
de classe $\mathscr B$.
Nous allons devoir faire un peu de \textit{bootstrapping}. L'assertion 
(a) déjà démontrée, couplée à un raisonnement G-local sur $X$, permet de supposer que $X$ est 
compact et qu'il existe 
un moment $h\colon X\to \gm^n$ tel que $u_\omega=v\circ h_\trop$ pour une certaine fonction $v$ de $h_\trop(X)$ vers $\R$. Posons $P=h_\trop(X)$. 
Soit $Z$ une composante irréductible de $X$, vue comme un espace analytique réduit 
de niveau $\dim Z$. Soit $P_Z$ le polyèdre $h_\trop(Z)$.
Puisque $Z$ est très riche, il existe une partie 
squelettique compacte $\Sigma_Z$ de $Z$ telle que $h_\trop(\Sigma_Z)=P_Z$. Soit $\Sigma$ la réunion des
$\Sigma_Z$ pour $Z$ parcourant l'ensemble des composantes irréductibles de $X$ ; notons que cette réunion est disjointe, puisque
$\Sigma_Z$ est constitué de points Zariski-denses dans $Z$. On a alors $h_\trop(\Sigma)=P$. 
L'assertion (c) à montrer  est G-locale sur $X$, et il suffit en particulier de la prouver sur les éléments
d'un G-recouvrement de $X$ image réciproque d'un G-recouvrement de $P$ ; on peut par conséquent supposer que
l'application paralinéaire surjective 
$h_\trop|_\Sigma\colon \Sigma \to P$ admet une section paralinéaire à valeurs dans $\Sigma_Z$ pour une certaine
composante irréductible $Z$ de $X$ ; en remplaçant $\Sigma$ par l'image de cette
section, on peut supposer que $\Sigma=\Sigma_Z$ et que 
$h_\trop $ induit un isomorphisme de $\Sigma$ sur $P$. 

Pour démontrer (c), il suffit de prouver que $v|_P$ est 
G-localement de classe $\mathscr B$. 
Comme $h_\trop$ induit un isomorphisme de $\Sigma$ sur $P$, il suffit de démontrer 
que
$u_\omega|_\Sigma$ est G-localement de classe $\mathscr B$. Pour ce faire on peut remplacer
$X$ par un voisinage analytique compact de $\Sigma$ ne rencontrant aucune autre composante 
irréductible de $X$, ce qui permet de supposer ce dernier équidimensionnel. Enfin, en raisonnant une fois encore
G-localement sur $X$, on peut le supposer affinoïde. 

Comme $X$ est affinoïde et comme
$p$ est sans bord, l'espace $Y$ est bon. Soit $x$ un point de $X$ et soit $y$ un point de
$p^{-1}(x)\cap \supp(\omega)$. Comme $Y$ est bon il est décent en $y$ et il résulte donc
de la proposition \ref{prop.local-G-local} qu'il existe un voisinage affinoïde $W_y$ de $y$, un moment $h_y$ sur
$W_y$ et une G-forme $\alpha_y$ de bidegré $(d,d)$ et de classe $\mathscr B$ sur $h_{y,\trop}(W_y)$ telle que
$\omega|_{W_y}=h_y^*\alpha_y$. En vertu de la proposition  \ref{prop.projection-tropicale} on peut alors supposer, 
quitte à retreindre
$W_y$, qu'il existe un voisinage affinoïde $V_y$ de $x$ contenant $p(W_y)$
tels que $\omega|_{W_y}$ soit bien présentée relativement 
à $p|_{W_y}\colon W_y\to V_y$. 
Par compacité, il existe $y_1,\ldots, y_m$ tels que les
intérieurs des $W_{y_i}$ recouvrent $p^{-1}(x)\cap \supp(\omega)$. Choisissons un voisinage affinoïde
$V$ de $x$ contenu dans tous les $V_{y_i}$ tel que $p^{-1}(V)\cap
\supp(\omega)$ soit inclus dans la
réunion des intérieurs topologiques des $W_{y_i}$,
ce qui 
est possible par propreté topologique
de $p$.
En remplaçant chacun des $W_{y_i}$
par $W_{y_i}\times_{V_{y_i}}V$ on se ramène alors au cas où les $V_{y_i}$ sont tous égaux à un même voisinage affinoïde
$V$ de $x$, et où $p^{-1}(V)\cap \supp(\omega)$
est contenu
dans $\bigcup \mathring{W_{y_i}}$.  Puisque $Y$ est bon, il existe une partition de l'unité
$(\lambda_i)$ au voisinage de $p^{-1}(x)\cap \supp(\omega)$ 
subordonnée au recouvrement de $p^{-1}(x)$ par les 
$\mathring{W_{y_i}}$. Soit $u_i$ l'application de $V$ dans $\R$ qui envoie $v$
sur $\int_{(\mathring W_{y_i})_v}\lambda_i \omega$. On a alors $u_\omega|_V=\sum u_i$.

Il suffit donc de démontrer que pour tout $i$ la fonction 
$u_i|_\Sigma$ est G-localement de classe 
$\mathscr B$. On peut donc supposer que $\omega$ est de la forme $\lambda\eta$ où $\lambda$ est lisse
et à support compact et où $\eta$ est une $(d,d)$-forme G-tropicalement de classe $\mathscr B$
bien présentée relativement à $p$. Choisissons un couple $(f,g)$ qui atteste de la bonne présentation 
de $\eta$ ; quitte à raisonner G-localement sur $X$ et à agrandir le moment $f$ et le squelette $\Sigma$ on peut 
supposer que $\Sigma=\Sigma_f$; le résultat voulu découle alors de la proposition \ref{prop.variation-cellule}.
\end{proof}

\begin{coro}
Soit $d$ un entier et soit $p\colon Y\to X$ un morphisme 
d'espaces $k$-analytiques topologiquement séparés qui est
plat et purement de dimension relative~$d$. 
Soit~$\omega$ une $(d,d)$-forme à coefficients
tropicalement boréliens localement bornés sur~$Y$, 
et à support relativement compact sur~$X$. 
Soit~$u$ la fonction numérique $x\mapsto \int_{Y_x}\omega$ sur~$X$. 

\begin{enumerate}
\item Soit $Z$ un fermé de Zariski de $X$ tel que $u$ soit constante sur $Z$. 
La fonction $u$ est alors constante au voisinage de $Z$. 

\item La fonction $u$ est constante au voisinage de tout point rigide de $X$. 
\end{enumerate}
\end{coro}

\begin{proof}
Commençons par montrer (a). L'assertion est G-locale sur $X$. 
On peut donc d'après le théorème \ref{theo-variation-integrales} (a) supposer que $X$
est compact et qu'il existe un moment $f$ sur $X$ et une application $v$ de $f_\trop(X)$ vers $\R$
telle que $u=v\circ f_\trop$. Posons $Q=f_\trop(Z)$. C'est un polyèdre compact contenu dans $P$, et
$v$ est constante sur $Q$. L'image réciproque $f_\trop^{-1}(Q)$
est alors un domaine analytique de
$X$ contenant $Z$ sur lequel $u$ est constante. 
Comme $Z\hookrightarrow X$ est sans bord, $Z$ est contenu 
dans $\Int(f_\trop^{-1}(Q)/X)$ qui est aussi l'intérieur topologique de $f_\trop^{-1}(Q)$ dans $X$. 
Autrement dit,  $f_\trop^{-1}(Q)$ est un voisinage de $Z$ dans $X$. 

Si $x$ est un point rigide de $X$, le sinlgeton $\{x\}$ est un
fermé de Zariski de $X$. L'assertion (a) assure alors que $u$
est constante au voisinage de $x$. 
\end{proof}

\begin{rema}
L'hypothèse que $p$ est plat est indispensable
dans le théorème~\ref{theo-variation-integrales} et son corollaire.
En effet, dans le cas du morphisme de normalisation d'une singularité double
ordinaire en dimension~$1$ et à la fonction constante~$\omega=1$, 
l'intégrale dans la fibre vaut~$2$ au point singulier et~$1$ ailleurs.

Nous considérons cet énoncé comme l'analogue non archimédien
du théorème de~\cite{Stoll-1966a} en géométrie analytique complexe. 
Il y a cependant plusieurs différences formelles :
\begin{itemize}
\item Nous faisons une hypothèse de platitude sur le morphisme~$p$
et intégrons exactement dans la fibre, avec sa structure analytique
adaptée ;
\item L'énoncé en géométrie complexe ne suppose pas de platitude sur~$p$, 
mais la base~$X$ est supposée normale ; de plus,
les fibres de~$p$ sont considérées comme des espaces analytiques réduits,
si bien que les intégrales doivent être pondérées par une multiplicité adéquate.
\end{itemize}
\end{rema}

 \section{Les espaces de Berkovich stricts sont très riches}
 
 \begin{prop}\label{prop-extension-squelette}
 Soit~$X$ un espace  $k$-analytique de niveau~$m$ et soit~$x$ un point de~$X$
 tel que~$\hrt x^1$ soit de degré de transcendance~$m$ sur~$\widetilde k^1$. 
 Soit~$Y$ un espace~$k$-analytique de niveau~$n$ et soit~$f$ un morphisme
 de~$Y$ vers~$X$. 
 
 \begin{enumerate}
\item La fibre~$f^{-1}(x)$ est de dimension~$\leq n-m$ ; considérons-la comme un espace~$k$-analytique de niveau~$n-m$. 

\item Soit~$d$ un entier et soit~$\Sigma$ une partie squelettique compacte de~$f^{-1}(x)$ purement de dimension~~$d$. Il existe une partie squelettique compacte~$\Tau$ de~$Y$ purement de dimension~$d+m$ et 
contenant~ $\Sigma$.
 \end{enumerate}
 \end{prop}
 \begin{proof}
 Puisque~$Y$ est de niveau~$n$ il est de dimension~$\leq n$. 
 Si~$t\in f^{-1}(x)$ on a donc~$d_k(t)\leq  n$, soit encore
~$d_{\hr x}(t)+m\leq n$, et donc~$d_{\hr x}(t)\leq n-m$. Ainsi 
~$\dim f^{-1}(x)\leq n-m$, d'où \emph a). 
 
 Montrons maintenant \emph b). La proposition \ref{prop-squelettes-nilpotents}
assure que le problème est insensible à la présence de nilpotents. On peut donc 
remplacer~$Y$ et~$X$ par les espaces réduits associés. 
Il suffit de montrer l'assertion en remplaçant~$X$ par n'importe quel domaine
analytique de~$X$ contenant~$x$. Cela permet de supposer~$X$ affinoïde. 

Faisons une remarque générale.
Soit~$(Y_i)$ une famille finie de domaines analytiques
compacts de~$Y_x$.
dont la réunion contient~$\Sigma$. La partie squelettique~$\Sigma$ est alors
recouverte par les~$(Y_i\cap\Sigma)^{(d)}$, et il suffit de montrer que chacun
des~$(Y_i\cap\Sigma)^{(d)}$ est contenu dans une partie squelettique compacte 
de~$Y$ purement de dimension~$n$ ; il suffit donc de montrer l'énoncé voulu pour chacun 
des~$(Y_i\cap \Sigma)^{(d)}$. 

En recouvrant~$\Sigma$ par une famille finie de domaines affinoïdes de~$Y$,
cette remarque permet d'abord de supposer que $Y$ est affinoïde.
Elle permet également de supposer qu'il existe un domaine analytique compact 
$W$ de~$Y_x$ contenant~$\Sigma$
et un moment~$g\colon W\to \gm[\hr x]^{n-m}$ tel que 
$g_\trop|_{\Sigma}$ soit injectif. 
La partie paralinéaire compacte~$\Sigma$ est alors
contenue dans~$\Sigma_g$ ; elle est donc de la forme~$W'\cap \Sigma_g$
pour un certain domaine analytique compact~$W'$ de~$W$ 
(lemme \ref{lemme-proprietes-paralineairesX} \emph e))  ; quitte à remplacer
$W$ par~$W'$ on peut supposer que~$\Sigma=\Sigma_{g}$. 

Comme~$Y$ est affinoïde, 
il existe pour tout point~$y$ de~$\Sigma$ un voisinage affinoïde 
$V_y$ de~$y$ dans~$Y$ et un moment~$g_y~$ sur~$V_y$ tel que~$\abs{g_y-g}<\abs g$ sur 
$V_y\cap W$ ; 
on a alors~$g_\trop=g_{y,\trop}$ sur~$V_y\cap W$, 
et~$\Sigma_g\cap(V_y\cap W)=\Sigma_{g_y}\cap (V_y\cap W)$. 
Par compacité de~$\Sigma$, ce dernier est contenue dans une union finie 
de~$V_y$. Ceci permet au vu de la remarque générale ci-dessus de supposer que 
$g$ s'étend en un moment défini sur~$Y$ tout entier et encore noté~$g$. 

Le domaine analytique compact~$W$ de~$Y_x$ est une union finie de domaines rationnels de
$Y_x$. En appliquant encore la remarque générale, on 
peut ainsi supposer que~$W$ est un domaine rationnel de~$Y_x$. 
Choisissons un système rationnel d'inégalités~$\abs{f_i}\leq \lambda_i \abs h$ décrivant
$W$. Les~$f_i$ et~$h$ étant sans zéro commun sur~$Y_x$, la fonction~$h$ ne s'annule pas
sur~$W$, si bien qu'on peut qu'on peut rajouter
une inégalité~$\abs h \geq r$ au système ci-dessus. 

Mais alors si les~$f'_i$ et~$h'$ sont des fonctions
analytiques sur~$Y_x$ telles que~$\abs{f'_i-f_i}<\lambda_i r$
et~$\abs {h'}<r$ en tout point de~$Y_x$, 
le système d'inégalités~$\{\abs{f'_i}\leq \lambda_i\abs {h'}\}_i, \abs{h'}\geq r$
définit encore le domaine~$W$. 
Quitte à restreindre~$X$, on peut donc supposer 
que $h$ et les~$f_i$ 
sont toutes des restrictions de fonctions analytiques sur~$Y$ ; 
le domaine~$W$ s'écrit alors~$V\cap Y_x$ pour un certain domaine affinoïde (rationnel)
$V$ de~$Y$. 
En remplaçant~$Y$ par~$V$, on se ramène ainsi au cas où~$\Sigma=\Sigma_{g|_{Y_x}}$. 

Il résulte du théorème 10.3.7 de \cite{ducros2018} et du fait que 
$X$ est réduit que
le morphisme~$f$ est plat en tout point de~$Y_x$. 
Comme~$\Sigma=\Sigma_{g|_{Y_x}}$, le morphisme
$g|_{Y_x} \colon Y_x \to \gm[\hr x]^{n-m}$ est plat en tout point de~$\Sigma$ ; 
en vertu du critère de platitude par fibres
(\cite{ducros2018}, théorème 8.3.8) 
le~$X$-morphisme~$g\colon Y\to X\times \gm^{n-m}$ est plat en tout point de~$\Sigma$. 
Quitte à remplacer~$Y$ par un voisinage analytique compact de~$\Sigma$ (l'espace~$Y$
peut perdre son caractère affinoïde à cette occasion, mais nous n'en aurons plus besoin)
on peut supposer que~$g\colon Y\to X\times \gm^{n-m}$ est plat. L'image~$g(Y)$
est alors un domaine analytique compact de~$X\times \gm^{n-m}$ (c'est un théorème dû
à Raynaud dans le cas strict, voir \cite{ducros2018}, théorème 9.2.1 pour le  cas général). 

Pour tout~$u\in X$, notons 
$S_u$ le squelette standard de~$\gm[\hr u]^{n-m}$. 
D'après le lemme \ref{lem-description-1V}, il existe un voisinage affinoïde~$U$ de
$x$ dans~$X$ et une famille finie~$(a_1,\ldots,a_\ell)$ de fonctions inversibles sur~$U$
telles que pour tout~$u\in U$ le polyèdre compact 
$\trop(g(Y)\cap S_u)$ ne dépende que des~$\abs{a_i(u)}$.

Puisque~$\hrt x^1$ est de degré de transcendance~$m$ sur~$\widetilde k^1$
et $d_k(x)\leq m$, il s'ensuite que $d_k(x)=m=\dim_x(X)$ et que
le groupe~$\abs{\hr x}^\times$ est de torsion modulo 
$\abs{k^\times}$, et le germe~$(X,x)$ est dès lors strict
(exemple \ref{exemple-torsion-strict}). 
Quitte à remplacer chacune des~$a_i$ par~$a_i^{n_i}/\lambda_i$
pour un entier~$n_i$ et un élément~$\lambda_i$ de~$k^\times$ convenables, 
on peut supposer que~$\abs{a_i(x)}=1$ pour tout~$i$. 

Soit~$M$ un modèle de~$\widetilde{(X,x)}^1$ ; 
c'est une~$\widetilde k^1$-variété intègre
de corps des fonctions~$\hrt x^1$.
Soit~$M'$ un ouvert de Zariski non vide de~$M$
sur lequel les~$\widetilde{a_i(x)}$ sont définies et inversibles. 
Soit~$X'$ un domaine analytique de~$X$ contenant~$x$ tel que
$\widetilde{(X',x)}^1=\zr{\hrt x^1}{\widetilde k^1}\{M'\}$. 
Soit~$\xi$ un point fermé de~$M'$ et soit~$(\beta_1,\ldots, \beta_m)$ 
une suite régulière de paramètres de~$\mathscr O_{M',\xi}$. On peut supposer, quitte à restreindre~$M'$, que les~$\beta_i$ sont définis sur~$M'$
et, quitte à restreindre~$U$,  qu'il existe pour tout~$i$ une fonction analytique inversible~$b_i$ sur~$U$
telle que~$\abs{b_i(x)}=1$ et~$\widetilde{b_i(x)}=1$ ; on peut enfin remplacer~$X'$
par~$X'\cap U$, c'est-à-dire faire en sorte que~$X'\subset U$. 

\begin{lemm}
Il existe une~$\widetilde k^1$-valuation sur~$\hrt x^1$ centrée en~$\xi$ et tel que~$v(\beta_i)$ soit pour tout 
$i$ infiniment petit relativement à~$\langle v(\beta_j)\rangle_{j<i}$. 
\end{lemm}
\begin{proof}
On procède par récurrence
sur~$m$ en supposant donc l'énoncé vrai en dimension~$<m$. Il n'y rien à faire si~$m=0$. Supposons~$m>0$. On
choisit une composante irréductible~$N$ du lieu des zéros de~$\beta_1$ qui contient~$\xi$, 
puis une composante irréductible~$N'$ de l'image réciproque de~$N$ sur la normalisée de~$M'$. 
La composante~$N'$ définit une valuation~$w$ sur~$\hrt x^1$ centrée en le point générique de~$N'$ ;
l'hypothèse de récurrence appliquée à la variété~$N$
qui est de dimension~$m-1$ et a un antécédent~$\xi'$ de~$\xi$ sur~$N'$ fournit une valuation~$w'$ sur le 
corps des fonctions de~$N'$, centrée en~$\xi'$, et la composée~$v$ de~$w$ et~$w'$ convient alors. 
\end{proof}

La remarque \ref{rem-description-trop-germ}
décrit le germe de cône~$b_\trop(X',x)$ en termes d'un sous-foncteur~$D$ de~$\mathsf D^m_{\{1\}}$
qui par construction contient~$(v(\beta_1),\ldots, v(\beta_m))$. Comme les~$v(\beta_i)$ sont~$\Z$-linéairement
indépendants,~$D$ est de dimension~$m$, si bien que la dimension tropicale de~$g|_{X'}$ en~$x$ est égale à~$m$, si bien que $x\in \Sigma_{g|_{X'}}{(m)}$.
Il existe en conséquence une~$g$-cellule~$C\subset \Sigma_g$ de dimension~$m$ contenant~$x$ et contenue dans~$X'$. 
Comme les~$\widetilde{a_i(x)}$ sont inversibles sur~$M'$, chacune des~$\abs{a_i}$ est identiquement égale à~$1$ sur~$X'$
et en particulier sur~$C$. 

Pour tout~$u$ appartenant à~$C$ on a~$\abs{a_i(u)}=1=\abs{a_i(x)}$ 
quel que soit~$i$, si bien que~$\trop(g(Y)\cap S_u)=\trop(g(Y)\cap S_x)
= g_\trop(\Sigma)$, où
la seconde égalité résulte du fait que~ $\Sigma=\Sigma_{g|_{Y_x}}$ ; notons que~$g_\trop(\Sigma)$ est une partie paralinéaire
compacte de~$\R^{n-m}$ purement de dimension~$d$. 

Il résulte du lemme \ref{lemm.dimensions-tropicales-fibre}
que~$\Sigma_{g,b}$ est la réunion des~$\Sigma_{g|_{Y_u}}$ pour~$u$
parcourant~$\Sigma_b$. 
Par conséquent la réunion~$\Sigma'$ des ~$\Sigma_{g|_{Y_u}}$
pour~$u$ parcourant~$C$ est la partie linéaire compacte de 
$\Sigma_{g,f^*b}$ égale à~$f^{-1}(C)$. 
Pour tout~$u\in C$ on a donc
a~$g_\trop(\Sigma_{g|_{Y_u}})=g_\trop(\Sigma)$, si bien que 
$(gf^*,b)_\trop(\Sigma')=g_\trop(\Sigma)\times b_\trop(C)$ ; 
c'est une partie paralinéaire purement de dimension~$d+m$ de~$\R^n$. 
La surjection paralinéaire~$(g,f^*b)_\trop$ de~$\Sigma'$
sur~$g_\trop(\Sigma)\times b_\trop(C)$
admet G-localement des sections
(proposition \ref{prop.sections-G-locales}). 
Choisissons un 
recouvrement fini~$(Q_i)$ de~$g_\trop(\Sigma)\times b_\trop(C)$ par des parties
paralinéaires compactes, chacune étant munie d'une section~$\sigma_i$ de ~$(g,
f^*b)_\trop$. 
Quitte à remplacer chacune des~$Q_i$ par~$Q_i^{(d+m)}$, 
on peut supposer chacune des $Q_i$ purement de dimension~$d+m$. 
Mais alors la réunion~$\Tau$ des~$\sigma_i(Q_i)$ est une partie
paralinéaire de~$\Sigma'$ purement de dimension~$d+m$.
Il reste pour conclure à s'assurer que~$\Tau$ contient~$\Sigma$. 
Soit donc~$y\in \Sigma$ et soit~$z$ son image par~$(g,f^*b)_\trop$; 
on a $z=(g_\trop(y), b_\trop(x))$. Soit~$i$ tel que~$z$ appartienne à~$Q_i$. 
On a alors~$(f^*b)_\trop(\sigma_i(z))=b_\trop(x)$, ce qui entraîne que 
$\sigma_i(z)\in Y_x$ puisque~$b_\trop$ est injective sur~$C$. 
Et l'on a~$g_\trop(\sigma_i(z))=g_\trop(y)$, ce qui entraîne que~
$\sigma_i(z)=y$ car~$g_\trop$ est injective sur~$\Sigma_{g|_{Y_x}}=\Sigma$. 
En conséquence~$y\in \Tau$, ce qui achève la démonstration. 
\end{proof}

\begin{prop}\label{prop-polydisque-riche}
Soit~$n$ un entier
et  soit~$r=(r_1,\ldots, r_n)$ un polyrayon constitué 
d'éléments de~$\abs{k^\times}^\Q$.
Le polydisque fermé de polyrayon~$r$ est très riche.
\end{prop}
 
 \begin{proof}
Soit $D$ ce polydisque.
Il est riche (théorème~\ref{theo.riche}). 
On raisonne par récurrence forte sur~$n$. 
Soit~$\Sigma$ une partie paralinéaire compacte de~$D$. Il s'agit de démontrer que~$\Sigma$ est contenue
dans une partie paralinéaire compacte de~$D$ purement de dimension~$n$. 
Soit~$x$ un point de~$X$. Il existe un voisinage affinoïde~$V$ de~$x$ dans~$X$
et un moment~$f\colon V\to \gm^n$ tel que~$\Sigma\cap V$ soit contenu dans~$\Sigma_f$
(proposition \ref{prop.parties-polyedrales-moments}). 
Par compacité de~$\Sigma$, 
il suffit d'exhiber une partie paralinéaire~$\Tau$ de~$X$ purement de dimension~$n$ contenant un voisinage
de~$x$ dans~$\Sigma_f$. 
Notons~$p_1,\ldots, p_n$ les~$n$ projections de~$D$ 
sur les disques de rayons~$r_1,\ldots, r_n$.
Le bord~$\partial D$ est alors la réunion des~$p_i^{-1}(\eta_{r_i})$, 
et la trace de~$p_i^{-1}(\eta_{r_i})$
chacun d'eux sur~$\Sigma_f$ est pour tout $i$ une partie
paralinéaire de ce dernier de dimension~$\leq n-1$, 
et c'est aussi une partie squelettique de $p_i^{-1}(\eta_{r_i})$
(lemme \ref{lemme-fibreetar-pl}) ; de plus, $\partial V\cap \Sigma_f$
est lui aussi paralinéaire de dimension~$\leq n-1$ (corollaire \ref{coro-bor-petit}). 
Fixons une décomposition cellulaire~$\mathscr C$ de~$\Sigma_f$ adaptée à son intersection avec~$\partial V$ et avec chacune
des~$p_i^{-1}(\eta_{r_i})$. Comme~$V$ est un voisinage de~$x$, il suffit de montrer que chacune des cellules~$C$ de~$\mathscr C$
contenant~$x$ est contenue dans une partie squelettique compacte de~$X$ purement de dimension~$n$. 

Soit donc~$C$ une telle cellule. 

Supposons tout d'abord que~$C$ n'est contenue dans aucun des~$p_i^{-1}(\eta_{r_i})$. 
La cellule ouverte~$\mathring C$ est alors contenue dans~$\Int(X)$ 
et comme elle rencontre~$\Int(V/X)$ puisque ce dernier contient~$x$, 
elle rencontre~$\Int(V)$. 
Comme~$\Sigma_f$ est purement de dimension~$n$ en tout point de~$\Int(V)$, 
la cellule~$C$ est contenue dans une~$n$-cellule
de~$\mathscr C$. 

Supposons maintenant qu'il existe~$i$ tel que~$C$ soit contenue dans~$p_i^{-1}(\eta_{r_i})$. Cette fibre étant un polydisque de
polyrayon~$(r_j)_{j\neq i}$, l'hypothèse de récurrence assure que~$C$ est contenue dans une partie squelettique
compacte~$C'$ de~$p_i^{-1}(\eta_{r_i})$ purement de dimension~$n-1$. 
Puisque $r_i\in \abs{k^\times}^\Q$,
la proposition~\ref{prop-extension-squelette} 
garantit que~$C'$ est contenue dans une 
partie squelettique compacte de~$D$ purement de dimension~$n$, 
ce qui conclut la démonstration.
\end{proof}
 
\begin{theo}\label{theo.strict-tres-riche}
Soit~$X$ un espace~$k$-analytique purement de dimension~$n$. 

\begin{enumerate}
\item
Soit $x$ un point de~$X$ tel que le germe $(X,x)$  soit strict.
Soit $\Sigma$ une partie squelettique de~$X$ contenant~$x$.
Il existe une partie squelettique de~$X$, purement de dimension~$n$,
qui contient un voisinage de~$x$ dans~$\Sigma$.

\item
Si l'espace~$X$ est strict, il est très riche.
\end{enumerate}
\end{theo}
\begin{proof}
Il suffit de démontrer la première assertion.
On sait déjà que~$X$ est riche (théorème~\ref{theo.riche}). 
Il existe un voisinage de~$x$ dans~$X$ qui est réunion finie
de domaines affinoïdes dont le germe en~$x$ est strict. 
Comme l'assertion est G-locale sur le germe~$(X,x)$,
on peut supposer que $X$ est affinoïde.
Notons~$A$ l'algèbre des fonctions analytiques sur~$X$. 
Utilisons le symbole~$\widetilde \cdot$ plutôt que~$\widetilde \cdot^1$ pour les réductions strictes, 
 qui sont les seules dont nous aurons besoin puisque le germe considéré est strict. 

 Le germe~$\widetilde{(X,x)}$ est égal à~$\zr{\hrt x}{\widetilde k}\{R\}$ où~$R$ est la~$\widetilde k$-algèbre de type 
 fini image de~$\widetilde A$ dans~$\hrt x$. 
D'après le lemme de normalisation de Noether appliqué à la
 $\widetilde k$-algèbre~$R$,  il existe une famille
 finie de fonctions~$f_1,\ldots, f_d$ sur~$X$ de normes spectrales
égales à~$1$ telles que~$\abs{f_i(x)}=1$ pour tout~$i$, 
telles que les~$\widetilde{f_i(x)}$ 
soient algébriquement indépendantes sur~$\widetilde k$, et telles que
l'algèbre~$R$ soit entière sur~$\widetilde k[\widetilde{f_i(x)}]_i$.

Comme un anneau de valuation est intégralement clos,
cette dernière condition entraîne que~$\zr{\hrt x}{\widetilde k}\{R\}=\zr{\hrt x}{\widetilde k}\{\widetilde{f_i(x)}\}$. 
 La famille~$(f_1,\ldots, f_d)$ induit alors un morphisme~$f\colon X\to D$, 
où $D$~est le $d$-polydisque unité, car les normes spectrales
des~$f_i$ sont égales à~$1$.
Comme $\abs{f_i(x)}=1$ et comme les $\widetilde{f_i(x)}$
sont algébriquement indépendants, on a $f(x)=\eta_1$.
Comme $(\widetilde{X,x})= \zr{\hrt x}{\widetilde k}\{\widetilde{f_i(x)}\}$,
la réduction du germe $(\widetilde{X,x})$ est égale à l'image
réciproque de la réduction du germe de~$D$ en~$\eta_1$;
cela démontre que le morphisme~$f$ est intérieur en~$x$.
D'après le théorème~10.3.7 de~\cite{ducros2018}, $f$ est plat au-dessus
de~$\eta_1$, et en particulier en~$x$. Soit $U$ l'ensemble des points
de~$\Int(X)$ en lesquels $f$ est plat; c'est un voisinage ouvert de~$x$.

D'après la proposition \ref{prop-polydisque-riche}, 
le disque unité~$D$ est très riche.
Le théorème~\ref{theo-tresriche-relatif} assure que~$U$ est très riche. 
Il existe donc une partie squelettique compacte~$\Tau$ de~$U$,
purement de dimension~$n$,
et contenant un voisinage de~$x$ dans~$\Sigma$. 
 \end{proof}

\section{Formule de projection}

\subsection{}
Nous reprenons le contexte du paragraphe précédent:
$p\colon Y \to X$ est un morphisme d'espaces analytiques équidimensionnels
et topologiquement séparés;
on pose $\dim(X)=m$, $\dim(Y)=n$ et $d=n-m$.
On considère~$X$ comme espace tropical de niveau~$m$
et~$Y$ comme espace tropical de niveau~$n$.

\begin{rema}\label{rema.dim-fibres}
Notons~$U$ le lieu de platitude de~$p$; c'est un ouvert de Zariski de~$Y$
(\cite{ducros2018}, théorème~10.3.2).
Notons aussi~$V$ l'ensemble des points~$y\in Y$
en lesquels la dimension relative de~$p$ prend sa valeur minimale~$\delta$;
c'est également un ouvert de Zariski de~$Y$
(\cite{ducros2007}, th. 4.9). 

\begin{enumerate}
\item
Soit $y$ un point de~$V$ tel que $d_k(y)=n$; 
il en existe car $V$ n'est pas vide.
Alors, 
$n=d_k(y)=d_k(p(y))+d_{\hr{p(y)}}(y) \leq m+\delta$.
Par suite, $d\leq \delta$.

\item
Soit $y$ un point de~$U$. 
D'après le lemme~4.5.11 de~\cite{ducros2018}, 
la dimension relative de~$p$ en~$y$ est alors égale à~$d$.
Cela prouve que $\delta=d$ si $U$ n'est pas vide;
on a alors $U\subset V$.

\item\label{rema.dim-fibres,2}
Soit $x\in X$ tel que $d_k(x)=m$ et soit $y\in Y_x$.
On a $d_{\hr{p(y)}}(y)=d_k(y)-d_{k}(x) \leq n-m=d$; 
par suite $\dim(Y_x)\leq d$.
Puisque $d$ est la valeur minimale de la dimension relative,
cela prouve que $Y_x$ est purement de dimension~$d$;
on a ainsi $Y_x\subset V$.
Cela entraîne également $\delta=d$.

\item
On suppose que $X$ est réduit, que $\delta=d$ et que $V$ est dense dans~$Y$.
Démontrons alors que $U$ est dense.

Il suffit de démontrer que $U$ contient tout tout point~$y$
de~$V$ tel que $d_k(y)=n$.
Posons $x=p(y)$. Comme les fibres de~$p|_V$ sont de dimension~$d$, on a
$d_k(x)=n-d=m$ (\cite{ducros2018}, 1.4.14 (4)). 
L'espace $X$ étant réduit, le morphisme~$p$ est alors plat en~$y$ 
par le théorème~10.3.7 de \cite{ducros2018}, c'est-à-dire $y\in U$. 
\end{enumerate}
\end{rema}

\subsection{}
Soit $\omega$ une G-forme de type~$(d,d)$ sur~$Y$
dont le support est relativement propre sur~$X$.
Au paragraphe précédent, nous avons défini l'intégrale~$u_\omega(x)$
de~$\omega$ dans les fibres de~$p$ en un point squelettique~$x$ de~$X$,
ou bien un point~$x$ quelconque lorsque $p$ est plat;
dans ces deux cas, la fibre~$Y_x$ est purement de dimension~$d$.

Dans ce paragraphe,  nous allons faire l'hypothèse 
que le lieu de platitude de~$p$ est dense dans~$Y$.
Pour tout $x\in X$ tel que $Y_x$ est de dimension~$d$, on pose $u_\omega(x)
= \int_{Y_x}\omega$;
sinon, on pose $u_\omega(x)=0$.

\begin{theo}\label{theo.integrale-fibre}
On suppose que le lieu de platitude de~$p$ est dense dans~$Y$.
Soit $\omega$ une G-forme de type~$(d,d)$ sur~$Y$
à coefficients tropicalement boréliens localement bornés
et dont le support est $p$-compact.
La fonction~$u_\omega$ définie comme ci-dessus
est localement intégrable sur~$X$ et l'on a, 
pour toute $(m,m)$-forme lisse~$\alpha$ à support compact sur~$X$, l'égalité
\[ \int_Y \omega\wedge p^*\alpha = \int_X u_\omega \,\alpha_\Rad. \]
\end{theo}

\begin{coro}
On a l'égalité de courants
$p_*[\omega] = [u_\omega]$.
\end{coro}

\begin{rema}
Il résulte de la remarque~\ref{rema.dim-fibres}
que les hypothèses  du théorème sont satisfaites
lorsque $X$ est réduit, $d=\delta$ et $V$ est dense dans~$Y$.

L'énoncé ne vaut pas en général sans hypothèse de  platitude sur~$p$.
Lorsque $X$ n'est pas réduit, la formule est fausse si 
$p$ est l'immersion (non plate) de $X_\red$ dans~$X$, 
son défaut étant mesuré
par les multiplicités génériques de~$X$.
\end{rema}

\begin{proof}
Considérons une $(m,m)$-forme lisse~$\alpha$ à support compact sur~$X$;
il s'agit de montrer que $u_\omega$ est intégrable pour la mesure~$\alpha_\Rad$
et de prouver l'égalité 
\[ \int_Y \omega\wedge p^*\alpha = \int_X u_\omega \,\alpha_\Rad. \]
Pour cela, on peut remplacer~$X$ par un voisinage analytique compact
du support de~$\alpha$
 puis, le support de~$\omega$ étant alors compact,
$Y$ par un voisinage analytique compact de ce support. 
Nous supposons donc que $X$ et $Y$ sont compacts.

Le lieu de~$Y$ constitué des points 
en lesquels $p$ est dimension relative~$>d$ est un fermé de Zariski de~$Y$,
donc son image dans~$X$ est compacte.
D'après la remarque~\ref{rema.dim-fibres}, \ref{rema.dim-fibres,2}), cette image est disjointe du support de~$\alpha$.
Quitte à remplacer de nouveau~$X$ par un voisinage analytique compact
du support de~$\alpha$, on suppose que les fibres de~$p$
sont toutes purement de dimension~$d$.

L'espace~$X$ possède un G-recouvrement fini par des domaines analytiques
compacts tropicalisant~$\alpha$;
par additivité des intégrales en jeu, on peut donc supposer que 
la forme~$\alpha$ sur~$X$ est tropicale.
De même, l'espace~$Y$ possède un G-recouvrement fini par des domaines
analytiques compacts tropicalisant~$\omega$;
par additivité, on peut donc de même supposer que la forme~$\omega$
est tropicale sur~$Y$.

Soit $f\colon X\to T$ un moment tropicalisant~$\alpha$
et $h=(f\circ p,g)\colon Y\to T\times T'$ 
un moment tropicalisant~$\omega$ qui  prolonge~$f\circ p$.
En raisonnant G-localement sur~$X$ et sur~$Y$,
on peut supposer qu'il existe une cellule~$C$ de~$T_\trop$
et une cellule~$D$ de~$(T\times T')_\trop $
telles que $f_\trop(X)=C$ et $h_\trop(Y)=D$.
L'image de~$D$ dans~$C$ par la projection de $(T\times T')_\trop$
sur~$T_\trop$ est une cellule~$C'$ de~$T_\trop$ contenue dans~$C$.
On a $C'=(f\circ p)_\trop(Y)$.
Distinguons deux cas:

\emph{Supposons que $\dim(C')<m$.}
Dans ce cas, le morphisme~$p$ se factorise par le domaine analytique
$f_\trop^{-1}(C')$ sur lequel la forme~$\alpha$ est nulle,
ainsi que la mesure~$\alpha_\Rad$.
Comme $u$ est nulle en dehors de~$p(Y)$, l'égalité voulue est immédiate.

\emph{Supposons que $\dim(C')=m$.}
En commençant par extraire $m$ fonctions convenables du moment~$f$,
on suppose d'abord que $T=\gm^m$; en extrayant $\dim(D)-m$
fonctions du moment~$g$ et en complétant au besoin 
par la fonction constante~$1$ (répétée $n-\dim(D)$ fois), on suppose 
que $T'=\gm^d$.

D'après 
l'assertion~\emph a) du lemme~\ref{lemm.dimensions-tropicales-fibre},
on a $p(\Sigma_h)\subset\Sigma_f$ et le morphisme~$p$ 
induit par restriction une application paralinéaire
de~$\Sigma_h$ dans~$\Sigma_f$.

Soit $\mathscr C$ une décomposition cellulaire de~$\Sigma_f$ 
en $f$-cellules commodes 
et une décomposition cellulaire de~$\Sigma_h$ 
en $h$-cellules commodes (lemme~\ref{lemm.exists-dec-cell-commode}). 
Quitte à raffiner ces décompositions cellulaires, on suppose
que l'image d'une cellule de~$\mathscr D$ est une cellule de~$\mathscr D$.

On note $\mathscr C_m$ l'ensemble des cellules de dimension~$m$
de~$\mathscr C$ et $\mathscr D_n$ l'ensemble des cellules de dimension~$n$ de~$\mathscr D$.

Dans la suite du calcul, nous identifierons toute cellule de~$\mathscr C$
à son image par~$f_\trop$ dans~$(\gm^m)_\trop$ 
et toute cellule de~$\mathscr D$ à son image par~$h_\trop$ dans~$(\gm^{m+d})_\trop$. De même, les fibres $D\cap p^{-1}(x)$ sont identifiées à leur image
par $g_\trop$ dans~$(\gm^d)_\trop$.
Lorsqu'elles sont de dimension~$m$ (resp.~$n$, resp.~$d$), 
on les munit du calibrage déduit de cette identification;
autrement dit, leur calibrage canonique est ce calibrage multiplié 
par un degré convenable. 

% Démontrons que $p(\Sigma_h^{(n)})\subset \Sigma_f^{(m)}$.
% Choisissons une décomposition cellulaire~$\mathscr D$ de~$\Sigma^{(n)}_h$ 
% en $h$-cellules commodes. Considérons une cellule $D\in\mathscr D$
% de dimension~$n$ et un point $y\in\mathring D$; posons $x=p(y)$.
% Comme $D$ est commode, le point~$y$ appartient à~$\Int(Y)$;
% l'assertion~\emph a) du lemme~\ref{lemm.dimensions-tropicales-fibre}
% entraîne alors 
% que $f$ est de dimension tropicale~$m$ en~$x$, 
% c'est-à-dire que $x\in\Sigma_f^{(m)}$. 
% Puisque $p(\mathring D)$ est dense dans~$p(D)$, 
% cela prouve que $p(D)\subset\Sigma_f^{(m)}$.
% Enfin, l'espace~$\Sigma_h^{(n)}$ étant  réunion des cellules de dimension~$n$
% de~$\mathscr D$, on $p(\Sigma_h^{(n)})\subset \Sigma_f^{(m)}$,
% et l'application~$p$ induit par restriction 
% une application paralinéaire
% de~$\Sigma_h^{(n)}$ dans~$\Sigma_f^{(m)}$.
% 
% 
% Soit $\mathscr C$ une décomposition cellulaire de~$\Sigma_f^{(m)}$ 
% en $f$-cellules commodes (lemme~\ref{lemm.exists-dec-cell-commode}). 
% Quitte à raffiner à  décomposition cellulaire de~$\Sigma^{(n)}_h$ 
% précédemment choisie, on suppose 
% qu'elle raffine 
% le recouvrement $(p^{-1}(C)\cap \Sigma^{(n)}_h)_{C\in\mathscr C}$.
 
% Soit $\omega_1$ une $(d,d)$-forme sur~$(\gm^m\times\gm^d)_\trop$
% telle que $\omega=h^*\omega_1$;
% soit $\alpha_1$ une $(m,m)$-forme sur $(\gm^m)_\trop$
% telle que $\alpha=f^*\alpha_1$. Munissons
% $(\gm^n)_\trop$ et $(\gm^m)_\trop$ de leurs calibrages canoniques.
% Notons enfin $q$ la projection de $\gm^m\times\gm^d$ sur~$\gm^m$.
 
Pour toute $n$-cellule~$D\in\mathscr D_n$,
il existe un unique entier~$d_D$ tel que
le morphisme~$h$ soit fini et plat de degré~$d_D$ 
en tout point de~$\mathring D$. 
On a ainsi
\[ \int_Y \omega\wedge p^*\alpha = \sum_{D\in\mathscr D_n}
 d_D    \int_{D} \omega\wedge p^*\alpha.  \]

Fixons une telle cellule~$D$ et posons $C=p(D)$.
Comme les fibres de~$p$ sont de dimension~$\leq d$, on a $\dim(C)\geq m$,
donc $\dim(C)=m$.
On applique alors le théorème de Fubini: pour presque tout
$x\in C$, l'intégrale 
$ \int_{D\cap p^{-1}(x)} \omega $ est convergente,
la fonction $u_D\colon x\mapsto \int_{D\cap p^{-1}(x)}\omega$
est intégrable sur~$C$ et l'on a 
\[ \int_D \omega\wedge p^*\alpha = \int_C u_D \alpha. \]
(Lorsque $\omega$ est de classe~$\mathscr C^p$,
il découle de la proposition~\ref{prop.continuite-integrale}
que la fonction~$u_D$ est même G-localement de classe~$\mathscr C^p$ sur~$C$.)
Il existe un unique entier~$d_C$ tel que 
le morphisme~$f$ soit fini et plat de degré~$d_C$ 
en tout point de~$\mathring C$.
Ainsi,
\[ \int_Y \omega\wedge p^*\alpha = 
\sum_{C\in\mathscr C_m} d_C \int_C 
\left(\sum_{\substack{D\in\mathscr D_n\\ p(D)=C}} (d_D/d_C) u_D \right) 
\alpha. \]

Fixons maintenant une cellule~$C\in\mathscr C_m$.
Soit $x\in\mathring C$ tel que $u_D$ soit définie en~$x$ pour tout~$D\in\mathscr D_n$ tel que $p(D)=C$; cela n'exclut qu'un ensemble de mesure nulle de~$C$.
La famille $(D\cap p^{-1}(x))$, pour $D\in\mathscr D$ tel que $p(D)=C$,
est une décomposition
cellulaire de~$\Sigma_h\cap p^{-1}(x)$, qui est égal à~$\Sigma_{h_x}$
d'après le lemme~\ref{lemm.dimensions-tropicales-fibre}.
Soit $D$ une telle cellule.
Lorsque $\dim(D)<n$, on a $\dim(D\cap p^{-1}(x))<d$
et cette cellule n'intervient pas dans le calcul de $\int_{Y_x}\omega$.
Supposons donc que $\dim(D)=n$. Nécessairement, tout point
de~$\mathring D$ est intérieur à~$Y$, par définition d'une cellule commode.
Puisque le lieu de platitude de~$p$ est ouvert de Zariski qui est dense, 
il contient tous les points~$y\in Y$ tels que $d_k(y)=n$, et en particulier~$\Sigma_h$.
Le lemme~\ref{lemm.dimensions-tropicales-fibre} entraîne alors
que $g|_{Y_x}$ est fini et plat en tout point de~$\mathring D\cap p^{-1}(x)$
et que son degré est égal à $d_D/d_C$.
Cela prouve que l'intégrale $u_\omega(x)=\int_{Y_x}\omega$ converge et que 
\[
u_\omega(x)=\int_{Y_x} \omega 
 = \sum_{\substack{D\in\mathscr D_n \\ p(D)=C}} 
(d_D/d_C) \int_{ D \cap p^{-1}(x)}  \omega 
= \sum_{\substack{D\in\mathscr D_n \\ p(D)=C}} 
(d_D/d_C) u_D(x).
\]
De plus, la fonction~$u_\omega$ définie presque partout sur~$C$ est intégrable
et l'on a 
\[ d_C \int_C  u_\omega \alpha 
 = d_C \sum_{\substack{D\in\mathscr D_n \\ p(D)=C}} 
(d_D/d_C) \int_{ D }  \omega \wedge p^*\alpha
= \sum_{\substack{D\in\mathscr D_n \\ p(D)=C}} 
d_D \int_D \omega\wedge p^*\alpha. \]

Cela démontre que la fonction~$u_\omega$ est localement intégrable sur~$X$
et l'on a 
\begin{align*}
 \int_X u_\omega \alpha_\Rad
& = \sum_{C\in\mathscr C_m} d_C \int_C  u \alpha 
= \sum_{\substack{D\in\mathscr D_n \\ \dim(p(D))=m}} 
d_D \int_D \omega\wedge p^*\alpha  \\
&= \sum_{\substack{D\in\mathscr D_n }}
d_D \int_D \omega\wedge p^*\alpha
= \int_Y \omega\wedge p^*\alpha. \end{align*}
Cela termine la démonstration.
\end{proof}

\begin{theo}\label{theo.formule-projection}
Soit $p\colon Y\to X$ un morphisme d'espaces analytiques 
topologiquement séparés, équidimensionnels
et sans bord;
posons $m=\dim(X)$, $n=\dim(Y)$  et $d=n-m$.
On suppose que le lieu de platitude de~$p$ est dense dans~$Y$.
Soit $u_1,\dots,u_r$ des fonctions 
tropicalement d-convexes sur~$X$,
soit $\omega$ une forme G-lisse de type~$(d,d)$ sur~$Y$
dont le support est $p$-compact 
et soit $u_\omega\colon X\to\R$ la fonction 
définie par l'intégrale 
\[ u_\omega(x) = \int_{Y_x} \omega \]
de la forme de type~$(d,d)$ induite par~$\omega$ sur l'espace
$\hr x$-analytique~$Y_x$ si cet espace est de dimension~$d$,
et par $u_\omega(x)=0$ sinon.
Soit $\alpha$ une $(m-r,m-r)$-forme lisse à support compact sur~$X$.

Les fonctions $p^*u_1,\dots,p^*u_r$ sur~$Y$ sont tropicalement d-convexes
et l'on a  
\[ 
\langle \ddc(p^* u_1)\wedge\dots\wedge \ddc(p^* u_r),
	\omega\wedge p^*\alpha\rangle
= 
\langle \ddc u_1 \wedge\dots\wedge \ddc u_r, 
u_\omega\cdot\alpha \rangle. \]
\end{theo}

Pour justifier l'existence du membre de gauche
de cette dernière formule, notons que 
le support de $\omega\wedge p^*\alpha$ est compact dans~$Y$.
D'autre part, l'application 
$\phi\mapsto \langle \ddc u_1 \wedge\dots\wedge \ddc u_r, 
\phi\alpha\rangle $ 
est une mesure de Radon sur~$X$ dont le support
est contenu dans une partie squelettique compacte~$\Sigma$
(remarque~\ref{rema.support-bt-squelette})
et le membre de droite de la formule 
est obtenu
en intégrant la fonction $u_\omega|_\Sigma$ contre cette mesure;
observons que 
$u_\omega|_\Sigma$ est G-lisse car 
$u_\omega$ est G-lisse (théorème~\ref{theo-variation-integrales}).

\begin{proof}
Le théorème est de nature locale sur~$X$. 
Par multilinéarité, on peut alors supposer 
qu'il existe un moment $f\colon X\to T$
et des fonctions convexes $v_1,\dots,v_r$ sur un voisinage
polyédral convexe compact~$P$ de~$f_\trop(X)$ dans~$T_\trop$ 
telles que $u_j=f^*v_j$ pour tout~$j$.
Quitte à étendre le moment~$f$,
on peut enfin supposer que la forme~$\alpha$ s'écrit
$\rho f^*\eta$, où $\eta$ est une forme lisse sur~$T_\trop$
et $\rho $ une fonction lisse à support compact sur~$X$.

Il existe des familles~$(v_j^\eps)$ de fonctions lisses convexes
sur~$P$ qui convergent uniformément vers les~$v_j$.
Les $u_j^\eps=f^*v_j^\eps$ sont psh lisses et convergent uniformément
vers~$u_j$; alors le courant
\[ \ddc(u_1)\wedge \dots\wedge \ddc(u_r) \]
est limite des courants associés aux formes lisses 
$\ddc(u_1^\eps)\wedge \dots\wedge \ddc(u_r^\eps)$,
et la mesure 
\[ \phi \mapsto \langle \ddc(u_1)\wedge \dots\wedge \ddc(u_r) ,
\phi \alpha \rangle \]
est limite des mesures
\[ \phi \mapsto \langle \ddc(u_1^\eps)\wedge \dots\wedge \ddc(u_r^\eps) ,
\phi \alpha \rangle .\]
Ces mesures sont toutes supportées par~$\Sigma_f$.
La fonction~$u_\omega|_{\Sigma_f}$ est $\mathscr C^\infty$ par morceaux
(théorème~\ref{theo-variation-integrales}),
si bien que
$\langle \ddc(u_1)\wedge \dots\wedge \ddc(u_r) ,
u_\omega  \alpha \rangle $
est la limite des intégrales
\[  \langle \ddc(u_1^\eps)\wedge \dots\wedge \ddc(u_r^\eps) ,
u_\omega \alpha \rangle. \]

D'après le théorème~\ref{theo.integrale-fibre},
on  a
\[ \langle \ddc(u_1^\eps)\wedge \dots\wedge \ddc(u_r^\eps) ,
u_\omega \alpha \rangle
 = \langle \ddc (p^*u_1^\eps) \wedge \dots \wedge \ddc(p^*u_r^\eps),
\omega \wedge p^*\alpha \rangle
\]
pour tout~$\eps$,
qui convergent
vers 
\[ \langle \ddc (p^*u_1) \wedge \dots \wedge \ddc(p^*u_r),
\omega \wedge p^*\alpha \rangle ,\]
d'où le théorème.
\end{proof}

\begin{coro}
On a l'égalité de courants 
\[ p_*(
\ddc (p^* u_1)\wedge\dots\wedge \ddc (p^* u_r)  \wedge\omega)
= 
\ddc u_1 \wedge\dots\wedge \ddc u_r \cdot u_\omega. \]
\end{coro}

\section{Courants et cycles}

Soit $X$ un $n$-espace $k$-analytique, considéré comme espace tropical
de niveau~$n$.

\subsection{}
Soit $Y$ un sous-espace analytique fermé de~$X$, purement de dimension~$p$. 
On le considère comme un espace tropical de niveau~$p$
et on note $j\colon Y\to X$ l'immersion fermée
canonique. 
Le courant $j_*\delta_Y$ sur~$X$ est de bidimension~$(p,p)$;
on le note plus simplement~$\delta_Y$.

\subsection{}
Soit $p$ un entier et soit $z=\sum z(Y)Y$ un cycle purement $p$-dimensionnel
sur~$X$. Soit $\omega\in\mathscr A^{p,p}_\cpct(X)$; posons 
\[ \langle \delta_z,\omega\rangle = \int_z \omega 
= \sum_Y z(Y) \int_Y \omega. \]
C'est une somme finie: en effet, comme $\omega$ est à support
compact et la famille $(z(Y))$ est localement finie, 
l'ensemble des fermés de Zariski intègres~$Y$ de~$X$
tels que $z(Y)\neq 0$ et $\omega|_Y\neq 0$ est fini.
L'application $\omega\mapsto \int_z\omega$ est un courant:
si l'on se restreint aux formes à support
dans un domaine analytique compact donné~$V$, il s'agit de la somme
finie des courants $z(Y) \delta_Y$, où $Y$ parcourt l'ensemble
fini des fermés de Zariski intègres~$Y$ de~$X$ qui rencontrent~$V$
et tels que $z(Y)\neq 0$.

% Si $X$ est sans bord,  le courant~$\delta_z$ est $\di$-fermé, 
% pour tout cycle~$z$.

Lorsqu'on se donne un sous-espace analytique fermé de~$X$,
purement de dimension~$p$,
on peut considérer d'une part l'intégration sur ce sous-espace
et d'autre part l'intégration sur le cycle associé.
La proposition suivante affirme que ces deux opérations coïncident.

\begin{prop}\label{prop.cycle-courant}
Soit $Y$ un sous-espace analytique fermé de~$X$, purement de dimension~$p$.
On a $\delta_Y=\delta_{[Y]}$.
\end{prop}
\begin{proof}
Notons $j$ l'immersion de~$Y$ dans~$X$.
Soit $\omega$ une $(p,p)$-forme à support compact sur~$X$.
On a donc $\langle\delta_Y,\omega\rangle = \int_Y j^*\omega$,
ce qui permet de supposer que $Y=X$.

Soit $(X_i)$ la famille des composantes irréductibles de~$X$ et,
pour tout~$i$, soit $m_i$ la multiplicité de~$X$ le long de~$X_i$.
Soit $X'$ l'ouvert de Zariski de~$X$ constitué des points
qui sont situés sur une unique composante irréductible de~$X$;
il contient le support de~$\omega$. Pour tout~$i$,
posons $X'_i=X'\cap X_i$; c'est un ouvert de~$X$ et l'on a
\[ \int_{X_{i,\red}}\omega
 = \int_{X'_{i,\red}}\omega
= \frac1{m_i} \int_{X'_{i}}\omega, \]
en vertu de la proposition~\ref{prop.mesure-f*-degre-integral} appliquée
à l'immersion de~$X'_{i,\red}$ dans~$X'_i$, dont le
degré calibré est~$1/m_i$ (proposition~\ref{exemple-degcal-fini}).
Alors, on a 
\[ \langle \delta_X,\omega\rangle = \int_X\omega=\int_{X'}\omega
= \sum_i \int_{X'_i}\omega
= \sum_i m_i \int_{X'_{i,\red}} \omega 
= \sum_i m_i \int_{X_{i,\red}} \omega 
= \langle \delta_{[X]}, \omega\rangle, \]
comme il fallait démontrer.
\end{proof}

\subsection{}
Si $z=\sum z(Y)Y$, on définit aussi $\delta^\partial_{z}
=\sum z(Y) \delta^\partial_{Y}$. 
Si $Y$ est un sous-espace analytique fermé de~$X$, on démontre
comme pour la proposition~\ref{prop.cycle-courant}
que $\delta^\partial_{Y}=\delta^\partial_{[Y]}$.

\section{La formule de Poincaré--Lelong}\label{poinclel} 

\subsection{}
Soit~$X$ un $n$-espace~$k$-analytique sans bord; 
on le considère comme un espace tropical de niveau~$n$.

Soit $f$ une  fonction méromorphe régulière~$f$ sur~$X$.
Le courant d'intégration $\ddiv f$ sur le cycle~$\div(f)$
est un courant symétrique de bidimension~$(n-1,n-1)$ sur~$X$,
donc de bidegré~$(1,1)$; il est $\di$-fermé  et $\dc$-fermé
(proposition~\ref{prop.deltaY-ferme}).

La formule de Poincaré--Lelong (théorème~\ref{pcrllg})
va entraîner qu'il est $\ddc$-exact.

\subsection{}
Soit~$f$ une fonction méromorphe régulière sur~$X$.
Soit~$U$ le sous-espace de~$X$ formé des points au voisinage
desquels~$f$ est définie et non nulle ;
c'est un ouvert de Zariski de~$X$
sur lequel  la fonction~$\log\,\abs f$ est lisse,
et il est Zariski dense.
Pour tout point~$x$ du fermé complémentaire, on  a $d_k(x)<\dim(X)$.
D'après le paragraphe~\ref{ss.prolongement-courant}, 
le courant $[\log\abs f]$ sur~$U$ se prolonge un unique courant
de bidegré~$(0,0)$ sur~$X$ que nous noterons encore $[\log \abs f]$.

\begin{theo}[Formule de Poincaré--Lelong]
\label{pcrllg}\label{theo.poincare-lelong}
Soit~$X$ un $n$-espace~$k$-analytique sans bord
et soit~$f$ une fonction méromorphe régulière sur~$X$. 
On a l'égalité \[\ddiv f=\ddc[\log \abs f].\]
\end{theo} 
L'exemple~\ref{exem.f*ddcf*u} 
montre que l'hypothèse de régularité sur~$f$ est nécessaire,
même si le complémentaire de son lieu d'inversibilité est de dimension~$<n$.

\begin{lemm}\label{lemme-pl-inversible}
L'égalité ci-dessus est vraie sur l'ouvert d'inversibilité de $f$.
\end{lemm}

\begin{proof}
On peut supposer $f$ inversible. On a alors $\div f=0$ et le terme de gauche
de l'égalité est donc nul. 
On a par ailleurs $\di \dc x=0$ dans $\mathscr A^{1,1}(\R)$ ; il vient~$\di \dc \log \abs f=0$.
\end{proof}
 
\begin{lemm}
Il suffit de démontrer le théorème~\ref{theo.poincare-lelong}
si l'on suppose de plus que 
$X$~satisfait à la propriété~$(\mathrm S_1)$
et que $f$~est une fonction holomorphe régulière qui définit
un morphisme purement de dimension relative~$n-1$
de~$X$ vers~$\A^1$.
\end{lemm}
 \begin{proof} 
Le théorème est local sur~$X$; il suffit donc de prouver que tout
point~$x$ de~$X$ a un voisinage sur lequel cette égalité de courant
est vérifiée.
On peut ainsi commencer par supposer 
que $f$ est quotient de deux fonctions holomorphes régulières.
Par additivité des deux termes de l'égalité à établir, on peut
donc supposer que $f$ est holomorphe.
Pour alléger les notations, nous noterons encore~$f$ le morphisme 
$X\to \A^1$ qu'elle induit. 

Si~$f(x)\neq 0$, il existe un voisinage~$U$ de~$x$ sur
lequel~$f$ est inversible, et l'égalité à établir vaut sur $U$ en vertu du lemme
\ref{lemme-pl-inversible}.

Supposons que~$f(x)=0$.  
Comme la fonction~$f$ est régulière,
le morphisme~$f$ qu'elle induit est de dimension relative~$n-1$ en~$x$.
Par suite, ce morphisme est de dimension~$\leq n-1$ au voisinage de~$x$
(\cite{ducros2007}, th. 4.9). 
En vertu du \emph{Hauptidealsatz}
(lemme~\ref{lemm.hauptidealsatz-anal}), le morphisme~$f$ est donc
purement de dimension relative~$n-1$ au voisinage de~$x$.

Démontrons que l'espace~$X$ a la propriété~$(\mathrm S_1)$ en~$x$.
Soit $V$ un domaine  affinoïde de~$X$ contenant~$x$.
Le germe de~$f|_V$ en~$x$ est un élément de l'idéal maximal
de~$\mathscr O_{V,x}$ et est un élément régulier de cet anneau.
En conséquence, l'anneau local~$\mathscr O_{V,x}$ 
a la propriété~$(\mathrm S_1)$, 
si bien que $X$ a la propriété~$(\mathrm S_1)$ en~$x$. 
Il existe dès lors un voisinage ouvert~$U$ de~$x$ dans~$X$ 
tel que~$U$ satisfasse la propriété~$(\mathrm S_1)$ 
et tel que~$f|_U$ soit purement de dimension relative~$n-1$. 
\end{proof}

Le lemme
précédent permet de supposer à partir de maintenant que
$X$ a la propriété~$(\mathrm S_1)$ et que $f$ est une fonction
holomorphe régulière induisant un morphisme purement
de dimension relative~$n-1$ de~$X$ dans~$\A^1$.

Les deux courants~$\delta_{\div f}$
et~$\ddc [\log \abs f]$ sont symétriques ; pour
s'assurer de leur égalité, 
il suffit de les appliquer à une forme
test symétrique.

Soit donc~$\omega$ une forme lisse de type~$(n-1,n-1)$ sur~$X$,
symétrique et à support compact.
%Comme $\dc\omega$ est une forme de type~$(n-1,n)$,
%son support ne rencontre aucun fermé de Zariski 
%de dimension~$<n$ (proposition~\ref{suppdtrn}); 
%en particulier,  il ne rencontre pas~$f^{-1}(0)$.
%Comme ce support est compact et la fonction~$\abs f$ continue, 
%on a  $\inf_{\supp(\dc\omega)}(\abs{f})>0$.
%
Comme le support de~$\omega$ est compact,
il existe une famille finie $(V_i)_{1\leq i\leq m}$
de domaines analytiques compacts de~$X$  tropicalisant~$\omega$ et 
telle
tout point du support de~$\omega$ soit contenu dans l'intérieur 
de l'un des~$V_i$.
Notons~$V$ la réunion des~$V_i$. C'est un domaine analytique compact 
de~$X$. Par construction, le support de~$\omega$ est contenu dans 
l'intérieur topologique~$\mathring V$ de~$V$ dans~$X$,
qui coïncide avec~$\Int(V/X)$. 

Pour toute partie non vide~$E$ de~$\{1,\ldots, m\}$ et tout $s>0$, 
on pose $V_E=\bigcap_{i\in E}V_i$,
$V_0=V\cap \{f=0\}$, $V_{\geq s}=V\cap \{\abs f\geq s\}$, 
$V_{E,0}=V_E\cap V_0$ et $V_{E,\geq s}=V_E\cap V_{\geq s}$.
L'espace analytique~$V_E$ est intersection d'une famille non vide
de domaines analytiques compacts de~$X$ qui tropicalisent~$\omega$;
% Comme~$X$ est sans bord
% (donc bon) et topologiquement séparé, il est séparé
% d'après la proposition 1.4.2 de \cite{berkovich1993}.
% En conséquence, l'espace analytique~$V_E$ est un domaine affinoïde de~$X$;
il possède en particulier la propriété~(S\textsubscript1).
De plus $\omega$ est tropicale sur~$V_E$ ; 
soit $g_E\colon V_E\to T$ un moment et soit $\alpha_E$ une $(n-1,n-1)$-forme
sur~$g_{E,\trop}(V_E)$ telle que $\omega|_{V_E}=g^*\alpha_E$;
posons aussi $h_E=(g_E,f)\colon V_E\to T\times\A^1$.
Choisissons un nombre réel~$r>0$ pour lequel
les conclusions du théorème~\ref{theo.tropcont} sont satisfaites 
par chacun des couples $(V_E,g_E)$. 

Soit $(u_1,u_2)$ une partition de l'unité lisse de $\R$, subordonnée
au recouvrement ouvert $(\mathopen]-\infty, \log r\mathclose[, \mathopen]\log r-1,+\infty\mathclose[)$.
Soit $t\in \mathopen]0,r\mathclose[$ tel que le support de $u_1$ soit disjoint
de $\mathopen[\log t, \log r\mathclose[$. 
On note $v_1$ (resp. $v_2$) la fonction lisse sur $X$ égale à $u_1(\log \abs f)$ (resp. $u_2(\log \abs f)$)
sur l'ouvert d'inversibilité de $f$, et à $1$ (resp. $0$) au voisinage de $\{f=0\}$. Posons
$\omega_1=v_1\omega$ et $\omega_2=v_2\omega$. Il suffit de démontrer
que 
\[
\langle \di\dc \log \abs f, \omega_1\rangle=\int_{\{f=0\}} \omega_1
\;\;\text{et}\;\;
\langle \di\dc \log \abs f, \omega_2\rangle=\int_{\{f=0\}} \omega_2. 
\]

Par construction, le support de $\omega_2$
est contenu dans l'ouvert $\{f\neq 0\}$ sur lequel $\di \dc \log \abs f$ s'annule ; les deux termes 
de la seconde égalité sont donc nuls, 
et il suffit désormais d'établir la première. 

Comme $\dc\omega_1$ est une forme de type~$(n-1,n)$,
son support ne rencontre aucun fermé de Zariski 
de dimension~$<n$ (proposition~\ref{lemm.support-comp-ferme}); 
en particulier,  il ne rencontre pas~$f^{-1}(0)$.
Comme ce support est compact et la fonction~$\abs f$ continue, 
on a  $\inf_{\supp(\dc\omega_1)}(\abs{f})>0$ ; 
soit $s\in \mathopen ]0,t\mathclose [$
tel que $\abs f>s$ en tout point 
du support de~$\dc\omega_1$. 
On suppose aussi que $u_1\equiv 1$ 
dans un voisinage de $\mathopen]-\infty,\log(s)]$.
%Comme $\omega_1=v_1\omega$, le maximum de $|f|$ sur le support
%de $\omega_1$ est strictement inférieur à $r$ ; choisissons un réel $t\in \mathopen]s,r\mathclose[$
%tel que $\abs f< t$ sur $\supp (\omega_1)$. 
En particulier, 
le support de $\dc\omega$ est contenu 
dans l'intérieur~$\Int(V_{\geq s}/X)=\Int(V_{\geq s})$
du domaine analytique fermé~$V_{\geq s}$ de~$V$.

Par définition du courant $\log\abs f$
et compte-tenu du fait que $V_{\geq s}$ contient le support 
de $\ddc\omega_1$ on a 
\[ \langle \di\dc \log \abs f, \omega_1\rangle
=\int_{V_{\geq s}} \log \abs f \,\ddc \omega_1.\]
Puisque $\omega_1$ est symétrique, la 
formule de Green (proposition~\ref{prop.stokes})
assure que
\[
\int_{V_{\geq s}} \log \abs f \ddc \omega_1=
\int_{V_{\geq s}} (\ddc \log \abs f)\wedge \omega_1
+\int^\partial_{V_{\geq s}} \log \abs f\dc \omega_1 
- \int^\partial_{V_{\geq s}} (\dc \log \abs f) \wedge \omega_1.
\] 
Comme $f$ est inversible sur~${V_{\geq s}}$, on a
$\ddc\log\abs f=0$ sur~${V_{\geq s}}$;
de plus, le support de~$\dc\omega_1$  
ne rencontre pas~$\partial({V_{\geq s}})$;
les deux premiers termes du membre de droite de l'égalité précédente
sont donc nuls, d'où l'égalité
\[ \int_{X} \log\abs f \ddc\omega_1 
= - \int^\partial_{V_{\geq s}} (\dc\log \abs f)\wedge\omega_1. \]

%Soit $u_s$ la fonction sur~$X$ qui vaut~$1$ si $\abs f\leq s$ et~$0$ et sinon.
%Les supports de~$\omega$ et de~$u_s\omega$ ne rencontrent pas 
%le bord de~$V$;  on a donc 
%\begin{gather*}
%\int_{\partial ({V_{\geq s}})} \dc(\log \abs f)\wedge\omega
%=\int_{\partial (V_{\geq s}\cap\Int(V))} \dc(\log \abs f)\wedge\omega \\
%\noalign{\noindent{\text{et}}}
%\int_{\partial ({V_{\geq s}})} u_s \dc(\log \abs f)\wedge  \omega
%=\int_{\partial (V_{\geq s}\cap\Int(V))} u_s \dc(\log \abs f)\wedge \omega .
%\end{gather*}
%D'après le lemme~\ref{lemm.bord(W+)},
%le bord de $V_{\geq s}\cap\Int(V)$ est égal à $f^{-1}(\eta_s)\cap\Int(V)$,
%et $u_s$ y est identiquement égale à~$1$.
%En outre, ce bord contient
%le support de la mesure sur $V_{\geq s}\cap\Int(V)$
%associée à $\dc\log\abs f\wedge\omega$; on a donc
%\[
%\int_{\partial (V_{\geq s}\cap\Int(V))} u_s \dc(\log \abs f)\wedge \omega 
%= 
%\int_{\partial (V_{\geq s}\cap\Int(V))} \dc(\log \abs f)\wedge \omega . \]
%On a ainsi démontré l'égalité
%\[\int_{\partial {V_{\geq s}}} \dc(\log \abs f)\wedge\omega
%= \int_{\partial {V_{\geq s}}} u_s \dc(\log \abs f)\wedge \omega.\]

Comme le support de~$\omega_1|_{X_0}$ est contenu dans~$V_0$
et comme $\omega_1=\omega$ au voisinage de~$V_0$,
on a $\int_{V_0}\omega=\int_{X_0}\omega_1$.
Pour terminer la preuve de la formule de Poincaré--Lelong, il suffit
donc de démontrer que  l'on a 
\[  \int^\partial_{V_{\geq s}} \dc(\log \abs f)\wedge\omega_1
= - \int_{V_0} \omega. \]

%Pour toute partie non vide~$E$ de~$\{1,\ldots, m\}$
%et tout $s>0$, on note~$V_E$ l'intersection~$\bigcap_{i\in E}V_i$,
%$V_{E,0}=V_E\cap V_0$ et $V_{E,\geq s}=V_E\cap V_{\geq s}$.
La famille $(V_{i,0})$ est un G-recouvrement fini de~$V_0$;
on a donc l'égalité
\[
 \int_{V_0} \omega = \sum_{E\neq \emptyset} (-1)^{\Card(E)+1} \int_{V_{E,0}} \omega .\]
Par ailleurs, la famille $(V_{i,\geq s})$ est un G-recouvrement fini
de~$V_{\geq s}$; la proposition~\ref{prop.compensation}
entraîne alors l'égalité:
\[
 \int^\partial_{V_{\geq s}}  \dc(\log\abs f) \wedge \omega_1
= \sum_{E\neq \emptyset} (-1)^{\Card(E)+1}
\int^\partial_{V_{E,\geq s}}  \dc(\log\abs f)\wedge \omega_1.
\]
Comme il n'y a qu'un nombre fini de telles parties~$E$,
il suffit de démontrer que pour toute partie non vide~$E$ de~$\{1,\dots,m\}$,
on a
\[ 
\int^\partial_{V_{E,\geq s}} \dc(\log\abs f)\wedge \omega_1
= -\int_{V_{E,0}} \omega. \]
Fixons donc une
telle $E$. Posons $V_{E,[s,t]}=V_E\cap \{ s\leq\abs f\leq t\}$;
comme le support de~$\omega_1|_{V_{E,\geq s}}$ est contenu dans l'intérieur
de~$V_{E,[s,t]}$ relativement à $V_{E,\geq s}$, on a
\[ \int^\partial_{V_{E,\geq s}} \dc \log\abs f\wedge \omega_1
= \int^\partial_{V_{E,[ s,t]}} \dc \log\abs f\wedge \omega_1. 
\]
Nous allons calculer cette dernière intégrale par voie tropicale.
Posons $\sigma=\log(s)$ et $\tau=\log(t)$.
La restriction de~$h$ à~$V_{E,[s,t]}$ est un moment;
on a $\omega_1|_{V_{E,[s,t]}}=h_E^*(u_1\alpha_E)$
et $\dc \log\abs f \wedge\omega_1|_{V_{E,[s,t]}}
=h_E^*(\dc t\wedge u_1\alpha_E)$.

Notons $(P,\mu)$ le polyèdre calibré $g_{\trop}(V_{E,0})^{(n-1)}$
et $e$ le vecteur $(0,1)$ de $T_\trop\times\gmtrop$ ; 
notons $(Q,\nu)$ le polyèdre calibré
$h_\trop(V_{E,[s,t]})^{(n)}$. 
Par le choix de~$r$ fait au début de la preuve, on a
$(Q,\nu)=(P\times[\sigma,\tau], \mu\times \lambda)$ où
$\lambda$ désigne le calibrage standard de $[\sigma,\tau]$. 
En conséquence 
$\int^\partial_{V_{E,[ s,t]}} \dc \log\abs f\wedge \omega_1
=\int_Q^\partial 
\langle \dc t\wedge u_1\alpha,\nu\rangle.$

Choisissons une décomposition cellulaire 
$\mathscr C$ de $P$
adaptée à son calibrage. L'ensemble des partie de la forme
$C\times\{\sigma\}$, $C\times\{\tau\}$ et $C\times[\sigma,\tau]$, 
où $C\in \mathscr C$, est une décomposition cellulaire de
$P\times[\sigma \tau]$ adaptée à son calibrage. 
est la réunion de $P\times\{\sigma\}$,
Si $F$ est une $(n-1)$-cellule de $P$
la discordance~$\partial\nu_{F\times\{\sigma\}}$ s'identifie
au produit de $(-\mu_F)$ 
par le vecteur-volume~$\abs e$ ; 
comme $\dc t\wedge\alpha=\alpha\wedge \dc t$
et $u_1(\sigma)=1$,
la contribution de $F\times\{\sigma\}$ à
$\int_{Q}^\partial
\langle \dc t\wedge u_1\alpha,\mu\rangle$ est égale à 
\[
- \int_{F\times\{\sigma\}}
      \langle u_1 \alpha\wedge\dc t,\mu_F\cdot \abs e\rangle
 = - \int_F \langle \alpha,\mu_F\rangle,\]
 si bien que la contribution totale de
 $P\times \{\sigma\}$ à l'intégrale
 étudiée est égale à $-\int_P\langle
 \alpha,\mu\rangle$ soit encore à 
 $-\int_{V_{E,0}} \omega $. 
 
La contribution de $P\times	\{\tau\}$ est quant à elle 
nulle car
la fonction $u_1$ est nulle au voisinage de $\tau$. 
Enfin, la contribution de $\partial (P)
\times[\sigma,\tau]$ elle est 
également nulle : elle est en effet égale par construction à une somme
d'intégrales de la forme $\int_{F\times [\sigma,\tau]}u_1\eta$ où
$F$ est une $(n-2)$-cellule orientée de $\mathscr C$ et $\eta$ une $(n-1)$-forme
différentielle sur $F\times  [\sigma,\tau]$ qui provient en réalité
d'une $(n-1)$-forme différentielle sur $F$ (l'élément 
différentiel $\dc t$ ayant été contracté) et est dès lors nulle. 

Il vient 
\[\int^\partial_{V_{\geq s}}  \dc(\log\abs f) \wedge \omega_1
=-\int_{V_{E,0}}\omega,\]
ce qui conclut la démonstration du théorème~\ref{theo.poincare-lelong}.

\section{Support des courants positifs fermés}

\subsection{}
Soit $X$ un $n$-espace analytique, 
considéré comme espace tropical de niveau~$n$.
Le support d'une $(n,n)$-forme~$\alpha$ sur~$X$ est contenu dans une
partie squelettique et possède une densité par
rapport aux mesures de Lebesgue $n$-dimensionnelles.
En particulier, la mesure de Radon associée
ne charge pas des parties squelettiques de dimension de Hausdorff~$<n$.

Cette section vise à étendre cette propriété
à certains courant de la forme $T\wedge \alpha$,
où $T$ est un courant positif fermé de bidimension~$(p,p)$
et $\alpha$ est une forme lisse de bidegré~$(p,p)$.

\begin{prop}\label{rema.courants-chgt-base}
Soit $X$ un $n$-espace analytique, sans bord.
Soit $K$ une extension valuée de~$k$, universellement multiplicative;
soit $p\colon X_K\to X$ le morphisme de changement de base
et soit $\sigma\colon X\to X_K$ sa section de Shilov.

Soit $u^1,\dots,u^p$ des fonctions dpsh sur~$X$,
soit $T=\ddc(u^1)\wedge\dots\wedge \ddc(u^p)$
et soit $T_K=\ddc(u^1_K)\wedge\dots\wedge\ddc(u^p_K)$.
Soit $\alpha$ une forme lisse sur~$X$ de bidegré~$(u,v)$,
où $\max(u,v)\geq n-p$.
Le support du courant $T_K \wedge p^*\alpha$
est contenu dans~$\sigma(X)$.
\end{prop}
La proposition s'applique par exemple lorsque $K$ est le corps $\hr{\eta_r}$
où $r=(r_1,\dots,r_d)$ est une suite de nombres réels strictement positifs
et $\eta_r$ est le point de Gauss correspondant de~$\gm^d$.
Il s'applique également lorsque $k$ est algébriquement clos et $K$ arbitraire.

Rappelons aussi la proposition~\ref{prop.bt-projection-p} 
selon laquelle $\pi_*(T_K)=T$.
% Si $\alpha$ est une forme lisse sur~$X$, on a donc
% $\pi_*(T_K\wedge\pi^*\alpha)=T\wedge\alpha$.

% \begin{enumerate}\def\theenumi{\alph{enumi}}\def\labelenumi{\theenumi)}
% \item
% Si $u^0$ est une fonction continue sur~$X$
% et $u^1,\dots, u^p$ des fonctions dpsh sur~$X$,
% la fonction $u^0_K$ sur~$X_K$ est continue,
% les fonctions $u^1_K,\dots,u^p_K$ sont dpsh.
% Démontrons l'égalité de courants 
%  \[ f_*( u^0_K \ddc u^1_K \wedge \dots \wedge \ddc u^p_K)
%  = u^0\ddc u^1 \wedge \dots \wedge \ddc u^p. \]
% 
% Par linéarité on peut supposer que $u^0$ est positive
% et que $u^1,\dots,u^p$ sont psh.
% Par densité des fonctions lisses dans les fonctions continues,
% on suppose que $u^0$ est lisse; il suffit alors même de traiter
% le cas où $u^0=1$.
% Démontrons alors l'égalité voulue par récurrence sur le nombre
% des entiers $j\in\{1,\dots,p\}$ tels que $u^j$ ne soit pas lisse.
% 
% Si $u^1,\dots,u^p$ sont lisses, 
% l'assertion résulte de la proposition~\ref{prop.mesure-chgt-base}.
% 
% Soit $\mathscr P$ le sous-faisceau de $\CPSH_X$ des fonctions $v$
% telles que l'assertion vaut pour la suite $(v,u_2,\dots,u_p)$.
% C'est un sous-faisceau de~$\CPSH_X$;
% il contient les fonctions lisses psh, par l'hypothèse de récurrence;
% il est stable par limites uniformes. Il est donc égal à~$\CPSH_X$.
% L'assertion en découle par récurrence.
% 
% \item
% Supposons que $K$ soit une extension universellement multiplicative de~$k$,
% par exemple le corps $K=\hr{\eta_r}$
% où $r=(r_1,\dots,r_d)$ est une suite de nombres réels strictement positifs
% et $\eta_r$ est le point de Shilov correspondant de~$\gm^d$.

\begin{proof}
Le résultat est local sur~$X$; par multilinéarité
on peut donc se ramener au cas où les~$u_i$ sont psh.
Considérons les suites $(u^1,\dots,u^p )$ de fonctions psh,
définies sur un ouvert de~$X$,
pour lesquelles le support de~$T_K\wedge\pi^*\alpha$
est contenu dans~$\sigma(X)$.
Elles forment un sous-faisceau en $\R_+$-modules
du faisceau~$(\CPSH_X)^p$
qui est stable par limite uniforme, d'après la 
continuité du produit des courants à la Bedford--Taylor.
Il suffit donc de prouver qu'il contient toute
suite $(u^1,\dots,u^p)$ de fonctions lisses psh 
définies sur un ouvert de~$X$.
La propriété est locale sur~$X$, ce qui permet de supposer
que $\alpha$ et les~$u_i$ sont tropicalisées par un moment commun~$f$.
Dans ce cas, $T_K\wedge\pi^*\alpha$ est le courant associé
à la forme lisse $\pi^*(\ddc(u_1)\wedge\dots\wedge \ddc(u_p)\wedge\alpha)$.
Son support est donc contenu dans~$\Sigma_{f_K}^{(n)}$,
lequel est égal à~$\sigma(\Sigma_f^{(n)})$ 
en vertu du corollaire~\ref{coro.Sigma-chgt-base}.
En particulier, il est contenu dans~$\sigma(X)$, ce qu'il fallait
démontrer.
\end{proof}

\begin{defi}
Soit $X$ un espace G-tropical
et soit~$A$ une partie fermée de~$X$. Soit $p$ un nombre réel~$\geq0$.
On dit que $A$ est tropicalement de mesure de Hausdorff
$p$-dimensionnelle nulle, ou tropicalement $\mathscr H^p$-négligeable,
si pour toute carte G-tropicale $f\colon V\to \R^m$ 
sur un domaine compact~$V$ de~$X$, l'ensemble $f(A\cap V)$ de $\R^m$
est de mesure de Hausdorff $p$-dimensionnelle nulle.
\end{defi}
C'est une propriété G-locale sur~$X$.

\begin{exem}\label{exem.fan-neg}
Lorsque $X$ est un espace analytique topologiquement séparé,
toute partie faiblement analytique de dimension~$<p$ 
est tropicalement $\mathscr H^p$-négligeable,
puisque la tropicalisation d'un espace analytique compact
de dimension~$<p$ est une partie paralinéaire 
de dimension~$<p$.
\end{exem}

\begin{theo}\label{theo.negligeable-courant}
On suppose que la valeur absolue de~$k$ est non triviale.
Soit~$X$ un $n$-espace analytique sans bord
et soit~$T$ un courant positif fermé de bidimension~$(p,p)$ sur~$X$.
Soit $A$ une partie fermée de~$X$ qui est tropicalement
$\mathscr H^p$-négligeable.
Pour toute forme~$\alpha\in\mathscr A^{p,p}(X)$,
l'ensemble~$A$ est négligeable pour la mesure $T\wedge\alpha$ sur~$X$.
\end{theo}
\begin{proof}
L'assertion à prouver est locale sur~$X$. 
On peut ainsi supposer que $X$ est paracompact 
et que la forme~$\alpha$ est tropicale.
Soit $x\in X$. Soit $V$ un voisinage analytique compact de~$X$
et soit $f\colon V\to \gm^n$ un moment tel que $f_\trop(x)\not\in f_\trop(\partial(V))$; il en existe d'après le lemme~\ref{lemm.moment-bord}.
Quitte à concaténer~$f$ avec un moment tropicalisant~$\alpha$,
on peut supposer que $f$ tropicalise~$\alpha|_V$.
Soit $\omega$ une forme tropicale au voisinage de~$f_\trop(V)$
telle que $\alpha|_V=f^*\omega$. En la multipliant
par une fonction-plateau convenable, on se ramène au cas
où $\omega$ est une forme de type~$(p,p)$ sur~$\R^n$, à support compact.

Posons $\Omega=\R^n\setminus f_\trop(\partial(V))$; c'est
un ouvert de $\R^n$ car $\partial(V)$ est compact.
Posons alors
 $U=f_\trop^{-1}(\Omega)=V\setminus f_\trop^{-1}(f_\trop(\partial(V)))$;
c'est un ouvert de~$V$ qui est disjoint de~$\partial(V)$; 
il est donc contenu dans~$\Int(V)$ et est donc ouvert dans~$X$.
Il contient aussi~$x$.
Par construction, l'application $f_\trop|_U \colon U\to \Omega$
 est propre, puisqu'elle se déduit de~$f_\trop\colon V\to \R^n$ 
par changement de base.
On définit alors un courant tropical de type~$(p,p)$, $T_\Omega$, sur~$\Omega$,
par la formule $\beta\mapsto \langle T|_U, f^*\beta\rangle$,
pour $\beta\in\mathscr A^{p,p}_\cpct(\Omega)$.
Ce courant est positif et fermé, car il en est de même de~$T$.
L'hypothèse faite sur~$A$ entraîne que $f_\trop(A\cap U)$ est 
de mesure de Hausdorff $p$-dimensionnelle nulle dans~$\R^n$.
D'après~\cite[corollary~4.1]{lagerberg2012}, 
la mesure $T_\Omega\wedge\omega$ sur~$\Omega$ ne charge pas~$f_\trop(A\cap U)$.
Cela entraîne que la mesure $T\wedge\alpha$ sur~$U$
ne charge pas $f_\trop ^{-1}(f_\trop (A\cap U))$,
donc ne charge pas $A\cap U$.

Précisons cet argument.
L'ouvert~$\Omega$ de~$\R^n$ est réunion dénombrable de parties compactes;
comme $f_\trop\colon U\to\Omega$ est propre, l'espace~$U$ est réunion dénombrable
de parties compactes et il en est de même de son sous-espace fermé~$A\cap U$.
Il suffit de démontrer que pour toute partie compacte~$B$ de~$A\cap U$,
la mesure~$T\wedge\alpha$ ne charge pas~$B$.
Comme $f_\trop(B)$ est une partie compacte de~$f_\trop(A\cap U)$,
elle est de mesure de Hausdorff $p$-dimensionnelle nulle dans~$\R^n$.
Considérons une suite $(u_n)$ de fonctions continues
et positives sur~$\Omega$, à supports compact, à valeurs dans~$[0;1]$,
qui converge simplement vers la fonction indicatrice de~$f_\trop(B)$.
On peut, par exemple, prendre la fonction $u_n=\sup(1-  d(\cdot,f_\trop(B)/c)^{1/n},0)$, où $c$ est assez petit pour que le $c$-voisinage de $f_\trop(B)$
soit contenu dans~$\Omega$.
Par densité des fonctions lisses dans les fonctions continues,
il existe une suite de fonctions lisse à support compacts 
contenus dans~$\Omega$ telle que $\abs{v_n-u_n} \leq 1/n$. 
Alors, la suite bornée $(v_n)$ converge vers la fonction 
indicatrice de~$f_\trop(B)$, donc
l'intégrale $\langle T_\Omega\wedge\omega, v_n\rangle $ tend vers~$0$.
Par définition, on a donc $\langle T\wedge\alpha,f^*v_n\rangle \to 0$.
Hors, $(f^*v_n)$ est est une suite bornée qui converge vers
la fonction indicatrice de $f_\trop^{-1}(f_\trop(B))$;
par suite, $f^{-1}_\trop (f_\trop(B))$ est négligeable pour
la mesure~$T\wedge\alpha$, et c'est a fortiori le cas de~$B$.
\end{proof}

\begin{coro}\label{coro.support-prod-ddc}
Soit $X$ un $n$-espace analytique sans bord,
soit $p$ un entier naturel
et soit $(u_1,\dots,u_p)$ une famille de fonctions dpsh sur~$X$.
Soit $A$ une partie fermée de~$X$
qui est tropicalement~$\mathscr H^p$-négligeable.
Pour toute forme $\alpha\in\mathscr A^{n-p,n-p}(X)$, 
l'ensemble~$A$ est négligeable pour la mesure $
\ddc u_1 \wedge \dots \wedge \ddc u_p \wedge \alpha$ sur~$X$.
\end{coro}
\begin{proof}
Lorsque la valuation de~$k$ est non triviale, ce résultat est un cas
particulier du théorème~\ref{theo.negligeable-courant};
ce qui suit n'est donc intéressant que si la valuation de~$k$
est triviale, ce qu'on suppose désormais.

L'assertion est locale sur~$X$.
Par multilinéarité, on  peut supposer
que les~$u_i$ sont des fonctions psh.
Notons $T$ le courant $\ddc u_1\wedge \dots \wedge \ddc u_p$;
il est positif et fermé.

Soit $r$ un nombre réel tel que $0<r<1$ et soit $k_r$
l'extension complète~$k\lpar T\rpar$ de~$k$
munie de la valeur absolue $T$-adique telle que $\abs T=r$.
Soit $X_r$ l'espace $k_r$-analytique déduit de~$X$
par changement de base, soit $\pi\colon X_r\to X$ sa projection
et soit $\sigma\colon X\to X_r$ sa section de Shilov.

L'image~$\sigma(A)$ est une partie fermée de~$X_r$ (car $X$ est topologiquement
séparé).
Démontrons qu'elle est tropicalement $\mathscr H^p $-négligeable.
On peut raisonner G-localement et supposer que $X$ est affinoïde.
Soit $W$ un domaine analytique compact de~$X_L$ et soit $f$
un moment sur~$W$. En utilisant le lemme~\ref{lemm.chgt-base-kr}
et en utilisant un recouvrement fini de~$\sigma^{-1}(W)$, 
on se ramène au cas où il existe un moment~$g$ au voisinage de~$A$
tel que $f_\trop \circ \sigma = g_\trop  + e\log(r)$, où $e\in\Z^m$.
Par hypothèse, $g_\trop(A)$ est $\mathscr H^p$-négligeable;
il en est donc de même de $f_\trop(\sigma(A))$.

Pour tout~$j$, posons $u_{j,r}=\pi^* u_j$ et notons~$T_r$
le courant $\ddc u_{1,r}\wedge\dots \wedge \ddc u_{p,r}$ sur~$X_r$;
il est positif fermé.
Posons aussi $\alpha_r=\pi^*\alpha$.

D'après la remarque~\ref{rema.courants-chgt-base}, 
on a l'égalité de courants~$\pi_*(T_r)=T$ et 
et l'égalité de mesures $\pi_*(T_r\wedge\alpha_r)=T\wedge \alpha$.
Toujours d'après cette remarque, le support de $T_r\wedge\alpha_r$
est contenu dans l'image~$\sigma(X)$ de la section de Shilov.
D'après le théorème~\ref{theo.negligeable-courant}, cette mesure
sur~$X_r$ ne charge pas $\sigma(A)$.
Elle ne charge donc pas $\pi^{-1}(A)$.
Par suite, $T\wedge\alpha$ ne charge pas~$A$, ce qu'il fallait démontrer.
\end{proof}
% 
% 
% Par construction, $T$ est limite d'une suite de courants 
% de la forme $[\omega]$,
% où $\omega$ est une forme positive de type~$(p,p)$ sur~$X$,
% et le courant~$T_r$ est limite des courants~$[\omega_r]$,
% où $\omega_r=\pi^*\omega$.   Posons $\alpha_r=\pi^*\omega_r$.
% De même, la mesure $T\wedge\alpha$ est limite d'une suite
% de mesures de la forme $[\omega\wedge\alpha]$,
% et le courant~$T_r\wedge\alpha_r$  
% est limite des $[\omega_r\wedge\alpha_r]$.
% 
% Soit $\Sigma$ un support fort d'une forme $\omega\wedge\alpha$.
% La partie polyédrale~$\Sigma_{k_r}$ de~$X_r$ déduite de~$\Sigma$
% par changement de base (\S~\ref{sss.chgt-base-polyedre})
% est égale à $\sigma(\Sigma)$,
% d'après la proposition~\ref{prop.chgt-base-polyedre}.
% De plus, c'est un support fort de la forme
% $[\omega_r\wedge\alpha_r]$ (\S\ref{sss.support-fort}).
% En  particulier, la mesure $[\omega_r\wedge\alpha_r]$ est portée
% par~$\sigma(X)$ et, par conséquent, 
% le support de la mesure~$T_r\wedge\alpha_r$
% est contenu dans~$\sigma(X)$.
% D'après le théorème~\ref{theo.negligeable-courant}, cette mesure
% sur~$X_r$ ne charge pas $\sigma(A)$.
% Elle ne charge donc pas $\pi^{-1}(A)$.
% 
% Par changement de base, on a $p_*([\omega_r])
% =[\omega]$ (\S\ref{sss.[omega]}).
% Par passage à la limite, on en déduit l'égalité
% de courants $\pi_*(T_r)=T$ 
% et l'égalité de mesures $\pi_*(T_r\wedge\alpha_r)=T\wedge\alpha$.
% Il en résulte que la mesure~$T\wedge \alpha$ ne charge pas~$A$.

\begin{lemm}\label{lemm.chgt-base-kr}
Soit $X=\mathscr M(\mathscr A)$ un espace $k$-affinoïde,
soit $W$ un domaine analytique compact de~$X_L$
et soit $f\in\mathscr A_L^\times$ une fonction analytique sur~$W$, inversible.
Il existe une suite finie $(K_1,\dots,M_m)$ de parties compactes
de~$\sigma^{-1}(W)$ et pour tout~$j$, une fonction analytique~$g_j$ 
sur~$X$ et un entier~$e_j$,
telle que $\abs{f}\circ\sigma=\abs{g_j} r^{e_j}$ sur~$K_j$.
\end{lemm}
\begin{proof}
En utilisant le théorème de Gerritzen--Grauert, on se ramène
au cas où $W$ est un domaine rationnel de~$X_L$. 
Il existe donc deux fonctions analytiques $\phi$ et~$\psi$ sur~$X_L$
telles que $\psi$ ne s'annule pas sur~$W$ 
et telles que $\abs f = \abs \phi/\abs\psi$ sur~$W$.
On est ainsi ramené au cas où $f$ est la restriction d'une fonction
analytique~$\phi$ sur~$X_L$ puis, par approximation,
au cas où $\phi\in\mathscr A[T,T^{-1}]$.
Écrivons $\phi=\sum_j f_j T^j$,
où $f_j\in\mathscr A$ pour tout~$j$. Pour tout $x\in X$,
on a $\abs \phi \circ \sigma (x) = \max \left( \abs{f_j}(x) r^j \right)$.
On peut ainsi prendre pour partie compacte~$K_j$
l'ensemble des points~$x\in \sigma^{-1}(W)$ 
tels que $\abs \phi \circ \sigma (x) = \abs{f_j(x)} r^j $.
\end{proof}

\section{Négligeabilité des ensembles polaires}

\begin{prop}\label{prop.vois-pseudoconv}
Soit $X$ un $n$-espace analytique sans bord 
sur un corps complet non archimédien~$k$.
On suppose que $X$ est bon.
Tout point de~$X$ possède une base de voisinages ouverts~$U$
sur lesquels il existe une fonction psh lisse~$\rho\colon U\to\R$
telle que $\rho<0$ sur~$U$ et $\{\rho\leq 0\}$ est compact.
\end{prop}
\begin{proof}
Comme tout ouvert de~$X$ est bon, il suffit de démontrer
que tout point~$x$ de~$X$ possède un voisinage ouvert comme dans l'énoncé.
Comme $X$ est bon, il existe un domaine affinoïde~$V$ de~$X$ 
qui est un voisinage de~$x$.
Soit $W$ un voisinage affinoïde de~$x$ contenu dans $\Int(V/X)$
qui est un domaine rationnel: il existe des fonctions holomorphes
$f_1,\dots,f_n,g$ sur~$V$ sans zéro commun 
et des nombres réels strictement positifs
$\lambda_1,\dots,\lambda_n$ strictement 
tel que $W$  soit défini par les conditions $\abs{f_i}\leq \lambda_i \abs g$
(\cite{berkovich1990}, \S2.2).
Soit $U$ l'ouvert d'inversibilité de~$V$.
Pour tout nombre réel~$\mu\geq 1$,
$W_{\mu}=\{ \abs{f_i}\leq \lambda_i \mu \abs g \}$
est un voisinage rationnel de~$x$ dans~$V$ qui est contenu
dans~$U$, car les~$f_i$ et~$g$ sont sans zéro commun.

Pour tout $i$, posons $u_i=\log (\abs{f_i/g}/\lambda_i)$;
c'est une fonction psh sur~$U$,
qui vaut $-\infty$ sur $\{f_i=0\}\cap U$
est est paralinéaire en dehors.
La fonction  $u=\max(u_1,\dots,u_n)$ sur~$U$ est paralinéaire
et psh,
et pour tout $t\geq 0$,
l'ensemble $\{u\leq t\}$ est un voisinage de~$x$ dans~$U$.
Nous allons en construire une approximation psh lisse.

Soit $\theta$ une fonction lisse sur~$\R$,
positive, de support contenu dans~$[-1;1]$
et d'intégrale~$1$. 
Pour tout $\eps>0$ et tout~$s\in\R^m$, posons
\[ M_\eps (s_1,\dots,s_m) = \int_{\R^m} \max(s_1+t_1,\dots,s_m+t_m)
\prod_{i=1}^m \theta(t_i/\eps) \, dt_1\cdot dt_m.\]
D'après le lemme~5.18 de~\cite{demailly2009}, 
la fonction~$M_\eps$ sur~$\R^m$
est lisse, convexe, croissante en chaque variable,
vérifie 
\[ \max(s_1,\dots,s_m) \leq M_\eps(s_1,\dots,s_m) \leq \max(s_1,\dots,s_m)+\eps.\]
En outre, si $s'$ est la famille déduite de~$s$ en enlevant
les coordonnées~$x_i$ telles que $s_i\leq \max(s_1,\dots,s_m)-2\eps$,
on a $M_\eps(s)=M_\eps(s')$.
Par suite, la fonction $u_\eps$ sur~$U$
définie par $u_\eps= M_\eps(u_1,\dots,u_n)$ est lisse et psh,
et vérifie $u\leq u_\eps\leq u+\eps$.

Prenons $\eps=1$ et posons $\tilde u=u_1-u_1(x)-1$.
La fonction~$\tilde u$ sur~$U$ est psh et lisse ;
on a $\tilde u(x)=-1<0$.
L'ensemble de~$U$ défini par $\tilde u\leq 0$
est contenu dans l'ensemble fermé $\{u \leq u_1(x)+1\}$ de~$U$.
Or, ce dernier ensemble est aussi le domaine rationnel de~$V$
défini par $\abs{f_i}\leq e^{u_1(x)+1}\lambda_i \abs g$,
car les~$f_i$ et~$g$ sont sans zéro commun ; il est donc compact.
\end{proof}

\begin{theo}\label{theo.psh-locint}
Soit~$X$ un $n$-espace $k$-analytique sans bord.
Soit~$u_1,\dots,u_m$ des fonctions psh continues sur~$X$
qui sont localement approchables,
soit~$V$ une fonction psh localement approchable sur~$X$,
soit $\alpha$ une forme lisse sur~$X$.
Si $V$ est localement intégrable sur~$X$, est elle est localement intégrable
pour la mesure $\ddc u_1\wedge\dots\ddc u_m\wedge\alpha$ sur~$X$.
\end{theo}

\begin{proof}
D'après la proposition~\ref{prop.vois-pseudoconv},
l'espace tropical associé à~$X$ est justiciable 
du corollaire~\ref{coro.local-int}.
\end{proof}
\begin{coro}
Soit $X$ un $n$-espace $k$-analytique sans bord.
Soit $u_1,\dots,u_m$ des fonctions psh continues sur~$X$
qui sont localement approchables,
soit $\alpha$ une forme lisse de type~$(n-m,n-m)$ sur~$X$
et soit $V$ une fonction psh localement approchable sur~$X$.
Si $V$ est localement intégrable sur~$X$, 
la mesure $\ddc u_1\wedge\dots\ddc u_m\wedge\alpha$ sur~$X$
ne charge pas le lieu polaire de~$V$.
\end{coro}
\begin{proof}
D'après la proposition~\ref{prop.vois-pseudoconv},
l'espace tropical associé à~$X$ est justiciable 
du corollaire~\ref{coro.local-polaire}.
\end{proof}
\begin{coro}\label{coro.zariski-bt-neg}
Soit $X$ un $n$-espace $k$-analytique sans bord.
Soit $u_1,\dots,u_m$ des fonctions psh continues sur~$X$
qui sont localement approchables,
soit $\alpha$ une forme lisse de type~$(n-m,n-m)$ sur~$X$.
La mesure $\ddc u_1\wedge\dots \wedge \ddc u_m\wedge\alpha$ sur~$X$
ne charge aucun fermé de Zariski de dimension~$<n$.
\end{coro}
\begin{proof}
Soit $Y$ un fermé de Zariski de~$X$ tel que $\dim(Y)<\dim(X)$.
L'assertion est locale au voisinage de tout point~$y$ de~$Y$. 
Soit $W$ un voisinage affinoïde de~$y$.
La trace $Y\cap W$ ne contient aucune composante irréductible de~$W$
de dimension~$n$.
D'après le lemme d'évitement des idéaux premiers, 
il existe une fonction analytique~$f$ sur~$W$ telle que $Y\subset V(f)$ 
et telle que $f$ ne s'annule identiquement 
sur aucune composante irréductible de~$W$ de dimension~$n$.
Quitte à remplacer~$X$ par l'intérieur de~$W$,
on peut supposer qu'il existe une fonction analytique~$f$
sur~$X$ qui ne s'annule identiquement sur aucune composante
irréductible de dimension~$n$ et~$X$ et telle que $Y\subset V(f)$.

La fonction $\log \abs f$ est alors continue sur toute
partie squelettique de~$X$, donc est localement intégrable sur~$X$.
Par ailleurs, c'est la limite décroissante des fonctions 
$\log\max(\abs f, t)$, pour $t\to 0$. Comme ces fonctions sont psh
et continues sur~$X$, il en résulte que $\log\abs f$ est psh approchable.
Comme son lieu polaire contient~$Y$, le corollaire en découle.
\end{proof}

\begin{coro}
Reprenons les hypothèses du corollaire précédent.
Soit $Y$ un $n$-espace $k$-analytique
et soit $f\colon Y\to X$ un morphisme topologiquement
compact qui est fini
au-dessus d'un ouvert de Zariski  dense.
Pour toute fonction $\delta$ de~$X$ dans~$\R$ qui prolonge
la fonction~$\degcal_f$, 
on a l'égalité de courants de bidegré~$(m,m)$
\[ f_* (\ddc (f^* u_1)\wedge \dots \wedge \ddc(f^* u_m))
   = \delta \cdot \ddc u_1\wedge\dots \wedge \ddc u_m \]
sur~$X$.
\end{coro}
\begin{proof}
Dans la formule précédente,
le membre de gauche est l'image directe d'un courant positif sur~$Y$,
donc est un courant positif sur~$X$.
Quant à lui, le membre de doite est le produit d'un courant positif
$ \ddc u_1\wedge\dots \wedge \ddc u_m$
par la fonction~$\delta$;
nous allons prouver qu'il s'agit d'un courant
en démontrant que pour toute forme lisse~$\alpha$ à support compact,
de type~$(n-m,n-m)$,
la fonction~$\delta$
est localement intégrable contre
la mesure signée
$\mu_\alpha = \ddc u_1\wedge\dots\wedge \ddc u_m\wedge [\alpha]$,
et que l'on a
\[ \langle \ddc (f^* u_1)\wedge \dots \wedge \ddc(f^* u_m), f^*\alpha\rangle
   = \int_X \delta \cdot \ddc u_1\wedge\dots \wedge \ddc u_m \wedge [\alpha].
\]

Par partition de l'unité, on peut supposer que le
support de~$\alpha$ est contenu
dans un ouvert de~$X$ sur lequel les~$u_i$ sont limite
uniforme de suites de fonctions lisses psh.
Quitte à remplacer~$X$ par un tel ouvert, on suppose
donc que les~$u_i$ sont limite uniforme de suites
de fonctions lisses psh sur~$X$.

La forme~$\alpha$ est différence de deux formes
positives à support compact, 
si bien qu'on peut également supposer que $\alpha$ est positive.
La mesure~$\mu_\alpha$ est alors positive.

La réunion~$Z_1$ des intersections deux à deux des composantes irréductibles
de~$X$ est de dimension~$<n$.
Soit $Z_2$ un fermé de Zariski de~$X$ de dimension~$<n$,
de complémentaire~$U$, tel que $f$ soit fini au-dessus de~$U$.
Posons $Z=Z_1\cup Z_2$ et soit~$U$ son complémentaire;
on a $\dim(Z)<n$ et $f$ est fini au-dessus de~$U$.

La formule voulue est vraie lorsque les~$u_i$ sont lisses.
En effet, 
$\mu_\alpha$ est alors la mesure associée à la $(n,n)$-forme
positive lisse
$ \omega=\ddc u_1\wedge\dots\wedge \ddc u_m\wedge\alpha$;
elle possède un support fort~$S$ qui est contenu dans~$X^\gen$. 
Alors,
\[ \int_X \delta \mu_\alpha = \int_X \degcal_f \ddc u_1\wedge
\dots \wedge \ddc u_m\wedge \alpha. \]
Soit $U$ un voisinage ouvert de~$S$ dont chaque point
n'appartient qu'à une seule composante irréductible de~$X$
et soit $(U_i)$ la famille de ses composantes connexes.
On a donc 
\[ \int_X \delta \mu_\alpha = \int_U \degcal_f \ddc u_1\wedge
\dots \wedge \ddc u_m\wedge \alpha
=\sum_i \int_{U_i} \degcal_f \ddc u_1\wedge\dots\wedge\ddc u_m\wedge\alpha.\]
Pour tout~$i$, posons $V_i=f^{-1}(U_i)$; posons aussi $V=f^{-1}(U)$;
on a
\[ \int_{U_i}\degcal_f \ddc u_1\wedge\dots\wedge\ddc u_m\wedge\alpha
= \int_{V_i} \ddc f^*u_1\wedge\dots\wedge\ddc f^*u_m\wedge f^*\alpha, \]
de sorte
que 
\[ \int_X \delta \mu_\alpha = \int_V \ddc f^*u_1\wedge\dots\wedge\ddc f^*u_m\wedge f^*\alpha, \]
La $(n,n)$-forme lisse
$\ddc f^*u_1\wedge\dots\wedge\ddc f^*u_m\wedge f^*\alpha = f^*\omega$
sur~$Y$ possède un support fort~$T$.
Si une $n$-cellule~$C$ de~$T$ est contractée par~$f$,
alors $f^*\omega$ est identiquement nulle sur~$C$.
On peut donc supposer qu'aucune $n$-cellule de~$T$ n'est contractée
par~$f$, si bien que $T\subset V$ et
\[ \int_V \ddc f^*u_1\wedge\dots\wedge\ddc f^*u_m\wedge f^*\alpha
 = \int_X \ddc f^*u_1\wedge\dots\wedge\ddc f^*u_m\wedge f^*\alpha.\]
Cela prouve l'assertion voulue quand les~$u_i$ sont lisses.

Dans le cas général, on considère des suites~$(u_{i,p})_p$ de fonctions
lisses psh sur~$X$ qui convergent uniformément vers~$X$.
Pour tout~$p$, on a l'égalité
\[ \langle f_* (\ddc (f^* u_{1,p})\wedge \dots \wedge \ddc(f^* u_{m,p})),
\alpha\rangle 
   = \int_X \delta \cdot \ddc u_{1,p}\wedge\dots \wedge \ddc u_{m,p} \wedge [\alpha].\]
En vertu du la proposition~\ref{prop.BT-continu},
le membre de gauche converge vers
\[ \langle f_* (\ddc (f^* u_{1})\wedge \dots \wedge \ddc(f^* u_{m})),
\alpha\rangle. \]
Le membre de droite est de la forme
$\int_X \delta \mu_p$, où $(\mu_p)$ est une suite de mesures
positives sur~$X$ qui converge vers la mesure~$\mu_\alpha$;
les supports de ces mesures sont compacts,
contenus dans celui de~$\alpha$. 
La fonction~$\delta$ est localement constante, donc continue, sur~$U$.
D'après le corollaire~\ref{coro.zariski-bt-neg}, le complémentaire
de~$U$ est négligeable pour la mesure~$\mu_\alpha$.
On a donc (proposition~22 de \cite{INT-1-4}, chap.~IV, §5, \no12)
$\int_X \delta\mu_p\to\int_X\delta\mu_\alpha$;
cela conclut la démonstration.
\end{proof}

\part{Fibrés métrisés et courbures}

\chapter{Fibrés vectoriels métrisés}
\label{sec.metriques}

\section{Métriques}

\subsection{Fibrés vectoriels}

Un fibré vectoriel de rang~$n$ sur un espace $k$-analytique~$X$ 
est un faisceau en modules localement libres de rang~$n$
constant sur le site 
annelé $X_{\rm G}$ (à savoir~$X$ muni
de sa G-topologie et de son faisceau structural).

 Il définit par restriction 
à l'espace topologique $X$ un faisceau en 
$\mathscr O_X$-modules, dont on ne sait pas s'il
est en général localement libre de rang~$n$
C'est toutefois toujours le cas lorsque $X$ est bon, et notamment 
lorsque $X$ est sans bord.
Dans cette situation, cette construction fournit
une équivalence entre la catégorie des 
$\mathscr O_{X_{\rm G}}$-modules 
localement libres de rang fini et celle des
$\mathscr O_X$-modules localement libres de rang
fini (\cite{berkovich1993}, prop.  1.3.4).

Soit $X$ un espace analytique et soit $E$ un fibré vectoriel sur~$X$.
Notons $\mathbf V(E)$ l'espace total de~$E$: c'est un espace
analytique muni d'un morphisme $\pi$ vers~$X$  dont les
sections au-dessus d'un domaine analytique de~$X$ s'identifient
canoniquement aux sections de~$E$ sur ce domaine.
Si $E= \mathscr O_X^n$, 
alors $\mathbf V(E)=\A^n_X$ est l'espace affine (analytique)
au-dessus de~$X$; dans le cas général, on le construit par recollement.

\begin{defi}[Métriques]
Soit $X$ un espace analytique et soit $E$ un fibré vectoriel sur~$X$.
Une \emph{semi-métrique}\index{semi-métrique} sur~$E$
est une application continue $\norm\cdot\colon \mathbf V(E)\ra\R_+$
qui est une norme dans chaque fibre, au sens suivant:
pour tout point $x\in X$, toute extension valuée complète~$L$
de~$\mathscr H(x)$, l'application de $E(x)\otimes L$
dans~$\R$ donnée par la composition
\[ E(x)\otimes_{\mathscr H(x)} L \hra  \mathbf V(E)_L \ra \mathbf V(E)\xrightarrow{\norm\cdot }  \R_+ \]
est une semi-norme ultramétrique
sur le $L$-espace vectoriel de dimension finie $E(x)\otimes L$.

On dit que cette semi-métrique est une métrique\index{métrique}
si ces semi-normes sont des normes et qu'elle est continue
si l'application $\norm\cdot\colon \mathbf V(E)\to\R_+$ ci-dessus
est continue.
\end{defi}

On notera souvent $\bar E$ la donnée d'un fibré vectoriel~$E$ sur~$X$
et d'une semi-métrique sur ce fibré.
Inversement, si $F$ est un fibré semi-métrisé sur~$X$, 
on notera~$\underline F$ le fibré vectoriel sous-jacent.

\begin{rema}
L'exigence d'une telle condition après produit tensoriel par
toute extension valuée complète~$L$ de~$\hr x$ peut sembler superflue.
Que nenni! Prenons par exemple le fibré de rang~$1$ trivial~$E$ sur
l'espace~$\mathscr M(k)$ d'un corps trivialement valué~$k$. 
Toute fonction continue de $\mathbf V(E)$ dans~$\R_+$ qui applique
tout $k$-point non nul sur~$1$ et l'origine sur~$0$
induit une norme sur~$E(k)$. Si c'est le cas de la fonction $\abs T$,
c'est également le cas de la fonction $\abs T^2$ qui n'induit
pas de norme sur $E(x)\otimes L$ dès que $L$ n'est plus trivialement valuée.
\end{rema}

\begin{exem}\label{exem.metrique-rang-1}
Soit $X$ un espace analytique, soit $L$ un fibré en droites sur~$X$.

Soit $\eps$ une section globale \emph{inversible} de~$L$.
Pour toute semi-métrique~$\norm\cdot$ sur~$L$,
on dispose d'une fonction $\norm \eps$ sur~$X$.
Inversement, pour toute fonction $u\colon X\to\R_+$,
il existe une unique semi-métrique sur~$L$
telle que $u= \norm\eps$. En effet, $\eps$
identifie $\mathbf V(L)$ à $\A^1_X$ 
et la semi-métrique est alors donnée par $v\mapsto \abs{T(v)} u(p(v))$,
où $p\colon \A^1_X\to X$ est la projection naturelle.
Les métriques correspondent ainsi aux fonctions à valeurs
strictement positives, et les métriques continues aux fonctions
continues à valeurs strictement positives.

Soit $(U_i)$ un G-recouvrement de~$X$ trivialisant~$L$;
pour tout~$i$, soit $s_i$ une section inversible de~$L$ sur~$U_i$;
pour tout couple $(i,j)$, soit $f_{ij}$ la fonction inversible
$s_i/s_j$  sur~$U_i\cap U_j$.
Soit $\norm\cdot$ une semi-métrique sur~$L$.
Pour tout~$i$, soit $u_i$ la fonction~$\norm{s_i}$ sur~$U_i$;
on a $u_i= \abs{s_i/s_j} u_j$ pour tout couple~$(i,j)$.
Inversement, si $(u_i)$ est une famille de fonctions sur les domaines~$U_i$
vérifiant cette relation de cocycle, il existe une unique
semi-métrique~$\norm\cdot$ sur~$L$ telle que $\norm{s_i}=u_i$ pour tout~$i$.

Cette semi-métrique est une métrique si les fonctions~$u_i$
sont à valeurs strictement positives, et est une métrique continue
si ces fonctions sont de plus continues.
Dans ce cas, les fonctions continues $\log \norm{s_i}^{-1}$ sur les~$U_i$
ne se recollent pas a priori mais c'est le cas de leurs
images dans le faisceau quotient $\mathscr C_X/\log(\abs{\mathscr O_X^\times})$.
\end{exem}

\begin{rema}\label{rema.metrique.G-local}
Soit $X$ un espace analytique, soit $E$ un fibré vectoriel sur~$X$
muni d'une métrique continue~$\norm\cdot$.

Soit $Y$ un espace analytique, $f\colon Y\to X$ un morphisme
et $s$ une section globale de $f^*E$.
On déduit de~$\norm\cdot$ une application continue,
notée $\norm s$, de~$Y$ dans~$\R_+$:
c'est la composition
\[  Y \xrightarrow s \mathbf V(f^*E)\to \mathbf V(E)
  \xrightarrow{\norm\cdot}\R_+. \]
Cette application vérifie les propriétés suivantes : 
\begin{itemize}
\item % Si $U$ est un domaine analytique de~$X$,
Si $u\in\Gamma(Y,\mathscr O_Y)$ et
$s\in\Gamma(Y,f^*E)$,
alors $\norm{us} = \abs u\cdot \norm s$;
\item % Si $U$ est un domaine analytique de~$X$ et
Si $s_1,s_2\in\Gamma(Y,f^*E)$,
alors $\norm{s_1+s_2} \leq\max(\norm {s_1}, \norm {s_2})$;
\item % Si $U$ est un domaine analytique de~$X$ et $s\in\Gamma(U,E)$, 
La fonction $\norm s$ est à valeurs positives 
et s'annule en un point $y$ de $Y$ si et seulement si $s(y)=0$;
\item
Soit $g\colon Z\to Y$ un morphisme d'espace analytiques,
on a $\norm{g^* s} = \norm s \circ g$.
\end{itemize}

Inversement, donnons-nous, pour tout espace analytique~$Y$,
tout morphisme~$f\colon Y\to X$, et toute section~$s$ de~$f^*E$,
une application $\norm s\colon Y\to \R_+$ vérifiant les propriétés
ci-dessus.  Elle est induite par une unique métrique continue sur le 
fibré~$E$: elle se retrouve en prenant pour espace~$Y$
l'espace total~$\mathbf V(E)$ et pour~$s$ la section tautologique.

Plus généralement, supposons donné un G-recouvrement~$(X_i)$ de~$X$
ainsi que de telles applications $\norm s$, pour tout triplet $(Y,f,s)$
comme ci-dessus lorsque $f(Y)$ est contenu dans l'un des~$X_i$.
Ces données fournissent d'abord une métrique sur le fibré $E|_{X_i}$,
pour tout~$i$; ces métriques coïncident sur les intersections
et se recollent en une métrique sur~$E$.

Notons $\mathscr C^0_X$ (resp.
$\mathscr C^0_{X_{\rm G}}$) le faisceau $U\mapsto \mathscr C^0(U,\R)$
sur l'espace topologique $X$ (resp. le site $X_{\rm G}$).
En se restreignant aux domaines analytiques de~$X$,
on voit en particulier qu'une métrique continue sur~$E$ donne lieu
à un morphisme de faisceaux $E\to \mathscr C^0_{X_{\rm G}}$.
\end{rema}

\begin{defi}[Métriques lisses]
On dit qu'une métrique sur un fibré vectoriel~$E$ est lisse
si l'application de~$\mathbf V(E)\setminus\{0\}$ dans~$\R_+^*$
déduite  par restriction au complémentaire  de la section
nulle est lisse.
\end{defi}

C'est une propriété locale sur~$X$.

\begin{prop}
Soit $L$ un fibré en droites
qui est localement trivial pour la topologie usuelle de~$X$.
Pour qu'une métrique continue sur~$L$ soit lisse, il faut et il suffit
que  pour toute section inversible~$s$ sur un ouvert~$U$
de~$X$, la fonction $\log\norm s$ soit lisse sur~$U$.
\end{prop}
Comme dans l'exemple, on déduit d'une métrique lisse sur~$L$
une section globale du faisceau $\mathscr A^0_X/\log(\abs{\mathscr O_X^\times})$.

\begin{proof}
La nécessité de la condition est évidente, car
$\norm s$ est la composition de la norme et de
la section $U\ra\mathbf V(E)$ déduite de~$s$,
l'image de cette section étant contenue dans le complémentaire
de la section nulle.
Inversement, soit $\norm\cdot$ une métrique continue sur~$X$
vérifiant la condition indiquée. Pour démontrer qu'elle est lisse,
on peut raisonner localement sur~$X$.
Par hypothèse, $X$ est recouvert par des ouverts~$U$
sur lesquels $L$ possède une section inversible~$s$.
La section~$s$ fournit un isomorphisme
de~$\mathbf V(E)|_U$ avec $\A^1\times U$; par cet
isomorphisme, la métrique est transformée en l'application
$(t,x)\mapsto \abs t \cdot \norm {s(x)}$; sa restriction
au complémentaire~$\gm\times U$ de la section
nulle est donc lisse. Par suite, la métrique donnée est lisse.
\end{proof}

\begin{prop}\label{prop.existence-metrique-lisse}
Soit $X$ un espace $k$-analytique paracompact.
Si $X$ est  localement holomorphiquement
séparé, tout fibré en droites sur~$X$ qui est localement
trivial pour la topologie usuelle de~$X$ possède une métrique lisse.
En particulier, si $X$ est bon (par exemple, sans bord),
tout fibré en droites sur~$X$ possède une métrique lisse.

De plus, toute métrique continue sur un tel fibré peut-être approchée
par une métrique lisse pour la topologie de la convergence compacte.
\end{prop}
\begin{proof}
Soit $L$ un fibré en droites sur~$X$.
Par hypothèse, 
il existe un recouvrement $(U_i)$ de~$X$ par des ouverts
sur lesquels $L$ est trivialisable;
pour tout~$i$, soit $s_i$ une section inversible de~$L|_{U_i}$.
Soit $(\lambda_i)$
une partition de l'unité lisse subordonnée au recouvrement ouvert~$(U_i)$;
il en existe, car $X$ est paracompact.
Il existe une unique métrique sur~$L$ pour laquelle,
\[ \log\norm s (x) = \sum_{i} \lambda_i(x) \log\abs{s/s_i}(x) \]
pour tout espace analytique~$Y$, tout morphisme $f\colon Y\to X$
et toute section inversible~$s$ de~$f^*L$ sur~$Y$,
la somme étant réduite aux indices~$i$ tels que $f(x)\in U_i$.

Lorsque $s$ est une section inversible sur un ouvert~$U$ de~$X$,
la fonction $\log\norm s$ est lisse sur~$U$,
si bien que la métrique ainsi définie sur~$L$ est lisse.

Soit $\norm\cdot$ une métrique lisse arbitraire sur~$L$
et soit $\norm\cdot'$ une métrique continue sur~$L$;
soit $K$ une partie compacte de~$X$ et soit~$\eps>0$.
Il existe une unique fonction continue~$f$ sur~$X$ telle
que $\norm\cdot' =\norm\cdot e^f$.
Soit $u$ une fonction lisse sur~$X$ telle que
$\abs{u(x)-f(x)}\leq\eps$ pour tout $x\in K$
(corollaire~\ref{coro.berk-sw}).
La métrique $\norm\cdot e^u$ sur~$L$ est lisse
et approche la métrique continue donnée~$\norm\cdot'$ comme voulu.
\end{proof}

\begin{rema}
En revanche, un fibré vectoriel de rang~$n>1$ sur un espace analytique~$X$
n'admet pas de métrique lisse: ce n'est déjà pas le cas lorsque
$X=\mathscr M(k)$. En effet, une métrique sur le fibré
vectoriel trivial~$\mathscr O_X^n$ est une application
$\norm\cdot$ de~$\A^n_k$ dans~$\R_+$ qui, pour toute extension
valuée~$L$ de~$k$, induit une norme ultramétrique sur~$L^n$.
Supposons, ce n'est pas restrictif,
que $k$ soit algébriquement clos et sphériquement complet.  
Dans ce cas,  on sait (\cite{perez-garcia-schikhof2010}, Theorem~2.3.25;
\cite{gruson1966}, prop.~2)
que l'espace vectoriel~$k^n$,
muni de la norme induite par~$\norm\cdot$, possède une base
$(e_1,\dots,e_n)$  qui est « orthogonale », au sens
où $\norm{\sum x_i e_i}=\max(\abs {x_i} \norm{e_i})$
pour tout $(x_1,\dots,x_n)\in k^n$.
Autrement dit, il existe des formes linéaires
linéairement indépendantes~$f_1,\dots,f_n$
sur~$k^n$ et des nombres réels strictement positifs
$a_1,\dots,a_n$ tels que $\norm{x}=\max(a_i \abs{f_i(x)})$
pour tout $x\in \A^n(k)$.
Comme $\A^n(k)$ est dense dans~$\A^n_k$, cette formule
vaut pour tout~$x\in\A^n_k$.

Cela entraîne que la norme~$\norm\cdot$ est~\pl au sens
du paragraphe suivant mais n'est pas lisse.
Supposons en effet par l'absurde
que~$\norm\cdot$ soit une métrique lisse.
La $(1,1)$-forme $\ddc \log\norm\cdot$ sur~$\A^{n}_k\setminus\{0\}$
descend en une $(1,1)$-forme lisse sur~$\P^{n-1}_k$.
Sa puissance $(n-1)$-ième est alors une $(n-1,n-1)$-forme lisse,
contredisant la proposition~\ref{MA-delta_0/1}
qui affirme que $(\ddc \log\norm\cdot)^{n-1}$
est une masse de Dirac.

C'est donc probablement par la géométrie de Finsler qu'il conviendra
d'étudier les formes de Chern des fibrés vectoriels en géométrie 
non archimédienne.
\end{rema}

\subsection{}
Le produit tensoriel~$L\otimes L'$ 
de deux fibrés en droites métrisés~$L$ et~$L'$
possède une unique métrique continue pour laquelle 
$\norm{s\otimes s'}=\norm s \cdot \norm {s'}$.
% si $s$ et $s'$ sont des sections de~$L$ et~$L'$ respectivement.

De même, l'inverse d'un fibré en droites métrisé~$L$
possède une unique métrique continue pour laquelle~$\norm {s^{-1}}=\norm s^{-1}$
quand $s$ est inversible.
% si $s$ est une section inversible de~$L$.

Si $L$ et $L'$ sont des fibrés en droites
munis de métriques lisses, la métrique naturelle sur
$\underline L\otimes \underline L'$ est lisse, 
de même que la métrique naturelle de~$\underline L^{-1}$.

\section{Métriques paralinéaires}

Nous allons maintenant introduire une classe importante de fibrés métrisés, dont nous verrons plus bas
(\ref{s-metriques-formelles}) qu'elle contient notamment les fibrés métrisés issus de modèles formels.
Nous allons utiliser dans ce qui suit les différentes classes d'applications paralinéaires introduites en 
\ref{ss-classes-pl}.

\subsection{}
Soit $X$ un espace $k$-analytique, soit $E$
un fibré vectoriel sur $X$ et soit $(e_1,\ldots, e_n)$ une base de $E$. 

Pour toute  
famille
$(u_1,\ldots, u_n)$ de fonctions continues à valeurs strictement
positives sur $X$, il existe une unique métrique continue $\norm \cdot$ sur 
$E$ telle que \begin{equation}
\label{eq.base-orthogonale}
\norm{(\sum f_i \phi^*e_i)(y)}=\max  u_i(\phi (y)) \abs {f_i(y)}\end{equation}
 pour tout espace analytique $Y$, tout morphisme~$\phi\colon Y\to X$,
 toute famille $(f_i)$ de fonctions analytiques 
 sur $Y$ et tout $y\in Y$ ; nous noterons 
 $\max u_i \cdot \abs{e_i^\vee}$ cette métrique. \index{base orthogonale}

 Si $\norm \cdot $ est une métrique sur $E$, nous dirons que la base $(e_i)$
 est orthogonale pour $\norm \cdot$ s'il existe une famille $(u_i)$
 comme ci-dessus telle que $\norm \cdot =\max u_i\cdot \abs{e_i^\vee}$. La famille $u_i$ est alors uniquement
 déterminée : on a en effet nécessairement $u_i=\norm{e_i}$ pour tout $i$. Si $u_i$ est identiquement égale à $1$
 pour tout $i$ on dit que $(e_i)$ est orthonormée. 
\index{base orthonormée}

\subsection{}
Soit $(e_1,\dots,e_n)$ une base orthogonale de~$E$ 
et soit $(\eps_1,\dots,\eps_n)$ une famille de sections telle
que pour tout~$i$, l'inégalité $\norm{\eps_i}<\norm{e_i}$ 
soit vérifiée en tout point de~$X$.
On vérifie immédiatement que
la famille $(e_i+\eps_i)$  est une base orthogonale ; notons
que la fonction $\norm{e_i+\eps_i}$ est égale à $\norm{e_i}$ pour tout $i$. 

\begin{defi}\label{def.metrique-pl} 
 On dit qu'un fibré vectoriel métrisé $E$ est
 paralinéaire (\resp paralinéaire entier)
ou que la métrique de~$E$ est paralinéaire (\resp paralinéaire entière) 
s'il existe un G-recouvrement
 $(X_i)$ de $X$ et, pour tout $i$, 
 une base orthogonale $(e_{ij})$ telle que les fonctions $\log \norm{e_{ij}}$ 
 soient paralinéaires (resp. paralinéaires entières).
Si $X$ est strict, on dit que $E$
 est paralinéaire strict (\resp paralinéaire strictement entier) ou que sa métrique
 est paralinéaire stricte (\resp paralinéaire strictement entière) s'il existe un \Gstr -recouvrement $(X_i)$ de $X$
 et, pour tout $i$, 
 une base orthogonale $(e_{ij})$ telle que les fonctions $\log \norm{e_{ij}}$ 
 soient paralinéaires strictes (resp. paralinéaires strictement entières). 
\end{defi}

Notons que par définition, les fibrés paralinéaires stricts et strictement
entiers n'existent que sur les espaces stricts.  
Soit $E$ un fibré vectoriel muni d'une métrique paralinéaire;
pour toute section globale~$s$ de~$E$, la fonction
$\log \norm s$ est paralinéaire
sur l'ouvert de non-annulation de~$s$.
L'énoncé analogue vaut pour les métriques
paralinéaires entières, paralinéaires strictes, 
et paralinéaires strictement entières. 

\begin{rema}\label{rem-bases-orthogonales}
Soit $E$ un fibré vectoriel muni d'une métrique paralinéaire entière. Donnons-nous des familles
$(X_i)$ et $(e_{ij})$ comme dans la définition. Les fonctions $\log \norm{e_{ij}}$ sont paralinéaires entières. Pour $i$ fixé il y a 
un nombre fini de telles fonctions, et il existe donc un G-recouvrement $(X_{i\ell})$ 
de $X_i$ et, pour tout $\ell$, des fonctions analytiques inversibles
$g_{ij\ell}$ sur $X_{i\ell}$
et des nombres réels strictement positifs $c_{i\ell}
$ tels que $\norm{e_{ij}}|_{X_{i\ell}}=c_{i\ell}\abs{g_{ij\ell}}$ pour tout $j$. La famille des $g_{ij\ell}^{-1}e_{ij}|_{X_{i\ell}}$
est alors une base orthogonale de $E|_{X_{i\ell}}$ dont chaque élément est de norme constante. 
Par conséquent on peut, dans la définition d'un fibré métrisé paralinéaire entier, 
exiger que les $\norm{e_{ij}}$ soient constantes. 
On montre \textit{mutatis mutandis} qu'on peut, dans la définition d'un fibré paralinéaire strictement entier, 
exiger que les $\norm{e_{ij}}$  soient toutes identiquement égales à $1$, c'est-à-dire que les 
bases $(e_{ij})$ soient orthonormées.\end{rema}

\subsection{}
La métrique naturelle
sur le fibré en droites $\mathscr O_X$, pour laquelle $\norm{f}=\abs f$
universellement (c'est-à-dire pour tout morphisme $Y\to X$ et toute $f\in \mathscr O(Y)$) 
est paralinéaire entière, et strictement entière si $X$ est strict. Le fibré
$\mathscr O_X$ muni de cette métrique
sera appelé le fibré en droites métrisé trivial.

Plus généralement, soit $u$ une fonction continue à valeurs réelles sur $X$. 
La métrique sur $\mathscr O_X$ pour laquelle $\norm f=e^{-u}\cdot \abs f$ universellement est alors
paralinéaire (\resp paralinéaire entière, \resp paralinéaire stricte, \resp paralinéaire strictement entière) si et seulement si c'est le cas  
de $u$.

\subsection{}
Soit $E$ et $E'$ des fibrés vectoriels métrisés sur~$X$.
On munit la somme directe $\underline E\oplus \underline E'$ 
des fibrés vectoriels sous-jacents
de la métrique définie universellement
par $(e,e')\mapsto \max(\norm e,\norm {e'})$; on dit que c'est la somme
directe orthogonale et on note $E\oplus E'$ le fibré
vectoriel correspondant.

Cette métrique est paralinéaire (\resp paralinéaire entière, \resp paralinéaire stricte, \resp
paralinéaire strictement entière)
dès que les métriques sur~$E$ et $E'$ le sont.

Un fibré vectoriel métrisé $E$
sur $X$ est paralinéaire strictement entier
si et seulement s'il  existe un \Gstr -reouvrement $(X_i)$ de 
$X$ de $X$
tel que $E|_{X_i}$ soit pour tout $i$ isomorphe 
à une somme directe orthogonale finie de copies du fibré en droites trivial. 

\subsection{}
Soit $L$ et $L'$ deux 
fibrés métrisés paralinéaires sur~$X$; 
les fibrés métrisés 
$L\otimes L'$ et~$L^{-1}$ sont alors paralinéaires.
Un énoncé analogue vaut pour les fibrés métrisés
paralinéaires entiers, paralinéaires stricts, 
et paralinéaires strictement entiers.

Soient maintenant $E$ et $F$ deux fibrés vectoriels métrisés de rangs
arbitraires. 
Supposons que $E$ et $F$ possèdent des bases orthogonales $(e_i)$
et~$(f_j)$; on peut alors munir $E\otimes F$ de la métrique $\norm \cdot$ pour
laquelle la base $(e_i\otimes f_j)$ est orthogonale et vérifie les égalités
$\norm{e_i\otimes f_j}=\norm {e_i}\cdot \norm {f_j}$. 
Cette métrique ne dépend pas du choix des bases orthonormées
fixées $(e_i)$ et $(f_j)$ : c'est en effet l'unique métrique telle que pour
tout $x\in X$, la norme induite sur le $\hr x$-espace
vectoriel $(E\otimes F)(x)=E(x)\otimes F(x)$ soit la 
norme produit tensoriel définie par exemple 
dans \cite[p. 12]{berkovich1990}. 

Cette construction
permet par recollement G-local
ou \Gstr -local de munir le produit tensoriel de deux 
fibrés métrisés paralinéaires (\resp paralinéaires stricts, 
\resp paralinéaires entiers, \resp paralinéaires strictement entiers)
d'une métrique paralinéaire (\resp \ldots ).

De même, si $E$ est un fibré métrisé possédant une base orthogonale $(e_i)$ 
la métrique sur $E^\vee$ pour laquelle la base duale $(e_i^\vee)$ est orthogonale 
et vérifie les égalités $\norm{e_i^\vee}=\norm{e_i}^{-1}$. Cette construction
permet par recollement G-local ou
\Gstr -local de munir le dual d'un fibré métrisé paralinéaire (\resp \ldots) 
d'une métrique paralinéaire (\resp \ldots). 

\subsection{}
Soit $X$ un espace $k$-analytique, soit $Y$ un espace analytique sur une extension complète $L$ de $k$, soit $f\colon Y\to X$
un $k$-morphisme et soit $E$
un fibré vectoriel métrisé sur $X$. Si $E$ est paralinéaire (\resp paralinéaire entier) il en va de même de $f^*E$. 
Si $X$ et $Y$ sont stricts et si $E$ est paralinéaire strict (\resp paralinéaire strictement entier), 
il en va de même de $f^*E$.

\subsection{}\label{ss-sleqr-domaine}
Soit $E$ un fibré muni d'une métrique paralinéaire, soit $s$ une section de
$E$ et soit $r$ un réels strictement positif. L'ensemble des point $x$
de $X$ tels que $\|s(x)\|\leq r$ est alors un domaine analytique de $X$. 
En effet, on peut pour le voir raisonner G-localement
sur $X$, et donc supposer que $E$ possède une base orthogonale $(e_i)$. 
On peut alors écrire $s=\sum a_i e_i$ où les $e_i$ sont des fonctions
analytiques sur $X$. Par définition d'une métrique paralinéaire il existe
une famille $(g_i)$ de fonctions à valeurs réelles sur $X$, dont les logarithmes
sont paralinéaires, et telles que $\|s\|=\sup_i \abs{a_i}\cdot g_i$ identiquement
sur $X$. L'assertion s'en déduit immédiatement. 

Si $X$ est strict, si $E$ est strictement paralinéaire et si $r$
est de torsion modulo $\abs{k^\times}$ on démontre \textit{mutatis mutandis}
que l'ensemble des point $x$
de $X$ tels que $\|s(x)\|\leq r$ est un domaine strictement 
analytique de $X$.

\section{Fibré résiduel d'un fibré paralinéaire entier}

Soit $X$ un espace $k$-analytique séparé (au sens analytique), 
soit $x$ un point de $X$
et soit $E$ un fibré vectoriel métrisé paralinéaire entier sur $X$  dont
on note $n$ le rang.

\subsection{}
La tige
$E(x)$ est un $\hr x$-espace vectoriel normé
de dimension finie. Elle possède à ce titre un espace vectoriel 
gradué résiduel $\widetilde {E(x)}$ sur le corps 
gradué $\hrt x$ (\ref{ss-red-graduee}). 

Puisque $E$ est paralinéaire entier, il existe un domaine
analytique $V$ de $X$ contenant $x$, une base orthogonale
$(e_1,\ldots, e_n)$ de $E|_V$ et une famille
$(r_1,\ldots, r_n)$ de réels strictement positifs
telle que $\|e_i\|=r_i$ identiquement sur $V$ pour tout $i$
(remarque \ref{rem-bases-orthogonales}). 
Les $e_i(x)$ forment alors
une base du $\hr x$-espace
vectoriel normé 
$E(x)$ et l'on a 
$\|\sum \lambda_i e_i(x)\|=
\sup_i \abs{\lambda_i}r_i$ pour toute
famille $(\lambda_i)$ d'éléments de
$\hr x^n $. Il s'ensuit que  les $\widetilde {e_i(x)}$ forment une base
du $\hrt x$-espace vectoriel gradué 
$\widetilde{E(x)}$ (\ref{ss-independance-lin-residuelle}). 
En particulier, $\widetilde {E(x)}$ est de dimension $n$. 

\begin{lemm}\label{lem-exist-sections-locales}
Supposons que
$X$ est presque bon en $x$ et que $E$ est engendré par ses sections globales
sur $X$ en tant que $\mathscr O_X$-module.

\begin{enumerate}
\item Soit $\sigma$ un élément homogène de $\widetilde {E(x)}$.
Il existe un voisinage analytique $U$ de $x$ dans $X$ et une section 
$s$ de $E$ sur $U$ telle que $\sigma=\widetilde{s(x)}$. 

\item Soit $F$ un autre fibré vectoriel métrisé paralinéaire entier
sur $X$ engendré par ses
sections globales comme $\mathscr O_X$-module, 
et soit $\theta$ un isomorphisme de $\widetilde {E(x)}$ sur $\widetilde {F(x)}$. Il existe
un voisinage analytique $U$ de $x$ dans $X$ et un isomorphisme $u\colon E|_U\to F|_U$ de fibrés vectoriels
tel que l'isomorphisme $u(x)\colon E(x)\to F(x)$ soit une isométrie dont
la réduction $\widetilde{u(x)}$ est égale à $\theta$. 
\end{enumerate}
\end{lemm}

\begin{proof}
Montrons d'abord (a). 
L'assertion est évidente si $\sigma$ est nulle. Supposons $\sigma\neq 0$ et
soit $r$ son degré. Il existe alors un élément $t$ de $E(x)$ tel que
$\|t\|=r$ et $\sigma=\widetilde t$. Comme $E$ est engendré par ses sections
globales, il existe $s_1,\ldots, s_m$ dans $E(X)$ et des éléments
$\lambda_1,\ldots, \lambda_m$ de $\hr x$ tels que
$t=\sum \lambda_i s_i(x)$. Puisque $X$ est presque bon en $x$, il existe un
voisinage analytique $U$ de $x$ et des fonctions analytiques $\mu_1,\ldots, \mu_m$
sur $U$ telles que $\abs{\lambda_i-\mu_i(x)}\cdot \|s_i(x\|<r$ pour tout $i$. 
Notons $s$ la section $\sum \mu_i s_i|_U$ de $E$ sur $U$. On a par 
construction $\|s(x)-t\|<r$, si bien que $\widetilde{s(x)}=\widetilde
t=\sigma$. 

Montrons maintenant (b).
Choisissons uen base de $\widetilde{E(x)}$. 
Il résulte de (a) qu'il existe
une famille $(e_1,\ldots, e_n)$de sections de $E$ 
sur un voisinage analytique $U$ de $x$ 
telle que la base en question soit égale à $(\widetilde{e_i(x)})$. 
On a alors l'égalité
$\abs{\sum \lambda_i e_i(x)}=\sup_i \abs{\lambda_i}\|e_i(x)\|$ pour toute
famille $(\lambda_i)$ d'éléments de $\hr x$ 
(\ref{ss-independance-lin-residuelle}), ce qui implique que la famille des
$e_i(x)$ est libre, puis qu'elle est une base de $E(x)$ pour des raisons 
de dimension, et finalement 
une base de $E$ au voisinage de $x$ puisque $E$ est trivialisable au voisinage 
de $x$. On peut donc restreindre $U$ de sort que les $e_i$
forment une base de $E|_U$. 

Comme
$\theta$ est un isomorphisme, la famille des 
$\theta(\widetilde{f_i(x)})$ est une base de $\widetilde{F(x)}$. 
En utilisant à nouveau (a) et en raisonnant comme ci-dessus on voit que quitte à restreindre
$U$ on peut supposer qu'il existe une base $(f_i)$ de $F|_U$ telle que 
$\widetilde{f_i(x)}=\theta(\widetilde{u_i(x)})$ pour tout $i$, et que l'on a 
$\abs{\sum \lambda_i f_i(x)}=\sup_i \abs{\lambda_i}\|f_i(x)\|$ pour toute
famille $(\lambda_i)$ d'éléments de $\hr x$ ; 
remarquons que comme $\theta$ préserve les degrés on a $\|f_i(x)\|=\|e_i(x)\|$.
Soit $u$ l'isomorphisme de $E|_U$ sur $F|_U$ qui envoie $e_i$ sur $f_i$
pour tout $i$. 
Comme on a pour toute famille 
$(\lambda_i)$ d'éléments de $\hr x$ les égalités
\[\left|u\left(\sum \lambda_i e_i(x)\right)\right|=\left|\sum \lambda_i f_i(x)\right|=
\sup_i \abs{\lambda_i}\cdot \|f_i(x)\|
=\sup_i \abs{\lambda_i}\cdot \|e_i(x)\|=\left|\sum \lambda_i e_i(x)\right|,\]
le morphisme $u(x)$ est bien une isométrie, 
et comme $\widetilde{u(x)}$ envoie
$\widetilde{e_i(x)}$ sur $\widetilde{f_i(x)}$ par construction il vient
$\widetilde{u(x)}=\theta$, d'où (b). 
\end{proof}

\begin{prop}\label{prop-metrique-residuelle}
Soit $\sigma$ une section homogène non nulle de
$\widetilde E(x)$, soit $r$ son degré
et soit $v$ un élément de
$\widetilde{(X,x)}$.

\begin{enumerate}

\item Il existe un domaine analytique $V$ de $X$ contenant $x$
tel que $v\in \widetilde {(V,x)}$,
une base orthogonale $(e_i)$ de $Ê|_V$ telle que $\|e_i\|$ soit
constante pour tout $i$, et 
une section $s$ de $E$ sur $V$ vérifiant 
l'égalité $\widetilde{s(x)}=\sigma$. 
\item Soit $(V,(e_i), s)$ comme en (a) ; pour 
tout $i$, notons $r_i$ la valeur constante de $\|e_i\|$
sur $V$. 
Écrivons
$s=\sum a_i e_i $ et désignons par
$I$ l'ensemble des indices
$i$ tels que $\abs{a_i(x)}\cdot r_i=r$. L'élément $\sup_{i\in I}v(
\widetilde{a_i(x)})$ de
$v(\hrt x^\times)$ ne dépend alors pas du choix
de $(V,(e_i), s)$. Notons-le 
$v(\sigma)$. 

\item Soit $W$ un domaine analytique de $X$
contenant $x$
et tel que $v\in \widetilde{(W,x)}$, 
soit $a$ une fonction analytique 
sur $W$ ne s'annulant pas en $x$
et soit $t$ une section de $E$ sur $W$
telle que $\widetilde {t(x)}=\sigma$. 
Les assertions suivantes sont équivalentes : 
\begin{itemize}
\item[(i)] $v(\widetilde{a(x)})v(\sigma)\leq 1$ ; 
\item[(ii)] le domaine analytique 
$W'$ de $W$ défini par l'inégalité
$\|at\|\leq r\cdot \abs{a(x)}$ vérifie la condition 
$\widetilde{(W',x)}\ni v$. 
\end{itemize}
\end{enumerate}
\end{prop}
Notons que $W'$ est bien un domaine
analytique de $W$ d'après
\ref{ss-sleqr-domaine}. 

\begin{proof}
Comme  la métrique de $E$ est paralinéaire entière, il existe 
un G-recouvrement $(V_i)$ de $X$ tel que $E|_{V_i}$ possède pour tout $i$
une base orthogonale dont les vecteurs sont de norme constante 
(\ref{rem-bases-orthogonales}). Quitte à raffiner ce recouvrement, on peut 
supposer que les $V_i$ sont affinoïdes. Soit $I$ l'ensemble des indices
tels que $x\in V_i$. Les $(V_i,x)$ pour $i\in I$ constituent un G-recouvrement
du germe $(X,x)$, si bien que $\widetilde{(X,x)}$ est la réunion des
$\widetilde{(V_i,x)}$. Il existe donc $i\in I$ tel que $v\in 
\widetilde{(V_i,x)}$. Et comme $V_i$ est affinoïde et en particulier presque
bon en $x$, il résulte du lemme \ref{lem-exist-sections-locales} 
qu'il existe un voisinage analytique $V$ de $x$ dans  $V_i$ et une section 
$s$ de $E$ sur $V$ telle que $\widetilde{s(x)}=\sigma$, ce qui achève 
de prouver (a). 

Prenons $V,(e_i), (r_i), s,(a_i), I$ comme dans (b), 
et posons $v(\sigma)=\max_{i\in I}v(
\widetilde{a_i(x)})$. Avant de montrer que que $v(\sigma)$
ne dépend effectivement que de $v$ et de $\sigma$, nous allons montrer qu'il
satisfait (c). Soit donc $W$ et $t$ comme dans (c). Pour établir l'équivalence
entre (i) et (ii) on peut remplacer $W$ par n'importe lequel de ses domaines
contenant $x$ et dont la réduction contient $v$, et en particulier
par $V\cap W$ ; et l'on peut aussi remplacer $V$ par $V\cap W$, ce qui 
permet finalement de supposer que $W=V$. 
Les égalités $\widetilde {t(x)}=\sigma=\widetilde{s(x)}$ entraînent que
$\|s(x)-t(x)\|<r$ si bien que pour toute fonction 
$a$ sur $W$ non nulle en $x$ les inégalités 
$\|as\|\leq r\abs{a(x)}$ et 
$\|at\|\leq r\abs{a(x)}$ définissent le même domaine analytique au voisinage
de $x$ dans $V$. On peut donc également pour établir l'équivalence
voulue remplacer $t$ par $s$, c'est-à-dire supposer que $t=s$. 
On a $\|s\|=\max_i \abs{a_i}r_i$ identiquement sur $V$ ; quitte à remplacer $V$
par un voisinage convenable de $x$ dans $V$, on peut supposer que
 $\|s\|=\max_{i\in I} \abs{a_i}r_i$ identiquement sur $V$. 

Soit $a$ une fonction analytique sur $V$ ne s'annulant pas en $x$. 
Pour harmoniser les notations, désignons par $V'$ (plutôt que $W'$) le domaine
analytique 
de $V$ défini par l'inégalité $\|as\|\leq r\abs{a(x)}$. 
On peut également définir $V'$ par la conjonction d'inégalités
\[\bigwedge_{i\in I} \abs{aa_i}r_i\leq r\cdot \abs{a(x)}=\abs{a(x)a_i(x)}\cdot r_i,\]
c'est-à-dire encore par la conjonction d'inégalités 
\[\bigwedge_{i\in I} \abs{aa_i}\leq \abs{a(x)a_i(x)}.
\]
Il s'ensuit que $\widetilde{(V',x)}$ est égal 
à $\zr{\hrt x}{\widetilde k}\{\widetilde{a_i(x)}\widetilde{a(x)}\}_{i\in I}$. 
Par conséquent, $v$ appartient à $\widetilde{(V',x)}$
si et seulement si $\max_{i\in I}v(\widetilde{a_i(x)})v(\widetilde{a(x)}\leq 1$,
c'est-à-dire encore si et seulement si 
$v(\widetilde{a(x)}v(\sigma)\leq 1$, ce qui montre (c). 

Il reste à prouver (b). Pour ce faire, fixons un domaine
affinoïde $W$ de $X$ contenant $x$ et une section $t$
de $E$ sur $W$ telle que $\widetilde{t(x)}=\sigma$ (l'existence d'un tel
$(W,t)$ est assuré par (a)). 

Soit $(V,(e_i), s, (a_i))$ comme en (b) ; posons
$v(\sigma)=\max_{i\in I}v(
\widetilde{a_i(x)})$. 
Soit $\gamma$ un élément de $v(\hrt x^\times)$. 
Comme $W$ est affinoïde, il existe une fonction analytique $a$
définie sur un voisinage analytique $U$ de $x$ dans $W$
tel que $v(\widetilde{a(x)})=\gamma$. Soit $U'$ le domaine
analytique de $U$ défini par l'inégalité 
$\|at\|\leq r\cdot \abs{a(x)}$. D'après l'assertion 
(c) déjà établie, on a $\gamma v(\sigma)\leq 1$
si et seulement si $v$ appartient à $\widetilde{(U',x)}$. 
Cette dernière condition ne dépend pas du choix de $(V,(e_i), s)$. 
Par conséquent, l'ensemble $\Gamma$ des éléments $\gamma$ de $v(
\widetilde{a_i(x)})$ tels que $\gamma v(\sigma)\leq 1$ est 
indépendant de $(V,(e_i), s)$ ; or $v(\sigma)$ est l'inverse du plus grand
élément de $\Gamma$, ce qui montre (b). 
\end{proof}

\subsection{}\label{ss-def-metrique-residuelle}
La donnée pour tout élément homogène non nul
$\sigma$ de $\widetilde {E(x)}$ de l'application $v\mapsto v(\sigma)$
définit (en posant en plus $v(0)=0)$ une métrique au-dessus
de $\widetilde{(X,x)}$
sur le
$\hrt x$-espace vectoriel gradué $\widetilde {E(x)}$
(\ref{def-metrique-graduee}).

Soit $V$ un domaine analytique de $X$ contenant $x$ et sur lequel
$E$ possède une base orthogonale $(e_i)$
dont les vecteurs sont de norme constante.  
La famille des $\widetilde{e_i(x)}$ est alors une base
de $\widetilde{E(x)}$ (\ref{ss-independance-lin-residuelle}), et il résulte 
de l'assertion (b) de la proposition 
\ref{prop-metrique-residuelle}
que cette base est orthonormée 
au-dessus de $\widetilde{(V,x)}$.

Puisque $E$ est paralinéaire entier, il existe une famille finie
$(V_i)$ de domaines analytiques contenant $x$ et dont la réunion est un 
voisinage de $x$ tel que chacun des $V_i$ possède une base orthogonale
constituée de vecteurs de norme constante. 
Les
$\widetilde{(V_i,x)}$ recouvrent $\widetilde {(X,x
)}$, et il résulte de ce qui précède que $\widetilde{E(x)}$ possède pour tout $i$
une base orhonormée au-dessus de $\widetilde{(V_i,x)}$. 
Ceci entraîne que
$(\widetilde {E(x)}, s\mapsto (v\mapsto v(s))$ est associé à 
un fibré vectoriel de rang $n$ sur $\widetilde{(X,x)}$, 
que nous noterons $\widetilde{E_x}$. Nous l'appelons
le \textit{fibré résiduel} de $E$ en $x$. 

\subsection{}\label{rappels-fibgradue-metriques}
Rappelons ici les liens entre $\widetilde{E_x}$ et $\widetilde{E(x)}$ : 
\begin{itemize}
\item le faisceau $\widetilde{E_x}$ est un sous-faisceau du faisceau constant
$\underline{\widetilde{E(x)}}$ ; pour tout ouvert non vide $U$ de $\widetilde {(X,x)}$ et tout $r>0$ le groupe
$\widetilde{E_x}(U)_r$ est le sous-groupe de $\widetilde{E(x)}_r$ formé des sections
$s$ telles que $v(s)\leq 1$ pour tout $v\in U$ ;
\item l'espace vectoriel gradué $\widetilde{E(x)}$ est la fibre générique du fibré vectoriel
$\widetilde{E_x}$ ; pour toute section homogène $s$ de $\widetilde{E(x)}$, tout
$v\in U$ et tout $a\in \hrt x^\times$ on a $v(s)\leq v(a)^{-1}$ si et seulement si 
$as\in (\widetilde{E_x})_v$ ; 
\item si $(\eps_1,\ldots, \eps_n)$ est une base de $\widetilde{E(x)}$ et si $U$ est un ouvert de
$\widetilde{(X,x)}$, la base $(\eps_i)$ est orthonormée au-dessus de $U$ si et seulement si 
les $\eps_i|_U$ constituent une base de $\widetilde{E_x}|_U$. 
\end{itemize}

On déduit alors de \ref{ss-def-metrique-residuelle} que si $V$ est un domaine 
analytique de $X$ contenant $x$ tel que $E|_V$ possède une base orthognale 
$(e_i)$ dont les vecteurs sont de norme constante sur $V$, les $\widetilde{e_i(x)}|_{\widetilde{(V,x)}}$ 
forment une base de $\widetilde{E_x}|_{\widetilde{(V,x)}}$.

\subsection{}
Supposons que $X$ est presque bon en $x$ et que 
$E$ est trivialisable comme fibré vectoriel 
au voisinage de $x$. Soit $V$
un domaine analytique de $X$ contenant $x$. 
Il résulte du lemme \ref{lem-exist-sections-locales}
que pour tout réel $r>0$ le groupe des sections
homogènes de degré $r$ de $\widetilde{E_x}$ sur $\widetilde{(V,x)}$
est le quotient du groupe des sections $s$ de $E$
sur le germe $(X,x)$  telles que $\|s\|\leq r$ 
sur le germe $(V,x)$
par son sous-groupe formé des sections $s$ 
telles que $\|s(x)\|<r$.

\section{Propriétés du foncteur de réduction des fibrés métrisés}

On conserve les notations de la section précédente : $X$ est un espace $k$-analytique séparé, $x$ un point de $X$
et $E$ un fibré vectoriel métrisé paralinéaire entier sur $X$  dont
on note $n$ le rang.

\subsection{}\label{fibre-res-foncteur}
Il résulte de la construction que ni la métrique sur $\widetilde{E(x)}$
au-dessus de $\widetilde{(X,x)}$ ni (par conséquent) $\widetilde{E_x}$
ne sont modifiés si l'on remplace $X$ par un voisinage 
analytique $U$ de $x$ dans $X$, et $E$ par $E|_U$ : le fibré résiduel de
$E$ en $x$ ne dépend donc que du germe de $E$ en $x$. 

Cette dépendance est fonctorielle : si $U$ est un voisinage analytique de $x$
dans $X$ et si $F$ un fibré vectoriel sur $U$ muni d'une métrique
paralinéaire entière, toute isométrie de $E|_U$ sur $F$ induit une isométrie
de $\widetilde{E(x)}$ sur $\widetilde{F(x)}$, et donc un isomorphisme
de $\widetilde{E_x}$ sur $\widetilde{F_x}$. 

Si $V$ est un domaine analytique de $X$ contenant $x$, on a 
par construction $\widetilde{(E|_V)_x}=\widetilde{E_x}|_{\widetilde{(V,x)}}$. 
Plus généralement, pour tout espace analytique séparé 
$Y$ sur une extension de $k$, 
tout morphisme $p\colon Y\to X$ et tout $y\in p^{-1}(x)$, le fibré résiduel 
$\widetilde{(p^*E)_y}$ est égal à 
$q^*\widetilde{E_x}$ où $q$ est l'application 
de $\widetilde{(Y,y)}$ vers $\widetilde{(X,x)}$ induite par $p$. 

\subsection{}\label{fibre-res-tenseur}
Soit $F$ un fibré métrisé paralinéaire entier
sur $X$. Soit $G$ le fibré métrisé $E\otimes F$. 
Soit $\sigma$ un élément homogène de $\widetilde{E(x)}$ et soit $\tau$ un élément
homogène de $\widetilde{F(x)}$. Relevons-les respectivement en des éléments
$s$ et $t$ de $E(x)$ et $F(x)$. Un calcul explicite dans des bases orthogonales
de $E(x)$ et $F(x)$ montre que $\widetilde{(s\otimes t)(x)}$ ne dépend que de $\sigma$ et 
$\tau$. On définit ainsi une application bilinéaire de $\widetilde{E(x)}\times \widetilde{F(x)}$ vers
$\widetilde {G(x)}$ et donc une application linéaire de $\widetilde {E(x)}\otimes \widetilde{F(x)}$
vers $\widetilde{G(x)}$ dont on vérifie (en utilisant là encore des bases orthogonales de $E(x)$ et $F(x)$)
que c'est un isomorphisme.Cet isomorphisme est en fait une isométrie, comme on le voit en se ramenant
par un raisonnement G-local sur le germe $(X,x)$ au cas où $E$ et $F$ possèdent chacun une base orthogonale
dont les vecteurs sont de norme constante. 

On dispose donc d'un isomorphisme naturel entre $\widetilde{E_x}\otimes\widetilde{F_x}$ et $\widetilde{G_x}$. 

\subsection{}\label{fibre-res-dual}
Soit $r>0$ et soit $\sigma$ une section de $\widetilde{E^\vee(x)}^\vee$ homogène de degré $r$. 
Soit $\tau$ une section homogène de $\widetilde{E(x)}$. Relevons $\sigma$ en un élément
$s$ de $\mathrm{Hom}(E(x), \hr x[r])$ et $\tau$ en un élément 
$t$ de $E(x)$. Un calcul explicite dans une base orthogonale de $E(x)$ montre que 
l'élément homogène $\widetilde{s(t)(x)}$ de $\hrt x[r]$ ne dépend que de $\sigma$ et $\tau$, 
et pas des relèvements $s$ et $t$. On obtient ainsi une application linéaire de $\widetilde{E^\vee(x)}$ dans
$\widetilde{E(x)}^\vee$ dont on vérifie (en utilisant là encore une base orthogonale de $E(x)$)
que c'est un isomorphisme.Cet isomorphisme est en fait une isométrie, comme on le voit en se ramenant
par un raisonnement G-local sur le germe $(X,x)$ au cas où $E$ possèdent une base orthogonale
dont les vecteurs sont de norme constante. 

On dispose donc d'un isomorphisme naturel entre $\widetilde{E^\vee_x}$
et $\widetilde{E_x}^\vee$.

\begin{lemm}\label{lem-isometrie-residuelle}
Soit $F$ un fibré vectoriel métrisé paralinéaire entier 
sur $X$. 
Soit $V$ un domaine analytique de $X$ contenant $x$.
Soit $u$ un isomorphisme de fibrés vectoriels de $E$ sur $F$ tel que
le morphisme induit $u(x)\colon E(x)\to F(x)$ soit 
une isométrie ; soit
$\theta$ l'isomorphisme de 
$\widetilde {E(x)}$ sur $\widetilde{F(x)}$ induit par $u(x)$. 
Les assertions suivantes sont équivalentes : 

\begin{itemize}
\item[(i)] il existe un voisinage analytique $U$
de $x$ dans $X$ tel que $u|_{U\cap V}$ soit une isométrie ; 

\item[(ii)] l'application $\theta$ est une isométrie au-dessus de $\widetilde{(V,x)}$, c'est-à-dire que 
l'on a $v(\theta(s))=v(s)$ pour toute section homogène $s$
de $\widetilde {E(x)}$ et toute $v\in \widetilde{(V,x)}$ ; 
\item[(iii)] l'application $\theta$ induit un isomorphisme de
$\widetilde{E_x}|_{\widetilde{(V,x)}}$ sur 
$\widetilde{F_x}|_{\widetilde{(V,x)}}$. 
\end{itemize}
\end{lemm}

\begin{proof}
L'équivalence de (ii) et (iii) résulte des généralités rappelées en 
\ref{rappels-fibgradue-metriques}, 
et l'implication (i)$\Rightarrow$(ii) provient de la fonctorialité
de la construction du fibré résiduel, et de son bon
comportement par restriction à un domaine analytique. 
Supposons maintenant que (ii) est vraie, et 
montrons (i). 
Pour ce faire on peut remplacer $X$
par n'importe quel voisinage analytique de $x$ dans $X$. 
On peut ainsi supposer que $X$ et $V$ sont compacts. 
Il existe alors un recouvrement fini $(V_j)$ de $V$ 
par des domaines analytiques compacts tels que $E|_{V_j}$ et $F|_{V_j}$
admettent pour tout $j$ une base orthogonale dont les vecteurs sont de norme
constante sur $V_j$. Il suffit alors d'exhiber pour chaque $j$ un voisinage ouvert
$U_j$ de $x$ dans $X$ tel que $u|_{V_j\cap U_j}$ soit une isométrie : en effet dans ce cas
la restriction de $u$ à $U:=\bigcap U_j$ induira une isométrie sur 
$V\cap U$, par vérification sur le G-recouvrement de
$V\cap U$ par les 
$V_j\cap U$.  On se ramène ainsi au cas où 
$E|_V$ et $F|_V$ possèdent chacun une base orthogonale à vecteurs de norme
constante. Il suffit dès lors de montrer l'existence d'un 
voisinage analytique $U$ de $x$ tel que $u|_{V\cap U}$ diminue
les normes (au sens large). En effet l'application de  ce résultat à $u^{-1}$ 
montrera que $U$ peut être choisi de sorte que $u^{-1}|_{V\cap U}$ diminue 
également les normes, auquel cas $u|_{V\cap U}$ sera une isométrie. 

Choisissons une base orthogonale $(s_i)$ pour de $E|_V$ 
tel que $\|s_i\|$ soit pour tout $i$ de norme constante $\rho_i$. 

Par hypothèse sur $u$ on a pour tout indice $i$ les
égalités $\|u(s_i)(x)\|=\|s_i(x)\|=\rho_i$
et 
$\widetilde{u(s_i)(x)}=\theta(\widetilde{s_i(x)})$. 
Soit $v\in \widetilde {(V,x)}$. Par 
choix de $s_i$ on a $\|s_i\|=\rho_i$ identiquement
sur $V$, si bien que $v(\widetilde{s_i(x)})=1$. Comme
on travaille sous l'hypothèse (ii) on en déduit que
$v(\theta(\widetilde{s_i(x)}))=1$. Ceci valant pour tout $v$
appartenant à $\widetilde{(V,x)}$, on a l'égalité 
$\|u(s_i)\|=\rho_i$ au voisinage de $x$ sur $V$. Comme il y a un nombre fini d'indices $i$,
on en déduit l'existence d'un voisinage analytique $U$
de $x$ dans $X$ tel que $\|u(s_i)\|=\rho_i$ identiquement sur $V\cap U$
pour tout $i$. 
Pour toute famille $(b_i)$ de fonctions analytiques définies sur un 
domaine analytique $W$ de $V\cap U$ on a alors
$\|\sum b_i s_i\|=\max_i \|b_i\|\rho_i$ identiquement sur $W$, 
et \[\|u(\sum b_i s_i)\|=\|\sum b_i u(s_i)\|\leq \max_i \|b_i\|\rho_i\]
identiquement sur $W$. Par
conséquent $u$ diminue les normes sur $V\cap U$, ce qui achève la démonstration. 
\end{proof}

\begin{coro}\label{coro-BON}
Soit $(e_i)$ une base de $E$ telle que les $\widetilde{e_i(x)}$ forment une base
de $\widetilde{E(x)}$ et soit $V$ un domaine analytique de $X$ contenant $x$. 
Les assertions suivantes sont équivalentes : 

\begin{itemize}
\item[(i)] il existe un voisinage analytique $U$ de $x$ tel que les
$e_i|_{V\cap U}$ soient de norme constante et constituent une base orthogonale
de $E|_{V\cap U}$ ; 

\item[(ii)] la base $(\widetilde{e_i(x)})$
est orthonormée au-dessus de $\widetilde{(V,x)}$ ;

\item[(iii)] les $\widetilde{e_i(x)}|_{\widetilde{(V,x)}}$ constituent une base de $\widetilde{E_x}
|_{\widetilde{(V,x)}}$. 

\end{itemize}
\end{coro}

\begin{proof}
L'équivalence de (ii) et (iii) résulte des propriétés générales rappelées en \ref{rappels-fibgradue-metriques}. 
L'implication (i)$\Rightarrow$(ii) découle de \ref{ss-def-metrique-residuelle}. 

Supposons que (iii) soit vraie. 
Soit $n$ le rang de $E$. 
Pour tout $i$, posons $r_i=\|e_i(x)\|$. 
Soit $F$ le fibré vectoriel $\mathscr O_X^n$ muni de la 
métrique paralinéaire entière $(a_i)\mapsto \sup_i \|a_i\|\cdot r_i$. 
Soit $u$ le morphisme de $F$ vers $E$ qui envoie $(a_i)$ sur $\sum a_i e_i$. 
L'hypothèse faite sur les $e_i$ signifie précisément que $u(x)\colon F(x)\to E(x)$ est une isométrie
(\ref{ss-independance-lin-residuelle}). 
Soit $(f_i)$ la base canonique de $F$. Elle est par construction
orthogonale et constituée de vecteurs de norme constante. 
En utilisant (pour le fibré métrisé paralinéaire entier $F$) l'implication  (i)$\Rightarrow$(iii) déjà établie, on en déduit que les $\widetilde{f_i(x)}$
forment une base
de $\widetilde{F_x}$. Puisqu'on suppose que (iii) est vraie (pour le fibré métrisé paralinéaire entier $E$), le morphisme
$\widetilde{u(x)}$ induit un isomorphisme de $\widetilde{F_x}|_{\widetilde{(V,x)}}$ sur  $\widetilde{E_x}|_{\widetilde{(V,x)}}$. 
Le lemme \ref{lem-isometrie-residuelle} assure alors l'existence d'un voisinage analytique $U$ de $x$ dans $X$
tel que $u|_{V\cap U}$ soit une isométrie ; mais cela signifie précisément que les $e_i|_{V\cap U}$ sont
de norme constante et 
constituent une base orthogonale de $E|_{V\cap U}$. 

\end{proof}

\begin{coro}\label{coro-releve-isometrie}
Supposons que $X$ est presque bon en $x$ et que $E$ est trivialisable au voisinage de $x$ comme fibré
vectoriel. Soit $F$ un fibré vectoriel métrisé paralinéaire entier sur $X$, lui aussi trivialisable comme fibré vectoriel
au voisinage de $X$. Soit $V$ un domaine analytique de $X$ contenant $x$ et soit $\theta$ un isomorphisme
de $\widetilde{E(x)}$ sur $\widetilde{F(x)}$ induisant un isomorphisme
de $\widetilde{E_x}|_{\widetilde{(V,x)}}$ sur $\widetilde{F_x}|_{\widetilde{(V,x)}}$. Il existe alors un voisinage
analytique $U$ de $x$ dans $X$ et un isomorphisme de fibrés vectoriels $u\colon E_U\simeq F|_U$ tel que $u|_{V\cap U}$ soit une 
isométrie et tel que $\widetilde{u(x)}=\theta$. 
\end{coro}

\begin{proof}
Il existe par hypothèse un voisinage analytique $U$ de $x$ dans $X$ tel que $E|_U$ et $F|_U$ soient libres et en particulier
engendré par leurs sections globales. Le lemme \ref{lem-exist-sections-locales} assure alors qu'on peut supposer, quitte
à restreindre $U$, qu'il existe un isomorphisme $u$ de fibrés vectoriels de $E|_U$ sur $F|_U$ tel que $u(x)$ soit une isométrie
dont la réduction $\widetilde{u(x)}$ est égale à $\theta$. 
Il résulte alors du lemme \ref{lem-isometrie-residuelle} qu'on peut restreindre $U$ de sorte que 
$u|_{V\cap U}$ soit une isométrie.
\end{proof}

\begin{prop}\label{prop-fibreres-essurj}
Supposons que $X$ est presque bon en $x$. Soit $M$ un fibré vectoriel
sur $\widetilde {(X,x)}$. Il existe un voisinage analytique $U$ de
$x$ dans $X$ et un fibré vectoriel métrisé paralinéaire entier $F$ sur $X$, 
trivialisable au voisinage de $x$ comme fibré vectoriel, tel que
$\widetilde{F_x}\simeq M$.
\end{prop}

\begin{proof}
Soit
$m$ le rang de $M$. 
Il existe une famille finie $(V_\alpha)$ de domaines analytiques contenant
$x$ tel que les germes $(V_\alpha,x)$ recouvrent $(X,x)$ et, pour tout $i$, 
un isomorphisme $\theta_\alpha$
de $ M|_{\widetilde{(V_\alpha,x)}}$ sur $ \mathscr 
O_{\widetilde{(V_\alpha,x)}}[r_\alpha]$ pour un certain polyrayon  $r_\alpha=(r_{\alpha i})_i$ de longueur
$m$. Nous notons encore 
$\theta_\alpha$ l'isomorphisme $M_\eta\simeq \hrt x[r_\alpha]$ d'espaces vectoriels
gradués induit par localisation au point générique. 

Pour tout $\alpha$, désignons par $F_\alpha$ le fibré vectoriel 
$\mathscr O_X^m$ muni de la métrique paralinéaire entière
$(a_i)\mapsto \sup_i \abs{a_i}\cdot r_{\alpha i}$ ; on a 
$\widetilde{F_{\alpha,x}}=\mathscr O_{\widetilde{(X,x)}}[r_\alpha]$. 

Fixons un indice 
$\alpha$. 
En vertu du corollaire
\ref{coro-releve-isometrie} et de la finitude de la famille
$(V_\beta)_\beta$ 
il existe 
un voisinage analytique $U$ de $x$ dans $X$ et, pour tout 
$\beta\neq \alpha$, un automorphisme $u_{\alpha\beta}$ de $\mathscr  O_U^m$ 
induisant une isométrie de 
$F_\beta|_{V_\alpha\cap V_\beta\cap U}$ sur $F_\alpha|_{V_\alpha\cap V_\beta\cap U}$ ; 
on pose par ailleurs $u_{\alpha\alpha}=\mathrm{Id}_{\mathscr O_U}^m$.

Pour tout couple $(\beta,\gamma)$ on pose alors
$u_{\beta\gamma}=u_{\alpha\beta}^{-1}u_{\alpha \gamma}$ (la notation
$u_{\alpha\gamma}$ dans ce sens coïncide avec celle introduite précédemment). 
Chacun des $u_{\beta\gamma}$ est un automorphisme de $\mathscr O_U^m$ induisant 
une isométrie 
de
$F_\gamma|_{V_\beta\cap V_\gamma\cap U}$ sur $F_\beta|_{V_\beta\cap V_\gamma\cap U}$. La famille des $(u_{\beta \gamma})$
satisfait visiblement les relations de cocycle et l'on peut donc l'utiliser pour recoller les fibrés métrisés
$F_\beta|_{U\cap V_\beta}$ en un fibré métrisé
$F$ ; par définition, $F$ est paralinéaire entier, et il résulte de sa construction que l'on dispose pour tout $\beta$
d'un isomorphisme composé
\[\lambda_\beta \colon \widetilde{F_x}|_{\widetilde{(V_\beta,x)}}
\simeq\widetilde{F_{\beta,x}}|_{\widetilde{(V_\beta,x)}}=
\begin{tikzcd}\mathscr 
O_{\widetilde{(V_\beta,x)}}[r_\beta]\ar[rr,"\sim", "\theta_\beta^{-1}"'] &&M|_{\widetilde{(V_\beta,x)}}
\end{tikzcd}\] et que 
$\lambda_\beta=\lambda_\gamma$ sur $\widetilde{(V_\beta,x)}\cap \widetilde{(V_\gamma,x)}$
pour tout $(\beta,\gamma)$. Ainsi, les $\lambda_\beta$ se recollent en un isomorphisme de
$\widetilde{F_x}$ sur $M$. 
\end{proof}

\subsection{}\label{ss-categories-germes-fibres}
Soit $X$ un espace $k$-analytique séparé et soit $x\in X$. 
Notons $\mathsf F(X,x)$ le groupoïde des germes de fibrés vectoriels paralinéaires entiers en $x$, défini comme suit. 
On part de la catégorie $\mathsf C (X,x)$ dont les objets sont les couples $(U,E)$ où $U$ est un voisinage
analytique de $x$ dans $X$ et $E$ un fibré vectoriel métrisé paralinéaire entier sur $U$, et dans laquelle l'ensemble
des morphismes de $(V,F)$ vers $(U,E)$ est vide si $V$ n'est pas contenu dans $U$, et est l'ensemble des isométries
de $F$ sur $E|_U$ sinon ; on définit alors $\mathsf F(X,x)$ comme la localisation de $\mathsf C(X,x)$ par la collection 
de toutes 
ses flèches ; un objet  de $\mathsf F(X,x)$ possède un rang (le rang en $x$ du fibré vectoriel d'où il provient), 
invariant par isomorphismes de $\mathsf F(X,x)$. On note $\breve {\mathsf F}(X,x)$ la sous-catégorie de $\mathsf F(X,x)$
formée des objets qui sont triviaux en tant que germe de fibré
vectoriel (sans métrique), c'est-à-dire isomorphes à un objet $(U,E)$ où $E$ est libre comme fibré vectoriel. 
Si le germe $(X,x)$ est bon on a  $\breve {\mathsf F}(X,x)=\mathsf F(X,x)$. 
Enfin nous noterons $\widetilde{\mathsf F}(X,x)$ le groupoïde des fibrés vectoriels sur $\widetilde{(X,x)})$
(on ne considère  donc comme flèches que les isomorphismes de fibrés vectoriels).

On peut récapituler ce qui précède à travers le théorème suivant. 

\begin{theo}\label{theo-fibre-residuel}
Soit $X$ un espace $k$-analytique séparé et soit $x$ un point de $X$. 
Soit $n$ un entier. 

\begin{enumerate}
\item La formule $E\mapsto \widetilde{E_x}$ définit un foncteur de $\mathsf F (X,x)$ vers
$\widetilde{\mathsf F}(X,x)$ qui préserve le rang, commute au produit tensoriel, à la formation du dual, et au changement de base. 

\item Supposons que
le germe $(X,x)$ est presque bon.  Le foncteur $E\mapsto \widetilde{E_x}$
est essentiellement surjectif, et c'est déjà le cas de sa restriction à $\breve{\mathsf F}(X,x)$ ; cette dernière
est également pleine. 

\item Supposons que le germe $(X,x)$ est bon. Le foncteur $E\mapsto \widetilde{E_x}$ est plein.

\end{enumerate}

\end{theo}

\begin{proof}
L'énoncé (a)
résulte de \ref{fibre-res-foncteur}, \ref{fibre-res-tenseur}
et \ref{fibre-res-dual}. 
Supposons que $(X,x)$ est presque bon. La proposition 
\ref{prop-fibreres-essurj} assure précisément 
que la restriction à $\breve{\mathsf F}$ du foncteur 
$E\mapsto \widetilde{E_x}$ 
est essentiellement surjective. Montrons qu'elle est pleine. 
Soit donc $E$ et $F$ deux objets de $\breve{\mathsf F}_n$, et supposons donné
un isomorphisme $\theta$ entre $\widetilde{E_x}$ et $\widetilde{F_x}$ ; soit
$\theta'$ le morphisme induit de $\widetilde{E(x)}$ sur $\widetilde{F(x)}$. 
Le corollaire \ref{coro-releve-isometrie} assure qu'il existe un 
isomorphisme $u$ entre les germes en $x$ de fibrés vectoriels (non métrisés) 
$E$ et $F$ qui induit une isométrie de $E(x)$ sur $F(x)$ dont la réduction 
$\widetilde{u(x)}$  coïncide
avec $\theta'$. 
Puisque $\theta'$induit l'isomorphisme 
$\theta$ entre $\widetilde{E_x}$ et 
$\widetilde{F_x}$, le lemme \ref{lem-isometrie-residuelle} garantit que
$u$ est en réalité une isomorphisme entre germes de fibrés vectoriels
métrisés sur $(X,x)$, 
ce qui montre la plénitude attendue, et achève la preuve de (b). 

L'assertion (c) résulte de (b) et du fait que $\breve{\mathsf F}(X,x)
=\mathsf F(X,x)$ lorsque
$(X,x)$ est bon. 

\end{proof}

\begin{lemm}\label{lemm-max-vai}
Soit $X$ un espace $k$-analytique séparé, soit $x$ un point de $X$, et soit $E$
un fibré vectoriel métrisé paralinéaire entier sur $X$.
Soit $s$ une section de $E$ sur $X$ ne s'annulant pas en $x$ et soit $(a_1,\ldots, a_n)$ une famille
non vide de fonctions analytiques sur $X$
inversibles en $x$. Pour tout $i$, posons $r_i=\abs{a_i(x)}
\cdot \|s(x)\|^{-1}$. 
Les assertions suivantes sont équivalentes : 

\begin{itemize}
\item[(i)] il existe un voisinage analytique $U$ de $x$ dans $X$ tel que
$\|s\|^{-1}=\max_i \abs{a_i}\cdot r_i$ identiquement sur $U$ ; 
\item[(ii)] on a $v(\widetilde{s(x)})^{-1}=\max_i v(\widetilde{a_i(x)})$ pour tout $v\in \widetilde{(X,x)}$.
\end{itemize}
\end{lemm}

\begin{proof}
Pour tout $i$, notons $V_i$ le domaine analytique de $X$ défini par l'égalité
$\|s\|^{-1}=\abs{a_i}\cdot r_i$, et soit $W$ le domaine de $X$ décrit par la conjonction 
d'inégalités $\|s\|^{-1}\leq \abs{a_i}\cdot r_i$ pour $i$ variant de $1$ à $n$. 
Par définition des $r_i$, le point $x$ appartient à $W$
et à chacun des $V_i$. Posons $V=W\cap (\bigcup_i V_i)$. C'est un domaine
analytique de $X$ contenant $x$, et c'est précisément le lieu de validité de l'égalité
$\|s\|^{-1}=\max_i \abs{a_i}\cdot r_i$. 

La réduction $\widetilde{(V,x)}$ est l'intersection de $\widetilde{(W,x)}$ et de la réunion des
$\widetilde{(V_i,x)}$. Par construction, $\widetilde{(W,x)}$ est l'ensemble des 
$v\in \widetilde{(X,x)}$ telles que $v(\widetilde{s(x)})^{-1}
\leq v(\widetilde{a_i(x)})$ pour tout $i$, et $\widetilde{(V_i,x)}$
est l'ensemble des 
$v\in \widetilde{(X,x)}$ telles que $v(\widetilde{s(x)})^{-1}=v(\widetilde{a_i(x)})$. 
Il s'ensuit que $\widetilde{(V,x)}$ est l'ensemble des 
$v\in \widetilde{(X,x)}$ telles que $v(\widetilde{s(x)})=\max_i v(\widetilde{a_i(x)})$. 

L'équivalence requise résulte alors du fait que $\widetilde{(V,x)}=\widetilde{(X,x)}$ si et seulement si 
$V$ est un voisinage de $x$ dans $X$. 
\end{proof}

\begin{coro}\label{coro-max-ai}
Soit $L$ un fibré en droites métrisé paralinéaire entier sur $X$, trivialisable comme fibré
vectoriel au voisinage de $x$.
Soit $s$ une section inversible de $L$ au voisinage de $x$ et soit $(a_i)$ une famille
finie d'éléments de $\mathscr O_{X,x}^\times$. 
Pour tout $i$, posons $r_i=\abs{a_i(x)}\cdot \norm{s(x)}^{-1}$. 

Les assertions suivantes sont équivalentes : 

\begin{enumerate}
\item[(i)] Les $\widetilde{a_is(x)}$ sont des sections
globales de $\widetilde{L_x}$ qui l'engendrent. 
\item[ii)] On a $\|s\|^{-1}=\max_i \abs{a_i}\cdot r_i$ au voisinage de $x$.
\end{enumerate}
\end{coro}
\begin{proof}
Cela découle immédiatement du lemme \ref{lemm-max-vai}
et du lemme \ref{lemm-secglob-max}. 

\end{proof}

\begin{coro}\label{coro-engendre-sections-globales}
Soit $L$ un fibré en droites métrisé paralinéaire entier sur $X$, trivialisable comme fibré
vectoriel au voisinage de $x$.
Soit $s$ une section inversible de $L$ au voisinage de $x$. Supposons le germe
$(X,x)$ presque bon. Les assertions suivantes sont équivalentes : 

\begin{enumerate}
\item[(i)] Le fibré résiduel $\widetilde{L_x}$ est engendré par ses sections globales. 
\item[ii)] Il existe une famille finie 
 finie $(a_i)$ de fonctions appartenant à 
 $\mathscr O_{X,x}^\times$ telles que
 $\|s\|^{-1}=\max_i \abs{a_i}\cdot r_i$ au voisinage de $x$, 
 où $r_i=\abs{a_i(x)}\cdot \norm{s(x)}^{-1}$. 
\end{enumerate}

De plus si elles sont satisfaites et si $\Lambda$ désigne un corps 
gradué de définition de $\widetilde{L_x}$, on peut choisir les $a_i$ de sorte que
$\widetilde{a_i(x)/a_j(x)}\in \Lambda$ pour tout $(i,j)$. 
\end{coro}

\begin{proof}
L'équivalence entre (i) et (ii) résulte du
corllaire \ref{coro-engendre-globalement-gradue}, du lemme
\ref{lemm-max-vai}, et du fait que comme $X$ est presque bon en $x$, 
l'application naturelle de $\mathscr O_{X,x}^\times$ 
vers $\hrt x^\times$ est surjective. 
La dernière affirmation découle de
la remarque \ref{rem-ai-sur-aj} et, là encore, de la surjectivité de 
$\mathscr O_{X,x}^\times\to \hrt x^\times$. 
\end{proof}

\begin{prop}\label{prop.metrique-lisse-pl}
Soit $(X,x)$ un germe d'espace $k$-analytique séparé et presque bon.
Soit $L$ un fibré en droites métrisé paralinéaire entier
sur~$(X,x)$, localement trivialisable
comme fibré en droites. 
La métrique sur $L$ est alors lisse au voisinage de $x$ si 
et seulement si $\widetilde{L_x}$ est de torsion.

\end{prop}
\begin{proof}
Supposons que $\widetilde L_x$ soit de torsion.
Soit $n$ un entier~$\geq 1$ tel que $\widetilde L(x)^{\otimes n}$
soit trivial.
D'après le théorème~\ref{theo-fibre-residuel},
le germe de fibré métrisé~$L^{\otimes n}$ est trivial,
si bien que sa métrique est lisse au voisinage de $x$. 
Par suite, la métrique de $L$ est lisse au voisinage de $x$. 

Réciproquement, supposons que la métrioque de $L$
soit lisse au voisinage de $x$. 
Soit $s$ une section inversible de $L$ au voisinage de $x$. 
La fonction~$\log(\norm s)$ est à la fois paralinéaire entière
et lisse au voisinage de $x$. 
D'après la proposition~\ref{prop.pl-lisse-berk}, il existe un
entier~$n\geq 1$ et une fonction holomorphe inversible~$f$ 
sur $(X,x)$ tels 
que $ n \log(\norm s) = \log(\abs f)$.
La base locale $f^{-1}s^{\otimes n}$ de~$L^{\otimes n}$
est alors de norme constante~$1$.
Le fibré résiduel de~$L^{\otimes n}$ est alors trivial,
si bien que $\widetilde {L_x}^{\otimes n}$ est trivial.
\end{proof}

\section{Fibrés résiduels : le cas strict}
On reprend les notations des sections précédentes: 
$X$ est un espace $k$-analytique séparé, $x$ un point de $X$
et $E$ un fibré vectoriel métrisé paralinéaire entier sur $X$  dont
on note $n$ le rang. 
On suppose de plus que $X$ est strict. 

\begin{lemm}\label{lemm-fibres-strict}
Les assertions suivantes sont équivalentes : 

\begin{itemize}
\item[(i)] La métrique de $E$ est strictement paralinéaire entière au voisinage
de $x$. 

\item[(ii)] Le fibré résiduel $\widetilde{E_x}$ est strict. 
\end{itemize}
\end{lemm}

\begin{proof}
Supposons (i). 
Soit $(V_i)$ une famille finie de domaines strictement $k$-analytiques, contenant $x$ et dont 
la réunion est un voisinage de $x$, et tels que $E_{V_i}$ possède pour tout $i$ une base orthonormée $(e_{ij})_j$. 
Il résulte de la construction de \ref{rappels-fibgradue-metriques} que $(\widetilde{e_{ij}(x)})_j$ est pour
tout $i$ une base de $\widetilde{E_x}|_{\widetilde{(V_i,x)}}$, dont les éléments sont homogènes
de  degré $1$ ; puisque les
$\widetilde{(V_i,x)}$ recouvrent $\widetilde{(X,x)}$, le fibré $\widetilde{E_x}$ est strict. 

Réciproquement, supposons que $\widetilde{E_x}$ est strict. 
Il existe un recouvrement fini $(U_i)$ de $\widetilde{(X,x)}$ par des ouverts quasi-compacts
stricts et, pour tout $i$, une base $(\epsilon_{ij})$ de
$\widetilde{E_x}|_{U_i}$ constituée de sections de degré $1$.
On peut supposer que les $U_i$ sont affines.  Chacun des $U_i$ est alors égal à $\widetilde{(V_i,x)}$ pour
un bon domaine strictement $k$-analytique $V_i$ de $X$ contenant $x$, et la réunion des $V_i$
est un voisinage de $x$. Puisque $V_i$ est bon (ce qui entraîne que $E|_{V_i}$ est engendré par ses sections
globales au voisinage de $x$ dans $V_i$, 
il résulte du lemme 
\ref{lem-exist-sections-locales}
que chaque $\epsilon_{ij}$ se relève en une section $e_{ij}$ de $E$ au voisinage de $x$, et 
l'on déduit du corollaire \ref{coro-BON} que les $e_{ij}$ forment une base orthonormée de $E$ sur un voisinage 
de $x$ dans $V_i$. 
On peut donc restreindre les $V_i$ de sorte que chacun des $E|_{V_i}$ possède une base orthonormée. Par
conséquent, la métrique paralinéaire de $E$ est strictement entière au voisinage de $x$. 
\end{proof}

\subsection{}\label{ss.description-fibre-residuel-strict}
Supposons que les assertions équivalentes ci-dessus soient satisfaites. 
Rappelons que pour tout domaine analytique strict $V$ de $X$  contenant $x$, nous 
désignons par $\widetilde{(V,x)}^1$ l'ouvert quasi-compact de $\zr{\hrt x^1}{\widetilde k^1}$ 
dont il provient (qui est aussi l'image de  $\widetilde{(V,x)}^1$ sur $\zr{\hrt x^1}{\widetilde k^1}$). 
Nous noterons $\widetilde{E_x}^1$ le fibré vectoriel sur $\widetilde{(X,x)}^1$ d'où
$\widetilde{E_x}$ provient. Cette notation est cohérente avec le fait que pour tout
domaine analytique strict $V$ de $X$  contenant $x$, les sections de
$\widetilde{E_x}^1$ sur $\widetilde{(V,x)}^1$ sont précisément les sections homogènes de degré $1$
de $\widetilde{E_x}$ sur $\widetilde{(V,x)}$. 

Il résulte de nos constructions que le fibré résiduel strict $\widetilde{E_x}^1$ peut être directement décrit comme
suit, en termes de $\hrt x^1$-espace vectoriel gradué métrisé. 

Le $\hrt x^1$-espace vectoriel gradué sous-jacent à $\widetilde{E_x}^1$  est $\widetilde{E(x)}^1$, c'est-à-dire
le quotient du groupe des éléments de $E(x)$ de norme $\leq 1$ par son sous-groupe formé des éléments
de norme $<1$.  La métrique sur $\widetilde{E(x)}^1$ définissant le fibré vectoriel $\widetilde{E_x}^1$ est 
caractérisé par la propriété suivante. Soit $V$ un domaine strictement $k$-analytique de $X$ contenant $x$
et soit $s$ une section de $E$ sur $V$ telle que $\norm{s(x)}=1$. Soit $W$ le domaine strictement
$k$-analytique de $X$ défini par l'inégalité $\norm s\leq 1$. Alors $\widetilde{(W,x)}^1$ est l'ensemble
des valuations $v\in \widetilde{(V,x)}^1$ telles que $v(\widetilde{s(x)})\leq 1$.

Chacun des énoncés vus à la section précédente admet une déclinaison stricte, qui peut soit être déduite de l'énoncé en question 
\textit{via} l'équivalence de catégories de \ref{ss-fibre-strict}, soit être démontrée directement en suivant
\textit{mutatis mutandis} la preuve de l'énoncé original. Nous allons indiquer ci-dessous ces différentes variantes
strictes, sans preuve. On suppose à partir de maintenant que 
le fibré métrisé paralinéaire $E$ est strictement entier.

\begin{lemm}[variante stricte du lemme \ref{lem-isometrie-residuelle}]
Soit $F$ un fibré vectoriel métrisé paralinéaire strictement entier 
sur $X$. 
Soit $V$ un domaine strictement analytique de $X$ contenant $x$.
Soit $u$ un isomorphisme de fibrés vectoriels de $E$ sur $F$ tel que
le morphisme induit $u(x)\colon E(x)\to F(x)$ soit 
une isométrie ; soit
$\theta$ l'isomorphisme de 
$\widetilde {E(x)}^1$ sur $\widetilde{F(x)}^1$ induit par $u(x)$. 
Les assertions suivantes sont équivalentes : 

\begin{itemize}
\item[(i)] il existe un voisinage analytique $U$
de $x$ dans $X$ tel que $u|_{U\cap V}$ soit une isométrie ; 

\item[(ii)] l'application $\theta$ est une isométrie au-dessus de $\widetilde{(V,x)}^1$, c'est-à-dire que 
l'on a $v(\theta(s))=v(s)$ pour toute section $s$
de $\widetilde {E(x)}^1$ et toute $v\in \widetilde{(V,x)}^1$ ; 
\item[(iii)] l'application $\theta$ induit un isomorphisme de
$\widetilde{E_x}^1|_{\widetilde{(V,x)}^1}$ sur 
$\widetilde{F_x}^1|_{\widetilde{(V,x)}^1}$. 
\end{itemize}
\end{lemm}

\begin{coro}[variante stricte du corollaire \ref{coro-BON}]
Soit $(e_i)$ une base de $E$ telle que $\norm{e_i(x)}=1$ pour tout $i$
et telle que les $\widetilde{e_i(x)}$ forment une base
de $\widetilde{E(x)}^1$ et soit $V$ un domaine strictement analytique de $X$ contenant $x$. 
Les assertions suivantes sont équivalentes : 

\begin{itemize}
\item[(i)] il existe un voisinage analytique $U$ de $x$ tel que les
$e_i|_{V\cap U}$ constituent une base orthonormée
de $E|_{V\cap U}$ ; 

\item[(ii)] la base $(\widetilde{e_i(x)})$
est orthonormée au-dessus de $\widetilde{(V,x)}^1$ ;

\item[(iii)] les $\widetilde{e_i(x)}|_{\widetilde{(V,x)}^1}$ constituent une base de $\widetilde{E_x}^1
|_{\widetilde{(V,x)}^1}$. 

\end{itemize}
\end{coro}

\begin{coro}[variante stricte du corollaire \ref{coro-releve-isometrie}]
Supposons que $X$ est presque bon en $x$ et que $E$ est trivialisable au voisinage de $x$ comme fibré
vectoriel. Soit $F$ un fibré vectoriel métrisé paralinéaire strictement
entier sur $X$, lui aussi trivialisable comme fibré vectoriel
au voisinage de $X$. Soit $V$ un domaine strictement 
analytique de $X$ contenant $x$ et soit $\theta$ un isomorphisme
de $\widetilde{E(x)}^1$ sur $\widetilde{F(x)}^1$ induisant un isomorphisme
de $\widetilde{E_x}^1|_{\widetilde{(V,x)}^1}$ sur $\widetilde{F_x}^1|_{\widetilde{(V,x)}^1}$. Il existe alors un voisinage
analytique $U$ de $x$ dans $X$ et un isomorphisme de fibrés vectoriels
$u\colon E_U\simeq F|_U$ tel que $u|_{V\cap U}$ soit une 
isométrie et tel que $\widetilde{u(x)}=\theta$. 
\end{coro}

\begin{prop}[variante stricte de la proposition \ref{prop-fibreres-essurj}]
Supposons que $X$ est presque bon en $x$. Soit $M$ un fibré vectoriel
sur $\widetilde {(X,x)}^1$. Il existe un voisinage analytique $U$ de
$x$ dans $X$ et un fibré vectoriel métrisé paralinéaire strictement 
entier $F$ sur $X$, 
trivialisable au voisinage de $x$ comme fibré vectoriel, tel que
$\widetilde{F_x}^1\simeq M$.
\end{prop}

Notons $\mathsf F(X,x)^{\mathrm{str}}$, 
$\widetilde{\mathsf F}(X,x)^{\mathrm{str}}$
et $\breve{\mathsf F}^{\mathrm{str}}(X,x)$ les variantes
strictes des catégories introduites en 
\ref{ss-categories-germes-fibres}. 

\begin{theo}
Soit $X$ un espace strictement 
$k$-analytique séparé et soit $x$ un point de $X$. 
Soit $n$ un entier. 

\begin{enumerate}
\item La formule $E\mapsto \widetilde{E_x}^1$ définit un foncteur de $\mathsf F^{\mathrm{str}} (X,x)$ vers
$\widetilde{\mathsf F}^{\mathrm{str}}(X,x)$ qui préserve le rang, commute au produit tensoriel, à la formation du dual, et au changement de base. 

\item Supposons que
le germe $(X,x)$ est presque bon.  Le foncteur $E\mapsto \widetilde{E_x}^1$
est essentiellement surjectif, et c'est déjà le cas de sa restriction à $\breve{\mathsf F}^{\mathrm{str}}(X,x)$ ; cette dernière
est également pleine. 

\item Supposons que le germe $(X,x)$ est bon. Le foncteur $E\mapsto \widetilde{E_x}^1$ est plein.
\end{enumerate}
\end{theo}

\begin{lemm}[variante stricte du lemme \ref{lemm-max-vai}]
\label{lemm-max-vai-str}
Soit $X$ un espace strictement 
$k$-analytique séparé, soit $x$ un point de $X$, et soit $E$
un fibré vectoriel métrisé paralinéaire strictement entier sur $X$.
Soit $s$ une section de $E$ sur $X$ telle que $\norm{s(x)}=1$
soit $(a_1,\ldots, a_n)$ une famille
non vide de fonctions analytiques sur $X$
telles que $\abs{a_i(x)}=1$ pour tout $i$. 
Les assertions suivantes sont équivalentes : 

\begin{itemize}
\item[(i)] il existe un voisinage analytique $U$ de $x$ dans $X$ tel que
$\|s\|^{-1}=\max_i \abs{a_i}$ identiquement sur $U$ ; 
\item[(ii)] on a $v(\widetilde{s(x)})^{-1}=\max_i v(\widetilde{a_i(x)})$ pour tout $v\in \widetilde{(X,x)}^1$.
\end{itemize}
\end{lemm}

\begin{coro}[variante stricte du corollaire \ref{coro-max-ai}]
\label{coro-max-ai-str}
Soit $L$ un fibré en droites métrisé paralinéaire
strictement entier sur $X$, trivialisable comme fibré
vectoriel au voisinage de $x$.
Soit $s$ une section de $L$ au voisinage de $x$
telle que $\norm{s(x)}=1$ et et soit $(a_i)$ une famille
finie d'éléments de $\mathscr O_{X,x}^\times$ tels que $\abs{a_i(x)}=1$
pour tout  $i$.

Les assertions suivantes sont équivalentes : 

\begin{enumerate}
\item[(i)] Les $\widetilde{a_is(x)}$ sont des sections
globales du fibré en droites $\widetilde{L_x}^1$ qui l'engendrent. 
\item[ii)] On a $\|s\|^{-1}=\max_i \abs{a_i}$ au voisinage de $x$.
\end{enumerate}
\end{coro}

\begin{coro}[variante stricte du corollaire \ref{coro-engendre-sections-globales}]
\label{coro-engendre-sections-globales-str}
Soit $L$ un fibré en droites métrisé paralinéaire
strictement entier sur $X$, trivialisable comme fibré
vectoriel au voisinage de $x$.
Soit $s$ une section de $L$ au voisinage de $x$
telle que $\norm{s(x)}=1$. 
pposons le germe
$(X,x)$ presque bon. Les assertions suivantes sont équivalentes : 

\begin{enumerate}
\item[(i)] Le fibré en droites résiduel $\widetilde{L_x}^1$ 
est engendré par ses sections globales. 
\item[ii)] Il existe une famille finie 
 finie $(a_i)$ de fonctions appartenant à 
 $\mathscr O_{X,x}^\times$ telles que
 $\abs{a_i(x)}=1$ pour tout $i$ et que 
 $\|s\|^{-1}=\max_i \abs{a_i}$ au voisinage de $x$. 
\end{enumerate}

De plus si elles sont satisfaites et si $\Lambda$ désigne un corps 
de définition de $\widetilde{L_x}^1$, on peut choisir les $a_i$ de sorte que
$\widetilde{a_i(x)/a_j(x)}\in \Lambda$ pour tout $(i,j)$. 
\end{coro}

\begin{prop}[variante stricte de la proposition \ref{prop.metrique-lisse-pl}]
\label{prop.metrique-lisse-pl-str}
Soit $(X,x)$ un germe d'espace strictement 
$k$-analytique séparé et presque bon.
Soit $L$ un fibré en droites métrisé paralinéaire
strictement entier sur~$(X,x)$, localement trivialisable
comme fibré en droites. 
La métrique sur $L$ est alors lisse au voisinage de $x$ si 
et seulement si $\widetilde{L_x}^1$ est de torsion.
\end{prop}

\section{Dimension de définition d'un fibré métrisé paralinéaire entier}\label{dimdefn} 

% \subsection{}
% Soit $X$ un espace $k$-analytique~$X$ séparé 
% Supposons que pour tout $x\in X$ l'application
% $\mathscr O_{X,x}\to \hr x$ est d'image dense.
% et soit $(E_i)$ une famille finie de fibrés métrisés~\pl sur~$X$.
% telle que
% le fibrés vectoriels~$\underline E_i$ sont localement triviaux sur~$X$.

\begin{defi}
Soit $(X,x)$ un germe d'espace $k$-analytique séparé
et soit $(E_i)$ une famille de fibrés vectoriels métrisés
paralinéaires entiers sur~$X$.
La \emph{dimension de définition résiduelle simultanée}\index{dimension de définition résiduelle simultanée d'une famille de fibrés vectoriels métrisés
paralinéaires entiers} des~$E_i$ est le plus petit entier~$d$ tel qu'il existe
un corps gradué de définition de $(\widetilde{(X,x)}, (\widetilde E_{i,x}))$
de degré de transcendance $d$. 
\end{defi}

Ctte dimension de définition résiduelle
est aussi la dimension minimale d'un pré-modèle 
de $(\widetilde{(X,x)},  (\widetilde E_{i,x}))$. 
Elle est inférieure ou égale au degré de transcendance
de~$\hrt x$ sur~$\tilde k$ et en particulier, 
elle est inférieure ou égale à $\dim_x(X)$.
Lorsqu'il y a égalité, 
le point~$x$ est d'Abhyankar:
l'extension~$\hrt x$ de~$\tilde k$ 
est de degré de transcendance~$\dim_x(X)$,
elle est de type fini
et le groupe $\abs{k^\times}$ est d'indice fini dans $\abs{\hr x^\times}$
(\S\ref{lemm.abhyankar-finitude}).

Lorsque $(E_i)$ est une famille de fibrés vectoriels métrisés paralinéaires entiers
sur un espace $k$-analytique presque bon~$X$,
on définit sa dimension de définition résiduelle
simultanée des~$E_i$ en un point~$x$ en la restreignant au germe~$(X,x)$.

\begin{prop}\label{prop.dimdefn}
Soit~$X$ un espace $k$-analytique séparé de dimension $\leq n$. 
Soit~$d$ un entier naturel.
Soit $(E_i)$ une famille finie de fibrés vectoriels métrisés 
paralinéaires entiers sur~$X$.
L'ensemble~$W^d$ des points~$x$ de~$X$
en lesquels la dimension de définition résiduelle simultanée 
des~$E_i$ est supérieure ou égale à~$d$ est contenu dans une partie
faiblement analytique de~$X$, de dimension~$\leq n-d$.
En particulier, si $d=n$, cet ensemble est fermé et discret.
\end{prop}
\begin{proof}
Soit $x\in X$.  
Soit $(X_j)_{j \in J}$ une famille finie de domaines affinoïdes
de $X$ contenant $x$, dont la réunion est un voisinage de~$x$, 
et telle que pour tout $(i,j)$ le fibré métrisé
$E_i|_{X_j}$ 
possède une base orthogonale $(e_{ij\alpha})_\alpha$ 
dont les vecteurs sont de norme constante. Pour toute partie
non vide $P$ de $J$, notons $X_P$ 
l'intersection des $X_j$ pour $j$ dans $P$, et $\rho_P\colon 
X_P\to \widetilde{X_P}$ l'application de spécialisation. 
Soit $U$ l'intérieur topologique (dans $X$) de la réunion des 
$X_i$. Il suffit de démontrer que $W^d\cap U$ est faiblement 
analytique de dimension $\leq n-d$. 

Soit $y$ un point de $U$ et soit $J(y)$ l'ensemble des indices $j$
tels que $y\in X_j$. Posons $A=\mathscr O(X_{J(y)})$, 
$M_i=E_i(X_{J(y)})$, et $\eta=\rho_{J(y)}(y)$ ; soit
$\kappa(\eta)$ le corps résiduel gradué de $\eta$. 

L'évaluation en $y$ définit un morphisme contractant de
$A$ vers $\hr y$, qui induit un morphisme
de $\widetilde A$ vers $\hrt y$ qui par définition se factorise
par $\kappa(\eta)$. 

Pour tout $i$, la semi-norme spectrale
$s\mapsto \sup_{z\in X_{J(y)}}\norm{s(x)}$
fait de $M_i$ 
un $A$-module semi-normé, et l'évaluation définit 
un morphisme contractant de $A$-modules normés de 
$M_i$ vers $E(y)$, d'où un morphisme de 
$\widetilde A$-modules gradués de 
$\widetilde {M_i}$ vers $\widetilde {E_i(y)}$,
qui se factorise par $N_i:=\widetilde {M_i}\otimes_{\widetilde A}
\kappa(\eta)$. Choisissons $j\in J(y)$. 

Quel que soit l'indice $i$, on a l'égalité 
$\norm{\sum \lambda_\alpha e_{ij\alpha}}=\sup_\alpha \norm{\lambda_\alpha}\cdot \norm{e_{ij\alpha}}$
pour toute famille $(\lambda_\alpha)$ d'éléments de $A$ ; il en résulte que les
$\widetilde{e_{ij\alpha}}$ forment une base du $\widetilde{A}$-module
$\widetilde{M_i}$, puis que leurs images
dans 
$N_i$ (que nous noterons 
$\widetilde{e_{ij\alpha}}(\eta)$) forment une base de ce dernier
comme $\kappa(\eta)$-espace
vectoriel gradué. Et comme les $e_{ij\alpha}(y)$ forment une base orthogonale de $E_i(y)$, 
les $\widetilde{e_{ij\alpha}(y)}$ forment une base de $\widetilde{E_i(y)}$. 
Le morphisme naturel de $N_i\otimes_{\kappa(\eta)}
\hrt y$ dans 
$\widetilde{E_i(y)}$ envoyant $1\otimes  \widetilde{e_{ij\alpha}}(\eta)$
sur $\widetilde{e_{ij\alpha}(y)}$ pour tout $\alpha$, c'est un isomorphisme. 

Dans ce qui suit, nous nous permettrons de considérer $\kappa(\eta)$
comme un sous-corps gradué de $\hrt y$, et chacun des $N_i$ comme un
sous-$\kappa(\eta)$-espace vectoriel de $\widetilde {M_i}$. 
Avec ces conventions, chacun des  $\widetilde{e_{ij\alpha}(y)}$
appartient à $N_i$. 

Comme $y$ appartient à l'intérieur
de $\bigcup _{j\in J}X_j$, la
réunion des $X_j$ pour $j\in J(y)$ est un voisinage de $y$ dans $X$, 
si bien que $\widetilde{(X,y)}=\bigcup_{j\in J(y)}\widetilde
{(X_j,y)}$. 
Pour tout $j\in J(y)$, posons $B_j=\mathscr O(X_j)$. L'ouvert 
$\widetilde
{(X_j,y)}$ est égal à $\zr{\hrt y}{\widetilde k}\{\mathsf B_j\}$, 
où $\mathsf B_j$ désigne l'image de $\widetilde {B_j}$ dans 
$\hrt y$, et 
$\{\widetilde{e_{ij\alpha}(y)}\}_\alpha$ est une base
de $\widetilde{E_{i,y}}|_{\widetilde {(X_j,y)}}$. 
De plus, le morphisme de $B_j$ vers $\hr y$ se factorise par $A$ ; 
en passant aux réductions graduées, on en déduit que le morphisme 
de $\widetilde {B_j}$ vers $\hrt y$ se factorise par $\widetilde A$, 
puis par $\kappa(\eta)$ ; par conséquent, $\mathsf B_j\subset \kappa(\eta)$
pour tout $j$. Joint au fait que les $\{\widetilde{e_{ij\alpha}(y)}\}_\alpha$
appartiennent à $N_i$ pour tout $(i,j,\alpha)$ ceci entraîne au vu du lemme
\ref{lemm-critere-def-fibre} que $\widetilde{(X,x)}$ est l'image réciproque 
d'un ouvert $V$ de $\zr{\kappa(\eta)}{\widetilde k}$ par la projection canonique
$\pi \colon \zr{\hrt y}{\widetilde k}$ et que chacun des $\widetilde{E_{i,y}}$
est isomorphe à $\pi^*F_i$ pour un certain fibré vectoriel $F_i$
sur $U$. Par conséquent, la dimension de définition résiduelle simultanée des 
$E_i$ en $y$ est majorée par le degré de transcendance de 
$\kappa(\eta)$ sur $\widetilde k$. 

Il résulte de ce qui précède que $W_d\cap U$ est contenu dans
\[ \bigcup_{\emptyset
\subset P\subset J}
\{z\in X_P, \mathrm{deg.\,tr.\,}(\kappa(\rho_P(z))/\widetilde k)
\geq d\}.\]
En vertu de la proposition \ref{prop.tube-faiblement-analytique}, 
l'ensemble ci-dessus est une partie faiblement analytique de dimension 
majorée par $n-d$. Il en va dès lors de même de son intersection avec $U$, 
ce qui achève la démonstration. 

\end{proof}

\begin{defi}\label{def-defanal-simultanee}
Soit $(X,x)$ un germe presque bon d'espace $k$-analytique.
Soit $(E_i)$ une famille de fibrés vectoriels métrisés 
paralinéaires entiers
sur~$(X,x)$ dont les fibrés vectoriels sous-jacents sont trivialisables.
La dimension de définition analytique simultanée des~$E_i$
est le plus petit entier~$d$ tel qu'il existe
un germe presque bon~$(Y,y)$ de dimension~$d$, 
une famille $(F_i)$ de fibrés vectoriels métrisés paralinéaires entiers~$(Y,y)$ 
dont les fibrés sous-jacents sont trivialisables 
et un morphisme plat et sans bord 
$f\colon (X,x)\to (Y,y)$ 
tel que $E_i\simeq f^*F_i$ pour tout~$i$.  
\end{defi}

Cette dimension  est bien sûr inférieure ou égale à~$\dim_x(X)$,
et supérieure  ou égale à celle de définition résiduelle simultanée.

\begin{prop}\label{prop.dim-def-analytique}
Soit~$(X,x)$ un germe presque bon, 
soit~$(E_i)$ une famille finie de fibrés vectoriels métrisés paralinéaires entiers
sur~$(X,x)$ dont les fibrés sous-jacents sont trivialisables,
et soit~$d$ la dimension de définition résiduelle simultanée des~$E_i$.

\begin{enumerate}
\item 
Supposons qu'il existe un sous-corps gradué $L$ de $\hrt x$, 
contenant $\widetilde k$, de type fini et de degré de transcendance $d$ sur $\widetilde k$,
tel que le module des différentielles graduées $\Omega_{L/\widetilde k}$ soit de dimension $d$
comme $L$-espace vectoriel, et tel que $L$ soit un corps gradué de définition
de $(\widetilde{(X,x)}, (\widetilde{E_{i,x}})_i)$. 
La dimension de définition analytique  simultanée
de la famille~$(E_i)$ est alors égale à~$d$. Plus précisément, on peut trouver
$(Y,y)$ comme dans la définition \ref{def-defanal-simultanee}
tel que $\hrt y$ soit égal à $L$ lorsqu'on le voit comme sous-corps gradué  de
$\hrt x$ via $f\colon (X,x)\to (Y,y)$. 

\item Si le corps~$\widetilde k$ est parfait, 
la dimension de définition analytique  simultanée
de la famille~$(E_i)$ est égale à~$d$.

\item Si les $E_i$ sont des fibrés en droites
et si la famille $(E_i)$ est finie, il existe
un entier naturel~$q$, puissance 
de l'exposant caractéristique~$p$ de~$\widetilde k$,
tel que la famille~$(E_i^{\otimes p^N})$ soit de dimension de définition
analytique simultanée inférieure ou égale à~$d$.
\end{enumerate}
\end{prop}
Pour les notions relatives au module des différentielles ou à la notion de corps
parfait et d'extension séparable dans le contexte gradué, on pourra se reporter 
à la première section de \cite{ducros2013}. Précisons juste que $\widetilde k$ est toujours parfait en 
caractéristique résiduelle nulle ; en caractéristique résiduelle $p>0$ il l'est si et seulement si 
$\widetilde k^p=\widetilde k$, ce qui revient à demander que le corps $\widetilde k_1$ soit parfait 
et que $\abs{\widetilde k^\times}=\abs{k^\times}$ soit $p$-divisible.

\begin{proof}
Montrons tout d'abord l'énoncé~1). 
Soit $(f_1,\dots,f_d)$ une famille de fonctions analytiques
sur $(X,x)$ inversibles en $x$ et telles que
les $\widetilde{f_i(x)}$ appartiennent à $L$ et que leurs
différentielles forment une base 
de $\Omega_{L/k}$ ; leur existence résultent
des hypothèses faites sur $L$ et du caractère presque bon de $(X,x)$. 
Les $\widetilde{f_i(x)}$ forment alors
une base de transcendance de $L$ sur $\widetilde k$, et $L$
est finie séparable
sur son sous-corps engendré par $\widetilde k$ et les 
$\widetilde{f_i(x)}$. 

Posons $r=(\abs{f_1(x)}, \ldots, \abs{f_d(x)})$ et $f=(f_1,\dots,f_d)$; 
par construction, $f$ est un morphisme de~$(X,x)$
vers $(\gm^d,\eta_r)$, \emph{via}
lequel $\hrt{\eta_d}$ s'identifie à $\widetilde k(\widetilde {f_1(x)},
\dots,\widetilde {f_d(x)})$. 

Soit $\Lambda$ l'extension modérément
ramifiée de $\hr{\eta_r}$
dont le corps résiduel gradué est l'extension
finie séparable $L$ de $\hrt{\eta_d}$
(voir la section 2 \cite{ducros2013} pour une présentation de la théorie
de la ramification modérée en termes de réductions graduées). 
Par la proposition~3.4.1 de~\cite{berkovich1993}, 
l'extension $\hr {\eta_r}\hookrightarrow \Lambda$ est induite par un morphisme
étale de germes $(Y,y)\to (\gm^d,\eta_r)$.
Puisque $(\gm^d,\eta_r)$ est sans bord, $(Y,y)$
est sans bord, ce qui veut
dire que $\widetilde{(Y,y)}
=\zr{\hrt y}{\widetilde k}$. 
Le produit tensoriel
$\hr x\otimes_{\hr {\eta_r}}\hr y$ est une $\hr x$-algèbre étale modérément ramifiée, 
d'algèbre étale résiduelle graduée
$\hrt x\otimes_{\widetilde k(\widetilde {f_1(x)},
\ldots,\widetilde {f_d(x)}}L$, et le plongement de $L$ dans~$\hrt x$
fournit un quotient de cette algèbre étale égal à~$\hrt x$ ; 
ce dernier correspond
à un quotient de $\hr x\otimes_{\hr {\eta_d}}\hr y$ isomorphe à $\hr x$, 
c'est-à-dire en vertu
une fois encore de la proposition 3.4.1 de \cite{berkovich1993}, 
à une section du morphisme
étale $(Y,y)\times_{(\gm^d,\eta_r)} (X,x)\to (X,x)$ ; cette section  induit alors une factorisation de~$f$ par un morphisme $g\colon (X,x)\to (Y,y)$. Par construction, le plongement 
$\hrt y\hookrightarrow \hrt x$ induit par~$g$ s'identifie au plongement initialement donné
de~$L$ dans~$\hrt x$. Nous n'utiliserons désormais
plus la notation $L$, mais uniquement $\hrt y$. 
L'image
de $\widetilde{(X,x)}$ sur 
$\zr{\hrt y}{\widetilde k}$ est un ouvert quasi-compact, 
donc est de la forme $\widetilde{(Y',y)}$ pour un certain
domaine analytique $Y'$ de $Y$ contenant $y$. En tant que domaine
d'un bon germe, $(Y',y)$ est presque bon. 
Et puisque ${\hrt y}$ est un corps gradué de définition de
$(\widetilde{(X,x)}, (\widetilde{E_{ix}})_i)$, 
on a l'égalité
$\zr{\hrt x}{\widetilde k}\times_{\zr {\hrt y}
{\widetilde k}}\widetilde{(Y',y)}=
\widetilde{(X,x)}$, et chacun des fibrés
$\widetilde{E_{ix}}$ est l'image réciproque
d'un fibré vectoriel $G_i$ sur $\widetilde{(Y',y)}$. 
Le morphisme 
$(X,x)\to (Y,y)$ se factorise dès lors par un morphisme
sans bord $h\colon (X,x)\to (Y',y)$. 
Le germe 
$(Y',y)$ étant quasi-étale sur $(\gm^ d,\eta_1)$ il est quasi-lisse 
et en particulier réduit, et $y$ en est un point d'Abhyankar. 
Ceci entraîne que $h$ est plat, 
par exemple en invoquant le théorème 10.3.7 de~\cite{ducros2018}.

Pour tout~$i$, le fibré~$G_i$ sur~$\widetilde{(Y',y)}$
se relève d'après 
d'après le théorème \ref{theo-fibre-residuel}
en un fibré métrisé
paralinéaire entier $F_i$ sur~$(Y',y)$ 
dont le fibré sous-jacent est trivialisable.
Par fonctorialité, chacun des fibrés métrisés paralinéaires
entiers $h^*F_i$
sur~$(X,x)$ admet $\widetilde{E_{ix}}$ comme fibré résiduel. 
En utilisant une fois encore
le théorème~\ref{theo-fibre-residuel}, 
on voit que $h^*F_i\simeq E_i$, ce qui achève de montrer l'assertion~(a). 

Lorsque le corps~$\widetilde k$ est parfait, 
la dimension du $L$-espace
$\Omega_{L/\widetilde k}$ espace vectoriel gradué
est égale au degré de transcendance de $L$ sur $\widetilde k$. 
En effet, il suffit pour le voir de montrer que c'est 
le cas lorsqu'on remplace $\widetilde
k$ par $\widetilde k(T_i/r_i)_i[T_i^{1/p^\infty}]_i$ et $L$
par $L(Ti/r_i)_i[T_i^{1/p^\infty}]$ où $(r_i)$ est un polyrayon donné
et où $p$ est l'exposant caractéristrique de $\widetilde k$ --
observons que comme $\widetilde k$ est parfait,
$\widetilde k(T_i/r_i)_i[T_i^{1/p^\infty}]_i$ l'est encore. 
On peut donc, en choisissant convenablement la famille $(r_i)$, 
se ramener au cas où $\abs{L^\times}=\abs{\widetilde k^\times}$ ; 
dans ce cas $L$ est égal à $L_1\otimes_{\widetilde k_1}\widetilde k$, et
le degré de transcendance de $L$ sur $\widetilde k$ est égal
à celui de $L_1$ sur $\widetilde k_1$. Et comme $\widetilde k$
est parfait, $\widetilde k_1$ est parfait, et notre assertion résulte
alors de son analogue non gradué bien connu. 
On en déduit au vu de (a) que la dimension de
définition analytique simultanée des $E_i$ est au plus $d$, 
et comme elle est par ailleurs majorée par $d$, elle
vaut exactement $d$.

Supposons enfin que les~$E_i$ sont de rang~$1$. 
Comme la dimension de définition résiduelle simultanée des~$E_i$ est~$d$
et comme la famille $(E_i)$ est finie, 
la remarque \ref{rem-corps-gradue-def} entraîne
l'existence d'un corps gradué de
définition $L$
de $(\widetilde{(X,x)},(\widetilde{E_{i,x}})_i)$
de type fini et de degré de
transcendance $d$ sur $\widetilde k$. 

Fixons une base de transcendance $(t_1,\ldots, t_d)$ de~$L$ 
sur~$\widetilde k$
et notons~$\Lambda$ la clôture séparable de~$\widetilde k(t_1,\dots,t_d)$ dans~$L$.
Il existe un entier~$q$, puissance
de l'exposant caractéristique de~$\widetilde k$,
tel que~$t^{q}\in\Lambda $ pour tout 
$t\in L$. 
Il résulte alors du lemme \ref{lemme-puissance-p}
que $\Lambda$ est un corps gradué de définition
de $(\widetilde{(X,x)},(\widetilde{E_{i,x}^{\otimes q}})_i)$. 
En vertu de~(1) il s'ensuit que la dimension de définition
analytique simultanée de la famille~$(E_i^{\otimes q})$ 
est au plus égale à~$d$. 
\end{proof}

\subsection{Variantes strictes} 
Si $(X,x)$ est séparé et strict, et si $(E_i)$
est une famille de fibrés
vectoriels paralinéaires strictement entiers, on définit la dimension 
de définition résiduelle simultanée stricte des $E_i$ en $x$ 
comme le degré de transcendance minimal sur
$\widetilde k^1$ d'un corps de définition de $(\widetilde{(X,x)}^1, 
((\widetilde{E_{i,x}})^1)_i$. La définition \ref{def-defanal-simultanee} 
et les propositions \ref{prop.dimdefn}, 
et \ref{prop.dim-def-analytique} ainsi que leurs preuves 
se déclinent alors 
dans ce contexte : il suffit de modifier les 
définitions, énoncés et
démonstrations originels
en imposant partout que les espaces soient stricts et les fibrés paralinéaires strictement entiers, et l'en remplaçant
toutes les réductions graduées (de germes, de fibrés...) par les
réductions strictes correspondantes. 
Notons que si la famille $(E_i)$ supposée finie,
l'avatar strict de l'assertion (a) de la \ref{prop.dim-def-analytique}
peut se reformuler en demandant que
$(\widetilde{(X,x)}^1,((\widetilde{E_{i,x}})^1)_i)$ 
admette un pré-modèle $(V, (F_i))$ où $V$
est une $\widetilde k^1$-variété intègre et génériquement lisse.

\section{Métriques formelles}\label{s-metriques-formelles}

\subsection{}
Soit $E$ un fibré vectoriel de rang~$n$ sur un espace $k$-analytique~$X$.
Considérons un \emph{modèle formel}~$(\mathfrak X,\mathfrak E)$ de~$(X,E)$:
cela signifie que $\mathfrak X$ est $k^\circ$-schéma
formel, plat et localement topologiquement de type fini
muni d'un isomorphisme $\mathfrak X_\eta\simeq X$
et que $\mathfrak E$ est un fibré vectoriel sur~$\mathfrak X$ 
muni d'un isomorphisme   $\mathfrak E_\eta\simeq E$.

Nous ne supposons pas que la valeur absolue de~$k$ est non triviale;
au contraire, nous appliquerons ces considérations
sur le corps résiduel (trivialement valué) de~$k$.
Le lecteur trouvera dans l'article~\cite{thuillier2007}
les définitions essentielles concernant les schémas formels dans le
cas trivialement valué.

\subsection{}
On associe au modèle formel~$(\mathfrak X,\mathfrak E)$
une métrique sur~$E$ de la façon suivante.

Soit $(\mathfrak U_i)$ un recouvrement formel de~$\mathfrak X$
qui trivialise~$\mathfrak E$ et soit, pour tout~$i$, 
une trivialisation~$(e_1,\dots,e_n)$ de~$\mathfrak E|_{\mathfrak U_i}$.
Pour tout~$i$, $\mathfrak U_{i,\eta}$ est un domaine analytique
de~$X$.

Soit $Y$ un espace analytique, $\phi\colon Y\to X$ un morphisme
et $s$ une section de~$\phi^* E$; supposons qu'il existe
un ouvert~$\mathfrak U$ de~$\mathfrak X$ trivialisant~$\mathfrak E$
tel que $\phi(Y)$ soit contenu dans~$\mathfrak U_{\eta}$.

Soit $(e_1,\dots,e_n)$ une base de~$\mathfrak E$ sur~$\mathfrak U$.
Il existe une unique famille $(f_1,\dots,f_n)$
de fonctions holomorphes sur~$Y$ 
telles que $s=\sum f_i \phi^* e_i$; posons alors
$\norm s = \max(\abs{f_1},\dots,\abs{f_n})$;
c'est une fonction continue. 

Soit $\mathfrak U'$ un ouvert de~$\mathfrak X$ trivialisant~$\mathfrak E$
contenant~$\phi(Y)$; choisissons une base~$(e'_1,\dots,e'_n)$
de~$\mathfrak E$ sur~$\mathfrak U'$ et définissons $\norm s'$
de façon analogue.

Prouvons que l'on a $\norm s=\norm s'$.
il existe une matrice inversible $A\in\GL_n(\mathfrak U\cap \mathfrak U')$
telles que $(e_1,\dots,e_n)=(e'_1,\dots,e'_n)\cdot A$.
Si $s=\sum f_i \phi^* e_i = \sum f'_i \phi^* e'_i$, cela entraîne
la relation $(f'_1,\dots,f'_n)^{\mathrm t}=\phi^* A\cdot (f_1,\dots,f_n)^{\mathrm t}$.
Pour tout point~$x\in\mathfrak U\cap\mathfrak U'$, les coefficients
de~$A(x)$ et de~$A^{-1}(x)$ sont de normes~$\leq 1$, 
si bien que 
l'on a $\max(\abs{f_i(y)})=\max(\abs{f'_i(y)})$
pour tout $y\in Y$. Cela démontre que $\norm s=\norm s'$.

D'après la remarque~\ref{rema.metrique.G-local}, on a ainsi défini une métrique sur~$E$.

De telles métriques seront appelées \emph{formelles}\index{métrique!--- formelle}. Elles sont par construction
paralinéaires strictement entières
toute base de $\mathfrak E$ fournissant
une base orthonormée.

\begin{exem}
Soit $n$ un entier~$\geq 0$.
L'exemple le plus important de métrique formelle est fourni par le fibré
en droites~$\mathscr O(1)$, sur
l'espace projectif~$\P^n_k$, et, plus généralement, par ses
puissances tensorielles $\mathscr O(m)$, pour $m\in\Z$.

L'espace projectif relatif 
$\Proj k^\circ[T_0,\dots,T_n]$ sur~$k^\circ$
et le fibré~$\mathscr O(m)$ en fournissent un modèle formel après complétion.
On dira que la métrique formelle associée est
la métrique formelle canonique sur~$\mathscr O(m)$,
ou la métrique de Weil.

Donnons-en une description explicite.
Pour $i\in\{0,\dots,n\}$, soit $\mathfrak U_i$
l'ouvert affine défini par la condition $T_i\neq 0$.
Sa fibre générique est le domaine analytique de~$(\P^n_k)^\an$
formé des points dont les coordonnées homogènes
$(x_0:\dots:x_n)$ vérifient $\abs{x_i}=\max(\abs{x_0},\dots,\abs{x_n})$.
Si $P\in k[T_0,\dots,T_n]$ est un polynôme homogène 
de degré~$m$, notons $s_P$ la section globale correspondante
de~$\mathscr O(m)$ sur~$\mathbf P^n_k$.
Pour tout $i\in\{0,\dots,n\}$, la section~$s_{T_i}$
provient d'une trivialisation de~$\mathscr O(1)$ sur~$\mathfrak U_i$. On a
donc $\norm{s_{T_i}}(x)=1$ pour tout $x\in\mathfrak U_{i,\eta}$.
Cela entraîne la formule familière
\[ \norm {s_P}(x) = \frac{\abs{P(x_0,\dots,x_n)}}{\max(\abs{x_0},\dots,\abs{x_n})^m} \]
pour tout point $x\in(\mathbf P^n_k)^\an$ de coordonnées homogènes
$(x_0:\dots:x_n)$ et tout polynôme $P\in k[T_0,\dots,T_n]$
qui est homogène de degré~$m$.
\end{exem}

\begin{exem}
Considérons deux fibrés vectoriels $\mathfrak E$   et $\mathfrak E'$
sur $\mathfrak X$, et notons $E$ et~$E'$ leurs fibres génériques.
Alors $\mathfrak E\otimes\mathfrak E'$ est un modèle formel de~$E\otimes E'$
et la métrique formelle correspondante se déduit par produit tensoriel
des métriques
formelles sur~$E$ et sur~$E'$.
\end{exem}

% \begin{prop}
% Soit $X$ un espace $k$-analytique et
% $E$ un fibré vectoriel sur~$X$.
% Toute métrique formelle sur~$E$ est~\pl.
% \end{prop}

% \begin{rema}
% On peut définir des fonctions $\Q$-PL comme
% les fonctions dont un multiple entier (non nul) est~\pl,
% voire les fonctions $\R$-PL comme le sous-espace réel engendré
% par les fonctions~\pl.
% 
% On pourrait définir de même une 
% métrique $\Q$-PL 
% (resp. $\R$-PL) sur un fibré vectoriel $E$
% comme on l'a fait pour les métriques~\pl, 
% en remplaçant l'égalité~\eqref{eq.base-orthonormee}
% de la définition~\ref{def.metrique-pl}
% par $\norm{(\sum f_ie_i)(x)}=\max u_i(x) \abs{f_i(x)}$,
% où les $u_i$ sont des fonctions strictement
% positives telles que 
% $\log (u_i)$ soit $\Q$-PL (resp. $\R$-PL)
% pour tout $i$.
% 
% Une métrique sur un fibré en droites~$L$ 
% est dite $\Q$-formelle s'il existe un entier~$n\geq 1$
% tel que la métrique induite sur la puissance tensorielle~$L^{\otimes n}$
% est formelle. Une métrique $\Q$-formelle est $\Q$-PL.
% 
% Ces métriques sont importantes en pratique.
% \end{rema}

\begin{prop}\label{prop.exist.metrique.formelle}
Soit $k$ un corps complet pour une valuation non triviale,
soit~$X$ un espace strictement $k$-analytique 
paracompact et soit $E$ un fibré vectoriel sur~$X$.

\begin{enumerate}
\item Le fibré vectoriel~$E$ possède des métriques
paralinéaires strictement entières. 
\item Une métrique sur~$E$ est
formelle si et seulement si elle est 
paralinéaire strictement entière. 
% \item
% % Supposons de plus que la valuation de~$k$ est discrète,
% % ou que le corps~$k$ est stable 
% % et que son groupe des valeurs est divisible.\footnote{Appeler « de Grauert-Remmert » de tels corps?}
% Si $X$ est réduit, alors toute métrique~\pl sur~$E$ est formelle.
\end{enumerate}
\end{prop}

Dans le cas des fibrés en droites,
cette proposition avait été démontrée par Gubler
(prop.~7.5, \cite{gubler1998})
sous l'hypothèse supplémentaire que le corps~$k$ est algébriquement clos 
et que $X$ est réduit, quasi-compact et quasi-séparé,
puis dans~\cite{gubler-martin2016}   (prop.~2.10) sans hypothèse.
La  preuve ci-dessous s'inspire de leurs arguments.

\begin{proof}
La première assertion résulte de ce que $(X,E)$ possède un modèle
formel~$(\mathfrak X,\mathfrak E)$, où $\mathfrak E$ est localement libre
(corollaire~\ref{coro.raynaud} et théorème~\ref{theo.app-faisceau}).
En effet, la métrique formelle associée est paralinéaire strictement entière. 

Démontrons  la seconde. Puisque toute métrique formelle est paralinéaire
strictement entière il suffit de prouver qu'une métrique
paralinéaire strictement entière sur~$E$ est formelle.
Soit $(X_i)$ un G-recouvrement de~$X$ tel que 
pour tout~$i$, le fibré métrisé $E|_{X_i}$ possède une base
orthonormée; comme $X$ est paracompact, on peut supposer ce recouvrement
localement fini.
Soit $(g_{ij})$ le $\GL^\circ_n$-cocycle associé à ces données, où
$\GL^\circ_n$ est le G-faisceau qui envoie un domaine $V$
sur le sous-ensemble de $\mathrm M_n(\mathscr O(V))$ formé
des matrices inversibles $M$ telles que tous les coefficients de
$M$ et de $M^{-1}$ soient partout de norme $\leq 1$ ; en d'autres termes, 
c'est le faisceau des automorphismes isométriques du fibré
$\mathscr O_X^n$ muni de la métrique pour laquelle la base canonique est 
orthonormée.

Pour tout couple~$(i,j)$, choisissons un modèle~$\mathfrak X_{ij}$
de~$X_i\cap X_j$ tel que les coefficients de la matrice~$g_{ij}$
et de son inverse appartiennent à $\mathscr O(\mathfrak X_{ij})$
(corollaire~\ref{coro.modele-fonctions}).

Appliquons le théorème~\ref{theo.raynaud} 
à l'espace analytique~$X$, au G-recouvrement localement fini
$(X_{ij})$ et à la famille $(\mathfrak X_{ij})$.
On obtient ainsi un modèle~$\mathfrak X'$ de~$X$
et un recouvrement ouvert $(\mathfrak X'_{ij})$ de~$\mathfrak X$,
tel que $\mathfrak X'_{ij}$ domine~$\mathfrak X_{ij}$ pour tout couple~$(i,j)$.

Pour tout~$i$, posons $\mathfrak X'_i=\mathfrak X'_{ii}$.
Par construction, le cocycle $(g_{ij})$ fournit un
$\GL_n$-cocycle sur~$\mathfrak X'$ subordonné au 
recouvrement $(\mathfrak X'_i)$. Ce cocycle définit un faisceau
localement libre~$\mathfrak E'$ de rang~$n$ sur~$\mathfrak X'$.
Comme sa fibre générique est encore définie par $(g_{ij})$, 
le fibré formé
$\mathfrak E$ est
un modèle de~$E$ et la métrique formelle correspondante sur~$E$
coïncide avec la métrique paralinéaire 
strictement entière donnée initialement.
\end{proof}

\begin{coro}\label{coro.densite-metrique.formelle}
Soit $k$ un corps complet pour une valuation non triviale,
soit~$X$ un espace strictement $k$-analytique 
paracompact et soit $L$ un fibré en droites sur~$X$.
Si $X$ est localement holomorphiquement séparé,
toute métrique continue sur~$L$ peut être approchée
par une métrique paralinéaire strictement entière, pour la
topologie 
de la convergence compacte.
\end{coro}
\begin{proof}
Soit $\norm\cdot$ une métrique continue sur~$L$, soit $K$
une partie compacte de~$X$ et soit $\eps>0$.
Il faut démontrer qu'il existe une métrique strictement~\pl $\norm\cdot'$
sur~$L$ telle que $\norm s(x) e^{-\eps}\leq \norm{s}'(x) \leq \norm s(x) e^{\eps}$ pour toute section G-locale~$s$ de~$L$ et tout $x\in K$.
D'après la proposition~\ref{prop.exist.metrique.formelle},
il existe une métrique 
paralinéaire strictement entière $\norm\cdot '$ sur~$L$.
Soit $f$ l'unique fonction continue sur~$X$
telle que $\norm \cdot' = \norm\cdot \cdot e^{f}$.
D'après le corollaire~\ref{coro.sw-pl},
il existe une fonction strictement~\pl
$u$ sur~$X$ telle que $\abs{u(x)-f(x)}\leq\eps$ pour tout $x\in K$.
La métrique $\norm\cdot' e^{u}$ sur~$L$ 
répond alors à la condition de l'énoncé.
\end{proof}

\subsection{}
Pour $*\in\{\mathscr C^0, \pl, \text{lisse}\}$,
notons $\hPic(X,*)$ l'ensemble des classes d'isométrie
de fibrés en droites sur~$X$ munis d'une métrique~$*$.
Le produit tensoriel le munit d'une structure de groupe abélien.
De plus, l'oubli de la métrique définit un homomorphisme
de groupes $\hPic(X,*)\ra\Pic(X)$. 

Son noyau est le groupe des métriques~$*$ sur le fibré
en droites trivial, c'est-à-dire le groupe des fonctions~$*$
sur~$X$ modulo le sous-groupe des fonctions
(qui sont à la fois continues, lisses et~\pl) de la forme $\log\abs u$,
pour $u\in\Gamma(X,\mathscr O_X^*)$.

Supposons $X$ paracompact.
Lorsque $*=\mathscr C^0$ ou $\text{lisse}$, 
cet homomorphisme est surjectif dès que $X$ est bon
(proposition~\ref{prop.existence-metrique-lisse}).
Il est aussi surjectif 
lorsque $*=\pl$, $\abs{k^\times}\neq 1$ et $X$ est strict.

\subsection{}
Soit $E$ un fibré vectoriel métrisé sur~$X$.
Soit $f\colon Y\ra X$ un morphisme d'espaces $k$-analytiques.
On dispose alors d'un carré cartésien
\[   \begin{tikzcd}
  \mathbf V(f^*E) \ar{r}{f'}\ar{d} & \mathbf V(E)\ar{d} \\
   Y \ar{r}{f} & X \end{tikzcd}.\]
Si $E$ est libre, identifié à~$\mathscr O_X^n$, $f^*E$ est
identifié à~$\mathscr O_Y^n$,
le morphisme~$f'$ s'identifie
à l'application $(\id,f)$ de $\mathbf V(f^*E)=\mathbf A^n\times Y$
dans $\mathbf V(E)=\mathbf A^n\times X$.
On définit alors une métrique
sur le fibré vectoriel~$f^*E$ par la composition
$\mathbf V(f^*E)\xrightarrow {f'}\mathbf V(E)\xrightarrow{\norm\cdot}\R_+$.

Si la métrique de~$E$ est lisse, resp. ~$\pl$,
il en est de même de la métrique de~$f^*E$ ainsi définie.
Par passage aux classes d'isométrie de fibrés en droites métrisés~\pl,
on en déduit un homomorphisme de groupes
 $f^*\colon\hPic(X,*)\ra\hPic(Y,*)$.

\subsection{Fibré résiduel d'un fibré formel}\label{sss.fibre-residuel-formel}
% Supposons la valuation de~$k$ non triviale,
Soit $\mathfrak X$ un $k^\circ$-schéma formel séparé, 
topologiquement localement de type fini et quasi-paracompact
et soit $\mathfrak E$ un fibré vectoriel sur~$\mathfrak X$,
partout de rang~$n$.
Soit $X$ la fibre générique de~$\mathfrak X$ et 
soit $\pi\colon X\to\mathfrak X_{\mathrm s}$ le morphisme de spécialisation.
Comme $\mathfrak X$ est séparé, $X$ est séparé. 
Soit $E$ le fibré vectoriel sur~$X$
fibre générique de~$\mathfrak E$, muni de la métrique formelle 
correspondante.
Soit $x\in X$ et soit~$V$ la $\tilde k$-variété 
$\overline{\{\pi(x)\}}$ (munie de sa structure réduite) ; le 
lemme~\ref{redtemk} assure que $V$ est de type fini
et que $\widetilde{(X,x)}^1=\zr {\hrt x^1}{\tilde k^1}\{V\}$. 
Par ailleurs, le fibré métrisé $E$ est paralinéaire strictement entier. 
Il possède donc un fibré résiduel strict $\widetilde{E_x}^1$ 
en $x$. 

\begin{lemm}\label{lemme-premodele-formel}
Le couple $(V,\mathfrak E)$ est un pré-modèle de
$(\widetilde{(X,x)}^1, \widetilde {E_x}^1$). 
\end{lemm}

\begin{proof}
Choisissons un recouvrement ouvert localement fini 
$(\mathfrak X_i)$
de $\mathfrak X$ tel que $\mathfrak E|_{\mathfrak X_i}$ 
possède pour tout $i$ une base $(e_{ij})_j$ ; désignons
pour tout $i$ par $X_i$ la fibre générique de $\mathfrak X_i$, 
et notons encore $e_{ij}$ l'image de $e_{ij}$ dans $\mathscr O(X_i)$. 
Pour $i$ fixé, les $e_{ij}$ forment par construction
une base orthonormée du fibré métrisé 
$E|_{X_i}$. 
Soit $I$ l'ensemble fini des indices $i$ tels que 
$x\in X_i$. La réunion des $X_i$ pour $i\in I$ est un voisinage ouvert de $x$. 
(En particulier, $I$ est non vide). Pour tout $i\in I$, notons
$V_i$ l'ouvert $\mathfrak X_{i,s}$ de $V$. Par construction, 
$(V_i)$ est un recouvrement ouvert de $V$. 

Soit $\mathfrak U$ un ouvert affine 
de $\mathfrak X$ contenant $\pi(x)$ 
et soit $\sigma$ une section de $\mathfrak E|_{\mathfrak U_s}$. 
Relevons $\sigma$ en une section $s$ de $\mathfrak E$ sur $\mathfrak U$, 
que nous voyons également comme une section de $E$ sur $\mathfrak U_\eta$. 
Si $t$ est une section de $\mathfrak E$ dont la restriction 
$\tau$ à $\mathfrak U_s$ est telle que
$\tau-\sigma$ s'annule en $\pi(x)$ alors $\norm{(t-s)(x)}<1$
si bien que $\widetilde{t(x)}^1=\widetilde{s(x)}^1$ dans
$\widetilde{E(x)}^1$. L'élément $\widetilde{s(x)}^1$ ne dépend donc que
de $\sigma(\pi(x))$, et sa construction est compatible à la restriction 
à un ouvert $\mathfrak V$ de $\mathfrak U$ contenant encore $\pi(x)$. 
On a ainsi construit un morphisme naturel
de $F:=\mathfrak E\otimes \kappa(\pi(x))$
vers $\widetilde{E(x)}^1$, qui induit un morphisme
de $F\otimes_{\kappa(\pi(x))}\hrt x^1$ vers $\widetilde{E(x)}^1$. 
Choisissons $i\in I$. Par construction, les $e_{ij}(\pi(x))$ pour $j$
variable 
forment une base de $F$ ; et comme les
$(e_{ij})$ forment une base orthonormée de $E(x)$, les $\widetilde{e_{ij}(x)}$
forment une base de $\widetilde{E(x)}^1$. Par conséquent, le morphisme
de $F\otimes_{\kappa(\pi(x))}\hrt x^1$ vers $\widetilde{E(x)}^1$ construit
ci-dessus est un isomorphisme ; nous l'utiliserons pour identifier
$F$ à un sous $\kappa(\pi(x))$-espace
vectoriel de $\widetilde{E(x)}^1$. 

On a $\widetilde{(X,x)}^1=\bigcup_{i\in I}\widetilde{(X_i,x)}^1$, et
il résulte du lemme~\ref{redtemk} que 
$\widetilde {(X_i,x)}^1$ est égal à
$\zr{\hrt x^1}{\widetilde k^1}\{V_i\}$ pour
tout $i\in I$ ; c'est donc l'image réciproque de l'ouvert quasi-compact
$\zr {\kappa(\pi(x))}{\widetilde k^1}\{V_i\}$ de 
$\zr {\kappa(\pi(x))}{\widetilde k^1}$. 

Pour tout $i\in I$, la famille $(e_{ij})$ étant une base orthonormée de
$E|_{X_i}$, la famille des $\widetilde{e_{ij}(x)}$ est une base de
$\widetilde{E_x}^1|\widetilde{(X_i,x)}^1$. Comme
$\widetilde{e_{ij}(x)}=e_{ij}(\pi(x))$ pour tout $(i,j)$, 
le lemme \ref{lemm-critere-def-fibre} assure que 
$\widetilde{E_x}^1$ est l'image réciproque
sur $\zr {\hrt x_1}{\widetilde k_1}$
d'un fibré vectoriel $G$
sur 
$\zr {\kappa(\pi(x))}{\widetilde k^1}\{V\}$, caractérisé par le fait 
que sa fibre générique est $F$ et que pour tout $i$, la famille des
$e_{ij}(\pi(x))$ forme une base de $G$ au-dessus de $\zr{\kappa(\pi(x))}
{\widetilde k^1}\{V_i\}$. Mais comme l'image réciproque de 
$\mathfrak E|_V$ sur  $\zr{\kappa(\pi(x))}
{\widetilde k^1}\{V\}$ satisfait ces deux conditions, elle coïncide avec
$G$. Ainsi,  $(V,\mathfrak E)$ est bien un pré-modèle de
$(\widetilde{(X,x)}^1, \widetilde {E_x}^1$). 

\end{proof}

\begin{exem}[Fibré sur une variété propre en valuation triviale]\label{fibvaltriv}
\ \par
Supposons que la valuation de~$k$ est triviale.
Soit $X$ une $k$-variété algébrique et soit $E$ un fibré vectoriel sur $X$ ; 
le paragraphe précédent s'applique et munit le fibré vectoriel~$E_\eta$
sur~$X_\eta$ d'une métrique formelle.
Supposons de plus que $X$ soit propre; 
alors on a $X_\eta=X^\an$ et $E_\eta=E^\an$.

Soit $x$ un point de $X^{\text{an}}\simeq \mathfrak 
X_\eta$ et soit $\tilde x$ son image sur $X\simeq \mathfrak X_{\mathrm s}$. 
Le couple
$(\overline{\{\tilde x\}},E|_{\overline{\{\tilde x\}}})$ est un 
est alors un prémodèle de $(\widetilde{(X^\an,x)}^1, \widetilde {E^{\an}_x}^1)$ (où $\overline{\{\tilde 
x\}}$ est muni de sa structure réduite). 

Notons un cas particulier 
important. Supposons $X$ intègre, soit $\xi$ son point générique, 
et soit~$x$ le point de $X^\an$ correspondant à la 
flèche $\Spec (k(\xi), \abs\cdot_0)\to X$. Le poitn $x$ est alors
l'unique point de $X^\an$ se spécialisant en $\xi$ ; par ce qui précède, $(X,E)$ est alors un prémodèle,
et même un modèle, de $(\widetilde{(X^\an,x)}^1, \widetilde {E^{\an}_x}^1)$. 
\end{exem}

\section{Métriques psh}

\begin{defi}\label{defi.metrique-psh}
Soit $X$ un espace $k$-analytique topologiquement séparé, 
considéré comme espace G-tropical affiné, et soit $L$ un fibré
en droites sur~$X$, localement trivial pour la 
topologie usuelle de~$X$.

On dit qu'une métrique continue sur~$L$ est psh
(\resp  dpsh, \resp tropicalement convexe, \resp tropicalement dconvexe)
% localement psh-approchable, \resp tropicalement psh-approchable,
% \resp localement approchable, \resp tropicalement approchable)
\index{métrique!psh}\index{métrique!dpsh}\index{métrique!tropicalement convexe}
si, pour toute section inversible locale~$s$ de~$L$,
la fonction continue $\log\norm s^{-1}$ est psh (définition~\ref{defi.psh})
(\resp dpsh, \resp tropicalement convexe, définition~\ref{defi.trop-convexe},
\resp tropicalement d-convexe).
\end{defi}

Pour qu'il en soit ainsi, il suffit que $L$ possède
une telle section au voisinage de tout point de~$X$.

Il découle de ces définitions qu'une métrique tropicalement convexe 
est tropicalement d-convexe, qu'une métrique psh est d-psh,
et qu'une métrique tropicalement convexe, \resp tropicalement d-convexe,
est psh, \resp d-psh.

Si la métrique de~$L$ est psh, sa courbure $c_1(\overline L)$
est un courant positif.

Comme toute fonction lisse sur~$\R^n$
est localement différence de deux fonctions psh lisses
(proposition~\ref{prop.diff-positive}, \emph b)),
toute métrique lisse sur~$L$ est tropicalement d-convexe.

Si $X$ est presque bon, toute fonction G-lisse est localement
différence de deux fonctions tropicalement d-convexes
(proposition~\ref{rema.G-lisse-diff-psh}, \emph b)),
de sorte que toute métrique G-lisse
est tropicalement d-convexe. 
C'est en particulier le cas des métriques paralinéaires.

\begin{prop}\label{prop.metrique-pl-approchable} % \label{prop.lisse-pl-approchable}
Soit $X$ un espace $k$-analytique séparé et presque bon,
et soit~$L$ un fibré en droites sur~$X$ 
muni d'une métrique paralinéaire entière. 
On suppose que $L$ est localement trivial pour la topologie usuelle de~$X$.

Soit $x\in X$ tel qu'il
existe une puissance
strictement positive du fibré résiduel~$\widetilde{L_x}$
engendrée par ses sections globales.
Alors la métrique de~$L$ est tropicalement convexe (et donc psh)
au voisinage de~$x$.
\end{prop}
\begin{proof}
Quitte à remplacer~$L$ par une puissance strictement positive,
on suppose que $\widetilde{L_x}$ est engendré par ses sections globales.
D'après le
corollaire \ref{coro-engendre-sections-globales},
il existe donc une famille finie $(a_1,\dots,a_m)$ 
d'éléments de~$\mathscr O_{X,x}^\times$
telle que 
la fonction $\norm{s}^{-1}$ soit égale à $\max(\abs{a_1},\dots,\abs{a_m})$
au voisinage de~$x$. 
Il découle alors de la proposition~\ref{exem.psh-approchable}
que la métrique de~$L$ est tropicalement convexe au voisinage de~$x$.
\end{proof}

\begin{rema}
Voyons comment la proposition précédente
permet de redémontrer le fait rappelé après la définition~\ref{defi.metrique-psh}
que les métriques paralinéaires strictement entières
sont tropicalement d-convexes (et donc d-psh).

L'assertion est locale. 
Soit $x\in X$. La proposition~\ref{lemme-difference-globalement-engendre}
assure qu'il existe deux fibrés en droites~$M_1$ et~$M_2$
sur $\widetilde{(X,x)}$, engendrés par leurs sections globales, 
tels que $\widetilde{L_x}\simeq M_1\otimes M_2^\vee$. 

Relevons $M_1$ et $M_2$ en deux fibrés 
métrisés paralinéaires~$L_1$ et~$L_2$ sur~$(X,x)$,
strictement entiers, et 
triviaux comme fibrés en droites au voisinage de~$x$
(théorème~\ref{theo-fibre-residuel});
on dispose aussi d'après ce théorème
d'un isomorphisme $L\simeq L_1\otimes L_2^\vee$ 
de fibrés en droites métrisés au voisinage de~$x$.
D'après le premier cas, les métriques de~$L_1$ et~$L_2$ sont tropicalement convexes
au voisinage de~$x$. Par conséquent, la métrique de~$L$ est tropicalement d-convexe
au voisinage de~$x$.
\end{rema}

\begin{prop}\label{prop.nef-vers-psh}
Soit $X$ un $k$-espace analytique séparé et sans bord, 
et soit~$L$ un fibré en droites sur~$X$ muni d'une métrique
paralinéaire strictement entière. 

Soit $x\in X$ tel que le fibré résiduel~$\widetilde {L_x}$
soit numériquement effectif (\ref{def-numeff}). 
Alors la métrique de~$L$
est psh au voisinage de~$x$.
\end{prop}
\begin{proof}
Soit $(V,M)$ un prémodèle du fibré résiduel~$\widetilde L(x)$;
comme $X$ est sans bord, $V$ est propre;
par le lemme de Chow, on peut supposer que $V$ est projective.
Soit $M'$ un fibré ample sur~$V$; il 
existe un voisinage compact~$U$ de~$x$ dans~$X$ 
et une métrique strictement~\pl sur le fibré trivial~$\mathscr O_U$
dont le fibré résiduel en~$x$ est~$M'$; notons $h=\log(\norm{1}^{-1})$
pour~$M'$; d'après la fonction précédente, 
c'est une fonction psh (et même tropicalement convexe) sur~$X$.

Pour tout nombre réel~$\eps>0$, on peut munir
le fibré en droites~$L|_U$ de la métrique $\norm\cdot_\eps$
qui est le produit de sa métrique initiale par la fonction $\exp(-\eps h)$.
Lorsque $\eps$ tend vers~$0$, cette métrique tend uniformément vers celle de~$L$
sur~$U$.
Il suffit donc d'observer que cette métrique est psh lorsque~$\eps$
est un nombre rationnel (a posteriori, elle sera psh pour tout nombre
réel positif~$\eps$).

Supposons donc~$\eps$ rationnel; écrivons $\eps=m/n$ avec $m,n\geq 1$. 
Alors la métrique~$\norm\cdot_\eps$ est strictement~$\Q$-\pl;
la métrique $(\norm\cdot_\eps)^N$ sur~$L^{\otimes N}|_U$ est strictement~\pl;
il suffit de prouver qu'elle est~psh.
Or, le fibré résiduel associé est donné par
le prémodèle $(V,M^{\otimes n}\otimes (M')^{\otimes m})$.
Comme $M$ est numériquement effectif et $M'$ est ample,
il résulte du critère de Nakai-Moishezon 
que ce fibré en droites sur~$V$ est ample.
D'après la proposition précédente, la métrique considérée est~psh.
\end{proof}

\subsection{}
On dit qu'une métrique continue~$\norm\cdot$ sur~$L$ 
est globalement psh\index{métrique!globalement psh}
s'il existe une suite $(\norm\cdot_n)$ de métriques psh lisses  sur~$L$
telles que les fonctions continues $\log (\norm\cdot/\norm\cdot_n)$
sur~$X$ convergent uniformément vers~$0$.

On dit qu'une métrique continue sur~$L$ est 
globalement dpsh\index{métrique!globalement dpsh}
s'il existe deux fibrés en droites~$L_1$ et~$L_2$
sur~$X$, munis de métriques globalement psh,
tels que $\bar L\simeq \bar L_1\otimes\bar L_2^{-1}$.

\begin{prop}
Sur l'espace projectif~$\P^n$, le fibré en droites~$\mathscr O(1)$
muni de sa métrique formelle canonique est globalement psh
et tropicalement convexe.

Plus précisément, il existe une famille $(\norm\cdot_\eta)_{\eta>0}$
de métriques psh lisses sur~$\mathscr O(1)$, qui converge
uniformément vers sa métrique formelle canonique quand $\eta\to 0$,
vérifiant de plus la propriété suivante:
soit $x\in\P^n$, soit $I$ l'ensemble des $i\in\{0,\dots,n\}$ tels
que $x\in D(T_i)$; il existe
un voisinage~$U$ de~$x$ dans $\bigcap_{i\in I} D(T_i)$
tel que, pour $i\in I$ et $\eta<1$, la fonction $\norm{T_i}_\eta$
est tropicalisée par le moment $(T_j/T_i)_{j\in I}$
sur~$U$.
\end{prop}
\begin{proof}
Tout fibré résiduel de ce fibré en droites admet
un prémodèle qui est le fibré~$\mathscr O(1)$ d'une sous-variété
de~$\P^n_{\tilde k}$; il est en particulier semi-ample.
D'après la proposition~\ref{prop.metrique-pl-approchable}, 
la métrique de Weil est tropicalement convexe. 
Démontrons qu'elle est globalement psh.

Pour tout entier~$p\geq 0$ et tout nombre réel~$\eta>0$,
Demailly définit dans~\cite{demailly2009}, lemme~5.18,
une fonction~$M_\eta\colon\R^p\ra\R$ vérifiant les propriétés
suivantes:
\begin{enumerate}\def\theenumi{\alph{enumi}}\def\labelenumi{\theenumi)}
\item La fonction~$M_\eta$ est lisse, convexe et croissante en chaque variable;
\item Pour tout $(t_1,\dots,t_p)\in\R^p$,
\[ \max(t_1,\dots,t_p)\leq M_\eta(t_1,\dots,t_p)\leq \max(t_1,\dots,t_p)+\eta;\]
\item Si $t_j+\eta\leq \max_{i\neq j}(t_i-\eta)$, on a
\[ M_\eta(t_1,\dots,t_p)=M_\eta(t_1,\dots,\widehat{t_j},\dots,t_p)\ ;\]
\item Pour tout $t\in\R$,
\[ M_\eta(t_1+t,\dots,t_p+t)=M_\eta(t_1,\dots,t_p)+t .\]
\end{enumerate}

Il résulte de la lissité de la fonction~$M_\eta$
et de la propriété~d) que l'on définit 
une métrique lisse~$\norm\cdot_\eta$ 
sur~$\mathscr O(1)$ en posant 
\[ \log\norm{s_P}_\eta^{-1}([x_0:\dots:x_n])
 = - \log\abs{P(x_0,\dots,x_n)}
 + M_\eta(\log\abs{x_0},\dots,\log\abs{x_n}) \]
pour tout  polynôme~$P$ homogène de degré~$1$.
Comme
\[  \log\norm{s_P}_\eta^{-1}-\log\norm{s_P}^{-1}
 = 
  M_\eta(\log\abs{x_0},\dots,\log\abs{x_n}) 
- \max(\log\abs{x_0},\dots,\log\abs{x_n} ),
\]
la propriété~b) ci-dessus entraîne que  
$  \log\norm{s_P}_\eta^{-1}-\log\norm{s_P}^{-1}$
appartient à l'intervalle~$[0,\eta]$.
Par suite, la métrique $\norm{\cdot}_\eta$ converge uniformément
vers $\norm{\cdot}$ lorsque $\eta\ra 0$.

% Il reste à démontrer que la métrique~$\norm\cdot_\eta$ est lisse et psh.

Soit $x=[x_0:\dots:x_n]$ un point~ de~$\P^n$; supposons pour fixer
les idées que $x_0,\dots,x_p\neq 0$ mais $x_{p+1}=\dots=x_n=0$,
et prenons pour $P$ le polynôme~$X_0$.
Pour $j\in\{1,\dots,n\}$, soit $f_j=T_j/T_0$; c'est
une fonction holomorphe sur $D(T_j)$.
Les fonctions~$f_1,\dots,f_p$ sont inversibles sur
tout voisinage~$U$ de~$x$ contenu dans~$D(T_0\dots T_p)$.
Dans un tel  voisinage, on a
\begin{align*}
 \log\norm{s_P}_\eta^{-1}(x)
& = -\log\abs{x_0}+
  M_\eta(\log\abs{x_0},\dots,\log\abs{x_n}) \\
& = M_\eta (0,\log\abs{f_1(x)},\dots,\log\abs{f_p(x)}) \end{align*}
si $\eta<1$ et si $U$ est pris assez petit pour que l'on ait
\[ \sup_U\sup_{j> p} \log \abs{f_j}+1 \leq \inf_U \max_{i\leq p}\log\abs{f_i}-1. \]
Comme $M_\eta$ est lisse et convexe sur~$\R^{p+1}$,
cette formule prouve que la métrique $\norm\cdot_\eta$
est lisse et psh sur~$U$, tropicalisée par le moment~$(f_1,\dots,f_p)$.

La proposition en résulte.
\end{proof}

\begin{coro}\label{coro.formel-glob-psh-approchable}
Soit $X$ un espace strictement $k$-analytique propre, 
soit $\mathfrak X$ un $k^\circ$-modèle propre de~$X$,  
soit $\mathfrak L$ un fibré en droites sur~$\mathfrak X$
et soit $\bar L$ 
sa fibre générique, muni de sa métrique formelle canonique.
Si $\mathfrak L$ est semi-ample, par
alors $\bar L$ est globalement psh.
% et tropicalement convexe.
\end{coro}
\begin{proof}
Par hypothèse, il existe un entier $d\geq 1$ et un morphisme
$p\colon\mathfrak X\ra\P^N_{k^\circ}$
tel que $p^*\mathscr O(1)$ soit isomorphe à~$\mathfrak L^{\otimes d}$.
Cela prouve que la métrique sur~$L^{\otimes d}$ déduite
de la métrique formelle de~$L$
est l'image réciproque par~$p$ de la métrique
de Weil sur~$\mathscr O(1)$.
Le lemme en résulte.
\end{proof}

\begin{coro}
Soit $L$ un fibré en droites sur un espace $k$-analytique
topologiquement séparé~$X$.
Si $L$ est semiample, alors $L$ possède une métrique lisse psh.
\end{coro}
\begin{proof}
Par hypothèse, il existe un entier $d\geq 1$
et un morphisme  $p\colon X\ra\P^n_k$
tel que $p^*\mathscr O(1)$ soit isomorphe à $L^{\otimes d}$.
L'image réciproque d'une métrique lisse psh sur~$\mathscr O(1)$
induit une métrique lisse psh sur~$L^{\otimes d}$,
et donc aussi sur~$L$.
\end{proof}

\begin{coro}\label{coro.formel-glob-approchable}
Soit $X$ l'analytifié d'une $k$-variété projective
et  soit $\bar L$ un fibré en droites sur~$X$ muni d'une métrique paralinéaire.
Alors $\bar L$ est globalement d-psh.
\end{coro}
\begin{proof}
Soit $(\mathfrak X,\mathfrak L)$ un modèle formel de~$(X,L)$
induisant la métrique formelle donnée de~$\bar L$.
Comme $X$ possède un modèle formel qui est projectif,
on peut supposer que $\mathfrak X$ est projectif sur~$k^\circ$.
Il résulte alors du théorème
de comparaison géométrie algébrique/géométrie formelle
(théorème~2.13.8 de~\cite{abbes2010})
que $\mathfrak L$ est un fibré en droites algébrique sur~$\mathfrak X$.
Dans ces conditions, il existe deux fibrés en droites amples $\mathfrak L_1$
et $\mathfrak L_2$ sur~$\mathfrak X$
tels que $\mathfrak L\simeq \mathfrak L_1\otimes \mathfrak L_2^{-1}$.
Pour $i\in\{1,2\}$, soit $\bar L_i$ la fibre générique de~$\mathfrak L_i$,
muni de sa métrique formelle canonique ; c'est une métrique
globalement psh-approchable 
et tropicalement psh-approchable
d'après le corollaire~\ref{coro.formel-glob-psh-approchable}.
L'isomorphisme $\mathfrak L\simeq\mathfrak L_1\otimes\mathfrak L_2^{-1}$
induit un isomorphisme $ L\simeq  L_1\otimes  L_2^{-1}$.
C'est même une isométrie, d'où le corollaire.
\end{proof}

\begin{rema}
Il serait intéressant de prouver que toute métrique psh continue sur une variété
projective est globalement psh.
Cela est vrai  en analyse complexe, cf.~\cite{maillot2000}, théorème~4.6.1,
ou aussi \cite{blocki-kolodziej2007}, théorème~1.

Par ailleurs, on pourrait vouloir  généraliser 
le corollaire~\ref{coro.formel-glob-approchable} à 
un espace strictement $k$-analytique propre possédant
un modèle formel à fibre spéciale projective. 
Dans~\cite{Scholze-2018}, Scholze propose en effet de considérer
les espaces analytiques non archimédiens satisfaisant cette condition
comme les analogues des variétés complexes kählériennes.
\end{rema}

\chapter{Produits de courants de courbure}

\def\cycle{\operatorname{cycle}}

\section{Courbure d'un fibré en droites métrisé}

\subsection{}
Soit $X$ un $n$-espace $k$-analytique sans bord 
et soit~$L$ un fibré en droites sur~$X$ muni d'une métrique continue.

Soit~$U$ un ouvert de $X$ sur lequel $L$ admet 
une section inversible~$s$.
La fonction  $\log\norm s^{-1}$ est alors continue sur~$U$.
Si $t$ est une autre section inversible de~$L$ sur~$U$,
le quotient $t/s$ est une fonction inversible~$f$ sur~$U$
et l'on a $\log \norm{s}^{-1} = \log \norm{t}^{-1} + \log \abs f$.

Ainsi, ces fonctions continues $(\log \norm s^{-1})$
se recollent en une section globale 
du faisceau quotient $\mathscr C_X/\log(\abs{\mathscr O_X^\times})$.

Lorsque la métrique est lisse, on obtient une section globale
du sous-faisceau $\mathscr A_X^0/\log(\abs{\mathscr O_X^\times})$.

Lorsqu'elle est~\pl, on obtient une section globale
du sous-faisceau $\PL_X/\log(\abs{\mathscr O_X^\times})$.

\subsection{}
Soit $X$ un $n$-espace $k$-analytique sans bord 
et soit~$\bar L$ un fibré en droites
sur~$X$ muni d'une métrique continue.

L'application $u\mapsto \ddc [u]$ de $\mathscr C_X$ dans l'espace
des courants de bidegré~$(1,1)$ sur~$X$ passe au quotient
par le sous-espace $\log(\abs{\mathscr O_X^\times})$
qui sont lisses et vérifient $\ddc u=0$.
On déduit donc de la section globale du faisceau quotient
$\mathscr C_X/\log(\abs{\mathscr O_X^\times})$ associée à~$\bar L$
un courant de bidegré~$(1,1)$ sur~$X$ 
que l'on note $c_1(\bar L)$.

Sur un ouvert~$U$ de $X$ sur lequel $L$ admet 
une section inversible~$s$, ce courant est donné 
par $\ddc \log \norm{s}^{-1}$.

\begin{defi}\label{defi.courant-courbure}
On dit que ce courant $c_1(\bar L)$ est le courant de courbure
du fibré inversible métrisé~$\bar L$.
\end{defi}

Sa formation est compatible à la restriction.

Le courant de courbure d'un fibré inversible métrisé~$\bar L$
est $\di$-fermé et $\dc$-fermé.

Si la métrique de~$L$ est lisse, son courant de courbure
est une \emph{forme} lisse.

Si la métrique de~$L$ est psh, son courant de courbure
est positif.

\subsection{}
En géométrie complexe, il est souvent important d'utiliser
des métriques singulières. 
Les constructions précédentes s'étendent naturellement dans ce cadre,
pour peu qu'on considère des semi-métriques localement intégrables.

Soit $X$ un $n$-espace $k$-analytique sans bord 
et soit~$L$ un fibré en droites sur~$X$ muni d'une semi-métrique.
On dit que cette semi-métrique est localement intégrable
si pour toute section inversible~$s$ de~$L$ sur un ouvert~$U$
de~$X$, la fonction $\log \norm s$ sur~$U$
(à valeurs dans $\R\cup\{-\infty\}$) est localement intégrable.

Par analogie avec la définition~\ref{defi.metrique-psh},
on dit que cette semi-métrique est psh si, pour toute section
inversible~$s$ de~$L$ sur un ouvert~$U$ de~$X$, la fonction~$\log \norm s$
sur~$U$ est psh.

Dans les deux cas, il suffit qu'il existe, au voisinage de tout point de~$X$,
une section inversible de ce type, car les fonctions du type~$\log \abs f$,
pour une fonction  analytique inversible~$f$, sont continues et psh.

On déduit de même une section globale du faisceau quotient
$\mathrm L^1_{\loc,X}/\log(\abs{\mathscr O_X^\times})$,
respectivement du faisceau quotient
$\PSH_X/\log(\abs{\mathscr O_X^\times})$,
ainsi que le courant de courbure $c_1(\overline L)$
du fibré en droites semi-métrisé~$\overline L$, à semi-métrique
localement intégrable. Lorsque cette semi-métrique est psh,
ce courant est positif.

\subsection{}
Soit $X$ un $n$-espace $k$-analytique sans bord 
et soit~$L$ un fibré en droites sur~$X$.
Comme en géométrie algébrique (\cite{ega4.4}, 20.1.8), 
le $\mathscr M_X$-module $L \otimes_{\mathscr O_X} \mathscr M_X$
est inversible; ses sections sont appelées sections méromorphes;
ses sections inversibles sont appelées sections méromorphes régulières.

Si $s$ est une section méromorphe (régulière) de~$L$,
alors pour tout ouvert~$U$ et toute section inversible~$\eps$
de~$L$ sur~$U$, le quotient $s|_U/\eps$ est une fonction méromorphe
(inversible) sur~$U$.

Une section méromorphe régulière~$s$ possède un diviseur~$\div(s)$,
construit localement dans des cartes trivialisantes.
Son support est un fermé de Zariski purement de codimension~$1$,
sur le complémentaire~$U$ duquel $s$ fournit une trivialisation de~$L$.

Supposons maintenant que $L$ soit muni d'une semi-métrique localement
intégrable.
Alors la fonction $\log \norm{s}^{-1}$ est définie et localement
intégrable sur~$U$;
d'après le paragraphe~\ref{ss.prolongement-courant},
elle définit alors un courant sur~$X$  que l'on note $[\log \norm{s}^{-1}]$.

\begin{theo}[Formule de Poincaré--Lelong]\label{theo.poincare-lelong-metriques}
Soit $X$ un $n$-espace $k$-analytique sans bord
et soit~$\bar L$ un fibré en droites sur~$X$ muni d'une semi-métrique 
localement intégrable.
Soit $s$ une section méromorphe régulière de~$L$ sur~$X$.
On a l'égalité
\[ \ddc [\log\norm s^{-1}]+\delta_{\div(s)}=c_1(\bar L). \]
\end{theo}
\begin{proof}
Cette égalité de courants se vérifie localement,
sur un ouvert~$U$ sur lequel $L$ possède une section inversible~$\eps$.
Soit $f$ l'unique fonction méromorphe inversible sur~$U$ 
telle que $s|_U = f\eps$. 
La formule indiquée résulte alors des définitions et de 
la formule de Poincaré--Lelong (théorème~\ref{pcrllg})
pour les fonctions méromorphes régulières.
\end{proof}

\begin{coro}
Soit $X$ une $k$-courbe analytique propre et
soit $L$ un fibré en droites sur~$X$ muni d'une semi-métrique
localement intégrable.
La masse totale du courant~$c_1(\overline L)$
est égale au degré du fibré en droite~$L$.
\end{coro}
Rappelons que $X$ est algébrisable, de même que~$L$,
plus précisément qu'il existe, à isomorphisme près, une unique
$k$-courbe algébrique muni d'un fibré en droite
redonnant $(X,L)$ après analytification.
(Pour une démonstration de l'algébrisation d'une courbe, 
voir~\cite{ducros-2024b}, théorème~3.7.2. L'algébrisation
de~$L$ et la propriété d'unicité se déduisent alors
du théorème GAGA dans ce contexte ; voir l'appendice~A
de~\cite{poineau2010a} pour une démonstration détaillée.)

\begin{proof}
Commençons par traiter le cas où $X$ est intègre.
Le fibré en droites~$L$ possède alors une section méromorphe
globale non nulle~$s$, et cette section est régulière car $X$ est intègre.
D'après la formule de Poincaré--Lelong, on a alors
\[ \langle c_1(\overline L), 1\rangle
 = \langle \ddc [\log \norm{s}^{-1}], 1\rangle 
 + \langle \delta_{\div(s)}  , 1\rangle 
=  \langle [\log \norm{s}^{-1}], \ddc (1)\rangle 
 + \langle \delta_{\div(s)}  , 1\rangle 
. \]
Le premier terme est nul, car $\ddc(1)=0$.
Si $\div(s)=\sum n_i [P_i]$, alors $\deg(L)=\sum n_i [k(P_i):k]$
et $\langle \delta_{P_i},1\rangle = [k(P_i):k]$.
Cela prouve l'égalité $\langle c_1(\overline L), 1\rangle=\deg(L)$
dans le cas où $X$ est intègre.

Traitons maintenant le cas général.
Soit $(X_i)$ la famille finie des composantes irréductibles (intègres) de~$X$
et, pour tout~$i$, soit $m_i$ la multiplicité générique de~$X_i$ dans~$X$.
Soit $Y$ la courbe somme disjointe des~$X_i$ et soit $p\colon Y\to X$
le morphisme fini canonique;
on considère les~$X_i$ comme des sous-espaces ouverts et fermés de~$Y$.

Le fibré en droites semi-métrisé~$\overline{L}$ définit
par image inverse un fibré en droites semi-métrisé~$\overline M$
sur~$Y$.
Soit $(U_j)$ un recouvrement ouvert fini de~$X$ tel que 
pour tout~$j$, le fibré en droites $L|_{U_j}$ 
possède une section inversible~$s_j$.
Soit $(\lambda_j)$ une partition de l'unité lisse subordonnée
à ce recouvrement.
Alors 
\[
\langle c_1(\overline L), 1\rangle 
= \sum_j \langle c_1(\overline L),\lambda_j \rangle
= \sum_j \int_{U_j} \log \norm{s_j}^{-1} \ddc \lambda_j . \]
Pour tout~$i$, soit $V_i$ l'ouvert de~$X$ 
complémentaire de la réunion des autres composantes composantes que~$X_i$;
ces ouverts sont disjoints et le complémentaire de leur réunion
est un ensemble fini de points rigides.
De plus, $p^{-1}(V_i)$ est un ouvert de~$X_i$, dont le complémentaire
est un ensemble fini de points rigides; il s'identifie à~$V_{i,\red}$.
Comme la $(1,1)$-forme lisse~$\ddc \lambda_j$ 
est nulle au voisinage des points rigides,
on a 
\[ \int_{U_j} \log \norm{s_j}^{-1} \ddc \lambda_j
 = \sum_i \int_{U_j \cap V_i} \log \norm{s_j}^{-1} \ddc \lambda_j. \]
Pour tout~$j$ et tout~$i$, on a
\[ \int_{U_j \cap V_i} \log \norm{s_j}^{-1} \ddc \lambda_j 
 = m_i \int_{p^{-1}(U_j) \cap X_i} \log \norm{p^*s_j} \ddc (p^*\lambda_j).
\]
Alors, 
\[
\langle c_1(\overline L), 1\rangle 
 = \sum_{j,i} m_i \int_{p^{-1}(U_j) \cap X_i}
        \log \norm{p^* s_j}^{-1} \ddc (p^*\lambda_j) \\
 = \sum_{i} m_i \langle c_1(\overline L|{X_i}), 1\rangle, 
\]
en renversant les calculs.
Par le cas intègre, on a $\langle c_1(\overline L|_{X_i}),1\rangle
=\deg(L|_{X_i})$.
Par suite,
\[ \langle c_1(\overline L), 1\rangle 
= \sum_i m_i \deg(L|_{X_i}) = \deg(L). \] 
Cela conclut la démonstration.
\end{proof}

\section{Le courant canonique d'une courbe}

Dans ce paragraphe, on appelle \emph{courbe $k$-analytique}
un espace $k$-analytique séparé, purement de dimension~1.
Une telle courbe peut donc être considérée comme espace tropical de niveau~$1$.

\subsection{}
Dans~\cite{Temkin-2016}, M.~Temkin a défini,
pour tout espace espace $k$-analytique~$S$ et tout
$S$-espace analytique
quasi-lisse~$X$ la « semi-norme de Kähler »
sur le fibré vectoriel~$\Omega^1_{X/S}$.
Sa construction fournit,
pour tout domaine~$V$ de~$X$ et toute section $\omega\in\Omega^1_{X/S}(V)$,
une fonction $x\mapsto \norm{\omega(x)}$ sur~$V$.
Rappelons sa construction. Soit $x\in X$
et soit $s$ son image sur $S$. 
L'application du module
des différentielles de Kähler algébriques $\Omega^1_{\hr x/\hr s}$ 
vers $\R_+$ qui envoie $\eta$ sur la borne inférieure des $\sup \abs {a_i} \cdot \abs{b_i}$
pour toutes les expressions $\eta = \sum a_i \, db_i$,
où $a_i, b_i \in \hr x$, est une semi-norme ; notons $\widehat{\Omega^1}_{\hr x/\hr x}$
le séparé-complété correspondant. 
Pour tout domaine analytique $V$ de $X$ contenant $x$ et tout $\omega\in \Omega^1_{V/S}$, 
on obtient par tiré en arrière le long du morphisme $x\colon \mathscr M(\hr x)\to V$ 
un élément $\omega(x)$ du $\hr x$-espace de Banach $\widehat{\Omega^1}_{\hr x/k
\hr s}$, et c'est sa norme que l'on note $\norm{\omega(x)}$.

C'est une notion différente de la notion de semi-métrique
au sens du présent travail, car elle n'est pas invariante par extension
des scalaires. Toutefois, il résulte
de l'exemple~\ref{exem.metrique-rang-1} qu'elle fournit,
au moins dans le cas des courbes,
une semi-métrique canonique (en notre sens) sur le fibré canonique.
Nous allons voir que cette semi-métrique est localement intégrable
et, lorsque la courbe est lisse, calculer son courant de courbure.

\begin{exem}
Prenons l'exemple de la droite affine $X=\mathbf A^1_k=\mathscr M(k[T])$
sur un corps algébriquement clos~$k$.
La forme différentielle $dT$ est une base de $\Omega^1_{X/k}$.
D'après l'exemple~6.2.1 de~\cite{Temkin-2016}, pour tout
point~$x$ de~$X$, sa norme en~$x$ est le \emph{rayon}~$r(x)$ de~$x$, 
borne inférieure des rayons des disques de~$X$ qui contiennent~$x$,
ou encore la distance, dans~$\hr x$, de $T(x)$ à~$k$.

On voit déjà que $\norm{\da T}$ s'annule exactement sur l'ensemble~$X(k)$
des points rigides; comme cet ensemble n'est pas fermé, 
car tout voisinage ouvert du point de Gauss~$\eta_1$
contient des points rigides,
la semi-métrique de Temkin n'est pas continue.
(Néanmoins, cette fonction $\norm{\da T}$ est semi-continue supérieurement.)

Soit $a\in k$ et soit $r$ un nombre réel strictement positif ;
on a \[\norm{\da T(\eta_{a,r})}=r=\abs{T-a}(\eta_{a,r})\]
puisque $\abs{T(\eta_{a,r})-b}=\sup (\abs{b-a},r)$
pour tout $b\in k$.
Ainsi, la fonction $r\mapsto \log(\norm{\da T(\eta_{a,r}))}$
est affine sur la partie squelettique de~$X$
formée des points du type~$\eta_{a,r}$, pour $r>0$.
En particulier, en restriction à toute partie squelettique de~$X$,
la fonction $\log(\norm{\da T})$ est paralinéaire et donc continue.
Cela démontre dans ce cas
que la semi-métrique de Temkin est localement intégrable.
\end{exem}

\subsection{}
Soit $X$ une courbe $k$-analytique et
soit $W$ un domaine analytique de~$X$. 
On dit qu'une fonction $t\in\mathscr O_X(W)$ est un paramètre 
si le morphisme de~$W$ vers~$\A^1$ induit par~$t$ est quasi-étale.
Cela entraîne que pour tout point $x\in W$
l'extension $\hr {t(x)}\to \hr x$ est finie séparable.
On dit que $t$ est un paramètre modéré (resp. non ramifié)
si, de plus, ces extensions sont toutes modérément ramifiées 
(resp. non ramifiées).
Par exemple,
la fonction coordonnée de la droite affine en est un paramètre non ramifié.

On dit que le paramètre~$t$ est \emph{monomial} en un point~$x$ de~$W$
si $t(x)$ appartient au squelette canonique de~$\gm$.

\begin{prop}
Soit $k$ un corps ultramétrique complet, non trivialement valué
et algébriquement clos~$k$. 
Soit $X$ une courbe $k$-analytique quasi-lisse,
soit $V$ un domaine connexe de~$X$, 
et soit $\omega\in\Omega^1_{X/k}(V)$ 
une forme différentielle non nulle.

\begin{enumerate}
\item Pour tout point $x\in V$, on a $\norm{\omega(x)}$ si et seulement
si $x$ est un point rigide de~$V$.
\item Pour toute partie squelettique~$\Sigma$ de~$V$,
la restriction à~$\Sigma$ de la fonction $\log( \norm{\omega})$ 
est paralinéaire.
\item
En particulier, la semi-métrique canonique est localement intégrable.
\end{enumerate}
\end{prop}
\begin{proof}
On peut déduire du théorème de réduction semi-stable
que $V$ possède un G-recouvrement par des domaines sur lesquels il existe
un paramètre non ramifié.  Voir aussi 
le théorème~6.3.1 de~\cite{Temkin-2010} et
le lemme~3.6.14 de~\cite{CohenTemkinTrushin-2016}
pour une approche plus ponctuelle, 
mais qui repose également sur le théorème de réduction semi-stable.

% Nous allons déduire des résultats de~La propriété est G-locale sur~$V$, 
% il suffit de la vérifier au voisinage de tout point~$x$ de~$V$.
% Le lemme~3.6.14 de~\cite{CohenTemkinTrushin-2016}
% et le théorème~6.3.1 de~\cite{Temkin_2010}
% assurent que si $x$ n'est pas rigide, il
% possède un voisinage qui est G-recouvert par de tels domaines.
% D'autre part, si $x$ est un point rigide, il possède
% un voisinage isomorphe au disque unité, car $V$ est quasi-lisse,
% donc lisse en~$x$, si bien que $x$ possède un voisinage
% sur lequel il existe un paramètre non ramifié.

Comme $\omega$ n'est pas nulle et comme $V$ est connexe,
la restriction de~$\omega$ à tout domaine analytique connexe non vide
de~$V$ est encore non nulle (et son lieu des zéros
est un ensemble fermé et discret de points rigides).
Pour démontrer la proposition, on peut donc se restreindre aux
domaines d'un G-recouvrement donné de~$V$,et en conséquence supposer
qu'il existe un paramètre non ramifié~$t$ sur~$V$.

Puisque $t$ est quasi-étale, $dt$ est une base du fibré en droites~$\Omega^1_{V/k}$; il existe donc une unique fonction $f\in\mathscr O_X(V)$
telle que $\omega = f\,\da t$. 
Comme $\omega\neq0$ et que
$V$ est connexe, la fonction~$f$ 
n'est pas nulle sur~$V$.
Si $x$ est un point rigide, $\norm{\omega(x)}=0$ par construction 
de la semi-métrique de Temkin, car $\Omega_{\hr x/k}=0$.
Sinon, on a $f(x)\neq 0$; de plus,
le théorème~2.3.2 de~\cite{CohenTemkinTrushin-2016}
affirme que $\norm{\da t}(x)$ est la distance, notée~$r_{t_x}$
de~$t(x)$ à~$k$ dans~$\hr x$; en particulier, $\norm{\da t(x)}>0$,
si bien que $\norm{\omega(x)}>0$. L'assertion~\emph a) est donc démontrée.

Soit $\Sigma$ une partie squelettique de~$V$.
Puisque $f$ est inversible au voisinage de~$\Sigma$,
la fonction $\log\norm f|_\Sigma$ est paralinéaire.
D'autre part, $\norm {\da t}$ est la composée
de la fonction rayon sur~$\A^1$ et du morphisme~$f$ induit par
le paramètre~$t$.
Comme $f(\Sigma)$ est une partie squelettique de~$\A^1_k$
(lemme~3.6.8 de~\cite{CohenTemkinTrushin-2016}),
l'assertion découle du fait que le logarithme de la fonction rayon induit
une fonction paralinéaire
sur toute partie squelettique. Cela démontre l'assertion~\emph b).

L'assertion~\emph c) découle de~\emph b) puisque les fonctions
paralinéaires sur un squelette sont continues.
\end{proof}

\subsection{}
Soit $k$ un corps ultramétrique complet, non trivialement valué,
algébriquement clos.
Soit $X$ une courbe analytique quasi-lisse sur~$k$.
Soit $\Sigma$ une partie squelettique compacte de~$k$.

On sait que $\Sigma$ possède une \emph{triangulation},
c'est-à-dire une décomposition cellulaire finie~$\mathscr C$
telle que, pour toute arête~$e\in\mathscr C_1$ de~$\mathscr C$,
il existe un ouvert~$U_e$ de~$X$ isomorphe à une couronne $\{r<\abs T<R\}$
qui identifie l'arête ouverte au squelette canonique de cette couronne,
c'est-à-dire à l'ensemble des points~$\eta_t$, 
pour $t\in\mathopen]r;R\mathclose[$.
Pour tout point~$x$ de type~II de~$X$, 
le corps~$\hrt x^1$ est le corps des fonctions
d'une unique courbe projective lisse~$C_x$
sur le corps algébriquement clos~$\tilde k^1$;
on note $g(x)$ le genre de cette courbe.
Si $x$ est un point de type~III, on pose $g(x)=0$.
%Par ailleurs, les sommets de~$\mathscr C$ sont des points de type~II
%ou type~III.

Cet énoncé peut déduire du théorème de réduction semi-stable,
il peut également être démontré directement (voir~\cite{ducros-2024b})
et servir de base à une démonstration de ce théorème.

Pour tout sommet~$x\in\mathscr C_0$, on note $v(x)$
le nombre d'arêtes de~$\mathscr C$ qui contiennent~$x$.

On définit un « diviseur » sur~$\Sigma$ par la formule 
\[ K_{\mathscr C} = \sum_{x\in\mathscr C_0} (2g(x)-2+v(x))  \delta_x ,\]
où $x$ est la mesure de Dirac en~$x$.
Raffinons $\mathscr C$ en une décomposition $\mathscr C'$
par adjonction d'un point~$x$ à l'intérieur d'une arête. 
Si le point $x$ est de type~III, on a $g(x)=0$
par définition; sinon il appartient au squelette canonique d'une couronne
ouverte,
de sorte que 
le corps~$\hrt x$ est isomorphe à~$\tilde k^1(T)$, si bien que 
$C_x=\P^1_{\tilde k^1}$ et que $g(x)=0$. 
Et puisque $x$ est situé à l'intérieure d'une arête, on a $v(x)=2$. Par conséquent
$2g(x)-2+v(x)=0$, d'où il résulte que le diviseur~$K_{\mathscr C'}$
est égal au diviseur~$K_{\mathscr C}$.

Il s'ensuit que le diviseur~$K_{\mathscr C}$ ne dépend pas du choix
de la triangulation~$\mathscr C$. On notera $K_\Sigma$
ce diviseur, et on dira que c'est le diviseur canonique de~$\Sigma$.

Si $\Sigma'$ est une partie squelettique compacte qui contient~$\Sigma$,
on a $K_\Sigma \leq K_{\Sigma'}|_\Sigma $, mais l'inégalité n'est
pas une égalité en général.

\subsection{}
Soit $k$ un corps valué complet, non trivialement valué, algébriquement clos
et soit $X$ une courbe analytique lisse sur~$k$.
Soit $\omega$ une forme différentielle méromorphe régulière sur~$X$.

\subsection{}
Soit $x\in X$ un point de type~II et soit $e\subset X$ 
une $1$-cellule dont $x$ est une extrémité.

Il existe un point $y\in e$ et un
ouvert~$U$ de~$X$
tel que $U$ soit isomorphe à une couronne ouverte
et ait pour squelette l'intervalle~$\mathopen]x;y\mathclose[$ de~$e$.
Soit $t$ une fonction coordonnée sur~$U$ qui est croissante vers~$x$
(c'est-à-dire $\lim_{x} \abs t>\lim_y \abs t)$.
Il existe une fonction analytique~$f$ sur~$U$ telle que $\omega|_U=f\, \da t/t$,
et $f$ n'a qu'un nombre fini de zéros et de pôles sur~$U$
car $\omega$ n'a qu'un nombre fini de zéros et de pôles au voisinage
de la partie compacte~$\overline U=U\cup\{x;y\}$.
On peut dès lors, quitte à prendre pour~$y$ un point plus proche de~$x$,
supposer que $f$ est inversible sur~$U$.
Dans ce cas, il existe un entier~$n$ tel 
que $\abs{f}=\abs t^n$ sur~$U$.
Alors, $\norm{\omega}=\abs t^n$ sur~$\mathopen]x;y\mathclose[$.
L'entier~$n$ ne dépend pas du choix de~$y$,
donc ne dépend que du germe en~$x$ de la cellule~$e$;
on le note~$n_e(\omega)$; 
c'est la pente de~$\omega$ en~$x$ le long de~$e$.
(Elle est positive ou nulle si $\norm \omega \leq \norm\omega(x)$
au voisinage de~$x$ sur~$e$.)

\subsection{}\label{ss-parametres-moderes}
Soit $x\in X$ un point de type~II ou~III
et soit $(e_i)$ une famille finie de germes de $1$-cellules 
dont $x$ est une extrémité.

Il existe une famille finie $(y_i)$, où $y_i\in e_i$ pour tout~$i$,
une famille finie $(U_i)$ d'ouverts de~$X$,
chaque~$U_i$ étant isomorphe à une couronne ouverte de squelette
l'intervalle~$\mathopen]x;y_i\mathclose[$,
et une fonction analytique~$t$ au voisinage de~$x$,
telles que pour tout~$i$, $t|_{U_i}$ soit une fonction
coordonnée sur la couronne~$U_i$, croissante vers~$x$,
et qui induit un paramètre modéré monomial en~$x$.

En effet, supposons tout d'abord que $x$ est de type II. 
Il suffit alors
de prendre une fonction analytique~$t$
de valeur absolue~$1$ en~$x$ telle que $\widetilde{t(x)}$
ait un zéro d'ordre exactement~$1$ 
en chacun des points fermés de la courbe~$C_x$
correspondant aux cellules~$e_i$ issues de~$x$ dans~$\mathscr C$,
et un unique pôle en un point fermé auxiliaire, 
choisi d'ordre premier à~$p$ si $\tilde k$ est de caractéristique~$p>0$.

Supposons maintenant $x$ de type III.
Il possède alors un voisinage ouvert~ $U$ qui est une couronne
ouverte telle que $U\cap e_i$ soit contenu dans le squelette de~$U$.
En particulier, les~$e_i$ ne définissent au plus que deux
germes de cellule en~$x$.
Toute fonction coordonnée sur~$U$ est un paramètre modéré et monomial en~$x$.

On dira que $t$ est un paramètre adapté aux cellules~$e_i$.

\subsection{}
\def\ord{\operatorname{ord}\nolimits}
Soit $x\in X$ un point de type~II.
Comme $k$ est algébriquement clos,
il existe $a\in k^\times$ tel que $\norm\omega(x)=\abs a$.
Alors $\omega/a$ possède une réduction au sens de Temkin,
qui est une forme différentielle
\emph{méromorphe} sur la courbe~$C_x$ que nous noterons $\widetilde{\omega(x)}$.
Elle est bien définie à multiplication près 
par un élément de~$\widetilde k^\times$.

Par ailleurs, chaque $1$-cellule~$e$
dont~$x$ est une extrémité correspond à un
point fermé $P$ de la courbe~$C_x$; 
on notera~$\ord_e(\widetilde{\omega(x)})$ 
le coefficient de $P$ dans le diviseur de $\widetilde {\omega(x)}$ ; cet entier relatif ne dépend pas du choix de l'élément~$a$.

\begin{lemm}\label{lemme-ne-ordre}
Soit $k$ un corps valué complet, non trivialement valué, algébriquement clos
et soit $X$ une courbe $k$-analytique lisse.
Soit $\omega$ une forme différentielle méromorphe régulière sur~$X$.
Soit $x\in X$ un point de type~II et soit $e\subset X$ 
une $1$-cellule dont $x$ est une extrémité.
On a $n_e(\omega) = 1 + \ord_e(\widetilde{\omega(x)})$.
\end{lemm}
\begin{proof}
L'assertion résulte essentiellement du théorème~4.3.3~(ii)
de~\cite{CohenTemkinTrushin-2016}. Donnons néanmoins quelques précisions.
Quitte à multiplier~$\omega$ par un scalaire, on suppose
que $\norm{\omega(x)}=1$. Soit $t$ une fonction analytique au voisinage
de~$x$ qui fournit un paramètre en~$x$, adapté à la cellule~$e$;
soit $U$ un ouvert de~$X$ qui est une couronne ouverte
dont le squelette est un intervalle $\mathopen]x;y\mathclose[$ de~$e$n et sur laquelle
$\omega$ n'a ni zéro ni pôle.
Il existe une fonction méromorphe régulière régulière~$\phi$
au voisinage de~$x$ telle que $\omega = \phi\cdot \da t/t$ au voisinage de~$x$.

Quitte à multiplier~$t$ par un scalaire, on peut
supposer que $\abs{t(x)}$; on a alors $\abs{\phi(x)}=1$.

Posons $\tau=\widetilde{t(x)}\in\hrt x^1$;
il a un zéro simple au point~$v$ de~$C_x$ qui correspond à la branche~$e$.
Par construction, $\da t/t$ se réduit en $\da \tau/\tau$,
et $\widetilde{\omega(x)}=\widetilde{\phi(x)} \da \tau/\tau$.

On a alors $\abs{\phi(t)} = \abs {t}^{n_e(\omega)}$ sur~$U$,
si bien que $\widetilde{\phi(x)}$ est d'ordre $n_e$ en $v$.  Il s'ensuit que 
$\ord_e(\widetilde{\omega(x)})=n_e-1$, d'où le lemme.
\end{proof}

\begin{lemm}\label{lemme-ne-degdisque}
Soit $x$ un point de type~II ou~III de~$X$
et soit $D$ une partie ouverte de~$X$ qui est isomorphe à un disque
ouvert et dont le bord topologique
dans $X$ est égal à~$\{x\}$.
Soit $e\subset X$ une cellule dont $x$ est une extrémité
et telle que $e\setminus\{x\}\subset D$.
La forme~$\omega$ n'a qu'un nombre fini de zéros et de pôles sur~$D$,
et l'on a $n_e(\omega) = 1 + \deg(\omega|_D)$.
\end{lemm}
\begin{proof}
Comme $\overline D=D\cup\{x\}$, c'est une partie compacte de~$X$,
de sorte que $\omega$ n'a qu'un nombre fini de zéros et de pôles sur~$D$.

Soit $t\in\mathscr O(D)$
une coordonnée sur ce disque (qui induit un isomorphisme de~$D$
sur un disque ouvert). Comme $\da t$ est une base de~$\Omega_D$,
il existe une unique fonction analytique $f$ sur~$D$
telle que $\omega = f\,\da t$.
Comme le diviseur de~$\omega|_D$ est fini, 
il existe une fonction inversible~$u$ sur~$D$ 
et une fonction rationnelle $g\in k(t)$ telle que $f=u g$.
La fonction $\abs u$ est constante sur~$D$.
Pour $y\in D$ assez proche de~$x$, on a donc $\abs {f(y)}= \abs{u(0)} 
  \cdot  \abs{t(y)}^d$, 
où $d$ est 
le nombre de zéros et de pôles de
$g$ dans $D$, comptés avec leur multiplicités,
c'est-à-dire $\deg(\omega|_D)$.
Par suite, \[\norm{\omega(y)} = \abs{f(y)}\cdot \abs{t(y)} \cdot\left\|\frac{\da t}t(y)\right\|=\abs{u(0)}\cdot \abs{t(y)}^{1+d}\cdot
\left\|\frac{\da t}t(y)\right\|.\]
Si $t(y)$ appartient au squelette canonique de~$\gm$,
on a $\norm{(\da t/t}(y))=1$ et donc
$\norm{\omega}(y)=\abs{u(0)}\cdot\abs{t(y)}^{1+d}$.
Autrement dit, $n_e(\omega)=1+d$, comme il fallait démontrer.
\end{proof}

\begin{defi}
Soit $k$ un corps ultramétrique complet algébriquement clos 
et non trivialement valué, 
soit~$X$ une courbe $k$-analytique lisse 
et soit $f$ une fonction tropicalement continue à support compact sur~$X$. 
Nous dirons qu'une partie 
squelettique $\Sigma$ de $X$ est \emph{$f$-admissible}
si elle satisfait les conditions suivantes : 

\begin{itemize}
\item $\Sigma$ est compacte ; 
\item $f$ est localement constante sur $X\setminus \Sigma$ ;
\item toute composante connexe $U$ de $X\setminus \Sigma$ ou bien 
est contenue dans le lieu d'annulation de $f$, ou bien est un
disque ouvert relativement compact dans $X$.
\end{itemize}

Si $F$ est un ensemble de fonctions tropicalement continues à support compact, 
nous dirons que $\Sigma$ est $F$-admissible s'il est $f$-admissible pour toute $f\in F$. 
\end{defi}

\begin{exem}\label{exemple-f-admissible}
Soit $k$ et $X$ comme ci-dessus, soit $\Gamma$ une partie
squelettique compacte de $X$ et soit $F$ un ensemble fini de fonctions
tropicalement continues à support compact sur $X$.
Soit $V$ un domaine
analytique compact de $X$ contenant $\Gamma$ et le support de $f$ pour toute $f\in F$. 
Soit $(V_i)$ un recouvrement fini de $V$ par des domaines analytiques compacts
tels qu'il existe pour tout $i$ un moment $g_i$ sur $V_i$ tropicalisant $f|_{V_i}$
pour toute $f\in F$. 
Soit $\Delta$ la réunion de $\Gamma$, des $\Sigma_{g_i}$, de $\partial V$, 
de l'ensemble des points $x$ de $V$ tels que $g(x)>0$, et de l'ensemble des boucles tracées sur $V$ ; soit $\Delta'$
la partie squelettique obtenue à partir de $\Delta$ en lui adjoignant, pour toute composante connexe 
$W$ de $V$ telle que $\Delta\cap W=\emptyset$, un point de type II ou III arbitrairement choisi sur $W$. 

Si $C$ et $D$ sont deux composantes connexes distinctes de $\Delta'$, il existe au plus un segment $[x,y]$ tracé sur $V$ avec $x\in C, y\in D$ et $]x,y[\subset V\setminus \Delta'$ : 
en effet s'il
existait deux  tels segment $[x,y]$
 et $[x',y']$ distincts, 
la concaténation d'un segment $[x,x']$ tracé
sur $C$, de $[x',y']$, d'un segment $[y',y]$ tracé sur $D$ et de $[y,x]$ contiendrait 
nécessairement une boucle non incluse dans $\Delta'$. Notons $I_{C,D}$ cet unique segment
lorsqu'il existe, et désignons par $\Sigma$ la réunion de $\Delta'$ et des différents
$I_{C,D}$ pour $C$ et $D$ variables. Par construction, $\Sigma$ est une partie squelettique
de $V$ qui contient
$\Gamma, \bigcup \Sigma_{g_i}$, $\partial V$ tous les point $x$ de $V$ tels que
$g(x)>0$, et toutes les boucles de $V$, et dont l'intersection 
avec toute composante connexe de $V$ est non vide et connexe. 

Si $W$ est une composante connexe de $V\setminus \Sigma$ alors $W$ est un arbre
(puisqu'elle ne contient aucune boucle) et n'est pas compacte 
(sinon ce serait une composante connexe de $V$ ne rencontrant pas $\Sigma$). 
Le bord topologique de $W$ dans $V$ est alors un singleton : il est en effet non vide car $W$
n'est pas compacte, et s'il contenait deux points distincts $x$ et $y$ ceux-ci seraient
deux points de $\Sigma$ situés sur la même composante connexe de $V$ et seraient donc à ce titre 
joints par un segment tracé sur $\Sigma$, mais ils le seraient aussi par un intervalle ouvert 
tracé sur l'arbre $W$, ce qui fournirait une boucle de $V$ non située sur $\Sigma$. 
Ainsi $W$ est un arbre à un bout relativement compact sur $V$, qui n'en rencontre pas le bord
et ne contient aucun point de genre strictement positif. C'est en conséquence un disque ouvert. 

Soit maintenant $U$ une composante connexe de $X\setminus \Sigma$. Supposons que $U$
rencontre $V$. Comme $\partial V\subset \Sigma$, on a $U\cap V=U\cap \int(V)$, et $U\cap V$
apparaît ainsi comme une partie non vide, ouverte et fermée de $U$, qui est donc égale
à $U$ par connexité. Autrement dit, la composante $U$ est contenue dans $V$, et c'est dès lors
une composante connexe de $V\setminus \Sigma$. C'est en conséquence par ce qui précède un disque
ouvert relativement compact, et $f$ est constante sur $U$ pour toute $f\in F$ puisque
$\Sigma$ contient
$\bigcup_i \Sigma_{g_i}$. Et si $U$ ne rencontre pas $V$ alors $f|_U$ est identiquement nulle
pour toute $f\in F$ puisque $V$ contient le support de $f$. On en conclut que $\Sigma$ est $F$-admissible.
\end{exem}

\begin{lemm}\label{lemme-courant-canonique}
Soit $k$ et $X$ comme dans la définition ci-dessus, et soit 
$F$ un ensemble fini de fonctions tropicalement continues à support compact sur $X$. 
\begin{enumerate}
\item Pour toute partie squelettique compacte $\Gamma$ de $X$, 
il existe une partie squelettique de $X$ qui est $F$-admissible
et contient $\Gamma$ ; 
\item Soit $\Sigma$ une partie squelettique $F$-admissible de $X$ et soit $\Tau$ une partie 
squelettique compacte de $X$ contenant $\Sigma$ ; 
les assertions suivantes sont équivalentes : 
\begin{itemize}
\item[(i)] la partie squelettique $\Tau$ est $F$-admissible ;
\item[(ii)] toute composante connexe de $\Tau$ qui ne rencontre pas $\Sigma$ est contenue dans le lieu d'annulation 
de $f$ pour chaque $f\in F$ ; 
\end{itemize}
\item Soit $\Sigma$ et $\Tau$ comme dans b) telles que les conditions
équivalentes (i) et (ii) soient satisfaites ; on a pour toute $f\in F$ l'égalité 
\[\langle K_\Tau, f|_\Tau\rangle=\langle K_\Sigma, f|_\Sigma\rangle.\]
\item Soit $\Sigma$ une partie $F$-admissible de $X$ et soit $U$ un voisinage ouvert de $\Sigma$ contenant
la réunion des supports des éléments de $F$. Alors $\Sigma$ est une partie $(F|_U)$-admissible de $U$. 
\end{enumerate}
\end{lemm}

\begin{proof}
L'exemple \ref{exemple-f-admissible}
fournit un exemple explicite de partie squelettique $F$-admissible de
$X$ contenant $\Gamma$, d'où \emph a). 

Prouvons \emph b). Supposons tout d'abord que $\Tau$ est $F$-admissible. Soit $\Theta$ une composante connexe de $\Tau$ ne rencontrant
pas $\Sigma$ et soit $f\in F$ ; montrons que $f|_{\Theta}=0$. 
La partie squelettique compacte $\Theta$ est contenue dans une composante connexe $U$ de $X\setminus \Sigma$ ; il suffit de
vérifier que la valeur constante de $f$ sur $U$ est nulle. On suppose par l'absurde que ce n'est pas le cas. Dans ce cas $U$ est un disque ouvert, 
et son bord contient un unique point $x$ situé sur $\Sigma$. Soit $y$ l'image de $x$ sur $\Theta$ par la rétraction canonique de l'arbre compact
$U\cup\{x\}$ sur son sous-arbre compact non vide $\Theta$. La composante connexe $V$ de $]y,x[$ dans $U\setminus \Theta$ est alors une
composante connexe de $X\setminus \Tau$, et donc un disque ouvert relativement compact puisque $\Tau$
est $f$-admissible et que la valeur constante de $f$ sur $V$ n'est pas nulle. Le bord topologique de $V$ dans $X$ est dès lors un singleton, ce qui 
absurde puisque ce bord est égal à $\{x,y\}$ par construction. 

Réciproquement, supposons que (ii) est satisfaite. Comme $\Tau\supset \Sigma$, toute 
fonction $f$ appartenant à $F$ est localement constante sur 
$X\setminus \Tau$. Soit $U$ une composante connexe de $X\setminus \Tau$.
Supposons qu'il existe $f\in F$ dont la valeur constante sur $U$
est non nulle, et soit $V$ la composante connexe de $X\setminus \Sigma$ 
contenant $U$. Nous allons montrer que $V$ est un disque
ouvert relativement compact dans $X$, ce qui prouvera (i). 
Par construction, $V$ est un disque ouvert dont l'unique point $x$
du bord est situé sur $\Sigma$. 
Soit $E$ une composante connexe de $\Tau \cap (V\cup\{x\})$. 
Elle est compacte, et si elle ne contient pas $x$ c'est une composante connexe de
$\Tau$ ne rencontrant pas $\Sigma$, ce qui est absurde au vu de l'hypothèse (ii)
et puisque la valeur constante de $f$ sur $V$ est non nulle. 
Par conséquent, $\Tau\cap  (V\cup\{x\})$ est un arbre compact contenant $x$. On distingue 
dès lors deux cas. Si $\Tau\cap V$ est vide alors $U=V$, et $U$ est donc un disque ouvert
relativement compact de $X$. Et si $\Tau\cap V$ est non vide, c'est un sous-arbre fermé de $V$
contenant un intervalle ouvert aboutissant à $x$ ; en conséquence $V\setminus \Tau$ est une union
disjointe $\coprod V_i$ de disques ouverts relativement compacts dans $V$. Les $V_i$ sont alors
à la fois ouverts et fermés dans $X\setminus \Tau$, si bien que ce sont des composantes
connexes de $X\setminus \Tau$, et $U$ est dès lors nécessairement l'un des $V_i$, 
ce qui termine la preuve de \emph b). 

Montrons enfin \emph c). Soit
$f\in F$. Soit $\Tau_1,\ldots, \Tau_m$ les composantes connexes de
$\Tau\setminus \Sigma$. Pour tout $i$, soit $U_i$ la composante connexe de 
$X\setminus \Sigma$ qui contient $\Tau_i$. Soit $I$ l'ensemble des indices
tels que la valeur constante $a_i$ de $f$ sur $U_i$ soit non nulle. 
Si $i\notin I$, aucun point de $\overline {\Tau_i}$ ne contribue 
à $\langle K_\Tau, f|_\Tau\rangle$
ni à $\langle K_\Sigma, f|_\Sigma\rangle$. Si
$i\in I$ alors 
$U_i$ est un disque ouvert 
relativement compact dans $X$, et $\Tau_i$ n'est pas compacte (sans quoi
ce serait une composante connexe de $\Tau$, ce qui contredirait (ii)). L'adhérence de 
$\Tau_i$ dans $X$ est donc égale à $\Tau_i\cup\{x_i\}$, où $x_i$ est l'unique point du bord
de $U_i$ (remarquons que si $i$ et $j$ sont
deux indices différents de $I$ alors $U_i$ et $U_j$ sont disjoints, sans quoi 
$\Tau_i$ et $\Tau_j$ coïncideraient à l'approche de $x_i$).
La valence de $\Tau$ en un point $x$ de $\Sigma$ en lequel $f$ ne
s'annule pas est donc égale
à la somme de la valence de $\Sigma$ en $x$ et du cardinal de $\{in I,x_i=x\}$. 
Il s'ensuit que
\[\langle K_\Tau, f|_\Tau\rangle=\langle K_\Sigma, f|_\Sigma\rangle
+\sum_{i\in I} a_i\left(1+\sum_{x\in S_i}(v(x)-2)\right),\]
où $S_i$
désigne l'ensemble des sommets de $\Tau_i$, où
la valence $v(x)$ est calculée relativement à $\Tau_i$, et où l'on a utilisé
le fait que $g(x)=0$ pour tout $x\in S_i$ puisque $U_i$ est un disque. 
Pour tout $i$, notons $\Tau'_i$ l'arbre compact 
$\Tau_i\cup\{x_i\},\,S'_i=S_i\cup\{x_i\}$ l'ensemble de ses sommets
et $E_i$ l'ensemble de ses arêtes. 
On a alors
\begin{align*}
1+\sum_{x\in S_i}(v(x)-2)&=\sum_{x\in S'_i}v(x)-2\Card(S_i)\\
&=2\Card(E_i)-2\Card(S'_i)+2\\
&=2(1-\chi(\Tau'_i))\\
&=0,
\end{align*}
si bien que 
$\langle K_\Tau, f|_\Tau\rangle=\langle K_\Sigma, f|_\Sigma\rangle$.

Montrons enfin (d). Comme $\Sigma$ est $F$-admissible, $\Sigma$ est une partie squelettique compacte de $U$ et $f|_U$
est localement constante sur $U\setminus \Sigma$. Soit maintenant $V$ une composante connexe de $U\setminus \Sigma$
telle qu'il existe une fonction $f\in F$ dont la valeur constante sur $V$ est non nulle. 
Soit $V'$ la composante connexe de $X\setminus \Sigma$ qui contient $V$. La valeur constante de $f$ sur $V'$ est alors non nulle, 
ce qui entraîne que $V'$ est un disque ouvert relativement compact dans $X$, dont l'unique point du bord $x$ est situé sur $\Sigma$ ; et
cela entraîne aussi que $V'$ est contenu dans le support de $f$, et a fortiori dans $U$. Il vient $V=V'$, si bien que $V$
est un disque ouvert dont l'adhérence dans $U$ est le compact $V\cup\{x\}$ (puisque $\Sigma\subset U$). 
\end{proof}

\subsection{}
Si $F$ est un ensemble fini de fonctions tropicalement continues à support compact, il résulte 
du lemme \ref{lemme-courant-canonique} que l'ensemble $\mathscr S$ 
des parties squelettiques $F$-admissibles de $X$ est non vide, et même filtrant, 
et que si $\Sigma\in \mathscr S$ et $f\in F$ le nombre réel 
$\langle K_\Sigma, f|_\Sigma\rangle$ ne dépend pas de $\Sigma$. 

Il est donc licite de définir pour toute fonction tropicalement continue $f$ à support
compact
$\langle K^\sq_X,f\rangle$ comme étant égal à $\langle K_\Sigma,f|_\Sigma\rangle$
pour n'importe quelle partie squelettique $f$-admissible $\Sigma$ de $X$. 
Si $g$ est une autre fonction tropicalement continue à support compact on montre en utilisant 
une partie squelettique $\{f,g\}$-admissible que 
\[\langle K^\sq_X,f+\lambda g\rangle=\langle K^\sq_X,f\rangle+\lambda 
\langle K^\sq_X,g\rangle\] pour tout $\lambda\in \R$. Ainsi $K_X^\sq$
apparaît-il comme une forme linéaire sur l'espace des applications tropicalement continues
à support compact sur $X$. En le restreignant à $\mathscr A_{\mathrm c}(X)$, on obtient
un courant d'ordre $0$ sur $X$. 

\begin{theo}
Soit $k$ un corps valué complet, non trivialement valué, algébriquement clos
et soit $X$ une courbe analytique lisse sur~$k$.
Soit $c_1(X)$ le courant de courbure du fibré canonique de~$X$,
muni de sa semi-métrique de Temkin.
Soit $f$ une fonction lisse et à support compact sur~$X$.
On a 
\[\langle c_1(X),f\rangle=\langle K^\sq_X, f\rangle.\]
\end{theo}
\begin{proof}
Soit $V$ un domaine analytique compact de $X$ contenant le support de $f$ et soit $\Sigma$
une partie squelettique $f$-admissible de $X$ contenue dans $V$ et contenant 
$\partial V$ (il en existe par l'exemple \ref{exemple-f-admissible}). 
Il s'agit de démontrer que
\[\langle c_1(X),f\rangle=\langle K_\Sigma, f|_\Sigma\rangle.\]
Il existe un voisinage ouvert $U$ de $V$
dans $X$ qui est de la forme $V\cup\bigcup_i X_i$ où chaque 
$X_i$ est une couronne ouverte contenue dans $X\setminus V$ dont le bord est une paire
$\{x_i,y_i\}$ avec $x_i\in \partial V$ et $y_i\notin V$, et où les $X_i$ sont deux
à deux disjointes. En prolongeant chacune des $X_i$ en un disque ouvert de bord $\{x_i\}$ 
on obtient une courbe $k$-analytique propre et donc projective lisse $Y$ munie d'une 
immersion ouverte $U\hookrightarrow Y$ ; comme $Y$ est une courbe projective, le fibré en droites
$\Omega_{Y/k}$ possède une section méromorphe régulière $\omega$, dont la restriction à $U$ est une section méromorphe
régulière de $\Omega_{U/k}$. Les deux termes de l'égalité à démontrer  $\langle c_1(X),f\rangle=\langle K_\Sigma, f|_\Sigma\rangle$
peuvent être calculés en remplaçant $X$ par $U$ puisque $U$ contient $\Sigma$ et le support de $f$
et que $\Sigma$ est encore $f|_U$-admissible d'après 
le lemme \ref{lemme-courant-canonique} (d). On peut donc supposer qu'il existe une forme différentielle méromorphe
régulière $\omega$ sur $X$.

Le théorème~\ref{theo.poincare-lelong-metriques} affirme
alors que $c_1(X)=\ddc [\log\norm\omega^{-1}] + \delta_{\div(\omega)}$.
Par suite,  on a
\[ \langle c_1(X), f\rangle = \int_X \log \norm{\omega}^{-1} \ddc f
 + \langle \delta_{\div(\omega)}, f\rangle. \]

On commence par calculer le premier terme.
Soit $\mathscr C$ une triangulation de~$\Sigma$.
Puisque la fonction lisse $f$ est localement constante en dehors de
$\Sigma$, ce dernier est un support fort de~$\ddc f$, si bien que 
\[ \int_X \log\norm\omega^{-1}\ddc f = 
 \sum_{e\in\mathscr C_1} \int_e \log \norm{\omega^{-1}}\ddc f. \]

Pour toute arête~$e\in\mathscr C_1$,
soit $U_e$ la couronne ouverte de $X$ 
dont le squelette 
est la cellule ouverte~$\mathring e$.
Choisissons une coordonnée~$t_e$ sur cette couronne;
la fonction $\tau_e=\log\abs{t_e}$ identifie~$\mathring e$ 
à un intervalle~$\mathopen]a_e;b_e\mathclose[$ de~$\R$,
de manière compatible aux calibrages. (Par convention, on a choisi
$a_e<b_e$.)

Pour tout sommet~$x\in\mathscr C_0$,
il existe un paramètre $t_x$ adapté 
à toutes les arêtes aboutissant à $x$ (\ref{ss-parametres-moderes}). On peut dès lors raffiner 
la triangulation~$\mathscr C$
de sorte que, pour chaque arête~$e$, le paramètre~$t_e$
choisi plus haut se prolonge en un paramètre
encore noté $t_e$ sur un voisinage analytique de $\overline{U_e}$ ; et l'on peut aussi faire en sorte
que $\omega_e|_{U_e}$ n'ait ni zéro ni pôle, et que $f|_e$ soit égale à 
$f_e\circ \tau_e$ pour une certaine fonction indéfiniment dérivable $f_e$ sur le segment $[a_e;b_e]$. 

Notons que le choix des paramètres~$t_e$
fournit une orientation du graphe~$\Sigma$: l'arête~$e$
est orientée de sorte que la fonction~$\tau_e$ y soit croissante.
Une arête~$e$ contenant un point~$x\in\mathscr C_e$
sera ainsi dite sortante si $a_e=\tau_e(x)$, et entrante si $b_e=\tau_e(x)$.

Soit $e\in \mathscr C_1$. 
Puisque $\da t_e/t_e$ est une base de~$\Omega^1_{U_e/k}$,
c'est une une forme différentielle
méromorphe régulière au voisinage 
de~$\overline{U_e}$, et il existe dès lors 
une fonction méromorphe régulière~$\phi_e$ 
au voisinage de~$\overline{U_e}$
telle que $\omega$ s'écrive $\phi_e \,\da t_e/t_e$
sur le voisinage en question. Et comme $\omega|_{U_e}$ n'a ni zéro ni pôle,
la fonction $\phi_e|_{U_e}$ est inversible. 
Alors, pour tout~$x\in \mathring e$,
on a 
\[\norm{\omega(x)}=\abs{\phi_e(x)} \cdot \norm{\da t_e/t_e(x)}=\abs{\phi_e(x)}.\]
Comme $\phi_e$ est 
inversible sur $U_e$ il existe une fonction affine~$\psi_e$ sur l'intervalle~$[a_e;b_e]$
telle que $\psi_e(\tau_e(x))=\log\abs{\phi_e(x)}$ pour tout $x\in e$ ; notons
$n_e$ la pente de $\psi_e$ (c'est un entier relatif).
S
On a alors
\[ \int_{e} \log \norm{\omega}^{-1}\ddc f 
 = - \int_{a_e}^{b_e} \psi_e(\tau) f_e''(\tau) \,d\tau. \]

En intégrant par parties, il vient 
\begin{align*}
\int_{e} \log \norm{\omega}^{-1}\ddc f   
    & = - \left[ \psi_e(\tau)f_e'(\tau)\right]_{a_e}^{b_e}
     + \int_{a_e}^{b_e} n_e f_e'(\tau)\, d\tau \\
  & = \psi_e(a_e) f_e'(a_e)-\psi_e(b_e) f_e'(b_e) 
 + n_e (f_e(b_e)-f_e(a_e)).
\end{align*}
Alors, 
\begin{align*}
\int_U \log \norm{\omega}^{-1}\ddc f
 & = \sum_{e\in\mathscr C_1}
    \left( \psi_e(a_e) f_e'(a_e)- \psi_e(b_e) f_e'(b_e)
     + n_e (f_e(b_e)-f_e(a_e))\right) \\
& = \sum_{x\in \mathscr C_0}
 \sum_{\text{$e$ est sortante en~$x$}}
   \left( \log\abs{\phi_e(x)}  \cdot f_e'(\tau_e(x)) - n_e f(x)\right) \\
& \qquad\qquad {}
 - \sum_{\text{$e$ est entrante en~$x$}}
   \left( \log\abs{\phi_e(x)} \cdot f_e'(\tau_e(x)) - n_e f(x)\right)\\
& = \sum_{x\in\mathscr C_0} \sum_{e\ni x} \eps_e(x) 
   \left( \log\abs{\phi_e(x)} \cdot f_e'(\tau_e(x)) - n_e f(x)\right) ,
\end{align*}
où $\eps_e(x)$ vaut~$1$ si l'arête~$e$ est sortante en~$x$, et~$-1$ sinon.
Par suite,
\[ \int_U \log \norm{\omega}^{-1}\ddc f
 = \sum_{x\in\mathscr C_0} \log\norm{\omega(x)} \left(\sum_{e\ni x} \eps_e(x) f_e'(\tau_e(x))\right) - f(x) \left(\sum_{e\ni x} \eps_e(x) n_e\right).\]

\begin{lemm}
Pour tout $x\in\mathscr C_0$, on a 
$\sum_{e\ni x} \eps_e(x) f'_e(\tau_e(x))=0$.
\end{lemm}
\begin{proof}
Soit $g$ une fonction lisse sur~$X$, égale à~$1$ en~$x$ et
dont le support est compact et ne rencontre
aucune arête de~$\mathscr C$ autre que celles contenant~$x$.
La forme lisse $g\dc f$ sur~$X$ est de type~$(0,1)$ et à support
compact. 
La réunion des arêtes contenant~$x$ est un support fort de la forme~$g\dc f$;
par suite, son intégrale de bord vaut
\[ \int^\partial_X g\dc f = \sum_{e\ni x} \int^\partial_e g\dc f.\]
Or, 
pour toute arête~$e$, on a 
\[ \int^\partial_eg\dc f = g(a_e) f'_e(a_e) - g(b_e) f'_e(b_e)
= - \eps_e(x) f'_e(\tau_e(x)), \]
si bien que
\[ \int^\partial_X g\dc f = - \sum_{e\ni x} \eps_e(x) f'_e(\tau_e(x)).\]
Comme $X$ est sans bord, on a $\int^\partial_X g\dc f= 0$,
d'où le lemme.
\end{proof}

Ainsi,
\[
\int_X \log \norm\omega^{-1}\ddc f 
 = \sum_{x\in\mathscr C_0} f(x) \left( \sum_{e\ni x} \eps_e n_e \right).
\]
Fixons un point~$x\in\mathscr C_0$ ; il est de type~II.
Compte-tenu des conventions d'orientations, on a $\eps_e n_e = n_e(\omega)$
pour toute arête~$e\in\mathscr C_1$ qui contient~$x$.
Par suite,
\[ \sum_{e\ni x} \eps_e n_e = \sum_{e\ni x} (1 + \ord_e (\widetilde{\omega(x)}))
 = v(x) + \sum_{e\ni x} \ord_e (\widetilde{\omega(x)}). \]

Supposons que $f(x)\neq 0$. 
Dans ce cas la valeur constante de $f$ sur toute composante
connexe de $X\setminus \Sigma$ dont le bord contient $x$ est non nulle, 
si bien qu'une telle composante est un disque ouvert. 
Soit $\mathscr D_x$ l'ensemble des composantes
connexes de $X\setminus \Sigma$ qui sont des disques
ouverts de bord $\{x\}$, et soit $U_x$
le domaine analytique compact $\{x\}\cup \bigcup_{D\in \mathscr D_x}D$
de $X$. 
Le degré de $\omega|_{U_x}$ est la somme (presque nulle)
des degrés $\deg(\omega|_D)$, 
lorsque $D$ parcourt $\mathscr D_x$. 
Soit $D\in \mathscr D_x$. Le disque $D$ définit un germe 
d'arête $e$ aboutissant à $x$
et il résulte des lemmes 
\ref{lemme-ne-ordre} et \ref{lemme-ne-degdisque}
que 
$\ord_e(\widetilde{\omega(x)}) = n_e(\omega)-1=\deg(\omega|_D)$.
La section méromorphe $\widetilde{\omega(x)}$ de $\Omega^1_{C_x/\widetilde k^1}$ est de degré 
$2g(x)-2$. Supposons que $f(x)\neq 0$. Dans ce cas pour toute
composante connexe $U$ de $X\setminus \Sigma$ dont le bord contient $x$, la valeur constante de $f$ sur
$U$ est non nulle, ce qui force $U$ à être un disque ouvert de bord $\{x\}$, par $f$-admissibilité
de $\Sigma$. 
Tout point fermé de $C_x$ est donc associé ou bien à une arête de $\Sigma$, ou bien à l'un des 
disques $D\in \mathscr D_x$. 
Par conséquent, 
\[ \sum_{e \in \mathscr C_1, e\ni x} \ord_e(\widetilde{\omega(x)})
 = 2g(x)-2 - \sum_{D\in \mathscr D_x} \deg(\omega|_D) . \]
Il
vient 
\begin{align*}
\int_X \log \norm\omega^{-1}\ddc f &= \sum_{x\in \mathscr C_0} f(x)\left( \sum_{e\in \mathscr C_1, e\ni x} \eps_e n_e \right)\\
&=\sum_{x\in\mathscr C_0} f(x)\left(v(x)+
 \sum_{e\in \mathscr C_1, e\ni x} \ord_e (\widetilde{\omega(x)})\right)\\
 &=\sum_{x\in\mathscr C_0, f(x)\neq 0} f(x)\left(v(x)+
 \sum_{e\in \mathscr C_1, e\ni x} \ord_e (\widetilde{\omega(x)}\right)\\
 &=\sum_{x\in\mathscr C_0, f(x)\neq 0} f(x)
\left( 2g(x)-2 +v(x) - \deg(\omega|_{U_x})\right)\\
&=\sum_{x\in \mathscr C_0}
f(x)
\left( 2g(x)-2 +v(x)\right)-\sum_{x\in \mathscr C_0, f(x)\neq 0} f(x)\deg(\omega|_{U_x})\\
&=\langle K_X^\sq, f\rangle \sum_{x\in \mathscr C_0, f(x)\neq 0} -f(x)\deg(\omega|_{U_x}).
\end{align*}
Il suffit pour conclure de s'assurer que $\sum_{x\in \mathscr C_0, f(x)\neq 0} f(x)\deg(\omega|_{U_x})
=\langle \delta_{\div(\omega)},f\rangle.$
Soit $a$ un pont situé sur $\div(\omega)$ et soit $U$ la composante connexe de $X\setminus \Sigma$ 
contenant $a$. Comme $\omega$ est sans zéro ni pôle sur chacune des couronnes $U_e$, le bord de
$U$ est contenu dans $\mathscr C_0$. Si $f(a)=0$ alors $f$ est identiquement nulle sur $U$ et donc
en tout point de $\partial U$. Sinon la valeur constante de $f$ sur $U$ est non nulle et $U$ est dès lors un disque ouvert ; 
son bord est un singleton $\{x\}$ et $f(x)=f(a)\neq 0$. On a par conséquent
\begin{align*}
\langle \delta_{\div(\omega)},f\rangle&=\sum_{a\in \div(\omega)}f(a)\ord_a (\omega)\\
&=\sum_{a\in \div(\omega),f(a)\neq 0}f(a)\ord_a (\omega)\\
&=\sum_{x\in  C_0, f(x)\neq 0} f(x)\sum_{D\in \mathscr D_x}\deg(\omega|_D)\\
&=\sum_{x\in  C_0, f(x)\neq 0} f(x)\deg(\omega|_{U_x}).\qedhere
\end{align*}
\end{proof}

\section{Produits de courants de courbure}

Soit $X$ un $n$-espace $k$-analytique sans bord.

\subsection{}\label{ss.prod-c1-BT}
Considérons des fibrés en droites $\overline{L_1},\dots,\overline{L_m}$ 
sur~$X$ munis de métriques dpsh. On dispose ainsi de courants
de courbure $c_1(\overline{L_i})$, localement différences de courants
positifs fermés.

Les métriques dpsh sur les~$L_i$ fournissent des sections
globales~$u_i$ du faisceau $\RPSH_X/\log \abs{\mathscr O_X^\times}$ 
et donc aussi des sections globales du faisceau
$\RPSH_X/\Harm_X$.
On a expliqué au paragraphe~\ref{ss.bt-Rpsh} comment 
la construction $(v_1,\dots,v_m)\mapsto \ddc(v_1)\wedge \dots \wedge \ddc(v_p)$ sur $(\RPSH_X)^m$ passe au quotient 
en un morphisme $m$-linéaire
de $(\RPSH_X/\Harm_X)^m$ dans l'espace des courants de bidegré~$(m,m)$.
On définit alors le courant 
$c_1(\overline {L_1})\wedge \dots \wedge c_1(\overline{L_m})$
comme l'image de la suite $(u_1,\dots,u_m)$ par ce morphisme quotient.

Plus généralement, si $T$ est un courant positif fermé sur~$X$,
on définit de même 
$c_1(\overline {L_1})\wedge \dots \wedge c_1(\overline{L_m})\wedge T$.

Sur un ouvert~$U$ de~$X$ sur lequel chacun des~$L_i$
possède une section inversible~$s_i$, ces courants sont donnés par 
l'expression
\[ c_1(\overline {L_1})\wedge \dots \wedge c_1(\overline{L_m})\wedge T|_U 
=
 \ddc \log \norm{s_1}^{-1} \wedge \dots \wedge \ddc \log \norm{s_m}^{-1} \wedge T . \]
Il est localement $\ddc$-exact si $m\geq1$.
Sa formation est multilinéaire en les~$\overline{L_i}$.

Ce courant
$c_1(\overline {L_1})\wedge \dots \wedge c_1(\overline{L_m})\wedge T$
est:
\begin{itemize}
\item 
de Bedford--Taylor, si $T$ est est de Bedford--Taylor
(\S\ref{ss.bt-def});
\item
positif, 
si les métriques des~$\overline {L_i}$ sont psh
(\S\ref{ss.bt-def});
\item
squelettique
si les métriques des~$\overline{L_i}$ sont~\pl
et si le courant~$T$ est squelettique
(corollaire~\ref{coro.ddcuT-paralineaire}).
\end{itemize}

Si les métriques de~$L_i$ sont tropicalement d-convexes, 
le support du courant
$c_1(\overline {L_1})\wedge \dots \wedge c_1(\overline{L_m})\wedge T$
est localement contenu  dans une partie squelettique de~$X$
(remarque~\ref{rema.support-bt-squelette}).

\begin{prop}\label{prop.bt-berk-projection}
Soit $j\colon Y\to X$ un morphisme topologiquement propre 
entre espaces analytiques sans bord et topologiquement séparés,
et soit $S$ un courant positif fermé sur~$Y$.
Le courant $j_*S$ sur~$X$ est positif fermé et l'on a l'égalité
\[ c_1(\overline L_1) \wedge\cdots\wedge c_1(\overline L_m)\wedge j_*S
 = j_* 
(c_1(j^*\overline L_1) \wedge\cdots\wedge c_1(j^*\overline L_m)\wedge S). \]
\end{prop}
\begin{proof}
L'assertion est locale sur~$X$ ; par multilinéarité, 
on peut supposer que les métriques des fibrés considérées
sont~psh.
Compte tenu des définitions
l'assertion se déduit alors de la proposition~\ref{prop.bt-projection},
par récurrence sur~$m$.
\end{proof}

\begin{coro}
Soit $X$ un $n$-espace $k$-analytique sans bord.

\begin{enumerate}
\item Soit $K$ une extension complète de~$k$ et soit $p\colon X_K\to X$
le morphisme de changement de base.
On a 
\[ p_* (c_1(p^*\overline L_1) \wedge\cdots\wedge c_1(p^*\overline L_m))
    = c_1(\overline L_1) \wedge\cdots\wedge c_1(\overline L_m).
\]

\item
Soit $Y$ un $n$-espace $k$-analytique sans bord,
et soit $f\colon Y\to X$ un morphisme fini 
dont le degré calibré~$\deg_f$ est constant de valeur~$d$ sur~$X^\gen$.
On a 
\[ f_* (c_1(f^*\overline L_1) \wedge\cdots\wedge c_1(f^*\overline L_m))
    = d\cdot c_1(\overline L_1) \wedge\cdots\wedge c_1(\overline L_m).
\]
\end{enumerate}
\end{coro}
\begin{proof}
Compte tenu de la proposition précédente,
cela découle des propositions~\ref{prop.image-directe-fini}
et~\ref{prop.image-directe-chgt-base}.
\end{proof}

\begin{prop}
Soit $X$ un $n$-espace $k$-analytique sans bord.
Soit $K$ une extension valuée de~$k$, universellement multiplicative;
soit $\pi\colon X_K\to X$ le morphisme de changement de base
et soit $\sigma\colon X\to X_K$ sa section de Shilov.

Soit $\overline L_1,\dots,\overline L_p$ des fibrés
en droites  sur~$X$ munis de métriques d-psh, 
soit $T=c_1(\overline L_1) \wedge \cdots \wedge c_1(\overline L_p)$
et soit $T_K=\ddc(\pi^*\overline L_1)\wedge\dots\wedge\ddc(\pi^*\overline L_p)$.
Soit $\alpha$ une forme lisse sur~$X$ de bidegré~$(u,v)$,
où $\max(u,v)\geq n-p$.
Le support du courant $T_K \wedge \pi^*\alpha$
est contenu dans~$\sigma(X)$.
\end{prop}
\begin{proof}
Le résultat est local sur~$X$. On peut ainsi supposer que
les fibrés en droites considérés sont triviaux
et les courants de courbures $c_1(\overline L_m)$
de la forme $\ddc u_m$, où $u_m$ est une fonction d-psh sur~$X$.
L'assertion résulte alors de la proposition~\ref{rema.courants-chgt-base}.
\end{proof}

\begin{prop}\label{theo.formule-projection-c1}
Soit $p\colon Y\to X$ un morphisme d'espaces analytiques 
équidimensionnels, topologiquement séparés et sans bord;
posons $m=\dim(X)$, $n=\dim(Y)$  et $d=n-m$.
On suppose que le lieu de platitude de~$p$ est dense dans~$Y$.
Soit $\overline L_1,\dots,\overline L_r$
des fibrés en droites sur~$X$ munis de métriques
tropicalement d-convexes,
soit $\omega$ une forme G-lisse de type~$(d,d)$ sur~$Y$
dont le support est $p$-compact 
et soit $u_\omega\colon X\to\R$ la fonction 
définie par l'intégrale 
\[ u_\omega(x) = \int_{Y_x} \omega \]
de la forme de type~$(d,d)$ induite par~$\omega$ sur l'espace
$\hr x$-analytique~$Y_x$ si cet espace est de dimension~$d$,
et par $u_\omega(x)=0$ sinon.

Pour toute $(m-r,m-r)$-forme lisse~$\alpha$ à support compact sur~$X$,
\[ 
\langle c_1(p^*\overline L_1)\wedge\dots\wedge c_1(p^*\overline L_r),
\omega\wedge p^*\alpha\rangle
\]
est l'intégrale de la fonction~$u_\omega$
contre la mesure
$ c_1(\overline L_1)\wedge\dots\wedge c_1(\overline L_r)\wedge 
\alpha $ sur~$X$.
\end{prop}
\begin{proof}
En considérant une partition de l'unité sur~$X$, 
on se ramène au cas où les fibrés en droites~$L_m$ sont triviaux
et les courants de courbures $c_1(\overline L_i)$
sont différences de deux courants de 
la forme $\ddc u_i$, où $u_i$ est une fonction d-convexe sur~$X$.
Par multilinéarité, 
l'assertion résulte alors du théorème~\ref{theo.formule-projection}.
\end{proof}

\begin{coro}\label{coro.formule-projection-c1}
Avec les notations de la proposition, on a l'égalité de
mesures
\[ p_* (c_1(p^*\overline L_1)\wedge \dots\wedge c_1(p^*\overline L_r)
\wedge p^*\alpha \wedge\omega) = u_\omega \cdot c_1(\overline L_1)\wedge\dots \wedge c_1(\overline L_r)\wedge\alpha . \]
\end{coro}

\begin{prop}
Soit $X$ un $n$-espace analytique sans bord,
soit~$p$ un entier naturel
et soit $\overline L_1,\dots,\overline L_m$ des fibrés en droites
sur~$X$ munis de métriques d-psh.
Soit $A$ une partie fermée de~$X$
qui est tropicalement~$\mathscr H^p$-négligeable.
Pour toute forme $\alpha\in\mathscr A^{n-p,n-p}(X)$, 
l'ensemble~$A$ est négligeable pour la mesure $
c_1(\overline L_1)\wedge \dots \wedge c_1(\overline L_p) \wedge \alpha$ sur~$X$.
\end{prop}
\begin{proof}
L'assertion est locale sur~$X$. On peut donc supposer
que les fibrés en droites~$\overline L_m$ sont triviaux
et les courants de courbure $c_1(\overline L_m)$ de la forme
$\ddc u_m$, où $u_m$ est une fonction d-psh sur~$X$.
L'assertion résulte alors du corollaire~\ref{coro.support-prod-ddc}.
\end{proof}

\begin{lemm}
Soit $\overline{L_1},\dots,\overline{L_m}$
des fibrés en droites sur~$X$ munis de métriques tropicalement d-convexes.
Soit $U$ un ouvert de~$X$ tel que $d_k(x)<n$ pour pour tout
$x\in X\setminus U$,
et soit $h$ une fonction mesurable sur~$U$, localement bornée.
Le courant $ h  c_1(\overline {L_1}|_U) \wedge
\dots \wedge c_1(\overline{L_m}|_U)$ sur~$U$ 
se prolonge par zéro à~$X$.
\end{lemm}
\begin{proof}
Démontrons que pour toute forme lisse~$\alpha$ 
de bidegré~$(n-m,n-m)$ sur~$X$, à support compact,
le support du 
courant $c_1(\overline {L_1}|_U) \wedge \dots \wedge c_1(\overline{L_m}|_U) 
\wedge \alpha|_U$ sur~$U$ est compact.
Ce support est l'intersection avec~$U$
du support du courant $c_1(\overline {L_1}) \wedge \dots \wedge c_1(\overline{L_m}) \wedge \alpha$.
Comme les métriques de~$\overline {L_i}$ sont dpsh
et comme le support de~$\alpha$ est compact,
le support de ce dernier courant
est contenu dans une partie squelettique compacte~$\Sigma$ de~$X$
(\S\ref{ss.prod-c1-BT}).
On a $d_k(x)=n$ pour tout $x\in \Sigma$, si bien que $\Sigma$
est contenu dans~$U$.
\end{proof}

\begin{prop}\label{prop.PL+BT}
Soit $\overline L,\overline{L_1},\dots,\overline{L_m}$
des fibrés en droites sur~$X$ munis de métriques tropicalement d-convexes.
Soit~$s$ une section méromorphe régulière de~$L$ sur~$X$,
soit~$U$ son ouvert de définition.
Le courant 
$\log \norm s^{-1}  c_1(\overline {L_1}) \wedge
\dots \wedge c_1(\overline{L_m})$ sur~$U$
possède un prolongement par zéro à~$X$;
notons-le $[\log \norm s^{-1} c_1(\overline {L_1}) \wedge
\dots \wedge c_1(\overline{L_m})]$.
On a l'égalité 
\begin{multline*}
 \ddc [\log \norm s^{-1} c_1(\overline {L_1}) \wedge
\dots \wedge c_1(\overline{L_m}) ]
+ c_1(\overline {L_1}) \wedge
\dots \wedge c_1(\overline{L_m})  \wedge \delta_{\div(s)} \\
 = c_1(\overline L) \wedge c_1(\overline {L_1}) \wedge
\dots \wedge c_1(\overline{L_m}) \end{multline*}
de courants sur~$X$.
\end{prop}  
\begin{proof}
L'existence de ce prolongement par zéro résulte du lemme précédent.
Pour démontrer l'égalité de courants
indiquée, on peut raisonner localement sur~$X$; 
on suppose donc que chaque~$L_i$ possède une section
inversible~$s_i$; posons $u_i=\log \norm{s_i}^{-1}$.
Raisonnant encore localement,
un argument de multilinéarité et la définition
des fonctions tropicalement d-convexes
permettent de supposer que les~$u_i$ sont tropicalement convexes.
Il s'agit alors de démontrer l'égalité de courants
\begin{multline*}
 \ddc [\log \norm s^{-1} \ddc(u_1)  \wedge
\dots \wedge \ddc(u_m)  ]
+ \ddc(u_1) \wedge
\dots \wedge \ddc(u_m) \wedge \delta_{\div(s)} \\
 = c_1(\overline L) \wedge \ddc(u_1)  \wedge
\dots \wedge \ddc(u_m).
\end{multline*}
Commençons par traiter le cas où les~$u_i$ sont lisses.
Soit $\alpha$ une forme de bidegré~$(n-m,n-m)$ sur~$X$, à support compact.
Par définition, on a 
\begin{align*}
\langle \ddc [\log \norm s^{-1} \ddc(u_1)  \wedge
\dots \wedge \ddc(u_m)  ], \alpha \rangle \hskip -5cm \\
& = \langle \log \norm s^{-1} \ddc(u_1)  \wedge
\dots \wedge \ddc(u_m)  , \ddc\alpha|_U \rangle  \\
& = \langle \log \norm s^{-1} , \ddc(u_1)  \wedge
\dots \wedge \ddc(u_m)  \wedge   \ddc\alpha|_U \rangle  \\
& = \langle \log \norm s^{-1} , \ddc (\ddc(u_1)  \wedge
\dots \wedge \ddc(u_m)  \wedge  \alpha|_U)  \rangle  \\
& = \langle \ddc\log \norm s^{-1} , \ddc(u_1)  \wedge
\dots \wedge \ddc(u_m)  \wedge  \alpha|_U  \rangle \\
& = \langle -\delta_{\div(s)} + c_1(\overline L),  \ddc(u_1)  \wedge
\dots \wedge \ddc(u_m)  \wedge  \alpha|_U  \rangle ,
\end{align*}
d'après la formule de Poincaré--Lelong pour~$\overline L$
(théorème~\ref{theo.poincare-lelong-metriques}).
La formule voulue s'en déduit.

Traitons maintenant le cas général.
On peut encore raisonner localement sur~$X$ et supposer 
qu'il existe un moment~$f\colon X\to T$ sur~$X$,
un polyèdre compact~$P$ de~$T_\trop$  contenant~$f_\trop(X)$,
des fonctions convexes lisses $(v_i^k)$ sur~$P$
telles que pour tout~$i$,
la suite $(v_i^k)$ converge uniformément vers une fonction~$v_i$
sur~$P$ telle que $u_i=f^*v_i$; pour tout~$i$, notons $u_i^k=f^*_\trop v_i^k$.
Pour tout~$k$, on a la formule 
\begin{multline*}
 \ddc [\log \norm s^{-1} \ddc(u_1^k)  \wedge
\dots \wedge \ddc(u_m^k)  ]
+ \ddc(u_1^k) \wedge
\dots \wedge \ddc(u_m^k) \wedge \delta_{\div(s)} \\
 = c_1(\overline L) \wedge \ddc(u_1^k)  \wedge
\dots \wedge \ddc(u_m^k).
\end{multline*}
D'après la proposition~\ref{prop.BT-continu}
le second terme du membre de gauche et le membre de droite
convergent, comme courants, vers l'expression correspondante.
Il suffit de justifier que 
les courants $[\log \norm s^{-1} \ddc(u_1^k)  \wedge
\dots \wedge \ddc(u_m^k)  ]$
convergent vers $[\log \norm s^{-1} \ddc(u_1)  \wedge
\dots \wedge \ddc(u_m)  ]$.
Soit $\alpha$ une forme lisse à support compact sur~$X$, de
bidegré~$(n-m,n-m)$.
Soit $\Sigma$ une partie squelettique compacte de~$X$,
réunion  de polyèdres caractéristiques de moments obtenus 
en concaténant la restriction de~$f$
et des moments tropicalisant~$\alpha$
sur des domaines compacts de~$X$ dont les
intérieurs recouvrent le support de~$\alpha$.
Alors $\Sigma$ est un support fort des
formes $ \ddc(u_1^k)  \wedge
\dots \wedge \ddc(u_m^k) \wedge \alpha  $ 
(lemme~\ref{lemm.Sigmaf-support-fort}).
Comme $d_k(x)<n$ pour tout $x\in X\setminus U$, on a $\Sigma\subset U$.

Le support
des courants $ \ddc(u_1^k)  \wedge
\dots \wedge \ddc(u_m^k) \wedge \alpha  $ sur~$X$
est contenu dans~$\Sigma$, 
et ces courants convergent vers  le courant
$ \ddc(u_1^k)  \wedge
\dots \wedge \ddc(u_m^k) \wedge \alpha$,
d'après la proposition~\ref{prop.BT-continu}.
Comme $\Sigma$ est compact et contenu dans~$U$,
la fonction $\log \norm s^{-1}$ est bornée sur~$\Sigma$;
par convergence dominée, on en déduit la convergence de
$ \langle [\log \norm s^{-1} \ddc(u_1^k)  \wedge
\dots \wedge \ddc(u_m^k)],  \alpha\rangle  $ 
vers $ \langle [\log \norm s^{-1} \ddc(u_1)  \wedge
\dots \wedge \ddc(u_m)],  \alpha\rangle  $.
\end{proof}

\begin{prop}\label{prop.masse-totale}
Soit $X$ un $n$-espace analytique sans bord, 
soit $L_1,\dots,L_n$ des fibrés en droites sur~$X$;
pour tout~$i$, soit $\norm\cdot$ et $\norm\cdot '$
des métriques lisses (resp. dpsh) sur~$L_i$.
Il existe une $(n-1,n-1)$-forme lisse~$\alpha$ 
(resp. un $(n-1,n-1)$-courant~$\alpha$) sur~$X$ vérifiant
\[ c_1(\overline L'_1)\wedge\dots\wedge c_1(\overline L'_n)
-  c_1(\overline L_1)\wedge\dots\wedge c_1(\overline L_n)
 = \ddc \alpha.\]
En particulier, si $X$ est propre et de dimension~$n$,
le nombre réel
\[ \int_X c_1(\overline L_1)\wedge\dots\wedge c_1(\overline L_n)  \]
ne dépend que des fibrés en droites~$L_1,\dots,L_n$ sur~$X$,
et pas des métriques dpsh dont ils sont munis. 
\end{prop}
\begin{proof}
Pour tout~$i$, soit $u_i$ la fonction lisse (resp. dpsh)
sur~$X$ telle que $\log\norm{s}=u_i + \log\norm s'$ pour toute
section inversible locale~$s$  de~$L_i$.
On a $c_1(\overline L'_i)=\ddc u_i+c_1(\overline L_i)$.
Par suite,
$c_1(\overline L'_1)\wedge\dots\wedge c_1(\overline L'_n)
-  c_1(\overline L_1)\wedge\dots\wedge c_1(\overline L_n)$
est somme de termes de la forme
$T_1\wedge T_2\wedge \ldots T_n$ 
où chacun chacun des $T_i$ est un courant ou bien égal à 
$c_1(\overline{L_i})$ ou bien à $\ddc u_i$, le second cas se
présentant pour au moins un indice $i$. 
Considérons un tel terme, et soit $j$
tel que $T_j=\ddc u_j$. 
Alors $T_1\wedge T_2\wedge \ldots T_n$ est égal à
$\ddc (u_j\wedge T_1\wedge T_2\wedge T_{j-1}\wedge 
T_{j+1}\wedge\ldots \wedge T_n)$. 
C'est donc une forme lisse, resp. un courant symétrique fermé, 
de bidegré $(n-1,n-1)$. 
Il en résulte que $c_1(\overline L'_1)\wedge\dots\wedge c_1(\overline L'_n)
-  c_1(\overline L_1)\wedge\dots\wedge c_1(\overline L_n)$
est bien de la forme $\ddc \alpha$ où $\alpha$ est une forme lisse, resp. un courant symétrique fermé, 
de bidegré $(n-1,n-1)$. 

Par conséquent 
\[ \int_X c_1(\overline L_1)\wedge\dots\wedge c_1(\overline L_n)
=\int_X c_1(\overline L_1)\wedge\dots\wedge c_1(\overline L_n)+
\int_X\ddc \alpha\]
et l'on conclut en remarquant que comme $X$ est propre
\[\int_X\ddc \alpha=\langle \ddc \alpha, 1\rangle=\langle
\alpha, \ddc 1\rangle=0.\]
Cela conclut la démonstration.
\end{proof}

\begin{theo}\label{theo.MA-lisse-masse}
Soit $X$ une $k$-variété algébrique propre, purement de dimension~$n$.
Soit $\bar L_1,\ldots, \bar  L_n$ une famille de 
fibrés en droites munis de métriques dpsh sur~$X^\an$
(par exemple, des métriques lisses ou formelles).
On a
\[ \int_{X^\an} c_1(\bar L_1)\wedge\dots \wedge c_1(\bar L_n)
=  \int_X c_1(L_1)\cdots c_1(L_n). \]
\end{theo}
Remarquons que les hypothèses entraînent que $X^\an$ 
est un $n$-espace $k$-analytique sans bord.
Précisons également que comme $X$ est propre, chacun des~$L_i$ 
est l'analytifié d'un fibré en droites canonique
sur~$X$ (\cite{poineau2010a}, Appendice A)  que l'on note encore~$L_i$, 
ce qui donne un sens au  membre de droite. 
Cela prouve en particulier que le membre de gauche de la relation
précédente est un entier relatif, positif ou nul si les~$L_i$
sont nef, ce qui n'est pas évident a priori. Nous étendrons
cette propriété d'intégralité au cas des espaces analytiques propres
arbitraires, non nécessairement algébriques
(corollaire~\ref{coro.integralite}).  

\begin{proof}
D'après les propositions~\ref{prop.masse-totale} et~\ref{prop.existence-metrique-lisse},
on peut supposer que les métriques des fibrés~$L_i$ sont lisses.
Soit $(X_j)$ la famille des composantes irréductibles réduites de~$X$, 
et pour tout $j$ soit~$m_j$ la multiplicité générique de~$X$ le long de~$X_j$. 
Les~$X_j^\an$ sont alors les composantes irréductibles de~$X^\an$
et $X^\an$ est de multiplicité générique~$m_j$ le long de~$X_j^\an$
(\ref{ss-composantes-irreductibles} et lemme~\ref{lem-gaga-multiplicite}). 
On a alors
\[\int_{X^\an}c_1(\bar L_1)\cdots c_1(\bar L_n)
=\sum_j m_j \int_{X_j^\an}c_1(\bar L_1)\cdots c_1(\bar L_n)\]
et
\[
\int_Xc_1(L_1)\cdots c_1(\L_n)
=\sum_j m_j \int_{X_j}c_1(L_1)\cdots c_1(L_n),\]
la première de ces égalités résultant de la proposition \ref{prop.formule-degre-integral}, 
et la seconde de la théorie de la théorie de l'intersection standard. 
Il suffit donc de prouver l'égalité voulue pour chacune des $X_j$, ce qui permet
de se ramener au cas où $X$ est intègre. On raisonne alors par récurrence
sur $n$.

Si $n=0$ alors $X$
est le spectre d'une extension finie $L$ de $k$, et les deux termes de l'égalité à démontrer sont alors
égaux à $[L:k]$ : 
cela découle de la proposition~\ref{prop.formule-degre-integral}
du côté analytique, et de la définition du degré
d'intersection du côté algébrique. 
Le théorème est donc vrai lorsque $n=0$. 

Supposons $n>0$ et le résultat vrai en dimension $n-1$. 
Choisissons une section rationnelle non nulle $s$ de~$L_n$.
Soit $D$ le diviseur de $s$ vue comme section rationnelle inversible
de $L_n$ sur le schéma $X$, et soit et soit $D^\an$ le diviseur de $s$
vue comme section méromorphe inversible de
$L_n$ sur l'espace analytique $X^\an$.  Écrivons $D=\sum m_j D_j$ où les $D_j$ 
sont irréductibles et réduits ; par les mêmes arguments que plus haut, on a 
$D^\an=\sum m_j D_j^\an$. 

Alors, $\log\norm s$ est un courant sur~$X^\an$ et l'on a la formule
de Poincaré--Lelong (théorème~\ref{theo.poincare-lelong-metriques})
\[ \ddc [\log\norm s^{-1}]+ \delta_{D^\an}=c_1(\bar L_n). \]
Notons $\alpha$
la forme lisse
$c_1(\bar L_1)\dots c_1(\bar L_{n-1})$, et appliquons-lui cette
égalité de courants. Il vient
\[ \langle [\log\norm s^{-1}],\ddc\alpha\rangle
+ \int_{D^\an} \alpha = \int_X c_1(\bar L_1)\dots c_1(\bar L_n). \]

On a par ailleurs
\begin{align*}
\int_{D^\an}\alpha &=\sum m_i \int_{D_i^\an}\alpha\\
&=\sum m_i \int_{D_i}c_1(L_1)\cdots c_1(L_{n-1}) \\
&=\int_X c_1(L_1)\cdots c_1(L_n),
\end{align*}
la première égalité résultant
de la définition de l'intégrale sur un cycle, la seconde
de l'hypothèse de récurrence, 
et la troisième du fait que $c_1(L_n)\cap [X]=D$. 
D'autre part, la forme $\alpha$ s'écrit localement
\[ \ddc (u_1)\dots \ddc(u_{n-1}), \]
où $u_i=\log\norm{s_i}^{-1}$, $s_1,\dots,s_{n-1}$ étant des sections inversibles de $L_1,\dots,L_{n-1}$.
Il s'ensuit que $\ddc \alpha=0$, ce qui permet de conclure. 
\end{proof}

% \begin{coro}\label{coro.masse-totale}
% Soit $X$ une $k$-variété algébrique propre, purement de dimension~$n$.
% Soit $\bar L_1,\ldots, \bar  L_n$ une famille de 
% fibrés en droites munis de métriques localement approchables sur~$X^\an$.
% La masse totale de la mesure de Monge-Ampère  est calculée
% par la formule 
% \[  \int_{X^\an} \MA( c_1(\bar L_1),\dots,c_1(\bar L_n))
% =  (c_1(L_1)\dots c_1(L_n) \cap [X]). \]
% \end{coro}
% \begin{proof}
% Par multilinéarité, il suffit de traiter le cas où
% les $\bar L_j$ sont globalement psh-approchables.
% Pour tout~$i$, la métrique de~$\bar L_i$ est alors limite
% uniforme d'une suite $(\bar L_{i,m})$ de métriques lisses psh sur~$L_i$.
% Par construction, $\MA(c_1(\bar L_1),\dots,c_1(\bar L_n))$
% est la limite des mesures positives
%  \[ c_1(\bar L_{1,m})\wedge \dots\wedge c_1(\bar L_{n,m}), \]
% sur $X^\an$, lorsque $m\ra\infty$.
% D'après le théorème~\ref{theo.MA-lisse-masse},
% chacune de ces mesures est de masse 
% \[  (c_1(L_1)\dots c_1(L_n)\cap [X]), \]
% d'où le corollaire puisque $X^\an$ est compact.
% \end{proof}

\section{Opérateur de Monge-Ampère (métriques paralinéaires)}

\subsection{}
Le théorème principal de cette section affirme que l'opérateur de Monge-Ampère
que nous avons défini dans cet article 
coïncide avec celui défini
par le premier auteur dans~\cite{chambert-loir2006}.
La théorie construite dans ce dernier article partait
du cas des métriques formelles, alors considérées comme métriques
{\og lisses\fg} : la théorie de l'intersection sur la fibre spéciale
fournissait des mesures atomiques pour des fibrés en droites
munis de métriques formelles nef ; un argument d'approximation
permettait de construire des mesures  pour des fibrés en droites
munis de métriques limites uniformes de métriques nef.
Dans le présent article, le point de vue est renversé : les
métriques formelles sont singulières mais (lorsqu'elles 
sont nef) sont limites uniformes de métriques psh lisses, 
et leurs mesures de Monge-Ampère
se calculent par approximation.

\subsection{}
Soit $X$ un espace $k$-analytique purement de dimension $n$. 
Rappelons que nous désignons
par $X^\#$ l'ensemble des points $x$ de $X$ tels que $d_k(x)=n$ ; nous noterons
$X_{\mathrm{str}}^\#$ le sous-ensemble de $X^\#$ formé des points tels que 
$\hrt x^1$ soit de degré de transcendance $n$ sur $\widetilde k^1$. 
Si $x\in X^\#$ alors $\hrt x$ est de type fini sur $\hrt k$ (lemme
\ref{lemm.abhyankar-finitude}); si 
$x\in X^\#_{\mathrm{str}}$ alors $\hrt x$ est fini sur $\hrt x^1
\otimes_{\widetilde k^1}\widetilde k$, 
de degré $(\abs{\hrt x}^\times : \abs {k^\times})$. 

\begin{prop}\label{prop.MA-berk}
Soit $X$ un $n$-espace $k$-analytique sans bord.
Soit $(L_1,\ldots, L_n)$ une famille de 
fibrés en droites métrisés paralinéaires entiers
sur~$X$ 
et soit~$S$ le sous-ensemble de~$X$ 
formé des points $x$ tels que la dimension de définition 
simultanée des $\widetilde {L_{i,x}}$ 
soit égale à~$n$.  Il est
fermé et discret dans~$X$, et contenu dans~$X^\#$.
Les fibrés métrisés~$L_i$ sont dpsh
et
il existe une famille $(\lambda_x)_{x\in S}$ 
de nombres réels telle que 
\[\bigwedge_{i=1}^n c_1(L_i)=\sum_{x\in S} \lambda_x \delta_x.\] 
\end{prop}
Dans la suite de ce paragraphe, nous donnerons une formule
pour les nombre réels~$\lambda_x$ en fonction
des fibrés résiduels~$\widetilde {L_{i,x}}$. 
Ces formules montreront notamment 
que ce sont des nombres entiers.

\begin{proof}
La proposition~\ref{prop.dimdefn} garantit
que l'ensemble des points $x$ tels que la dimension de définition 
simultanée des $\widetilde{L_{i,x}}$ soit égale à $n$ est fermé et discret dans $X$, et sa
définition même assure qu'il est inclus dans
$X^\#$. 
De plus, les fibrés métrisés~$L_i$ sont tropicalement d-convexes
d'après la proposition~\ref{prop.metrique-pl-approchable}, et en particulier dpsh. 

Soit $x\in X$. Démontrons qu'au voisinage de~$x$,
le courant indiqué est de la forme~$\lambda_x \delta_x$.

Choisissons une famille $(V,M_1,\dots,M_n)$ où,
pour pour $i$, $(V,M_i)$ est un prémodèle de $\widetilde {L_{i,x}}$,
et $V$ est de dimension minimale, 
c'est-à-dire la dimension de définition simultanée des~$\widetilde L_{i,x}$.

Par multilinéarité, 
on se ramène au cas où les $M_i$ sont tous engendrés par leurs sections globales
(grâce à \ref{ss.quasiproj-difference}) puis, 
par polarisation, au cas où 
les $M_i$ sont tous égaux à un même fibré en droites~$M$.
Alors, au voisinage de~$x$, les fibrés métrisés~$L_i$
sont égaux à un même fibré métrisé~$L$.

Soit $s$ une section inversible de $L$ au
voisinage de $x$ . 
D'après le corollaire~\ref{coro-engendre-sections-globales},
$\norm{s}^{-1}$ est au voisinage de $x$ de la forme
$ \max( r_1\abs{a_1},\dots,r_m\abs{a_m})$
où les $r_i$ sont des réels strictement positifs
et les $a_i$ des éléments de~$\mathscr O_{X,x}^\times$
tels que $\widetilde{a_i/a_j}\in \tilde k(V)$ pour tout couple~$(i,j)$. 
D'après la prop.~\ref{prop.ma-berk-local},
le courant $c_1(L)^n$ est donc de la forme~$\lambda_x\delta_x$
au voisinage de~$x$.

Il reste à vérifier que $\lambda_x=0$ si $x\not\in S$.
Dans ce cas, le choix de~$V$
entraîne que sa dimension est strictement inférieure à~$n$.
D'après le théorème~3.4 de~\cite{ducros2012b},
la dimension tropicale en~$x$ de la famille $(a_i/a_1)_i$ est strictement
inférieure à~$n$.
D'après la proposition~\ref{prop.ma-berk-local},
la mesure~$c_1(L)^n$ est nulle au voisinage de~$x$.
\end{proof}

\begin{rema}\label{rem-cl-strict}
Supposons que les fibrés métrisés paralinéaires entiers $L_i$ sont stricts. 
Soit $x\in S$. Les fibrés $\widetilde {L_{i,x}}$ sont stricts. 
Si $F$ désigne un sous-corps gradué de $\hrt x_1$
contenant $\widetilde k^1$ et
sur lequel tous les $\widetilde{L_{i,x}}^1$ sont définis, alors tous les $\widetilde{L_{i,x}}$ sont
définis sur $F\otimes_{\widetilde k^1} \widetilde k$. Ce dernier est donc de degré de transcendance $n$
sur $\widetilde k$, ce qui force $F$ à être de degré de transcendance $n$ sur $\widetilde k^1$. 
Ainsi  l'ensemble $S$ est contenu dans l'ensemble des points $x$ tels que la dimension de définition 
simultanée des $\widetilde{L_{i,x}}^1$ soit égale à $n$.
\end{rema}

\begin{prop}\label{prop.ma-berk-alg-clos}
Supposons $k$ algébriquement clos.
Soit $X$ un $n$-espace $k$-analytique sans bord et génériquement réduit, 
et soit $(L_1,\ldots, L_n)$ une famille de 
fibrés en droites métrisés paralinéaires
strictement entiers sur~$X$.
Le support de la mesure discrète $\bigwedge c_1(L_j)$
est contenu dans $X^\#_{\mathrm{str}}$. 
La masse d'un point~$x$ de $X^\#_{\mathrm{str}}$
pour cette mesure est égale à
\[
\int_{\zr {\hrt x^1}{\widetilde k^1}}
c_1(\widetilde {L_{1,x}}^1)\cdots c_1(\widetilde {L_{n,x}}^1)\]
\end{prop}
Rappelons que sous nos hypothèses les fibrés en
droites résiduels $\widetilde{L_{i,x}}$ sont stricts 
que $\widetilde{L_{i,x}}^1$ désigne le fibré en droites sur $\zr {\hrt x^1}{\widetilde k^1}$
dont provient $\widetilde{L_{i,x}}$.
Pour une  description directe de ces fibrés en droites, 
voir le \S\ref{ss.description-fibre-residuel-strict}.

\begin{proof}
On sait d'après la proposition~\ref{prop.MA-berk},
que le support de cette mesure est fermé et discret. 
On sait aussi que si $x$ est un point de ce support, 
la dimension de définition résiduelle simultanée
des $\widetilde{L_{i,x}}$ est égale à $n$. Comme les $\widetilde{L_{i,x}}$ sont
par ailleurs stricts, ils sont tous définis sur $\hrt x^1\cdot \widetilde k$, ce qui
implique que $\hrt x^1$ est de degré de transcendance $n$ sur $\widetilde k^1$. 

\textit{Comme nous n'allons manipuler dans la preuve que des réductions strictes
et des objets homogènes de degré $1$
nous nous permettons exceptionnellement 
dans ce qui suit de ne plus noter le $1$ en exposant : nous écrirons
ainsi $\widetilde k, \hrt x, \widetilde{L_{i,x}}$, etc. au lieu de 
$\widetilde k^1, \hrt x^1, \widetilde{L_{i,x}}^1$, etc.}

Soit $x\in X^\#_{\mathrm{str}}$. 
Avec les notations de la proposition~\ref{prop.MA-berk},
il s'agit de démontrer
l'égalité $\lambda_x = 
\int_{\zr{\hrt x}{\widetilde k}} c_1(\widetilde {L_{1,x}})\cdots c_1(\widetilde {L_{n,x}})$. 
Comme dans le début de la preuve
de la proposition~\ref{prop.MA-berk}, on se ramène au cas où
tous les fibrés métrisés~$L_i$
sont égaux à un même fibré métrisé~$L$
(paralinéaire strictement entier)
dont le fibré résiduel~$\widetilde{L_x}$ est engendré par ses sections
globales. 
La métrique de $L$ étant paralinéaire strictement 
entière, $L$ possède une section $s$ inversible au voisinage
de $x$ telle que $\norm{s(x)}=1$. 
D'après le corollaire~\ref{coro-engendre-sections-globales-str}, 
il existe une famille finie $(f_1,\dots,f_m)$
d'éléments de $\mathscr O_{X,x}^\times$ 
telle que $\abs{f_i(x)}=1$ pour tout $i$ et telle que 
\begin{equation}
 \log (\norm{s}^{-1}) = \max(\abs{f_1},\dots,\abs{f_m})
\end{equation}
au voisinage de~$x$.
Comme le calcul est local, il n'est pas restrictif de supposer que les~$f_i$
sont définies sur~$X$; notons $f\colon X\to\gm^m$ le moment~$(f_1,\dots,f_m)$.
D'après la proposition~\ref{prop.ma-berk-local},
$\lambda_x$ est égal à l'invariant~$\lambda$ du cône tropical~$\Pi_{f,x}$
du moment~$f$ en~$x$.

Par choix de $x$ on a $d_k(x)=n$, si bien que 
l'anneau local $\mathscr O_{X,x}$ est artinien; l'espace~$X$
étant génériquement réduit, l'anneau~$\mathscr O_{X,x}$ est réduit
et est donc un corps ; en particulier,
l'anneau $\mathscr O_{X,x}$ est régulier. 
L'espace~$X$ est dès lors géométriquement régulier en~$x$, donc
lisse au voisinage de~$x$ (puisqu'il est sans bord). 
Quitte à remplacer~$X$ par un voisinage ouvert de~$x$, 
on peut alors supposer qu'il
existe un $k$-schéma de type fini affine intègre lisse~$\mathscr X$
et un ouvert de~$\mathscr X^\an$ qui est isomorphe à~$X$
(voir par exemple le lemme~(0.21) de~\cite{ducros2012b}). 
Comme $k(\mathscr X)$ est dense dans $\hr x$,
on peut également supposer que les $f_i$ appartiennent
à l'anneau $\mathscr O(\mathscr X)^\times$,
et donc que $X=\mathscr X^\an$.
Alors, le moment~$f$ considéré ci-dessus provient
d'un moment algébrique $f\colon \mathscr X\to \gm^m$.

%Pour tout~$i$, le réel
% $\abs{f_i(x)}$ appartient à $ \abs{\hr x^\times}$
%qui est lui-même égal à $\abs{k^\times}$,
%puisque $\hrt x$ est de degré de trancendance $n$ 
%sur $\widetilde k$ et que $k$ est algébriquement clos.
%Quitte à multiplier $s$ par un élément de~$k^\times$
%(et à ne conserver que les $f_i$ dont la valeur absolue
%en $x$ est égale à $\norm{s(x)}$)
%on peut donc supposer que~$\abs{f_i(x)}=1$ pour tout~$i$.

Le
lemme \ref{lemm-secglob-max} assure alors que les $\widetilde{f_i(x)s(x)}$ 
sont des sections globales de 
$\widetilde{L_x}$ qui engendrent ce dernier. 
Soit $(M,V)$ 
un modèle propre de $(\widetilde{(X,x)}, \widetilde{L_x})$. 
On a par définition 
\[ \int_{\zr {\hrt x}{\widetilde k}}c_1(\widetilde{L_x})^n
 =\int_Vc_1(M)^n. \]

Munissons le corps résiduel $\widetilde k$ de la valeur absolue triviale;
cela permet de considérer l'analytifiée~$V^\an$ 
de la $\widetilde k$-variété propre~$V$.
Notons $\xi$ le point de~$V^\an$ 
qui correspond à la valeur absolue triviale sur $\widetilde k(V)$;
on a $\hrt\xi=\hrt x$. 
Pour tout~$i$, on considère $\widetilde{f_i(x)}$ 
comme une fonction analytique sur le germe~$(V^\an,\xi)$;
soit $\widetilde f\colon (V^\an,\xi)\to \mathbf G^m_{\mathrm m,\tilde k}$
le moment défini par ces fonctions.
On considère aussi $\widetilde s(x)$ 
comme une section du fibré analytique~$M^\an$
sur le germe $(V^\an,\xi)$.  

En considérant $V$ comme un schéma formel sur l'anneau topologique
discret $\widetilde k$, on peut voir $M^\an$ comme la fibre générique $M_\eta$
 du fibré formel $M$, fibre générique qui hérite d'une métrique paralinéaire
strictement entière
(exemple~\ref{fibvaltriv}) ; 
et le fibré résiduel correspondant 
$\widetilde {M_{\eta,\xi}}$ sur $\zr{\hrt \xi}{\widetilde k}=\zr {\hrt x}{\widetilde k}$
possède alors comme prémodèle $M$ lui-même (lemme \ref{lemme-premodele-formel}) ; 
il résulte des constructions que pour toute section rationnelle $u$ non nulle de $M$
sur un ouvert de Zariski de $V$ on a $\norm{u(\xi)}=1$
et $\widetilde {u(\xi)}=u$, lorsqu'on identifie 
$\widetilde{M_{\eta,\xi}}$ au fibré en droites de modèle $M$. 
Puisque les $\widetilde{s(x)f_i(x)}$ sont des sections globales de $\widetilde{L_x}
=\widetilde{M_{\eta,\xi}}$ qui engendrent ce dernier, on déduit alors du corollaire
\ref{coro-max-ai-str} que 
\begin{equation}
\log(\norm{\widetilde s(x)}^{-1}) = \max (\abs{\widetilde{f_1(x)}},\dots,\abs{\widetilde{f_m(x)}})
\end{equation}
au voisinage de $\xi$ sur $V^\an$. 

D'après le théorème~\ref{theo.comparaison-cones},
les cônes tropicaux $\Pi_{f,x}$ et $\Pi_{\tilde f,\xi}$
sont égaux.
D'après la proposition~\ref{prop.ma-berk-local},
on a donc $\lambda_x=\lambda_\xi$, où
$\lambda_\xi$ est le coefficient de $\delta_\xi$
apparaissant dans la mesure discrète $c_1(M_\eta)^n$. 

Observons maintenant
que le point $\xi$ est le seul point de $V^\an$ qui ait un corps 
résiduel de degré de transcendance égal à $n$ sur $\widetilde k$. 
En effet, si $\omega$ est un tel point, on a $d_{\tilde k}(\omega)=n$;
comme $\dim(V)=n$, le point~$\omega$ est situé au-dessus du point générique 
de~$V^\an$ et $\abs{\hr{\omega}^\times}$ est de torsion modulo
$\abs{\tilde k^\times}$; ainsi, $\omega$ correspond à la valeur
absolue triviale sur~$\tilde k(V)$, donc $\omega=\xi$.
On a en conséquence l'égalité
\[ c_1(\overline {M_\eta})^n = \lambda_\xi \delta_\xi \]
de mesures sur~$V^\an$, où $\lambda_\xi$ est l'invariant~$\lambda$
du cône tropical~$\Pi_{\tilde f,\xi}$.
D'autre part, la masse totale de~$c_1(\overline {M_\eta})^n$ est égale
au degré $\int_V c_1(M)^n$
en vertu du théorème~\ref{theo.MA-lisse-masse}
(et du fait que $M_\eta=M^{\an}$ comme fibré en droites). 
Cela conclut la démonstration de la proposition.
\end{proof}

\begin{rema}\label{rem-monge-ampere-algclos}
Conservons les hypothèses de la proposition 
\ref{prop.ma-berk-alg-clos}. Soit $x\in X^\#_{\mathrm{str}}$. 
Le groupe $\abs{\hrt x}^\times/\abs{k^\times}$ est alors de torsion, et est
dès lors trivial puisque $\abs{k^\times}$ est divisible. On a donc
$\hrt x=\hrt x^1\otimes_{\widetilde k^1}\widetilde k$, si bien que 
\[\int_{\zr {\hrt x^1}{\widetilde k^1}}
c_1(\widetilde {L_{1,x}}^1)\cdots c_1(\widetilde {L_{n,x}}^1)
=\int_{\zr {\hrt x}{\widetilde k}}
c_1(\widetilde {L_{1,x}})\cdots c_1(\widetilde {L_{n,x}}).\]
Nous avons préféré donner la description non graduée de cet entier dans l'énoncé
de la proposition car sa définition est plus élémentaire, 
et car c'est sous cette forme qu'elle
apparaît au cours de la démonstration. 

Mais nous nous proposons maintenant de décrire la mesure
$c_1(L_1)\wedge \ldots \wedge c_1(L_n)$ 
(autrement dit, de calculer les coefficients $\lambda_x$ de la 
proposition \ref{prop.MA-berk})
dans le cas général, c'est-à-dire sans supposer
que $k$ est algébriquement clos, ni que $X$ est génériquement réduit, ni que les métriques 
paralinéaires entières en jeu sont strictes. L'expression naturelle des $\lambda_x$ se fera alors
au moyen des nombres d'intersections gradués, et sera établie en se ramenant 
par extension des scalaires au cas particulier 
que nos venons de traiter dans la proposition~\ref{prop.ma-berk-alg-clos}; 
il sera en conséquence utile d'avoir remarqué que les nombres d'intersections 
non gradués de cette proposition
peuvent tout aussi bien s'interpréter comme des nombres d'intersection gradués. 
\end{rema}

%\begin{coro}\label{coro.lambda_x-entiers}
%Les nombres réels~$\lambda_x$ sont entiers.
%\end{coro}
%\begin{proof}
%Cette assertion découle 
%de la proposition~\ref{prop.ma-berk-alg-clos}
%lorsque le corps~$k$ est algébriquement clos. 
%Nous allons démontrer comment nous ramener à ce cas particulier.
%
%Soit $K$ une extension complète algébriquement close de~$k$
%et soit $p\colon X_K\to X$ le morphisme de changement de base.
%D'après la proposition~\ref{prop.bt-berk-projection}
%que la mesure $\bigwedge c_1(L_i)$  sur~$X$
%est l'image directe, par~$p_*$, de la mesure
%$\bigwedge c_1(L_{i,K})$ sur~$X_K$.
%D'après le cas d'un corps algébriquement clos, il existe
%une partie fermée discrète~$\overline S$ de~$X_K$
%et une famille~$(\overline\lambda_y)_{y\in\overline S}$ d'entiers
%telle que $\bigwedge c_1(L_{i,K}) = \sum_{y\in \overline S}
%\overline\lambda_y \delta_y$.
%Alors,
%\[ \bigwedge c_1(L_{i}) = \sum_{x\in S}\left( \sum_{y\in \overline S\cap p^{-1}(x)}
%\overline\lambda_y \right) \delta_x, \]
%ce qui prouve l'égalité
%\[ \lambda_x = \sum_{y\in \overline S\cap p^{-1}(x)} \overline \lambda_y. \]
%C'est donc un entier.
%\end{proof}

\begin{theo}\label{theo.formule-ma-berk}
Soit $X$ un $n$-espace $k$-analytique sans bord et
soit $(L_1,\ldots, L_n)$ une famille de 
fibrés en droites métrisés
paralinéaires entiers sur~$X$.
Le support de la mesure discrète $\bigwedge c_1(L_j)$
est contenu dans $X^\#$. 
La masse de tout $x\in X^\#$ pour cette mesure est égale à l'entier
\[
m(x)\cdot\df_k(x)\cdot
\int_{\zr{\hrt x}{\widetilde k}}c_1(\widetilde {L_{1,x}})\cdots
c_1(\widetilde{L_{n,x}}),\]
où $m(x)$ désigne la multiplicité de l'unique composante irréductible de $X$ qui contient $x$. 
\end{theo}
\begin{proof}
D'après la proposition~\ref{prop.MA-berk}, le support de
cette mesure est contenu dans l'ensemble discret~$S$ 
des points~$x\in X$
tels que la dimension de définition simultanée des~$\widetilde{L_{i,x}}$ 
soit égale à~$n$, et a fortiori dans $X^\#$. 
Il suffit donc de vérifier l'assertion au voisinage de tout point $x$ de $X^\#$.

Soit $x$ un point de $X^\#$ ; 
notons $\lambda_x$ sa masse du point~$x$ pour 
la mesure $\bigwedge c_1(L_j)$. Comme $x$ est situé sur $X^\#$, il appartient à une unique composante irréductible $X'$
de $X$. Soit $X''$ l'ouvert de Zariski de $X$ constitué des points de $X'$ qui n'appartiennent à aucune autre composante. 
C'est un espace irréductible (\cf \cite{ducros2021}, Lemme 1.11) ; pour calculer $\lambda_x$ on peut remplacer $X$
par $X''$, et donc supposer $X$ irréductible. 
Choisissons une famille finie de domaines affinoïdes
$(V_j)$ de $X$, contenant $x$ et dont la réunion est un voisinage de $x$, 
et tel que pour tout $i$ le fibré $L_i|_{V_j}$ possède sur 
$V_j$ une section dont la norme appartient à $\rho_{ij}\cdot \abs{\mathscr O_X(V_j)^\times}$
pour un certain réel strictement positif
$\rho_{ij}$.
Soit $r=(r_\ell)$ un polyrayon de longeur finie contenant tous les $\rho_{ij}$ ainsi
qu'un système générateur de $(\abs{\hr x}^\times/\abs{k^\times})\otimes \Q$, et
soit $k_r$
l'extension complète $\hr{\eta_r}$ de $k$. 
Soit $S$ le support de $\bigwedge c_1(L_j)$
et soit $\sigma$
la section de Shilov de l'application naturelle $p\colon X_r\to X$ (où
$X_r=X\times_k k_r$).
Posons
$M_i=p^*L_i$. 
La proposition \ref{rema.courants-chgt-base}, couplée à la déifnition 
des courants $c_1(L_i)$, assure que le support de 
$\bigwedge c_1(M_i)$ (vue comme mesure sur l'espace
$k_r$-analytique
$X_r$) coïncide avec $\sigma(S)$. Au vu de la proposition
\ref{prop.bt-berk-projection}, ceci entraîne que $\lambda_x$ est égal 
au poids de la mesure $\bigwedge c_1(M_i)$  en $\sigma(x)$. 
Et puisque $\hrt{\sigma(x)}$ est simplement égal
au corps (gradué) des fractions de 
$\hrt x\otimes_{\widetilde k}\widetilde{k_r}$, 
on a 
\[\int_{\zr{\hrt x}{\widetilde k}}c_1(\widetilde {L_{1,x}})\cdots
c_1(\widetilde {L_{n,x}})
=\int_{\zr{\hrt {\sigma(x)}}{\widetilde {k_r}}}c_1(\widetilde {M_{1,\sigma(x)}})\cdots
c_1(\widetilde {M_{n,\sigma(x)}}).\]
Il suffit donc
de montrer que la masse de $\bigwedge
c_1(M_i)$ en $\sigma(x)$
est égale 
à 
\[m(x)\cdot\df_k(x)\cdot \int_{\zr{\hrt{\sigma(x)}}{\widetilde{k_r}}}
c_1(\widetilde {M_{1,\sigma(x)}})\cdots
c_1(\widetilde {M_{n,\sigma(x)}}).\]
Par ailleurs $m(x)=m(\sigma(x))$
car l'extension $k_r$ de $k$ est analytiquement séparable, 
donc ne fait pas apparaître de nouvelles multiplicités
(corollaire \ref{coro-longueur-artinien}). 
Et $\df_{k_r}(\sigma(x))=\df_k(x)$ 
d'après \ref{ss-blabla-defaut}. 
Il suffit donc de démontrer la formule
de l'énoncé pour les fibrés métrisés
$M_i$ sur l'espace $k_r$-analytique $X_r$
en le point $\sigma(x)$. Autrement dit, 
on s'est ramené au cas où les $L_i$ sont
paralinéaires strictement entiers au voisinage de $x$
et où $x\in X^\#_{\mathrm{str}}$. 

Soit $K$ le complété d'une clôture algébrique de~$k$.
Soit $j\colon X_{K,\red}\to X_K$ l'immersion
fermée canonique
et $q\colon X_K \to X$ le morphisme de changement de base; 
posons $p=q\circ j$.

Notons~$m$ la multiplicité générique de~$X$; c'est l'entier~$m(x)$
du théorème.
Le groupe~$\Aut(K/k)$ permute transitivement
les composantes irréductibles de~$X_K$, si bien qu'elles
ont toute même multiplicité générique~$m_K$.
Par suite, le degré calibré de~$j$ est identiquement égal à~$1/m_K$.
Appliquant la proposition~\ref{prop.image-directe-fini} 
à l'immersion fermée de~$X_\red$ dans~$X$ et à $d=1/m_K$, 
on en déduit
\[ \bigwedge c_1(q^*L_j) = m_{K} j_* (\bigwedge c_1(p^*L_j)). \]
De plus, par changement de base, on a  l'égalité de mesures
\[ \bigwedge c_1(L_j) = q_* (\bigwedge c_1(q^* L_j)). \]
Ainsi,  on a
\[ \bigwedge c_1(L_j) = m_{K} p_* (\bigwedge c_1(p^* L_j)). \]

Appliquons la proposition~\ref{prop.ma-berk-alg-clos} aux
fibrés $p^*L_j$ sur~$X_{K,\red}$; on obtient
\[ \bigwedge c_1(p^*L_j) =
\sum_{y\in X_{K,\red,\mathrm{str}}^\#}
  \left( \int_{\zr{\hrt y^1}{\widetilde K^1}}
 c_1(\widetilde {M_{1,y}}^1)\cdots c_1(\widetilde{M_{n,y}}^1)\right) \delta_y. \]
Compte de la remarque~\ref{rem-monge-ampere-algclos},
on a donc
\[ \bigwedge c_1(p^*L_j) =
\sum_{y\in X_{K,\red,\mathrm{str}}^\#}
  \left( \int_{\zr{\hrt y}{\widetilde K}}
 c_1(\widetilde {M_{1,y}})\cdots c_1(\widetilde{M_{n,y}})\right) \delta_y. \]

Puisque les antécédents de~$x$ par~$p$ appartienennt à tous
à~$X^\#_{K,\red,\mathrm{str}}$,
il vient
\[ \lambda_x = m_K  \sum_{y \in p^{-1}(x)} 
  \int_{\zr{\hrt y}{\widetilde K}}
 c_1(\widetilde {M_{1,y}})\cdots c_1(\widetilde{M_{n,y}}).\]

Notons $\widetilde \ell$ la longueur de l'algèbre artinienne
$\hrt x\otimes_{\widetilde k} \widetilde K$
et $\nu$ le cardinal de $q^{-1}(x)$. 
Fixons $y\in p^{-1}(x)$.
Le corps gradué~$\hrt y$ est fini sur 
$\hrt x\otimes \widetilde K$; son degré 
sur l'extension composée $\hrt x\cdot \widetilde K$ dans~$\hrt y$
ne dépend que de~$x$; on le note~$\widetilde d$. 
Dans la suite, nous écrirons $\widetilde{L_i}$
au lieu de $\widetilde {L_{i,x}}$ et
$\widetilde {M_i}$ au lieu de $\widetilde {M_{i,y}}$, et nous noterons
$N_i$ l'image réciproque de $\widetilde{L_i}$ sur $\zr {\hrt x\cdot \widetilde K}
{\widetilde K}$. 

Soit $\nu'$ le nombre d'idéaux
maximaux de  
$\hrt x\otimes_{\widetilde k}\widetilde K$. Cette dernière algèbre
est alors produit de 
$\nu'$~algèbres artiniennes locales, 
ayant toutes même longueur $\widetilde \ell/\nu'$, et 
toutes même corps gradué résiduel isomorphe
à $\hrt x\cdot \widetilde K$. 

Si $\mathsf X$ est un modèle de~${\zr{\hrt x}{\widetilde k}}$,
alors $\mathsf X_{\tilde K}$ possède~$\nu'$ composantes
irréductibles, toutes de multiplicité générique~$\widetilde\ell/\nu'$
et toutes de même corps gradué de fonctions~${\hrt x\cdot \widetilde K}$.
En appliquant successivement
les assertions~\emph b) et~\emph c)
de la proposition~\ref{proprietes-intersection-graduee},
on obtient l'égalité
\begin{align*}
 \int_{\zr{\hrt x}{\widetilde k}}
c_1(\widetilde{L_{1,x}})\cdots c_1(\widetilde{L_{n,x}})
& = \nu' \cdot \frac{\widetilde\ell}{\nu'}
\int_{\zr{\hrt x\cdot \widetilde K}
{\widetilde K}}c_1(N_1)\cdots c_1(N_n) \\
& =
\widetilde\ell
\int_{\zr{\hrt x\cdot \widetilde K}
{\widetilde K}}c_1(N_1)\cdots c_1(N_n).
\end{align*}
En appliquant l'assertion~\emph d) de cette proposition, on en déduit
\[
\int_{\zr{\hrt x}{\widetilde k}}
c_1(\widetilde{L_{1,x}})\cdots c_1(\widetilde{L_{n,x}})
=\frac{\widetilde \ell}{\widetilde d}\int_{\zr{\hrt y}{\widetilde K}}
c_1(\widetilde {M_1})\cdots c_1(\widetilde{M_n}).
\]

%On a alors 
%\begin{equation}\label{eq.degre-fini}
% \big(\prod c_1(\widetilde {L_{j,y}})\cap [\zr{\hrt y}{\widetilde K}]\big) 
% = \frac{\widetilde d(x)}{\widetilde \ell(x)}  \big(\prod c_1(\widetilde {L_{j,x}})\cap [\zr
% {\hrt x}{\widetilde k}]\big) .
%\end{equation}

Par suite, on a
\[ \lambda_x = 
 \frac{\nu\cdot m_K\cdot \widetilde d}{\widetilde \ell}
 \int_{\zr{\hrt x}{\widetilde k}}
c_1(\widetilde{L_{1,x}})\cdots c_1(\widetilde{L_{n,x}}). \]

Notons $m'_K$ la multiplicité générique commune 
des composantes irréductibles de~$X_{\red, K}$.
D'après la proposition \ref{prop.calcul-defaut} \emph a), \emph c) et \emph e), l'anneau est artinien 
$\hr x\widehat \otimes_k K$
et sa longueur $\ell$ est égale à $\nu m'_K$ ; et d'après la proposition
 \ref{prop.calcul-defaut}  \emph b)  et la proposition \ref{prop.abh-default-gal}, le défaut $\df_k(x)$ est égal 
à $\ell \widetilde d/\widetilde \ell$. 

Il vient 
\begin{align*}
 \lambda_x 
&=\frac{m_K}{m'_K}\cdot \frac{\nu \cdot m'_K\cdot \widetilde d}{\widetilde \ell}
\int_{\zr{\hrt x}{\widetilde k}}c_1(\widetilde{L_{1,x}})\cdots c_1(\widetilde{L_{n,x}})\\
&=\frac{m_K}{m'_K}\cdot \frac{\ell \widetilde d}{\widetilde \ell}
\int_{\zr{\hrt x}{\widetilde k}}c_1(\widetilde{L_{1,x}})\cdots c_1(\widetilde{L_{n,x}})\\
&=\frac{m_K}{m'_K}\cdot \df_k(x) \int_{\zr{\hrt x}{\widetilde k}}c_1(\widetilde{L_{1,x}})\cdots c_1(\widetilde{L_{n,x}})\\
&=m\cdot \df_k(x) \int_{\zr{\hrt x}{\widetilde k}}c_1(\widetilde{L_{1,x}})\cdots c_1(\widetilde{L_{n,x}}),
\end{align*}
car $m_K=m\cdot m'_K$ en vertu du lemme \ref{lemme-longueur-XK}.
\end{proof}

%Il reste à démontrer l'égalité~\ref{eq.degre-fini}.
%Considérons un modèle $(V,M_1,\dots,M_n)$ des fibrés
%résiduels $\widetilde L^1_j(x)$ sur $(\widetilde{X,x})^1$.
%Par définition, on a 
%\[ \big(\prod c_1(\widetilde L^1_j(x))\cap [(\widetilde {X,x})^1]\big) 
%= (\prod c_1(M_j) \cap [V]). \]
%Par changement de base, cette expression est égale à 
%\[ (\prod c_1(M_j) \cap [V_{\tilde K}]). \]
%Notons $W_1,\dots,W_r$ les composantes irréductibles de~$V_{\tilde K^1}$
%et $\nu$ leur multiplicité commune.
%On a alors $\tilde m(x) = r\nu$.
%Le composé $\hrt x^1\cdot\tilde K^1$ dans~$\hrt y^1$ correspond à l'une
%de ces composantes~$W$ qui est donc un prémodèle de~$(\widetilde{X_K,y})^1$
%muni des fibrés $\widetilde L^1_i(y)$. 
%Choisissons un modèle~$W'$ de~$\hrt y^1$ qui domine~$W$; 
%par définition, son degré sur~$W$ est~$\tilde d(x)$, de sorte que
%\[ \big(\prod c_1(\widetilde L^1_j(y))\cap [(\widetilde{X_K,y})^1]
%= \big(\prod c_1(M_j)\cap [W'])
% = \tilde d(x)  \big(\prod c_1(M_j)\cap [W]\big). \]
%
%Le cycle $[V_{\tilde K^1}]$ est égal à $\nu \sum [W_j]$, si bien que
%\[  (\prod c_1(M_j) \cap [V]) =\nu \sum_j  (\prod c_1(M_j)\cap [W_j]). \]
%Puisque $\Aut(\tilde K^1/\tilde k^1)$ agit transitivement sur
%ces composantes, les nombres d'intersection sont égaux et il en résulte que
%\[ \big(\prod c_1(\widetilde L^1_j(y))\cap [(\widetilde{X_K,y})^1]
% =\frac{\tilde d(x)}{r\nu} \big( \prod c_1(M_j) \cap [V]\big)  ,\]
%d'où la formule~\eqref{eq.degre-fini} puisque $\tilde m(x)=r\nu$.

\begin{exem}
Supposons que le corps~$k$ soit stable. 
Dans ce cas pour toute base d'Abhyankar~$t$ de~$\hr x$ 
le corps~$\widehat{k(t)}$ est stable, 
et $\hr x$ est donc sans défaut sur icelui. 
Par conséquent $\df_k(x)=1$ et l'on a donc
$\lambda_x=m(x)\int_{\zr {\hrt x}{\widetilde k}}
c_1(\widetilde{L_{1,x}})\cdots c_1(\widetilde{L_{n,x}}).$
\end{exem}

\begin{exem}
Supposons toujours que $k$ est stable. 
Faisons de plus l'hypothèse que
le corps résiduel classique
$\hrt x^1$ est de degré de transcendance 
$n$ sur $\tilde k^1$
et que les métriques des $L_i$ sont
paralinéaires strictement entières au voisinage de $x$.
L'indice
$(\abs {\hr x^\times}:\abs{k^\times})$ 
est alors fini, notons-le $e$. 
Pour tout $i$, le
fibré résiduel 
$\widetilde{L_{i,x}}$ est 
le tiré en arrière le long
de l'extension $\hrt x^1\otimes_{\widetilde k^1}
\widetilde k\hookrightarrow
\hrt x$
du fibré résiduel classique $(\widetilde{L_{i,x}})^1$. 
Or, $\hrt x$ est de degré~$e$ sur $\hrt x^1\otimes_{\widetilde k^1}\widetilde k$.
Il vient
\begin{align*}
\lambda_x&=m(x)\int_{\zr {\hrt x}{\widetilde k}}
c_1(\widetilde{L_{1,x}})\cdots c_1(\widetilde{L_{n,x}})
\\
&=em(x)\int_{\zr {\hrt x_1}{\widetilde k_1}}
c_1(\widetilde{L_{1,x}}^1)\cdots c_1(\widetilde{L_{n,x}}^1).
\end{align*}
\end{exem}

\begin{coro}\label{coro.integralite}
Soit $X$ un $k$-espace analytique propre purement de dimension~$n$,
et soit $\overline L_1,\dots,\overline L_n$ des fibrés en droites sur~$X$
munis de métriques dpsh.
Alors
\[ \int_{X^\an} c_1(\overline L_1)\wedge \dots \wedge c_1(\overline L_n)  \]
ne  dépend pas des métriques
psh dont on a muni les~$L_i$ ; 
c'est un entier relatif, qui est positif ou nul 
si les métriques considérées sont psh.
\end{coro}

\begin{proof}
Que le nombre réel obtenu soit indépendant des métriques dpsh considérées
a déjà été vu (proposition \ref{prop.masse-totale}). 
Le calcul $\int_{X^\an}c_1(\overline L_1)\wedge\dots
\wedge c_1(\overline L_n)$ peut alors être fait après extension
des scalaires à une extension complète~$K$ de~$k$
(proposition~\ref{prop.bt-berk-projection}).
Cela permet de supposer que la valuation de~$k$ est non triviale.

La proposition~\ref{prop.exist.metrique.formelle} garantit alors l'existence 
de métriques paralinéaires entières (et même strictement entières)
sur les fibrés en droites~$L_{i}$.
Il suffit donc de démontrer l'intégralité annoncée
lorsque les métriques des~$\overline{L_i}$ sont paralinéaires entières. 

Dans ce cas, le courant
\[ c_1(\overline L_1)\wedge \dots\wedge c_1(\overline L_n) \]
est une somme finie $\sum \lambda_x \delta_x$
(prop.~\ref{prop.MA-berk}) et les $\lambda_x$ sont entiers 
par le théorème  \ref{theo.formule-ma-berk}. 
Par conséquent
\[ \int_{X^\an}  c_1(\overline L_1)\wedge \dots\wedge c_1(\overline L_n) 
= \sum \lambda_x\]
est bien un entier relatif. 

Si les métriques considérées sont psh,
le courant $c_1(\overline L_1)\wedge\dots\wedge c_1(\overline L_n)$
est positif, de sorte que cet entier est positif ou nul.
\end{proof}

\subsection{}
Compte tenu de l'existence de métriques lisses 
(proposition~\ref{prop.existence-metrique-lisse}),
le corollaire ci-dessus fournit 
un accouplement multilinéaire et symétrique
$ \langle \cdot, \cdots,\cdot \rangle \colon \Pic(X)^n \to \Z$. 
Il est invariant par extension des scalaires.

Lorsque $X$ est algébrique,
il coïncide avec le degré d'intersection usuel
(théorème~\ref{theo.MA-lisse-masse}).

\begin{prop}\label{prop-conservation-nombre}
Soit $d$ un entier et soit $p\colon Y\to X$ un morphisme propre et plat, purement de dimension relative $d$, entre espaces
$k$-analytiques. Soient $L_1,\ldots, L_d$ des fibrés en droites sur $Y$. L'application 
\[X\to \Z, \qquad x\mapsto \langle L_1|_{Y_x},\ldots, L_d|_{Y_x}\rangle\] est localement constante. 
\end{prop}
\begin{proof}
L'assertion étant G-locale sur $X$, on peut le supposer bon. Dans ce cas $Y$ est également bon puisque
$p$ est propre et en particulier
sans bord. Il existe alors sur chacun des $L_i$ une métrique lisse, donnant lieu à un fibré
métrisé $\bar L_i$. Soit $\omega$ la $(d,d)$-forme lisse
$c_1(\bar L_1)\wedge\ldots\wedge c_1(\bar L_d)$ (sur l'espace $Y$). On a pour tout $x\in X$ l'égalité
\[ \langle L_1|_{Y_x},\ldots, L_d|_{Y_x}\rangle=\int_{Y_x} \omega.\] 
Il s'ensuit en vertu du théorème \ref{theo-variation-integrales} que $x\mapsto \langle L_1|_{Y_x},\ldots, L_d|_{Y_x}\rangle$
est continue ; puisqu'elle est à valeurs entières, cette fonction est localement constante. 
\end{proof}

\section{Calculs formels}

Dans ce paragraphe, on explique comment calculer des produits
de $c_1(\overline L)$ lorsque la métrique sur~$L$ est donnée par
un modèle formel. Commençons par un cas élémentaire.

\begin{prop}
Soit $X$ un $k$-schéma propre de dimension~$n$
et soit $L$ un fibré en droites sur~$X$.
Considérons un modèle formel propre~$\mathfrak X$ sur~$k^\circ$ de~$X$
et un fibré en droites~$\mathfrak L$ sur~$\mathfrak X$ prolongeant~$L$.
Supposons que $\mathfrak X$ est relativement normal dans
sa fibre générique et que $\mathfrak X_s$ est irréductible;
soit $\eta$ l'unique point de~$X^\an$ dont la réduction est égale
au point générique de~$\mathfrak X_s$.
Munissons~$L$ de la métrique 
paralinéaire strictement entière induite par le modèle~$\mathfrak L$.
On a
\[ c_1(\overline{L})^n  = \left(\int_X c_1(L)^n\right) \delta_\eta. \]
\end{prop}
\begin{proof}
La spécialisation de tout point~$x$ de~$X^\an$ autre que~$\eta$
a une adhérence de dimension $\leq n-1$, donc la dimension de définition 
de~$\widetilde{L_x}$ est au plus~$n-1$ (lemme \ref{lemme-premodele-formel}). 
D'après la proposition \ref{prop.MA-berk} il existe un nombre réel~$\lambda$
tel que 
\[ c_1 (\overline L)^n = \lambda \delta_{\eta}. \]
D'après le théorème~\ref{theo.MA-lisse-masse}, la masse totale
de cette mesure est le degré $\int_X c_1(L)^n$, 
si bien que $\lambda=\int_X c_1(L)^n$.
\end{proof}

\begin{exem}\label{exem.weil-pn}
Prenons le cas du fibré~$\mathscr O(1)$ sur l'espace projectif~$\P^n$,
muni de la métrique de Weil associée au modèle formel 
évident sur~$k^\circ$. Soit $\eta_1$ l'unique point de~$\P^n$
qui se spécialise en le point générique de la fibre spéciale.
D'après la proposition précédente, on a
\[ c_1 (\overline{\mathscr O(1)})^n = \delta_{\eta_1}. \]
\end{exem}

\subsection{}
Soit $\mathfrak X$ un $k^\circ$-schéma formel plat,
localement topologiquement de type fini et purement de dimension relative~$n$.
Notons $X$ sa fibre générique; supposons-la sans bord (cela revient à demander que les composantes
irréductibles de $\mathfrak X_s$ soient propres).

Soit $(\mathfrak L_1,\dots,\mathfrak L_n)$ des fibrés en droites 
sur~$\mathfrak X$; notons $\overline L_1,\dots,\overline L_n$
les fibrés en droites correspondants sur~$X$ munis de leurs métriques
paralinéaires strictement entières. 

On désire calculer la mesure $c_1(\overline L_1)\wedge\dots\wedge
c_1(\overline L_n)$ sur~$X$ de manière analogue au calcul donné
par la proposition~\ref{theo.formule-ma-berk}
à l'aide de la théorie de l'intersection sur la fibre
spéciale du schéma formel~$\mathfrak X$.

Soit $x$ un point de~$X$. Soit $\xi\in\mathfrak X_s$
sa spécialisation et soit $V=\overline{\{\xi\}}$;
on suppose que $\dim(V)=n$.
Comme $X$ est sans bord, $V$ est propre sur~$\widetilde k_1$.
Soit $E=\widetilde k^1(V)$ son corps des fonctions
et soit~$F$ le corps gradué~$E\otimes_{\widetilde k^1}\widetilde k$.
Le corps gradué $\hrt x$ est fini sur
$\hrt x^1\otimes_{\widetilde k^1}\widetilde k$ de degré
$(\abs{\hrt x^\times}:\abs{k^\times})$ et $\hrt x^1$ est fini
sur~$E$; par conséquent $\hrt x$ est fini sur $F$, 
et l'on note $\lambda_{\mathfrak X}(x)$ le degré 
$[\hrt x:F]$. On peut décrire $\lambda_{\mathfrak X}(x)$ de manière non graduée
en remarquant que 
\[ \lambda_{\mathfrak X}(x) = \big(\abs{\hr x^\times}:\abs {k^\times}\big)
 [\hrt  x^1:E]. \]

\begin{defi}
Supposons que le schéma formel~$\mathfrak X$ soit propre.
On appelle \emph{cycle fondamental} du schéma formel~$\mathfrak X$
le cycle 
purement de dimension~$n$ supporté par~$\mathfrak X_s$ donné par
\[ \cycle(\mathfrak X) = \sum_{x} m(x) \df_k(x) \lambda_{\mathfrak X}(x) [V_x] ,\]
où la somme est restreinte aux points~$x$ de~$X$ tels que l'adhérence
de Zariski~$V_x$ de la spécialisation de~$x$ est de dimension~$n$.
\end{defi}

Comme l'application de spécialisation est surjective,
le coefficient de chaque composante est strictement positif.

\begin{theo}
Soit $\mathfrak X$ un $k^\circ$-schéma formel plat,
localement topologiquement de type fini,
purement de dimension relative~$n$. 
Notons~$X$ sa fibre générique, supposons-la sans bord.
Soit $\mathfrak L_1,\dots,\mathfrak L_n$ des fibrés en droites
sur~$\mathfrak X$; notons $\overline L_1,\dots,\overline L_n$ 
les fibrés en droites sur~$X$ déduits de~$\mathfrak L_i$,
munis des métriques paralinéaires strictement entières correspondantes. 

\begin{enumerate}
\item Le support de la mesure 
est contenu dans l'ensemble localement fini des points de~$X$ 
qui se spécialisent
sur le point générique d'une composante irréductible de la fibre spéciale
de~$\mathfrak X$.
\item
Soit $x$ un point de~$X$ dont la spécialisation est le point
générique d'une composante irréductible~$V$ de~$\mathfrak X_s$.
La masse du point~$x$ pour la mesure $c_1(\overline L_1)\dots c_1(\overline L_n)$
est égale à
\[ m(x) \df_k(x) \lambda_{\mathfrak X}(x)\int_Vc_1(\mathfrak L_{1,s})\cdots c_1(\mathfrak L_{n,s}).\]
\item
Si $\mathfrak X$ est propre, 
la masse totale de cette mesure est égale à 
\[
\int_{ \cycle(\mathfrak X)}c_1(\mathfrak L_{1,s})\cdots c_1(\mathfrak L_{n,s})). \]
\end{enumerate}
\end{theo}
\begin{proof}
Notons $\mu$ cette mesure $c_1(\overline L_1)\dots c_1(\overline L_n)$.
D'après la proposition~\ref{prop.MA-berk}
et la remarque \ref{rem-cl-strict} son support 
est contenu dans l'ensemble~$S$ des points~$x\in X$
tels que la dimension de définition simultanée des~$(\widetilde{L_{i,x}})^1$
soit égale à~$n$. 
Soit $x$ un point de~$X$,
soit $\xi\in\mathfrak X_s$ sa spécialisation
et soit $V$ l'adhérence réduite $\overline{\{\xi\}}$. Pour tout $i$, notons $\Lambda_i$ le fibré en droites
sur $\zr E{\widetilde k^1}$ de prémodèle $(V,\mathfrak L_i|_V)$, et $M_i$ 
l'image réciproque de $\Lambda_i$ sur $\zr F{\widetilde k}$. 
D'après le lemme \ref{lemme-premodele-formel}
le $\widetilde k^1$-schéma intègre $V$, muni des restrictions
des~$\mathfrak L_i$, est un prémodèle de $\zr {\hrt x}{\widetilde k}$, muni
des fibrés résiduels stricts $(\widetilde{L_{i,x}})^1$ ; en d'autres termes, 
l'image réciproque de $M_i$ sur $\zr{\hrt x}{\widetilde k}$ est isomorphe à 
$\widetilde{L_{i,x}}$ pour tout $i$. 
Par conséquent, la dimension de définition simultanée 
des $\widetilde {L_{i,x}}$ est inférieure ou égale à~$\dim(V)$.
En particulier, si $\dim(V)<n$, la mesure~$\mu$ est nulle au voisinage de~$x$.
Cela démontre l'assertion~\emph a).

L'assertion~\emph c) découle de l'assertion~\emph b) que
nous démontrons maintenant.
Conservons les notations $x, \xi, V,\Lambda_i, M_i$ introduites ci-dessus, en supposant de plus
que $\dim V=n$, c'est-à-dire que $\xi$ est le point générique d'une composante irréductible de 
$\mathfrak X_s$. 

On a alors
\begin{align*}
\mu(x)&= m(x) \df_k(x) \int_{\zr{\hrt x}{\widetilde k}}c_1(\widetilde{L_{1,x}})\cdots c_1(\widetilde{L_{n,x}})\\
&=m(x) \df_k(x) \lambda_\mathfrak X(x) \int_{\zr F{\widetilde k}}c_1(M_1)\cdots c_1(M_n)\\
&=m(x) \df_k(x) \lambda_\mathfrak X(x)\int_{\zr E{\widetilde k_1}}c_1(\Lambda_1)\cdots c_1(\Lambda_n)\\
&=m(x) \df_k(x) \lambda_\mathfrak X(x)\int_Vc_1(\mathfrak L_1)\cdots c_1(\mathfrak L_n),
\end{align*}
où la seconde égalité provient du~\S\ref{ss-intersection-changebasefini},
la troisième du~\S\ref{coro-intersection-graduee}, 
et la dernière de la définition du nombre d'intersection 
dans le cas non gradué. 
Ceci termine la démonstration. 
\end{proof}

\begin{prop}\label{prop.defaut-reduit}
Soit $\mathfrak X$ un $k^\circ$-schéma formel plat,
localement topologiquement de type fini,
purement de dimension relative~$n$. Soit  $x$ un point de~$X$ dont l'image~$\xi$ dans~$\mathfrak X_s$
est le point générique d'une composante irréductible~$V$. 
\begin{enumerate}
\item Si $X$ est réduit, on a $m(x)=1$.
\item Si $X$ et $\mathfrak X_s$ sont réduits,
on a $\lambda_{\mathfrak X}(x)=1$. 
De plus, $x$ est l'unique antécédent de~$\xi$, et l'on a
$\abs{\hr x^\times}=\abs{k^\times}$ et $\hrt x^1=\widetilde k^1(V)$.
\item Si $X$ et $\mathfrak X_s$ sont géométriquement réduits,
on a $\df_k(x)=1$.
\end{enumerate}
\end{prop}
\begin{proof}
L'assertion~\emph a) est évidente.

\emph b)
Supposons $X$ et $\mathfrak X_s$ réduits, et
montrons que $\lambda_{\mathfrak X}(x)=1$. 
On peut pour ce faire remplacer $\mathscr X$ par n'importe quel voisinage ouvert formel
de $\xi$ (et $X$ par la fibre générique de ce voisinage), ce qui permet
de supposer que $\mathfrak X=\Spf(A)$ où $A$ est une $k^\circ$-algèbre topologiquement
de présentation finie telle que 
$A_{\widetilde k^1}$ soit intègre. 

On a $X=\mathscr M(A_k)$. Comme $A_k$ et $A_{\widetilde k^1}$ sont réduits,
la proposition~1.1
de~\cite{BoschLutkebohmertRaynaud-1995},
assure qu'il existe une présentation 
$k\{T_1,\dots,T_n\}\to A_k$ de~$A_k$ dont la norme quotient
coïncide avec la norme spectrale de $A_k$. Il en résulte que la norme spectrale de $A_k$
prend ses valeurs dans $\abs k$, 
que $A_k^\circ=A$ et que $A_k^{\circ\circ}=k^{\circ\circ}\cdot A$;
par suite, $A\otimes_{k^\circ} \tilde k^1 \to \tilde A^1$ est un isomorphisme.
La proposition~2.4.4, (ii) et~(iii), de~\cite{berkovich1990}
entraîne alors que le point générique de~$V$ a un unique antécédent
dans~$X$, qui est donc le point~$x$, et
que $\{x\}$ est le bord de Shilov de~$A_k$.
La semi-norme sur $A_k$ qui correspond à $x$ est donc la norme spectrale
de $A_k$, 
si bien que $\abs{\hr x^\times}=\abs{k^\times}$.
Il découle alors de la proposition citée que 
$\hrt x^1=\widetilde k^1(V)$. Par suite, $\lambda_{\mathfrak X}(x)=1$.

\emph c)
Supposons $X$ et $\mathfrak X_s$ 
géométriquement réduits, et montrons que $\df_k(x)=1$. 
On se ramène comme ci-dessus au cas où $\mathfrak X=\Spf(A)$.
Soit $K$ le complété d'une clôture algébrique de~$k$;
on va appliquer la proposition~\ref{prop.calcul-defaut}
en démontrant, avec les notations de cette proposition
que l'on a $\ell=\tilde\ell$ et $\widetilde d=1$, ce qui permettra de conclure puisque $\df_k(x)=\ell\widetilde d/\widetilde \ell$. 

Notons $\mathfrak Y=\mathfrak X\otimes_{k^\circ}K^\circ$
et $Y$ sa fibre générique; on a $Y=X_K$.
Soit $W_1,\dots,W_r$ les composantes irréductibles
de~$\mathfrak Y_s$ qui dominent~$V$
et $\xi_1,\dots,\xi_r$ leurs points génériques.
Par hypothèse, on peut appliquer le résultat~\emph b)
à~$\mathfrak Y$.

Pour tout~$i$, le point~$\xi_i$  a donc un unique antécédent~$y_i$
dans~$Y$, et les~$y_i$ sont les antécédents de~$x$ dans~$Y$.
Comme la fibre~$\mathfrak X_s$ est géométriquement réduite,
l'algèbre $\widetilde k^1(V)\otimes_{\widetilde k^1}\widetilde K^1$
est le produit des $\widetilde K^1(W_i)$;
sa longueur est donc égale à~$r$.

Puisque 
$X$ et $\mathfrak X_s$ sont géométriquement réduit ils sont réduits, et
il résulte alors de~\emph b) que $\abs{\hr x^\times}=\abs {k^\times}$. 
Par conséquent 
\[ \hrt x\otimes_{\hrt k}\hrt K=
 (\hrt x^1\otimes_{\hrt k^1}\hrt K^1) \otimes_{\hrt K^1}\hrt K, \]
si bien que
l'algèbre graduée $\hrt x\otimes_{\hrt k}\hrt K$ est de longueur~$r$.
On a donc $\tilde\ell=r$.

Comme $Y$ est réduit, $\hr x\widehat\otimes_k K$ est le produit
des~$\hr {y_i}$ en vertu de la proposition~\ref{prop.calcul-defaut}, \emph a);
la longueur de cet anneau est donc égale à~$r$ ; autrement dit, $\ell=r$.

Enfin, pour tout $i$, on a $ [\hrt {y_i}^1:(\hrt K^1\cdot \hrt x^1)]=1$.
Comme $K$ est algébriquement clos, le groupe $\abs{K^\times}$
est divisible
et l'on a $ [\hrt {y_i}:\hrt K\cdot \hrt x]
=  [\hrt {y_i}^1:(\hrt K^1\cdot \hrt x^1)]$. Par suite, $\widetilde d=1$.
Cela conclut la démonstration.
\end{proof}

\begin{coro}\label{coro.cycle-fondamental-reduit}
Si $X$ et $\mathfrak X_s$ sont géométriquement réduit et si $\mathfrak X$
est propre, alors le cycle fondamental~$\cycle(\mathfrak X)$
est la classe de la fibre spéciale $[\mathfrak X_s]$.
\end{coro}

\begin{prop}\label{prop.defaut-discrete}
Supposons que la valuation de~$k$ soit discrète.
Désignons ici simplement par $\widetilde\cdot$ la réduction classique,
plutôt que par~$\widetilde\cdot^1$.
Soit $\mathfrak X$ un $k^\circ$-schéma formel 
localement topologiquement de présentation finie 
et relativement normal dans sa fibre générique~$X$,
que l'on suppose réduite. 
Soit $\xi$ un point générique de la fibre spéciale de~$\mathfrak X$.

\begin{enumerate}
\item 
Le point~$\xi$ possède un unique antécédent~$x$ dans~$X$.

\item
La multiplicité de la composante irréductible $\overline{ \{\xi\}}$ 
de $\mathfrak X_{\tilde k}$, 
c'est-à-dire la longueur de l'anneau local artinien 
$\mathscr O_{\mathfrak X_{\tilde k},\xi}$, 
est égale à l'indice $\big(\abs{\hr x^\times}:\abs{k^\times}\big)$. 

\item
Le corps $\hrt x$ s'identifie au corps $\widetilde k(\xi)$.

\item
On a $m(x)=\df_k(x)=1$ et $\lambda_{\mathfrak X}(x)$
est la multiplicité de la composante irréductible~$\overline{\{\xi\}}$
de~$\mathfrak X_{\bar k}$.

\item
Supposons que $\mathfrak X$ soit propre; alors, 
le cycle fondamental $\cycle(\mathfrak X)$ est égal à $[\mathfrak X_{\bar k}]$.
\end{enumerate}
\end{prop}
Cela démontre en particulier que les mesures définies par la théorie
de cet article
sont compatibles avec celles de~\cite{chambert-loir2006}. 

\begin{proof}
Notons $\rho$ l'application de réduction
de~$X$ dans~$\mathfrak X_{\tilde k}$.
La question est locale sur $\mathfrak X$, ce qui autorise à le supposer
affine, donc de la forme $\Spf(A)$ où $\mathfrak A$ est
une $k^\circ$-algèbre plate topologiquement de présentation finie
intégralement close dans $A_k$.
On peut également supposer que $\mathfrak X$
est équidimensionnel ; soit $d$ sa dimension.
L'espace affinoïde $X=\mathfrak X_\eta$ est alors équidimensionnel
et de dimension~$d$.
Comme il est réduit, son lieu de non-normalité~$Y$ 
est un fermé de Zariski de dimension strictement inférieure à~$d$
(\cite{ducros2009}, théorème~4.4).
Le corps $\widetilde{\hr x}$
étant de degré de transcendance~$d$ sur $\tilde k$, le point~$x$
n'appartient pas à~$Y$, donc $\xi$ n'appartient pas à~$\rho(Y)$.
Si $\tilde X$ et $\tilde Y$ désignent les réductions canoniques
des espaces affinoïdes~$X$ et~$Y$, la factorisation canonique
\[ \begin{tikzcd}
   Y \ar{r} \ar{d} & X \ar{d} \\
  \tilde Y \ar{r} & \tilde X \ar{r} & \mathfrak X_{\tilde k},
\end{tikzcd}
\]
la surjectivité des applications de réduction
et le théorème de Chevalley entraînent que $\rho(Y)$
est constructible dans~$\mathfrak X_s$ (et même fermé, car $\widetilde Y$
est fini sur $\tilde X$, qui est fini sur $\mathfrak X_s$) ; et
il est de dimension $<d$ car pour tout $y\in Y$ le corps
$\hrt y$ est de degré de transcendance
majoré par $d_k(y)$, et donc strictement inférieur à $d$, 
sur $\widetilde k_1$. 
Quitte à restreindre encore~$\mathfrak X$, 
on peut donc supposer que $X=\mathfrak X_\eta$ est intègre et normal.  
Puisque $A$ est intégralement
close dans $A_k$, l'algèbre intègre $A$ est 
également normale.  Soit $K$ son corps des fractions.

Le schéma $\mathfrak X_s$ s'identifie à $\Spec A_{\widetilde k}$ ; le point $\xi$ correspond par ce biais
à un point de $\Spec  (A)$ que l'on note encore $\xi$ et qui
est de hauteur~1 dans le schéma noethérien normal $\Spec (
A)$.  L'anneau local $\mathscr O_{\Spec (A),\xi}$ 
est donc l'anneau d'une valuation discrète de~$K$, et
il domine $k^\circ$. Soit $\abs \cdot\colon K\to \R_+$ la valuation 
d'anneau $\mathscr O_{\Spec (A),\xi}$ 
normalisée de sorte que sa restriction à $k^\circ$ 
soit la valeur absolue structurale de ce dernier. 
La composition de $\abs \cdot \colon K\to \R_+$ 
cet de l'inclusion de
$A_k$ dans $K$ est une semi-norme multiplicative bornée sur $A_k$, 
qui définit donc un point~$x$ de 
$\mathscr M(A_k)$.  L'évaluation en $x$ induit par construction 
un plongement isométrique de
$K$ dans $\hr x$ d'image dense. 
Le corps résiduel de $\hr x$ est dès
lors égal à celui de $K$, c'est-à-dire
à $\widetilde k(\xi)$, 
et son groupe des valeurs n'est autre
que $\abs{K^\times}=\abs{k^\times}^{1/e}$, où
$e$ est l'indice de ramification de 
$\mathscr O_{\Spec(A),\xi}$ sur $k^\circ$. L'indice
$e$ peut aussi s'interpréter comme la longueur
de $\mathscr O_{\Spec(A),\xi}/(\pi)$ où $\pi$ est une uniformisante de 
$k^\circ$, c'est-à-dire encore la longueur de
$\mathscr O_{\mathfrak X_s, \xi}$. Par conséquent,
la longueur de $\mathscr O_{\mathfrak X_s, \xi}$ est égale
à l'indice $(\abs{\hrt x^\times}:\abs{k^\times})$. 

Enfin, soit $y\in X$ tel que $\rho(y)=\xi$.
Par hypothèse, si un élément $a\in A$ vérifie
$a(\xi)=0$, il satisfait $\abs{a(y)}<1$. 
L'anneau de valuation de~$K$ induit par~$y$ domine donc
$\mathscr O_{\Spec(A),\xi}$. Comme ce dernier est un anneau
de valuation (discrète), les deux anneaux de valuation coïncident.
Comme les restrictions à~$k$ des valeurs absolues correspondant à~$x$
et~$y$ coïncident, ces valeurs absolues sont égales 
et l'on a $y=x$. Ceci achève la preuve de \emph a), \emph b) et \emph c). 

Démontrons~\emph d).
Comme $X$ est réduit, on a $m(x)=1$; comme la valuation de~$k$ est
discrète, le corps~$k$ est stable et l'on a $\df_k(x)=1$.
On a par ailleurs
\[ \lambda_{\mathfrak X}(x) = \big( \abs{\hr x^\times}:\abs{k^\times}\big)
 [ \hrt x:\widetilde k(\xi)]. \]
L'assertion découle donc de~\emph b) et~\emph c).

Enfin, \emph e) en résulte, compte tenu de la définition de~$\cycle(\mathfrak X)$.
\end{proof}

\section{Opérateur de Monge-Ampère relatif}

\begin{prop}\label{prop.MA-relatif}
Soit $X$ et~$Y$ des espaces analytiques sans bord, équidimensionnels,
et topologiquement séparés,
soit $p\colon Y\to X$ un morphisme génériquement plat;
on pose $m=\dim(X)$, $n=\dim(Y)$ et $d=n-m$.

Soit $(L_1,\dots,L_m)$
une famille de fibrés en droites métrisés 
paralinéaires entiers sur~$X$.
Pour tout $x\in X^\#$,  posons
$\lambda_x=m(x)\df_k(x)\left(\int_{\zr {\hrt x}{\widetilde k}}
c_1(\widetilde{L_{1,x}})\cdots c_1(\widetilde{L_{m,x}})\right)$. 

\begin{enumerate}
\item L'ensemble des $x\in X^\#$ tels que $\lambda_x\neq 0$ est une partie fermée et
discrète de $X$, et la mesure $\bigwedge c_1(L_i)$ sur $X$ est égale à $\sum
\lambda_x \delta_x$. 

\item On a l'égalité suivante de courants de bi-dimension $(d,d)$ sur $Y$ : 
\[ \bigwedge c_1(p^*L_i) = \sum_{x\in X^\#} \lambda_x \delta_{Y_x}, \]
où $\delta_{Y_x}$ est le courant d'intégration sur l'espace $\hr x$-analytique~$Y_x$.
\end{enumerate}
\end{prop}

\begin{proof}
L'assertion \emph a) est un simple rappel de la proposition~\ref{prop.MA-berk}.

Soit $\omega$ une forme lisse de bidegré~$(n-m,n-m)$ à support compact sur~$Y$.
D'après la proposition~\ref{theo.formule-projection-c1},
on a 
\[ \langle c_1(p^*L_1)\wedge \dots \wedge c_1(p^*L_m) \wedge \omega\rangle
 = \int_X u_\omega c_1(L_1)\wedge \dots \wedge c_1(L_m), \]
où le membre de droite désigne l'intégrale de la fonction~$u_\omega$
contre la mesure
$c_1(L_1)\wedge \dots\wedge c_1(L_m)$, laquelle 
est égale à $\sum_{x\in X^\#} \lambda_x \delta_x$
comme on l'a rappelé. 
Le membre de droite est alors égal à 
\[ \sum_{x\in X^\#} \lambda_x u_\omega(x) .\]
Par ailleurs, pour tout $x\in S$, 
on a $d_k(x)=m$, si bien que la fibre~$Y_x$ est de dimension~$\leq n-m$;
par platitude générique, on a $\dim(Y_x)=n-m$, si bien que 
\[ u_\omega(x) = \int_{Y_x} \omega = \langle \delta_{Y_x},\omega\rangle. \]
La proposition en résulte.
% de la proposition~\ref{theo.formule-projection-c1}. % {prop.MA-relatif}
% et du corollaire~\ref{coro.lambda_x-entiers}
\end{proof}

\begin{coro}\label{coro.log-max-general}
Soit $X$ un espace $k$-analytique sans bord, topologiquement séparé
et équidimensionnel.  
Soit $(f_1,\dots,f_d)$ une suite de $d$~fonctions analytiques sur~$X$ 
que l'on considère comme un morphisme $f\colon X\to \A^d$. 
Pour toute suite $(t,r_1,\dots,r_d)$ de nombres réels~$>0$,
le courant~$(\ddc [\log\max(t,\abs {f_1}/r_1,\dots,\abs{f_d}/r_d))])^d$ 
est égal au courant d'intégration
sur la fibre~$f^{-1}(\eta_r)$, où $r=(tr_1,\dots,tr_d)$.
\end{coro}
\begin{proof}
Posons $\phi=\log\max(t,\abs {f_1}/r_1,\dots,\abs{f_d}/r_d))$. 
Soit $x$ un point de $X$. Supposons tout d'abord qu'il existe $i$ tel
que $\abs{f_i(x)}{r_i}$ soit différent de $t$. Dans ce cas on peut écrire  au voisinage 
de $x$ 
l'égalité $\phi=\log \max (g_1/s_1,\ldots, g_m/s_m)$ où $m\leq d$, où les $g_i$
sont des fonctions inversibles et les $s_i$ des réels strictement positifs et où
$\abs{g_i(x)}/s_i$ est un réel indépendant de $i$, que l'on note $\rho$
(chaque couple $(g_i,s_i)$ étant de la forme $(f_j, r_j)$ ou $(1,1/t)$). 
Soit $\psi$ la fonction $\phi-\log \abs{g_1}$. On a $\ddc \phi=\ddc \psi$
au voisinage de $x$, 
et $\psi$ est égale à $\log \max (1/s_1,(g_2g_1^{-1})/s_2\ldots, (g_mg_1^{-1})/s_m)$. 
Le lemme \ref{lemm.approx-max} fournit une famille $(u_\eps)$ 
de fonctions lisses et convexes sur $\R^{m-1}$ qui converge uniformément 
vers $\max (-\log s_1, x_1-\log s_2,\ldots, x_{m-1}-\log s_m)$ quand $\eps$ tend vers $0$. 
Le courant $(\ddc \psi)^d$ est alors la limite des formes 
$(\ddc u_\eps (\log \abs{g_2g_1^{-1}}, \ldots, \log \abs{g_mg_1^{-1}}))^d$. Or comme la famille
$(g_1g_i^{-1})_{2\leq i\leq m}$ est de cardinal majoré par $m-1$ et a fortiori par $d-1$, sa dimension tropicale 
est majorée par $d-1$, si bien que $(\ddc u_\eps (\log \abs{g_2g_1^{-1}}, \ldots, \log \abs{g_mg_1^{-1}}))^d$
est nul pour tout $\eps$. On a donc au voisinage de $x$ les égalités $\ddc \phi=\ddc \psi=0$. 

Supposons maintenant que $\abs{f_i(x)}=r_i$ pour tout $i$. En utilisant encore le lemme \ref{lemm.approx-max} 
on obtient une famille $(v_\eps)$ de fonctions convexes lisses sur $\R^d$ convergeant uniformément vers
$\max(\log t, x_1-\log r_1,\ldots, x_d-\log r_d)$, et le courant $\ddc \phi$ et la limite  des formes 
$(\ddc v_\eps (\log \abs{f_1}, \ldots, \log \abs{f_d}))^d$. Si la dimension tropicale de $f$ en $x$ est strictement inférieure
à $d$, le courant $(\ddc \phi)^d$ est donc nul au voisinage de $x$. L'espace $X$ étant sans bord, la dimension tropicale de $f$
en $x$ est égale au degré de transcendance de la famille
$\widetilde{f_i(x)}$. Par conséquent, $(\ddc \phi)^d$ est nul au voisinage de $x$ dès que 
$f(x)\notin S(\gm^d)$, c'est-à-dire dès que $f(x)\neq \eta_r$ puisqu'on a supposé
que $\abs{f_i(x)}=r_i$ pour tout $i$. 

Il résulte de ce qui précède que le support de $(\ddc\phi)^d$
est contenu dans $f^{-1}(\eta_r)$. Or $f$ est plat en tout point de la fibre
$f^{-1}(\eta_r)$ : cela provient du fait que $\mathscr O_{\gm^d,\eta_r}$ est un corps
(puisqu'il est artinien et que $\gm^d$ est réduit) et que $f$ est sans bord. 
On peut donc, pour démontrer le corollaire, remplacer $X$ par le lieu de platitude 
de $f$ et ainsi supposer que $f$ est plat.

Soit $L$ le fibré métrisé sur $\A^d$ de fibré sous-jacent $\mathscr O_X$, muni 
de la métrique pour laquelle $\log \norm 1^{-1}=\log \max (t, \abs{T_1}/r_1,
\ldots, \abs{T_d}/r_d)$. Par définition, 
$\ddc \phi$ est égal à $c_1(f^*L)$, si bien que 
$(\ddc \phi)^d=c_1(f^*L)^d$. Puisque le support de $(\ddc \phi)^d$
est contenu dans $f^{-1}(\eta_r)$ comme on l'a vu plus haut, la proposition 
\ref{prop.MA-relatif}
montre que \[(\ddc \phi)^d=\left(\int_{\zr{\hrt{\eta_r}}
{\widetilde k}}c_1(\widetilde{L_{\eta_r}})^d\right)\delta_{f^{-1}(\eta_r)}.\]
Il suffit donc pour conclure de montrer que
$\int_{\zr{\hrt{\eta_r}}
{\widetilde k}}c_1(\widetilde{L_{\eta_r}})^d=1$. 

Soit $s$ la section 
$\widetilde{1(\eta_r)}$
de $\widetilde{L(\eta_r)}$. Puisque $\log \norm 1^{-1}=\log \max (t, \abs{T_1}/r_1,
\ldots, \abs{T_d}/r_d)$, on a d'après le lemme \ref{lemm-max-vai}
l'égalité $v
(s)^{-1}
=\max(1, v(\widetilde{T_1(\eta_r)}), \ldots, v(\widetilde{T_d(\eta_r)})$
pour toute $v\in \zr{\hrt{\eta_r}}{\widetilde k}$.
On déduit alors de l'exemple 
\ref{exemple-o1-ZR} que $\int_{\zr{\hrt{\eta_r}}{\widetilde k}}c_1(\widetilde{L_{\eta_r}})^d=1$. 
\end{proof}

\section{Formules de Bochner--Martinelli 
et Levine}

On démontre dans ce paragraphe l'analogue non archimédien
de la formule de Bochner--Martinelli, dû à~\cite{Mihatsch-2024},
ainsi que l'analogue de la formule de Levine.

\subsection{}
Soit $X$ un $n$-espace $k$-analytique sans bord,
soit $(f_1,\dots,f_d)$ une suite régulière de fonctions holomorphes sur~$X$,
soit $Y$ le sous-espace analytique fermé de~$X$ défini par 
les équations $f_1=\dots=f_d=0$, soit $U=X\setminus Y$ le fermé complémentaire.
Par hypothèse, $Y$ est purement de codimension~$d$.
Posons $u=\log(\max(\abs{f_1},\dots,\abs{f_d}))$;
c'est une fonction continue et psh sur~$U$.
Par suite, pour tout entier naturel~$r$, on peut
former les courants $(\ddc u)^r$  et $u(\ddc u)^r$ sur~$U$
(\S\ref{ss.bt-def});
ils sont de bidegré~$(r,r)$. Comme $d_k(x)\leq d$
pour tout point $x\in Y$, ces courants 
s'étendent uniquement par zéro en des courants sur~$X$ lorsque $r<d$,
en vertu du lemme~\ref{lemm.prolongement-courant-trop}.
Nous noterons de la même façon ces prolongements.
Lorsque $r+1<d$, on a $ \ddc(u(\ddc u)^r)=(\ddc u)^{r+1}$.

\begin{theo}[Mihatsch] \label{theo.bm}
Soit $X$ un $n$-espace $k$-analytique sans bord,
soit $(f_1,\dots,f_d)$ une suite régulière de fonctions holomorphes sur~$X$,
soit $Y$ le sous-espace analytique fermé de~$X$ défini par 
les équations $f_1=\dots=f_d=0$;
posons $u=\log(\max(\abs{f_1},\dots,\abs{f_d}))$.

On a l'égalité de courants
\[ \ddc (u (\ddc u)^{d-1}) = \delta_Y. \]
\end{theo}

Lorsque $d=1$, on retrouve la formule de Poincaré--Lelong
(théorème~\ref{pcrllg}).
Comme $(\ddc u)^{d-1}$ est fermé, on voudrait interpréter le membre de gauche
de cette égalité comme $(\ddc u)^d$, mais ce produit n'est pas défini a priori
puisque $u$ n'est pas continue sur~$X$.

\begin{proof}
Pour tout nombre réel~$r\geq 0$, posons 
\[ u_r =  \log \max(r, \abs{f_1},\dots,\abs{f_d}) = \log \max(r,u). \]
Si $r>0$, $u_r$ est une fonction~\pl sur~$X$ et l'on a 
\[ (\ddc u_r)^d = \delta_{f^{-1}(\eta_r)} \]
en vertu du corollaire~\ref{coro.log-max-general}.
Soit $\omega$ une $(n-d,n-d)$-forme lisse à support compact sur~$X$.
Par définition, on a
\[ \langle \ddc( u(\ddc u)^{d-1}),\omega \rangle
=  \langle u(\ddc u)^{d-1},\ddc \omega \rangle
= \langle u(\ddc u)^{d-1},\ddc \omega|_U \rangle, \]
la forme $\ddc\omega|_U$ étant à support compact.
La fonction~$u$ est minorée par un nombre réel strictement positif
sur le support de $\ddc\omega|_U$; il existe donc un nombre réel~$r_0$
tel que $u_r=u$ pour tout nombre réel~$r$ tel que $0<r\leq r_0$.
Pour un tel~$r$, on a donc
\[ \langle \ddc( u(\ddc u)^{d-1}),\omega \rangle
= \langle u_r(\ddc u_r)^{d-1},\ddc \omega|_U \rangle, \]
et en reprenant les calculs dans l'autre sens,
\[ \langle \ddc( u(\ddc u)^{d-1}),\omega \rangle
=  \langle \ddc( u_r(\ddc u_r)^{d-1}),\omega \rangle. \]
Compte tenu de la définition des produits de courant à la Bedford--Taylor,
on a 
\[ \langle \ddc( u(\ddc u)^{d-1}),\omega \rangle
=  \langle (\ddc u_r)^{d}),\omega \rangle. \]
Par suite,  pour tout nombre réel~$r>0$ assez petit, on a
\[ \langle \ddc( u(\ddc u)^{d-1}),\omega \rangle
=\langle \delta_{f^{-1}(\eta_r)},\omega \rangle. \]
La proposition suivante assure que lorsque $r$ est assez petit, on a 
\[ \langle \ddc( u(\ddc u)^{d-1}),\omega \rangle
 =\langle \delta_Y,\omega\rangle, \]
d'où le théorème.
\end{proof}

\begin{prop}
Soit $X$ un $n$-espace analytique
et soit $f=(f_1,\dots,f_d)\colon X\to\A^d$ un morphisme
purement de dimension relative~$d$.
On suppose en outre que $(f_1,\dots,f_d)$ est une suite régulière
et on note~$Y$ le fermé analytique d'équations $f_1=\dots=f_d=0$;
il est purement de dimension~$n-d$.

Soit $\omega$ une G-forme de bidegré $(n-d,n-d)$ sur~$X$,
à support compact et à coefficients tropicalement boréliens localement bornés
(par exemple, tropicalement continus).
Pour $t$ suffisamment proche de l'origine dans~$\A^d$, on a 
\[ \langle \delta_{f^{-1}(t)},\omega\rangle =
\langle \delta_Y,\omega\rangle .\]
\end{prop}
\begin{proof}
Comme $Y$ est défini par une suite régulière de $d$~équations,
le lieu où~$X$ est~$(\mathrm S_d)$ contient~$Y$.
Puisque ce lieu est un ouvert de Zariski 
(\cite{ducros2009}, théorème~4.4), on peut supposer que $X$ est~$(\mathrm S_d)$.
La fibre en zéro de~$f$, qui est~$Y$, est purement de 
dimension~$n-d$ car $(f_1,\dots,f_d)$ est une suite régulière; 
par suite, le morphisme~$f$ est de dimension~$\leq n-d$ au voisinage de~$Y$
(\cite{ducros2007}, th. 4.9). 
D'après le Hauptidealsatz (lemme~\ref{lemm.hauptidealsatz-anal}), 
la dimension relative de~$f$ en tout point est au moins de dimension~$n-d$.
Quitte à remplacer~$X$ par un voisinage de~$Y$,
on peut donc supposer que $f$ est purement de dimension~$n-d$.

En considérant un recouvrement  du support de~$\omega$
par des domaines analytiques compacts tropicalisant~$\omega$
et en raisonnant par additivité, on peut supposer que $X$
est compact et que $\omega$ est tropicale.
Soit $g\colon X\to \gm^N$ un moment et soit $\alpha$
une $(n-d,n-d)$-forme à coefficients continus sur~$\R^N$ 
tels que $\omega=g^*\alpha$. Posons $h=(g,f)\colon X\to\gm^N\times\A^d$.

D'après le théorème~\ref{theo.tropcont}, \emph a), 
il existe un nombre réel~$r>0$
tel que pour tout $t\in D(0,r)$,
les polyèdres calibrés $g_\trop(X_t)^{(n-d)}$ sont égaux.
On a donc
\[ \langle \delta_{f^{-1}(t)}, \omega\rangle
= \int_{g_\trop(X_t)^{(n-d)}}, \alpha 
= \int_{g_\trop(X_0)^{(n-d)}}, \alpha 
= \langle \delta_{f^{-1}(0)}, \omega\rangle 
= \langle \delta_Y,\omega\rangle . \]
Ceci conclut la démonstration.
\end{proof}

\subsection{}
Passons maintenant à l'analogue de la formule de Levine.
Soit $n$ un entier naturel; posons $X=\P^n$.
Soit $p$ un entier naturels tel que $1\leq p\leq n$
et soit $f\colon X \dashrightarrow \P^{p-1}$
l'application rationnelle donnée 
par $f([x_0:\dots:x_n])=[x_0:\dots:x_{p-1}]$.
Son ouvert de définition~$U$ est 
le complémentaire du sous-espace projectif $Y\simeq\P^{n-p}$ de~$X=\P^n$
défini par $x_0=\dots=x_{p-1}=0$.

On munit le fibré en droites~$\mathscr O(1)$ sur~$\P^n$
de sa métrique formelle psh standard. 
Posons $\alpha = c_1(\overline{\mathscr O(1)})$; c'est un courant 
paralinéaire positif de bidegré~$(1,1)$ sur~$X$.

Sur l'ouvert~$U$, on peut également considérer le fibré
en droites métrisé $f^*\overline{\mathscr O(1)}$
(psh, formel)
et le courant paralinéaire positif 
de bidegré~$(1,1)$, $\beta=c_1(f^*\overline{\mathscr O(1)})$.

Sur $U$, le fibré en droites $f^*\mathscr O(1)$ est isomorphe
à la restriction de~$\mathscr O(1)$; notons~$h$ le logarithme du quotient 
de leurs deux métriques:
\[ h ([x_0:\dots:x_n]) = \log \frac{ \max(\abs{x_0},\dots,\abs{x_n})}{\max(\abs{x_0},\dots,\abs{x_{p-1}})}. \]
C'est une fonction continue paralinéaire sur~$U$.

La théorie de Bedford--Taylor permet de définir les courants positifs
$\alpha^i \beta^j$ sur~$U$; ils sont paralinéaires d'après le théorème~\ref{theo.ddcuT-paralineaire}. Posons
\[ \omega = \sum_{j=0}^{p-1} \alpha^{p-1-j} \beta^j \ ;\]
c'est un courant paralinéaire de bidimension~$(n-p+1,n-p+1)$ sur~$X$.
Comme $h$ est continue sur~$U$, on peut considérer le courant 
$ h \omega$
de bidimension~$(n-p+1,n-p+1)$ sur~$U$. 
Comme $X\setminus U$ est un fermé de Zariski de dimension~$n-p$,
il résulte de~\S\ref{ss.prolongement-courant}
que $h\omega$ se prolonge en un unique courant sur~$X$
que nous noterons~$\Lambda$.
Pour toute forme~$\eta$ de type $(n-p+1,n-p+1)$ à support compact sur~$X$, 
le support du courant $\Lambda\wedge\eta$ est contenu dans~$U$;
ainsi, $h\omega\wedge\eta|_U$ est un courant à support compact sur~$U$
et l'on a 
\[ \langle \Lambda,\eta\rangle=\langle h\omega\wedge\eta, 1\rangle
=\langle h\omega , \eta|_U\rangle. \]

\begin{theo}
On a la relation  de courants
\[ \ddc\Lambda + \delta_Y = \alpha^p \]
sur~$X$.
\end{theo}
\begin{proof}
Considérons $g\colon X'\to X$ l'éclatement de~$Y$ dans~$X$,
soit $E$ le diviseur exceptionnel et soit $U'$ l'image
réciproque de~$U$.
Le morphisme~$g$ induit un isomorphisme de~$U'=X'\setminus E$
sur~$U=X\setminus Y$.
Le morphisme $f\circ g\colon U' \to \P^{p-1}$ 
se prolonge en un morphisme $f'\colon X'\to \P^{p-1}$.
Considérons les deux fibrés en droites métrisés sur~$X'$
donnés par $\overline{\mathscr L_a}=g^*\overline{\mathscr O(1)}$
et $\overline{\mathscr L_b}=(f')^*\overline{\mathscr O(1)}$;
leurs métriques sont paralinéaires, psh; 
notons~$\alpha'$ et $\beta'$ leurs courbures.
Modulo l'isomorphisme de~$U'$ sur~$U$ induit par~$g$,
on a $\alpha'=\alpha$ et $\beta'=\beta$.
Posons également $h'=g^*h$
et $\omega'=\sum_{j=0}^{p-1}(\alpha')^{p-1-j}(\beta')^{j}$.

Les fibrés en droites sur~$U'$ correspondant sont naturellement isomorphes,
et on peut prolonger un tel isomorphisme  
en un morphisme $\rho\colon {\mathscr L_a} \to {\mathscr L_b}$
sur~$X'$ dont le lieu des zéros est~$E$.
On a $\log \norm\rho^{-1}=  h'$.

Pour toute forme lisse~$\eta$ de bidegré~$(n-p+1,n-p+1)$ sur~$X'$, 
la théorie des courants paralinéaires fournit un courant
$\omega'\wedge \eta$ sur~$X'$. 
Ce courant est une limite de formes lisses 
obtenues en remplaçant~$\omega'$ par des formes lisses
déduites d'approximations psh lisses
des métriques tropicalement convexes de~$\mathscr L_a$ et $\mathscr L_b$,
donc tropicalisées par une même famille de cartes tropicales.
Son support est ainsi contenu dans une partie paralinéaire compacte~$S$ de~$X'$;
en particulier, il ne rencontre aucun fermé Zariski  strict de~$X'$,
et est ainsi contenu dans~$U'$.
On pose alors $\langle \Lambda',\eta\rangle 
= \langle \log(h'), \omega'\wedge\eta\rangle$.
Comme on peut choisir~$S$ ne dépendant que d'un atlas tropical
tropicalisant~$\eta$, 
on voit que cette formule définit un courant~$\Lambda'$ sur~$X'$.

Sa restriction à~$U'$ s'identifie à~$h'\omega'$,
qui lui-même s'identifie à~$h\omega$ via l'isomorphisme de~$U'$ sur~$U$.
Si $\eta$ est une forme-test sur~$X$, on a donc
\[ \langle \Lambda',g^*\eta\rangle
= \langle h'\omega', g^*\eta|_{U'} \rangle = \langle h\omega, \eta|_U\rangle
=\langle \Lambda, \eta\rangle. \]
Par suite, on a $g_*\Lambda'=\Lambda$.

La formule de Poincaré--Lelong sur~$X'$ entraîne
\[ \ddc h'  + \delta_E = \alpha'-\beta'. \]

D'après la proposition~\ref{prop.PL+BT}, on a 
\[ \ddc (\Lambda') = \omega' \wedge \delta_E + (\beta'-\alpha') \wedge \omega', \]
où $\omega' \wedge\delta_E$ est défini par la théorie de Bedford--Taylor.
On a 
\[ (\beta'-\alpha')\wedge\omega' = (\beta')^p - (\alpha')^p 
 = - (\alpha')^p \]
puisque $\beta'$ provient de~$\P^{p-1}$.

Démontrons finalement l'égalité
\[ g_* (\omega'\wedge\delta_E) = \delta_Y. \]
Soit $\eta$ une forme lisse de bidegré~$(n-p,n-p)$.
Par passage à limite à partir du cas lisse, on a 
\[ \langle g_*(\omega' \wedge\delta_E), \eta \rangle
= \langle \omega'\wedge\delta_E, g^*\eta \rangle 
= \langle \omega'_E , g^*\eta|_E \rangle, \]
où 
\[ \omega'_E = \sum_{j=0}^{p-1} (\alpha'_E)^{p-1-j} (\beta'_E)^j, \]
où l'on a noté $\alpha'_E$ et $\beta'_E$ les courbures
des $\overline{\mathscr L_a}|_E$ et $\overline{\mathscr L_b}|_E$.
Il s'agit de démontrer que cette expression
vaut $\int_Y \eta$.
Les métriques de $\overline{\mathscr L_a}$
et $\overline{\mathscr L_b}$ sont limites uniformes 
de métriques lisses positives sur les fibrés  sous-jacents.
Par passage à la limite (proposition~\ref{prop.BT-continu}),
il suffit de démontrer que si $\widetilde\alpha$
et $\widetilde \beta$ sont les courbures de métriques
lisses sur les fibrés en droites~$\mathscr L_a$ et~$\mathscr L_b$,
on a 
\[ \langle \widetilde\omega|_E, g^*\eta|_E \rangle
=  \int_Y \eta,  \]
où l'on a noté
$\widetilde\omega =  \sum_{j=0}^{p-1} \widetilde \alpha^{p-1-j}\wedge \widetilde\beta^j$.
Appliquons maintenant la formule de projection du théorème~\ref{theo.formule-projection}; il vient
\[ \langle \widetilde\omega|_E, g^*\eta|_E\rangle
= \int_E g^*\eta|_E \wedge \widetilde\omega 
= \int_Y u \eta, \]
où $u$ est la fonction sur~$Y$ définie par 
$u(x)= \int_{E_x} \widetilde\omega $ lorsque $E_x$ est dimension~$p-1$,
et~$0$ sinon.
L'espace~$E$ est un fibré projectif au-dessus de~$Y$, de dimension
relative~$p-1$, la restriction du fibré en droites~$\mathscr L_b$
est son~$\mathscr O(1)$.
Pour tout $x\in Y$, 
on a
\[ u(x) = \sum_{j=0}^{p-1} (c_1(\mathscr L_a)^{p-1-j} c_1({\mathscr L_b})^j \cap [E_x]). \]
Les fibres~$E_x$ sont des espaces
projectifs, la restriction de~$\overline{\mathscr L_a}$
est triviale, celle de~$\overline{\mathscr L_b}$ est~$\mathscr O(1)$;
on a donc
\[ u(x) = (c_1(\mathscr O(1)^{p-1} \cap [\P^{p-1}]) = 1 \]
pour tout $x\in Y$.
Cela conclut la démonstration 
de l'égalité $g_*(\omega'\wedge\delta_E)=\delta_Y$.

On a $\Lambda =g_*\Lambda'$, donc
$\ddc\Lambda = \ddc(g_*\Lambda')=g_*(\ddc\Lambda')$,
qui est somme de $g_*(\omega'\wedge\delta_E)=\delta_Y$
et de $g_*(-(\alpha')^p)$.
On conclut par la formule de projection du théorème~\ref{theo.formule-projection}, en utilisant le fait que $g$ est birationnel.
\end{proof}

\section{Produits de courants de courbure (métriques formelles)}

\subsection{}
Soit $X$ un $n$-espace $k$-analytique sans bord,
soit $\bar L_1,\dots,\bar L_d$ des fibrés inversibles sur~$X$
munis de métriques paralinéaires entières. 
Le courant paralinéaire $T=c_1(\bar L_1)\wedge \dots\wedge c_1(\bar L_d)$
sur~$X$ est paralinéaire (\S\ref{ss.prod-c1-BT}). 
Dans la suite de ce paragraphe,
nous voulons le décrire plus précisément.

Au voisinage d'un point~$x$,
cette description se décompose en trois étapes, suivant
la dimension de définition analytique en~$x$ de la famille $(\overline L_j)$
de « $\Q$-fibrés métrisés ».
Si elle est~$<d$, le courant sera nul au voisinage de~$x$;
le lieu des points où elle est~$>d$ ne sera pas chargé.
Si elle est égale à~$d$, 
nous construirons 
un morphisme de germes~$f\colon (X,x)\to (Y,y)$,
où $\dim(Y)=d$, sur lequel le courant se restreint en 
un multiple du courant d'intégration sur la fibre~$f^{-1}(y)$.
Au voisinage de~$x$,
le sous-espace polyédral qui décrit le courant par rapport
à un moment donné~$g$ apparaît donc
comme le polyèdre caractéristique $\Sigma_{g|_{f^{-1}(y)}}$
du moment $g|_{f^{-1}(y)}$, muni d'un multiple
de son calibrage canonique en tant que $\hr y$-espace analytique.

\subsection{}
Soit $V$ l'ensemble des points~$x\in X$
en lesquels la dimension de définition
résiduelle de la famille $(\bar L_1,\dots,\bar L_d)$ 
est au plus égale à~$d$.

Soit aussi $W$ l'ensemble des points~$x\in X$
pour lesquels qu'il existe un entier~$q\geq 1$, puissance de l'exposant
caractéristique de~$\tilde k$,
tels que la dimension de définition
analytique de la famille $(\bar L_1^q,\dots,\bar L_d^q)$ 
est au plus égale à~$d$.

Par définition, $W$ est ouvert dans~$X$
et l'on a $V\subset W$ 
d'après la proposition~\ref{prop.dim-def-analytique}.
(Cette proposition montre aussi que l'on peut prendre $q=1$
si le corps~$\tilde k$ est parfait.)
D'autre part, $X\setminus V$ est contenu
dans une partie faiblement analytique de dimension~$<n-d$
(proposition~\ref{prop.dimdefn}),
donc tropicalement $\mathscr H^{n-d}$-négligeable.
Il en résulte que $X\setminus W$ est tropicalement
$\mathscr H^{n-d}$-négligeable; en particulier,
il ne charge pas le courant $c_1(\bar L_1)\wedge\dots\wedge c_1(\bar L_d)$.
Autrement dit, pour toute forme lisse~$\alpha$ de type~$(n-d,n-d)$
sur~$X$, à support compact, la mesure~$T\wedge\alpha$
ne charge pas $X\setminus W$, de sorte que
\[ \langle T,\alpha\rangle = \int_X T\wedge\alpha = \int_W T \wedge\alpha.\]
Cela ramène à calculer la restriction à~$W$ du courant~$T$.
Cela prouve également que $T$ est positif si et seulement si $T|_W$ l'est.

\subsection{}
Soit $x\in W$. Par hypothèse, il existe
un morphisme plat (et sans bord)
de germes séparés $f\colon (X,x)\to (Y,y)$,
où $\dim(Y)\leq d$,
un entier~$q$, des fibrés inversibles 
$\overline M_1,\dots,\overline M_d$ sur~$Y$ 
munis de métriques paralinéaires entières
tels que $\overline L_i^{q} \simeq f^*\overline M_i$
pour tout~$i$. 
Notons que comme ici $(X,x)$ est sans bord et les germes en jeu séparés, 
le morphisme~$f$ est en fait automatiquement sans bord, et $(Y,y)$ aussi. 

Si $\dim_y(Y)<d$, il résulte de la remarque~\ref{rema.courants-chgt-base-zero}
que le courant $T$ est nul au voisinage de~$x$.

Supposons maintenant que l'on a $\dim_y(Y)=d$.
D'après la proposition~\ref{prop.MA-relatif},
et quitte à restreindre~$Y$, le courant~$T$ coïncide avec le courant
$q^{-d}\lambda \delta_{f^{-1}(y)}$ au voisinage de~$x$,
où 
\[ \lambda = m(x) \df_k(x) \left( \prod_{j=1}^d c_1( \widetilde M_i(x)) \cap [ \widetilde{(X,x)}] \right).
\]

\begin{theo}\label{theo.prod-courants-paralineaires-local}
Soit $X$ un espace analytique sans bord, topologiquement séparé.
Soit $\overline {L_1},\dots,\overline{L_d}$ des fibrés
en droites sur~$X$ munis de métriques paralinéaires entières.
Soit $x$ un point de~$X$.
Pour que le courant $c_1(\overline L_1)\wedge\dots\wedge c_1(\overline L_d)$
soit positif (\resp nul) au voisinage de~$x$, il faut et il suffit que
le produit des classes de Chern
$c_1(\widetilde L_{1,x})\dots c_1(\widetilde L_{d,x})$
sur~$\widetilde{(X,x)}$
soit numériquement effective (\resp numériquement triviale).
\end{theo}
Rappelons cette condition (\S\ref{ss-numeff-nongradue}): 
choisissons un prémodèle~$V$ 
de~$\widetilde{(X,x)}$ et des fibrés en droites $M_1,\dots,M_d$
sur~$V$ qui sont des prémodèles des~$\widetilde{L_j}$;
il s'agit de dire que pour tout sous-variété irréductible~$Z$ 
de codimension~$m$ de~$V$, le degré de la classe de zéro-cycle
$(c_1(M_1)\dots c_1(M_d)\cap [Z])$ sur~$V$ 
est effective (\resp numériquement triviale).
Il est classique qu'elle ne dépend pas du choix des prémodèles
(\cite{Lazarsfeld-2004}, exemple~1.4.4).

\begin{proof}
Comme un courant ou une classe numérique de cycle sont nuls 
si et seulement s'ils sont à la fois positif et négatif,
il suffit de traiter l'assertion relative à la positivité.

Notons $m$ le plus petit entier tel qu'il existe un entier~$q\geq 1$
de sorte que la dimension de définition analytique en~$x$ de la famille
$(\overline L_1^q,\dots,\overline L_d^q)$ soit égale à~$m$.
Par homogénéité, on se ramène au cas où $q=1$.
Considérons un morphisme plat $f\colon (X,x)\to (Y,y)$, où $\dim(Y)=m$,
et des fibrés inversibles $\overline M_1,\dots,\overline M_d$
sur~$Y$ munis de métriques paralinéaires entières
tels que $\overline L_i\simeq f^*\overline M_i$ pour tout~$i$.
Notons aussi $T$ le courant considéré.

Si $m<d$, le courant~$T$ est nul, de même
que le produit de classes de Chern
$c_1(\widetilde L_1(x))\dots c_1(\widetilde L_d(x))$,
puisqu'il existe un prémodèle de dimension~$<d$.

Supposons $m=d$. 
On a alors vu qu'au voisinage de~$x$,
le courant~$T$ est de la forme $\lambda\delta_{f^{-1}(y)}$.
où $\lambda$ est un multiple strictement positif
du nombre d'intersection
$(c_1(\widetilde M_1(y))\dots c_1(\widetilde M_d(y)) \cap [\widetilde{(Y,y)}])$,
qui est positif ou nul si et seulement si
le produit de classes de Chern
$(c_1(\widetilde M_1(y))\dots c_1(\widetilde M_d(y))$ est numériquement
effectif.
Tout prémodèle des $\widetilde M_j(y)$ fournit un prémodèle
des~$\widetilde L_j(x)$, d'où l'équivalence dans ce cas
car le courant d'intégration $\delta_{f^{-1}(y)}$ est positif 
et non nul au voisinage de~$x$.

Il reste à traiter le cas où $m>d$.
À toute famille finie $\Phi=(\phi_i)_{i\in I}$ d'éléments de $\hrt x^\times$,
on associe un recouvrement affine~$\mathscr U_\Phi$ 
de l'espace de Zariski--Riemann
$\widetilde{(X,x)}$ formé des parties de la forme 
$\mathscr U_{\Phi}^\eps=\P\{ (\phi_i^{\eps_i})_i\}$, 
où $\eps=(\eps_i)$ parcourt $\{-1,1\}^I$.
Choisissons une telle famille~$\Phi$ telle que pour tout~$\eps$
et tout~$j$, le fibré résiduel~$\widetilde L_{j,x}$ possède une
section inversible homogène $\sigma_{j,\eps}$ sur~$\mathscr U_\Phi^\eps$.
Pour tout~$i$, choisissons un élément~$f_i\in\mathscr O_{X,x}^\times$
tel que $\widetilde{f_i(x)}=\phi_i$.
Pour tout~$j$, choisissons une section inversible~$s_{j,\eps}$
du fibré en droites~$L_j$ au voisinage de~$x$
telle que $\widetilde{s_{j,\eps}(x)}=\sigma_{j,\eps}$.

Choisissons alors un voisinage affinoïde~$V$ de~$x$ sur lequel
les fonctions~$f_i$ et les sections~$s_{j,\eps}$ sont définies et inversibles;
pour tout~$\eps\in\{\pm1\}^I$, notons~$V_\eps$ le domaine affinoïde de~$V$
défini par les inégalités~$\abs{f_i^{\eps_i}}\leq \abs{f_i(x)}$.
On peut supposer que pour tout~$j$ et tout~$\eps$,
$\norm{s_{j,\eps}}= \norm{s_{j,\eps}(x)}$ sur~$V_\eps$.

Pour tout~$\eps$, la réduction~$\widetilde V_\eps$ de~$V_\eps$
est un $\tilde k$-schéma gradué de type fini; ces schémas se recollent
en un $\tilde k$-schéma gradué de type fini~$\mathscr S$.
Les applications de réduction $V_\eps\to \widetilde V_\eps$ se recollent
en une une application de réduction $\pi\colon V\to \mathscr S$,
anticontinue. Pour tous $s\in \mathscr S$, l'adhérence de
$\pi^{-1}(s)$ dans $V$ est la réunion des $\pi^{-1}(t)$, où $t$ parcourt
l'ensemble des générisations de $s$ dans $\mathscr S$ 
(cela se vérifie sur chacun des $V_\eps$, pour lesquels on applique
le lemme \ref{lemme-adherence-tube}). 

Pour tout~$j$ et tout~$\eps$, la section $s_{j,\eps}$ du fibré en droites 
métrisé $L_j|_{V_\eps}$
est de norme constante. 
Lorsque $\eps$ varie, les réductions graduées des 
$\mathscr O(V_\eps)$-modules normés  $L_{j,\eps}(V_\eps)$ 
se recollent en un fibré en droite gradué $\mathscr L_j$ sur~$\mathscr S$.

Pour tout $y\in V$,
l'adhérence de Zariski $\overline{\{\pi(y)\}}$ de~$\pi(y)$ 
est un prémodèle de~$\widetilde{(X,y)}$.
L'espace de Zariski--Riemann $\widetilde{(V,y)}$ 
est égal à $\P_{\hrt y/\tilde k(\pi(y))}\{\overline{\pi(y)}\}$;
en particulier, si $y\in\mathring V$, 
on a $\widetilde{(X,y)}=\widetilde{(V,y)}=
\P_{\hrt y/\tilde k(\pi(y))}\{\overline{\pi(y)}\}$,
ce qui entraîne que $\overline{\pi(y)}$ est propre.
Pour tout~$j$,
la restriction de~$\mathscr L_j$ à~$\overline{\{\pi(y)\}}$
est un prémodèle de~$\widetilde{L_{j,y}}$.
(Pour la variante formelle de ce raisonnement, voir la
démonstration du lemme~\ref{lemme-premodele-formel}.)

En appliquant ce qui précède, on en déduit que le produit des classes de Chern
$c_1(\widetilde L_{1,x})\dots c_1(\widetilde L_{d,x})$
est numériquement effectif sur~$\widetilde{(X,x)}$ 
si et seulement si l'on a
$c_1(\mathscr L_1|_W)\dots c_1(\mathscr L_d|_W)\geq 0$
pour toute sous-variété irréductible~$W$ de~$\overline{\pi(x)}$ 
telle que $\dim(W)=d$.

Supposons que ce soit le cas et soit $U=\pi^{-1}(\overline{\pi(x)})$.
C'est un voisinage ouvert de~$x$ dans~$V$
et donc un voisinage de~$x$ dans~$X$.
Soit $y$ un point de~$U\cap\mathring V$ 
tel que $W=\overline{\pi(y)}$ est de dimension~$\leq d$.
Comme $W$, muni de la restriction des~$\mathscr L_j$, est un prémodèle
des $\widetilde L_{j,y}$ sur~$\widetilde{(X,y)}$, 
le produit des classes de Chern 
$c_1(\widetilde L_{1,y})\dots c_1(\widetilde L_{d,y})$ 
est numériquement effectif, par l'hypothèse faite.
D'après le cas « $m\leq d$ » déjà traité, le courant~$T$
est positif sur un voisinage ouvert~$U_y$ de~$y$ dans~$\mathring U$.
Le complémentaire dans~$\mathring U$ de la réunion des~$U_y$
est une partie  fermée de~$\mathring U$, 
c'est l'image réciproque par~$\pi$ de l'ensemble des points de~$\mathscr S$
tels que $\dim(\overline{y})> d$.
D'après la la proposition~\ref{prop.tube-faiblement-analytique}
appliquée à chacun des~$V_\eps$, c'est une partie faiblement
analytique de~$\mathring U$ de dimension~$<n-d$.
D'après l'exemple~\ref{exem.fan-neg}, ce fermé est tropicalement
$\mathscr H^{n-d}$-négligeable.
Par suite, le courant~$T$ est positif sur~$\mathring U$.

Inversement, supposons qu'il existe un voisinage ouvert~$U$
 de~$x$ dans~$\mathring V$ tel que $T|_U$ soit un courant positif.
Soit $W$ une sous-variété irréductible de dimension~$d$ de~$\overline{\pi(x)}$
et soit $w$ son point générique. Alors, l'adhérence de~$\pi^{-1}(w)$ 
contient~$x$; en particulier, il existe un point $y\in \pi^{-1}(w)\cap U$.
Par hypothèse, le courant~$T$ est positif au voisinage de~$y$
et les~$\mathscr L_j|_W$ fournissent un prémodèle des~$\widetilde L_{j,y}$.
D'après le cas « $m\leq d$ » traité, le nombre d'intersection
$(c_1(\mathscr L_1|_W)\dots c_1(\mathscr L_d|_W)\cap [W])$
est positif ou nul. Comme cela est vrai pour tout~$W$,
cela démontre que le produit des classes de Chern
$c_1(\widetilde L_{1,x})\dots c_1(\widetilde L_{d,x})$ 
sur $\widetilde{(X,x)}$ est numériquement effectif.
\end{proof} 

\begin{coro}\label{coro.PL-prod-positif}
Soit $\overline L_1,\dots,\overline L_d$ des fibrés en droites sur~$X$
munis de métriques paralinéaires.
Si les courants $c_1(\overline L_1),\dots,c_1(\overline L_d)$
sont positifs, il en est de même de leur produit
$c_1(\overline L_1)\wedge\dots\wedge c_1(\overline L_d)$.
\end{coro}
\begin{proof}
Quitte à remplacer les~$\overline{L_i}$ par une puissance convenable,
on suppose qu'ils sont munis de métriques paralinéaires entières.
En effet, sous cette hypothèse, les classes de Chern
$c_1(\widetilde L_{j,x})$ des fibrés résiduels
sont numériquement effectives, pour tout $x\in X$.
Il résulte du théorème de Kleiman qu'il en est alors
de même de leur produit (voir \cite{Lazarsfeld-2004}, exemple~1.4.16).
\end{proof}

\begin{coro}\label{coro.image-inverse-PL-positif}
Soit $Y$ un espace analytique sans bord sur une extension complète de $K$ 
et soit $f\colon Y\to X$ un $K$-morphisme. 
Soit $\overline L_1,\dots,\overline L_d$ des fibrés en droites sur~$X$
munis de métriques paralinéaires.
Si le courant $c_1(\overline L_1)\wedge\dots\wedge c_1(\overline L_d)$
sur~$X$ est positif,
il en est de même du courant
$c_1(f^*\overline L_1)\wedge\dots\wedge c_1(f^*\overline L_d)$
sur~$Y$.
\end{coro}
\begin{proof}
Quitte à remplacer les~$\overline{L_i}$ par une puissance convenable,
on suppose qu'ils sont munis de métriques paralinéaires entières.
Pour tout $i$ posons $\overline M_i=f^*\overline L_i$
Soit $y\in Y$ et soit $x$ son 
image sur $X$. 
Notre hypothèse sur les 
$\overline L_i$ $x\in X$ et le théorème ci-dessus assurent que 
le produit des classes de Chern
$ c_1(\widetilde L_{1,x})\dots c_1(\widetilde L_{d,x}) $
est numériquement effectif.
Il en va alors de même de
$c_1(\widetilde M_{1,y})\dots c_1(\widetilde M_{d,y})$
(voir \ref{ss-numeff-nongradue}), et l'on conclut en utilisant à nouveau le théorème. 
\end{proof}

\begin{coro}
Soit $f\colon Y\to X$ un morphisme d'espaces analytiques sans bord,
soit $u$ une fonction
paralinéaire\emph{harmonique} sur~$X$.
Alors $u\circ f$ est harmonique.
\end{coro}
\begin{proof}
La fonction~$u$ est harmonique si le fibré en droites trivial
muni de la métrique paralinéaire pour laquelle $\norm 1=\exp(-u)$ est 
à courbure nulle. Le corollaire découle alors du corollaire précédent.
\end{proof}

\begin{coro}\label{coro.PL-psh-c1pos}
Soit $X$ un espace analytique sans bord, soit $\overline L$
un fibré en droites sur~$X$ muni d'une métrique paralinéaire entière.
Soit $x$ un point de~$X$.
Les conditions suivantes sont équivalentes:
\begin{enumerate}\def\theenumi{\roman{enumi}}\def\labelenumi{(\theenumi)}
\item Le fibré métrisé $\overline L$ est psh au voisinage de~$x$;
\item Le courant $c_1(\overline L)$ est positif au voisinage de~$x$;
\item Le fibré en droites résiduel~$\widetilde L(x)$
est numériquement effectif.
\end{enumerate}
\end{coro}
\begin{proof}
L'implication (i)$\Rightarrow$(ii) résulte
de la proposition~\ref{prop.psh-courant};
l'équivalence (ii)$\Rightarrow$(iii) est le théorème~\ref{theo.prod-courants-paralineaires-local} et
l'implication (iii)$\Rightarrow$(i) est la proposition~\ref{prop.nef-vers-psh}.
\end{proof}

\begin{coro}\label{coro.psh-harmo}
On suppose que $X$ est sans bord et que la valeur absolue de~$k$ 
est non triviale.
Soit $\tau\colon X\to\R^n$ une fonction strictement paralinéaire 
à composantes harmoniques.
Soit $u$ une fonction convexe continue sur~$\R^n$.
La fonction $\tau^* u$ sur~$X$ est psh.
\end{coro}
\begin{proof}
Nous allons démontrer le corollaire en trois étapes.

\emph a) Supposons d'abord que $u$ est lisse.
Comme $\tau$ est harmonique, le courant $\ddc(\tau^*u)$ coïncide 
avec le courant associé à la G-forme $\tau^*\ddc (u)$, qui est positive
car $u$ est convexe et lisse.

\emph b) Supposons maintenant que $u$ est $\Q$-paralinéaire.

En remplaçant~$\tau$ par un multiple~$a\cdot\tau$
et $u$ par $x\mapsto u(a^{-1} \cdot x)$,
on se ramène au cas où $\tau$ est paralinéaire entière.
Il existe alors une suite~$(u_n)$ de fonctions lisses convexes
sur~$\R^n$ qui converge localement uniformément vers~$u$.
Alors $\tau^* u_n$ converge localement uniformément vers~$\tau^* u$
sur~$X$, de sorte que la suite de courants $(\ddc(\tau^*u_n))$ converge 
vers le courant $\ddc(\tau^* u)$.
D'après le cas~\emph a), les courants $\ddc(\tau^*u_n)$ sont positifs. 
Par suite, le courant $\ddc(\tau^*u)$ est positif.
Le corollaire~\ref{coro.PL-psh-c1pos} entraîne alors que $u$ est~psh. 

\emph c) Traitons maintenant le cas général.
L'assertion est locale sur~$X$; soit $x$ un point de~$X$
et soit $W$ un voisinage compact de~$x$ dans~$X$.

Soit $\eps>0$. 
Il existe une décomposition simpliciale de~$\R^n$ 
dont les sommets sont à coordonnées entières.
Dilatons-la d'un facteur assez petit dans~$G$
de sorte que $u$ ne varie pas de plus de~$\eps$ 
sur chaque simplexe rencontrant~$\tau(W)$.

Soit $P$ l'enveloppe convexe de la réunion des simplexes 
qui rencontrent~$\tau(W)$; c'est une $\Q$-cellule compacte de~$\R^n$
munie d'une décomposition simpliciale~$\mathscr C$ à sommets dans~$\Q^n$.

L'ensemble des fonctions continues sur~$P$ qui sont $\R$-affines
sur chaque cellule de~$\mathscr C$ est un espace affine~$\mathscr F$;
il s'identifie à $\R^{\mathscr C_0}$. Il est muni de la $\Q$-structure
$\Q^{\mathscr C_0}$.
Les fonctions $f\in\mathscr F$ qui sont convexes 
en constituent un cône convexe fermé $\mathscr F_{\mathrm {conv}}$.
Démontrons que ce cône est défini par 
un nombre fini de formes linéaires à coefficients dans~$\Q$
sur~$\R^n$.

Soit $f\in\mathscr F$; pour toute $n$-cellule $C\in\mathscr C$,
notons $f_C$ l'unique fonction affine sur~$\R^n$
qui coïncide avec~$f$ sur~$C$.
Pour que $f$ soit convexe, il faut et il suffit que
pour tout sommet $a\in\mathscr C_0$  et toute cellule $C\in\mathscr C_n$,
on ait $f_C(a)\leq f(a)$.
(En effet, on alors $f_C(x)\leq f(x)$ pour tout $x\in P$ et tout $C\in\mathscr C_n$, d'où $\sup(f_C)\leq f$, et l'inégalité inverse est évidente.)
Soit $(\lambda_i)$ la famille normalisée des coordonnées barycentriques
de~$a$ par rapport aux sommets~$(x_i)$ de~$C$; 
ce sont des nombres
rationnels car la décomposition simpliciale~$\mathscr C$
est dilatée d'une décomposition à sommets dans~$\Z^n$;
on a de plus $f_C(a)=\sum \lambda_i f_C(x_i)$, et $f_C(x_i)=f(x_i)$
pour tout~$i$,
de sorte que la relation $f_C(a)\leq f(a)$ s'écrit
$f(a)- \sum \lambda_i f(x_i)\geq 0$.

Cela prouve que 
$\mathscr F_{\mathrm{conv}}$ est un cône polyédral
$\Q$-rationnel de~$\mathscr F$, de sommet~$0$.

L'interpolation affine par morceaux~$u_P$ de~$u|_P$ relative
à la décomposition simpliciale~$\mathscr C$
est encore convexe, car $u$ est convexe.
Elle définit un point \emph{réel} de~$\mathscr F_{\mathrm{conv}}$.
Par suite, il existe des points rationnels de $\mathscr F_{\mathrm{conv}}$
arbitrairement proches de~$u|_P$ aux sommets de~$\mathscr C$.
En particulier, il existe une fonction~$u_\eps$ sur~$P$,
continue, convexe, $\Q$-affine sur chaque cellule de~$\mathscr C$,
prenant des valeurs rationnelles aux sommets de~$\mathscr C$,
telle que $\abs{u_\eps-u_P}\leq \eps$ sur~$P$.
Puisque la décomposition simpliciale~$\mathscr C$ avait choisie 
assez fine pour que 
$\abs{u_P-u}\leq \eps$ sur~$P$, 
on a $\abs{u_\eps-u}\leq 2\eps$ sur~$P$.

La fonction $\tau^* u_\eps$ sur~$W$ est alors paralinéaire 
et approche~$\tau^* u$ à~$2\eps$-près.
D'après~\emph b), elle est psh sur~$\mathring W$.

En faisant tendre~$\eps$ vers~$0$, on en déduit que $\tau^*u$
est psh sur~$\mathring W$. Cela conclut la démonstration.
\end{proof}

\begin{exem}\label{exem.courants-produits-formel}
(Dans cet exemple, la notation~$\widetilde\cdot$ désigne
la réduction usuelle, non graduée.)
Soit $\mathfrak X$ un schéma formel sur~$k^\circ$,
plat, localement topologiquement de présentation finie,
à fibre spéciale équidimensionnelle,
et soit $\mathfrak L_1,\dots,\mathfrak L_d$ une suite de fibrés
en droites sur~$\mathfrak X$.
La fibre générique~$\mathfrak X_\eta$ de~$\mathfrak X$
est un espace $k$-analytique équidimensionnel.
Notons $X$ l'intérieur de~$\mathfrak X_\eta$:
c'est l'image réciproque par l'application de réduction~$\pi\colon\mathfrak X_\eta\to\mathfrak X_s$
des points de~$\mathfrak X_s$ dont l'adhérence est propre sur~$\tilde k$.
Les fibrés en droites~$\mathfrak L_j$
induisent des fibrés en droites munis de métriques strictement~\pl sur~$X$,
notons-les $\overline{L_j}$.

\begin{enumerate}
\item
Expliquons comment déduire 
du théorème~\ref{theo.prod-courants-paralineaires-local}
et de la description du fibré résiduel d'un fibré formel
(\S\ref{sss.fibre-residuel-formel})
une description du \emph{support} du courant paralinéaire
$c_1(\overline L_1)\wedge\dots\wedge c_1(\overline L_d)$ sur~$X$.

Soit $x$ un point de~$X$.
Soit $V_x$ la $\tilde k$-variété $\overline{\{\pi(x)\}}$;
elle est irréductible et propre.
Par définition, pour tout~$j$, la restriction 
$\mathfrak L_j|_{V_x}$
à~$V_x$
du fibré formel~$\mathfrak L_j$ est un prémodèle
du fibré en droites résiduel $\widetilde L_{j,x}$.

D'après le théorème~\ref{theo.prod-courants-paralineaires-local}
et la remarque qui le suit, 
le courant 
$c_1(\overline L_1)\wedge\dots\wedge c_1(\overline L_d)$ 
est nul au voisinage de~$x$ si et seulement si
la classe de cycle numérique
$c_1(\mathfrak L_1|_{V_x})\cap \dots \cap c_1(\mathfrak L_d|_{V_x})\cap[{V_x}]$
sur~$V$ est numériquement triviale.
Autrement dit, le support du courant 
$c_1(\overline L_1)\wedge\dots\wedge c_1(\overline L_d)$ sur~$X$
est l'ensemble des points~$x$ tels que la classe
$c_1(\mathfrak L_1|_{V_x})\cap \dots \cap c_1(\mathfrak L_d|_{V_x})\cap[V_x]$
ne soit pas numériquement triviale.

De manière duale, il en résulte que le complémentaire du support de ce courant
est la réunion des $\pi^{-1}(W)$, où $W$ parcourt l'ensemble
des sous-variétés irréductibles propres de~$\mathfrak X_s$ telles
que 
$c_1(\mathfrak L_1|_{W})\cap \dots \cap c_1(\mathfrak L_d|_{W})\cap[{W}]$
soit numériquement triviale.
(En effet, si $x\in\pi^{-1}(W)$, on aura $V_x\subset W$.)
Comme $\pi$ est anticontinue, observons que $\pi^{-1}(W)$ est
une partie ouverte de~$X$, 
pour toute sous-variété irréductible propre~$W$ de~$\mathfrak X_s$,
de sorte que la réunion indiquée est effectivement ouverte.

La réunion des sous-variétés irréductibles 
propres~$W$ de~$\mathfrak X_s$
telles que 
$c_1(\mathfrak L_1|_{W})\cap \dots \cap c_1(\mathfrak L_d|_{W})\cap[{W}]$
soit numériquement triviale
semble difficile à décrire  en général.
Toutefois, 
lorsque chaque composante irréductible~$W$ de~$\mathfrak X_s$ est propre 
et que les restrictions des $\mathfrak L_j$ y induisent des fibrés  amples, 
cet ensemble est la réunion des sous-variétés irréductibles
de~$\mathfrak X_s$ qui sont de dimension~$<d$.

Par exemple, le support du courant $c_1(\overline{\mathscr O(1)})^d$ 
sur $\P^n_k$, le fibré en droites 
$\mathscr O(1)$ étant muni de sa métrique formelle canonique,
est l'ensemble des points de~$\P^n_k$ dont la spécialisation sur~$\P^n_{\widetilde k}$ a une adhérence de dimension~$\geq d$.

\item
On peut mener une analyse similaire pour étudier la positivité
du courant 
$c_1(\overline L_1)\wedge\dots\wedge c_1(\overline L_d)$ 
au voisinage d'un point~$x$ de~$X$.
Avec les mêmes notations, ce courant est positif au voisinage
de~$x$ si et seulement si
la classe
$c_1(\mathfrak L_1|_{V_x})\cap \dots \cap c_1(\mathfrak L_d|_{V_x})\cap[V_x]$
est numériquement effective. De manière duale,
l'image réciproque de la réunion des sous-variétés irréductibles propres~$W$
telles que 
$c_1(\mathfrak L_1|_{W})\cap \dots \cap c_1(\mathfrak L_d|_{W})\cap[{W}]$
soit numériquement effective
est le plus grand ouvert de~$X$ sur lequel
le courant 
$c_1(\overline L_1)\wedge\dots\wedge c_1(\overline L_d)$ 
est positif.

Dans le cas particulier où toute composante irréductible de~$\mathfrak X_s$ 
est propre, on en déduit que le courant
$c_1(\overline L_1)\wedge\dots\wedge c_1(\overline L_d)$ 
est positif si et seulement si, pour toute composante irréductible~$W$
de~$\mathfrak X_s$,
la classe de cycle
$c_1(\mathfrak L_1|_{W})\cap \dots \cap c_1(\mathfrak L_d|_{W})\cap[W]$ sur~$W$
est numériquement effective.
\end{enumerate}
\end{exem}

\section{Détecter la positivité par une sous-variété}

En géométrie complexe, le caractère psh (ou pluriharmonique)
d'une métrique hermitienne sur un fibré en droites 
se définit par restriction à toute courbe, mais est équivalent
à la positivité du courant de courbure.
Ce n'est pas ainsi que nous avons procédé dans le cas non archimédien.

Dans ce paragraphe, on donne un résultat de ce genre,
qui détecte la stricte positivité d'une métrique paralinéaire
sur un fibré en droites au voisinage d'un point d'Abhyankar,
par restriction à une courbe convenable, éventuellement
sur une extension valuée complète du corps de base.

\begin{lemm}\label{deggen} 
Soit $K$ un corps
gradué, soit $q\colon V\ra Z$ un 
morphisme surjectif de $K$-schémas gradués propres et intègres,
soit $d$ sa dimension relative générique. 
Soit $(L_1,\dots,L_d)$ une famille finie de fibrés en droites sur~$V$ ;
on suppose que $c_1(L_1)\dots c_1(L_d)$ est numériquement effectif 
et qu'il existe un point $z$ de~$Z$
de corps gradué résiduel fini sur $K$
et une composante irréductible~$H$ de~$q^{-1}(z)$ 
telle que $\int_H c_1(L_1)\dots c_1(L_d) >0$. 
Alors, le degré de~$c_1(L_1)\dots c_1(L_d)$ sur la fibre
générique de~$q$ est strictement positif.
\end{lemm}

\begin{proof}
Le lemme \ref{lemm-composantes-graduees}
permet de se ramener, par  extension des scalaires à $K(T/\rho)$ 
pour un polyrayon $\rho$ convenable, au cas non gradué. 
Notons~$\eta$ le point générique de~$Z$; la fibre générique de~$q$
est une $K(\eta)$-variété~$W$ et il s'agit de démontrer
que $\int_W c_1(L_1|_W)\dots c_1(L_d|_W)>0$.

Quitte à étendre les scalaires à une clôture algébrique $\bar K$ de~$K$, 
à remplacer~$V$ par une composante irréductible de~$V_{\bar K}$,
puis $Z$ par l'image de cette composante sur $Z_{\bar K}$ et enfin $H$
par une composante irréductible de~$H_{\bar K}$ contenue dans~$V_{\bar
K}$, on peut supposer que $K$ est algébriquement clos.

Posons $n=\dim(V)$ et $m=\dim(Z)$, de sorte que $d=n-m$.
Soit $U$ un ouvert non vide de $Z$ au-dessus duquel $q$ est plat. 

Soit $Z'$ un voisinage affine de~$z$ dans $Z$ et soit $V'$ l'image
réciproque de~$Z'$ sur $W$. Le lemme de normalisation de Noether
fournit un morphisme fini et surjectif $\pi\colon Z'\to \A^{d}_K$ ; on note
$p = \pi\circ q \colon  V'\to \A^{m}_K$ la flèche composée.
Soit $D$ une droite de $\A^{m}_K$ passant par le $K$-point $\pi(z)$ qui ne soit
pas contenue dans le fermé strict $\pi(Z'\setminus U)$ de $\A^{m}_K$;
il en existe puisque $K$ est infini. 

Soit $H'$ une composante irréductible de $p^{-1}(D)$ contenant $H$;
démontrons que $p(H')=D$. 
Comme $D$ peut
être localement (et même globalement !) définie par $m-1$ équations,
il découle du \emph{Hauptidealsatz} que $H'$ est de dimension au moins
égale à $n-(m-1)=d+1$. 
Comme $H$ est de dimension $d=n-m$ et $\pi$ est fini,
les morphismes~$p$ et~$q$
sont de dimension relative~$d$ au point générique de~$H$;
il en est donc de même de la restriction de~$p$ à~$H'$,
car $H'$ contient~$H$.
Puisque $H'$ est de dimension~$d+1$,
il n'est pas entièrement contenu dans $p^{-1}(\pi(z))$ qui est
de dimension~$d$ en au moins un point.
Puisque $p(H')$ est par ailleurs contenu dans la
droite $D$, il s'ensuit que $p(H')=D$.

Munisons $H'$ et $D$ de leurs structures réduites.
Alors, $D$ est normale (c'est une droite),
$H'$ est intègre et le morphisme $p\colon H'\to D$ est propre
et surjectif. Par conséquent, $H'$
est plat sur $D$ et en particulier purement de dimension relative $n-d$ 
sur $D$ (puisqu'il existe par ce qui précède
au moins un point en lequel il est de dimension relative $n-d$ sur $D$).

Comme $c_1(L_1)\dots c_1(L_d)$ est nef et comme  $\int_H P>0$, 
le degré 
$\int_{(\pi|_{H'})^{-1}(p(z))}c_1(L_1)\dots c_1(L_d)$ 
est strictement positif ; dès lors,
le degré de $c_1(L_1)\dots c_1(L_d)$ sur n'importe quelle fibre 
de $p\colon H'\to D$ est strictement positif
(« conservation du nombre », cf.~\cite[théorème~10.2]{fulton98}).

Choisissons $t\in  D$ n'appartenant pas à $p(H'\setminus U)$.
On a donc $p^{-1}(t)\subset U$ et 
la fibre $p|_{H'}^{-1}(t)$ est purement de dimension $d$.
D'après ce qui précède, elle possède une composante irréductible~$E$ 
telle que $\int_E P>0$. 
Soit $u\in Z'$ un point de~$q(E)$.
On a $\pi(u)=t$, donc $u\in U\cap Z'$  par choix de~$t$. 
Par platitude, $q$ est purement
de dimension relative $n-d$ au-dessus de $U$, donc $E$ est une
composante irréductible de $q^{-1}(u)$. 
Comme $c_1(L_1)\dots c_1(L_d)$ est nef,
on a $\int_{q^{-1}(u)}c_1(L_1)\dots c_1(L_d)>0$.
En vertu de la platitude de $q$ au-dessus de $U$,
le degré de $c_1(L_1)\dots c_1(L_d)$ sur la fibre générique de $q$ 
est alors également strictement positif
(de nouveau par conservation du nombre), ce qu'il fallait démontrer.
\end{proof}

\begin{prop}\label{detectgen} 
Soit~$K$
un corps gradué, soit  $V$ un
$K$-schéma gradué intègre et
propre de dimension $n$ et soit $(L_1,\dots,L_d)$ 
une famille de fibrés en droites sur~$V$.
On suppose que la classe $c_1(L_1)\dots c_1(L_d)$ est numériquement
effective 
et numériquement non triviale.
Il existe un $K$-schéma gradué intègre et propre~$W$, 
un morphisme birationnel $p\colon W\to V$, un polyrayon $r$
de longueur $n-d$ et un morphisme dominant
$q\colon W\to \P^{1,r}_K$ tels que le 
degré de $c_1(p^*L_1)\dots c_1(p^*L_d)$ sur la fibre générique
de $q$ soit strictement positif.
\end{prop}
\begin{proof}
Choisissons une sous-variété irréductible fermée~$H_0$
de~$V$ de dimension~$d$ telle que $\int_{H_0} P>0$ 
et soit $\xi$ le point générique de $H_0$. 
Le lemme \ref{lemme-suite-reguliere}
assure l'existence d'un ouvert $U$ de $V$ rencontrant $H_0$,
et de $n-d$ éléments homogènes 
$f_1,\ldots, f_d$ 
de $\mathscr O_X(U)$, de degrés respectifs $r_1,\ldots, r_{n-d}$,
tel que le lieu des zéros des $f_i|_U$ soit
égal à $H_0\cap U$. Pour tout $i$,
la famille $(f_1,\ldots,f_{n-d})$ définit un
morphisme de $U$ vers $\A^r_K\subset \P^{(1,r)}_K$ où
$r=(r_1,\ldots, r_{n-d})$ ; il existe alors
un morphisme propre et birationnel $p\colon W\to V$, qui est un isomorphisme
au-dessus de $U$, tel que $U\to  \P^{(1,r)}_K$ s'étende en un morphisme
$q\colon W\to  \P^{(1,r)}_K$. L'adhérence de $H_0\cap U$ dans $W$ est une
variété irréductible $H$ de dimension $d$ qui est par construction une
composante irréductible de $q^{-1}(0)$ ; comme $H$ domine $H_0$, on a
$\int_H P>0$. Il résulte alors du lemme~\ref{deggen} ci-dessus que
le degré de $c_1(L_1)\dots c_1(L_d)$ 
sur la fibre générique de $q$ est strictement positif.
\end{proof}

\begin{theo}
Soit $(X,x)$ un germe d'espace $k$-analytique sans bord, 
soit $(\overline L_1,\dots,\overline L_d)$ 
une famille finie de fibrés en droites sur~$X$ munis de métriques
paralinéaires.
On suppose que le courant $c_1(\overline L_1)\wedge\dots\wedge c_1(\overline L_d)$ est positif et non nul,
et qu'il existe un fermé de Zariski de~$(X,x)$ de dimension~$d_k(x)$.
Il existe alors une immersion $\phi\colon (Y,x)\hra (X,x)$
d'un germe d'espace analytique sans bord de dimension~$d$ 
tel que le courant $c_1(\phi^*\overline L_1)\dots c_1(\phi^*\overline L_d)$
soit non nul.
\end{theo}

En particulier, pour $d=1$, on a le corollaire suivant.
\begin{coro}
Soit $(X,x)$ un germe d'espace $k$-analytique sans bord, 
soit $\overline L$ un fibré en droites sur~$X$ muni d'une métrique
paralinéaire psh tel que le courant $c_1(L)$ n'est pas nul. 
S'il existe un fermé de Zariski de~$(X,x)$ de dimension~$d_k(x)$,
il existe un germe de courbe $(Y,x)$ sur~$(X,x)$
tel que le courant $c_1(L|_Y)$ ne soit pas nul.
\end{coro}

\begin{proof}
Posons $n = d_k(x)$.
Quitte à remplacer $(X,x)$ par le fermé de Zariski de dimension~$n$
dont on a postulé l'existence, on peut supposer que la dimension de~$(X,x)$
est égale à~$n$.
Posons $m=n-d$. 
%Dans la démonstration qui suit, la notation~$\widetilde\cdot$
%désign la réduction classique, non graduée.

D'après le théorème~\ref{theo.prod-courants-paralineaires-local},
la classe $c_1(\widetilde L_{1,x})\dots c_1(\widetilde L_{d,x})$
sur $\zr{\hrt x}{\widetilde k}$ est nef et numériquement non triviale.
Soit
$(V,M_1,\dots,M_d)$
un modèle de $(\zr{\hrt x}{\widetilde k} \widetilde L_{1,x},\dots,\widetilde  L_{d,x})$;
la classe $c_1(M_1)\dots c_1(M_d)$ sur~$V$ est donc nef et numériquement
non triviale.
En vertu du lemme~\ref{detectgen}, quitte à modifier~$V$,
on peut supposer
qu'il existe un polyrayon
$r=(r_1,\ldots, r_m)$  et un morphisme dominant
$q\colon W\to \P_K^{(1,r)}$
tel que le degré de $c_1(M_1)\dots c_1(M_d)$
sur la fibre générique de~$q$ soit strictement positif.

Choisissons un système de coordonnées affines $T_1,\ldots, T_{m}$ 
sur $\A^r_{\tilde k}\subset \P^{(1,r)}_{\widetilde k}$ (chaque $T_i$ étant homogène
de degré $r_i$). 
Pour tout $i$, on note~$f_i$ l'image de $T_i$ dans $\widetilde k(V)$
\emph{via} la flèche $q$. 
Notons~$W$ la fibre générique de~$q$ ; c'est naturellement un schéma
gradué 
intègre de dimension~$d$ sur~$\widetilde k(f_1,\dots,f_m)$, de corps
gradué des fonctions
$\widetilde k(V)=\hrt x$.
Par hypothèse$\int_W c_1(M_1|_W)\dots c_1(M_d|_W)$ est
strictement positif.

Le germe $(X,x)$ est sans bord, et \emph{a fortiori} presque bon. 
Il existe donc 
$m$ fonctions
inversibles $g_1,\ldots,g_m$ sur 
le germe $(X,x)$ telles que~$\widetilde{g_i(x)}=f_i$
pour tout $i\leq m$. 
Soit $g\colon (X,x)\to (\A^{m,\an}_k,z)$ 
le morphisme de germes induit par les $g_i$.
Démontrons que le germe $(Y,x)=(g^{-1}(\eta_r),x)$ convient.

Démontrons tout d'abord que l'on a $\dim(Y,x)=d$ et que $d_{\hr{\eta_r}z}(x)=d$.
Pour tout point $y\in Y$, on a 
\[ n\geq d_k(y)=d_{\hr {\eta_r}}(y)+d_k(\eta_r)=d_{\hr{\eta_r} }(y)+m \]
de sorte $d_{\hr {\eta_r}}(y)\leq d$. Cela démontre $\dim(Y,x)\leq d$.
Comme $d_k(x)=n$, le même raisonnement fournit l'égalité
$d_{\hr {\eta_r}}(x)=d$, d'où l'égalité $\dim (Y,x)=d$. 

Posons $\Lambda_i=L_i|_Y$. 
Compte tenu du théorème~\ref{theo.prod-courants-paralineaires-local},
il reste à démontrer
que 
$\int_{\zr{\hrt x}{\hrt z}}c_1(\widetilde \Lambda_{1,x})\dots c_1(\widetilde \Lambda_{d,x})$
est non nulle.
Or par fonctorialité de la formation du fibré résiduel
$(W,M_1|_W,\ldots, M_d|_W)$ est un modèle
de $(\zr{\hrt x}{\hrt z},\widetilde \Lambda_{1,x},\dots, \widetilde \Lambda_{d,x})$. 
L'intégrale ci-dessus est donc égale à
$\int_W c_1(M_1|_W)\dots c_1(M_d|_W)$ sur~$W$ qui est strictement positive. 
\end{proof}

\begin{rema}
Dans les énoncés de cette section, on peut remplacer
la classe $c_1(L_1)\dots c_1(L_d)$, produit de $d$ premières
classes de Chern de fibrés en droites, par un polynôme homogène de degré~$d$
en les classes de Chern de fibrés vectoriels
qui est numériquement effectif et numériquement non trivial.
Le lemme~\ref{deggen} et la proposition~\ref{detectgen}
se transposent \emph{verbatim}, avec leurs démonstrations.
Le théorème~\ref{detectgen} fait référence à une notion
de classes de Chern de fibrés vectoriels munis de
métriques  paralinéaires pour lesquelles nous n'avons pas de définition.
Comme on l'a vu dans sa preuve, la positivité analytique
était immédiatement traduite en termes de théorie de l'intersection
résiduelle, via le théorème~\ref{theo.prod-courants-paralineaires-local},
et c'est ainsi que sa généralisation peut se formuler
en l'état actuel de la théorie; modulo cette modification, la démonstration  
est identique.  On laisse les détails aux lecteurs intéressés.
\end{rema}

\begin{rema} Nous ne savons pas si l'hypothèse qu'il existe
un fermé de Zariski de dimension~$d_k(x)$ 
(c'est-à-dire que $x$ est, localement, un point d'Abhyankar de rang 
maximal d'un fermé analytique)
est nécessaire.
\end{rema}

\chapter{Variétés abéliennes}

\section{Métriques sur les fibrés en droites sur les variétés abéliennes}

\begin{lemm}\label{lemm.tate}
Soit $X$ un $k$-espace analytique propre.
Soit $L$ un fibré en droites sur~$X$.
Soit $w$ un entier~$\geq 2$, soit $f\colon X\ra X$ un morphisme
et soit $\eps\colon f^*L\simeq L^w$ un isomorphisme de fibrés en droites.
Il existe une unique métrique continue sur~$L$
qui fait de l'isomorphisme~$\eps$ une isométrie.

Si $L$ possède une métrique psh  
(resp.\ psh lisse, par exemple si $L$ est ample), 
alors cette métrique est psh (resp. globalement psh).
\end{lemm}
\begin{proof}
Soit $\norm\cdot$ une métrique continue sur~$L$.
On en déduit une métrique sur~$f^*L$ par fonctorialité,
puis une métrique sur~$L^w$ au moyen de~$\eps$,
et enfin une métrique sur~$L$. La métrique~$\norm\cdot'$ ainsi
définie est  l'unique métrique continue sur~$L$ qui fait
de l'isomorphisme~$\eps$ une isométrie si l'on munit le membre de gauche
de la métrique $f^*(L,\norm\cdot)$ et le membre de droite
de la métrique $(L,\norm\cdot')^w$.
L'ensemble $M(L)$ des métriques continues sur~$L$ est un espace
affine dont l'espace de translations est 
l'espace de Banach $\mathscr C(X,\R)$.  Cela munit~$M(L)$
d'une structure canonique d'espace métrique complet.
En outre, l'application $\norm\cdot\mapsto \norm\cdot'$ 
est $(1/w)$-lipschitzienne. Elle a donc un unique point fixe~$\norm\cdot_0$.

D'après Picard, ce point fixe est la limite des métriques
définies par l'itération de l'application $\norm\cdot\mapsto\norm\cdot'$
à partir de n'importe quelle métrique. Si nous partons
d'une métrique psh (resp. psh lisse), toutes les métriques en jeu
sont psh (resp. psh lisses) et convergent uniformément vers la
métrique~$\norm\cdot_0$, laquelle est donc psh (resp. globalement psh).
\end{proof}

\subsection{}
Soit $A$ une $k$-variété abéloïde, c'est-à-dire un espace $k$-analytique
en groupes qui est propre, connexe et lisse; par exemple, l'analytifiée
d'une $k$-variété abélienne.

Soit $L$ un fibré en droites sur~$A$.
Pour toute partie~$I$ de~$A^3$,
soit $q_I\colon A^3\to A$ l'homomorphisme donné par la somme
des coordonnées d'indices dans~$I$.
Le théorème
du cube affirme que le fibré en droites $\mathscr D_3(L)$
sur~$A^3$ défini par
\[ \mathscr D_3(L) 
      = \bigotimes_{I\subset\{1,2,3\}} (q_I)^*L^{\otimes (-1)^{\Card(I)+1}}
\]
est trivial. 
Plus précisément, ce fibré possède une trivialisation canonique~$\tau_L$;
elle satisfait à certaines relations de symétrie et de cocycle
pour lesquelles nous renvoyons à~\cite{breen1983,moret-bailly85b}
dans le cas des variétés abéliennes
et à~\cite{Lutkebohmert-2016} pour le cas des variétés abéloïdes générales.
En particulier, la formation de~$\mathscr D_3(L)$ et de sa trivialisation
canonique commute à l'image réciproque par un morphisme de variétés abéloïdes,
au produit tensoriel de fibrés en droites et au dual.
De plus, la restriction de~$\tau_L$ à $A\times A\times\{0\}$
coïncide avec la trivialisation évidente de la restriction de~$\mathscr D_3(L)$
à ce sous-espace.

On définit de manière analogue le fibré en droites~$\mathscr D_2(L)$
sur~$A^2$:
\[ \mathscr D_2(L) = q_{12}^*L \otimes q_1^*L^{-1}\otimes q_2^*L^{-1}
 \otimes q_\emptyset^* L. \]
Il est canoniquement rigidifié; ses restrictions à $A\times\{0\}$
et $\{0\}\times A$ sont canoniquement trivialisées.
Il est trivialisable si et seulement si 
$L$ est invariant par translation :
pour tout morphisme $x\colon S\to A$, on a un isomorphisme
$L \simeq t_x^*L$ sur $A\times S$. 

Toute métrique sur~$L$ induit des métriques sur~$\mathscr D_3(L)$
et $\mathscr D_2(L)$.

\begin{defi}
Nous dirons qu'une métrique continue sur~$L$ est \emph{cubiste}
si la norme de sa trivialisation canonique pour la métrique
induite sur~$\mathscr D_3(L)$ est identiquement égale à~$1$.
\end{defi}

\begin{lemm}\label{lemm.cubiste-tens}
\begin{enumerate}
\item
Soit $L$ et $M$ des fibrés en droites sur~$A$, munis de métriques
cubistes; la métrique produit tensoriel sur $L\otimes M$ est cubiste.

\item
Soit $L$ un fibré en droite sur~$A$ muni d'une métrique cubiste;
la métrique duale sur~$L^{-1}$ est cubiste.
\end{enumerate}
\end{lemm}
\begin{proof}
On un isomorphisme naturel 
$\mathscr D_3(L)\otimes \mathscr D_3(M)\simeq \mathscr D_3(L\otimes M)$ ;
c'est une isométrie.
Comme il applique le produit tensoriel des deux trivialisations
canoniques sur la trivialisation canonique, la première
assertion s'en déduit.

De même, on a une isométrie naturelle $\mathscr D_3(L^{-1})\simeq \mathscr D_3(L)^{-1}$, compatible aux trivialisations canoniques, d'où
la seconde assertion.
\end{proof}

\subsection{}
Soit $L$ un fibré en droites sur~$A$; fixons-en une rigidification~$\eps$,
c'est-à-dire une trivialisation en l'origine de~$A$.

Supposons que $[-1]^*L$ soit isomorphe à~$L^{-1}$ 
et fixons un tel isomorphisme compatible aux rigidifications
induites par~$\eps$.
On déduit du théorème du cube par récurrence,
par applications successives de l'isomorphisme~$\tau_L$,
qu'il existe pour tout entier~$n\in\Z$ un unique isomorphisme
$[n]^*L\simeq L^{n}$  qui est compatible aux rigidifications
sur~$[n]^*L$ et~$L^n$ induites par~$\eps$. 
La construction de cet isomorphisme entraîne que c'est une isométrie
si $L$ est muni d'une métrique cubiste telle que $\norm\eps=1$.

Supposons que $[-1]^*L$ soit isomorphe à~$L$ et fixons 
un tel isomorphisme compatible aux rigidifications induites par~$\eps$.
De même, il existe alors pour tout entier~$n\in\Z$ un unique isomorphisme
$[n]^*L\simeq L^{n^2}$  qui est compatible aux rigidifications
sur~$[n]^*L$ et~$L^{n^2}$ induites par~$\eps$. 
Si $L$ est muni d'une métrique cubiste telle que $\norm\eps=1$,
cet isomorphisme est une isométrie.

\begin{lemm}
Soit $L$ un fibré en droites sur une variété abéloïde~$A$.

\begin{enumerate}
\item
Le fibré en droites~$L$ possède une métrique cubiste~$\norm\cdot$.

\item
Une métrique sur~$L$ est cubiste si et seulement si elle est de la forme
$c\norm\cdot$, pour un nombre réel strictement positif~$c$.

\item 
Soit $n$ un entier~$\geq 2$.
Supposons que $L$ est symétrique (resp. antisymétrique) et fixons 
un isomorphisme $[n]^*L\simeq L^{n^w}$, où $w=2$ (resp. $w=1$).
Une métrique sur~$L$ est cubiste si et seulement cet isomorphisme
est de norme constante.
\end{enumerate}
\end{lemm}
\begin{proof}
Comme $L\mapsto \mathscr D_3(L)$  est homogène de degré~$0$,
un multiple réel strictement positif d'une métrique cubiste sur~$L$
est encore cubiste.
Il suffit donc de démontrer que pour toute rigidification~$\eps$ de~$L$,
il existe une unique métrique cubiste sur~$L$
telle que $\norm\eps=1$.

On commence par traiter le cas 
où $L$ est ou bien antisymétrique ($[-1]^*L\simeq L^{-1}$)
ou bien symétrique ($[-1]^*L\simeq L$);
posons $w=1$ dans le premier cas et $w=2$ dans le second.
Il existe alors pour tout~$n\geq 2$ un unique isomorphisme 
$[n]^*L\simeq L^{n^w}$ qui est compatible avec les rigidifications
induites par~$\eps$.
D'après le lemme~\ref{lemm.tate}, il existe pour tout~$n\geq 2$
une unique métrique continue sur~$L$ qui rend cet isomorphisme
une isométrie. Ces métriques sont deux à deux égales
et sont nécessairement égales à la métrique cherchée,
si elle existe.

Prouvons que ces métriques sont cubistes.
Or, l'isomorphisme $[n]^*L\simeq L^{n^w}$ induit
un isomorphisme $[n]^*\mathscr D_3(L)\simeq \mathscr D_3(L)^{n^w}$.
Cet isomorphisme est compatible aux rigidifications
données par le théorème du cube.
En outre, c'est une isométrie lorsque~$L$ est muni de la métrique
construite ci-dessus. Par suite, la métrique construite sur~$L$
induit la métrique triviale sur le fibré en droites
trivialisé~$\mathscr D_3(L)$.

Cela prouve l'existence d'une métrique cubiste lorsque $L$ est symétrique
ou antisymétrique. Dans le cas général, la formule
$L^2\simeq (L\otimes [-1]^*L)\otimes (L\otimes [-1]^*L^{-1})$
exprime $L^2$ comme produit tensoriel d'un fibré en droites symétrique
et d'un fibré en droites antisymétrique, rigidifiés, de sorte que $L^2$
possède une métrique cubiste telle que $\norm{\eps^2}=1$.
 Sa « racine carrée » est une métrique
cubiste sur~$L$ telle que $\norm\eps=1$.

Démontrons maintenant l'unicité.
Considérons deux métriques cubistes~$\norm\cdot$ et $\norm\cdot'$
sur~$L$  telles que $\norm\eps=\norm\eps'=1$. 
Leur quotient est une métrique cubiste sur~$\mathscr O_{A}$
pour laquelle $\norm{1}=1$;
il suffit de prouver que cette métrique quotient est la métrique triviale.
On est ainsi ramené au cas $L=\mathscr O_A$ et $\eps=1$.

Posons $M=L\otimes [-1]^*L$.
Pour tout entier $n\geq 2$,
l'isométrie du cube de~$L$ fournit une isométrie
$[n]^*M\simeq M^{n^2}$ compatible avec la rigidification~$\eps$.
D'après le lemme~\ref{lemm.tate}, la métrique de~$M$ est triviale.

Posons maintenant $M'=L\otimes [-1]^*L^{-1}$.
De même, l'isométrie du cube de~$L$
fournit une isométrie $[n]^*M' \simeq (M')^n$, 
compatible avec la rigidification~$\eps$, de sorte
que la métrique de~$M'$ est elle-aussi triviale.

Comme $L^2=M\otimes M'$, la métrique de~$L$ est triviale, 
ce qu'il fallait démontrer.

La dernière assertion résulte de la construction des métriques cubistes.
\end{proof}

\begin{lemm}
Si $L$ est invariant par translation, alors la norme de toute trivialisation
de~$\mathscr D_2(L)$
est constante lorsque ce fibré en droites est muni de sa métrique
induite par une métrique cubiste sur~$L$.
\end{lemm}
\begin{proof}
Posons $M=\mathscr D_2(L)$.
Pour tout espace analytique~$S$ sur un extension valuée complète de~$k$
et tout triplet $(x,y,z)$ de $k$-morphismes de~$S$ dans~$A$,
la trivialisation~$(x,y,z)^*\tau_L$ de~$(x,y,z)^*\mathscr D_3(L)$
fournit des isomorphismes
\[ (x,y+z)^*M\simeq  (x,y)^*M\otimes (x,z)^*M   \]
et
\[ (x+y,z)^*M\simeq (x,z)^*M \otimes (y,z)^* M \]
définissant la structure de biextension sur~$M$.
% MDG-MDB 
% = L(x+y+z)-L(x)-L(y+z)+L(0)
%    - ( L(x+y)-L(x)-L(y)+L(0) ) - (L(x+z)-L(x)-L(z)+L(0)) 
% = L(x+y+z) - L(x+y)-L(x+z)-L(y+y) +L(x) +L(y)+L(z) - L(0)
% = 0
Pour qu'une métrique sur~$L$ soit cubiste, il faut et il suffit
que l'un ou l'autre de ces isomorphismes soit une isométrie, lorsque
$M$ est muni de la métrique induite par celle de~$L$.

Supposons $M$ trivialisable et soit $s$ l'unique section inversible
de~$M$ compatible avec sa rigidification canonique.
Posons $f=\log(\norm s)$; c'est une  fonction continue sur~$A\times A$,
elle est en particulier bornée.
Par sa définition, le fibré~$M$ est canoniquement trivialisé
sur~$A\times\{0\}$ et $\{0\}\times A$; la section~$s$
s'y identifie alors à la section~$1$.

Considérons le cas particulier 
où $S$ est l'espace~$A^3$ et $x,y,z$ sont égales aux trois projections
de~$A^3$ dans~$A$.
Alors $  (x,y+z)^* s\otimes ((x,y)^* s \otimes (x,z) ^*s)^{-1}$
est une section du fibré~$\mathscr D_3(L)$
qui se restreint en sa trivialisation évidente
sur~$\{0\}\times A^2$; c'est donc la section~$\tau_L$.

Relâchons nos hypothèses sur $S,x,y,z$. 
Par définition d'une métrique cubiste et par ce qui précède, 
on a 
 $f\circ (x,y+z)=f\circ (x,y)+f\circ (x,z) $.
On en déduit que $f\circ (x,ny)=n f\circ(x,y)$ pour tout~$n$.
Il en résulte $\abs{f\circ (x,y)}\leq \frac1n \sup (\abs f)$
pour tout~$n$, d'où $f\circ (x,y)\equiv 0$.
Comme ceci est vrai pour tout $(S,x,y)$, il en résulte que $f$ est
identiquement nulle.
\end{proof}

\begin{prop}
Supposons que $A$ se prolonge en un schéma abélien formel~$\mathfrak A$ 
sur~$k^\circ$.
Soit $L$ un fibré en droites sur~$A$.
Il existe un fibré en droites~$\mathfrak L$ sur~$\mathfrak A$
qui prolonge~$L$ et une trivialisation
de~$\mathscr D_3(\mathfrak L)$ qui prolonge celle de~$\mathscr D_3(L)$.
La métrique formelle sur~$L$ associée à ce modèle est cubiste.
\end{prop}

\begin{proof}
L'existence d'un tel fibré en droites~$\mathfrak L$
découle du lemme~6.2.4 de~\cite{Lutkebohmert-2016}.
(Dans le cas algébrique, voir aussi le corollaire~\ref{coro.extension-pic}.)
Le fibré en droites $\mathscr D_3(\mathfrak L)$ sur~$\mathscr A^3$
est alors un modèle du fibré en droites~$\mathscr D_3(L)$ sur~$A^3$.
Comme ce dernier est trivialisable, le corollaire affirme
également que $\mathscr D_3(\mathfrak L)$ est trivialisable.
La restriction de~$\mathscr D_3(\mathfrak L)$ à~$\mathfrak A^2\times\{0\}$
est munie d'une trivialisation évidente qui se prolonge en une
unique trivialisation~$\tau_{\mathfrak L}$ de~$\mathscr D_3(\mathfrak L)$.
Sa restriction à~$A^3$ est une 
trivialisation de~$\mathscr D_3(L)$ qui induit
la trivialisation évidente sur $A^2\times\{0\}$; c'est donc
sa trivialisation canonique.
Cela entraîne que $\tau_L$  est de norme~$1$, de sorte que
la métrique formelle sur~$L$ associé au modèle $(\mathfrak A,\mathfrak L)$
est cubiste.
\end{proof}

\begin{rema}
Plaçons-nous encore sous les hypothèses de la proposition.
Notons $\overline L$ le fibré en droites~$L$ muni de la métrique
formelle déduite du prolongement~$\mathfrak L$. 
Soit $g=\dim(A)$ et soit $\gamma\in A$ l'unique point 
dont la réduction est le point générique de~$\mathfrak A_s$.

a) La mesure $c_1(\overline L)^g$ est supportée par~$\gamma$.
Lorsque $k$ est algébriquement clos, 
le théorème~\ref{theo.formule-ma-berk} assure que
sa masse est $(c_1(\mathfrak L_s)^g \cap [\mathfrak A_s])$;
c'est encore le cas dans le cas général, comme
on le voit en appliquant le résultat à l'extension
du schéma formel~$\mathfrak A$ à~$\overline k^\circ$.

Lorsque $A$ est algébrique, 
cette masse est également égale à~$(c_1(L)^g\cap [A])$
(théorème~\ref{theo.MA-lisse-masse}).

b) Il résulte de l'exemple~\ref{exem.courants-produits-formel}, \emph b), 
que les trois propriétés de positivité suivantes sont équivalentes:
\begin{enumerate}\def\theenumi{\roman{enumi}}\def\labelenumi{(\theenumi)}
\item Le courant $c_1(\overline L)$ est positif ;
\item Le courant $c_1(\overline L)$ est positif au voisinage de~$\gamma$;
\item Le fibré en droites $\mathfrak L_s$ est numériquement effectif. 
\end{enumerate}
L'implication (i)$\Rightarrow$(ii) est évidente;
l'implication (ii)$\Rightarrow$(iii) est expliquée dans l'exemple
indiqué, de même que l'implication (iii)$\Rightarrow$(i).
\end{rema}

Dans le paragraphe suivant, nous allons exploiter l'uniformisation
des variétés abéloïdes pour obtenir une description
similaire des métriques cubistes sur les fibrés en droites
et des puissances de leurs courants de courbure.

\section{Métriques cubistes et uniformisation}

\subsection{}
% Rappels sur l'uniformisation des variétés abéliennes}
Rappelons l'uniformisation des variétés abéloïdes,
d'après~\cite{berkovich1990,bosch-l91,gubler2010,Lutkebohmert-2016};
nous renvoyons aussi le lecteur à l'appendice~\ref{app-varab}.

Soit $A$ une variété abéloïde sur~$k$; notons~$g$ sa dimension.
Il existe une variété abéloïde~$B$ à bonne réduction potentielle,
un tore algébrique~$T$ de dimension~$r$,
une extension~$E$ de~$B$ par~$T$,
une suite exacte de groupes  analytiques 
\[ 0 \to \Lambda^\an \to E \xrightarrow p A \to 0 \]
où $\Lambda$ est un $k$-groupe localement isomorphe à~$\Z^r$
pour la topologie étale de~$\Spec(k)$.

Nous dirons que la situation est déployée si $B$ a bonne réduction
et si le tore~$T$ est déployé. Dans ce cas, $\Lambda$ est isomorphe à~$\Z^r$,
la suite exacte précédente s'interprète comme le revêtement 
universel de l'espace topologique~$A^\an$ et $\Lambda$ comme son
groupe fondamental.

Nous nous plaçons désormais dans ce cas.

\subsection{}
Notons $q\colon E\to B$ la projection de l'extension~$E$.
Son noyau est le tore déployé~$T$,
de sorte que cette extension
est localement triviale pour la topologie de Berkovich.

L'extension~$E$ est construite à partir 
d'une extension analytique~$N$ de~$B$
par le tore affinoïde~$T^1$ ; 
cet espace~$N$ est un domaine analytique compact
de~$A$, muni d'une structure de sous-groupe formellement
analytique compact.

Soit $\mathfrak V$ un recouvrement formellement affinoïde
de~$B$ trivialisant l'extension~$N$;
pour tout $V\in \mathfrak V$, soit $f_V\colon V\to N$ une section
de la projection $q|_N\colon N\to B^\an$.
Sur $V\cap W$, les sections~$f_V$ et~$f_W$ diffèrent de la multiplication
par un morphisme $f_{VW}\colon V\cap W \to T^1$.
Ces morphismes $(f_{VW})$ forment un cocycle à valeurs dans~$T^1$
qui représente l'extension~$N$, mais également, 
si on les considère comme un cocycle à valeurs dans~$T$,
l'extension~$E$.

Pour $V\in\mathfrak V$, la section~$f_V$ permet d'identifier $q^{-1}(V)$
au produit $V\times T$; composons la projection sur le second facteur
avec l'application de tropicalisation de~$T$ vers~$T_\trop$;
on obtient une application~$\phi_V$ de $q^{-1}(V)$ dans~$T_\trop$
qui est lisse et~\pl:
lorsqu'on identifie~$T$ à~$\gm^r$
et $T_\trop$ à~$\R^r$, chaque coordonnée est 
le logarithme de la valeur absolue d'une fonction holomorphe inversible
(un caractère du tore).

Comme le cocycle $(f_{VW})$ est à valeurs dans~$T^1$,
ces applications $\phi_V$ se recollent en une application
G-lisse, \pl, de~$E$ dans~$T_\trop$. Notons-la~$\tau$.

La restriction de~$\tau$ au tore~$T$ est l'application de tropicalisation
usuelle de~$T$ sur~$T_\trop$. Comme $\tau|_{T}$ est propre
et que $B$ est compacte, l'application $\tau\colon E\to T_\trop$
est surjective et propre.

Si $E=B$, le tore~$T$ est de dimension~$0$ 
et $\tau$ est l'application constante.

L'image du réseau~$\Lambda$ par~$\tau$
est un réseau cocompact de~$T_\trop$ 
(théorème~1.2 de \cite{BoschLutkebohmert-1991}).
Par suite, $\tau$ induit une application surjective, \pl,
de~$A$ dans le tore réel $T_\trop/\tau(\Lambda)$.

Notons $\beta$ le point de Gauss de~$B$.
Dans \cite[\S6.5]{berkovich1990}, 
Berkovich construit une rétraction par déformation
de~$E$ sur le squelette du tore $T_{\mathscr H(\beta)}$
qui s'identifie à~$T_\trop$ via~$\tau$; nous
le noterons~$S(E)$. L'action de~$\Lambda$
par translation fixe le point~$\beta$ et stabilise ce squelette.
Par passage au quotient, cela induit une rétraction par déformation
de~$A$ sur le sous-espace $S(A)=S(E)/\Lambda$
qu'il appelle le squelette de~$A$;
par l'application~$\tau$, ce squelette s'identifie
au tore réel $T_\trop/\tau(\Lambda)$.
Pour toute extension valuée~$K$ de~$k$,
la rétraction de~$A$ sur~$S(A)$ induit un morphisme
de groupes de~$A(K)$ sur~$S(A)$.

\begin{rema}
Supposons que $T$ soit le tore~$\gm$.
La variété abéloïde~$B$ a bonne réduction, prolongeons-la
en un schéma formel abélien~$\mathfrak B$ sur $k^\circ$.
L'extension formelle~$N$ de~$B$ par~$\gm^1$ correspond
à un fibré en droites~$\mathfrak Q$, algébriquement équivalent à zéro,
sur le schéma formel abélien~$\mathfrak B$.
Notons~$Q$ le fibré en droites sur~$B$ qui correspond à~$\mathfrak Q$ 
par passage à la fibre générique.
L'extension~$E$ est un ouvert de l'espace total du fibré~$Q$ et 
la métrique formelle de~$Q$ déduite de~$\mathfrak Q$,
considérée comme fonction sur l'espace total de ce fibré
est la fonction~$\tau$.

Le fibré résiduel~$\mathfrak Q_s$ sur~$\mathscr B_s$
est numériquement trivial,
car algébriquement équivalent à zéro.
D'après le théorème~\ref{theo.prod-courants-paralineaires-local},
le courant $c_1(\overline Q)$ est donc nul.
Cela entraîne que le courant $\ddc\tau$ est nul:
\emph{la fonction~$\tau$ est harmonique.}

Pour que la fonction~$\tau$ soit lisse, il est nécessaire
et suffisant que la métrique formelle induite par~$\mathfrak Q$
soit lisse.  D'après la proposition~\ref{prop.metrique-lisse-pl},
cela signifie que $\mathfrak Q$ est de torsion 
sur toute sous-variété intègre de~$\mathscr B_s$.
Ainsi, $\tau$ est lisse si et seulement $\mathfrak Q_s$
est de torsion.

Si le corps résiduel de~$k$ est algébrique sur  un corps fini,
cette dernière propriété est automatique, mais ce n'est pas le cas en général.

Lorsque $T$ est un tore de dimension arbitraire, on peut 
l'identifier à~$\gm^r$, de sorte que $\tau$ s'identifie
à une famille $(\tau_1,\dots,\tau_r)$ de fonctions numériques sur~$E$.
Ces fonctions sont G-lisses,  harmoniques ; 
pour qu'elles soient toutes lisses, il faut et il suffit
l'extension résiduelle de~$\mathscr B_s$
par~$T_{\tilde k}$ soit isotriviale.
\end{rema}

\subsection{}
Soit $L$ un fibré en droites sur~$A$.
Choisissons-en une trivialisation~$\ell$ en l'origine de~$A$
et munissons alors $L$ de l'unique métrique cubiste pour laquelle $\norm\ell=1$.

Le fibré en droites $p^*L$ sur~$E$ provient de~$B$:
il existe un fibré en droites~$M$ sur~$B$ 
et un isomorphisme $\eps\colon p^*L \simeq q^*M$.
Au-dessus de l'élément neutre de~$E$, le fibré~$p^*L$
possède la trivialisation~$p^*\ell$,
et le fibré~$q^*M$
possède possède une trivialisation déduite de la structure formelle
de~$M$, unique à unité près;
on normalise~$\eps$ de sorte qu'il applique l'une sur l'autre.

Le choix d'un tel isomorphisme~$\eps$ munit~$q^*M$ d'une donnée de
descente de~$E$ à~$A$, c'est-à-dire ici
d'une famille d'isomorphismes $c=(c_\lambda)_{\lambda\in\Lambda}$,
où, pour $\lambda\in \Lambda$,
$c_\lambda$ est un isomorphisme $q^*M \to t_\lambda^* q^*M$
de fibrés en droites sur~$E$ 
($t_\lambda$ désigne la translation par~$\lambda$
dans~$E$), satisfaisant en outre à la condition de cocycle
\[ c_{\lambda+\lambda'} =  t_{\lambda'}^* c_\lambda \circ c_{\lambda'}. \]
% On obtient ainsi une équivalence entre la catégorie
% des fibrés en droites sur~$A$ et celle des couples~$(M,c)$ comme ci-dessus.

Pour aller plus loin, nous reprenons les notations de l'appendice
(A.7.15-17). En particulier, on note~$q_L$
la forme quadratique sur~$T_\trop$ associée au fibré en droites~$L$.

On veut comparer la métrique de $q^*M$ 
déduite de la métrique formelle de~$M$
et la métrique cubiste de $p^*L$ au travers 
de l'isomorphisme qui identifie ces fibrés.

Remarquons que l'application de tropicalisation~$\tau$
s'exprime en termes de ces métriques formelles
par la formule 
\[ \langle \tau(t), \lambda'  \rangle = \log \norm{ s_{\lambda'} } (t) ,\]
pour $t\in E$.

\begin{prop}\label{prop.metrique-formelle-cubiste}
Fixons $(\eps,M,c)$ comme ci-dessus. 
Il existe
une unique fonction~$f$ sur~$T_\trop$, à valeurs réelles
telle  que l'on ait 
\[ \log\norm{s}^{-1} - \log \norm{\eps(s)}^{-1} = f \circ\tau \]
pour toute section inversible locale~$s$
de~$p^*L$. Cette fonction est quadratique 
et sa composante homogène de degré~$2$ 
est égale à~$\frac12 q_L$.
\end{prop}
\begin{proof}
Pour tout $\lambda\in\Lambda$, nous allons étudier
comment l'isomorphisme $c_\lambda$ modifie les métriques 
sur~$q^*M$ et $t_\lambda^*q^*M$ déduites de la métrique formelle de~$M$.
Au-dessus d'un point~$t$ de~$E$,
on a 
\[ \log\norm{c_\lambda} = \log \norm{r(\lambda)\otimes r(0)^{-1}}
+ \log \norm{s_{u(\lambda)}}(t)
= \log \norm{r(\lambda)\otimes r(0)^{-1}}  + \langle \tau(t), 
u(\lambda)\rangle \]

On déduit de la formule
\[ r(\lambda+\mu)\otimes r(\lambda)^{-1}\otimes r(\mu)^{-1}\otimes r(0)
 = \langle \lambda, u(\mu)\rangle \]
que  la fonction
$ \lambda \mapsto \log\norm{r(\lambda)}$
est quadratique sur~$\Lambda$, 
et que sa composante homogène de degré~2 est donnée
par $\lambda \mapsto \frac12 \log \norm{\langle \lambda, u(\lambda)}$.
Il existe une unique fonction quadratique~$f$ sur~$T_\trop$
telle que 
\[ \log\norm{r(\lambda)} = f\circ \tau(\lambda) \]
pour tout $\lambda\in \Lambda$.

Munissons $q^*M$ de la métrique~$\norm\cdot_f$ obtenue
en multipliant la métrique formelle par la fonction $e^{-f\circ\tau}$.
Pour cette nouvelle métrique, la donnée de descente a pour norme
\begin{align*}
 \log\norm{c_\lambda}_f 
& = \log \norm{r(\lambda)}- \log \norm{r(0)}
+ \langle \tau(t), u(\lambda)\rangle
+ f(\tau(t))- f(\tau(t+\lambda))  \\
& = -f\circ \tau (t+\lambda)+ f\circ \tau(t) + f \circ \tau(\lambda) - f\circ\tau(0) 
+ \langle \tau(t), u(\lambda) \rangle. 
\end{align*}
C'est une expression linéaire en~$t$ et en~$\lambda$.
Pour vérifier qu'elle est nulle, il suffit de prouver qu'elle
s'annule lorsque $t\in\Lambda$.
Soit donc $\mu\in\Lambda$. Par construction de~$f$, on a
\[  f\circ \tau (\mu+\lambda)- f\circ \tau(\mu) - f \circ \tau(\lambda) + f\circ\tau(0) = \log \norm{\langle\lambda, u(\mu)\rangle}  
= \log \norm {s_{u(\lambda)}(\mu)} 
= \langle \tau(\mu), u(\lambda)\rangle 
\]
d'où l'annulation requise.

La métrique formelle sur~$M$ est cubiste, donc celle
sur~$q^*M$ l'est également. Comme $f\circ \tau$ est une fonction
quadratique sur~$E$, la métrique modifiée $\norm\cdot_f$ 
est encore cubiste.

Compte tenu de l'isomorphisme~$\eps$,
la métrique modifiée~$\norm\cdot_f$ descend en une métrique sur~$L$,
cubiste. C'est donc la métrique cubiste canonique de~$L$, à un scalaire
près.  (Ce scalaire vaut~$1$ si $M$ et~$L$ sont rigidifiés en l'origine
et si $\eps$ et~$r$ sont supposés respecter ces rigidifications.)
Cela démontre qu'il existe une fonction quadratique~$f$ sur~$T_\trop$
qui vérifie la première partie de l'énoncé.
L'unicité d'une telle fonction résulte de la surjectivité de~$\tau$.
Enfin, on a vu que la partie homogène de degré~$2$ de~$f$
est égale à $\lambda \mapsto \frac12 \norm{\langle \lambda ,u(\lambda)\rangle}$
sur~$\Lambda$; elle y coïncide donc avec~$\frac12 q_L\circ \tau$,
ce qui conclut la démonstration.
\end{proof}

\begin{coro}
\begin{enumerate}
\item
La métrique cubiste de~$\bar L$ est d-psh.
\item
On a $c_1(p^*\bar L) = c_1(q^*\overline M) + \tau^*(\di\dc  \frac12 q_L)$.

\item
Le courant de courbure $c_1(\bar L)$ est invariant par translation sur~$A$;
il est nul si $L$ est invariant par translation.
\end{enumerate}
\end{coro}
\begin{proof}
\begin{enumerate}
\item
La relation
entre la métrique cubiste de~$p^*L$ et la métrique formelle de~$q^*M$
donnée par la proposition~\ref{prop.metrique-formelle-cubiste} prouve
que la métrique de~$p^*\overline L$ est d-psh: c'est la somme
d'une métrique formelle et d'une métrique G-lisse.
\item
De cette relation découle
l'égalité de courants
\[ c_1(p^*\bar L) = c_1(q^*\bar M) + \ddc [ f\circ\tau]. \]
Comme $\tau$ est harmonique, on  a 
\[ \ddc [f \circ \tau] = \ddc[\tau^* f] = \tau^* [\ddc(f)], \]
en vertu du corollaire~\ref{coro.harm-diff}.
Comme $f$ est quadratique et que sa composante homogène
de degré~$2$ est égale à~$\frac12 q_L$, on a 
aussi 
$ \ddc f = \frac12 \ddc q_L $ sur $T_\trop$, si bien que
\[ c_1(p^*\bar L) = c_1(q^*\bar M) + \tau^* \ddc(\frac12 q_L). \]

\item
Notons $t_a\colon A \to A$ la translation par un point $a\in A(k)$.
Nous devons démontrer que $c_1(t_a^*\overline L)=c_1(\overline L)$.
Le fibré en droites sous-jacent à $t_a^*\overline L\otimes \overline L^{-1}$
est $N=t_a^*L\otimes L^{-1}$ et sa métrique est cubiste.
On a donc $c_1(t_a^*\overline L)=c_1(\overline L)+c_1(\overline N)$.
Comme $N$ est algébriquement équivalent à~$0$,
il suffit de démontrer que l'on a $c_1(\overline L)=0$
si $L$ est algébriquement équivalent à~$0$.

Dans ce cas, la proposition~\ref{prop.unif-pic0}
entraîne que $q_L=0$, d'où $\ddc(q_L)=0$.
Par la même proposition, le fibré formel~$\mathfrak M$
sur~$\mathfrak B$ est invariant par translation, si bien que
le fibré en droites~$\mathfrak M_s$ 
sur la variété abélienne~$\mathfrak B_s$
est algébriquement équivalent à~$0$, et donc numériquement
trivial.
D'après le corollaire~\ref{coro.PL-psh-c1pos} (appliqué
à~$\overline M$ et son dual), on a $c_1(\overline M)=0$.

Cela prouve que $c_1(\overline L)=0$.
\qedhere
\end{enumerate}
\end{proof}

\begin{rema}\label{rema.omegaL}
Comme la forme $\ddc(\frac12q_L)$ est invariante par translations
sur~$T_\trop$ et comme $\tau$ est équivariant pour l'action
de~$\Lambda$, la forme $\tau^*(\ddc \frac12q_L)$ 
est invariante par translations par~$\Lambda$,
de sorte qu'il existe une unique G-forme~$\omega_L$ sur~$A$
telle que $p^*\omega_L=\tau^*(\ddc\frac12q_L)$.
\end{rema}

\begin{coro}
\begin{enumerate}
\item
La mesure $c_1(p^*\bar L)^g$ est une mesure de Haar sur~$S(E)$.

\item
La mesure $c_1(\bar L)^g$ est l'unique mesure de Haar sur~$S(A)$
de masse totale~$\deg_L(A)$.
\end{enumerate}
\end{coro}
\begin{proof}
\begin{enumerate}
\item
La formule précédente écrit
le courant $c_1(p^*\bar L)$ comme la somme du courant paralinéaire
$c_1(q^*\bar M)$ et d'un courant associé à une G-forme G-lisse
de type~$(1,1)$ localement harmoniquement tropicalisable,
tous déduits par~$\ddc$ de métriques d-psh.
% Si $L$ est ample, la construction de la métrique cubiste de~$\bar L$
% prouve que cette métrique est psh ; comme $A$ est projective, il 
% en découle que la métrique cubiste de~$\bar L$ est d-psh.\footnote{Cela
% devra avoir été dit avant.}
% Par conséquent, la métrique cubiste de~$p^*\bar L$ est d-psh
% (psh si $L$ est ample).
% La métrique de~$q^*\bar M$ est formelle, donc d-psh
% (proposition~\ref{prop.metrique-pl-approchable}).
% Par suite, le terme différence $p^*\bar L\otimes (q^*\bar M)^{-1}$,
% qui est le fibré trivial muni de la métrique pour laquelle
% $\log\norm 1^{-1}=\tau^*(\frac12 q_L)$, est d-psh.
% Autrement dit, la fonction $\tau^*(\frac12 q_L)$ est d-psh;
% cette fonction est G-lisse car $\tau$ est paralinéaire;
% elle est même localement harmoniquement tropicalisable.
% \footnote{En fait, il faudrait savoir si une fonction G-lisse n'est
% pas automatiquement d-psh. Dans ce cas, on pourrait éviter,
% soit l'hypothèse de projectivité, soit l'utilisation des courants
% généralisés… Sur $\R$, une fonction est « d-convexe » si et seulement
% si sa dérivée est une mesure, 
% donc une fonction lisse par  morceaux est d-psh.}
 
Comme le produit de courants à la Bedford--Taylor est multilinéaire, il
en résulte l'égalité
\[ c_1(p^*\bar L)^g = \big(c_1(q^*\bar M) + \tau^* \ddc(\frac12 q_L)\big)^g
= \sum_{d=0}^g \binom gd c_1(q^*\bar M)^{g-d} \wedge
(\tau^*\ddc(\frac12 q_L))^d . \]
Dans cette formule, les produits sont pris au sens de la théorie
de Bedford--Taylor.
Le corollaire~\ref{coro.produit-BT=produit-G}
affirme qu'elle reste valable lorsque,
pour tout~$d$, on interprète 
le terme d'indice~$d$ du second membre 
comme le produit du courant paralinéaire fermé 
(différence de courants de Bedford--Taylor)
$c_1(q^*\bar M)^{g-d}$ et de la G-forme 
(localement harmoniquement tropicalisable)
$\tau^*(\ddc\frac12 q_L)^d$.

Le terme d'indice~$d$ du membre de droite est nul dès que $g-d>r$,
la dimension de l'image de~$\tau$. Posons $b=\dim(B)$;
ce terme est nul si $d>b$ car
$q^*\bar M$ est de dimension de définition~$\leq b$ en tout point.
Ainsi, seul le terme d'indice $b$ persiste et l'on obtient
\[ c_1(p^*\bar L)^g 
=  \binom gr c_1(q^*\bar M)^{b} \wedge (\tau^*\ddc(\frac12 q_L))^r . \]

Soit $u$ une fonction lisse à support compact sur~$E$. On a donc
\begin{align*}
 \langle c_1(p^*\bar L)^g , u\rangle
& =  \binom gr \langle c_1(q^*\bar M)^{b} \wedge (\tau^*\ddc(\frac12 q_L))^r ,u\rangle  \\
& =  \binom gr \langle c_1(q^*\bar M)^{b} , u (\tau^*\ddc(\frac12 q_L))^r \rangle .
\end{align*}

D'après la proposition~\ref{prop.MA-relatif},
le courant $ c_1(q^*\bar M)^b$ 
est un multiple $c\delta_{q^{-1}(\beta)}$ 
du courant d'intégration sur la fibre~$q^-1(\beta)$.
En effet, comme la fibre spéciale
de~$B$ est irréductible et de dimension~$b$,
seul son point générique~$\beta$ peut intervenir.
Ainsi,
\[ \langle c_1(p^*\bar L)^g , u\rangle
= c \binom gr \int_{q^{-1}(\beta)} u (\tau^*\ddc(\frac12 q_L))^r 
= c\binom gr \int_{q^{-1}(\beta)} u \tau^*\omega, \]
où l'on a posé $\omega=\ddc(\frac12 q_L)^r$. Comme $q_L$ est quadratique,
$\omega$ est une $(r,r)$-forme
invariante par translations sur~$T_\trop$.
Ainsi, $\Sigma_\tau=S(E)$ est  un support fort de la
G-forme $u\tau^*\omega$, de sorte que
\[ \langle c_1(p^*\bar L)^g , u\rangle
= c \binom gr \int_{S(E)} \langle u \tau^*\omega, \mu_E\rangle, \]
où $\mu_E$ est le calibrage canonique de~$S(E)$.

On peut identifier la fibre~$q^{-1}(\beta)$ avec $T_{\hr \beta}$
de sorte que la fonction~$\tau$ s'identifie à l'application
de tropicalisation de~$T_{\hr \beta}$ sur~$T_\trop$,
et $S(E)$ s'identifie à~$T_\trop$ muni de son calibrage canonique.
La contraction $\langle \tau^*\omega,\mu_E\rangle$
s'identifie à une $r$-forme invariante par translations~$\alpha$ sur~$S(E)$
et 
\[ \langle c_1(p^*\bar L)^g , u\rangle
= c \binom gr \int_{S(E)} u(x) \alpha(x) .\]
Ainsi, $c_1(p^*\bar L)^g $ est une mesure de Haar sur~$S(E)$, comme annoncé.

\item
Comme $p$ est une application de revêtement,
on peut tirer en arrière tout courant~$T$ sur~$A$ et en déduire
un courant~$p^*T$ sur~$E$. La construction de $c_1(\bar L)^g$
entraîne que $p^*c_1(\bar L)^g= c_1(p^*\bar L)^g$.
D'après \emph b), ce courant $p^*c_1(\bar L)^g$ est une 
mesure de Haar~$\nu$ sur~$S(E)$. 
L'application~$p$ fait de~$S(E)$ un revêtement de~$S(A)$
(c'est même son revêtement universel);
considérons la mesure de Haar~$\mu$ sur~$S(A)$ 
telle que $p^*\mu =\nu$. Les courants $p^*c_1(\bar L)^g$
et $p^*[\mu]$ sont égaux; il en résulte que les courants
$c_1(\bar L)^g$ et $[\mu]$ sont égaux.
D'autre part, on sait que la masse totale du courant $c_1(\bar L)^g$
est égale à $\deg_L(A)$ (théorème~\ref{theo.MA-lisse-masse}).
Le lemme en résulte.
% Observons que $(\di\dc q_L)^n$ est une $(n,n)$-forme
% invariante par translation sur~$T_\trop$. D'après le \S~\ref{ss.mesure-squelette}, 
% $\trop^*(\di\dc q_L)^n$ est une mesure de Haar sur~$S(T)$.
\qedhere
\end{enumerate}
\end{proof}
\begin{coro}\label{coro.structure-mesure}
Soit $U$ un ouvert de~$A$,
soit $X$ un sous-espace analytique irréductible fermé de~$U$, 
soit~$d$ la dimension de~$X$.
Pour tout $j\in\N$,
le courant $c_1(q^*\overline M|_{p^{-1}(X)})^j$ sur~$p^{-1}(X)$
descend  en un courant paralinéaire~$S_j$ sur~$X$, de bidimension~$(d-j,d-j)$.

Soit $\omega_L$ l'unique G-forme sur~$A$ telle que $p^*(\omega_L)
=\tau^*(\ddc \frac12q_L)$ (remarque~\ref{rema.omegaL}).

\begin{enumerate}
\item
Pour tout entier~$e\in\{0,\dots,d\}$,
on a
\[ c_1(\overline L|_X)^e = \sum_{j=0}^e  \binom ej S_{e-j} \wedge \omega_L^j. \]
De plus, on a $S_j=0$ si $j>b$ ou si $j>d$,
et $\omega_L^j=0$ si $j>r$, si bien que la somme précédente
peut être restreinte aux entiers~$j$ tels que 
$e-j \leq \inf(b,d)$
et $j\leq r$, c'est-à-dire 
$\sup(e-b,0)\leq j\leq \inf(r,e)$.
% Faut-il mieux dire : 
% nul si $d-j>b$ ou si $j>r$.
% d-j \leq b, d-b \leq j \leq r
% \sup(d-b,0)\leq j \leq \inf(r,d)

\item
Il existe une suite $(P_0,\dots,P_d)$ de sous-espaces paralinéaires
calibrés de~$X$, où $P_j$ est de dimension~$j$,  telle que
l'on ait
\[ \langle c_1(\overline L|_X)^d , f\rangle 
= \sum_{j=0}^d \int_{P_j} f \omega_L|_{P_j}^j \]
pour toute fonction~$f$ mesurable localement bornée sur~$X$.
\end{enumerate}
\end{coro}
\begin{proof}
Rappelons les notations $b=\dim(B)$ et $r=\dim(T)$, de sorte que $g=b+r$.
Posons $Y=p^{-1}(X)$ et 
considérons l'égalité de courants
\[ c_1(p^*\overline L|_X) = c_1(q^*\overline M|_Y) 
	+ \ddc(\tau^*(\frac12q_L)|_Y) \]
sur~$Y$.
Prenons-en la puissance $e$-ième au sens de Bedford--Taylor;
on obtient
\[ c_1(p^*\overline L|_X)^e = \sum_{j=0}^d \binom ej 
c_1(q^*\overline M|_Y)^{e-j} \ddc(\tau^*(\frac12q_L)|_Y) ^j \]
Pour tout entier~$i$,
la puissance $c_1(q^*\overline M|_Y)^{i}$ est un courant
paralinéaire sur~$Y$, de bidimension~$(d-i,d-i)$;
il est différence de courants de Bedford--Taylor
et est nul si $i>b$ car sa dimension de définition est~$\leq b$.
La (1,1)-forme $\widetilde\omega_L=\tau^*(\ddc\frac12q_L)$ sur~$E$
est localement harmoniquement tropicalisable;
il en est donc de même de sa restriction à~$Y$.
Le corollaire~\ref{coro.produit-BT=produit-G}
entraîne donc que
le produit au sens de Bedford--Taylor  de $c_1(q^*\overline M|_Y)^{e-j}$
par $\widetilde\omega_L|_Y^{j}$ 
coïncide avec son produit naïf par cette G-forme.
Cela donne 
\[ c_1(p^*\overline L|_X)^e = \sum_{j=0}^e \binom ej 
c_1(q^*\overline M|_Y)^{e-j} \wedge \widetilde\omega_L|_Y^{j} \]
Enfin, $\widetilde\omega_L^j=0$ si $j>r$
puisque $\widetilde\omega_L$ provient d'une $(1,1)$-forme sur~$T_\trop$ 
par~$\tau$.

On a vu que $\widetilde\omega_L$ descend à~$A$
en la G-forme~$\omega_L$,  si bien que $\widetilde\omega_L|_Y^j$
descend à~$X$ en la G-forme $\omega_L^j|_X$.
Les courants $c_1(p^*\overline L|_X)$ descend à~$X$, par construction.
Par suite, le courant $c_1(q^*\overline M|_Y)$
descend à~$X$, et chaque 
$c_1(q^*\overline M|_Y)^{i}$ descend en un courant~$S_i$ sur~$X$.
On a alors 
\[ c_1(\overline L|_X)^e 
 = \sum_{j=0}^e \binom ej S_{e-j} \wedge \omega_L|_X^j.\]
On a vu que $S_{e-j}=0$ si $e-j>b$, et que $\omega_L|_X^j=0$ si $j>r$;
par suite, il suffit de conserver ces courants~$S_j$
pour $j$ dans l'intervalle $[\sup(e-b,0),\inf(r,e)]$.
Cela conclut la démonstration de la première partie du corollaire.

La seconde assertion est une égalité de mesures;
il suffit de la prouver pour une fonction~$f$ qui est lisse.
En appliquant la proposition~\ref{prop.paralineaire-atlas}
aux courants $\binom dj S_{d-j}$ et aux G-formes $f\omega_L^j$,
on obtient des polyèdres calibrés~$P_j$, où $P_j$ est de dimension~$j$,
tels que $\langle \binom dj S_{d-j}, f\omega^j_L\rangle
= \int_{P_j} f\omega_L|_{P_j}^j$ pour tout~$j$, d'où la proposition.
\end{proof}

% \begin{coro}
% La mesure~$c_1(\bar L)^n$ sur~$A^\an$ est l'unique mesure
% de Haar de volume $\deg_L(A)$ sur~$S(A)$.
% \end{coro}
% \begin{proof}
% D'après le corollaire précédent, l'image réciproque de la mesure~$c_1(\bar L)^n$
% sur~$A^\an$ est une mesure de Haar sur~$S(T)$. Il en résulte
% que $c_1(\bar L)^n$ est une mesure de Haar sur~$S(A)=p(S(T))$.
% D'après le théorème~\ref{theo.MA-lisse-masse},
% son volume est égal à~$\deg_L(A)$,
% d'où le corollaire.
% \end{proof}
%  
% \section{Cas des variétés abéliennes générales}
% Il faudrait voir si on peut donner une formule pour
% la mesure canonique dans le cas général (non totalement dégénéré).
% Cela doit combiner le cas précédent avec celui de bonne réduction...
% 
% Il faudrait aussi voir si l'on peut comprendre la mesure
% $c_1(\bar L)^p \delta_X$ lorsque $X$ est une sous-variété algébrique
% de dimension~$p$ de~$A$, voire un domaine analytique de~$A$.
% Gubler l'écrit comme somme de mesures de Lebesgue
% sur des cellules de dimension~$p$. 

\section{Positivité des métriques canoniques}

Soit $A$ une $k$-variété abéloïde.

\begin{prop}\label{prop.canonique-positive}
Supposons que la valeur absolue de~$k$ n'est pas triviale.
Soit~$L$ un fibré en droites sur~$A$.
Reprenons les notations $E,p,q,B,M$ et $q_L$ introduites ci-dessus.
Les assertions suivantes sont équivalentes.
\begin{enumerate}\def\theenumi{\roman{enumi}}\def\labelenumi{(\theenumi)}
\item La métrique cubiste de~$L$ est psh;
\item On a $c_1(\overline L)\geq 0$;
\item La forme quadratique de~$q_L$ est positive 
et le fibré en droites résiduel~$\mathfrak M_s$ 
sur la $\tilde k$-variété abélienne~$\mathfrak B_s$ 
est numériquement effectif.

Lorsque $A$ est algébrique, elles sont équivalentes à la condition:
\item Le fibré en droites~$L$ est nef.
\end{enumerate}
\end{prop}
\begin{proof}
L'implication (i)$\Rightarrow$(ii) est standard.

Démontrons l'implication (iii)$\Rightarrow$(i).
Soit $y\in E$; comme le fibré résiduel de $q^*\overline M$ en~$y$
s'identifie au fibré résiduel de~$\overline M$ en~$q(y)$,
il est numériquement effectif. 
D'après le corollaire~\ref{coro.PL-psh-c1pos},
le fibré métrisé~$q^*\overline M$ est psh.
La fonction~$\tau$ est harmonique et strictement~\pl;
la fonction~$q_L$ est convexe sur~$\R^n$ puisque
c'est la somme d'une forme quadratique positive et
d'une forme affine.
D'après le corollaire~\ref{coro.psh-harmo},
la fonction $\tau^* q_L$ est psh sur~$E$. 
Puisque le fibré métrisé~$p^*\overline L$ 
s'identifie au produit de $q^*\overline M$ et du fibré
trivial muni de la métrique donné par la fonction $\tau^*q_L$,
il est psh. Comme $p$ a localement des sections,
le fibré métrisé~$\overline L$ est également psh.

Il reste à démontrer l'implication (ii)$\Rightarrow$(iii).
Prouvons d'abord que la partie quadratique de~$q_L$ est positive.
Soit $b$ un point rigide de~$B$; le fibré résiduel~$\overline M(b)$
est trivial, donc il existe un voisinage ouvert~$U$
de~$b$ dans~$B$ sur lequel le fibré métrisé~$\overline M$ est trivial.
L'extension~$E$ est G-localement trivialisable; 
puisque $b$ est un point
rigide, on peut également supposer qu'elle est trivialisable au-dessus
de~$U$. Il existe alors un isomorphisme $q^{-1}(U)\simeq U\times\gm^r$
qui identifie~$\tau$ à la tropicalisation du moment fourni
par la seconde projection.

Sur $q^{-1}(U)$, on a donc $c_1(p^*\overline L)=\tau^*(\frac12 \ddc q_L)$.
Puisque $c_1(\overline L)\geq 0$, il en est de même
de $c_1(p^*\overline L)$, si bien que $\tau^*(\frac12 \ddc q_L)$
est un courant positif.
Comme c'est une forme sur~$U$, cette forme est positive;
comme l'image de~$q^{-1}(U)$ par~$\tau$ est égale à~$\R^n$, la $(1,1)$-forme
$\ddc q_L$ sur~$\R^n$ est positive, ce qui signifie exactement
que la partie quadratique de~$q_L$ est positive.

Il reste à démontrer, sous l'hypothèse que $c_1(\overline L)$
est un courant positif,
que le fibré en droite résiduel~$\mathfrak M_s$ sur~$\mathfrak B_s$ 
est numériquement effectif.
D'après le corollaire~\ref{coro.PL-psh-c1pos},
il suffit de démontrer que le courant $c_1(\overline M)$ sur~$B$
est positif.
Soit $\alpha$ une forme lisse positive 
de type~$(n-1,n-1)$ sur~$B$;
démontrons que $\langle c_1(\overline M),\alpha\rangle \geq 0$.

Soit $\omega$ la $(1,1)$-forme standard sur~$\R^r$,
$\omega=\frac12\ddc \norm x^2= \sum_{i,j} \di x_i \wedge \dc x_j$.
Comme $\omega^r$ est positive, 
le courant $c_1(\overline L)\wedge \tau^*\omega^r$ est positif.
On a 
\[ c_1(\overline L)\wedge \tau^*\omega^r 
= c_1(q^*\overline M)\wedge \tau^*\omega^r 
+ \tau^*(\frac12\ddc q_L) \wedge \tau^* \omega^r
=  c_1(q^*\overline M)\wedge \tau^*\omega^r   \]
puisque le second terme, une $(r+1,r+1)$-forme issue de~$\R^r$, s'annule.
Ainsi, le courant $c_1(q^*\overline M)\wedge \tau^*\omega^r$
sur~$E$ est positif.

% Considérons $q_*(uT)$, où $u$ est lisse à support compact,
% positive; c'est encore un courant positif.
% La formule de projection devrait dire que c'est
% $c_1(\overline M) \wedge q_*(u \tau^*\omega^r)$.
% 
% Supposons pour le moment que $\tau$ est lisse.

Soit $u$ une fonction lisse et positive, à support compact, sur~$E$.
La forme $u\cdot q^*\alpha$ est positive sur~$E$.
Puisque le courant $c_1(\overline M)\wedge \tau^*\omega^r$ est positif,
il vient
\[ \langle c_1(\overline M) \wedge \tau^*\omega^r , 
u \cdot q^*\alpha \rangle \geq 0. \]
On récrit cette inégalité comme
l'intégrale de la G-forme $u\cdot\tau^*\omega^r \wedge q^*\alpha$
contre le courant paralinéaire  $c_1(\overline M)$:
\[ \langle c_1(\overline M), u \cdot \tau^*\omega^r \wedge q^*\alpha \rangle
\geq 0 ,\]
et on applique la formule de projection 
du théorème~\ref{theo.formule-projection}.
Il vient donc
\[ \langle c_1(\overline M), v \alpha \rangle \geq 0, \]
où $v\colon B\to \R$ est définie par
\[ v(b) = \int_{E_b} u \tau^*\omega^r ,\]
puisque toutes les fibres de~$q$ sont de dimension~$r$. 

Par approximation uniforme, cette inégalité vaut
pour toute fonction continue positive~$u$ sur~$E$.
Considérons en particulier une fonction~$u$ de la forme $u_0\circ\tau$,
où $u_0$ est une fonction lisse à support compact sur~$\R^r$.
Alors, 
\[ v(b)=\int_{E_b} \tau^*u_0 \tau^*\omega^r
= \int_{\R^r} u_0 \omega^r, \]
est indépendant de~$b$,
et est strictement positif si $u_0\neq 0$.
Cela prouve que $\langle c_1(\overline M),\alpha\rangle\geq 0$
et conclut la démonstration de l'équivalence des
propriétés~(i), (ii) et~(iii).

Supposons maintenant que $A$ est algébrique
et démontrons qu'elles équivalent au caractère~nef de~$L$.

Supposons que $L$ est nef et démontrons que $c_1(\overline L)\geq 0$.
Écrivons $L^{\otimes 2}=(L\otimes [-1]^*L)\otimes (L\otimes [-1]^*L^{-1})$
comme le produit de deux fibrés en droites~$L_+$ et~$L_-$,
où $L_+$ est symétrique et $L_-$ est antisymétrique. 
En particulier, $L_-$ est algébriquement équivalent à~$0$,
de sorte que $c_1(\overline L_-)=0$.
Soit $N$ un fibré en droites ample et symétrique sur~$A$. 
Pour tout entier naturel~$t$, le fibré
en droites $L_+^{\otimes t}\otimes N$ sur~$A$ est alors ample,
en vertu du critère de Nakai--Moishezon.
Il possède en particulier des métriques psh; comme
il est symétrique, ses métriques cubistes sont obtenues
par le procédé du lemme~\ref{lemm.tate} et sont par conséquent psh. 
Ainsi, $t c_1(\overline L_+)+c_1(\overline N)\geq 0$.
En divisant par $t$ et en faisant tendre~$t$ vers~$+\infty$,
on obtient que $c_1(\overline L_+)\geq 0$. Par suite, $c_1(\overline L)\geq 0$.
Cela prouve l'implication (iv)$\Rightarrow$(ii) lorsque $A$ est algébrique.

Enfin, supposons~$\overline L$ psh et démontrons que $L$ est nef.
Soit $C$ une courbe irréductible dans~$A$.
D'après le théorème~\ref{theo.MA-lisse-masse},
on a $c_1(L)\cap [C] = \int_{C^\an} c_1(\overline L|_{C^\an})$.
Par ailleurs, $\overline L|_{C^\an}$ est psh,
de sorte que 
$c_1(\overline L|_{C^\an})\geq 0$. Par suite, $c_1(L)\cap [C]\geq 0$.
\end{proof}

\begin{coro}\label{coro.canonique-triviale}
Supposons que la valeur absolue de~$k$ n'est pas triviale.
Soit $L$ un fibré en droites sur~$A$.
Reprenons les notations $E,p,q,B,M$ et $q_L$ introduites ci-dessus.
Les assertions suivantes sont équivalentes.
\begin{enumerate}\def\theenumi{\roman{enumi}}\def\labelenumi{(\theenumi)}
\item Le fibré en droites~$L$ est invariant par translation;
\item On a $c_1(\overline L) = 0$;
\item On a $q_L=0$ et 
le fibré en droites résiduel~$\mathfrak M_s$ 
sur la $\tilde k$-variété abélienne~$\mathfrak B_s$ 
est numériquement trivial.

Lorsque $A$ est algébrique, elles sont équivalentes à la condition:
\item Le fibré en droites~$L$ est numériquement trivial.
\end{enumerate}
\end{coro}
\begin{proof}
Appliquée à~$L$ et~$L^{-1}$,
la proposition précédente
entraîne l'équivalence de~(ii), (iii), ainsi que~(iv) 
lorsque $A$ est algébrique.
Par ailleurs, il résulte de 
la proposition~\ref{prop.unif-pic0} que
les assertions~(i) et~(iii) sont équivalentes.
\end{proof}

\begin{rema}
Supposons que la valuation de~$k$ est triviale.
Dans ce cas, la transposition de la proposition~\ref{prop.canonique-positive}
et de son corollaire se heurte à une bonne théorie des modèles formels
d'espaces $k$-analytiques propres. Il ne semble par exemple pas connu,
bien que plausible, que toute variété abéloïde sur un corps trivialement
valué soit abélienne, et on se demande bien, si elle ne l'était pas,
ce que serait la fibre spéciale d'un éventuel modèle formel.

Cependant, si on se restreint aux variétés abéliennes, les propriétés~(i),
(ii) et~(iv) de la proposition~\ref{prop.canonique-positive} restent équivalentes.
Rappelons en effet (remarque~\ref{fibvaltriv}) 
que si $A$ est une variété abélienne et $L$ un fibré en droites
sur~$A$, alors
$(A,L)$ est essentiellement un modèle formel de~$(A^\an,L^\an)$,
si bien que  $L^\an$ possède une métrique formelle naturelle.
Cette métrique est cubiste, par le théorème du cube pour~$L$.
L'équivalence des propriétés~(i), (ii), (iv)
de la proposition~\ref{prop.canonique-positive}
pour $(A^\an,L^\an)$ 
est alors conséquence du corollaire~\ref{coro.PL-psh-c1pos}.

On en déduit également, toujours lorsque $A$ est une variété abélienne, que 
 les assertions~(i), (ii) et~(iv) du corollaire~\ref{coro.canonique-triviale}
pour $(A^\an,L^\an)$ sont équivalentes.
\end{rema}

\begin{rema}
Reprenons les notations du corollaire~\ref{coro.structure-mesure}.
Le calcul de la mesure $c_1(\overline L|X)^d$ 
fait a priori intervenir  des sous-espaces calibrés~$P_j$
de toutes dimensions~$j$
telles que $\sup(d-b, 0) \leq j\leq \inf(r,d)$.

a) La G-forme $\omega_L|_{P_j}$ se déduit de $\ddc(\frac12q_L)$
par l'application linéaire par morceaux $\tau|_{P_j}$.
C'est donc G-localement une (1,1)-forme constante sur~$P_j$.
Par suite, la mesure~$\mu_j(L)$ sur~$P_j$ déduite
de $\omega_L^{j}$ est 
G-localement multiple constant d'une mesure de Lebesgue sur~$P_j$.

Le support de cette mesure $\mu_j(L)$ est en particulier
un sous-espace polyédral de~$P_j$, purement de dimension~$j$, 
sur lequel $\tau$ est une immersion par morceaux.

b) Démontrons que si $L$ et $L'$ sont amples, alors
le support de $\mu_j(L)$ coïncide, pour tout~$j$, 
avec celui de $\mu_j(L')$. (Cette remarque n'a bien sûr d'intérêt
que si $A$ est algébrique…)

Il existe un entier~$p\geq 1$ tel que $L^p \otimes (L')^{-1}$
est ample. D'après la proposition~\ref{prop.canonique-positive}, 
on a $pc_1(\overline L|_X)\geq c_1(\overline L'|_X)$, ces deux courants étant positifs. On en déduit
que la mesure $c_1(\overline L|_X)^d$ domine un multiple
de la mesure $c_1(\overline L'|_X)^d$, et inversement par symétrie.
La description polyédrale de ces deux mesures, 
comme sommes des mesures $\mu_j(L)$ (resp. $\mu_j(L')$)
portée par des sous-espaces paralinéaires de dimensions distinctes, 
dont deux à deux orthogonales, entraîne l'égalité des supports.

c) Dans~\cite{gubler2010},
Gubler construit un exemple où les supports des mesures~$\mu_j(L)$
ne sont vides pour aucune valeur de~$j$ 
telle que $\sup(d-b, 0) \leq j\leq \inf(r,d)$.
\end{rema}

% \begin{rema}
% \begin{enumerate}
% \item
% Supposons que $L$ soit symétrique.
% Si $L$ possède une métrique~psh, alors sa métrique
% cubiste $c_1(\overline L)$ est positive (lemme~\ref{lemm.tate}).
% 
% \item
% Supposons que $L$ soit antisymétrique.
% Dans ce cas, $c_1(\overline L)=0$.
% 
% Lorsque $k$ est discrètement valué,
% une construction de Künnemann,
% \cite[lemme~8.1]{kunnemann96}
% et~\cite[théorèmes~3.5, 3.2]{kunnemann98}
% entraîne qu'il existe un modèle formel propre~$\mathfrak A$
% de~$A$ sur~$k^\circ$, et un modèle formel~$\mathfrak L$  
% de~$L$ sur~$\mathfrak A$ tel que $\mathfrak L_s$
% soit algébriquement équivalent à~$0$.
% D'après le corollaire~\ref{coro.PL-psh-c1pos},
% la métrique~\pl correspondante est psh,
% de sorte que la métrique cubiste de~$L$ est psh.
% (Voir aussi~\cite{chambert-loir2000b}.)
% 
% Qu'en est-il lorsque la valuation de~$k$ est arbitraire?
% \end{enumerate}
% \end{rema}
% 
 
% * Courbe elliptique, $L=[p]-[q]$, $p-q$ pas de torsion.
% La métrique est de courbure nulle (critère numérique).
% Est-elle psh ?
% 
% Prenons $k$ algébriquement clos.
% On peut écrire $n(p-q)=r_n-o$, 
% $[p]-[q]=\frac1n ([r_n]-[o])$. 
% Est-ce que ça permet de dire quelque chose de la métrique de $L$?

\clearpage
\appendix

\part*{Appendices}

\def\thechapter{\Alph{chapter}}\def\theHchapter{\Alph{chapter}}

\chapter{Compléments d'analyse et de topologie}

\section{Paracompacité} % 

Commençons par rappeler quelques de définition de topologie générale.

\subsection{}
Soit $X$ un espace topologique.

Une famille $(A_i)_{i\in I}$ de parties de~$X$ est dite \emph{localement finie}
si tout point~$x$ de~$X$ possède un voisinage~$V$ tel que 
l'ensemble des~$i\in I$ tels que $V\cap A_i$ est non vide est fini.

Une famille $(B_j)_{j\in J}$ de parties de~$X$
est un \emph{raffinement} d'une famille~$(A_i)_{i\in I}$
si pour tout $j\in J$, il existe $i\in I$ tel que $B_j\subset A_i$.

Une famille $(A_i)_{i\in I}$ de parties de~$X$ est appelée
recouvrement si sa réunion est égale à~$X$,
et recouvrement fermé (resp.\ ouvert, etc.) si ses membres 
sont fermés (resp.\ ouverts, etc.).

\subsection{}
Un espace topologique~$X$ est dit \emph{paracompact}
s'il est séparé et si pour tout recouvrement ouvert de~$X$,
il existe un recouvrement ouvert de~$X$ qui le raffine
et qui est localement fini.

Un espace topologique compact est paracompact.

\subsection{}
Un espace topologique est dit \emph{dénombrable à l'infini}
s'il est réunion dénombrable de parties compactes.

Si $X$ est un espace topologique localement compact (ce qui suppose
qu'il est séparé, par définition), il est paracompact si et seulement
si il est somme disjointe d'espaces topologiques localement compacts
et dénombrables à l'infini. 
En particulier, un espace topologique localement compact et connexe
est paracompact si et seulement s'il est dénombrable à l'infini.

\begin{lemm}\label{comp.fort.paracomp}
Soit $X$ un espace topologique séparé, localement
compact et dont tout point possède une base d'ouverts dénombrables à l'infini.

\begin{enumerate}
\item
Toute partie compacte de~$X$
possède un voisinage ouvert qui est dénombrable à l'infini
et relativement compact.
\item
Toute partie dénombrable à l'infini de~$X$
possède un voisinage ouvert qui est dénombrable à l'infini,
et en particulier paracompact.
% En particulier, toute partie compacte de~$X$ est fortement paracompacte.
\end{enumerate}
\end{lemm}
\begin{proof}
Il suffit de démontrer la première assertion.
Soit $F$ une partie compacte de~$X$. Comme $X$ est localement
compact, $F$ possède donc un voisinage compact~$V$.
Soit $x$ un point de~$F$; il possède un voisinage ouvert~$U_x$ dans~$X$,
contenu dans~$V$  et dénombrable à l'infini.
Puisque~$F$ est compact, il existe une partie finie~$S$
de~$F$ telle que la réunion~$U$ des~$U_x$, pour $x\in S$, contienne~$F$.
Cet ensemble~$U$ est ouvert dans~$X$ et dénombrable à l'infini.
\end{proof}

L'énoncé suivant est un raffinement du théorème~5
de Bourbaki, Topologie générale, chapitre~I, \S9, \no10. 

\begin{lemm}\label{lemm.app-paracompact}
Soit $X$ un espace topologique paracompact et localement compact. 
Soit $\mathscr B$ un ensemble de parties de~$X$
dont les intérieurs forment une base de la topologie de~$X$.
Soit $(U_i)_{i\in I}$ un recouvrement ouvert de~$X$.
Il existe une famille $(V_j)_{j\in J}$ d'éléments de~$\mathscr B$, 
localement finie, raffinant~$(U_i)$, et telle
que la famille $(\mathring V_j)_{j\in J}$ recouvre~$X$.
\end{lemm}
\begin{proof}
On peut supposer que $V$ est connexe.
Il est alors dénombrable à l'infini
et on peut donc l'écrire comme réunion croissnte 
d'une suite~$(W_n)$ d'ensembles ouverts 
relativement compacts  telle que $\overline{W_n}\subset W_{n+1}$
pour tout~$n$.
Posons $K_0=\overline W_0$; pour tout entier~$n\geq 1$, posons 
$K_n=\overline W_n\setminus W_{n-1}$; les ensembles~$K_n$ sont compacts.
L'ensemble ouvert $\Omega_n=V_{n+1}\setminus \overline{V_{n-2}}$
est un voisinage de~$K_n$; 
par construction, $\Omega_n$ ne rencontre pas~$\Omega_m$ si $\abs{m-n}\geq 3$.
Pour tout $x\in K_n$, il existe un élément~$V_x$ de~$\mathscr B$
contenu dans un des ensembles~$U_i$ et contenu dans cet ouvert $\Omega_n$
et tel que $x\in\mathring V_x$.
Comme $K_n$ est compact, il existe une partie finie~$J_n$ 
de~$K_n$ telle que les $\mathring V_x$, pour $x\in J_n$, recouvrent~$K_n$.
La famille des~$V_x$, pour $n\in\N$ et $x\in J_n$, convient.
\end{proof}

\section{Séparation, densité et partitions de l'unité}

\subsection{}
Nous démontrons ici un certain nombre de résultats généraux
et relativement standard concernant 
la séparation, la densité de certains sous-espaces de fonctions
continues et les partitions de l'unité par de telles fonctions.

Nous considérons un espace topologique localement compact~$X$
(en particulier séparé) et un sous-faisceau~$\mathscr A$
en $\R$-algèbres
du faisceau des fonctions continues à valeurs réelles sur~$X$.
On suppose que pour tout ouvert~$U$ de~$X$,
tout élément $f\in \mathscr A(U)$ et toute fonction lisse~$\phi$
sur~$\R$ définie au voisinage de~$f(U)$, 
la fonction $\phi\circ f$ appartient à~$\mathscr A(U)$.

Notons que cette hypothèse est vérifiée dans les deux cas suivants :
\begin{itemize}
\item L'espace~$X$ est un espace topologique 
et $\mathscr A$ est le faisceau des fonctions continues sur~$X$;
\item L'espace~$X$ est un espace paralinéaire affiné
ou un espace analytique, plus généralement un espace G-tropical affiné,
et $\mathscr A$ est le faisceau des fonctions lisses sur~$X$.
\end{itemize}

Pour alléger la rédaction, nous appellerons fonctions lisses
les sections du faisceau~$\mathscr A$.

\subsection{}\label{ss.app-sep}
On dit que $\mathscr A(X)$ sépare les points si pour
tout couple~$(x,y)$ de points de~$X$ tels que $x\neq y$,
il existe une section globale~$f$ de~$\mathscr A_X$
telle que $f(x)\neq f(y)$.

On dit que $\mathscr A$ sépare localement les points
si pour tout point~$x$ de~$X$, il existe un voisinage ouvert~$U$
de~$X$ tel que $\mathscr A(U)$ sépare les points de~$U$.

\begin{lemm}
Supposons que $\mathscr A(X)$ sépare les points.
Soit $K$ un sous-ensemble compact de~$X$ et soit $x\in X\setminus K$.
Il existe une fonction lisse sur~$X$ qui vaut~$1$ au voisinage de~$x$
et nulle au voisinage de~$K$.
\end{lemm}
\begin{proof}
On commence par traiter le cas où $K$ est réduit à un point~$y$.
Par hypothèse,
il existe une fonction lisse~$f$ sur~$X$ telle que $f(x)\neq f(y)$.
On choisit une fonction lisse~$u$ sur~$\R$ qui est constante égale à~$0$
au voisinage de~$f(x)$ et constante égale à~$1$ au voisinage de~$y$;
l'existence de telles fonctions est classique.
La fonction $u\circ f$ sur~$X$ est lisse et convient.

D'après ce qui précède, pour tout $y\in K$, il existe une fonction lisse~$f_y$
sur~$X$ valant~$1$ au voisinage de~$x$ 
et nulle dans un voisinage ouvert~$V_y$ de~$y$. 
Soit~$S$ une partie finie de~$K$ telle que
les~$V_y$, pour $y\in S$, recouvrent~$K$;
la fonction $\prod_{y\in S} f_y$ convient.
\end{proof}

\begin{coro}\label{coro.app-lisse-separe}
On suppose que $\mathscr A$ sépare localement les points.
Soit $K$ une partie compacte de~$X$ et soit $U$ un voisinage ouvert de~$K$.
Il existe une fonction lisse sur~$X$ qui vaut~$1$ au voisinage de~$K$
et dont le support est compact et contenu dans~$U$.
\end{coro}
\begin{proof}
On commence par traiter le cas où le compact~$K$ est réduit à un point~$x$.
Soit $V$ un voisinage compact de~$x$ tel que $\mathscr A(\mathring V)$
sépare les points de~$\mathring V$.
L'ensemble~$F=V\setminus (\mathring V\cap U)$ est une partie fermée de~$X$
contenue dans~$V$. Soit $f$ une fonction lisse sur~$V$
qui vaut~$1$ au voisinage de~$x$ et qui est nulle au voisinage de~$F$.
Soit $g$ la fonction sur~$X$ qui coïncide avec~$f$ sur~$V$
et qui est nulle en dehors de~$V$. Elle est lisse sur l'intérieur de~$V$,
sur l'ouvert $X\setminus V$ ainsi que sur un voisinage ouvert de~$F$ dans~$X$;
ces trois ouverts recouvrent~$X$. La fonction~$g$ est donc lisse.
Son support est contenu dans~$U$; comme il est contenu dans~$V$,
il est aussi compact.

Traitons maintenant le cas général. Pour tout $x\in K$,
soit $f_x$ une fonction lisse sur~$X$ qui vaut~$1$ au voisinage de~$x$
et dont le support est compact et contenu dans~$U$.
Quitte à remplacer~$f_x$ par son carré, on suppose que cette fonction
est positive ou nulle. Comme $K$ est compact,
la somme~$g$ d'une sous-famille finie de ces fonctions est
strictement positive en tout point de~$K$.
Elle est lisse et son support est compact et contenu dans~$U$.
Soit $\phi$ une fonction lisse de~$\R$ vers~$[0;1]$,
nulle en~$0$ et égale à~$1$ sur un voisinage de~$g(K)$;
la fonction $f=\phi\circ g$ convient.
\end{proof}

\begin{theo}[Stone--Weierstraß] \label{theo.app-sw}
On suppose que $\mathscr A$ sépare localement les points.
Soit $U$ un ouvert de~$X$ et
soit $f$ une fonction continue à valeurs réelles sur~$X$
dont le support est compact et contenu dans~$U$.
Soit $a$ et $b$ des nombres réels  
tels que $a\leq f(x)\leq b$ pour tout $x\in X$.

Pour tout~$\eps>0$, il existe une fonction lisse~$g$ sur~$X$
dont le support est compact, contenu dans~$U$,
et telle que $a\leq g(x)\leq b$ et $\abs{f(x)-g(x)}<\eps$ pour tout $x\in X$.
\end{theo}
\begin{proof}
Soit $X'$ le compactifié d'Alexandroff de~$X$; comme $X$
est localement compact, c'est un espace topologique compact.
Soit $\mathscr S$ l'ensemble des fonctions réelles
sur~$X'$ qui sont la somme d'une fonction constante
et d'une fonction lisse à support compact sur~$X$ (prolongée par~$0$
au point à l'infini).
C'est une sous-algèbre  unitaire de l'algèbre des fonctions continues
sur~$X'$; elle sépare les points en vertu du corollaire~\ref{coro.app-lisse-separe}.
Prolongée par~$0$ au point à l'infini,
la fonction~$f$ définit une fonction continue~$f'$ sur~$X'$.
D'après le théorème de Stone--Weierstra\ss, il existe
un nombre réel~$c$ et une fonction lisse à support compact
sur~$X$, disons~$g$, tels que $\abs{f'-g-c}<\eps/4$.
En considérant cette inégalité au point à l'infini,
on a $\abs{c}<\eps/4$, si bien que $\abs{f-g}<\eps/2$ en tout point.

Soit $h\colon X\ra[0,1]$ une fonction lisse sur~$X$
dont le support est compact et contenu
dans~$U$ et qui vaut~$1$ au voisinage du support de~$f$.
La fonction $hg$ est lisse sur~$X$, son support est
compact et contenu dans~$U$.
Pour $x$ en dehors du support de~$f$, on a
$\abs{f(x)-h(x)g(x)}=\abs{h(x)g(x)}
\leq\abs{g(x)}=\abs{f(x)-g(x)}<\eps/2$.
Si $x$ appartient au support de~$f$,
on a $\abs{f(x)-h(x)g(x)}=\abs{f(x)-g(x)}<\eps/2$.

Considérons maintenant une fonction~$\mathscr C^\infty$
de~$\R$ dans~$\R$
vérifiant les conditions 
$\theta(x)=a$ pour $x\leq a$, $\theta(x)=b$ pour $x\geq b$,
$\abs{\theta(x)-x}\leq\eps/2$ pour $x\in [a;b]$ et,
si $a\leq 0\leq b$, $\theta(0)=0$.

Démontrons que la fonction $\theta\circ hg$ répond aux conditions
de l'énoncé.
Elle est lisse, prend ses valeurs dans l'intervalle~$[a,b]$
et, pour tout $x\in X$, on a 
\[ \abs{\theta\circ hg(x)-f(x)}\leq \abs{\theta \circ hg(x)-hg(x)}
+ \abs{hg(x)-f(x)} < \eps/2 + \eps/2 \leq \eps. \]

Il reste à vérifier que le support de~$\theta\circ hg$ 
est compact et contenu dans~$U$.
Si $X$ est compact et $X=U$, la condition à vérifier est vide.
Sinon,  $f$ s'annule, de sorte que $a\leq 0 \leq b$, d'où $\theta(0)=0$.
Par suite, le support de~$\theta\circ hg$ est contenu dans celui de~$g$,
il est donc compact et contenu dans~$U$.
\end{proof}

\begin{coro}\label{coro.app-sw}
On suppose que $\mathscr A$ sépare localement les points.

\begin{enumerate}
\item L'ensemble des fonctions lisses à support compact sur~$X$
est dense dans l'espace~$\mathscr C_\cpct(X)$
des fonctions continues à support compact sur~$X$, 
muni de la topologie limite inductive des espaces
de fonctions à support dans un compact donné.
\item L'ensemble des fonctions lisses à support compact sur~$X$
est dense dans l'espace~$\mathscr C(X)$ des fonctions continues sur~$X$
muni de la topologie de la convergence uniforme sur tout compact.
\end{enumerate}
\end{coro}
\begin{proof}
\begin{enumerate}
\item
Soit $\Omega$ une partie ouverte non vide de~$\mathscr C_\cpct(X)$;
soit $K$ une partie compacte de~$X$ telle que $\Omega$
rencontre~$\mathscr C_K(X)$ et soit $f\in\Omega\cap\mathscr C_K(X)$.
Soit $V$ un voisinage ouvert de~$K$ dans~$X$, relativement compact.
Par définition de la topologie de~$\mathscr C_\cpct(X)$,
$\Omega\cap\mathscr C_{\bar V}(X)$ est ouvert dans~$\mathscr C_{\bar V}(X)$.
Il existe donc $\eps>0$ tel que la boule de~$\mathscr C_{\bar V}(X)$,
de centre~$f$ et de rayon~$\eps$, soit contenue dans~$\Omega$.
D'après le théorème~\ref{theo.app-sw}, cette boule contient
une fonction lisse.

\item
Soit $\Omega$ une partie ouverte non vide de~$\mathscr C(X)$
et soit $f\in\Omega$. Soit $K$ une partie de~$X$ et $\eps>0$;
démontrons qu'il existe une fonction lisse à support compact sur~$X$,
$g$, telle que $\norm{g-f}_K\leq \eps$.
Soit $V$ un voisinage relativement compact de~$K$ de~$X$
et soit $\lambda$ une fonction lisse à support contenu dans~$V$
qui vaut~$1$ sur~$K$
D'après le théorème~\ref{theo.app-sw}, il existe une fonction~$g$,
lisse à support contenu dans~$V$, telle que $\norm{g-\lambda f}\leq \eps$.
En particulier, $\norm{g-f}_K\leq\eps$.\qedhere
\end{enumerate}
\end{proof}

\subsection{}
Soit $X$ un espace topologique, soit $(U_i)_{i\in I}$
un recouvrement ouvert de~$X$. 
Une partition de l'unité\index{partition de l'unité} 
subordonnée au recouvrement~$(U_i)$ est une famille
localement finie~$(f_i)_{i\in I}$
de fonctions continues sur~$X$
telle que pour tout~$i\in I$, le support de~$f_i$ soit 
contenu dans~$U_i$. La mention « localement finie »
signifie que tout point~$x\in X$ possède un voisinage~$V$
sur lequel seul un nombre fini des~$f_i$ ne sont pas
identiquement nulles.

Si $X$ est paracompact, il existe une partition de l'unité
subordonnée à tout recouvrement ouvert de~$X$
(Bourbaki, \emph{Topologie générale}, IX, \S4, \no4, p.~40,
corollaire~1). 

La remarque suivante nous permet, pour construire
une partition de l'unité subordonnée à un recouvrement,
de le raffiner autant que nécessaire.
Soit $(U_i)_{i\in I}$ et $(V_j)_{j\in J}$ deux recouvrements
ouverts de~$X$. Supposons le recouvrement~$(V_j)$
plus fin que le recouvrement~$(U_i)$, au sens où pour tout~$j\in J$,
il existe $i\in I$ tel que $V_j\subset U_i$.
Soit alors $(g_j)_{j\in J}$ une partition de l'unité
subordonnée au recouvrement~$(V_j)$.
Pour tout $j\in J$, soit $a(j)$ un élément de~$I$
tel que $V_j\subset U_{a(j)}$.
Pour tout $i\in I$, posons $f_i=\sum_{a(j)=i} g_j$;
la famille $(f_i)_{i\in I}$ est une partition de l'unité
subordonnée au recouvrement~$(U_i)_{i\in I}$.

\begin{prop}[Partitions de l'unité lisses]
\label{prop.app-partition.1.lisse}
On suppose que $X$ est paracompact et que $\mathscr A$ sépare localement
les points.
Soit $(U_i)_{i\in I}$ un recouvrement ouvert de~$X$.
Il existe une partition de l'unité\index{partition de l'unité} 
subordonnée à~$(U_i)$
formée de fonctions lisses sur~$X$.
\end{prop}
Lorsque $\mathscr A$ est le faisceau des fonctions continues,
on retrouve ainsi l'énoncé classique qu'un espace paracompact 
admet des partitions de l'unité. 
Bien sûr, notre démonstration s'inspire de celle-là.
Pour une réciproque partielle qui indique la nécessité
de l'hypothèse de paracompacité, 
voir la proposition~\ref{prop.partition-paracompact}.

\begin{proof}
On peut supposer que $X$ est connexe.
Comme $X$ est paracompact et localement compact,
il existe un recouvrement localement  fini formé d'ouverts
relativement compacts dans~$X$ qui raffine le recouvrement~$(U_i)$.
Cela nous autorise à supposer que le recouvrement~$(U_i)$
est déjà de cette forme.
Puisque $X$ est connexe et paracompact, il est
dénombrable à l'infini ; on peut donc indexer par 
une partie de l'ensemble des entiers naturels 
l'ensemble d'indices du recouvrement~$(U_i)$.

Considérons une partition de l'unité continue $(g_i)$ subordonnée 
au recouvrement~$(U_i)$. 
Pour tout~$i$, le support de~$g_i$ est compact.
Il existe donc une fonction lisse~$h_i$ sur~$X$, à support compact
contenu dans~$U_i$, telle que $\abs{h_i-g_i}<1/2^{i+2}$.
Posons $h=\sum h_i$; c'est la somme d'une famille 
localement finie donc fonctions lisses, donc c'est une fonction lisse.
Pour $x\in X$, 
\[ h(x) = \sum_{i} h_i(x)  
\geq \sum_i g_i(x) - \sum_i 2^{-i-2} \geq 1 - \sum_{i=0}^\infty 2^{-i-2}
= \dfrac12. \]

Posons $f_i=h_i/h$; c'est une fonction lisse sur~$X$
dont le support est contenu dans~$U_i$.  On a $\sum f_i=1$.
\end{proof}

\begin{coro}
\label{prop.app-trop-partition.1.lisse-compact}
Soit $F$ une partie fermée de~$X$.
Supposons que $F$ possède un voisinage ouvert~$U$
qui est paracompact et tel que $\mathscr A|_U$ sépare localement
les points.
Soit $(U_i)_{i\in I}$ un recouvrement de~$F$ par des ouverts de~$X$.
Il existe une famille localement finie $(\theta_i)$ 
de fonctions lisses positives sur~$X$,
telle que pour tout~$i$, le support de~$\theta_i$ soit 
compact et contenu dans~$U_i$
et telle que $\sum\theta_i$ vaille~$1$ au voisinage de~$F$.
\end{coro}
\begin{proof}
Soit $U$ un voisinage ouvert de~$F$ dans~$X$ qui est paracompact
tel que $\mathscr A|_U$ sépare localement les points.
Soit $(V_j)_{j\in J}$ un recouvrement ouvert de~$F$
par des ouverts relativement compacts
de~$U$ qui est plus fin que le  recouvrement~$(U_i)_{i\in I}$.
Choisissons un élément~$0$ qui n'appartient pas à~$J$ ;
posons $J'=J\cup\{0\}$ et $V_0=U\setminus F$. 
D'après la proposition~\ref{prop.app-partition.1.lisse}
appliquée à l'espace paracompact~$U$ 
au faisceau~$\mathscr A|_U$
et au recouvrement ouvert~$(V_j)_{j\in J'}$,
il existe une partition de l'unité $(g_j)_{j\in J'}$
formée de fonctions lisses sur~$U$
qui est subordonnée  au recouvrement ouvert~$(V_j)_{j\in J'}$. 

Pour tout $j\in J$, soit $g'_j$ la fonction sur~$X$ qui coïncide avec~$g_j$
sur~$V_j$ et est identiquement nulle en dehors de~$V_j$.
Comme le support de~$g_j$ est compact et contenu dans~$V_j$,
il est fermé dans~$X$. Par suite, la fonction~$g'_j$ est lisse sur~$X$.

Pour tout $j\in J$, choisissons un élément $a(j)\in I$
tel que $V_j\subset U_{a(j)}$. Pour tout $i\in I$,
posons $f_i=\sum_{a(j)=i} g'_j$.
La famille $(f_i)_{i\in I}$ convient.
\end{proof}

\begin{coro}\label{coro.app-fin}
On suppose que $X$ est paracompact.
Si le faisceau  $\mathscr A$ sépare
localement les points, il est fin.
\end{coro}

\begin{prop}\label{prop.partition-paracompact}
Soit $X$ un espace topologique séparé dont les composantes
connexes sont ouvertes, par exemple connexe ou localement connexe.
Soit $(f_i)$ une partition de l'unité sur~$X$
formée de fonctions continues à supports compacts.
Alors, $X$ est localement compact et paracompact.
\end{prop}
\begin{proof}
Pour tout~$i$, soit~$U_i$ l'ensemble des~$x\in X$ tels 
que $f_i(x)>0$, soit~$F_i$ son adhérence; par hypothèse, $F_i$ est compact.
Comme $\sum f_i=1$, les ouverts~$U_i$ recouvrent~$X$;
cela entraîne que $X$ est localement compact.

L'espace~$X$ est somme  de ses composantes connexes, car elles
sont ouvertes.
Pour démontrer qu'il est paracompact, nous pouvons
donc supposer que $X$ est connexe et non vide; 
il s'agit alors de démontrer
qu'il est dénombrable à l'infini.

Soit~$\sim$ la relation d'équivalence sur~$X$ la plus fine
telle que $x\sim y$ s'il existe $i\in I$ tel que $\{x,y\}\subset U_i$.
Comme les~$U_i$ sont ouverts, les classes d'équivalences sont ouvertes.
Comme $X$ est connexe et non vide, il n'y a qu'une seule classe d'équivalence.
Soit maintenant~$x\in X$ et soit $i\in I$. 
Construisons par récurrence une suite croissante $(I_n)$ de parties finies
de~$I$. On pose $I_0=\{i\}$; supposons que $I_n$ est défini.
Soit $F$ la réunion des~$F_i$ pour $i\in I_n$; c'est une partie compacte
de~$X$; soit $I_{n+1}$ l'ensemble des~$i\in I$ tels 
que $F\cap U_i\neq\emptyset$. Il contient~$I_n$.
Comme le recouvrement~$(U_i)$ est localement fini,
$I_{n+1}$ est fini. Par hypothèse, tout point~$y$ de~$X$
appartient à l'un des~$F_i$, pour $i\in\cup_{n\geq 0} I_n$.
Cela prouve que $X$ est réunion dénombrable de parties compactes
de~$X$, ainsi qu'il fallait démontrer.
\end{proof}

\section{Rappels de quelques faits de théorie de la mesure}

\subsection{}
On ne considère que des espaces localement compacts (donc, implicitement,
séparés), et localement connexes par arcs (si besoin),
mais on ne peut pas faire d'hypothèse  supplémentaire.
En particulier, ils ne sont pas tous paracompacts (toute
composante est dénombrable à l'infini, ici).
Exemples de la droite projective et du plan projectif sur $\C((t))$ 
privés du point de Gauß; ils ne sont  pas $\sigma$-compacts,
le second  étant connexe, n'est même pas paracompact.

\subsection{}
Soit $X$ un espace localement compact. 
La théorie de l'intégration sur un tel espace
donne lieu à deux points de vue concurrents
et nos arguments nous obligent apparemment à utiliser l'un
et l'autre des points de vue.

Nous appelerons \emph{mesure de Borel} (positive), ou plus simplement
\emph{mesure} (positive),
une fonction $\sigma$-additive
à valeurs dans~$\R_+\cup\{\infty\}$ sur la  tribu  de Borel.
Une telle mesure sera dite \emph{localement finie} (\resp \emph{finie})
si elle prend une valeur finie en tout compact (\resp sur~$X$).
Elle sera dite \emph{régulière} si la mesure d'une partie
borélienne est la borne supérieure des mesures de ses
sous-ensembles compacts.
Une \emph{mesure signée} est la différence de deux mesures finies;
une mesure signée est dite régulière si ses parties positive
et négative le sont.

On appelle \emph{mesure de Radon} sur~$X$ une forme linéaire continue
sur l'espace vectoriel~$\mathscr C_c(X)$ 
des fonctions continues à support compact sur~$X$, 
muni de la topologie limite inductive déduite de la filtration
par le support. On déduit du lemme d'Urysohn
que pour qu'une forme linéaire~$\mu$ sur
cet espace soit continue il suffit que l'on ait $\mu(f)\geq 0$
pour toute fonction $f\in\mathscr C_c(X)$ qui est positive;
on parle alors de mesure de Radon positive. 
Toute mesure de Radon sur~$X$  est différence de deux mesures
de Radon positives (\cite{INT-1-4}, chapitre~3, p.~53, th.~2).

\subsection{}
Soit $\mu$ une mesure de Borel positive sur~$X$.

Une fonction $f\colon X\to\R\cup\{\pm\infty\}$ est appelée fonction-plateau
s'il existe une partition dénombrable $(A_i)$ de~$X$
par des parties boréliennes
telle que $f$ soit constante de valeur~$a_i$ sur chaque~$A_i$.
Si $f$ est une fonction borélienne à valeurs dans~$[0,+\infty]$,
on pose  $\mu(f)=\sum a_i \mu(A_i)$,
avec la convention $(+\infty)\times 0=0\times(+\infty)=0$;
c'est un élément de $[0,+\infty]$. 
Si $f\colon X\to[0,+\infty]$
est une fonction positive arbitraire, on définit $\mu^*(f)$
comme la borne supérieure des intégrales $\mu(g)$, où $g$ est une fonction
plateau telle que $0\leq g \leq f$; on dit que $f$ est intégrable
si $\mu^*(f)<\infty$.

On dit qu'une fonction borélienne $f\colon X\to[-\infty,+\infty]$ 
est intégrable 
si les deux fonctions (boréliennes)
$f_+=\max(0,f)$ et $f_-=-\max(0,-f)$ sont intégrables
et l'on pose alors $\mu(f)=\mu(f_+)-\mu(f_-)$.
Si $f$ est positive, on a $f_-=0$, donc $\mu(f)=\mu^*(f)$.
On note $\mathscr L^1(X,\mu)$ l'ensemble des fonctions boréliennes
intégrables. L'application $f\mapsto \mu(f)$ sur $\mathscr L^1(X,\mu)$
est une forme linéaire (\cite{rudin1987}, Théorème~1.32).

\subsection{Théorème de représentation de Riesz}
Si $\mu$ est une mesure de Borel localement finie (positive),
alors l'espace $\mathscr C_c(X)$ est contenu dans $\mathscr L^1(X,\mu)$.
Toute mesure de Borel localement finie donne ainsi lieu 
à une mesure de Radon positive.
Par suite, une mesure signée donne lieu à une mesure de Radon sur~$X$.

Inversement, toute mesure de Radon positive sur~$X$,
est associée à une unique mesure de Borel localement
finie et \emph{régulière} (\cite{pollard-topsoe1975}, exemple~3,
p.~183; de nombreux ouvrages, par exemple~\cite{rudin1987},
utilisent d'autres régularités et ne garantissent pas l'unicité.)
On en déduit que toute forme  linéaire sur $\mathscr C_c(X)$
qui est continue pour la norme uniforme est associée à une mesure signée.

% \subsection{}
% Toute mesure  de Radon positive sur  $X$ est de la forme
% $u\mapsto \int_X u\,\mathrm d\mu$, où $\mu$ est une « mesure extérieure 
% de Radon positive » sur~$X$ uniquement définie : 
% $\mu$ est une application de l'ensemble des boréliens de~$X$
% dans~$[0,+\infty]$ qui vérifie les propriétés suivantes:
% \begin{itemize}
% \item 
% tout point de~$X$ a un voisinage de mesure finie
% (elle est localement finie) ;
% \item la mesure d'une partie est la borne inférieure des mesures des ouverts
% qui la contiennent (elle est extérieurement régulière) ;
% \item la mesure d'une partie ouverte ou de mesure finie
% est la borne supérieure des mesures des compacts qu'elle contient
% (elle est « faiblement » intérieurement régulière, l'adverbe faiblement
% faisant référence à la restriction sur les parties ouvertes ou de mesure finie).\end{itemize}
% Inversement toute mesure extérieure de Radon positive donne lieu
% à une forme linéaire positive sur $\mathscr C_c(X)$.

% \begin{lemm}
% Une partie $A$ d'un espace  topologique localement compact~$X$
% est localement fermée si et seulement si elle est localement compacte.
% \end{lemm}

\subsection{}
Soit $f\colon Y\to X$ une application borélienne.
Soit $\mu$ une mesure  de Borel sur~$Y$. On définit alors
une mesure de Borel $f_*\mu$  sur~$X$
par la formule $f_*\mu(A)=\mu(f^{-1}(A))$ pour toute partie
borélienne~$A$ de~$X$.

Supposons que $f$ est continue;
démontrons alors que $f_*\mu$ est régulière si $\mu$ l'est.
Soit $A$ une partie borélienne de~$X$, soit $r$ un nombre
réel tel que $r<f_*\mu(A)$,  soit $K$ une partie
compacte de~$Y$, contenue dans~$f^{-1}(A)$,  telle que $\mu(K)>r$.
Alors, $f(K)$ est une partie compacte de~$A$ telle
que $f_*\mu(f(K))=\mu(f^{-1}(f(K)))\geq \mu(K)>r$, d'où l'assertion.

Si $f$ est continue et compacte et si $\mu$ est localement
finie, alors $f_*\mu$ est localement finie.

\subsection{}
Soit  $f\colon Y\to X$ une application continue et compacte.
Soit $\mu$ une mesure de Radon sur~$Y$. Pour toute fonction continue
à support compact~$\phi$ sur~$X$, la fonction $\phi\circ f$
sur~$Y$ est continue et à support compact; on définit ainsi une
mesure de Radon $f_*\mu$ sur~$X$ en posant $(f_*\mu)(\phi)=\mu(\phi\circ f)$.

Cette notion coïncide avec l'image des mesures de Borel
lorsque $\mu$ est positive.

\subsection{}
Comme $X$ est localement compact, une
partie~$A$ de~$X$ est localement compacte si et seulement
si elle est localement fermée (intersection d'un ouvert et d'un fermé).

Soit $\mu$ une mesure de Borel sur~$X$ 
et soit $A$ une partie localement compacte de~$X$.
En restreignant~$\mu$ à la tribu des parties boréliennes de~$A$,
on obtient une mesure sur~$A$ appelée, par abus,
la restriction de~$\mu$ à~$A$, et notée $\mu_A$ ou $\mu|_A$.
Si $\mu$ est localement finie (resp. si $\mu$ est régulière), 
il en est de même de~$\mu_A$.

Ainsi, les mesures de Borel positives localement
finies régulières forment 
de façon naturelle un préfaisceau 
sur la catégorie des ensembles localement compacts de~$X$.

D'après le théorème de Riesz, les mesures de Radon positives
forment donc aussi un préfaisceau sur  cette catégorie.
On en déduit qu'il en est de même
des mesures de Radon (non nécessairement positives).

Soit $A$ une partie localement compacte de~$X$ et soit
$f\in\mathscr C_c(X)$ une fonction continue dont le support
est contenu dans~$A$.  Pour toute mesure de Radon~$\mu$
sur~$X$, on a $\mu_A(f|_A)=\mu(f)$.
En effet, l'intégrale d'une fonction
continue se déduit de la connaissance 
de celle des fonctions caractéristiques des boréliens
contenus dans son support.

% \subsection{}
% Les mesures de Radon (positives) forment un faisceau sur l'espace
% topologique~$X$.
% Il suffit de considérer un recouvrement ouvert $(U_i)$ de~$X$,
% et pour tout~$i$, une mesure de Radon (positive) $\mu_i$ sur~$U_i$,
% et de prouver qu'il existe une unique mesure de Radon~$\mu$ sur~$X$ 
% dont la trace
% sur~$U_i$ est égale à~$\mu_i$.
% 
% Soit $K$ une partie compacte de~$X$.
% Soit $V$ un voisinage compact de~$K$.
% Comme $V$ est compact, il est paracompact,
% il existe une partition continue de l'unité~$(f_i)$
% sur~$V$, subordonnée au recouvrement $(U_i\cap V)$.
% Soit aussi $f$ une fonction continue sur~$X$, supportée par~$\mathring V$,
% prenant ses valeurs dans~$[0,1]$, valant~$1$ sur~$K$.
% Pour tout~$i$, la fonction $u_i$ sur~$X$ donnée par $f_if$ sur~$V$
% et par~$0$ hors du support de~$f$ est continue et son
% support est compact et contenu dans~$U_i$. 
% La famille $(u_i)$ est finie, les $u_i$ prennent leurs valeurs dans~$[0,1]$,
% et $\sum u_i$ vaut~$1$ au voisinage de~$K$ et est supportée par~$\mathring V$.
% Pour toute fonction  $\phi\in\mathscr C_c(X)$ dont le support
% est contenu dans~$K$ et toute mesure de Radon~$\mu$
% dont la trace sur~$U_i$ est égale à~$\mu_i$, on a nécécessairement 
% \[ \mu(\phi) = \sum_i \mu_i(\phi u_i). \]
% Inversement, on vérifie que le second membre de l'égalité précédente
% ne dépend d'aucun des choix faits; notons-le~$\mu(\phi)$. L'application~$\mu$ 
% ainsi définie est une mesure de Radon (positive) sur~$X$ 
% dont la trace sur~$U_i$ est égale à~$\mu_i$.
% 
% 

\subsection{}\label{recollement-Borel}
La topologie de Borel  sur l'espace topologique~$X$
est une topologie de Grothendieck ensembliste sur la « catégorie »
des parties localement fermées de~$X$ : une famille $(A_i)_{i\in I}$
est un B-recouvrement de~$U$ 
si pour tout~$x\in U$, il existe une partie finie~$J$ de~$I$
telle que $x\in A_j$ pour tout $j\in J$,
et la réunion des~$A_j$, pour $j\in J$, soit un voisinage de~$x$.
% \footnote{ACL: pourquoi seulement les parties localement fermées,
% et pas les parties localement compactes? --- Réponse: ce sont les mêmes…}
Considérons un B-recouvrement $(A_i)_{i\in I}$ de~$U$;
on dit qu'une partie~$V$ de~$U$ est adaptée au recouvrement $(A_i)$
s'il existe
une partie finie $J\subset I$ tels que pour tout $j\in J$,
$V\cap A_j$ soit fermé dans~$V$ et $V=\bigcup_{j\in J}(V\cap A_j)$.
Tout sous-ensemble d'une partie adaptée l'est encore.

En outre, tout point de~$X$ possède une base de voisinages
compacts adaptés au B-recouvrement $(A_i)_{i\in I}$.
Partons en effet d'une partie finie~$J$ comme dans la définition;
pour tout $j\in J$, soit $V_j$ un voisinage de~$x$ dans~$U$
tel que $A_j\cap V_j$ soit fermé dans~$V_j$. 
Posons
\[ V = \left(\bigcup_{j\in J} A_j\right) \cap \bigcap_{j\in J} V_j. \]
C'est un voisinage de~$x$ dans~$U$, car $J$ est finie,
et tout voisinage de~$x$ qui est contenu dans~$V$ convient.
En particulier, il existe de tels voisinages~$V$ compacts.

\begin{lemm}\label{lemm.app-raffinement}
Toute partie compacte~$V$ de~$X$ admet un recouvrement
fini par des parties compactes 
qui est plus fin que le B-recouvrement~$(A_i\cap V)$.
\end{lemm}
\begin{proof}
Pour $x\in V$, considérons  $V_x$ un voisinage de~$x$ dans~$X$,
compact, et adapté au recouvrement~$(A_i)$; soit $J_x$
une partie finie de~$I$ telle que la famille $(A_j\cap V_x)_{j\in J_x}$ 
soit un recouvrement fermé de~$V_x$.
Soit $S$ une partie finie de~$V$ telle que les~$V_x$, pour $x\in S$,
recouvrent~$X$, et soit $J$ la réunion des~$J_x$, pour $x\in S$.
Les ensembles $A_j\cap V_x\cap V$, pour $x\in S$ et $j\in J$,
forment un recouvrement fini de~$V$ par des parties compactes.
\end{proof}

\subsection{}
Les mesures 
forment un faisceau  sur la catégorie des ensembles boréliens
(munie de la topologie discrète, pour laquelle les recouvrements
sont les familles $(A_i)$ d'ensembles boréliens dont
une sous-famille dénombrable recouvre~$X$).
En effet, si $(A_i)_{i\in I}$ est une telle famille et si, pour tout $i$,
$\mu_i$ est une mesure sur~$A_i$, telles que les restrictions
à~$A_i\cap A_j$ de~$\mu_i$ et $\mu_j$ coïncident, si $i\neq j$,
alors il existe une unique mesure~$\mu$ sur~$X$ dont la restriction
à~$A_i$ est égale à~$\mu_i$.
(Remplacer $I$ par~$\N$, puis $A_i$ par le complémentaire $B_i$
de la réunion $\bigcup_{j<i}A_j\cap A_i$, ce qui fournit
une partition borélienne $(B_i)$ dénombrable de~$X$;
remplacer $\mu_i$ par sa restriction à~$B_i$.)

\begin{lemm}\label{lemm.recollement-compact}
Soit $X$ un espace topologique localement compact.
Pour toute partie compacte~$K$ de~$X$,
supposons donnée une mesure de Borel régulière~$\mu_K$
sur~$K$; on suppose que pour tout couple $(K,L)$
de parties compactes de~$X$ telles que $L\subset K$,
on a $\mu_K|_L=\mu_L$.
Alors, il existe une unique mesure de Borel régulière~$\mu$
sur~$X$ telle que $\mu|_K=\mu_K$ pour toute partie compacte~$K$.
\end{lemm}
\begin{proof}
Soit $\mu$ une mesure de Borel régulière sur~$X$ dont la trace
sur~$K$ est égale à~$\mu_K$ pour toute partie compacte~$K$ de~$X$.
Pour toute partie borélienne~$A$ de~$X$, 
on a nécessairement
\[ \mu(A)  = \sup_{\substack{K\subset A\\\text{$K$ compact}}}
  \mu(K) = \sup_{\substack{K\subset A \\ \text{$K$ compact}}} \mu_K(K). \]
Cela prouve qu'il existe au plus une telle mesure. Pour établir
son existence, définissons $\mu(A)$  
par la formule précédente,
pour toute partie borélienne~$A$ de~$X$.

Soit $K$ une partie compacte de~$X$ et soit $A$ 
une partie borélienne de~$X$ contenue dans~$K$. Par définition,
on a 
\[ \mu(A) =\sup_{\substack{L\subset A \\ \text{$L$ compact}}} \mu_L(L)
 = \sup_{\substack{L\subset A \\ \text{$L$ compact}}} \mu_K(L) 
 = \mu_K(A), \]
car $\mu_K$ est une mesure de Borel régulière.

D'autre part, $\mu$ est une fonction d'ensemble positive et
croissante.  Démontrons qu'elle est dénombrablement additive.
Soit $(A_n)$ une famille dénombrable de  parties boréliennes disjointes,
notons $A$ sa réunion et prouvons que $\mu(A)=\sum \mu(A_n)$.
Soit $K$ une partie compacte de~$A$. On a 
\[ \mu(K) = \mu_K(K) = \sum \mu_K(K\cap A_n) =\sum \mu(K\cap A_n)
 \leq \sum \mu(A_n). \]
On a donc $\mu(A)\leq \sum \mu(A_n)$.
Inversement, traitons le cas où $\sum \mu(A_n)<\infty$. 
Soit $\eps>0$. Soit $N$ tel que $\sum_{n=1}^N \mu(A_n)\geq \sum\mu(A_n)-\eps$.
Pour tout $n\in\{1,\dots,N\}$, soit $K_n$ une partie compacte
de~$A_n$ telle que $\mu(K_n)\geq \mu(A_n)-2^{-n}\eps$
et posons $K=K_1\cup\dots\cup K_N$.
Alors, $K$ est une partie compacte de~$A$ telle que 
\[ \mu_K(K) = \sum_{n=1}^N \mu_K(K_n) =\sum_{n=1}^N \mu(K_n)
\geq \sum_{n=1}^N \mu(A_n)- \eps \geq \sum \mu(A_n)-2\eps. \]
Par suite, $\mu(A)\geq \sum \mu(A_n)$. Le cas $\sum\mu(A_n)=\infty$
est analogue.
\end{proof}

\begin{prop}\label{prop.mesures-B-faisceau}
Les mesures de Borel régulières,
les mesures de Radon positives et
les mesures de Radon
forment des faisceaux pour la B-topologie sur~$X$.
En particulier, elles forment des faisceaux
pour la topologie usuelle.
\end{prop}
\begin{proof}
Soit $(A_i)$ un B-recouvrement de~$X$ et,
pour tout $i$, soit  $\mu_i$ une mesure de Borel régulière sur~$A_i$;
on suppose que pour tout couple~$(i,j)$, 
on a $\mu_i|_{A_i\cap A_j} =\mu_j|_{A_j\cap A_i}$.
Démontrons qu'il existe une unique mesure régulière~$\mu$ sur~$X$
telle que $\mu|_{A_i}=\mu_i$ pour tout $i$.

Traitons d'abord le cas où $X$ est compact.
Quitte à raffiner le B-recouvrement~$(A_i)$, on peut supposer
qu'il s'agit d'un recouvrement fini par des parties compactes
(lemme~\ref{lemm.app-raffinement}).
Si $\mu_i$ était localement finie pour tout~$i$, cela permet
de les supposer finies.
Soit $J$ une partie finie de~$I$
telle que  $A_j$ soit fermé pour tout $j\in J$, et les $A_j$, pour $j\in J$,
recouvrent~$X$.   

D'après le paragraphe~\ref{recollement-Borel},
il existe une unique mesure borélienne~$\mu$ sur~$X$ 
dont la restriction à~$A_i$
est égale à~$\mu_i$, pour tout $i\in I$. 
Démontrons que cette mesure est régulière.
Soit en effet une partie borélienne~$A$ de~$X$.
Soit $\eps>0$. 
Pour tout $i\in I$, soit $C_i$ une partie compacte
de~$A\cap A_i$ telle que $\mu_i(C_i)> \mu_i(A\cap A_i)-\eps$.
Soit $C$ la réunion des~$C_i$; c'est une partie compacte de~$A$
et l'on a 
\[ \mu(A\setminus C) \leq \sum_{i\in I} \mu ((A\setminus C)\cap A_i)
 \leq \sum_{i\in I} \mu_i(A_i\setminus C_i)\leq \eps\Card(I), \]
donc est arbitrairement petit.
On a $\mu|_{A_i}=\mu_i$ pour tout~$i$;
comme $X$ est recouvert par la famille finie~$(A_i)_{i\in I}$,
la mesure~$\mu$ est la seule mesure de Borel 
sur~$X$ qui vérifie cette propriété. 
Si les mesures~$\mu_i$ sont localement finies,
la mesure~$\mu$ est (localement) finie.

Revenons maintenant au cas général en ne supposant plus que $X$ est compact.
D'après ce qui précède, il existe,
pour toute partie compacte~$V$ de~$X$, une unique mesure
de Borel régulière~$\mu_V$
sur~$V$ telle que $\mu_V|_{A_i\cap V}
=\mu_i|_{A_i\cap V}$ pour tout~$i\in I$.
Si $V\subset W$, il résulte des définitions que $\mu_W|_V=\mu_V$.
D'après le lemme~\ref{lemm.recollement-compact},
il existe alors une unique mesure de Borel régulière~$\mu$ sur~$X$
telle que $\mu|_V=\mu_V$ pour toute partie compacte~$V$ de~$X$.
Si les mesures~$\mu_V$ sont (localement) finies, la mesure
$\mu$ construite est localement finie, car $X$ est localement compact.

Soit $i\in I$.
Démontrons que la restriction de~$\mu$ à~$A_i$ est égale à~$\mu_i$.
Par définition d'une mesure de Borel régulière, il suffit
de vérifier que pour toute partie compacte~$C$ de~$A_i$,
on a $\mu|_C=\mu_i|_C$, ce qui est évident puisqu'alors
$\mu|_C=\mu_C=\mu_C|_{C\cap A_i}=\mu_i|_{C\cap A_i}$.

Soit $\nu$ une mesure de Borel régulière 
telle que $\nu|_{A_i}=\mu_i$ pour tout~$i$.
Pour toute partie compacte~$V$ de~$X$, on a donc
$\nu|_{A_i\cap V}=\mu_i|_{V}=\mu_V|_{A_i\cap V}$,
donc $\nu|_V=\mu_V$ par définition de~$\mu_V$. Donc $\nu=\mu$.

Nous avons ainsi prouvé que les mesures de Borel régulières
forment un faisceau pour la B-topologie, et que les mesures localement
finies en constituent un sous-faisceau.

Le cas des mesures de Radon s'en déduit en les décomposant
en la différence de leurs parties positive et négative.
\end{proof}

% Comme $X$ est localement compact et que les mesures forment
% un faisceau pour la topologie classique, on peut se ramener au cas
% où $X$ est compact. (On définit ainsi $\mu|_V$ pour tout compact~$V$
% de~$X$. Du point de vue formes linéaires, on se doute bien
% que c'est tout ce qu'il y a à faire, d'ailleurs; 
% dans le langage de INT~IX, la collection des~$\mu|_V$
% définit une prémesure sur l'espace topologique séparé~$X$
% et  la prop.~3 de INT IX, \S, \no 1 conclut à l'existence d'une 
% unique mesure.)
% 
% 

\subsection{}\label{app-image-inverse-mesure}
Soit $f\colon Y\to X$ une application continue.
On suppose qu'il existe un B-recouvrement $(Y_i)$ de~$Y$
tel que, pour tout~$i$, $f|_{Y_i}$ soit un homéomorphisme de~$Y_i$
sur une partie $f(Y_i)$ de~$X$. Dans ce cas, $f(Y_i)$ est localement
compact, donc localement fermé dans~$X$;
on pourrait ainsi dire que $f$ est un « homéomorphisme B-local ».

Pour toute mesure de Borel (\resp de Radon) $\mu$ sur~$X$,
il existe alors une unique mesure de Borel (\resp de Radon) $f^*\mu$
sur~$Y$ dont la restriction à~$Y_i$ est égale à $(f^{-1})_*(\mu|_{f(Y_i)})$.
Cette mesure ne dépend pas du choix du B-recouvrement~$(Y_i)$.

\def\conv{\operatorname{conv}}

\section{Rappels d'analyse convexe}

Pour la commodité du lecteur, nous rappelons ici
quelques faits élémentaires d'analyse convexe, 
en tâchant  d'en donner les démonstrations.
L'ouvrage~\cite{hiriart-urruty-lemarechal2001} nous
a été une référence très utile.

Par \emph{espace affine}, nous entendons ici un espace
affine de dimension finie sur~$\R$. On désignera alors
par $\norm\cdot$ une norme sur l'espace vectoriel sous-jacent.

\subsection{}
Soit $V$ un espace affine.
Si $x$ et $y$ sont des points de~$V$, on note
$[x,y]$, $\mathopen]x,y]$, $[x,y\mathclose[$ et $\mathopen]x,y\mathclose[$
les segments (respectivement fermé, semi-ouverts et ouvert) 
d'extrémités~$x$ et~$y$:
ce sont les ensembles des points de~$V$ de la forme $(1-t)x+ty$,
où $t$ parcourt respectivement les intervalles $[0,1]$, $\mathopen]0,y]$, $[0,1\mathclose[$ et $\mathopen]0,1\mathclose[$.

\subsection{}
Une partie~$C$ de~$V$ est convexe si pour tout couple $(x,y)$
de points de~$C$, le segment $[x,y]$ est contenu dans~$C$.

L'intersection d'une famille partie de  convexes est convexe.
L'enveloppe convexe~$\conv(A)$ d'une partie~$A$ de~$V$ est l'intersection de
la famille des parties convexes qui contiennent~$A$;
c'est la plus petite partie convexe de~$V$ qui contient~$A$.

L'image, resp. l'image réciproque, d'une partie convexe par une application
affine est une partie convexe.

\subsection{}
Soit $C$ une partie convexe de~$V$, non vide,
et soit $W$ le sous-espace affine de~$V$ engendré par~$C$.
L'adhérence~$\overline C$ de~$C$ est 
contenue dans~$W$; elle est convexe.
(Soit en effet $x,y$ des points de~$\overline C$ et soit $t\in[0,1]$;
écrivons~$x$ et~$y$ comme limites de suites~$(x_n)$ et~$(y_n)$
de points de~$C$; alors $(1-t)x+ty=\lim (1-t)x_n+ty_n$ est limite
de points de~$C$ donc appartient à~$\overline C$.)

Si $W\neq V$, l'intérieur de~$C$ dans~$V$ est vide.
On appelle \emph{intérieur relatif} de~$C$, et on note~$\mathring C$
ou~$C^\circ$, l'intérieur de~$C$ dans~$W$.

Si $C$ est vide, on pose $\mathring C=\emptyset$.

\begin{lemm}
Soit $C$ une partie convexe d'un espace affine~$V$.
Soit $x\in\mathring C$ et $y\in\overline C$.
Alors le segment ouvert $\mathopen]x,y\mathclose[$ est contenu
dans~$\mathring C$.
\end{lemm}
\begin{proof}
Quitte à remplacer~$V$ par le sous-espace affine engendré par~$C$,
on peut supposer que $C$ engendre l'espace affine~$V$, de sorte
que $\mathring C$ est l'intérieur topologique de~$C$.
Soit $\delta$ un nombre réel~$>0$ tel que 
la boule ouverte $B(x,\delta)$ soit contenue dans~$C$, donc dans~$\mathring C$.
Soit $t\in\mathopen]0,1\mathclose[$ et soit $z=(1-t)x+ty$.
Par définition de l'adhérence de~$C$, il existe un point~$y'\in C$
dans la boule ouverte de centre~$y$ et de rayon~$t\delta/2(1-t)$;
écrivons $y'=y+v$, où $v$ est un vecteur de~$V$.
Soit $u$ un vecteur de~$V$ de norme~$<\delta/2(1-t)$ et
posons $x'=x-\frac t{1-t}v+\frac1{1-t}u$, de sorte que
\[ z+u=(1-t) (x- \frac t{1-t}v + \frac1{1-t}u) + t (y+v) = (1-t) x'+t y'. \]
Par construction, on a $x'\in C$ et $y'\in C$, donc $z+u\in C$.
Cela prouve que la boule ouverte de centre~$z$ et de rayon $\delta/2(1-t)$
est contenue dans~$C$; en particulier, $z\in\mathring C$.
\end{proof}

\begin{coro}
L'intérieur relatif d'une partie convexe est convexe.
\end{coro}

\begin{coro}\label{coro.interieur-adherence}
Soit $C$ une partie convexe d'un espace affine~$V$.
\begin{enumerate}\def\theenumi{\roman{enumi}}
\item On a $\overline C=\overline{C^\circ}$;
en particulier, l'intérieur relatif $\mathring C$ de~$C$ est dense dans~$C$.
\item On a $\mathring C=(\overline C)^\circ$;
\item Si $D$ est une partie convexe de~$V$,
les assertions $\mathring C=\mathring D$ et $\overline C=\overline D$
sont équivalentes.
\end{enumerate}
\end{coro}
\begin{proof}
On peut supposer que $C$ n'est pas vide, puis que $V$
est l'espace affine engendré par~$C$.

(i) Démontrons d'abord que $\mathring C$ n'est pas vide.
Puisque $C$ engendre affinement~$V$, il existe une suite $(x_0,\dots,x_n)$
de points de~$C$ qui est un repère affine de~$V$. Alors
$C$ contient le simplexe $\conv(x_0,\dots,x_n)$ de sommets ces points. 
Le simplexe ouvert, ensemble des combinaisons convexes
des~$x_i$ à coefficients strictement positifs, est ouvert dans~$V$,
non vide,
et contenu dans~$C$. Cela prouve que $\mathring C$ n'est pas vide.
Fixons alors un point $a\in \mathring C$.

On a évidemment  $\overline {C^\circ}\subset\overline C$.
Inversement, soit $x\in \overline C$. Le segment $\mathopen]a,x\mathclose[$
est contenu dans~$\mathring C$, et $x$ en est un point adhérent.
Cela prouve que $x$ appartient à l'adhérence de~$\mathring C$, 
ce qu'il fallait démontrer.

(ii) On a $\mathring C\subset \overline C^\circ$. Inversement,
soit $x$ un point intérieur de~$\overline C$.
Soit $S$ un simplexe de~$V$, contenu dans~$\overline C$
tel que $x\in\mathring S$, c'est-à-dire que $x$ soit barycentre
des sommets $(x_0,\dots,x_n)$ de~$S$ à coefficients strictement positifs.
L'ensemble des familles $(y_0,\dots,y_n)$ de~$V^{n+1}$
qui sont des repères affines de~$V$ par rapport auquel les coordonnées
barycentriques de~$x$ sont strictement positives
est un ouvert de~$V^{n+1}$. Il contient le point~$(x_0,\dots,x_n)$
de $\overline C^{n+1}$, donc contient un point $(y_0,\dots,y_n)$
de~$C^{n+1}$. Cela fournit un simplexe~$T$ contenu dans~$C$
tel que $x\in\mathring T$; a fortiori, $x\in\mathring C$.

(iii) Cela se déduit des assertions~(i) et~(ii).
\end{proof}

\begin{prop}\label{prop.intersection-convexe}
Soit $C$ et $D$ des parties convexes d'un espace affine~$V$.
Supposons que $\mathring C\cap \mathring D$ ne soit pas vide.
On a alors
\[ (C\cap D)^\circ = C^\circ \cap D^\circ
\qquad\text{et}\qquad 
  \overline{C\cap D}=\overline C\cap \overline D. \]
\end{prop}
\begin{proof}
Fixons un point $a\in\mathring C\cap \mathring D$.

Commençons par démontrer les égalités
\[ \overline{C^\circ \cap D^\circ }=\overline{C\cap D}=\overline C\cap \overline D.\]
Les inclusions 
\[ \overline{C^\circ\cap D^\circ}\subset
\overline {C\cap D} \subset  \overline C\cap \overline D \]
sont évidentes, car $\overline C$ et $\overline D$ sont des parties fermées
de~$V$ contenant $C\cap D$, et $C\cap D$ contient $C^\circ\cap D^\circ$.
Inversement, soit $x\in\overline C\cap \overline D$.
Le segment~$\mathopen]a,x\mathclose[$ étant contenu dans~$C^\circ$
et dans $D^\circ$, il est contenu dans  $C^\circ\cap D^\circ$.
Comme le point~$x$ y est adhérent, on a $x\in\overline{C^\circ\cap D^\circ}$.

Puisque $C^\circ\cap D^\circ$ est une partie convexe
de~$V$, de même adhérence que $C\cap D$, elle a même intérieur relatif.
On a donc $(C^\circ \cap D^\circ)^\circ=(C\cap D)^\circ$.
En particulier, $(C\cap D)^\circ\subset C^\circ\cap D^\circ$.

Soit $x$ un point de~$\mathring C\cap \mathring D$.
Cet ensemble contient un voisinage de~$x$ dans l'intersection
des espaces affines engendrés par~$C$ et~$D$; 
en particulier, il contient un voisinage de~$x$  dans
le sous-espace affine engendré par~$C\cap D$, 
ce qui prouve que $\mathring C\cap \mathring D$ est contenu
dans $(C\cap D)^\circ$.
% Autre preuve :
% Inversement, soit $x$ un point de $(C\cap D)^\circ$.
% Le segment $[a,x]$, contenu dans~$C$, se prolonge au-delà de~$x$, 
% car $x\in\mathring C$;
% de même, le segment $[a,x]$, contenu dans~$D$, se prolonge au-delà de~$x$.
% Il existe donc un point~$y\in  C\cap D$  tel que 
% $x\in \mathopen ]a,y\mathclose[$. 
% Puisque $a\in C^\circ\cap D^\circ$,
% cela entraîne que $x\in C^\circ\cap D^\circ$, d'où l'assertion.
\end{proof}

\begin{prop}\label{prop.image-convexe}
Soit $f\colon V\to W$ une application affine.

\begin{enumerate}
\item
Soit $C$ une partie convexe de~$V$.
Alors $f(C)$ est une partie convexe de~$W$ et l'on a $f(C)^\circ=f(C^\circ)$.
\item
Soit $D$ une partie convexe de~$W$. Alors, $f^{-1}(D)$ est une partie
convexe de~$V$. Si, de plus, $f^{-1}(D^\circ)\neq \emptyset$,
alors $f^{-1}(D)^\circ=f^{-1}(D^\circ)$.
\end{enumerate}
\end{prop}
\begin{proof}
Pour la première assertion, on peut supposer que $C$
engendre~$V$ et que $f$ est surjective.
Comme $f$ est continue et $C^\circ$ est dense dans~$\overline C$, 
on a 
\[ f(C)\subset f(\overline C)= f(\overline{C^\circ})
\subset \overline{f(C^\circ)} \subset \overline{f(C)}. \]
Par suite, $f(C^\circ)$ et $f(C)$ ont même adhérence,
donc même intérieur relatif. En particulier,
$f(C)^\circ=f(C^\circ)^\circ\subset f(C^\circ)$.
Puisque $f$ est une application ouverte et que $C^\circ$ est 
ouvert dans~$V$, $f(C^\circ)$ est ouvert dans~$W$;
on a donc $f(C^\circ)\subset f(C)^\circ$.
L'égalité $f(C)^\circ=f(C)^\circ$ en résulte.

Pour démontrer la seconde assertion, supposons d'abord que
$f$ soit l'inclusion d'un sous-espace affine.
Alors, $f^{-1}(D)=D\cap V$, et $f^{-1}(D^\circ)=D^\circ \cap V
= (D\cap V)^\circ=f^{-1}(D)^\circ$ en vertu de la proposition précédente.
Dans le cas général, choisissons  un sous-espace affine maximal~$V_1$
de~$V$ tel que $f|_{V_1}$ soit injectif. Cela permet de supposer
que $V=V_1\times V_2$ et que $f$ est de la forme $(v_1,v_2)\mapsto f_1(v_1)$,
où $f_1\colon V_1\to W$ est injectif. Alors
\[ f^{-1}(D^\circ)=f_1^{-1}(D^\circ)\times V_2
 = f_1^{-1}(D)^\circ \times V_2=(f_1^{-1}(D)\times V_2)^\circ
 = f^{-1}(D)^\circ, \]
ainsi qu'il fallait démontrer.
\end{proof}

\chapter{Structure des variétés abéloïdes}

\section{Géométrie analytique et géométrie formelle}

% \section{Modèles formels}

Soit $k$ un corps valué complet; 
on suppose que sa valuation n'est pas triviale.

\subsection{}\label{ss.formel-spec}
Un modèle formel~$\mathfrak X$ d'un espace strictement $k$-analytique~$X$
est un $k^\circ$-schéma formel \emph{plat}, localement topologiquement
de présentation finie  muni d'un isomorphisme de~$\mathfrak X_\eta $
sur~$X$ (qu'on considérera être une égalité).
Nous renvoyons par exemple au~\S1 de~\cite{berkovich1994} pour la construction
(dans un cadre un peu plus général où~$\mathfrak X$ n'est pas supposé plat).

On dispose aussi d'une application de spécialisation
$\pi\colon \mathfrak X_\eta\to \mathfrak X_s$; elle est anticontinue
et surjective.
La preuve de cette surjectivité ne semble 
pas explicitée dans la littérature;
pour les points fermés, renvoyons à la proposition~8.3/8 de~\cite{bosch2014};
on s'y ramène par changement de base.

\subsection{}
Le théorème suivant est une variante du théorème de Raynaud ;
sa preuve consiste essentiellement à vérifier que la construction
faite par Raynaud fournit un résultat un peu plus précis
que celui traditionnellement donné.
\begin{theo}\label{theo.raynaud}
Soit $X$ un espace strictement $k$-analytique topologiquement séparé.
Soit $(V_i)_{i\in I}$ un G-recouvrement localement fini de~$X$
par des domaines strictement analytiques compacts,
pour tout~$i\in I$, soit $\mathfrak V_i$ un modèle formel de~$V_i$.
Il existe un modèle formel $\mathfrak X$ de~$X$ et
un recouvrement ouvert $(\mathfrak U_i)_{i\in I}$ de~$\mathfrak X$
tels que pour tout~$i\in I$, $\mathfrak U_{i,\eta}=U_i$
et $\mathfrak U_i$ est un éclatement admissible de~$\mathfrak V_i$.
\end{theo}

\begin{lemm}\label{lemm.raynaud-fini}
Le théorème~\ref{theo.raynaud} est vrai lorsque l'ensemble~$I$ est fini.
\end{lemm}
\begin{proof}
Soit $X$ un espace strictement $k$-analytique,
réunion de deux domaines strictement analytiques compacts~$U$ et~$V$,
soit $\mathfrak U$ et $\mathfrak V$ des modèles formels de~$U$ et~$V$.
Démontrons qu'il existe alors un modèle formel~$\mathfrak X$ de~$X$ et
deux ouverts $\mathfrak U'$ et $\mathfrak V'$ de~$\mathfrak X$
tels que:
\begin{enumerate}
\item $\mathfrak U'_\eta=U$ et $\mathfrak U'$ est un éclatement
admissible de~$\mathfrak U$;
\item $\mathfrak V'_\eta=V$ et $\mathfrak V'$ est un éclatement
admissible de~$\mathfrak V$.
\end{enumerate}
Posons $W=U\cap V$; 
c'est un domaine compact de~$X$, car $X$ est topologiquement séparé.
Le lemme~8.4/5 de~\cite{bosch2014} appliqué
à~$\mathfrak U$ et au domaine analytique $W$ de~$U=\mathfrak U_\eta$
fournit 
un éclatement admissible $\mathfrak U'\to \mathfrak U$
et un ouvert formel~$\mathfrak W_1'$ de~$\mathfrak U'$
de fibre générique $W$.
De même, il existe un éclatement admissible $\mathfrak V'\to \mathfrak V$
et un ouvert formel~$\mathfrak W_2'$ de~$\mathfrak V'$
de fibre générique $W=V\cap U$.
D'après le lemme~8.4/4 (d) de~\cite{bosch2014},
il existe un diagramme d'éclatements admissibles 
\[ \begin{tikzcd}[column sep=small]
   & \mathfrak W'' \ar{dl} \ar{dr} \\
  \mathfrak W'_1 &  & \mathfrak W'_2 \end{tikzcd}
\]
induisant l'identité de~$W $ sur les fibres génériques.
Comme $\mathfrak W'_1$ et $\mathfrak W'_2$ sont quasi-compacts,
ces deux éclatements admissibles  se prolongent en des éclatements admissibles
$\mathfrak U'\leftarrow \mathfrak U''$ et $\mathfrak V''\to \mathfrak V'$
(\cite{bosch2014}, proposition~8.2/13).
Soit $\mathfrak X$ le recollement de $\mathfrak U''$ et $\mathfrak V''$
le long de $\mathfrak W''$. C'est un modèle formel de~$X$.
Comme le composé de deux éclatements admissibles est un éclatement
admissible (\cite{bosch2014}, lemme~8.2/11),
le recouvrement ouvert $(\mathfrak U'',\mathfrak U'')$ satisfait
les conditions requises.
\end{proof}

\begin{lemm}\label{lemm.raynaud-N}
Le théorème~\ref{theo.raynaud} est vrai lorsque
l'ensemble $I=\N$ et que $V_m\cap V_n=\emptyset$
dès que $\abs{m-n}\geq 2$.
\end{lemm}
\begin{proof}
Pour tout~$n$, posons $W_n=V_0\cup\dots\cup V_n$.
C'est un espace strictement analytique compact
dont $(V_0,\dots,V_n)$ est un G-recouvrement fini par des domaines
strictement analytiques;
nous allons en construire un modèle $\mathfrak W_n$
muni d'un recouvrement ouvert $(\mathfrak V_{n,i})_{i\leq n}$
tel que $(\mathfrak V_{n,i})_\eta=V_i$ pour tout~$i$,
de sorte que les propriétés suivantes soient satisfaites:
\begin{enumerate}
\item Pour tous~$i,n$ tels que $i\leq n$,
$\mathfrak V_{n,i}$ domine~$\mathfrak V_i$;
\item Pour tous $i,m,n$ tels que $i\leq m\leq n$, $\mathfrak V_{n,i}$ domine
$\mathfrak V_{m,i}$;
\item Pour tous $i,m,n$ tels que $i\leq m-2$ et $m\leq n$, 
alors $\mathfrak V_{n,i}=\mathfrak V_{m,i}$.
\end{enumerate}

Pour $n=0$, on pose $\mathfrak W_{0}=\mathfrak V_{0,0}=\mathfrak V_{0}$.

Supposons $\mathfrak W_m$ et $\mathfrak V_{m,i}$ 
construits pour $i\leq m\leq n$; construisons $\mathscr W_{n+1}$.

Commençons par recoller $\mathfrak V_{n,n}$ et $\mathfrak V_{n+1}$.
D'après le lemme~\ref{lemm.raynaud-fini},
appliqué à $V_n\cup V_{n+1}$ et à son recouvrement $(V_n,V_{n+1})$
et à la famille de modèles $(\mathfrak V_{n,n},\mathfrak V_{n+1})$
il existe un schéma formel~$\mathfrak S$, un recouvrement ouvert
$(\mathfrak V'_{n,n},\mathfrak V'_{n+1})$ de~$\mathfrak S$
induisant le recouvrement $(V_n,V_{n+1})$,
où $\mathfrak V'_{n,n}$ est un éclatement admissible de $\mathfrak V_{n,n}$
et $\mathfrak V'_{n+1}$ est un éclatement admissible de~$\mathfrak V_{n+1}$.
Soit $I$ le faisceau cohérent d'idéaux ouverts sur~$\mathfrak V_{n,n}$
définissant~$\mathfrak V'_{n,n}$.
Pour $i\leq n-2$, la fibre générique~$V_n$ de~$\mathfrak  V_{n,n}$ 
est disjointe de celle, $V_i$, de~$\mathfrak V_{n,i}$.
D'après la proposition~8.2/13 de~\cite{bosch2014}, 
il existe un faisceau cohérent d'idéaux ouverts~$J$ sur~$\mathscr W_n$
prolongeant~$I$ et dont la restriction à l'union des~$\mathscr V_{n,i}$
pour $i\leq n-2$ est le faisceau structural.
Soit $\mathfrak W'_n$ l'éclatement admissible de~$\mathfrak W_n$
le long de~$J$; c'est un modèle de~$W_n$ dont $\mathfrak V'_{n,n}$
est un ouvert.

Soit $\mathfrak W_{n+1}$ le schéma formel obtenu par
recollement de~$\mathfrak W'_n$ et~$\mathfrak S$
le long de $\mathfrak W'_{n,n}$.
Par construction, $\mathfrak V'_{n+1}$ en est un ouvert que l'on
note $\mathfrak V_{n+1,n+1}$.
On pose aussi $\mathfrak V_{n+1,n}=\mathfrak V'_{n,n}$.
Enfin, pour $i\leq n-1$, on note $\mathfrak V_{n+1,i}$
l'image réciproque de $\mathfrak V_{n,i}$ dans
l'ouvert~$\mathfrak W'_n$ de~$\mathfrak W_{n+1}$.
Si $i\leq n-2$, $\mathfrak V_{n+1,i}=\mathfrak V_{n,i}$.

La construction est ainsi obtenue par récurrence.

Pour tout~$i$, notons~$\mathfrak U_i$ la limite 
du système inductif essentiellement constant $(\mathfrak V_{n,i})_{n\geq i}$.
La famille $(\mathfrak U_i)$ se recolle en un schéma formel~$\mathfrak X$
qui satisfait les conditions requises.
\end{proof}

\begin{proof}[Preuve du théorème~\ref{theo.raynaud}]
On peut supposer que $X$ est connexe. 

Définissons une suite $(W_n)$ de domaines strictement analytiques compacts
de~$X$ et une suite $(I_n)$ de parties finies de~$I$
telles que $W_n=\bigcup_{i\in I_n} V_i$.
Choisissons un élément $i_0\in I$ et posons $W_0=V_{i_0}$; posons $I_0=\{i_0\}$.
Si $W_0,\dots,W_n$, et $I_0,\dots,I_n$ sont définis, 
posons $I_{n+1}$ l'ensemble des $i\in I$ tels que 
tels que $V_i$ rencontre~$W_n$, mais $V_i$ ne rencontre
aucun des~$W_m$, pour $m<n$, et soit $W_{n+1}$ la réunion des~$V_i$,
pour $i\in I_{n+1}$.
Comme le recouvrement~$(V_i)$ est localement fini, 
l'ensemble $I_{n+1}$ est fini, de sorte que $W_{n+1}$ est un domaine
strictement analytique compact.

Pour tout~$n$, choisissons un modèle formel~$\mathfrak W_n$
de~$W_n$ possédant un recouvrement ouvert $(\mathfrak V_{n,i})_{i\in I_n}$,
où $\mathfrak V_{n,i}$ est, pour tout $i\in I_n$, un éclatement
admissible de~$\mathfrak V_i$.

Appliquons le lemme~\ref{lemm.raynaud-N}
à l'espace~$X$ et à la famille $(\mathfrak W_n)$.
On en déduit l'existence d'un modèle $\mathfrak X$ muni
d'un recouvrement ouvert $(\mathfrak W'_n)$ où, pour tout~$n$,
$\mathfrak W'_n$ est un éclatement admissible de~$\mathfrak W_n$.
Pour tout~$i\in I$, il existe un unique entier~$n$
tel que $i\in I_n$; 
notons $\mathfrak U_i$ l'image réciproque
de~$\mathfrak V_{n,i}$ dans~$\mathfrak W'_n$.
Le modèle~$\mathfrak X$ et la famille $(\mathfrak U_i)$ satisfont
les conditions du théorème~\ref{theo.raynaud}. 
\end{proof}

\begin{coro}\label{coro.raynaud}
Tout espace strictement $k$-analytique paracompact possède un modèle
formel.
\end{coro}
\begin{proof}
Soit $X$ un tel espace. Il est topologiquement séparé,
par définition d'un espace topologique paracompact.
Par hypothèse, $X$ possède un G-recouvrement strictement affinoïde.
Puisque $X$ est paracompact, ce G-recouvrement peut être raffiné
en un G-recouvrement~$(X_i)_{i\in I}$ qui est localement fini.
Pour tout~$i$, choisissons un modèle de l'espace affinoïde~$X_i$
(choisir un système d'équations à coefficients entiers
et quotienter par la $k^{\circ\circ}$-torsion).
D'après le théorème~\ref{theo.raynaud}, il existe
un modèle formel~$\mathfrak X$ de~$X$.
\end{proof}

\begin{coro}\label{coro.modele-fonctions}
Soit $X$ un espace strictement analytique paracompact,
soit $(f_\alpha)_{\alpha\in A}$ 
une famille finie d'éléments de~$\mathscr O_X(X)$
telles que $\abs{f_\alpha(x)}\leq 1$ pour tout $\alpha\in A$ et tout $x\in X$.
Il existe un modèle $\mathfrak X'$ de~$\mathfrak X$
tel que chaque~$f_\alpha$ appartienne à~$\mathscr O_{\mathfrak X'}(\mathfrak X')$.
\end{coro}
\begin{proof}
Soit $(\mathfrak X_i)$ un recouvrement ouvert affine de~$\mathfrak X$
dont les fibres génériques~$(X_i)$ 
forment un G-recouvrement localement fini de~$X$.
D'après le lemme~8.4/6 de \cite{bosch2014},
il existe pour tout~$i$ un éclatement admissible $\mathfrak X'_i$
de~$\mathfrak X_i$ tel que $f_\alpha|_{X_i}$ appartient
à $\mathscr O(\mathfrak X'_i)$  pour tout $\alpha\in A$.
D'après le théorème~\ref{theo.raynaud},
il existe un modèle~$\mathfrak X'$ de~$X$ 
possédant un recouvrement ouvert~$(\mathfrak X''_i)$
tel que $\mathfrak X''_i$ domine~$\mathfrak X'_i$ pour tout~$i$. Il convient.
\end{proof}

\begin{theo}\label{theo.app-faisceau}
Soit $X$ un espace strictement $k$-analytique paracompact
et soit~$\mathfrak X$ un modèle formel de~$X$.
Soit $\mathscr M$ un faisceau cohérent sur~$X_\groth$.

\begin{enumerate}
\item
Il un faisceau cohérent~$\mathfrak M$
de~$\mathscr M$ sur~$\mathfrak X$
dont la fibre générique est égale à~$\mathscr M$.

\item 
Si $\mathscr M$ est localement libre et $(\mathfrak X,\mathfrak M)$
en est un modèle, il existe un éclatement admissible
$p\colon \mathfrak X'\to\mathfrak X$ tel que le transformé
strict de~$\mathfrak M$ par~$p$ soit localement libre.
\end{enumerate}
\end{theo}
Apparemment, la preuve ne se trouve dans la littérature
(proposition~5.6 de~\cite{bosch-lutkebohmert1993}, par exemple) 
que sous l'hypothèse que $X$ est compact.
Comme nous allons voir, la preuve donnée s'applique en général.
\begin{proof}
\begin{enumerate}
\item
On peut supposer que $X$ est connexe;
puisqu'il est paracompact, 
$\mathfrak X$ est la réunion d'une \emph{suite} $(\mathfrak U_n)_{n\in\N}$
d'ouverts formels affines dont les fibres génériques
forment un G-recouvrement affinoïde de~$X$.
Pour tout entier~$n$, 
posons aussi $\mathfrak V_n=\bigcup_{m=0}^n \mathfrak U_m$.

Soit $n$ un entier.
La fibre générique~$\mathfrak U_{n,\eta}$ de~$\mathfrak U_n$ étant
affinoïde,
il existe un faisceau cohérent 
$\mathfrak M_n$ sur $\mathfrak U_n$, sans $k^\circ$-torsion,
dont la fibre générique est égale à~$\mathscr M$
sur $\mathfrak U_{n,\eta}$; il suffit par exemple
de prendre une présentation de~$\mathscr M$ à coefficients entiers
et de quotienter par la $k^{\circ\circ}$-torsion.

On construit alors par récurrence un faisceau cohérent~$\mathfrak M'_n$
sur~$\mathfrak V_n$ dont la restriction à~$\mathfrak V_{n-1}$
est égale à $\mathfrak M'_{n-1}$ et dont la fibre générique
est $\mathscr M$ sur $\mathfrak U_{n,\eta}$. Le point-clé est
le lemme~5.7 de~\cite{bosch-lutkebohmert1993} qui explique comment,
étant donné $\mathfrak M'_n$, comment modifier $\mathfrak M_{n+1}$
de sorte qu'il se recolle avec $\mathfrak M'_n$ sur $\mathfrak V_n\cap \mathfrak U_{n+1}$.\footnote{Avec les notations de \loccit,
il convient plutôt de recoller $\mathscr M_1$ et $g^{-n}\mathscr M'_2+g^m\mathscr M_2$, où $m$ est un entier tel que $g^m\mathfrak M_{21}\subset\mathscr M_{12}$.}
Cela fournit un modèle formel $\mathfrak M$ de~$\mathscr M$.

\item
Supposons maintenant que $\mathscr M$ est localement libre et
expliquons comment utiliser le théorème de platification par éclatements
de Raynaud
(\cite{bosch-lutkebohmert1993b}, théorème~4.1)
pour transformer un modèle formel~$\mathfrak M$ de~$\mathscr M$
en un modèle localement libre.
On se ramène au cas où $X$ est connexe;
reprenons le recouvrement ouvert croissant 
$(\mathfrak V_n)$ considéré précédement.
On va construire un système projectif $(\mathfrak X_{n})$
de modèles formels de~$X$, avec $\mathfrak X_{-1}=\mathfrak X$,
dont les morphismes de transition sont des éclatements admissibles,
de sorte qu'au-dessus de~$\mathfrak V_n$,
le transformé strict de~$\mathfrak M$ sur~$\mathfrak X_n$
soit localement libre et 
le morphisme de transition $\mathfrak X_{n+1}\to\mathfrak X_n$ 
soit un isomorphisme.
La limite projective de ce système est un schéma formel $\mathfrak X'$
qui s'identifie à~$\mathfrak X_n$ au-dessus de~$\mathfrak V_n$,
le morphisme canonique $\mathfrak X'\to\mathfrak X$ est un éclatement
admissible et le transformé strict de~$\mathfrak M$ sur~$\mathfrak X'$
est localement libre; 
c'est un modèle de~$\mathscr M$.

On pose $\mathfrak X_{-1}=\mathfrak X$.
Supposons $\mathfrak X_{n}$ construit, 
notons $\mathfrak F$ le transformé strict de~$\mathfrak M$ sur~$\mathfrak X_{n}$, notons $\mathfrak W$ l'image réciproque de~$\mathfrak V_n$
et $\mathfrak W'$ celle de $\mathfrak V_{n+1}$.
Il existe un idéal cohérent ouvert~$\mathscr J'_n$
de $\mathscr O_{\mathfrak W'}$ tel que le transformé
strict de~$\mathfrak F|_{\mathfrak W'}$ par l'éclatement admissible
correspondant soit localement libre;
de plus, on peut suppose que $\mathscr J'_n$ est trivial 
sur l'ouvert~$\mathfrak W$ sur lequel $\mathfrak F$ est localement libre.
Prolongeons l'idéal $\mathscr J_n$ en un idéal cohérent ouvert
de~$\mathscr O_{\mathfrak X_n}$ (\cite{bosch2014}, proposition~8.2/13)
et prenons pour espace~$\mathfrak X_{n+1}$ l'éclatement admissible
correspondant. Le système $(\mathfrak X_n)$ ainsi défini convient.
\qedhere
\end{enumerate}
\end{proof}

% \subsection{Quelques trucs à clarifier}
% 
% - Expliciter la  cofinalité des modèles
% 
% - Est-ce que les schémas formels qui servent de modèles
% sont forcément séparés ? Est-ce qu'on ne fait pas comme
% si l'intersection de deux ouverts affines était affine ?
% 
% - Ajouter l'unicité de l'application de spécialisation
% dans le cas formel.

\section{Adhérence de Zariski des diviseurs de Cartier}

\begin{prop}\label{prop.adherence-cartier}
Soit $A$ un anneau de valuation, soit $K$ son corps des fractions.
Soit $f\colon X\to\Spec(A)$ un morphisme lisse de schémas.
Soit $Y$ un diviseur de Cartier effectif de~$X_K$.
L'adhérence schématique de~$Y$ dans~$X$ est un diviseur de Cartier
effectif de~$X$.
\end{prop}
(Voir aussi~\cite{Lutkebohmert-1990}, lemme~2.9.)
\begin{proof}
On peut supposer que $X$ est de type fini et de dimension relative constante.
Notons $Z$ l'adhérence schématique de~$Y$ dans~$X$.
Par définition de l'adhérence schématique,
$\mathscr O_Z$ est sans-$A$-torsion,  donc $Z$ est plat sur~$A$.
D'après le théorème~3 de~\cite{Nagata-1966}, il est alors de présentation finie.
En particulier, il est de codimension relative~$1$ en tout point
et ne contient donc aucune composante irréductible d'une fibre
de~$X\to\Spec(A)$.
Les anneaux locaux de~$X$ sont intègres.
Il s'agit donc de prouver  que l'idéal de~$Z$ est principal  
au voisinage de tout point.

Par approximation noethérienne, il existe une sous-$\Z$-algèbre
normale et de type fini~$\mathscr A$  de~$A$, 
un morphisme lisse $\phi\colon\mathscr X\to\Spec(\mathscr A)$
et un sous-schéma fermé~$\mathscr Z$ de~$\mathscr X$
induisant $f\colon X\to\Spec(A)$ et~$Z$ par changement de base à~$A$.

Soit $z\in Z$, posons $s=f(z)$;
soit $\zeta\in\mathscr Z$ 
et $\sigma=\phi(\zeta)\in \Spec(\mathscr A)$ leurs images.
Par hypothèse, aucune composante irréductible de $f^{-1}(s)$ n'est contenue
dans~$Z$; par conséquent, aucune composante irréductible de~$\phi^{-1}(\sigma)$
n'est contenue dans~$\mathscr Z$.
D'après la proposition~21.14.3 de~\cite{ega4.4}
(théorème de Ramananujam--Samuel), 
l'idéal de~$\mathscr Z$ dans~$\mathscr X$ est principal au voisinage de~$\zeta$.
Par suite, l'idéal de~$Z$ dans~$X$ est principal au voisinage de~$z$.
Cela conclut la démonstration.
\end{proof}

\begin{coro}\label{coro.extension-pic}
Soit $A$ un anneau de valuation, soit $K$ son corps de fractions.
Soit $f\colon X\to\Spec(A)$ un morphisme lisse et de présentation finie
de schémas.
L'homomorphisme de $\Pic(X)$ dans~$\Pic(X_K)$ est surjectif.

Si les fibres de~$f$ sont géométriquement intègres, 
cet homomorphisme est bijectif.
\end{coro}
(Pour l'injectivité, on utilisera seulement de~$A$ que c'est un anneau
normal de groupe de Picard trivial.)
\begin{proof}
Soit $L$ un fibré en droites sur $X_K$. 
Dans chaque composante connexe de~$X_K$, on choisit un  ouvert non vide 
sur lequel $L$ est trivialisable et on en choisit une trivialisation;
la famille de ces trivialisations définit une section rationnelle inversible~$s$
de~$L$.
Comme $X_K$ est lisse, et en particulier localement factoriel, 
chaque composante du diviseur de Cartier~$\div(s)$  est elle-même
un diviseur de Cartier, ce qui permet d'écrire~$\div(s)$
comme la différence $D-D'$ de deux diviseurs de Cartier \emph{effectifs} 
sur~$X_K$.
D'après la proposition~\ref{prop.adherence-cartier}, leurs
adhérences schématiques~$E$ et $E'$ sont des diviseurs
de Cartier sur~$X$. Par construction $\mathscr O_X(E-E')|_{X_K}$
est isomorphe à~$\mathscr O_{X_K}(D-D')$, donc à~$L$.
Cela démontre la surjectivité de l'homomorphisme indiqué.

Supposons que les fibres de~$f$ sont géométriquement intègres
et démontrons l'injectivité de cet homomorphisme.
Soit $L$ un fibré en droites sur~$X$ dont
la restriction à~$X_K$ est trivialisable.
Comme la propriété pour les fibres d'un morphismes d'être géométriquement
intègres est constructible, il existe un sous-anneau~$R$ de~$A$,
qui est une $\Z$-algèbre de type fini normale, 
un morphisme $\phi\colon\mathscr X\to\Spec(R)$, 
lisse et de présentation finie, à fibres géométriquement intègres,
et un fibré en droites~$\mathscr L$ sur~$\mathscr X$
tel que $\mathscr L_{\Frac(R)}$ soit trivialisable,
induisant~$f$ et~$L$ par changement de base.
Compte-tenu de~\cite[Err., 53, (21.4.13)]{ega4.4},
il existe un fibré en droites~$\mathscr M$ sur~$\Spec(R)$
tel que $\mathscr L$ est isomorphe à~$\phi^*\mathscr M$.
Il s'ensuit que $L$ est isomorphe à~$f^*M$, où $M$
est le fibré en droites sur~$\Spec(A)$ déduit de~$\mathscr M$.
Puisque $A$ est local, $M$ est trivialisable, et $L$ l'est donc aussi.
\end{proof}

\section{Corps de définition des domaines analytiques}

\begin{prop}\label{prop.descente-1}
Soit $X$ un espace $k$-analytique topologiquement séparé.
Soit~$K$ une extension complète de~$k$ et 
soit~$W$ un domaine analytique de~$X_K$.

\begin{enumerate}
\item Il existe au plus un domaine analytique compact~$V$ de~$X$
tel que $W=V_K$.
\item Soit $K_0$ un sous-corps dense de~$K$ contenant~$k$.
Si $W$ est compact, il existe un sous-corps~$k'$ de~$K_0$,
de type fini sur~$k$, dont on note~$K'$ l'adhérence dans~$K$,
et un domaine analytique~$V'$ de~$X_{K'}$ tels que $W=V'_K$.
\end{enumerate}
\end{prop}
\begin{proof}
Notons $\pi\colon X_K\to X$  de changement de base; il est surjectif.
Si $V$ est un domaine analytique de~$X_K$ tel que $W=V_K$,
on a $V=\pi(W)$ et $W=\pi^{-1}(V)$, d'où l'unicité. 

Cela démontre plus généralement
qu'il existe un domaine analytique~$V$ de~$X$ tel que $W=V_K$
si et seulement si $\pi(W)$ est un domaine analytique 
de~$X$ tel que $W=\pi^{-1}(\pi(W))$.
On constate ainsi que cette propriété  est locale pour la G-topologie de~$X$.

Comme $\pi(W)$ est compact, il existe une famille finie $(U_i)$
de domaines affinoïdes de~$X$ dont la réunion recouvre~$\pi(W)$.
Il suffit de prouver le résultat pour les domaines analytiques $W\cap \pi^{-1}(U_i)$ de $U_{i,K}$, quitte à prendre, 
sur l'extension composée des extensions de type fini qui apparaissent,
la réunion des domaines analytiques construits.
On suppose donc que $X$ est affinoïde. Dans ce cas, $X_K$ est affinoïde,
et $W$ est réunion finie de domaines rationnels de~$X_K$,
en vertu du théorème de Gerritzen--Grauert. 

On peut donc supposer
que $W$ est un domaine rationnel de~$X_K$. Il est ainsi décrit
par une combinaison d'inégalités $\abs{f_1}\leq \lambda_1 \abs g$,
\dots, $\abs{f_n}\leq \lambda_n \abs g$, où $f_1,\dots,f_n,g$
sont des éléments de~$\mathscr O(X_K)$ sans zéro commun
et $\lambda_1,\dots,\lambda_n$ des nombres réels strictement positifs.
En particulier, $g$ ne s'annule pas sur~$W$; comme $W$ est compact,
$g$ est minorée sur~$W$ et on peut ajouter à ce système d'inégalités
une inégalité de la forme $1\leq \lambda_0 \abs g$.
L'anneau $\mathscr O(X)\otimes_k K_0$ est dense dans~$\mathscr O(X_K)$.
Grâce à l'inégalité ultramétrique, on peut remplacer les~$f_i$ et~$g$
par des éléments de $\mathscr O(X)\otimes_k K_0$
sans changer le domaine décrit par les inégalités ci-dessus.
Il suffit de prendre le sous-corps~$k'$ de~$K_0$ engendré par les 
coefficients d'une représentation des~$f_i$ et~$g$ dans
le produit tensoriel $\mathscr O(X)\otimes_K K_0$.
\end{proof}

\begin{coro}\label{coro.descente-1}
Supposons que $W$ est compact 
et que $K$ est la complétion d'une clôture algébrique de~$k$.

\begin{enumerate}
\item Il existe une extension finie~$k'$ de~$k$ contenue dans~$K$
et un domaine analytique compact~$V'$ de~$X_{k'}$
tels que $W=V'_K$;
\item Si $k$ n'est pas trivialement valué,
on peut choisir $k'$ séparable sur~$k$.
\end{enumerate}
\end{coro}
\begin{proof}
La première assertion est immédiate, et la seconde résulte de
ce que la clôture séparable de~$k$ dans~$K$ est dense 
dans~$K$ (\cite{bosch-g-r1984}, proposition 3.4.1/6).
\end{proof}

\begin{rema}
Supposons que la valuation de~$k$ n'est pas triviale.
La démonstration de la proposition
prouve que si $X$ est un espace strictement $k$-analytique,
et si $W$ est un domaine strictement $K$-analytique de~$X_K$,
il existe une extension~$k'$ comme dans la proposition
(ou dans le corollaire)
et un domaine strictement~$k'$-analytique~$V'$ de~$X_{k'}$
tels que $W=X_K$. On procède de la même manière, en prenant $\lambda_1,\dots,\lambda_n$ égaux à~$1$ et $\lambda_0$ dans $\abs{k^\times}$.
\end{rema}

\section{Espaces formellement analytiques}

Soit $k$ un corps ultramétrique complet dont la valeur absolue est non triviale.
Dans ce paragraphe, tous les espaces analytiques considérés seront
et nous noterons $\widetilde\cdot$ les réductions classiques, non graduées.

\begin{defi}
Soit $X$ un espace strictement affinoïde
et soit $\pi\colon X\to\widetilde X$ l'application de réduction.
On dit qu'un domaine analytique~$V$ de~$X$ 
est un \emph{domaine formel} si $\pi(V)$ est un ouvert
de~$\widetilde X$ et si $V=\pi^{-1}(\pi(V))$.
\end{defi}

Cela revient à demander qu'il existe une famille finie $(f_i)$ 
de fonctions analytiques sur~$X$ de normes spectrales inférieures
ou égales à~$1$ 
telles que $V$ soit la réunion des domaines $\{ \abs{f_i(x)}=1\}$.

Un domaine formel est un domaine strictement analytique compact.

\begin{lemm}
Soit $X$ un espace strictement affinoïde 
et soit $V$ un domaine affinoïde de~$X$.
On a les équivalences:
\begin{enumerate}
\item Le domaine analytique~$V$ est un domaine formel de~$X$;
\item L'immersion de~$V$ dans~$X$ induit une immersion ouvertes
$\widetilde V\to \widetilde X$ entre leurs réductions canoniques;
\item Le domaine~$V$ est l'image réciproque d'un ouvert affine de~$\widetilde X$
par l'application de réduction.
\end{enumerate}
\end{lemm}
Notons $\pi\colon X\to \widetilde X$ l'application de réduction canonique.
L'image de~$\widetilde V$ dans~$\widetilde X$ par
le morphisme induit par l'immersion $V\hookrightarrow X$ 
est égale à~$\pi(V)$, par construction.
Ainsi, si les assertions du lemme sont satisfaites, 
la réduction canonique~$\widetilde V$ de~$V$
s'identifie à l'ouvert $\pi(V)$ de~$\widetilde X$.

Si $V$ est un domaine formel quelconque  de~$X$, 
il n'y aura donc pas d'ambiguïté à noter~$\widetilde V$
l'ouvert~$\pi(V)$ de~$\widetilde X$.
\begin{proof}
L'équivalence (i)$\Leftrightarrow$(ii) est le théorème~1.2
de~\cite{bosch1977}. Si elles sont satisfaites, $\widetilde V$ 
est un schéma affine, car $V$ est affinoïde, d'où l'assertion~(iii). 
Enfin, l'implication (iii)$\Rightarrow$(i) est évidente.
\end{proof}

\begin{defi}
Soit $X$ un espace strictement $k$-analytique.
Un \emph{atlas affinoïde formel} sur~$X$ est un G-recouvrement~$\mathscr V$
de~$X$ par des domaines strictement affinoïdes tel que pour tout couple~$(V,W)$ d'éléments de~$\mathscr V$, 
le domaine analytique $V\cap W$ est un domaine formel de~$V$ et de~$W$.
\end{defi}

Deux atlas affinoïdes formels sont dits compatibles si leur réunion
est encore un atlas affinoïde formel. C'est une relation d'équivalence.

Un espace formellement analytique est un espace strictement  $k$-analytique
muni d'une classe d'équivalence  d'atlas affinoïdes formels.
Il possède alors un unique atlas affinoïde formel maximal;
les éléments de cet atlas sont appelés
\emph{domaines affinoïdes formels}.

Un espace formellement analytique est topologiquement séparé.

\subsection{}
Soit $X$ un espace formellement analytique  et soit $\mathscr V$
son atlas affinoïde formel maximal. Pour $U\in\mathscr V$,
on note $\pi_U\colon U\to\widetilde U$ l'application de réduction canonique.

Pour tous $U,V,W\in\mathscr V$ tels que $W\subset U\cap V$,
l'application canonique $\widetilde W \to \pi_U(W)$ est un isomorphisme,
de même que l'application canonique $\widetilde W\to \pi_V(W)$.
On peut donc recoller les $\tilde k$-schémas~$\widetilde U$ et les applications
$\pi_U\colon U \to\widetilde U$, pour $U$ parcourant~$\mathscr V$,
en un $\tilde k$-schéma $\widetilde X$ 
et une application 
anticontinue surjective $\pi\colon X\to\widetilde X$.

On dit que $\widetilde X$ est la \emph{réduction canonique} de
l'espace formellement analytique~$X$.  (Elle dépend bien sûr
de l'atlas affinoïde formel choisi, et pas seulement de la structure
d'espace analytique sous-jacent.)

Comme les $\widetilde U$ sont réduits et de type fini,
le $\tilde k$-schéma $\widetilde X$ est réduit et localement de type fini.
Si $X$ est compact, il est de type fini.

\begin{exem}
\begin{enumerate}
\item
Soit $X$ un espace affinoïde. L'ensemble~$\{X\}$ est un atlas affinoïde formel sur~$X$ qui le munit d'une structure d'espace analytique formel.

\item
Soit $\mathfrak X$ un schéma formel plat 
et localement topologiquement de type fini sur~$k^\circ$.
Soit $\mathscr U$ un recouvrement de~$\mathfrak X$
par des ouverts formels affines.
Alors, la famille des $\mathfrak U_\eta$, pour $\mathfrak U\in\mathscr U$,
est un atlas affinoïde formel de~$\mathfrak X_\eta$.
La structure d'espace analytique formelle obtenue sur~$X=\mathfrak X_\eta$
ne dépend pas du choix du recouvrement~$\mathscr U$.

Le morphisme de réduction $\rho\colon X\to\mathfrak X_s$
est surjectif (voir~\ref{ss.formel-spec});
il se factorise de manière unique 
par un morphisme fini et surjectif $\phi\colon \widetilde X\to\mathfrak X_s$
(la finitude se déduit du théorème~6.3.5/1 de~\cite{bosch-g-r1984},
voir aussi \cite{berkovich1994}, \S1, p.~541, ligne~12 en partant du bas).

Soit $U$ un ouvert affine de~$\mathfrak X_s$;
alors $\rho^{-1}(U)=\pi^{-1}(\phi^{-1}(U))$ 
et $\phi^{-1}(U)$ est un ouvert affine de~$\widetilde X$,
car $\phi$ est un morphisme affine,
puisque fini.
D'après le lemme ci-dessous, $\rho^{-1}(U)$ est un 
domaine affinoïde formel de~$X$.
\end{enumerate}
\end{exem}

\begin{lemm}
Soit $X$ un espace formellement analytique.
Pour que $X$ soit affinoïde, il faut et il suffit que sa réduction
soit un schéma affine.
\end{lemm}
\begin{proof}
C'est le théorème~3.1 de~\cite{bosch1977}.
\end{proof}

\subsection{}
Soit $X$ un espace formellement analytique et soit $V$ un domaine analytique de~$X$. Les conditions suivantes sont équivalentes:
\begin{enumerate}
\item Il existe un ouvert~$\Omega$ de~$\widetilde X$ tel que $V=\pi^{-1}(\Omega)$;
\item $V$ est G-recouvert par les domaines affinoïdes formels qu'il contient;
\item $\pi(V)$ est un ouvert de~$\tilde X$ et $V=\pi^{-1}(\pi(V))$.
\end{enumerate}
On dira que $V$ est un \emph{domaine analytique formel} de~$X$.

\begin{defi}
Soit $X$ et $Y$ des espaces formellement analytiques.
On dit qu'un morphisme d'espaces analytiques $f\colon X\to Y$ 
est \emph{formel} si les domaines affinoïdes formels~$U$
de~$X$ tels qu'il existe un domaine affinoïde formel~$V$ de~$Y$
contenant $f(U)$ constituent un G-recouvrement de~$X$.
\end{defi}

\begin{lemm}
Soit $X$ et $Y$ des espaces formellement analytiques
et soit $f\colon X\to Y$ un morphisme d'espaces analytiques.
Les conditions suivantes sont équivalentes:
\begin{enumerate}\def\theenumi{\roman{enumi}}
\item Pour tout domaine analytique formel~$V$
de~$Y$, l'image réciproque $f^{-1}(V)$ est un domaine analytique formel 
de~$X$;
\item Pour tout domaine affinoïde formel~$V$
de~$Y$, l'image réciproque $f^{-1}(V)$ est un domaine analytique formel 
de~$X$;
\item Il existe un G-recouvrement de~$Y$ par des domaines
affinoïdes formels~$V$ dont  l'image réciproque $f^{-1}(V)$ est un domaine analytique formel de~$X$;
\item $f$ est un morphisme formel.
\end{enumerate}
\end{lemm}
\begin{proof}
Les implications (i)$\Rightarrow$(ii)$\Rightarrow$(iii)
sont évidentes.

(iii)$\Rightarrow$(iv).
Soit $\mathscr V$ un G-recouvrement de~$Y$ par des domaines
affinoïdes formels de~$Y$ tel que $f^{-1}(V)$ soit un domaine 
analytique formel de~$X$, pour tout $V\in\mathscr V$.
Pour tout $V\in\mathscr V$, choisissons un G-recouvrement~$\mathscr U_V$
de~$f^{-1}(V)$ par des domaines affinoïdes formels de~$X$.
La réunion des~$\mathscr U_V$, pour $V\in\mathscr V$
est un G-recouvrement affinoïde formel de~$X$ 
qui garantit que $f$ est un morphisme formel.

(iv)$\Rightarrow$(ii).
Soit $\mathscr U$ un G-recouvrement de~$X$ par des domaines affinoïdes
formels tel que pour tout $U\in\mathscr U$, 
il existe un domaine affinoïde formel~$V_U$ de~$Y$ tel que $f(U)\subset V_U$.
Soit $W$ un domaine affinoïde formel de~$Y$; démontrons que son
image réciproque est un domaine analytique formel de~$X$.

Soit $U\in\mathscr U$; l'intersection
$ W \cap V_U $ est réunion finie $\bigcup_{i\in I_U} W_{i}$ 
de domaines analytiques de~$V_U$ de la forme $W_{i}=\{\abs {\phi_{i}}=1\}  $, 
où $\phi_i$ est une fonction analytique sur~$V_U$ 
de norme spectrale inférieure ou égale à~$1$.
L'image réciproque~$U_i=f^{-1}(W_i)\cap U$ de~$W_i$ dans~$U$ 
est donc définie par $\abs{f^*(\phi_i)}=1$ et
la norme spectrale de~$f^*(\phi_i)$ est inférieure ou égale à~$1$.
Comme $f^{-1}(W)\cap U = \bigcup_{i\in I_U} U_i$, cela démontre
que $f^{-1}(W)\cap U$ est un domaine analytique formel de~$U$.

Comme $f^{-1}(W)$ est G-recouvert par les $f^{-1}(W)\cap U$,
pour $U\in\mathscr U$, il en résulte que $f^{-1}(W)$
est un domaine analytique formel de~$X$, ce qu'il fallait démontrer.

(ii)$\Rightarrow$(i).
Soit $V$ un domaine analytique formel de~$Y$ et
soit $\mathscr W$ un G-recouvrement de~$V$ par des domaines affinoïdes
formels de~$Y$. Pour tout $W\in\mathscr W$, choisissons un G-recouvrement~$\mathscr U_W$ de $f^{-1}(W)$ par des domaines affinoïdes formels de~$X$.
Alors, la réunion de~$\mathscr U_W$ est un G-recouvrement
de~$f^{-1}(V)$ par des domaines affinoïdes formels de~$X$,
ce qui prouve que $f^{-1}(V)$ est un domaine analytique formel de~$X$.
\end{proof}

\subsection{}
La catégorie des espaces formellement analytiques possède des produits.
Si $X$ et $Y$ sont des espaces formellement analytiques,
leur produit a pour espace analytique sous-jacent le produit $X\times_k Y$,
et ses domaines formellement affinoïdes 
sont les produits  de domaines formellement affinoïdes de~$X$ et~$Y$ 
respectivement.

Par construction, on dispose d'un morphisme canonique 
$\widetilde {X\times_k Y}\to \widetilde X\times_{\tilde k}\widetilde Y$;
il est fini.

\subsection{}
Soit $A$ une $k$-algèbre strictement affinoïde.
Toute présentation
$f\colon k\{T_1,\dots,T_n\}\to A$ induit sur~$A$ une norme quotient
qui domine la semi-norme spectrale de~$A$;
on dit que $A$ est distinguée, 
ou que l'espace affinoïde $\mathscr M(A)$ est distingué,
s'il existe une telle présentation
pour laquelle la semi-norme spectrale de~$A$ est égale
à la norme quotient.
Si l'algèbre~$A$ est distinguée, elle est réduite,
sa semi-norme spectrale est une norme et prend ses valeurs dans~$\abs k$.
Inversement, si $k$ est un corps stable et $A$ est une algèbre
strictement affinoïde réduite dont la norme spectrale
prend ses valeurs dans~$\abs k$, alors $A$ est distinguée
(théorème~6.4.3/1 de~\cite{bosch-g-r1984}).

Si $A$ est distinguée, 
alors la $k^\circ$-algèbre~$A^\circ$ est topologiquement de présentation finie. Plus précisément, la démonstration du corollaire~2.6 de~\cite{bosch1977},
prouve que toute famille finie $(a_1,\dots,a_n)$
dans~$A^\circ$ dont les images dans~$\widetilde A$ engendrent~$\widetilde A$
engendre topologiquement~$A^\circ$ (comme $k^\circ$-algèbre)
et~$A$ (comme $k$-algèbre). De même, l'inclusion $k^{\circ\circ}A^\circ\to
A^{\circ\circ}$ est une égalité,
et l'homomorphisme canonique de~$\widetilde k\otimes_{k^\circ} A^\circ$
dans~$\widetilde A$ est un isomorphisme;
en particulier, $\widetilde k\otimes_{k^\circ}A^\circ$
est réduit.

Inversement, si $R$ est une $k^\circ$-algèbre, plate et topologiquement
de présentation finie, telle que $\widetilde k\otimes_{k^\circ} R$
soit réduite, alors, l'algèbre affinoïde~$R_k$ est distinguée
et l'homomorphisme canonique de~$R$ dans~$(R_k)^\circ$
est un isomorphisme. (Voir \cite{bosch-g-r1984}, proposition~6.4.3/4.)

\subsection{}
On dit qu'un espace formellement analytique~$X$
est distingué s'il possède un G-recouvrement formellement affinoïde
composé de domaines affinoïdes distingués.
Si $X$ est distingué, tout domaine formellement affinoïde de~$X$
est distingué (\cite{bosch1977}, corollaire~2.5 et commentaires 
pages~16--17).
Il s'ensuit que tout domaine analytique formel de~$X$ est 
un espace formellement analytique distingué.

\subsection{}
Soit $X$ un espace formellement analytique distingué.
Pour tout domaine formellement affinoïde~$U$ de~$X$,
la $k^\circ$-algèbre $\mathscr O(U)^\circ$ est topologiquement
de présentation finie et 
l'homomorphisme canonique $\widetilde k\otimes_{k^\circ}\mathscr O(U)^\circ
\to \widetilde{\mathscr O(U)}$
est un isomorphisme.
Les spectres formels $\Spf(\mathscr O(U)^\circ)$
se recollent en un schéma formel~$\mathfrak X$,
plat et localement topologiquement de présentation finie sur~$k^\circ$,
dont la fibre générique s'identifie à l'espace formellement analytique~$X$
et la fibre spéciale à~$\widetilde X$
de manière compatible aux applications de réduction.

Inversement, si $\mathfrak X$ est un $k^\circ$-schéma formel,
plat, localement topologiquement de présentation finie 
et à fibre spéciale réduite,
alors l'espace formellement analytique~$\mathfrak X_\eta$
est distingué et le schéma formel qui lui est associé s'identifie
à~$\mathfrak X$.

Cette construction induit une équivalence 
entre la catégorie des espaces formellement analytiques distingués sur~$k$
et celle des schémas formels plats, localement topologiquement de présentation
finie sur~$k^\circ$ dont la fibre spéciale est réduite.

\subsection{}
Soit $X$ un espace formellement analytique distingué 
dont la fibre spéciale est géométriquement réduite.
Alors, pour toute extension complète~$K$ de~$k$,
$X_K$ est distingué, et sa fibre  spéciale s'identifie à~$\widetilde X_{\tilde K}$ (\cite{Bosch-1976}, lemme~1.6).

Soit $X,Y$ des espaces formellement analytiques distingués
dont les  fibres spéciales sont géométriquement réduites
(il suffit en fait que l'une des deux fibres spéciales 
soit géométriquement réduite).
Alors, pour toute extension complète~$K$ de~$k$,
l'espace formellement analytique $X\times_k X$ 
est distingué et sa fibre spéciale s'identifie
à $\widetilde X\times_{\tilde k}\widetilde Y$
(loc. cit.).

\subsection{}
Un groupe formellement analytique est un objet en groupes dans la catégorie
des $k$-espaces formellement analytiques.
Son espace analytique sous-jacent est alors un groupe $k$-analytique.
Étant donné un groupe analytique~$G$, une structure de groupe
formellement analytique sur~$G$ est une structure d'espace formellement
analytique pour laquelle la loi de groupe de~$G$ est un morphisme
d'espaces formellement analytiques.

\begin{theo}[\cite{Bosch-1976}, Satz~1.1]\label{theo.bosch}
Tout morphisme d'espaces $k$-analytiques en groupes
entre groupes formellement analytiques est formellement analytique.
En particulier, un groupe $k$-analytique possède au plus une
structure de groupe formellement analytique.
\end{theo}

\subsection{}
Suivant~\cite{Bosch-1976}, définition~1.7,
on dit qu'un $k$-groupe formellement analytique~$G$ réduit a \emph{bonne réduction} s'il est distingué et si sa fibre spéciale est géométriquement réduite.

Dans ce cas, la réduction de l'espace formellement analytique $G\times_k G$
s'identifie au produit $\widetilde G\times_{\tilde k} \widetilde G$,
(\cite{Bosch-1976}, lemme~1.6).
La loi de groupe sur~$G$
induit alors sur~$\widetilde G$ une structure de $\tilde k$-schéma en groupes.
Comme il est géométriquement réduit, il est lisse.

Si $G$ a bonne réduction, alors $G_K$ a bonne réduction, pour toute
extension complète~$K$ de~$k$, de réduction  $\widetilde G_{\tilde K}$.

\def\alg{{\mathrm{a}}}

\section{Le théorème d'uniformisation}
\label{app-varab}

Les variétés abéloïdes sont les analogues 
pour la géométrie non archimédienne
des tores complexes.
Les analytifiés de variétés abéliennes 
en fournissent des exemples cruciaux qui seront ceux qui nous
intéresseront particulièrement.

Soit $k$ un corps ultramétrique complet dont la valeur absolue
est non triviale.

\begin{defi}
Une \emph{variété abéloïde} sur~$k$
est un $k$-groupe analytique connexe, propre et lisse.
\end{defi}

Une variété abéloïde est commutative (\cite{Lutkebohmert-2016}, corollaire~7.1.4).

L'analytifiée d'une $k$-variété abélienne est abéloïde.

La fibre générique d'un $k^\circ$-schéma formel en groupes 
propre, lisse et connexe est une variété abéloïde. Les
variétés abéloïdes obtenues de cette façon sont des groupes
formellement analytiques à bonne réduction; nous
les appellerons simplement variétés abéloïdes à bonne réduction. 

\subsection{}
Les variétés abéloïdes possèdent une uniformisation,
inspirée par celle que l'on connaît dans le cas complexe,
et qui a d'ailleurs été l'une des motivations
pour le développement d'une géométrie analytique non archimédienne.

Cette uniformisation a d'abord été construite
par Tate~\cite{tate95} pour les courbes elliptiques puis
étendue aux variétés abéloïdes par Raynaud~\cite{raynaud71}.
La théorie a été reprise et rédigée en détail par Bosch \&\ Lütkebohmert :
\cite{Lutkebohmert-2016} (chapitre~7)
et~\cite{berkovich1990} (chapitre~6);
voir aussi~\cite{Lutkebohmert-1995} dans le cas où
la valuation de~$k$ est discrète, 
ainsi que~\cite{BoschLutkebohmert-1984,BoschLutkebohmert-1986}
dans le cas des variétés abéliennes.

La description de cette uniformisation
fait jouer un rôle central aux tores et à leurs variantes
affinoïdes ; nous allons commencer par quelques rappels à leur sujet.

\subsection{}
On note $\gm[k,1]$ le sous-groupe analytique compact
de l'analytifié~$\gm[k]^\an=\mathscr M(k\{T,T^{-1}\})$ du tore algébrique~$\gm[k]=\Spec(k[T,T^{-1}])$  défini par la condition $\abs{T}=1$.
On appelle $k$-\emph{tore affinoïde} un $k$-groupe affinoïde~$T$
tel qu'il existe une extension complète~$K$ de~$k$
pour laquelle $T_K$ est isomorphe à une puissance~$(\gm[K,1])^n$;
on dit alors que $K$ déploie~$T$, ou que $T$ est déployé sur~$K$.
Si $T$ est un $k$-tore affinoïde, il existe en fait 
une extension finie et séparable de~$k$ qui déploie~$T$.

Dans~\cite{Bosch-1971}, Bosch définit les $k$-tores affinoïdes en
imposant que l'extension~$K$ comme ci-dessous soit le complété
d'une clôture algébrique de~$k$. Dans ce cas, le fait qu'on puisse
supposer~$K$ finie et séparable est le théorème~2.6 de cette référence.
Comme nous n'avons pas pu trouver une preuve du cas  général 
dans la littérature, explicitons-la ici.

Soit $k_1$ une extension finie séparable de~$k$
tel que $G_{k_1}$ soit un $k_1$-groupe analytique à bonne réduction.
Quitte à agrandir~$K$, on peut supposer qu'il contient 
une clôture algébrique de~$k_1$. 
La fibre spéciale $\widetilde{G_K}$ est un tore algébrique déployé
sur~$\tilde K$; comme elle est égale à $(\widetilde{G_{k_1}})_{\tilde K}$,  
cela démontre que $\widetilde{G_{k_1}}$ est un $\widetilde{k_1}$-tore algébrique
(non nécessairement déployé). Quitte à agrandir~$k_1$, on peut donc
supposer que $\widetilde{G_{k_1}}$ est un $\widetilde{k_1}$-tore algébrique
déployé. Nous allons démontrer que cette extension~$k_1$
convient alors. Quitte à remplacer~$k$ par~$k_1$, on peut
ainsi supposer que $G$ est un $k$-groupe affinoïde à bonne réduction
dont la fibre spéciale est un $\widetilde k$-tore algébrique déployé,
et il s'agit de démontrer que $G$ est un $k$-tore affinoïde déployé.

Soit $\mathfrak G$ le $k_0$-schéma formel associé au groupe à bonne
réduction~$G$ (c'est-à-dire le spectre formel de $\mathscr O(G)^\circ$).
Soit $u$ un isomorphisme de $\widetilde k$-groupes algébriques
de~$\gm[\widetilde k]^n$ sur~$\widetilde{G}$.
D'après~\cite[exposé~X, prop.~2.1]{sga3}, 
il existe pour tout élément $a\in k^{\circ\circ}$
un unique isomorphisme~$u_a$ de $k^\circ/(a)$-schémas en groupes
de $\gm[{k^\circ/(a)}]^n$ sur $\mathfrak G_{k^\circ/(a)}$ 
qui relève~$u$. La famille des~$u_a$ définit alors un isomorphisme
de schémas en groupes formels~$\mathfrak u$ de~$\mathfrak G_{\mathfrak m}^n$
sur~$\mathfrak G$. En passant aux fibres génériques, il induit
un isomorphisme de~$\gm[{k,1}]^n$ sur~$G$.

\subsection{}
Cet argument démontre en fait qu'un $k$-groupe  affinoïde
à bonne réduction~$G$ est un tore déployé si et seulement si
sa réduction est un tore déployé.

Plus généralement, si $G$ est un $k$-groupe affinoïde à bonne réduction,
c'est un $k$-tore si et seulement si sa réduction est un $\tilde k$-tore;
si c'est le cas, $G$ est déployé par une extension finie non ramifiée de~$k$.

\subsection{}
Soit $T$ un $k$-tore algébrique. 
Si $T$ est déployé et $j\colon T\to \gm^n$ est un isomorphisme de tores,
l'image réciproque de~$\gm[k,1]^n$ est un domaine
affinoïde de~$T^\an$ et est un $k$-tore affinoïde. Il ne dépend
pas du choix de~$j$; on le note~$T_1$.
Dans le cas général, si $k'$ est une extension finie galoisienne
qui déploie~$T$, le tore affinoïde $T_{k',1}$ descend à~$k$
et s'identifie à domaine affinoïde~$T_1$ de~$T^\an$ qui est un $k$-tore
affinoïde déployé par~$k'$.

Comme dans~\cite[Satz 3.8]{Bosch-1971},
cette construction établit une équivalence de catégories
de la catégorie des $k$-tores algébriques vers la catégorie des $k$-tores
affinoïdes.  En effet, ces deux catégories sont également identifiées,
par les foncteurs des caractères,  à la catégorie des $\Z$-modules
libres de type fini munis d'une action de Galois, de façon compatible
à la construction $T\mapsto T_1$.

\begin{lemm}\label{lemm.maximal-semiab}
Soit $A$ une variété abéloïde sur~$k$
et soit $N$ un domaine analytique compact de~$A$
qui est un sous-groupe analytique.
Supposons que $N$ soit un groupe formellement analytique
à réduction semi-abélienne.

\begin{enumerate}
\item
Pour tout groupe formellement analytique compact~$Z$
à bonne réduction et tout morphisme  de groupes $f\colon Z\to A$,
on a $f(Z)\subset N$.
\item
Pour toute extension complète~$k'$ de~$k$,
le domaine~$N_{k'}$ est le plus grand domaine
analytique compact de~$A_{k'}$ qui est un sous-groupe analytique
et un groupe formellement analytique à bonne réduction.
\end{enumerate}
\end{lemm}
\begin{proof}
L'assertion~\emph a) est la proposition~5.4.6 de~\cite{Lutkebohmert-2016},
appliquée au point rationnel~$z_0=e$. Comme $N_{k'}$
satisfait les mêmes propriétés que~$N$, l'assertion~\emph b)
se déduit de~\emph a) en prenant pour~$f$ un morphisme d'inclusion.
\end{proof}

\begin{prop}\label{prop.neron0}
Soit $A$ une variété abéloïde sur~$k$.
Il existe un unique domaine analytique compact~$N$ de~$A$
vérifiant les propriétés suivantes:
\begin{enumerate}
\item C'est un sous-groupe analytique de~$A$;
\item Il possède une structure de groupe formellement analytique;
\item Il existe une extension finie séparable~$k'$ de~$k$
telle que~$N_{k'}$ soit un groupe formellement analytique 
extension (dans la catégorie des groupes formellement analytiques)
d'une variété abéloïde à bonne réduction  par un tore affinoïde.
\end{enumerate}
La formation de~$N$ commute à toute extension des scalaires.
\end{prop}

Notons que dans~\emph c), $N_{k'}$ est un groupe formellement
analytique à bonne réduction semi-abélienne; il est en particulier
justiciable du lemme~\ref{lemm.maximal-semiab}.

On l'appellera le \emph{domaine de Néron}\index{domaine de Néron}
de la variété abéloïde~$A$.
Supposons que $k$ soit de valuation discrète et que $A$
soit l'analytifié d'une variété abélienne; soit $\mathscr A$
son modèle de Néron sur~$k^\circ$ et supposons que la
fibre spéciale de sa composante neutre~$\mathscr A^\circ$ 
soit une variété semi-abélienne. Dans ce cas, la fibre générique
du complété formel de~$\mathscr A^\circ$, qui est égale
à l'image réciproque de~$\mathscr A^\circ_s$ par l'application
de réduction, est le domaine de Néron de~$A$.

\begin{proof}
Le théorème~7.6.4 de~\cite{Lutkebohmert-2016}
affirme qu'il existe une extension finie séparable~$K$ de~$k$
et un domaine analytique compact~$M$ de~$B=A_{K}$ 
qui est un groupe formellement analytique, 
extension d'une variété abéloïde à bonne réduction par un tore affinoïde.
Cette propriété reste vraie si l'on agrandit~$K$.

La proposition s'en déduit par descente comme suit.

Démontrons d'abord l'assertion d'unicité. Soit $N$ un domaine
analytique vérifiant les conditions de la proposition.
Quitte à agrandir~$K$, on peut supposer que $k'\subset K$.

Comme $N_{k'}$ est distingué, $N_{K}$ est un domaine analytique
de~$A_{K}$, qui en est un sous-groupe analytique,
muni d'une structure de groupe formellement analytique distingué
dont la réduction est une variété semi-abélienne;
c'est donc~$M_{K}$ d'après le lemme~\ref{lemm.maximal-semiab},
appliqué à~$N_{K}$ et à~$M_{K}$.
On en déduit que $N$ est l'image de~$M$
par le morphisme de changement de base $X_K\to X$.

Pour établir l'existence d'un domaine analytique~$N$ vérifiant les conditions
de la proposition, nous allons démontrer que « $M$ descend à~$k$ ».
%
% D'après la proposition~\ref{prop.descente-1}
% et le corollaire~\ref{coro.descente-1},
% il existe une extension finie séparable~$k'$ de~$k$
% et un domaine analytique compact~$N'$ de~$A_{k'}$ 
% tels que $M=N'_K$.  Le morphisme de~$N'\times_{k'} N'$ 
% dans~$A_{k'}$ se factorise ensemblistement par~$N'$,
% parce qu'il se factorise par~$M$ après extension à~$K$,
% si bien que $N'$ est un sous-groupe analytique de~$A_{k'}$.
% 
% D'après la proposition~1.4 de~\cite{BoschLutkebohmert-1985},
% quitte à agrandir l'extension~$k'$, on peut supposer
% que la structure formellement analytique de~$M$ 
% provient d'une structure formellement analytique sur~$N'$,
% nécessairement unique.
% 
% D'après la proposition~1.3 de cette même référence,
% on peut également supposer que $N'$ est distingué et que
% sa fibre spéciale est géométriquement réduite. Le groupe
% formellement analytique~$N'$ a bonne réduction.
% Dans ce cas, la fibre spéciale de~$N'$ fournit celle de~$M$
% par extension des scalaires de~$\widetilde{k'}$ à~$\widetilde K$;
% c'est donc une extension d'une variété abélienne par un tore.
%
Quitte à agrandir~$K$, on peut supposer que l'extension $k\to K$ 
est galoisienne.
Soit $g\in\Gal(K/k)$ et soit $\gamma$ un automorphisme de corps valué
de~$K$ qui étend~$g$. Le domaine analytique $(gN)_K$ est égal
à $\gamma M$ et satisfait les propriétés de~$M$,
donc est égal à~$M$, d'après le lemme~\ref{lemm.maximal-semiab}.
On en déduit que $gN=N$.
Par une version « tordue » du théorème de pleine fidélité de Bosch
(théorème~\ref{theo.bosch}), le groupe~$\Gal(K/k)$ agit sur~$N$
par des morphismes formellement analytiques.
Le lemme~2.3 de~\cite{BoschLutkebohmert-1986} entraîne
alors qu'il existe un domaine analytique~$N$ de~$A$
qui est un sous-groupe analytique,
muni d'une structure formellement analytique,
tel que $M=N_{K}$ en tant que groupe formellement analytique.

Soit $N$ le domaine de Néron de~$A$ 
et soit $K$ une extension complète de~$k$. 
Comme $N_K$ vérifie les conditions de l'énoncé
qui caractérisent le domaine de Néron de la variété abéloïde~$A_K$,
c'est son domaine de Néron.
Cela conclut la démonstration.
\end{proof}

\subsection{}
Soit $A$ une variété abéloïde sur~$k$ et soit $N$
le domaine de Néron de~$A$ (proposition~\ref{prop.neron0}).
\emph{La catégorie des immersions fermées $j\colon T\to N$, où $T$ est un tore
affinoïde et $j$ un morphisme de groupes analytiques, admet
un objet final.}

Pour cette assertion, un argument de descente galoisienne
permet de supposer que $N$ a bonne réduction semi-abélienne déployée.
Compte tenu de~\cite{sga3}, exposé~XV, théorème~8.8,
la catégorie étudiée s'identifie alors,
via le foncteur de réduction, à celle
des immersions fermées de $\tilde k$-tores 
$\widetilde j\colon \widetilde T\to \widetilde N$.
Détaillons un peu l'argument.

L'anneau~$k^\circ$ n'est pas noethérien, 
mais le schéma formel~$\mathfrak N$ associé 
à la structure formelle analytique de~$N$
est topologiquement de présentation finie.
Par suite,  pour tout 
élément non nul~$a$ de~$k^\circ$, le $k^\circ/(a)$-schéma~$\mathfrak N_a$
déduit de~$\mathfrak N$ est de présentation finie;
il est lisse, car il est plat et que sa fibre spéciale est lisse.
Couplé à un argument d'approximation noethérienne,
le théorème~8.8 de l'exposé~XV de~\cite{sga3} 
assure que le foncteur de réduction $\mathfrak T\mapsto \mathfrak T_s$ 
induit une bijection
entre sous-tores formels de~$\mathfrak N$ et sous-tores de~$\widetilde N$.
Autrement dit, le foncteur $T\mapsto \widetilde T$
induit une bijection
entre sous-tores affinoïdes (Zariski-fermés)
à bonne réduction de~$N$ et sous-tores de~$\widetilde N$.

Comme le schéma en groupes semi-abélien~$\widetilde N$ possède un plus grand
sous-tore, le groupe formellement analytique~$N$ possède un plus grand
sous-tore affinoïde (Zariski-fermé) à bonne réduction.
Il reste à démontrer que tout sous-tore affinoïde affinoïde (Zariski-fermé)
de~$N$ est à bonne réduction. Soit $T$ un tel sous-tore affinoïde.
Soit $k'$ une extension finie galoisienne de~$k$ telle que $T_{k'}$
ait bonne réduction. 
Par ce qui précède, ce sous-tore de~$N_{k'}$ correspond à un sous-tore
de~$\widetilde {N_{k'}}=\widetilde N_{\widetilde k'}$, isomorphe à~$\widetilde{T_{k'}}$.
Comme $\widetilde N$ est déployé, il existe un sous-tore~$S$
de~$\widetilde N$ tel que $\widetilde{T_{k'}}=S_{\widetilde k'}$.
On en déduit que $T$ est l'unique sous-tore affinoïde (Zariski-fermé)
à bonne réduction de~$N$ dont la réduction est~$S$.

\subsection{}
Notons $j\colon T\to N$ l'immersion fermée maximale d'un
tore affinoïde  dans~$N$. 
\emph{Le quotient $B=N/T$ est une variété abéloïde sur~$k$ à potentielle
bonne réduction. Plus précisément, si $k'$ est une extension complète de~$k$
telle que $N_{k'}$ ait bonne réduction, alors $B_{k'}$ a bonne réduction.}

Détaillons l'argument. 

Supposons d'abord que $N$ soit extension
d'une variété abéloïde à bonne réduction~$B$ par un tore affinoïde~$S$;
alors $S=T$ et $N/T=B$.

Dans le cas général, il existe une extension finie galoisienne~$k'$
de~$k$ telle que le groupe analytique $(N/T)_{k'}$ soit 
une $k'$-variété abéloïde~$B'$ à bonne réduction.
Cette variété abéloïde est munie d'une donnée de descente 
galoisienne de~$k'$ à~$k$.
Comme la réduction de~$B'$ est projective, tout ensemble
fini de points de~$B'$ est contenu dans un domaine affinoïde
formel de~$B'$, image réciproque d'un ouvert affine
contenant les réductions des points considérés. 
Cela entraîne  que la donnée de descente sur~$B'$ est effective,
dans la catégorie des espaces formellement analytiques,
si bien qu'il existe une variété abéloïde~$B$ sur~$k$,
qui est formellement analytique,
et telle que $B'=B_{k'}$.

Notons que si $B'$ est l'analytifié d'une variété abélienne,
il en va de même de~$B$ par descente galoisienne schématique.

\subsection{}
Soit $T_\alg$ le tore algébrique tel que $T=T_{\alg, 1}$.
On déduit du théorème~7.6.4, (b) de~\cite{Lutkebohmert-2016}
et d'un argument de descente galoisienne finie
que tout morphisme de groupes analytiques
de~$T$ vers~$A$ s'étend de manière unique
en un morphisme de groupes analytiques de~$T_\alg^\an$ dans~$A$.
En particulier, l'immersion fermée de~$T$ dans~$N$
s'étend de manière unique 
en un morphisme de groupes analytiques~$q$ de~$T_\alg^\an$ dans~$A$.

Considérons le morphisme de groupes
analytiques $q'\colon T_\alg^\an \times N \to A$
déduit de~$q$  et de l'injection de~$N$ dans~$A$.
Le morphisme $t\mapsto (t^{-1},t)$ de $T$ dans~$T_\alg^\an\times N$
est une immersion fermée, son image est un sous-groupe analytique 
(Zariski-fermé) qu'on note~$D$. Le quotient~$E$ de~$T_\alg^\an\times N$
par~$D$ est le produit contracté $T_\alg^\an \times^{T} N$;
on a par construction une suite exacte de $k$-groupes analytiques
\[ 1 \to T_\alg^\an \to E \to B \to 0. \]
Le morphisme $q'\colon T_\alg^\an\times N\to A$ passe au quotient
et induit un morphisme analytique~$p$ de~$E$ dans~$A$.

\subsection{}
Dans ce paragraphe et dans les deux qui suivent, on suppose
que le domaine de Néron~$N$ a bonne réduction
et que le tore~$T$ est déployé. 

Soit $\mathfrak V$ un recouvrement formellement affinoïde
de~$B$ trivialisant l'extension~$N$;
pour tout $V\in \mathfrak V$, soit $f_V\colon V\to N$ une section
de la projection $q|_N\colon N\to B^\an$.
Sur $V\cap W$, les sections~$f_V$ et~$f_W$ diffèrent de la multiplication
par un morphisme $f_{VW}\colon V\cap W \to T$.
Ces morphismes $(f_{VW})$ forment un cocycle à valeurs dans~$T$
qui représente l'extension~$N$, mais également, 
si on les considère comme un cocycle à valeurs dans~$T_\alg^\an$,
l'extension~$E$.

Pour $V\in\mathfrak V$, la section~$f_V$ permet d'identifier $q^{-1}(V)$
au produit $V\times T_\alg^\an$; composons la projection sur le second facteur
avec l'application de tropicalisation de~$T$ vers~$T_{\alg,\trop}$;
on obtient une application~$\tau_V$ de $q^{-1}(V)$ dans~$T_{\alg,\trop}$.
Lorsqu'on identifie~$T_\alg^\an$ à~$\gm^r$, l'espace~$T_{\alg,\trop}$
s'identifie à~$\R^r$ et chaque coordonnée de~$\tau_V$ est 
le logarithme de la valeur absolue d'une fonction holomorphe inversible
(un caractère du tore).
Comme le cocycle $(f_{VW})$ est à valeurs dans~$T$,
ces applications $\tau_V$ se recollent en une application
$\tau\colon E\to T_{\alg,\trop}$.
(Dans le langage de ce travail, l'application~$\tau$ est~\pl.)

La restriction de~$\tau$ au tore~$T_\alg^\an$ 
est l'application de tropicalisation usuelle 
de~$T_\alg^\an$ sur~$T_{\alg,\trop}$. 
Comme $\tau|_{T_\alg^\an}$ est propre
et que $B$ est compacte, l'application $\tau\colon E\to T_{\alg,\trop}$
est surjective et propre.

Si $E=B$, $T_\alg$ est de dimension~$0$ et $\tau$ est l'application constante.

\subsection{}
Nous nous sommes placés dans le cas déployé, 
et le théorème~7.6.4, \emph c), de~\cite{Lutkebohmert-2016}
affirme alors que le morphisme $p\colon E\to A$ est surjectif 
et que son noyau~$\Lambda$ est un
sous-groupe discret, abélien libre de rang~$\dim(T)$, de~$E(k)$.
Le $\Lambda$-torseur~$E$ est un $\Z^{\dim(T)}$-torseur topologique. 
L'image du réseau~$\Lambda$ par~$\tau$
est un réseau cocompact de~$T_{\alg,\trop}$ 
(corollaire~7.6.2 de~\cite{Lutkebohmert-2016}).
Par suite, $\tau$ induit une application surjective
de~$A^\an$ dans le tore réel $T_{\alg,\trop}/\tau(\Lambda)$.

\subsection{}
Supposons encore que le domaine de Néron~$N$ ait bonne réduction
et que $T$ soit déployé.
L'étude analytique de l'extension~$E$
(\cite{berkovich1990}, \S6.5, p.~124) démontre
que $E$ est contractile; cela
repose sur le fait que la variété abéloïde~$B$ ayant bonne réduction,
l'espace analytique~$B$ est contractile, 
de même que l'analytifié d'un tore.
Cela fournit ainsi l'uniformisation analytique de~$A$:
$E$ est un revêtement universel de~$A$.

Plus précisément, Berkovich construit une rétraction par déformation
de~$B$ sur le point de Gauss~$\beta$
correspondant au point générique de sa réduction,
une rétraction par déformation de~$T_\alg^\an$ 
sur son squelette~$(T_\alg^\an)_\sq$,
et les combine pour obtenir une rétraction par déformation de~$E$ 
sur un sous-espace fermé~$S(E)$
qui est le squelette de $(T_\alg^\an)_\beta$.
La restriction de~$\tau$ à~$S(E)$
est un homéomorphisme  de ce squelette sur~$T_{\alg,\trop}$.

Cette rétraction par déformation est invariante par  l'action de~$\Lambda$
par translation et identifie~$\Lambda$ à un réseau~$\Gamma$ de~$S(E)$.
Elle passe au quotient et induit donc une rétraction par déformation
de~$A$ sur le tore réel $S(A)=S(E)/\Gamma$.

\subsection{}
Revenons maintenant à la situation générale.

Par descente galoisienne, il résulte du cas déployé que 
le morphisme $p\colon E\to A$ est surjectif,
que son noyau~$\Lambda$ est un $k$-groupe étale
et que $p$ fait de~$E$ un $\Lambda$-torseur.
Pour toute extension~$k'$ qui déploie la situation, $\Lambda_{k'}$
est un $k'$-groupe constant, abélien libre de rang~$\dim(T)$.

Considérons une extension galoisienne finie~$k'$
déployant la situation. Alors,  $E_{k'}$
est contractile, $p_{k'}\colon E_{k'}\to A_{k'}$ est un revêtement
universel de~$A_{k'}$.
Cependant, $p\colon E\to A$ n'est pas, en général, un isomorphisme
local (par exemple, dès que le noyau de~$p$ n'est pas déployé).

Cependant, après changement de base de~$k$ à une extension galoisienne~$k$
finie, les constructions du cas déployé fournissent
une rétraction par déformation sur le squelette $S(A_{k'})$
qui est un tore réel de dimension~$\dim(T)$.
Berkovich démontre que les constructions données commutent
à l'action de $\Gal(k'/k)$ et fournissent une rétraction par déformation
de~$A$ sur le quotient du tore réel~$S(A_{k'})$
par l'action de~$\Gal(k'/k)$.

\section{Fibrés en droites sur une variété abéloïde}

\subsection{}
Nous allons maintenant rappeler comment l'uniformisation
d'une variété abéloïde permet de décrire la catégorie des fibrés en droites
sur icelle, d'après~\cite{Raynaud-1971c,bosch-l91,gubler2010,Lutkebohmert-2016}.
Nous nous plaçons dans le cas déployé, c'est-à-dire que le tore $T$
est déployé et que la variété abéloïde~$B$ a bonne réduction.
Partons du diagramme fondamental que fournit l'uniformisation:
\[ \begin{tikzcd} 
& & \Lambda \ar{d}{j} \ar{dr}{\phi} \\
    1 \ar{r} & T_\alg^\an \ar{r} & E \ar{r} \ar{r}{q} & B \ar{r} & 0 
\end{tikzcd}
\]

On dispose également d'un diagramme dual, classique
dans la théorie des 1-motifs:
\[ \begin{tikzcd} 
& & \Lambda' \ar{d}{j'} \ar{dr}{\phi'} \\
  1 \ar{r} & T_\alg^{\prime\an} \ar{r} & E' \ar{r}  \ar{r}{q'} & B' \ar{r} & 0 
\end{tikzcd}
\]
Rappelons sa construction.

 Notons $B'$ la variété
abéloïde duale de~$B$ et $\mathscr P$ la biextension de Poincaré
sur $B\times B'$; 
elles sont canoniquement munies de structures formellement analytiques
que nous désignons par les mêmes notations.
Nous avons noté $j\colon \Lambda\to E$ l'immersion canonique
et posé $\phi=q\circ j\colon \Lambda\to B$.

Notons $\Lambda'$ le groupe des caractères du tore~$T_\alg$
et $T'_\alg$ le tore algébrique de groupe de caractères~$\Lambda$.
À tout $\lambda'\in\Lambda'$ correspond par fonctorialité
une extension~$\lambda'_*E$ 
de~$B$ par~$\gm$, donc un fibré en droites $\phi'(\lambda')
\in B'(k)$. L'application $\phi'\colon \Lambda' \to B'(k)$
est un morphisme de groupes.
De plus, le fibré en droites $q^*\phi'(\lambda')$ possède
une trivialisation canonique~$s_{\lambda'}$ et
l'application $\lambda' \mapsto s_{\lambda'}$ est additive.

Par bidualité, $B(k)$ s'interprète comme $\Pic^0(B')$
de sorte que le morphisme de groupes $\phi\colon\Lambda\to B(k)$
fournit une extension~$E'$ de~$B'$ par le tore~$T_\alg^{\prime\an}$.
Pour $(\lambda,\lambda')\in\Lambda\times\Lambda'$,
le point $j(\lambda)$ de~$E(k)$ fournit un 
$k$-point de l'extension~$\lambda'_*E$ de~$B$ par~$\gm$,
c'est-à-dire un point $\langle\lambda,\lambda'\rangle$
de la fibre de~$\mathscr P$
en $(\phi(\lambda),\phi'(\lambda'))$ et,
par symétrie, un $k$-point~$j'(\lambda')$ de l'extension~$E'$
qui relève $\phi'(\lambda')$. L'application $j'\colon\Lambda'\to E'$
ainsi construite est un morphisme de groupes, injectif.

On note enfin l'égalité $\langle \lambda,\lambda'\rangle=s_{\lambda'}(j(\lambda))$.

\subsection{}
Soit $L$  un fibré en droites sur~$A$; il vérifie le théorème
du cube (\cite{Lutkebohmert-2016}, théorème~7.1.6, \emph a)).
Comme tout fibré en droites cubiste sur~$E$,
le fibré en droites~$p^*L$ sur~$E$ provient de~$B$
et même d'un fibré en droites cubiste~$M$ 
sur l'espace formellement analytique~$B$
(\cite{Lutkebohmert-2016}, proposition~6.2.10).
Par la formule $b\mapsto t_b^*M\otimes M^{-1}$, 
le fibré en droites~$M$ fournit un morphisme de variétés
abéloïdes $\theta_L\colon B\to B'$,
$t_b$ désignant la translation par~$b$.
Le fibré~$M$ n'est pas unique; les ambiguïtés possibles
consistent exactement à le multiplier par
un fibré en droites de la forme~$\phi'(\lambda')$, pour $\lambda'\in\Lambda'$,
si bien que l'application~$\theta_L$ ne dépend bien que de~$L$.

D'autre part, ce fibré en droites~$p^*L$  est muni d'une donnée de
descente relative à l'action de~$\Lambda$, qu'on transporte
à $q^*M$ au moyen de l'isomorphisme donné.
% On obtient ainsi une équivalence entre la catégorie des fibrés
% en droites sur~$A$ et celle des couples $(M,c)$, où $M$ est un fibré
% en droites sur l'espace formellement analytique~$B$ 
% et $c$ une donnée de descente sur $q^*M$ relative à l'action de~$\Lambda$.
Ces données de descente sont explicitées par la proposition~6.2.12 de~\cite{Lutkebohmert-2016}:
elles sont paramétrées par des couples $(u,r)$,
où $u\colon\Lambda\to\Lambda'$ est un morphisme de groupes
tel que 
\[ \theta_M\circ\phi=\phi'\circ u \]
et $r\colon\Lambda\to\phi^*M$ est une trivialisation
telle que
\[ r(\lambda+\mu)\otimes r(\lambda)^{-1}\otimes r(\mu)^{-1}\otimes r(0)
 = \langle \lambda, u(\mu)\rangle. \]
Pour donner sens à cette relation,
observons que le membre de droite de l'égalité précédente
est une section de $\mathscr P$ au-dessus
du point $(\phi(\lambda),\phi'\circ u(\mu))$.
Le membre de gauche 
est une trivialisation du fibré en droites
\begin{multline*}
 \phi(\lambda+\mu)^*M\otimes \phi(\lambda)^*M^{-1}\otimes \phi(\mu)^*M^{-1} \otimes \phi(0)^*M \\
\simeq \phi(\lambda)^* (t_{\phi(\mu)}^* M \otimes M^{-1})
	\otimes 0^* (t_{\phi(\mu)}^*M\otimes M^{-1})^{-1} \\
\simeq \phi(\lambda)^* \theta_M(\phi(\mu))  \otimes \theta_M(\phi(\mu))^{-1}.\end{multline*}
Le premier facteur est la fibre en~$\phi(\lambda)$
du fibré en droites $\theta_M\circ\phi(\mu)
=\phi'(u(\mu))$, c'est-à-dire la fibre de~$\mathscr P$
en $(\phi (\lambda),\phi'(u(\mu)))$;
le second facteur est la fibre en l'origine 
du fibré en droites~$\theta_M\circ\phi(\mu)$,
c'est-à-dire la fibre de~$\mathscr P$ en~$(0,\phi'(u(\mu)))$;
par la structure de biextension sur~$\mathscr P$, le membre de gauche
s'interprète  donc comme un point de la fibre de~$\mathscr P$
en $(\phi(\lambda),\phi'\circ u(\mu))$.

À un tel couple $(u,r)$ est associé la famille des isomorphismes
\[ c_\lambda \colon q^*M\to t_{j(\lambda)}^* q^*M,
\quad
m\mapsto r(\lambda)\otimes r(0)^{-1} \otimes s_{u(\lambda)} \otimes m, \]
pour $\lambda\in\Lambda$.

\subsection{}
Soit $\lambda\in\Lambda$ et $\lambda'\in\Lambda'$;
on a construit une section canonique $\langle \lambda,\lambda'\rangle$
du fibré de Poincaré~$\mathscr P$ 
au-dessus de~$(\phi(\lambda),\phi'(\lambda'))$.
Grâce à la structure formelle du fibré de Poincaré, il lui est
associé une métrique formelle. 
L'application 
\[ (\lambda,\lambda') \mapsto \log (\norm{\langle \lambda,\lambda'\rangle}) \]
est une forme bilinéaire sur~$\Lambda\times\Lambda'$.
Comme $\Lambda$ est un réseau de~$E$, cette forme bilinéaire
non dégénérée (\cite{Lutkebohmert-2016}, prop.~6.1.8,
équivalence de (i) et~(ii)).

Pour $(u,r)$ comme ci-dessus, l'application
\[  (\lambda,\mu) \mapsto \log (\norm{\langle \lambda, u(\mu)\rangle} )
\]
est une forme bilinéaire symétrique sur~$\Lambda\times\Lambda$
(\cf \cite{Lutkebohmert-2016},  prop.~6.4.1, \emph c)).
Nous noterons~$q_L$ la forme quadratique 
sur~$T_{\alg,\trop}$ correspondante:
\[ q_L (\tau(\lambda))=  \log (\norm{\langle \lambda, u(\lambda)\rangle} ).\]

\begin{theo}
Soit $L$ un fibré en droites sur~$A$, soit $(M,u,r)$ le triplet sur~$E$
fournissant~$L$ par descente.
Pour que $L$ soit ample, il faut et il suffit que $M_s$ soit ample sur
la variété abélienne résiduelle~$B_s$
et que la forme quadratique~$q_L$ sur~$T_{\alg,\trop}$
soit définie positive. Si c'est le cas, la variété
abéloïde~$B$ et le schéma formel abélien~$\mathfrak B$ sont algébrisables.
\end{theo}
Pour la démonstration, nous renvoyons à~\cite{Lutkebohmert-2016}, 
théorème~6.4.4. Dans cette référence, l'amplitude de~$M_s$
intervient par celle du fibré en droites analytique~$M$ 
sur la variété abéloïde~$B$; d'après le lemme suivant, ces
deux assertions sont équivalentes et entraîne l'énoncé d'algébricité.

\begin{lemm}
Soit $\mathfrak B$ un schéma formel abélien sur~$k^\circ$
et soit $\mathfrak L$ un fibré en droites sur~$\mathfrak B$.
Les conditions suivantes sont équivalentes:
\begin{enumerate}\def\theenumi{\roman{enumi}}\def\labelenumi{(\theenumi)}
\item Le fibré en droites~$\mathfrak L_\eta$ 
sur la variété abéloïde~$\mathfrak B_\eta$ est ample;
\item Le fibré en droites~$\mathfrak L$ est relativement ample
sur $\Spf(k^\circ)$;
\item Le fibré en droites~$\mathfrak L_s$ sur la variété
abélienne~$\mathfrak B_s$ est ample.
\end{enumerate}
\end{lemm}
\begin{proof}
L'implication (iii)$\Rightarrow$(ii)  résulte du
théorème des fonctions formelles de Grothendieck
sous sa forme non nécesssairement noethérienne
(\cite{abbes2010}, \S2.11).
L'implication (ii)$\Rightarrow$(i) est évidente.
Il reste à démontrer que si $\mathfrak L_\eta$ est ample,
alors $\mathfrak L_s$ est ample.
Dans les deux théories, le critère d'amplitude est analogue:
il faut et il suffit qu'une puissance du fibré soit effective
et qu'il définisse un morphisme injectif dans la variété
abéloïde (resp.\ abélienne) duale.
Supposons que $\mathfrak L_\eta^{\otimes n}$ est effectif.
Comme $\mathfrak B$ est propre et lisse, à fibres géométriquement
connexes, l'« adhérence schématique » d'un diviseur de Cartier qui le définit
est un diviseur de Cartier~$\mathfrak D$ sur~$\mathfrak X$
et $\mathscr O_{\mathfrak X}(\mathfrak D)$ est l'unique
fibré en droites sur~$\mathfrak X$ qui prolonge~$\mathfrak L_\eta^{\otimes n}$
(\cite{Lutkebohmert-1990}, preuve du lemme~2.9).
On a donc $\mathfrak L^{\otimes n}\simeq \mathscr O_{\mathfrak X}(\mathfrak D)$,
ce qui entraîne que $\mathfrak L_s^{\otimes n}$ est effectif.
Considérons le morphisme $\theta_{\mathfrak L}$ 
du schéma formel abélien~$\mathfrak B$ vers le schéma formel abélien
dual~$\mathfrak B'$. Sur la fibre générique, on 
obtient le morphisme correspondant entre variétés abéloïdes;
son noyau est fini par hypothèse, 
si bien que $\theta_{\mathfrak L,\eta}$ est surjectif.
Par surjectivité de la spécialisation, $\theta_{\mathfrak L,s}$
est également surjectif, donc son noyau est fini.
Cela prouve que $\mathfrak L_s$ est ample.
\end{proof}

\begin{rema}
Lorsque $A$ est une variété abélienne, nous allons
démontrer que la variété abéloïde~$B$ et son extension~$E$
sont algébrisables.

Lorsque la situation est déployée, on vient de voir
que la variété abéloïde~$B$ est  simultanément
l'analytifié de la fibre générique d'un schéma abélien~$\mathscr B$
et la fibre générique de son complété formel $\widehat{\mathscr B}$.
Les descriptions d'une extension d'une
variété abélienne par un tore déployé sont les mêmes
dans les cas algébrique, formel et analytique:
ces extensions correspondent 
à un morphisme du groupe des caractères de ce tore
dans la variété duale.
Cela fournit une extension~$\mathscr E$ du schéma abélien~$\mathscr B$
par un $k^\circ$-tore  déployé~$\mathscr T$.
La fibre générique de cette extension est une extension algébrique
de~$B$ par le tore~$T_\alg$ dont l'analytifiée est~$E$;
la fibre générique du complété formel de cette extension
est une extension de~$B$ par le tore affinoïde~$T$
et s'identifie comme extension au domaine de Néron~$N$ de~$A$.

Dans le cas général, il existe donc une extension finie galoisienne~$k'$
de~$k$ telle que l'extension~$E_{k'}$ de $B_{k'}$ par
le tore~$T_{\alg,k'}^\an$ soit algébrisable et que ce tore soit déployé.
Ces espaces et l'extension sont munis d'une donnée de descente
galoisienne analytique. Comme $B_{k'}$ est projective,
cette donnée de descente est algébrique, et donc effective,
si bien que $B$ est algébrisable.

Les automorphismes d'une extension d'une variété abélienne par~$\gm$
(induisant l'identité sur les deux facteurs) sont en particulier
des automorphismes de $\gm$-torseurs. Par \textsc{gaga} appliqué
au fibré en droite correspondant, ces automorphismes d'extension
analytiques sont algébrisables. Il en va de même des automorphismes
d'extension par un tore analytique déployé, de sorte que
la donnée de descente de l'extension~$E_{k'}$ est algébrique.
Comme $E_{k'}$ est affine sur la variété projective~$B_{k'}$,
cette donnée de descente est encore effective. Autrement dit,
l'extension~$E$ est algébrique.

Cependant, il n'y a pas de raison pour que le domaine de Néron
soit la fibre générique d'un schéma formel de type fini,
a fortiori, algébrisable.

Bien sûr, le morphisme d'uniformisation $p\colon E\to A$ n'est
pas algébrique si le tore est de dimension strictement positive,
puisque son noyau est infini et de dimension~$0$.
\end{rema}

\begin{prop}\label{prop.unif-pic0}
Soit $L$ un fibré en droites sur~$A$, soit $(M,u,r)$
le triplet sur~$E$ fournissant~$L$ par descente. 
Pour que $L$ soit invariant par translation,
il faut et il suffit que $\mathfrak M$ soit invariant par translation
et que la forme quadratique~$q_L$ soit nulle.
\end{prop}
\begin{proof}
Cela résulte du corollaire~6.2.13 de~\cite{Lutkebohmert-2016}
qui affirme l'équivalence de trois assertions (a), (b), (c);
l'assertion~(a) signifie que $L$ est invariant par translations
et l'assertion~(b) signifie que $M$ est invariant par translation
et~$q_L$ est nulle.
\end{proof}

\clearpage
\bibliographystyle{smfplain}
\bibliography{aclab,acl,chern}

\clearpage
\printindex

\end{document}